\newcommand{\toitself}{\mathbin{\scalebox{.85}{%
    \lefteqn{\scalebox{.5}{$\blacktriangleleft$}}\raisebox{.34ex}{$\supset$}}}}
\newcommand\reallywidehat[1]{%
\savestack{\tmpbox}{\stretchto{%
  \scaleto{%
    \scalerel*[\widthof{\ensuremath{#1}}]{\kern-.6pt\bigwedge\kern-.6pt}%
    {\rule[-\textheight/2]{1ex}{\textheight}}
  }{\textheight}%
}{0.5ex}}%
\stackon[1pt]{#1}{\tmpbox}%
}
\newtheorem{thm}{Theorem}
\newtheorem{lemm}{Lemma}
\newtheorem{prop}{Proposition}
\newtheorem{cor}{Corollary}
\newtheorem{probl}{Problem}
\newtheorem{defn}{Definition}
\newtheorem{rem}{Remark}
\newtheorem{ex}{Example}
\numberwithin{figure}{chapter}
\numberwithin{thm}{chapter}
\numberwithin{lemm}{chapter}
\numberwithin{cor}{chapter}
\numberwithin{prop}{chapter}
\numberwithin{defn}{chapter}
\numberwithin{probl}{chapter}
\numberwithin{rem}{chapter}
\numberwithin{ex}{chapter}
\theoremstyle{definition}
\theoremstyle{remark}
\numberwithin{section}{chapter}
\numberwithin{equation}{chapter}
\begin{document}

\frontmatter

\title{\Huge Braids, conformal module and entropy}


\author{Burglind J\"oricke}
\curraddr{Inst. Math.,  Humboldt-University Berlin,
 Rudower Chaussee 25, 12489 Berlin}

\email{joericke@googlemail.com}

\author{}
\address{}
\curraddr{}
\email{}
\thanks{}

\date{today}

\subjclass[2010]{Primary  30F60, 32G05, 37B40, 32L05, 57M99, 57R52;
Secondary 32B15, 32G15, 14H30}

\keywords{ braids, mapping classes, Thurston classification, space
of polynomials, conformal module, topological entropy, elliptic
fiber bundle, isotopy }


\maketitle

\tableofcontents


\mainmatter

\chapter{Introduction}
\label{intro}

In 1951 J.P. Serre phrased a heuristic principle, widely known as Oka Principle:
{\it In the absense of topological obstructions analytic problems on Stein manifolds admit analytic solutions.} The Cartan-Serre Theorems A and B, and the equivalence between topological and holomorphic classification of principal fiber bundles over Stein manifolds are important examples of this principle.

In \cite{G} M. Gromov offered an interpretation of the Oka Principle
in terms of the $h$-principle for holomorphic functions: ''Oka Principle is an expression of an optimistic expectation with regard to the validity of the h-principle for holomorphic maps in the situation where the source manifold is Stein''.  
The $h$-principle is said to hold for holomorphic mappings from a complex manifold $X$ to a complex manifold $Y$, if each continuous mapping $X\to Y$ is homotopic to a holomorphic one.
Gromov's Oka Principle became the subject of intensive and fruiful research (for a comprehensive account see \cite{Forst}). The emphasis has been on the ''affirmative  side'', i.e. on situations, when the principle holds.

But what about the ''negating side"? 
Is there a genuine interest in looking at situations when Gromov's Oka Principle is violated?\\
After certain hesitation in this respect  several encouraging
observations gave some hope, that
a look at situations, when this important heuristic principle is violated, may
lead to finding interesting structures. The present work is an attempt to start a closer look at this theme.

In more detail, 
a basic problem concerning the ''negating side" of Gromov's Oka Problem is the following.

\smallskip

\noindent {\bf Problem.}
{\it Understand obstructions for Gromov's Oka Principle.}

\smallskip

Obstructions for Gromov's Oka Principle are based
on the relation between conformal invariants of the source and the target, 
when the relation between topological invariants does not completely describe the situation.

Gromov himself referred in his seminal paper \cite{G} from 1989 to mappings from annuli into the twice punctured complex plane as simplest example, where this principle fails and pointed out the special role of mappings from annuli for analyzing the failure of the principle. 
The occurring conformal
invariants of the twice punctured complex plane (called here the conformal modules of conjugacy classes of elements of the fundamental group) are supposed to capture certain "conformal rigidity".

The conformal module of a conjugacy class of elements of the fundamental group $\pi_1(Y,y_0)$ with base point of a complex manifold $Y$ is defined as follows.
The conjugacy class $\hat e$ of an element $e\in \pi_1(Y,y_0)$ 
can be identified with the free homotopy class of curves containing the curves of the class $e$.
A continuous map $g$ from an
annulus $A = \{ z \in {\mathbb C} : r < \vert z \vert < R \}$ into
${\mathcal X}$ is said to represent the conjugacy class $\hat e$
if for some (and hence for any) $\rho \in (r,R)$ the map $g :
\{\vert z \vert = \rho \} \to {\mathcal X}$ represents $\hat e$.

The definition of the conformal module of $\hat e$ is based on Ahlfors' concept of conformal module and extremal length. According to Ahlfors the conformal module of an annulus $\,A= \{z \in \mathbb{C}:\; r<|z|<R\}\,\,$ is equal to
$\,m(A)= \frac{1}{2\pi}\,\log(\frac{R}{r})\,,$ and its extremal length equals $\lambda(A)=\frac{1}{m(A)}$.
For any complex manifold $\mathcal{X}$ and any conjugacy class $\hat e$ of elements of the fundamental group of $\mathcal{X}$ the conformal module $\mathcal{M}(\hat{e})$ of $\hat e$ is defined as the supremum of the conformal modules of annuli that admit a holomorphic mapping into $\mathcal{X}$ representing $\hat e$. The extremal length $\Lambda(\hat{e})$ equals $\frac{1}{\mathcal{M}(\hat{e})}$.

An obstruction to Gromov's Oka principle is given by the following observation.\\
{\it For a  holomorphic mapping $f:X\to Y$ from a complex manifold $X$ to a complex manifold $Y$ and any element $e \in \pi_1(X,x_0)$ the conformal module of the free homotopy class of its image $\widehat{f_*({e})}$ is  not smaller than the conformal module of the free homotopy class $\hat{e}$ of the element itself.}

It was an encouringing observation, that for the twice punctured complex plane it is possible to give good estimates of the conformal invariants proposed by Gromov.
More precisely, in Chapter \ref{chapter3-braids} we give upper and lower bounds for the conformal  module of conjugacy classes of elements of the fundamental group of the twice punctured complex plane. The upper and lower bound differ by multiplicative constants not depending on the conjugacy class.

Interestingly, the notion of  the conformal module of conjugacy classes of elements of the fundamental group appeared already 1974 (without name) much before Gromov's work.
It was used in the paper \cite{GL} which was motivated
by the interest of the authors in Hilbert's Thirteen's Problem for
algebraic functions. In Chapter \ref{chapter8} we  present some related historical remarks. 
The complex manifold in this case was the space
$\mathfrak{P}_n$, the space of
monic polynomials of degree $n$ without multiple zeros. (The adjective ''monic'' refers to the property that the coefficient by the highest power of the variable equals $1$).
The elements of the fundamental group of this space are braids on $n$ strands. Thus, the invariant that was considered in  \cite{GL} is the conformal module of conjugacy classes of braids. It was applied to the question of global reducibility of families of polynomials  $\mathfrak{P}_n$ that depend holomorphically on a parameter varying over a connected open (i.e. non-compact) Riemann surface (in particular over an annulus).

Braids play a role in several mathematical fields. It is therefore
not surprising that invariants of conjugacy classes of braids occurred in different areas of
mathematics. There is a dynamical invariant, the entropy. The dynamical concept was considered in 1979 in \cite{FLP} in the Asterisque volume dedicated to Thurston's theory of surface homeomorphisms. Thurston's theory of surface homeomorphisms was motivated by his celebrated geometrization conjecture. For the dynamical concept one assigns to each braid on $n$ strands an isotopy class of self-homeomorphisms of the unit disc all elements of which  fix the unit circle pointwise and permute a set $E_n$ of $n$ chosen points in the unit disc. The entropy of the braid is the infimum of the entropy of the mappings in the class. The entropy does not change under conjugation, in other words it is an invariant of conjugacy classes of braids.
The entropy of surface homeomorphisms has received a lot of
attention.

It turns out that the two invariants related to the dynamical aspect and to the conformal aspect of braids are related. The following theorem holds.
\medskip

\noindent {\bf Main Theorem.} {\it For each conjugacy class of braids
$\hat b \in \hat{\mathcal B}_n$, $n \geq 2$, the following equality
holds
$$
{\mathcal M} (\hat b) = \frac\pi2 \, \frac1{h(\hat b)} \, .
$$
}
Here $h(\hat b)$ denotes the entropy of the conjugacy class $\hat b$.
\medskip

The relation between the invariants coming from different fields has applications. 
First, results related to the more intensively studied dynamical invariant entropy may be applied
to problems that use the concept of conformal module. For instance, Chapter \ref{chapter8}
contains a short conceptional proof (of a slightly stronger version) of the result of \cite{GL},
based on a result \cite{P} related to entropy.

Vice versa, methods of complex analysis including quasi-conformal mappings, that can be used for the study of the conformal module, may give results related to the entropy.
For example, the relation between the conformal and the dynamical invariants together with the effective estimates of the conformal module give 
effective upper and lower bounds for the entropy of pure $3$-braids.

There is an invariant of braids (not merely an invariant of conjugacy classes of braids), 
the conformal module (or extremal length) of braids with totally real horizontal boundary values. It uses Ahlfors' extremal length of rectangles instead of the extremal length of annuli. This notion is more powerful for applications to problems related to Gromov's Oka Principle than the extremal length of conjugacy classes of braids. For instance, it is used to obtain
quantitative statements concerning Gromov's Oka Principle in the spirit of analogs of the Geometric Shafarevich Conjecture (see Chapter \ref{chapterfin}).

We tried to make the work self-contained and coherent for graduate students.
Chapter \ref{chapter2} provides prerequisites from several topics, like Riemann surfaces, braids and mapping classes, as well as Teichm\"uller theory.
In some cases proofs are included, for instance,
in the case when the material is less standard and not extensively treated in the literature, or when the idea behind a statement will occur to be important in the following.
Readers who are familiar with the material may skip this chapter and
consult it later for notation and for some less standard facts
that are included in this chapter.

In Chapter \ref{chapter-entropy} a self-contained thorough treatment on the entropy of braids and mapping classes is given. One of the theorems of  \cite{FLP} on irreducible mapping classes on closed surfaces is proved along the lines of \cite{FLP},  another theorem is proved in a different way. We include detailed prerequisites
on foliations generated by quadratic differentials. A thorough treatment of the
entropy of mapping classes on finite Riemann surfaces of second kind (i.e. on connected Riemann surfaces that are neither closed nor punctured and have finitely generated  fundamental group) is presented. This is needed to treat the entropy of braids and the entropy of reducible mapping classes.

The proof of the Main Theorem for irreducible braids is given in Chapter \ref{chapter4}.
The chapter starts with a short introduction to Ahlfors' concept of the extremal length of families of curves in planar domains, and presents an extension of the concept to homotopy classes of curves in complex manifolds. The chapter contains
explicit computations of the extremal length of some examples
of conjugacy classes of $3$-braids (which means by the Main Theorem, to give another way of computing some entropies), and it contains
explicit computations
of the extremal length with totally real horizontal boundary values of some examples of $3$-braids.

In Chapter \ref{chapter6} we define the irreducible braid components of a conjugacy class of reducible pure braids and show that the conformal module of the conjugacy class of a pure braid equals the minimum of the conformal modules of the irreducible components. Similarly, we define the irreducible nodal components of the conjugacy class of mapping classes associated to the braid, and show that the entropy of the mapping class is equal to the maximum of the entropies of the irreducible nodal components. The Main Theorem for irreducible pure braids implies the Main Theorem for reducible pure braids.
The conjugacy class of a reducible pure braid can be recovered from its irreducible components. The conjugacy class of the mapping class can be recovered up to products of commuting Dehn twists from the conjugacy classes of the irreducible nodal components.

Similar statements for not necessarily pure braids are contained in Chapter \ref{chapter7a}.
As mentioned above Chapter \ref{chapter8} discusses global reducibility of holomorphic families of
polynomials.

The deepest applications of the concept of the conformal module of braids or their conjugacy classes (more generally, of elements of fundamental groups or their conjugacy classes) are contained in Chapters \ref{chapterGrom} - \ref{chapter3-braids}.

In Chapter \ref{chapterGrom} we study situations when Gromov's Oka Principle fails, using the concept of the conformal modules of  conjugacy classes of elements of the fundamental group. 
For instance, we call a mapping $f:X\to Y$ from a connected finite open Riemann surface $X$ to a complex manifold $Y$ a Gromov-Oka mapping, if for each conformal structure on $X$ with only thick ends the mapping is homotopic to a holomorphic mapping.
 (A conformal structure on a connected finite open Riemann has only thick ends if it makes the manifold a closed Riemann surface with a finite number of closed discs but no point removed.)
The Gromov-Oka mappings from finite open Riemann surfaces to the twice punctured Riemann surface are completely described.

In Chapter  \ref{chapterEl} we 
prove theorems related to the failure of the Gromov-Oka Principle for bundles whose
fibers are Riemann surfaces of type $(1,1)$ (in other words, the fibers are once punctured tori). The problem can be reduced to the respective problem for
$(0,4)$-bundles (and, hence, for mappings into  $\mathfrak{P}_3$) by considering double branched coverings.

Chapter \ref{chapter3-braids} is devoted to effective upper and
lower bounds for the extremal length with totally real horizontal boundary values of any element of the fundamental group of the twice punctured complex plane  (not merely of the extremal length of conjugacy classes). The bounds differ by a multiplicative constant not depending on the element.
This is done in terms of a natural syllable decomposition
of the word representing the element. In the last section (Section \ref{sec:3-braids5})
we give respective estimates for $3$-braids. The extremal length with totally real horizontal boundary values
is capable to give more subtle information regarding Gromov's Oka Principle and limitations of its validity than the respective invariant of conjugacy classes.
Moreover, the estimates for the extremal length with totally real horizontal boundary values of pure braids imply estimates of the extremal length of conjugacy classes of these braids, hence, of their entropy.

In Chapter \ref{Ch9} we give effective estimates for the growth of the number of those  elements of the braid group modulo center $\mathcal{B}_3\diagup\mathcal{Z}_3$, 
whose extremal length with totally real boundary values is positive and does not  exceed a positive number $Y$. 
As a corollary we give an alternative proof of the result of Veech \cite{Ve} on the exponential growth of the number of conjugacy classes of elements of $\mathcal{B}_3\diagup\mathcal{Z}_3$ that have positive entropy not exceeding  $Y$. The proof does not use deep techniques from Teichm\"uller theory.

In Chapter \ref{chapterfin} we apply the concept of conformal module and extremal length to obtain quantitative statements on the limitation for Gromov's Oka Principle. More specifically, for any finite open Riemann surface $X$ (maybe, of second kind) we
give an effective upper bound for the number of irreducible holomorphic mappings up to homotopy from $X$ to the twice punctured complex plane, and an effective upper bound for the number of irreducible holomorphic torus bundles up to isotopy on such a Riemann surface.
These statements are analogs of the Geometric Shafarevich Conjecture and the Theorem of de Franchis, that state the finiteness of the number of certain holomorphic objects on closed or punctured Riemann surfaces.

\noindent \textit{Acknowledgement}. While working on this project the author had the opportunity to gain from the wonderful
working conditions at IHES, Max-Planck-Institute for Mathematics,
Weizmann Institute,  CRM Barcelona, and  at  Humboldt-University Berlin,
in particular at the SFB "Raum-Zeit-Materie".
Other parts of the work were done while visiting the \'{E}cole Normale Sup\'{e}rieure Paris, the Universities of
Grenoble, Bern, Calais, Toulouse, Lille, Bloomington, and Canberra. 
Talks, or series of lectures, given on
the topic of the paper at various places encouraged research and
helped to improve the explanation. A talk given at the Algebraic
Geometry seminar at Courant Institute and a stimulating discussion
with F.Bogomolov and M.Gromov had a special impact. The author is
indebted to many other mathematicians for interesting and fruitful
discussions and for information concerning references to the
literature. Among them are B. Berndtsson, R.Bryant, D.Calegari, F.Dahmani, P.Eyssidieux,
B.Farb, P.Kirk, M.Reimann, S.Orevkov, L.Stout, O.Viro and
M.Zaidenberg. O.Viro asked about a notion of conformal module
of braids rather than of conjugacy classes of braids. M.Reimann gave
references to the literature related to computation of the conformal
module of special doubly connected domains and of quadrilaterals.
The author is grateful to B.Farb who suggested to use the concept of conformal
module and extremal length for a proof of finiteness theorems, and to B.Berndtsson
for proposing the kernel for solving the $\bar{\partial}$-problem that arises in the
proof of Proposition \ref{propfin1a}.

Special thanks go to Alexander Wei{\ss}e for teaching how to draw
figures and for producing  the essential parts of some
difficult figures.
The author is much obliged to F.Dufour and 
Marie-Claude Vergne for drawing some other figures, and to C\'ecile Gourgues for typing a preliminary version of some chapters.

\chapter{Riemann Surfaces, Braids, Mapping Classes, and Teichm\"uller Theory.}
\label{chapter2}
\setcounter{equation}{0}

In this chapter we provide prerequisites from several topics, like Riemann surfaces, braids and mapping classes, as well as Teichm\"uller theory.
For completeness
and convenience of the reader and for fixing notation and
terminology we give here a recollection of basic facts, that will be used later
frequently. References to the literature are given. In some cases proofs are included, for instance,
in the case when the material is less standard and not extensively treated in the literature, or when the idea behind a statement will occur to be important in the following.
Readers who are familiar with the material may skip this chapter and
consult it later for notation and for some less standard facts
that are include in this chapter.

\section{Free homotopy classes of mappings and conjugacy classes of group elements}
\label{sec:2.3b}

\noindent {\bf The change of the base point.}
Let $\mathcal{X}$ be a connected smooth manifold with base point $x_0$ and non-trivial fundamental group $\pi_1(\mathcal{X},x_0)$.
\index{group ! fundamental group} \index{$\pi_1(X,x_0)$}  Let $\alpha$ be an arc in $\mathcal{X}$ with initial point $x_0$ and terminal point $x$. Change the base point $x_0 \in \mathcal{X}$
along a curve $\alpha$ to the point $x \in \mathcal{X}$. This leads to an isomorphism $\mbox{Is}_{\alpha}: \pi_1(\mathcal{X},x_0) \to \pi_1(\mathcal{X},x)$ of fundamental groups induced by the correspondence $\gamma \to \alpha^{-1} \gamma \alpha$ for any loop $\gamma$ with base point $x_0$ and the arc $\alpha$ with initial point $x_0$ and terminal point $x$. We will denote the correspondence $\gamma \to \alpha^{-1} \gamma \alpha$ between curves also by $\mbox{Is}_{\alpha}$.
\index{${\rm Is}_{\alpha}$}

We call two homomorphisms $h_j:G_1 \to G_2, \, j=1,2,\,$ from a group $G_1$ to a group $G_2$ conjugate if there is an element $g' \in G_2$ such that for each $g \in G_1$ the equality $h_2(g)= {g'}^{-1} h_1(g) g'$ holds.
For two
arcs $\alpha_1$ and $\alpha_2$ with initial point $x_0$ and terminal point $x$ we have
$\alpha_2^{-1} \gamma \alpha_2= (\alpha_1^{-1}\alpha_2)^{-1} \alpha_1^{-1} \gamma \alpha_1 (\alpha_1^{-1}\alpha_2)$.
Hence, the two isomorphisms $\mbox{Is}_{\alpha_1}$ and $\mbox{Is}_{\alpha_2}$ differ by conjugation with the element of $\pi_1(\mathcal{X},x)$ represented by $\alpha_1^{-1}\alpha_2$.

Free homotopic curves are related by homotopy with fixed base point and an application of a homomorphism $\mbox{Is}_{\alpha}$
that is defined up to conjugation. Hence, free homotopy classes of curves can be identified with conjugacy classes of elements of the fundamental group  $\pi_1(\mathcal{X},x_0)$ of $\mathcal{X}$.

Let $\mathcal{X}$ and $\mathcal{Y}$ be oriented smooth connected non-compact  surfaces with finitely generated fundamental group, with base points $x_0 \in \mathcal{X}$ and $y_0 \in \mathcal{Y}$. For a continuous mapping $F:\mathcal{X} \to \mathcal{Y}$ with $F(x_0)=y_0$ we denote by $F_*: \pi_1(\mathcal{X},x_0) \to \pi_1(\mathcal{Y},y_0) $ the induced map on fundamental groups. For each element $e_0\in \pi_1(\mathcal{X},x_0)$ the image $F_*(e_0)$ is called the monodromy
\index{monodromy} along $e_0$, and the homomorphism $F_*$ is called the monodromy homomorphism corresponding to $F$. \index{$F_*$}
The mapping $F\to F_*$ defines a one to one correspondence from
the set of homotopy classes of continuous mappings $\mathcal{X}\to \mathcal{Y}$ with fixed base point $x_0$ in the source and fixed value $y_0$ at the base point to the set of homomorphisms $\pi_1(\mathcal{X},x_0) \to \pi_1(\mathcal{Y},y_0) $.

Consider a free homotopy $F_t, \, t \in (0,1)$, of continuous mappings from $\mathcal{X}$ to $\mathcal{Y}$. Consider the curve $\alpha(t)\stackrel{def}=F_t(x_0),\, t\in [0,1]$. Suppose $F_0(x_0)=y_0$ and $F_1(x_0)=y_1$. Each curve $\beta$ in $\mathcal{Y}$ with initial point $y_0$ and terminal point $y_1$ defines an isomorphism  $\pi_1(\mathcal{Y},y_0) \to \pi_1(\mathcal{Y},y_1) $ by considering the compositions of curves $\beta^{-1} \gamma \beta$ for loops $\gamma$ representing elements of $\pi_1(\mathcal{Y},y_0)$. For different curves $\beta$ the isomorphisms differ by conjugation. Identifying fundamental groups with different base point by a chosen isomorphism of the described kind
we obtain a mapping that assigns to each free homotopy class of maps  $\mathcal{X}\to \mathcal{Y}$ a conjugacy class of homomorphisms $\pi_1(\mathcal{X},x_0) \to \pi_1(\mathcal{Y},y_0) $.
The mapping is surjective. It is also injective by the following reason. Let $F_j:\mathcal{X}\to \mathcal{Y},\,j=0,1,$ be continuous mappings, $F_j(x_0)=y_0$, such that for an element $e\in \pi_1(\mathcal{Y},y_0)$ the equality $(F_1)_*=e^{-1}(F_0)_*\,e$ holds. Let $\alpha$ be a smooth curve in $\mathcal{Y}$ that represents $e$. There exists a free homotopy $F^t,\, t\in [0,1],$ of mappings such that $F^0=F_0$ and $F^t(x_0)=\alpha(t),\, t\in[0,1]$. Then $(F^1)_*=e^{-1}(F^0)_*\,e$. Hence, $F_1$ and $F^1$ define the same homomorphism $\pi_1(\mathcal{X},x_0) \to \pi_1(\mathcal{Y},y_0) $, and hence, they are homotopic.
We obtain the following theorem (see also \cite{Ha},\cite{St}, \cite{Sp}.)

\begin{thm}\label{thmEl-1}
The free homotopy classes of continuous mappings from $\mathcal{X}$ to $\mathcal{Y}$
are in one-to-one correspondence to the set of conjugacy classes of homomorphisms between the fundamental groups of $\mathcal{X}$ and $\mathcal{Y}$.
\end{thm}

\section{Riemann surfaces}
\label{sec:2.0}
We will use the common definition of
a surface, of a smooth surface, of a surface with boundary, and of a Riemann surface.
A homeomorphic mapping $\omega:S\to X$ from a surface $S$ onto a Riemann surface $X$ is called a conformal structure or a complex structure, having in mind, that the pull back of a complex analytic atlas on $X$ provides a complex analytic atlas on $S$.

 If the interior of a smooth surface with boundary
is equipped with a complex structure, the surface with boundary  is called (following Ahlfors) a bordered Riemann surface.\index{Riemann surface ! bordered} \index{surface}  \index{surface ! Riemann surface} \index{surface ! with boundary}

A surface (not a surface with boundary) is called open  if no connected component is compact. A compact surface (without boundary) is called a closed surface.
A closed surface with finitely many points removed is called a punctured surface. The removed points are called punctures.
A connected closed oriented surface or a connected closed Riemann surface
is called of type $(g,m)$,
if it has genus ${g}$ and is equipped with ${{m}}$ distinguished points.  \index{surface ! punctured}  \index{surface ! open}\index{surface ! closed}
\index{points ! distinguished points} \index{surface ! Riemann surface of type $(g,m)$}

The intersection number  of an ordered pair of smooth loops with transversal  intersection in a smooth oriented surface $X$  is the sum of the intersection numbers over all intersection points. The intersection number at an intersection point equals  $+ 1$ if the orientation determined by the tangent vector to the first curve
followed by the tangent vector to the second curve is the orientation of $X$, and equals $-1$ otherwise. The intersection number does not change under homotopy, i.e. the intersection number of a homotopic pair of smooth curves with transversal intersection is equal to the intersection number of the original pair.

If the loops are not smooth or do not intersect transversally, their intersection number is defined as intersection number of a homotopic pair of smooth loops with transversal intersection.
\index{intersection number}

We will consider only oriented surfaces, often without further mentioning. If not mentioned otherwise, the considered surfaces will be connected.
A surface is called finite if its fundamental group is finitely generated. A finite surface is obtained from a closed surface by removing finitely many disjoint simply connected closed sets (called holes).
The fundamental group of a surface of genus $g$ with $m>0$ holes is a free group in $2g+m-1$ generators. \index{group ! fundamental group}
\index{surface ! finite} \index{holes}

Each finite connected open Riemann surface $X$ is conformally equivalent
to a domain (denoted again by $X$) on a closed Riemann surface $X^c$ such that each connected component of the complement $X^c \setminus X$ is either a point or a closed topological disc with smooth boundary \cite{Sto}.

A connected Riemann surface is said to be of first kind, if it is a closed or a punctured Riemann surface, otherwise it is said to be of second kind. If all holes of a finite open Riemann surface are closed topological discs, the Riemann surface is said to have only thick ends. \index{Riemann surface ! of first kind} \index{Riemann surface ! of second kind} \index{Riemann surface ! with only thick ends}

\smallskip

\noindent {\bf Standard bouquets of circles.} Let $X$ be an oriented smooth open surface of genus $g\geq 0$ with $m>0$ holes, equipped with a
base point $q_0$.
The union $B$ of non-contractible circles in $X$ with base point $q_0$ is called a bouquet of circles in $X$,
\index{bouquet of circles ! in $X$}
if $q_0$ is the only common point of any pair of circles in $B$, and different circles in $B$ with base point $q_0$ represent different elements of $\pi_1(X,q_0)$.
Suppose $B$ is the union
of simple closed oriented curves $\alpha_j,\,\beta_j$, $ j=1,\ldots ,g',$  and  $\gamma_{k}$, $k=1,\ldots,m'$, with base point $q_0$ with the following property.
Labeling the rays of the loops emerging from the base point $q_0$ by $\alpha_j^-,\,\beta_j^-$  $\gamma_j^-$, and the incoming rays by $\alpha_j^+,\,\beta_j^+$  $\gamma_j^+$, and
moving in counterclockwise direction along a small circle around $q_0$ we meet the rays in the order
\begin{align*}
\ldots, \alpha^-_j,\beta^-_j,\alpha^+_j,\beta^+_j,\ldots, \gamma^-_k,\gamma^+_k,\ldots\;.
\end{align*}
Then $B$ is called a standard bouquet of circles in $X$.

Let  $B$ be a standard bouquet of circles in $X$ with base point $q_0$. \index{bouquet of circles ! standard bouquet of circles in $X$}
If the collection $\mathcal{E}$ of elements of the fundamental group $\pi_1(X,q_0)$ represented by the collection of curves in $B$ is a system of generators of $\pi_1(X,q_0)$ (then in particular, $g'=g$, $m'=m$), we call $B$ a standard bouquet of circles for $X$, and say that the system $\mathcal{E }$ of generators
 is associated to a standard bouquet of circles for $X$. \index{bouquet of circles ! standard bouquet of circles for $X$} \index{generators ! associated to a standard bouquet of circles for $X$} \index{$\mathcal{E}$} \index{$B$}
See also Figure \ref{fig8.3}.
\begin{figure}[H]
\begin{center}
\includegraphics[width=8.5cm]{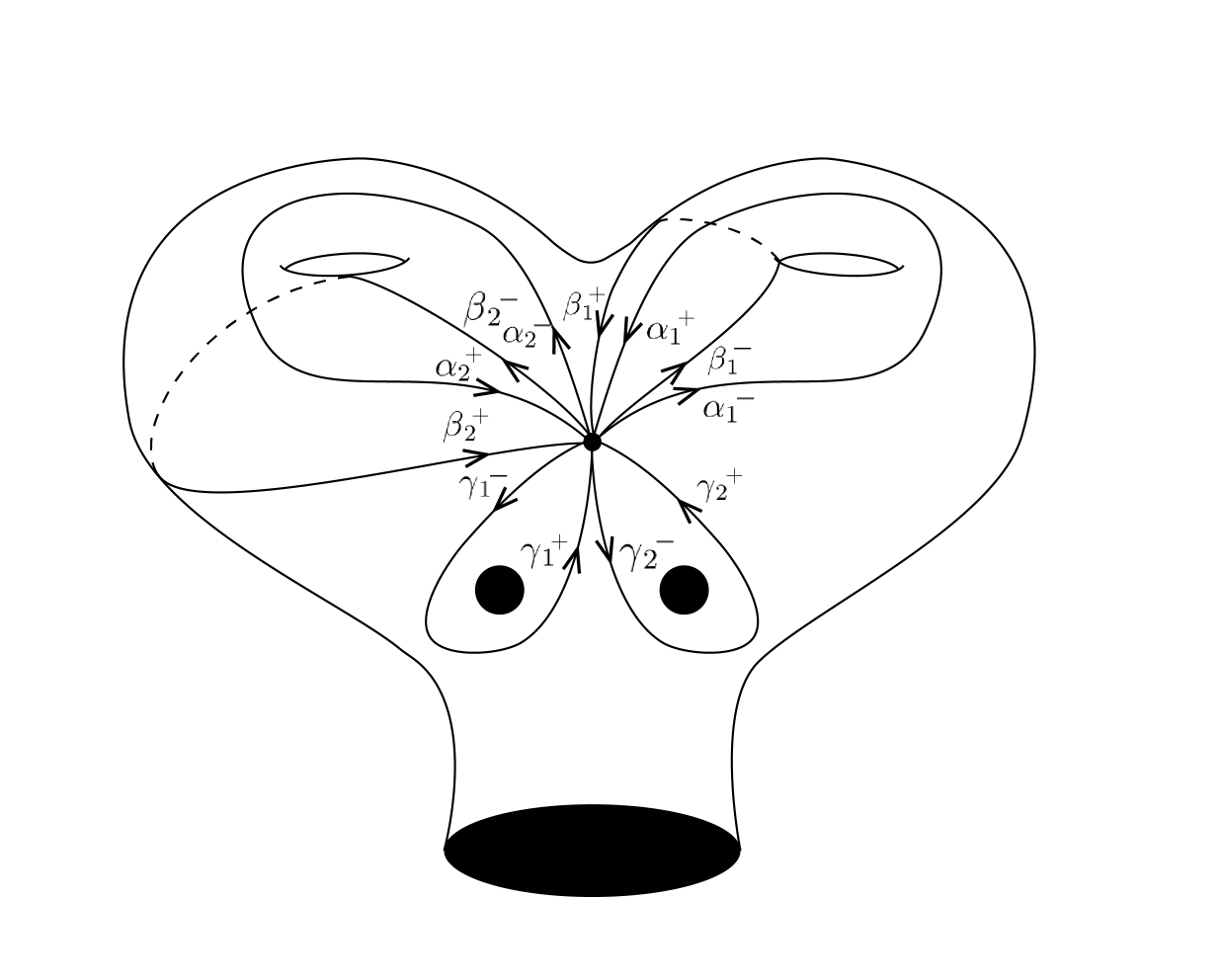}
\end{center}
\caption{A standard bouquet of circles for a connected finite open Riemann surface}\label{fig8.3}
\end{figure}

Label the generators $\mathcal{E}\subset  \pi_1(X,q_0)$ associated to a standard bouquet of circles for $X$ as follows.
The elements $e_{2j-1,0}\in \pi_1(X,q_0),$  $j=1,\ldots ,g,$  are represented by $\alpha_j$, the elements $e_{2j,0}\in \pi_1(X,q_0) , \, j=1,\ldots ,g,$  are represented by $\beta_j$, and the elements $e_{2g+k,0}\in \pi_1(X,q_0), \, k=1,\ldots ,m,$  of $\pi_1(X,q_0)$ are represented by $\gamma_k$. The system $\mathcal{E}=\{e_j\}_{j=1}^{2g+m-1}$labeled in this way is called a standard system of generators of $\pi_1(X,q_0)$. \index{generators ! standard system of generators for $X$}

A standard bouquet of circles for a connected finite open Riemann surface can be obtained as follows. Recall that the Riemann surface $X$ is conformally equivalent to a domain in a closed Riemann surface $X^c$ of genus $g$.
Take a fundamental polygon of the compact Riemann surface $X^c$, so that the projection of its sides to $X^c$ do not meet the holes of $X$. The projections of the sides provide a collection of simple closed curves $\alpha_j$, $\beta_j$, $j=1,\ldots,g,$ with base point $q_0$, that are pairwise disjoint outside  $q_0$, and represent the elements of the fundamental group $\pi_1(X^c,q_0)$. Each pair $(\alpha_j, \beta_j)$ corresponds to a handle of $X^c$.
Cut $X^c$ along the curves $\alpha_j$, $\beta_j$, $j=1,\ldots,g$. We obtain a simply connected domain.
Denote the connected components of $X^c \setminus X$ by $\mathcal{C}_1, \mathcal{C}_2,\ldots, \mathcal{C}_{m}$. (The $\mathcal{C}_{\ell}$ are also the connected components of $X^c \setminus \big(X\bigcup_j (\alpha_j\cup \beta_j)\big) $.)
 Each component is either a point or a closed disc. We associate
to the components $\mathcal{C}_\ell,\, \ell=1,\ldots,m-1,$ a collection of $m-1$
simple closed curves $\gamma_{\ell}$ in $X$ with base point $q_0$ that meet the union $\bigcup_j (\alpha_j\cup \beta_j)$ exactly at the point $q_0$ and are pairwise disjoint outside $q_0$.
Moreover, each $\gamma_{\ell}$ is contractible in $X\cup \mathcal{C}_\ell$ and divides $X$ into two connected
components, one of them containing $\mathcal{C}_\ell$. We will say that $\gamma_{\ell}$ surrounds $\mathcal{C}_\ell$. We orient $\gamma_{\ell}$ so that $\mathcal{C}_{\ell}$ is on the left when walking along the curve $\gamma_{\ell}$ equipped with this orientation.
We may choose the $\gamma_{\ell}$ so that the union $\bigcup_{j=1}^g (\alpha_j\cup \beta_j)\cup \bigcup_{\ell=1}^{m-1}\gamma_{\ell}$
is a standard bouquet of circles for $X$.

Let  $X$ be a connected finite open smooth surface with base point $q_0$, and $B$ a standard bouquet of circles in $X$. Choose a neighbourhood $N(B)$ of $B$ in $X$ with the following properties. $N(B)$ contains a smoothly bounded open disc $D$ in $X$ around $q_0$,
such that the intersection of each circle in the bouquet with $X\setminus {D}$ is a closed arc with endpoints on the boundary $\partial D$. Denote the arcs by ${\sf s}_j$.
Moreover, $N(B)$ is equal to the union  $D\cup\bigcup_{j=1}^{2g+m-1}V_j$ of $D$ with disjoint sets $V_j$, where each $V_j$ has the following properties.
There exists a
diffeomorphism $\varphi_j$ from a half-open rectangle $\{x+iy \in\mathbb{C}:|x|<{\sf a},|y|\leq{\sf b}\}$ in the plane with sides parallel to the axes onto $V_j$, that maps the horizontal sides into $\partial D$, and the rest of the rectangle into $X\setminus D$. Moreover, its restriction to the vertical symmetry axis of the rectangle (with the direction of the imaginary axis) parameterizes the arc ${\sf s}_j$  (with the orientation of the respective circle of the bouquet).
We call the $V_j$ bands \index{band} attached to $D$ representing ${\sf s}_j$, and
we call  a neighbourhood of the described form a standard neighbourhood of the standard bouquet $B$.
\index{bouquet ! standard neighbourhood of a standard bouquet} \index{$D$} \index{$V_j$} \index{${\sf s}_j$}
\begin{lemm}\label{lemEl2b}
Suppose a standard bouquet $B$ of circles for a connected open Riemann surface $X$ consists of $g$ pairs of curves $\alpha_j$,$\beta_j$, and $m-1$ curves $\gamma_k$ as above, that represent a system of generators of $\pi_1(X,q_0)$, and let $N(B)$ be a standard neighbourhood of $S$. Then  there exists a continuous family of mappings $\varphi_t:X\to X,\, t\in [0,1],$ that map $X$ homeomorphically onto a domain $X_t$ in $X$ such that $\varphi_0$ is the identity and $X_1=N(B)$. In particular,
$X$ has genus $g$ and $m$ holes.
\end{lemm}
\noindent {\bf Proof.}
For each $j$ the pair of loops $\alpha_j$, $\beta_j$ in $N(B)$ has non-vanishing intersection number. Each of these pairs corresponds to a handle of $N(S)$.
For any other pair of loops the intersection number \index{intersection number} equals zero. Each loop $\gamma_k,\, k=1,\ldots ,m-1,$ divides $N(B)$. One of the connected components of $N(B)\setminus \gamma_k$ is an annulus $A_k$ whose second boundary component $\partial_k$ is a boundary component of $N(B)$.

The loop $\gamma_k$ also divides $X$. Otherwise there would be a simple closed curve $\gamma$ in $X$ with base point $q_0$ whose only intersection point with $\gamma_k$ is $q_0$, and that has points on $A_k$ and in $N(B)\setminus A_k$. Then the intersection number of the curves $\gamma$ and $\gamma_k$ would be different from zero. But all elements of the fundamental group of $N(B)$, and hence all elements of the fundamental group of $X$, have vanishing intersection number with $\gamma_k$. Let $\Omega_k$ be the component of $X\setminus \gamma_k$ that  contains
$\partial_k$. Put $X_k=\Omega_k\setminus \overline{A_k}$.

The last boundary component $\partial_m$ of $N(B)$ also divides $X$. Otherwise there would be an arc in $X$ that joins two points that are close to a point in  $\partial_m$ and locally on different sides of  $\partial_m$. Then there would exist a simple closed loop in $X$ with base point $q_0$ that has non-zero intersection number with $\partial_m$. This is impossible, since all elements of the fundamental group $\pi_1(X,q_0)$ can be represented by loops that are contained in $N(B)$ and hence have zero intersection number with   $\partial_m$. Let $X_m$ be the connected component of $X\setminus  \partial_m$, that contains the part of a neighbourhood of $\partial_m$ which is contained in $X\setminus N(B)$.

Each of the $X_k$ is an open Riemann surface of some genus with a number of holes. Since $\alpha_j$, $\beta_j$, and $\gamma_k$ represent generators of the fundamental group of both, $X$ and $N(B)$, each $X_k$ has genus zero. No $X_k$ is a disc. If some $X_k,\, k\leq m-1,$ was a disc, $\gamma_k$ would be contractible in $X$, which is a contradiction. $X_m$ is not a disc, because $\partial_m$ is free homotopic to $\gamma_1\ldots \gamma_{m-1}\alpha_1\beta_1\alpha_1^{-1}\beta_1^{-1}\ldots \alpha_g\beta_g\alpha_g^{-1}\beta_g^{-1}$ which is not the identity in the fundamental group of $X$. Since the fundamental groups of $X$ and $N(B)$ are isomorphic, each $X_k$ is an annulus bounded be a boundary component of $N(B)$ and $\partial_k$. We proved that
$X$ has genus $g$ and $m$ holes. Moreover, there exists a continuous family of mappings $\varphi_t:X\to X,\, t\in [0,1],$ that map $X$ diffeomorphically onto a domain $X_t$ in $X$ such that $\varphi_0$ is the identity and $X_1=N(B)$. \hfill $\Box$
\index{$\varphi_t$}

\smallskip

\noindent{\bf Coverings.}
For the following facts see \cite{Fo}.
By a covering $P:\mathcal{Y} \to \mathcal{X}$ \index{covering} we mean a continuous map $P$ from a connected topological space $\mathcal{Y}$ to  a connected topological space $\mathcal{X}$ such that for each point $x \in \mathcal{X}$ there is a neighbourhood $V(x)$ of $x$ such that the mapping $P$ maps each connected component of the preimage of $V(x)$ homeomorphically onto $V(x)$. (Note that in function theory sometimes these objects are called unlimited unramified coverings to reserve the notion ''covering'' for more general objects.)
A covering $P:\mathcal{Y} \to \mathcal{X}$ is called a universal covering \index{covering ! universal} if
for each covering $P_{\mathcal Z}:\mathcal{Z} \to \mathcal{X}$ of $\mathcal{X}$ by a connected topological space $\mathcal Z$ and for
each pair of points $y\in \mathcal{Y}$ and $z\in \mathcal{Z}$ with $P(y)=P_{\mathcal Z}(z)$
there exists a unique fiber preserving continuous mapping $f:\mathcal{Y}\to \mathcal{Z}$ that takes $y$ to $z$.
A connected locally simply connected topological space has up to isomorphism at most one universal covering.
If  $\mathcal{X}$ and  $\mathcal{Y}$ are connected manifolds, $\mathcal{Y}$ is simply connected, and
$P:\mathcal{Y} \to \mathcal{X}$ is a covering,
then $P$ is the universal covering.

For each connected manifold $\mathcal{X}$ the
universal covering exists and is denoted by  ${\sf P}: \tilde{\mathcal{X}}\to \mathcal{X}$.
It is constructed as follows. Fix a base point $x_0$ in $\mathcal{X}$. For each $x\in \mathcal{X}$ we denote by $\pi_1(\mathcal{X};x_0,x)$ the set of homotopy classes of arcs in $X$ with initial point $x_0$ and terminal point $x$.
Consider the set that consists of the union of all elements $e_{x_0,x} \in \pi_1(\mathcal{X};x_0,x)$ for all $x\in \mathcal{X}$. Consider the mapping that assigns to each element $e_{x_0,x} \in \pi_1(\mathcal{X};x_0,x)$ the point $x\in \mathcal{X}$. We define a system of fundamental neighbourhoods of elements of the defined set
as follows.
For each element $e_{x_0,x} \in \pi_1(\mathcal{X};x_0,x)$ we take any simply connected neighbourhood $U\subset \mathcal{X}$ and consider the set of all products $e_{x_0,x}e_{x,y}\in \pi_1(\mathcal{X};x_0,y)$ for which $e_{x,y}$ can be represented by curves that are contained in contained in $U$ and join $x$ with $y$.
We obtain a simply connected manifold $\tilde{\mathcal{X}}$ and a covering ${\sf P}:\tilde{\mathcal{X}}\to \mathcal{X}$. (For a proof see \cite{Fo}.)

Consider the universal covering ${\sf P}:\tilde{ X}\to X$ of a manifold $X$.
A covering transformation \index{covering transformation} is a homeomorphism $\varphi: \mathcal{Y}\,\toitself$ such that $P\circ \varphi =P$. The covering transformations form a group under composition, denoted by ${\rm Deck}(\tilde{X},X)$. \index{${\rm Deck}(\tilde{X},X)$}
By the definition of the universal covering
for each pair of points $z_1,z_2$ of $\tilde X$ with ${\sf P}(z_1)={\sf P}(z_2)$ there exists a unique element $\varphi\in {\rm Deck}(\tilde{X},X)$ with $z_2=\varphi(z_1)$.
\index{${\sf P}:\tilde{ X}\to X$}

Equip $X$ with a Riemannian metric $d$, and denote by the same letter $d$ the lifted metric on $\tilde X$. In other words, the length of a curve in the metric $d$ on $\tilde X$ is the length of its projection to $X$. 
Take any point $z_0\in\tilde X$. The Dirichlet region \index{region ! Dirichlet} containing $z_0$ is defined as
\begin{equation}
\mathrm{D}_{z_0}\stackrel{def}= \{z\in \tilde X: d
(z,z_0)<d(z,g(z_0)) \;{\rm for \; all } 
\;g \in {\rm Deck}(\tilde{X},X)\,\}\,.
\end{equation}
The Dirichlet region is an open set and $g_1(\mathrm{D}_{z_0})\cap g_2(\mathrm{D}_{z_0})=\emptyset$ if $g_1$ and $g_2$ are different covering transformations.

The union of the closures $\overline{g(\mathrm{D}_{z_0})}$ over all covering transformations $g$ is equal to the whole $\tilde X$. Hence, the union of  $\mathrm{D}_{z_0}$ with a suitable subset of its boundary is a fundamental region. \index{region ! fundamental}
A subset $U$ of $\tilde X$ is called a fundamental region for the covering ${\sf P}:\tilde X \to X$
if each point in $X$ has exactly one preimage under the mapping ${\sf P}:U\to X$.
Further, for each point $z$ of the boundary of $\mathrm{D}_{z_0}$ (taken in the topological space $\tilde X$) there is a covering transformation $g$ such that $d(z,z_0)=d(z,g(z_0))$.

We saw that for each point $z_0\in \tilde X$ there exists a neighbourhood $U$ such that $U\cap g(U)=\emptyset$ for all covering transformations except the identity.
In particular, no element of this group except the identity has a fixed point, and the group
acts discontinuously \index{group action! discontinuous} on $\tilde X$, i.e. for two compact sets $A$ and $B$ in $\tilde X$ the set $A\cap g(B)$ is non-empty for at most finitely many elements $g$ of the group.

Let $X$ be a connected finite open Riemann surface with base point $q_0$ and let ${\sf P}: \tilde X \to X$
be the universal covering map.
We fix a base point $q_0\in X$ and a base point $\tilde{q}_0\in {\sf P}^{-1}(q_0)\subset \tilde{X}$. The group of covering transformations of  $\tilde X$ \index{group ! group of covering transformations} can be identified with the fundamental group $\pi_1(X,q_0)$ of $X$ by the following correspondence (See e.g. \cite{Fo}).
Take a covering transformation $\sigma \in {\rm Deck}(\tilde{X},X)$.
Let $\tilde{\gamma}_0$ be an arc in $\tilde X$ with initial point $\tilde{q}_0$
and terminal point $\sigma(\tilde{q}_0)$.
Denote by ${\rm Is}^{\tilde{q}_0}(\sigma)$  \index{${\rm Is}^{\tilde{q}_0}(\sigma)$}  the element of  $\pi_1(X,q_0)$ represented by the loop ${\sf P}(\tilde{\gamma}_0)$.

By the convention to write products of curves from left to right and compositions of mappings from right to left for the mapping ${\rm Deck}(\tilde{X},X)\ni\sigma\to {\rm Is}^{\tilde{q}_0}(\sigma)\in \pi_1(X,q_0)$ the following relation holds
$$
{\rm Is}^{\tilde{q}_0}(\sigma_1 \sigma_2)={\rm Is}^{\tilde{q}_0}(\sigma_2){\rm Is}^{\tilde{q}_0}(\sigma_1)\,.
$$
Hence, the mapping $\sigma\to {\rm Is}^{\tilde{q}_0}(\sigma^{-1})$
is a group homomorphism. This homomorphism
is injective and surjective, hence it is a group isomorphism. The inverse $({\rm Is}^{\tilde{q}_0})^{-1}$ of the mapping ${\rm Is}^{\tilde{q}_0}$ is obtained as follows. Represent an element $e_0\in \pi_1(X,q_0)$ by a loop $\gamma_0$. Consider the lift $\tilde{\gamma}_0$ of $\gamma_0$ to $\tilde X$ that has initial point $\tilde{q}_0$. Then
$({\rm Is}^{\tilde{q}_0})^{-1}(e_0)$ is the covering transformation that maps $\tilde{q}_0$ to the terminal point of $\tilde{\gamma}_0$.

For another point $\tilde{q}$ of $\tilde X$ and the point $q\stackrel{def}={\sf P}(\tilde{q})\in X$ the isomorphism ${\rm Is}^{\tilde{q}}:{\rm Deck}(\tilde{X},X)\to\pi_1(X,q)$ assigns to each $\sigma \in{\rm Deck}(\tilde{X},X)$ the element of $\pi_1(X,q)$ that is represented by ${\sf P}(\tilde{\gamma})$ for a curve $\tilde{\gamma}$ in $\tilde X$ that joins $\tilde q$ with $\sigma(\tilde{q})$. \index{$\sigma(\tilde{q})$}
${\rm Is}^{\tilde{q}}$ is related to ${\rm Is}^{\tilde{q}_0}$ as follows.
Let $\tilde{\alpha}$ be an arc in $\tilde X$ with initial point $\tilde{q}_0$ and terminal point $\tilde{q}$.
Put
$\alpha={\sf P}(\tilde{\alpha})$.
Then for the isomorphism ${\rm Is}_{\alpha}:\pi_1(X,q_0)\to \pi_1(X,q)$ the equation
\begin{equation}\label{eq1''}
{\rm Is}^{\tilde{q}}(\sigma)=
{\rm Is}_{\alpha}\circ{\rm Is}^{\tilde{q}_0}(\sigma),\;\; \sigma \in {\rm Deck}(\tilde{X},X),
\end{equation}
holds, i.e. the following diagram Figure \ref{fig2} is commutative.
\begin{figure}[H]
\begin{center}
$$
\xymatrix{
&{\pi_1(X,q_0)} \ar[dd]^{{\rm Is}_{\alpha}    } \\
{{\rm Deck}(\tilde{X},X)  } \ar[ru]^{{\rm Is}^{\tilde{q}_0}} \ar[rd]_{{\rm Is}^{\tilde{q}}  } \\
& \pi_1(X,q)
}
$$
\end{center}
\caption{A commutative diagram related to the change of the base point}
\label{fig2}
\end{figure}
Indeed, let
$\tilde{\alpha}^{-1}$ denote the curve that is obtained from
$\tilde{\alpha}$ by inverting the direction on $\tilde{\alpha}$, i.e. moving from $\tilde{q}$ to $\tilde{q}_0$.
For a curve $\tilde{\gamma}_0$ in $\tilde X$ that joins   $\tilde{q}_0$ with $\sigma(\tilde{q}_0)$, the curve
$\tilde{\alpha}^{-1} \; \tilde{\gamma}_0\;\sigma(\tilde{\alpha})$ in $\tilde X$ has initial point $\tilde{q}$ and terminal point $\sigma(\tilde{q})$.
Therefore ${\sf P}(\tilde{\alpha}^{-1} \; \tilde{\gamma}_0\;\sigma(\tilde{\alpha}) )$ represents ${\rm Is}^{\tilde{q}}(\sigma)$.
On the other hand
\begin{equation}\label{eq1'}
{\sf P}(\tilde{\alpha}^{-1} \; \tilde{\gamma}_0\;\sigma(\tilde{\alpha}))=
{\sf P}(\tilde{\alpha}^{-1})\;{\sf P}(\tilde{\gamma}_0)\; {\sf P}(\sigma(\tilde{\alpha})  )=
\alpha^{-1} \gamma_0 \,\alpha
\end{equation}
represents ${\rm Is}_{\alpha}(e_0)$ with $e_0={\rm Is}^{\tilde q_0}(\sigma)$.
In particular, if $\tilde{q}'_0\in {\sf P}^{-1}(q_0)$ is another preimage
of the base point $q_0$ under the projection $\sf P$,
then the associated isomorphisms to the fundamental group $\pi_1(X,q_0)$ are conjugate, i.e. ${\rm Is}^{\tilde{q}'_0}(e_0)=(e'_0)^{-1} {\rm Is}^{\tilde{q}_0}(e_0) e'_0$ for each $e_0\in\pi_1(X,q_0)$. The element $e'_0$ is represented by the projection of an arc in $\tilde X$ with initial point $\tilde{q}_0$ and terminal point $\tilde{q}'_0$.

Keeping fixed $\tilde{q}_0$ and $q_0$ we will say that a point $\tilde{q}\in \tilde X$ and a curve $\alpha$ in $X$ are compatible if the diagram Figure \ref{fig2} is commutative, equivalently, if equation \eqref{eq1''} holds.
We may also start with choosing a curve $\alpha$ in $X$ with initial point $q_0$ and terminal point $q$.
Then there is a point $\tilde{q}=\tilde{q}(\alpha)$, such that $\tilde{q}$ and $\alpha$
are compatible.
Indeed,
let $\tilde{\alpha}$ be the lift of $\alpha$, that has initial point $\tilde{q}_0$.
Denote the terminal point of $\tilde{\alpha}$ by $\tilde{q}(\alpha)$, and repeat the previous arguments.

For two Riemann surfaces $\mathcal{X}$ and $\hat{\mathcal{X}}$ and a non-constant holomorphic mapping $p:\hat{\mathcal{X}}\to \mathcal{X}$ a point $\hat{x}\in \hat{\mathcal{X}}$ is called a branch point or ramification point if there is no neighbourhood of  $\hat{x}$ such the restriction of $p$ to it is injective. For each branch point there exists a neighbourhood $V$ such that $p\mid V$ is equal to the mapping $z\to z^k$  for an integer number $k$ in suitable coordinates on  $\mathcal{X}$ and $\hat{\mathcal{X}}$ near $x$ and $\hat x$.

A non-constant holomorphic mapping $p:\hat{\mathcal{X}}\to \mathcal{X}$ is called a branched covering,
if the image $E$ of the set  $\hat{E}$ of branch points is discrete and
the restriction $p\mid (\mathcal{X}\setminus \hat{E})$ is a covering. The set $E$ is called the branch locus.

The branched covering is called simple if for each point $x\in E$ the fiber $p^{-1}(x)$ contains exactly one
branch point and in a neighbourhood of this branch point the mapping is a double branched covering (i.e. in suitable coordinates it equals $z\to z^2$).
\index{covering ! branched} \index{covering ! simple branched} \index{branch locus}

\smallskip

\noindent {\bf Hyperbolic Riemann surfaces.}
For the following collection of well-known results we refer e.g. to \cite{Lehn}, and \cite{Let}.
By the Uniformisation Theorem the universal covering space $\tilde X$ of each connected Riemann surface $X$ (equipped with the complex structure that makes the projection ${\sf P}:\tilde{X}\to X$ holomorphic) is conformally equivalent to one of the following: the Riemann sphere $\mathbb{P}^1$, or the complex plane $\mathbb{C}$, or the upper half-plane $\mathbb{C}_+\stackrel{def}=\{z\in \mathbb{C}: {\rm Im}z>0\}$. The upper half-plane is equipped with the hyperbolic metric which is defined by the infinitesimal length element $\frac{|dz|}{2y}$. If the upper half-plane is replaced by the unit disc $\mathbb{D}$, \index{hyperbolic metric}
the hyperbolic metric (that is induced by a conformal mapping $\mathbb{C}_+\to \mathbb{D}$) is defined by $\frac{|dz|}{1-|z^2|}$. \index{$\mathbb{C}^*$} \index{$\mathbb{C}_+$} \index{${\rm Im}(z)$}
A Riemann surface whose universal covering is (conformally equivalent to) the upper half-plane will be called hyperbolic.

Let $X$ be a finite Riemann surface with $\tilde X$ being the upper half-plane. Each covering transformation is a conformal mapping of the upper half-plane onto itself. Such a mapping extends to M\"obius transformation on the Riemann sphere. Hence, the group of covering transformations is a subgroup of the projective special linear group $PSL_2(\mathbb{R})=SL_2(\mathbb{R})\diagup \langle-{\rm Id}\rangle $, where $SL_2(\mathbb{R})\diagup \langle -{\rm Id}\rangle $ is the group of mappings $z\to \frac{az+b}{bz+c}$ with complex numbers $a,b,c,d$, that map the real axis to itself, and $\langle-{\rm Id}\rangle$ is the subgroup generated by $-\rm Id$ for the identity $\rm Id$. It acts properly discontinuously.
Subgroups of $PSL_2(\mathbb{R})$ that act properly discontinuously are called Fuchsian groups. For each fixed point free Fuchsian group $\Gamma$ the quotient $\mathbb{C}_+\diagup \Gamma$ is a Riemann surface.

The limit set of a Fuchsian group $\Gamma$ is the set of accumulation points of $\{g(z_0): g\in \Gamma\}$, where $z_0$ is any point of the upper half-plane. The set is independent of the choice of the point $z_0$. The limit set of a fixed point free Fuchsian group is either the real axis $\mathbb{R}$ or a nowhere dense subset of the real axis without isolated points.
A Fuchsian group is said to be of first kind if the limit set is the real axis, and is said to be of second kind if the limit set is a nowhere dense subset of the real axis without isolated points.
\index{group ! Fuchsian group}
The following theorem motivates our definition of Riemann surfaces of first and second kind.
\begin{thm}\label{prop2.2*}
A connected Riemann surface is of first kind iff it is the quotient $\mathbb{C}_+\diagup \Gamma$ of the upper half-plane by a
Fuchsian group $\Gamma$ of first kind.
\end{thm}
For a proof (in a more general situation) see e.g. \cite{Kra}, II Theorem 3.2 and \cite{A2}.

\noindent {\bf The universal covering of the twice punctured complex plane.}
We recall the well-known explicit description of the universal covering of $\mathbb{C}\setminus \{0,1\}$. We consider the domain $D_0=\{z\in \mathbb{C}_+: 0<{\rm Re}z<1, |z-\frac{1}{2}|>\frac{1}{2}\}$ in the upper half-plane, which is bounded by a half-circle and two half-lines. These curves are geodesic curves in the hyperbolic metric of the upper half-plane. We call $D_0$ the geodesic triangle  with \index{geodesic triangle}
vertices $0,\,1,$ and $\infty$. Denote by  $\sf P$ the conformal mapping from $D_0$ onto the upper half-plane, whose continuous extension to the boundary
takes $0$ to $0$, $1$ to $1$ and $\infty$ to $\infty$. The extension of $\sf P$ to the boundary takes $\ell\stackrel{def}=(0, i\infty)$ to $(-\infty,0)$, $\mathcal{C}\stackrel{def}= \{|z-\frac{1}{2}|=\frac{1}{2}\}\cap \mathbb{C}_+$ to $(0,1)$ and $r\stackrel{def}=1+(0,+i\infty)$ to $(1,\infty)$.
\index{${\rm Re}(z)$}

We reflect $D_0$ in the circle  $|z-\frac{1}{2}|=\frac{1}{2}$. By symmetry reasons the reflected domain is the geodesic triangle $D_1$ bounded by $\mathcal{C}$ (the reflection of itself), and the half-circles in $\mathbb{C}_+$ with diameter $(0,\frac{1}{2})$ (the reflection of $\ell$), and diameter $(\frac{1}{2},1)$ (the reflection of $r$). In the same way we reflect $D_0$ in the half-lines in the boundary.

\begin{figure}[H]
\begin{center}
\includegraphics[width=60mm]{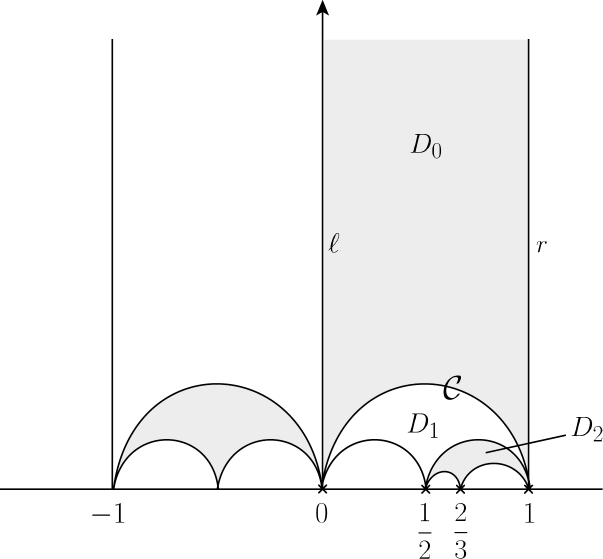}
\end{center}
\caption{The universal covering of the twice punctured complex plane}
\end{figure}

We reflect now $D_1$ in the circle  $|z-\frac{3}{4}|=\frac{1}{4}$ with diameter $(\frac{1}{2},1)$.
We obtain the geodesic triangle $D_2$ with vertices $\frac{1}{2}$, $\frac{2}{3}$,
and $1$. This can be seen by the following calculation. The mapping $z\to\zeta(z)= 4(z-\frac{3}{4})$ maps the circle $|z-\frac{3}{4}|=\frac{1}{4}$ to the unit circle
$|\zeta|=1$. The reflection map in $|\zeta|=1$ is $\zeta\to \frac{1}{\overline{\zeta}}$, hence, the reflection in  the circle $|z-\frac{3}{4}|=\frac{1}{4}$  is defined by the mapping
$$
z=\frac{1}{4}(\zeta+3)\to \frac{1}{4}(\frac{1}{\overline{\zeta}}+3)=\frac{1}{4}(\frac{1}{4\bar{z}-3}   +3)=
\frac{3\bar{z}-2}{4\bar{z}-3}\,.
$$
The mapping takes $1$ to $1$, $0$ to $\frac{2}{3}$, and $\frac{1}{2}$ to $\frac{1}{2}$.
We get further geodesic triangles by continuing reflecting in each geodesic arc of each already obtained geodesic triangle.
We get a collection of open geodesic triangles whose closure in $\mathbb{C}_+$ fill the whole $\mathbb{C}_+$.

Extend the mapping $\sf P$ continuously to the closure of $D_0$, and
by Schwarz' Reflection Principle through the geodesic arcs in the boundary of $D_0$.  Extending $\sf P$ repeatedly through the geodesic boundary arcs of the images under repeated reflection, we obtain a holomorphic mapping form $\mathbb{C}_+$ to $\mathbb{C}\setminus \{0,1\}$, denoted also by ${\sf P}$. The extended $\sf P$ takes each geodesic triangle, obtained from $D_0$ by an even number of successive reflections, conformally onto the upper half-plane, and maps each geodesic triangle, obtained from $D_0$ by an odd number of successive reflections, conformally onto the lower half-plane. It is easy to see that ${\sf P}: \mathbb{C}_+\to \mathbb{C}\setminus \{-1,1\}$ is a covering, in fact it is the universal covering of the twice punctured complex plane.
\index{Schwarz ! reflection principle}

Each reflection in a geodesic side of one of the geodesic triangles obtained from $D_0$ by reflection is an anti-holomorphic self-homeomorphism of $\mathbb{C}_+$. The composition $\varphi$ of an even number of successive reflections is a conformal mapping of $\mathbb{C}_+$ onto itself, for which ${\sf P}\circ \varphi= {\sf P}$, in other words, it is a covering transformation. The covering transformation obtained by reflection in the imaginary axis followed by reflection in one of the half-circles $\{z\in \mathbb{C}_+: |z\pm\frac{1}{2}|=\frac{1}{2}\}$ equals $z\to \frac{z}{1\mp 2 z}$.
Indeed, the reflection in $\{z\in \mathbb{C}_+: |z\pm\frac{1}{2}|=\frac{1}{2}\}$
is given by the mapping $z\to \frac{\mp\bar{z}}{2\bar{z}\pm 1}$, the reflection in the imaginary axis is given by $x+iy\to -x+iy$.

Reflection in the imaginary axis followed by reflection in $\{z\in \mathbb{C}_+: {\rm Re} z=1\}$ equals $z\to z+2$. These covering transformation generate the group of covering transformations. In particular,
each covering transformation is an element of $PSL_2(\mathbb{Z})$. Here
$SL_2(\mathbb{Z})$ is the group of $2\times 2$ matrices with integer entries and determinant $1$, and
$PSL_2(\mathbb{Z})$ is the quotient $SL_2(\mathbb{Z})\diagup \langle -{\rm Id}\rangle $, for the subgroup $\langle -{\rm Id}\rangle $ where $\rm Id$ is the unit matrix. \index{$SL_2(\mathbb{Z})$} \index{$PSL_2(\mathbb{Z})$}

\medskip

\noindent {\bf Properties of elements of $PSL_2(\mathbb{Z})$.}
Let $T(z)=\frac{az+b}{cz+d}$ represent an element of $PSL_2(\mathbb{Z})$. The fixed points of this mapping satisfy the equation $\frac{az+b}{cz+d}=z$, hence, if $c\neq 0$, $z^2+\frac{d-a}{c}z -\frac{b}{c}=0$. This implies that the fixed points are $z_{\pm}= -\frac{d-a}{2c} \pm \frac{\sqrt{(a+d)^2-4}}{2c}$. We used the fact that the determinant of the matrix
$A=\begin{pmatrix}
a & b  \\
c & d
\end{pmatrix}$ equals $ad-bc=1$.
The eigenvalues $t_{\pm}$ of the matrix $A$ satisfy the equation $t^2-(a+d)t+1=0$, hence, $t_{\pm}=\frac{a+d}{2} \pm \frac{\sqrt{(a+d)^2-4}}{2}$. We used again that the determinant of the matrix equals $1$.

If $|a+d|>2$ there are two different real fixed points. The matrix $A$ has
two different real eigenvalues $t_{\pm}$, and can be conjugated by a real matrix to the matrix $\begin{pmatrix}
t_+ & 0  \\
0   & t_-
\end{pmatrix}$. Hence, $T$ can be conjugated by a holomorphic self-homeomorphism of $\mathbb{C}_+$ to the mapping $T'(z)=\frac{t_+z}{t_-}= t_+^2 z$.

If $|a+d|=2$, $T$ has a double fixed point $z_0$. This point is real. Conjugating this fixed point
to $\infty$ (for instance by the conformal self-mapping $z\to \frac{1}{z_0-z}$ of $\mathbb{C}_+$) we arrive at a mapping of the form $z\to z+{\sf b}$ for a real number ${\sf b}$. The eigenvalues are equal to $t_{\pm}=1$.
If $c=0$ then $a=d=\pm 1$, hence $|a+d|=2$.

If $|a+d|<2$ the eigenvalues are equal to $t_{\pm}=\pm i$, if $|a+d|=0$,  and to $t_{\pm}=\frac{1}{2}\pm \frac{\sqrt{3}}{2}i$ or  $t_{\pm}=-\frac{1}{2}\pm \frac{\sqrt{3}}{2}i$, if $|a+d|=1$. In the first case the square of the mapping $T^2$ is the identity. In the second case $T^3$ is the identity.

\section{Braids}
\label{sec:2.1}

In this section we outline basic facts on braids.
For details the reader may consult \cite{Bi} or \cite{KaTu}.

Most intuitively, braids are described in terms of geometric braids.
We will use here the complex plane ${\mathbb C}$
\index{$\mathbb{C}$} though the complex structure will play a role
only later.
Let $E_n$
be a subset of ${\mathbb C}$ containing exactly $n$ points.
A geometric braid \index{braid ! geometric} in
$[0,1] \times {\mathbb C}$ with base point $E_n$ is a collection of
$n$ mutually disjoint arcs in the  cylinder $[0,1] \times {\mathbb
C}$ which joins the set $\{ 0 \} \times E_n$ in the bottom with the
set $\{ 1 \} \times E_n$ in the top of the cylinder and
intersects each fiber $\{ t \} \times {\mathbb C}$ along an
unordered $n$-tuple $E_n(t)$ of points. The arcs are called the
strands of the geometric braid. \index{braid ! strand of a braid}

\index{tuples ! ordered  $(z_1 , \ldots , z_n)$}
\index{tuples ! unordered $\{z_1, \ldots, z_n
\}$}

There is a more conceptional point of view.
We consider points $(z_1 , \ldots , z_n) \in {\mathbb C}^n$
as ordered tuples of points in ${\mathbb C}$. Usually the
points $z_j \in {\mathbb C}$ will be required to be pairwise
distinct, in other words, we require that $(z_1 , \ldots , z_n)$
belongs to the configuration space $C_n ({\mathbb C}) = \{(z_1
, \ldots , z_n) \in {\mathbb C}^n : z_i \ne z_j$ for $i \ne j\}$ of $n$ particles moving in the plane without collision.
\index{configuration space} Denote by ${\mathcal S}_n$
\index{$\mathcal{S}_n$} the symmetric group \index{group ! symmetric group}.
Each permutation in ${\mathcal S}_n$ acts on $C_n ({\mathbb C})$ by
permuting the coordinates. The quotient $C_n ({\mathbb C}) \diagup
{\mathcal S}_n$ \index{$C_n ({\mathbb C}) \diagup {\mathcal S}_n$}
is called the symmetrized configuration space. \index{symmetrized
configuration space} We denote its elements by $\{z_1, \ldots, z_n
\}$ and consider them as unordered $n$-tuples of points in ${\mathbb
C}$ or as subset $E_n$ of ${\mathbb C}$ consisting of exactly $n$
points. For a subset $A$ of ${\mathbb C}$ we will put $C_n (A) = C_n
({\mathbb C}) \cap A^n$.


The configuration space inherits the topology and complex structure
from ${\mathbb C}^n$. The symmetrized configuration space is given
the quotient topology and quotient complex structure. Note that
${\mathcal S}_n$ acts freely and properly discontinuously on $C_n
({\mathbb C})$. The canonical projection ${\mathcal P}_{\rm sym} :
C_n ({\mathbb C}) \to C_n ({\mathbb C}) \diagup {\mathcal S}_n$
\index{$\mathcal{P}_{\rm sym}$}  \index{$C_n ({\mathbb C}) \diagup
{\mathcal S}_n$} is holomorphic. \index{$\mathcal{P}_{\rm sym}$}


Identifying subsets of $\mathbb{C}$ consisting of $n$ points
with elements of  $C_n(\mathbb{C})\diagup\mathcal{S}_n$, we may regard a geometric braid
with base point $E_n$ as a set of the form
\begin{equation}\label{eq2.0}
\{(t,f(t)),\, t \in[0,1]\} \subset [0,1]\times \mathbb{C}
\end{equation}
for a continuous mapping $[0,1] \ni t
\overset{f}{\longrightarrow} C_n ({\mathbb C}) \diagup {\mathcal
S}_n$ with $f(0) = f(1) = E_n$.
We will often identify the geometric braid \eqref{eq2.0} with the mapping $f$ defining it.
Two geometric braids with base point
$E_n$ are called isotopic if there is a continuous family of
geometric braids with base point $E_n$ joining them. A braid
\index{braid} on $n$ strands with base point $E_n$ is an isotopy
class of geometric braids with base point $E_n$. Isotopy classes of
geometric braids \index{isotopy class ! of geometric braids} with
base point $E_n$ form a group. The operation is obtained by putting
one geometric braid on the top of another.


Take a geometric braid with base point $E_n$. Assigning to each
point in $\{ 0 \} \times E_n$ the point in $\{ 1 \} \times E_n$
which belongs to the same strand we obtain a permutation of the points of $E_n$.
It depends only on the isotopy class which we denote by $b$. This
permutation is denoted by $\tau_n (b)$. \index{$\tau_n(b)$} The
braid $b$ is called pure if $\tau_n (b)$ is the identity.
The set of pure $n$-braids forms a group which we denote by $\mathcal{PB}_n$.
\index{braid ! pure} \index{$\mathcal{PB}_n$}


Algebraically, $n$-braids are represented as elements of the Artin braid \index{group ! Artin braid group}
group ${\mathcal B}_n$.
This is the group
with generators denoted by $\sigma_1 , \ldots , \sigma_{n-1}$ and
relations $\sigma_i \, \sigma_j = \sigma_j \, \sigma_i$ for $\vert
i-j \vert \geq 2$, and $\sigma_i \, \sigma_{i+1} \, \sigma_i =
\sigma_{i+1} \, \sigma_i \, \sigma_{i+1}$ for $i = 1,\ldots , n-2$.
\index{$\sigma_i$}
An isomorphism between ${\mathcal B}_n$ and
isotopy classes of geometric braids with given base point $E_n$ can be
obtained as follows. Choose a projection ${\rm Pr}$ of ${\mathbb C}$ \index{${\rm Pr}$}
onto the real line ${\mathbb R}$ which is injective on $E_n$.
Label the points in $E_n$ so that $E_n=\{z_1,\ldots,z_n\}$ with ${\rm Pr}(z_j)<{\rm Pr}(z_{j+1})$. The strand of a (geometric) braid with initial point $z_j$ is called the $j$-th strand.
The
projection $[0,1] \times {\mathbb C} \ni (t,z) \to (t, {\rm Pr} \,
z) \in [0,1] \times {\mathbb R}$ assigns to each geometric braid a
braid diagram (at intersection points of images of strands it is
indicated which strands is ``over'' and which strand is ``under'').
Isotopy classes of geometric
braids with base point $E_n$ correspond to equivalence classes of diagrams. The equivalence classes of diagrams can
be interpreted as elements of the Artin braid group.
The braid diagram corresponding to the generator $\sigma_j$ of the Artin braid group is the diagram with one intersection point, at this point the $j$-th strand is over the $j+1$-st strand.

The isomorphism from the group of isotopy classes of geometric
braids to the Artin group depends on the base point $E_n$ and the
projection ${\rm Pr}$. A change of the base point
and a change of the projection lead to
conjugation with an element of the Artin group.
Indeed, let $\alpha$ be a curve in $C_n(\mathbb{C})\diagup \mathcal{S}_n$
with initial point $E_n$ and endpoint $E'_n$. A change of base point from $E_n$ to $E'_n$ along $\alpha$ assigns to each curve $\gamma$ in $C_n(\mathbb{C})\diagup \mathcal{S}_n$
with initial point and endpoint equal to $E_n$ (i.e. to each geometric braid with base point $E_n$) the curve $\alpha^{-1} \gamma \alpha$.
The latter curve is obtained by travelling first along the arc $\alpha^{-1}$ in the symmetrized configuration space, that joins the new base point $E_n'$ with $E_n$, then along the curve $\gamma$, and then along $\alpha$. Looking at the associated braid diagram using a fixed projection (provided it is injective on both, $E_n$ and $E'_n$) and considering equivalence classes we obtain the statement concerning the change of base point. A change of the projection can be made by fixing the projection and considering an isotopy of the space $\mathbb{C}$.
This leads to an isotopy of geometric braids, hence we obtain the statement for the change of projections.

The center $\mathcal{Z}_n$ of the Artin braid group $\mathcal{B}_n$ is generated by the element
$\Delta_n^2$, where $\Delta_n\stackrel{def}= (\sigma_1 \sigma_2\ldots \sigma_{n-1})(\sigma_1 \sigma_2\ldots \sigma_{n-2}) \ldots (\sigma_1 \sigma_2) \sigma_1 $ is called the Garside element. The braid $\Delta_n$ is
represented by the following geometric braid. Choose a set $E_n\subset \mathbb{R}$
that is invariant under reflection in the imaginary axis. The representing geometric braid is
obtained from the trivial geometric braid with base point $E_n$  by a half-twist of the cylinder $\mathbb{C}\times [0,1]$, i.e. by fixing the bottom of the cylinder and turning the top by the angle $\pi$. We will say that the braid $\Delta_n$
corresponds to a half-twist.
The braid   $\Delta_n^2$ corresponds to a full twist. For an element $e$ of a group we denote by $\langle e\rangle$ the subgroup that is generated by $e$. We put  $\mathcal{Z}_n=\langle\Delta_n^2\rangle$.
\index{ $\langle e\rangle$}\index{$\Delta_n$} \index{Garside element}

The pure braid group for  $n=3$ is related to a free group, more precisely
the following lemma holds.
\begin{lemm}\label{lemm1a} The group $\mathcal{PB}_3\diagup \langle
\Delta_3^2 \rangle$ is isomorphic to the fundamental group of
$\mathbb{C} \setminus \{-1,1\}$ with base point $0$. The generators $\sigma_j^2\diagup \langle\Delta_3^2 \rangle$ of $\mathcal{PB}_3\diagup \langle
\Delta_3^2 \rangle$ correspond to the generators $a_j$ of $\pi_1(\mathbb{C} \setminus \{-1,1\},0)$.
\end{lemm}
Here $a_1$ is represented by a loop that surrounds $-1$ counterclockwise, and $a_2$ is represented by  a loop that surrounds $1$ counterclockwise.

\medskip

\noindent {\bf Proof.}
For a point $z=(z_1,z_2,z_3)\in C_3(\mathbb{C})$ we denote by $M_z=M_{(z_1,z_2,z_3)}$ the
M\"{o}bius
transformation that maps $z_1$ to $0$, $z_3$ to $1$ and fixes
$\infty$. Then $M_z(z_2)$ omits $0$, $1$ and $\infty$. Notice
that $M_z(z_2)$
 is equal to the cross ratio $(z_2,z_3;z_1,\infty)
=\frac{z_2-z_1}{z_3-z_1}\cdot
\frac{z_3-\infty}{z_2-\infty}=\frac{z_2-z_1}{z_3-z_1}$.
We define $\mathfrak{C}(z)\stackrel{def}=2 M_z(z_2)-1$ with $z=(z_1,z_2,z_3) \in C_3(\mathbb{C})$.  The mapping $\mathfrak{C}$ takes
$C_3(\mathbb{C})$ to $\mathbb{C}\setminus \{-1,1\}$ and
$C_3(\mathbb{R})^0 \stackrel{def}{=}
\{(x_1,x_2,x_3) \in \mathbb{R}^3: x_1<x_2<x_3\}    $ to $(-1,1)$. \index{$\mathfrak{C}(z)$}
Associate to a curve $\tilde{\gamma}(t)\,=\, (\tilde \gamma_1(t),\tilde \gamma_2(t), \tilde \gamma_3(t)),\,
t \in
[0,1],$ in $C_3(\mathbb{C})$ the curve
$\mathfrak{C}(\tilde \gamma)(t)\overset{def}= 2
\,\frac{\tilde \gamma_2(t)-\tilde \gamma_1(t)}{\tilde \gamma_3(t)-\tilde \gamma_1(t)}
\,-1\,,\,
t \in
[0,1],$ in $\mathbb{C}$  
which omits the points $-1$ and $1$. The mapping $\tilde{\gamma}\to \mathfrak{C}(\tilde{\gamma})$ defines a homomorphism  $\mathfrak{C}_*$ from the fundamental group
$\pi_1(C_3(\mathbb{C}) , (-1,0,1))$ of
$C_3(\mathbb{C})$ with base point $(-1,0,1)$ to the fundamental group
$\pi_1 \stackrel{def}{=}\pi_1(\mathbb{C}\setminus
\{-1,1\},0)$
of the twice punctured complex plane with base point $0$.
\index{$\mathfrak{C}(z)$} \index{$M_{(z_1,z_2,z_3)}$} \index{$\mathfrak{C}_*$}

Identify the pure braid group $\mathcal{PB}_3$ with a subgroup of the fundamental group of $C_3(\mathbb{C}) \diagup \mathcal{S}_3$ with base point $\{-1,0,1\}$. For curves $\gamma$ representing
elements of this subgroup we consider the lift $\tilde\gamma$ under $\mathcal{P}_{\rm sym}$ with base point
$(-1,0,1) \in C_3(\mathbb{C})$. This gives an isomorphism from  the pure braid group $\mathcal{PB}_3$
onto the fundamental group of  $C_3(\mathbb{C})$ with base point $(-1,0,1)$.

The braids represented by the two loops
$\tilde{\gamma}\,$ and $\mathring{\gamma}$,
$\mathring{\gamma} (t)\stackrel{def}{=}(-1,\mathfrak{C}(\tilde{\gamma})(t),1) = \big(\,2M_{\tilde{\gamma}(t)}(\tilde{\gamma}_1(t))-1\,,\, 2M_{\tilde{\gamma}(t)}(\tilde{\gamma}_2(t))-1\,,\,2M_{\tilde{\gamma}_3(t)}(\tilde{\gamma}_1(t))-1)\, \big)       ,
\;\,t \in [0,1],\,$ differ by a power $\Delta_3^{2N}$ of the
full
twist $\Delta_3^2$. Indeed, to obtain $\mathring{\gamma}$ we act  for each $t$ by a complex linear mapping on the point $\tilde{\gamma}(t)$.
The number $N$ can be interpreted as
the linking number of the first and the third strands of the
geometric braid $\tilde{\gamma}(t), \, t \in [0,1].\,$ This linking
number is obtained as follows. Discard the second strand. The resulting
braid
equals $\sigma ^{2N}$ where $N$ is the mentioned linking
number. For
the
geometric braid
$\mathring \gamma$ the linking number of
the
first and third strand is zero. It follows that $\mathfrak{C}_*$ is
surjective
and its kernel equals $\langle \Delta_3^2 \rangle$.

\smallskip
A word in the generators of a free group is called reduced, if neighbouring terms are powers of different generators.
We saw that for each element $\mathbold{b}$ of
the pure braid group modulo its center $\mathcal{PB}_3 \diagup \mathcal{Z}_3$
there is a unique element $b\in\mathcal{PB}_3$ that represents $\mathbold{b}$ and can be written as reduced word in $\sigma_1^2$ and $\sigma_2^2$.
Indeed, this representative $b$ of $\mathbold{b}$ is determined by the property that the linking number between the first and the third strand equals zero.
Assigning to each element $\mathbold{b}\in\mathcal{PB}_3 \diagup \mathcal{Z}_3$ the mentioned element we obtain the isomorphism to the free group in two generators $\sigma_1^2$ and $\sigma_2^2$, or, equivalently to
the fundamental group $\pi_1(\mathbb{C} \setminus\{-1,1\},0)$ of the twice punctured complex plane with base point $0$ with generators $a_1$ and $a_2$, respectively. \hfill $\Box$

\smallskip
Geometric braids \index{braid ! geometric} with base point $E_n$
were interpreted as paths in $C_n ({\mathbb C}) \diagup {\mathcal
S}_n$
with initial and terminal point equal
to $E_n$, in other words, as loops in this space with base point
$E_n$. Isotopy classes of geometric braids with base point $E_n$
correspond to homotopy classes of loops in $C_n ({\mathbb C}) \diagup {\mathcal S}_n$
with
base point $E_n$, in other words, braids with base point $E_n$ correspond to elements of the fundamental
group $\pi_1(C_n ({\mathbb C}) \diagup {\mathcal
S}_n, E_n)$
\index{$\pi_1(C_n ({\mathbb C}) \diagup {\mathcal
S}_n ,E_n)$}
of $C_n ({\mathbb C}) \diagup {\mathcal
S}_n$  with base
point $E_n$. Thus the Artin braid group ${\mathcal B}_n$ is isomorphic to $\pi_1
( C_n ({\mathbb C}) \diagup {\mathcal
S}_n  , E_n)$. We saw that a change of the base point leads to an
automorphism of ${\mathcal B}_n$ defined by conjugation with an
element of ${\mathcal B}_n$. Denote by $\widehat{\mathcal B}_n$
\index{$\widehat{\mathcal{B}}_n$} the set of conjugacy classes of
${\mathcal B}_n$. Its elements $\hat b \in \widehat{\mathcal B}_n$ can
be interpreted as free homotopy classes of loops in $C_n ({\mathbb C}) \diagup {\mathcal
S}_n$ . \index{$\mathfrak{P}_n$}
In other words, two geometric braids
$f_0: [0,1] \to C_n ({\mathbb C}) \diagup {\mathcal
S}_n,\,f_1:
[0,1] \to C_n ({\mathbb C}) \diagup {\mathcal
S}_n,\,$
represent the same class $\widehat {\mathcal{B}}_n,\,$ iff there is a
free homotopy joining them, i.e. if there exists a continuous mapping
$h: [0,1] \times [0,1] \to C_n ({\mathbb C}) \diagup {\mathcal
S}_n $ such that
$h(t,0)=f_0(t),\; h(t,1)=f_1(t),\;$ for $t \in [0,1],\;$ and
$h(s,0)=h(s,1)\;$ for $s \in [0,1].\,$ We may consider the
continuous family of braids $f_s:[0,1] \to C_n ({\mathbb C}) \diagup {\mathcal
S}_n$ with
variable base point as a free isotopy of braids. \index{braids !
free isotopy of}

We will also use the following terminology. A loop in $C_n ({\mathbb C}) \diagup {\mathcal S}_n$
(i.e. a continuous map of the circle into
$C_n ({\mathbb C}) \diagup {\mathcal
S}_n)$
is called a closed geometric braid.\index{braid ! closed geometric}
A free homotopy class of loops in $C_n ({\mathbb C}) \diagup {\mathcal
S}_n$ is called a
closed braid. \index{braid ! closed} Closed braids correspond to
elements of $\widehat{\mathcal B}_n$.

Arnol'd interpreted the symmetrized configuration space $C_n
({\mathbb C}) \diagup {\mathcal S}_n$ as the space of monic
polynomials ${\mathfrak P}_n$ \index{$\mathfrak{P}_n$}
of degree $n$ without multiple zeros. (''Monic'' refers to the property that
the coefficient by the highest power of the variable equals $1$.)
Denote by $\overline{\mathfrak
P}_n$ \index{$\overline{\mathfrak{P}}_n$} the set of all monic
polynomials of degree $n$. If we assign to each unordered $n$-tuple
$E_n = \{ z_1 , \ldots , z_n \}$ (this time the $z_j$ are not
necessarily pairwise distinct) the monic polynomial
$\overset{n}{\underset{j=1}{\prod}} (z-z_j) = a_0 + a_1 \, z +
\ldots z^n$ whose set of roots equal $E_n$, we obtain a bijection
onto the set $\overline{\mathfrak P}_n$. The set of monic
polynomials of degree $n$ can also be parameterized by the ordered
$n$-tuple $(a_0 , a_1 , \ldots , a_{n-1}) \in {\mathbb C}^n$ of the
coefficients. Hence, we obtain a bijection between $C_n ({\mathbb
C}) \diagup {\mathcal S}_n$ and the set of points $(a_0 , a_1 ,
\ldots , a_{n-1}) \in {\mathbb C}^n$ which are coefficients of
polynomials without multiple zeros. There is a polynomial
$\textsf{D}_n$ \index{${\sf D}_n$} in the variables $a_0 , \ldots
, a_{n-1}$, called the discriminant, which vanishes exactly if the \index{discriminant}
polynomial with these coefficients has multiple zeros. Hence, the
symmetrized configuration space $C_n ({\mathbb C}) \diagup {\mathcal
S}_n$ can be identified with ${\mathbb C}^n \backslash
V_{\textsf{D}_n}$ where $V_{ \textsf{D}_n} \overset{\rm def}{=} \{
(a_0 , \ldots , a_{n-1}) \in {\mathbb C}^n : \textsf{D}_n (a_0 ,
\ldots , a_{n-1}) = 0\}$. \index{$V_{\textsf{D}_n}$} The
identification is actually a biholomorphic map. Thus, $C_n ({\mathbb
C}) \diagup {\mathcal S}_n$ is biholomorphic to a pseudoconvex
domain in ${\mathbb C}^n$, even stronger, it is biholomorphic to the
complement of an algebraic hypersurface in ${\mathbb C}^n$. The space
${\mathfrak P}_n \cong C_n ({\mathbb C}) \diagup {\mathcal S}_n$
received much attention in connection with problems of algebraic
geometry. For instance, motivated by his interest in the Thirteen's
Hilbert problem, Arnol'd \cite{Ar} computed its topological
invariants. \index{Thirteen's
Hilbert problem}

\section{Mapping class groups.}
\label{sec:2.2}

Mapping class groups are defined as follows.
Let $A$
be a topological space, not necessarily compact, but paracompact.
Let $A_1$ and $A_2$ be disjoint closed subsets of $A$. Denote by
${\rm Hom} (A ; A_1 , A_2)$ \index{${\rm Hom} (A ; A_1 , A_2)$} the
set of self-homeomorphisms of $A$ that fix $A_1$ pointwise and $A_2$
setwise. We will also write ${\rm Hom} (A ; A_1)$  \index{${\rm Hom}
(A ; A_1)$} for ${\rm Hom} (A ; A_1 , \emptyset)$, but in case we do
not require that some points are fixed we write ${\rm Hom} (A ;
\emptyset , A_2)$. We will also write ${\rm Hom} (A )$ \index{${\rm
Hom} (A )$} for ${\rm Hom} (A ; \emptyset , \emptyset)$. Equip the
set ${\rm Hom} (A ; A_1 , A_2)$ with compact open topology.
 \index{mapping class group}

Let $A$ be an oriented manifold. By ${\rm Hom}^+ (A;A_1,A_2)$
\index{${\rm Hom}^+ (A;A_1,A_2)$} we denote the set of orientation
preserving self-homeomorphisms in ${\rm Hom}(A;A_1,A_2)$. It forms a
group with respect to composition. The set of connected components
of ${\rm Hom}^+ (A;A_1,A_2)$ equipped with the inherited group structure is the mapping class group
${\mathfrak M}(A;A_1,A_2)$ corresponding to the triple $(A,A_1,A_2)$.
\index{$\mathfrak{M} (A;A_1,A_2)$}


Let $\bar {\mathbb{D}}$ \index{$\bar {\mathbb{D}}$}  be the closed unit disc in the complex plane ${\mathbb
C}$ with boundary $\partial \mathbb{D}$ and interior $\mathbb{D}$ \index{$\mathbb{D}$}.
At this point we may forget the complex structure of $\mathbb{D}$, but we will need it later.

Let $E_n^0 = \left\{ 0,\frac1n , \ldots , \frac{n-1}n \right\}$  \index{$E_n^0$}
be the ``standard'' subset of $\mathbb{D}$ containing $n$ points. We also
identify $E_n^0$ with the respective unordered $n$-tuple of points.
The set ${\mathfrak M} (\bar {\mathbb{D}};\partial \mathbb{D},E_n^0)$
\index{$\mathfrak{M} (\bar {\mathbb{D}};\partial {\mathbb{D}},E_n^0)$} is commonly known
as the mapping class group of the $n$-punctured disc. Note that
${\mathfrak M} (\bar {\mathbb{D}} ; \partial {\mathbb{D}} ,E_n^0)$ is isomorphic to
${\mathfrak M} (\bar {\mathbb{D}} \backslash E_n^0 ; \partial {\mathbb{D}})$
\index{$\mathfrak{M} (\bar {\mathbb{D}} \backslash E_n^0 ; \partial {\mathbb{D}})$} since
each element of ${\mathfrak M} (\bar {\mathbb{D}} \backslash E_n^0 ; \partial
{\mathbb{D}})$ extends continuously to each point of $E_n^0$ (the
``punctures''). The points of $E_n^0$ are also called
``distinguished points''. The homeomorphisms in ${\rm Hom}^+ (\bar {\mathbb{D}}
; \partial {\mathbb{D}} , E_n^0)$ are called homeomorphisms of $\bar {\mathbb{D}}$ with
distinguished points $E_n^0$. By an abuse of language we will also call them homeomorphisms of the $n$-punctured
disc, having in mind the identification with their restrictions to $\mathbb{D}\setminus E^0_n$. The connected component ${\rm Hom}^0 (\bar {\mathbb{D}} ;
\partial {\mathbb{D}} ,E_n^0)$ \index{${\rm Hom}^0 (\bar {\mathbb{D}} ;
\partial {\mathbb{D}} ,E_n^0)$} of ${\rm Hom}^+ (\bar {\mathbb{D}} ;
\partial {\mathbb{D}} ,E_n^0)$ containing the identity consists of the
self-homeomorphisms of $\bar {\mathbb{D}}$ which can be joined to the identity
by a continuous family of homeomorphisms in ${\rm Hom}^+ (\bar {\mathbb{D}} ;
\partial {\mathbb{D}} ,E_n^0)$. In other words, it consists of homeomorphisms
in ${\rm Hom}^+ (\bar {\mathbb{D}} ; \partial {\mathbb{D}} ,E_n^0)$ which are isotopic to
the identity through homeomorphisms fixing $\partial {\mathbb{D}}$ and $E_n^0$
pointwise. We have
\begin{equation}
\label{eq2.1} {\mathfrak M} (\bar {\mathbb{D}} ; \partial {\mathbb{D}} ,E_n^0) = {\rm
Hom}^+ (\bar {\mathbb{D}} ,
\partial {\mathbb{D}} ,E_n^0) \diagup  {\rm Hom}^0 ({\mathbb{D}} ; \partial {\mathbb{D}} ,E_n^0) \, .
\end{equation}
\index{distinguished points} \index{punctures}
The respective mapping class group ${\mathfrak M} (\bar {\mathbb{D}} ; \partial
{\mathbb{D}} ,E_n)$ \index{$\mathfrak{M} (\bar {\mathbb{D}} ; \partial {\mathbb{D}} ,E_n)$} can be
defined for any unordered $n$-tuple $E_n$ of points in ${\mathbb{D}}$.

\smallskip

For a
homeomorphism $\varphi \in {\rm Hom}^+ (\bar {\mathbb{D}} ;
\partial {\mathbb{D}} , E_n)$ we consider its restriction $\varphi \mid {\mathbb{D}} \in
{\rm Hom}^+ ( {\mathbb{D}} ; \emptyset , E_n)$ and the corresponding mapping
class in ${\mathfrak M} ( {\mathbb{D}} ; \emptyset , E_n)$. This mapping class
is denoted by ${\mathfrak m}_{\varphi}^{\rm free}$
\index{$\mathfrak{m}_{\varphi}^{\rm free}$} and is called the
free isotopy class 
of $\varphi$. \index{$\varphi$} We also call the elements of
${\mathfrak M} (\bar {\mathbb{D}} ;
\partial {\mathbb{D}} , E_n)$ the relative mapping classes.\index{relative mapping classes} The restriction map
$\varphi \to \varphi \mid {\mathbb{D}}$ defines a surjective mapping from
${\mathfrak M} (\bar D ;
\partial {\mathbb{D}} , E_n)$ to ${\mathfrak M} ({\mathbb{D}} ; \emptyset , E_n)$. Indeed,
each element of ${\mathfrak M} ({\mathbb{D}} ; \emptyset , E_n)$ contains
representatives which extend to the boundary $\partial {\mathbb{D}}$ as the
identity mapping on $\partial \mathbb{D}$. There is a short exact sequence
\begin{equation}\label{eq2.2}
0 \to \mathcal{K} \to {\mathfrak M} (\bar {\mathbb{D}} ; \partial {\mathbb{D}} , E_n) \to
{\mathfrak M} ({\mathbb{D}} ; \emptyset , E_n) \to 0 \, .
\end{equation}
An element ${\mathfrak m} \in {\mathfrak M} (\bar {\mathbb{D}} ; \partial {\mathbb{D}},
E_n)$ is in the kernel $\mathcal{K}$ iff each representative is \index{$\mathcal{K}$}
isotopic to the identity through homeomorphisms of $ {\mathbb{D}}$ which
fix
$E_n$ setwise (and, hence, pointwise).


Let $X$ be an oriented surface.
A Dehn twist \index{Dehn twist} about a simple closed curve $\gamma$
in an oriented surface $S$ is a mapping that is isotopic to the following one. Take a neighbourhood of
$\gamma$ that can be parameterized  as a round annulus  $A=\{ e^{-\varepsilon} < \vert z \vert < 1\}$  so that $\gamma$ corresponds to $|z|=e^{-\frac{\varepsilon}{2}}$. The mapping is
an orientation preserving self-homeomorphism of $S$
which is the identity outside $A$
and is equal to the mapping $e^{-\varepsilon s +2\pi i t}\to  e^{-\varepsilon s +2\pi i (t+s) }$ for  $e^{-\varepsilon s +2\pi i t}\in A$, i.e. $s\in (0,1)$. Here $\varepsilon$ is a small positive number.

For the following theorem see e.g. \cite{Joh}.
\begin{thm}\label{thm2.1} The kernel $\mathcal{K}$ is generated by a
Dehn twist about a simple closed curve $\gamma$ in ${\mathbb{D}}$ which is
homologous to $\partial {\mathbb{D}}$ in ${\mathbb{D}} \backslash E_n$.
\end{thm}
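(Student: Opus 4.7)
The plan is to realize $\mathcal{K}$ as the image of a connecting homomorphism in the long exact sequence of a natural evaluation fibration, and then to identify the generator geometrically as the asserted Dehn twist. Concretely, consider the evaluation map $\mathrm{ev}: \mathrm{Hom}^+(\bar D) \to C_n(D)\diagup \mathcal{S}_n$ defined by $\varphi \mapsto \varphi(E_n)$. A standard collar/isotopy-extension argument makes $\mathrm{ev}$ a (quasi-)fibration whose fiber over $E_n$ is $\mathrm{Hom}^+(\bar D; \emptyset, E_n)$, whose $\pi_0$ is the target group $\mathfrak{M}(D;\emptyset,E_n)$ of sequence~\eqref{eq2.2}. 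By Alexander's trick, $\mathrm{Hom}^+(\bar D; \partial D)$ is contractible, while the full group $\mathrm{Hom}^+(\bar D)$ deformation retracts onto its rotation subgroup $\mathrm{SO}(2)\simeq S^1$; in particular $\pi_0(\mathrm{Hom}^+(\bar D))=0$ and $\pi_1(\mathrm{Hom}^+(\bar D))\cong\mathbb{Z}$. The long exact sequence of the fibration then becomes
$$\mathbb{Z} \xrightarrow{\iota_*} \mathcal{B}_n = \pi_1(C_n(D)\diagup\mathcal{S}_n) \xrightarrow{\partial_*} \mathfrak{M}(D;\emptyset,E_n) \to 0.$$
Running the analogous argument with $\mathrm{Hom}^+(\bar D; \partial D)$ in place of $\mathrm{Hom}^+(\bar D)$ identifies $\mathfrak{M}(\bar D;\partial D,E_n)$ with $\mathcal{B}_n$, and the induced map on $\pi_0$ is precisely the forgetful homomorphism in~\eqref{eq2.2}. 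Therefore $\mathcal{K}$ coincides with the image of $\iota_*$.

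Next I pin down this image. A generator of $\pi_1(\mathrm{Hom}^+(\bar D))\cong\mathbb{Z}$ is the loop of rotations $R_t(z)=e^{2\pi i t}z$, $t\in[0,1]$, based at the identity; its image under $\iota_*$ is the loop $t\mapsto R_t(E_n)$, a rigid rotation of the $n$-tuple $E_n$ through the angle $2\pi$. Reading off the corresponding braid diagram with respect to any injective linear projection of $E_n$ identifies this class with the full twist $\Delta^2=(\sigma_1\sigma_2\cdots\sigma_{n-1})^n\in\mathcal{B}_n$, which has infinite order for $n\geq 2$. Hence $\mathcal{K}=\langle\Delta^2\rangle$ is infinite cyclic. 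To see that $\Delta^2$ is the Dehn twist claimed, pick any simple closed curve $\gamma\subset D$ that encloses all of $E_n$, so that $\gamma$ and $\partial D$ cobound a closed annulus $A\subset\bar D\setminus E_n$; then a concrete representative of $\Delta^2$ is obtained by setting the map equal to the identity off $A$ and to one full twist of $A$ supported in $A$, which by definition is the Dehn twist $T_\gamma$. Since $\gamma$ and $\partial D$ cobound $A$ in $D\setminus E_n$, we have $[\gamma]=[\partial D]$ in $H_1(D\setminus E_n;\mathbb{Z})$, which is the homological condition in the statement.

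The main technical obstacle is the justification that the evaluation maps above are indeed (quasi-)fibrations in the compact-open topology, so that the long exact sequence is available; this is classical and reduces to local triviality of the ``push points in small disjoint discs'' map around each point of $E_n$, combined with the isotopy-extension theorem for surfaces. Everything after that is an exercise in chasing the long exact sequence and applying the standard dictionary between a full rotation of a configuration of points and the full twist $\Delta^2$ in the Artin group.
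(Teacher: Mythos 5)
Your argument is correct and is the standard Birman exact sequence proof of this fact (the paper itself gives no proof, only citing \cite{Joh}). You compare the evaluation fibration over $C_n(D)\diagup\mathcal{S}_n$ with total space $\mathrm{Hom}^+(\bar D)$ (homotopy equivalent to $S^1$) against the one with total space $\mathrm{Hom}^+(\bar D;\partial D)$ (contractible by Alexander's trick); naturality of the two long exact sequences under the inclusion of fibrations identifies the kernel $\mathcal{K}$ with the image of $\pi_1(\mathrm{Hom}^+(\bar D))\cong\mathbb{Z}$ in $\mathcal{B}_n\cong\mathfrak{M}(\bar D;\partial D,E_n)$, and you compute this image as $\langle\Delta^2\rangle$, geometrically the Dehn twist about a curve encircling $E_n$. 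Two small points that you invoke but should be stated explicitly if this is to be a complete proof: (i) the identification of $\pi_0\bigl(\mathrm{Hom}^+(\bar D;\emptyset,E_n)\bigr)$ with the group $\mathfrak{M}(D;\emptyset,E_n)$ defined via homeomorphisms of the \emph{open} disc, which uses that every self-homeomorphism of $D$ fixing $E_n$ is isotopic to one that is the identity near $\partial D$ and hence extends; and (ii) the group structure on $\pi_0$ of the fiber making the connecting map a homomorphism, which is automatic here because all spaces involved are homogeneous under topological group actions. With those two remarks spelled out, the proof is complete.
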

\index{$\mathcal{H}_{\infty}$} \index{$\mathfrak{M} ({\mathbb P}^1;
\infty, E_n)$}
Let again $E_n \subset {\mathbb D}$ be a set that
contains exactly $n$ points. The inclusion $\overline{\mathbb{D}} \hookrightarrow \mathbb{P}^1$ induces a surjective homomorphism
${\mathcal H}_{\infty}$
\begin{equation}
\label{eq2.7} {\mathcal H}_{\infty} : {\mathfrak M} (\bar{\mathbb
D};
\partial \, {\mathbb D} , E_n) \to {\mathfrak M} ({\mathbb P}^1;
\infty, E_n),
\end{equation}
 which is described in detail as
follows. Take a mapping class ${\mathfrak m} \in {\mathfrak
M}(\overline{\mathbb D};
\partial \, {\mathbb D} , E_n)$. Represent it by a homeomorphism
$\varphi$. Extend $\varphi$ to a self-homeomorphism
$\varphi_{\infty}$ of ${\mathbb P}^1$ by putting $\varphi_{\infty} =
\varphi$ on $\overline{\mathbb D}$ and $\varphi_{\infty} = {\rm id}$
outside $\overline{\mathbb D}$. Let ${\mathfrak m}_{\infty} \in
{\mathfrak M} ({\mathbb P}^1; \infty, E_n)$ be the mapping class of
$\varphi_{\infty}$. It depends only on the mapping class
$\mathfrak{m}$ of $\varphi$ in ${\mathfrak M} (\bar{\mathbb D};
\partial \, {\mathbb D} , E_n)$, not on the choice of the representative $\varphi$. Put
\begin{equation}\label{eq2.8}
{\mathcal H}_{\infty} ({\mathfrak m}) \stackrel{def}= {\mathfrak
m}_{\infty} \in {\mathfrak M} ({\mathbb P}^1;  \infty , E_n).
\end{equation}
\index{$\mathcal{H}_{\infty} ({\mathfrak m})$}
\index{$\mathfrak{m}_{\infty}$} \index{$\varphi$} \index{$\varphi_{\infty}$}
The homomorphism is surjective since each element of  $ {\mathfrak M} ({\mathbb P}^1;  \infty , E_n)$ can be represented by a homeomorphism that is equal to the identity on ${\mathbb P}^1\setminus \mathbb{D}$. By Theorem \ref{thm2.1} the kernel of the homomophism  \eqref{eq2.8} consists of the class of powers of Dehn twists about a simple closed curve that is homologous to $\partial \mathbb{D}$.

Let $A$ and $A'$ be oriented manifolds, let $A_1$ and $A_2$ be subsets of $A$, and let $A_1'$ and $A_2'$ be subsets of $A'$. Take a homeomorphism $\psi:A'\to A$ that maps $A'_j$ to $A_j$ for $j=1,2$.  The mapping $\varphi\to \psi^{-1} \circ \varphi\circ \psi$ is a one-to-one mapping from ${\rm Hom}^+(A;A_1,A_2)$ onto ${\rm Hom}^+(A';A'_1,A'_2)$. It defines an isomorphism ${\rm is}_ \psi$ from the mapping class
$\mathfrak{M}(A;A_1,A_2)$ onto the mapping class  $\mathfrak{M}(A';A'_1,A'_2)$. The isomorphism is determined by $\psi$ up to
conjugation by an element of  $\mathfrak{M}(A;A_1,A_2)$. Indeed, let $\psi_1:A'\to A$ be another homeomorphism that maps $A'_1$ to $A_1$ and $A'_2$ to $A_2$. Then
\begin{align*}
(\psi_1)^{-1}\circ\varphi\circ\psi_1= (\psi^{-1} \circ\psi_1)^{-1}\circ \psi^{-1}\circ \varphi \circ\psi\circ  (\psi^{-1} \circ\psi_1) \,.
\end{align*}
The conjugacy class $\widehat {\mathfrak{m}}$ of an element $\mathfrak{m}$ of the mapping class group $\mathfrak{M}(A;A_1,A_2)$ consists of all elements $\mathfrak{g}^{-1} \mathfrak{m}\mathfrak{g}$ with $\mathfrak{g}\in \mathfrak{M}(A;A_1,A_2)$.The set of conjugacy classes is denoted by $\widehat{\mathfrak{M}}(A;A_1,A_2)$. We will identify the conjugacy class of an element $\mathfrak{m}\in \mathfrak{M}(A;A_1,A_2)$    
with the set
\begin{align*}
\big\{\psi^{-1} \varphi \circ \psi: \, \varphi\in \mathfrak{m},\, \psi: & \, A'\to A\, \mbox{a homeomorphism that maps }\\
& A'_1 \,\mbox{to}\,A'_2\,
\mbox{ and}\, A'_2 \, \mbox{to}\, A_2\big\}\,,
\end{align*}
having in mind the isomorphisms ${\rm is}_{\psi}^{-1}$ that are determined up to conjugation by elements of $\mathfrak{M}(A;A_1,A_2)$   .
\index{conjugacy class of mappings}
\index{$\widehat {\mathfrak{m}}$}

Consider the set of conjugacy classes $   \widehat{\mathfrak M} ({\mathbb P}^1;  \infty , E_n)$. 
\index{$\widehat{\mathfrak M} ({\mathbb P}^1; \infty,E_n)$}
Any homeomorphism $h:\mathbb{D}\setminus E_n\to \mathbb{C}\setminus E_n$ that maps points close to $\partial \mathbb{D}$ to points close to $\infty$  defines an isomorphism
$${\mathfrak M} ({\mathbb D};  \emptyset , E_n) \to
 {\mathfrak M} ({\mathbb P}^1;  \infty , E_n)
$$
by associating to each orientation preserving self-homeomorphism $\varphi:\mathbb{D}\setminus E_n\toitself$ the self-homeomorphism $h\circ\varphi\circ h^{-1}$ of $\mathbb{C}\setminus E_n$. The isomorphisms corresponding to different homomorphisms $\mathbb{D}\setminus E_n\to \mathbb{C}\setminus E_n$
differ by conjugation by a self-homeomorphism of $\mathbb{C}\setminus E_n$ whose extension to $\mathbb{P}^1$ fixes $\infty$.
We obtain a one-to-one correspondence between the set of conjugacy classes $\widehat{\mathfrak{M}}(\mathbb{D}\setminus E_n)$ and the set of conjugacy classes
$  \widehat{\mathfrak M} ({\mathbb P}^1 \setminus E_n;  \infty) \cong  \widehat{\mathfrak M} ({\mathbb P}^1;  \infty , E_n)$.

More generally,  if $X$ is a closed oriented surface and $E_n$ a subset of $X$ consisting of $n$ points, the space ${\mathfrak M} (X ; \emptyset ,E_n) \cong {\mathfrak M} (X\backslash E_n)$ is called the mapping class group of the
$n$-punctured surface.
Let $X$ be a compact connected oriented surface with boundary and let $E_n$ be a subset of $X$ that contains
exactly $n$ points. The set ${\mathfrak M} (X ; \partial X$, $E_n)$ $\cong$
${\mathfrak M} (X \backslash E_n; \partial X)$ \index{$\mathfrak{M}
(X ; \partial X, E_n)$}
is called the mapping class group of the
$n$-punctured surface  with boundary.
Consider the mapping $\varphi\to \varphi|{\rm Int} X$ that assigns to each self-homeomorphism $\varphi$ of $X$ the restriction to the interior ${\rm Int}X$ of $X$. This mapping defines a homomorphism ${\mathfrak M} (X ; \partial X,\;  E_n) \to {\mathfrak M} ({\rm Int }X ; \emptyset,\;  E_n)$, whose kernel is generated by Dehn twists about closed curves in $X$ that are homologous to a boundary component of $X$.

Let $X$ be a bordered Riemann surface with smooth boundary. There is a compact Riemann
surface $X^c$ \index{$X^c$} and a diffeomorphism from $X$ onto the
closure of a smoothly bounded domain in $X^c$ with the following properties. The
diffeomorphism is conformal on ${\rm Int} \, X$.
The domain in $X^c$
is obtained by removing from $X^c$ a finite number of smoothly bounded topological discs. This follows from the fact, that the interior of $X$ admits a conformal mapping onto a  smoothly bounded domain on a closed Riemann surface
(\cite{Sto}, see also  Section \ref{sec:2.0}), and the smooth analog of Caratheodory's Theorem (Theorem 4 Chapter 2, Section 3, in \cite{Go}) that states smooth extension to the boundary of  conformal mappings between smoothly bounded domains (see e.g. \cite{Bel}).
Let $\partial_1 , \ldots ,
\partial_N$ be the boundary components of $X$ and let $\delta_1 ,
\ldots , \delta_N$ be the open discs on $X^c$ bounded by the
$\partial_j$. Let $E_n \subset {\rm Int} \, X$ be a finite set. For
each $j = 1 , \ldots , N$ we pick a point $\zeta_j \in \delta_j$.
Put $\zeta=\{\zeta_1 , \ldots , \zeta_N \}$.
There is a homomorphism 
\begin{equation}\label{eq2.10}
{\mathcal H}_{\zeta} : {\mathfrak M} (X; \partial X,E_n) \to
{\mathfrak M} (X^c ; \{\zeta_1 , \ldots , \zeta_N \}, E_n)
\end{equation}
which can be described as follows.
\index{$\mathfrak{M}(X^c ; \{\zeta_1 , \ldots , \zeta_N \}, E_n)$}
Take ${\mathfrak m} \in {\mathfrak M} (X;  \partial X , E_n)$. Let
$\varphi \in {\rm Hom}^+ (X;  \partial X , E_n)$ be a representing
homeomorphism. Let $\varphi^c$ be the extension of $\varphi$ to
$X^c$ which is the identity outside $X$. Then ${\mathcal H}_{\zeta}
({\mathfrak m}) = {\mathfrak m}_{\zeta}$ where ${\mathfrak
m}_{\zeta}$ is the class of $\varphi^c$ in ${\mathfrak M} (X^c ; \{
\zeta_1 , \ldots , \zeta_N \} , E_n)$. The homomorphism  \eqref{eq2.10} is surjective. Its kernel is generated by Dehn twists about closed curves that are homologous to a boundary component of $X$. \index{$\mathcal{H}_{\zeta}$} \index{$\mathfrak{m}_{\zeta}$}

We remove now the requirement that the elements of a mapping class  fix the boundary of a bordered Riemann surface $X$ pointwise (or fixes each boundary component setwise), in other words we consider mapping classes in
${\mathfrak M} (X; \emptyset,E_n)$. We obtain a homomorphism
$$
{\mathfrak M} ({\rm Int}X; \emptyset,E_n)\to  {\mathfrak M} (X^c\setminus  \{
\zeta_1 , \ldots , \zeta_N \} ;\emptyset,E_n)\,.
$$
We may identify the conjugacy classes
$$
\widehat{\mathfrak{M}}  ({\rm Int}X;\emptyset, E_n)\cong \widehat{\mathfrak M} (X^c \setminus \{
\zeta_1 , \ldots , \zeta_N \} ; \emptyset, E_n)\,.
$$

\smallskip

\section{The relation between Braids and Mapping Classes}
\label{sec:2.2a}
In this section we describe the isomorphism between the braid group
${\mathcal B}_n$ and the mapping class group of the $n$-punctured
disc. We use complex notation.


For any subset $E$ of the unit disc and any self-homeomorphism $\psi$ in 
${\rm Hom}^+(\overline{\mathbb D} , \emptyset, \emptyset)$ we
put ${\rm ev}_E \, \psi = \psi (E)$. For $E_n^0 = \left\{ 0,\frac1n ,
\ldots , \frac{n-1}n \right\}$ ( considered as unordered tuple of
$n$ points or as set) we obtain
\begin{equation}\label{eq2.10a}
{\rm ev}_{E_n^0} \, \psi = \left\{ \psi (0) , \psi \left(\frac1n
\right), \ldots , \psi \left( \frac{n-1}n \right) \right\} \, .
\end{equation}
We define the mapping $e_n$, \index{${\rm ev}_E \, \psi$}
\begin{equation}\label{eq2.10b}
e_n (\psi) = \left( \psi (0) , \ldots , \psi \left( \frac{n-1}n
\right) \right)
\end{equation}
\index{$e_n (\psi)$}that assigns to $\psi$ the ordered $n$-tuple of values of $\psi$ at the points
$0,\frac{1}{n},\ldots,\frac{n-1}{n}$. Note that
$e_n (\psi) \in  C_n ({\mathbb C})$, and ${\mathcal P}_{\rm sym} \, e_n
(\psi) = {\rm ev}_{E_n^0} \, \psi$. The isomorphism between the
mapping class group ${\mathfrak M} (\overline{\mathbb D}, \partial
\, {\mathbb D} , E_n^0)$ and the group of isotopy classes of
geometric braids with base point $E_n^0$ is obtained as follows.

Let $\varphi \in {\rm Hom}^+  (\overline{\mathbb D} ; \partial \,
{\mathbb D}, E_n^0)$. Consider a path $\varphi_t \in {\rm Hom}^+
(\overline{\mathbb D} , \partial \, {\mathbb D})$, $t \in [0,1]$,
which joins $\varphi$ with the identity. In other words, $\varphi_t$
is a continuous family of self-homeomorphisms of $\overline{\mathbb
D}$ which fix the boundary $\partial \, {\mathbb D}$ pointwise, such
that $\varphi_0 = {\rm id}$ and $\varphi_1 = \varphi$. Notice that we do not
require that $\varphi_t$ maps $E_n^0$ to itself. By the
Alexander-Tietze Theorem
(see e.g. \cite{KaTu}, Section 1.6.1)  such a family exists for each $\varphi \in
{\rm Hom}^+ (\overline{\mathbb D} ; \partial \, {\mathbb D} ,
E_n^0)$. Consider the evaluation map \index{evaluation map}
\begin{equation}\label{2.10g}
[0,1] \ni t \to {\rm ev}_{E_n^0} \, \varphi_t = \left\{ \varphi_t
(0) , \ldots , \varphi_t \left( \frac{n-1}n \right) \right\} \in C_n
({\mathbb C}) \diagup {\mathcal S}_n \, .
\end{equation}
\index{$\varphi_t$}
This map defines a geometric braid  in the cylinder $[0,1] \times {\mathbb
D}$ (i.e. ${\rm ev}_{E_n^0} \, \varphi_t \in C_n({\mathbb D})
\diagup {\mathcal S}_n$ for each $t$). Its base point is
$$
E_n^0 = \varphi_0 (E_n^0) = \varphi_1 (E_n^0) \, .
$$
Notice that the isotopy class of the obtained geometric braid
depends only on the class of $\varphi$ in ${\mathfrak M}
(\overline{\mathbb D} ; \partial \, {\mathbb D} , E_n^0)$. The
obtained mapping from ${\mathfrak M} (\overline{\mathbb D} ;
\partial \, {\mathbb D} , E_n^0)$ to the group of braids with base
point $E_n^0$, hence to ${\mathcal B}_n$, is a homomorphism. It is,
in fact, an isomorphism. This is a consequence of Proposition
\ref{prop2.2} below.

Let $E_n$ be an arbitrary unordered $n$-tuple of points in ${\mathbb
D}$. A continuous family of homeomorphisms $\varphi_t \in {\rm
Hom}^+ (\overline{\mathbb D} ; \partial \, {\mathbb D})$, $t \in
[0,1]$, is called a parameterizing isotopy of a \index{isotopy ! parameterizing
isotopy} geometric braid $f : [0,1] \to C_n ({\mathbb D}) \diagup
{\mathcal S}_n$ with base point $E_n$ if $\varphi_0 = {\rm id}$ and
${\rm ev}_{E_n} \, \varphi_t = f(t)$, $t \in [0,1]$.

\begin{prop}\label{prop2.2}
Let $f : [0,1] \to C_n ({\mathbb D})\diagup {\mathcal S}_n$ be a smooth
geometric braid in $[0,1] \times \overline{\mathbb D}$ with any base
point $E_n \in C_n ({\mathbb D}) \diagup {\mathcal S}_n$. Then there
exists a smooth parameterizing isotopy $\varphi_t$ for $f$.
The mappings $\varphi_t$ can be chosen so that the map $[0,1] \times
\overline{\mathbb D} \ni (t,z) \to (t,\varphi_t (z)) \in [0,1]
\times \overline{\mathbb D}$ is a diffeomorphism.
\end{prop}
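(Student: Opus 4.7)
The plan is to reduce the problem to an isotopy extension statement and then solve it by an explicit flow. Since $\mathcal{S}_n$ acts freely and properly discontinuously on $C_n(\mathbb{D})$, the projection $\mathcal{P}_{\rm sym}$ is a covering map; after ordering the base point as $E_n^{\ast}=(z_1^0,\ldots,z_n^0)$, the path $f$ lifts uniquely to a continuous (respectively smooth) path $\tilde f(t)=(z_1(t),\ldots,z_n(t))$ in $C_n(\mathbb{D})$ with $\tilde f(1)=E_n^{\ast}$. It then suffices to build a continuous (respectively smooth) family $\varphi_t\in{\rm Hom}^+(\overline{\mathbb{D}};\partial\mathbb{D})$ with $\varphi_1={\rm id}$ and $\varphi_t(z_j^0)=z_j(t)$ for all $j$ and $t$; the evaluation ${\rm ev}_{E_n}\varphi_t$ then recovers $f(t)$.

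For the smooth case I would write the flow down explicitly. By compactness of $[0,1]$, pick $\varepsilon>0$ so small that the closed disks $\overline{D(z_j(t),2\varepsilon)}$ are pairwise disjoint and contained in $\mathbb{D}$ for every $t\in[0,1]$. Choose a smooth bump function $\chi:[0,\infty)\to[0,1]$ with $\chi\equiv 1$ on $[0,\varepsilon]$ and $\chi\equiv 0$ on $[2\varepsilon,\infty)$, and define the time-dependent vector field
\[
V_t(z)\;=\;\sum_{j=1}^{n} z_j'(t)\,\chi\!\bigl(|z-z_j(t)|\bigr),\qquad z\in\overline{\mathbb{D}},\ t\in[0,1].
\]
This is smooth, has compact support in $\mathbb{D}$, and satisfies $V_t(z_j(t))=z_j'(t)$. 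Let $\varphi_t$ be its non-autonomous flow normalised by $\varphi_1={\rm id}$. Since $V_t$ vanishes near $\partial\mathbb{D}$, $\varphi_t$ fixes the boundary pointwise; by uniqueness of ODE solutions together with $\varphi_1(z_j^0)=z_j(1)$ one obtains $\varphi_t(z_j^0)=z_j(t)$. Standard ODE theory then gives that $(t,z)\mapsto(t,\varphi_t(z))$ is a smooth diffeomorphism of $[0,1]\times\overline{\mathbb{D}}$.

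For the merely continuous case I would patch local constructions. Partition $[0,1]$ by $1=s_0>s_1>\cdots>s_N=0$ finely enough that on each subinterval $[s_{k+1},s_k]$ the strands $z_j(t)$ remain within pairwise disjoint topological disks $U_j^k\Subset\mathbb{D}$. On each subinterval one constructs a continuous family of homeomorphisms supported in $\bigcup_j U_j^k$, acting on each $U_j^k$ as a canonical ``straightening'' of the strand (for instance, a translation transported via a radial parametrisation of $U_j^k$ onto a round disk), and then concatenates at the times $s_k$. Equivalently, the required $\varphi_t$ is produced by the path-lifting property of the locally trivial fibration $\psi\mapsto{\rm ev}_{E_n^0}\psi$ from ${\rm Hom}^+(\overline{\mathbb{D}};\partial\mathbb{D})$ to $C_n(\mathbb{D})/\mathcal{S}_n$, which is the standard isotopy extension theorem for finitely many points on a surface (see \cite{Bi}).

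The main obstacle is securing genuine continuity of $t\mapsto\varphi_t$ in the compact-open topology, not just existence of a homeomorphism at each $t$. In the smooth case this is automatic from continuous dependence of ODE solutions on parameters. In the continuous case one must either invoke the isotopy extension theorem or glue the local pieces above carefully, making sure that at the transition times $s_k$ no discontinuity is introduced and that the total support remains compactly inside $\mathbb{D}$, so that $\partial\mathbb{D}$ is fixed pointwise throughout.
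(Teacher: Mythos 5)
Your proposal is correct and is essentially the same argument as the paper's: for the smooth case, you both lift $f$ to $C_n(\mathbb{D})$ and build a time-dependent vector field by cutting off the ideal velocities $z_j'(t)$ inside small pairwise-disjoint moving disks around the strands (the paper moves the cutoff via translations $\tilde\varphi_t$, you center it directly at $z_j(t)$ — a purely cosmetic difference), then integrate the flow backwards from $t=1$. For the continuous case the paper likewise simply defers to the literature (\cite{KaTu}), as you do.
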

For the continuous version see e.g. \cite{KaTu}.

\medskip

\noindent {\bf Proof of Proposition \ref{prop2.2}}.
Lift the mapping
$f$ to a mapping $\tilde f : [0,1] \to C_n ({\mathbb D})$. Denote
the coordinate functions of $\tilde f$ by $f_j : [0,1] \to {\mathbb
D}$. Choose $\delta > 0$ so that for each $t$ the discs ${\mathcal
U}_j (t)$ of radius $\delta$ around the points $f_j (t)$, $j =
1,\ldots , n$, are pairwise disjoint subsets of ${\mathbb D}$.

Consider for each $j$ the tube ${\mathfrak T}_j = \underset{t \in
[0,1]}{\bigcup} (t,{\mathcal U}_j(t))$. Define a smooth mapping $v$
from the union $\underset{j}{\bigcup} \ {\mathfrak T}_j$ of the tubes
to
the complex plane $\mathbb{C}$ by putting
\begin{equation}\label{eq2.10h}
v(t,\zeta)\stackrel{def}=\frac{\partial}{\partial t} f_j(t),\,\; (t,\zeta)\in {\mathfrak T}_j\,.
\end{equation}
Notice that for $(t,\zeta)\in {\mathfrak T}_j$ the point $\zeta$ is in the $\delta$-neighbourhood $\mathcal{U}_j(t)$ of $f_j(t)$ in $\mathbb{D}$. For each $t$ the mapping $v$ is constant on each $\mathcal{U}_j(t)$ as a function of $\zeta$.
The function $f_j$ satisfies the differential equation
\begin{equation}\label{eq2.10i}
\frac{\partial}{\partial t}f_j(t)= v(t,f_j(t)), \, t \in [0,1]\,.
\end{equation}
Indeed, $(t,f_j(t))$ is contained in ${\mathfrak T}_j$, hence,
$v(t,f_j(t))=\frac{\partial}{\partial t} f_j(t)$.

Let ${\mathfrak T}^0_j \Subset {\mathfrak T}_j$ be the $\frac{\delta}{2}$-tubes in $[0,1]\times \mathbb{D}$ around the graphs of $f_j$. Let $\chi_0$ be a $C^{\infty}$-function on $\mathbb{C}$ with values in $[0,1]$ that equals $1$ on $|\zeta|\leq \frac{\delta}{2}$ and equals $0$ outside $|\zeta|< \delta$.
Define a mapping $\chi$ on $[0,1]\times \mathbb{D}$ by the equations $\chi(t,\zeta)= \chi_0(\zeta- f_j(t))$ for $(t,\zeta)\in \mathfrak{T}_j$, $j=1,\ldots,n,$ and $\chi(t,\zeta)=0$ else.
Then $\chi$ is a $C^{\infty}$-function on $[0,1]\times \mathbb{D}$ with values in $[0,1]$ that equals $1$ in the union $\underset{j}{\bigcup} \ {\mathfrak T}^0_j$ of the smaller tubes and equals $0$ outside the union of the bigger tubes $\underset{j}{\bigcup} \ {\mathfrak T}_j$.
Put
\begin{equation}\label{eq2.10j}
V(t,\zeta)=\begin{cases} v(t,\zeta)\cdot \chi(t,\zeta) &\text{if} \;(t,\zeta) \in \underset{j}{\bigcup} \ {\mathfrak T}_j\\
0& \text{otherwise}
\end{cases}
\end{equation}
For $\zeta \in \mathbb{D}$ we denote by $\varphi_t(\zeta)$ the solution of the initial value problem
\begin{equation}\label{eq2.10j'}
\frac{\partial}{\partial t}\varphi_t(\zeta)= V(t,\varphi_t(\zeta)),\, \varphi_0(\zeta)=\zeta\,,
\end{equation}
on the maximal interval of existence. By the smooth version of the Picard-Lindel\"of Theorem the initial value problem has a unique solution on a maximal interval of existence and the solution depends smoothly on all parameters (see e.g. Hartmann, Theorem V, 3.1, Corollary V, 3.1 and the remark after it, and also Theorem V, 4.1).
Near the ends of the maximal interval of existence the solution curve approaches the boundary of the domain. Since $V=0$ in a neighbourhood of $[0,1]\times \partial \mathbb{D}$, the solution curve approaches $\{1\}\times \mathbb{D}$, in other words the maximal interval of existence equals $[0,1]$. Moreover, for each $t$ the mapping $\zeta\to \varphi_t(\zeta)$ is a local diffeomorphism, and hence by the uniqueness theorem for solutions of initial value problems, this mapping is a global diffeomorphism. Since $V=v$ on the union $\underset{j}{\bigcup} \ {\mathfrak T}^0_j$ of the smaller tubes, equation \eqref{eq2.10i} yields $\varphi_t(f_j(0))=f_j(t)$ for all $j$ and $t\in[0,1]$. Since $V$ vanishes in a neighbouhood of $[0,1]\times \overline{\mathbb{D}}$, $\varphi_t(\zeta)=\zeta$ for $\zeta$ close to  $\overline{\mathbb{D}}$. Hence,  $\varphi_t,\,t \in[0,1]$, is a parameterizing isotopy.
\hfill $\Box$

\begin{rem}\label{rem2.2}
Let $f:[0,1]\times[0,1]\to C_n ({\mathbb D})\diagup {\mathcal S}_n$
be a smooth isotopy of braids with
fixed base point $E_n$, that joins the braids $t\to f(t,0)$ and $t\to f(t,1)$.
Then there exists a diffeomorphism $[0,1]\times[0,1]\times \overline{\mathbb{D}}\ni(t,s,\zeta)   \overset{\varphi_{t,s}(\zeta) }\longrightarrow [0,1]\times[0,1]\times \overline{\mathbb{D}} $ such that
$\varphi_{t,s}(E_n)=f(t,s),\, (t,s)\in [0,1]\times[0,1],$ and $\varphi_{t,s}$ is the identity on $[0,1]\times[0,1]\times \partial{\mathbb{D}}$. If $ \varphi_t^0$ is a given smooth parameterizing isotopy for $t\to f(t,0)$ and $ \varphi_t^1$ is a given smooth parameterizing isotopy for $t\to f(t,1)$, then
the family $\varphi_{t,s}$ may be chosen so that  $ \varphi_{t,0}=\varphi_t^0$, $ \varphi_{t,1}=\varphi_t^1$, and
$\varphi_{0,s}={\rm Id}$.
\end{rem}
The proof follows along the same lines as the proof of Proposition \ref{prop2.2}. We construct a
vector field $V$ on $ [0,1]\times[0,1]\times \overline{\mathbb{D}} $,
i.e. a vector field that depends on  $(\zeta,t)\in \overline{\mathbb{D}}\times [0,1]$ and the additional parameter $s\in [0,1]$, such that
$$
V(t,s,f_j(t,s))= \frac{\partial}{\partial t}f_j(t,s)\; \mbox{for} \; (t,s)\in [0,1]\times [0,1]
$$
for the strands $f_j$ of $f$, and solve the initial value problem
\begin{align*}
\frac{\partial}{\partial t}\varphi_{t,s}(\zeta)= V(t,s,\varphi_{t,s}(\zeta)),\, \varphi_{0,s}(\zeta)=\zeta\,,(s,\zeta)\in [0,1]\times \overline{\mathbb{D}}\,.
\end{align*}
The families $ \varphi_{t,0}$ and
$ \varphi_{t,1}$ define vector fields  $\frac{\partial}{\partial t} \varphi_{t,0}$ and   $\frac{\partial}{\partial t} \varphi_{t,1}$      for $(t,s,\zeta)$ in $[0,1]\times \{0\}\times \overline{\mathbb{D}} $ and
$[0,1]\times \{1\}\times \overline{\mathbb{D}} $. The vector field $V$ can be chosen to coincide with these vector fields on the respective sets. We omit the details.
\smallskip

By the proposition the inverse of the mapping ${\mathfrak M}
(\overline{\mathbb D} ; \partial \, {\mathbb D} , E_n^0) \to
{\mathcal B}_n$ is obtained as follows. Take a braid $b \in
{\mathcal B}_n$ and choose a representing smooth geometric braid in $[0,1]
\times {\mathbb D}$. Consider a parameterizing isotopy $\varphi_t$.
Associate to $b$ the mapping class of the homeomorphism $\varphi_1$.


Explicitly, the inverse mapping assigns to each generator $\sigma_j
\in {\mathcal B}_n$ the class of the following homeomorphism which
is called a half-twist around the interval $\left[ \frac{j-1}{n} ,
\frac {j}{n} \right]$. Take two open discs $D_1$ and $D_2$ centered
at the midpoint of the segment $\left[ \frac{j-1}n , \frac jn
\right]$, such that $\left[ \frac{j-1}n , \frac jn \right] \subset
D_1 , \bar D_1 \subset D_2$, $\bar D_2$ does not contain points of
$E_n^0$ other than $\frac{j-1}n$ and $\frac jn$. Define $\varphi
_{\sigma_j}$ to be the identity on $\overline{\mathbb D} \backslash
D_2$ and to be counterclockwise rotation by the angle $\pi$ on $\bar
D_1$. Extend this mapping by a homeomorphism of $\bar D_2 \backslash
D_1$ which changes the argument of each point by a non-negative
value at most equal to $\pi$. We will denote the mapping
$\mathcal{B}_n \to {\mathfrak M} (\overline{\mathbb D} ;
\partial \, {\mathbb D} , E_n^0)$ by $\Theta_n$. \index{$\Theta_n$}

\begin{figure}[H]
\begin{center}
\includegraphics[width=120mm]{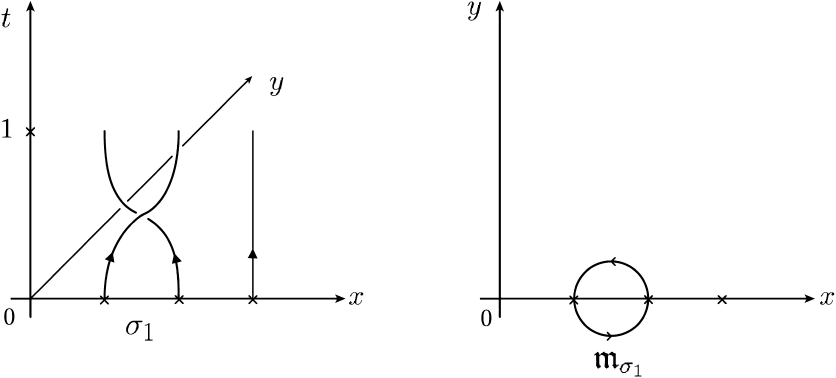}
\end{center}
\caption{The braid $\sigma_1\in \mathcal{B}_3$ and the corresponding mapping class $\mathfrak{m}_{\sigma_1}$}\label{fig2.1}
\end{figure}
The mapping class $\mathfrak{m}_{\Delta_n^2}$ corresponding to $\Delta_n^2$ is the Dehn twist about a curve in $\mathbb{D}$ that is homologous to $\partial \mathbb{D}$ in $\mathbb{D}\setminus E_n^0$.
Notice that the elements of the set of conjugacy classes $\widehat {\mathcal{B}}_n$ of braids are in
one-to-one correspondence with the elements of the set of conjugacy classes $\widehat{\mathfrak M}
(\overline{\mathbb D} ; \partial \, {\mathbb D} , E_n^0)$ of mapping classes.

\section{Beltrami differentials and quadratic differentials.}
\label{sec2:3}
In this section we collect some prerequisites from Teichm\"uller theory, in particular we recall some facts related to Beltrami differentials and quadratic differentials.
For more details we refer the reader for example to \cite{A1}, \cite{Let}, \cite{Na}, \cite{Str}.

Let $X$ be a Riemann surface. A Beltrami differential
\index{Beltrami differential} $\mu$ \index{$\mu$} on $X$ assigns to
each chart on $X$ with holomorphic coordinates $z$ an essentially
bounded measurable function $\mu (z)$ so that $\mu (z) \,
\frac{\overline{dz}}{dz}$ \index{$\mu (z) \,
\frac{\overline{dz}}{dz}$} is invariant under holomorphic coordinate
changes. In other words, the functions $\mu_1 (z)$ and $\mu_2
(\zeta)$ associated to local coordinates $z=z(\zeta)$ and  $\zeta$ are
related by the equation
\begin{equation}\label{eq2.14}
\mu_1 (z(\zeta)) \left( \frac{dz}{d\zeta} \right)^{-1}
\overline{\left( \frac{dz}{d\zeta} \right)} = \mu_2 (\zeta) \, .
\end{equation}
We may interpret Beltrami differentials as sections in the
bundle $\kappa^{-1}$ $\otimes$ $\bar\kappa$ for the cotangent bundle
$\kappa$ of $X$.


By an abuse of notation we will denote the Beltrami differential on
$X$ by $\mu$ and write $\mu = \mu (z) \, \frac{d\bar z}{dz}$, where
the left hand side denotes the globally defined Beltrami
differential and $\mu (z)$ on the right hand side is a representing
function in local coordinates $z$. The value $\vert \mu (z) \vert$
is invariant under holomorphic coordinate change. Put $\Vert \mu
\Vert_{\infty} \overset{\rm def}{=} \underset{X}{\rm sup} \, \vert
\mu \vert$. \index{$\underset{X}{\rm sup} \, \vert \mu \vert$}


Let $X$ and $Y$ be Riemann surfaces and let $\varphi : X \to Y$ be a
smooth orientation preserving homeomorphism. Let $z$ be coordinates
near a point of $X$ and let $\zeta$ be coordinates on $Y$ near its
image under $\varphi$. We write $\zeta = \varphi(z)$ in these
coordinates. Consider the function $\frac{\bar\partial \,
\varphi}{\partial \, \varphi} = \frac{\frac{\partial}{\partial \,
\bar z} \, \varphi}{\frac{\partial}{\partial z} \, \varphi}$ in
these coordinates. Since for the Jacobian ${\mathcal J} (z)$ of
$\varphi (z)$, ${\mathcal J}^2 (z) = \left\vert
\frac{\partial}{\partial z} \, \varphi (z) \right\vert^2 -
\left\vert \frac{\partial}{\partial \, \bar z} \, \varphi (z)
\right\vert^2 > 0$, the denominator $\partial \varphi$ does not
vanish and $ \left\vert \frac{\bar\partial \, \varphi}{\partial \,
\varphi} \right\vert < 1$. The expression
$\frac{\frac{\partial}{\partial \, \bar z} \, \varphi (z) \, d \bar
z}{\frac{\partial}{\partial z} \, \varphi (z) \, dz}$ is invariant
under holomorphic coordinate changes on $X$ and on $Y$. It defines a
Beltrami differential $\mu_{\varphi}$ \index{$\mu_{\varphi}$} on
$X$. The mapping $\varphi$ is called quasiconformal \index{mapping !
quasiconformal} if $\Vert \mu_{\varphi} \Vert_{\infty} < 1$. If $X$
and $Y$ are compact this is automatically so. The condition that
$\varphi$ is differentiable can be weakened.

For the Beltrami differential of superpositions of quasiconformal mappings the following formulas hold.
\begin{align}\label{eq2.14'}
\mu_g\circ f=\frac{\partial _z f}{ \partial _{\overline z} \overline{f}}
\frac{\mu_{g\circ f} -\mu_f  }{1-\overline{\mu_f} \, \mu_{g\circ f}}\,.
\end{align}
If $g$ is conformal, then $\mu_g=0$ and $\mu_{g\circ f}=\mu_f$. If
$f$ is conformal, then $\mu_f=0$ and
\begin{align}\label{eq2.14''}
\mu_g\circ f =\big( \frac{f'}{|f'|}\big)^2\, \mu_{g\circ f}\,.
\end{align}
For a detailed account
on quasiconformal mappings we refer to \cite{A1}.


The quasiconformal dilatation \index{quasiconformal dilatation} of
the mapping $\varphi$ is defined as $K(\varphi) = \frac{1+\Vert
\mu_{\varphi} \Vert_{\infty}}{1-\Vert\mu_{\varphi} \Vert_{\infty}}$
\index{$K(\varphi)$} if $\varphi$ is quasiconformal and is defined to be equal to $\infty$
otherwise.


A meromorphic (respectively, holomorphic) quadratic differential
\index{quadratic differential ! holomorphic} \index{quadratic
differential ! meromorphic} $\phi$ \index{$\phi$} on $X$ assigns to
each chart on $X$ with holomorphic coordinates $z$ a meromorphic
(respectively, holomorphic) function which by abusing notation we
denote by $\phi (z)$, such that $\phi (z)(dz)^2$ \index{$\phi
(z)(dz)^2$} is invariant under holomorphic changes of coordinates.
In other words, the functions $\phi_1 (z)$ and $\phi_2 (\zeta)$
associated to local coordinates $\zeta$ and $z(\zeta)$ are related
by the transformation rule
\begin{equation}\label{eq2.15}
\phi_1 (z(\zeta)) \left( \frac{dz}{d\zeta} \right)^2 = \phi_2
(\zeta) \, .
\end{equation}
Holomorphic quadratic differentials on $X$ can be regarded as
holomorphic sections of the bundle $\kappa^2$. By an abuse of
notation we will write $\phi = \phi (z) \, dz^2$, where the left
hand side denotes the quadratic differential and $\phi (z)$ on the
right hand side denotes the meromorphic function that represents
$\phi$ in local coordinates $z$. If the function $\phi_1(\zeta)$ corresponding to local coordinates $\zeta$ vanishes or equals $\infty$ at a point $\zeta_0$
the function $\phi_2(z)$ corresponding to local coordinates $z(\zeta)$  vanishes at the point $z(\zeta_0)$ or equals $\infty$ there. The set of points on $X$ where this happens is called the set of singular points of the quadratic differential $\phi$. The other points are called regular points of the quadratic differential.
\index{quadratic differential ! singular point of} \index{quadratic differential ! regular point of}

Suppose on a simply connected domain $U$ with coordinates $\zeta$ the quadratic differential equals $\phi(\zeta)(d\zeta)^2$ where $\phi \neq 0$ is a nowhere vanishing analytic function (in particular $\phi$ has no pole on $U$). Consider the differential form
$\sqrt{\phi(\zeta)}d\zeta$ on $U$ for a branch of the square root of $\phi$. There are local holomorphic coordinates $z$ of the quadratic differential
$\phi (\zeta) \, d\zeta^2$ on $U$ in which the quadratic differential
has the form $dz^2$. They are called 
flat coordinates. Such coordinates can be obtained as the integral of the mentioned differential form, \index{quadratic differential ! flat coordinates of}
\begin{equation}\label{eq2.15a}
z(\zeta)= \int_{\zeta_0}^{\zeta}\sqrt{\phi(\zeta')}d\zeta'.
\end{equation}
Together with the complex differential form $\sqrt{\phi(\zeta)}d\zeta$
on $U$ we will consider the two real forms $dx(\zeta)= \mbox{Re} (\sqrt{\phi(\zeta)}d\zeta)$ and $dy(\zeta)= \mbox{Im} (\sqrt{\phi(\zeta)}d\zeta)$ on $U$.

The coordinates \eqref{eq2.15a} vanish at $\zeta_0$. Flat coordinates that vanish at the point $\zeta_0$ are called distinguished coordinates at $\zeta_0$. 
Different flat coordinates on $U$ differ from each other
by sign (since we may choose the other branch of the square root) and an additive constant (as the integral of the form is defined up to an additive constant). Denote by $E'$ the set of singular points of the quadratic differential on $X$.
The $\phi$-regular set $X\setminus E'$ has a flat structure. This means that there is an atlas with transition functions being $z\to \pm z +  b$ for complex numbers $b$.

The flat structure defines foliations on the  $\phi$-regular part of $X$ as follows.
Cover $X\setminus E'$ by simply connected open sets $U_j$ each of which is equipped with distinguished coordinates $z_j$.
Then the forms $dx_j$ on $U_j$ define a foliation on $X\setminus E'$ which is called horizontal foliation. The plaques (the local parts of leaves in $U_j$ ) are the connected components of the sets $x_j=\mbox{const}$ in $U_j$. The transition functions map plaques to plaques.

\index{foliation ! horizontal} \index{foliation ! vertical}
In the same way the forms $dy_j$ on $U_j$ define a foliation on $X \setminus E'$ which is called the vertical foliation. Actually, for each real $\theta$ the forms $\mbox{Re}(e^{i\theta}dz_j)$ define a foliation on $X\setminus E'$.

The horizontal foliation on each $U_j$ is generated by a vector field $v_h(\zeta), \zeta \in U_j$. In other words, the leaves of this foliation are the integral curves of this vector field. In arbitrary coordinates $\zeta$ on $U_j$ the vector field is defined by the conditions
\begin{align}\label{eq2.300}
 \mbox{Re}(\sqrt{\phi(\zeta)}\;d\zeta)(v_h(\zeta))=1\,, \;\;  \mbox{Im}(\sqrt{\phi(\zeta)}\;d\zeta)(v_h(\zeta))=0\,.
\end{align}
Notice that if $\phi(\zeta)= |\phi(\zeta)| e^{i\theta}$ in coordinates $\zeta=\xi+i \eta$,  then in these coordinates
\begin{align*}
\mbox{Re}(\sqrt{\phi(\zeta)}\;d\zeta)= \sqrt{ |\phi(\zeta)|}(cos{\frac{\theta}{2}}\,d\xi -\sin{\frac{\theta}{2}}\,d\eta)\,,\nonumber\\
 \mbox{Im}(\sqrt{\phi(\zeta)}\;d\zeta)= \sqrt{ |\phi(\zeta)|}(\sin{\frac{\theta}{2}}\,d\xi +\cos{\frac{\theta}{2}}\,d\eta)\,.
\end{align*}
In distinguished coordinates $z_j$ the conditions \eqref{eq2.300} become $dx_j(v_h)=1\,\,,\,$ $dy_j(v_h)=0\,.\,$ In other words, in these coordinates the vector field points in the direction of the positive real axis (or in the direction of the negative real axis if the other branch of the square root is chosen). Identifying a vector with a point in the complex plane we obtain the following.  If $\phi(\zeta)= |\phi(\zeta)| e^{i\theta}$ in coordinates $\zeta$ then in these coordinates $v_h(\zeta)= |\phi(\zeta)|^{-\frac{1}{2}}\binom{\;\;\;\cos \frac{\theta}{2}}{ -\sin\frac{\theta}{2}}$
which we identify with the point $|\phi(\zeta)|^{-\frac{1}{2}} e^{-\frac{1}{2}i\theta}$ in the complex plane. Moreover, $v_{v}(\zeta)=|\phi(\zeta)|^{-\frac{1}{2}}\binom{\sin \frac{\theta}{2}}{ \cos\frac{\theta}{2}}=  |\phi(\zeta)|^{-\frac{1}{2}}\binom{\;\;\;\cos \frac{\theta-\pi}{2}}{ -\sin\frac{\theta-\pi}{2}}$ which can be identified with the point $|\phi(\zeta)|^{-\frac{1}{2}} e^{-\frac{1}{2}i(\theta-\pi)}$ in the complex plane.

In general the $v_{h}$ (the $v_v$, repectively) in the coordinate patches do not define a global vector field on $X\setminus E'$ but merely a global line field. \index{line field} Indeed, the horizontal foliation is in general not orientable.
The leaves of the horizontal foliation (of the vertical foliation, respectively) are also called horizontal trajectories (vertical trajectories, respectively). Arcs that are contained in horizontal (vertical, respectively) trajectories are called horizontal (vertical, respectively) arcs.

 \index{trajectory ! horizontal}
\index{trajectory ! vertical}

In a neighbourhood $U$ of a singular point $p_0$ there are holomorphic coordinates $z$
vanishing at the point in which the quadratic differential has the
form
\begin{equation}\label{eq2.16}
\phi (z) \, dz^2 = \left( \frac{a+2}2 \right)^2 z^a \, dz^2
\end{equation}
for some integer $a$. The coordinates in which $\phi$ has the form \eqref{eq2.16} are uniquely
defined up to multiplication by an $(a+2)$-nd root of unity and are
called distinguished coordinates at the singular points. The number $a$ is called the order
\index{quadratic differential ! order of a singular point} of the point. For distinguished
coordinates near regular points $\phi$ has the form \eqref{eq2.16} with
$a=0$. We will consider quadratic differentials with at worst poles
of order one, i.e. $a \geq -1$.

Equip a simply connected subset $U'$ of $U \setminus \{p_0\}$ with  coordinates $z$ satisfying \eqref{eq2.16}. We assume that in these coordinates
$U$ has the form $U=\{|z|<r_0\}$ for a positive number $r_0$.

The vector field that equals
$\frac{2}{a+2} r^{-\frac{a}{2}}e^{-\frac{i a \theta}{2}}$
at the point $z= e^{i\theta}$ in $U'$ in the mentioned coordinates generates the horizontal foliation on $U'$.
A ray $\{re^{i \theta}: 0<r<r_0\}\,$ (in these coordinates) is a subset of a horizontal trajectory if at each point the unit tangent vector to the ray is equal to the unit vector in a horizontal direction, i.e
$e^{i \theta}=\pm e^{-\frac{i a \theta}{2}}$, or equivalently, $\theta = \frac{2k\pi}{a+2}$, $k=0,\ldots,a+1$. The rays that are subsets of the vertical trajectories
are $\{re^{\frac{(2 k+1)\pi i}{a+2}}: r>0 \;\mbox{small}\},$ for $k=0,\ldots,a+1$ (we use the same coordinates satisfying \eqref{eq2.16}).
We call these rays horizontal bisectrices (also horizontal separatrices), or vertical bisectrices (also vertical separatrices), respectively.
Any other horizontal leaf in $U$ is a subset of a sector between two neighbouring horizontal bisectrices and avoids a neighbourhood (depending on the leaf) of the singular point. The respective facts hold for the vertical leaves.

The same facts hold in a neighbourhood of each singular point. We obtained a singular foliation on $X$. \index{singular foliations}
On a sector between two consecutive horizontal bisectrices a branch of the power $z \to z^{\frac{a+2}{2}}$ provides flat coordinates. The image of the sector under the latter mapping is a subset of the upper or lower half-plane with horizontal leaves
being the sets $x = \mbox{const}$. A sector between two consecutive vertical bisectrices is mapped to a subset of the right or left half-plane
with vertical leaves being the sets $y = \mbox{const}$.

We want to associate a singular metric to $\phi$.
Consider 
again a simply connected domain $U$ in $X\setminus E'$ equipped with coordinates $\zeta$.  The quadratic differential in these coordinates  has the form $\phi(\zeta)d\zeta^2$. Let $z$ be flat coordinates on $U$.
Let $\gamma:[0,1] \to U$ be a piecewise smooth curve.
We define the length $\ell_{\phi}(\gamma)$ with respect to $\phi$  of the curve $\gamma$ as its Euclidean length in some flat coordinates. This value is independent on the choice of the flat coordinates, and therefore it is well defined. If we denote by $\gamma_1$ the curve $t \to z(\gamma(t))$ in some flat coordinates $z$ then \index{$\ell_{\phi}(\gamma)$}
\begin{equation}\label{eq3.107b}
\ell_{\phi}(\gamma)= \int _{\gamma_1} |dz|= \int_0^1 \Bigl|\frac{d}{dt}(\gamma_1 (t)\Bigl|dt\,.
\end{equation}
In arbitrary coordinates $\zeta$ we obtain with $dz=\sqrt{\phi(\zeta)}d\zeta$
\begin{equation}\label{eq3.107c}
\ell_{\phi}(\gamma)= \int_{\gamma} |\sqrt{\phi(\zeta)}d\zeta|=
 \int_0^1 \sqrt{|\phi(\gamma(t))|}\; \Bigl|\frac{d}{dt}(\gamma (t))\Bigl|dt.
\end{equation}

We call  $|dz|=|\sqrt{\phi(\zeta)}d\zeta|$ the length element in the $\phi$-metric and $\ell_{\phi}(\gamma)$ the $\phi$-length of the curve $\gamma$. An arbitrary piecewise smooth curve in the regular part of $\phi$ can be subdivided into pieces contained in small discs around regular points. The \index{$\phi$-metric} \index{$\phi$-length}
$\phi$-length of the curve is the sum of the $\phi$-length of these pieces. The invariant form \eqref{eq3.107c} makes sense also for curves passing through singular points. As the singular points of $\phi$ are at worst poles of order one, the $\phi$-length of any piecewise smooth compact curve $\gamma: [0,1] \to X$ is finite.

In the same way we define the horizontal $\phi$-variation \index{$\phi$-variation ! horizontal} \index{$\phi$-variation ! vertical}
$\ell_{\phi,h}(\gamma)$ of a curve $\gamma$ (the vertical $\phi$-variation
$\ell_{\phi,v}(\gamma)$, respectively).  \index{$\ell_{\phi,h}(\gamma)$}  \index{$\ell_{\phi,v}(\gamma)$}    For a curve $\gamma$ in a simply connected set $U$ with
flat coordinates $z$  this is the total variation of the $x$ coordinate (the $y$ coordinate, respectively) along the curve $\gamma$,
\begin{align}\label{eq107d}
\ell_{\phi,h}(\gamma)= & \int _{\gamma} |dx|= \int_0^1 \Bigl|\mbox{Re}\frac{d}{dt}(\gamma (t)\Bigl|dt\,,\nonumber\\
\ell_{\phi,v}(\gamma)= & \int _{\gamma} |dy|= \int_0^1 \Bigl|\mbox{Im}\frac{d}{dt}(\gamma (t)\Bigl|dt\,.
\end{align}
An arbitrary curve is divided into pieces contained in such sets and the variations of the pieces are added.

The area element in the $\phi$-metric is defined as $dz\wedge d\bar z$ in flat coordinates $z$, or, equivalently, $\sqrt{\phi(\zeta)}d\zeta \wedge \overline{\sqrt{\phi(\zeta)}d\zeta}= |\phi(\zeta)| d\zeta\wedge \overline{ d\zeta}$ in arbitrary local coordinates $\zeta$.
For
a small open subset $U \subset X$ the integral $\underset{U}{\iint} \vert \phi \vert = \frac12 \underset{U}{\iint}
\vert \phi (\zeta) \vert \, d\zeta \wedge d\bar \zeta$ is invariant under
holomorphic coordinate changes. Hence, $\Vert \phi \Vert_1
\overset{\rm def}{=} \underset{X}{\iint} \vert \phi \vert$ is well
defined. $\phi$ is called integrable on $X$ if this integral is
finite. \index{quadratic differential ! integrable} \index{$\Vert \phi \Vert_1$}
The singularities of integrable meromorphic quadratic
differentials \index{quadratic differential ! integrable}
are zeros or
simple poles. The set of integrable meromorphic quadratic
differentials on $X$ with norm $\Vert \cdot \Vert_1$ forms a complex
Banach space.


The following fact is an immediate corollary of the definitions. For
each meromorphic quadratic differential $\phi$ and any number $k \in
(0,1)$ the object $k \, \frac{\vert \phi \vert}{\phi} = k \,
\frac{\bar\phi}{\vert \phi \vert}$ (given in local coordinates $z$
by $k \, \frac{\bar\phi (z)}{\vert \phi (z) \vert}$ with a
meromorphic function $\phi (z)$) is a Beltrami differential of norm
$K = \frac{1+k}{1-k}$.

\section{Elements of Teichm\"uller Theory.}
\label{sec:2.3a}

\noindent {\bf Teichm\"uller Theorem (closed surfaces).}
The following celebrated theorem was the key point for solving Riemann's problem on describing the classes of conformally equivalent Riemann surfaces. It was Teichm\"uller's vision that isotopy classes of conformally equivalent Riemann surfaces are easier to handle and their description provides a solution of Riemann's problem.
For more
details, see \cite{A2}, \cite{B2}.

\index{Teichm\"{u}ller ! Theorem}
\begin{thm}\label{thm2.3} Let $X$ and $Y$ be closed Riemann surfaces of
genus $g \geq 2$, and let $\varphi : X \to Y$ be a homeomorphism.
Then there is a unique homeomorphism isotopic to $\varphi$ with
smallest quasiconformal dilatation. This homeomorphism is either
conformal or its Beltrami differential has the form $k \cdot
\frac{\vert \phi \vert}\phi$ for a holomorphic quadratic
differential $\phi$ on $X$ and a constant $k \in (0,1)$. $\phi$ is
unique up to multiplication by a positive constant.
\end{thm}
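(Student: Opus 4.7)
The plan is to establish existence by a normal families argument, characterize the extremal map using the duality between Beltrami differentials and holomorphic quadratic differentials, and obtain uniqueness from Teichmüller's sharp inequality.

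For existence, let $K_0 = \inf\{K(\psi) : \psi \text{ is a quasiconformal homeomorphism } X \to Y \text{ isotopic to } \varphi\}$. Since $\varphi$ itself may be taken (after smoothing) to be quasiconformal, $K_0 < \infty$. Take a minimizing sequence $\varphi_n$ with $K(\varphi_n) \to K_0$. Since $g \geq 2$, the universal covers of $X$ and $Y$ are the upper half plane, and I can lift $\varphi_n$ to quasiconformal self-maps $\tilde{\varphi}_n$ of the upper half plane that are normalized to fix three boundary points (by precomposing with the action of the deck group on base points). The classical distortion bounds for $K$-quasiconformal maps give equicontinuity on compacta, so by normal families I can extract a subsequence converging locally uniformly. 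The limit descends to a $K_0$-quasiconformal homeomorphism $\varphi_0 : X \to Y$, which remains in the isotopy class of $\varphi$ because uniform convergence preserves homotopy classes.

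For the characterization, I would invoke the Hamilton--Krushkal necessary condition for extremality: if $\mu_0 = \mu_{\varphi_0}$ is the Beltrami differential of an extremal map with $k := \|\mu_0\|_\infty > 0$, then
\[
k \;=\; \sup\left\{ \left|\iint_X \mu_0 \,\phi \right| \,:\, \phi \text{ holomorphic quadratic differential on } X,\ \|\phi\|_1 = 1 \right\}.
\]
Since the space of holomorphic quadratic differentials on a closed surface of genus $g \geq 2$ is finite-dimensional (of dimension $3g-3$), the unit ball is compact, so the supremum is attained by some $\phi$ with $\|\phi\|_1 = 1$. The equality $\iint \mu_0 \phi = k$ together with $|\mu_0| \leq k$ forces $\mu_0(z) = k \, \overline{\phi(z)}/|\phi(z)|$ almost everywhere on the complement of the zero set of $\phi$. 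Since the zero set of $\phi$ has measure zero, this globally identifies $\mu_0 = k \cdot |\phi|/\phi$ in the prescribed form. If instead $k=0$, then $\varphi_0$ is conformal.

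For uniqueness, suppose $\varphi_0$ and $\varphi_1$ are two extremals with Beltrami forms $k \cdot |\phi_j|/\phi_j$, $j=0,1$. Then $\psi = \varphi_1 \circ \varphi_0^{-1}$ is a homeomorphism $Y \to Y$ isotopic to the identity with $K(\psi) \leq K(\varphi_1)\cdot K(\varphi_0^{-1}) = K_0^2$. Applying Teichmüller's main inequality (a strengthened form of the Reich--Strebel inequality) to $\psi$ against the quadratic differential $(\varphi_0)_* \phi_0$ forces $\psi$ to be conformal; being isotopic to the identity on a closed surface of genus $\geq 2$, it must be the identity by the rigidity of conformal automorphisms in a given mapping class. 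Hence $\varphi_0 = \varphi_1$, and comparing the two Beltrami differentials shows that $\phi_1 = c \, \phi_0$ for some $c > 0$. The main obstacle is the characterization step: showing that the extremal necessarily saturates the Hamilton--Krushkal bound and that the saturating functional is achieved by a \emph{holomorphic} quadratic differential (not merely an element of the dual Banach space), which is where the finite dimensionality of the space of holomorphic quadratic differentials and compactness of $X$ are essential.
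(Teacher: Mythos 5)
The paper states this as a classical background result (Teichm\"uller's theorem) and refers to Ahlfors \cite{A2} and Bers \cite{B2} without giving a proof, so there is nothing to compare against in the paper itself; what follows is an assessment of your sketch on its own terms. Your overall route --- existence by normal families, Teichm\"uller form from the Hamilton--Krushkal necessary condition plus finite dimensionality of the space of integrable holomorphic quadratic differentials, uniqueness from the Reich--Strebel main inequality --- is a legitimate and standard modern approach. The existence step is fine once one lifts a minimizing sequence to normalized equivariant self-maps of ${\mathbb C}_+$ and uses the H\"older distortion estimate; the limit stays in the homotopy class because the lifts conjugate the same pair of Fuchsian groups and the normalized boundary extensions converge. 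The derivation of the Teichm\"uller form from the chain $\iint\mu_0\phi\le\iint|\mu_0||\phi|\le k\|\phi\|_1$ with equality throughout is correct: it forces $|\mu_0|=k$ a.e.\ and $\mu_0\phi\ge 0$ a.e., hence $\mu_0=k\bar\phi/|\phi|$ off the zero set of $\phi$, which has measure zero.

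The genuine gap is in your uniqueness argument. You form $\psi=\varphi_1\circ\varphi_0^{-1}$, observe $K(\psi)\le K_0^2$, and claim that applying the main inequality to $\psi$ against $(\varphi_0)_*\phi_0$ ``forces $\psi$ to be conformal.'' That implication does not hold: the main inequality is satisfied by \emph{every} quasiconformal map homotopic to the identity, and the bound $K(\psi)\le K_0^2$ is both too weak and entirely unused in the conclusion you draw. What actually makes the argument close is the explicit Teichm\"uller form of $\mu_{\varphi_0^{-1}}$ obtained in the previous step: applying the Reich--Strebel inequality to $\psi$ with the terminal quadratic differential of $\varphi_0$ and substituting $\mu_{\varphi_1}$ via the chain rule for Beltrami coefficients yields the sharp inequality $K(\varphi_1)\ge K(\varphi_0)$, with equality only when $\mu_\psi\equiv 0$. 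It is this extremality inequality, not the a priori bound $K_0^2$, that forces $\psi$ conformal; your subsequent appeal to rigidity of conformal automorphisms in a fixed mapping class on a hyperbolic surface is then correct. The final observation --- that $\phi_0/\phi_1$ is meromorphic and positive a.e., hence a positive constant --- is also fine, but it presupposes $\varphi_1$ is itself of Teichm\"uller type, which you should state as the hypothesis of the uniqueness clause rather than derive from it.
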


A homeomorphism with Beltrami differential $k \cdot
\frac{\vert \phi \vert}\phi$ for a holomorphic quadratic
differential $\phi$ on $X$ and a constant $k \in (0,1)$
is called a Teichm\"uller
mapping and $\phi$ is called its quadratic differential. \index{Teichm\"uller ! mapping}

Teichm\"uller mappings are characterized by their local description in distinguished coordinates.
Let $X$ and $Y$ be closed Riemann surfaces.
Suppose $\varphi : X \to Y$ is a Teichm\"uller mapping
with quadratic differential $\phi$ and constant $k$. Then the
inverse mapping $\varphi^{-1}$ is again a Teichm\"uller mapping with
quadratic differential denoted by $-\psi$ and constant $k$. The
order of $\phi$ at a point $z$ is the same as the order of $-\psi$
at the image $\varphi (z)$. The quadratic differential $\psi$ is called the terminal quadratic differential of $\varphi$. The quadratic differential $\phi$ of $\varphi (z)$ is also called the initial quadratic differential of $\varphi (z)$. For more insight see
\cite{Be1}, or Theorem 8.1, V.8, of \cite{Let}. \index{quadratic differential ! initial} \index{quadratic differential ! terminal}


There are distinguished coordinates \index{quadratic differential !
distinguished coordinates of} $z$ for $\phi$ near a point $z_0 \in
X$ that vanish at $z_0$ and distinguished coordinates $\zeta$ for
$-\psi$ near the image point $\varphi (z_0) \in Y$ which vanish at
$\varphi (z_0)$ so that the mapping $\varphi$ has the form
\begin{equation}\label{eq2.17}
\zeta = \left( \frac{z^{a+2} + 2 \, k \vert z \vert^{a+2} + k^2 \,
\bar z^{a+2}}{1-k^2} \right)^{\frac1{a+2}}
\end{equation}
with $\zeta > 0$ for $z > 0$. If $a=0$ this is equivalent to
\begin{equation}\label{eq2.17a'}
\zeta
= \xi + i\eta = K^{\frac12} x + i \, K^{-\frac12} y
\end{equation}
with $z=x+iy$ and $K = \frac{1+k}{1-k}$ being the quasiconformal dilatation.


We will describe now an idea of Ahlfors ( \cite{A2}, section 4, p. 19-20) which allows to reduce
questions concerning self-homeomorphisms of punctured Riemann surfaces (self-homeomorphisms of closed Riemann surfaces with distinguished points, respectively) to the related
questions concerning closed Riemann surfaces. Ahlfors used it for a
proof of Teichm\"uller's theorem for punctured Riemann surfaces. It
can also be used for reducing the study of the entropy \index{entropy} of self-homeomorphisms of closed Riemann surfaces
with distinguished points
to the study of the entropy of self-homeomorphisms of closed Riemann surfaces.


Ahlfors' idea is the following. Let $X$ and $Y$ be closed Riemann surfaces, both with a set of $m$
distinguished points. (Equivalently, we may think about two $m$-punctured Riemann surfaces by using the identification of ${\rm Hom}^+
(X,\emptyset , E)$ with ${\rm Hom}^+ (X \backslash E,\emptyset ,
\emptyset )$.)
Assume $2g-2 + m > 0$, so that the universal covering of the
$m$-punctured surfaces equals ${\mathbb C}_+$. Except for $g=0$ and $m\leq 4$ Ahlfors associates to $X$ and
$Y$ closed Riemann surfaces $\hat X$ and $\hat Y$, which are
holomorphic simple branched coverings $p_X:\hat{X} \to X$ of $X$ and $p_Y:\hat{Y} \to Y$ of  $Y$ with branch
locus being the set of distinguished points, and have genus at least
two.
\index{covering ! simple branched}

Such coverings can be obtained as follows.
If the number of punctures is even (and not zero) and
either $g \geq 1$, or $g=0$ and $m \geq 6$, then one
can take a double branched covering with branch locus being the set of
distinguished points.

If $g>0$ and the number of points  in the set $E$ of distinguished points is odd and bigger than $1$, one first considers an intermediate double branched covering with branch locus  being a non-empty set $E' \subset E$ that contains an even number of points,and takes then a
double branched cover of the intermediate branched covering, this time with branch locus being the preimage of $E\setminus E'$  in the  intermediate branched covering (which consists of an even number of points).

If the number of distinguished points equals $1$ and $g>0$, we take an intermediate unramified covering and proceed further as before.

If $g=0$ and $m\geq 5$ is odd we consider an intermediate double branched covering with branch locus  being $m-1$ of the punctures, and then a double branched covering over the immediate covering branched at the preimages of the remaining puncture of $X$.
This treats all cases except $g=0$ and $m\leq 4$. If $m\leq 3$ there exist holomorphic self-mappings of $\mathbb{P}^1$ that map each distinguished point in $X$ to the respective distinguished point in $Y$. If $m=4$, the double branched covering with the required branch locus is a torus, in which case the  extremal mappings are well described, and also problems related to entropy are well-known in this case.
\index{covering ! branched}  \index{covering ! double branched} \index{covering ! unramified}

Consider a homeomorphism $\varphi : X \to Y$  which maps the set of
distinguished points of $X$ to the set of distinguished points of
$Y$. There is a lift $\;\hat\varphi \,$ of $\varphi\;$, $\hat\varphi :
\hat X \to \hat Y, \;$ i.e. a homeomorphism between closed
surfaces such that $p_Y \circ \hat \varphi = \varphi \circ p_X$.
The homeomorphism $\hat{\varphi}$ has some ``additional symmetries''.
Vice versa, a
homeomorphism between $\hat X$ and $\hat Y$ with such symmetries
is a lift of a homeomorphism between $X$ and $Y$ which maps the set
of distinguished points in $X$ to the set of distinguished points in
$Y$.

Notice that a self-homeomorphism $\varphi$ of a Riemann surface $X$ which preserves a set $E$ consisting of an even number of points, lifts to the double branched covering with branch locus $E$ as follows. Cut $X$ along a set $\Gamma$ of disjoint arcs that join pairs of points of $E$. Consider two copies of $X\setminus \Gamma$. $\varphi$ maps each copy to a copy of $X\setminus \varphi(\Gamma)$. Glue the copies in the source and the copies in the target crosswise along respective strands of the cuts and extend the mapping.

Teichm\"{u}ller's theorem also applies in the situation of
homeomorphisms with ``additional symmetry'' between closed Riemann
surfaces. One obtains quadratic differentials on $\hat X$ with a
"symmetry", and one obtains quadratic differentials on $X$ which
lift to the mentioned quadratic differentials on $\hat X$. This
implies that the quadratic differentials on $X$ have at most simple
poles at the branch locus. This is a simple calculation using the behaviour
of quadratic differentials under coordinate changes. Indeed, let $\hat z_0$ be a branch point in $\hat X$ and let $z_0$ be its image under the projection $\hat X \to X$. Let $\zeta$ be distinguished coordinates on $\hat X$ at $\hat z_0$, and let $z=z(\zeta)= \zeta ^2$ be coordinates near $z_0$ on $X$. If $\phi$ is the quadratic differential on $X$ and $\hat \phi$ its lift to $\hat X$ then by \eqref{eq2.15}
\begin{equation}\label{eq2.17a}
\hat{\phi}(\zeta)=\phi(z(\zeta)) z'(\zeta)^2=
\phi(z(\zeta)) (2\zeta)^2= 4 \phi(z(\zeta))z(\zeta)= 4 \phi(z) z \,.
\end{equation}
Since $\hat\phi$ does not have poles,
$\phi$ has at most a simple pole at $z_0$.

The arguments imply that
Teichm\"uller's theorem remains true for homeomorphisms between
closed  surfaces with distinguished points, if instead of
holomorphic quadratic differentials one considers meromorphic
quadratic differentials on closed surfaces with at most simple poles
at distinguished points. These are exactly the integrable
meromorphic quadratic differentials.


In a similar way Ahlfors treats finite Riemann surfaces of the second kind (i.e. Riemann
surfaces with finitely many boundary components, some of which may
be points, some are continua). He uses an extension of the
homeomorphism to a homeomorphism between the doubles of the Riemann
surfaces. We do not need this case here. For details see \cite{A2}.

The local characterization of Teichm\"uller maps implies the following useful fact. Let $X$, $X_1$ and $X_2$ be closed or punctured Riemann surfaces, and let $\varphi_1:X\to X_1$ and  $\varphi_2:X_1\to X_2$ be Teichm\"uller maps with quadratic initial differentials $\phi_1$ and $\phi_2$, respectively. Suppose $\phi_2$ is equal to the terminal quadratic differential of $\varphi_1$. Then $\varphi_2\circ\varphi_1$ is a Teichm\"uller map with quadratic differential $\phi_1$.

\bigskip

\noindent {\bf Teichm\"uller spaces.}
Let $X$ be a connected Riemann
surface of genus $g$, with $\ell$ boundary continua and $m$
punctures. We always assume that the universal covering of $X$
equals ${\mathbb C}_+$. (This excludes the Riemann sphere, ${\mathbb
C}$, ${\mathbb C}^* = {\mathbb C} \backslash \{ 0 \}$, and tori
(compact Riemann surfaces of genus 1).) \index{Teichm\"{u}ller ! space}

Let $w_j : X \to Y_j$, $j=1,2$, be two quasiconformal homeomorphisms
onto Riemann surfaces (considered as conformal structures on $X$). \index{conformal structure}
They are called (Teichm\"uller) equivalent if there exists a
conformal mapping $c : Y_1 \to Y_2$ such that $w_2^{-1} \circ c
\circ w_1$ is isotopic to the identity by an isotopy which fixes the
set of boundary continua pointwise (if any). We denote the
equivalence class of a quasiconformal complex structure $w : X \to
Y$ on $X$ by $[w]$. \index{$[w]$} The set of equivalence classes is
the Teichm\"uller space ${\mathcal T} (X)$.  \index{$\mathcal {T} (X)$}
Equip the Teichm\"uller
space with the Teichm\"uller metric \index{Teichm\"{u}ller ! metric}
\index{metric ! Teichm\"{u}ller} $d_{\mathcal T}$,
\begin{equation}\label{eq2.18}
d_{\mathcal T} ([w_1] , [w_2]) \overset{\rm def}{=} {\rm inf}
\left\{ \frac12 \log K (v_2 \circ v_1^{-1}) : v_1 \in [w_1] \, , \
v_2 \in [w_2] \right\} \, .
\end{equation}
\index{$d_{\mathcal T} ([w_1] , [w_2])$} Note that the infimum is
equal to the following
\begin{align}\label{eq2.19}
{\rm inf}&\biggl\{ \frac12 \log K(g) :\, g:Y_1\to Y_2 \;\mbox{such that}\, w_2^{-1}\circ g \circ w_1 :X \to X \; \mbox{is isotopic } \nonumber\\
&\mbox{to the identity
fixing the boundary continua pointwise} \biggl\} \, .
\end{align}
For a quasi-conformal homeomorphism $w$ with $[w] \in {\mathcal T}
(X)$ the space ${\mathcal T} (w(X))$ is canonically isometric to
${\mathcal T} (X)$. We choose a reference Riemann surface $X$ with
$m$ punctures and $\ell$ boundary continua and write ${\mathcal T}
(g,m,\ell)$.
\index{$\mathcal{T}(g,m,\ell)$}

We also need the Fuchsian model \index{Fuchsian model} of the
Teichm\"uller space. (For more details see  \cite{A1}, VI B.)
Consider Riemann surfaces $X$ and $Y$ of first kind whose universal coverings
equal ${\mathbb C}_+$.  Represent $X$ and $Y$ as quotients of the
upper half-plane ${\mathbb C}_+$ \index{$\mathbb{C}_+$} by the
action of Fuchsian groups $\Gamma$ and $\Gamma_1$ of first kind, i.e. $X \cong
{\mathbb C}_+  \diagup \Gamma$, $Y \cong {\mathbb C}_+  \diagup
\Gamma_1$. Then each homeomorphism $w$ from $X$ to $Y$ lifts to a
self-homeomorphism $\tilde w$ of the upper half-plane such that
\begin{equation}\label{eq2.20}
\mbox{for each $\gamma \in \Gamma$ there is $\gamma_1^{\tilde w} \in
\Gamma_1$ such that $\tilde w \circ\gamma =
\gamma_1^{\tilde w}$} \circ\tilde w \, ,
\end{equation}
and each self-homeomorphisms of ${\mathbb C}_+$ with this property
projects to a homeomorphisms from $X$ onto $Y$. Indeed, a mapping $\tilde{w}_1$ also lifts $w$ if and only it differs from $\tilde w$ by precomposition with a covering transformation for the covering $\mathbb{C}_+\to Y$.


Beltrami differentials and quadratic differentials on $X$ lift to
Beltrami differentials $\tilde\mu$ and quadratic differentials
$\tilde\phi$ on ${\mathbb C}_+$ with the invariance property
\begin{equation}\label{eq2.21}
\tilde\mu \circ \gamma = \tilde\mu \,
\frac{\gamma'}{\overline{\gamma'}} \quad \mbox{for} \ \gamma \in
\Gamma \qquad \mbox{(automorphic $(-1,1)$-forms)} \, ,
\end{equation}
\begin{equation}\label{eq2.22}
\tilde\phi \circ \gamma \cdot \gamma'^2 = \tilde\phi  \quad
\mbox{for} \ \gamma \in \Gamma \qquad \mbox{(automorphic $2$-forms)}
\, .
\end{equation}
A self-homeomorphism of the closed upper half-plane $\bar{\mathbb
C}_+$ (a self-homeomor\-phism of the Riemann sphere ${\mathbb P}^1$, respectively) is called normalized if it maps $0$ to $0$,
$1$ to $1$ and $\infty$ to $\infty$.
Note that each quasiconformal
self-homeomorphism of ${\mathbb C}_+$ extends to a
self-homeomorphism of $\bar{\mathbb C}_+$ (\cite{A2}).

Let $\Gamma$ be a Fuchsian group. Denote by $Q_{\rm norm} (\Gamma)$
\index{$Q_{\rm norm} (\Gamma)$} the set of normalized quasiconformal
self-homeomorphisms of ${\mathbb C}_+$ that satisfy \eqref{eq2.20}
for another Fuchsian group $\Gamma_1$. The Beltrami
differentials on ${\mathbb C}_+$ that satisfy \eqref{eq2.21} are in
one-to-one correspondence to elements of $Q_{\rm norm} (\Gamma)$.
Indeed, associate to each Beltrami differential $\mu$ on ${\mathbb
C}_+$ the Beltrami differential $\hat\mu$ on ${\mathbb C}$, for
which $\hat\mu (z) = \mu (z)$, $z \in {\mathbb C}_+$, $\hat\mu (z) =
\bar\mu (\bar z)$, $z \in {\mathbb C}_-$ (${\mathbb C}_-$ denotes
the lower half-plane). There is a unique normalized solution
\index{normalized solution} $w$ of the equation $w_{\bar z} =
\hat\mu (z) \, w_z$ on the complex plane. It maps ${\mathbb C}_+$
onto itself. Its restriction to ${\mathbb C}_+$ is denoted by
$w^{\mu}$. \index{$w^{\mu}$} \index{$W^{\mu}$} Let $\Gamma_1 = \Gamma^{\mu}$
\index{$\Gamma^{\mu}$} be the group $w^{\mu} \circ \gamma \circ
(w^{\mu})^{-1}$, $\gamma \in \Gamma$. This is a Fuchsian group. If
$\mu$ satisfies \eqref{eq2.21} then $w^{\mu}$ satisfies
\eqref{eq2.20} for $\Gamma_1 = \Gamma^{\mu}$.
Hence, $w^{\mu}$ induces a
quasiconformal mapping $W^{\mu}:\mathbb{C}_+\diagup \Gamma \to \mathbb{C}_+ \diagup \Gamma_1$.


Two elements of $Q_{\rm norm} (\Gamma)$ are called equivalent iff
their restrictions to the real axis coincide. The Teichm\"uller
space ${\mathcal T} (\Gamma)$ \index{$\mathcal{T} (\Gamma)$} is
defined as set of equivalence classes of elements of $Q_{\rm norm}
(\Gamma)$. Two mappings $w^{\mu}
, w^{\nu} \in Q_{\rm norm} (\Gamma)$ are equivalent iff the mappings
$w^{\mu} , w^{\nu}$ of ${\mathbb C}_+$ induce Teichm\"uller
equivalent mappings $W^{\mu} , W^{\nu}$ on  $X = {\mathbb C}_+
\diagup \Gamma$ (\cite{A1}, VI B, Lemma 2).


Let $\mu$ be a Beltrami differential on $X$, let $\tilde{\mu}$ be its
lift to $\mathbb{C}_+$ and let $w^{\tilde{\mu}}$ be the normalized
solution of the Beltrami equation on $\mathbb{C}_+$  for
$\tilde{\mu}$. The projection of $w^{\tilde{\mu}}$ to $X$ is
denoted by $W^{\mu}$.
For later use we give the following definition.

\begin{defn}\label{def2.1} For each Beltrami differential
$\mu$ on $X$ the homeomorphism $W^{\mu}$ is called the
normalized solution of the Beltrami equation on $X$ for the Beltrami
differential $\mu$.
\end{defn}
\index{normalized solution} Also, we assign to $\mu$ the element
$[W^{\mu}]$ of the Teichm\"uller space ${\mathcal T} (X)$. We use
the notation
$\{\mu\}$ for $[W^{\mu}]$. \index{$\{\mu\}$}
The obtained mapping from ${\mathcal T} (\Gamma)$ to ${\mathcal T}
(X)$ is a bijection.

Let now $X$ be a Riemann surface of genus $g$ with $m$ punctures and
no boundary continuum, i.e. $X=X^c \setminus E$ for a closed Riemann surface $X^c$ with set of distinguished points $E$.
We assume that the universal covering of $X$
equals ${\mathbb C}_+$. The Teichm\"uller space is denoted by
${\mathcal T} (X) \cong {\mathcal T} (g,m,0)$. Instead of ${\mathcal
T} (g,m,0)$
\index{$\mathcal{T}(g,m)$}
we will write ${\mathcal T} (g,m)$. Teichm\"uller's theorem
implies the following.


Denote by $QC(X)$ the \index{$QC(X)$} set of quasiconformal
homeomorphisms of $X$ onto another Riemann surface. The mapping
$QC(X) \overset{[ \ ]}{\longrightarrow} {\mathcal T} (X)$ assigns to
each element $w \in QC(X)$ its class $[w]$ in the Teichm\"uller
space ${\mathcal T} (X)$. The Teichm\"uller space is equipped with
the Teichm\"uller metric $d_{\mathcal T}$. Associate to each
non-trivial class $[w] \in {\mathcal T} (X)$ a (unique up to
composition with conformal mappings) extremal quasiconformal
homeomorphism in this class. We obtain a bijection from non-trivial
elements of the Teichm\"{u}ller space to Beltrami differentials of the form
$k \, \frac{\vert \phi \vert}{\phi}$ on $X$, where $k$ is a constant
in $(0,1)$ and $\phi$ is a holomorphic quadratic differential on $X$,
that extends to a meromorphic quadratic differential on $ X^c$ with at most simple poles at the points of $E$.
Notice that a  meromorphic quadratic differential on $X^c$ is integrable iff it has at most simple poles.
Assign to the
trivial class in the Teichm\"{u}ller space (corresponding to a conformal
extremal mapping) the zero quadratic differential. We obtain a
bijection of the Teichm\"{u}ller space to the space of integrable
holomorphic quadratic differentials of norm less than $1$. The real
dimension of the space of integrable holomorphic quadratic
differentials on $X$ is equal to $6g - 6 + 2m$. It can be proved
that the Teichm\"uller space ${\mathcal T} (X)$ is homeomorphic to
the unit ball in the Banach space of such quadratic differentials.


There is a unique conformal structure on ${\mathcal T} (g,m)$ with
the following property. Take any family of complex structures in
$QC(X)$ whose Beltrami differentials depend holomorphically on
certain complex parameters. More precisely, for almost all $x\in X$ the Beltrami differentials at $x$ of the members of the family depend holomorphically on the complex parameters. Then the equivalence classes in
${\mathcal T} (X)$ depend holomorphically on the complex parameters.


The explicit construction uses the Fuchsian model (see \cite{Be3}). Write $X = {\mathbb C}_+ \diagup \Gamma$. Let $\mu$ be a Beltrami
differential on ${\mathbb C}_+$ satisfying \eqref{eq2.21}. Let
$w_{\mu}$ be the unique normalized solution of the Beltrami equation
on the Riemann sphere with Beltrami coefficient equal to $\mu$ on
${\mathbb C}_+$ and equal to $0$ on ${\mathbb C}_-$. The mapping
$w_{\mu}$ does not map ${\mathbb C}_+$ onto ${\mathbb C}_+$, but if
the Beltrami differentials depend holomorphically on complex
parameters then the mappings $w_{\mu}$ depend holomorphically on
them. The mappings $w_{\mu}$ are conformal on the lower half-plane
${\mathbb C}_-$. Moreover, for two Beltrami differentials $\nu$ and
$\mu$ the equality $w^{\mu} = w^{\nu}$ holds on ${\mathbb R}$ iff
the equality $w_{\mu} = w_{\nu}$ holds on ${\mathbb R}$ (and hence
the latter equality holds on ${\mathbb C}_-$) (\cite{A1}, VI B, Lemma 1). Consider the
Schwarzian derivative
${\mathfrak S}
(w_{\mu} \mid {\mathbb C}_-)$. \index{$\mathfrak{S} (w_{\mu} \mid
{\mathbb C}_-)$} (For a locally conformal mapping $f$ on an open set
in ${\mathbb P}^1$ the Schwarzian derivative is defined as
${\mathfrak S} (f) = \left( \frac{f''}{f'} \right)' - \frac12 \left(
\frac{f''}{f'} \right)^2$.)


The Schwarzian derivatives of M\"obius transformations equal zero
and ${\mathfrak S} (f \circ g) = {\mathfrak S} (f) \circ g \cdot
(g')^2 + {\mathfrak S} (g)$ for locally conformal  mappings $f$ and
$g$. This implies that $\phi_{\mu} = {\mathfrak S} (w_{\mu} \mid
{\mathbb C}_-)$ is a quadratic differential on ${\mathbb C}_-$ which
satisfies \eqref{eq2.22} (with respect to $\Gamma$ acting on
${\mathbb C}_-$). If $\mu$ depends holomorphically on parameters
then so does $\phi_{\mu}$. We obtained a map $\mu \to \phi_{\mu} =
{\mathfrak S} (w_{\mu} \mid {\mathbb C}_-)$ from the set of Beltrami
differentials on ${\mathbb C}_+$ satisfying \eqref{eq2.21} to the
space of holomorphic quadratic differentials on ${\mathbb C}_-$. The mapping
satisfies the condition
\begin{equation}\label{eq2.22bis}
\phi_{\mu} = \phi_{\nu} \quad {\rm if} \quad w^{\mu} , w^{\nu} \in
Q_{\rm norm}(\Gamma) \; {\rm  are \; equivalent.}
\end{equation}
\index{Schwarz ! Schwarzian derivative}


Note that for each holomorphic function $f$ on a simply connected
domain in the complex plane there is a meromorphic function $w$ in
the domain, unique up to a M\"obius transformation for which
${\mathfrak S} (w) = f$. There is an explicit way to find such a
function.


Consider the Banach space $B({\mathbb C}_- , \Gamma)$ of holomorphic
quadratic differentials in ${\mathbb C}_-$ \index{$B({\mathbb C}_- , \Gamma)$}
with norm $\Vert \varphi \Vert = {\rm sup} \,
\vert y^2 \, \varphi (z) \vert$. By the condition \eqref{eq2.22bis} the mapping
\begin{equation}\label{eq2.23}
{\mathcal T} (X) \cong {\mathcal T} (\Gamma) \ni \{ \mu \} \to
\phi_{\mu}
\end{equation}
is well defined. It can be proved that it
defines a homeomorphism of ${\mathcal T} (X)$ onto an open subset of
the unit ball of $B({\mathbb C}_- , \Gamma)$ (\cite{A1}). This homeomorphism is
called Bers embedding. The complex structure on ${\mathcal T} (X)$
induced by this homeomorphism from $B({\mathbb C}_- , \Gamma)$ is
the desired one. For Beltrami differentials $\mu$ on
$X$ depending holomorphically on parameters, the Teichm\"uller
classes $[W^{\mu}] = \{ \mu \}$ also depend holomorphically on parameters, but the homeomorphisms
$W^{\mu}$ do possibly not  have
this property.

\bigskip

\noindent {\bf Teichm\"uller discs.}
Let $X$ be \index{Teichm\"{u}ller !
disc} a Riemann surface of genus $g$ with $m$ punctures with
universal covering ${\mathbb C}_+$. Write $X = {\mathbb C}_+ \diagup
\Gamma$ for a Fuchsian group $\Gamma$. Let $\phi$ be a meromorphic
quadratic differential on $X$ which is holomorphic on $\,X^c
\,$ except, maybe, at some punctures, where it may have simple
poles. For each $z = r \, e^{i\theta} \in {\mathbb D}$ we consider
the Beltrami differential $\mu_z = z \, \frac{\vert \phi
\vert}{\phi} = r \, \frac{\vert e^{-i\theta} \, \phi
\vert}{e^{-i\theta} \, \phi}$. Notice that the absolute value $|\mu_z|$
of $\mu_z$ is the constant function $|z|$ on $\mathbb{C}_+$.
Each Beltrami differential $\mu_z$
defines a unique element $w^{\mu_z} \in Q_{\rm norm} (\Gamma)$,
equivalently a unique normalized solution $W^{\mu_z} \in QC(X)$
associated to $\mu_z$, and a unique Teichm\"uller class $[W^{\mu_z}]
= \{\mu_z\} \in {\mathcal T} (X) \cong {\mathcal T} (\Gamma)$. The
mapping
\begin{equation}\label{eq2.24}
{\mathbb D} \ni z \to \{ \mu_z \} \in {\mathcal T} (X)
\end{equation}
is a holomorphic embedding. For each $z \ne 0$ the homeomorphism
$W^{\mu_z}$ is a Teichm\"uller map. Let $z_1 , z_2 \in {\mathbb D}$,
$z_1 \ne z_2$. Up to positive multiplicative constants the quadratic differential of $W^{\mu_{z_j}}$ equals $e^{-i\theta_j}\phi$, $j=1,2,$ i.e. the quadratic differentials differ by  a constant factor. Hence, by Lemma 9.1 of  \cite{Let}
the composition $W^{\mu_{z_2}} \circ
(W^{\mu_{z_1}})^{-1}$ is a Teichm\"uller mapping of the form $W^{\mu_{z'}}$ for a point $z'\in \mathbb{D}$. By formula (9.5) of \cite{Let} the point $z'$ is real, if $z_1$ and $z_2$ are real. One can show that the absolute value
of the Beltrami differential of  $W^{\mu_{z_2}} \circ
(W^{\mu_{z_1}})^{-1}$ is a constant function on $\mathbb{C}_+$ that equals
\begin{equation}\label{eq2.25}
k = |\mu_{W^{\mu_{z_2}} \circ (W^{\mu_{z_1}})^{-1}}| = \left\vert \frac{z_2
- z_1}{1-z_2 \, \bar z_1} \right\vert \, .
\end{equation}
Hence, with $K=\frac{1+k}{1-k}$ the value
$$
\frac12 \log K = \frac{1}{2} \log \frac{|1-z_2\overline{z_1}|+|z_2-z_1|}{|1-z_2\overline{z_1}|-|z_2-z_1|}
$$
is equal to the distance of $z_1$ and $z_2$ in the Poincar\'e
metric on the unit disc. (For details see, e.g. \cite{Let}.) Thus the mapping
${\mathbb D} \ni z \to \{ \mu_z \} \in {\mathcal T} (X)$ is an
isometric proper holomorphic embedding of the disc with Poincar\'e metric
into Teichm\"uller space with Teichm\"uller metric. It is called the
Teichm\"uller disc associated to $\phi$. We will denote its image in
${\mathcal T} (X)$ by ${\mathcal D}_{X,\phi}$.
\index{$\mathcal{D}_{X,\phi}$}

\bigskip

\noindent {\bf The modular group.} Consider a closed Riemann surface of
genus $g$ with $m$ punctures, denoted by $X=X^c\setminus E$. Here $X^c$ is a closed Riemann surface with a set $E$ of $m$ distinguished points.
A quasiconformal \index{modular
group} self-homeomorphism $\varphi$ of  $X$
induces a
mapping $\varphi^*$ of the Teichm\"uller space ${\mathcal T} (X)
\cong {\mathcal T} (g,m)$ to itself. It is defined as follows. For
each homeomorphism $w \in QC(X)$, $w:X\to Y$ the composition $w \, \circ \,
\varphi : X \to Y$ is another quasiconformal homeomorphism.
Its class $[w \circ \varphi] \in
{\mathcal T} (g,m)$ depends only on the class $[w]$ of $w$. Put
$\varphi^* ([w]) = [w \circ \varphi]$.
For each quasiconformal self-homeomorphism $\varphi$ of $X$
the mapping $\varphi^*$ is an
isometry on the Teichm\"uller space ${\mathcal T} (g,m)$.
Moreover,
it maps ${\mathcal T} (g,m)$ biholomorphically onto itself. The
mapping $\varphi^*$ is called the modular transformation of
$\varphi$.

Notice that
$\varphi^*$ depends only on the mapping class of $\varphi$ in the mapping class group $\mathfrak{M}(X;\emptyset)= {\mathfrak M} (X^c , \emptyset , E)$, in other words, isotopic self-homeomorphisms of $X$
have the same modular transformation. The modular group is
isomorphic to the mapping class group ${\mathfrak M} (X) = {\mathfrak M} (X^c , \emptyset , E)$.
For two quasiconformal self-homeomorphism $\varphi_1$ and $\varphi_2$ of the punctured Riemann surface $X$
the equality $(\varphi_1\circ\varphi_2)^*=\varphi_1^* \varphi_2^*$ holds, because $[w\circ(\varphi_1\circ\varphi_2)]=\varphi_2^*([w\circ\varphi_1])=
\varphi_1^*\varphi_2^*([w])$.
Hence, the set of modular transformations forms a group, called
the modular group. It is often denoted by ${\rm Mod} (g,m)$.
The quotient ${\mathcal T}
(g,m) \diagup {\rm Mod}(g,m)$ can be identified with the Riemann
space of conformally equivalent complex structures, i.e. with the
moduli space of Riemann surfaces of genus $g$ with $m$ punctures.
\index{${\rm Mod}(g,m)$}
\index{moduli space}

\bigskip

\noindent {\bf Royden's theorem.} The following deep theorem of
Royden \cite{Roy} has many applications.
\index{Theorem ! Royden}
\begin{thm}[Royden]\label{thm2.4} The Teichm\"uller metric on the
Teichm\"uller space ${\mathcal T}(g,m)$ is equal to the Kobayashi
metric. \index{metric ! Kobayashi} In other words, a holomorphic
mapping from the unit disc ${\mathbb D}$ with Poincar\'e metric
$\frac{\vert dz \vert}{1-\vert z \vert^2}$ into ${\mathcal T} (g,m)$
with Teichm\"uller metric is a contraction. (Equivalently, a
holomorphic map from ${\mathbb C}_+$ with hyperbolic metric
$\frac{\vert dz \vert}{2 \, y}$ into ${\mathcal T}(g,m)$ is a
contraction.) \index{metric ! hyperbolic}
\end{thm}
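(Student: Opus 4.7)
I will prove the two inequalities $d_K \leq d_{\mathcal{T}}$ and $d_{\mathcal{T}} \leq d_K$ separately, where $d_K$ denotes the Kobayashi distance on $\mathcal{T}(g,m)$.

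\textbf{Step 1 (the easy direction, $d_K \leq d_{\mathcal{T}}$, via Teichm\"uller discs).} Given two points $\tau_1,\tau_2 \in \mathcal{T}(g,m)$, apply Teichm\"uller's theorem (in the form stated in the excerpt for surfaces of first kind, using Ahlfors' trick to accommodate punctures) to produce the extremal quasiconformal map in the relevant isotopy class. Its Beltrami differential has the form $k\,\tfrac{|\phi|}{\phi}$ for an integrable holomorphic quadratic differential $\phi$ on the reference surface, with $\tfrac{1}{2}\log\tfrac{1+k}{1-k} = d_{\mathcal{T}}(\tau_1,\tau_2)$. Then both $\tau_1$ and $\tau_2$ lie on the Teichm\"uller disc $\mathcal{D}_\phi$. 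By equation~\eqref{eq2.25} the embedding $\mathbb{D} \to \mathcal{T}(g,m)$ defining $\mathcal{D}_\phi$ is an isometry from the Poincar\'e disc onto $\mathcal{D}_\phi$ with the Teichm\"uller metric. Since the Kobayashi metric is by definition the largest pseudometric for which every holomorphic map $\mathbb{D} \to \mathcal{T}(g,m)$ is a contraction, and since the Kobayashi (= Poincar\'e) distance between the $\mathbb{D}$-preimages of $\tau_1,\tau_2$ equals $d_{\mathcal{T}}(\tau_1,\tau_2)$, we conclude $d_K(\tau_1,\tau_2) \leq d_{\mathcal{T}}(\tau_1,\tau_2)$.

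\textbf{Step 2 (the hard direction, $d_{\mathcal{T}} \leq d_K$, via infinitesimal norms).} Both metrics are Finsler, so it suffices to prove equality of the infinitesimal norms on each tangent space $T_{\tau}\mathcal{T}(g,m)$, and then integrate. Represent $\tau = [w]$ and identify $T_{\tau}\mathcal{T}(g,m)$ with the quotient of bounded Beltrami differentials on $w(X)$ by the subspace infinitesimally trivial ones; the infinitesimal Teichm\"uller norm of a class $v$ is the infimum of $\|\mu\|_\infty$ over representatives, and by standard duality (Hahn--Banach combined with the Hamilton--Krushkal lemma) one has
\begin{equation*}
\|v\|_{T} \;=\; \sup\bigl\{|\langle v,\phi\rangle| : \phi \in Q(w(X)),\ \|\phi\|_1 = 1\bigr\},
\end{equation*}
where $Q(w(X))$ is the space of integrable holomorphic quadratic differentials and the pairing is $\langle\mu,\phi\rangle = \iint_{w(X)}\mu\phi$. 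On the other hand, Step~1 applied infinitesimally gives $\|v\|_K \leq \|v\|_T$ at each point.

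\textbf{Step 3 (the main obstacle and how to overcome it).} The hard content, which is Royden's theorem proper, is the reverse inequality $\|v\|_T \leq \|v\|_K$ at each tangent space. The strategy is to show that every complex-linear functional on $T_{\tau}\mathcal{T}(g,m)$ of $\|\cdot\|_T$-norm $1$ is already ``realized'' by the derivative at $0$ of some Teichm\"uller disc through $\tau$, and that these discs are extremal among all holomorphic discs. Concretely, one examines the unit sphere of the Banach space $Q(w(X))$ with its $L^1$-norm and proves Royden's structural theorem: every surjective complex-linear isometry of $Q(w(X))$ is induced by a biholomorphic self-map of $w(X)$ permuting its distinguished points. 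This rigidity forces the infinitesimal Kobayashi--Royden pseudo-norm dual to $\|\cdot\|_1$ to coincide with $\|\cdot\|_T$, because any holomorphic disc with normalized derivative at $\tau$ would otherwise produce a functional on $Q(w(X))$ whose support geometry cannot be matched by an automorphism. The main difficulty is precisely this rigidity theorem for the unit ball of $Q(w(X))$; its proof requires a delicate local analysis of quadratic differentials near their zeros, exploiting that the norm $\|\phi\|_1$ is smooth away from forms with zeros, and invoking Teichm\"uller's uniqueness theorem to identify extremal discs. Once equality of the infinitesimal norms is established, integrating along length-minimizing paths in either metric yields $d_{\mathcal{T}} = d_K$, completing the proof.
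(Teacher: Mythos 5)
The paper does not prove this theorem; it is stated as a citation to Royden \cite{Roy}, so there is no in-paper proof to compare against. Evaluating your sketch on its own terms: Steps~1 and~2 are correct and standard. Step~1 correctly obtains $d_K \leq d_{\mathcal{T}}$ from the Teichm\"uller disc embedding \eqref{eq2.25} and the extremality property of the Kobayashi metric, and Step~2 correctly reduces the converse to an infinitesimal Finsler statement via Hamilton--Krushkal duality.

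Step~3, however, conflates two distinct results from Royden's paper. The isometry rigidity theorem --- that every surjective complex-linear $L^1$-isometry of $Q(w(X))$ is induced by a conformal automorphism of $w(X)$ --- is the key to Royden's \emph{other} main theorem, namely that the isometry group of Teichm\"uller space is the extended modular group. It is not the engine behind the Kobayashi--Teichm\"uller equality, and your claim that ``this rigidity forces the infinitesimal Kobayashi--Royden pseudo-norm... to coincide with $\|\cdot\|_T$'' does not follow; there is no direct implication from the structure of the isometry group of $Q$ to the extremality of Teichm\"uller discs among all holomorphic discs. The actual key ingredient for $\|v\|_T \leq \|v\|_K$ is a Schwarz--Pick-type estimate derived from a sharp asymptotic expansion of the $L^1$-norm: for $\phi_1 \in Q$ with $\|\phi_1\|_1 = 1$ and $\phi_2 \in Q$, one has $\|\phi_1 + t\phi_2\|_1 = 1 + t\,\operatorname{Re}\int \tfrac{\bar{\phi_1}}{|\phi_1|}\phi_2 + O(|t|^{1+\alpha})$ with $\alpha>0$ controlled by the orders of zeros of $\phi_1$. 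This H\"older-type expansion, applied to a holomorphic disc $h:\mathbb{D}\to\mathcal{T}$ with $h(0)=\tau$, yields $\|h'(0)\|_T \leq 1$, which is exactly the infinitesimal contraction. You gesture at this in your final clause (``delicate local analysis... near their zeros''), but you attribute it to the proof of the isometry rigidity rather than recognizing it as the independent lemma that does the work here. As written, Step~3 would not close the argument: you would need to extract and prove the asymptotic lemma directly and apply it to arbitrary holomorphic discs, not invoke the rigidity theorem.
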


\bigskip

\noindent {\bf Configuration space and Teichm\"uller space.}
We will relate geometric $n$-braids
to paths in the Teichm\"uller space ${\mathcal T} (0,n+1)$ of the
Riemann sphere with $n+1$ punctures.
The reference Riemann surface
will be denoted by $X_0 = {\mathbb C} \backslash E_n^0 = {\mathbb P}^1 \backslash
(\{ \infty \} \cup E_n^0)$, where $E_n^0 = \left\{ 0 , \frac1n ,
\ldots \frac{n-1}n \right\}$. Let $E_n^1 \subset {\mathbb C}$ be
another set containing exactly $n$ points. For a homeomorphism from
${\mathbb C} \backslash E_n^0$ onto ${\mathbb C} \backslash E_n^1$
we will use the same notation for this homeomorphism and for the
extension of this homeomorphism to a self-homeomorphism of ${\mathbb
P}^1$ which maps $E_n^0$ to $E_n^1$ and $\infty$ to $\infty$.

Consider the set  $QC_{\infty} (0,n+1)$ \index{$QC_{\infty}(0,n+1)$} of
orientation preserving quasiconformal homeomorphisms $w:\mathbb{C}\setminus E_n^0 \to
\mathbb{C}\setminus E_n^1$ whose extensions to $\mathbb{P}^1$ fix $\infty$. We equip this set with the topology for which a neighbourhood basis
of an element $w_0$ is given by the sets
\begin{align*}
N_{\varepsilon}(w_0)=&\big\{ w\in QC_{\infty} (0,n+1):\\
& \|\mu_{w}-\mu_{w_0}\|_{\infty}<\varepsilon,\,
|w(0)-w_0(0)|<\varepsilon,\, |w(\frac{1}{n})-w_0(\frac{1}{n}  )|<\varepsilon\big\}.
\end{align*}
Convergence in this topology implies uniform convergence on compact subsets of $\mathbb{C}$ (of the extensions of the mappings across the punctures). Indeed, let $\mathfrak{a}_{w,w_0}:\mathbb{C}\,\toitself$ be the complex affine
mapping that takes $w(0)$ to $w_0(0)$ and $w(\frac{1}{n})$ to $w_0(\frac{1}{n})$. Then $\mathfrak{a}_{w,w_0}\circ w$ takes $0$ to $w_0(0)$ and $\frac{1}{n}$ to $w_0(\frac{1}{n})$, and  for any sequence of elements $w_n\in N_{\varepsilon}(w_0)$ with $\varepsilon_n\to 0$ the sequence
$\mathfrak{a}_{w_n,w_0}$ converges to the identity uniformly on compacts.
By formulas \eqref{eq2.14'}  and \eqref{eq2.14''}
the $L^{\infty}$-norm of the Beltrami coefficient of the mapping  $\mathfrak{a}_{w_n,w_0}\circ w_n \circ w_0^{-1}$ converges to $0$ for
any sequence $w_n\in N_{\varepsilon}(w_0),\,\varepsilon_n\to 0 $.  The mappings $\mathfrak{a}_{w_n,w_0}\circ w_n\circ w_0^{-1}$ fix the points $w_0(0)$, $w_0(\frac{1}{n})$, and $\infty$, i.e. they are normalized solutions of Beltrami equations on $\mathbb{C}$ with Beltrami coefficients converging to $0$ in the $L_{\infty}$-norm. Hence, these mappings converge to the identity uniformly on compacts (see \cite{A1}, Ch. V, B, Lemma 1 and the proof of Theorem 3).

Each self-homeomorphism $\psi$ of ${\mathbb C}$ acts diagonally on
the configuration space $C_n ({\mathbb C})$. Denote this action
again by $\psi$:
\begin{equation}\label{eq2.28}
\psi: (z_1 , \ldots , z_n) \to (\psi (z_1) , \ldots , \psi (z_n))
,\;\; (z_1 , \ldots , z_n) \in C_n ({\mathbb C}) \, .
\end{equation}
The action descends to an action on the symmetrized configuration space $C_n
({\mathbb C}) \diagup {\mathcal S}_n$.


Let ${\mathcal A}$ be the set of complex affine mappings on the \index{$\mathcal{A}$}
complex plane (equivalently, the set of M\"obius transformations on the Riemann sphere that fix $\infty$). Each element ${\mathfrak a} \in {\mathcal A}$
\index{$\mathfrak{a}$} has the form ${\mathfrak a}(z) = az+b$, $z
\in {\mathbb C}$. Here $b \in {\mathbb C}$ and $a \in {\mathbb C}^*
= {\mathbb C} \backslash \{ 0 \}$ are constants. ${\mathcal A}$ has
the complex structure of ${\mathbb C}^* \times {\mathbb C}$. It
forms a group under composition.

For $z=(z_1,z_2,\ldots,z_n)$ we put $\mathfrak{a}_z(\zeta)= \frac{1}{n}\frac{\zeta-z_1}{z_2-z_1}$.  Then
\begin{align*}
\mathfrak{a}_z(\zeta)( (z_1,z_2,\ldots,z_n))= &\big(0,\, \frac{1}{n},\,\frac{1}{n} \, \frac{z_3 - z_1}{z_2 - z_1},\,\ldots,\, \frac{1}{n} \, \frac{z_n - z_1}{z_2 -
z_1}\big)\\
\in& \{0\}\times \{\frac{1}{n}\} \times
C_{n-2} ({\mathbb C} \backslash \{0,\frac{1}{n}\})\,.
\end{align*}
Recall that
$$
C_{n-2} \big({\mathbb C} \backslash \{0,\frac{1}{n}\}\big) =\big \{ (z_3 ,
\ldots , z_n) \in \big({\mathbb C} \, \backslash
\{0,\frac{1}{n}\}\big)^{n-2} :\mbox{ $ z_i \ne z_j\, $ for $\, i\ne
j$}\big\}\,.
$$
The mapping
\begin{align*}
(z_1,z_2,\ldots,z_n) \to \big(z_1,\, z_2-z_1,\,\frac{1}{n} \, \frac{z_3 - z_1}{z_2 - z_1},\,\ldots,\, \frac{1}{n} \, \frac{z_n - z_1}{z_2 -
z_1}\big)
\end{align*}
is a holomorphic isomorphism from $C_n(\mathbb{C})$ onto $\mathbb{C}\times \mathbb{C}^*\times
C_{n-2} ({\mathbb C} \backslash \{0,\frac{1}{n}\})$.
We denote by $\mathcal{P}_{\mathcal{A}}$ the projection from $C_n(\mathbb{C})$ onto the quotient $C_n(\mathbb{C})\diagup {\mathcal{A}}$ by the action of $\mathcal{A}$. The quotient $C_n(\mathbb{C})\diagup {\mathcal{A}}$ with the inherited complex structure is isomorphic to $\{0\}\times\{\frac{1}{n}\}\times  C_{n-2} ({\mathbb C} \backslash \{0,\frac{1}{n}\})$.
Denote by ${\rm Is_n}$ the isomorphism ${\rm Is_n}: C_n(\mathbb{C})\diagup {\mathcal{A}} \to \{0\}\times\{\frac{1}{n}\}\times  C_{n-2} ({\mathbb C} \backslash \{0,\frac{1}{n}\})$ that assigns to each element of the quotient  $C_n(\mathbb{C})\diagup {\mathcal{A}}$ the normalized element of $C_n(\mathbb{C})$ whose first two coordinates are $0$ and $\frac{1}{n}$.
The composition $\mathcal{P}_{\mathcal{A},{\rm n}}\stackrel{def}={\rm Is_n}\circ \mathcal{P}_{\mathcal{A}}$ is equal to the mapping
$(z_1,z_2, \ldots, z_n) \xrightarrow {{\rm Is_n}\circ \mathcal{P}_{\mathcal{A}}} \big(0,\, \frac{1}{n},\,\frac{1}{n} \, \frac{z_3 - z_1}{z_2 - z_1},\,\ldots,\, \frac{1}{n} \, \frac{z_n - z_1}{z_2 -
z_1}\big) $.
\index{$\mathcal{P}_{\mathcal A}$}

Recall that for each self-homeomorphism $w$ of ${\mathbb P}^1$
the map $e_n (w)$ is defined by $e_n (w) = \Bigl(w (0) ,
w\left(\frac1n\right) , \ldots ,$ $w \left( \frac{n-1}n
\right)\Bigl)$.
Recall also that two complex
structures $w_1 , w_2$ on ${\mathbb C} \backslash E_n^0$ are
Teichm\"uller equivalent if there is a conformal mapping $c : w_1
({\mathbb C} \backslash E_n^0) \to w_2 ({\mathbb C} \backslash
E_n^0)$ such that $w_2^{-1} \circ c \circ w_1 : {\mathbb C}
\backslash E_n^0 \to {\mathbb C} \backslash E_n^0$ is isotopic to
the identity on ${\mathbb C} \backslash E_n^0$. Denote by the same
letters $c,w_1,w_2$ the extensions of the previous mappings to
${\mathbb P}^1$. The mapping $c$ is a M\"obius transformation that
fixes $\infty$, hence $c \in {\mathcal A}$.
This implies that for two Teichm\"uller equivalent conformal structures $w_1$ and $w_2$ on ${\mathbb C} \backslash E_n^0$ there exists a mapping $c \in \mathcal{A}$ such that
$e_n (w_2) = c(e_n (w_1))$.
Indeed, if $w_2^{-1} \circ c\circ w_1$ is isotopic to the identity on ${\mathbb C} \backslash E_n^0$ its extension to $\mathbb{C}$ fixes each point in $E_n^0$, hence $w_2$ and $c\circ w_2$ take the same values at each point of $E_n^0$.

The arguments imply that
to each Teichm\"uller class $[w] \in
{\mathcal T} (0,n+1)$ corresponds a unique class in $C_n ({\mathbb
C}) \diagup {\mathcal A}$. It is obtained as follows. Take an element $\tau\in {\mathcal T}(0,n+1)$. Represent $\tau$ by a self-homeomorphism $w\in \tau$. The element $e_n(w)\in C_n(\mathbb{C})$ is defined modulo a diagonal action of the group $\mathcal{A}$ of complex affine self-mappings of $\mathbb{C}$. We denote by $\mathcal{P}_\mathcal{T}(\tau)$ the unique element of the class in $C_n ({\mathbb
C}) \diagup {\mathcal A}$ corresponding to $\tau$, that has the form
$(0,\frac{1}{n},z_3,\ldots,z_n)$. \index{$\mathcal{P}_\mathcal{T}$}

In the following simple but useful lemma we again identify
homomorphisms between punctured surfaces and their extensions to
${\mathbb P}^1$.

\begin{lemm}\label{lemm2.6} Let $E_n^1$ and $E_n^2$ be subsets of
${\mathbb C}$, each containing exactly $n$ points. Let $w_1 :
{\mathbb C} \backslash E_n^0 \to {\mathbb C} \backslash E_n^1$ and
$w_2 : {\mathbb C} \backslash E_n^0 \to {\mathbb C} \backslash
E_n^2$ be Teichm\"uller equivalent homeomorphisms. If $w_1$ and $w_2$ take equal values on two different points $\frac{j_1}{n}$ and  $\frac{j_2}{n}$  of $E_n^0$, i.e. $w_1 \left(\frac{j_1}{n}\right) = w_2
\left(\frac{j_1}{n}\right)$ and $w_1 \left( \frac{j_2}{n} \right) = w_2 \left( \frac{j_2}{n} \right)$,
then $E_n^1 = E_n^2$ and $w_1 , w_2$ are isotopic as mappings from
${\mathbb C} \backslash E_n^0$ onto ${\mathbb C} \backslash E_n^1$ (i.e.
$w_1 \circ w_2^{-1}:\mathbb{C}\setminus E_n^2\toitself$ is isotopic to the identity.)
In particular, $e_n \circ w_1 = e_n \circ w_2$.
\end{lemm}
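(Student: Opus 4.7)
The plan is to use the material laid out in the paragraphs immediately preceding the lemma, where it is already observed that the conformal map $c$ witnessing Teichm\"uller equivalence extends to a M\"obius transformation of $\mathbb{P}^1$ fixing $\infty$, hence lies in $\mathcal{A}$, and moreover satisfies $e_n(w_2) = c(e_n(w_1))$.

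First, I would unpack the Teichm\"uller equivalence of $w_1$ and $w_2$: by definition there is a conformal isomorphism $c : w_1(\mathbb{C} \setminus E_n^0) \to w_2(\mathbb{C} \setminus E_n^0)$ with $w_2^{-1} \circ c \circ w_1$ isotopic to the identity on $\mathbb{C} \setminus E_n^0$. Extending across $\infty$ and the points of $E_n^j$ in the usual way, $c$ becomes a M\"obius transformation of $\mathbb{P}^1$ fixing $\infty$, so $c(\zeta) = a\zeta + b$ for some $a \in \mathbb{C}^*$, $b \in \mathbb{C}$, and coordinate-wise the relation $e_n(w_2) = c(e_n(w_1))$ yields
\[
w_2(k/n) \;=\; c\bigl(w_1(k/n)\bigr), \qquad k=0,1,\ldots,n-1.
\]

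Next I would plug in $k=0$ and $k=1$ together with the hypotheses $w_1(0)=w_2(0)$ and $w_1(1/n)=w_2(1/n)$, which force $c$ to fix the two distinct points $w_1(0)$ and $w_1(1/n)$. An affine map $\zeta \mapsto a\zeta+b$ fixing two distinct finite points must have $a=1$ and $b=0$, so $c = \mathrm{id}$. Consequently $w_2(k/n)=w_1(k/n)$ for all $k$, which both proves $e_n \circ w_1 = e_n \circ w_2$ and, after applying $\mathcal{P}_{\mathrm{sym}}$, gives $E_n^1 = E_n^2$.

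Finally, with $c=\mathrm{id}$, the Teichm\"uller equivalence simplifies to the statement that $w_2^{-1} \circ w_1$ is isotopic to the identity as a self-homeomorphism of $\mathbb{C} \setminus E_n^0$; composing the isotopy with $w_2$ exhibits an isotopy between $w_1$ and $w_2$ as maps $\mathbb{C} \setminus E_n^0 \to \mathbb{C} \setminus E_n^1$. There is no genuine obstacle: essentially every step is a bookkeeping consequence of the preceding paragraphs, and the only point that needs any attention is making sure the extensions of $w_1,w_2,c$ across the punctures are the \emph{same} ones used in the identity $e_n(w_2) = c(e_n(w_1))$, which is precisely the convention fixed just above the lemma.
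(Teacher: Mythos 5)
Your proof is correct and follows essentially the same route as the paper's: identify the conformal map $c$ as an affine map, use the two coincidence conditions to force $c = \mathrm{id}$, and then read off the conclusions from the definition of Teichm\"uller equivalence. The only cosmetic difference is that you invoke the displayed relation $e_n(w_2) = c(e_n(w_1))$ from the preceding paragraph as a black box, whereas the paper's proof re-derives the key fact behind it (that an isotopy of $w_2^{-1}\circ c \circ w_1$ to the identity relative to the discrete set $E_n^0$ forces that map to fix $E_n^0$ pointwise) directly; both are legitimate.
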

\begin{figure}[H]
\begin{center}
{ $ \xymatrix
   { & X^0 \ar[dl]_{w_1} \ar[dr]^{w_2} & \\
    \mathbb{C}\setminus E_n^1  \ar[rr]_{c} & & \mathbb{C}\setminus E_n^2
   }
$}
\end{center}
\caption{Teichm\"uller equivalent mappings}
\end{figure}

\noindent {\bf Proof.} Under the conditions of the lemma there is an affine map $c \in {\mathcal A}$ such that the
mapping $w_2^{-1} \circ c \circ w_1$ is isotopic to the identity
through self-homeomorphisms of ${\mathbb C} \diagup E_n^0$. Then $w_2^{-1} \circ c \circ w_1$
fixes $E_n^0$ pointwise, i.e. $c \circ w_1 \mid E_n^0 = w_2 \mid
E_n^0$. Since $w_1 \left(\frac{j_1}{n}\right) = w_2
\left(\frac{j_1}{n}\right)$ and $w_1 \left( \frac{j_2}{n} \right) = w_2 \left( \frac{j_2}{n} \right)$,
$c$ fixes two points in ${\mathbb C}$, hence
$c$ is the identity. Thus $E_n^1 = w_1 (E_n^0) = w_2 (E_n^0) =
E_n^2$ and $w_2^{-1} \circ w_1$ is isotopic to the identity through
self-homeomorphisms of ${\mathbb C} \backslash E_n^0$. In other
words, there is a continuous family $\varphi^t$, $t \in [0,1]$, of
self-homeomorphisms of ${\mathbb C} \mid E_n^0$ with $\varphi^0 =
w_2^{-1} \circ w_1$ and $\varphi^1 = {\rm id}$. The required isotopy
is $w_2 \circ \varphi^t$, $t \in [0,1]$. The equality $e_n \circ w_1
= e_n \circ w_2$ follows. \hfill $\Box$

\medskip

The following theorem was proved by Kaliman  \cite{Ka} and later independently by Bers and Royden \cite{BeRo}. Our proof follows the approach of Kaliman.
\index{Kaliman ! Theorem}

\begin{thm}\label{thm2.1a}
The mapping $\mathcal{P}_{\mathcal{T}}:\mathcal{T}(0,n+1)\to  \{0\}\times\{\frac{1}{n}\}\times      C_{n-2} ({\mathbb C} \backslash \{0,\frac{1}{n}\})$ is a holomorphic covering. Hence,
the Teichm\"uller space $\mathcal{T}(0,n+1)$ is the holomorphic universal
covering space of the space $\{0\}\times\{\frac{1}{n}\}\times   C_{n-2} ({\mathbb C} \backslash \{0,\frac{1}{n}\})$.
\end{thm}

\noindent {\bf Proof.}
The following lemma implies the theorem. \hfill $\Box$

\medskip

In the Lemma \ref{lem2.1a} we consider $\{0\}\times\{\frac{1}{n}\}\times      C_{n-2} ({\mathbb C} \backslash \{0,\frac{1}{n}\})$ as subspace of $C_n(\mathbb{C})$ and denote points of this space by $\tilde {E}_n$.

\begin{lemm}\label{lem2.1a}
For any point $\tilde{E}^*_n\in
\{0\}\times\{\frac{1}{n}\}\times      C_{n-2} ({\mathbb C} \backslash \{0,\frac{1}{n}\})$ there exists a neighbourhood $U$ in $\{0\}\times\{\frac{1}{n}\}\times      C_{n-2} ({\mathbb C} \backslash \{0,\frac{1}{n}\})$, such that for
any preimage $\tau_0\in\mathcal{P}_{\mathcal{T}}^{-1}(\tilde{E}^*_n)$ there exists a holomorphic mapping $g_U: U\to \mathcal{T}(0,n+1)$ that takes the value $\tau_0$ at the point $\tilde{E}^*_n$ and is a local inverse of $\mathcal{P}_{\mathcal{T}}$.

Moreover, for any given homeomorphism $w_{\tilde{E}_n^*}^0\in  QC_{\infty}(0,n+1)$,  such that
$e_n(w_{\tilde{E}_n^*}^0)=\tilde{E}_n^*$ and $[w_{\tilde{E}_n^*}^0]=\tau_0$, there exists a smooth mapping\\
$\quad \quad U\ni\tilde{E}_n \xrightarrow{g_U^1} w_{\tilde{E}_n}\in QC_{\infty}(0,n+1)$
with $w_{\tilde{E}_n^*}=w_{\tilde{E}_n^*}^0$ and $e_n(w_{\tilde{E}_n})= \tilde{E}_n$,\\ such that
$g_U(\tilde{E}_n)= [w_{\tilde{E}_n}]= [g_U^1(\tilde{E}_n)]$, and hence,   $\mathcal{P}_{\mathcal{T}}([w_{ \tilde{E}_n}])= \mathcal{P}_{\mathcal{T}}(g_U(\tilde{E}_n) )= \tilde{E}_n$.
\end{lemm}

\noindent{\bf Proof.} We consider the
family $v^{\mathbb{D}}_z:\overline{\mathbb{D}}\toitself$ of self-homeomorphisms of the unit disc, \index{$v^{\mathbb{D}}_z$}
\begin{equation}\label{eq2.29}
v^{\mathbb{D}}_z(\zeta)=\frac{\rho(\zeta) z+\zeta}{1+\rho(\zeta) z\overline{\zeta}}\,, \zeta \in \mathbb{D},
\end{equation}
where the parameter $z$ runs over a disc $\varepsilon\mathbb{D}$ with center $0$ and small radius $\varepsilon$. Here $\zeta\to\rho(\zeta)$ is a (real) non-negative smooth function on $\mathbb{D}$, that vanishes near the unit circle, equals $1$ in a neighbourhood of the closed disc $\overline{\varepsilon\mathbb{D}}$, and satisfies the inequality $|\rho(\zeta)|\leq 1$ on $\mathbb{D}$.
For each $z \in  \varepsilon\mathbb{D}$  the restriction $v^{\mathbb{D}}_z|  \overline{\varepsilon\mathbb{D}}$ equals $\zeta\to \frac{z+\zeta}{1+z\overline{\zeta}}$.
This mapping takes $0$ to $z$. The mapping $v^{\mathbb{D}}_z$ is equal to the identity on the unit circle.
For each $\zeta\in \overline{{\mathbb{D}}}$ the mapping $z\to v_z^{\mathbb{D}}(\zeta),\,z\in \mathbb{D},$ is holomorphic.
The following equalities hold.
\begin{align}\label{eq2.29'}
v_z^{\mathbb{D}}(\zeta)-\zeta=& \frac{\rho(\zeta) z (1-|\zeta|^2)}{1+z\rho(\zeta)\overline{\zeta}  }\,,\nonumber\\
\partial_{\overline{\zeta}}(v_z^{\mathbb{D}}(\zeta)-\zeta)=
&\frac{z \partial_{\overline{\zeta}}\rho(\zeta)(1-|\zeta|^2)-\rho(\zeta)z {\zeta}}{1+z\rho(\zeta)\overline{\zeta}}-
\frac{\rho(\zeta) z (1-|\zeta|^2)(z \partial_{\overline{\zeta}}\rho (\zeta)\overline{\zeta}+z\rho(\zeta))}
{ (1+z\rho(\zeta)\overline{\zeta})^2   }\,,\quad\nonumber\\
\partial_{\zeta}(v_z^{\mathbb{D}}(\zeta)-\zeta) = &\frac{z\partial_{\zeta}\rho(\zeta)(1-|\zeta|^2)-\rho(\zeta)z\overline{\zeta}} {1+z\rho(\zeta)\overline{\zeta}}-\frac{\rho(\zeta) z (1-|\zeta|)^2 \, z(\partial_{\zeta}
\rho)(\zeta)\overline{\zeta} }{(1+z\rho(\zeta)\overline{\zeta})^2      }\,.\quad\quad
\end{align}

Put $C\stackrel{def}=\max_{|\zeta|\leq 1} |(\partial_{\zeta}\rho) (\zeta)|= \max_{|\zeta|\leq 1} |(\partial_{\overline{\zeta}}\rho) (\zeta)|  $.
For each $z\in \varepsilon \mathbb{D}$ we extend $v^{\mathbb{D}}_z$ to $\mathbb{P}^1$ by the identity outside the unit disc. Denote the extended function again by $v^{\mathbb{D}}_z$.
By equation \eqref{eq2.29'} for each $z\in\varepsilon \mathbb{D}$ the mapping  $v^{\mathbb{D}}_z(\zeta)-\zeta,\, \zeta \in \mathbb{P}^1,$ is  smooth and vanishes outside the unit disc.  Its supremum norm and the supremum norm of its differential do not exceed $C' \varepsilon$ for a constant $C'$ depending on $C$. These facts imply, that if $\varepsilon$ is small, for each $z \in\varepsilon\mathbb{D}$ the mapping  $v^{\mathbb{D}}_z:\mathbb{P}^1\toitself$ is a diffeomorphism. Indeed, $v^{\mathbb{D}}_z$ is a local diffeomorphism and it is  $C'\varepsilon$-close to the identity in the supremum norm, hence it is
injective, if $\varepsilon$ is small. Hence, $v^{\mathbb{D}}_z$ is a diffeomorphism from $\mathbb{P}^1$ onto its image.   Since it is the identity outside the unit disc, it is a diffeomorphism from $\mathbb{P}^1$ onto itself, and it maps the unit disc onto itself. Moreover,
the Beltrami coefficient
$\mu_{v_z^{\mathbb{D}}}(\zeta)= \frac{\partial_{\overline{\zeta}}v_z^{\mathbb{D}}(\zeta) }{\partial_{\zeta}v_z^{\mathbb{D}}(\zeta)}$  of $v^{\mathbb{D}}_z$ for $|\zeta|<1$  satisfies the inequality $\sup_{\zeta\in \mathbb{D}}|\mu_z^{\mathbb{D}}(\zeta)|\leq C'' \varepsilon$, and, hence, is small for each $z\in \varepsilon\mathbb{D}$ if $\varepsilon$ small.
Moreover, the mapping $z\to \mu_{v_z^{\mathbb{D}}}(\zeta), \, z\in\varepsilon\mathbb{D},$
is holomorphic for each $\zeta \in \mathbb{D}$.

Write $\tilde{E}_n^*= (0,\frac{1}{n},z_3^0,\ldots,z_n^0) $, and
let $D_j$ be disjoint discs in $\mathbb{C}\setminus \{0,\frac{1}{n}\}$ with center $z_j^0$, $j=3,\ldots,n$. The set $U'\stackrel{def}=\{0\}\times\{1\}\times D_3\times\ldots\times D_n \subset \{0\}\times\{\frac{1}{n}\}\times      C_{n-2} ({\mathbb C} \backslash \{0,\frac{1}{n}\})$ is a neighbourhood of $(0,\frac{1}{n},z_3^0,\ldots,z_n^0)$ in $\{0\}\times\{\frac{1}{n}\}\times      C_{n-2} ({\mathbb C} \backslash \{0,\frac{1}{n}\})$.
Let $D_j^{\varepsilon},\, j=3,\ldots,n,$ be the disc with the same center as $D_j$ and radius equal to the radius of $D_j$ multiplied by the small number
$\varepsilon$ chosen above. We consider the neighbourhood $U \stackrel{def}=\{0\}\times\{1\}\times D_3^{\varepsilon}\times\ldots\times D_n^{\varepsilon}$, $U\subset U'$, of $(0,\frac{1}{n},z_3^0,\ldots,z_n^0)$.
Take any $\tau_0\in \mathcal{T}(0,n+1)$ for which $\mathcal{P}_{\mathcal{T}}(\tau_0)=(0,\frac{1}{n},z_3^0,\ldots,z_n^0)=\tilde{E}_n^*$ and any $w_{\tilde{E}_n^*}^0$ with $[w_{\tilde{E}_n^*}^0]=\tau_0$.
We will define a continuous family $w_{\tilde{E}_n}\in QC_{\infty}(0,n+1), \tilde{E}_n\in U,$ of
quasiconformal homeomorphisms of $\mathbb{P}^1$ with the following properties. $e_n(w_{\tilde{E}_n})=\tilde{E}_n$, $g_U(\tilde{E}_n)= [w_{\tilde{E}_n}]$, and
$\mathcal{P}_{\mathcal{T}}([w_{\tilde{E}_n}])=\tilde{E}_n$. Moreover, $w_{\tilde{E}_n^*}=w_{\tilde{E}_n^*}^0$,
and for all $\zeta \in\mathbb{P}^1 $  the Beltrami coefficient $\mu_{w_{\tilde{E}_n}}(\zeta)$ depends holomorphically on $ \tilde{E}_n\in U$.

For $j=3,\ldots,n$ we denote by $\mathfrak{a_j}\in \mathcal{A}$ a complex affine mapping that maps the unit disc onto $D_j$ and maps $0$ to $z_j^0$. Let $\tilde{E}_n=(0,\frac{1}{n},z_3,\ldots,z_n)\in U$.
Put $v_{\tilde{E}_n}(\zeta)$ equal to $\mathfrak{a}_j\circ v_{\mathfrak{a}_j^{-1}(   z_j)}^{\mathbb{D}}\circ \mathfrak{a}_j^{-1}(\zeta)$ if $\zeta\in D_j$, $j=3,\ldots,n$, and equal to $\zeta$ on the rest of $\mathbb{P}^1$. Since $v_z^{\mathbb{D}}$ is the identity near $\partial \mathbb{D}$, for each $\tilde{E}_n=(1,\frac{1}{n},z_3,\ldots,z_n)\in U$ the mapping $(\zeta,\tilde{E}_n) \to v_ {\tilde{E}_n}(\zeta)$ is smooth on $\mathbb{P}^1$,
takes $z_j^0$ to $z_j$, $j=3,\ldots,n$, and fixes $0$,$\frac{1}{n}$, and $\infty$.
$v_{\tilde{E}_n^*}$ is equal to the identity.
Moreover, for each $\zeta\in \mathbb{P}^1$ each $v_{\tilde{E}_n}$ in the family is differentiable at $\zeta$. By equations \eqref{eq2.14'} and \eqref{eq2.14''} (see also
\cite{A1}, 1D, equations (7) and (8)) for the Beltrami differential of compositions, the absolute value of its Beltrami coefficient is bounded by
$C''\varepsilon$. Put $g_U^1(\tilde{E_n}) = w_{\tilde{E}_n}= v_{\tilde{E}_n}\circ w_{\tilde{E}_n^*}^0$. Then $w_{\tilde{E}_n^*}=w_{\tilde{E}_n^*}^0$. The mapping $\tilde{E}_n\to \mu_{w_{\tilde{E}_n}} $ is uniformly continuous in $\tilde{E}_n$ and  $\zeta \in\mathbb{P}^1$ (recall that each $\mu_{w_{\tilde{E}_n}}$ vanishes outside $\mathbb{D}$). Hence, the mapping
$\tilde{E}_n\to v_{\tilde{E}_n}\circ w_{\tilde{E}_n^*}^0\in QC_{\infty}(0,n+1)$ is continuous. Moreover, the mapping $\tilde{E}_n\to \mu_{w_{\tilde{E}_n}}(\zeta)$ is holomorphic for every $\zeta \in \mathbb{P}^1$. By the choice of the complex structure on the Teichm\"uller space $\mathcal{T}(0,n+1)$ the mapping $\tilde{E}_n\to [w_{\tilde{E}_n}]\in \mathcal{T}(0,n+1)$ is holomorphic.
It is clear that $\mathcal{P}_{\mathcal{T}}([g_U^1(\tilde{E_n})])=     \mathcal{P}_{\mathcal{T}}([w_{\tilde{E}_n}])=
\tilde{E}_n=(0,\frac{1}{n},z_3,\ldots,z_n)$. The lemma is proved. \hfill $\Box$

\medskip

For later use we state the following
tautological lemma.
\begin{lemm}\label{lemm2.5} The following diagram is commutative.
$$
\xymatrix{
QC_{\infty} (0,n+1)\ar[rrr]^{[ \ ]}
 \ar[d]^{e_n}   && &\!\!\;\;{\mathcal T} (0,n+1) \ar[d]^{{\mathcal P}_{\mathcal T}} \\
C_n({\mathbb C})
\ar[r]^{{\mathcal P}_{\mathcal A}\;\;\;\;}&\!\!\;\; \; C_n({\mathbb C})\diagup {\mathcal A} \;\;\; \ar[rr]^{\;\;\;\;\;\;\;\;{\rm Is_n}\;\;\;\;}&& \{0\}  \times \{\frac{1}{n}\}&\!\!\!\!\!\!\!\!\!\!\!\!\!\!\!\!\!\!\!\!\;\times C_{n-2} ({\mathbb
C}\setminus \{0,\frac{1}{n}\})    }
$$
All mappings in the diagram are continuous.
\end{lemm}

\noindent {\bf The modular transformation corresponding to braids.}
For each $n$-braid $b\in \mathcal{B}_n$ we consider the mapping class
\begin{align*}
\mathfrak{m}_{b,\infty}=\mathcal{H}_{\infty}(\mathfrak{m}_{b})\in \mathfrak{M}(\mathbb{P}^1; \{\infty\}, E_n^0) \subset  \mathfrak{M}(\mathbb{P}^1\setminus (E_n^0\cup\{\infty\}))\,.
\end{align*}
We associate to  $\mathfrak{m}_{b,\infty}$ the modular transformation $\varphi_{b,\infty}^*$
on the Teichm\"uller space $\mathcal{T}(n+1,0)$, that is induced by a homeomorphism $\varphi_{b,\infty}$ that represents the mapping class $\mathfrak{m}_{b,\infty} $.
The modular transformation does not depend on the choice of the representing homeomorphism $\varphi_{b,\infty}$. It is convenient to choose for $\varphi_{b,\infty}$  the mapping $\varphi_1$ of a parameterizing isotopy for a geometric braid representing $b$. We denote
the mapping $\varphi_{b,\infty}^*$ on $\mathcal{T}(n+1,0)$ by $T_b$ and call it the modular transformation $T_b$
 of $b$. The mapping $T_b$ depends only on $\mathfrak{m}_{b,\infty}$, i.e. on $b\diagup \mathcal{Z}_n$.
Since braids are composed as elements of a fundamental group and modular transformations are composed as mappings, the relation
\begin{align}\label{eq2.100}
T_{b_1b_2}=T_{b_2} T_{b_1}\,
\end{align}
holds for two braids $b_1$ and $b_2$.

Vice versa, take a modular transformation $T$ on the Teichm\"uller space  $\mathcal{T}(n+1,0)$,
i.e. a mapping $T:\mathcal{T}(n+1,0)\toitself$ such that for a representative $\varphi$ of a class in $\mathfrak{M}(\mathbb{P}^1; \infty,E_n^0)$ for each element $[w]\in \mathcal{T}(n+1,0)$ the equality  $T([w])=[w\circ\varphi]$ holds.
We associate to $T$ an element
 $b\diagup \mathcal{Z}_n$ as follows.

Let $\tilde{q}_0 ={\rm Id}\in \mathcal{T}(n+1,0)$. Then
$T([{\rm Id}])=[\varphi]$. Let $\varphi_t$ be a continous family of self-homeomorphisms of $\mathbb{P}^1$ that fix $\infty$ such that $\varphi_0={\rm Id} $ and $\varphi_1=\varphi$.
Then $e_n(\varphi_0)=e_n({\rm Id})$ and $e_n(\varphi_1)=e_n(\varphi)=S_{\varphi}(e_n({\rm Id}))$ for a permutation $S_{\varphi}$ of the set $E_n^0$, since $\varphi$ maps $E_n^0$ onto itself permuting the points of $E_n^0$. Then the curve $\mathcal{P}_{\rm sym}e_n(\varphi_t),\, t\in[0,1], $ in $C_n(\mathbb{C})\diagup \mathcal{S}_n$ defines a geometric braid. Let $b$ be the braid represented by it. Then $T=T_b$. Indeed,   $\varphi_t,\, t\in[0,1]$ is a parameterizing isotopy for the geometric braid
$\mathcal{P}_{\rm sym}e_n(\varphi_t),\, t\in[0,1], $ and $\varphi_1=\varphi=\varphi_{b,\infty}$.

We proved that the mapping $\mathcal{B}_n\diagup \mathcal{Z}_n\ni b\diagup \mathcal{Z}_n \to T_b\in \mathcal{T}(n+1,0)$ is  a bijection that satisfies \eqref{eq2.100}.

\bigskip

\section{Thurston's classification of mapping
classes.}
\label{sec:2.4}
\index{Thurston classification}  Thurston's interest in
surface homeomorphisms was motivated by his celebrated geometrization
conjecture. He considered a closed surface $S$ and a
self-homeomorphism $\varphi$ of $S$. The mapping torus

$$
([0,1] \times S) \diagup (0,x) \sim (1,\varphi (x))
$$
is obtained by gluing the fiber over the point $0$ of the cylinder
$[0,1] \times S$ to the fiber over $1$ using the homeomorphism
$\varphi$. Thurston observed that for a class of homeomorphisms,
which he called pseudo-Anosov, the mapping torus admits a complete
hyperbolic metric of finite volume.  \index{pseudo-Anosov homeomorphism}
This was one of the eight
geometric structures. Moreover, Thurston gave a classification of
mapping classes of surface homeomorphisms. Here is Thurston's
theorem on classification of mapping classes.

\begin{thm}[Thurston \cite{Th}]\label{thm2.7} A self-homeomorphism of
a closed surface which is not isotopic to a periodic one is either
isotopic to a pseudo-Anosov homeomorphism or is reducible, but not
both.
\end{thm}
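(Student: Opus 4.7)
The plan is to follow Thurston's original strategy, using the action of the mapping class on the space of projective measured foliations. First, one compactifies the Teichm\"uller space $\mathcal{T}(S)$ by attaching the sphere $\mathcal{PMF}(S)$ of projective classes of measured foliations on $S$, obtaining a closed ball on which the mapping class $[\varphi]$ acts continuously: it acts by isometries on $\mathcal{T}(S)$ with the Teichm\"uller metric, and this action extends to the Thurston boundary. By the Brouwer fixed point theorem, $[\varphi]$ has a fixed point in this closed ball, and the classification is obtained by analysing where that fixed point lies.

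If the fixed point lies in the interior $\mathcal{T}(S)$, then $[\varphi]$ preserves a complex structure, so some representative is conformal for this structure; the Hurwitz bound on the automorphism group of a closed Riemann surface of genus $\geq 2$ (or the analogous bound for the torus with distinguished points) forces $\varphi$ to be periodic, contrary to hypothesis. Hence the fixed point lies on $\mathcal{PMF}(S)$ and is represented by a measured foliation $(\mathcal{F},\mu)$ with $\varphi_{*}(\mathcal{F},\mu) = (\mathcal{F},\lambda\mu)$ for some $\lambda>0$. Here the dichotomy appears. If $\mathcal{F}$ fails to be arational (a saddle connection closes up into a simple closed curve, or $\mathcal{F}$ has a non-minimal component), then the homotopy classes of closed leaves, together with boundaries of regular neighbourhoods of the non-minimal components, yield an invariant admissible system of disjoint simple closed curves, so $\varphi$ is reducible. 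If $\mathcal{F}$ is arational, the aim is to produce a transverse invariant measured foliation $(\mathcal{G},\nu)$ with $\varphi_{*}(\mathcal{G},\nu) = (\mathcal{G},\lambda^{-1}\nu)$; one way is to apply the fixed point theorem again to the continuous action on the subset of $\mathcal{PMF}(S)$ consisting of foliations of positive intersection with $\mathcal{F}$ and to rule out the possibility $\lambda=1$ by a Perron--Frobenius type argument, an alternative being to take a limit of normalised iterates $\varphi^{-n}(C)$ of a fixed essential simple closed curve. The pair $(\mathcal{F},\mathcal{G})$ would then be realised as the horizontal and vertical foliations of a holomorphic quadratic differential in a suitable complex structure, and $\varphi$ would be isotopic to the associated Teichm\"uller mapping with stretch factor $\lambda$, which is pseudo-Anosov.

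The main obstacle is exactly the arational case: producing the transverse foliation and verifying that $(\mathcal{F},\mathcal{G})$ actually comes from a quadratic differential, so that one obtains a genuine pseudo-Anosov representative rather than merely a pair of transverse invariant singular foliations. The delicate points are the compatibility of the singularity data of $\mathcal{F}$ and $\mathcal{G}$ and the orthogonality at regular points in distinguished coordinates in the sense of the discussion preceding \eqref{eq2.16}. Once this is done, mutual exclusion is short: a pseudo-Anosov cannot be reducible, for if it preserved the isotopy class of an essential simple closed curve $C$, then
$$
i(C,\mathcal{F}) \;=\; i(\varphi(C),\mathcal{F}) \;=\; i(C,\varphi_{*}^{-1}\mathcal{F}) \;=\; \lambda^{-1}\, i(C,\mathcal{F}),
$$
forcing $i(C,\mathcal{F})=0$, which contradicts the fact that $\mathcal{F}$ fills $S$; and a pseudo-Anosov cannot be periodic, since the transverse measures on $\mathcal{F}$ and $\mathcal{G}$ are multiplied by $\lambda^{n}$ and $\lambda^{-n}$ under $\varphi^{n}$ with $\lambda>1$.
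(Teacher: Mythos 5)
Your sketch follows Thurston's original route via the compactification $\mathcal{T}(S)\cup\mathcal{PMF}(S)$ and the Brouwer fixed point theorem, whereas the paper does not prove Theorem~\ref{thm2.7} at all: it cites it from \cite{Th} and then, for its own purposes, develops Bers's alternative Teichm\"uller-theoretic proof (\cite{Be1}; Theorems~\ref{thm2.8}--\ref{thm2.17} of the paper). Bers's route classifies the modular transformation $\varphi^*$ on $\mathcal{T}(S)$ by the behaviour of the translation length $L(\varphi^*)$ --- elliptic, parabolic, hyperbolic, pseudohyperbolic --- and identifies the hyperbolic case with the existence of an absolutely extremal Teichm\"uller map (which is then shown to be pseudo-Anosov) and the parabolic and pseudohyperbolic cases with reducibility; the hard analysis is a degeneration argument on a minimizing sequence of quasiconformal maps, which also produces the nodal Riemann surface used in Theorem~\ref{thm2.17}. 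Your route instead compactifies at infinity and analyses the fixed point, with the hard analysis concentrated in the arational case. Both are classical and both are complete proofs once the hard step is filled in; Bers's approach buys the extremal-problem formulation that the present paper needs for relating entropy to conformal module, while Thurston's approach buys the invariant foliations and train-track machinery directly.

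Two points in your sketch deserve a second look. First, for the transverse foliation in the arational case you propose to ``apply the fixed point theorem again to the continuous action on the subset of $\mathcal{PMF}(S)$ consisting of foliations of positive intersection with $\mathcal{F}$''; that subset is an open, non-compact subset of the sphere, so Brouwer does not apply to it as stated. Your alternative --- taking a limit of normalized iterates $\varphi^{-n}(C)$ of a fixed essential curve $C$, or running a Perron--Frobenius argument on a $\varphi$-invariant train track carrying $\mathcal{F}$ --- is the route that actually works, and this is exactly where the compatibility of singularity data and the realization of $(\mathcal{F},\mathcal{G})$ by a quadratic differential (the discussion preceding \eqref{eq2.16}) must be verified, as you flag. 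Second, in the exclusion argument you assume $\varphi(C)$ is isotopic to $C$; reducibility only gives a $\varphi$-invariant admissible \emph{system} of curves, so the curves may be permuted. Pass to a suitable power $\varphi^{n}$ (which is pseudo-Anosov with the same invariant foliations and stretch factor $\lambda^{n}$) for which each curve of the system is individually preserved up to isotopy; then your intersection-number computation $i(C,\mathcal{F})=\lambda^{-n}\,i(C,\mathcal{F})$ applies and forces $i(C,\mathcal{F})=0$, contradicting arationality.
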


\index{Thurston ! Theorem}

We explain now  Thurston's notion of pseudo-Anosov homeomorphisms.
Let $S$ be a connected finite smooth oriented surface. It is either closed or homeomorphic to a surface with a finite number of punctures. We will assume from the beginning that $S$ is either closed or punctured.

A finite non-empty set of mutually disjoint Jordan curves $\{ C_1 ,
\ldots , C_{\alpha}\}$ on a connected closed or punctured oriented surface $S$ is called
admissible if no $C_i$ is homotopic
to a point in $X$, or to a puncture,
or
to a $C_j$ with $i \ne j$. Thurston calls an isotopy class $\mathfrak{m}$ of self-homeomorphisms
of $S$ (in other words, a mapping class on $S$) reducible if there is an admissible system
of curves $\{ C_1 ,\ldots , C_{\alpha}\}$ on $S$ such that
some (and, hence, each) element in $\mathfrak{m}$ maps the system to an isotopic system. In
this case we say that the system  $\{ C_1 ,
\ldots , C_{\alpha}\}$ reduces $\mathfrak{m}$. A mapping class which is not reducible is
called irreducible. 

Let $S$ be a closed or punctured surface with set $E$ of distinguished points. We say that $\varphi$ is a self-homeomorphism of $S$ with distinguished points $E$, if $\varphi$ is a self-homeomorphism of $S$ that maps the set of distinguished points $E$ to itself.
Notice that each self-homeomorphism of the punctured surface $S\setminus E$ extends to a self-homeomorphism of the surface $S$ with set of distinguished points $E$.
We will sometimes identify
self-homeomorphisms of $S\setminus E$ and self-homeomorphism of $S$ with set $E$ of distinguished points.

For a (connected oriented closed or punctured) surface $S$ and a finite subset $E$ of $S$ a finite non-empty set of mutually disjoint Jordan curves $\{ C_1 ,
\ldots , C_{\alpha}\}$ in $S\setminus E$ is called
admissible for $S$ with set of distinguished points $E$ if
it is admissible for $S \setminus E$. An admissible system of
curves for $S$ with set of distinguished points $E$ is said to reduce a mapping class $\mathfrak{m}$ on $S$ with set of distinguished
points $E$, if the induced mapping class on $S\setminus E$ is reduced by this system
of curves.

\index{mapping class ! reducible} \index{mapping class !
irreducible}
Thurston calls a self-homeomorphism $\varphi$ of a closed or punctured surface $S$
reduced by the set $\{ C_1 , \ldots , C_{\alpha}\}$, if
this set is admissible and
$$
\varphi (C_1 \cup C_2 \cup \ldots \cup C_{\alpha}) = C_1 \cup C_2
\cup \ldots C_{\alpha} \, .
$$
A self-homeomorphism $\varphi$ of $S$ is called reducible if it is isotopic to a reduced mapping and irreducible
otherwise. A mapping class is reducible if it consists of
reducible maps. Similarly, a braid $b \in {\mathcal B}_n$ is called
reducible, if
its associated mapping class ${\mathfrak m}_b = \Theta_n(b)  \in
{\mathfrak M} (\overline{\mathbb D} ;
\partial \, {\mathbb D} , E_n^0)$ is reducible and is called
irreducible otherwise. A conjugacy class is called reducible if its
representatives are reducible. \index{conjugacy class of mappings ! reducible} \index{braid ! reducible} \index{braid ! irreducible}
\index{conjugacy class of mappings ! irreducible}

Bers \cite{Be1} gave a proof of Thurston's Theorem from the point of
view of Teichm\"{u}ller theory and obtained a description of reducible
mappings. The proof of our Main Theorem makes explicit use of the
technique developed by Bers. We will outline now the results of
Bers' approach to Thurston's theory which we need.


Consider a smooth closed or punctured surface $S$. We do not require that it is connected. We
allow $S$ to be the union of more than one, but at most finitely
many, connected components. A conformal structure on $S$ is a
homeomorphism $w$ of $S$ onto a Riemann surface. Let $\varphi$ be a
self-homeomorphism of $S$. Consider the extremal problem to find the
following infimum  \index{$I(\varphi)$}
$$
{I}(\varphi) \overset{\rm def}{=} \mbox{${\rm inf} \, \{ K (w \circ
\tilde\varphi \circ w^{-1}) : w : S \to w(S)$ is a conformal
structure on $S$,}
$$
\begin{equation}\label{eq2.30}
\mbox{$\tilde\varphi$ is free isotopic to $\varphi\}$.}
\end{equation}
This extremal problem differs from Teichm\"uller's extremal problem
by varying also the conformal structure. Notice that in \eqref{eq2.30} we do not
required that the conformal structures are quasiconformal. If the
infimum is realized on a pair $(w_0 , \varphi_0)$, then $w_0 \circ
\varphi_0 \circ w_0^{-1}$ is called absolutely extremal
\index{mapping ! absolutely extremal} and $w_0$ is called a
$\varphi_0$-minimal conformal structure.
\index{conformal structure}
\index{conformal structure ! $\varphi_0$-minimal
conformal structure}

Denote by $\mathfrak{m}_{\varphi}$ the mapping class of $\varphi$
and by $\widehat{\mathfrak{m}_{\varphi}}$  its conjugacy class,
$$
\widehat{\mathfrak{m}_{\varphi}} = \{ \psi = w \circ \tilde
{\varphi} \circ w^{-1}: w\; \mbox{is a conformal structure on} \;S,\;
\tilde{\varphi}\; \mbox{is isotopic to} \; \varphi \}.
$$
Then $I(\varphi)$ can be written as follows
\begin{equation}
I(\varphi)= {\rm {inf}} \{K(\psi): \psi \in
\widehat{\mathfrak{m}_{\varphi}}\}.
\end{equation}
In the following we will again consider connected surfaces $S$ unless said
otherwise. A self-homeomorphism $\varphi$ of $S$ is periodic if
$\varphi^n$ is the identity on $S$ for a natural number $n$. For
mappings that are isotopic to periodic self-homeomorphisms the
infimum is attained. More precisely, the following theorem holds.
\index{mapping ! periodic}

\begin{thm}[Bers, see \cite{Be1}]\label{thm2.8} A self-homeomorphism
$\varphi$ of a surface $S$ is (free) isotopic to a periodic
self-homeomorphism iff there is a conformal structure $w$ on $S$ and
a self-homeomorphism $\tilde\varphi$ of $S$ such that $w \circ
\tilde\varphi \circ w^{-1}$ is conformal (thus $K (w \circ
\tilde\varphi \circ w^{-1}) = 0$). Moreover, if $\varphi$ is free
isotopic to a periodic self-homeomorphism then there is a
$\varphi$-minimal conformal structure of first kind.
\end{thm}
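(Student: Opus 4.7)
The plan is to treat the two implications of the biconditional separately, and then address the additional claim about first-kind minimal structures.

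For the easy direction ($\Leftarrow$): Assume $\psi \overset{\rm def}{=} w \circ \tilde\varphi \circ w^{-1}$ is a conformal self-homeomorphism of the Riemann surface $w(S)$. Since throughout this chapter the universal cover of such surfaces is taken to be $\mathbb{C}_+$, the group $\mathrm{Aut}(w(S))$ of conformal self-homeomorphisms is finite — a classical consequence of Hurwitz's theorem (or, equivalently, of the discreteness of the isometry group of the hyperbolic metric on a finite-area quotient). Hence $\psi$ has finite order $N$, so $\tilde\varphi^N = \mathrm{id}$, and $\varphi$ is free isotopic to the periodic map $\tilde\varphi$.

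For the substantive direction ($\Rightarrow$): Assume $\varphi$ is free isotopic to a periodic $\tilde\varphi$ of period $n$, and let $G = \langle \tilde\varphi \rangle$. The plan is to produce a $G$-invariant conformal structure by averaging. Concretely, I would pick any smooth Riemannian metric $g_0$ on $S$ compatible with its given topology and form
\begin{equation*}
g \;=\; \frac{1}{n}\sum_{k=0}^{n-1} (\tilde\varphi^k)^{*} g_0 ,
\end{equation*}
which is $G$-invariant by construction, so that $\tilde\varphi$ is an isometry of $(S,g)$. By existence of isothermal coordinates, $g$ determines a complex structure on $S$; letting $w$ denote the identity viewed as a homeomorphism from $S$ onto $S$ equipped with this new complex structure, the map $w\circ\tilde\varphi\circ w^{-1}$ is conformal on $w(S)$. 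Minimality of $w$ for $\varphi$ is automatic, since a conformal homeomorphism has quasiconformal dilatation $K=1$, the absolute minimum of the expression in \eqref{eq2.30}.

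The remaining and most delicate point is arranging that the structure $w$ so produced is of \emph{first} kind. The main obstacle is that a naive average of a metric on a bordered $S$ yields a conformal structure of second kind, whereas the conclusion asserts all boundary continua should disappear into punctures in the completion. The plan to handle this is to work with the quotient orbifold $S/G$: since $G$ is finite and $S$ has negative Euler characteristic (as it is covered by $\mathbb{C}_+$), the orbifold $S/G$ carries a complete finite-area hyperbolic structure in which each end corresponding to a boundary continuum of $S$ can be realized as a cusp. Lifting this hyperbolic metric to $S$ gives a $G$-invariant complete finite-area hyperbolic metric whose conformal completion adds only points, yielding a first-kind conformal structure $w$ for which $\tilde\varphi$ acts as an isometry, hence conformally. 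This is the step I expect to require the most care; the averaging construction above should be regarded as a model calculation, with the hyperbolic-orbifold refinement giving the correct boundary behavior.
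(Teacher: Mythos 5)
The paper itself cites Bers and gives no proof of Theorem \ref{thm2.8}, so there is no "paper's proof" to compare against; I'll assess the argument on its own terms.

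Your ($\Leftarrow$) direction is correct in spirit but the stated reason is not quite right: having universal cover $\mathbb{C}_+$ does \emph{not} imply the automorphism group is finite --- the punctured disc and the annulus are uniformized by $\mathbb{C}_+$ and have circle automorphism groups containing irrational (hence infinite-order) rotations. What you need is that the covering Fuchsian group is non-elementary, i.e.\ $\pi_1(S)$ is not cyclic, which holds for the surfaces actually considered in this chapter ($3g-3+m>0$). With that hypothesis the group $\mathrm{Aut}(w(S))$ is finite and your argument closes.

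The real gap is in the ($\Rightarrow$) direction. The averaging formula
\begin{equation*}
g \;=\; \frac{1}{n}\sum_{k=0}^{n-1} (\tilde\varphi^k)^{*} g_0
\end{equation*}
only makes sense if $\tilde\varphi$ is at least $C^1$, but the periodic representative produced by the hypothesis is merely a \emph{homeomorphism}, and a finite-order homeomorphism of a surface need not be smoothable in any obvious way, nor can one simply perturb it to a diffeomorphism without destroying periodicity. What is actually needed is the classical realization theorem (Ker\'{e}kj\'{a}rt\'{o}, Brouwer, Eilenberg; for general surfaces often attributed to Nielsen, and a special case of the Nielsen realization problem for cyclic groups): a finite-order orientation-preserving homeomorphism of a surface of finite type is topologically conjugate to a conformal automorphism for a suitable complex structure. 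Once that is invoked the conclusion follows immediately, and the averaging computation becomes superfluous --- it is precisely the \emph{smooth} case of that theorem, not a self-contained proof. So either cite the realization theorem outright, or, if you want to keep the averaging as the engine, you must first justify replacing the periodic homeomorphism by a smooth periodic one, which is itself the nontrivial content. Your handling of the first-kind requirement via the quotient orbifold and cusped hyperbolic structure is the right idea, but it too presupposes that $\tilde\varphi$ generates a genuine (smooth or conformal) finite group action, so it inherits the same dependence on the realization theorem.
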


\index{Bers}
For self-homeomorphisms of $S$ that are not isotopic to periodic
ones the following theorem holds.

\begin{thm}[Bers, see \cite{Be1}]\label{thm2.9.} A conformal structure $w$
of second kind on a surface $S$ cannot be $\varphi$-minimal for a
self-homeomorphism $\varphi$ of $S$ that is not free isotopic to a
periodic one. Moreover, there exists a conformal structure $w_1$ of
first kind and a self-homeomorphism $\tilde\varphi$ of $S$ which is
isotopic to $\varphi$ and such that $K (w_1 \circ \tilde\varphi
\circ w^{-1}_1) < K (w \circ \varphi \circ w^{-1})$.
\end{thm}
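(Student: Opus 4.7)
The strategy is to enlarge $w(S)$ to a first-kind Riemann surface $Y$ by replacing each boundary continuum with a puncture, extend the quasiconformal conjugate $\psi = w \circ \varphi \circ w^{-1}$ across the filled regions with zero Beltrami differential, and exploit the uniqueness of Teichm\"uller extremal mappings to produce a strict improvement in dilatation.

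\emph{Setup and extension.} Realize $w(S)$ as a domain $\Omega = X^c \setminus \bigl(\bigcup_i \{p_i\} \cup \bigcup_{j=1}^k \bar{\delta}_j\bigr)$ in a closed Riemann surface $X^c$, with $k \geq 1$ since $w$ is of second kind. After replacing $\varphi$ by a suitable isotopic representative if necessary, choose points $\zeta_j \in \delta_j$ compatibly with the permutation of boundary continua induced by $\psi$, and form the first-kind surface $Y = X^c \setminus \bigl(\{p_i\} \cup \{\zeta_j\}\bigr)$; then $\Omega \subsetneq Y$. The quasiconformal map $\psi$ extends continuously to $\bigcup_j \partial \delta_j$, and we further extend it across each punctured disc $\delta_j \setminus \{\zeta_j\}$ by a quasiconformal homeomorphism with identically zero Beltrami differential, obtaining a self-homeomorphism $\Psi : Y \to Y$ with $K(\Psi) = K(\psi)$.

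\emph{Strict inequality via Teichm\"uller's theorem.} Apply Teichm\"uller's theorem on $Y$, which extends from Theorem~2.3 to first-kind surfaces via Ahlfors's branched-cover trick described earlier in the chapter (passing to a closed genus $\geq 2$ cover if $Y$ has too low complexity). The isotopy class of $\Psi$ on $Y$ contains a unique extremal $\Psi^*$ whose Beltrami differential either vanishes identically or has the Teichm\"uller form $k^* \cdot |\phi|/\phi$ for an integrable meromorphic quadratic differential $\phi$ on $X^c$, holomorphic on $Y$ with at most simple poles at $\{p_i\} \cup \{\zeta_j\}$. Suppose for contradiction that $K(\Psi) = K(\Psi^*)$; by uniqueness $\Psi = \Psi^*$, so the Beltrami form $k^* \cdot |\phi|/\phi$ vanishes on the nonempty open set $Y \setminus \bar{\Omega}$. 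Analytic continuation on the connected surface $Y$ then forces $\phi \equiv 0$, whence $\Psi$ is conformal. By Theorem~2.8, $\varphi$ would then be free isotopic to a periodic self-homeomorphism, contradicting the hypothesis. Hence $K(\Psi^*) < K(\Psi) = K(\psi)$. Interpreting $\Psi^*$ as $w_1 \circ \tilde{\varphi} \circ w_1^{-1}$ for a first-kind conformal structure $w_1$ on $S$ (in the broad sense of the paper, allowing boundary continua of $w(S)$ to be replaced by punctures on $Y$) and a representative $\tilde{\varphi}$ free isotopic to $\varphi$ yields both conclusions of the theorem.

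\emph{Main obstacle.} The principal difficulty lies in the last identification: one must verify that the isotopy class of $\Psi$ on $Y$ corresponds to the free isotopy class of $\varphi$ on $S$ so that $\Psi^*$ yields a genuine competitor for $I(\varphi)$ in the framework of \eqref{eq2.30}. This involves making precise the notion of a ``first-kind conformal structure'' on a surface of second-kind type and tracking how extensions across the $\delta_j$ (which differ by Dehn twists about loops around the $\zeta_j$) interact with the underlying mapping class. Once this correspondence is carefully set up, the analytic heart of the argument --- the rigidity of Teichm\"uller quadratic differentials under vanishing on an open set, combined with Theorem~2.8 --- delivers the strict inequality.
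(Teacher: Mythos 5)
The paper does not prove this theorem; it simply quotes Bers' Theorem from \cite{Be1}. So I will assess your argument on its own terms.

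Your rigidity step is fine: if the extremal map $\Psi^*$ on a connected first-kind surface $Y$ has a Teichm\"uller Beltrami form $k^*\vert\phi\vert/\phi$ whose absolute value is constant, then vanishing on a nonempty open set forces $k^* = 0$, hence $\Psi^*$ is conformal, and Theorem~2.8 would make $\varphi$ free isotopic to a periodic map, giving the desired contradiction. The problem lies earlier, in the step where you assert that $\psi = w\circ\varphi\circ w^{-1}$ "extends across each punctured disc $\delta_j\setminus\{\zeta_j\}$ by a quasiconformal homeomorphism with identically zero Beltrami differential." That is a conformal extension, and such an extension almost never exists: the restriction of $\psi$ to $\partial\delta_j$ is an arbitrary quasisymmetric homeomorphism onto $\partial\delta_{j'}$, while the boundary trace of a conformal map $\delta_j\to\delta_{j'}$ fixing $\zeta_j\mapsto\zeta_{j'}$ is a restriction of a three-real-parameter family of M\"obius transformations. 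Generically there is no conformal map with the prescribed boundary values, so any genuine homeomorphic extension $\Psi$ will carry positive dilatation inside the discs, and its dilatation bound is a priori worse, not equal: $K(\Psi)\geq K(\psi)$, with equality far from automatic. That equality $K(\Psi)=K(\psi)$ is exactly what feeds the final chain $K(\Psi^*)<K(\Psi)=K(\psi)$; without it the argument collapses.

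The flaw is not a missing detail that "careful bookkeeping" will fix but a change of mechanism. One cannot push $\psi$ itself through the filled discs at the same dilatation; one has to work instead with the Beltrami \emph{coefficient}, extend $\mu_\psi$ by zero (which always keeps the $L^\infty$-norm), and then run the Teichm\"uller extremal comparison inside a larger space of conformal structures in which both first- and second-kind structures compete (the route Bers actually follows, via his main approximation theorem). The "main obstacle" you flag at the end --- matching isotopy classes up to Dehn twists about the $\partial\delta_j$ --- is indeed a point one must address, but it is manageable once a consistent extension of $w$ and of the representative of $\varphi$ is fixed; the conformal-extension claim is the genuine gap.
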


In the light of the two theorems it is sufficient to consider the
extremal problem only for conformal structures of first kind.
Moreover, we may fix a reference conformal structure of first kind
and replace $S$ by the obtained Riemann surface $X$. Composing the
mappings $w$ in \eqref{eq2.30} with the inverse of the reference conformal
structure we may consider conformal structures on $X$ rather than on
$S$. Assume that $X$ is of genus $g$ with $m$ punctures and $3g-3+m
> 0$. (We require that the universal covering of $X$ is ${\mathbb
C}_+$ and, in case of genus $0$, that the number of punctures is at
least $4$, to avoid trivial cases.)

The Teichm\"{u}ller space of a Riemann surface $\mathcal{T}(X)$ of first
kind can equivalently be described as follows. Consider the set of
\textit{all} homeomorphisms of $X$ onto another Riemann surface $Y$
of first kind (instead of \textit{quasiconformal} homeomorphisms).
Call two (not necessarily quasiconformal) homeomorphisms $w_j:X \to
Y_j,\; j=1,2,$ Teichm\"{u}ller equivalent if there is a conformal
mapping $c:Y_1 \to Y_2$ such that $w_2^{-1}\circ c \circ w_1$ is
isotopic to the identity. The thus obtained equivalence classes are
the same as the Teichm\"{u}ller classes. Indeed, if the Riemann surfaces
$X$ and $Y$ are of first kind and $w$ is an arbitrary homeomorphism
from $X$ onto $Y$ then $w$ can be extended to a homeomorphism
between closed Riemann surfaces. The extended homeomorphism can be
uniformly approximated by smooth, and thus, quasiconformal
homeomorphisms. For Riemann surfaces of first kind this implies also
that the Teichm\"{u}ller distance \eqref{eq2.18} can be computed by
letting all homeomorphisms in \eqref{eq2.18} be arbitrary
homeomorphisms instead of quasiconformal homeomorphisms. In
particular, for a Riemann surface $X$ of first kind and for
homeomorphisms $w_1 : X \to X_1$, $w_2 : X \to X_2$, the
Teichm\"uller distance $d_{\mathcal T} ([w_1],[w_2])$ can also be
written as follows:
$$
d_{\mathcal T} ([w_1],[w_2]) = {\rm inf} \, \biggl\{ \frac12 \log
K(g): g:X_1 \to X_2 \ \mbox{a surjective homeomorphism}
$$
\begin{equation}\label{eq2.31}
\mbox{which is isotopic to $w_2 \circ w_1^{-1}$ through such
homeomorphisms}\biggl\}.
\end{equation}

\noindent Let $X$ be a Riemann surface of first kind and let
$\varphi$ be a self-homeomorphism of $X$. Denote by $\varphi^*$
\index{$\varphi^*$} \index{modular transformation}  the modular
transformation induced by (the mapping class of) $\varphi$ on
${\mathcal T} (g,m)$. Put

\begin{equation}\label{eq2.32}
L(\varphi^*) \overset{\rm def}{=}  \underset{\tau \in {\mathcal T}
(g,m)}{\rm inf} d_{\mathcal T} (\tau , \varphi^* (\tau)) \, .
\end{equation}
The quantity $L(\varphi^*)$ is \index{translation length}
\index{$L(\varphi^*)$} called the translation length of $\varphi^*$.
Write $\tau = [w]\,$. Then $\varphi^*([w]) = [w \circ \varphi],\,$
and by \eqref{eq2.30} and \eqref{eq2.31}
\begin{equation}\label{eq2.33}
\frac12 \log I(\varphi) = \underset{\tau \in {\mathcal T} (g,m)}{\rm
inf} d_{\mathcal T} (\tau , \varphi^* (\tau)).
\end{equation}

Bers uses the following terminology in analogy to the classification
of elements of ${\rm PSL} (2,{\mathbb Z})$. Recall that ${\rm PSL} (2,{\mathbb Z})$ is the quotient of the group ${\rm SL} (2,{\mathbb Z})$ of
quadratic matrices of determinant $1$ with integer entries by the subgroup consisting of the identity ${\rm Id}$ and $-{\rm Id}$. \index{${\rm SL} (2,{\mathbb Z})$} \index{${\rm PSL} (2,{\mathbb Z})$}
Consider a modular
transformation $\varphi^*$, i.e. an element of the modular group
${\rm Mod}(g,m)$. \index{${\rm Mod}(g,m)$} A point $\tau \in
{\mathcal T} (g,m)$ is called $\varphi^*$-minimal, if $d_{\mathcal
T} (\tau , \varphi^* (\tau)) = L(\varphi^*)$. A modular
transformation $\varphi^* \in {\rm Mod} (g,m)$ is elliptic, if it
has a fixed point in ${\mathcal T} (g,m)$, parabolic, if it has no
fixed point but $L(\varphi^*) = 0$, hyperbolic, if $L(\varphi^*) >
0$ and $L(\varphi^*)$ is attained, and pseudohyperbolic, if $L
(\varphi^*) > 0$ but $d_{\mathcal T} (\tau , \varphi^* (\tau)) >
L(\varphi^*)$ for all $\tau \in {\mathcal T} (g,m)$. \index{$\varphi^*$-minimal}

\index{modular transformation ! elliptic} \index{modular
transformation ! hyperbolic} \index{modular transformation !
parabolic} \index{modular transformation ! pseudohyperbolic}

A conformal structure $w$ is $\varphi$-minimal for a
self-homeomorphism $\varphi$ of $X$ iff $[w]$ is
$\varphi^*$-minimal.

\begin{thm}[Bers, see \cite{Be1}]\label{thm2.10} An element $\varphi^* \in
{\rm Mod} (g,m)$ is elliptic iff it is periodic. This happens iff
the absolutely extremal map in the isotopy class of
self-homeomorphisms containing $\varphi$ is conformal.
\end{thm}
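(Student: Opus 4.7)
The plan is to prove the equivalence of three statements: (a) $\varphi^*$ is elliptic; (b) $\varphi^*$ is periodic in $\mathrm{Mod}(g,m)$; (c) the absolutely extremal map in the isotopy class of $\varphi$ is conformal. I would establish the cycle (a)~$\Rightarrow$~(c)~$\Rightarrow$~(b)~$\Rightarrow$~(a).

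For (a)~$\Rightarrow$~(c): suppose $\tau_0 = [w_0]$ is fixed by $\varphi^*$. Since $\varphi^*([w]) = [w \circ \varphi]$, the equation $[w_0 \circ \varphi] = [w_0]$ produces a conformal automorphism $c$ of $w_0(X)$ and a representative $\tilde\varphi = w_0^{-1} \circ c \circ w_0$ of the isotopy class of $\varphi$ for which $w_0 \circ \tilde\varphi \circ w_0^{-1} = c$ has quasiconformal dilatation $1$. This realizes the infimum in (2.30), so $I(\varphi) = 1$ and the absolutely extremal map in the isotopy class is conformal.

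For (c)~$\Rightarrow$~(b): if the absolutely extremal map is conformal then Theorem~2.8 applies directly and yields that $\varphi$ is free isotopic to a periodic self-homeomorphism $\tilde\varphi$ with $\tilde\varphi^N = \mathrm{id}$ for some $N$. Since isotopic homeomorphisms determine the same modular transformation, $(\varphi^*)^N = (\tilde\varphi^N)^* = \mathrm{id}$, and $\varphi^*$ is periodic.

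The main obstacle is (b)~$\Rightarrow$~(a). Assuming $(\varphi^*)^N = \mathrm{id}$ we must produce a fixed point in $\mathcal{T}(g,m)$. My approach would combine Royden's theorem (Theorem~2.4) with the Bers embedding discussed above, which realizes $\mathcal{T}(g,m)$ as a bounded open subset of $\mathbb{C}^{3g-3+m}$. Under this embedding $\varphi^*$ is a biholomorphic automorphism of the Bers image, and the finite cyclic group $G = \langle \varphi^* \rangle$ acts by biholomorphisms on this bounded domain. H.~Cartan's fixed-point theorem---a compact group of biholomorphic self-maps of a bounded domain in $\mathbb{C}^N$ has a common fixed point---then produces a $G$-fixed point, in particular a $\varphi^*$-fixed point, which establishes ellipticity. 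An alternative route, closer to Bers' reasoning, would be to take any orbit $\{(\varphi^*)^k \tau_0 : 0 \le k < N\}$, which is finite and $\varphi^*$-invariant, and locate a Chebyshev center with respect to $d_{\mathcal{T}}$; existence follows from properness of the Teichm\"uller metric, while the delicate uniqueness of such a center (which forces it to be preserved by the isometry $\varphi^*$, hence fixed) can be extracted from Royden's theorem and a Teichm\"uller-disc convexity argument using the holomorphic embeddings (2.24).
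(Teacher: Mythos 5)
Your verification of (a) $\Rightarrow$ (c) and (c) $\Rightarrow$ (b) is correct: a fixed point $[w_0]$ of $\varphi^*$ unwinds, via the definition of Teichm\"uller equivalence, to a representative of the isotopy class of $\varphi$ that is conformal for $w_0$, hence has quasiconformal dilatation $1$ and realises $I(\varphi)$; Theorem~\ref{thm2.8} together with the invariance of the modular transformation under isotopy closes the second arrow, and since a conformal automorphism of a hyperbolic Riemann surface of first kind has finite order by Hurwitz's theorem, (c) $\Rightarrow$ (a) also follows directly. The paper recalls the result from \cite{Be1} without proof, so the only issue is whether your own argument stands.

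The step (b) $\Rightarrow$ (a) does not, and that is where the entire weight of the theorem sits. The statement you attribute to H.~Cartan --- that every compact group of biholomorphic self-maps of a bounded domain in $\mathbb{C}^N$ has a common fixed point --- is false: the bounded domain $\{(z,w)\in\mathbb{C}^2 : 1 < |z|^2+|w|^2 < 2\}$ carries the biholomorphic action of $U(2)$, and in particular of any finite cyclic subgroup of its centre, yet the only point of $\mathbb{C}^2$ fixed by such a group is the origin, which lies outside the domain. Cartan's theorem is a \emph{linearisation} result around an already given fixed point, not an existence theorem. To rescue this line you would need contractibility of the Bers image of $\mathcal{T}(g,m)$ together with Smith theory, and even then the cheap conclusion only covers cyclic groups of prime power order --- at which point you are proving rather than quoting the hard part. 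Your fallback via Chebyshev centres has a parallel defect: uniqueness of the circumcentre of a bounded set requires a convexity property of the metric (CAT(0), or at least uniform convexity of balls) that the Teichm\"uller metric $d_{\mathcal{T}}$ does not possess; Royden's Theorem~\ref{thm2.4} and the isometric embeddings \eqref{eq2.24} tell you how Teichm\"uller discs sit inside $\mathcal{T}(g,m)$ but give no convexity of $d_{\mathcal{T}}$-balls, and Masur's work shows the Teichm\"uller metric is not nonpositively curved in the sense needed. The implication ``$\varphi^*$ periodic $\Rightarrow \varphi^*$ elliptic'' is exactly the cyclic case of the Nielsen realisation problem (Nielsen 1942, Fenchel), which is the genuinely deep input behind Theorem~\ref{thm2.10}, and Bers relies on it in \cite{Be1}. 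You should name cyclic Nielsen realisation as the essential external ingredient rather than attempt to manufacture it from a bounded-domain fixed-point theorem that does not exist.
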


The following theorem is a reformulation of Thurston's result.

\begin{thm}[see \cite{Be1}]\label{thm2.11} For an irreducible
self-homeomorphism $\varphi$ of a Riemann surface $X$ of first kind
the modular transformation $\varphi^*$ of $\varphi$ is either
elliptic or hyperbolic.
\end{thm}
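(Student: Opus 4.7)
The plan is to prove the contrapositive: assume $\varphi^*$ is neither elliptic nor hyperbolic, and deduce that $\varphi$ is reducible. By Theorem~\ref{thm2.10}, elliptic corresponds exactly to (the free isotopy class of) a periodic self-homeomorphism, so it suffices to show that a parabolic or pseudohyperbolic $\varphi^*$ arising from a non-periodic $\varphi$ forces $\varphi$ to be reducible.

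First I would select a minimizing sequence $\tau_n = [w_n] \in \mathcal{T}(g,m)$ with $d_{\mathcal{T}}(\tau_n, \varphi^*(\tau_n)) \to L(\varphi^*)$. By the very definition of the parabolic and pseudohyperbolic cases this infimum is not attained, so up to the action of ${\rm Mod}(g,m)$ the projections of $\tau_n$ to the moduli space $\mathcal{T}(g,m)/{\rm Mod}(g,m)$ cannot stay in any compact set. Next I would invoke Mumford's compactness criterion: along such a sequence there must exist simple closed geodesics $\gamma_n$ on the hyperbolic Riemann surface $w_n(X)$ whose hyperbolic lengths tend to zero. Pulling back by $w_n$ and passing to a subsequence (after adjusting by elements of ${\rm Mod}(g,m)$, which does not change the distance $d_{\mathcal{T}}(\tau_n, \varphi^*(\tau_n))$), one arranges that the curves $w_n^{-1}(\gamma_n)$ represent, up to isotopy on $X$, a fixed non-empty finite family $\mathcal{C} = \{C_1, \ldots, C_\alpha\}$ of disjoint simple closed curves, none of which bounds a disc, a once-punctured disc, or is homotopic to a boundary continuum; that is, $\mathcal{C}$ is admissible in the sense introduced before Theorem~\ref{thm2.7}.

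The final step is to check that $\varphi$ permutes the isotopy classes of the $C_j$, so that (after an isotopy) $\varphi$ is reduced by $\mathcal{C}$ and therefore reducible. The comparison tool is the Wolpert--Kerckhoff inequality, which bounds the ratio of hyperbolic lengths of any fixed simple closed curve in two conformal structures by a multiple of their Teichm\"uller distance. Applied to the curve $\varphi(C_j)$ in the structures $w_n$ and $w_n \circ \varphi^{-1}$, the fact that its length in $w_n \circ \varphi^{-1}$ (which is its length as $C_j$ in $w_n$) tends to zero, together with the bounded sequence $d_{\mathcal{T}}(\tau_n, \varphi^*(\tau_n)) \to L(\varphi^*)$, forces the length of $\varphi(C_j)$ in $w_n$ also to tend to zero. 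Hence $\varphi(C_j)$ is isotopic to some $C_{\sigma(j)} \in \mathcal{C}$, and $\sigma$ is a permutation. This makes $\varphi$ reducible, contradicting our irreducibility assumption.

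The main obstacle is the last step: extracting a \emph{canonical} finite admissible system from the minimizing sequence, and then showing it is $\varphi$-invariant. One needs a quantitative form of Mumford compactness (to ensure $\mathcal{C}$ is non-empty, finite, and admissible, using the collar lemma to prevent short curves from intersecting) together with a Teichm\"uller-distance versus hyperbolic-length comparison uniform in $n$. Both ingredients are available from Bers's Teichm\"uller-theoretic development \cite{Be1}, but they must be carefully combined precisely in the two cases where the infimum $L(\varphi^*)$ fails to be attained.
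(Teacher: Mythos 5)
Your argument is a correct sketch of the contrapositive, and it follows the same route as the proof in Bers's paper \cite{Be1} (to which the present paper defers for this statement): analyze a minimizing sequence for the translation length, observe that non-attainment forces degeneration, extract the system of pinching curves, and show it is $\varphi$-invariant up to isotopy. The only anachronism is the appeal to ``Wolpert's inequality,'' which postdates Bers's 1978 paper; Bers obtains the needed length comparison directly from quasiconformal distortion bounds (essentially $\ell_{\tau_2}(\gamma) \le K\,\ell_{\tau_1}(\gamma)$ for a $K$-quasiconformal map in the Teichm\"uller class), but this is equivalent to what you invoke, so the substance of the argument is the same.
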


\begin{cor}\label{corr2.1} An irreducible
self-homeomorphism $\varphi$ of a Riemann surface $X$ of first kind
leads to an absolutely extremal self-homeomorphism $\tilde\varphi$
of a Riemann surface $Y$ by isotopy and conjugation with a
homeomorphism. For the quasiconformal dilatation $K(\tilde\varphi)$
of the absolutely extremal mapping $\tilde\varphi$ we have
\begin{equation}\label{eq2.33bis}
\frac12 \log K(\tilde\varphi) = \frac12 \log I( \varphi) =
L(\varphi^*).
\end{equation}
\end{cor}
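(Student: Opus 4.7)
The plan is to combine the identification
\[
\tfrac12\log I(\varphi)\;=\;\underset{\tau\in\mathcal T(g,m)}{\inf}d_{\mathcal T}(\tau,\varphi^*(\tau))\;=\;L(\varphi^*)
\]
supplied by \eqref{eq2.33} with Bers' classification in Theorem~\ref{thm2.11}, which for an irreducible $\varphi$ on a first-kind surface asserts that $\varphi^*$ is either elliptic or hyperbolic. The key point is that in both admissible cases the infimum $L(\varphi^*)$ is not just an infimum but is actually attained at some $\tau_0\in\mathcal T(g,m)$. Once such a $\varphi^*$-minimal $\tau_0$ is in hand, Teichm\"uller's existence theorem (Theorem~\ref{thm2.3}, in the Ahlfors form that admits at most simple poles at punctures) converts it into an absolutely extremal representative.

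More concretely, I would first invoke Theorem~\ref{thm2.11} to split into two cases. In the elliptic case $\varphi^*$ fixes some $\tau_0\in\mathcal T(g,m)$ and $L(\varphi^*)=0$; in the hyperbolic case, by the very definition of hyperbolic, there exists $\tau_0\in\mathcal T(g,m)$ with $d_{\mathcal T}(\tau_0,\varphi^*(\tau_0))=L(\varphi^*)>0$. Pick any representative $w_0:X\to Y$ of $\tau_0=[w_0]$; then $\varphi^*([w_0])=[w_0\circ\varphi]$, so via \eqref{eq2.31} the Teichm\"uller distance $d_{\mathcal T}([w_0],[w_0\circ\varphi])$ is realized by a quasiconformal self-homeomorphism $g:Y\to Y$ isotopic to $w_0\circ\varphi\circ w_0^{-1}$ with
\[
\tfrac12\log K(g)\;=\;d_{\mathcal T}([w_0],[w_0\circ\varphi])\;=\;L(\varphi^*).
\]
In the elliptic subcase $g$ may be chosen conformal by Theorem~\ref{thm2.10}, and in the hyperbolic subcase $g$ is a genuine Teichm\"uller mapping, whose associated integrable meromorphic quadratic differential on the closure of $Y$ exists by the Ahlfors extension of Theorem~\ref{thm2.3} outlined earlier.

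Setting $\varphi_0:=w_0^{-1}\circ g\circ w_0:X\to X$, the map $\varphi_0$ is isotopic to $\varphi$, the pair $(w_0,\varphi_0)$ realizes the infimum in the definition \eqref{eq2.30} of $I(\varphi)$, and $\tilde\varphi:=g=w_0\circ\varphi_0\circ w_0^{-1}$ is by definition absolutely extremal with $K(\tilde\varphi)=I(\varphi)$. Combining this with \eqref{eq2.33} yields exactly \eqref{eq2.33bis}. The main obstacle is not in the present argument but in the inputs: the entire force of the corollary rests on Theorem~\ref{thm2.11}, i.e.\ on the fact that in the irreducible case the modular transformation cannot be parabolic or pseudohyperbolic; given that ruled out, the rest is an essentially formal assembly of the definitions of $I(\varphi)$, $L(\varphi^*)$ and the Teichm\"uller metric, together with the Ahlfors extension of Teichm\"uller's theorem which guarantees that the extremal mapping in each Teichm\"uller class on a punctured first-kind surface actually exists.
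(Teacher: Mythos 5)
Your argument is correct and is essentially the one the paper intends: the corollary is labeled as such precisely because it is an immediate assembly of Theorem~\ref{thm2.11} (which rules out the parabolic and pseudohyperbolic cases, guaranteeing that $L(\varphi^*)$ is attained at some $\tau_0$), the identity \eqref{eq2.33}, and Teichm\"uller's existence theorem in the Ahlfors form for punctured first-kind surfaces, which supplies the extremal representative $g$ in the class $[w_0\circ\varphi]$ relative to $[w_0]$. Your filling-in of the pair $(w_0,\varphi_0)$ realizing the infimum in \eqref{eq2.30} matches the paper's implicit proof.
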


The following theorem characterizes the absolutely extremal maps
with hyperbolic modular transformation in terms of Teichm\"{u}ller
mappings and quadratic differentials.

\begin{thm}[Bers, see \cite{Be1}]\label{thm2.12} Let $X$ be a Riemann
surface of genus $g$ with $m$ punctures, $3g-3+m > 0$. A mapping
$\varphi : X \to X$ is absolutely extremal, iff it is either
conformal or a Teichm\"uller mapping satisfying the following two
equivalent conditions
\begin{enumerate}
\item[(i)] {\it the mapping $\varphi \circ \varphi$ is also a Teichm\"uller mapping
with $K(\varphi \circ \varphi) = (K(\varphi))^2$,}
\item[(ii)] {\it the initial and terminal quadratic differentials of $\varphi$
coincide.}
\end{enumerate}
\end{thm}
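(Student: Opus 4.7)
The plan is to combine Teichm\"uller's theorem with the characterization of absolutely extremal maps via translation length of the induced modular transformation. The strategy exploits the iterate $\varphi^n$ to link dilatation and translation length, and then relates the equivalence (i) $\iff$ (ii) to the local structure of the Beltrami coefficient of $\varphi \circ \varphi$.

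First I would dispose of the conformal case, which is trivially absolutely extremal. So assume $\varphi$ is not conformal. By Theorem \ref{thm2.10} combined with Theorem \ref{thm2.11} and the hypothesis $3g-3+m>0$, if $\varphi$ is absolutely extremal and non-conformal then $\varphi^*$ is hyperbolic; moreover, since $\varphi$ is extremal from $X$ to $X$ in its isotopy class, Teichm\"uller's uniqueness statement in Theorem \ref{thm2.3} forces $\varphi$ itself to be a Teichm\"uller mapping. Thus the dichotomy in the statement is already justified.

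For the direction ($\Leftarrow$), suppose $\varphi$ is Teichm\"uller and satisfies (i). Induction on $n$ using (i) gives that $\varphi^n$ is Teichm\"uller with $K(\varphi^n)=K(\varphi)^n$, hence $d_{\mathcal T}([\mathrm{id}],(\varphi^*)^n[\mathrm{id}])=\tfrac12\log K(\varphi^n)=\tfrac n2\log K(\varphi)$. Applying the standard asymptotic formula $L(\varphi^*)=\lim_{n}d_{\mathcal T}(\tau,(\varphi^*)^n\tau)/n$ (valid for any isometry of a metric space), we get $L(\varphi^*)=\tfrac12\log K(\varphi)$, and by equation \eqref{eq2.33bis} this yields $I(\varphi)=K(\varphi)$, so $\varphi$ is absolutely extremal. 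For the direction ($\Rightarrow$), if $\varphi$ is absolutely extremal then Corollary \ref{corr2.1} gives $L(\varphi^*)=\tfrac12\log K(\varphi)$; combined with $(\varphi^*)^n$ hyperbolic of translation length $nL(\varphi^*)$, the chain
\[
n L(\varphi^*)\le d_{\mathcal T}([\mathrm{id}],[\varphi^n])\le \tfrac12\log K(\varphi^n)\le \tfrac n2\log K(\varphi)=nL(\varphi^*)
\]
collapses to equalities, so $K(\varphi^n)=K(\varphi)^n$ and $\varphi^n$ is extremal in its isotopy class. Since no iterate of a hyperbolic $\varphi^*$ is elliptic, $\varphi^n$ cannot be conformal, so by Theorem \ref{thm2.3} it is a Teichm\"uller mapping; taking $n=2$ gives (i).

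Finally I would establish (i) $\iff$ (ii) by a local computation in distinguished coordinates. If $\phi=-\psi$, choose distinguished coordinates $z=x+iy$ for $\phi$ at a regular point; since the initial and terminal quadratic differentials coincide on $X$, the same coordinate (up to sign) is distinguished for $-\psi$ at the image $\varphi(z_0)$, so $\varphi$ reads $z\mapsto K^{1/2}x+iK^{-1/2}y$ in the same coordinates at both points and $\varphi\circ\varphi$ reads $z\mapsto Kx+iK^{-1}y$, which is Teichm\"uller with quadratic differential $\phi$ and dilatation $K^2$. For the converse, the composition formula
\[
\mu_{\varphi\circ\varphi}(z)=\frac{\mu_\varphi(z)+\mu_\varphi(\varphi(z))\,\overline{\partial\varphi}/\partial\varphi}{1+\overline{\mu_\varphi(z)}\,\mu_\varphi(\varphi(z))\,\overline{\partial\varphi}/\partial\varphi}
\]
attains $|\mu_{\varphi\circ\varphi}|=2k/(1+k^2)$ pointwise (the value forced by $K(\varphi^2)=K^2$) exactly when the two terms $\mu_\varphi(z)$ and $\mu_\varphi(\varphi(z))\,\overline{\partial\varphi}/\partial\varphi$ are positive real multiples of each other almost everywhere; rewriting this condition invariantly using $\mu_\varphi=k|\phi|/\phi$ and the transformation rule for quadratic differentials yields precisely the equality of the initial and terminal quadratic differentials up to a positive constant, i.e.\ condition (ii).

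The main obstacle is the fine part of (i) $\Rightarrow$ (ii): translating the pointwise alignment condition from the composition formula into the global statement that the initial and terminal holomorphic quadratic differentials agree on $X$ requires combining the invariant transformation rules for $(-1,1)$-forms and $2$-forms with the fact that meromorphic quadratic differentials are determined by their values on an open set. The rest of the argument is essentially bookkeeping with results already available in the paper.
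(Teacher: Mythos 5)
The paper itself gives no proof of Theorem~\ref{thm2.12}; it is stated as a recollection with a citation to Bers \cite{Be1}. So there is no in-paper argument to compare against. Your reconstruction is essentially the standard argument from Bers' paper and is correct in substance: reduce to the Teichm\"uller case via Theorem~\ref{thm2.3}, translate absolute extremality into the identity $L(\varphi^*)=\tfrac12\log K(\varphi)$ via Corollary~\ref{corr2.1}, use submultiplicativity of dilatation and the additivity of translation length along the invariant geodesic to collapse the chain of inequalities, and obtain (i)$\iff$(ii) from the Beltrami chain rule.

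Two small points are worth tightening. First, the claim that $L(\varphi^*)=\lim_n d_{\mathcal T}(\tau,(\varphi^*)^n\tau)/n$ holds ``for any isometry of a metric space'' is false: in general one only has $\lim_n d(\tau,f^n\tau)/n\le L(f)$ (the stable translation length can be strictly smaller than the minimal displacement, and indeed for Bers' pseudohyperbolic modular transformations this is part of the point). Fortunately your argument only needs the easy inequality $\lim_n d(\tau,(\varphi^*)^n\tau)/n\le L(\varphi^*)$ together with the trivial bound $L(\varphi^*)\le d([\mathrm{id}],\varphi^*[\mathrm{id}])=\tfrac12\log K(\varphi)$, so the conclusion stands. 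Second, the induction step giving $K(\varphi^n)=K(\varphi)^n$ from ``(i)'' alone does not go through as stated: (i) only controls $\varphi\circ\varphi$, and to pass from $\varphi^n$ to $\varphi^{n+1}$ one needs the terminal quadratic differential of $\varphi^n$ to match the initial one of $\varphi$, which is precisely condition (ii). Since you establish (i)$\iff$(ii), this is a wording issue rather than a gap, but the induction should explicitly invoke (ii). The local computation for (ii)$\Rightarrow$(i) and the alignment argument from the composition formula for (i)$\Rightarrow$(ii) are both correct; for the latter, combining the alignment identity with the chain-rule relation between the Beltrami coefficients of $\varphi$ and $\varphi^{-1}$ gives $\psi/|\psi|=\phi/|\phi|$ pointwise, hence $\psi$ is a positive constant multiple of $\phi$, as required.
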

\begin{rem}\label{rem2.1}
Suppose $\varphi$ is an absolutely extremal self-homeomorphism of $X$ with quadratic differential $\phi$. Then its modular transformation maps the Teichm\"uller disc ${\mathcal D}_{X,\phi}$ homeomorphically onto itself.
\end{rem}
Indeed,
the mapping $\varphi$ has Beltrami differential $k\frac{|\phi|}{\phi}$, and hence it has initial quadratic differential $\phi$. The inverse $\varphi^{-1}$ has Beltrami differential $-k\frac{|\psi|}{\psi}$, where $\psi$ is the terminal quadratic differential of $\varphi$. Since $\varphi$ is absolutely extremal, $\psi=\phi$.

For each $z\in \mathbb{D}$ and $\mu_z=z \frac{|\phi|}{\phi}$  the element $\varphi^*([W^{\mu_z}]) \in \mathcal{T}(X)$ is represented by  $W^{\mu_z}\circ \varphi$. Since $\varphi^{\pm 1}$ has Beltrami differential ${\pm} k\frac{\phi}{|\phi|}$ for a number $k\in (0,1)$,  the homeomorphism $\varphi^{\pm 1}$ has the form $W^{\mu_{\pm k}}$. Hence the composition $W^{\mu_z}\circ \varphi= W^{\mu_z} \circ (W^{\mu_{-k}} )^{-1}$  equals $W^{\mu_{z'}}$ for a number $z'\in\mathbb{D}$. Since also for the inverse $\varphi^{-1}$ the composition $W^{\mu_z}\circ \varphi^{-1}=W^{\mu_z} \circ (W^{\mu_{k}} )^{-1}   $ equals $W^{\mu_{z''}}$ for a number $z''\in\mathbb{D}$,
$\varphi$ maps  the Teichm\"uller disc ${\mathcal D}_{X,\phi}$ homeomorphically onto itself.
In case
$z=x\in \mathbb{R}$ is a positive real number, the composition $W^{\mu_z}\circ (\varphi)^{\pm 1}$  is equal to  $W^{\mu_{z_{\pm}}}$ for real numbers $z_{\pm}$.
In other words, $\varphi^*$ maps real points $\{\mu_z\}, z \in \mathbb{R},$ in the Teichm\"uller disc to real points in the Teichm\"uller disc.

\medskip
The following definition of pseudo-Anosov mappings is equivalent to Thurston's definition (see \cite{Th}, \cite{FLP}).
\begin{defn}\label{defn2.2}
Let $\,S\,$ be an oriented smooth connected surface. A self-homeomorphism $\varphi$ of $S$ is called  a pseudo-Anosov mapping,
if there exists a homeomorphism $w:S\to X$ onto a Riemann surface $X$ of first kind such that $\,w\circ \varphi\circ w^{-1} $ is
an absolutely extremal self-homeomorphism of $X$ with hyperbolic modular transformation
(equivalently, a non-periodic absolutely extremal self-homeomorphism of $X$ ).
\end{defn}

The absolutely extremal self-homeomorphisms $\varphi$ of a Riemann surface of first kind with
hyperbolic modular transformation (equivalently, the non-periodic  absolutely extremal self-homeomorphisms $\varphi$ of a Riemann surface of first kind)  are characterized by the following properties. There is a  quadratic differential
$\phi$ such that $\varphi$ maps singular points to singular points,
it maps the leaves of the horizontal foliation to leaves of the
horizontal foliation and leaves of the vertical foliation to leaves
of the vertical foliation. These two foliations are orthogonal
outside the common singularity set and are measured foliations in the sense of Thurston by \index{foliation ! measured}
using the metric $ds = \vert\phi\vert^{\frac12}$ to measure the
distance between leaves. The mapping $\varphi$ decreases the
distance between horizontal trajectories by the factor
$K^{-\frac12}$ and increases the distance between vertical
trajectories by the factor $K^{\frac12}$. Here $K$ is the
quasiconformal dilatation of $\varphi$.
\index{mapping ! pseudo-Anosov}

The image of $(-1,1)$ under an isometry from $(-1,1)$  with metric $\frac{dx}{1-x^2}$ into the Teichm\"uller space ${\mathcal T} (g,m)$ with Teichm\"uller metric is called a geodesic line.
\index{geodesic ! line}
Each pair of distinct points $\tau_1, \tau_2 \in \mathcal{T}(g,m)$ lies on a unique geodesic line. This geodesic line contains
each point $\tau$ for which $d_{\mathcal{T}}(\tau_1,\tau) + d_{\mathcal{T}}(\tau,\tau_2)= d_{\mathcal{T}}(\tau_1,\tau_2)$ (see \cite{Be1}, \cite{Krav}).

Self-homeomorphisms of Riemann surfaces with parabolic or
pseudohyperbolic modular transformation are reducible. Notice that
self-homeomorphisms with elliptic modular transformation also may be
reducible but those with hyperbolic modular transformation are
irreducible (see Theorem \ref{thm2.17}, statement ($3$) below).

Following Bers \cite{Be1} we consider now the extremal problem for
the quasiconformal dilatation in the case of reducible
self-homeomorphisms of Riemann surfaces.

Suppose again $X$ is a Riemann surface of first kind with $3g-3+m >
0$ and a self-homeomorphism $\varphi : X \to X$ is reduced by a
non-empty admissible system of curves $\{ C_1 , \ldots ,
C_{\alpha}\}$. $\varphi$ is called \index{mapping ! maximally
reduced} maximally reduced by this system if there is no admissible
system of more than $\alpha$ curves which reduces a mapping which is
isotopic to $\varphi$. $\varphi$ is called completely reduced by
this system of curves if for each connected component $X_j$ of the
complement $X \backslash \underset{\ell =
1}{\overset{\alpha}{\bigcup}} C_{\ell}$ and the smallest positive
integer $N_j$, for which $\varphi^{N_j} (X_j)=X_j$, the  map
$\varphi^{N_j} \mid X_j$ is irreducible.

\begin{lemm}[\cite{Be1}]\label{lemm2.14}
A reducible mapping is isotopic to a maximally reduced mapping.
\end{lemm}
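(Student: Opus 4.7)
The plan is to exploit the finiteness of the maximal cardinality of an admissible system of curves on a surface of finite type, and then take a maximum.

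First I would observe that on the Riemann surface $X$ (of genus $g$ with $m$ punctures and possibly finitely many boundary continua, $3g-3+m > 0$), any admissible system of simple closed curves has uniformly bounded cardinality. Indeed, an admissible system consists of essential, pairwise disjoint, pairwise non-isotopic simple closed curves, and such systems extend to pants decompositions; consequently the number of curves is at most $3g-3+m+(\text{contribution from boundary continua})$, a finite number $N(X)$ depending only on the topological type of $X$.

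Next, consider the set
\begin{equation*}
\mathcal{N}(\varphi) \;=\; \{\alpha \in \mathbb{N} : \text{some } \tilde\varphi \text{ isotopic to } \varphi \text{ is reduced by an admissible system of } \alpha \text{ curves}\}.
\end{equation*}
Since $\varphi$ is reducible, $\mathcal{N}(\varphi) \neq \emptyset$, and by the previous paragraph $\mathcal{N}(\varphi) \subset \{1,2,\ldots,N(X)\}$ is finite. Let $\alpha_{\max} = \max \mathcal{N}(\varphi)$ and choose $\tilde\varphi$ isotopic to $\varphi$ together with an admissible system $\{C_1,\ldots,C_{\alpha_{\max}}\}$ reducing $\tilde\varphi$.

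I claim $\tilde\varphi$ is maximally reduced. If not, there would exist a mapping $\hat\varphi$ isotopic to $\tilde\varphi$ (hence to $\varphi$) which is reduced by an admissible system of strictly more than $\alpha_{\max}$ curves; but this would produce an element of $\mathcal{N}(\varphi)$ exceeding $\alpha_{\max}$, contradicting maximality. So $\tilde\varphi$ is the desired maximally reduced representative of the isotopy class of $\varphi$. The only non-trivial step is the a priori bound on the cardinality of admissible systems; this is a purely topological fact about surfaces of finite type, so no hard analysis is required.
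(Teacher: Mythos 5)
Your proof is correct and uses the standard argument: the cardinality of an admissible system of disjoint, pairwise non-isotopic essential curves on a finite-type surface is bounded (e.g. by $3g-3+m$ plus a boundary contribution), so the set of cardinalities achievable by systems reducing some representative of the isotopy class is a nonempty finite subset of the naturals, and any representative realizing the maximum is maximally reduced by the definition in the paper. This is essentially the argument Bers gives, which the paper cites without reproducing.
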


\begin{lemm}[\cite{Be1}]\label{lemm2.15} If $\varphi$ is
maximally reduced by a system of curves $\{ C_1 , \ldots ,
C_{\alpha}\}$ it is completely reduced by this system.
\end{lemm}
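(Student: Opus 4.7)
The plan is to argue by contradiction: assume $\varphi$ is maximally reduced by $\mathcal{C}=\{C_1,\ldots,C_\alpha\}$ but not completely reduced, and construct an admissible system of more than $\alpha$ curves reducing some mapping isotopic to $\varphi$, contradicting maximality.

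First I would set up notation. Let $X_1,\ldots,X_r$ be the connected components of $X\setminus\bigcup_\ell C_\ell$. Since $\varphi$ is reduced by $\mathcal{C}$, $\varphi$ permutes these components; group them into $\varphi$-orbits $\{X_j,\varphi(X_j),\ldots,\varphi^{N_j-1}(X_j)\}$, where $N_j$ is the first return time. By assumption there exists an index $j_0$ such that $\psi \overset{\mathrm{def}}{=} \varphi^{N_{j_0}}\mid X_{j_0}$ is reducible on $X_{j_0}$ (viewed as a surface with punctures/boundary obtained from the closures of the $C_\ell$'s on its frontier treated as ends). Thus there is an admissible system $\mathcal{D}=\{D_1,\ldots,D_\beta\}$ on $X_{j_0}$, and a self-homeomorphism $\tilde\psi$ of $X_{j_0}$ isotopic to $\psi$ through homeomorphisms fixing $\partial X_{j_0}$ and the punctures, such that $\tilde\psi(\bigcup D_i)=\bigcup D_i$.

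Next I would lift the local isotopy to a global isotopy of $\varphi$. Let $\varphi_t$, $t\in[0,1]$, with $\varphi_0=\psi$, $\varphi_1=\tilde\psi$, be the isotopy on $X_{j_0}$ given by reducibility. I want to replace $\varphi$ by $\tilde\varphi$ isotopic to it (in $X$, fixing the boundary/punctures of $X$) so that $\tilde\varphi^{N_{j_0}}\mid X_{j_0}=\tilde\psi$ while the system $\mathcal{C}$ is still preserved setwise. This is the routine step: since $\varphi$ cyclically maps $X_{j_0}\to\varphi(X_{j_0})\to\cdots\to\varphi^{N_{j_0}-1}(X_{j_0})\to X_{j_0}$, I modify $\varphi$ only on $X_{j_0}$ by the isotopy $\varphi_t$; the rest of the iterates on the orbit are left unchanged. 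A direct computation then shows that the new map $\tilde\varphi$ (which equals $\varphi$ off $X_{j_0}$ and is $\varphi_t$-deformed on $X_{j_0}$) still satisfies $\tilde\varphi(\bigcup C_\ell)=\bigcup C_\ell$ and $\tilde\varphi^{N_{j_0}}\mid X_{j_0}=\tilde\psi$.

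Now I enlarge the reducing system. Define
\begin{equation*}
\mathcal{C}' \;=\; \mathcal{C}\;\cup\;\bigcup_{k=0}^{N_{j_0}-1}\bigcup_{i=1}^{\beta}\tilde\varphi^{k}(D_i).
\end{equation*}
By construction $\tilde\varphi(\bigcup\mathcal{C}')=\bigcup\mathcal{C}'$, so $\tilde\varphi$ is reduced by $\mathcal{C}'$. It remains to check that $\mathcal{C}'$ is admissible and strictly larger than $\mathcal{C}$. Strict enlargement is clear because the $D_i$ lie in the interior of $X_{j_0}$ and are not homotopic (in $X_{j_0}$) to any boundary component, hence not isotopic in $X$ to any curve of $\mathcal{C}$; the same then holds for their $\tilde\varphi$-iterates since $\tilde\varphi$ permutes $\mathcal{C}$. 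For admissibility of $\mathcal{C}'$, each $D_i$ (and hence each $\tilde\varphi^k(D_i)$, lying in some component $\tilde\varphi^k(X_{j_0})$ which is homeomorphic to $X_{j_0}$) is non-peripheral and not null-homotopic by the admissibility of $\mathcal{D}$ in $X_{j_0}$; pairwise non-isotopy across different components of $X\setminus\bigcup\mathcal{C}$ is automatic since the components are disjoint subsurfaces.

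This yields an admissible system $\mathcal{C}'$ with $|\mathcal{C}'|>\alpha$ reducing $\tilde\varphi$, which is isotopic to $\varphi$. This contradicts the maximality of $\mathcal{C}$, and so $\varphi$ must be completely reduced by $\mathcal{C}$. The main obstacle is the bookkeeping in the previous step: one must guarantee that the locally produced curves $D_i$, which a priori live in the interior of one component, together with their $\tilde\varphi$-iterates and the original $\mathcal{C}$, genuinely form an admissible system in the global surface $X$ and are not inadvertently peripheral or parallel to some $C_\ell$ from the viewpoint of $X$ rather than of $X_{j_0}$.
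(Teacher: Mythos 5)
The paper gives no proof of this lemma — it only cites Bers — so there is no argument in the text to compare yours against; what you have written is the standard argument, and its overall structure (contradiction, local reduction of a component, push out by the isotopy, enlarge the system) is correct.

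There is one point, however, where your justification is not right as stated. You claim that ``pairwise non-isotopy across different components of $X\setminus\bigcup\mathcal{C}$ is automatic since the components are disjoint subsurfaces.'' Disjointness of subsurfaces does \emph{not} by itself prevent two simple closed curves lying in different components from being isotopic in $X$: disjoint curves can perfectly well cobound an annulus. What actually rules this out is admissibility of $\mathcal{C}$ together with an innermost-annulus argument. If $D\subset X_{j_0}$ and $D'\subset X_{j_1}$ (with $j_0\neq j_1$, or $j_0=j_1$ and $D,D'$ non-isotopic in $X_{j_0}$) were isotopic in $X$, they would cobound an embedded annulus $A$. Since $\partial A$ is disjoint from $\bigcup\mathcal{C}$, each curve $C_\ell$ meeting $A$ must be contained in $A$ and hence isotopic to its core; since $A$ must cross $\bigcup\mathcal{C}$ (the endpoints of $A$ lie in distinct components, or in the case $j_0=j_1$ one uses that $D$ and $D'$ are not isotopic in $X_{j_0}$), some $C_\ell\subset A$, so $C_\ell$ is isotopic to $D$, hence isotopic to a boundary curve of $X_{j_0}$ through $A\cap X_{j_0}$, contradicting the admissibility of $\mathcal{D}$ in $X_{j_0}$ (or admissibility of $\mathcal{C}$, in the form of non-isotopy of $C_\ell$ to a neighbouring $C_{\ell'}$). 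The same annulus argument, together with $\pi_1$-injectivity of the inclusion $X_{j_0}\hookrightarrow X$, is also what you need to justify the claim that essential non-peripheral curves in $X_{j_0}$ remain essential, non-peripheral and non-isotopic to the $C_\ell$ when viewed in $X$; your attribution of this solely to ``admissibility of $\mathcal{D}$ in $X_{j_0}$'' skips that step. You correctly flagged this bookkeeping as the main obstacle — you just should not simultaneously call part of it ``automatic.'' With that repair the proof is complete.
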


\index{mapping ! completely reduced} Let $\varphi$ be a
self-homeomorphism of a Riemann surface $X$ of first kind, $3g-3+m >
0$, which is completely reduced by a non-empty admissible system $\{
C_1 , \ldots , C_{\alpha}\}$ of curves. If $\varphi$ is not periodic
then an absolutely extremal self-mapping of a Riemann surface
related to $\varphi$ by isotopy and conjugation does not exist (see
Theorem \ref{thm2.17} below). But there exists an absolutely
extremal self-mapping $\psi$ of a {\it nodal} Riemann surface $Y$.
The nodal Riemann surface $Y$ is the image of $X$ by a continuous
mapping $w$ which collapses each curve $C_j$ to a point and maps
$X \, \backslash \,
\underset{1}{\overset{\alpha}{\bigcup}} \ C_j$
homeomorphically onto its image. The absolutely
extremal mapping $\psi$ is related to $\varphi$ by isotopy on $X$
and semi-conjugation with $w$. The nodal Riemann \index{Riemann
surface ! nodal} surface $Y$ and the absolutely extremal mapping
$\psi$ on it can be regarded as a ``limit'' of a sequence of
non-singular Riemann surfaces $X_j$ and self-homeomorphisms
$\varphi_j$ of $X_j$. For the $X_j$ we have $X_j = w_j (X)$ for a
quasiconformal complex structure $w_j$ on $X$. The $\varphi_j$ are
related to $\varphi$ by isotopy and conjugation: $w_j^{-1} \circ
\varphi_j \circ w_j$ is isotopic to $\varphi$ on $X$. The
quasiconformal dilatations $K(\varphi_j)$ converge to the infimum
$${\rm inf} \, \{ K(w \circ \tilde\varphi \circ w^{-1}) : w \in
QC(X), \ \tilde\varphi \ \mbox{isotopic to} \ \varphi\} =
e^{2L(\varphi^*)}$$
and are strictly larger than the infimum.


We will need the details later and give them here. A nodal Riemann
\index{Riemann surface ! nodal} surface $X$ (or Riemann surface with
nodes) is a one-dimensional complex space, each point of which has a
neighbourhood which is either biholomorphic to the unit disc
${\mathbb D}$ in the complex plane or to the set $\{ z = (z_1, z_2)
\in {\mathbb D}^2 : z_1 z_2 = 0\}$. (A mapping on the latter set is
holomorphic if its restriction to either of the sets, $\{ (z_1,0) :
z_1 \in {\mathbb D}\}$, and $\{ (0,z_2) : z_2 \in {\mathbb D}\}$, is
holomorphic.) In the second case the point $(0,0)$ is called a node. We
assume that $X$ is connected and has finitely many nodes. Let
${\mathcal N}$ be the set of nodes. The connected components of $X
\backslash {\mathcal N}$ are called the parts of the nodal Riemann
surface. We do not require that the set ${\mathcal N}$
\index{$\mathcal{N}$} of nodes \index{node} is non-empty. If it is
empty we also call the Riemann surface non-singular. Thus,
non-singular Riemann surfaces are particular cases of nodal Riemann
surfaces.

We will say that a non-singular Riemann surface is of finite type
\index{Riemann surface ! finite type and stable} and stable if it
has no boundary continuum, has genus $g$ and $m$ punctures with $2g
-2+m
> 0$ (hence the universal covering is
${\mathbb C}_+$).
A connected nodal Riemann surface with finitely many parts, each of
which is a stable Riemann surface of finite type, is called of
finite type and stable.


Let $X$ and $Y$ be stable nodal Riemann surfaces of finite type. A
surjective homeomorphism $\varphi : X \to Y$ is orientation
preserving if its restriction to each part of $X$ is so. Notice that
$\varphi$ defines a bijection between the parts of $X$ and the parts
of $Y$. The quasiconformal dilatation of $\varphi$ is defined as
\begin{equation}\label{eq2.34}
K(\varphi) = \max_{X_j} \ K (\varphi \mid X_j) \, ,
\end{equation}
where $X_j$ runs over all parts of $X$.


Let $\varphi$ be an orientation preserving self-homeomorphism of a
stable nodal Riemann surface $X$ of first kind. The mapping
$\varphi$ permutes the parts of $X$ along cycles. Let $X_0$ be a
part of $X$ and let $n$ be the smallest number for which $\varphi
^n(X_0) = X_0$. Put $X_j = \varphi^j (X_0)$ for $j=1,\ldots , n-1,$,
and call $(X_0, X_1, ... , X_{n-1})$ a $\varphi$-cycle of length
$n$. The mapping $\varphi$ is called absolutely extremal if for any
$\varphi$-cycle $(X_0, X_1, ... , X_{n-1})$
the restriction $\varphi | X_0 \cup ... \cup X_{n-1}$ to the Riemann
surface $X_0 \cup ... \cup X_{n-1}$ (which is not connected if
$n>1$) is absolutely extremal. In other words the following holds.
Let $w : X \to Y$ be a homeomorphism onto another nodal Riemann
surface $Y$ (considered as conformal structure on $X$). Let
$\hat\varphi$ be a self-homeomorphism of $X$ which is isotopic to
$\varphi$. Let $(X_0, ... , X_{n-1})$ be a cycle of parts for
$\varphi$. Then
\begin{equation}\label{eq2.35}
\underset{0 \leq j \leq n-1}{\max} \ K (\varphi \mid X_j) \leq
\underset{0 \leq j \leq n-1}{\max} \ K (w \circ \hat\varphi \circ
w^{-1} \mid w(X_j)) \, .
\end{equation}

If the mapping $\varphi | X_0 \cup ... \cup X_{n-1}$ is absolutely extremal, then the restriction $\varphi ^n| X_0$ is an absolutely extremal self-homeomorphism of the connected Riemann surface $X_0$.
Notice that if $\varphi$ fixes all parts of $X$ then \eqref{eq2.35}
is equivalent to the condition that $\varphi \mid X_j$ is an
absolutely extremal self-homeomorphism of $X_j$ for each part $X_j$ of
$X$. In this case Theorem \ref{thm2.11} applied to the parts gives a
description of the absolutely extremal self-homeomorphisms.

Let $X_0$, $X_1,\ldots X_n$ be Riemann surfaces, and let $\hat \varphi$ be a self-homeomorphism of the disjoint union $X\stackrel{def}=X_0 \cup ...  \cup
X_{n-1}$ that commutes the $X_j$ along the cycle
\begin{align}\label{eq2.14b}
X_0 \overset{\hat\varphi}{-\!\!\!-\!\!\!\longrightarrow} X_1
\overset{\hat\varphi}{-\!\!\!-\!\!\!\longrightarrow} \ldots
\overset{\hat\varphi}{-\!\!\!-\!\!\!\longrightarrow} X_n \overset{\rm
def}{=} X_0 .
\end{align}
Let $w$ be a conformal structure on $X$, in other words, consider
Riemann surfaces $Y_j$, their disjoint union $Y\stackrel{def}= \bigcup_{j=0}^{n-1} Y_j$, and a conformal mapping $w:X\to Y$.
We may assume that $w$ maps $X_j$ conformally onto $Y_j$ for each $j$.
We define $\psi =  w \circ \hat \varphi
\circ  w ^{-1}$ on $Y = w (X)$. Let $\hat \varphi _j = \hat \varphi
|X_j$, $\psi_j = \psi |Y_j$ for $j=0,\ldots,n-1,\,$ and $w_j=w \mid
X_j,\, j=0,\ldots, n-1,\,$ $X_n=X_0$, and $ w _n = w | X_n =  w _0$.
Then there is following commutative  diagram

\begin{figure}[H]
\begin{center}
$$
\xymatrix{
X_0 \ar[rr]^{\widehat\varphi} \ar[d]^{ w_0} &&X_1 \ar[rr]^{\widehat\varphi} \ar[d]^{ w_1} &&\ldots \ar[rr]^{\widehat\varphi} &&X_{n-1} \ar[rr]^{\widehat\varphi} \ar[d]^{ w_{n-1}} &&X_n = X_0 \ar[d]^{ w_n =  w_0} \\
Y_0 \ar[rr]^{\psi} &&Y_1\ar[rr]^{\psi} &&\ldots \ar[rr]^{\psi}
&&Y_{n-1} \ar[rr]^{\psi} &&Y_n = Y_0 }\,.
$$
\end{center}
\caption{A commutative diagram for self-homeomorphisms of non-connected surfaces}
\label{fig2.4}
\end{figure}
The commutativity of the diagram implies the following equations
\begin{align}\label{eq2.36}
\psi_j \quad\; = & w _{j+1} \circ \hat \varphi_j \circ  w_j ^{-1}, \; j= 0,
\ldots n-1\,,\nonumber\\
\psi ^j |Y_0 = & w _{j} \circ \hat \varphi^j \circ  w _0^{-1}, \; j=
1, \ldots n\nonumber\,,\\
\psi ^n |Y_0 = & w _0 \circ \hat \varphi^n \circ  w
_0^{-1}\,.
\end{align}

It will be convenient to have in mind the following lemma which
describes the absolutely extremal self-homeomorphisms on the cycles
of parts of the nodal surface $X$.

\begin{lemm}\label{lemm2.16} Let $X_0, X_1, ... , X_{n-1}, X_n
\overset{\rm def}{=} X_0$ be non-singular stable Riemann surfaces of
finite type. Suppose $\varphi $ is a self-homeomorphism of $X_0 \cup
... \cup X_{n-1}$ which permutes the $X_j$ along the $n$-cycle
\begin{align}\label{eq2.35''}
X_0 \overset{\varphi}{-\!\!\!-\!\!\!\longrightarrow} X_1
\overset{\varphi}{-\!\!\!-\!\!\!\longrightarrow} \ldots
\overset{\varphi}{-\!\!\!-\!\!\!\longrightarrow} X_n \overset{\rm
def}{=} X_0 .
\end{align}
\noindent Then $\varphi $ is absolutely extremal iff the following
two conditions hold.
\begin{itemize}
\item [$(1)$] The mapping $F \overset{\rm def}{=} \varphi ^n | X_0$ is
absolutely extremal.
\item [$(2)$]  The quasiconformal dilations $K(\varphi_j)$ of the mappings $\varphi_j\stackrel{def}= \varphi|X_j$
satisfy the equality
\begin{equation}\label{eq2.35'}
\frac{1}{2}\log K(\varphi) = \max _j \frac{1}{2} \log K(\varphi_j) =   \frac{1}{2n} \log
K(\varphi ^n \mid X_0)\,.
\end{equation}
\end{itemize}
\smallskip

\noindent Moreover, if $\varphi $ is absolutely extremal then one of the following two situations occurs.
\begin{itemize}
\item [$(2a)$] $\;$ The mapping $\,F\,$ is conformal and $\,\varphi\,$ is
conformal.
\item [$(2b)$]$\;$The mapping $\,F: X_0\toitself \,$ is a Teichm\"uller mapping whose initial
and terminal quadratic dfferentials are equal,
and the Teichm\"{u}ller
classes $\,[\varphi ^j] \in \mathcal{T}(X_0)\,,$ $\,j= 1, ... , n-1,\,$
divide the bounded
segment  with endpoints $[id]$ and $[F]$ on the unique geodesic line through these two points into $n$
segments, each of which has $d_{\mathcal{T}}$-length equal to
$$
\frac{1}{2n} d_{\mathcal{T}(X_0)}([id],[F])= \frac{1}{2n} \log K(F)\,.
$$
Moreover, for each $j,\; j=1, ... , n-1,$  the mapping $\varphi_j\stackrel{def}= \varphi|X_j$   is a Teichm\"{u}ller map from
$\varphi^j(X_0)$ onto $\varphi^{j+1}(X_0)$.
\end{itemize}
\end{lemm}

\bigskip

\noindent {\bf Proof.}
Notice first that for all homeomorphisms $\hat \varphi$ that permute the $X_j$ along the cycle \eqref{eq2.35''}
the inequalities
\begin{align}\label{eq2.36x}
n \max_j \frac{1}{2} \log K(\hat{\varphi}_j)\geq \sum_0^{n-1}  \frac{1}{2} \log K(\hat{\varphi}_j)\geq  \frac{1}{2} \log K(\hat{\varphi}^n| X_0)\,
\end{align}
hold.

We prove first that $\varphi$ is absolutely extremal if $F=\varphi^n|X_0$ is absolutely extremal and
equality \eqref{eq2.35'} holds.
Let $\hat\varphi $ be a self-homeomorphism
of $X_0 \cup \ldots  \cup X_{n-1}$ that is isotopic to $\varphi$, and let $\psi$ be related to $\varphi$
by the diagram Figure \ref{fig2.4}. If  $F=\varphi^n|X_0$ is absolutely extremal, then
\begin{align*}
\frac{1}{2} \log K(\varphi^n| X_0)\leq \frac{1}{2} \log K(\psi^n| X_0)\,.
\end{align*}
If also equality \eqref{eq2.35'} holds, then
\begin{align*}
\frac{n}{2} \log K(\varphi)= \frac{1}{2} K(\varphi^n| X_0)  \leq \frac{1}{2} K(\psi^n| X_0)
\leq n \max_j \frac{1}{2} \log K(\psi_j)=\frac{n}{2} \log K(\psi)\,.
\end{align*}
We proved that $\varphi$ is absolutely extremal.

Vice versa, suppose $\varphi$ is absolutely extremal. Then $F=\varphi^n|X_0$ is absolutely extremal. 
It remains to show that condition (2) holds.
If $F$ is conformal, we associate to $F$ the self-homeomorphism
$$
X_0 \overset{\rm Id}{-\!\!\!-\!\!\!\longrightarrow} X_0
\overset{\rm Id}{-\!\!\!-\!\!\!\longrightarrow} \ldots
\overset{F}{-\!\!\!-\!\!\!\longrightarrow} X_0
$$
of the formal disjoint union of $n$ copies of $X_0$. This homeomorphism is conformal, hence its quasiconformal dilation equals zero.
By the Lemma on Conjugation this homeomorphism is conjugate to $\varphi$. Since $\varphi$ is absolutely extremal, $\varphi$ must
have vanishing quasiconformal dilation, hence $\varphi$ is  conformal (see equation \eqref{eq2.35}).

Consider the remaining case when $F$ is a Teichm\"uller mapping whose initial and terminal quadratic differentials are equal.
Let $\phi$ be the quadratic differential of $F$
and $\mathcal{D}_{X_0,\phi}$ the Teichm\"uller disc in $X_0$ corresponding to $\phi$. Suppose $\mu_{\phi}=k\frac{|\phi|}{\phi}$. The set $\Big\{ \{x\frac{|\phi|}{\phi}\}: x\in \mathbb{R}\Big\}$, equipped with the metric $d_{\mathcal{T}}$, is the unique geodesic line through $[F]=\{k\frac{|\phi|}{\phi}\}$ and $[{\rm Id}]= \{0\}$. Consider the points $\tau_j= \{x_j\frac{|\phi|}{\phi}\}$ on this line that divide the segment $\Big\{ \{x\frac{|\phi|}{\phi}\}: x\in [0,k]\Big\}$ on this line into $n$ segments of equal $d_{\mathcal{T}}$-length $\frac{1}{2n} \log K(F)$. The class $\tau_j,\, j=1,\ldots,n-1,$ is represented by the absolutely extremal self-homeomorphism $W^{\mu_{x_j}}$ of $X_0$, $\tau_0$ is represented by the identity $\rm Id$ and $\tau_n$ is represented by $W^{\mu_{x_n}}=W^{\mu_{k}}=F$.
Put $\mathring{X}_0=\mathring{X}_n=X_0$ and  $\mathring{X}_j= W^{\mu_{x_j}}(X_0),\, j=1,\ldots,n-1$.
By Lemma 9.1 of \cite{Let} for each $j=0,\ldots,n-1,$ the homeomorphism $\mathring{\varphi}_j \stackrel{def}= W^{\mu_{x_{j+1}}} \circ  (W^{\mu_{x_{j}}})^{-1}:\mathring{X}_j\to \mathring{X}_{j+1}$ is a Teichm\"uller mapping with quadratic differential $\phi$. For its quasi-conformal dilatation the equality $\frac{1}{2} \log K(\mathring{\varphi}_j)=d_{\mathcal{T}}(\{x_j\frac{|\phi|}{\phi}\},\{x_{j+1}\frac{|\phi|}{\phi}\})= \frac{1}{2n}\log K(F)$ holds.

Consider the mapping $\mathring{\varphi}$ on $\mathring{X}\stackrel{def}=\bigcup_0^{n-1}\mathring{X}_j$ that equals $\mathring{\varphi}_j $ on $\mathring{X}_j$. Then $\mathring{\varphi}_0^n\mid  \mathring{X}_0=F$, $\frac{1}{2}\log K(\mathring{\varphi})=\frac{1}{2n} \log K(\mathring{\varphi}^n\mid  \mathring{X}_0)$. Since $\mathring{X}_0=X_0$ and $\mathring{\varphi}^n\mid X_0= {\varphi}^n\mid X_0$ the mapping $\mathring{\varphi}$ on  $\mathring{X}$ is conjugate to the mapping ${\varphi}$ on $X$. Since ${\varphi}$ is absolutely extremal, the inequality
$$
\frac{1}{2}\log K({\varphi})\leq \frac{1}{2}\log K(\mathring{\varphi})=\frac{1}{2n}
\log K(\mathring{\varphi}^n\mid  \mathring{X}_0) =
\frac{1}{2n} \log K({\varphi}^n\mid {X}_0)
$$
holds. Condition $(2)$ follows from \eqref{eq2.36x}.

Conditions (1) and (2) imply that $\varphi$ is conformal if $F$ is conformal, and if $F$ is a Teichm\"uller mapping, then
equality \eqref{eq2.35'} implies that the elements $\tau_j$ of the
Teichm\"uller space represented by the $\varphi_j$
divide the segment on the geodesic line between $[Id]$ and $[F]$ into $n$ segments of equal length.
The lemma is proved. \hfill $\Box$

\smallskip

Now we describe in more detail the solution of the extremal problem
\eqref{eq2.30} in the reducible case. Let $X$ be a connected Riemann
surface, which is closed or of first kind, with universal covering
equal to $\mathbb{C}_+$. Let $\mathfrak{m} \in \mathfrak{M}(X)$ be
an isotopy class of orientation preserving self-homeomorphisms. Let
$\mathcal{C}$ be an admissible system of curves which completely
reduces an element $\varphi$ of $\mathfrak{m}$. By an isotopy we may
assume that $\mathcal{C}$ is real analytic (or even geodesic).
Associate to $X$ and the system of curves $\mathcal{C}$ a nodal
surface $Y$ and a continuous surjection $w:X \to Y$. This can be
done as follows.

Surround each connected component $\mathcal{C}_j$ of $\mathcal{C}$ by an annulus
$A(\mathcal{C}_j)$, which admits a conformal mapping $\mathfrak{c}_j$ onto a round annulus
$A_{r_j}=\{\frac{1}{r_j}< |z| < r_j\}\;\}$ for some $\; r_j>1,\; $ such that
$\mathfrak{c}_j$ maps the curve $\mathcal{C}_j$  to the unit circle. For all $j$ we let $\chi_j: [1,r_j]
\to [0,r_j]$ be a diffeomorphism such that $\frac{\chi_j(z)}{z}$ is the identity near $r_j$ and maps $1$ to $0$.

For each $j$ we define a continuous surjective mapping $w_j: A_{r_j}\to V_j\stackrel{def}= \{(z_1,z_2)\in \mathbb{C}^2:\, z_1 z_2 =0 ,\, |z_j|<r_j \, \mbox{for} \, j=1,2\}$ as follows.
For $|z|\leq 1$ we put
$w_j(z)= (\,0\,, \; \chi_j(|z|^{-1}) \cdot \frac{z}{|z|}) \in \mathbb{C}^2$ and for
$|z|\geq 1$ we put $w_j(z)= (\, \chi_j(|z|) \cdot
\frac{z}{|z|}\,,\,0\,)\in \mathbb{C}^2.$
The nodal surface $Y$ is obtained by gluing each $V_j$ to $X\setminus\bigcup \mathcal{C}_j$ along $A(\mathcal{C}_j)\setminus \mathcal{C}_j$,  using the homeomorphism $w_j\circ \mathfrak{c}_j:A(\mathcal{C}_j)\setminus \mathcal{C}_j\to V_j\setminus \{0\}$.
The set of nodes $\mathcal{N}$ of $Y$ is in bijective correspondence to the set of connected components of $\mathcal{C}$. The surjective mapping $w:X\to Y$ is defined
on $X\setminus \mathcal{C}$ so that it
takes this set identically to its copy in $Y$, and is equal to $w_j\circ \mathfrak{c}_j$ on each annulus $A(\mathcal{C}_j)$.
The thus defined mappings agree on the set
$\bigcup (A(\mathcal{C}_j) \setminus \bigcup \mathcal{C}_j)$, hence $w$ is correctly defined.

Since $\varphi$ maps the complement $X\setminus \bigcup_{C \in\mathcal{C}}C$ of the set of curves homeomorphically onto itself,
we may consider the mapping $w \circ \varphi \circ
w^{-1}$ on $Y \setminus \mathcal{N}$. It extends continuously to the
nodes. Denote the obtained mapping on $Y$ by
$\varphi_{\odot}$
and its isotopy class on $Y$ by
${\mathfrak{m}}_{\odot}$.
\index{$\varphi_{\odot}$}

Note that the nodal surface $Y$ is defined up to homeomorphism by the
isotopy class of $\mathcal{C}$. (We consider isotopies of the system of curves $\mathcal{C}$
within the class of real analytic systems of curves). The class
${\mathfrak{m}}_{\odot}$ is determined by $\mathfrak{m}$, and by the system of curves
$\mathcal{C}.\,$ The conjugacy class
$\widehat{{\mathfrak{m}}_{\odot}}$ of
${\mathfrak{m}}_{\odot}$ is defined by the isotopy class of
$\mathcal{C}$ and the class $\widehat{\mathfrak{m}}$.
\index{$\mathfrak{m}_{\odot}$ }
\index{$\widehat{{\mathfrak m}_{\odot}}$}

The mapping $\varphi_{\odot}$, and, hence its isotopy class ${\mathfrak{m}}_{\odot}$,
permutes the parts $Y_k$ of $Y \setminus \mathcal{N}$ along cycles denoted by $\mathring{{\rm cyc}}_j$.
\index{$\mathring{{\rm cyc}}_j$}
Let ${\mathfrak m}_{\odot,j}$ be \index{$\mathfrak{m}_{\odot,j}$}
the restrictions  of the class ${\mathfrak{m}}_{\odot}$ to the cycles $\mathring{{\rm cyc}}_j$.
We call the conjugacy classes $\widehat{\mathfrak{m}_{\odot,j}}$ of the restrictions to the cycles $\mathring{{\rm cyc}}_j$
the irreducible components of $\widehat{\mathfrak{m}}$
related to the class of $\mathcal{C}$. Notice \index{mapping class !
irreducible components of} that the class
$\widehat{{\mathfrak{m}}_{\odot}}$ determines the class
$\widehat{\mathfrak{m}}$ only modulo products of powers of Dehn
twists around curves which are homotopic to curves of the system
$\mathcal{C}$. It is not hard to see that the $\widehat{{\mathfrak{m}}_{\odot}}$ are irreducible (see alo Remark
\ref{rem6.1}).

Notice also that the isotopy class of a system of curves
$\mathcal{C}$ which completely reduces an element of $\mathfrak{m}$
is not uniquely determined by $X$ and $\mathfrak{m}$, even if we
require that the system maximally reduces the homeomorphism. In
particular, the type of the nodal surface $Y$ is not uniquely
determined. This may occur, for instance for reducible
homeomorphisms with elliptic modular transformation.

It is a remarkable fact that the extremal problem for the
quasiconformal dilatation has a solution in terms of nodal Riemann
surfaces and irreducible parts of mapping classes.

The following theorem is due to Bers.

\begin{thm}[\cite{Be1}]\label{thm2.17} Let $\mathfrak{m}$ be a mapping
class of orientation preserving self-homeomorphisms of a Riemann
surface $X$ of first kind with universal covering ${\mathbb C}_+$.
Let $\mathcal{C}$ be an admissible system of real analytic curves
which completely reduces an element $\varphi \in \,\mathfrak{m}$.
Choose a stable nodal Riemann surface $Y$ of first kind with set of
nodes $\mathcal{N}$ and a continuous surjection $w: X \to Y$ which
contracts each curve of $\mathcal{C}$ to a point and whose
restriction to the complement of $\mathcal{C}$ is a homeomorphism
onto $Y \setminus \mathcal{N}$. Denote by
${\mathfrak{m}}_{\odot}$ the mapping class on $Y$ induced by
$\mathfrak{m}$ and $w$, and by $\widehat
{{\mathfrak{m}}_{\odot}}$ its conjugacy class. Then the
following holds.
\begin{enumerate}
\item [$1$.] There exists a conformal structure $\tilde w$ on $Y,\,$ $\tilde w:
Y \to \tilde w(Y) = \tilde Y,\,$ and an absolutely extremal
self-homeomorphism $\tilde {\varphi}$ of $\tilde Y$, representing
the class $\widehat {{\mathfrak{m}}_{\odot}}$.
\end{enumerate}
\noindent The self-homeomorphism $\tilde {\varphi}$ has the
following stronger extremal properties.
\begin{enumerate}
\item [$2$.] The equality
\begin{align}\label{eq2.40}
I(\varphi) = e^{2L(\varphi^*)} = K (\tilde {\varphi})
\end{align}
holds for the modular transformation ${\varphi}^*$ of $\varphi$. Moreover, for  any continuous surjection
$w':X \to Y'$ onto a nodal Riemann surface $Y'$, such that the preimages of the nodes are disjoint
Jordan curves and the restriction of $w'$ to the complement of the
curves is a homeomorphism, and any self-homeomorphism $\varphi'$ in
the class ${\mathfrak{m}}'_{\odot}$ induced by
$\mathfrak{m}$ and $w'$, we have
$$
K(\tilde {\varphi}) \leq K(\varphi').
$$
\item [$3$.] If $\mathfrak{m}$ is reducible and not periodic then for each
non-singular Riemann surface $Y'$, each surjective homeomorphism
$w' : X \to Y'$, and each self-homeomorphism $\varphi'$ of $Y'$
such that $(w')^{-1} \circ \varphi' \circ w' \in \mathfrak{m}$, we
have strict inequality
$$
K(\tilde {\varphi}) < K(\varphi').
$$
\item [$4$.] If $\mathfrak{m}$ is reducible then there exists a sequence $Y^{(j)}$ of
non-singular Riemann surfaces $Y^{(j)}$, surjective homeomorphisms $w^{(j)}
: X \to Y^{(j)}$, and self-homeomorphism $\varphi^{(j)}$ of $X^{(j)}$ with the
following property:

$$
\mbox{ $(w^{(j)})^{-1} \circ \varphi^{(j)} \circ w^{(j)} \in \mathfrak{m}$ and
$K(\varphi^{(j)}) \to K(\tilde {\varphi})$.}
$$
\end{enumerate}
\end{thm}


Recall that a self-homeomorphism $\tilde  {\varphi}$ of a nodal Riemann surface $\tilde Y$
is absolutely extremal if the quantity $K(\tilde  {\varphi})=\max_{{\tilde Y}_j} K(\tilde  {\varphi}|Y_j)$ is smallest among the respective quantities for all nodal Riemann surfaces that are homeomorphic to $Y$ and all
self-homeomorphisms of $Y$ that are obtained from $\tilde  {\varphi}$ by isotopy and conjugation. Here ${\tilde Y}_j$ are the parts of $\tilde Y$. Recall also that the quantity $I({\varphi})$ for a self-homeomorphism $\varphi$ of a non-singular Riemann surface equals the infimum of $K(\varphi')$ over all self-homeomorphisms $\varphi'$ of non-singular Riemann surfaces
in the class of $\varphi$. Equality \eqref{eq2.40} relates the value $I({\varphi})$
to the quasiconformal dilatation of the absolutely extremal self-homeomorphism of the associated nodal surface.

The second part of
Statement (2)
is strictly stronger than statement (1). In Statement (1)
the nodal surface up to homeomorphism and the class of $\tilde\varphi$ are fixed. In statement (2) the
type of the nodal surface $Y'$ is not prescribed. It may be
different from the type of $Y$. We require only that $Y'$ is the image of $X$ under a continuous surjection that maps some subcollection of the set of admissible curves to nodes.

The deepest part of the theorem is its particular case concerning
irreducible homeomorphisms and Statement (3).

\chapter{The entropy of surface homeomorphisms}

\label{chapter-entropy}
\setcounter{equation}{0}

Here we give a self-contained proof of the Theorem of Fathi and Shub
on the entropy minimizing property of pseudo-Anosov self-homeomorphisms of closed Riemann surfaces of genus at least two following mainly along the lines of the original proof. A proof of the theorem for the case of punctured surfaces is also included. The exposition contains a thorough account on the necessary prerequisites on the trajectories of quadratic differentials.  The present proof of the Fathi and Shub Theorem on the relation between the entropy and the quasi-conformal dilatation is new. The chapter concludes with a thorough treatment of the entropy of self-homeomorphisms of Riemann surfaces of second kind which is a prerequisite for the treatment of reducible braids.
\section{The topological entropy 
of mapping classes}
\label{sec:entropy.1}

The topological entropy of continuous surjective mappings of a
compact topological space to itself is defined as follows.
\index{entropy}

\smallskip

Let $X$ be a compact topological space and $\varphi$ a continuous
mapping from $X$ onto itself. Let ${\mathcal A}$
\index{$\mathcal{A}$} \index{$\mathcal{B}$}
be a collection of
open subsets of $X$ which cover $X$ (for short, ${\mathcal A}$ is an
open cover of $X$). \index{cover}
For two open covers ${\mathcal A}$ and
${\mathcal B}$ we define ${\mathcal A} \vee {\mathcal B} = \{ A \cap B : A
\in {\mathcal A} , B \in {\mathcal B} \}$. \index{$\mathcal{A} \vee
{\mathcal B}$} Let ${\mathcal N} ({\mathcal A})$ be \index{$\mathcal
{N} ({\mathcal A})$} the minimal cardinality of a subset ${\mathcal
A}_1$ of ${\mathcal A}$ which is a cover of $X$. The entropy
$h(\varphi , {\mathcal A})$ of $\varphi$ with respect to ${\mathcal
A}$ is defined as
\begin{equation}\label{eq3.1}
h(\varphi , {\mathcal A}) \overset{\rm def}{=} \ \overline{\lim}_{N
\to \infty} \ \frac1N \log\ {\mathcal N} ({\mathcal A} \vee \varphi^{-1}
({\mathcal A}) \vee \ldots \vee \varphi^{-N} ({\mathcal A})) \, .
\end{equation}
(Here $\varphi^{-1} ({\mathcal A}) = \{ \varphi^{-1} (A) : A \in
{\mathcal A}\}$, $\varphi^{-k-1} ({\mathcal A}) = \varphi^{-1}
(\varphi^{-k} ({\mathcal A}))$.) The entropy $h(\varphi)$ of
\index{$h(\varphi)$} $\varphi$ is defined as
\begin{align}\label{eq3.1a}
h(\varphi) \stackrel{def}=\underset{\mathcal
A}{\sup} \ h (\varphi , {\mathcal A}).
\end{align}
\index{$h(\varphi , {\mathcal A})$}
An open cover ${\mathcal A}$ is a refinement of an open cover
${\mathcal B}$ if each set $A \in \mathcal{A}$ is contained in a set
$B \in \mathcal{B}$. We write $\mathcal{A} \prec \mathcal{B}$. A sequence $\{\mathcal{A}_n\}, \, n=1,2,\dots,$
of open covers is refining if for each $n$ the cover
$\mathcal{A}_{n+1}$ is a refinement of $\mathcal{A}_n$ and each open
cover $\mathcal{B}$ has a refinement among the $\mathcal{A}_n$. For
a refining sequence of open covers $\mathcal{A}_n$ the equality
$$
h(\varphi)=\lim_{n \to \infty}h(\varphi,\mathcal{A}_n)
$$
holds. (The sequence of numbers on the right is non-decreasing.) See
\cite{AKM}, Proposition 12.

An example of a refining sequence of covers is the following. Suppose $X$ is equipped with a metric. Let $\mathcal{A}_{n}$ be the cover consisting of all open balls of radius $\varepsilon_n$. If $\varepsilon_n$ decreases to zero for $n \to \infty$ then the sequence is refining.

\smallskip

\noindent The entropy is a conjugacy invariant (see Theorem 1 of \cite{AKM}):\\
{\it If $\psi : X \to Y$ is a homeomorphism of
topological spaces, then} $h(\psi \circ \varphi \circ \psi^{-1}) = h
(\varphi)$.
\smallskip

\noindent Let $\varphi$ be a self-homeomorphism of $X$. The following relation is not difficult to prove (see Theorem 2 of \cite{AKM}).\\
{\it For any non-zero integral number $n$}, $h(\varphi^n)= |n| \, h (\varphi)$.

\smallskip

\noindent Moreover, the following result of \cite{AKM} (see Theorem 4 there) holds.\\
{\it Let $X_1$ and $X_2$ be two closed subsets of a topological space $X$ such that $X=X_1 \cup X_2$. If a self-homeomorphism $\varphi$  of $X$ fixes each $X_j$, $j=1,2,$ setwise, then} $h(\varphi)= \max_{j=1,2}h(\varphi\mid X_j) $.\\
\noindent For more details see \cite{AKM}.

\smallskip

\noindent We als need the following property of entropies (see Theorem 5  of \cite{AKM}). \\
{\it Suppose $X$ is a compact topological space and $\sim$ is an equivalence relation on $X$.
Let $p:X\to X\diagup \sim$ be the projection of $X$ to the quotient. If $\tilde{\varphi}$ is a continuous mapping from $X$ into itself such that $\tilde{\varphi}(x)\sim \tilde{\varphi}(y)$ if $x\sim y$, then for the mapping ${\varphi}$ on the quotient that is defined by ${\varphi}\circ p= p\circ \tilde{\varphi}$ the inequality
$h({\varphi})\leq h(\tilde{\varphi})$ holds.}

We will be concerned with the topological entropy of \index{entropy
! of self-homeomorphisms} self-homeomorphisms of compact surfaces
(with or without boundary). Let $X$ be a compact surface (with or
without boundary) with a finite set $E_n$ of distinguished points.

\smallskip

Let ${\mathfrak m} \in {\mathfrak M} (X ; \partial X , E_n)$ (or
${\mathfrak m} \in {\mathfrak M} (X ; \emptyset , E_n)$ if $\partial X =\emptyset$)  be
a mapping class. The entropy of the mapping class \index{entropy !
of mapping classes} ${\mathfrak m}$ is defined as follows
\begin{equation}\label{eq3.2}
h({\mathfrak m}) = {\rm inf} \, \{ h(\varphi) : \varphi \in
{\mathfrak m} \} \, .
\end{equation}
Since the topological entropy of a homeomorphism is invariant under
conjugation the following holds:
\begin{equation}\label{eq3.2a}
h({\mathfrak m}) = h(\widehat {\mathfrak m}) = {\rm inf} \, \{ h (\psi
\circ \varphi \circ \psi^{-1}) : \varphi \in {\mathfrak m} \, , \
\psi : X \to Y \ \mbox{a homeomorphism}\} \, .
\end{equation}
For a braid $b \in \mathcal{B}_n$ with base point $E_n$ we define
the entropy $h(b)$ \index{entropy ! of braids} as the entropy of the
mapping class $\mathfrak{m}_b = \Theta_n(b) \in {\mathfrak M} (\bar
D ;
\partial D , E_n)$ corresponding to $b$:
\begin{equation}\label{eq3.41}
h(b) \overset{\rm def}{=} \ h(\mathfrak{m}_b) .
\end{equation}
By \eqref{eq3.2a} the entropy $h(b)$ does not depend on the choice
of the base point $E_n$ and is a conjugacy invariant. Hence,
\begin{equation}\label{eq3.42}
h(b) \, =\, h(\hat b)\, = \ h(\widehat {\mathfrak{m}_b})
\,\overset{\rm def}{=}\, {\rm inf} \, \{ h(\varphi) : \varphi \in
\widehat {\mathfrak m _b} \} \, .
\end{equation}
\index{entropy ! of conjugacy classes of braids} \index{$h(b)$} \index{$h(\hat{b})$}
The entropy of
self-homeomorphisms of closed surfaces was studied first in expos\'e~10 of the Asterisque volume dedicated to Thurston's work \cite{FLP}.

In Sections \ref{sec:entropy.2a} and \ref{sec:entropy.2} of this chapter we give the proof of the following two theorems.
\begin{thm}\label{thm3.2} Let $X$ be a (closed connected) Riemann surface
of genus $g$ with a set $E_m$ of $m\geq 0$ distinguished points, $3g-3+m>
0$. Let $\varphi_0 \in {\rm Hom} (X ; \emptyset , E_m)$ be a
non-periodic absolutely extremal self-homeomorphism of $X$ with set of distinguished
points $E_m$ (by an abuse of language we speak about a non-periodic absolutely extremal
self-homeomorphism of $X \backslash E_m$). Then
\begin{equation}\label{equation3.2'}
h(\varphi_0) = \frac{1}{2} \log K(\varphi_0) \, = \, L (\varphi_0^*) \,.
\end{equation}
\end{thm}

Here $K (\varphi_0)$ is the quasiconformal dilatation of
$\varphi_0$,
and $\varphi_0^*$ is the modular transformation on the
Teichm\"uller space ${\mathcal T} (g,m)$ induced by $\varphi_0$ and
$L(\varphi_0^*)$ is its translation length.

The theorem was first proved  for closed surfaces by Fathi and Shub in the exposé 10 of the volume \cite{FLP}.
The present proof of Theorem \ref{thm3.2} (see Section \ref{sec:entropy.3}) differs from that of Fathi and Shub and works also for punctured surfaces.
\index{Fathi} \index{Shub}

\begin{thm}[\cite{FLP}]\label{thm3.1} Let $X$ be a closed
connected Riemann surface of genus $g \geq 2$. Then any
pseudo-Anosov self-homeomorphism of $X$ is entropy minimizing in its
isotopy class.
\end{thm}

Theorem \ref{thm3.1} is proved in \cite{FLP} but the ingredients of the proof of Theorem \ref{thm3.1} in \cite{FLP} are spread over several chapters.
In Sections \ref{sec:entropy.2a} and \ref{sec:entropy.2} we present a self-contained proof of Theorem \ref{thm3.1} following mainly the ideas of Fathi and Shub.
Ahlfors' trick (see \cite{A2}, section 4, p.19-20), and the  Lemma \ref{lem3.4} below on the entropy of lifts to simple branched coverings are needed to prove the analog of Theorem \ref{thm3.1} for punctured Riemann surfaces. Theorem \ref{thm3.1} for punctured Riemann surfaces is proved in Section \ref{sec:entropy.3}. The result of Section \ref{sec:entropy.4} on the entropy of self-homeomorphisms of Riemann surfaces of second kind is needed to treat the entropy of reducible braids.

Recall that
a branched covering is called simple, if over each point there is at most one branch point and the mapping is a double branched covering in a neighbourhood of this branch point.
 \index{covering ! simple branched}

\begin{lemm}\label{lem3.4} Let $\varphi$ be a
self-homeomorphism of a compact smooth surface $X$. Let $\hat X
\overset{p}{\longrightarrow} X$ be a topological simple branched
covering of $X$ and let $\hat\varphi:\hat X \to \hat X$ be a self-homeomorphism of $\hat X$ such that $p \circ \hat\varphi = \varphi \circ p$ (for short, $\hat\varphi$ is a lift of $\varphi$). Then $h (\hat\varphi) = h(\varphi)$.
\end{lemm}

\noindent {\bf Proof of the Lemma \ref{lem3.4}.} We assume that the covering multiplicity $m$ is bigger than $1$. \\
{\bf I. The inequality $h(\varphi) \leq h (\hat\varphi)$} follows from Theorem~5 \cite{AKM}.
Here is a proof for convenience of the reader. Let ${\mathcal A}$ be
an open cover of $X$. Put $ p^{-1} ({\mathcal A})
\overset{\rm def}{=} \{ p^{-1} (A) : A \in {\mathcal A} \}$.
$p^{-1} ({\mathcal A})$ is an open cover of $\hat X$. Since $p \circ
\hat\varphi = \varphi \circ p$ it follows that $p^{-1} (\varphi^{-1}
({\mathcal A})) = \hat\varphi^{-1} (p^{-1} ({\mathcal A}))$. Further,
$p^{-1} ({\mathcal A} \vee {\mathcal B}) = p^{-1} ({\mathcal A})
\vee p^{-1} ({\mathcal B})$ for two open covers ${\mathcal A}$ and
${\mathcal B}$ of $X$. Hence
\begin{align}
{\mathcal N} \Big(p^{-1}({\mathcal A}) \vee \ldots \vee  \hat\varphi^{-N}(p^{-1} ({\mathcal A}))\Big) &=
{\mathcal N} \Big(p^{-1} ({\mathcal A}) \vee \ldots \vee p^{-1}
(\varphi^{-N} ({\mathcal A}))\Big) \nonumber \\
= {\mathcal N} \Big(p^{-1} ({\mathcal A} \vee \ldots \vee \varphi^{-N} ({\mathcal A}))\Big)\quad \quad
& ={\mathcal N}\Big({\mathcal A} \vee \ldots \vee \varphi^{-N}
({\mathcal A})\Big) \, . \nonumber
\end{align}
Hence
$$
h(\varphi) = \underset{{\mathcal A}}{\rm sup} \, h(\varphi ,
{\mathcal A}) = \underset{{\hat{\mathcal A} = p^{-1} ({\mathcal A})
\, {\rm for} \atop {\rm a \, cover} \, {\mathcal A} \, {\rm of} \,
X}}{\rm sup} \, h(\hat\varphi , \hat{\mathcal A}) \leq
\underset{{\tilde{\mathcal A} \, {\rm an \, arbitrary} \atop {\rm
cover \, of} \, \hat X}}{\rm sup} \, h (\hat\varphi , \tilde{\mathcal A})
= h(\hat\varphi) \, .
$$

Notice that whatever sequence of covers $\mathcal{A}_n$ of $X$ we take, the sequence of covers $p^{-1} ({\mathcal A}_n)$ is not refining if the covering multiplicity is bigger than $1$.

\noindent {\bf II. The opposite inequality $h(\varphi) \geq h (\hat\varphi)$.}
We will choose a suitable refining sequence of coverings of $X$ and associate to it a refining sequence of coverings of  $\hat X$. Equip $X$ with a metric $d$ and denote by $d$ also the induced metric on $\hat X$ (which is defined by putting the length of any smooth curve in $\hat X$ equal to the length of its projection). Let $\sigma >0$ be the minimal distance between two points in the branch locus of the covering.

A connected open subset $V$ of $X$ is called
nicely covered if $p^{-1}(V)$ contains no more than one branch point. For a
nicely covered connected open set $V$ the preimage $p^{-1}(V)$ consists either of $m$ or $m-1$ connected components.
In the first case $p$ is a homeomorphism from each connected component of $p^{-1}(V)$ onto $V$. In the second case
this statement is true for $m-2$ of the connected components of $p^{-1}(V)$.
The restriction of $p$ to the remaining component of $p^{-1}(V)$ is a double branched covering of $V$ with a single branch point.
Each open set of diameter not exceeding $\sigma$ is nicely covered.

\noindent {\bf Remarks preceeding the proof of the opposite inequality.}
Suppose we have three simply connected sets $A_1$, $A_2$, and $A_3$ belonging to a certain open cover $\mathcal A$ of $X$.
Suppose for integer numbers $k_1,k_2,$ and $k_3$ the set $A_1\cap \varphi^{-k_2}(A_2)\cap \varphi^{-k_3}(A_3)$ is not empty and the union $A_1\cup \varphi^{-k_2}(A_2)\cup \varphi^{-k_3}(A_3)$    is contained in a simply connected set $B$, that does not intersect the branch locus. Since $\varphi$ is a homeomorphism, none of the sets $A_j,\,j=1,2,3,$ intersects the branch locus. For $k'=1,2,3,$ we let $A_{k'}^{l}$ be the $m$ connected components of $p^{-1}(A_{k'})$,
and $\hat{\varphi}$ a self-homeomorphism of $\hat X$ for which
$p\circ \hat{\varphi}=\varphi\circ p$.
Then
the intersection of two or three sets among the $A_1^{j_1}$, $  \varphi^{-k_2}(A_2^{j_2})$, and
 $  \varphi^{-k_3}(A_2^{j_3})$ is not empty  if and only if the sets lie in the same of the $m$ connected components of $p^{-1}(B)$. Hence, there are exactly $m$  non-empty sets of the form
$A_1^{j_1}\cap  \varphi^{-k_2}(A_2^{j_2})\cap
  \varphi^{-k_3}(A_2^{j_3})$.

Consider the case when $A_1$ and $A_3$ do not meet the branch locus but $A_2$ contains a single point of the branch locus. Suppose the intersection  $A_1\cap \varphi^{-k_2}(A_2)\cap \varphi^{-k_3}(A_3)$ is not empty and the union $A_1\cup \varphi^{-k_2}(A_2)\cup \varphi^{-k_3}(A_3)$    is contained in a simply connected set $B$ with a single point in the branch locus. The previous arguments remain true for the $m-2$ connected components of
$p^{-1}(B)$ that do not contain a branch point. The connected component $B^{m-1}$, for which
$p:B^{m-1}\to B$ is a  double branched covering contains two preimages
of $A_1$, and two preimages of $\varphi^{-k_3}(A_3)$, and we cannot exclude that each of the two components of $p^{-1}(A_1)$ intersects each of the two components of $p^{-1}(\varphi^{-k_3}(A_3))$. We may get more than $m$, but no more than $m-2+4$ non-empty sets of the form
$A_1^{j_1}\cap  \varphi^{-k_2}( A_2^{j_2})\cap
  \varphi^{-k_3}(A_3^{j_3})$. More detailed, there are at most two of the connected components of $p^{-1}(A_1^{j_1})$, such that each of them may contain
$2$ different sets of the form $A_1^{j_1}\cap  \varphi^{-k_2}(A_2^{j_2})\cap\varphi^{-k_3}(A_3^{j_3})$, and all other components contain exactly one set of this form. Equivalently, there are at most two of the connected components
of  $ p^{-1}(\varphi^{-k_3}(A_3^{j_3}))$,
such that each of them may contain
$2$ sets of the form $A_1^{j_1}\cap  \varphi^{-k_2}(A_2^{j_2})\cap\varphi^{-k_3}(A_3^{j_3})$, and all other components contain exactly one set of this form.

\noindent {\bf Beginning of the formal proof.}
We make the following choices of positive numbers by induction on the natural number $\ell$.
For each $\ell$ there exists a positive number $\varepsilon_{\ell}<\varepsilon_{\ell-1}$, $ \varepsilon_1<     \frac{\sigma}{2}$,  such that for any pair of points $z_1,z_2 \in X$ the following implication holds
\begin{align}\label{eq3.1b}
d(z_1,z_2)< \varepsilon_{\ell}  \Rightarrow d(\varphi^{\pm l}(z_1) ,\varphi^{\pm l}(z_2))<\frac{\sigma}{2}\; \mbox{for}\; l=1,2,\ldots, \ell \,.
\end{align}
Further, there exists a positive number $\varepsilon'_{\ell}< \frac{\varepsilon_{\ell}}{2}$, $\varepsilon'_{\ell}<\varepsilon'_{\ell-1}$, such that for any pair of points $z_1,z_2\in X$ the implication
\begin{align}\label{eq3.1d}
d(z_1,z_2)\geq \frac{\varepsilon_{\ell}}{2} \Rightarrow d(\varphi^{\pm l}(z_1) ,\varphi^{\pm l}(z_2))\geq\varepsilon'_{\ell}\;\mbox{for}\; l=1,2,\ldots, \ell \,
\end{align}
holds.
Finally, there exists a positive number $\delta_{\ell} < \frac{\varepsilon'_{\ell}}{4}$, $\delta_{\ell}<\delta_{\ell-1}$,   such that such that for any pair of points $z_1,z_2 \in X$
\begin{align}\label{eq3.1c}
d(z_1,z_2)\leq 2\delta_{\ell} \Rightarrow d(\varphi^{\pm l}(z_1) ,\varphi^{\pm l}(z_2))<\frac{\varepsilon'_{\ell}}{2}\; \mbox{for}\; l=1,2,\ldots, \ell \, .
\end{align}
The statements follow from the uniform continuity of $ \varphi^{\pm 1}$ on the compact space $X$. The implication \eqref{eq3.1d} is equivalent to the implication
\begin{align*}
d(z_1,z_2)< \frac{\varepsilon_{\ell}}{2} \Leftarrow d(\varphi^{\pm l}(z_1) ,\varphi^{\pm l}(z_2))<\varepsilon'_{\ell}\,.
\end{align*}
Consider the cover $\mathcal{A}_{\ell}$ of $X$ whose elements are the open  balls of radius $\varepsilon_{\ell}$ in the metric $d$ centered at each point of the branch locus and the open balls of radius $\delta_{\ell}$ centered at each point of the complement of all the latter balls.

Associate to $\mathcal{A}_{\ell}$ the 
cover $\hat{ \mathcal{A}}_{\ell}'$ of $\hat X$ that consists of all connected components of $p^{-1}(A)$ for all $A \in \mathcal{A}_{\ell}$.
For $\mathcal{A}$ being one of the $\mathcal{A}_{\ell}$
the cover
$\hat{\varphi}^{-1}(\hat {\mathcal{A}}')$ consists of all sets of the following form. For a set $A \in \mathcal{A}$ we take a connected component $\hat A^j$ of $p^{-1}(A)$ and consider $\hat{\varphi}^{-1}(\hat{A}^{j})$.

The cover $\hat {\mathcal{A}}' \vee \hat{\varphi}^{-1}(\hat {\mathcal{A}}') \vee \ldots\vee   \hat{\varphi}^{-n+1}(\hat {\mathcal{A}}')$   of $\hat X$ consists of all sets of the form
\begin{equation}\label{eq3.2b}
\hat{A}_1^{j_1}\cap  \hat{\varphi}^{-1}(\hat{A}_2^{j_2})\cap\ldots\cap \hat{\varphi}^{-n+1}(\hat{A}_n^{j_n})
\end{equation}
for $A_1, \ldots, A_n \in \mathcal{A}$  and $\hat{A}_{k'}^{j_{k'}}$ being a connected component of $p^{-1}(A_{k'})$,
${k'}=1,2,\ldots,n$.
We want to compare $\mathcal{N}(\mathcal{A} \vee  \varphi^{-1}(\mathcal{A})
\vee\ldots\vee \varphi^{-n+1}(\mathcal{A}))$ and  $\mathcal{N}( \hat {\mathcal{A}}' \vee \hat{\varphi}^{-1}(\hat {\mathcal{A}}') \vee\ldots\vee \hat{\varphi}^{-n+1} (\hat {\mathcal{A}}')    )$.

\noindent {\bf 1. The case when no $A_j$ meets the branch locus.} Consider the case $n=2$.
Notice that $A_1 \cap \varphi^{-1}(A_2)\neq \emptyset$ if
the intersection $\hat{A}_1^{j_1}\cap  \hat{\varphi}^{-1}(\hat{A}_2^{j_2})$ is not empty.
Suppose $A_1$ and $A_2$ do not meet the branch locus. Then there are local inverses of $p:\hat{X}\to X$, $i_{l,A_1}:A_1\to \hat X,$ and $i_{l,\varphi^{-1}(A_2)}:\varphi^{-1}(A_2)\to \hat X$, $ l=1,\ldots,m$. Recall that $m$ is the covering multiplicity of the covering $p:\hat{X}\to X$.
We claim that the sets of the form
\begin{align}\label{eq3.2f}
\hat{A}_1^{j_1}\cap  \hat{\varphi}^{-1}(\hat{A}_2^{j_2})
\end{align}
are the images
\begin{align*}
i_{l,A_1}(A_1\cap\varphi^{-1}(A_2) )=i_{l,\varphi^{-1}(A_2)}(A_1\cap\varphi^{-1}(A_2)), \,l=1,\ldots,m\,.
\end{align*}
The claim implies that in this case the number of sets of the form \eqref{eq3.2f} equals  $m$.

The claim can be obtained as follows. We write for short $\varepsilon$ instead of $\varepsilon_{\ell}$, $\varepsilon'$ instead of $\varepsilon'_{\ell}$, and $\delta$ instead of $\delta_{\ell}$. In the considered case the center of $A_j$, $j=1,2,$ has distance in the metric $d$ at least $\varepsilon$ from the branch locus, the diameter of $A_1$ equals ${\rm diam}(A_1)=2\delta$, and by the implication \eqref{eq3.1c} the diameter of $\varphi^{-1}(A_2)$ equals ${\rm diam}(\varphi^{-1}(A_2))<\frac{\varepsilon'}{2}$. Since the sets $A_1$ and
$\varphi^{-1}(A_2)$ intersect, their union has diameter ${\rm diam}(A_1\cup \varphi^{-1}(A_2))< 2\delta +\frac{\varepsilon'}{2}<{\varepsilon'}$. Since the center of $A_1$ has distance bigger than $\varepsilon$ from the branch locus, the union $A_1\cup \varphi^{-1}(A_2)$ is
contained in an $\varepsilon'$-ball $B$ that has distance at least $\varepsilon'$ from the branch locus.
The preimage $p^{-1}(B)$ 
consists of $m$ connected components (called sheets) on which $p$ is a homeomorphism. Let $i_{l,B}$ be the inverse of the restriction of $p$ to the $l$-th sheet. Then $p^{-1}(A_{1})$ is the union of the $i_{l,B}(A_{1})$, and $p^{-1}(\varphi^{-1}(A_{2}))$  is the union of the
 $i_{l,B}(\varphi^{-1}(A_{2}))$. Hence, the non-empty sets of the form \eqref{eq3.2b} are exactly the sets $i_{l,B}(A_1 \cap \varphi^{-1}(A_2)),\, l=1,\ldots,m$, i.e. the sheets over $A_1 \cap \varphi^{-1}(A_2)$. We obtained the claim for $n=2$.

For $k>2$ we claim the following. Suppose the intersection
\begin{align}\label{eq3.1c'}
A_1 \cap \varphi^{-1}(A_2)\cap\ldots\cap  \varphi^{-k+1}(A_k)
\end{align}
is not empty and none of the $A_{k'}$ intersects the branch locus.
Then the non-empty sets of the form
\begin{equation}\label{eq3.1d'}
\hat{A}_1^{j_1}\cap  \hat{\varphi}^{-1}(\hat{A}_2^{j_2})\cap\ldots\cap \hat{\varphi}^{-k+1}(\hat{A}_k^{j_k})
\end{equation}
for $A_{k'} \in \mathcal{A}$  and $\hat{A}_{k'}^{j_{k'}}$ a connected component of $p^{-1}(A_{k'})$,
${k'}=1,2,\ldots,k,$ are the images
\begin{align}\label{eq3.1e}
i_{l,{\varphi}^{-k+1}({A}_{k})}(A_1 \cap \varphi^{-1}(A_2)\cap\ldots\cap  \varphi^{-k+1}(A_k)) \;,l=1,\ldots,m\,,
\end{align}
under the $m$ local inverses $i_{l,{\varphi}^{-k+1}({A}_{k})}$ of
$p\mid {\varphi}^{-k+1}({A}_{k})$. Moreover, in the statement we may replace the local inverses $i_{l,{\varphi}^{-k+1}({A}_{k})}$ of
$p\mid {\varphi}^{-k+1}({A}_{k})$ by the local inverses $i_{l,{\varphi}^{-k'+1}({A}_{k'})}$ of
$p\mid {\varphi}^{-k'+1}({A}_{k'})$ for any $k'=1,\ldots, k$.

The last assertion is easy to see. Indeed, if it is true for some $k'>1$, then it is true also for $k'-1$, since $A_{k'-1}\cup \varphi^{-1}(A_{k'})$ is simply connected and, hence, $\varphi^{-k'}(A_{k'-1}) \cup \varphi^{-k'+1}(A_{k'}) $ is simply connected and
on $\varphi^{-k'}(A_{k'-1}) \cap \varphi^{-k'+1}(A_{k'}) $ the equality
$i_{l,{\varphi}^{-k'+1}({A}_{k'})}= i_{l,{\varphi}^{-k'}({A}_{k'-1})}= i_{l,{\varphi}^{-k'+1}({A}_{k'})\cup {\varphi}^{-k'}({A}_{k'-1})}$ holds.

We now prove the claim by induction  for $k>2$.
Suppose it is proved for some number $k\geq 2$.
The collection of sets \eqref{eq3.1d'} computed for the number $k+1$ is the collection of intersections of the sets  \eqref{eq3.1d'} computed for the number $k$
with the sets
$\hat{\varphi}^{-k}(\hat{A}_{k+1}^{j_{k+1}})$. By the statement for $k=2$ the
collection of sets of the form $\hat{A}_k^{j_k}\cap\hat{\varphi}^{-1}(\hat{A}_{k+1}^{j_{k+1}})$
equals the collection $i_{l,A_k^{j_k}}(A_k\cap\varphi^{-1}( A_{k+1})),\,l=1,\ldots,m$.
Since $\varphi^{-k+1}$ is a homeomorphism,
the collection $\hat{\varphi}^{-k+1}(\hat{A}_k^{j_k})\cap \hat{\varphi}^{-k}(\hat{A}_{k+1}^{j_{k+1}})$ equals the collection $ i_{l,{\varphi}^{-k+1}({A}_{k})}(\varphi^{-k+1}(A_k)\cap \varphi^{-k}(A_{k+1}))$. Together with the statement for $k$ this gives the statement for $k+1$ and $k'=k$.
We proved in particular, that there are exactly $m$ non-empty sets of form \eqref{eq3.1d'} if the $A_{k'}$ do not meet the branch locus and the set  \eqref{eq3.1c'} is not empty.

\noindent {\bf 2. All $A_j$ intersect the branch locus.} Let again $n=2$.
Suppose now, that  both, $A_1$ and $\varphi^{-1}(A_2)$, intersect the branch locus. Then the intersection of each set with the branch locus consists of a single point which is the center of $A_1$ and also the preimage under $\varphi$ of the center of $A_2$.
By implication \eqref{eq3.1b} the set $A_1 \cup \varphi^{-1}(A_2)$ is contained in a disc $B_1$ of radius not exceeding $\frac{\sigma}{2}$ centered at a point in the branch locus.
Consider the connected components
${B_1}^j, j=1,\ldots,m-1,$ of  $p^{-1}(B_1)$, where  $p:{B_1}^{m-1}\to B_1$ is a double branched covering, and $p:B_1^j\to B_1$ is a homeomorphism for $j<m-1$.
Since $\hat{A}_{1}^{j_1}$ and  $\hat{\varphi}^{-1}(\hat{A}_{2}^{j_{2}})$ may intersect only if they are in the same component $B_1^j$,
there are $m-1$ non-empty sets of the form \eqref{eq3.2f} each contained in a ${B_1}^j$ and each ${B_1}^j$ contains a set of form \eqref{eq3.2f}.
In this case there are $m-1$ non-empty sets of the form \eqref{eq3.2f}.

Consider now $k$ sets $A_{k'}$ each of which intersects the branch locus. Suppose that
$A_1\cap\varphi^{-1}(A_2)\cap\ldots\cap\varphi^{-k+1}(A_k)$ is not empty.
The same induction argument as in the case when none of the $A_{k'}$ meets the branch locus
gives the following. Take a set $A_{k'}$ of the collection. The set $\varphi^{-k'+1}(A_{k'})$
contains a single point in the branch locus, namely the center of the $\varepsilon$-disc $A_1$.
The preimage $p^{-1}(\varphi^{-k'+1}(A_{k'}))$ consists of $m-1$ connected components $B_l$,
such that the restriction of $p$ to the first $m-2$ of them is a homeomorphism, and the
restriction of $p$ to the last component is a double branched covering.
For each $l=1,\ldots,m-1$ we consider the preimage of $A_1\cap\varphi^{-1}(A_2)\cap\ldots\cap\varphi^{-k+1}(A_k)$ under the projection $p$, that is contained in the connected component $B^l$ of $p^{-1}(B)$. The collection of non-empty sets of the form
\eqref{eq3.2b} is equal to this collection.

\noindent {\bf 3. The mixed case, two sets.} Suppose now that $A_1$ intersects the branch locus,  $A_2$ does not, and $A_1\cap \varphi^{-1}(A_2)\neq\emptyset$.
Then $A_1$ is a disc of radius $\varepsilon$ with center at a point in the branch locus. The ball $B_1$ of radius $2\varepsilon$ with the same center also has a single point in the branch locus.
Since $A_2$
does not intersect the branch locus, it is a disc of radius $\delta$ around a point $z$ that has distance bigger than  $\varepsilon$ to the branch locus.
By implication \eqref{eq3.1d} the preimage  $\varphi^{-1}(z)$
has distance at least $\varepsilon'$ from the branch locus.
By \eqref{eq3.1c} the diameter of $\varphi^{-1}(A_2)$
is less than $\frac{\varepsilon'}{2}$.
Hence, the distance of  $\varphi^{-1}(A_2)$ from the branch locus is not smaller than $\varepsilon'-\frac{1}{2}\varepsilon'=\frac{1}{2}\varepsilon'$. Hence,  $\varphi^{-1}(A_2)$ is contained in a sector $S_1$ of an angle not bigger than $\pi$ of the disc $B_1$. The preimages of $S_1$ under $p$ are $m$ sheets that are mapped by $p$ homeomorphically onto $S_1$.
It is now clear that the collection of sets  \eqref{eq3.2f} is equal to the collection of the $m$
images of the local inverses of $p$ on $S_1$.

\noindent {\bf 4. The building block for the general case.} $\,$ Consider now elements $\;$ $A_{k_0}, A_{k_0+1},\ldots,A_{k_0+k_1},A_{k_0+k_1+1}$ of the cover $\mathcal{A}_{\ell}$ such that $A_{k_0}$ and $A_{k_0+k_1+1}$ do not intersect the branch locus, but all other sets are discs around a point in the  branch locus. Suppose
${\varphi}^{-k_0+1}({A}_{k_0})\cap\ldots\cap {\varphi}^{-k_0-k_1
}({A}_{k_0+k_1+1})\neq \emptyset$ and $k_1\leq \ell-1$. We claim that the collection of sets of the form
\begin{align}\label{eq3.1f}
 \hat{\varphi}^{-k_0+1}(\hat{A}_{k_0}^{j_{k_0}})\cap\ldots\cap \hat{\varphi}^{-k_0-k_1
}(\hat{A}_{k_0+k_1+1}^{j_{k_0+k_1+1}})
\end{align}
equals
the collection of the $m$ images of the set
$$
\varphi^{-k_0+1}(A_{k_0})\cap \ldots\cap    {\varphi}^{-k_0-k_1
}({A}_{k_0+k_1+1})
$$
under the local inverses of $p$ on the set ${\varphi}^{-k_0-k_1
}({A}_{k_0+k_1+1})$.

By the preceding arguments
the respective statement is true for the sets of this collection that do intersect the branch locus. Namely, the collection of sets of the form
\begin{align}\label{eq3.1g}
 \hat{\varphi}^{-k_0}(\hat{A}_{k_0+1}^{j_{k_0+1}})\cap\ldots\cap \hat{\varphi}^{-k_0-k_1
+1}(\hat{A}_{k_0+k_1}^{j_{k_0+k_1}})
\end{align}
equals the collection of preimages of ${\varphi}^{-k_0}({A}_{k_0+1})\cap\ldots\cap {\varphi}^{-k_0-k_1+1}({A}_{k_0+k_1})$ contained in the $m-1$ connected components of the preimage $p^{-1}({\varphi}^{-k'+1}({A}_{k'}))$ for any $k'$ between $k_0+1$ and $k_0+k_1$.

Recall that the two sets $A_{k_0}$ and $A_{k_0+k_1+1}$, that do not intersect the branch locus, are discs of radius $\delta_{\ell}$ with center at distance more than $\varepsilon_{\ell}$ from the branch locus. Also,
 $A_{k_0}\cap \varphi^{-k_1-1}(A_{k_0+k_1+1})\neq \emptyset$. By the implication 
\eqref{eq3.1c} the diameter of $ \varphi^{-k_1-1}(A_{k_0+k_1+1})$ is smaller then $\frac{1}{2}\varepsilon'_{\ell}$. Hence, $A_{k_0}\cup \varphi^{-k_1-1}(A_{k_0+k_1+1})$ is contained in a disc $B'_{k_0}$ of radius not exceeding $\delta_{\ell}+\frac{1}{2}\varepsilon'_{\ell}\leq \varepsilon'_{\ell}$ around the center of  $A_{k_0}$. 
Since the distance of the center of $A_{k_0}$ to the branch locus is at least $\varepsilon_{\ell}$, the distance of $B'_{k_0}\supset A_{k_0}\cup \varphi^{-k_1-1}(A_{k_0+k_1+1})$ to the branch locus is at least $\varepsilon_{\ell}-{\varepsilon'_{\ell}}\geq\frac{\varepsilon_{\ell}}{2}\geq \varepsilon'_{\ell} $.

By \eqref{eq3.1b}  $\varphi ^{-1}( A_{k_0+1})$ is contained in the disc of radius $\frac{\sigma}{2}$ around the point in the branch locus that is contained in $\varphi^{-1} ( A_{k_0+1})$. Hence, the union
${A}_{k_0}\cup \varphi^{-k_1-1}({A}_{k_0+k_1+1})\cup\varphi^{-1} ( {A}_{k_0+1})$ is contained in the $\sigma$-disc around a point of the branch locus. It is now clear, that the collection of sets
$\hat{A}_{k_0}^{j_{k_0}}\cap \hat{\varphi}^{-k_1-1}(\hat{A}_{k_0+k_1+1}^{j_{k_0+k_1+1}})\cap\hat{\varphi}^{-1} ( \hat{A}_{k_0+1}^{j_{k_0+1}})$ is equal to the collection of the $m$ images of the set $A_{k_0}\cap \varphi^{-k_1-1}(A_{k_0+k_1+1})\cap\varphi^{-1} ( A_{k_0+1})$ under the
local inverses of $p$ on the disc $B'_{k_0}$ of radius $\varepsilon'_{\ell}$ that contains $A_{k_0}\cup \varphi^{-k_1-1}(A_{k_0+k_1+1})$.
Apply the diffeomorphism $ \varphi^{-k_0+1}$. We see that the collection of the non-empty sets of the form
$\hat{\varphi}^{-k_0+1}(\hat{A}_{k_0}^{j_{k_0}})\cap     \hat{\varphi}^{-k_0}(\hat{A}_{k_0+1}^{j_{k_0+1}})\cap  \hat{\varphi}^{-k_0-k_1}(\hat{A}_{k_0+k_1+1}^{j_{k_0+k_1+1}})$ is the collection of the $m$ images of the set $\varphi^{-k_0+1}(A_{k_0})\cap\varphi^{-k_0} ( A_{k_0+1})\cap
 \varphi^{-k_0-k_1}(A_{k_0+k_1+1})$ under the local inverses of $p$ on the set $\varphi^{-k_0+1}(A_{k_0})$ (equivalently,
under the local inverses of $p$ on the set $\varphi^{-k_0-k_1}(A_{k_0+k_1+1})$). Hence the same is true for the collection of sets
 \eqref{eq3.1f}. The claim is proved.

\noindent {\bf 5. The general case.} Take now a maximal collection of consecutive sets $\;$ $A_{k_0+1},\ldots,A_{k_0+k_1}$, that intersect the branch locus. We assume that $k_0+1>1$ and $k_0+k_1<n$  and
$k_1\geq \ell$. Notice that $A_{k_0}$ and  $A_{k_0+k_1+1}$ do not meet the branch locus.
By the remark in the beginning of the proof there are no more than $m-2+4<2m$ different non-empty sets of the form $\hat{\varphi}^{-k_0+1}(\hat{A}_{k_0}^{j_{k_0}})\cap\ldots\cap  \hat{\varphi}^{-k_0-k_1}(\hat{A}_{k_0+k_1+1}^{j_{k_0+k_1+1}})$, and each set is contained in a connected component of $p^{-1}(  {\varphi}^{-k_0-k_1}({A}_{k_0+k_1+1}^{j_{k_0+k_1+1}})   )$ for the set ${A}_{k_0+k_1+1}$ that does not intersect the branch locus. Moreover, each connected component of $p^{-1}(\hat{\varphi}^{-k_0-k_1}({A}_{k_0+k_1+1}))$
 contains at most two sets of the form $\hat{\varphi}^{-k_0+1}(\hat{A}_{k_0}^{j_{k_0}})\cap\ldots\cap  \hat{\varphi}^{-k_0-k_1}(\hat{A}_{k_0+k_1+1}^{j_{k_0+k_1+1}})$.

Consider now
any collection $A_1,A_2,\ldots,A_n\in \mathcal{A}_{\ell}$. Divide it into maximal collections of consecutive sets, so that either all sets of a collection intersect the branch locus or all sets of the collection do not intersect the branch locus.
For each integer number $k$ we consider all the maximal collections of
consecutive sets $A_{k_0+1},\ldots, {A}_{k_0+k_1}$, that intersect the branch locus and for which  $k_1\geq \ell$ and $k_0+k_1\leq k$. Its number is denoted by $\mathfrak{n}(k)$.
By induction on the number of the collection we obtain the following.

If $A_{k_0+1},\ldots, {A}_{k_0+k_1}$ is a maximal collection of consecutive sets among the $A_1,A_2,\ldots,A_n\in \mathcal{A}_{\ell}$,    that do not intersect the branch locus, then each connected component of the preimage $p^{-1}({\varphi}^{-k_0-k_1-1}({A}_{k_0+k_1}^{j_{k_0+k_1}}))$ contains at most $2^{\mathfrak{n}(k_0 )}=2^{\mathfrak{n}(k_0 +k_1)}$ different sets of the form $\hat{A}_1^{j_1}\cap\ldots\cap  \hat{\varphi}^{-k_0-k_1-1}(\hat{A}_{k_0+k_1}^{j_{k_0+k_1}})$.
If $A_{k_0+1},\ldots, {A}_{k_0+k_1}$ is a maximal collection of consecutive sets that  intersect the branch locus and  $k_0+k_1\leq n$, then,  if   $k_1< \ell$, each connected component of
$p^{-1}(  {\varphi}^{-k_0-k_1+1}({A}_{k_0+k_1}^{j_{k_0+k_1}})   )$
contains at most  $2^{\mathfrak{n}(k_0 )}$ different sets of the form $\hat{A}_1^{j_1}\cap\ldots\cap  \hat{\varphi}^{-k_0-k_1-1}(\hat{A}_{k_0+k_1}^{j_{k_0+k_1}})$,
and contains at most
$2\cdot 2^{\mathfrak{n}(k_0  )}= 2^{\mathfrak{n}(k_0+k_1  )}$ different sets of the form $\hat{A}_1^{j_1}\cap\ldots\cap  \hat{\varphi}^{-k_0-k_1+1}(\hat{A}_{k_0+k_1}^{j_{k_0+k_1}})$, if $k_1\geq\ell$.

Since $\mathfrak{n}(n)\leq [\frac{n}{\ell}]$, we obtain
$$\mathcal{N}\Big(\hat{\mathcal{A}}_{\ell}\vee\ldots \vee\hat{\varphi}^{-n}(\hat{\mathcal{A}}_{\ell})\Big)
\leq 2^{ [\frac{n}{\ell}]}\cdot m \cdot\mathcal{N}\Big(\mathcal{A}_{\ell}\vee\ldots \vee\varphi^{-n}(\mathcal{A}_{\ell})\Big)\,.$$
Hence,
\begin{equation}\label{eq3.2e}
h(\hat{\mathcal{A}}_{\ell},\hat\varphi)\leq \frac{\log 2}{\ell}+ h(\mathcal{A}_{\ell},\varphi)\,
\end{equation}
for each of the described covers $\mathcal{A_{\ell}}$ of $X$.
Since the sequences $\mathcal{A}_{\ell}$ and $\hat{\mathcal{A}}_{\ell}$ are refining we obtain
\begin{equation}\label{eq3.7}
h(\hat\varphi) \leq h(\varphi) \, .
\end{equation}
The lemma is proved. \hfill $\Box$

\bigskip

Theorem \ref{thm3.1} and Lemma \ref{lem3.4} together with Ahlfors' trick (see Section \ref{sec:2.3a})
will  imply the following.

\begin{thm}\label{thm3.1'} Let $X$ be a closed connected Riemann surface
of genus $g$ with a set $E_m$ of $m\geq 0$ distinguished points, $3g-3+m>
0$. Then any non-periodic absolutely extremal self-homeomorphism
of $X$ with set of distinguished
points $E_m$ is entropy minimizing in its
isotopy class.
\end{thm}

Together with Corollary \ref{corr2.1} we obtain the following
statement, which includes homeomorphisms with elliptic or hyperbolic
modular transformation.

\begin{cor}\label{corr3.3} Let $\varphi$ be an irreducible
self-homeomorphism of a closed connected Riemann surface $X$ with
a set $E_m$ of $m\geq 0$ distinguished points, $3g-3+m>0$. Let $\varphi_0$
be an absolutely extremal self-homeomorphism of a Riemann surface
$Y$ with $m$ distinguished points, which is obtained from $\varphi$
by isotopy and conjugation. Then

\begin{eqnarray}
h(\varphi_0) &= &\frac12 \log K(\varphi_0 ) =  L (\varphi^*)  =  h(\widehat {{\mathfrak m}_{\varphi _0}} ) \nonumber \\
&= &{\rm inf} \, \{h(\varphi_1) : \varphi_1 \ \mbox{is obtained from
$\varphi$ by isotopy and conjugation}\} \, . \nonumber
\end{eqnarray}
\end{cor}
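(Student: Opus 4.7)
The approach is to split on the
dichotomy for irreducible mapping classes provided by
Theorem~\ref{thm2.11}: since $\varphi$ is irreducible, its
modular transformation $\varphi^{*}$ is either elliptic or
hyperbolic. In either case Corollary~\ref{corr2.1} furnishes the
absolutely extremal representative $\varphi_{0}$, related to
$\varphi$ by isotopy and conjugation, and it already records
$\frac12\log K(\varphi_{0})=\frac12\log I(\varphi)=L(\varphi^{*})$.
The remaining task is to identify $h(\varphi_{0})$ with this
common quantity and to show that it realises the infimum of the
entropies over the orbit of $\varphi$ under isotopy and
conjugation.

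Suppose first that $\varphi^{*}$ is elliptic. By
Theorem~\ref{thm2.10} the map $\varphi_{0}$ is conformal, so
$K(\varphi_{0})=1$ and $L(\varphi^{*})=0$. Since $\varphi_{0}^{*}$
is periodic, some iterate $\varphi_{0}^{N}$ is isotopic to the
identity; being conformal and isotopic to the identity on a
hyperbolic Riemann surface of finite type with fixed distinguished
points (here the hypothesis $3g-3+m>0$ enters to ensure universal
cover $\mathbb{C}_{+}$), $\varphi_{0}^{N}$ must actually equal the
identity. Hence $\varphi_{0}$ has finite order, and from
$N\cdot h(\varphi_{0})=h(\varphi_{0}^{N})=h(\mathrm{id})=0$ we
obtain $h(\varphi_{0})=0$. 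As entropy is non-negative, $0$ is
automatically the infimum of $h(\varphi_{1})$ over the orbit,
and every claimed equality reduces to $0=0$.

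Suppose next that $\varphi^{*}$ is hyperbolic. By
Theorem~\ref{thm2.12} the absolutely extremal representative
$\varphi_{0}$ is a Teichm\"uller mapping whose initial and
terminal quadratic differentials coincide, i.e.\ precisely a
pseudo-Anosov map (as recalled in the discussion following
Theorem~\ref{thm2.12}). Theorem~\ref{thm3.2} applied to
$\varphi_{0}$ then yields
\begin{equation*}
h(\varphi_{0})=\tfrac12\log K(\varphi_{0})=L(\varphi_{0}^{*})
     =h(\widehat{\mathfrak{m}_{\varphi_{0}}}),
\end{equation*}
and identifies this common value with the infimum of
$h(\varphi_{1})$ over $\varphi_{1}$ obtained from $\varphi_{0}$
by isotopy (fixing the distinguished points) and by conjugation
with a homeomorphism onto another Riemann surface with $m$
distinguished points. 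Since $\varphi$ lies in the same
isotopy-and-conjugation orbit as $\varphi_{0}$, and
$L(\varphi_{0}^{*})=L(\varphi^{*})$ via Corollary~\ref{corr2.1},
the full chain of equalities in the statement follows.

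The principal obstacle is the elliptic case: one must argue that
conformality of $\varphi_{0}$ forces honest periodicity (and
hence zero entropy), which uses the hypothesis $3g-3+m>0$ to
rule out non-trivial conformal maps isotopic to the identity.
The hyperbolic case is essentially bookkeeping layered on top of
Theorems~\ref{thm3.2} and~\ref{thm2.12}, with
Corollary~\ref{corr2.1} providing the transfer between the
invariants of $\varphi$ and those of $\varphi_{0}$.
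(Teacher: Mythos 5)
Your proof is correct and follows the same route as the paper: combine the Fathi--Shub statement (Theorem~\ref{thm3.2}) for the pseudo-Anosov case with Corollary~\ref{corr2.1} to transfer between the invariants of $\varphi$ and $\varphi_0$, and handle the elliptic case separately. The paper presents this corollary as an immediate consequence of Theorem~\ref{thm3.2} and Corollary~\ref{corr2.1} without spelling out the elliptic branch; your explicit treatment of that branch (conformal $\Rightarrow$ finite order, via finiteness of the automorphism group of a hyperbolic finite-type surface, hence zero entropy) is a useful clarification of what the paper leaves implicit, and it is exactly what is needed.
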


\noindent {\bf Proof of Theorem \ref{thm3.1'}.}
Let $X$ be a
connected closed Riemann surface with set of distinguished points
$\{ z_1 , \ldots , z_m\}$, $3g-3+m>0$. According to \cite{A2}
(see also  Section \ref{sec:2.3a})
there
is a closed Riemann surface $\hat X$ of genus at least two which is
a simple branched covering of $X$ so that the set of branch points
projects onto the set $\{ z_1 , \ldots , z_m\}$. The non-periodic absolutely extremal self-homeomorphism
$\varphi_0$ lifts to a non-periodic absolutely extremal self-homeomorphism
$\hat\varphi_0$ on $\hat X$ with $K(\varphi_0) = K(\hat\varphi_0)$.

By the lemma
\begin{eqnarray}
&&{\rm inf} \, \{h(\varphi_1) : \varphi_1 \ \mbox{is a
self-homeomorphism of $X$
which is isotopic to $\varphi$}\} \nonumber \\
& = &{\rm inf} \, \{h(\hat\varphi_1) : \hat\varphi_1 \ \mbox{is the
lift of a
self-homeomorphism of $X$} \nonumber \\
& &\mbox{which is isotopic to $\varphi$}\} \nonumber \\
& \geq & {\rm inf} \, \{h({\mathcal F}_1) : {\mathcal F}_1 \
\mbox{is a self-homeomorphism of $\hat X$ which is isotopic to
$\hat\varphi$}\}. \nonumber
\end{eqnarray}

Since $\hat\varphi_0$ is absolutely extremal on $\hat X$ and isotopic to
$\hat\varphi$, the Fathi-Shub Theorem \ref{thm3.1} implies that the
last infimum equals $h(\hat\varphi_0)$. Thus, the last infimum is
attained on a homeomorphism of $\hat X$ which is a lift. Hence the
inequality between the second and the third infimum is an equality
too. Since by Lemma \ref{lem3.4} we have $h(\varphi_0) =
h(\hat\varphi_0)$, the first infimum equals $h(\varphi_0)$. Also,
$K(\varphi_0) = K(\hat\varphi_0)$. It remains to use
Corollary \ref{corr2.1}, and the invariance of the
entropy under conjugation. \hfill $\Box$

\section[The trajectories of quadratic differentials]{Absolutely extremal homeomorphisms and the trajectories of the associated quadratic differentials. }
\label{sec:entropy.2a}

We collect here some facts concerning trajectories of quadratic differentials that are related to non-periodic absolutely extremal self-homeomorphisms.
For a more comprehensive exposition we refer the reader to the book of Strebel \cite{Str}.

Let $\phi$ be a quadratic differential associated to a non-periodic absolutely extremal self-homeomorphism $\varphi$ on a closed Riemann surface $X$ possibly with set of distinguished points $E$. Recall that $\phi$ is a meromorphic quadratic differential. It is analytic except, maybe, at some distinguished points where it has at worst first order poles. We need some information about the horizontal and vertical trajectories of $\phi$.

For a horizontal trajectory $\gamma$ and a point $p$ on it we call the union of $\{p\}$ with any of the connected components of $\gamma \setminus \{p\}$ a horizontal trajectory ray. 
If the set of limit points of a trajectory ray (that is the closure of the ray in the $\phi$-metric minus the ray itself) contains a regular point it contains a horizontal arc through this point. If the limit set of a trajectory ray
consists of a single point this point must be critical and the trajectory ray is called critical. \index{trajectory ! ray} \index{trajectory ! critical trajectory ray}

\begin{prop}\label{prop3.101}
For a quadratic differential $\phi$ associated to a pseuo-Anosov self-homeomorphism $\varphi$ of a closed Riemann surface (possibly with distinguished points)
there is no leaf of the horizontal foliation with both trajectory rays critical (and no leaf of the vertical foliation with this property).
\end{prop}
\noindent {\bf Proof.} Suppose there is a horizontal leaf joining two critical points. There are only finitely many critical points and in a neighbourhood of each critical point there are only finitely many horizontal bisectrices. 
Since
$\varphi$ maps the set of singular points of $\phi$ onto itself (perhaps, permuting the points) and $\varphi$ maps horizontal trajectories to itself,
a power $\varphi^n$ of $\varphi$ maps the leaf onto itself. Adding the two endpoints to the leaf we obtain a compact curve which has finite $\phi$-length. But by
\eqref{eq2.17a'} the homeomorphism  $\varphi^n$ expands the length of the horizontal leaves by the factor $K^{\frac{n}{2}}$ which is impossible. \hfill $\Box$

\begin{prop}\label{prop3.102}
There is no closed leaf of the horizontal foliation and no closed leaf of the vertical foliation of a quadratic differential $\phi$ associated to a pseuo-Anosov homeomorphism $\varphi$.
\end{prop}
\noindent {\bf Proof.} Suppose there is a closed horizontal leaf $\gamma$.
Choose an orientation for $\gamma$.
We claim that there is an annulus $A$ around this leaf which is contained in the regular part of $X$ and is the union of closed horizontal leaves. Indeed, each point $x$
of the leaf $\gamma$ is regular. Hence, there is a neighbourhood $V$ of each point $x$, that is
equipped with distinguished coordinates centered at the point, such that
$V\cap \gamma$ is the set of points in $V$ that are real in distinguished coordinates, 
and the direction of the positive orientation of $\gamma$ corresponds to the positive direction of the real axis. We may assume that, perhaps after shrinking, the neighbourhood of each $x$ is 
a square $R(x)$  with center $x$ written in the distinguished coordinates centered at $x$ as
$\{|\mbox{Re}\,z)|< \varepsilon(x), \, |\mbox{Im}\,z|< \varepsilon(x)\}$. 
Cover $\gamma$ by a finite number of such squares labeled by $R_j, j=1,\ldots,N,$ of side length $\varepsilon_j$. We may do this so, that considering an infinite sequence of squares with $R_{j+N}=R_j$ for each $j$, any $R_j$ intersects
exactly the squares $R_{j\pm1}$. Let $\varepsilon$ be the minimum of the $\varepsilon_j$.

The squares $R_j$ cover an annulus $A$ around $\gamma$. By the choice of the distinguished coordinates on the $R_j$ each transition function is a translation by a real number. The arcs given in distinguished coordinates on $R_j$ by $(-\varepsilon_j, \varepsilon_j)$ cover the closed horizontal curve $\gamma$. Hence, for any real parameter $t$, $ |t| < \epsilon \stackrel{def}= \min \varepsilon_j$, the arcs $(-\varepsilon_j + i t, \varepsilon_j + i t)$ cover a closed horizontal curve. The union of these curves is an annulus $A$ around $\gamma$ whose existence was claimed.

Take any boundary point of the annulus $A$ in the sense, that the point is a limit point of $A$ 
in the $\phi$-metric and is not contained in $A$. If this point is a regular point then there is a horizontal arc contained in $\partial A$ 
that contains the point. 
If all points of a boundary component of $A$ are regular, then by the preceding arguments there is a bigger annulus that contains the previous annulus as well as the boundary component, and is the union of closed horizontal curves. Take the maximal annulus containing $\gamma$ that is the  union of closed horizontal curves. Then each boundary component must contain a critical point and the complement in each boundary component of the critical points consists of horizontal leaves with both rays being critical. This is not possible by Proposition \ref{prop3.101}. \hfill $\Box$

\medskip

We call a trajectory ray divergent if its limit set contains more than one point. A trajectory ray with initial point $p$ is called recurrent if $p$ is contained in the limit set of the ray. We will consider trajectory rays parameterized so that the initial point has the smallest parameter and write $p_2>p_1$ for two points $p_1$ and $p_2$ on the ray if the parameter of $p_1$ is smaller than that of $p_2$. The parametrization defines an orientation of the ray. \index{trajectory ! divergent trajectory ray}

Let as before $X$ be a Riemann surface with quadratic differential $\phi$, and let $R$ be an open rectangle (and let $\bar R$ be a closed rectangle, respectively)
in the complex plane with sides parallel to the axes . A mapping
$\mathcal{F}:R\to X$
is called an open $\phi$-rectangle \index{$\phi$-rectangle}
(and a mapping $\mathcal{F}:\bar R\to X$ is called a closed $\phi$-rectangle, respectively,) if  $\mathcal{F}$ takes horizontal segments in $R$ (in $\bar R$, respectively) to $\phi$-horizontal arcs and takes vertical segments in $R$ (in $\bar R$, respectively) to $\phi$-vertical arcs, so that the Euclidean length of the horizontal (vertical arc, respectively) is equal to the $\phi$-length of its image.

Recall that trajectories do not contain $\phi$-singular points, hence the images of $\phi$-rectangles are contained in the $\phi$-regular part of $X$.
We call a $\phi$-rectangle embedded if the mapping $\mathcal{F}$ is injective.

\begin{thm}\label{thm3.03}
Every divergent trajectory ray is recurrent. More precisely, let $\alpha^+$ be a divergent horizontal trajectory ray with initial point $p_0$. 
Let $\beta$ be any oriented open arc through $p_0$ contained in a vertical leaf, that is cut positively by $\alpha^+$ at $p_0$. 
Then for each point $p_1$ on the ray $\alpha^+$ there exists a point $p_2> p_1$ on $\alpha^+$, such that $\alpha^+$  cuts $\beta$ positively at $p_2$. The $\phi$-length of the arc on $\alpha^+$ between $p_1$ and $p_2$ is bounded from below by a constant $c$ that depends only on the quadratic differential $\phi$.
\end{thm}
\index{trajectory ! divergent}
\noindent {\bf Proof.} Let $\beta'\subset \beta$ be a vertical arc which has center $p_0$ and $\phi$-length $\ell_{\phi}(\beta')$, such that the vertical arc with center $p_0$ and $\phi$-length $3\ell_{\phi}(\beta')$ is contained in $\beta$. 
We call a point $p\in \beta'$ exceptional, if the horizontal trajectory ray, which cuts $\beta'$ positively at $p$, is critical and does not cut $\beta'$ positively at a point greater than $p$. Notice that a point $p$ is not exceptional, if the trajectory ray which cuts $\beta'$ positively at $p$ is critical but cuts $\beta'$ positively at a point greater than $p$ before running into the critical point.
We claim that only finitely many points of $\beta'$ are exceptional.
Indeed,
each such point $p$ and the respective critical ray are obtained as follows. Take a critical point, take any
oriented trajectory emerging from this critical point, consider its part between the critical point and its first negative intersection point with $\beta'$ (if there is such), and invert orientation.
There are only finitely many such rays, hence the claim.

Recall that the ray $\alpha^+$ is not critical. We take a closed arc $\beta_0$, that is contained in $\beta'$, has midpoint $p_0$,  and does not contain exceptional points.
We will prove now the following\\
{\bf Claim} {\it There is a horizontal trajectory ray that emerges at a point $p\in \beta_0$ and intersects $\beta'$ positively.}

For positive numbers $t$ we consider the closed rectangle $\overline{R_t} \subset \mathbb{C}$ with sides parallel to the axes, with horizontal Euclidean side length equal to $t$, 
vertical Euclidean side length
$\ell_{\phi}(\beta_0)$, and
with midpoint of the left side equal to the origin.

For small $t$ there is a closed $\phi$-rectangle $\mathcal{F}_{\phi,t}: \overline{R_t}\to X$ which maps the left side of $\overline{R_t}$ onto $\beta_0$.
Such a $\phi$-rectangle can be defined for parameters $t$ as long as each horizontal trajectory ray that emerges at a point $p\in\beta_0$ in positive direction does not run into a critical point 
before running along a closed arc of $\phi$-length $t$ with initial point $p$.

Assume by contradiction that the claim is not true. 
Since $\beta_0$ does not contain exceptional points,
each trajectory ray that emerges at a point in $\beta_0$ in positive direction is non-critical (otherwise it would intersect $\beta'$ positively at a point after its initial point). Hence, a closed $\phi$-rectangle $\mathcal{F}_{\phi,t}: \overline{R_t}\to X$ which maps the left side of $\overline{R_t}$ onto $\beta_0$ exists for all $t>0$.

\begin{figure}[H]
\begin{center}
\includegraphics[width=85mm]{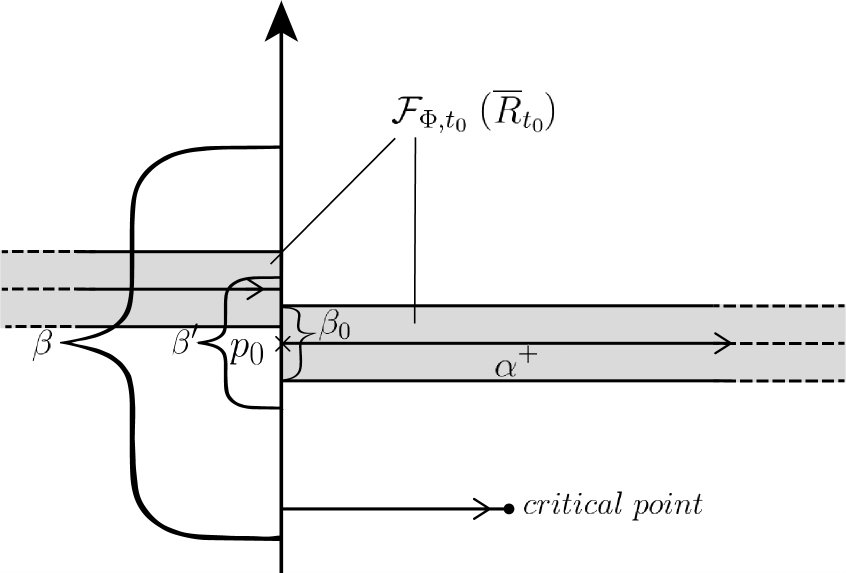}
\end{center}
\caption{The maximal injective $\phi$-rectangle $\mathcal{F}_{\phi,t_0}(\overline{R_{t_0}})$}\label{fig3.1}
\end{figure}

If $\mathcal{F}_{\phi,t}$ is injective on $\overline{R_t}$, the image $\mathcal{F}_{\phi,t}(\overline{R_t})$ 
has $\phi$-area $t \cdot \ell_{\phi}(\beta_0)$.
Since the $\phi$-area $\|\phi\|_1$ of $X$ is finite, there exists a smallest number $t_0>0$ for which the $\phi$-rectangle $\mathcal{F}_{\phi,{t_0}}:\overline{R_{t_0}}\to X$ is not injective.
Denote by  ${\sf s}_{t_0}$ the right side of $\overline{R_{t_0}}$. 
The mapping $\mathcal{F}_{\phi,t_0}$ is injective on $\overline{R_{t_0}}\setminus {\sf s}_{t_0}$. Hence, $\mathcal{F}_{\phi,t_0}( {\sf s}_{t_0})$ intersects the rest of the boundary of $\mathcal{F}_{\phi,t_0}(\overline{R_{t_0}})$.
We claim, that the vertical segment $\mathcal{F}_{\phi,t_0}( {\sf s}_{t_0})$ cannot intersect the image of the open horizontal sides of $\overline{R_{t_0}}$.
Suppose, in contrary,  $\mathcal{F}_{\phi,t_0}( {\sf s}_{t_0})$ intersects the image of an open horizontal side ${\sf s}_h$ of $\overline{R_{t_0}}$. Let $p'$ be the intersection point. 
Then $p'=\mathcal{F}_{\phi,t_0}(z')=\mathcal{F}_{\phi,t_0}(z'')$, where
$z'$ is on an open horizontal side ${\sf s}_h$ of $\overline{R_{t_0}}$, and $z''=t_0+iy$ is contained in ${\sf s}_{t_0}$. But then for all $t<t_0$ and close to $t_0$ the point $\mathcal{F}_{\phi,t_0}(t+iy)$ is also contained in 
the image $\mathcal{F}_{\phi,t_0}({\sf s}_h)$ of the open horizontal side ${\sf s}_h$. 
But this means,
that $\mathcal{F}_{\phi,t_0}$ is not injective on $\overline{R_{t}}$ for $t<t_0$ and close to $t$, contrary to the definition of $t_0$.

Hence,
$\mathcal{F}_{\phi,t_0}( {\sf s}_{t_0})$ intersects the image of the left side of $\overline{R_{t_0}}$, i.e $\mathcal{F}_{\phi,t_0}( {\sf s}_{t_0})$ intersects $\beta_0\subset \beta'$. Since $\mathcal{F}_{\phi,t_0}$ is injective on $\overline{R_{t_0}}\setminus {\sf s}_{t_0}$,
the points on $\overline{R_{t_0}}$ that are close to different
vertical sides of the rectangle, but not on the vertical sides, are mapped to different (local) sides of the vertical arc containing $\beta_0$. 
This means, that there is a horizontal trajectory that starts at a point $\beta_0$ and cuts $\beta_0\subset \beta'$ positively after running along a segment of length $t_0$. 
This contradicts our assumption that the claim is not true. The claim is proved.

Let $t_0$ be the infimum of positive numbers $t$ for which a horizontal trajectory ray that emerges at a point of $\beta_0$ in positive direction intersects $\beta'$ positively after running along a segment of length $t$.
Then, since $\beta_0$ does not contain exceptional points, the closed $\phi$-rectangle $\mathcal{F}_{\phi,t_0}: \overline{R_{t_0}}\to X$ exists. Moreover, the image of the right side
$\mathcal{F}_{\phi,t_0}( {\sf s}_{t_0})$ of $\overline{R_{t_0}}$ intersects $\beta'$. 
Then $\mathcal{F}_{\phi,t_0}( {\sf s}_{t_0})$
is a vertical arc of $\phi$-length $\ell_{\phi}(\beta_0)$ that intersects $\beta'$, hence, it is contained in $\beta$.
This means that each trajectory ray that emerges at a point of $\beta_0$ in positive direction, in particular $\alpha_+$, intersects $\beta$ positively after running along a segment of length $t_0$.

We proved that $\alpha_+$ cuts $\beta$ positively after $p_0$.
Denote the intersection point by $p_1$. The same argument applied to $p_1$ instead of $p_0$ shows that $\alpha_+$ cuts $\beta$ positively at a point $p_2$ after $p_1$.

There is a positive constant $c$ such that the $\phi$-length of an arc on $\alpha^+$ between two distinct positive intersection points with $\beta$ exceeds $c$. Indeed, such an arc must leave a simply connected neighbourhood of the closure of $\beta$, that depends only on the quadratic differential $\phi$.
Repeating the argument for the already found intersection points with $\beta$ we obtain
a sequence of positive intersection points $p_0<p_1<\ldots <p_n< \ldots $   of $\alpha_+$ with $\beta$ such that the $\phi$-length of the arc on $\alpha^+$ between $p_0$ and $p_n$ tends to $\infty$.
The theorem is proved.
\hfill $\Box$

\begin{figure}[H]
\begin{center}
\includegraphics[width=75mm]{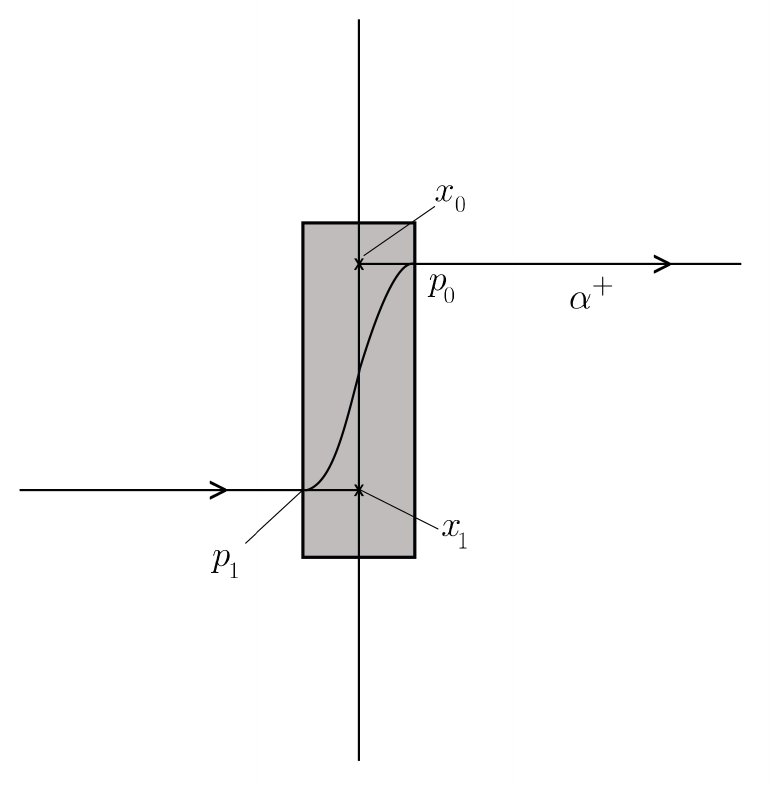}
\end{center}
\caption{A recurrent horizontal trajectory gives to a simple closed curve that is transversal to the horizontal foliation}\label{fig3.4}
\end{figure}

\begin{thm}\label{thm3.04}
For each $\phi$-regular point $x_0 \in X$ there exists a simple closed smooth curve in $X$ that passes through $x_0$, is contained in the $\phi$-regular part of $X$, and is transversal to the vertical foliation. The horizontal $\phi$-length $\ell_{\phi,h}$ of this curve is positive.
\end{thm}

\noindent {\bf Proof.}
By Proposition \ref{prop3.101} one of the horizontal trajectory rays that start at $x_0$ is not critical and therefore recurrent. We consider this recurrent trajectory ray, denoted by $\alpha^+$, oriented in the direction away from $x_0$. Let $\beta$ be a small vertical arc through $x_0$ that is oriented so that $\alpha^+$ intersects it positively at $x_0$. 
By Theorem \ref{thm3.03} $\alpha^+$ intersects $\beta$ positively at a point $x_1$ after $x_0$. 
Surround the closed vertical arc $\beta^*\subset\beta$ that joins the points $x_0$ and $x_1$ 
by an open $\Phi$-rectangle $V$ with small horizontal side length. Let $p_0$ be the first point (after $x_0$) of intersection of $\alpha_+$ with $\partial V$, and $p_1$ the last (before $x_1$) intersection point of $\alpha_+$ with $\partial V$. We take the closed curve which is the union of the part of $\alpha_+$ between $p_0$ and $p_1$ and a curve in $V$ that joins $p_1$ with $p_0$ and is transversal to the vertical foliation (see Figure \ref{fig3.4}). The curve can be chosen to be smooth and simple closed.
Its horizontal length equals the length of the part of $\alpha_+$ between $x_0$ and $x_1$.

The theorem is proved. \hfill $\Box$

\section[Pseudo-Anosov self-homeomorphisms with distinguished points]{The entropy of Pseudo-Anosov self-homeomorphisms of surfaces with distinguished points}
\label{sec:entropy.3}
In this section we will prove Theorem \ref{thm3.2}. Our proof will be different from that in \cite{FLP}. We begin with an especially simple example which is the entropy of a pseudo-Anosov self-homeomorphism of a torus with a distinguished point. This example is considered in \cite{AKM}. The arguments given here will be adapted later to give a proof of the general case of the theorem.

Consider the standard torus $X=\mathbb{C}\diagup ({\mathbb{Z}+ i\mathbb{Z}})$ with distinguished point $0 \diagup ({\mathbb{Z}+ i\mathbb{Z}})$. Let $\varphi$ be a self-homeomorphism of $X$ that lifts to a real linear self-homeomorphism of the complex plane which maps the integer lattice onto itself. Then $\varphi$ fixes the distinguished point $0 \diagup ({\mathbb{Z}+ i\mathbb{Z}})$ of $X$. The lift $\tilde \varphi$ of $\varphi$ to the universal covering $\mathbb{C}$  corresponds to a matrix $\begin{pmatrix}a&b\\c&d\end{pmatrix}$ with integer entries and determinant equal to one, i.e.
$$
\tilde\varphi \begin{pmatrix} \xi \\ \eta \end{pmatrix} \,=\, \begin{pmatrix} a&b\\c&d \end{pmatrix}  \begin{pmatrix} \xi \\ \eta \end{pmatrix}
$$
where $\zeta=\xi + i \eta \in \mathbb{C}$ are complex coordinates on $\tilde X =\mathbb{C}$, and $ad-bc=1$.
Suppose the eigenvalues of the matrix are positive and not equal to each other. Denote the bigger eigenvalue by $\lambda$ and the corresponding eigenvector by $v_+$. The other eigenvalue equals $\frac{1}{\lambda}$. Denote the corresponding eigenvector by $v_-$. Then $\tilde\varphi(x v_+ + y v_-)=\lambda x v_+ +  \lambda^{-1} y v_-$.
Consider the conformal structure $w:X\to w(X)$ on $X$ whose lift
$\tilde{w}: \tilde{X}  \to \tilde{w}(\tilde X)$ maps the point $x v_+ + y v_-$ to the point $z=x+iy$. Then
\begin{equation}\label{eq3.130}
\tilde{w}\circ\tilde{\varphi}\circ\tilde{w}^{-1}(x  + iy )=\lambda x +  i \lambda^{-1}  y \,.
\end{equation}

Consider the self-homeomorphism $\varphi_0\stackrel{def}=w \circ \varphi \circ w^{-1}$ of $w(X)$  with distinguished point $0 \diagup \tilde{w}(\mathbb{Z}+i \mathbb{Z})$. Take local flat coordinates $z$ vanishing at a point $z_0$ and local flat coordinates $\zeta$ vanishing at the point $\varphi_0(z_0)$.
The mapping $\varphi_0$ in these local coordinates becomes
\begin{equation}\label{eq3.131}
\zeta= w \circ \varphi \circ w^{-1}(x+iy)=\lambda x + i \frac{1}{\lambda} y\,.
\end{equation}
The Beltrami differential of the mapping is
\begin{equation}\label{eq3.131a}
\frac{\overline{\partial}\varphi_0(z)}{{\partial}\varphi_0(z)} = \frac{(\lambda- \lambda^{-1})\;d\bar{z}}{(\lambda+ \lambda^{-1})\; dz}=\frac{(\lambda- \lambda^{-1})\;d\bar{z}\,dz}{(\lambda + \lambda^{-1}) \;(dz)^2}\,.
\end{equation}
The equation shows the following. Let the quadratic differential $\phi$ be given in local coordinates by $(dz)^2$. The Beltrami differential of the mapping
$\varphi_0$ equals $k \frac{|\phi|}{\phi}=k \frac{d\bar{z}}{d{z}}$ with $k=\frac{(\lambda- \lambda^{-1})}{(\lambda+ \lambda^{-1})}$. Hence, the quadratic differential
of $\varphi_0$ is the globally defined quadratic differential $(dz)^2$ on $w(X)$, and $\varphi_0$ has quasiconformal dilatation $K={\frac{1+k}{1-k}}= \lambda^2$.

The quadratic differential of the inverse mapping $\xi+i \eta \to \frac{\xi}{\lambda} + i \lambda \eta$ has Beltrami differential
$$
\frac{(\lambda^{-1}-\lambda)d\bar z \, dz}{(\lambda^{-1}+\lambda)(dz)^2}\,.
$$
Hence, the quadratic differential of the inverse mapping is $-(dz)^2$, and therefore the terminal quadratic differential of $\varphi_0$ is equal to its initial quadratic differential $\phi$. We proved that $\varphi_0$ is absolutely extremal on $w(X)$ with distinguished point and its quadratic differential equals $\phi=(dz)^2$.
Notice that the quadratic differential $\phi$
does not have singular points, the distinguished point is also a regular point. 
Such a situation is exceptional and especially easy to treat.

The following proposition on the entropy of the mapping holds.
\begin{prop}\label{prop3.103 }
$h(\varphi)= h(\varphi_0)= \log \lambda\,.$
\end{prop}
\noindent {\bf Proof.} Choose a small positive number $\varepsilon_0$ so that the projection $\mathbb{C}\cong \tilde{w}(\tilde{X}) \to w(X)$ is injective on open squares of side length $2\lambda \epsilon_0$ (in coordinates $x+i y$ on $\mathbb{C}$). Take $\varepsilon <\varepsilon_0$. Choose the open cover $\mathcal{A}_{\varepsilon}$ of $w(X)$ that consists of projections to $w(X)$ of all open squares in $\mathbb{C}$ 
of side length $\epsilon$ with sides parallel to the axes.
The minimal cardinality $\mathcal{N}(\mathcal{A}_{\varepsilon})$ of a subcover of $\mathcal{A}_{\varepsilon}$ does not exceed the minimal cardinality of a cover of the closure of a fundamental domain on $\tilde X$ by squares of side length $\varepsilon$. This number does not exceed  $c_+ \,\varepsilon^{-2}$ for a positive number $c_+$. On the other hand
$\mathcal{N}(\mathcal{A}_{\varepsilon})$ is bounded from below by the maximal number of disjoint squares of side length $\varepsilon$ that can be put in the interior of a fundamental domain on $\tilde X$. This number is bounded from below by $c_- \,\varepsilon^{-2}$ for a positive constant $c_-$.

The cover
$\varphi_0^{-1}(\mathcal{A}_{\varepsilon})$  consists of the projection to $w(X)$ of all rectangles in $\tilde{w}(\tilde{X})$ with sides parallel to the axes of horizontal side length $\lambda^{-1} \epsilon$ and vertical side length $\lambda\epsilon$. If such a rectangle  in $\tilde{w}(\tilde{X})$ intersects an $\varepsilon$-square there, then by the choice of $\varepsilon$ the projection is injective on the union of the square and the rectangle. Hence, the cover $\mathcal{A}_{\varepsilon} \vee \varphi_0^{-1}(\mathcal{A}_{\varepsilon})$ consists of the projection to $w(X)$ of all rectangles in $\tilde{w}(\tilde{X})$ with sides parallel to the axes of horizontal side length at most $\lambda^{-1} \epsilon$ and vertical side length at most $\epsilon$.

By induction $\mathcal{A}_{\varepsilon} \vee \varphi_0^{-1}(\mathcal{A}_{\varepsilon}) \vee \ldots \vee \varphi_0^{-n}(\mathcal{A}_{\varepsilon})$ consists of projections to $w(X)$ of all rectangles in $\tilde{w}(\tilde X)$ of horizontal side length at most $\lambda^{-n} \epsilon$ and vertical side length at most $\epsilon$.

Indeed, if this is proved for a natural number $n$ it is obtained for $n+1$ as follows.  The cover $\varphi_0^{-1}(\mathcal{A}_{\varepsilon}) \vee \ldots \vee \varphi_0^{-n-1}(\mathcal{A}_{\varepsilon})$ is the image under $(\varphi_0)^{-1}$ of the cover $\mathcal{A}_{\varepsilon} \vee \ldots \vee \varphi_0^{-n}(\mathcal{A}_{\varepsilon})$. Hence, it consists of rectangles of horizontal side length at most $\lambda^{-n-1} \varepsilon$ and vertical side length at most $\lambda \varepsilon$. The cover $\mathcal{A}_{\varepsilon}\vee \varphi_0^{-1}(\mathcal{A}_{\varepsilon}) \vee \ldots \vee \varphi_0^{-n-1}(\mathcal{A}_{\varepsilon})$ consists of intersections of such rectangles with $\varepsilon$-squares.

For each $n$ a lift of each set of  a minimal subcover of  $\mathcal{A}_{\varepsilon}$ can be covered by $ \lambda^n+1$ rectangles of horizontal side length $\lambda^{-n} \epsilon$ and vertical side length at most $\epsilon$. On the other hand at least $ \lambda^n$ rectangles of horizontal side length at most $\lambda^{-n} \epsilon$ and vertical side length at most $ \epsilon$ are needed to cover a lift a single $\varepsilon$-square of  a minimal subcover of  $\mathcal{A}_{\varepsilon}$.

We obtain
\begin{equation}\label{eq3.133} \lambda^n \leq \mathcal{N}(\mathcal{A}_{\varepsilon} \vee \varphi_0^{-1}(\mathcal{A}_{\varepsilon}) \vee \ldots \vee \varphi_0^{-n}(\mathcal{A}_{\varepsilon})) \leq c_+ \epsilon^{-2} (\lambda^n+1) \,.
\end{equation}
Hence,
$$
h(\varphi_0,\mathcal{A}_{\varepsilon}) = \limsup_{n \to \infty} \frac{1}{n}
\log \mathcal{N}(\mathcal{A}_{\varepsilon} \vee \varphi_0^{-1}(\mathcal{A}_{\varepsilon}) \vee \ldots \vee \varphi_0^{-n}(\mathcal{A}_{\varepsilon}))= \log \lambda\,.
$$
For any decreasing sequence of small numbers $\varepsilon_n$ the sequence of coverings $\mathcal{A}_{\varepsilon_n}$ is refining. Indeed, it is clear that
$\mathcal{A}_{\varepsilon_1}\prec \mathcal{A}_{\varepsilon_2}$ for $\varepsilon_1< \varepsilon_2$. Any open cover $\mathcal{B}$ has a finite subcover. For small enough $\varepsilon>0$ each set in  $\mathcal{A}_{\varepsilon}$ is contained in a set of the finite subcover, hence, $\mathcal{A}_{\varepsilon}\prec \mathcal{B}$.

Hence,
$$
h(\varphi_0)= \log \lambda\,.
$$
The proposition is proved. \hfill $\Box$

\bigskip

We will now prove Theorem \ref{thm3.2} in the general case. In the rest of the section $X$ will be a closed connected Riemann surface
of genus $g$ with a set $E_m$ of $m\geq 0$ distinguished points, $3g-3+m>
0$. Let $\varphi_0$ be an absolutely extremal self-homeomorphism of $X$
with set of distinguished points $E_m$. Suppose $\varphi_0$ is not conformal and has quasiconformal distortion $K(\varphi_0)=\lambda^2>1$.
Then there exists a quadratic differential $\phi$ on $X$ with the following properties. $\varphi_0$ maps the set of critical points of $\phi$ to itself, it maps horizontal leaves of $\phi$ to horizontal leaves and vertical leaves to vertical leaves.

Let $\varepsilon>0$ be a small number. We prepare the definition of an open cover $\mathcal{A}_{\varepsilon}$ of $X$. We will call an open $\phi$-rectangle
with both side lengths equal to $\varepsilon$ an open $\varepsilon$-square.  \index{$\varepsilon$-square}
An open $\phi$-rectangle with length of the horizontal sides equal to $\varepsilon_1$ and length of the vertical sides equal to $\varepsilon_2$ will be called an open $(\varepsilon_1, \varepsilon_2)$-rectangle.

An $\varepsilon$-star at a singular point of $\phi$ is defined as follows. \index{$\varepsilon$-star}
Consider a simply connected neighbourhood $U$ of a singular point of $\phi$ and
distinguished coordinates $z$ on it in which $\phi$ has the form $\phi(z)= (\frac{a+2}{2})^2 z^{a} (dz)^2$ where $a $ is the order of the singular point. Since there are only finitely many singular points, we may assume that in these coordinates $U$ has the form $\{|z|< c\} $ for a constant $c$ that does not depend on the singular point. The horizontal bisectrices  $\{r \exp(\frac{2\pi j i}{a+2}),\, 0<r<c\} $, 
$j=0,\ldots,a+1, $
are segments of horizontal trajectories, the vertical bisectrices $\{r \exp(\frac{2\pi (j+\frac{1}{2}) i}{a+2}),\, 0<r<c\} $, 
$j=0,\ldots,a+1, $ are segments of vertical trajectories. \index{bisectrix ! horizontal} \index{bisectrix ! vertical} \index{halfsector}
 Consider the ''half-sectors'' between a horizontal bisectrix and a nearest vertical bisectrix. There are $2(a+2)$ such ''half-sectors''. They have the form
$\mathfrak{s}_j= \{r \exp({ x i}): \, x \in  (\frac{2\pi j i}{2(a+2)}, \frac{2\pi (j+1) i}{2(a+2)}), \;0<r<c \} \,, j=0,\ldots,2(a+2)-1$. \index{$\mathfrak{s}_j$} \index{$\mathfrak{s}_j^h$}
Denote by $\mathfrak{s}_j^h\stackrel{def}={\rm Int}(\overline{\mathfrak{s}_{2j}\cup \mathfrak{s}_{2j+1}})$ the sectors between two horizontal bisectrices.

Each half-sector $\mathfrak{s}_j$ is taken by
a branch $g_j$ of the mapping $z \to z^{\frac{a+2}{2}}$ conformally onto a quarter-disc with center $0$ in one of the quarters $\{\pm\mbox{Im}z >0, \pm\mbox{Re}z>0\}$ of the complex plane. For $\sqrt{2}\varepsilon<c^{\frac{a+2}{2}}$ we intersect the quarter disc with an open square in the complex plane with center $0$ and side length $2 \varepsilon$. We obtain an open square $Q_j(\varepsilon)$ of side length $\varepsilon$. The union of the preimages $g_j^{-1}(Q_j(\varepsilon)), \, j=1,\ldots, 2(a+2)-1, $ with the $2(a+2)$ open segments of $\phi$-length $\varepsilon$ emerging from the singular point and contained in a bisectrix, is a punctured neighbourhood of the singular point. Its union with the singular point is the required  $\varepsilon$-star at the singular point (see Figure \ref{figen.1}). \index{$g_j$}

The $\varepsilon$-star is bounded by $a+2$ horizontal segments of $\phi$-length $2 \varepsilon$ 
and  $a+2$ vertical segments of $\phi$-length $2 \varepsilon$
The distance in the $\phi$-metric from the center of the $\varepsilon$-star to the horizontal and vertical sides in the boundary of the star is equal to $\varepsilon$.
Hence, the $\phi$-distance from the center of an $\varepsilon$-star to its boundary equals $\varepsilon$. The $\varepsilon$-star is contained in a $\sqrt{2}\varepsilon$-neighbourhood (in the $\phi$-metric) of its center.

The definition of a star at a distinguished point implies immediately that for any positive $\varepsilon< \frac{1}{\sqrt{2}}c^{\frac{a+2}{2}}$ any $\varepsilon$-star punctured at its singular point of order $a$ can be covered by $4(a+2)$ $\varepsilon$-squares contained in the  $\varepsilon$-star. Indeed, the part of the star contained in the half-sectors $\mathfrak{s}_j$ is covered by $2(a+2)$ $\varepsilon$-squares, and each bisectrix is covered by an open $\varepsilon$-square. The latter follows from the fact that e.g. each horizontal bisectrix is contained in the sector between two neighbouring vertical bisectrices, and a branch of the mapping $z \to z^{\frac{a+2}{2}}$ takes this sector conformally onto a half-disc with center $0$ in the complex plane.
\begin{figure}[h]
\begin{center}
\includegraphics[width=100mm]{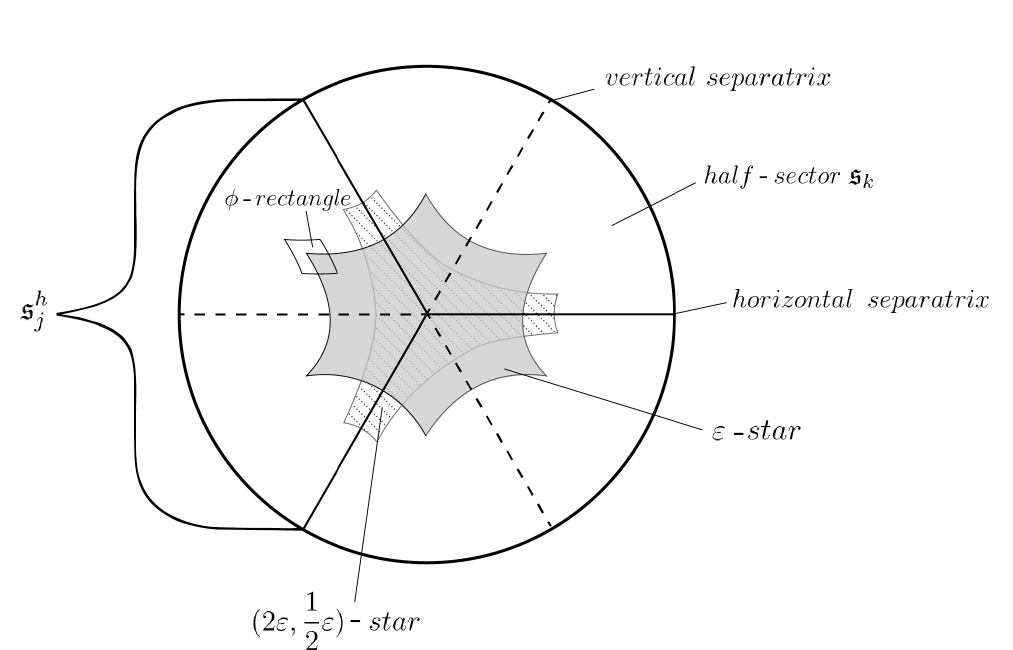}
\end{center}
\caption{An $\varepsilon$-star and a $(2\varepsilon, \frac{\varepsilon}{2})$-star at a $\phi$-singular point}
\label{figen.1}
\end{figure}
An $(\varepsilon_1, \varepsilon_2)$-star at a singular point is defined similarly. Instead of intersecting
quarter-discs with center $0$ in the complex plane
with open squares (in Euclidean coordinates) of center $0$ and side length $2\varepsilon$ we intersect the quarter-discs with open rectangles  with center $0$, horizontal side length $2\varepsilon_1$ and vertical side length $2\varepsilon_2$, and take the preimage under the chosen branch of the mapping $z \to z^{\frac{a+2}{2}}$.
The obtained intersections are $\phi$-rectangles of horizontal side length $\varepsilon_1$ and vertical side length $\varepsilon_2$. (See Figure \ref{figen.1}).

\begin{lemm}\label{lem3.109a}
Let $\varepsilon_1$ and $\varepsilon_2$ be positive numbers such that
$\sqrt{2} (\varepsilon_1 +\varepsilon_2) < c^{\frac{a+2}{2}}$ for the number c introduced above. If a $\phi$-rectangle with both side length's not exceeding  $\varepsilon_2$ intersects an $\varepsilon_1$-star  $\mathfrak{S}_{\varepsilon_1}$,  
then
the $(\varepsilon_1+ \varepsilon_2)$-star $\mathfrak{S}_{\varepsilon_1+ \varepsilon_2}$ at the singular point of  $\mathfrak{S}_{\varepsilon_1}$  exists, and
the rectangle is contained in the intersection of $\mathfrak{S}_{\varepsilon_1+ \varepsilon_2}$ with
a sector between two consecutive horizontal bisectrices (or two consecutive vertical bisectrices).
\end{lemm}
\index{$\mathfrak{S}_{\varepsilon}$}
\noindent{\bf Proof.} Since the distance (in the $\phi$-metric) of each point of the $\varepsilon$-star from the center does not exceed $\sqrt{2}\varepsilon_1$ and the diameter of the rectangle does not exceed $ \sqrt{2} \varepsilon_2$, the rectangle is contained in the $\sqrt{2}(\varepsilon_1+\varepsilon_2)$-neighbourhood (in distinguished coordinates) of the distinguished point. 
Suppose the rectangle intersects a vertical bisectrix 
at a point $x$.
Let $\mathfrak{s}_j^h$ be the sector between two neighbouring horizontal bisectrices that contains the vertical bisectrix. The rectangle can be obtained as follows. Take the maximal open vertical segment $s$ 
through $x$ that is contained in it. It has $\phi$-length at most $\varepsilon_2$. For each point in $s$  we take the maximal open horizontal segment 
through this point that is contained in the rectangle. Each has $\phi$-length at most $\varepsilon_2$.  Notice that the open rectangle does not contain singular points. Take the union of all these segments.
The points on these segments are reachable starting from the singular point along a piece of a vertical trajectory of length at most $\varepsilon_1 + \varepsilon_2$ followed by a piece of a horizontal trajectory of length at most $\varepsilon_2$. Hence, the rectangle is contained in the
$(\varepsilon_1 +\varepsilon_2)$-star. If the rectangle intersects a horizontal bisectrix or does not intersect any bisectrix, the proof is the same.
\hfill $\Box$

\begin{lemm}\label{lem3.109}
There exists a positive number $\delta_0$ such that there exist disjoint  $\delta_0$-stars at the singular points.
Moreover, there exists a positive number $\varepsilon_0< \delta_0$ such that for each $\varepsilon < \varepsilon_0$ and
for each point $x \in X$ which is not in an $\varepsilon$-star at a singular point 
there exists an $\varepsilon$-square with center $x$.
\end{lemm}
Recall that a square in $X$ is required to be contained in the $\phi$-regular part of $X$.

\medskip
\noindent {\bf Proof.} It is clear, that for some $\delta_0>0$ there exist disjoint $\delta_0$-stars at the singular points. For each point in the complement $X_{\delta_0}$ of all $\frac{1}{2}\delta_0$-stars there is a an $\varepsilon(x)$-square with center at this point for some positive number $\varepsilon(x)$ depending on $x$. Since $X_{\delta_0}$ is compact it can be covered by a finite number of $\frac{1}{2}\varepsilon(x_j)$-squares, $j=1,\ldots,N$. Let $\varepsilon_0$ be the minimum of the numbers $\frac{1}{2}\varepsilon(x_j)$. Then for each $x \in X_{\delta_0}$ there is an $\varepsilon_0$-square with center at this point. \index{$g_j^h$}

Let $x$ be in a $\delta_0$-star at a singular point but not in an $\varepsilon$-star. Suppose the distance of the point $x$ to a vertical bisectrix is equal to its distance to the union of all bisectrices.
Let $ \mathfrak{s}_j^h$ be the sector of the $\delta_0$-star, that is bounded by two consecutive horizontal bisectrices and contains the latter vertical bisectrix.
Let $g_j^h$ be a branch of the mapping $z \to z^{\frac{a+2}{2}}$ that maps $\mathfrak{s}_j^h$ to the upper or lower half-plane. It defines flat coordinates 
on $ \mathfrak{s}_j^h$.
The point $g_j^h(x)$ is contained in the intersection of the upper or lower half-plane with
a square of side length $2 \delta_0$ and center $0$, but not in the square of side length $2\varepsilon$ and center $0$. Moreover, the distance of $g_j^h(x)$ to the imaginary axis does not exceed its distance to the real axis.
Hence, the intersection of the half-plane with the square of side length $2\delta_0$ and center $0$ contains a square of side length $\varepsilon$ with center $g_j^h(x)$.
The existence of an $\varepsilon$-square with center $x$ is proved.
\hfill $\Box$

\bigskip

We prove now the following proposition which is one of the estimates of Theorem \ref{thm3.2}.

\begin{prop}\label{prop3.110}
Let $X$ be a (closed connected) Riemann surface
of genus $g$ with a set $E_m$ of $m\geq 0$ distinguished points, $3g-3+m>
0$. Let $\varphi_0 \in {\rm Hom} (X ; \emptyset , E_m)$ be a
non-periodic absolutely extremal self-homeomorphism
of $X$ with set of distinguished
points $E_m$. Then
$$
h(\varphi_0) \leq  \frac{1}{2} \log K(\varphi_0)
$$
\end{prop}

\noindent{\bf Proof.} Let $\varepsilon_0$ be as in Lemma \ref{lem3.109}.
Take a positive number $\varepsilon< \min\{\frac{\varepsilon_0}{\lambda}, \frac{c^{\frac{a+2}{2}}}{4\sqrt{2}\lambda}\}$. Let $\mathcal{A}_{\varepsilon}$ be the set which consists of all $\varepsilon$-squares and all $\varepsilon$-stars at distinguished points.
Then $\mathcal{A}_{\varepsilon}$ is an open cover of $X$.

The cover $\varphi_0^{-1}(\mathcal{A}_{\varepsilon})$ consists of all $(\lambda^{-1} \varepsilon, \lambda \varepsilon)$-stars at distinguished points and all $(\lambda^{-1} \varepsilon, \lambda \varepsilon)$-rectangles. Indeed, the image under $\varphi_0^{-1}$ of an $\varepsilon$-star at a singular point of $\phi$ is a $(\lambda^{-1} \varepsilon, \lambda \varepsilon)$-star at some singular point of $\phi$. Vice versa, take a $(\lambda^{-1}\varepsilon, \lambda\varepsilon)$-star $\mathfrak{S}$. Its image $\varphi_0(\mathfrak{S})$
under $\varphi_0$ is an $\varepsilon$-star and $\mathfrak{S}$ is the image of the
$\varepsilon$-star $\varphi_0(\mathfrak{S})$ under $\varphi_0^{-1}$. The argument for rectangles is the same.

Each $(\lambda^{-1} \varepsilon, \varepsilon)$-rectangle is the intersection of an $\varepsilon$-square with a $(\lambda^{-1} \varepsilon, \lambda\varepsilon)$-rectangle. Indeed, if the midpoint of the $(\lambda^{-1} \varepsilon, \varepsilon)$-rectangle is outside the $\lambda\varepsilon$-stars, then by Lemma \ref{lem3.109} there exists a $\lambda\varepsilon$-square with center at this point. This square contains an $\varepsilon$-square and a $(\lambda^{-1} \varepsilon, \lambda\varepsilon)$-rectangle centered at this point. The intersection of the latter two sets is the required  $(\lambda^{-1} \varepsilon, \varepsilon)$-rectangle.

Suppose the center $x$ of a $(\lambda^{-1} \varepsilon, \varepsilon)$-rectangle $R$ is in a $\lambda\varepsilon$-star $\mathfrak{S}_{\lambda\varepsilon}$.
Lemma \ref{lem3.109a} with $\varepsilon_1=\lambda \varepsilon$, $\varepsilon_2=\varepsilon$,
and $\sqrt{2}(\varepsilon_1+\varepsilon_2)= \sqrt{2}(\lambda+1)\varepsilon<\frac{1}{2}c^{\frac{a+2}{2}}$ implies that the rectangle $R$ 
is contained in a sector of the $(\lambda+1)\varepsilon$-star $\mathfrak{S}_{(\lambda +1)\varepsilon}$ between consecutive horizontal bisectrices, or between consecutive vertical bisectrices.
We assume that $R$
is contained in a sector $\mathfrak{s}_j^h \cap \mathfrak{S}_{(\lambda +1)\varepsilon}$ of the $(\lambda+1)\varepsilon$-star between two consecutive horizontal bisectrices. The remaining case is treated similarly and is even slightly simpler.

By our choice of $\varepsilon$ the $2(\lambda+1)\varepsilon$-star $\mathfrak{S}_{2(\lambda +1)\varepsilon}$ at the given singular point exists. The image $g_j^h(\mathfrak{s}_j^h \cap \mathfrak{S}_{2(\lambda +1)\varepsilon})$ is the intersection of the upper or lower half-plane with
a Euclidean square of side length $4(\lambda+1)\varepsilon$. 
The image $g_j^h(R)$ is a Euclidean rectangle of horizontal side length $\lambda^{-1} \varepsilon$ and vertical side length $\varepsilon$ contained in
the Euclidean rectangle  $g_j^h(\mathfrak{s}_j^h \cap \mathfrak{S}_{2(\lambda +1)\varepsilon})$
of horizontal side length $4(\lambda+1) \varepsilon$ and vertical side length $2(\lambda+1) \varepsilon$ . Hence,
$g_j^h(R)$ can be written as intersection of
a Euclidean rectangle of side lengths  $\lambda^{-1} \varepsilon$ and $\lambda \varepsilon$, and a Euclidean $\varepsilon$-square, both contained in the Euclidean rectangle $g_j^h(\mathfrak{s}_j^h \cap \mathfrak{S}_{2(\lambda +1)\varepsilon})$.
Hence, $R$ can be written as the desired intersection.

We showed that each $(\lambda^{-1} \varepsilon, \varepsilon)$-rectangle
can be written as $A_0\cap \varphi_0^{-1}(A_1)\in\mathcal{A}_{\varepsilon} \vee \varphi_0^{-1}(\mathcal{A}_{\varepsilon})$ for $\varepsilon$-squares $A_0$ and $A_1$.

Any $(\lambda^{-n} \varepsilon, \varepsilon)$-rectangle can be written as
$A_0\cap \varphi_0^{-1}(A_1)\cap \ldots \cap \varphi_0^{-n}(A_n)\in
\mathcal{A}_{\varepsilon} \vee \varphi_0^{-1}(\mathcal{A}_{\varepsilon}) \vee \ldots \vee \varphi_0^{-n}(\mathcal{A}_{\varepsilon})$ for $\varepsilon$-squares $A_j$, $j=0,1,\ldots,n$. The proof goes by induction. Assume the claim is true for a natural number $n$. We prove that it is true for $n+1$. Suppose first the midpoint of the $(\lambda^{-n-1} \varepsilon, \varepsilon)$-rectangle $A$ is outside the $\lambda\varepsilon$-stars. Then by Lemma \ref{lem3.109} the $\lambda \varepsilon$-square around this point exists.
Hence, there is an $\varepsilon$-square $A'$ and a $(\lambda^{-n-1} \varepsilon, \lambda\varepsilon)$-rectangle $A''$ with the same center so that
the intersection $A'\cap A''$ is equal to $A$. 
The image $\varphi_0(A'')$ is a $(\lambda^{-n} \varepsilon, \varepsilon)$-rectangle which is an element of
$\mathcal{A}_{\varepsilon} \vee \varphi_0^{-1}(\mathcal{A}_{\varepsilon}) \vee \ldots \vee \varphi_0^{-n}(\mathcal{A}_{\varepsilon})$
by the claim for the number $n$.
Hence, its preimage $A''$ under $\varphi_0$ is in $\varphi_0^{-1}(\mathcal{A}_{\varepsilon}) \vee \varphi_0^{-2}(\mathcal{A}_{\varepsilon}) \vee \ldots \vee \varphi_0^{-1-n}(\mathcal{A}_{\varepsilon})$. Since $A=A' \cap A''$ the claim is proved in this case.

In the remaining case the midpoint of the $(\lambda^{-n-1} \varepsilon, \varepsilon)$-rectangle $A$ is inside a $\lambda\varepsilon$-star. Using the proof for $n=1$ and the induction argument above
we obtain the claim for $n+1$ also in this case.

Each $(\lambda^{-n} \varepsilon, \varepsilon)$-star can be written as $A_0\cap \varphi_0^{-1}(A_1)\cap\ldots \cap \varphi_0^{-n}(A_n)\in  \mathcal{A}_{\varepsilon} \vee \varphi_0^{-1}(\mathcal{A}_{\varepsilon}) \vee \ldots \vee \varphi_0^{-n}(\mathcal{A}_{\varepsilon})$ for $\varepsilon$-stars $A_j$, $j=0,\ldots ,n$. This is true for $n=1$
since each $(\lambda^{-1} \varepsilon, \varepsilon)$-star is the intersection of a $(\lambda^{-1} \varepsilon, \lambda \varepsilon)$-star with an $\varepsilon$-star.
We suppose that the claim is true for $n$, and prove it for $n+1$.
Each  $(\lambda^{-n-1} \varepsilon, \varepsilon)$-star is the intersection of
an $\varepsilon$-star with a
$(\lambda^{-n-1} \varepsilon, \lambda\varepsilon)$-star. The latter is a preimage under $\varphi_0$ of a $(\lambda^{-n} \varepsilon, \varepsilon)$-star, hence, by the induction hypothesis it can be written as  $ \varphi_0^{-1}(A_1)\cap \ldots \cap\varphi_0^{-n-1}(A_{n+1}) \in \varphi_0^{-1}(\mathcal{A}_{\varepsilon}) \vee \ldots \vee \varphi_0^{-n-1}(\mathcal{A}_{\varepsilon})$. This proves the claim.

Cover $X$ by a finite subcover of $\mathcal{A}_{\varepsilon}$. We may assume that this subcover consists of all $\varepsilon$-stars and a finite number of $\varepsilon$-squares that cover the complement of the singular points in $X$. Let $C_1$ be the number of squares in the subcover, and $C_2$ the number of stars . Each square can be covered by no more than $\lambda ^n+1$ sets that are
$(\lambda^{-n} \varepsilon, \varepsilon)$-rectangles. These rectangles are in $\mathcal{A}_{\varepsilon} \vee \varphi_0^{-1}(\mathcal{A}_{\varepsilon}) \vee \ldots \vee \varphi_0^{-n}(\mathcal{A}_{\varepsilon})$.
Each singular point can be covered by a $(\lambda^{-n} \varepsilon, \varepsilon)$-star which is also contained in $\mathcal{A}_{\varepsilon} \vee \varphi_0^{-1}(\mathcal{A}_{\varepsilon}) \vee \ldots \vee \varphi_0^{-n}(\mathcal{A}_{\varepsilon})$. We obtain a subcover of cardinality at most $C_1( \lambda ^n+1) +C_2  \leq C ( \lambda ^n+1)$.
Hence, for any $\varepsilon < \varepsilon_0$
\begin{equation}\label{eq3.134}
h(\varphi_0,\mathcal{A}_{\varepsilon}) \leq \limsup \frac{1}{n} \log(C(\lambda^n+1))=\log \lambda\,.
\end{equation}
Take a sequence of $\varepsilon$'s decreasing to $0$. We get \eqref{eq3.134} for a refining sequence of coverings. Hence,
\begin{equation}
h(\varphi_0)\leq \log \lambda= \frac{1}{2} \log K(\varphi_0)\,. \nonumber
\end{equation}
The proposition is proved.  \hfill $\Box$

\bigskip
The following proposition states the opposite inequality.

\bigskip
\begin{prop}\label{prop3.111}
Let $X$ be a (closed connected) Riemann surface
of genus $g$ with a set $E_m$ of $m\geq 0$ distinguished points, $3g-3+m>
0$. Let $\varphi_0 \in {\rm Hom} (X ; \emptyset , E_m)$ be a
non-periodic absolutely extremal self-homeomorphism of $X$ with set of distinguished
points $E_m$. Then
$$
h(\varphi_0) \geq  \frac{1}{2} \log K(\varphi_0)
$$
\end{prop}

\bigskip

\noindent{\bf Proof.} Let
$\varepsilon< \min\{\frac{\varepsilon_0}{3\lambda+1}, \frac{c^{\frac{a+2}{2}}}{4\sqrt{2}\lambda}\}$.
We consider the cover $ \mathcal{B}_{\varepsilon}$ which consist of
all $\varepsilon$-squares and all $\frac{\varepsilon}{2}$-stars at singular points.
We prove first that each non-empty set of the cover  $\mathcal{B}_{\varepsilon} \vee\varphi_0^{-1}(\mathcal{B}_{\varepsilon})$ is either a $(\lambda^{-1}\frac{\varepsilon}{2}, \frac{\varepsilon}{2})$-star at a singular point of $\phi$ 
or an $(\varepsilon_1,\varepsilon_2)$-rectangle with $\varepsilon_1\leq \lambda^{-1}\varepsilon$ and
$\varepsilon_2\leq \varepsilon$.

We have to consider non-empty intersections
$A_1 \cap \varphi_0^{-1}(A_2)$ for two sets in  $\mathcal{B}_{\varepsilon}$.
Suppose both, $A_1$ and $A_2$ are $\varepsilon$-squares, hence, $\varphi_0^{-1}(A_2)$ is an $(\lambda^{-1}\varepsilon, \lambda\varepsilon)$-rectangle.
If the center $x_0$ of $A_1$ is not in a $2\lambda\varepsilon$-star, then by Lemma \ref{lem3.109} there exists a $2\lambda\varepsilon$-square with center at the same point $x_0$. $\varphi_0^{-1}(A_2)$ is a $(\lambda^{-1}\varepsilon, \lambda \varepsilon)$-rectangle and intersects $A_1$, hence $A_1\cap \varphi_0^{-1}(A_2)$ is contained in the $2\lambda \varepsilon$-square centered $x_0$. It is now clear that $A_1 \cap \varphi_0^{-1}(A_2)$ is an $(\varepsilon_1,\varepsilon_2)$-rectangle with $\varepsilon_1\leq \lambda^{-1}\varepsilon$ and
$\varepsilon_2\leq \varepsilon$.

Suppose $A_1$ and $A_2$ are $\varepsilon$-rectangles and the centre of $A_1$ is in a $2\lambda \varepsilon$-star. Then by Lemma \ref{lem3.109a} $A_1$ is contained in the $(2\lambda +1) \varepsilon$-star and $\varphi_0^{-1}(A_2)$ intersects the $(2\lambda +1) \varepsilon$-star. Hence, by the same Lemma \ref{lem3.109a} $\varphi_0^{-1}(A_2)$ is contained in the $(3\lambda +1) \varepsilon$-star. Each of the $A_1$ and $\varphi_0^{-1}(A_2)$ is contained in a sector between two consecutive horizontal or between two consecutive vertical bisectrices. Moreover, the two sectors intersect at least along a half-sector. The union of the two sectors, equipped with flat coordinates, is an open subset of
a half-plane or a three-quarter plane with Euclidean coordinates. It is now clear that the intersection of $A_1$ and $\varphi_0^{-1}(A_2)$ is an $(\varepsilon_1,\varepsilon_2)$-rectangle with $\varepsilon_1\leq \lambda^{-1}\varepsilon$ and
$\varepsilon_2\leq \varepsilon$.

If $A_1$ is an $\frac{\varepsilon}{2}$-star, further $A_2$ is an $\varepsilon$-square, and $A_1\cap \varphi_0^{-1}(A_2)$ is non empty, then by Lemma \ref{lem3.109a} $A_1\cup \varphi_0^{-1}(A_2)$ is contained in the $(\frac{\varepsilon}{2} +\lambda \varepsilon)$-star. Moreover, $\varphi_0^{-1}(A_2)$ is contained in a sector of this star between two consecutive horizontal or two consecutive vertical bisectrices. It is now clear that the intersection of $A_1$ and $\varphi_0^{-1}(A_2)$ is an $(\varepsilon_1,\varepsilon_2)$-rectangle with $\varepsilon_1\leq \lambda^{-1}\varepsilon$ and
$\varepsilon_2\leq \varepsilon$.

Let $A_1$ be an $\varepsilon$-square and let $A_2$ be an  $\frac{\varepsilon}{2}$-star. Then $\varphi_0^{-1}(A_2)$ is a $(\lambda^{-1}\frac{\varepsilon}{2}, \lambda\frac{\varepsilon}{2})$-star.
If the intersection $A_1\cap \varphi_0^{-1}(A_2)$
is non-empty, then by Lemma \ref{lem3.109a}
$A_1$ is contained in a sector of a $(\frac{\lambda}{2}+1) \varepsilon$-star between two consecutive separatrices of the same kind (both horizontal or both vertical) with the same center. In flat coordinates on this set we obtain the intersection of an $\varepsilon$-square
with an $(\varepsilon_1,\varepsilon_2)$-rectangle with $\varepsilon_1\leq \lambda^{-1}\frac {\varepsilon}{2}$ and $\varepsilon_2\leq \lambda \frac{\varepsilon}{2}$. The claim is obtained also in this case.

If both, $A_1$ and $A_2$ are stars they have the same center if the intersection is not empty, and the claim is clear.

We prove now by induction that each  non-empty set of the form
\begin{equation}\label{eq3.135}
A_0 \cap \varphi_0^{-1}(A_1)\cap \ldots \cap \varphi_0^{-n}(A_n)
\end{equation}
(with $A_j, j=0,\ldots n,$ being sets in  $\mathcal{B}_{\varepsilon}$), is either an $(\varepsilon_1,\varepsilon_2)$-rectangle with $\varepsilon_1\leq \lambda^{-n}\varepsilon$ and $\varepsilon_2\leq \varepsilon$ or a
$(\lambda^{-n}\frac{\varepsilon}{2}, \frac{\varepsilon}{2})$-star at a singular point of $\phi$. Suppose this is true for a natural number $n$. Prove it for $n+1$.
We have to consider intersections $A_1 \cap \varphi_0^{-1}(B)$ with
$A_1 \in \mathcal{B}_{\varepsilon} $  and $B \in \mathcal{B}_{\varepsilon} \vee \varphi_0^{-1}(\mathcal{B}_{\varepsilon}) \vee \ldots \vee \varphi_0^{-n}(\mathcal{B}_{\varepsilon})$. Let $A_1$ be an $\varepsilon$-square. We have to intersect it either with an  $(\varepsilon_1,\varepsilon_2)$-rectangle with $\varepsilon_1\leq \lambda^{-n-1}\varepsilon$ and $\varepsilon_2\leq \lambda\varepsilon$, or with
a $(\lambda^{-n-1}\frac{\varepsilon}{2}, \frac{\varepsilon}{2})$-star at singular a point of $\phi$. In the same way as before we obtain that the intersection is a $(\varepsilon_1,\varepsilon_2)$-rectangle with $\varepsilon_1\leq \lambda^{-n-1}\varepsilon$ and $\varepsilon_2\leq \varepsilon$.

Let $A_1$ be an $\frac{\varepsilon}{2}$-star at a singular point of $\phi$. We have to intersect it either with a $(\lambda^{-n-1}\frac{\varepsilon}{2},\lambda \frac{\varepsilon}{2})$-star at the same singular point, or with an $(\varepsilon_1,\varepsilon_2)$-rectangle with $\varepsilon_1\leq \lambda^{-n-1}\varepsilon$ and $\varepsilon_2\leq \lambda\varepsilon$. In the first case we obtain a $(\lambda^{-n-1}\frac{\varepsilon}{2}, \frac{\varepsilon}{2})$-star. In the second case the rectangle is contained in a sector between two consecutive bisectrices of the same kind of a larger star and we obtain in flat coordinates the intersection of the $(\varepsilon_1,\varepsilon_2)$-rectangle with an
$(\varepsilon'_1,\varepsilon'_2)$-rectangle, where $\varepsilon'_1$ and $\varepsilon'_2$ do not exceed $\varepsilon$.
The smaller horizontal side length is $\lambda^{-n-1}\frac{\varepsilon}{2}$, the smaller vertical side length is $\varepsilon$. We proved the claim.

The proof of the proposition is completed as follows. Consider an $\varepsilon$-square $Q$ with $\phi$-distance bigger than $2 \lambda\varepsilon$ from any singular point. Then for any non-negative integer $n$ no star in the cover $\mathcal{B}_{\varepsilon} \vee \varphi_0^{-1}(\mathcal{B}_{\varepsilon}) \vee \ldots \vee \varphi_0^{-n}(\mathcal{B}_{\varepsilon})$ intersects $Q$.
Fix a natural number $n$. We want to estimate $\mathcal{N}(\mathcal{B}_{\varepsilon} \vee \varphi_0^{-1}(\mathcal{B}_{\varepsilon}) \vee \ldots \vee \varphi_0^{-n}(\mathcal{B}_{\varepsilon}))$ from below.
The sets in the cover of $X$ that intersect $Q$ are $(\varepsilon_1,\varepsilon_2)$-rectangles with $\varepsilon_1\leq \lambda^{-n}\varepsilon$ and $\varepsilon_2\leq \varepsilon$. To cover $Q$ at least $\lambda^n$ sets are needed. Hence,
\begin{align}\label{eq3.136}
h(\varphi_0,\mathcal{B}_{ \epsilon})\geq & \limsup_{n \to \infty} \frac{1}{n} \log \mathcal{N}(\mathcal{B}_{\varepsilon} \vee \varphi_0^{-1}(\mathcal{B}_{\varepsilon}) \vee \ldots \vee \varphi_0^{-n}(\mathcal{B}_{\varepsilon}))\nonumber\\
\geq & \limsup_{n \to \infty} \frac{1}{n} \log \lambda^n =\lambda\,.
\end{align}
Take a sequence of $\varepsilon$'s decreasing to $0$ we obtain a refining sequence of coverings. We proved that $h(\varphi_0)\geq \lambda$. \hfill $\Box$

The first equality in Theorem \ref{thm3.2} is proved. The second one is Corollary \ref{corr2.1}. Theorem \ref{thm3.2} is proved. \hfill $\Box$

\medskip

\section[Pseudo-Anosov self-homeomorphisms  are entropy minimizing]    {Pseudo-Anosov self-homeomorphisms of closed surfaces are entropy minimizing}

\label{sec:entropy.2}
In this section we will prove Theorem \ref{thm3.1} along the lines of \cite{FLP}.
Let $X$ be a closed surface of genus at least two and let $x_0$ be a chosen base point of $X$. 
Let $f$ be an arbitrary self-homeomorphism of $X$.
If $f(x_0)=x_0$ the mapping $f$ induces a homomorphism $f_*:\pi_1(X,x_0)\toitself$. Indeed, for each loop $\gamma$ with base point $x_0$ the image
$f\circ \gamma$ is a loop with base point $x_0$, whose class in $\pi_1(X,x_0)$ depends only on the class of $\gamma$ in $\pi_1(X,x_0)$.
\index{$f_*$}

In the general situation the plan is the following.
We will show that 
the mapping $f$ induces a conjugacy class of homomorphisms $\widehat{f_{\#}}$
from the group ${\rm Deck}(\tilde{X},X)$ of covering transformations (identified with the fundamental group of $X$) into itself.
\index{$\widehat{f_{\#}}$}
We will define a quantity $\Gamma_{\widehat{f_{\#}}}$ that depends only on the conjugacy class $\widehat{f_{\#}}$ and provides a lower bound
for the entropy of $f$. Moreover, the quantity is more geometric than the entropy. In the case when $f$ is a non-periodic absolutely extremal homeomorphism
the quantity can be estimated from below by $\frac{1}{2}\log K(f)$.

Before coming to the fundamental group we consider any group $G$. Let $\mathcal{G}=\{g_1,\ldots,g_r\}$ be any subset of $G$ that generates $G$. For an element $g \in G$ we denote by $\mathcal{L}_{\mathcal{G}}(g)$ the minimal length
of a word
in the $g_j$'s and $g_j^{-1}$'s that represents $g$. (The length of a word in the $g_j$ is the sum of the absolute values of the powers of exponents of the $g_j$ that appear in the word.)
For any other subset $\mathcal{G}'= \{g_1', \ldots, g_{r'}'\}$ of $G$ that generates $G$ the inequality
$\mathcal{L}_{\mathcal{G}'}(g) \leq (\max \mathcal{L}_{\mathcal{G}'}(g_j))  \mathcal{L}_{\mathcal{G}}(g)$ holds.
\index{$\mathcal{G}$} \index{$\mathcal{L}_{\mathcal{G}}(g)$}

For a group homomorphism $A:G \to G$ we put
\begin{equation}\label{eq3.101}
\Gamma_A=\underset{g \in G} {\sup} \, \limsup_{n\to \infty} \frac{1}{n} \log \mathcal{L}_{\mathcal{G}}(A^n g)
\end{equation}
\index{$\Gamma_A$}
The following lemma is a straightforward consequence of the definition.
\begin{lemm}\label{lem3.101}
$$\Gamma_A=\underset{g_j \in \mathcal{G}} {\max} \, \limsup_{n\to \infty} \frac{1}{n} \log \mathcal{L}_{\mathcal{G}}(A^n g_j)\,.$$
Hence, $\Gamma_A$ is finite.
\end{lemm}

For an element $g \in G$ we consider the group homomorphism $gAg^{-1}:G \to G$ defined by $gAg^{-1}(x)=gA(x) g^{-1}$.

For convenience of the reader we formulate the following technical lemma from \cite{FLP} (see  Lemma 10.6 in \cite{FLP})). The proof is a straightforward calculation and is left to the reader.

\begin{lemm}\label{lem3.100}
Let $(a_{n})_{n=1}^{\infty}$ and $(b_{n})_{n=1}^{\infty}$ be two sequences of positive numbers. Then\\
$(1)$  $\limsup \frac{1}{n} \log(a_n+b_n) = \max(\limsup \frac{1}{n} \log(a_n),
\limsup \frac{1}{n} \log(b_n);$\\
$(2)$  $ \limsup \frac{1}{n} \log(a_n) \leq \limsup \frac{1}{n} \log(a_1+ \ldots + a_n) \leq \max(0, \limsup \frac{1}{n} \log(a_n)).$
\end{lemm}

The following proposition is proved in \cite{FLP} (see Proposition 10.5 of \cite{FLP} and its proof).

\begin{prop}\label{prop3.101'}
The equality
\begin{equation}
\Gamma_A= \Gamma_{gAg^{-1}}\nonumber
\end{equation}
holds.
\end{prop}

\noindent{\bf Proof.} By the definition we have $(gAg^{-1})(x)=gA(x)g^{-1},\,g\in G$. Hence,
$$(gAg^{-1})^2(x)=gA(gA(x)g^{-1}) g^{-1}=gA(g)A^2(x)A(g^{-1})g^{-1}.$$
By induction
$$(gAg^{-1})^n(x)=gA(g)\ldots A^{n-1}(g)A^n(x)A^{-n+1}(g)\ldots A(g^{-1})g^{-1}\,.$$
Hence, since $ \mathcal{L}_{\mathcal{G}}(y)= \mathcal{L}_{\mathcal{G}}(y^{-1})$ for each $y\in G$ we obtain
\begin{align}\label{eq3.101'''}
\Gamma_{gAg^{-1}}
= &\underset {x \in G}\sup \limsup \frac{1}{n} \log(\mathcal{L}_{\mathcal{G}}(g 
\ldots A^{n-1}(g)A^n(x)A(g^{-n+1})\ldots 
g^{-1})) \nonumber\\
\leq &\underset {x \in G}\sup \limsup  \frac{1}{n} \log\Big(2 \big ( \mathcal{L}_{\mathcal{G}}(g) +\ldots +  \mathcal{L}_{\mathcal{G}}(A^{n-1}(g))\big) 
+  \mathcal{L}_{\mathcal{G}}(A^n(x))\Big)\,.
\end{align}
If $A^{n_0}(g)$ is the identity for some $n_0$, then it follows from Statement (1) of Lemma \ref{lem3.100}, that
the right hand side of \eqref{eq3.101'''} does not exceed
$$\limsup \frac{1}{n}\log \mathcal{L}_{\mathcal{G}}(A^n(x)).$$
In the remaining case
$\mathcal{L}_{\mathcal{G}}(A^n(g))\geq1$ for each $n$, and by Statements (1) and (2) of Lemma \ref{lem3.100}
\begin{align*}
\limsup &\frac{1}{n}\log\Big(\mathcal{L}_{\mathcal{G}}(gAg^{-1})(x)\Big)\\
\leq \max\Big(\limsup &\frac{1}{n}\log \mathcal{L}_{\mathcal{G}}(A^n(g)),\limsup \frac{1}{n}\log \mathcal{L}_{\mathcal{G}}(A^n(x))\Big)\,.
\end{align*}
We obtain $\Gamma_{gAg^{-1}}\leq \Gamma_A$. The opposite inequality follows by symmetry. \hfill $\Box$

\medskip

Let $X$ be  a closed surface and $x_0 \in X$ a base point. Denote by ${\sf P}:\tilde X\to X$ the universal covering of $X$. Recall that each point of $\tilde X$ that projects to a point $x \in X$ can be identified with an element $e_{x_0,x} \in \pi_1(X;x_0,x)$ (see e.g. \cite{Fo} and Section \ref{sec:2.0}).
Here $\pi_1(X;x_0,x)$ denotes the set of homotopy classes of arcs in $X$ with initial point $x_0$ and terminal point $x$.
We write $\{e_{x_0,x}\}$ if we mean the point of $\tilde X$ associated to the element $e_{x_0,x} \in \pi_1(X;x_0,x)$. The projection assigns to each point $\{e_{x_0,x}\}$ the point $x$. \index{$\pi_1(X;f(x_0),f(x_0))$} \index{$\pi_1(X;x_0,x)$}

Let $f:X\to X$ be a continuous mapping. For each pair of points $x_1$, $x_2$ in $X$ the mapping $f$  induces a mapping $f_*:\pi_1(X;x_1,x)\to \pi_1(X;f(x_1),f(x))$.
Namely, $f_*( e_{x_1,x})$ is 
the class represented by $f\circ \gamma_{x_1,x}$ for a  curve $\gamma_{x_1,x}$ that
represents $\{e_{x_1,x}\}$.

Let $x_0\in X$ be the base point of $X$. A lift of $f$ to $\tilde X$ is a continuous mapping $\tilde f$ such that $f\circ {\sf P}=
{\sf P}\circ \tilde{f}$. A lift of $f$ to $\tilde X$ can be described as follows (see also \cite{Fo}). Choose and fix a homotopy class of curves $e_{x_0,f(x_0)}^0\in \pi_1(X;x_0,f(x_0))$ with fixed endpoints $x_0$ and $f(x_0)$. Put
\begin{equation}\label{eq3.07b}
\tilde f (\{e_{x_0,x}\}) =\{e_{x_0,f(x_0)}^0 \, f_*(e_{x_0,x})\} , \; \{e_{x_0,x}\} \in \tilde X\,.
\end{equation}
Here $e_{x_0,f(x_0)}^0 \, f(e_{x_0,x})$ is the homotopy class of arcs represented by first traveling along an arc representing $ e_{x_0,f(x_0)}^0$ and then along an arc representing the image
$f_*(e_{x_0,x})$ of $e_{x_0,x}$ under the induced mapping $f_*:\pi_1(X;x_0,x)\to \pi_1(X;f(x_0 ),f(x))$. All other lifts are obtained by replacing $ e_{x_0,f(x_0)}^0$ by another element of $\pi_1(X; x_0, f(x_0))$.
Equivalently, for two lifts $\tilde{f_1}$ and $\tilde{f_2}$ of $f$ there exists a covering transformation $\theta \in \pi_1(X,x_0)$ such that $\tilde{f_1}= \theta \circ\tilde{f_2}$.
Indeed, if ${\sf P}\circ \tilde{f}_1 ={\sf P}\circ \tilde{f}_2 =f \circ {\sf P}$, then for each $\tilde{x}\in \tilde{X}$ the points $\tilde{f}_1(\tilde{x})$ and   $\tilde{f}_2(\tilde{x})$ differ by a covering transformation. The covering transformations depend continuously on $\tilde{x}$, hence, there is a single covering transformation $\theta$ such that  $\tilde{f_1}= \theta \circ\tilde{f_2}$.

The following lemma gives a lower bound for the entropy of a mapping $f$ in terms of metric properties of the lift $\tilde f$ of $f$.

\begin{lemm}\label{lem3.106} Let $X$ be a closed surface with metric $d$, $\tilde X$ its universal covering with the lifted metric also denoted by $d$, $f:X \to X$ a self-homeomorphism of $X$ and $\tilde f$ a lift of $f$  to $\tilde X$.
Then for any points $\tilde{x},\tilde{y} \in \tilde X$
\begin{equation}\label{eq3.108}
\limsup_{n \to \infty}  \frac{1}{n} d(\tilde f^n(\tilde{x}), \tilde f^n(\tilde{y})) \leq h(f).
\end{equation}
\end{lemm}

\noindent {\bf Proof.} Take any curve $\tilde \gamma$ in $\tilde X$ joining $\tilde{x}$ and $\tilde{y}$.  Let $\gamma$ be the projection of $\tilde \gamma$ to $X$. Since $\tilde \gamma$ is a compact subset of  $\tilde X$, there is a finite constant $l$ such that each point of $\gamma$ is covered at most $l$ times by points in $\tilde \gamma$. For small $\varepsilon >0$ we denote by $\mathcal{A}_{\varepsilon}$ the cover of $X$ by all discs of radius $\varepsilon$ in the metric $d$. For any decreasing sequence of $\varepsilon$'s we obtain a refining sequence of covers. Fix any small $\varepsilon>0$ and any positive number $\delta$. If $n$ is large there is a cover $\mathcal{A}_{\varepsilon,n}$ of $X$ by at most $\exp(n (h(f,\mathcal{A}_{\varepsilon} ) + \delta))$ elements of the cover $\mathcal{A}_{\varepsilon}
\vee f^{-1}(\mathcal{A}_{\varepsilon} ) \vee \ldots \vee f^{-n}(\mathcal{A}_{\varepsilon} )$. Note that $\mathcal{A}_{\varepsilon,n}$ is a subcover of $\mathcal{A}_{\varepsilon}
\vee f^{-1}(\mathcal{A}_{\varepsilon} ) \vee \ldots \vee f^{-n}(\mathcal{A}_{\varepsilon} )$.
For each element of the subcover $\mathcal{A}_{\varepsilon,n}$
there is an $\varepsilon$-disc $A_{\varepsilon,n}$ such that the element is contained in  $f^{-n}(A_{\varepsilon,n})$. Since $f^{-n}$ is a homeomorphism we see that $f^{n}(\gamma)$ can be covered by $\exp(n (h(f,\mathcal{A}_{\varepsilon}) + \delta))$ $\varepsilon$-discs.

The mapping $(\tilde f)^n$ is a lift of $f^n$ to $\tilde X$. Indeed,
$\tilde f$ lifts $f$, i.e. $p\circ \tilde{f}(\tilde{x})=f\circ p(\tilde{x})$ for $\tilde{ x}\in \tilde X$. By induction $p\circ   \tilde{f}^{n+1}(\tilde{x})=p\circ \tilde{f}^n(  \tilde{f}(\tilde{x}))=
f^n\circ p( \tilde{f}(\tilde{x}))=f^{n+1}\circ p(\tilde{x})$ for $\tilde{x}\in\tilde X$.
The curve $\tilde f ^n (\tilde\gamma)$ is a lift of $f^n(\gamma)$ to $X$. Hence,
each point of the curve $\tilde f ^n (\tilde \gamma)$ is covered by a lift of one of the $\varepsilon$-discs. Since each point of $f^n(\gamma)$ is the projection of at most $l$ points of $\tilde f ^n (\tilde \gamma)$ the latter curve can be covered by $l \exp(n (h(f,\mathcal{A}_{\varepsilon}) + \delta))$ $\varepsilon$-discs. This implies that 
the distance between its two endpoints $d(\tilde f ^n(x),\tilde f ^n(y))$, does not exceed $2 \varepsilon l  \exp(n (h(f,\mathcal{A}_{\varepsilon}) + \delta))$. Hence,
\begin{equation}\label{eq3.109}
\limsup_{n \to \infty} \frac{1}{n} \log d(\tilde f ^n(x),\tilde f ^n(y)) \leq h(f,\mathcal{A}_{\varepsilon}) +\delta.
\end{equation}
Since $\delta >0$ is arbitrary, equation \eqref{eq3.109} holds with $h(f,\mathcal{A}_{\varepsilon}) +\delta$ replaced by $h(f,\mathcal{A}_{\varepsilon})$. For a sequence of $\varepsilon$'s we obtain a refining sequence of coverings. The statement of
the lemma follows. \hfill $\Box$

\medskip
We will consider now a lift $\tilde{f}$ of $f$, and associate to it an action $\tilde{f}_{\#}$ on the fundamental group.
Recall that we associated to each element $\alpha$ of the 
fundamental group $\pi_1(X,x_0)$ a covering transformation as follows. We take a point $\tilde{ x}_0\in \tilde X$ that projects to $x_0$. Consider the lift $\tilde \alpha$ of a representative of $\alpha$ with initial point $\tilde{x}_0$. Associate to $\alpha$ the covering transformation $({\rm Is}^{\tilde{x}_0})^{-1}(\alpha)$ that maps $\tilde{x}_0$ to the terminal point of $\tilde\alpha$. The point $\tilde{x}_0$ will be fixed in this section. We denote the covering transformation $({\rm Is}^{\tilde{x}_0})^{-1}(\alpha)$ assocaited to $\alpha\in \pi_1(X,x_0)$ by $\alpha^{cov}$. Recall that $(\alpha_1\alpha_2)^{cov}= \alpha_2^{cov}\alpha_1^{cov}$. 
The covering transformation $\alpha^{cov}$, acts
as follows. Write the elements of $\tilde{X}$ as $\tilde{x}\cong \{e_{x_0,x}\}$. Then
\begin{equation}\label{eq3.07c}
\alpha^{cov}(\{e_{x_0,x}\}) = \{\alpha \, e_{x_0,x}\},\, \{ e_{x_0,x}\}\in \tilde{ X}\, .
\end{equation}
Fix a lift $\tilde{f}$ of $f$. Since  for each $\alpha \in \pi_1(X,x_0)$ the mapping $\tilde{f} \circ\alpha^{cov}$ is a lift of $f$, there is an element $(\tilde{f})_{\#}(\alpha) \in \pi_1(X,x_0)$ depending on $\alpha$ such that
\begin{equation}\label{eq3.07f}
\tilde f \circ\alpha^{cov} = \big((\tilde{f})_{\#}(\alpha)\big)^{cov} \circ\tilde f \,. 
\end{equation}
Since
\begin{align*}
\tilde f  \circ(\alpha_1\alpha_2)^{cov}(\tilde x)= & \tilde f  \circ (\alpha_2^{cov}\circ\alpha_1^{cov})(\tilde x)\\
= \big((\tilde{f})_{\#}(\alpha_2)\big)^{cov}\circ\tilde{f}(\alpha_1^{cov}(\tilde x))= & \Big(\big((\tilde{f})_{\#}(\alpha_2)\big)^{cov}\circ \big((\tilde{f})_{\#}(\alpha_1)\big)^{cov}\Big)\circ\tilde{f}(\tilde{x})
\end{align*}
the mapping $\alpha\to (\tilde{f})_{\#}(\alpha)$ is a group isomorphism of $\pi_1(X,x_0)$.

This isomorphism can be given explicitly as follows.
Let $\tilde f$ be given by \eqref{eq3.07b}.
For any element $\alpha \in \pi_1(X,x_0)$ we get the equality
\begin{align}\label{eq3.07''}
\tilde{f}\circ\alpha^{cov}(\tilde{x})=
\tilde{f}(\{\alpha e_{x_0,x}\})=&\{e_{x_0,f(x_0)}^0 f_*(\alpha e_{x_0,x})\}\nonumber \\=&\{e_{x_0,f(x_0)}^0 f_*(\alpha) f_*( e_{x_0,x})\}.
\end{align}
Recall that $\tilde{f}(\tilde{x})=\{e_{x_0,f(x_0)}^0 f_*( e_{x_0,x})\}
$.
Then
\begin{align}\label{eq3.07'a}
(\tilde{f})_{\#}(\alpha)
= & e_{x_0,f(x_0)}^0\,f_*(\alpha e_{x_0,x})
\big(e_{x_0,f(x_0)}^0\,f_*( e_{x_0,x})\big)^{-1}\nonumber \\=& e_{x_0,f(x_0)}^0\,f_*(\alpha)(e_{x_0,f(x_0)}^0)^{-1} \,.
\end{align}
Equality \eqref{eq3.07f} implies by induction
\begin{align}\label{eq3.07'''}
\tilde{f}^n \circ\alpha^{cov}(\tilde{x})=\big((\tilde{ f})_{\#}^n(\alpha)\big)^{cov}(\tilde{f}^n(\tilde{x})),\,  \tilde{x}\in \tilde{ X}\, .
\end{align}
Indeed,
\begin{align*}
\tilde{f}\big(\tilde{f}^{n-1} \circ\alpha^{cov}(\tilde{x})\big)=\tilde{f}\Big(\big((\tilde{ f})_{\#}^{n-1}(\alpha)\big)^{cov}\big(\tilde{f}^{n-1}(\tilde{x})\big)\Big)=\Big((\tilde{f})_{\#}^n(\alpha)\Big)^{cov}(\tilde{f}^n(\tilde{x}))\,.
\end{align*}

\begin{lemm}\label{lem3.5}
If $f_0$ and $f_1$ are two self-homeomorphisms of $X$ that are isotopic to each other then $(\tilde{f}_0)_{\#}= (\tilde{f}_1)_{\#}$.
\end{lemm}

\noindent {\bf Proof}
Let $f_t$ be a homotopy of self-homeomorphisms of $X$ joining $f_0$ and $f_1$.
Denote by $f:[0,1] \times X \to  [0,1] \times X$ the mapping for which $f(t,x)= f_t(x),\, t \in [0,1],\, x \in X$. The inverse of $f$ is the mapping defined by $f^{-1}(t,x)= (f_t)^{-1}(x),\, t \in [0,1], x \in X$.  Denote by $\tilde f$ a lift of $f$ to $[0,1] \times \tilde X$. 
Fix a covering transformation $\alpha \in \pi_1(X,x_0)$. Let $\tilde X_0$ be a compact subset of $\tilde X$ whose interior covers $X$.

The mapping $\tilde f$ and its inverse ${\tilde f}^{-1}$ are uniformly continuous on $[0,1] \times (\tilde X_0 \cup \alpha(\tilde X_0))$. Hence, for each  $\varepsilon >0$  there exists $\delta>0$ such that the implication
\begin{align}\label{eq3.100'}
(t_1,\tilde{x}_1),(t_2,\tilde{x}_2) \in [0,1]\times &  (\tilde X_0 \cup \alpha(\tilde X_0))    ,\nonumber\\
|t_1-t_2|<\delta,\; d(\tilde{x}_1,\tilde{x}_2)<& \,\delta \; \Rightarrow\;
d(\tilde{f}_{t_1}(\tilde{x}_1),\tilde{f}_{t_2}(\tilde{x}_2))< \varepsilon
\end{align}
holds, 
and the respective implication holds for the inverse of $\tilde f$.

Put $\xi(\tilde{x})=\tilde{f}_{t_1}(\tilde{x})$. We want to show that
\begin{equation}\label{eq3.103}
d\Big(\big(({\tilde{f}_{t_2}})_{\#}(\alpha)\big)^{cov}(\xi(\tilde{x})),\big( ({\tilde{f}_{t_1}})_{\#}(\alpha)\big)^{cov}(\xi(\tilde{x})\Big) < 2\varepsilon
\end{equation}
for $\tilde{x} \in  \tilde X_0 $ and $|t_1-t_2|<  \delta$.
Equation \eqref{eq3.103} shows that the values of the covering transformations $\big((\tilde{f_{t_1}})_{\#}(\alpha)\big)^{cov}$ and
$\big((\tilde{f_{t_2}})_{\#}(\alpha)\big)^{cov}$ are $2\varepsilon$-close on the set $\tilde{f_{t_1}}(\tilde X_0 )$. Since $\tilde{f_{t_1}}$ is a lift of a self-homeomorphism of $X$ this set covers $X$. If $\varepsilon$ is small, inequality \eqref{eq3.103} can only hold, if the covering transformations coincide. This implies  that $\big((\tilde{f_{t}})_{\#}(\alpha)\big)^{cov}$ is locally constant for $t \in [0,1]$, hence, it is constant and $\big((\tilde{f_{0}})_{\#}(\alpha)\big)^{cov}=\big((\tilde{f_{1}})_{\#}(\alpha)\big)^{cov}$. Since $\alpha$ was an arbitrary covering transformation the lemma will be proved if inequality \eqref{eq3.103} is proved.

We prove now inequality \eqref{eq3.103}. By the implication \eqref{eq3.100'}
the inequality $d(\tilde{f}_{t_1}\circ\alpha^{cov}(\tilde{x}),\tilde{f}_{t_2}\circ\alpha^{cov}(\tilde{x}))<\varepsilon$ holds for $\tilde{ x}\in \tilde{X}_0$ and $t_1,t_2\in [0,1],\,|t_1-t_2|<\delta$.
Hence, by \eqref{eq3.07f}
\begin{align}\label{eq3.105}
d\Big( \big((\tilde{f}_{t_1})_{\#}(\alpha)\big)^{cov}(\tilde{f}_{t_1}(\tilde{x})), \big((\tilde{f}_{t_2})_{\#}(\alpha)\big)^{cov}(\tilde{f}_{t_2}(\tilde{x}))\Big)<\varepsilon
\end{align}
for those $\tilde{x},\;t_1$ and $t_2$. Since the covering transformation $\big(\tilde{f}_{t_2})_{\#}(\alpha)\big)^{cov}$ is an isometry, the implication \eqref{eq3.100'} implies
the inequality
\begin{align}\label{eq3.106}
d\Big(\big((\tilde{f_{t_2}})_{\#}(\alpha)\big)^{cov}(\tilde{f_{t_2}}(\tilde{x})),\big((\tilde{f_{t_2}})_{\#}(\alpha)\big)^{cov}(\tilde{f_{t_1}}(\tilde{x}))\Big)<\varepsilon\,.
\end{align}
For $\tilde{x}\in\tilde{X}_0,\, |t_1-t_2|<\delta$.
Inequalites \eqref{eq3.105} and \eqref{eq3.106} imply inequality \eqref{eq3.103}. The lemma is proved. \hfill $\Box$

\medskip

\medskip

For two lifts $\tilde{f}_1$ and $\tilde{f}_2$ of $f$ the associated isomorphisms  $ ({\tilde{f}_1)}_{\#}$ and  $({\tilde{f}_2)}_{\#}$ of $\pi_1(X,x_0)$ are conjugate. Indeed, for a covering transformation $\theta$
\begin{equation}\label{eq3.107}
\tilde {f_1}\circ \alpha^{cov}(\tilde{x}) = \theta \circ\tilde{f_2}\circ\alpha^{cov} (\tilde{x}) = \theta\circ ({\tilde{f}_2})_{\#}(\alpha) (\tilde f_2(\tilde{x})) = \theta\circ ({\tilde{f}_2)}_{\#}(\alpha)(\theta^{-1} \tilde {f_1}(\tilde{x}))\,. \nonumber
\end{equation}
We obtain the equality $({\tilde{f}_1})_{\#}(\alpha)=\theta ({\tilde{f}_2})_{\#}(\alpha)\theta^{-1}$.
In other words, the mapping $f$ defines a conjugacy class of isomorphisms of $\pi_1(X,x_0)$, denoted by $\widehat{f_{\#}}$.
By Proposition \ref{prop3.101'}  the quantities  $\Gamma_{{({\tilde{f_j}})_{\#}}}$  defined by equation \eqref{eq3.101}  for the isomorphisms $ ({\tilde{f}_j})_{\#}   $ of $\pi_1(X,x_0)$ satisfy the equality $\Gamma_{{({\tilde{f_1}})_{\#}}}=\Gamma_{{ ({\tilde{f}_2})_{\#}}}$. Hence, there is a
quantity $\Gamma_{\widehat{f_{\#}}}$ related to $f$, that is correctly defined by
putting it equal to $\Gamma_{{ {\tilde f}_{\#}}}$ for any lift $\tilde f$ of $f$. Moreover, by Lemma
\ref{lem3.5}  $\Gamma_{\widehat {f_{\#}}}$ depends only on the isotopy class of $f$.

\medskip

The following lemma relates the word length $\mathcal{L}_{\pi_1(X,x_0)}(g)$ of an element $g \in \pi_1(X,x_0)$ to the distance from a point in the universal covering $\tilde X$ to its image under $g$. The proof of this lemma is due to J.Milnor \cite{Mi}. (See also \cite{FLP}, Lemma 10.7 there.) Again, $d$ is a metric on $X$ and its lift to the universal covering $\tilde X$ is denoted by the same letter $d$. Notice that the metric $d$ on $\tilde X$ is invariant under covering transformations.

\begin{lemm}\label{lem3.107}
Fix a point $\tilde {x} \in \tilde X$ and a set $\mathcal{G}$ of generators of the fundamental group $\pi_1(X,x_0)$. There exist two positive constants $C_1$ and $C_2$
such that for each $g \in \pi_1(X,x_0)$ the inequality
\begin{equation}\label{eq3.110}
C_1 \mathcal{L}_{\mathcal{G}}(g) \leq d(\tilde{x},g(\tilde{x})) \leq C_2 \mathcal{L}_{\mathcal{G}}(g) \,
\end{equation}
holds.
\end{lemm}
\noindent {\bf Proof.}  Let $\delta$ be the diameter of $X$. Put $\tilde {N}=\{\tilde{y}\in \tilde{ X}: d(\tilde{y},\tilde{x})\leq \delta\}\,.$ Then for the projection ${\sf P}:\tilde{X}\to X$ we have ${\sf P}(N)=X$.
The sets $\{g(\tilde{N}):g \in \pi_1(X,x_0)\}$ cover $\tilde{X}$. Indeed, for each point $\tilde y$ in $\tilde X$
there is a point $\tilde{y}'\in \tilde N$ with $p(\tilde{y}')= p(\tilde{y})=y$. There exists a covering transformation $g$ such that $g(\tilde{y}')=(\tilde{y})$.

The family $g(\tilde{N}),\,g\in\pi_1(X,x_0)$, is a locally finite cover of $\tilde X$ (by compact subsets). Indeed, let $\tilde{U}(\tilde x)\subset \tilde N$ be a neighbourhood of $\tilde{x}$ in $\tilde X$ such that $g(\tilde{U}(\tilde x))\cap \tilde{U}(\tilde x)=\emptyset$ for each $g\in \pi_1(X,x)$. If $g(\tilde{N})$ intersects $\tilde N$ then, since $g$ is an isometry, $g(\tilde{N})$ is contained in $\{\tilde{y}\in \tilde{ X}: d(\tilde{y},\tilde{x})\leq 3\delta\}$. If there were infinitely many different $g_j\in \pi_1(X,x_0)$ for which $g_j(\tilde{N})\cap \tilde{N}\neq \emptyset $, then since all
$g_j(\tilde{U}(\tilde x))$ are disjoint, have the same non-zero area in the metric $d$ and are  contained in $\{\tilde{x}\in \tilde{ X}: d(\tilde{y},\tilde{x})\leq 3\delta\}$, the area of the set
 $\{\tilde{y}\in \tilde{ X}: d(\tilde{y},\tilde{x})\leq 3\delta\}$ would be infinite. This is impossible.

We saw that the set $$\mathcal{G}'\stackrel{def}=\{ g \in \pi_1(X,x_0): g\tilde{N}\cap \tilde{N} \neq \emptyset\}$$ is finite. Since the sets $g(\tilde{N}),\,g\in\pi_1(X,x_0)$,
cover $\tilde X$, $\mathcal{G}'$
generates $\pi_1(X,x_0)$. Notice that $\mathcal{G}'$ contains the inverse of each of its elements.

Moreover, there is a positive number $\nu$ such that $d(\tilde{N}, g(\tilde{N}))\geq \nu$ if $g(\tilde{N})\cap \tilde{N}=\emptyset$.
Indeed, the union of all $g'(\tilde{N})$, $g'\in\pi_1(X,x_0)$, that intersect $\tilde N$, contains an open neighbourhood of the closure $\overline{\tilde N}$. If for some $g\in \pi_1(X,x_0)$ the set $g(\tilde{N})$ does not intersect this open neighbourhood of $\overline{\tilde N}$, the distance of $g(\tilde{N})$ to $\tilde N$ is bigger than a positive constant not depending on $g$. There are only finitely many $g$ for which $g(\tilde{ N})$ intersects the open neighbourhood of $\tilde N$, and each $g(\tilde{N})$ has positive distance to $\tilde N$, if it does not intersect $\tilde N$.

The second inequality in \eqref{eq3.110} is now easy to prove. If for an element $g\in\pi_1(X,x_0)$ we have $\mathcal{L}_{\mathcal{G}}(g)=n$, then  $\mathcal{L}_{\mathcal{G}'}(g)=n'$ for $n'\leq (\max \mathcal{L}_{\mathcal{G}'}(g_j))\,\mathcal{L}_{\mathcal{G}}(g)= C'_2 n$ with $C'_2=\max \mathcal{L}_{\mathcal{G}'}(g_j)$.  We can write $g=g_1\,g_2\,\ldots g_{n'}$ with $g_j(\tilde{N})\cap \tilde{N}\neq\emptyset$. Then $d(\tilde{x}, g( \tilde{x}))\leq 2\delta n'\leq 2\delta C_2'\,n =2\delta C_2'\, \mathcal{L}_{\mathcal{G}}(g)$.

The first inequality in  \eqref{eq3.110} is obtained as follows. Let $g\in \pi_1(X,x_0)$. Take the smallest positive integer number $k$ for which $d(\tilde{x}, g(\tilde{x}))< k\nu $. Consider a sequence of points $\tilde{y}_0=\tilde{x},\ldots, \tilde{y}_{k-1},\tilde{y}_k= g(\tilde{x})$, such that $d(\tilde{y}_j,\tilde{y}_{j+1})<\nu$ for $j=0,1,\ldots,k-1$. Choose for $j=1,\ldots,k-1$ a point $\tilde{y}'_j\in \tilde N$ and an element $g_j\in\mathcal{G}'$ such that $\tilde{y}_{j}=g_j(\tilde{y}'_{j})$ and put $g_0$ equal to the identity and $g_k=g$. 
We obtain $d(g_j(\tilde{y}'_{j}),g_{j+1}(\tilde{y}'_{j+1}))<\nu$. Hence, $g_j^{-1}g_{j+1}\in \mathcal{G}'$. Since $g=(g_0^{-1}g_1)\ldots(g_{k-1}^{-1}g_k)$, we obtain $L_{\mathcal{G}'}(g)< k$.

Since $k$ is minimal, we have with $\mu= \min\{d(\tilde{x},g(\tilde{x})): g\neq{\rm Id}, g \in \pi_1(X,x_0)\}$
$$
L_{\mathcal{G}'}(g)\leq \frac{1}{\nu}d(\tilde{x},g(\tilde{x})) +1\leq (\frac{1}{\nu}+\frac{1}{\mu})d(\tilde{x},g(\tilde{x})).
$$
The lemma follows by the inequality $L_{\mathcal{G}}(g) \leq (\max L_{\mathcal{G}}(g_j'))  L_{\mathcal{G}'}(g)$. \hfill $\Box$

\medskip

We give now the proof of the following theorem from expos\`{e} 10 of \cite{FLP} which is interesting in itself. It gives a lower bound of the entropy by the quantity $\Gamma_{\widehat{f_{\#}}}$.

\begin{thm}\label{thm3.101}
For a self-homeomorphism $f$ of $X$ the inequality
\begin{equation}\label{eq3.111}
h(f) \geq \Gamma_{\widehat{f_{\#}}}
\end{equation}
holds.
\end{thm}

\noindent {\bf Proof}. Let $\tilde f$ be a lift of $f$ to the universal covering $\tilde X$. By
Milnor's Lemma with $g$ replaced by  $\tilde{f}_{\#}^n(g)\in \pi_1(X,x_0)$ for each $\tilde{x} \in \tilde X$
\begin{align}\label{eq3.112}
\Gamma_{{\tilde{f}_{\#}}}= & \underset {g \in \pi_1(X,x_0)} \sup \limsup \frac{1}{n}\log\mathcal{L}_{\mathcal{G}}(\tilde{f}_{\#}^n(g))\nonumber \\
= & \underset {g \in \pi_1(X,x_0)} \sup \limsup \frac{1}{n}
\log d(\tilde{x}, (\tilde{f}_{\#})^n(g)(\tilde{x}))
\end{align}
for a metric $d$ on $X$ and its lift to $\tilde X$ denoted also by $d$.
Take in Lemma \ref{lem3.106}
$\tilde{y}=g(\tilde{x})$. Since by equation \eqref{eq3.07'''} $\tilde f ^n(g(\tilde{x})) = (\tilde{f}_{\#})^n(g)(\tilde f^n (\tilde{x}))$, Lemma \ref{lem3.106}
gives
\begin{equation}\label{eq3.113}
\limsup \frac{1}{n} d(\tilde f ^n(\tilde{x}),  (\tilde{ f}_{\#})^n(g)(\tilde f ^n(\tilde{x})))  \leq h(f) .
\end{equation}
Note that
\begin{align}\label{eq3.114'}
d(\tilde{x}, (\tilde{f}_{\#})^n(g)(\tilde{x})) & \leq d(\tilde{x}, \tilde f ^n(\tilde{x}))+d(\tilde f ^n(\tilde{x}),  (\tilde{ f}_{\#})^n(g)(\tilde f ^n(\tilde{x})))\nonumber \\
& + d((\tilde{ f}_{\#})^n(g)(\tilde f ^n(\tilde{x})), (\tilde{ f}_{\#})^n(g)(\tilde{x}))\,.
\end{align}
The first term on the right is estimated by
\begin{equation}\label{eq3.114}
d(\tilde{x}, \tilde f ^n(\tilde{x})) \leq \sum_{\ell =0}^{n-1} d( \tilde f^{\ell}(\tilde{x}), \tilde f^{\ell}(\tilde f(\tilde{x})) ).
\end{equation}
Apply Lemma \ref{lem3.106} with
$\tilde{y}= \tilde f(\tilde{x})$, and Lemma \ref{lem3.100}, Statement (2).  We obtain from \eqref{eq3.114}
\begin{equation}\label{eq3.115}
\limsup \frac{1}{n} \log d(\tilde{x},\tilde f ^n(\tilde{x})) \leq h(f).
\end{equation}
The last quantity $d((\tilde{f}_{\#})^n(g)(\tilde{x}), (\tilde{ f}_{\#})^n(g)(\tilde f ^n(\tilde{x})))$ on the right of \eqref{eq3.114'} is equal to the left hand side of equation \eqref{eq3.114}
since the covering transformation $(\tilde{f}_{\#})^n(g)$ is an isometry in the metric $d$. Hence, $d(\tilde{x}, (\tilde{f}_{\#})^n(g)(\tilde{x}))$ can be estimated from above by the sum of the three positive terms $I_j(n)$, $j=1,2,3,$ on the right of \eqref{eq3.114'} with $\limsup \frac{1}{n} \log I_j(n) \leq h(f)$ for each $j$. By Lemma \ref{lem3.100}, Statement (1), inequality \eqref{eq3.111} follows. \hfill $\Box$

\medskip
\index{Poincar\'{e}-Hopf  ! Theorem}
We need the Poincar\'{e}-Hopf Theorem. We consider a compact differentiable manifold $X$ of dimension $n$ with or without boundary and a smooth tangent vector field $v$ on $X$. If $X$ has a boundary then $v$ is required to point out of $X$ at points of $\partial X$. The singularities of $v$ are its zeros on $X$. We assume
that the zeros are isolated and contained in the interior of $X$. The index $i_v(p)$ of $v$ at an isolated singularity $p$ is defined as follows. Consider local coordinates in a small neighbourhood of $p$.
Then the mapping $z \to \frac{v(z)}{|v(z)|}$ takes a small sphere around $p$ in these coordinates to the unit sphere in $\mathbb{R}^{n-1}$. The degree of this mapping, considered as mapping from the unit sphere to itself, is called the index if $v$ at $p$. The index of any regular (i.e. non-singular) point equals zero.

\medskip

\noindent {\bf Poincar\'{e}-Hopf Theorem.} {\it Let $X$ be a compact differentiable manifold (or manifold with boundary) of dimension $n$, and $v$ a smooth tangent vector field on $X$ with isolated zeros. If $X$ has a boundary then we require that $v$ has no singularities on $\partial X$, and $v$ points out of $X$ at points of $\partial X$. Then
\begin{equation}\label{eq3.118}
\sum i_v(p) = \chi(X)\,,
\end{equation}
where $\chi(X)$ is the Euler characteristic of $X$.}
\index{Euler ! characteristic}
\bigskip

For a proof see \cite{Mi2} or \cite{Br}.

The Poincar\'{e}-Hopf Theorem implies the following theorem (see Proposition 5.6 in \cite{FLP}). Actually, we need only the version of the  Poincar\'{e}-Hopf Theorem for the dimension $2$ which was proved already by Poincar\'{e}.

\begin{thm}\label{thm3.102} Let $X$ be a Riemann surface, and let $\phi$
be a holomorphic quadratic differential on $X$ (equivalently, a meromorphic quadratic differential without poles). Then there is no relatively compact topological disc in $X$ with piecewise smooth boundary consisting of the union of a (perhaps empty) closed smooth vertical arc and a (perhaps empty) closed smooth arc that is contained in the regular part of $X$ and is transversal to the
vertical foliation.
\end{thm}

\noindent {\bf Proof.} Suppose, in contrary, that such a disc $\Delta$ exists. \index{$\Delta$}
Then by Proposition \ref{prop3.102} the whole boundary $\partial \Delta$ cannot be a vertical curve. If the whole boundary $\partial \Delta$ is transversal to the vertical foliation, we may deform the boundary of $\Delta$ slightly within the regular part of $X$, so that the boundary of the new disc is piecewise smooth and equals the union of a non-empty smooth vertical arc and a non-empty smooth arc that is transversal to the vertical foliation. We will assume from the beginning that $\partial \Delta$ is piecewise smooth and equals the union of a non-empty smooth closed vertical arc $\gamma_v$ and a non-empty smooth closed arc $\gamma_t$ that is contained in the $\phi$-regular part and is transversal to the vertical foliation.

Consider a conformal mapping of $\Delta$ onto the unit half-disc $\mathbb{D}_r= \{z\in \mathbb{C}_r: |z|<1\}$ in the right half-plane $\mathbb{C}_r$, whose continuous extension to the boundary maps the vertical arc $\gamma_v$ onto the interval $[-i,i]$. Since the vertical segment $\gamma_v$ is real analytic, the conformal mapping extends holomorphically by the reflection principle across the interval  $(-i,i)$. The extension maps a neighbourhood $N$ of the arc $\gamma_v$ to a neighbourhood $V$ of the segment $(-i,i)$.

Consider the push-foreword $\phi'$ to $\mathbb{D}_r\cup V$ of the quadratic differential $\phi\mid (\Delta\cup N)$ under the conformal mapping. 
For each singular point of $\phi'$ that is contained in ${\mathbb{D}_r}$
we consider a closed arc that joins the singular point with a point in the boundary half-circle, such that all points of the arc except one endpoint are contained in  ${\mathbb{D}_r}$. We may choose the arcs pairwise disjoint. 
Let $\Omega'$ be the complement of the arcs in  ${\mathbb{D}_r}$, and let
$\Omega$ be the union of  $\Omega'$ 
with its reflection in the imaginary axis and with the interval $(-i,i)$.

After possibly shrinking $V$, the domain  $V\cup\Omega'$ 
is simply connected, and the quadratic differential $\phi'\mid V\cup\Omega'$ has the form
$\phi'(\zeta) d\zeta^2$ for a non-vanishing holomorphic function $\phi'$ on $V\cup\Omega'$. Consider the form $\sqrt{\phi'}d\zeta$ on $\Omega'\cup V$ for a branch of the square root of $\phi'$. Since the segment $(-i,i)$ is a vertical trajectory of the 
quadratic differential $\phi'$, the real part of the vertical vector field $v_v$ vanishes at points of $(-i,i)$. With $\sqrt{\phi'(\zeta)}=\sqrt{|\phi'(\zeta)|}e^{\frac{i\theta}{2}}$
and $v_{v}(\zeta)=|\phi(\zeta)|^{-\frac{1}{2}}\binom{\sin \frac{\theta}{2}}{ \cos\frac{\theta}{2}}$ 
we obtain ${\rm Im}(\sqrt{\phi'})\mid (-i,i)=0$ (see equality \eqref{eq2.300} and the following calculations).

By Schwarz Reflection Principle the function $\sqrt{\phi'}\mid \Omega'$ extends to a holomorphic function on $\Omega$. Hence $\phi' \mid \Omega'$ extends to a holomorphic function on $\Omega$. 
Since
$\phi' d\zeta^2\mid \Omega'$  extends across the cuts to a holomorphic quadratic differential on $\mathbb{D}_r$ whose sigular points are  the initial points of the cuts, the reflection provides a holomorphic quadratic differential on $\mathbb{D}$, denoted by   $\phi_{\mathbb{D}}$,
whose singular points are the singular points of the push-foreward of $\phi\mid \Delta$ and their reflections in the imaginary axis.
\index{$E_{even}$} \index{$E_{odd}$}

Denote by $E_{even}$ the set of singular points of  $ \phi_{\mathbb{D}}$ of even order and by $E_{odd}$ the set of singular points of odd order. We want to apply the Poincar\'{e}-Hopf Theorem. If the set $E_{odd}$ is not empty,  $ \phi_{\mathbb{D}}$ does not define a vector field on $\mathbb{D}$. We consider the double branched covering $S$ of $\mathbb{D}$ with branch locus $E_{odd}$. Recall that the points of $E_{odd}$ come in pairs. Let $\tilde\phi_{\mathbb{D}}$ be a lift of  $ \phi_{\mathbb{D}}$ to $S$. In distinguished coordinates $z$ at any point in  $E_{even}$ the quadratic differential  $ \phi_{\mathbb{D}}$ is written as
$(\frac{a+2}{2})^2 z^a (dz)^2$ with a positive even number $a$, and the lift of  $ \phi_{\mathbb{D}}$ has the same behavior near the lifts of points in $E_{even}$.

At any point in  $E_{odd}$  the quadratic differential  $ \phi_{\mathbb{D}}$ is written
in distinguished coordinates $z$  as
$(\frac{a+2}{2})^2 z^a (dz)^2$ with a positive odd number $a$. The lift $\tilde \phi_{\mathbb{D}}$ can be written in coordinates $\zeta$ on $S$ with
$z(\zeta)= \zeta^2$ as $\tilde\phi_{\mathbb{D}}(\zeta)= \phi_{\mathbb{D}}(z(\zeta)) (z'(\zeta))^2= (a+2)^2 \zeta^{2a+2}$.

The quadratic differential $\tilde\phi_{\mathbb{D}}$ defines a vector field on $S$.
Indeed, cut $\mathbb{D}$ along disjoint arcs that join points $E_{odd}$ with the circle $\partial \mathbb{D}$. We obtain a simply connected domain $\hat \Omega$
which is contained in the unit disc and contains only singular points of $\phi_{\mathbb{D}}$ of even order. Hence, each point in the obtained domain $\hat\Omega$ has a neighbourhood on which two branches of the differential $\sqrt{\phi_{\mathbb{D}}(z)}dz$ are defined. The two  branches differ by sign. Since the domain is simply connected there are two branches of $\sqrt{\phi_{\mathbb{D}}(z)}dz$ that are globally defined on $\hat\Omega$. The surface $S$ is obtained as follows. Take two copies of $\hat\Omega$ which we call sheets. Consider for each cut in the disc the respective copies of the two sheets and do cross-gluing of the edges. Take a global branch $\sqrt{\phi_{\mathbb{D}}(z)}dz$ on one sheet and $-\sqrt{\phi_{\mathbb{D}}(z)}dz$ on the other sheet. The behaviour along small closed curves surrounding any preimage of a point in $E_{odd}$ shows that we obtain a globally defined differential which we denote by $\sqrt{\phi_{\mathbb{D}}(z)}dz$. Consider the vector field $v(z)$ on $S$ that is defined by $\sqrt{\phi_{\mathbb{D}}(z)}dz(v(z))=i$ for the non-singular points $z$ (a vector field in the vertical direction). The vector field $v$
is transversal to the boundary. Changing $v$ to $-v$ if necessary we may assume that $v$ points out of $S$ at all boundary points.

Apply the Poincar\'{e}-Hopf Theorem.
Take a point in $S$ whose projection to $\mathbb{D}$ is in $E_{ even}$.
In distinguished coordinates the quadratic differential $\tilde\phi_1$ near this point has the form $(\frac{a+2}{2})^2 z^{a} (dz)^2$ with $a$ even.
The vertical vector field near this point is given by $v(z)=\frac{2}{a+2} i z^{-\frac{a}{2}}$.
The index of the vector field $v$ at this point equals $-\frac{a}{2}$.
For a point in $S$ whose projection to $\mathbb{D}$ is in $E_{odd}$. the quadratic differential equals $(a+2)^2 \zeta^{2a+2}d\zeta^2$ and the vertical vector field can be written as $v(\zeta)= \frac{1}{a+2}i z^{-(a+1)}$. The index equals $-(a+1)$.
For a point $p \in \mathbb{D}$ we denote by $a(p)$ the order of $\phi_1$ at this point. Then
\begin{equation}\label{eq3.119}
\sum_{p \in S} i(p) = 2 \sum _{p \in E_{even}}-\frac{a(p)}{2} + 2 \sum _{p \in E_{odd}}-(1+a(p))\,.
\end{equation}
For the Euler characteristic of the double branched covering $S$ we have
$$
\chi(S)= 2 \chi(\mathbb{D}) - B= 2 -B \,.
$$
Here $B$ is the number of branch points which is equal to $2\,|E_{odd}|$, where  $|E_{odd}|$ is the number of points in $E_{odd}$. Hence, by the Poincar\'{e}-Hopf Theorem
\begin{equation}\label{eq3.120}
- \sum _{p \in E_{even}} a(p) - 2 \sum _{p \in E_{odd}} a(p)- 2 \,|E_{odd}|
= 2 - 2 \,|E_{odd}| \,.
\end{equation}
This is impossible. The contradiction proves the theorem. \hfill $\Box$

\bigskip
\index{Douadi Lemma}
The following lemma is proved by Douady (see Lemma 9.22 in \cite{FLP}).
\begin{lemm}\label{lem3.108}
Let $X$ be a compact Riemann surface with a quadratic differential $\phi$ and let
$\varrho$ be any Riemannian metric on $X$. There exist positive constants $c_1$ and $c_2$ such that for any free homotopy class $\hat\alpha$ of closed curves on $X$ the inequality
\begin{equation}\label{eq3.121}
c_1\,\ell_{\phi}(\hat \alpha) \leq \ell_{\varrho}(\hat \alpha)\leq  c_2\,\ell_{\phi}(\hat \alpha)\,
\end{equation}
holds.
Here $\ell_{\phi}(\hat \alpha)$ is the infimum of the $\phi$-length over all piecewise smooth loops in $\hat \alpha$, and $\ell_{\varrho}(\hat \alpha)$ is the infimum of the length 
in the metric $\varrho$ over all piecewise smooth loops 
in $\hat \alpha$.
\end{lemm}
\index{$\ell_{\phi}(\hat \alpha)$}   \index{$\ell_{\varrho}(\hat \alpha)$}
\medskip

\noindent {\bf Proof.} Take a small enough number $\delta>0$, such that the
$\delta$-neighbourhood $V_{\delta}(z_j)$ in the $\varrho$-metric of any singular point $z_j$ of $\phi$ is simply connected.
Any loop that is contained in such a neighbourhood $V_{\delta}(z_j)$ is contractible to the singular point $z_j$. 
Hence, if $\hat \alpha$ contains a loop contained in $V_{\delta}(z_j)$, than both, $\ell_{\phi}(\hat \alpha)$ and $\ell_{\varrho}(\hat \alpha)$ are equal to zero. In this case the inequalities \eqref{eq3.121} hold.

On the other hand, there exist positive numbers $c_1'$ and $c_2'$ depending on $\delta$ such that for any curve $\gamma'$ avoiding the $\frac{\delta}{2}$-neighbourhoods in the $\varrho$-metric of the singular points the inequalities
\begin{equation}\label{eq3.122a}
c_1'\,\ell_{\phi}( \gamma') \leq \ell_{\varrho}( \gamma')\leq  c_2'\,\ell_{\phi}( \gamma')\,
\end{equation}
hold. Moreover, there exists a constant $c''\geq 1$ such that for each $j$
\begin{align}\label{eq3.122b}
\ell_{\varrho}(\partial V_{\delta}(z_j)) \leq & 2 c'' d _{\varrho}(\partial V_{\delta}(z_j)), \partial V_{\frac{\delta}{2}}(z_j))\,,\nonumber\\
\ell_{\phi}(\partial V_{\delta}(z_j)) \leq & 2 c'' d _{\phi}(\partial V_{\delta}(z_j)), \partial V_{\frac{\delta}{2}}(z_j))\,.
\end{align}

Suppose the free homotopy class of curves $\hat\alpha$ cannot be represented by a loop contained in one of the $V_{\delta}(z_j)$. Take a curve $\gamma \in \hat \alpha$ which almost minimizes the $\varrho$-length,  $\ell_{\varrho}(\gamma)\leq \ell_{\varrho}(\hat \alpha)+ \varepsilon$ for a small positive number $\varepsilon$. For the part $\gamma \setminus \cup V_{\frac{\delta}{2}}(z_j)$ outside the $\frac{\delta}{2}$-neighbourhoods of the critical points the estimates \eqref{eq3.122a} hold.

Suppose there is a $j$ and a connected component $\tilde\gamma$ of $\gamma \cap V_{\delta}(z_j)$ which intersects  $V_{\frac{\delta}{2}}(z_j)$.
The connected component $\tilde \gamma$ is an arc with the two endpoints
on $\partial V_{\delta}(z_j)$. Replace $\tilde \gamma$ by an arc $\tilde \gamma'$ on  $\partial V_{\delta}(z_j)$ which is homotopic to $\tilde \gamma$ with fixed endpoints. By \eqref{eq3.122b}
\begin{align}\label{eq122c}
\ell_{\varrho}(\tilde \gamma') \leq \ell_{\varrho}(\partial V_{\delta}(z_j))\leq 2 c'' d _{\varrho}(\partial V_{\delta}(z_j)), \partial V_{\frac{\delta}{2}}(z_j)) \leq c'' \ell_{\varrho}(\tilde \gamma)\,.
\end{align}
Consider for each singular point $z_j$ all connected components of the intersection  $\gamma \cap V_{\delta}(z_j)$ which have points in  $V_{\frac{\delta}{2}}(z_j)$.
Replace in the same way as above each such component by an arc 
which avoids the sets $V_{\frac{\delta}{2}}(z_j)$. We obtain a curve $\gamma'$ that is homotopic to $\gamma$, avoids the ${\frac{\delta}{2}}$-neighbourhoods in the $\varrho$-metric of the singular points,  and satisfies the estimates
\begin{align}\label{eq122d}
\ell_{\varrho}( \gamma') \leq c'' \ell_{\varrho}(\gamma) 
\leq {c''} (\ell_{\varrho}(\hat \alpha) + \epsilon) \,.
\end{align}
Hence, by the inequalities \eqref{eq3.122a} the inequality $\ell_{\phi}(\hat \alpha) \leq \frac{c''}{c_1'}( \ell_{\varrho}(\hat \alpha) + \varepsilon)$ holds. Since $\varepsilon$ can be taken to be an arbitrary positive number, the first inequality in \eqref{eq3.121} holds. The second inequality follows by interchanging the role of $\varrho$ and $\phi$. 
\hfill $\Box$

\bigskip

The following proposition is proved in \cite{FLP} (see Proposition 5.7. there).

\begin{prop}\label{prop3.102'}
Let $X$ be a closed Riemann surface, and let $\phi$ be a holomorphic quadratic differential on $X$. Suppose $\gamma$ is a simple closed (connected and smooth) curve in $X$ which is contained in the $\phi$-regular part of $X$ and is transversal to the vertical foliation.
Let $\gamma'$ be any closed (connected and piecewise smooth) curve in $X$ which is free homotopic to $\gamma$. Then the following estimate for the horizontal $\phi$-length holds
\begin{equation}\label{eq3.124}
\ell_{\phi,h}(\gamma) \leq \ell_{\phi,h}(\gamma') \,.
\end{equation}
\end{prop}

\noindent {\bf Proof.} Let $x_0$ be the base point of $\gamma$. Denote by $\alpha$ the class of $\gamma$ in the fundamental group $\pi_1(X,x_0)$, and let ${\sf P}:\tilde{ X}\to X$ be the unversal covering of $X$. Consider the quotient $\hat{X}\stackrel{def}=X\diagup \alpha^{cov}$ for the covering transformation
$\alpha^{cov}$ associated to $\alpha$ and the fixed base point $\tilde{x}_0\in {\sf P}^{-1}(x_0)$. The quotient is conformally equivalent to an annulus. The universal covering  ${\sf P}:\tilde{ X}\to X$ induces a covering $\hat{p}:\hat{X}\to X$. (For more details see also Section \ref{sec:fin1a}). The holomorphis quadratic differential $\phi$ on $X$ lifts to a holomorphic
quadratic differential $\hat\phi$ on $\hat X$ and the free homotopic loops $\gamma$ and $\gamma'$ on $X$ lift to free homotopic loops $\hat\gamma$ and $\hat\gamma'$ on $\hat X$. Notice that $\ell_{\hat\phi,h}(\hat\gamma)=\ell_{\phi,h}(\gamma) $ and   $\ell_{\hat\phi,h}(\hat\gamma')=\ell_{\phi,h}(\gamma') $.  Hence, it is sufficient to prove the theorem in the case when $X$ is an annulus with a holomorphic quadratic differential, and both loops are free homotopic to the positive generator of the fundamental group of the annulus. We may assume that $\gamma'$ is also simple closed, removing otherwise some parts of $\gamma'$ which does not increase its horizontal $\phi$-length. We will now prove the Theorem under these conditions.

Assume first that $\gamma$ and $\gamma'$ are disjoint. Then the set $\gamma \cup\gamma'$ consists of all boundary points of an annulus $A\subset X$. Indeed, after a conformal mapping we may assume that $X$ is a subset of the plane. Then the claim follows from the Jordan Curve Theorem applied to each of the curves.

For each point $p \in \gamma$ we consider the vertical trajectory ray $r_p$ with initial point $p$ that enters $A$ at $p$. We call a trajectory ray $r_p$ exceptional if it stays in $A$ and runs into a critical point contained in $A$. We claim that
each non-exceptional vertical trajectory ray $r_p,\, p\in \gamma$,  meets the boundary $\partial A$ after $p$.
Indeed, if the non-exceptional ray $r_p$ is critical, it meets $\partial A$ at a point after $p$ by the definition of exceptional rays.

Suppose $r_p$ is non-critical. Consider an open $\phi$-rectangle $\mathcal{R}$ that contains $p\in \gamma$. Shrinking $\mathcal{R}$ in the horizontal direction if necessary, we may assume that the connected component $\gamma_p$, that contains $p$, of the intersection $\gamma\cap \mathcal{R}$  is an arc with endpoints on opposite vertical sides of $\mathcal{R}$. 

Denote by $\gamma^h_p$  the connected component containing $p$ of the intersection of $\mathcal{R}$ with the horizontal trajectory through $p$.
By (the proof of) Theorem \ref{thm3.03} the divergent vertical trajectory ray $r_p$ intersects $\gamma^h_p$ after $p$ infinitely often in the same direction.
Then the intersection
$r_p \cap \mathcal{R}$ contains a connected component that 
consists of points on $r_p$ after $p$. This component intersects $\gamma\subset \partial A$. 
The claim is proved.

By Proposition \ref{thm3.102} for any non-exceptional trajectory ray $r_p,\, p\in \gamma$, the first point after $p$ on the boundary $\partial A$ is contained in $\gamma'$. Indeed, suppose it is a point $p'$ on $\gamma$. The union of one of the arcs of $\gamma$ with endpoints $p$ and $p'$ and the arc of $r_p$ between $p$ and $p'$ bounds a disc. But the first arc is transversal to the vertical foliation and the second is an arc of a vertical leaf. This is  impossible.

We assign to each non-exceptional ray $r_p,\, p \in \gamma,$  the first point ${\sf{g}}(p)>p$ of the trajectory ray $r_p$  which is on $\gamma'$. Let $r_p'$ be the segment on $r_p$ between $p$ and ${\sf{g}}(p)$. 
There are only finitely many exceptional trajectory rays. Let $p_j,\, j=1,\ldots,N,$ be the initial points of the exceptional trajectory rays, ordered cyclically with respect to some orientation of $\gamma$. Let $s(p_j)$ be the singular point in $A$ which is the limit point of $r_{p_j}$. 
Then for $p_j\neq p_{j'}$ the singular points $s(p_j)$ and $s(p_{j'})$ are different. Indeed, suppose not. Then 
there are two neighbouring points $p_j$ and $p_{j+1}$
with $s(p_j)= s(p_{j+1})$. (If the two neighbouring points are $p_N$ and $p_1$ we relabel the $p_j$.) Then $r_{p_j}$ and $r_{p_{j+1}},$ are segments on neighbouring bisectrices at the singular point $s(p_j)=s(p_{j'})$, and the two segments together with an arc of $\gamma$ between $p_j$  and $p_{j+1}$ bound a sector which is laminated by vertical arcs with the two endpoints on the arc of $\gamma$ between $p_j$ and $p_{j+1}$. This is impossible by Theorem \ref{thm3.102}.
\begin{figure}[h]
\begin{center}
\includegraphics[width=80mm]{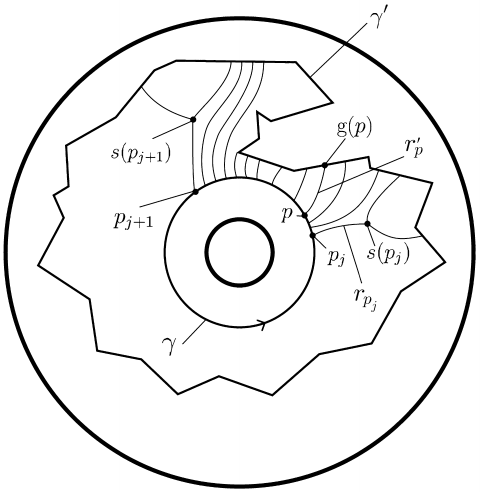}
\end{center}
\caption{Curves that are transversal to the vertical foliation are $\ell_{\phi,h}$-minimizing in their free homotopy class} \label{fig3.2}
\end{figure}

It is enough to prove the proposition for the case when $\gamma'$ does not contain singular points of the quadratic differential and is piecewise linear in flat coordinates. Indeed, one can achieve this by a deformation of $\gamma'$ that changes the $\phi$-length no more than by an a priory given amount.

Consider the open arc $\gamma_j$ contained in $\gamma$ whose endpoints are two neighbouring points
$p_j$ and $p_{j+1}$ ($p_N$ and $p_1$ for $j=N$).
We consider the set $\gamma'_j= {\sf{g}} (\gamma_j)$. The obtained subset $\gamma'_j$
of $\gamma'$ is the union of a finite number of open linear segments contained in $\gamma'$ and a finite number of points on $\gamma'$ (that are endpoints of some of the just mentioned linear segments). Hence, the set $\gamma'_j$ is measurable for the horizontal $\phi$-length. Moreover, each maximal open linear piece of  $\gamma'_j$ has the same horizontal $\phi$-length as the corresponding part on $\gamma_j$ which is the preimage of this linear piece under  $\sf{g}$. Indeed,
each point in the linear piece of $\gamma'$ is joined with its preimage under
$\sf{g}$ by a vertical segment.
Hence, $\ell_{\phi,h}(\gamma_j)=\ell_{\phi,h}(\gamma_j')$.
The union of the $\gamma_j$ equals $\gamma \setminus \cup\{p_j\}$.
The union of the $\gamma_j', \, j=1,\ldots N,$ may be much smaller than $\gamma'$. The proposition is proved for the case when $\gamma$ and $\gamma'$ are disjoint.

In the general case we may assume again that the curve $\gamma'$ does not contain singular points of the quadratic differential and is piecewise linear in flat coordinates. Consider the set $\gamma' \setminus \gamma$ of points that are in $\gamma'$ but not in $\gamma$. It consists of finitely many connected components, each being a piecewise linear open arc.
Take any component $\gamma_j'$. There is an arc $\gamma_j$ on $\gamma$ so that the union of the closures of $\gamma_j$ and $\gamma_j'$ (each with suitable orientation)
bounds a disc in $A$. The same argument as before shows that   $\ell_{\phi,h}(\gamma_j)\leq \ell_{\phi,h}(\gamma_j')$.
Instead of $\gamma'$ we consider the closed piecewise smooth curve $\gamma''$ obtained by replacing each $\gamma_j'$ by $\gamma_j$ oriented so that $\gamma_j'$ and $\gamma_j$ are homotopic with fixed endpoints.
The horizontal $\phi$-length $\ell_{\phi,h}(\gamma'')$ of the new curve $\gamma''$ does not exceed $\ell_{\phi,h}(\gamma')$. All points of $\gamma''$ lie on $\gamma$. Each point of $\gamma$ is covered by at least one point of $\gamma''$, but some parts of $\gamma$ may be covered multiply. Hence $\ell_{\phi,h}(\gamma)\leq \ell_{\phi,h}(\gamma'')\leq \ell_{\phi,h}(\gamma')$. \hfill $\Box$

\bigskip

\noindent {\bf Proof of Theorem \ref{thm3.1}.} Let $\varphi_0$ be a non-periodic absolutely extremal self-homeomorphism of
a closed Riemann surface $X$ of genus $g\geq 2$, and let $\varphi$ be a self-homeomorphism of $X$ that is isotopic to $\varphi_0$. We will prove that the entropy of $\varphi$  is not smaller than $\frac{1}{2}\log K(\varphi_0)$.
Theorem \ref{thm3.1} will then be a consequence of Theorem \ref{thm3.2}.

Denote by $\phi$ the quadratic differential of  $\varphi_0$. It is holomorphic on $X$.
Let $\gamma$ be a simple closed curve in $X$ with base point $x_0$ that is contained in the $\phi$-regular part of  $X$, is transversal to the vertical foliation of $\phi$, and has positive $\ell_{\phi,h}$-length. By Theorem \ref{thm3.04} such a curve exists. By Theorem \ref{thm3.102} $\gamma$ is not contractible. Denote by $\alpha$ the class of $\gamma$ in $\pi_1(X,x_0)$.
Let $d_{hyp}$ be the hyperbolic metric on the universal covering $\tilde{X}\cong \mathbb{C}_+$. We use the same notation for the induced metric on $X$.
By Lemma \ref{lem3.108} and  Proposition \ref{prop3.102'}
the inequalities
\begin{equation}\label{eq3.125}
\ell_{hyp}(\widehat{\alpha}) \geq   c_1 \ell_{\phi}(\widehat{\alpha}) \geq c_1 \ell_{{\phi},h}(\widehat\alpha)= c_1\ell_{{\phi},h}(\gamma)>0
\end{equation}
hold for the free homotopy class $\widehat{\alpha}$ represented by ${\alpha}$.
\index{$\ell_{hyp}$} \index{$d_{hyp}$}

Denote by $\tilde{x}_0\in \tilde X$ the point corresponding to the identity ${\rm Id}_{x_0,x_0}$ in $\pi_1(X,x_0,x_0)$. Let $\tilde{\alpha}$ be the lift of $\alpha$ to the universal covering with initial point $\tilde{x}_0$.
Let as before $\alpha^{cov}$ be the covering transformation that maps $\tilde{x}_0$ to $\tilde{\alpha}(\tilde{x}_0)$. 
Since $d_{hyp}(\tilde{x}_0,\alpha^{cov}(\tilde{x}_0))$ is the infimum of the hyperbolic lengths $\ell_{hyp}(\tilde{\gamma}')$ over all curves $\tilde{\gamma}'$ in $\tilde X$ with initial point $\tilde{x}_0$ and terminal point $\alpha^{cov}(\tilde{x}_0)$ and $\ell_{hyp}(\tilde{\gamma}')= \ell_{hyp}({\gamma}')$,
we obtain by \eqref{eq3.125}
\begin{equation}\label{eq3.126}
d_{hyp}(\tilde{x}_0,\alpha^{cov}(\tilde{x}_0)) \geq c_1\ell_{{\phi},h}(\gamma)\,.
\end{equation}

Since $\varphi_0$ is a non-periodic absolutely extremal self-homeomorphism of $X$ with quadratic differential $\phi$, for any natural number $n$ the curve $(\varphi_0)^n(\gamma)$ is a simple closed curve in $X$ that avoids the $\phi$-singular points and is transversal to the vertical foliation. 
The mapping $(\varphi_0)^n$ expands the horizontal $\phi$-length of arcs by the factor $K^{\frac{n}{2}}$. This implies the following estimate of the horizontal $\phi$-length of  $(\varphi_0)^n(\gamma)$
\begin{equation}\label{eq3.127}
\ell_{\phi,h}((\varphi_0)^n(\gamma)) \geq  K^{\frac{n}{2}} \ell_{{\phi},h}(\gamma) \,.
\end{equation}
The loop
$(\varphi_0)^n(\gamma)$ represents the free homotopy class of $((\varphi_0)_{\#})^n(\alpha)$ (considered as element of the fundamental group $\pi_1(X,x_0)$).
Indeed, $(\varphi_0)^n(\gamma)$ lifts to the curve $(\tilde{\varphi_0})^n(\tilde{\gamma})$
in $\tilde X$ with initial point $(\tilde{\varphi_0})^n(\tilde{x}_0)$ and terminal point  $\big((\tilde{\varphi_0})^n(\alpha^{cov}(\tilde{x}_0))=\Big(\big ((\varphi_0)_{\#}\big)^n(\alpha)\Big)^{cov}\big((\tilde{\varphi_0})^n(\tilde{x}_0)\big) $. The last equality follows from equality \eqref{eq3.07f}.

Lemma \ref{lem3.108} and  Proposition \ref{prop3.102'} apply to the loop $(\varphi_0)^n(\gamma)$ in the same way as to the loop $\gamma$. Hence,
inequality \eqref{eq3.125} with  $(\varphi_0)^n(\gamma)$ instead of $\gamma$ holds, and inequality \eqref{eq3.126} is true with $\alpha^{cov}$ replaced by $(((\varphi_0)_{\#})^n(\alpha))^{cov}$.
Using inequality \eqref{eq3.127}, we obtain
\begin{equation}\label{eq3.129}
d_{hyp}(\tilde{x}_0,(((\varphi_0)_{\#})^n(\alpha))^{cov}(\tilde{x}_0)) \geq c_1\ell_{{\phi},h}((\varphi_0)^n(\gamma)) \geq c_1\, K^{\frac{n}{2}}\,\ell_{{\phi},h}(\gamma) \,.
\end{equation}

Consider the self-homeomorphism $\varphi$ of $X$ that is isotopic to $\varphi_0$.
Since by Theorem \ref{thm3.101}, Lemma \ref{lem3.5} and equality \eqref{eq3.112}
$$
h(\varphi) \geq \Gamma_{\varphi_{\#}}= \Gamma_{(\varphi_0)_{\#}}\geq
\limsup \frac{1}{n} \log d_{hyp}(\tilde{x}_0,( ((\tilde{\varphi_0})_{\#})^n(\alpha))^{cov}(\tilde{x}_0))\,,
$$
we obtain
$h(\varphi) \geq \frac{1}{2} \log K\,$.
The Theorem is proved.
\hfill $\Box$

\section{The entropy of mapping classes on second kind Riemann surfaces}
\label{sec:entropy.4}
The following Theorem \ref{thm3.5} allows us
to treat the entropy of
mapping classes of Riemann surfaces of second kind.
\begin{thm}\label{thm3.5}
Let $X$ be a connected closed Riemann surface with a set $E$ of
distinguished points. Assume that $X \backslash E$ is hyperbolic
(i.e. covered by ${\mathbb C}_+$). Let $\varphi$ be an absolutely extremal non-periodic self-homeomorphism of $X$ that fixes $E$ setwise. Suppose $z_0 \in
E$ is a fixed point of $\varphi$, $\varphi (z_0) = z_0$.

Then $\varphi$ is isotopic through self-homeomorphisms of $X$ that
fix $E$ to a homeomorphism $\varphi_0$ that fixes a topological
disc $\Delta'$ around $z_0$ pointwise and has the same entropy  $h(\varphi_0)
= h(\varphi)$ as $\varphi $. Moreover, the homeomorphism $\varphi_0$ can be chosen to represent any a priory given preimage under the mapping $\mathcal{H}_{z_0}:  \mathfrak{M}(X\setminus \Delta';\partial \Delta', E\setminus\{z_0\})\to
\mathfrak{M}(X; \{z_0\}, E\setminus\{z_0\})$ of the class of $\varphi$ in $\mathfrak{M}(X; \{z_0\}, E\setminus\{z_0\})$.
\end{thm}
(For the notation see Section \ref{sec:2.2}.)

\medskip

Notice that the complement $X_0$ in $X$ of an open disc around $z_0$
is a bordered Riemann surface \index{Riemann surface ! bordered}(the
closure of a Riemann surface of second kind), and the restriction
$\varphi _0 \mid X_0$ is a self-homeomorphism of $X_0$ with entropy
$h(\varphi _0 \mid X_0) = h(\varphi)$.

We will now prepare the proof of Theorem \ref{thm3.5}.

In a neighbourhood of the distinguished point $z_0$ of $X$ we consider distinguished coordinates $\zeta$ with $\zeta(z_0)=0$.  In these coordinates the quadratic differential of $\varphi$ has the form $\phi(\zeta)(d\zeta)^2=(\frac{a+2}{2})^2 \zeta^{a} (d\zeta)^2$. Take a disc $\Delta =\{|\zeta|< c\}$ in these coordinates. The following lemma says that the restriction $\varphi \mid X \setminus \{z_0\}$ has an extension to a self-homeomorphism of the ''real blowup'' of $X$ at
$z_0$.
\begin{lemm}\label{lemm3.6} In distinguished coordinates $\zeta=re^{i\theta}$ the
quotient $\frac{\varphi(re^{i\theta})}{|\varphi(re^{i\theta})|}$ does not depend on $r\in (0,c)$ and defines a homeomorphism
\begin{equation}\label{eq3.9'}
\mathfrak{h}(e^{i\theta})\stackrel{def}= \frac{\varphi(re^{i\theta})}{|\varphi(re^{i\theta})|}  \,
\end{equation}
of the unit circle onto itself.
\end{lemm}
\index{$\mathfrak{h}$}
\noindent {\bf Proof.} 
The distinguished
coordinates are unique up to multiplication by an $(a+2)^{\rm nd}$
root of unity. In other words
\begin{equation}\label{eq3.9}
\left( \zeta \, e^{\frac{2\pi i}{a+2}\cdot l} \right)^a \,\left( d
 \zeta \, e^{\frac{2\pi i}{a+2}\cdot l} \right)^2 = \zeta^a
(d\zeta)^2
\end{equation}
for any integer $l$, and these are the only holomorphic changes of
coordinates after which the quadratic differential has again the
canonical form.


The initial quadratic differential of $\varphi$ coincides with its
terminal quadratic differential. Hence in the distinguished
coordinates $\zeta$ the mapping $\varphi$ has the form
\begin{equation}\label{eq3.10} \varphi (\zeta) = {\mathcal Z}_a (\zeta) \cdot
e^{\frac{2\pi i}{a+2} \cdot \ell} \quad \mbox{for some integer
$\ell$},
\end{equation}
where \index{$\mathcal{Z}_a (\zeta)$}
\begin{equation}\label{eq3.11} {\mathcal Z}_a (\zeta) =\left( \frac{\zeta^{a+2} + 2k
\, \vert \zeta \vert^{a+2} + k^2 \, \bar\zeta^{a+2}}{1-k^2}
\right)^{\frac1{a+2}} \, .
\end{equation}
Here $k = \frac{K-1}{K+1}$, and $K = K(\varphi)$ is the
quasiconformal dilatation of $\varphi$. We take the root which is
positive on the positive real axis. (See e.g. \cite{Be1},
formula~(2.3) and Theorem~6 of \cite{Be1} or Chapter \ref{chapter2}, equation  \eqref{eq2.17}).

The mapping $\mathcal{Z}_a$ fixes each
horizontal bisectrix  $\{r \exp(\frac{2\pi j i}{a+2}),\, 0<r<c\} $,
$j=0,\ldots,a+1, $ and each vertical bisectrix $\{r \exp(\frac{2\pi (j +\frac{1}{2}) i}{a+2}),\, 0<r<c\} $,
$j=0,\ldots,a+1 $.
Consider a sector  $\mathfrak{s}^h_j= \{r \exp({ \theta i}): \, \theta \in  (\frac{2\pi j i}{a+2}, \frac{2\pi (j+1) i}{a+2}), \;0<r<c \} \,, j=0,\ldots,a+1,$ of $\Delta$ between two consecutive horizontal bisectrices. The flat coordinates $g^h_j$ on the sector  $\mathfrak{s}^h_j$ map the sector to a half-disc in the upper or lower half-plane. Recall that $g^h_j$ is  a suitable branch of the mapping $\zeta\to \zeta'\stackrel{def}=\zeta^{\frac{a+2}{2}}$, $g^h_j(re^{i\theta})= r^{\frac{a+2}{2}} e ^{i\frac{a+2}{2}\theta}$ for $re^{i\theta}\in \mathfrak{s}^h_j$. The image is in the upper half-plane if $j$ is even and in the lower half-plane if $j$ is odd.

Put $\zeta'=\xi'+ i \eta'= r'e^{i\theta'}$.
In coordinates $\zeta'$ the mapping $\mathcal{Z}_a$ has the form
\begin{equation}\label{eq3.12}
\zeta' =\xi' + i \eta'  \to K^{\frac12} \, \xi' + i \, K^{-\frac12} \, \eta' = \mathcal{Z}'_a(\zeta') \,
\end{equation}
with $K=\frac{1+k}{1-k}$.
Hence, for $\zeta' = r'e^{i\theta'},\, \theta' \in (0,\pi)\,
{\rm or}\; \theta' \in (\pi,2\pi), \;{\rm respectively},\;
0<r'< c^{\frac{2}{a+2}},$
the equality
\begin{align}\label{eq3.12'}
\frac{\mathcal{Z}'_a(r'e^{i\theta'})}{|\mathcal{Z}'_a(r'e^{i\theta'})|}=&
\frac{K^{\frac{1}{2}}
r' \cos \theta' + i r K^{-\frac{1}{2}}
r' \sin \theta'}{\sqrt{Kr'^2\cos^2(\theta')+K^{-1}r'^2\sin^2(\theta')}    }\nonumber\\
=&
\frac{K^{\frac{1}{2}}
\cos \theta' + i K^{-\frac{1}{2}}
\sin \theta'}{\sqrt{K \cos^2(\theta')+K^{-1}\sin^2(\theta')}    }
\end{align}
holds. The quotient $\frac{\mathcal{Z}'_a(r'e^{i\theta'})}{|\mathcal{Z}'_a(r'e^{i\theta'})|}$ does not depend on $r'$ and is denoted by $\mathfrak{h}'(e^{i\theta'})$. The mapping $\mathfrak{h}'$ defines a homeomorphism of $(0,\pi)$ ($(\pi,2\pi)$, respectively) onto itself and extends to a homeomorphism of $[0,\pi]$  ($[\pi,2\pi]$, respectively) onto itself. In coordinates $\zeta$ we obtain a homeomorphism $\mathfrak{h}$ from $[\frac{2\pi ji}{a+2},\frac{2\pi (j+1)i}{a+2}]$ onto itself, that fixes each endpoint. Moreover, $\mathcal{Z}_a(re^{i\theta})=|\mathcal{Z}_a(re^{i\theta})| \mathfrak{h}(e^{i\theta})$. The lemma follows from the equation \eqref{eq3.10}.
\hfill $\Box$

\bigskip

\index{$X^{z_0}$}
We will now define a homeomorphism from the ''real  blowup'' of $X$ at $z_0$ onto a subset of $X$. We put $\Delta_0=\{\zeta\in\Delta:\, |\zeta|<\frac{c}{2}\}$, and $X^{z_0} = X \setminus \overline{\Delta_0}$. \index{$X^{z_0}$} Then $X^{z_0} \subset X$ is a
Riemann surface of second kind which is homeomorphic to $X
\backslash \{ z_0 \}$. The closure $\overline{X^{z_0}}$ (the closure is taken in $X$) is a bordered Riemann surface and can be identified with the ''real blowup'' of $X$ at $z_0$.
We choose a diffeomorphism $\psi^{z_0}:X^{z_0} \to X
\backslash \{ z_0 \}$ which is the identity on $X \backslash \Delta$
and is defined on $X^{z_0} \cap \Delta$ in coordinates $\zeta$ by
the mapping 
\index{$\psi^{z_0}$}
\begin{equation}\label{eq3.8}
\left\{ \frac12 \, c < \vert \zeta \vert < c \right\} \ni \zeta \to
\psi^{z_0} (\zeta) = \alpha \, (\vert \zeta \vert) \cdot
\frac\zeta{\vert\zeta\vert} \in \Delta \backslash \{ 0 \}
\end{equation}
for a smooth strictly increasing function $\alpha : \left( \frac12
\, c,c \right) \to (0,c)$ with $\lim_{t \to \frac c2} \alpha (t) =
0$ and $\alpha (t) = t$ for $t$ close to $c$.
Then the self-homeomorphism
\begin{align*}
(\psi^{z_0})^{-1} \circ \varphi \circ \psi^{z_0}: X^{z_0}\toitself
\end{align*}
of $X^{z_0}$ extends
to a self-homeomorphism of $\overline{X^{z_0}}$ denoted by $\varphi^{z_0}$.

\begin{lemm}\label{lemm3.7} The following equality for entropies hold:
$h (\varphi^{z_0}) = h(\varphi)$.
\end{lemm}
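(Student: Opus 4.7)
The plan is to realize $(X,\varphi)$ as a topological factor of $(\overline{X^{z_0}},\varphi^{z_0})$ via a semi-conjugacy whose only non-trivial fiber is the boundary circle $\partial X^{z_0}$, and then to compare entropies via the standard monotonicity in one direction and Bowen's fiber-entropy inequality in the other. Concretely, I would define a continuous surjection $\pi : \overline{X^{z_0}} \to X$ by setting $\pi = \psi^{z_0}$ on $X^{z_0}$ and $\pi(x) = z_0$ for $x \in \partial X^{z_0}$. Since $\alpha(t) \to 0$ as $t \to r/2$, the formula \eqref{eq3.8} makes $\pi$ continuous at the boundary, and it is surjective because $\psi^{z_0}$ maps $X^{z_0}$ diffeomorphically onto $X \setminus \{z_0\}$. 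By construction $\pi^{-1}(z_0) = \partial X^{z_0}$ is a circle while $\pi^{-1}(x)$ is a single point for every $x \in X \setminus \{z_0\}$.

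Next I would verify the semi-conjugacy relation $\pi \circ \varphi^{z_0} = \varphi \circ \pi$. On the interior $X^{z_0}$ this is immediate from Lemma~\ref{lemm3.6}, since $\varphi^{z_0}$ is by definition the continuous extension of $(\psi^{z_0})^{-1} \circ \varphi \circ \psi^{z_0}$. On $\partial X^{z_0}$, the invariance of the open subset $X^{z_0}$ under the self-homeomorphism $\varphi^{z_0}$ of $\overline{X^{z_0}}$ forces $\varphi^{z_0}(\partial X^{z_0}) = \partial X^{z_0}$, and then both sides of the identity equal $z_0$ because $\varphi(z_0)=z_0$. Standard monotonicity of topological entropy under factor maps then immediately yields $h(\varphi) \leq h(\varphi^{z_0})$.

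For the reverse inequality I would invoke Bowen's theorem on the entropy of factor maps: for any factor map $\pi : (Y,f) \to (X,g)$ of compact dynamical systems,
\begin{equation*}
h(f) \;\leq\; h(g) + \sup_{x \in X} h(f,\pi^{-1}(x)),
\end{equation*}
where $h(f,\pi^{-1}(x))$ denotes Bowen's fiber entropy. In our situation all fibers apart from $\pi^{-1}(z_0)$ are singletons and contribute $0$. The exceptional fiber $\partial X^{z_0}$ is $\varphi^{z_0}$-invariant by the paragraph above, so Bowen's fiber entropy there coincides with the ordinary topological entropy of the restriction $\varphi^{z_0}|_{\partial X^{z_0}}$; but this restriction is a homeomorphism of a circle and therefore has zero topological entropy. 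Bowen's bound consequently collapses to $h(\varphi^{z_0}) \leq h(\varphi)$, which together with the previous inequality yields the lemma.

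The main point that requires care is the boundary invariance $\varphi^{z_0}(\partial X^{z_0}) = \partial X^{z_0}$ used twice above. This is forced by Lemma~\ref{lemm3.6}: the conjugate $(\psi^{z_0})^{-1} \circ \varphi \circ \psi^{z_0}$ is a self-homeomorphism of the open surface $X^{z_0}$, so its continuous extension to the compact manifold-with-boundary $\overline{X^{z_0}}$ must be a homeomorphism carrying the interior to the interior, and hence the boundary to the boundary.
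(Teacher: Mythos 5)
Your proof is correct, and it takes a genuinely different and substantially shorter route than the paper's. The paper proves the inequality $h(\varphi^{z_0}) \leq h(\varphi)$ by hand: it first reduces (Step~1) to surfaces of genus at least two via branched coverings, then invokes the Markov-partition machinery from \cite{FLP} (Step~2), constructs a refining sequence of partitions $\hat{\mathfrak R}^n(z_0)$ of $\overline{X^{z_0}}$ adapted to a subdivision of the disc $\Delta$ into sectors (Step~3), and then carries out a delicate combinatorial counting argument (Lemmas~3.8 and 3.9) to show the number of partition cells grows only polynomially faster than for $\varphi$, so the exponential rate is the same. You instead package the whole situation as a factor map $\pi : (\overline{X^{z_0}}, \varphi^{z_0}) \to (X, \varphi)$ that is one-to-one off a single fiber, namely the boundary circle, and then apply Bowen's fiber-entropy inequality; since the exceptional fiber is a $\varphi^{z_0}$-invariant circle on which the restriction is a circle homeomorphism (entropy zero), the inequality collapses to $h(\varphi^{z_0}) \leq h(\varphi)$. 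The reverse inequality you get from monotonicity under factors, which is the same Theorem~5 of \cite{AKM} the paper also uses. All the steps you worry about check out: continuity of $\pi$ at the boundary follows from $\alpha(t) \to 0$ in \eqref{eq3.8}; the semi-conjugacy on the interior is the definition of $\varphi^{z_0}$ in Lemma~\ref{lemm3.6} and on the boundary both sides equal $z_0$; boundary invariance is automatic for a self-homeomorphism of a compact manifold with boundary (and is also visible directly from \eqref{eq3.20}); and Bowen's entropy of a singleton fiber is zero while on an invariant compact set it agrees with the topological entropy of the restriction. What you gain is brevity and conceptual clarity, and your argument applies to any $\varphi$ fixing $z_0$ for which the boundary extension $\varphi^{z_0}$ of Lemma~\ref{lemm3.6} exists, without invoking Markov partitions at all. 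What the paper's route buys is self-containedness relative to \cite{FLP} and \cite{AKM}: it only uses elementary counting with open covers, at the cost of considerably more work.
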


\bigskip

\noindent {\bf Proof of Lemma \ref{lemm3.7}.}\\
\noindent {\bf The upper bound.} The inequality $h(\varphi) \leq h(\varphi ^{z_0})$ follows from Theorem~5 of \cite{AKM}. Indeed,
the mapping $\psi^{z_0}:X^{z_0} \to X \setminus \{z_0\}$ extends to a continuous mapping  $\widetilde{\psi^{z_0}}:\overline{X^{z_0}}\to X$
that maps the circle $\partial \Delta$ to $z_0$. We
introduce the equivalence relation $\sim$ on $\overline{X^{z_0}}$, for which $z\sim z'$ for different points $z,z'\in \overline{X^{z_0}}$ if and only if $z$ and $z'$ are points of $\partial \Delta_0$.  We denote by $\pi$ the canonical projection $\pi:  \overline{X^{z_0}}\to \overline{X^{z_0}}\diagup \sim   $.
The quotient  $\overline{X^{z_0}}\diagup \sim$ is homeomorphic to
$X$. A homeomorphism is given by the mapping $\mathring{\psi}^{z_0}$, $\mathring{\psi}^{z_0}(x\diagup \sim)=\psi^{z_0}(x)$ for $x\in X^{z_0}$ and $\mathring{\psi}^{z_0}(\partial X^{z_0}\diagup \sim)=z_0$. Then $\widetilde{\psi^{z_0}}=\mathring{\psi}^{z_0}\circ \pi$. The equality $\varphi \circ \widetilde{\psi^{z_0}}=\widetilde{\psi^{z_0}}\circ \varphi^{z_0}$ holds on $X^{z_0}$ by the definition of $\psi^{z_0}$. It is clear that it holds on the whole  $\overline{X^{z_0}}$.
We obtain the equality
\begin{align*}
\big((\mathring{\psi}^{z_0})^{-1}\circ\varphi \circ\mathring{\psi}^{z_0}\big)\circ \pi = \pi\circ \varphi^{z_0} \,.
\end{align*}
By Theorem 5 of \cite{AKM} the inequality
 $h(\varphi)= h\big((\mathring{\psi}^{z_0})^{-1}\circ\varphi \circ\mathring{\psi}^{z_0}\big)\leq h(\varphi ^{z_0})$ holds. (See also the beginning of the proof of Lemma \ref{lem3.4} of Section \ref{sec:entropy.1}.)

\medskip

\noindent {\bf The lower bound.}
We need to prove the opposite inequality $h(\varphi^{z_0}) \leq h(\varphi)$. The difficulty is that there is no refining sequence of covers of  $\overline{X^{z_0}}$ of the form $(\widetilde\psi^{z_0})^{-1}(\mathcal{A}_j))$ for covers $\mathcal{A}_j$ of $X$. Each cover $(\widetilde\psi^{z_0})^{-1}(\mathcal{A})$ with $\mathcal{A}$ a cover of $X$ contains an element that covers the whole circle $\partial X^{z_0}$.


\noindent{\bf Refining sequences of covers of $\overline{X^{z_0}}$.} We will find refining sequences of covers for  $\overline{X^{z_0}}$ as follows.
Let $N$ be a large natural number. For a positive number $\varepsilon < \varepsilon_0$ we take the open cover $\mathcal{A}_{\varepsilon}$ of $X$ consisting of $\varepsilon$-squares and $\varepsilon$-stars. We associate to $\mathcal{A}_{\varepsilon}$ a cover $\mathcal{A}_{\varepsilon,N}'$ of $X\setminus\{z_0\}$ which is described as follows. 
Consider $N$ radii in the disc $\Delta \subset X$ around $z_0$ which divide the disc into $N$ open sectors $S_j$ of equal angle.
We may choose the number $N$ and the radii so that
each sector $S_j$ is contained in a sector between a horizontal bisectrix and one of the closest vertical bisectrices. (Thus they divide each such sector 
into sectors of equal angle.)
For each $j$ we let $S'_j$ be an open sector which has an angle not exceeding twice the angle of $S_j$ and contains $\overline{ S_j} \setminus \{0\}$.
Then the $S'_j$ cover the punctured disc $\Delta \setminus \{z_0\}$. We may assume that the maximum over  $j=0, \ldots N,$ of the (Euclidean) length of the arcs
$\{e^{i\theta}: \frac{c}{2}e^{i\theta} \in S_j' \}$ of the unit circle goes to $0$ for $N\to \infty$.

Let $\varepsilon$ be small and $N$ big. The cover $\mathcal{A}'_{\varepsilon,N}$ of $X \setminus \{z_0\}$
consists of all sets in $\mathcal{A}_{\varepsilon}$ that do not intersect the disc $\Delta_0\stackrel{def}=\{|\zeta|< \frac{c}{2}\}$ around $z_0$, 
and all sets of the form $A \cap{S}'_j$ for any sector ${S}'_j$ and any set $A \in \mathcal{A}_{\varepsilon} $ that intersects  $\Delta_0$.

\begin{figure}[H]
\begin{center}
\includegraphics[width=65mm]{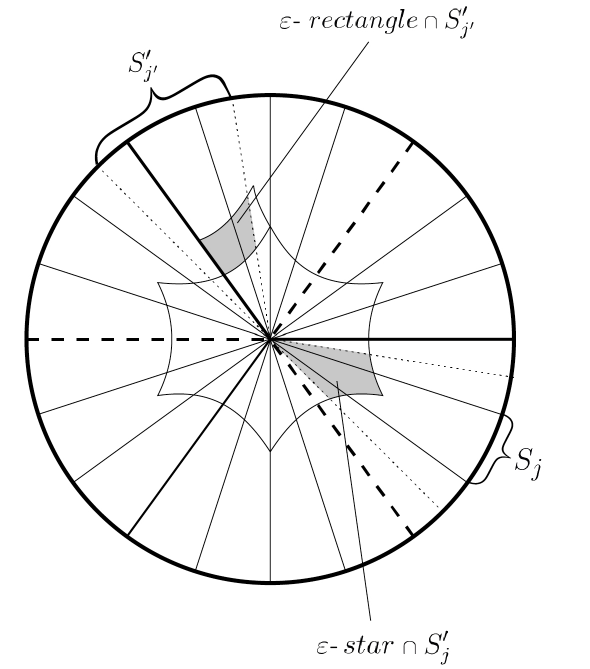}
\end{center}
\caption{The cover $\mathcal{A}'_{\varepsilon,N}$}\label{figen2}
\end{figure}

We define an open cover $\mathcal{A}_{\varepsilon,N}(z_0)$ of $\overline{X^{z_0}}$ as follows.
For each element $A \in\mathcal{A}'_{\varepsilon,N}$ that is not equal to the intersection of the $\varepsilon$-star at $z_0$ with a sector $S_j'$  we let the set $(\psi^{z_0})^{-1} (A)$ be an element of the cover $\mathcal{A}_{\varepsilon,N}(z_0)$. For each element $A \in\mathcal{A}'_{\varepsilon,N}$ that is an intersections of  the $\varepsilon$-star at $z_0$ with a sector $S_j'$
the boundary of the set $(\psi^{z_0})^{-1} (A)$ intersects the circle $\{|z-z_0|=\frac{c}{2}\}$ along an arc $\gamma_A$. We let the union $(\psi^{z_0})^{-1} (A) \cup \gamma_A$  be an element of the cover $\mathcal{A}_{\varepsilon,N}(z_0)$. These are all elements of the cover.
\index{$\mathcal{A}_{\varepsilon,N}(z_0)$ }

Take a sequence of $\varepsilon_k$'s that decrease to $0$, a sequence of $N_k$'s with $N_k$ dividing $N_{k+1}$ and  $N_k \to \infty$. Choose radii and sectors $S_{j_k}^{N_k}$ successively so that the $S_{j_1}^{N_1}$ subdivide the $\mathfrak{s}_j$ and for all $k>1$ the $S_{j_{k+1}}^{N_{k+1}}$ subdivide the $S_{j_k}^{N_{k}}$. Define sectors $({S_{j_{k}}^{N_{k}}})' \supset \overline{S_{j_k}^{{N_k}}} \setminus\{z_0\}$ as above and so that each
$({S_{j_{N+1}^{N_{k+1}}}})'$ is contained in some $({S_{j_k}^{N_{k}}})'$.
Consider for each $k$ the cover $\mathcal{A}_{\varepsilon_k,N_k}(z_0)$ of $\overline{X^{z_0}}$ obtained for the choice of $\varepsilon_k$, $N_k$, and $({S_j^{N_{k}}})'$.
Then the  $\mathcal{A}_{\varepsilon_k,N_k}(z_0)$
provide a refining sequence of open covers of $\overline{X^{z_0}}$.
This is not difficult to see, taking into account the choice of the $({S_j^{N_{k}}})'$, the fact that the sequence of coverings  $\mathcal{A}_{\varepsilon_k}$ of $X$ is refining, and the following observation.
Fix $k$ and consider the elements of the covers  $\mathcal{A}_{\varepsilon_k,N_k}(z_0)$ contained in $\overline{X^{z_0}} \cap \Delta$.
Let $\sigma_k$ be the supremum of the diameters (in coordinates $\zeta$ on $\Delta$) of such sets. Then the $\sigma_k$ tend to zero for $k\to \infty$, since in polar coordinates $\zeta= \varrho e^{i\theta}$ on $\Delta\cap \overline{X^{x_0}}$
each set $(\psi^{z_0})^{-1}(A)$ with $A \in
\mathcal{A}_{\varepsilon,N}'$ has width comparable to $\varepsilon_N$ in the $\varrho$-direction and width equal to the maximal angle of the ${S_j^N}'$ in the $\theta$ direction.

\noindent{\bf An upper bound for the entropy $h(\varphi^{z_0},\mathcal{A}_{\varepsilon,N}(z_0)) $.}
We will fix any sufficiently small number $\varepsilon$ and any large integer $N$ and give an upper bound of the entropy $h(\varphi^{z_0},\mathcal{A}_{\varepsilon,N}(z_0)) $ of $\varphi^{z_0}$ with respect to the cover $\mathcal{A}_{\varepsilon,N}(z_0)$ of $\overline{X^{z_0}}$.

For each set $A$ in the above defined cover $\mathcal{A}_{\varepsilon}$ of $X$ and one of the $N$ sectors $S_j^N$ of $\Delta\setminus \{z_0\}$
we define $A \cap^* S_j^N$ to be equal to $A$ if $A$ does not intersect $\Delta_0$ and equal to $A \cap S_j^N$ if $A$ intersects $\Delta_0$.
We prove first the following claim. \\
{\it For each $(\lambda^{-n}\varepsilon,\varepsilon)$-rectangle and each
$(\lambda^{-n}\varepsilon,\varepsilon)$-star written in the form
\begin{equation}\label{eq3.8a}
A_0\cap \ldots \cap\varphi^{-n}(A_n) \in \mathcal{A}_{\varepsilon}\vee \ldots \vee  \varphi^{-n}(\mathcal{A}_{\varepsilon})
\end{equation}
with either all $A_j$ being $\varepsilon$-squares or all $A_j$ being $\varepsilon$-stars,
there are at most $1+\frac{(n+1)N((n+1)N+1)}{2}$ different non-empty sets of the form
\begin{align}\label{eq3.8b}
A\stackrel{def}=\big(A_0\cap^*  S_{j_0}^N\big) \bigcap \ldots  \bigcap\varphi^{-n}\big(A_n\cap^* S_{j_n}^N\big)
\end{align}
where each $S_{j_l}^N$ is one of the $N$ sectors of $\Delta$.}

The claim is proved as follows. Suppose first that the set \eqref{eq3.8a}
is a $(\lambda^{-n}\varepsilon,\varepsilon)$-rectangle, i.e.
it is
the image of a $\phi$-rectangle $\mathcal{F}:R\to A_0\cap \ldots \cap\varphi^{-n}(A_n)$  for a rectangle $R$ in the complex plane with sides parallel to the axes. 
If a set $A_l$ intersects $\Delta_0$
then by a similar argument as used in the proof of Lemma \ref{lem3.109a} it is contained in a sector $\mathfrak{s}_{j_l}^h$ (or  $\mathfrak{s}_{j_l}^v$) of $\Delta$ between two consecutive horizontal bisectrices (or between two consecutive vertical bisectrices). A branch of the mapping $\zeta\to \zeta^{\frac{2}{a+2}}$ defines flat coordinates on the sector and maps each of the $N$ radii that are contained in the sector 
to a relatively closed straight line segment in the image. The mapping $\varphi$ is real linear in flat coordinates. Hence the mapping $\varphi^{-l}$ takes each of the $N$ radii that are contained in the sector to a relatively closed curve in the image which is a straight line segment in flat coordinates.
This implies that each set of the form \eqref{eq3.8b} is obtained as follows.

Take the rectangle $R$ in the plane which is the preimage $\mathcal{F}^{-1}(A)$ of a set $A$ of the form 
\eqref{eq3.8a}. For each $l$ the collection of preimages $\mathcal{F}^{-1}(A\bigcap^*\varphi^{-l}(A_l\cap  S_{j_l}^N))$ is the collection of intersections of $R$ with connected components of the complement of the union of no more than $N$ lines in the complex plane. We have to count non-empty sets each of which is the intersection over $l=0,\ldots ,n,$ of an element of the collection of preimages $\mathcal{F}^{-1}(A\bigcap\varphi^{-l}(A_l\cap^*  S_{j_l}^N))$.

Each such intersection $Q$ is a connected component of the intersection of $R$ with a connected component $Q'$ of the complement of the union of no more than $(n+1)N$ lines in the complex plane. Indeed, $Q$ is contained in such a component $Q'$, and since the boundary of $Q$ is the union of segments of some of the lines, it has no boundary points in $Q'$, hence it coincides with $Q'$.

The estimate of the number of connected components of the complement of the union of $N$ different real lines in the complex plane can be given by induction on the number $N$ of lines. If $N$ equals $1$ the number is obviously equal to $2$. Suppose an upper bound for the number is found for $N$ lines. The observation for $N+1$
lines is the following. The $(N+1)$-st line intersects a number of lines among the $N$ previous lines, and each of them is intersected once. The intersection points with the $N$  previous lines divide the $(N+1)$-st
\newline
 line into no more than $N+1$ connected components. Each connected component of the last line is contained in exactly one connected component of the complement of the first  $N$ lines in $\mathbb{C}$.
This component is divided into two parts by the last line. Hence, adding an $(N+1)$-st line increases the number of connected components of the lines at most by $N+1$. By induction, the number of connected components of the complement of $N$ lines in $\mathbb{C}$ does not exceed $1+ \frac{N(N+1)}{2}$. We proved the claim for the case when $A$ is a
$(\lambda^{-n}\varepsilon,\varepsilon)$-rectangle.

If the set \eqref{eq3.8a} is a $(\lambda^{-n}\varepsilon,\varepsilon)$-star and intersects $\Delta_0$,
then all $A_j$ are stars at $z_0$, and for all $A_j$ that are stars at $z_0$ we obtain at most $N$ radii in each $A_j$.
Hence, for the stars in $\mathcal{A}_{\varepsilon} \vee \varphi^{-1}(\mathcal{A}_{\varepsilon}) \vee \ldots \vee \varphi^{-n}(\mathcal{A}_{\varepsilon})$
there are at most $(n+1)N$ different non-empty sets of the form \eqref{eq3.8b}.
The claim is proved.

For each small $\varepsilon >0$ there is a finite number $C_1$ depending on $\varepsilon$, such that
the set $X\setminus\{z_0\}$ can be covered by $C_1$ sets, each either an $\varepsilon$-square
or an
$\varepsilon$-star. We may cover each  $\varepsilon$-square by $[\lambda^n+1]$ sets that are  $(\lambda^{-n}\varepsilon,\varepsilon)$-rectangles. Consider an $\varepsilon$-star centered at a singular point of order $a'$. Each of the $2a'+2$ sectors of the  $\varepsilon$-star between a horizontal and the nearest vertical bisectrices is an open $\varepsilon$-square. Hence, the punctured star can be covered by $(2a'+2)\cdot [\lambda^n+1]$  $(\lambda^{-n}\varepsilon,\varepsilon)$-rectangles, and the puncture is covered by a $(\lambda^{-n}\varepsilon,\varepsilon)$-star.

By the proof of Proposition \ref{prop3.110} 
each $(\lambda^{-n}\varepsilon, \varepsilon)$-rectangle can be written in the form \eqref{eq3.8a} for $\varepsilon$-squares $A_l$, and each  $(\lambda^{-n}\varepsilon,\varepsilon)$-star can be written in the form \eqref{eq3.8a} for $\varepsilon$-stars $A_l$.
Hence, there is a finite constant $C_2$, such that for each natural number $n$ the set $X$ can be covered by no more than $C_2 \cdot (\lambda^n+1)$ sets of the form \eqref{eq3.8a}.  By the claim, $X\setminus\{z_0\}$ can be covered by no more than $C_2\big(1+\frac{(n+1)(N(n+1)+1)}{2}\big) \cdot (\lambda^n+1) $ different non-empty sets of the form
\eqref{eq3.8b} with each $S_l^N$ replaced by the slightly bigger $({S_l^N})'$. These sets form a subcover of $\mathcal{A}_{\varepsilon,N}' \vee \varphi^{-1}(\mathcal{A}_{\varepsilon,N}') \ldots \vee \vee  \varphi^{-n}(\mathcal{A}_{\varepsilon,N}')$.

We obtain a cover of $\overline{X^{z_0}}$ by considering for each element of $\mathcal{A}_{\varepsilon,N}'$ the respective element of $\mathcal{A}_{\varepsilon,N}(z_0)$. This gives the inequality
\begin{align*}
& \mathcal{N}(\mathcal{A}_{\varepsilon,N}(z_0) \vee \varphi^{-1}(\mathcal{A}_{\varepsilon,N}(z_0)) \vee \ldots \vee \varphi^{-n}(\mathcal{A}_{\varepsilon,N}(z_0)))\\
\leq &C_2\Big(1 +\frac{(n+1)N((n+1)N+1)}{2}\Big)\cdot
(\lambda^n+1)\,.
\end{align*}
Hence,
\begin{align}\label{eq3.141}
& h(\varphi^{z_0})\nonumber \\
= &  \limsup \frac{1}{n}  \log\Big(\mathcal{N} (\mathcal{A}_{\varepsilon,N}(z_0) \vee \varphi^{-1}(\mathcal{A}_{\varepsilon,N}(z_0)) \vee \ldots \vee \varphi^{-n}(\mathcal{A}_{\varepsilon,N}(z_0))\Big)\nonumber \\
\leq
& \limsup \frac{1}{n}  \log\Big(C_2\big(1+\frac{N(n+1)(N(n+1)+1)}{2}\big)\cdot(\lambda^n+1)\Big) = \log \lambda\,.
 \end{align}
\noindent Lemma \ref{lemm3.7} is proved. \hfill $\Box$

\bigskip

The restriction of the homeomorphism $\varphi^{z_0}$ to $\partial X^{z_0}$  is not equal to the identity mapping on  $\partial X^{z_0}$. The Lemmas \ref{lemm3.10} and \ref{lemm3.11} below produce a self-homeomorphism of a slightly larger bordered Riemann surface whose restriction to the boundary is the identity mapping.

\begin{lemm}\label{lemm3.10} For each integer $\ell$ there exists a self-homeomorphism
$\;\tilde\varphi\;$ of $\left\{ \frac r4 \leq \vert \zeta \vert \leq
\frac r2 \right\} \subset \Delta \subset X$ of entropy zero which
equals $\varphi^{z_0}$ on $\left\{ \vert \zeta \vert = \frac r2
\right\}$ and is equal to multiplication by $e^{\frac{2\pi
i\ell}{2(a+2)}}$ on $\left\{ \vert \zeta \vert = \frac r4 \right\}$ for the integer $\ell$.
\end{lemm}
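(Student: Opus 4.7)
The plan is to build $\tilde\varphi$ as a homeomorphism of the annulus $\{r/4 \leq |\zeta| \leq r/2\}$ which preserves each concentric circle $\{|\zeta| = \rho\}$ setwise and acts on it by an orientation preserving circle homeomorphism; the boundary values will then be matched by an explicit interpolation, and the entropy will vanish because each invariant circle is acted on by a circle homeomorphism.

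First, I would produce an isotopy on the unit circle. By formula \eqref{eq3.20} the restriction of $\varphi^{z_0}$ to $\{|\zeta|=r/2\}$ is, after rescaling, the orientation preserving self-homeomorphism $e^{i\theta} \mapsto \mathfrak{h}(e^{i\theta})\cdot e^{\frac{2\pi i}{a+2}\ell}$ of the unit circle, while on $\{|\zeta|=r/4\}$ the target map is the rigid rotation $e^{i\theta}\mapsto e^{i\theta}\cdot e^{\frac{2\pi i \ell}{2(a+2)}}$. Both are orientation preserving homeomorphisms of $S^1$, and the space of such homeomorphisms is path connected (one may use, for example, the convex combination of lifts to $\mathbb{R}$). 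Hence I can choose a continuous family $g_t$, $t\in[0,1]$, of orientation preserving self-homeomorphisms of $S^1$ with $g_0(e^{i\theta})=e^{i\theta}\cdot e^{\frac{2\pi i \ell}{2(a+2)}}$ and $g_1(e^{i\theta})= \mathfrak{h}(e^{i\theta})\cdot e^{\frac{2\pi i}{a+2}\ell}$.

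Next, define $\tilde\varphi$ on the annulus by
\begin{equation*}
\tilde\varphi(\rho\, e^{i\theta}) \;=\; \rho \cdot g_{t(\rho)}(e^{i\theta}), \qquad \rho \in [r/4,\,r/2],
\end{equation*}
where $t(\rho) = \frac{4\rho}{r}-1$ is the affine bijection from $[r/4,r/2]$ onto $[0,1]$. Since for each $\rho$ the map $e^{i\theta}\mapsto g_{t(\rho)}(e^{i\theta})$ is an orientation preserving self-homeomorphism of $S^1$ depending continuously on $\rho$, the map $\tilde\varphi$ is a self-homeomorphism of the closed annulus. By construction $\tilde\varphi$ agrees with $\varphi^{z_0}$ on $\{|\zeta|=r/2\}$ (plugging in $t=1$ gives the expression in \eqref{eq3.20} after restoring the radius factor) and with the required rigid rotation on $\{|\zeta|=r/4\}$ (plugging in $t=0$).

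Finally, to see that $h(\tilde\varphi)=0$, observe that $\tilde\varphi$ preserves the foliation by circles $\{|\zeta|=\rho\}$ and is the identity on the ``base'' coordinate $\rho$. Thus $\tilde\varphi$ is a skew product over the identity of $[r/4,r/2]$ whose fibre maps are orientation preserving self-homeomorphisms of $S^1$. By a classical result any orientation preserving homeomorphism of the circle has zero topological entropy, and for a skew product with trivial base dynamics and uniformly zero fibre entropy the Bowen inequality $h(\tilde\varphi)\leq h(\mathrm{id})+\sup_\rho h(\tilde\varphi\!\mid\!\{|\zeta|=\rho\})=0$ gives $h(\tilde\varphi)=0$. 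The only point that requires mild care is the continuity and surjectivity of the interpolation family $g_t$; this is the routine obstacle and is handled by working with lifts to $\mathbb{R}$, where the space of homeomorphisms commuting with integer translation is convex.
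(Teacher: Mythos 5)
Your argument is correct and takes a genuinely different route from the paper. The paper decomposes the annulus into the $2(a+2)$ closed sectors $\bar\Omega_j$ cut out by the separatrices, builds $\tilde\varphi$ as a cyclic composition of sector maps constructed in Lemma~3.12, and then establishes zero entropy by an explicit line-counting argument in rectangle coordinates (Lemma~3.13), in the same spirit as the combinatorics already used in the proof of Lemma~3.7. You instead foliate the annulus by concentric circles, produce a skew product over the identity of the radial interval $[r/4,\,r/2]$ with orientation-preserving circle homeomorphisms as fiber maps, and conclude from two classical facts: every homeomorphism of $S^1$ has zero topological entropy, and Bowen's inequality $h(T)\leq h(S)+\sup_y h(T,\pi^{-1}(y))$ for a continuous semiconjugacy $\pi$ from $T$ to a base map $S$. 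Your construction is shorter and more conceptual; the paper's is self-contained and reuses the rectangle-counting machinery it needs anyway for the companion Lemmas~3.11 and~3.13. One point you should make explicit: because the base transformation is the identity, each circle $\pi^{-1}(\rho)$ is $\tilde\varphi$-invariant, and so the relative term $h(\tilde\varphi,\pi^{-1}(\rho))$ in Bowen's inequality coincides with the topological entropy of the restriction $\tilde\varphi\mid\pi^{-1}(\rho)$ — this is the step that legitimately reduces the claim to the circle-homeomorphism fact. With that noted, the path-connectedness of the orientation-preserving circle homeomorphisms (via convex combination of lifts commuting with integer translation, as you say) is the only remaining ingredient, and the proof closes.
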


Recall that $r\exp(\frac{2\pi j i}{2(a+2)}),\, j=1,\ldots, 2(a+2), 0<r<c$,  are the points on the vertical and horizontal bisectrices. Consider the arc $\gamma_j = \Bigl\{ \frac r2 \, e^{i\theta} :
\frac{2\pi j}{2(a+2)} \leq \theta \leq \frac{2\pi(j+1)}{2(a+2)}
\Bigl\}$, $j=0,1,\ldots , 2(a+2)-1$.
We will obtain Lemma
\ref{lemm3.10} from the following lemma.
\begin{lemm}\label{lemm3.12} Let $\psi_j$ be a
self-homeomorphism of $\gamma_j$ which fixes the endpoints of
$\gamma_j$ pointwise and does not fix any other point. Then there is
a self-homeomorphism $\tilde{\psi_j}$ of the truncated sector
$\bar\Omega_j = \left\{ \rho \, e^{i\theta} : \frac r4 \leq \rho
\leq \frac r2 , \frac r2 \, e^{i\theta} \in \gamma_j \right\}$ which
equals the identity on the boundary part $\partial \, \Omega_j
\backslash {\rm Int} \, \gamma_j$ of $\partial \, \Omega_j$, equals
$\psi_j$ on $\gamma_j$, and has entropy zero.
\end{lemm}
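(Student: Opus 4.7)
The plan is to build $\tilde{\mathcal{Z}}$ explicitly as a radial interpolation between the identity on the inner arc and $\mathcal{Z}$ on $\gamma_j$, and then to show that this interpolation has zero entropy because it preserves the circular foliation of $\bar\Omega_j$ and acts on each leaf as a monotone interval homeomorphism. Write $\gamma_j = \{(r/2) e^{i\theta} : \theta \in [\theta_j, \theta_{j+1}]\}$ with $\theta_j = \tfrac{2\pi j}{2(a+2)}$. Since $\mathcal{Z}$ is an orientation-preserving self-homeomorphism of $\gamma_j$ fixing its endpoints, it has the form $(r/2)e^{i\theta} \mapsto (r/2)e^{ih(\theta)}$ for a strictly increasing homeomorphism $h:[\theta_j,\theta_{j+1}]\to[\theta_j,\theta_{j+1}]$ fixing the endpoints. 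Choose a smooth nondecreasing cutoff $\chi:[r/4,r/2]\to[0,1]$ with $\chi(r/4)=0$ and $\chi(r/2)=1$, and set
\[
\tilde{\mathcal{Z}}(\rho e^{i\theta}) \;=\; \rho\, e^{i\,H(\rho,\theta)}, \qquad H(\rho,\theta) \;=\; \chi(\rho)\,h(\theta) + (1-\chi(\rho))\,\theta.
\]

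For each fixed $\rho$ the map $\theta \mapsto H(\rho,\theta)$ is a convex combination of two strictly increasing homeomorphisms of $[\theta_j,\theta_{j+1}]$ fixing the endpoints, hence is itself a strictly increasing homeomorphism of $[\theta_j,\theta_{j+1}]$ fixing the endpoints. Consequently $\tilde{\mathcal{Z}}$ is a self-homeomorphism of $\bar\Omega_j$; it equals $\mathcal{Z}$ on $\gamma_j$ (where $\chi=1$) and equals the identity both on the inner arc $\{|\zeta|=r/4\}$ (where $\chi=0$) and on the two radial sides at $\theta=\theta_j,\theta_{j+1}$ (where $h$ coincides with the identity). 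Thus $\tilde{\mathcal{Z}}$ satisfies all the boundary requirements.

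For the entropy vanishing, note that $\tilde{\mathcal{Z}}$ preserves each arc $\gamma_\rho:=\{\rho e^{i\theta}:\theta\in[\theta_j,\theta_{j+1}]\}$, $\rho\in[r/4,r/2]$. The radial projection $\pi:\bar\Omega_j\to[r/4,r/2]$, $\pi(\rho e^{i\theta})=\rho$, satisfies $\pi\circ\tilde{\mathcal{Z}}=\pi$, so $\pi$ is a semiconjugacy onto the identity on $[r/4,r/2]$. Bowen's inequality then gives
\[
h(\tilde{\mathcal{Z}}) \;\leq\; h(\mathrm{id}_{[r/4,r/2]}) + \sup_{\rho\in[r/4,r/2]} h\bigl(\tilde{\mathcal{Z}}\!\restriction\!\gamma_\rho\bigr) \;=\; \sup_\rho h\bigl(\tilde{\mathcal{Z}}\!\restriction\!\gamma_\rho\bigr),
\]
and each $\tilde{\mathcal{Z}}\!\restriction\!\gamma_\rho$ is an orientation-preserving homeomorphism of a closed interval, which by the classical fact on monotone interval maps has topological entropy zero.

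The main obstacle is to justify the Bowen-type estimate above with sufficient uniformity in $\rho$. I expect the cleanest way around this is a direct computation of covering numbers: cover $\bar\Omega_j$ by finitely many boxes of the form $[\rho_i,\rho_{i+1}]\times[\vartheta_k,\vartheta_{k+1}]$ refining the product of a radial subdivision with an angular subdivision, and observe that because the radial coordinate is preserved and on each angular fiber $\tilde{\mathcal{Z}}$ is a monotone interval map, the number of non-empty joins $\mathcal{A}\vee\tilde{\mathcal{Z}}^{-1}\mathcal{A}\vee\cdots\vee\tilde{\mathcal{Z}}^{-N}\mathcal{A}$ grows at most polynomially in $N$, yielding $h(\tilde{\mathcal{Z}},\mathcal{A})=0$ for every such $\mathcal{A}$, and hence $h(\tilde{\mathcal{Z}})=0$.
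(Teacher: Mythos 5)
Your construction of $\tilde{\mathcal{Z}}$ is essentially the same as the paper's, just written in polar coordinates: the paper first conjugates $\bar\Omega_j$ to the unit square $[0,1]\times[0,1]$ by a diffeomorphism $W$ sending $\gamma_j$ to the top edge, and then defines the interpolation $\alpha(x,y)=\bigl(y\,\alpha_j(x)+(1-y)x,\,y\bigr)$, where $\alpha_j$ is the conjugate of $\mathcal{Z}$. Your $H(\rho,\theta)=\chi(\rho)h(\theta)+(1-\chi(\rho))\theta$ is the same fiber-preserving affine interpolation, with $\rho$ playing the role of $y$. Your verification that each fiber map is a monotone homeomorphism fixing the endpoints, and that the boundary conditions are met, is correct.

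Where the two arguments diverge is the entropy computation. The paper proves the corresponding fact (its Lemma~3.13) by a self-contained covering-number count: after checking that the curves $C_x^m=\alpha^m(\{x\}\times[0,1])$ pairwise intersect at most once (this uses a monotonicity of $(\alpha_y)^m(x)$ in $y$), it bounds the number of regions cut out of the square by $(N+2)n_0+4$ such curves by $O(N^2)$ polygons, hence zero entropy. You instead invoke Bowen's inequality for the radial projection $\pi$, noting that $\pi$ semiconjugates $\tilde{\mathcal{Z}}$ to the identity and that each invariant fiber $\gamma_\rho$ carries a monotone interval homeomorphism, which has entropy zero. This route is valid and actually cleaner; moreover the uniformity worry you raise about Bowen's inequality is unfounded, since Bowen's theorem holds with the plain supremum $\sup_\rho h\bigl(\tilde{\mathcal{Z}}\restriction\pi^{-1}(\rho)\bigr)$ over compact fibers and needs no uniformity when every term vanishes. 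By contrast, your suggested ``direct covering number'' alternative at the end is left as a sketch and is not obviously easier to carry out than the paper's polygon count, which is precisely the nontrivial bookkeeping your Bowen argument lets you skip.
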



\noindent {\bf Proof.}
We may assume that $j=0$. Put $\varrho(t)=\frac{1}{2} t + \frac{1}{4}(1-t)$, $t\in[0,1]$. Then $\varrho(0)=\frac{1}{4}$,  $\varrho(1)=\frac{1}{2}$, and $\varrho$ is a linear homeomorphism from $[0,1]$ onto $[\frac{1}{4},\frac{1}{2}]$. Put $I\stackrel{def}=[0,\frac{2\pi}{2a+2}]$. The mapping
\begin{align*}
[0,1]\times I\ni (t,\theta) \xrightarrow {g} \varrho(t)e^{i\theta} \in \overline{\Omega_0}
\end{align*}
is a homeomorphism. The segment $\{1\}\times I$ is mapped homeomorphically onto
$\gamma_0$. Put $\Psi= g^{-1}\circ \psi_0\circ g\mid \{1\}\times I$.
The mapping $\Psi$ is a self-homeomorphism of $\{1\}\times I$ that is strictly increasing in the second variable.
We need to find a self-homeomorphism $\tilde{\Psi}$ of  $[0,1]\times I$ such that $\tilde{\Psi}\mid \{1\}\times I= \Psi$, the restriction of $\tilde{\Psi}$ to the rest of the boundary of $[0,1]\times I$ is the identity, and the entropy of $\tilde{\Psi}$ equals zero. Then the mapping $\tilde{\psi}_0\stackrel{def}=g\circ \tilde{\Psi}\circ g^{-1}$ satisfies the
requirement of the lemma for $j=0$.

We put
\begin{equation}\label{eq3.31}
\tilde{\Psi}(t ,\,\theta) = (t,\; t\,\,{\Psi}(\theta)\,  +\, (1-t)\,\theta ) , \quad (t,\theta)
\in [0,1] \times I\, ,
\end{equation}
and $\Psi_t(\theta)= t\,\,{\Psi}(\theta)\,  +\, (1-t)\,\theta$. 
The mapping $\tilde{\Psi}$ fixes the coordinate $t$, hence, it fixes each maximal vertical segment in $[0,1]\times I$, and maps the horizontal line segment 
with endpoints $(0,\theta)$ and  $(1,\theta)$ onto the line segment with endpoints $(0,\theta )$ and $(1,\Psi(\theta))$.

We will prove now that the homeomorphism $\tilde{\Psi}$ has
entropy zero. We choose open covers of $[0,1]\times I$ as follows. For a natural number $N$ we consider $N$ vertical line segments in $[0,1]\times I$ which divide $[0,1]\times I$ into $N+1$ vertical strips of
equal width, and $N$ horizontal line segments in $[0,1]\times I$  which divide  $[0,1]\times I$ into $N+1$ horizontal strips of
equal width. The complement of the lines in $[0,1]\times I$ consists of $(N+1)^2$
rectangles $R_l$. Let $\mathcal{A}_N$ be the cover of $[0,1]\times I$ by open $(N+1)^2$ rectangles each containing the closure of one of the $R_l$ and having diameter close to that of $R_l$.
For a suitable sequence of $N$'s we get a refining sequence of covers of $[0,1]\times I$. We prove now that for each $N$ the entropy $h(\tilde{\Psi},\mathcal{A}_N)$ is equal to zero.

The mapping ${\Psi}$ has no fixed point on the interior ${\rm Int} I$ of $I$. Hence, either ${\Psi}(\theta)> \theta$ for each $\theta\in  {\rm Int} I $, or ${\Psi}(\theta)< \theta$ for each $\theta$ in ${\rm Int} I$.
We assume that the first case holds. (In the second case we conjugate the mapping by $(t,\theta)\to (t,-\theta)$.)
Then for each $\theta \in  {\rm Int} I $ the mapping $t\to \Psi_t(\theta), \, t\in I,$ is strictly increasing.

Let $\Psi_t^k$ be the $k$-th iterate of $\Psi_t$ (considered as function of $\theta$ for fixed parameter $t$). Then $\tilde{\Psi}(t,\theta)=(t,\Psi_t(\theta))$, and
for the $k$-th iterate $(\tilde{\Psi})^k$ we have by induction
$$
\tilde{\Psi}^k\big(t,\theta\big)=\tilde{\Psi}\big(t,\Psi_t^{k-1}(\theta)\big)=
\big(t, \Psi_t^{k}(\theta)\big)\,.
$$

We claim that for each $k\geq 1$ the mapping $t\to \Psi_t^{k}(\theta))$
is strictly increasing for each $\theta $ in the interior of $I$. This is clear for the linear mapping $t\to \Psi_t(\theta)$ for $\theta \in {\rm Int} I$. By induction the claim for $k+1$ is obtained as follows.

Suppose we know that for all $0<t_1<t_2<1$ and $\theta\in {\rm Int} I$
the inequality
$\Psi_{t_2}^k(\theta)>\Psi_{t_1}^k(\theta)$ holds.
We have for all $t\in [0,1]$
\begin{align}\label{eq3.31'a}
\Psi_{t}^{k+1}(\theta)= & t\,{\Psi}\big(\Psi_{t}^k(\theta)\big)\,+\, (1-t)\,\Psi_{t}^k(\theta)\,.
\end{align}

Since ${\Psi}$ strictly increases and the mapping $t\to \Psi_{t}^{k}(\theta)$ strictly increases, we obtain
\begin{equation}\label{eq3.31'}
t\,{\Psi}\big(\Psi_{t_1}^k(\theta)\big))\,+\, (1-t)\,(\Psi_{t_1})^k(\theta)\,<\,
t\,{\Psi}\big(\Psi_{t_2}^k(\theta)\big)\,+\, (1-t)\,\Psi_{t_2}^k(\theta)\,
\end{equation}
for each $t\in[0,1]$, $\theta\in {\rm Int} I$, and $t_1,t_2\in [0,1]$, $t_1<t_2$.
Since ${\Psi}(\theta')>\theta'$ for each $\theta'$ in the interior of $I$,
we obtain 
the inequality
\begin{align}\label{eq3.31'b}
t_1\,{\Psi}\big(\Psi_{t_2}^k(\theta)\big)\,+\, (1-t_1)\,\Psi_{t_2}^k(\theta)\,<\,t_2\,{\Psi}\big(\Psi_{t_2}^k(\theta)\big)\,+\, (1-t_2)\,\Psi_{t_2}^k(\theta)\,.
\end{align}
Combining the last inequality \eqref{eq3.31'b} with inequality \eqref{eq3.31'} for $t=t_1$ and  equality \eqref{eq3.31'a} we obtain the claim.

The claim implies that for each natural number $k$ and for each $\theta$ in the interior of $I$ 
the curve
\begin{equation}\label{eq3.32}
\mathcal{C}_k^{\theta}\stackrel{def}=  \{\big(t, \Psi_t^k(\theta)\big)\,, t \in[0,1]\}= \tilde{\Psi}^k\big([0,1]\times \{\theta\}\big)
\end{equation}
intersects each vertical line and each horizontal line in $[0,1]\times I$
at most once. Since $\mathcal{C}_{k+l}^{\theta}= \tilde{\Psi}^k(\mathcal{C}_{l}^{\theta})   $
the intersection of two curves of form \eqref{eq3.32} consists of at most one point. The intersection behaviour of the union of the set of vertical and horizontal lines and the set curves $\mathcal{C}_{l}^{\theta}$ is the same as the intersection behaviour of straight lines.
A similar argument as in the proof of Lemma \ref{lemm3.7} proves that the entropy $h(\tilde{\Psi},\mathcal{A}_N)$ equals zero for each $N$. Hence, $h(\tilde{\Psi})=0$. Lemma \ref{lemm3.12} is proved. \hfill $\Box$

\bigskip

The following lemma states in particular that for a bordered Riemann surface $\overline X$ one can change a mapping class $\mathfrak{m}\in\mathfrak{M}(\overline {X}, \partial X)$ without increasing entropy  by a product of Dehn twists about circles that
are homologous to boundary curves.

\begin{lemm}\label{lemm3.11} Let $\hat\varphi$ be a twist on an annulus
$\left\{ \frac r8 \leq \vert \zeta \vert \leq \frac r4 \right\}$, more precisely, for some real constant $\nu$ we have $\hat\varphi
(\zeta) = \zeta \cdot e^{i\nu \cdot \left(\log \vert \zeta \vert  -
\log \frac r8 \right)}$. Then $h(\hat\varphi) = 0$.
\end{lemm}

\noindent {\bf Proof.}
Denote by $A$ the annulus $A= \left\{\, \frac{r}{8}\leq |\zeta| \leq \frac{r}{4}
\right\}\,$. Let $N$ be a natural number. Divide $A$ by
$N -1\,$ circles $\,{\rm Circ}_j= \left\{ \, |\zeta|= \frac{r}{8}
\cdot (1+\frac{j}{N})\, \right\}, \; j=1,\ldots,N-1,\;$ and
$N\,$ radii ${\rm Rad} _j\,= \left\{\,\rho \cdot e^{\frac{2\pi i
}{N}j}:\, \frac{r}{8} \leq \rho \leq \frac{r}{4}\right\}, \;
j=1,\ldots,N $ into a collection  $\mathcal{Q}^N$ of connected open sets $Q_l^N$. Locally the circles and the radii are straight line segments in logarithmic coordinates, and the $Q_l^N$ are open rectangles in these coordinates. Similarly as in the proof of Lemma \ref{lemm3.7} we take for each $l$ an open rectangle $({Q_l^N})'$ in local logarithmic coordinates that contains the closure $\overline{Q_l^N}$ and has diameter not exceeding twice the diameter of $Q_l^N$. We obtain an open cover $\mathcal{A}_N$ of the annulus $A$. For an increasing sequence $N_k$ of natural number with $N_{k+1}$ being an integer multiple of $N_k$ and a suitable choice of the $({Q_l^{N_k}})'$ we obtain a refining sequence of open covers.

For each $N$ the annulus can be covered by the collection of sets $(Q_0^N)'\cap\ldots \cap\hat{\varphi}^{-n}((Q_n^N)')$ for which the sets
$Q_0^N\cap\ldots \cap\hat{\varphi}^{-n}(Q_n^N)$ 
( $Q_l^N \in \mathcal{Q}^N$) are not empty. These sets $(Q_0)'^N\cap\ldots  \cap\hat{\varphi}^{-n}((Q_n^N)')$ are elements of a subcover
of the cover $\mathcal{A}_N\vee\ldots\vee\hat{\varphi}^{-n}(\mathcal{A}_N)$.    
We will estimate the number of these sets for each $N$ and $n$.

Each iterate $\hat{\varphi}^{-\ell},\, \ell=1,\ldots, n,$ is real linear in local logarithmic coordinates. It maps each circle ${\rm Circ} _j, \,
j=1,\ldots,N-1,\,$ onto itself and each radius ${\rm
Rad} _j,\, j=1,\ldots N,$ onto the curve
\begin{equation}\label{eq3.37}
\hat \varphi ^{-\ell}({\rm Rad} _j) = \left\{\, e^x \cdot e^{i
\frac{2\pi}{N} \, j + \, i \cdot \ell \cdot \nu \left( x - \log
\frac r8 \right)} : \log \frac r8 \leq x \leq \log \frac r4
\right\}.
\end{equation}

Each curve \eqref{eq3.37} intersects each given set $Q_l^N\in \mathcal{Q}^N$ along at most $c(\nu,N)\cdot \ell$ connected connected components, each a straight line segment in logarithmic coordinates on  $Q_l^N$. 
Hence, as in the proof of Lemma \ref{lemm3.7} the union of the $\hat \varphi ^{-\ell}({\rm Rad} _j), \, j=1,\ldots, N,\, \ell=1,\ldots ,n,\,$ intersects the given set $Q_l^N$ along at most $c(\nu,N) (N+\ldots +Nn) \leq c'(\nu,N) n^2$ straight line segments in logarithmic coordinates. They divide $Q_l^N$ into at most $C'(\nu,N) n^4$ connected components. Hence, the number
of non-empty sets of the form $Q_0^N\cap\ldots \hat{\varphi}^{-n}(Q_n^N)$
for sets $Q_l^N\in \mathcal{Q}^N$ does not exceed $C(\nu,N)n^4$. Here $c(\nu,N), c'(\nu,N), C'(\nu,N),$ and $C(\nu,N)$ are positive constants depending only on $\nu$ and $N$.

Hence, $\mathcal{N}(\mathcal{A}_N\vee\ldots\vee \hat{\varphi}^{-n}(\mathcal{A}_N))\leq C(\nu,N) n^4$. This implies that the entropy of
$\hat\varphi$ with respect to the cover $\mathcal{A}_N$ does not exceed
$\overline\lim_{n \to \infty} \, \frac1n \log (C(\nu , N) \cdot
n^4) = 0$. The estimate is obtained for each element of a refining
sequence of open covers of the annulus. Hence $h(\hat\varphi) = 0$.
\hfill $\Box$

\medskip

\noindent {\bf Proof of Lemma \ref{lemm3.10}.} Let $\ell$ be the integer from equality \eqref{eq3.10} and let $k$ be the
smallest positive integer for which $k\cdot \ell$ is a multiple of
$2(a+2)$. Then $\varphi^{z_0}$ permutes the $\gamma_j$ in cycles of
length $k$ and $(\varphi^{z_0})^k$ fixes each $\gamma_j$ setwise. In
particular, for each $j$ the mapping $(\varphi^{z_0})^k \mid \gamma_j$
satisfies the
conditions stated for the function $\psi_j$
of Lemma \ref{lemm3.12}.
Let $
\gamma_{\ell_1}
\overset{\varphi^{z_0}}{-\!\!\!-\!\!\!-\!\!\!\longrightarrow}
\gamma_{\ell_2} \longrightarrow \ldots \longrightarrow
\gamma_{\ell_k}
\overset{\varphi^{z_0}}{-\!\!\!-\!\!\!-\!\!\!\longrightarrow}
\gamma_{\ell_1}$ be one of the cycles for $\varphi^{z_0}$. For each
$\ell_j$, $j = 1,\ldots , k-1$, we take any homeomorphism
$\tilde\varphi_{\ell_j} : \bar\Omega_{\ell_j} \to
\bar\Omega_{\ell_{j+1}}$ which equals $\varphi^{z_0} \mid
\gamma_{\ell_j}$ on $\gamma_{\ell_j}$ and equals multiplication by
$e^{\frac{2\pi i \ell}{2(a+2)}}$ on the rest of the boundary of
$\bar\Omega_{\ell_j}$. Let $\psi_{\ell_1}$ be the self-homeomorphism
of $\bar\Omega_{\ell_1}$ obtained by applying Lemma \ref{lemm3.12}
to $(\varphi^{z_0})^k \mid \gamma_{\ell_1}$. We put
$\tilde\varphi_{\ell_k} = \psi_{\ell_1} \circ
(\tilde\varphi_{\ell_{k-1}} \circ \ldots \circ
\tilde\varphi_{\ell_1})^{-1}$. Then on $\gamma_{\ell_k}$ the equality
$\tilde\varphi_{\ell_k}=(\varphi^{z_0})^k\circ (\varphi^{z_0})^{-k+1}=\varphi^{z_0}$ holds. Denote by $\tilde{\varphi}_{cyc}$
the mapping that is defined by $\tilde\varphi_{\ell_{j}} $ on $\bar\Omega_{\ell_j}$ for $j=1,\ldots,k$.

The mapping that equals $\psi_{\ell_1}=\tilde\varphi_{\ell_k} \circ
\ldots \circ \tilde\varphi_{\ell_2} \circ \tilde\varphi_{\ell_1}$ on $\bar\Omega_{\ell_1}$ and $\tilde\varphi_{\ell_{j-1}} \circ \ldots \circ
\tilde\varphi_{\ell_1} \circ \tilde\varphi_{\ell_k} \circ \ldots
\circ \tilde\varphi_{\ell_{j+1}} \circ \tilde\varphi_{\ell_j}$ on $\bar\Omega_{\ell_j}$ has entropy zero, since by Lemma \ref{lemm3.11} $h(\psi_{\ell_1}) = 0$ and the mapping on $\bar\Omega_{\ell_j}$ is coniugate to  $\psi_{\ell_1}$. Moreover, the mapping
is equal to the $k$-th iterate  $\tilde{\varphi}_{cyc}$.
Hence, $h(\tilde{\varphi}_{cyc})=0$.

\smallskip

Proceed in the same way with all cycles of $\gamma_j$ under
iteration by $\varphi^{z_0}$.

\smallskip

The obtained mappings $\tilde\varphi_j$ map $\bar\Omega_j$
homeomorphically onto $e^{\frac{2\pi i \ell}{2(a+2)}} \,
\bar\Omega_j$. They match together to give a well-defined
self-homeomorphism $\tilde\varphi$ of the annulus $\left\{ \frac r4 \leq \vert
\zeta \vert \leq \frac r2 \right\}$ which equals $\varphi^{z_0}$ on
$\left\{ \vert \zeta \vert = \frac r2 \right\}$ and equals
rotation by $e^{\frac{2\pi i \ell}{2(a+2)}}$ on $\left\{ \vert \zeta
\vert = \frac r4 \right\}$. The $\psi_j$ match together to give a
self-homeomorphism $\psi$ of the annulus such that $\tilde\varphi^k
= \psi$. Since $h(\psi) = 0$ we have $h(\tilde\varphi)=0$. \hfill
$\Box$

\bigskip

\noindent{\bf Proof of Theorem \ref{thm3.5}.}
Put $\varphi_0 = \varphi^{z_0}$ on $\overline{X^{z_0}}$, take $r=c$, and put
$\varphi_0$ equal to the mapping $\tilde\varphi$ of Lemma
\ref{lemm3.10} on the annulus $\left\{ \frac c4 \leq \vert \zeta
\vert \leq \frac c2 \right\}$ with distinguished coordinates around
$z_0$. Let $\varphi_0$ be equal to the homeomorphism $\hat\varphi$
of Lemma \ref{lemm3.11} with $\nu = -\frac{2\pi \ell}{2(a+2)\cdot\log
2 }$ on the annulus $\left\{ \frac c8 \leq \vert \zeta \vert \leq
\frac c4 \right\}$ and equal to the identity on $\left\{ \vert \zeta
\vert \leq \frac c8 \right\}$. Then $\varphi_0$ is isotopic to $\varphi$ through self-homeomorphisms of $X$ that fix $E$,   $\varphi_0$ fixes the disc $\Delta'\stackrel{def}= \left\{\vert \zeta \vert \leq
\frac c8 \right\}$ around $z_0$ and has the same entropy as $\varphi$.
It follows that the
isotopy classes of the two mappings $\varphi$ and  $\varphi_0$ differ by a Dehn twist about a curve that is homologous to $\partial \Delta'$. By Lemma \ref{lemm3.11}  the homeomorphism $\varphi_0$ can be chosen to represent any a priory given preimage  of the class of $\varphi$ in $\mathfrak{M}(X; \{z_0\}, E\setminus\{z_0\})$ under the mapping $\mathcal{H}_{z_0}:  \mathfrak{M}(X\setminus \Delta';\partial \Delta', E\setminus\{z_0\})\to
\mathfrak{M}(X; \{z_0\}, E\setminus\{z_0\})$.
Theorem \ref{thm3.5} is proved. \hfill $\Box$

\bigskip

The following theorem concerns the slightly more general case when
the self - homeomorphism $\varphi$ of a Riemann surface of first
kind is changed without increasing entropy on the union of discs
around points of a subset of the set of distinguished points (rather
than on a single disc around a fixed distinguished point).

\medskip

\begin{thm}\label{thm3.6} Let $X$ be a connected closed
Riemann surface with a set $E$ of distinguished points. Assume that
$X \backslash E$ is hyperbolic. Let $\varphi$ be a
non-periodic absolutely extremal self-homeomorphism
of $X$ which fixes $E$ setwise. Suppose there is
a $\varphi$-invariant subset $E' \subset E$. 

\smallskip

Then $\,\varphi\,$ is isotopic through self-homeomorphisms which fix
$\,E\,$ setwise to a self-homeomorphism $\varphi_0$ of the same
entropy $h(\varphi_0) = h(\varphi)$ with the following property.

\smallskip

For each $z \in E'$ there is a closed round disc $\overline\delta_z$
in distinguished coordinates for the quadratic differential of
$\varphi$, such that $z$ is the center of $\delta_z$, and
$\varphi_0$ maps each $\delta_z$, $z \in E'$, conformally onto
another disc of the collection and maps the center of the source
disc to the center of the target disc. Moreover, if a
$\varphi$-cycle of points in $E'$ has length $k$, then the iterate
$\varphi_0^k$ fixes pointwise the disc $\delta_z$ around each point
$z$  of the cycle.
\end{thm}
Denote by $X'$ the bordered Riemann surface which is the complement of the union of the open discs $\cup_{z \in E'}\delta_z$.
The isotopy class of the restriction $\varphi_0|X'$ of the mapping $\varphi_0$ of Theorem \ref{thm3.6} is determined by $\varphi$ up to a product of powers of Dehn twists around simple closed curves that are homologous to
the boundary circles of $X'$. The mapping $\varphi_0$ of Theorem \ref{thm3.6} can be chosen so that the restriction $\varphi_0|X'$ represents the class corresponding to an a priory chosen product of powers of Dehn twists about the boundary circles.
The proof of Theorem \ref{thm3.6} follows along the same lines as the proof of
Theorem \ref{thm3.5} and is left to the reader.

\smallskip

We have the following corollaries. We formulate Corollary \ref{cor3.2}
concerning mapping classes of braids separately because it is simple
and useful, although it is a particular case of Corollary \ref{cor3.3}.
We call a braid $b$ irreducible if the associated mapping class $\mathfrak{m}_{b,\infty}$ is irreducible.

\medskip

\begin{cor}\label{cor3.2} Let $b \in {\mathcal B}_n$ be an
irreducible braid and let ${\mathfrak m}_b =\Theta_n(b) \in
{\mathfrak M} ({\mathbb D} ; \partial \, {\mathbb D} , E_n)$ be its
mapping class. Then
$$
h(\reallywidehat{{\mathfrak m}_b})
= h(\reallywidehat{{\mathfrak m}_b \cdot {\mathfrak m}_{\Delta_n^{2k}}})
=h(\reallywidehat{{\mathfrak m}_{b,\infty}})
$$
for each integer $k$.
\end{cor}

\smallskip

Recall that ${\mathfrak m}_{b,\infty} = {\mathcal H}_{\infty}
({\mathfrak m}_b)$ (see section \ref{sec:2.2a}), $\Delta_n$
is
the Garside element 
in ${\mathcal B}_n$ and
by $\widehat {\mathfrak{m}}$ we denote the conjugacy class of a
mapping class $\mathfrak{m}$.

\medskip

\noindent {\bf Proof.} Identify the set of elements of ${\mathfrak
m}_b$ with the set of elements  ${\mathfrak m}'_b \subset
{\mathfrak m}_{b,\infty}$ which are equal to the identity outside
the unit disc $\overline{\mathbb D}$. 
The entropy of a class is the infimum of entropies of mappings in
the class. We obtain the inequality $h(\widehat{{\mathfrak m}_b})  =
h(\widehat{{\mathfrak m}'_b})  \geq h(\widehat{{\mathfrak
m}_{b,\infty}})$.

On the other hand Theorem \ref{thm3.5} assigns to each absolutely extremal
representative of $\widehat{{\mathfrak m}_{b,\infty}}$ a
representative of $\widehat{{\mathfrak m}_{b,\infty}}$ which has the
same entropy and equals the identity in a neighbourhood of infinity
and, thus represents $\widehat{{\mathfrak m}'_b}$. Hence
$h(\widehat{{\mathfrak m}_{b,\infty}}) \geq h(\widehat{{\mathfrak
m}'_b})= h(\widehat{{\mathfrak m}_b})$. The first equality of the
statement of the corollary is Lemma \ref{lemm3.11} and the fact that the mapping class corresponding to $\Delta_n^2$ is the Dehn twist about a curve that is homologous in $\mathbb{D}\setminus E_n$ to $\partial \mathbb{D}$. \hfill $\Box$

\medskip

\begin{cor}\label{cor3.3} Let $X$ be a bordered Riemann
surface, and let $E$ be a finite set of distinguished points in
${\rm Int} \, X$. Let ${\mathfrak m}$ be an irreducible relative
isotopy class of mappings, ${\mathfrak m} \in {\mathfrak M} (X ;
\partial X , E)$. 
Then for the mapping $\mathcal{H}_{\zeta}$ defined by equation \eqref{eq2.10} in Section \ref{sec2:3}
the following equalities hold
$$
h(\widehat{\mathfrak m}) = h(\widehat{{\mathfrak m} \cdot {\mathfrak
m}}_D) = h(\widehat{\mathcal{H}_{\zeta}\mathfrak m})
$$
where ${\mathfrak m}_D$ is the mapping class of  an arbitrary
product of powers of Dehn twists about simple closed curves which
are free homotopic to boundary curves of $X$.
\end{cor}
The proof follows along the same lines as the proof of
Corollary \ref{cor3.2}. It relies on Theorem \ref{thm3.6}.

\chapter[Conformal invariants. The Main Theorem]    {Conformal invariants of homotopy classes of curves. The Main theorem.}
\label{chapter4}
\setcounter{equation}{0}

In this chapter we define the entropy and the conformal module of conjugacy classes of braids. We formulate our Main Theorem, stating that the entropy of each conjugacy class of braids is inverse proportional with factor $\frac{\pi}{2}$ to its conformal module.
Here we prove this theorem in the case of irreducible braids. The general case will be proved in the next two chapters.

The Main Theorem allows to apply methods and known results concerning the more intensively studied entropy to problems whose solutions are based on the concept of the conformal module. Vice versa, for instance
methods of quasi-conformal mappings related to the concept of extremal length and conformal module can be applied to give upper and lower bounds for the entropy of $3$-braids, the bounds differing by universal multiplicative constants (see Chapter \ref{chapter3-braids}).

We also define the conformal module of conjugacy classes of elements of the fundamental group of complex manifolds, in particular, of the twice punctured complex plane.
A version of the definition
for elements of fundamental g roups,
not merely of their conjgacy classes, appears more effective for applications to quantitative versions of the Gromov-Oka Principle that will be given in later Chapters.
In this chapter we compute these quantities explicitly in a number of examples.

\section{Ahlfors' extremal length and conformal module}
\label{sec:4.1a}

Ahlfors
defined the extremal
length of a family of curves in the complex plane as follows.
Let $\Gamma$ be a family each \index{$\Gamma$}
member of which consists of the union of no more than countably
many connected locally rectifiable (open, half-open, or closed) arcs or connected closed curves (loops) in the complex plane. (We do not require that this union reparametrizes to a
single connected curve.  )
In this context we will call also the elements of $\Gamma$
"curves".
Ahlfors defined the extremal length of the family $\Gamma$
as follows. For a non-negative measurable function $\varrho$ in the complex plane
he defines $A(\varrho) = \int\!\!\!\int_{\mathbb{C}} \varrho^2$. For an element $\gamma \in \Gamma$ and such a function $\varrho$  he puts
$L_{\gamma}(\varrho)= \int_{\gamma} \varrho |dz|$, if
$\varrho$ is measurable on $\gamma$ with respect to arc length
and $L_{\gamma}(\varrho)=\infty$ otherwise. Put  $L(\varrho)=
\inf_{\gamma \in \Gamma}L_{\gamma}(\varrho)$.
The extremal length of the family $\Gamma$ is the following
value
$$
\lambda(\Gamma)=
\sup_{\varrho}\frac{L(\varrho)^2}{A(\varrho)},
$$
where the supremum is taken over all non-negative measurable
functions $\varrho$ for which $A(\varrho)$ is finite and does
not vanish.
\index{$\lambda(\Gamma)$}  \index{$\mathcal{M}(\Gamma)$} \index{extremal length ! of families of curves} \index{conformal module ! of families of curves} \index{extremal length ! of a rectangle} \index{extremal length ! of an annulus}

It is not hard to see from this definition that the extremal length
is invariant under conformal mappings (Theorem 3 in
\cite{A1}, Chapter I.D).

The conformal module $\mathcal{M}(\Gamma)$ of the family $\Gamma$ is defined to be
\begin{align*}
\mathcal{M}(\Gamma)=\frac{1}{\lambda(\Gamma)}\,.
\end{align*}

\noindent {\bf Example 1.}
Let $R$ be an open rectangle in the complex plane $\mathbb{C}$. Unless said otherwise
the considered rectangles will always have sides
parallel to the coordinate axes. Denote the length of the horizontal
sides of $R$
by $\sf b$ and the length of the vertical sides by
$\sf a$.
(For instance, we may consider $R= \{z=x+iy:0<x<\sf b,\,
0<y<\sf a\,\}$.) The extremal length of the rectangle is defined to be the extremal length of the family of connected open arcs in the rectangle that join the two horizontal sides. An open arc is said to join the two horizontal sides if its limit set consists of two connected components contained in the closures of different horizontal sides.
A small computation shows that the extremal length of $R= \{z=x+iy:0<x<\sf b,\,
0<y<\sf a\,\}$ equals $\lambda(R)=\frac{\sf a}{\sf
b}$ and the conformal module equals $m(R)= \frac{\sf b}{\sf a}$ (see Example 1 in \cite{A1}, Chapter I.D).
\index{${\sf a}$} \index{${\sf b}$}

For a conformal mapping $\omega:R\to U$ of the rectangle $R$ onto a domain
$U\subset \mathbb{C}$ the image $U$ is called a curvilinear rectangle, if $\omega$ extends to a continuous mapping on the closure $\bar R$, and the restriction to each (open) side of $R$ is a homeomorphism onto its image. The images of the vertical (horizontal, respectively) sides of $R$ are called the vertical (horizontal, respectively) curvilinear sides of the curvilinear rectangle $\omega(R)$. The extremal length of the curvilinear rectangle $U$
is the extremal length of the family of open arcs in $U$ that join the two horizontal curvilinear sides. Thus the extremal length of $U$
equals the extremal length of $R$. (See \cite{A1}, Chapter I.D).
\index{curvilinear rectangle}

\medskip

\noindent {\bf Example 2.} Ahlfors \cite{A1}, Chapter I.D, defined the extremal
length of an annulus  $A= \{z \in \mathbb{C}:\; r_1<|z|<r_2\}$ in the complex plane as
the extremal length of the family of closed
curves which are contained in the annulus and represent
the conjugacy class of the positively oriented generator of the
fundamental group of the annulus. A simple computation (Example 3 in \cite{A1}, Chapter I.D) shows that
the extremal length of an annulus $\,A= \{z \in \mathbb{C}:\; r_1<|z|<r_2\}\,\,$
is equal to $\lambda(A)= \frac{2\pi}{\,\log(\frac{r_2}{r_1})}$, and the
conformal module \index{conformal module ! of an
annulus} of this annulus
equals
$\,m(A)= \frac{1}{2\pi}\,\log(\frac{r_2}{r_1})\,.\, $ \index{$m(A)$} Two
annuli of finite conformal module are conformally equivalent iff
they have equal conformal module. If a manifold $\Omega$ is
conformally equivalent to an annulus $A$ in the complex plane, its conformal module is
defined to be $m(A)$. Recall that any domain in the complex plane
with fundamental group isomorphic to the group of integer numbers
$\mathbb{Z}$ is conformally equivalent to an annulus.

\smallskip

\noindent Example 2 in \cite{A1}, Chapter I.D, shows that the extremal length of the family of open arcs $\gamma$ in $A= \{z \in \mathbb{C}:\; r_1<|z|<r_2\}\,$ that join the two boundary circles is equal to $\frac{\log\frac{r_2}{r_1}}{2\pi}$.

\medskip
{\bf Example 3.} The following example is a generalization of Example 1.
Let $\Phi$ be a real $C^1$-function $\Phi$ on the closure of an open interval $J$, and let $\textsf{b}$ be a positive number. We consider the curvilinear rectangle $R_{J,\Phi, \textsf{b}}\stackrel{def}{=} \{x+iy \in \mathbb{C}: y \in J,\, x \in (\Phi(y), \Phi(y)+\textsf{b})\}$.
The following lemma holds.
\begin{lemm}\label{lemm216} Let $\Phi$ be a real $C^1$-function on the closure of an open interval $J$ and let $\textsf{b}$ be a positive number.
Denote by $\Gamma_{\Phi}$ the set of curves in the curvilinear rectangle $R_{J,\Phi, \textsf{b}}= \{x+iy \in \mathbb{C}: y \in J,\, x \in (\Phi(y),
\Phi(y)+\textsf{b})\}$ which join the two horizontal curvilinear sides.  Suppose the absolute value $|\Phi'|$ of the derivative of $\Phi$ is bounded by the constant $C$. Then
$$
\lambda(\Gamma_{\Phi}) \leq (1 + C^2) \lambda(\Gamma_0),
$$
where $ \Gamma_0$ is the family corresponding to the function $\Phi_0$ which is identically equal to zero.
\end{lemm}
\index{$R_{J,\Phi, \textsf{b}}$}
\noindent {\bf Proof}. The proof is similar to Example 1 in Chapter 1 of \cite{A1}, Chapter I.D. For any measurable function $\varrho$ on $\mathbb{C}$ and any  $x \in (0,\textsf{b})$ we have
$$
\int_J \varrho(x + \Phi(y) +i y) \sqrt{1 +\Phi'(y)^2}dy \geq L_{\Gamma_{\Phi}}(\varrho).
$$
Integrate over the interval $(0,\textsf{b})$ and apply Fubini's Theorem and H\"older's inequality. Using the bound for $|\Phi'|$ we obtain
$$
\Big(\int \int_{R_{\Phi, \textsf{b}}} dm_2 \int \int_{R_{\Phi, \textsf{b}}} \varrho^2 \cdot(1+ C^2) dm_2\Big)^{\frac{1}{2}}\geq \textsf{b} L_{\Gamma_{\Phi}}(\varrho).
$$
Denote by $|J|$ the length of the interval $J$. We obtain
$$
\textsf{b} |J| (1+ C^2) A(\varrho) \geq \textsf{b}^2 L_{\Gamma_{\Phi}}(\varrho)^2.
$$
Hence,
$$
\frac{L_{\Gamma_{\Phi}}(\varrho)^2}{A(\varrho)} \leq \frac{|J|}{\textsf{b}}\cdot(1+C^2) = \lambda(\Gamma_0) (1+C^2).
$$
Taking the supremum over all measurable functions $\varrho$ with finite non-vanishing integral we obtain
$$
\lambda(\Gamma_{\Phi}) \leq (1+C^2)  \lambda(\Gamma_0).
$$
The lemma is proved. \hfill $\Box$

\medskip

For later use we formulate three theorems of Ahlfors.

For two families  $\Gamma_1$ and $\Gamma_2$ as above the
following relation is introduced by Ahlfors:
$\Gamma_1 < \Gamma_2$ if each "curve"  $\gamma_2 \in
\Gamma_2$ contains a "curve" $\gamma_1 \in \Gamma_1$.

Suppose $\Gamma_1$ and $\Gamma_2$ are contained in disjoint measurable sets.
Ahlfors defines the sum $\Gamma_1+\Gamma_2$ of two such
families as follows. Consider a "curve" $\gamma_1 \in \Gamma_1$ and a "curve"
$\gamma_2 \in \Gamma_2$. The sum $\gamma_1 +\gamma_2$ is the '' curve'' consisting of the union of $\gamma_1$ and $\gamma_2$. The set  $\Gamma_1+\Gamma_2$ is the set consisting of all sums $\gamma_1 +\gamma_2$ for $\gamma_1 \in \Gamma_1$ and
$\gamma_2 \in \Gamma_2$.

The following theorems were proved by Ahlfors.

\medskip

\noindent {\bf Theorem A.}(\cite{A1}, Ch.1.D Theorem 2) {\it If
$\Gamma' < \Gamma$ then $\lambda(\Gamma') < \lambda(\Gamma)$.}

\medskip

\noindent {\bf Theorem B.}(\cite{A1}, Ch.1.D Theorem 4) {\it If
the families $\Gamma_j$ are contained in disjoint measurable
sets then
$\sum \lambda(\Gamma_j) \leq \lambda(\sum \Gamma_j)$.}

\medskip
\begin{cor}\label{cor4.0}
Let $R$ be a rectangle and let $\alpha$ be an open arc in $R$, that extends to a closed arc
with endpoints on different open vertical sides of $R$. The arc $\alpha$ cuts $R$ into two curvilinear rectangles $R_1$ and $R_2$. The following estimate holds:
\begin{align}\label{eq4.1a}
\lambda(R) \geq \lambda(R_1) +\lambda(R_2)\,.
\end{align}
Further, let  $A= \{z \in \mathbb{C}:\; r_1<|z|<r_2\}\,$ be an annulus in the complex plane and $\alpha$ an open arc in $A$ that extends to a closed arc in $\bar A$ with endpoints on different boundary circles. The arc cuts $A$ into a curvilinear rectangle $R$. The following inequality holds
\begin{align}\label{eq4.1b}
\lambda(A) \geq \lambda(R)\,.
\end{align}
\end{cor}

\noindent {\bf Proof.} To prove inequality \eqref{eq4.1a} we let $R_1$ be the curvilinear rectangle below of $\alpha$ and $R_2$ the curvilinear rectangle above of $\alpha$. Let $\Gamma_j,\, j=1,2,$ be the family of open arcs in $R_j$ that join the two open horizontal curvilinear sides of $R_j$, and let $\Gamma$ be the family of open arcs in $R$ that join the two open horizontal sides of $R$. Then
by Theorem A (see \cite{A1}, Ch.1 Theorem 2)
$\lambda(\Gamma)\geq \lambda(\Gamma_1+\Gamma_2)$. By Theorem B (see \cite{A1}, Ch.1 Theorem 4)
$\lambda(\Gamma_1+\Gamma_2)\geq \lambda(\Gamma_1)+\lambda(\Gamma_2)$. By Example 1  of Section \ref{sec:4.1a} the
equalities $\lambda(\Gamma)=\lambda(R)$, $\lambda(\Gamma_j)=\lambda(R_j)$, $j=1,2$ hold.

For the proof of inequality \eqref{eq4.1b} we let $\Gamma(A)$ be the family of curves in $A$ that
represent the positively oriented generator of the fundamental group of $A$.
By Example 2 of Section \ref{sec:4.1a} the extremal length
 $\lambda(A)$ is equal to the extremal length $\lambda(\Gamma(A)) $.
Further,  the set  $A \setminus \alpha$ is a curvilinear rectangle with horizontal sides being the strands of $\alpha$ that are reachable from $A \setminus \alpha$ by moving clockwise or counterclockwise, respectively. Its extremal length $\lambda(A \setminus \alpha)$
is the extremal length
$\lambda(\Gamma(A \setminus \alpha))$ in the sense of Ahlfors \cite{A1} of the family $\Gamma(A \setminus \alpha)$
of curves in the curvilinear rectangle $A \setminus \alpha$ that join the two horizontal sides of the curvilinear rectangle.
By Theorem A (see \cite{A1}, Ch.1 Theorem 2) the  inequality
\begin{align}\label{eqfin2}
\lambda(\Gamma(A \setminus \alpha)) \leq  \lambda(\Gamma(A))\,
\end{align}
holds.
\hfill $\Box$
\index{curve ! simple closed dividing}

\section{Another notion of extremal length.}
\label{sec:4.1b}
We will use another notion of the extremal length of families of curves. This notion is in the spirit of the definition of the Kobayashi metric and
applies also to arbitrary families of curves, that are
homotopy classes of curves contained in complex manifolds, maybe, different from the complex plane.
\begin{defn}\label{defn4.02}
Let $\mathcal{X} $ be a complex manifold, $X$ a domain in $\mathcal{X} $, and
let $\mathcal{E}_1$ and $\mathcal{E}_2$ be relatively closed subsets of $X$.
Let $\textsf{h}=_{\mathcal{E}_1}{\textsf{h}}_{\,\mathcal{E}_2}$ be a homotopy class of curves
in $X$
with initial point in $\mathcal{E}_1$ and terminal point in $\mathcal{E}_2$. If
$\mathcal{E}_1=\mathcal{E}_2$ we write $\textsf{h}_{\,\mathcal{E}_1}$ instead of
$\textsf{h}=_{\mathcal{E}_1}{\sf{h}}_{\,\mathcal{E}_2}$. A continuous mapping
$f$ from an open rectangle into $X$ which admits a continuous extension
to the
closure of the rectangle (denoted again by $f$) is said to represent $\textsf{h}$ if the lower
open
horizontal side is mapped to $\mathcal{E}_1$, the upper horizontal side is mapped
to $\mathcal{E}_2$
and the restriction of $f$ to the closure of each maximal vertical segment in the rectangle
represents $\textsf{h}$.
\end{defn}
\index{$_{\mathcal{E}_1}{\textsf{h}}_{\,\mathcal{E}_2}$}
The extremal length of homotopy classes of such curves is defined as follows.
\begin{defn}\label{def4.1} For a complex manifold $\mathcal{X}$, a domain $X$ in
$\mathcal{X}$,
two relatively closed connected subsets $\mathcal{E}_1$ and $\mathcal{E}_2$ of $X$ and a homotopy class $\textsf{h}=_{\mathcal{E}_1}\textsf{h}_{\mathcal{E}_2}$ of curves in $X$
with initial
point in $\mathcal{E}_1$ and terminal point in $\mathcal{E}_2$ the extremal length
$\Lambda(\textsf{h})$ is defined as
\begin{align}\label{4.x}
\Lambda(\textsf{h})= & \inf \{\lambda(R): R\, {\rm a \,rectangle\,
which\, admits\, a\, holomorphic\, mapping\, to} \nonumber \\
& X \,{\rm that\,
represents}
\; \textsf{h}\}\,.
\end{align}
The conformal module
$\mathcal{M}(\textsf{h})$ of the class $\sf h$ is defined as
\begin{align}\label{4.y}
\mathcal{M}(\textsf{h})=\frac{1}{\Lambda(\textsf{h})}= & \sup \{m(R): R\, {\rm a \,rectangle\,
which\, admits\, a\, holomorphic\,} \nonumber \\{\rm mapping\, to}
& X \,{\rm that\,
represents}
\; \textsf{h}\}\,.
\end{align}
\end{defn}
\index{$\Lambda(\textsf{h})$} \index{$\mathcal{M}(\textsf{h})$}

A similar definition can be given for free homotopy classes of curves in a complex manifold, in other words for curves representing a conjugacy class of elements of the fundamental group of the manifold.
\begin{defn}\label{defn4.03}
Let $X$ be a complex manifold, and $\hat{e}$ a conjugacy class of elements of the fundamental group of $X$. A continuous mapping from an annulus $A=\{z\in \mathbb{C}: \frac{1}{r}<|z|<r\}$ with $1<r\leq \infty$ is said to represent $\hat e$, if the restriction to the circle $\{|z|=1\}$ with positive orientation (and, hence, the restriction to each curve that is free homotopic to this circle) represents $\hat e$.
\end{defn}
The extremal length and the conformal module of free homotopy classes of curves is defined as follows.
\begin{defn}\label{def4.2}
Let $X$ be a complex manifold, and $\hat{e}$ a conjugacy class of elements of the fundamental group of $X$.
The extremal length $\Lambda(\hat{e})$ of $\hat e$ is defined as
\begin{align}\label{eq4.x'}
\Lambda(\hat{e})\stackrel{def}= & \inf\{\lambda(A):
 A \, {\rm an \, annulus\, that \, admits\, a\, holomorphic\, mapping\, to}  \nonumber\\
&  X \,{\rm that\, represents\,} \hat{e}\}\,.
\end{align}
The conformal module $\mathcal{M}(\hat{e})$ of $\hat e$ is defined as
\begin{align}\label{eq4.y'}
\mathcal{M}(\hat{e})\stackrel{def}=  \frac{1}{\Lambda(\hat{e})}=    & \sup\{m(A):
 A \, {\rm an \, annulus\, that \, admits\, a\, holomorphic\,}\nonumber\\ {\rm mapping\, to\;}
&  X \;{\rm that\, represents\;} \hat{e}\}\;.
\end{align}
\end{defn}
\index{$\Lambda(\hat{e})$} \index{$\mathcal{M}(\hat{e})$}

For a curve $\gamma$ we consider the curve $\gamma^{-1}$ obtained from $\gamma$ by inverting orientation. For a family of curves $\Gamma$ we let $\Gamma^{-1}$ be the family of curves $\gamma^{-1}$ with $\gamma\in \Gamma$.
It follows immediately from the definitions that $\Lambda({\sf h})= \Lambda({\sf h}^{-1})$ and  $\Lambda(\hat{e})= \Lambda(\widehat{e^{-1}})$. Indeed,
suppose a mapping $f:R\to X$ represents $\sf h$. Put $-R=\{-z:z\in R\}$ and $f_-(z)=f(-z)$. Then $f_-:-R\to X$ represents ${\sf h}^{-1}$ The argument for $\hat e$ is similar.

We will see in the sequel that the definition of extremal length in the sense of Ahlfors and the present definition
essentially coincide for the families of curves in the plane which we will consider in this work.
We need the following two lemmas.

\begin{lemm}\label{lemm5} Let $R$ and $R'$ be
rectangles with sides parallel to the axes. Suppose $S'$ is the
vertical strip bounded by the two vertical lines which are
prolongations of the vertical sides of the rectangle $R'$.
Let $f: R \to S'$ be a holomorphic mapping with continuous extension to the closure that takes the two horizontal sides of $R$ into different horizontal sides of $R'$. Then
$$
\lambda (R) \geq \lambda (R') \,.
$$
Equality holds if and only if the mapping is a surjective conformal
mapping from $R$ onto $R'$.
\end{lemm}

The following lemma concerns holomorphic mappings between annuli.
\begin{lemm}\label{lemm6} Let  $A$ and $A'$ be two annuli and let $f$
be a
holomorphic mapping from $A$ into $A'$ which induces an isomorphism on
fundamental
groups.
Then
$$
\lambda(A) \geq \lambda(A')\,.
$$
Equality holds if and only if the mapping is a conformal mapping from
$A$ onto
$A'$.
\end{lemm}

\bigskip
\noindent \textbf{Proof of Lemma \ref{lemm5}}.
Normalize the rectangles and the mapping
so that ${R}= \{x+iy: x \in (0,1), y \in (0,\textsf{a}) \}\, $ and
${R}'=
\{x+iy: x \in (0,1), y \in (0,\textsf{a}')\} \, $. Denote the
continuous extension of $f$ to the closure of $R$ again by $f$.

We may assume that  $f$ maps the upper side of
${R}$ to the upper side of ${R}'$ and the lower side of ${R}$ to the
lower side of ${R}'$. Put $u= \mbox{Re}\, f$ and $v=\mbox{Im}\, f$. Then
\begin{align}\label{eq5b}
\textsf{a}'  =  \int_0^1  \textsf{a}'\,dx &= \int_0^1 (v(x, \textsf{a})
-v(x,0))dx =
\int_0^1 dx \int _0 ^{\textsf{a}} dy \frac{\partial}{\partial y}v(x,y) \\
\nonumber
& = \int_0^{\textsf{a}} dy\int_0^1 dx\, \frac{\partial}{\partial x}u(x,y) =
\int_0^{\textsf{a}} dy
\,(u(1,y)-u(0,y))\\ \nonumber
& \leq \int_0^{\textsf{a}} 1\, dy =\textsf{a}.
\end{align}
 We used the Cauchy-Riemann
equations. To justify, for instance, the third equality we take for each $x \in (0,1)$ the limit of the equality $v(x, \textsf{a}-\varepsilon)
-v(x,\varepsilon)=\int _{\varepsilon} ^{\textsf{a}-\varepsilon} dy\frac{\partial}{\partial y}v(x,y)$ for $\varepsilon \to +0$ and use that $v$ is continuous on the closure of $R$.

The relation \eqref{eq5b} implies the inequality $\textsf{a}' \leq \textsf{a}$.

If $\textsf{a}' = \textsf{a}$ then $u(1,y)-u(0,y)=1$ for each $y \in
(0,a)$.
Hence, the left side of ${R}$ is mapped to the left side of ${R}'$ and
the right
side  of ${R}$ is mapped to the right side of ${R}'$. Since also the
lower side
of   ${R}$ is mapped to the lower side of ${R}'$ and the upper side of
${R}$ is
mapped to the upper side of ${R}'$ the image of the positively oriented
boundary
curve of  ${R}$ has index $1$ with respect to any point of ${R}'$ and
index $0$ with respect to each point in $\mathbb{C}\setminus \bar R$.
By
the
argument principle $f(R)=R'$ and $f$ takes each value in ${R}'$ exactly
once. Hence,
$f$ is a conformal map of $R$
onto $R'$.       \hfill $\Box$

\medskip
\noindent {\bf Proof of Lemma \ref{lemm6}.}
Assume the annuli $A$ and $A'$ have center $0$,
smaller radius $1$ and larger radius $r$ and $r'$, respectively. The
set $A \setminus (0,\infty)$ is conformally equivalent to the rectangle
${R}=\{\xi+i\eta: \xi
\in (0,\log r), \eta \in (0,2 \pi)\}$. The exponential function covers
the annulus $A'$ by the strip $S'=\{\xi+i\eta: \xi \in (0,\log r'),
\eta \in \mathbb{R}\}$. We
obtain a holomorphic mapping  $g=U+iV$ from ${R}$ to $S'$ for which either
$V(\xi, 2\pi)=V(\xi,0)+2\pi$ or $V(\xi, 2\pi)=V(\xi,0)-2\pi$ for $\xi \in (0,\log r)$. Assume without loss of generality that the first option holds. Then
\begin{align}\label{eq5c}
2 \pi \log r   &=   \int _0^{\log r} (V(\xi,2\pi)-V(\xi,0))d\xi =
\int_0^{\log r} d\xi \int _0^{2\pi} d\eta \frac{\partial}{\partial
\eta}V(\xi,\eta) \\ \nonumber
& = \int_0^{\log r} d\xi \int _0^{2\pi}d\eta \frac{\partial}{\partial
\xi}U(\xi,\eta)= \int_0^{2\pi} (U(\log r,\eta)- U(0,\eta))d\eta \\
\nonumber
&\leq \int_0^{2\pi} \log r' d\eta = 2\pi \log r'.
\end{align}

Equality $r=r'$ holds iff $f$ maps the bigger circle of $A$  to the
bigger circle of $A'$ and maps the smaller circle of $A$  to the
smaller circle of $A'$. Since the map $f$ induces an isomorphism of
fundamental groups an application of the argument principle shows that
$f$ is a conformal mapping of $A$ onto $A'$.
\hfill $\Box$

\medskip

\noindent {\bf Example 1.} Let $0<{\sf a}<{\sf b}$ be two positive numbers.
\begin{figure}[h]
\begin{center}
\includegraphics[width=8cm]{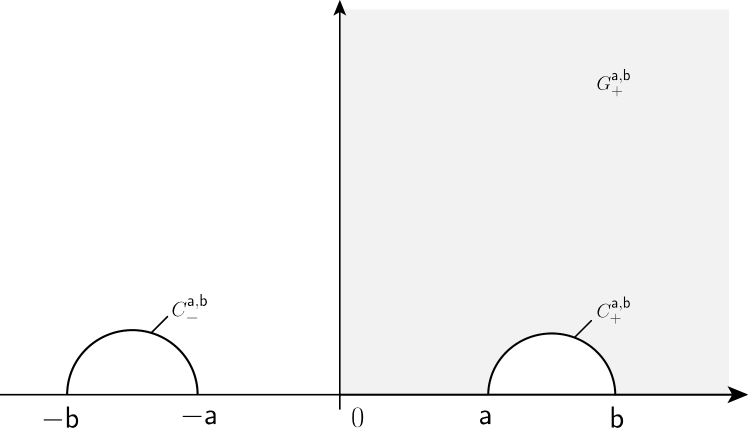}
\end{center}
\caption{The domains $G^{{\sf a},{\sf b}}$ and $G_{\pm}^{{\sf a},{\sf b}}$.}
\label{fig4.2}
\end{figure}
Denote by  $\Gamma^{{\sf a},{\sf b}}$ the family of curves in the upper half-plane $\mathbb{C}_+$ that join the two half-circles $C_{\pm}^{{\sf a}\,,{\sf b}}=\{z\in \mathbb{C}_+: |z\mp\frac{{\sf a}+{\sf b}}{2}|= \frac{{\sf b}- {\sf a}}{2}\}$ with diameter $({\sf a},{\sf b})$ and $(-{\sf b},-{\sf a})$, respectively. Similarly, let $\Gamma^{{\sf a},{\sf b}}_+$ be the family of curves in the upper half-plane $\mathbb{C}_+$ that join $C^{{\sf a},{\sf b}}_+$ with the imaginary half-axis.
\index{$\Gamma^{{\sf a},{\sf b}}$}
\index{$C^{{\sf a},{\sf b}}_+$} \index{$\Gamma^{{\sf a},{\sf b}}_+$}
\index{$\Gamma^{{\sf a},{\sf b}}_-$}\index{$C^{{\sf a},{\sf b}}_-$}
\index{$G^{{\sf a},{\sf b}}_+$}
\index{$G^{{\sf a},{\sf b}}_-$}

Let $G^{{\sf a},{\sf b}}_+$ be the domain
$G^{{\sf a},{\sf b}}_+=\{z\in \mathbb{C}: {\rm Re} z>0, {\rm Im} z>0, |z-\frac{{\sf a}+{\sf b}}{2}|> {\sf b}-{\sf a}\}$, and $G^{{\sf a},{\sf b}}= \{z\in \mathbb{C}_+:
|z \pm \frac{{\sf a}+{\sf b}}{2}|>  \frac{{\sf b}- {\sf a}}{2}\}$.
We consider $G^{{\sf a},{\sf b}}$  ($G^{{\sf a},{\sf b}}_+$, respectively) as curvilinear rectangles with curvilinear horizontal sides equal to $C^{{\sf a},{\sf b}}_{\pm}$ (equal to  $C^{{\sf a},{\sf b}}_+$ and
$\{z\in \mathbb{C}_+: {\rm Re}z>0\}$, respectively).
Notice, that the elements of $\Gamma_+^{{\sf a},{\sf b}}$ are not supposed to be contained in $G_+^{{\sf a},{\sf b}}$, and the respective remark concerns $\Gamma^{{\sf a},{\sf b}}$
and $\Gamma^{{\sf a},{\sf b}}_-$.

The following lemma holds.
\begin{lemm}\label{lem4.1'}
\begin{align}\label{eq4.x''}
\lambda(G^{{\sf a},{\sf b}}_{\pm})= \frac{1}{\pi}\log\frac{(\sqrt{\sf  a}+\sqrt{\sf b})^2}{{\sf b}-{\sf a}}\,,\;\;\quad
\lambda(G^{{\sf a},{\sf b}})= \frac{2}{\pi}\log\frac{(\sqrt{\sf a}+\sqrt{\sf b})^2}{{\sf b}-{\sf a}}\,.
\end{align}
Moreover,
\begin{align}\label{eq4.x'''}
\Lambda(\Gamma^{{\sf a},{\sf b}}_{\pm})= \frac{1}{\pi}\log\frac{(\sqrt{\sf a}+\sqrt{\sf b})^2}{{\sf b}-{\sf a}}\,,\;\;\quad
\Lambda(\Gamma^{{\sf a},{\sf b}})= \frac{2}{\pi}\log\frac{(\sqrt{\sf a}+\sqrt{\sf b})^2}{{\sf b}-{\sf a}}\,.
\end{align}
\end{lemm}
Recall that $\Lambda(\Gamma^{{\sf a},{\sf b}}_{\pm})$ ( $\Lambda(\Gamma^{{\sf a},{\sf b}})$, respectively)
denotes the extremal length of $\Gamma^{{\sf a},{\sf b}}_{\pm}$  ($\Gamma^{{\sf a},{\sf b}}$, respectively) in the sense of Definition \ref{def4.1}.
\index{$G^{{\sf a},{\sf b}}$} \index{$G^{{\sf a},{\sf b}}_+$}

\noindent{\bf Proof.}
\index{$T^{{\sf a},{\sf b}}$}
To prove equality \eqref{eq4.x''} we consider the
M\"obius transformation $T^{{\sf a},{\sf b}}$ that maps $0$ to $0$, $\infty$ to $1$, ${\sf a}$ to a positive real number $t< \frac{1}{2}$, and ${\sf b}$ to $1-t$. $T^{{\sf a},{\sf b}}$   is a conformal mapping of the Riemann sphere $\mathbb{P}^1$ onto itself, that takes $G^{{\sf a},{\sf b}}_+$ to a half-annulus and maps the horizontal curvilinear sides of  $G^{{\sf a},{\sf b}}_+$ to the half-circles. Each M\"obius transformation has the form $z\to \frac{a z +b}{ c z + d}$. The first two conditions for $T^{{\sf a},{\sf b}}$ imply that $b=0$ and $\frac{a}{c}=1$.
To determine the M\"obius transformation we ignore the normalization by the condition that the matrix with entries $a,b,c,d$ has determinant $1$ and put $a=c=1$. The remaining two conditions give the equations $\frac{\sf a}{{\sf a}+d}=t$, $\frac{\sf b}{{\sf b}+d}=1-t$, hence by eleminating $d$,
$$
{\sf a }(1-t)^2= {\sf b} t^2\,.
$$
We obtain the quadratic equation
$$
({\sf b}-{\sf a}) t^2 + 2 {\sf a}t -{\sf a}=0
$$
in $t$. Its solutions are
\begin{align}\label{eq4.1c}
t_{\pm}(=  t^{{\sf a},{\sf b}}_{\pm})= \frac{-{\sf a} \pm  \sqrt{{\sf ab}}}{{\sf b}-{\sf a}}\,.
\end{align}
The value $t_-$ is negative, so the required solution is $t_+<1-t_+$.
The half-annulus, whose boundary circles have center $\frac{1}{2}$
and radii $\frac{1}{2}- t_+=\frac{1}{2}- \frac{-{\sf a} +  \sqrt{{\sf ab}}}{{\sf b}-{\sf a}}      $
and $\frac{1}{2}$,  is a curvilinear rectangle which is the conformal image of a true rectangle with horizontal side length $\pi$ and vertical side length $\log\frac{1}{1-2 t_+}$. Hence,
$\lambda(G^{{\sf a},{\sf b}}_+)=\frac{1}{\pi}\log\frac{1}{1-2 t_+}\,.$
\index{$t_{\pm}$}
Since
\begin{align}\label{eq4.100}
\frac{1}{1-2 t_+}= \frac{1}{1- 2\frac{-{\sf a} + \sqrt{{\sf ab}}}{{\sf b}-{\sf a}}}=
\frac{{\sf b}-{\sf a}}{{\sf b}+{\sf a}-2\sqrt{{\sf ab}}}= \frac{(\sqrt{{\sf a}}+\sqrt{\sf{b}})^2}{{\sf b}-{\sf a}}\,,
\end{align}
the extremal length  $\lambda(G^{{\sf a},{\sf b}}_+) $
of the curvilinear rectangle $G^{{\sf a},{\sf b}}_+$ with horizontal sides being $C^{{\sf a},{\sf b}}_+=\{z\in \mathbb{C}_+: |z-\frac{{\sf a}+{\sf b}}{2}|= \frac{{\sf b}-{\sf a}}{2}\}$ and the imaginary half-axis is equal to $ \frac{1}{\pi} \log\frac{(\sqrt{{\sf a}}+\sqrt{{\sf b}})^2}{{\sf b}-{\sf a}}$.
Hence, we obtain equality \eqref{eq4.x''} for $\lambda(G_+^{{\sf a},{\sf b}})$.
Equalities \eqref{eq4.x''} for $\lambda(G_{-} ^{{\sf a},{\sf b}})$  and for $\lambda(G ^{{\sf a},{\sf b}})$ are obtained by the same reasoning.

For the proof of equality \eqref{eq4.x'''} we
consider the conformal mapping $R\to G^{{\sf a},{\sf b}}$ of a true rectangle $R$ onto $G^{{\sf a},{\sf b}}$ that maps the lower horizontal side of $R$ onto  $C^{{\sf a}, {\sf b}}_-$ and upper side to
$C^{{\sf a}, {\sf b}}_+$.
Apply Schwarz Reflection Principle to each horizontal side of $R$. We obtain
a conformal mapping from a true rectangle, denoted by $3R$, that has the same horizontal side length as $R$ and three times the vertical side length of $R$, onto the upper half-plane with two smaller half-discs removed. The removed half-discs are symmetric with respect to the imaginary axis.

After repeating $n$ times the application of the reflection principle we obtain a
conformal mapping from a rectangle $ 3^{n} \,R$ onto a domain $\Omega_n$
which is equal to the left half-plane with two half-discs removed. The
half-discs are symmetric with respect to the imaginary axis. The domains
$\Omega_n$ are increasing.  The diameter of the removed half-discs at step $n$ tends to zero for $n \to
\infty$ since the extremal length of $ 3^{n} \,R$ tends to $\infty$. We
obtain a conformal mapping, denoted by
$\mathfrak{c}$, of an
infinite strip $S'$ onto the left half-plane.

Let $f$ be any holomorphic mapping from a rectangle to the left
half-plane whose extension to the closure takes the upper side to
$C^{{\sf a}, {\sf b}}_+$ and the lower side to $C^{{\sf a}, {\sf b}}_-$ . Apply  Lemma \ref{lemm5} to the mapping $\mathfrak{c}^{-1} \circ f$.
Equality \eqref{eq4.x'''} for $\Lambda(\Gamma^{{\sf a},{\sf b}})$ follows. The inequality for
$\Lambda(\Gamma_{\pm}^{{\sf a},{\sf b}})$ is proved in the same way.
\hfill  $\Box$

\medskip

\noindent {\bf The fundamental group and the relative fundamental group of the twice punctured complex plane.}
The fundamental group $\pi_1(\mathbb{C}\setminus\{0,1\},\frac{1}{2})$ of the twice punctured complex plane with base point is a free group in two generators $a_1$ and $a_2$, were $a_1$ is represented by a curve that surrounds $0$ counterclockwise, and $a_2$ is represented by a curve that surrounds $1$ counterclockwise. The  fundamental group $\pi_1(\mathbb{C}\setminus\{0,1\},\frac{1}{2})$ is canonically isomorphic to the relative fundamental group $\pi_1(\mathbb{C}\setminus\{0,1\},(0,1))$. The elements of this group are represented by arcs with initial point and terminal point in the interval $(0,1)$. (The initial point may differ from the terminal point.) The isomorphism assigns to the element $e\in \pi_1(\mathbb{C}\setminus\{0,1\},\frac{1}{2})$ represented by a  curve $\gamma$ with base point $\frac{1}{2}$ the element in the relative fundamental group represented by $\gamma$.
For an element $e\in\pi_1(\mathbb{C}\setminus\{0,1\},\frac{1}{2})$ we denote be $e_{(0,1)}$ the image of $e$ under the mentioned canonical isomorphism.
\index{group ! relative fundamental group} \index{$\pi_1(\mathbb{C}\setminus\{0,1\},(0,1))$} \index{$e_{(0,1)}$}
\medskip

\begin{figure}[h]
\begin{center}
\includegraphics[width=11.5cm]{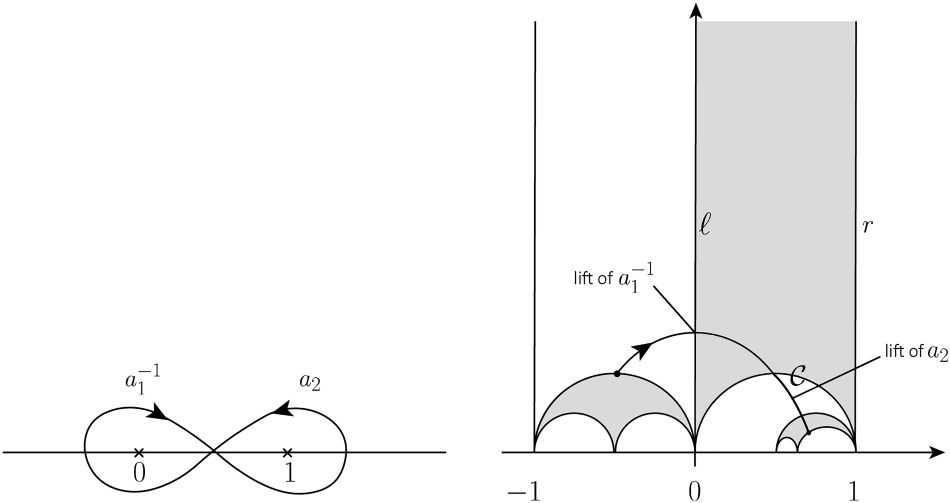}
\end{center}
\caption{A lift of $a_1^{-1}a_2$ to the universal covering.}
\label{fig4.4*}
\end{figure}

\noindent {\bf Example 2.} {\it The extremal length of $(a_1^k)_{(0,1)}$ and $(a_2^k)_{(0,1)}$
for  $k\in \mathbb{Z}$  equals zero, and the extremal length of $ (a_1^{-1}a_2)_{(0,1)}$ equals $\frac{2}{\pi}\log(2+\sqrt{5})$.}

This can be seen as follows. The class $(a_1)_{(0,1)}$ can be represented by the
holomorphic mapping $z\to e^z$ from  the rectangle $R=\{z=x+iy: x\in(-\infty,0), y\in (0,2\pi)\},$ of extremal length $\lambda(R)=0$ into $\mathbb{C}\setminus \{0,1\}$. There is a similar representation for all powers of $(a_1)_{(0,1)}$
and for all powers of $(a_2)_{(0,1)}$,
showing that the extremal length of these
elements of the relative fundamental group equals zero.

A representing arc for $(a_2)_{(0,1)}$ lifts  under the universal covering map ${\sf P}:\mathbb{C}_+ \to \mathbb{C}\setminus \{0,1\}$  to an arc in $\mathbb{C}_+$ that joins the half-circle with diameter $(0,1)$ with the half-circle with diameter$(\frac{2}{3}, 1)$ (see Section \ref{sec:2.0}). A representing arc for $(a_1)_{(0,1)}$ lifts under the universal covering map ${\sf P}:\mathbb{C}_+ \to \mathbb{C}\setminus \{0,1\}$ to an arc in $\mathbb{C}_+$ that joins the half-circle with diameter $(0,1)$ with the half-circle with diameter $(-1,0)$.
Hence, a representing arc for $(a_1^{-1}a_2)_{(0,1)}$
lifts to an arc in $\mathbb{C}_+$ that joins the half-circle with diameter $(-1,0)$
with the half-circle with diameter $(\frac{2}{3}, 1)$. (See Figure \ref{fig4.4*}.)
Any holomorphic mapping $f$ from a rectangle $R$ into $\mathbb{C}\setminus \{0,1\}$, that represents $(a_1^{-1}a_2)_{(0,1)}$, lifts to a holomorphic mapping from $R$ into $\mathbb{C}_+$ that takes the horizontal sides to the mentioned circles. The M\"obius transformation $z\to \frac{z}{z+1}$ maps $\mathbb{C}_+$ onto itself and takes $-1$ to $\infty$ and $0$ to $0$. Moreover it maps $\frac{2}{3}$ to $\frac{2}{5}$ and $1$ to $\frac{1}{2}$. Compose this mapping with the mapping $z\to 10 z$. The composition $\mathfrak{c}$ is a
conformal self-mapping of $\mathbb{C}_+$ that takes $-1,\,0,\, \frac{2}{3},\, 1$ to $\infty,\, 0,\, 4,\, 5$. Composing the lift of $f$ to $\mathbb{C}_+$ with this conformal self-mapping $\mathfrak{c}$ of $\mathbb{C}_+$, we obtain a holomorphic mapping $\mathfrak{c}\circ f$  from $R$ into $\mathbb{C}_+$ that takes the lower horizontal side to the imaginary half-axis and the upper horizontal side to the circle with diameter $(4,5)$. Vice versa, for each holomorphic mapping $g$ from a rectangle into $\mathbb{C}_+$, that takes the lower horizontal side into the imaginary axis and the upper horizontal side into the circle with diameter $(4,5)$, the composition $\mathfrak{c}^{-1}\circ g$
is a holomorphic mapping of a rectangle into the twice punctured complex plane that represents $(a_1^{-1}a_2)_{(0,1)}$. Lemma
\ref{lem4.1'} implies the equality $\Lambda((a_1^{-1}a_2)_{(0,1)})=\lambda(G^{4,5}_+)= \frac{1}{\pi}\log(2+\sqrt{5})^2=\frac{2}{\pi}\log(2+\sqrt{5})$.

\medskip
\noindent {\bf Example 3.} {\it The extremal length of the free homotopy class of closed curves $\widehat{a_1^{-1}a_2}$  equals $\frac{2}{\pi}\log(3+2\sqrt{2})$.}\\
This can be seen as follows.
We lift
a representative $\gamma$ of
$a_2\in\pi_1(\mathbb{C}\setminus\{0,1\},\frac{1}{2})$ to a curve in $\mathbb{C}_+$ with initial point $q$ on
the half-circle with diameter $(0,1)$. The lift
joins the half-circle with diameter $(0,1)$ with the half-circle with diameter $(\frac{2}{3},1)$.
There is a unique covering transformation $T_{a_2}$ that maps the initial point of this lift to its terminal point.

To obtain an explicit expression for the mapping $T_{a_2}$,
we make a change of the base point along a curve $\alpha$ in $\mathbb{C}\setminus \{0,1\}$. More precisely,
suppose $\alpha:[0,1]\to \mathbb{C}\setminus \{0,1\}$ is an arc with initial point $\frac{1}{2}$, 
such that $\alpha((0,1])$ is contained in $\mathbb{C}_+$. For $t\in [0,1]$ we
let $\alpha_t$ be the curves obtained by moving along $\alpha$ from the initial point $\alpha(0)=\frac{1}{2}$ to the point $\alpha(t)$. Consider the continuous family ${\rm Is}_{\alpha_t}(\gamma), \, t\in [0,1],$ of representatives of the free homotopy class $\reallywidehat{a_2}$, and their lift to the universal covering with initial point in $D_0$ if $t\in (0,1]$. The terminal point of each such lift is contained in the geodesic triangle $D_2$ with vertices $\frac{2}{3},1,\frac{1}{2}$ (see Section \ref{sec:2.0}). The set of covering transformations is discrete, and the family of covering transformations, that map the initial point of the considered lift of ${\rm Is}_{\alpha_t}(\gamma)$ to its terminal point, depends continuously on the parameter. Hence, each transformation of the family equals $T_{a_2}$. Hence, $T_{a_2}$ maps $D_0$ into $D_2$ and maps the half-circle with diameter $(0,1)$ to the half-circle with diameter $(\frac{2}{3},1)$. Consider in the same way the mapping $T_{a_2^{-1}}$, that is associated to the inverse $a_2^{-1}$, and its lift with initial point in $D_2$.
We see that the covering transformation $T_{a_2^{-1}}$ maps the geodesic triangle $D_2$ with vertices $\frac{2}{3},1,\frac{1}{2}$ into $D_0$, and maps the half-circle with diameter $(\frac{2}{3},1)$ into the half-circle with diameter $(0,1)$. The composition $T_{a_2^{-1}}T_{a_2}$ is the identity. Hence,  $T_{a_2}$ maps $D_0$ conformally onto the geodesic triangle $D_2$ with vertices $\frac{2}{3},1,\frac{1}{2}$, and maps the half-circle with diameter $(0,1)$ onto the half-circle with diameter $(\frac{2}{3},1)$.
Hence, $T_{a_2}|D_0$ coincides with the double reflection.
This can be used for explicit computation of $T_{a_2}$. One can also use the fact that $T_{a_2}$ is a conformal self-mapping of $\mathbb{C}_+$, hence extends to a M\"obius transformation, that takes the vertex $0$ (adjacent to the sides $\ell$ and $\mathcal{C}$) to $\frac{2}{3}$, the vertex $\infty$ (adjacent to the sides $\ell$ and $r$) to $\frac{1}{2}$, and the vertex $1$ (adjacent to $r$ and $\mathcal{C}$) to $1$. This M\"obius transformation equals $T_{a_2}= \frac{z-2}{2z-3}$.
Its inverse equals $T_{a_2}^{-1}=T_{a_2^{-1}}= \frac{-3z+2}{-2z+1}$.

In the same way we will now associate to $a_1\in \pi_1(\mathbb{C}\setminus \{0,1\},\frac{1}{2})$ a covering transformation $T_{a_1}$. We lift a representative of $a_1$ to a curve with initial point in the half-circle with diameter $(0,1)$. This lift joins the half-circle with diameter $(0,1)$ with
the half-circle with diameter $(-1,0)$. The covering transformation $T_{a_1}$ that takes the initial point of the lift to its terminal point, maps $D_0$ conformally onto the geodesic triangle with vertices $-1,-\frac{1}{2}, 0$, and maps the half-circle with diameter $(0,1)$ to
the half-circle with diameter $(-1,0)$.
Hence, $T_{a_1}$ maps $0$ to $0$, $1$ to $-1$, and $\infty$  to $-\frac{1}{2}$. A small computation gives $T_{a_1}=\frac{z}{1-2z}$. The inverse mapping equals $T_{a_1}^{-1}=T_{a_1^{-1}}=\frac{z}{2z+1}$.

The covering transformation corresponding to $a_1^{-1}a_2$ equals $T_{a_1^{-1}a_2    }(z)=T_{a_2}\circ T_{a_1}^{-1}(z) = \frac{3z+2}{4z+3} $. The matrix
$A=\begin{pmatrix}
3 & 2  \\
4 & 3
\end{pmatrix}$
has two real eigenvalues $t_{\pm}=3\pm 2\sqrt{2}$ whose product equals the determinant $1$.
The matrix $A$ can be conjugated to a diagonal matrix $B^{-1}AB= \begin{pmatrix}
t_+ & 0\\
0   & t_-
\end{pmatrix}$
by a matrix $B$ with real entries whose columns are eigenvectors. The M\"obius transformation corresponding to $B$ maps the real axis to itself. Changing if needed the direction of one of the eigenvectors we may assume that the M\"obius transformation maps $\mathbb{C}_+$ to itself. We see that the mapping $T_{a_1^{-1}a_2}:\mathbb{C}_+ \toitself$ can be conjugated by a conformal self-mapping of $\mathbb{C}_+$ to the mapping $T_{\lambda}, \, T_{\lambda}(z)= \frac{t_+ z}{t_-} =t_+^2 z= (3+2\sqrt{2})^2 z$. The quotient $\mathbb{C}_+\diagup {\langle T_{a_1^{-1}a_2} \rangle}$ is conformally equivalent to the quotient $\mathbb{C}_+\diagup T_{\lambda}$. This quotient is an annulus
of extremal length $\frac{1}{\pi}\log (3+2\sqrt{2})^2= \frac{2}{\pi}\log (3+2\sqrt{2})$.

Consider a holomorphic mapping $f$ of an annulus $A= \{z\in \mathbb{C}: r<|z|<R\}$ into $\mathbb{C}\setminus \{0,1\}$ that represents the free homotopy class $\widehat{a_1^{-1}a_2}$. Cut the annulus along the arc $\ell= (r,R)$, and lift the restriction $f|A\setminus \ell$ to a mapping $\tilde f$ into $\mathbb{C}_+$.
Let $\ell_-$ and $\ell_+$ be the strands of $\ell$ that are accessible from a point in $A\setminus \ell$ by moving clockwise, or counterclockwise, respectively, and for each point $p\in \ell$ we let $p_{\pm} \in \ell_{\pm}$ be the points corresponding to $p$. Then the equality $\tilde f (p_+) =T_{a_1^{-1}a_2}( \tilde{f}(p_-)) $ holds for the continuous extension of $\tilde f$ to $\ell_{\pm}$. This means, that the mapping $\tilde f$ from  $(A\setminus \ell)\cup \ell_-\cup \ell_+ $ descends to a holomorphic mapping from $A$ into
$\mathbb{C}_+\diagup T_{\lambda}$ that represents $\widehat{a_1^{-1}a_2}$. By Lemma \ref{lemm6}
the extremal length of $A$ is not smaller than $\lambda(\mathbb{C}_+\diagup T_{\lambda})=\frac{2}{\pi}\log (3+2\sqrt{2})$, and there is a conformal mapping of an annulus $A$ of extremal length $\frac{2}{\pi}\log (3+2\sqrt{2})$ onto $\mathbb{C}_+\diagup T_{\lambda}$ that represents $\widehat{a_1^{-1}a_2}$.

\bigskip

\noindent {\bf The extremal length of conjugacy classes of braids and
the extremal length of braids with totally real horizontal boundary values.}

These are conformal invariants of braids and  conjugacy classes of braids which will play a key role later.

Recall that for a subset $A$ of the
complex plane
$\mathbb{C}$ we defined the
configuration space $C_n (A) = \{(z_1 , \ldots , z_n) \in A^n
: z_i
\ne z_j$ for $i \ne j\}$ of $n$ particles moving along $A$
without collision.
Each
permutation in the symmetric group ${\mathcal S}_n$ acts on $C_n (A)$ by permuting
the
coordinates.
The quotient $C_n (A) \diagup {\mathcal S}_n$ is
called
the symmetrized configuration space related to $A$.
Recall that the natural projection $C_n (\mathbb{C}) \to  C_n (\mathbb{C}) \diagup {\mathcal S}_n$ is denoted by $\mathcal{P}_{\rm sym}$.

Choose a base point $E_n \in C_n (\mathbb {R}) \diagup {\mathcal S}_n$.
Recall that braids on $n$ strands  ($n$-braids for short)
with base point $E_n$ are homotopy classes of loops with
base point $E_n$ in the symmetrized configuration space,
equivalently, they are elements of the fundamental group $\pi_1(C_n(\mathbb {C})\diagup{\mathcal S}_n, E_n)$ of the symmetrized configuration space
with base point $E_n$.

Recall that conjugacy classes of $n$-braids are free homotopy classes of loops in $C_n(\mathbb {C})\diagup{\mathcal S}_n\cong \mathfrak{P}_n$.
According to Definition \ref{def4.2} the extremal length $\Lambda(\hat
b)$ of a conjugacy class of $n$-braids $\hat b$ is defined as $ \Lambda(\hat b)= {\rm inf}_{A \in
\mathcal{A}}\,
\lambda(A),$ where
$\mathcal{A}$ denotes the set of all annuli which admit a
holomorphic mapping into
$C_n(\mathbb{C}) \diagup \mathcal{S}_n$ that represents $\hat b$.

\index{extremal length ! of conjugacy classes of braids} \index{extremal length ! of braids with totally real horizontal boundary values}

To define the extremal length of braids with totally real horizontal boundary values we consider the totally real subspace $C_n(\mathbb{R})\diagup \mathcal{S}_n$ of $C_n(\mathbb{C})\diagup \mathcal{S}_n$.
The totally real subspace $\mathcal{E}^n_{tr}\stackrel{def}{=}\,C_n (\mathbb {R})
\diagup{\mathcal S}_n\,$ of $\,C_n (\mathbb {C}) \diagup {\mathcal
S}_n\,$ is connected and simply connected. Indeed, the totally real
subspace
$C_n (\mathbb {R})$ of $C_n (\mathbb {C})$ is the union of the
connected components $\,\{(x_1, \ldots , x_n) \in
\mathbb{R}^n: \,
x_{\sigma(1)} < x_{\sigma(2)} < \ldots < x_{\sigma(n)}\}\,$
over all
permutations $\sigma \in {\mathcal S}_n$. Thus $C_n (\mathbb
{R})$
is invariant under the action of ${\mathcal S}_n$ and the
quotient
is homeomorphic to $\{(x_1, \ldots , x_n) \in \mathbb{R}^n:
x_1 <
x_2 < \ldots < x_n\}\,$. Hence the claim.

The fundamental group $\pi_1(\,C_n (\mathbb {C}) \diagup
{\mathcal
S}_n\,,\; E_n\,)$ is isomorphic to the relative fundamental
group
$\pi_1(\,C_n (\mathbb {C}) \diagup {\mathcal S}_n\,,\; C_n
(\mathbb
{R}) \diagup {\mathcal S}_n \,)$. The elements of the latter
group
are homotopy classes of arcs in $\,C_n (\mathbb {C}) \diagup
{\mathcal S}_n\,$ with endpoints in $\,C_n (\mathbb {R})
\diagup
{\mathcal S}_n\,$.

The isomorphism between the two groups is obtained as follows.
Since the fundamental groups with different base point
are isomorphic, we may assume that $E_n$ is contained
in the
totally real subspace $\, C_n (\mathbb {R}) \diagup {\mathcal
S}_n\,$. Each element of $\pi_1(\,C_n (\mathbb {C}) \diagup
{\mathcal S}_n\,,\; E_n\,)$ is a subset of an element of
$\pi_1(\,C_n (\mathbb {C}) \diagup {\mathcal S}_n\,,\; C_n
(\mathbb
{R}) \diagup {\mathcal S}_n \,)$. Vice versa, since $\,C_n
(\mathbb
{R}) \diagup {\mathcal S}_n\,$ is connected and $E_n$ is
contained in  $\,C_n (\mathbb{R}) \diagup {\mathcal S}_n\,$,  each class
in $\pi_1(\,C_n (\mathbb {C}) \diagup
{\mathcal S}_n\,,\; C_n (\mathbb {R}) \diagup {\mathcal S}_n\,
)$
contains a class in $\pi_1(\,C_n (\mathbb {C}) \diagup
{\mathcal
S}_n\,,\; E_n\,)$. Since $\,C_n (\mathbb {R}) \diagup
{\mathcal
S}_n\,$ is simply connected, each class in $\pi_1(\,C_n
(\mathbb
{C}) \diagup {\mathcal S}_n\,,\; C_n (\mathbb {R}) \diagup
{\mathcal
S}_n \,)$ contains no more than one class of $\pi_1(\,C_n
(\mathbb
{C}) \diagup {\mathcal S}_n\,,\; E_n\,)$. Indeed, if two loops
in
$\,C_n (\mathbb {C}) \diagup {\mathcal S}_n\,$ with base point
$E_n$
are homotopic as loops in $\,C_n (\mathbb {C}) \diagup
{\mathcal
S}_n\,$ with varying base point in $\,C_n (\mathbb {R})
\diagup
{\mathcal S}_n\,$ then they are homotopic as loops in $\,C_n
(\mathbb {C}) \diagup {\mathcal S}_n\,$ with fixed base point
$E_n$.

Let $b \in \mathcal{B}_n$ be a braid.
Denote its image in the relative fundamental group
$\pi_1(\,C_n
(\mathbb {C}) \diagup {\mathcal S}_n\,,\; C_n (\mathbb {R})
\diagup
{\mathcal S}_n\, )$ by $b_{tr}$.
\index{$\pi_1(\,C_n (\mathbb {C}) \diagup {\mathcal S}_n\,,\; C_n
(\mathbb
{R}) \diagup {\mathcal S}_n \,)$}
\index{$b_{tr}$}

We are now ready to define for any braid its  extremal length  with
totally real boundary values (and the conformal module  with totally
real boundary values, respectively).
\begin{defn}\label{def1} Let $b \in \mathcal{B}_n$ be an
$n$-braid. The extremal length $\Lambda_{tr}(b)$ with totally
real horizontal
boundary values is defined as
\begin{align}
\Lambda_{tr}(b)=& \inf \{\lambda(R): R\, \mbox{ a rectangle
which
admits a holomorphic mapping to} \nonumber \\
&C_n (\mathbb {C}) \diagup {\mathcal S}_n \,\mbox{ that
represents}\; b_{tr}\}\,.\nonumber
\end{align}
The conformal module $\mathcal{M}_{tr}(b)$ of $b$ with totally
real horizontal
boundary values, respectively, is defined as
\begin{align}
\mathcal{M}_{tr}(b)= &\sup \{m(R): R\, \mbox{ a rectangle which
admits a holomorphic mapping to}\; \nonumber \\
& C_n (\mathbb {C}) \diagup {\mathcal S}_n \,\mbox{ that
represents}
\; b_{tr}\}\,.\nonumber
\end{align}
\end{defn}
\index{$\Lambda_{tr}(b)$} \index{$\mathcal{M}_{tr}(b)$}
We will also use the notation $\Lambda(b_{tr})$ for $\Lambda_{tr}(b)$ and the notation
$\mathcal{M}(b_{tr})$ for  $\mathcal{M}_{tr}(b)$.
Note that the two invariants are
inverse to each other. Sometimes it is more convenient to work with the extremal length, some other times it is more appropriate to speak about the  conformal module.

Recall that $\Delta_n$ denotes the Garside element in the braid group
$\mathcal{B}_n$.
\begin{lemm}\label{lemm1} For each braid $b\in \mathcal{B}_n$ the equalities $\Lambda(\hat b)= \Lambda(\widehat{b \Delta_n^2})$ and
$\Lambda_{tr}(b)= \Lambda_{tr}(b \Delta_n)=\Lambda_{tr}(\Delta_n b)$ hold.
\end{lemm}
\noindent {\bf Proof.} 
Let $R=\{x+iy: x \in (0, 1),\, y \in (0, \textsf{a})\}$, and suppose a holomorphic mapping $f=\{f_1,f_2,\ldots,f_n\}:R \to \,C_n(\mathbb{C}^n)
\diagup \mathcal{S}_n\, $ represents $b_{tr}$.
The mapping
$\zeta \to e^{\frac{\pi}{\textsf{a}}
\zeta} f(\zeta)= e^{\frac{\pi}{\textsf{a}}
\zeta}\{ f_1(\zeta), f_2(\zeta),\ldots   , f_n(\zeta)\} $ is holomorphic on
$R$ and represents
$(b\, \Delta_n)_{tr}$. Since $\Lambda_{tr}(b^{-1})=\Lambda_{tr}(b)$ for each braid $b$, we obtain $\Lambda_{tr}(\Delta_n b)=\Lambda_{tr}(b)$.
The stated relation for $\Lambda(\hat{b})$ is proved in the same way.
\hfill $\Box$

\medskip

\noindent {\bf Example 4.} {\it The extremal length with totally real horizontal boundary values of $\sigma_1^{-1} \sigma_2$ equals $\Lambda((\sigma_1^{-1} \sigma_2)_{\rm tr}) =\frac{\log(3+2\sqrt{2})}{\pi}$.}\\
To prove this fact, we let $\gamma$ be an arc in $C_3(\mathbb{C})\diagup \mathcal{S}_3$ that represents the homotopy class  $(\sigma_1 ^{-1}\sigma_2)_{\rm tr}\in \pi_1(C_3(\mathbb{C})\diagup \mathcal{S}_3, C_3(\mathbb{R})\diagup \mathcal{S}_3)$. The initial point (and also the terminal point) of $\gamma$ is an unordered triple of distinct points, each contained in the real axis. Lift $\gamma$ to an arc $\tilde{\gamma}=(\tilde{\gamma}_1,\tilde{\gamma}_2,\tilde{\gamma}_3)$ in $C_3(\mathbb{C})$, $\mathcal{P}_{\rm sym}(\tilde{\gamma})=\gamma$,  so that the initial point of the lift $\tilde{\gamma}$ is an ordered triple $(x_1,x_2,x_3)$ with $x_2<x_1<x_3$.

We
assign to each point $(z_1,z_2,z_3)$ the cross ratio
\begin{align*}
M_{(z_1,z_2,z_3)}\stackrel{def}= (z_2,z_3;z_1,\infty)
=\frac{z_2-z_1}{z_3-z_1}\cdot \frac{z_3-\infty}{z_2-\infty}=\frac{z_2-z_1}{z_3-z_1}\,.
\end{align*}
It defines a holomorphic mapping
\begin{align*}
C_3(\mathbb{C})\ni (z_1,z_2,z_3)\to M_{(z_1,z_2,z_3)}\in \mathbb{C}\setminus\{0,1\}\,.
\end{align*}

For later use we notice the following. In  Section \ref{sec:2.3a} we assigned to each class in $C_n(\mathbb{C})\diagup \mathcal{A}$ the unique representative  with first two coordinates equal to $0$ and $\frac{1}{n}$. Here it will be convenient to consider the
representative $\mathcal{P}'_{\mathcal{A}, {\sf n}}(z_1,z_2,z_3)=(0,M_z,1)$ of
$ (z_1,z_2,z_3)\diagup \mathcal{A}$ whose first entry equals $0$, and whose last entry equals $1$. We will denote by $\mathcal{P}'_{\mathcal{T}}$ the mapping that assigns to each
point $\tau\in \mathcal{T}(0,4)$ the representative of
 $\mathcal{P}_{\mathcal{T}}(\tau)  \diagup \mathcal{A}$ of the mentioned form
(compare with Lemma \ref{lemm2.5}).

We
associate to $\tilde{\gamma}$ the curve $M_{\tilde{\gamma}},\,M_{\tilde{\gamma}}(t)=M_{(\tilde{\gamma}_1(t),
\tilde{\gamma}_2(t),\tilde{\gamma}_3(t))},\, t\in [0,1], $ in  $\mathbb{C}\setminus\{0,1\}$.
The initial point of this curve is in $(-\infty,0)$.
Figure \ref{fig4.5} shows the curve $M_{\tilde{\gamma}}$ for a curve $\gamma$ in $C_3(\mathbb{C})\diagup \mathcal{S}_3$ representing $(\sigma_1^{-1} \sigma_2)_{\rm tr}$. Lift the curve $M_{\tilde{\gamma}}$ under the mapping $ \mathcal{P}'_{\mathcal{T}}: \mathcal{T}(0,4)\to \mathbb{C}\setminus\{0,1\}$
 to a curve $\tilde{M}_{\tilde{\gamma}}$ in the universal covering
$
\mathbb{C}_+\cong \mathcal{T}(0,4)$ of
$$
\mathbb{C}\setminus\{0,1\}\cong \{0\}\times \mathbb{C}\setminus\{0,1\}\times \{1\}\cong C_3(\mathbb{C})\diagup\mathcal{A}\,.
$$
We choose the lift
with initial point on the imaginary half-axis, see Figure  \ref{fig4.5}. Its terminal point is contained in the half-circle $\mathbb{C}_+\cap \{|z-\frac{3}{4}|=\frac{1}{4}\}$.
\index{cross ratio} \index{$M_{(z_1,z_2,z_3)}$}
\index{$M_{\tilde{\gamma}}$}
Consider a holomorphic mapping $g$ of a rectangle to $C_3(\mathbb{C)} \diagup \mathcal{S}_3$ that represents $(\sigma_1^{-1} \sigma_2)_{\rm tr} $. Lift this mapping to a holomorphic mapping $\tilde g$ into $C_3(\mathbb{C)}$,
consider the associated mapping $M_{\tilde{g}}:R\to \mathbb{C}\setminus\{0,1\}$, and lift it to a mapping
$\tilde{M}_{\tilde{g}}:R\to \mathbb{C}_+$, so that $\tilde{M}_{\tilde{g}}$ represents the curve  $\tilde{M}_{\tilde{\gamma}}$  in Figure \ref{fig4.5}. The mapping $\tilde{M}_{\tilde{g}}$ is holomorphic and maps the lower side of $R$ to the imaginary half-axis, and the upper side to the half-circle
$\mathbb{C}_+\cap \{|z-\frac{3}{4}|=\frac{1}{4}\}$.
Since $\frac{(\sqrt{\frac{1}{2}} +\sqrt{1})^2}{\frac{1}{2}}=2(\frac{1}{2}+2\sqrt{\frac{1}{2}}+1)= 3+2\sqrt{2}$,
by Lemmas \ref{lemm5} and \ref{lem4.1'} the extremal length $\lambda(R)$ is not smaller than
$\frac{\log(3+2\sqrt{2})}{\pi}$.

\begin{figure}[h]
\begin{center}
\includegraphics[width=9.0cm]{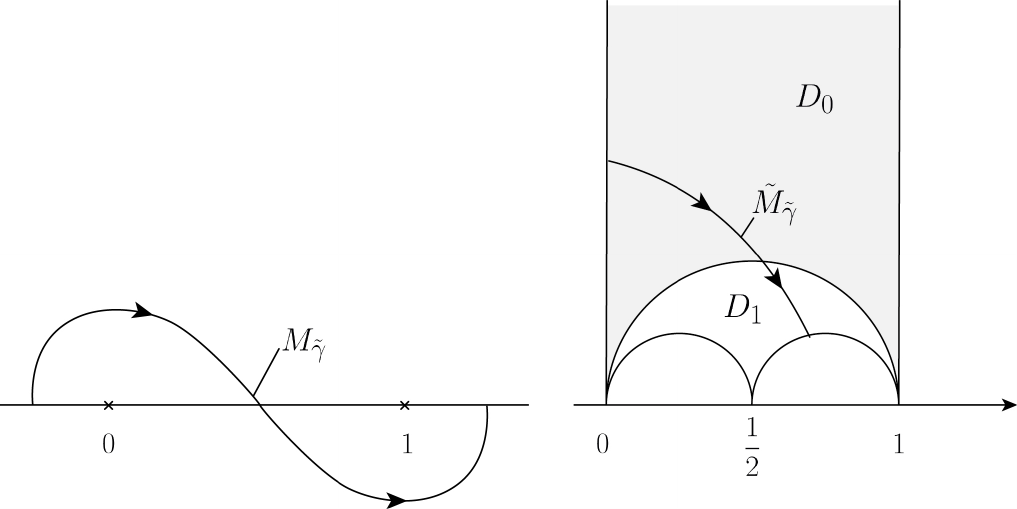}
\end{center}
\caption{The curve $\tilde{M}_{\tilde{\gamma}}$ on the Teichm\"uller space $\mathcal{T}(0,4)$, that corresponds to a representative of  $(\sigma_1^{-1} \sigma_2)_{\rm tr}$.}\label{fig4.5}
\end{figure}
Vice versa, a rectangle of extremal length $\frac{\log(3+2\sqrt{2})}{\pi}$ admits a conformal mapping $\mathfrak{c}$ onto $\{{\rm Re} z >0, {\rm Im} z >0, |z-\frac{3}{4}|>\frac{1}{4}\}$, such that the horizontal sides of $R$ correspond to the imaginary half-axis, and to the half-circle
$\mathbb{C}_+\cap \{|z-\frac{3}{4}|=\frac{1}{4}\}$. The composition $\mathcal{P}'_{\mathcal{T}}\circ \mathfrak{c}$ of $\mathfrak{c}$ with the universal covering map $\mathcal{P}'_{\mathcal{T}}$ is a holomorphic mapping from the rectangle into $\mathbb{C}\setminus \{0,1\} $ that represents the curve
 $\tilde{\gamma}$ (see Figure \ref{fig4.5}). The mapping $R\ni z \to (0, \mathcal{P}'_{\mathcal{T}}\circ \mathfrak{c}(z),1)\in C_3(\mathbb{C})$ projects to a mapping into $C_3(\mathbb{C})\diagup \mathcal{S}_3$ that represents $(\sigma_1^{-1} \sigma_2)_{\rm tr}$.

In the same way we see that the extremal length
$\Lambda((\sigma_1^{-1} \sigma_2^2 \sigma_1^{-1} )_{\rm tr})$ equals $\frac{2\log(3+2\sqrt{2})}{\pi}$. For a curve $\gamma$ representing $(\sigma_1^{-1} \sigma_2^2 \sigma_1^{-1})_{tr}$
the curve $\tilde{M}_{\tilde{\gamma}}$ joins the two half-circles $\{ |z\pm \frac{3}{4}|=\frac{1}{4}\}\cap \mathbb{C}_+$.

\medskip

\medskip

\noindent {\bf Example 5.} {\it The extremal length of the conjugacy class of $3$-braids $\reallywidehat{\sigma_1^{-1}\,\sigma_2}$ equals $\Lambda(\reallywidehat{\sigma_1^{-1}\,\sigma_2})= \frac{2}{\pi}\log\frac{3+\sqrt{5}}{2}$. This is the smallest non-vanishing extremal length among conjugacy classes of $3$-braids.}

To prove these facts, we will first describe the action on the Teichm\"uller space $\mathcal{T}(0,4)\cong \mathbb{C}_+$ of the modular transformations $T_{\sigma_1}$ and $T_{\sigma_2}$ corresponding to $\sigma_1$ and $\sigma_2$. We will use the normalized mappings $\mathcal{P}'_{\mathcal{A},{\sf n}}: C_3(\mathbb{C})\to \mathbb{C}\setminus\{0,1\}$ and $\mathcal{P}'_{\mathcal{T}}:\mathcal{T}(0,4)\cong \mathbb{C}_+ \to \mathbb{C}\setminus\{0,1\}$ as in Example 4 of this section.

Recall that the modular transformation $T_b$ associated to an $n$-braid $b$ is the modular transformation $\varphi_b^*$ of a self-homeomorphism $\varphi_b$ of $\mathbb{P}^1$ that represents the mapping class $\mathfrak{m}_{b,\infty}$ corresponding to $b$.
We consider the modular transformation $T_b$ for the $3$-braids $b=\sigma_1$ and $b=\sigma_2$.
With the same normalization as in Example 4 of this section the mapping $\varphi_b$ fixes $\infty$ and maps the set $E_3=\{0,\frac{1}{2}, 1\}$ onto itself. Such a homeomorphism $\varphi_b$ can be obtained using a parameterizing isotopy $\varphi^t$ for a geometric braid $\gamma$
representing $b$, i.e. a continuous family of self-homeomorphisms of $\mathbb{P}^1$
such that $\varphi_t(E_3)=\gamma(t), \, t\in [0,1]$. We may take $\varphi_b=\varphi^1$ and $T_b=(\varphi^1)^*$.
\index{$\varphi_b$} \index{$\varphi_b^*$}

Lift the base point $E_3=\{0,\frac{1}{2},1\}\in C_3(\mathbb{C})\diagup \mathcal{S}_3$ under $\mathcal{P}_{\rm{sym}}$ to the point $\tilde{E}_3=(0,\frac{1}{2},1)$ of $C_3(\mathbb{C})$. We identify the point  $\mathcal{P}'_{\mathcal{A},{\sf n}}(\tilde{E}_3)$ with $\frac{1}{2}$. Take the preimage $\frac{1+i}{2}$ of $\frac{1}{2}$ under $\mathcal{P}'_{\mathcal{T}}$. The image of $\frac{1+i}{2}$ under $T_{\sigma_1}$ can be obtained as follows.

Consider a curve $\gamma_1$ in $C_3(\mathbb{C})\diagup \mathcal{S}_3$ that represents $\sigma_1$ with base point $E_3$ and its lift $\tilde{\gamma}_1$ under $\mathcal{P}_{\rm sym}$ with initial point $\tilde{E}_3$. The projection $\mathcal{P}'_{\mathcal{A},{\sf n}}(\tilde{\gamma}_1)$ (identified with the curve $t\to M_{\tilde{\gamma}_1(t)}$) is a curve in $\mathbb{C}\setminus\{0,1\}$ with initial point $\frac{1}{2}$.
The curve $\gamma_1$ may be chosen
so that  $\mathcal{P}'_{\mathcal{A},{\sf n}}(\tilde{\gamma}_1)$  joins $\frac{1}{2}$ with a point in the negative real axis, and the interior points of the curve  $\mathcal{P}'_{\mathcal{A},{\sf n}}(\tilde{\gamma}_1)$ are contained in upper half-plane $\mathbb{C}_+$.

Consider the lift $\tilde{M}_{\tilde{\gamma}_1}$ of $M_{\tilde{\gamma}_1}$  under $\mathcal{P}'_{\mathcal{T}}:\mathcal{T}(0,4)\cong \mathbb{C}_+ \to \mathbb{C}\setminus\{0,1\}$ with initial point $\frac{1+i}{2}$. The lift $\tilde{M}_{\tilde{\gamma}_1}$ of $M_{\tilde{\gamma}_1}$ joins $\frac{1+i}{2}$ with a point on the positive imaginary axis. The interior of $\tilde{M}_{\tilde{\gamma}_1}$ is contained in $D_0$.

The terminal point of  $\tilde{M}_{\tilde{\gamma}_1}$
is the image of $\frac{1+i}{2}$ under $T_{\sigma_1}$.
Indeed, the following commutative diagram (see also Lemma \ref{lemm2.5} )
$$
\xymatrix{
\varphi^t \ar[rr]^{[ \ ]} \ar[d]^{e_3} && {\tilde{\mathcal{M}}_{\tilde{\gamma}_t}}
\ar[d]^{{\mathcal P}'_{\mathcal T}} \\
\tilde{\gamma}_1(t) \ar[rr]^{{\mathcal P}'_{\mathcal{ A},{\sf n}}}
&& \;\;\mathcal{M}_{\tilde{\gamma}_t}}
$$
implies the equality
\begin{align*}
T_{\sigma_1}(\frac{1+i}{2})=   \varphi_{\sigma_1} ^*([{\rm Id}]) = [\varphi_{\sigma_1}]= [\varphi^1]=\tilde{M}_{\tilde{\gamma_1}(1)}\,.
\end{align*}
We proved that $T_{\sigma_1}(\frac{1+i}{2})$ is contained in the imaginary half-axis.

We will now prove that $T_{\sigma_1}$ maps the half-circle $C_+^{0,1}=\{|z-\frac{1}{2}|=\frac{1}{2}\}\cap \mathbb{C}_+$ into the positive imaginary half-axis. Take any point in $C_+^{0,1}$ and join it with $\frac{1+i}{2}$ by an arc $\alpha(s),\, s \in [0,1]$, in $C_+^{0,1}$.
Let $w_s, \, s\in [0,1],$ be a continuous family of self-homeomorphisms of $
\mathbb{P}^1$, that fix $\infty$, such that $w_0={\rm Id}$ and $\tau_s\stackrel{def}=[w_s]=\alpha(s)$.
(See also Lemma \ref{lem2.1a}.)
Notice that for $\tilde{E}_3^s\stackrel{def}=w_s(\tilde{E}_3)$ the point $x(s)\stackrel{def}=\mathcal{M}_{\tilde{E}_3^s}$
is contained in $(0,1)$.  The equality
\begin{align*}
T_{\sigma_1}(\tau_s)=(\varphi_{\sigma_1}^1)^*([w_s])=[w_s \circ \varphi_{\sigma_1}^1]\,
\end{align*}
holds. Since $\varphi_{\sigma_1}^1$ acts on $\tilde{E}_3$ by permuting the first two coordinates, the homeomorphism $w_s\circ\varphi_{\sigma_1}^1$ acts on
$w_s(\tilde{E}_n)=\tilde{E}_n^s$ by permuting the first two coordinates. This means that
$x'(s)\stackrel{def}=\mathcal{M}_{w_s\circ\varphi_{\sigma_1}^1(\tilde{E}_3)}\in (-\infty,0)\subset \mathbb{C}\setminus\{0,1\}$. Hence, the lift $[w_s\circ\varphi_{\sigma_1}^1]$ of $\mathcal{M}_{w_s\circ\varphi_{\sigma_1}^1(\tilde{E}_3)}$ under $\mathcal{P'}_{\mathcal{T}}$ is contained in the imaginary half-axis.

Each member of the continuous family of curves $\{\mathcal{M}_{w_s\circ\varphi_{\sigma_1}^t(\tilde{E}_3)},\, t\in [0,1]\}_{s\in [0,1]}$ in $\mathbb{C}\setminus \{0,1\}$ has initial point on $(0,1)$ and terminal point in $(-\infty,0)$. The family lifts to a continuous family of curves in $\mathbb{C}_+$ with initial point in the half-circle $C^{0,1}_+$ and terminal point in the imaginary half-axis, and the interior  $\{\mathcal{M}_{w_0\circ\varphi_{\sigma_1}^t(\tilde{E}_3)},\,t\in(0,1)\}$ of the curve corresponding to the parameter $s=0$, is contained in the upper half-plane.
Hence, $T_{\sigma_1}$ maps the half-circle $C^{0,1}_+$  into the positive imaginary half-axis.

The same argument for the braid $\sigma_1^{-1}$ shows that the modular transformation $T_{\sigma_1}$ maps the half-circle $\{z\in \mathbb{C}_+: |z-\frac{1}{2}|=1\}$ onto the positive imaginary half-axis.

\index{geodesic triangle}
We show now that  $T_{\sigma_1}$  maps the geodesic triangle $D_1$ with vertices $0,\frac{1}{2}, 1$ onto $D_0$. Let  $\alpha(s),\, s \in [0,1],$ be
an arc in $\mathbb{C}_+\cong \mathcal{T}(0,4)$ with $\alpha(0)=\frac{1+i}{2}$ and $\alpha((0,1])\subset D_1$. The projection $\mathcal{P}'_{\mathcal{T}}(\alpha((0,1]))$ to $\mathbb{C}\setminus\{0,1\}$ is contained in $\mathbb{C}_-$ and $\mathcal{P}'_{\mathcal{T}}(\alpha(0))= \mathcal{P}'_{\mathcal{T}}(\frac{1+i}{2})=\frac{1}{2}$. Let again $w_s$ be a continuous family of self-homeomorphisms of $\mathbb{P}^1$ for which $w_0={\rm Id}$ and $\tau_s\stackrel{def}=[w_s]=\alpha_s$.  Consider again the continuous family of curves $\{\mathcal{M}_{w_s\circ\varphi_{\sigma_t}^1(\tilde{E}_3^0)},\, s\in [0,1]\}$ in $\mathbb{C}\setminus \{0,1\}$ with initial points
$\{\mathcal{M}_{w_s(\tilde{E}_3^0)},\, s\in [0,1]\}$ in $(0,1)\cup \mathbb{C}_-$.
The terminal points  $\{\mathcal{M}_{w_s\circ\varphi_{\sigma_1}^1(\tilde{E}_3^0)},\, s\in [0,1]\}$
are contained in $(-\infty,0)\cup \mathbb{C}_+$, since $\varphi_{\sigma_1}^1$ acts on $\tilde{E}_3^0$ by permuting the first two coordinates.
Indeed, for a point $(z_1,z_2,z_3)\in C_3(\mathbb{C})$ the inclusion $M_{(z_1,z_2,z_3)}\in \mathbb{C}_-$ means, that
the point $z_2$ lies on the right of the line through $z_1$ and $z_3$, oriented in the direction from $z_1$ to $z_3$. Then $z_1$ lies on the left of the line through $z_2$ and $z_3$, oriented in the direction from $z_2$ to $z_3$.
This means, $M_{(z_2,z_1,z_3)}\in \mathbb{C}_+$.
The lift under $\mathcal{P}'_{\mathcal{T}}\cong {\sf P}$ of the
family of curves $\{\mathcal{M}_{w_s\circ\varphi_{\sigma_1}^t(\tilde{E}_3^0)},\, t\in [0,1]\}_{s\in [0,1]},$ in $\mathbb{C}\setminus \{0,1\}$ provides a continuous family of curves in $\mathbb{C}_+$ with initial points in
$D_1\cup C^{0,1}_+$ and terminal points in $D_0\cup \{{\rm Re}z=0\}\cap \mathbb{C}_+$. The terminating point of each curve is obtained from its initial point by applying the modular transformation $ T_{\sigma_1}$.
We showed that
$T_{\sigma_1}$ maps the geodesic triangle with vertices $0,\frac{1}{2},\infty$ into $D_0$. Similar arguments for the braid $\sigma_1^{-1}$ imply that $T_{\sigma_1}$ maps the geodesic triangle with vertices $0,\frac{1}{2},\infty$ onto $D_0$. Since $T_{\sigma_1}$ maps
the half-circle $\{z\in \mathbb{C}_+: |z-\frac{1}{2}|=1\}$ onto the positive imaginary half-axis, $T_{\sigma_1}$ takes $0$ to $0$, $1$ to $\infty$, and $\frac{1}{2}$ to $1$.
Hence, $T_{\sigma_1}(z)=\frac{z}{1-z}$ and $T_{\sigma_1^{-1}}(z)= \frac{z}{1+z}$.

The modular transformation $T_{\sigma_2}$ is computed similarly.
Represent $\sigma_2$ by a closed curve $\gamma_2:[0,1]\to C_3(\mathbb{C})\diagup \mathcal{S}_3$ with initial point $E_3^x=\{0,x,1\}$, where $x\in(0,1)$. We associate to it a curve $\tilde{M}_{({\tilde{\gamma}_2})_t}$
in $\mathcal{T}(0,4)\cong \mathbb{C}_+$ with initial point
contained in the half-circle $\{|z-\frac{1}{2}|=\frac{1}{2}\}$. The terminal point is contained in the half-circle $\{|z-\frac{3}{4}|=\frac{1}{4}\}$. By the same arguments as used for $\sigma_1$ we see that $T_{\sigma_2}$ is a M\"obius transformation that maps the half-circle $\{|z-\frac{1}{2}|=\frac{1}{2}\}$ onto the half-circle $\{|z-\frac{3}{4}|=\frac{1}{4}\}$, and
maps $D_0$ conformally onto the geodesic triangle with vertices $0,\frac{1}{2},1$.
Hence, $T_{\sigma_2}$   maps $0$ to $\frac{1}{2}$, $1$ to $1$,  and $\infty$ to $0$. This implies that the mapping equals $T_{\sigma_2}(z)=\frac{1}{-z+2}$.
(See also Figure \ref{fig4.5}.)

The modular transformation corresponding to $\sigma_1^{-1}\sigma_2$ equals $T_{\sigma_1^{-1}\sigma_2}=
T_{\sigma_2}\circ T_{\sigma_1^{-1}}(z)=\frac{z+1}{z+2}$. The eigenvalues of the matrix $\begin{pmatrix}
1 & 1  \\
1 & 2
\end{pmatrix}$ are $t_{\pm}=\frac{3\pm\sqrt{5}}{2}$.
The same arguments as in Example 3 of this section show that $\Lambda(\reallywidehat{\sigma_1^{-1}\sigma_2})=
\frac{2}{\pi}\log\frac{3+\sqrt{5}}{2}$.

For each $3$-braid $b$ the respective modular transformation $T_b$ is a finite product of powers of the $T_{\sigma_j}$. Hence, each $T_b$ is an element of $PSL_2(\mathbb{Z})$. If for the trace $a+d$ of the matrix corresponding to $T_b$ the equality $|a+d|=2$  holds, then $T_b$ is conjugate to a translation by a real number, and the quotient $\mathbb{C}_+\diagup \langle T_b\rangle$ is conformally equivalent to the once punctured plane,
hence has extremal length $0$. If $|a+d|<2$, then a power of the mapping is the identity. Hence, in a neighbourhood of the fix point the mapping is conjugate to the mapping $z\to e^{i \frac{2\pi}{ k}}\,z $. The quotient $\{0<|z|<\delta\}\diagup (z\sim e^{i \frac{2\pi }{ k}}\,z) $ is a punctured disc and has extremal length $0$. Hence, $\mathbb{C}_+\diagup T_b$ has extremal length $0$. If $|a+d|>2$ the extremal length of $\mathbb{C}_+\diagup T_b$ equals $\frac{2}{\pi}\log|t_+|$ for the larger absolute value $|t_+|= \frac{|a+d|}{2} +\frac{\sqrt{(a+d)^2-4}}{2}$   among the eigenvalues of the matrix. The smallest among these values over all matrices in $SL_2(\mathbb{Z})$ is obtained for $|a+d|=3$.  Hence,  $\frac{2}{\pi}\log\frac{3+\sqrt{5}}{2}$ is the smallest non-vanishing extremal length among conjugacy classes of $3$-braids. \hfill $\Box$

\medskip

Notice that the trace of the matrix corresponding to $T_{\sigma_1}$ and to $T_{\sigma_2}$ equals $2$, and both eigenvalues are equal to $1$. Hence, the matrix is conjugate by a matrix in $SL_2(\mathbb{R})$ to the sum of the unit matrix and an upper diagonal matrix. Hence, both eigenvalues of all non-trivial powers of $T_{\sigma_1}$ and of all non-trivial powers of
$T_{\sigma_2}$ are equal to $1$. This implies that $\Lambda(\widehat{\sigma_1^k})=\Lambda(\widehat{\sigma_2^k})=0$ for all integers $k\neq 0$. Moreover, for $T_{\sigma_1 \sigma_2}(z)=T_{\sigma_2}\circ T_{\sigma_1}(z)=\frac{z-1}{3z-2}$ the associated matrix has trace $1$. The mapping has a fix point in $\mathbb{C}_+$. The eigenvalues of the associated matrix are $\frac{1}{2}\pm\frac{\sqrt{3}}{2}i$ (see Section \ref{sec:2.0}). The power $T_{\sigma_1 \sigma_2}^3$ is equal to the identity and $\Lambda(\reallywidehat{\sigma_1 \sigma_2})=0$.

\section [Invariants of conjugacy classes of braids. Main Theorem.]{Invariants of conjugacy classes of braids. Statement of the Main Theorem.}
\label{sec:4.0}

\noindent{\bf The entropy of braids.}
Recall that the entropy $h(b)$ of a braid $b\in \mathcal{B}_n$ is defined as the infimum of the entropies of self-homeomorphims of the closed disc $\overline{\mathbb{D}}$ that are contained in the mapping class $\mathfrak{m}_b$ associated to $b$,

\begin{equation}\label{eq4.01}
h(b)=\inf\{h(\varphi):\varphi\in \mathfrak{m}_b\}
\end{equation}

The value $h(\varphi)$ is invariant under conjugation with self-homeomorphisms of the closed disc $\overline{\mathbb{D}}$, hence it does not depend on the position of the set of distinguished points and on the choice of the representative of the conjugacy class $\hat b$. We write $h(\hat b)=h(b)$. In Chapter \ref{chapter-entropy} we studied the entropy of irreducible mapping classes and braids.
\index{$h(b)$} \index{$h(\hat{b})$}

The entropy of a mapping class
is an important dynamical invariant which measures the
complexity of its behaviour in terms of iterations. It has received a lot of attention and has been studied intensively. As for braids (and the associated mapping classes), it has been known
that the entropy of any irreducible braid is the logarithm of
an algebraic number. Further, the lowest non-vanishing entropy $h_n$
among irreducible braids on $n$ strands, $n \geq 3,\,$ has been
estimated from below by $\frac{\log 2}{4} n^{-1}$ (\cite{P}) and has
been computed for small $n$. The entropy of a few more  braids has been computed explicitly. It has been known that the smallest non-vanishing entropy among $3$-braids
equals $\log \frac{3+\sqrt{5}}{2}$ and is attained on the braid $\sigma_1^{-1} \sigma_2$. Further, there is an algorithm
which detects in principle whether a braid (respectively, an irreducible mapping
class) is pseudo-Anosov  and
in this
case it gives a computer assisted computation of the entropy (\cite{BH}). This approach uses so-called train
tracks. Further, fluid mechanics related to stirring devises
uses the entropy of the arising braids as a measure of complexity.

\noindent {\bf The conformal module of conjugacy classes of braids.}
On the other hand, the conformal module (or its inverse, the extremal length) is a conformal invariant of conjugacy classes of braids, and is even older than the entropy. The conformal module of conjugacy classes of braids
appeared first (without name) in the paper \cite{GL} in connection with the interest of the authors in Hilbert's Thirteen's Problem. This invariant of conjugacy classes of braids
is undeservedly almost forgotten, although it appeared again in Gromov's seminal paper \cite{G}.
Gromov observed
that the conformal module of conjugacy classes of braids defines restrictions for the validity of his Oka Principle
concerning homotopies of continuous or smooth objects involving braids to the respective holomorphic objects.

We recall the definition of the conformal module of conjugacy classes of braids. (See Definition \ref{def4.2} for the definition of the conformal module of the conjugacy classes of elements of the fundamental group of a complex manifold.)
\index{conformal module ! of a conjugacy class of braids}

We say that a continuous mapping $f$ of a round
annulus $A= \{z \in \mathbb{C}: \, r<|z|<R\},\;$ $0\leq r < R \leq \infty,\;$
into $ C_n(\mathbb{C})\diagup \mathcal{S}_n  $ represents an element $\hat b\in \widehat{ \mathcal{B}}_n$ if for some (and hence for
any) circle $\,\{|z|=\rho \} \subset A\,$ the loop $\,f:\{|z|=\rho
\} \rightarrow  C_n(\mathbb{C})\diagup \mathcal{S}_n\,$ represents $\,\hat b$.

Let $\hat b$ be a
conjugacy class of $n$-braids, $n \geq 2$. The conformal module
$M(\hat b)$ of $\hat b$ is defined as $ M(\hat b)=
\sup_{\mathcal{A}(\hat b)}\, m(A),$ where $\mathcal{A}(\hat b)$
denotes the set of all annuli which admit a holomorphic mapping into
$ C_n(\mathbb{C})\diagup \mathcal{S}_n$ which represents $\hat b$.
The extremal length $\Lambda(\hat b)$  of $\hat b$ is defined as
$\Lambda(\hat b)=
\inf_{\mathcal{A}(\hat b)}\, \lambda(A)$.
\index{$M(\hat{b})$}

\smallskip

\noindent {\bf The relation between entropy and conformal module.}
The interesting point is that the two invariants of conjugacy classes of braids,
the entropy, which is a dynamical invariant, and the conformal module, which is a conformal invariant related to complex and algebraic geometry, carry the same information on the braid class. More precisely, the following theorem holds.

\bigskip

\noindent {\bf Main Theorem.}  {\it For each conjugacy class of braids
$\hat b \in \hat{\mathcal B}_n$, $n \geq 2$, the following equality
holds
$$
{\mathcal M} (\hat b) = \frac\pi2 \, \frac1{h(\hat b)} \, .
$$
Equivalently,
$$
\Lambda (\hat b)= \frac{2}{\pi} h(\hat b)\,.
$$
}
\bigskip

The Main Theorem allows to apply methods and results known for the more intensively studied entropy to problems whose solutions are based on the concept of the conformal module. For instance, the Main Theorem together with the lower bounds for the entropy of irreducible $n$-braids allow to give a new conceptional proof of a slightly improved version of the Theorem of \cite{GL}, which was the first theorem that used the concept of conformal module (see Chapter \ref{chapter8}).
The Main Theorem together with Example 5 of Section \ref{sec:4.1b} give another proof of the fact that the smallest non-vanishing entropy among $3$-braids equals $\log\frac{3+\sqrt{5}}{2}$.
Methods of quasi-conformal mappings related to the concept of extremal length and conformal module are applied to give upper and lower bounds, differing by a universal multiplicative constant, for the entropy of $3$-braids (see Chapter \ref{chapter3-braids}).

In the present chapter we will give a proof of the Main Theorem for the case of irreducible braids. In Chapter \ref{chapter6} we will give its proof for arbitrary pure braids, and in Chapter \ref{chapter7a} the proof of the general case
will be completed.

In Chapters   \ref{chapter8}, \ref{chapterGrom}, \ref{chapterEl}, \ref{chapter3-braids}, \ref{Ch9}, and \ref{chapterfin}   we will give applications of the concept of the conformal module.
Chapters \ref{chapterGrom}, \ref{chapterEl}, and \ref{chapterfin}  are devoted to Gromov's Oka Principle. In these Chapters we apply the concept of the conformal module to describe restrictions for the existence of homotopies of continuous objects involving braids to the respective holomorphic objects.

\bigskip

\section [Upper bound for the
conformal module. Irreducible case. ]{Main Theorem. The upper bound for the
conformal module. The irreducible case.}
\label{sec:4.1}

\bigskip

\noindent Let $\hat b \in \hat{\mathcal B}_n$ be a conjugacy class of (maybe,  reducible) braids
and let ${\sf{f}} : A \to C_n ({\mathbb C}) \diagup {\mathcal S}_n$
be a holomorphic mapping of an annulus $A$ into the symmetrized
configuration space which represents $\hat b$. Our goal is to give an
upper bound for the conformal module $m(A)$.

It will be convenient to identify the annulus with the quotient of the upper half-plane by an automorphism of the upper half-pane as follows.
For a number $\rho >1$ we denote by $\Lambda_{\rho}$ the linear map
$z \to \rho z$ on the upper half-plane ${\mathbb C}_+ = \{ z \in
{\mathbb C} : {\rm Im} \, z > 0 \}$. The quotient ${\mathbb C}_+
\diagup \Lambda_{\rho}$ is conformally equivalent to an annulus of
conformal module $\frac{\pi}{\log \rho}$. Indeed, the half-open curvilinear
rectangle $\{ re^{i\theta} : 1 \leq r < \rho \, , \ 0 < \theta < \pi
\}$ is a fundamental polygon for the covering $\Lambda_{\rho}:{\mathbb C}_+\to
{\mathbb C}_+ \diagup \Lambda_{\rho}$. The logarithm maps it to $\{ x+iy : 0
\leq x < \log \rho \, , \ 0 < y < \pi\}$. Identifying points on the
vertical sides with equal $y$-coordinate we obtain an annulus of
conformal module $\frac{\pi}{\log\rho}$.
 \index{$\Lambda_{\rho}$}

\bigskip

\noindent {\bf Royden's Theorem and translation length.}
The key ingredient for obtaining the upper bound for the conformal
module is Royden's Theorem \ref{thm2.4} on equality of the Kobayashi and the Teichm\"uller metric on the Teichm\"uller space ${\mathcal T}
(0,n+1)$. Let $d_{\rm hyp}$ be the hyperbolic metric $\frac{\vert dz
\vert}{2y}$ on ${\mathbb C}_+$.
\index{Theorem ! Royden}
By Royden's Theorem \ref{thm2.4} any holomorphic
mapping ${\mathcal F}: \mathbb{C}_+ \to {\mathcal T}(0,n+1)$ is a contraction from $({\mathbb C}_+ ,
d_{\rm hyp})$ to $({\mathcal T} (0,n+1), d_{\mathcal T})$. In
particular, for any positive number $\rho$
\begin{equation}\label{eq4.6}
d_{\mathcal T} ({\mathcal F} (i) , {\mathcal F} (\rho i)) \leq
d_{\rm hyp} (i,\rho i) = \frac12 \log \rho \, .
\end{equation}
Let $\varphi$ be a self-homeomorphism of $\mathbb{P}^1$ with set of distinguished points $E_n^0\cup \{\infty\}$, and $\varphi^*$ its modular transformation on ${\mathcal T}(0,n+1)$. Suppose $\mathcal{F}$ has the following invariance property
\begin{equation}\label{eq4.6'}
\mathcal{F}(\rho z)= \varphi^*(\mathcal{F}(z)),\, z \in \mathbb{C}_+\,.
\end{equation}
The invariance property allows to associate to $\mathcal{F}$ a holomorphic map of the annulus ${\mathbb C}_+ \diagup {z \sim \rho z}$  of conformal module $\frac{\pi}{\log\rho}$ to the quotient ${\mathcal T}(0,n+1) \diagup ({\tau \sim \varphi^*(\tau)})$.
By Royden's Theorem
\begin{equation}\label{eq4.6''}
\frac{1}{2} \log \rho \geq L(\varphi^*)\,.
\end{equation}
Indeed, for the term on the left hand side of \eqref{eq4.6} the inequality
\begin{eqnarray}
\label{eq4.7} d_{\mathcal T} ({\mathcal F} (i) , {\mathcal
F} (\rho \, i)) &
= &d_{\mathcal T} ({\mathcal F} (i) , \varphi^* ( {\mathcal F} (i))) \nonumber \\
& \geq &\underset{\tau \in {\mathcal T} (0,n+1)}{\rm inf} \,
d_{\mathcal T} (\tau , \varphi^* (\tau)) = L(
\varphi^*)
\end{eqnarray}
holds.

\bigskip

\noindent {\bf The mapping to $\mathcal{T}(0,n+1)$ associated to a mapping to $C_n(\mathbb{C})\diagup \mathcal{S}_n$.}
The relation between the symmetrized configuration space
$C_n(\mathbb{C})\diagup \mathcal{S}_n$
and the Teichm\"{u}ller space of the $(n+1)$-punctured Riemann sphere (see Chapter \ref{chapter2})
will allow us to apply Teichm\"uller theory.

Take any holomorphic mapping ${\sf f}: \mathbb{C}\diagup \Lambda_{\rho}\to C_n(\mathbb{C})\diagup \mathcal{S}_n$. The set $\mathbb{C}\diagup \Lambda_{\rho}$ is identified with an annulus of conformal module $\frac{\pi}{\log\rho}$.
Lift $\sf{f}$ to a
$\Lambda_{\rho}$-equivariant mapping from ${\mathbb C}_+$ to $C_n
({\mathbb C}) \diagup {\mathcal S}_n$ which we denote by $f$.

Take a lift $\tilde f : {\mathbb C}_+ \to C_n
({\mathbb C})$ of $f$,
i.e. a mapping  $\tilde f$ for which ${\mathcal P}_{\rm
sym} \tilde f = f$.
The mapping ${\mathcal P}_{\mathcal A} \tilde f$ is a
holomorphic mapping from ${\mathbb C}_+$ to $C_n ({\mathbb
C})\diagup {\mathcal A}$. For the holomorphic isomorphism ${\rm Is}_{\sf n}:C_n(\mathbb{C})\diagup \mathcal{A}\to \{0\}\times \{\frac{1}{n}\}\times C_{n-2}(\mathbb{C}\setminus\{-1,1\})$ we lift the mapping $ {\mathcal P}_{\mathcal
{A},{\rm n}}  \tilde{f}\stackrel{def}={\rm Is}_{\rm n}\circ {\mathcal P}_{\mathcal
{A}}  \tilde{f}$
with respect to the holomorphic
projection
${\mathcal P}_{\mathcal T} : {\mathcal T} (0,n+1) \to C_n ({\mathbb
C})\diagup  \{0\}\times \{\frac{1}{n}\}\times C_{n-2}(\mathbb{C}\setminus\{-1,1\})    $ and obtain
a holomorphic map ${\mathcal F} : {\mathbb C}_+ \to
{\mathcal T} (0,n+1)$, such that ${\mathcal P}_{\mathcal T}
{\mathcal F} = {\mathcal P}_{\mathcal{ A},{\rm n}} \tilde{f}$. We have the
commutative diagram Figure \ref{fig4.1}.
(For the definition of ${\mathcal P}_{\rm
sym}$, ${\mathcal P}_{\mathcal{ A},{\rm n}}$ and ${\rm Is}_{\rm n}$ see Chapter \ref{chapter2}.)
\index{${\rm Is}_{\rm n}$}
\begin{figure}[h]
\begin{center}
\includegraphics[width=9.5cm]{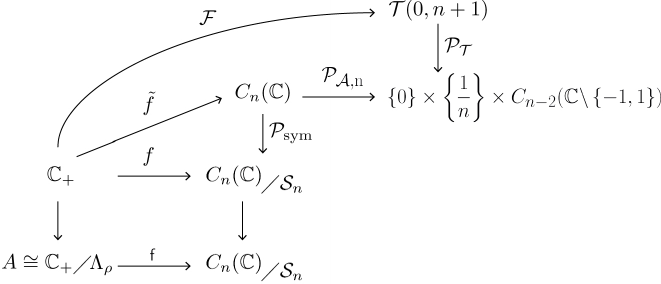}
\end{center}
\caption{The holomorphic mapping to Teichm\"uller space associated to a braid.}\label{fig4.1}
\end{figure}

Denote by $E_n$ the point
$E_n = f(i) = f(i \rho)$ and by $(z_1 , \ldots , z_n) \in C_n ({\mathbb C})$ the point $\tilde{f}(i)$.
Note that ${\mathcal P}_{\rm sym} ((z_1 , \ldots , z_n)) = E_n$.
For notational convenience we put $E_n'=E_n\cup\{\infty\}$.
Choose a
smooth self-diffeomorphism $\psi_i$ of ${\mathbb P}^1$ that is the
identity outside a large disc containing $E_n^0=\{0,\frac{1}{n},\ldots,\frac{n-1}{n}\}$ and $E_n$, and maps %
$\infty$ to $\infty$, and
$\frac{j-1}n$ to $z_j$ for $j = 1,\ldots ,n$, and has the property
${\mathcal F} (i) = [\psi_i]$.

\bigskip

Recall that each braid $b$ with base point $E_n^0$ corresponds to a mapping class $\mathfrak{m}_b$ of self-homeomorphisms of the unit disc which fix the set $E_n^0$ setwise and the boundary circle pointwise. We denoted by $\varphi^*_{b,\infty}$ the modular transformation on $\mathcal{T}(0,n+1)$ of an element of the mapping class $\mathfrak{m}_{b,\infty} \in \mathcal{T}(0,n+1)$. Recall that a representative of the mapping class $\mathfrak{m}_{b,\infty}$ may be obtained by extending an element of $\mathfrak{m}_b$ to the Riemann sphere putting it equal to the identity outside the disc.
For a self-homeomorphism $\psi$ of $\mathbb{P}^1$ that fixes $\infty$ and an element $\{\zeta_1,\ldots,\zeta_n\}\in C_n(\mathbb{C})\diagup \mathcal{S}_n$ we put as before $\psi(\{\zeta_1,\ldots,\zeta_n\})=\{\psi(\zeta_1),\ldots,\psi(\zeta_n)\}$.

The following lemma holds.
\begin{lemm}\label{lemm4.1} Let $\tilde{f}:\mathbb{C}_+ \to C_n(\mathbb{C})$ be a smooth mapping such that $f=\mathcal{P}_{\rm{sym}}(\tilde{f} )$ is $\Lambda_{\rho}$-invariant.
Let $\mathcal{F}:\mathbb{C}_+\to \mathcal{T}(0,n+1)$  be a smooth  mapping such that the diagram Figure  $\ref{fig4.1}$ commutes.
Denote by $b$ the braid
with base point $E^0_n$ represented by the geometric braid
\begin{equation}\label{eq4.19}
\psi_i^{-1} \circ f \mid [i,i \rho],\, t\in [1,\rho]
\end{equation}
where $\psi_i \in \mathcal{F}(i) $ is the self-diffeomorphism of $\mathbb{P}^1$ that was chosen above.
Then for all $z \in \mathbb{C}_+$
\begin{equation}\label{eq4.1}
\mathcal{F} (\rho \,z) = \varphi_{b,\infty}^* (\mathcal {F} (z)).
\end{equation}
\end{lemm}
\noindent{\bf Proof.}
Let $z$ be an arbitrary point in $ \mathbb{C}_+$,
and let $\alpha_z$ be a smooth curve in $\mathbb{C}_+$ that joins the points $i$ and $z$. By the proof of Proposition \ref{prop2.2} (applied to $\psi_i ^{-1}\circ f$)
there exists a smooth family of self-diffeomorphisms $\psi_{\zeta}$ of $\mathbb{P}^1$,    $ \zeta \in \alpha_z,$  that are equal to the identity outside a large disc,  such that $\psi_{\zeta}(E_n^0)=f(\zeta)$, 
and for $\zeta=i$ the diffeomorphism of the family coincides with the diffeomorphism $\psi_i$ in the statement of the lemma.

As a consequence $[\psi_{\zeta}]=\mathcal{F}(\zeta)$ since both mappings, $\zeta\to [\psi_{\zeta}]$ and
$\zeta \to \mathcal{F}(\zeta)$ lift $  \mathcal{P}_{\mathcal{A},{\rm n}}( \tilde{f}(\zeta))$ by the commutative diagram of Lemma \ref{lemm2.5}.

The family
\begin{equation}\label{eq4.**}
\zeta \to \Big\{\big(t,\, \psi_{\zeta}^{-1}(f(\zeta t))\big),\, t \in[1,\rho]\Big\}, \zeta \in \alpha_z\,,
\end{equation}
is an isotopy of geometric braids with base point $E_n^0$ that joins the geometric braid \eqref{eq4.19} with the geometric braid
\begin{equation}\label{eq4.20}
\Big\{\big(t,\, \psi_{z}^{-1}(f(z t))\big),\, t \in[1,\rho]\Big\}\,.
\end{equation}

Fix $z \in \mathbb{C}_+$ and take a parameterizing isotopy for the geometric braid \eqref{eq4.20}. More precisely, for a large positive number $R$ we take a smooth family of homeomorphisms
$\varphi^z_t \in {\rm Hom}^+ ( \mathbb{P}^1;\,\mathbb{P}^1 \setminus R\mathbb {D}),\,t
\in [1,\rho], $ such that
\begin{equation}\label{eq4.2}
{\rm ev}_{E_n^0} \, \varphi^z_t = \psi_z^{-1} (f(z\,t)) \, , \quad t \in
[1,\rho] \, ,
\end{equation}
and $\varphi^{z}_1$ is the identity.
Then the mapping $\varphi^i_{\rho}$
represents the mapping class $\mathfrak{m}_{b,\infty}\in{\mathfrak M} (\mathbb{P}^1;\,\{\infty\},\, E_n^0)$. Write
$\varphi^z_{\rho}=\varphi^z_{b,\infty}$. Since the $\varphi^z_{b,\infty}$ are isotopic with base point $E_n^0$, their modular transformations do not depend on $z$.
Let
$\varphi_{b,\infty}^*$ be the modular transformation induced on
${\mathcal T} (0,n+1)$ by the $\varphi_{b,\infty}^z$.

Equation \eqref{eq4.2} can also be written as ${\rm ev}_{E_n^0}
\,( \psi_z \circ \varphi^z_t) = f(z\,t)$, $t \in [1,\rho]$.
Hence, the mapping $t\to
e_n( \psi_z \circ \varphi^z_t) $ is a lift of the mapping $t\to f(zt)$, $t\in[0,1]$. Clearly also $t\to \tilde{f}(zt), t\in[0,1]$, lifts
the mapping $t\to f(zt)$, $t\in[0,1]$.
Moreover, both mappings, $e_n(\psi_z \circ \varphi_t^z)$ and $\tilde{f} (z\,t)$, take the value $\tilde{f}(i)=(z_1,\ldots,z_n)$ for $t=1$.
For $e_n (\psi_z \circ
\varphi^z_t)$ this follows from the definition of $\psi_i$ and
the fact that $\varphi^z_{1} = {\rm id}$. Hence, $e_n
(\psi_z \circ \varphi_t^z) = \tilde{f} (z\,t)$, $t \in [1,\rho]$.
Using also Lemma \ref{lemm2.5} we obtain the commutative
diagram Fig. \ref{fig4.3}.

\begin{figure}[h]
\begin{center}
\includegraphics[width=9.5cm]{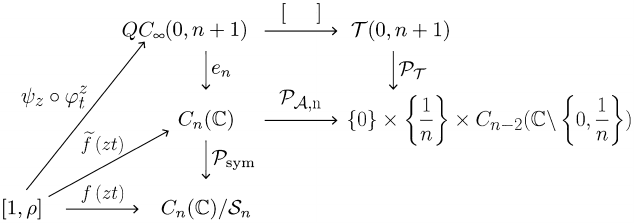}
\end{center}
\caption{}\label{fig4.3}
\end{figure}

The diagrams Figure \ref{fig4.1} and Figure \ref{fig4.3} show that
both mappings, $t\to\mathcal{F}(z\,t)$ and $t \to [\psi_z \circ \varphi_t^z]$,
lift the mapping $t\to {\mathcal P}_{\mathcal A,{\sf n}} ( \tilde f(z\,t))$ to a mapping from
$[1,\rho]$ to the Teichm\"uller space ${\mathcal T} (0,n+1)$.
Moreover, since
$\mathcal {F} (z) = [\psi_z]$ and $\varphi^z_{1} =
{\rm id}$, we may write ${\mathcal F} (z) = [\psi_z \circ
\varphi_1^z]$, which is the value of $[\psi_z \circ
\varphi^z_t]$ for $t=1$. Therefore, the two lifts
coincide:
\begin{equation}\label{eq4.4}
{\mathcal F} (z\,t) = [\psi_z \circ \varphi_t^z] \, , \quad t \in
[1,\rho] \, .
\end{equation}
In particular,
\begin{equation}\label{eq4.5}
{\mathcal F}(\rho\,z) = [\psi_z \circ \varphi^z_{\rho}] = [\psi_z \circ
\varphi_{b,\infty}^z] = (\varphi_{b,\infty})^* ([\psi_z]) =
\varphi_{b,\infty}^* ({\mathcal F} (z)) \, .
\end{equation}
We obtained \eqref{eq4.1}
for the arbitrarily chosen point $z$.
The lemma is proved. \hfill $\Box$

\medskip

\noindent {\bf Translation length and the upper bound for the conformal module. The general case.}
The following proposition gives an upper bound for the conformal module of a conjugacy class of braids in terms of the translation length of the associated modular transformation that holds for all (maybe, reducible) $n$-braids.
\begin{prop}\label{prop4.2}
Let $b$ be an $n$-braid with base point $E_n^0$ and ${\mathfrak m}_b$ its
mapping class (considered as mapping class on a large $n$-punctured disc).
For the associated mapping class  ${\mathfrak m}_{b,\infty} =
\mathcal{H}_{\infty}({\mathfrak m}_b) \, \in \mathfrak{M}(\,
\mathbb{P}^1; \, \infty,\, E_n^0)\,$ on the Riemann sphere
we consider a representative
$\varphi_{b,\infty}$ of ${\mathfrak m}_{b,\infty}\,$, its  modular
transformation $\varphi_{b,\infty}^*\,$ on ${\mathcal T} (0,n+1)\,$,
and the translation length $L (\varphi_{b,\infty}^*)$.
Then
\begin{equation}\label{eq4.8}
L (\varphi_{b,\infty}^*) \leq
 \frac{\pi}{2} \,
\frac{1}{\mathcal{M}(\hat b)} \, .
\end{equation}
\end{prop}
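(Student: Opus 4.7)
The plan is to leverage Royden's theorem (Theorem~2.4), which identifies the Kobayashi and Teichm\"uller metrics on $\mathcal{T}(0,n+1)$. The rough strategy is: starting from a holomorphic representative $\sf{f}:A\to \mathfrak{P}_n$ of $\hat b$ whose conformal module approximates $\mathcal{M}(\hat b)$, I would build a holomorphic map $\mathcal{F}:\mathbb{C}_+\to \mathcal{T}(0,n+1)$ that is equivariant under a modular transformation representing $\mathfrak{m}_{b,\infty}^*$. Applying Royden then turns a hyperbolic distance estimate on $\mathbb{C}_+$ into an upper bound on the translation length.

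Concretely, first I would identify $A$ with $\mathbb{C}_+/\Lambda_\rho$ where $\rho>1$ is chosen so that $m(A)=\pi/\log\rho$ and lift $\sf{f}$ to a $\Lambda_\rho$-equivariant holomorphic map $f:\mathbb{C}_+\to C_n(\mathbb{C})/\mathcal{S}_n$. Choose a lift $\tilde f:\mathbb{C}_+\to C_n(\mathbb{C})$ and a smooth diffeomorphism $\psi$ of $\mathbb{P}^1$, equal to the identity near $\infty$, with $\psi(E_n^0)=\mathcal{P}_{\mathrm{sym}}^{-1}(f(i\rho))$ via the prescribed labelling. Composing with $\mathcal{P}_\mathcal{A}$ and using that $\mathcal{P}_\mathcal{T}:\mathcal{T}(0,n+1)\to C_n(\mathbb{C})/\mathcal{A}$ is the universal covering (Lemma~2.5), lift $\mathcal{P}_\mathcal{A}\circ\tilde f$ to a holomorphic $\mathcal{F}:\mathbb{C}_+\to \mathcal{T}(0,n+1)$ normalized by $\mathcal{F}(i\rho)=[\psi]$.

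The crucial step is to show that $\mathcal{F}(i)=\varphi_{b,\infty}^\ast(\mathcal{F}(i\rho))$ for some representative $\varphi_{b,\infty}$ of $\mathfrak{m}_{b,\infty}$. For this, I would take a smooth parametrizing isotopy $\varphi^t\in \mathrm{Hom}^+(R\,\overline{\mathbb{D}};\partial R\,\mathbb{D})$ of the geometric braid $t\mapsto \psi^{-1}(f(it))$, $t\in[1,\rho]$, provided by (the smooth version of) Proposition~2.2, so that $\mathrm{ev}_{E_n^0}\varphi^t=\psi^{-1}(f(it))$ and $\varphi^\rho=\mathrm{id}$. Extending each $\varphi^t$ by the identity outside $R\,\overline{\mathbb{D}}$ produces $\varphi^t_\infty\in \mathrm{Hom}^+(\mathbb{P}^1;\infty)$, and the class of $\psi\circ\varphi^1_\infty$ represents $\mathfrak{m}_{b,\infty}$. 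Two lifts of $\mathcal{P}_\mathcal{A}\circ \tilde f\restriction[i,i\rho]$ to $\mathcal{T}(0,n+1)$ coinciding at $t=\rho$ must coincide everywhere (by monodromy); so $\mathcal{F}(it)=[\psi\circ\varphi^t_\infty]$ for $t\in[1,\rho]$, and at $t=1$ this gives exactly $\mathcal{F}(i)=[\psi\circ\varphi_{b,\infty}]=\varphi_{b,\infty}^\ast([\psi])$.

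Finally, Royden's theorem implies that $\mathcal{F}$ is a contraction from $(\mathbb{C}_+,d_{\mathrm{hyp}})$ to $(\mathcal{T}(0,n+1),d_\mathcal{T})$, so
\[
L(\varphi_{b,\infty}^\ast)\;\le\; d_\mathcal{T}(\mathcal{F}(i\rho),\varphi_{b,\infty}^\ast(\mathcal{F}(i\rho)))\;=\;d_\mathcal{T}(\mathcal{F}(i\rho),\mathcal{F}(i))\;\le\; d_{\mathrm{hyp}}(i\rho,i)\;=\;\tfrac{1}{2}\log\rho,
\]
which, taking $\rho$ so that $\pi/\log\rho$ approaches $\mathcal{M}(\hat b)$, yields the claimed inequality. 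The main obstacle is the third paragraph: one must keep careful track of the tower of covering projections $C_n(\mathbb{C})\to C_n(\mathbb{C})/\mathcal{S}_n$, $C_n(\mathbb{C})\to C_n(\mathbb{C})/\mathcal{A}$, and $\mathcal{T}(0,n+1)\to C_n(\mathbb{C})/\mathcal{A}$, reconciling the Teichm\"uller lift with the mapping-class-group lift via the Alexander--Tietze parametrizing isotopy. This matching is what identifies the monodromy of $\mathcal{F}$ along the $\Lambda_\rho$-action with the specific modular transformation $\varphi_{b,\infty}^\ast$ attached to $\mathfrak{m}_{b,\infty}=\mathcal{H}_\infty(\mathfrak{m}_b)$, rather than to some competing class in $\mathfrak{M}(\overline{\mathbb{D}};\partial\mathbb{D},E_n^0)$ that would give only a weaker estimate.
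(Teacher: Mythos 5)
Your proposal reproduces the paper's argument essentially verbatim: the same identification $A\cong\mathbb{C}_+/\Lambda_\rho$, the same choice of $\psi$ and normalized lift $\mathcal{F}$ to $\mathcal{T}(0,n+1)$, the same use of a smooth parametrizing isotopy (Proposition~2.2) and uniqueness of lifts to establish $\mathcal{F}(i)=\varphi_{b,\infty}^\ast(\mathcal{F}(i\rho))$ (this is exactly Lemma~4.1), and the same Royden contraction estimate at the end. The "main obstacle" you flag — matching the Teichm\"uller lift with the mapping-class lift through the tower of coverings — is precisely what the paper's commutative diagrams and Lemma~2.5 encode, and you handle it the same way.
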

\index{$\mathfrak{m}_{b,\infty}$}
\medskip

\noindent{\bf Proof.} As before we take an arbitrary holomorphic mapping $\sf{f}$ that takes the annulus $A=\mathbb{C}_+\diagup \Lambda_{\rho}$ of conformal module $m(A)=\frac{\pi}{\log \rho}$ to the symmetrized configuration space, and represents the conjugacy class $\hat b$ of the braid $b$.
Then by Lemma \ref{lemm4.1} and Royden's Theorem the inequality \eqref{eq4.6''} holds for the modular transformation $\varphi^*_{b,\infty}$ and the mapping $\mathcal{F}$ associated to $\sf f$ by the diagram Figure \ref{fig4.1}. Hence, in terms of the conformal module of $A$
$$
\frac{1}{2}\frac{\pi}{m(A)}\geq L(\varphi_{b,\infty}^*)\,.
$$
Taking the supremum over the conformal modules of all annuli admitting a holomorphic mapping representing $\hat b$, we obtain \eqref{eq4.8}. \hfill $\Box$

\bigskip

\noindent {\bf Entropy and the upper bound for the conformal module. The irreducible case.} Proposition \ref{prop4.2} implies the upper bound for the conformal module of \textit{irreducible} braids in terms of the entropy. More precisely, the following proposition holds.
\begin{prop}\label{prop4.3} For each irreducible
conjugacy class of braids $\hat b \in \hat{\mathcal B}_n$ the
inequality
$$
h(\hat b) \leq \frac\pi2 \, \frac1{{\mathcal M} (\hat b)}
$$
holds.
\end{prop}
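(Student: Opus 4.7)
The plan is to chain together the upper bound on the translation length from Proposition 4.2 with a sequence of equalities identifying that translation length with the entropy $h(\hat b)$. Since Proposition 4.2 already gives
\[
L(\varphi_{b,\infty}^*) \,\leq\, \tfrac{\pi}{2}\,\tfrac{1}{\mathcal{M}(\hat b)},
\]
what remains is to establish the identity $h(\hat b) \leq L(\varphi_{b,\infty}^*)$ in the irreducible case, and then compose the two inequalities.

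First I would invoke irreducibility of $\hat b$: since $b$ is irreducible, so is ${\mathfrak m}_{b,\infty}$, hence its modular transformation $\varphi_{b,\infty}^*$ is either elliptic or hyperbolic by Theorem 2.11. By Corollary 2.1 there is a homeomorphism $w:\mathbb{C}\setminus E_n^0\to\mathbb{C}\setminus E_n^1$ and a self-homeomorphism $\hat\varphi_{b,\infty}$ of $\mathbb{C}\setminus E_n^0$ isotopic to $\varphi_{b,\infty}$ such that $\tilde\varphi_{b,\infty}=w\circ\hat\varphi_{b,\infty}\circ w^{-1}$ is absolutely extremal on $\mathbb{C}\setminus E_n^1$ (pseudo-Anosov in the hyperbolic case, conformal in the elliptic case). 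Applying Theorem 3.2 together with Corollary 2.1 then yields
\[
\tfrac{1}{2}\log K(\tilde\varphi_{b,\infty}) \,=\, L(\varphi_{b,\infty}^*) \,=\, h(\tilde\varphi_{b,\infty}) \,=\, h(\widehat{\mathfrak{m}_{b,\infty}}).
\]

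Next I would transfer this identity from ${\mathfrak m}_{b,\infty}$ back to ${\mathfrak m}_b$ (i.e., from the once-punctured-sphere setting to the disc setting). Theorem 4 provides an isotopy of $\tilde\varphi_{b,\infty}$ through self-homeomorphisms of $\mathbb{C}\setminus E_n^1$ to a homeomorphism $\tilde\varphi_b$ that is the identity outside a large disc $R\,\overline{\mathbb D}$ while preserving entropy, $h(\tilde\varphi_b)=h(\tilde\varphi_{b,\infty})$. Conjugating back by $w^{-1}$, the mapping $w^{-1}\circ\tilde\varphi_b\circ w$ restricted to $R\,\overline{\mathbb D}$ represents a class in $\mathfrak{M}(R\,\overline{\mathbb D};\partial(R\,{\mathbb D}),E_n^0)$ that differs from ${\mathfrak m}_b$ only by a power of a Dehn twist about a circle homologous to $\partial(R\,{\mathbb D})$ (Theorem 2.1). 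Corollary 3.2 ensures such a Dehn-twist modification does not change the entropy of the conjugacy class, so
\[
h(\hat b) \,\leq\, h(w^{-1}\circ\tilde\varphi_b\circ w) \,=\, h(\tilde\varphi_b) \,=\, h(\tilde\varphi_{b,\infty}) \,=\, L(\varphi_{b,\infty}^*).
\]

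Combining this with Proposition 4.2 gives the desired inequality $h(\hat b)\leq\tfrac{\pi}{2}\,\tfrac{1}{\mathcal{M}(\hat b)}$. The main obstacle — already handled by the machinery built up in Chapters 2 and 3 — is the step through Theorem 4 (Theorem 3.3): one must extract a representative of the absolutely extremal class that is the identity near a puncture without losing entropy, so that after conjugating back by $w$ one obtains a \emph{compactly supported} representative whose relative mapping class on the disc differs from ${\mathfrak m}_b$ only by boundary-parallel Dehn twists. Once this identity-near-$\infty$ representative is in hand, Corollary 3.2 makes the passage from ${\mathfrak m}_{b,\infty}$ to ${\mathfrak m}_b$ automatic, and no further calculation is needed.
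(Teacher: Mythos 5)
Your proof follows the paper's argument step for step: the chain $\frac12\log K(\tilde\varphi_{b,\infty})=L(\varphi_{b,\infty}^*)=h(\tilde\varphi_{b,\infty})=h(\widehat{\mathfrak m}_{b,\infty})$ via Corollary~2.1 and Theorem~3.2, then Theorem~4 (Theorem~3.3) to produce an identity-near-infinity representative of the same entropy, conjugation by $w$ back to the disc picture, Theorem~2.1 plus Corollary~3.2 to absorb the boundary-parallel Dehn twist, and finally composition with the bound of Proposition~4.2. This is precisely the paper's proof and the reasoning is correct.
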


\noindent {\bf Proof.}
If
a braid $b$ with base point $E_n^0$ is irreducible, then,
$\varphi_{b,\infty}$ is irreducible and therefore
$\varphi_{b,\infty}^*$ is either elliptic or hyperbolic. By
Corollary \ref{corr2.1} there is an absolutely extremal
self-homeomorphism $\tilde\varphi_{b,\infty}$ of a Riemann surface
${\mathbb C} \backslash \tilde{E}_n$ which is obtained from
$\varphi_{b,\infty}$ by isotopy and conjugation. More precisely,
there is a homeomorphism $w : {\mathbb C} \backslash E_n^0 \to
{\mathbb C} \backslash \tilde{E}_n$ and a self-homeomorphism
$\hat\varphi_{b,\infty}$ of ${\mathbb C} \backslash E_n^0$ which is
isotopic to $\varphi_{b,\infty}$ on ${\mathbb C} \backslash E_n^0$
so that $\tilde\varphi_{b,\infty} = w \circ \hat\varphi_{b,\infty}
\circ w^{-1} : {\mathbb C} \backslash \tilde{E}_n \to {\mathbb C}
\backslash \tilde{E}_n$ is absolutely extremal. $\tilde\varphi_{b,\infty}$
is pseudo-Anosov if $\varphi_{b,\infty}^*$ is hyperbolic and
conformal if $\varphi_{b,\infty}^*$ is elliptic. For
$\tilde\varphi_{b,\infty}$ we have by Theorem \ref{thm3.2} and
Corollary \ref{corr2.1}
\begin{equation}\label{eq4.7'}
\frac12 \log K (\tilde\varphi_{b,\infty})\, =\,
 L (\varphi_{b,\infty}^*)\,=\,
h(\tilde\varphi_{b,\infty}) \, =\, h(\widehat { {\mathfrak
m}_{b,\infty}})  \, .
\end{equation}
Theorem \ref{thm3.5} provides an isotopy of $\tilde\varphi_{b,\infty}$ through
self-homeomorphisms of ${\mathbb C} \backslash \tilde{E}_n$ to a
homeomorphism $\tilde\varphi_b$ which is the identity outside the
disc $R \, \overline{\mathbb D}$ and has the same entropy as
$\tilde\varphi_{b,\infty}$:
\begin{equation}\label{eq4.8'}
h(\tilde\varphi_b) = h(\tilde\varphi_{b,\infty}) \, .
\end{equation}
Then $w^{-1} \circ \tilde\varphi_b \circ w$ is isotopic to
$\varphi_{b,\infty}$ through self-homeomorphisms of ${\mathbb C}
\backslash E_n^0$ and is the identity outside the disc $R \,
\overline{\mathbb D}$. By Theorem \ref{thm2.1} the mapping class
$\mathfrak{m}_b$ associated to $b$, and the mapping class of
$w^{-1} \circ \tilde\varphi_b \circ w \mid R \, \overline{\mathbb
D}\;$ in $\; {\mathfrak M} (R \, \overline{\mathbb D} ;
\partial { ( R \,
\overline{\mathbb D})}
, E_n^0)$ differ by a power of a Dehn twist about a circle in $R \,
{\mathbb D}$ of large radius. By
Corollary \ref{cor3.2}
the entropies of the two mapping classes are equal. We obtain
\begin{align}\label{eq4.9}
h(\hat b) = h(b)  = {\rm inf} \{ h(\varphi) : \varphi \in {\mathfrak m}_b \} & = {\rm inf} \{ h(\varphi) : \varphi \in {\mathfrak m}_{w^{-1}
\circ \tilde\varphi_b \circ w \mid R {\mathbb D}}\}    \nonumber \\
& \leq h (w^{-1}
\circ \tilde\varphi_b \circ w \mid R \, {\mathbb D}) \, .
\end{align}
Hence we obtain from \eqref{eq4.7'}, \eqref{eq4.8'}, and \eqref{eq4.9}
$$
h(\hat b) \leq h(w^{-1} \circ \tilde\varphi_b \circ w) =
h(\tilde\varphi_b) = h(\tilde\varphi_{b,\infty}) = L
(\varphi_{b,\infty}^*) \, .
$$
By \eqref{eq4.8} we have $h(\hat b) \leq \frac\pi2 \,
\frac1{\mathcal M(\hat b)}$. The proposition is proved. \hfill $\Box$

\section [Lower bound for the
conformal module. Irreducible case.] {The lower bound for the
conformal module. The irreducible case.}
\label{sec:4.2}

\noindent We will prove here the following proposition for the irreducible case.

\begin{prop}\label{prop5.1} Let $\hat b
\in \hat{\mathcal B}_n$ be an irreducible conjugacy class of braids.
Then for any annulus $A$ of conformal module $m(A)=\frac\pi 2 \frac1{h(\hat b)}$ the class $\hat b$ can be represented by a holomorphic map $f
: A \to C_n ({\mathbb C}) \diagup {\mathcal S}_n$.
\end{prop}

In the irreducible case Proposition \ref{prop5.1} is stronger than the statement of the Main Theorem. Indeed, the Proposition \ref{prop5.1} asserts furthermore that in the irreducible case the supremum in Definition 1 is attained.
Moreover, Proposition \ref{prop5.1} says, that in case the supremum is infinite, it is attained on the
punctured complex plane, not merely on the punctured disc.

Take an irreducible conjugacy class of $n$-braids $\hat b \in \hat{\mathcal
B}_n$.
Let $b \in {\mathcal B}_n$ be a braid with base point $E_n^0$
representing $\hat b$.
Let $\varphi_b$ be a self-homeomorphism of
$\overline{\mathbb D} \setminus E_n^0$ which represents the mapping
class ${\mathfrak m}_b = \Theta_n(b) \in {\mathfrak M}
(\overline{\mathbb D} ;
\partial \, {\mathbb D} , E_n^0)$ of $b$. Let $\varphi_{b,\infty}
\in {\rm Hom}^+ ({\mathbb C} ; \emptyset , E_n^0)$ be the extension
of $\varphi_b$ to the whole plane which is the identity outside the
unit disc.

\smallskip

Since the class $\hat b$ is irreducible, the mapping
$\varphi_{b,\infty}$ is irreducible. By Theorem \ref{thm2.11} the
induced modular transformation $\varphi_{b,\infty}^*$ on the
Teichm\"uller space ${\mathcal T} (0,n+1)$ is either elliptic or
hyperbolic.

\medskip

\noindent {\bf Proof of Proposition \ref{prop5.1}. The elliptic case.}
By Theorem \ref{thm2.10} the modular transformation $\varphi_{b,\infty}^*$ is elliptic iff there is a periodic conformal map
$\tilde{\varphi}_{b,\infty} \in {\rm Hom}^+ ({\mathbb C} ; \emptyset , E_n)$
which is obtained from $\varphi_{b,\infty}$ by isotopy and
conjugation. Here $E_n \subset {\mathbb C}$ is a set consisting of
$n$ points. The mapping $\tilde{\varphi}_{b,\infty}$ is a M\"obius
transformation that fixes $\infty$ and, being periodic, has a fixed
point on ${\mathbb P}^1$ different from $\infty$. Conjugate this
fixed point to zero. We obtain that $\tilde{\varphi}_{b,\infty}$ is conjugate
by a conformal mapping to multiplication by a root of unity $\omega
= e^{2\pi i \frac{{\sf p}}{{\sf q}}}$ for integer numbers ${\sf p}$
and ${\sf q}$, ${\sf q} \ne 0$. We may assume that the
mapping $\tilde{\varphi}_{b,\infty}$ itself has the form $\tilde{\varphi}_{b,\infty} (z) =
e^{2\pi i \frac{{\sf p}}{{\sf q}}} z$. Since $\tilde{\varphi}_{b,\infty} (E_n) = E_n$, the set $E_n$ is either equal to $\left\{ r , r \, e^{\frac{2\pi
i}{n}} , \ldots , r \, e^{\frac{2\pi i (n-1)}{n}} \right\}$ for some
$r > 0$, and in this case $\omega = e^{\frac{2\pi i {\sf p}}{n}}$; or it is equal
to $E_n = \left\{ 0,re^{\frac{2\pi i}{n-1}} , \ldots , r \,
e^{\frac{2\pi i (n-2)}{n-1}} \right\}$ for some $r > 0$, and in this
case $\omega = e^{\frac{2\pi i {\sf p}}{n-1}}$.

\smallskip

Consider the universal covering
$$
{\mathbb C} \ni \zeta = \xi + i\eta \to e^{\xi + i\eta} \in {\mathbb C}^*
$$
of the annulus ${\mathbb C}^* = {\mathbb C} \backslash \{ 0 \} = \{
z \in {\mathbb C} : 0 < \vert z \vert < \infty \}$.

\smallskip

Denote by $\tilde{\mathcal F}$ the following holomorphic mapping
from ${\mathbb C}$ into the space ${\mathcal A}$ of complex affine
mappings,
\begin{equation}\label{eq5.1}
\tilde{\mathcal F} (\zeta) = {\mathfrak a} (\zeta) \in {\mathcal A}
\, ,  \mbox{where} \quad {\mathfrak a} (\zeta) (z) = e^{\frac{{\sf p}}{{\sf q}} \cdot \zeta} \cdot z \, .
\end{equation}
Notice that
\begin{equation}\label{eq5.2}
{\mathfrak a} (0) (z) =  z \quad  \mbox{and} \quad {\mathfrak a} (2\pi i) (z) = e^{2\pi i \frac{{\sf p}}{{\sf q}}} z = \tilde{\varphi}_{b,\infty} (z) \, .
\end{equation}

\smallskip

The evaluation map
\begin{equation}\label{eq5.3}
{\mathbb C} \ni \zeta \to {\rm ev}_{E_n} \, {\mathfrak a}(\zeta) \in
C_n ({\mathbb C}) \diagup {\mathcal S}_n
\end{equation}
is holomorphic and is periodic with period $2\pi i$. Hence, the
evaluation map induces a holomorphic map $f$
from ${\mathbb C}^*$ to $C_n ({\mathbb C})\diagup {\mathcal S}_n$.
$f$ represents a conjugacy class $\hat b' \in
\hat{\mathcal B}_n$. By Theorem \ref{thm2.1} the mapping classes
corresponding to the braids $b' \in \hat b'$, and $b \in \hat b$
differ by a power of a Dehn twist. Replacing ${\sf p}$ by ${\sf p} +
2\pi k {\sf q}$ for a suitable integer number $k$ we may achieve
that $f : {\mathbb C}^* \to C_n ({\mathbb C}) \diagup
{\mathcal S}_n$ represents the conjugacy class $\hat b \in
\hat{\mathcal B}_n$. The proposition is proved for the elliptic case. \hfill $\Box$

Note that in the elliptic case $r=\infty$, and we may take $A = {\mathbb C}^*$.

\medskip

\noindent {\bf
The hyperbolic case. Teichm\"uller discs.} The plan of the proof in the hyperbolic case is the following. We will associate to the irreducible braid $b$ a Teichm\"uller disc, i.e. a holomorphic map $\mathcal{F}: \mathbb{C}_+\to \mathcal{T}(0,n+1)$ (see Figure \ref{fig4.1}). We need to find a mapping $\tilde f: \mathbb{C}_+ \to C_n(\mathbb{C})$ for which $\mathcal{P}_ {\mathcal{A},{\sf n}} \tilde{ f} = \mathcal{P}_{\mathcal{T}} \mathcal{F}$ (which means to find a holomorphic section), such that  $\mathcal{P}_{\rm sym}\tilde{f}$ has the equivariance property for which the Diagram
\ref{fig4.1} is commutative.

Associate to the irreducible braid $b \in \hat b$ with base point $\,E_n^0 = \left\{
0, \frac1n , \ldots , \frac{n-1}n \right\}\,$ a
representative $\,\varphi_b\,$ of the mapping class $\;{\mathfrak m}_b \in {\mathfrak
M} (\overline{\mathbb D} ; \partial  {\mathbb D} , E_n^0)\,.\;$ Let
$\varphi_{b,\infty} \in {\rm Hom}^+ ({\mathbb C} ; \emptyset ,
E_n^0)$ be the extension of $\varphi_b$ to ${\mathbb C}$ which is the
identity outside the unit disc ${\mathbb D}$. Consider the modular transformation $\varphi_{b,\infty}^*$ on $\mathcal{T}(0,n+1)$ that is associated to $b$, and let $\tilde{\varphi}_{b,\infty}$ be the absolutely extremal map that is obtained from $\varphi_{b,\infty}$ by isotopy and conjugation. Put $X_0=\mathbb{C}\setminus E_n^0= \mathbb{P}^1\setminus ({E_n^0})'$.
Denote by $w_0: X_0 \to w_0(X_0)=X$ the $\tilde{\varphi}_{b,\infty}$-minimal conformal structure on $X_0$. Let $X=w_0(X_0)$ and $E_n=w_0(E_n^0)$.
The mapping $ \tilde{\varphi}_{b,\infty}$
is a self-homeomorphism of $X$, equivalently, $\tilde{\varphi}_{b,\infty}$ defines a self-homeomorphism of $\mathbb{P}^1$ with set of distinguished points $E_n\cup \infty$, and $ \tilde{\varphi}_{b,\infty}$ is isotopic to $w_0 \circ \varphi_{b,\infty}\circ w_0^{-1}$.

By Corollary \ref{cor3.2}
$\;h (\tilde{\varphi}_{b,\infty})\; =\, h(\widehat {{\mathfrak
m}_{\varphi_{b,\infty}}}) = h(\widehat{b})$.
Again we identify self-homeomorphisms of punctured surfaces with
self-homeomorphisms of closed surfaces with distinguished points obtained by extension to the punctures.

\smallskip

The Beltrami differential of the self-homeomorphism
$\tilde{\varphi}_{b,\infty}$ of $X$ has the form $k \,
\frac{\bar\phi}{\vert \phi \vert}$ for a number $k\in (0,1)$ and a  meromorphic quadratic
differential $\phi$ on ${\mathbb P}^1$ which is holomorphic on
${\mathbb C} \backslash E_n$ and has at worst simple poles at the
points of $E_n \cup \{ \infty \}$.
The quasiconformal dilatation of $\tilde{\varphi}_{b,\infty}$
equals $K=\frac{1+k}{1-k}$. Recall that by Theorem \ref{thm3.2} we have
$\frac{1}{2} \log K = h(\tilde{\varphi}_{b,\infty}) $.

\smallskip
For $\mu_z \,= \,z \,
\frac{\bar\phi}{\vert\phi\vert}, \; z \in \mathbb{D},\;$
we let $W^{\mu_z}$ be the normalized solution on $X$ of the Beltrami
equation for $\mu_z$
(see Definition \ref{def2.1}).
Consider the Teichm\"uller disc $\mathbb{D}\ni z \to  [W^{\mu_z}]\in   {\mathcal D}_{ X, \phi}$ \index{$\mathcal{D}_{X,\phi}$}
in ${\mathcal T} (X)$ which is associated to
the quadratic differential $\phi$ on $X$ (see Section \ref{sec:2.3a}).
The modular transformation $\tilde{\varphi}_{b,\infty}^* $
maps the
disc ${\mathcal D}_{ X,\phi}$ onto itself. Indeed, $\tilde{\varphi}_{b,\infty}$ has Beltrami differential $k \,
\frac{\bar\phi}{\vert\phi\vert}$. 
For each $z$ in the open unit disc the mapping
$W^{\mu_z} \circ \, \tilde{\varphi}_{b,\infty}$ is a Teichm\"uller mapping
with quadratic differential ${\rm const} \cdot \phi$ (see Section \ref{sec:2.3a}). Hence $[W^{\mu_z} \circ \tilde{\varphi}_{b,\infty}] =
\tilde{\varphi}_{b,\infty}^* ([W^{\mu_z} ])$ has the form $\{ \mu_{z'}
\}$ for some $z' \in {\mathbb D}$. If $z$ is real then also $z'$ is
real. Each element $\{\mu_{z'}\},\, z' \in \mathbb{D}$, is in the image under $\tilde{\varphi}_{b,\infty}^* $ of the Teichm\"uller disc .
Since $W^0$ is the identity mapping, $\tilde{\varphi}_{b,\infty}^* $ maps
the point $\{\mu_0\}$ to $\{\mu_k\}$.

\smallskip

The conformal structure $w_0$ realizes a canonical
isomorphism between the Teichm\"uller space ${\mathcal T} (X)$ and
the canonical Teichm\"uller space ${\mathcal T} (X_0) = {\mathcal T}
(0,n+1)$. Indeed, associate to each conformal structure $w$ on $X$
the conformal structure $w \circ w_0$ on $X_0$. Its class $[w \circ
w_0]$ in ${\mathcal T} (0,n+1)$ depends only on the class $[w]$ in
${\mathcal T} (X)$. Put $w_0^* ([w]) = [w \circ w_0]$, $[w] \in
{\mathcal T} (X)$. The mapping $w_0^* : {\mathcal T} (X) \to
{\mathcal T} (0,n+1)$ gives the canonical isomorphism. $w_0^*$ is a
holomorphic mapping between Teichm\"uller spaces.
We denote the image  $w_0^* ({\mathcal D}_{ X,\phi})$ in ${\mathcal T}(0,n+1)$
by ${\mathcal D}_{\phi}^0$.
The mapping $w_0^*$ takes the Teichm\"uller disc in ${\mathcal T} (X)$ to a Teichm\"uller disc
\begin{equation}\label{eq5.4}
{\mathbb D} \ni z \overset{{\mathfrak
e}}{-\!\!\!-\!\!\!\longrightarrow} w_0^* (\{ \mu^z \}) \in
{\mathcal D}_{\phi}^0 \subset {\mathcal T} (0,n+1) \,
\end{equation}
in ${\mathcal T}(0,n+1)$. \index{$\mathcal{D}_{\phi}^0$}

It will be convenient to reparameterize the Teichm\"uller disc by a mapping from the upper half-plane to Teichm\"uller space. For this purpose we consider the composition
\begin{equation}\label{eq5.5}
\mathcal{F}: {\mathbb C}_+ \to {\mathcal D}_{\phi}^0 \, ,
\quad
\mathcal{F} = {\mathfrak e} \circ {\mathfrak
c}^{-1} \,
\end{equation}
with the conformal mapping ${\mathfrak c} (z) = i \, \frac{1+z}{1-z}$ from $\mathbb{D}$ onto ${\mathbb C}_+$.

The mapping $\varphi_{b,\infty}^*=(w_0^* )^{-1}\circ \tilde{\varphi}_{b,\infty}^*\circ w_0^* $ takes ${\mathcal D}^0_{\phi}$ onto itself.
Conjugate the restriction of the modular transformation $\varphi_{b,\infty}^* \mid {\mathcal
D}_{\phi}^0$ by $\mathcal{F}$. We obtain a holomorphic homeomorphism $\Lambda$ of the upper half-plane onto itself,
\begin{equation}\label{eq5.6}
\Lambda = \mathcal{F}^{-1} \circ \varphi_{b,\infty}^*
\circ \mathcal{F} : {\mathbb C}_+ \to {\mathbb C}_+\,.
\end{equation}
Since $\tilde{\varphi}_{b,\infty}^*$ maps each point $\{\mu_x\}$ with $x$ real to a point $\{\mu_{x'}\}$ with real $x'$,  and maps $\{\mu_0\}$ to $\{\mu_k\} $, the mapping
$\Lambda$ fixes the imaginary axis and maps $i$ to $i \, \frac{1+k}{1-k} = iK$. Hence
\begin{equation}\label{eq5.7}
\Lambda (\zeta) = \Lambda_K (\zeta) \stackrel{def}= K \zeta \, , \quad \zeta \in
{\mathbb C}_+.
\end{equation}
By \eqref{eq5.6} and \eqref{eq5.7}
\begin{equation}
\label{eq5.12} \mathcal{F}
(K\zeta) = \varphi_{b,\infty}^* (\mathcal{F} (\zeta)).
\end{equation}

The annulus $A =  {\mathbb C}_+ \diagup \Lambda_K$ has conformal
module $\frac\pi{2\log K^{\frac12}}$ and is conformally equivalent to the quotient ${\mathcal D}^0_{\phi} \diagup {\langle \varphi_{b,\infty}^*\rangle}$.

\bigskip

\noindent{\bf The mapping representing the braid class.}
Using a  mapping $\mathcal{F}: {\mathbb C}_+ \to {\mathcal D}^0_{\phi}$ that satisfies \eqref{eq5.12}, we will now represent the braid class $\hat b$ by a holomorphic mapping from an annulus to $C_n(\mathbb{C})\diagup \mathcal{S}_n$. Associate to $\mathcal{F}$  the  mapping $\mathcal{P}_{\mathcal{T}}\circ \mathcal{F}$ from $\mathbb{C}_+$ into $\{ 0 \} \times \{\frac1n\} \times C_{n-2} \, ({\mathbb C}\setminus \{1,\frac{1}{n}\}) ={\rm Is}_{\rm n}( C_n(\mathbb{C})\diagup \mathcal{A})$ (see Section \ref{sec:2.3a}). The diagram Figure \ref{fig4.1} and the diagram of Lemma  \ref {lemm2.5} suggest to find a suitable holomorphic section for the projection
\begin{align*}
\mathbb{C}_+ \times  C_n ({\mathbb C}) \ni (\zeta, z) \to (\zeta, z\diagup {\mathcal{A}})\in
\mathbb{C}_+\times
C_{n} ({\mathbb C})\diagup {\mathcal{A}}\,.
\end{align*}
We need a suitable section only  for $\mathbb{C}_+$ replaced by a neighbourhood
${\mathcal U}(\varepsilon)\stackrel{def} = \{ \zeta \in {\mathbb C}_+ : 1 -
\varepsilon < \vert \zeta \vert < (1+\varepsilon) K \}$ of the closure $\{ \zeta \in {\mathbb C}_+ : 1 \leq \vert
\zeta \vert \leq K\}$ of a fundamental domain in the universal covering $\mathbb{C}_+$ of the annulus $\mathbb{C}_+\diagup \Lambda$.
Each such holomorphic section has the following form
\begin{align*}
{\mathcal U}(\varepsilon)\times C_{n} ({\mathbb C})\diagup {\mathcal{A}} \ni (\zeta,Z)\to (\zeta,\mathfrak{A}(\zeta)({\rm Is}_{\sf n}(Z)))\in {\mathcal U}(\varepsilon)\times C_{n} ({\mathbb C})
\end{align*}
for a holomorphic map $\mathfrak{A}: {\mathcal U}(\varepsilon)\to \mathcal{A}$
that assigns to each point $\zeta\in {\mathcal U}(\varepsilon)$ a complex linear self-mapping $\mathfrak{A}(\zeta)\in \mathcal{A}$ of $\mathbb{C}$. Recall that for an element $\mathfrak{A}$ of $\mathcal{A}$ the point
\begin{align}
{\mathfrak A}\Big((0,\frac1n , z_3  , \ldots , z_n)\Big) \stackrel{def} =
\Big({\mathfrak A}(0), {\mathfrak A}(\frac1n) , {\mathfrak A} (z_3 ) , \ldots , {\mathfrak A} (z_n )\Big)
\end{align}
is the result of the diagonal action of the complex affine mapping $\mathfrak{A}$ on the element $(0,\frac1n , z_3  , \ldots , z_n)\in
\{ 0 \} \times \{\frac1n\} \times
C_{n-2} \, ({\mathbb C} \backslash \{0,\frac{1}{n}\})$.

For $\zeta \in {\mathbb C}_+$ we
write
\begin{equation}
\label{eq5.7a}
{\mathcal P}_{\mathcal T} ( \mathcal{F}
(\zeta)) \, = \, \big( 0,\frac1n , z_3 (\zeta) , \ldots , z_n
(\zeta)\big)\,.
\end{equation}

For a holomorphic mapping $\mathfrak{A}:{\mathcal U}(\varepsilon)\to \mathcal{A}$  we define the holomorphic mapping $f_{\mathfrak A}: {\mathcal U}(\varepsilon) \to C_n(\mathbb{C})$ by
$$
f_{\mathfrak A}(\zeta) \stackrel{def}= \mathfrak{A}(\zeta)({\mathcal P}_{\mathcal T} \Big(\mathcal{F}(\zeta))\Big)\,.
$$
The following lemma guarantees the existence of a mapping $ \mathfrak{A}$ that is suitable for our purposes.

\index{$\mathcal{U}(\varepsilon)$} \index{$f_{\mathfrak{A}}$}
\begin{lemm}\label{lemm5.2} There exists a holomorphic map
${\mathfrak A} : {\mathcal U}(\varepsilon) \to {\mathcal A}$ such
that the mapping $f(\zeta) \overset{\rm def}{=} {\mathcal P}_{\rm
sym} \, f_{\mathfrak A}  (\zeta)$, $\zeta \in {\mathcal
U}(\varepsilon)$, has the following properties:
\begin{equation}\label{eq5.9a}
f(K\zeta) = f(\zeta) \quad {\it for} \quad \zeta \in {\mathcal U}'
(\varepsilon) \stackrel{def}=\{\zeta \in \mathbb{C}:1-\varepsilon <|\zeta|<1+ \varepsilon \}   \,,
\end{equation}
and the family $f(it)$, $t \in [1,K]$, defines a closed path in $C_n
({\mathbb C}) \diagup {\mathcal S}_n$ in the free isotopy class
$\widehat{\Delta_n^{2\ell} \, b}$ for some $\ell \in {\mathbb Z}$.
\end{lemm}

\medskip

\noindent {\bf Proof of Lemma \ref{lemm5.2}.} 
By Lemma \ref{lem2.1a}
in a neighbourhood of each point $\zeta$
in $\mathbb{C}_+$ we may choose a smooth family $w_{\zeta}\in QC_{\infty}(0,n+1)$, such that $w_{\zeta}$   fixes $0$ and $\frac{1}{n}$, $[w_{\zeta}]=\mathcal{F}(\zeta)$, and
the equality ${\mathcal P}_{\mathcal T} ( \mathcal{F} (\zeta)) = e_n(w_{\zeta})$
holds.

The mapping $\varphi_{b,\infty}$ is a self-homeomorphism of $\mathbb{C} \setminus E_n^0 $. Its extension to $\mathbb{C}$ acts on $E_n^0$ by permutation:
\begin{equation}
\label{eq5.13}
\varphi_{b,\infty}(\frac{j}{n})=S_b(\frac{j}{n}), \, j=0,1,\ldots,n-1,
\end{equation}
for the permutation $S_b=\tau(b)$ that is related to the braid $b$ and maps the set $\{0,\frac{1}{n}, \ldots, \frac{n-1}{n}\}$ onto itself. Equation \eqref{eq5.13}
 is equivalent to the equation
$e_n(\varphi_{b,\infty})=S_b(e_n({\rm Id}))$.

Since $w_{\zeta}$ represents $\mathcal{F}(\zeta)$,
the mapping $w_{\zeta} \circ {\varphi}_{b,\infty}$ represents $\mathcal{F}(K\zeta)=  \varphi_{b,\infty}^*(\mathcal{F}(\zeta))$. Hence, the point
$e_n(w_{\zeta} \circ \varphi_{b,\infty})$
represents the class of
${\mathcal P}_{\mathcal T} ( \mathcal{F}(K \zeta))\,$
in the quotient $C_n(\mathbb{C})\diagup \mathcal{A}$     of $C_n(\mathbb{C})$ by the action of the group $\mathcal{A}$.

The locally defined family $\zeta\to e_n(w_{\zeta})$ does not depend on the local choice of the $\omega_{\zeta}$ and is holomorphic since it coincides with the holomorphic mapping $\zeta\to \mathcal{P}_{\mathcal{T}}(\mathcal{F}(\zeta))$. 
For $e_n(w_{\zeta})
 =\big(0,\frac{1}{n},\ldots,z_n(\zeta)\big)=\big(z_1(\zeta),\ldots,z_n(\zeta)\big)$
we have 
$e_n(w_{\zeta} \circ \varphi_{b,\infty})=
\big(z_{S(1)}(\zeta),\ldots,z_{S(n)}(\zeta)\big)$
for the permutation $S, \;S(j)=nS_b(\frac{j-1}{n})+1$ on $\{1,\ldots,n\}$, that is induced by $S_b$. 
Put $\,S(e_n(w_{\zeta}))=\,$ $S\big((z_1(\zeta),\ldots,z_n(\zeta))\big)=
(z_{S(1)}(\zeta),\ldots,z_{S(n)}(\zeta))$.
We obtained
\begin{equation}\label{eq5.13a}
e_n(w_{\zeta} \circ \varphi_{b,\infty})= S(e_n(w_{\zeta}))\,.
\end{equation}
Since $w_{\zeta}$ represents $\mathcal{F}(\zeta)$, and by  equation \eqref{eq5.12}
$w_{\zeta}\circ\varphi_{b,\infty}$ represents $\mathcal{F}(K\zeta)=\varphi_{b,\infty}^*(\mathcal{F}(\zeta))$, equation \eqref{eq5.13a} means that 
${\mathcal P}_{\mathcal T} ( \mathcal{F} (K \zeta))$ differs from 
$S({\mathcal P}_{\mathcal T} \Big(\mathcal{F}(\zeta))\Big)$ by the diagonal action of a M\"obius transformation depending on $\zeta \in \mathbb{C}_+$. This M\"obius transformation is determined by its action on the first two coordinates of ${\mathcal P}_{\mathcal T} (\mathcal{F}(K\zeta))$ and  $S({\mathcal P}_{\mathcal T}\Big( \mathcal{F} (\zeta))\Big)$.

The following lemma holds.
\begin{lemm}\label{lemm5.4} There exists a holomorphic mapping
${\mathfrak A} : {\mathcal U}(\varepsilon) \to {\mathcal A}$ such
that for each $\zeta \in {\mathcal U}' (\varepsilon)$ the first two
coordinates of the two $n$-tuples ${\mathfrak A} (\zeta)\Big(S ({\mathcal P}_{\mathcal T} \big( \mathcal{F} (\zeta))\big)\Big)$ and
${\mathfrak A}(K\zeta)\Big({\mathcal P}_{\mathcal T} \big( \mathcal{F} (K \zeta)\big)\Big)$
coincide on $\mathcal{U}'(\varepsilon)$.
\end{lemm}
We will prove this lemma after the proof of the present Lemma \ref{lemm5.2} is finished.

\smallskip

\noindent {\bf Continuation of proof of Lemma \ref{lemm5.2}}
Let ${\mathfrak A}$ be the mapping of Lemma \ref{lemm5.4}. We put $\;f_{\mathfrak A}  (\zeta) \stackrel{def}= \mathfrak{A}(\zeta)(\mathcal{P}_{\mathcal{T}}\Big( \mathcal{F}(\zeta))\Big)\,,\;$
$f_{\mathfrak A}: {\mathcal U}(\varepsilon) \to C_n(\mathbb{C})$. 
Since the first two coordinates of ${\mathfrak A} \big(K\zeta)\Big({\mathcal P}_{\mathcal T} \big( \mathcal{F} ( K\zeta)\big)\Big)$ and
${\mathfrak A} (\zeta)\Big(S({\mathcal P}_{\mathcal T} \big( \mathcal{F} (\zeta))\big)\Big)$ coincide on ${\mathcal U}(\varepsilon)$, 
equation \eqref{eq5.13a} implies
\begin{equation}\label{eq5.14}{\mathfrak A} (K\zeta)\Big({\mathcal P}_{\mathcal T} \big( \mathcal{F} (K \zeta)\big)\Big)={\mathfrak A} (\zeta)\Big(S\big({\mathcal P}_{\mathcal T} ( \mathcal{F} (\zeta))\big)\Big)
 \, ,
\quad \zeta \in{\mathcal U}'(\varepsilon) \, .
\end{equation}
Since the action of ${\mathfrak A} (\zeta)$
commutes with $S$, we have
\begin{equation}\label{eq5.15} {\mathfrak A} (K\zeta)\Big({\mathcal P}_{\mathcal T} \big( \mathcal{F}(K \zeta)\big)\Big)=S\Big({\mathfrak A} (\zeta)\big({\mathcal P}_{\mathcal T} ( \mathcal{F} (\zeta))\big)\Big)
 \, ,
\quad \zeta \in{\mathcal U}'(\varepsilon) \, .
\end{equation}
Equation \eqref{eq5.15} means that
$f_{\mathfrak
A} (K\zeta) = S (f_{\mathfrak A} (\zeta))$ for $\zeta \in{\mathcal
U}' (\varepsilon)$, hence,
\begin{equation}\label{eq5.16}
 {\mathcal P}_{\rm sym} \, f_{\mathfrak A} (K\zeta) = {\mathcal
P}_{\rm sym} \, f_{\mathfrak A} (\zeta) \, , \ \zeta \in{\mathcal
U}' (\varepsilon) \, .
\end{equation}
\noindent Equality \eqref{eq5.9a} is proved.

The second statement of
Lemma \ref{lemm5.2} is proved as follows. Take the smooth family $t\to w_{it} \in QC_{\infty}(0,n+1),\, t\in [1,K]$, of the beginning of the proof of Lemma \ref{lemm5.2}, for which $\mathcal{F}(it) =[w_{it}]$, and
$\mathcal{P}_{\mathcal{T}}( \mathcal{F}(it)) =     \mathcal{P}_{\mathcal{T}}([w_{it}])=e_n(w_{it})$.

We need to prove that $f(t)=\mathcal{P}_{\rm sym}(\mathfrak{A}(it)e_n(w_{it})),\, t\in [1,K],$ represents $\hat b$ up to a power of a Dehn twist.
By equation \eqref{eq5.15} the equality
$$\mathfrak{A}(K i t)\big(e_n(w_{Kit})\big)=S \big(\mathfrak{A}(it) (e_n(w_{it})\big)$$
holds. The mapping $\tilde{f}(it)\stackrel{def}= \mathfrak{A}(it)\big(e_n(w_{it})\big)=e_n\big(\mathfrak{A}(it)(w_{it})\big),\, t \in [1,K],$ is a lift to $C_n(\mathbb{C})$ of the mapping $f(it)=\mathcal{P}_{\rm sym}\big(\mathfrak{A}(it)e_n(w_{it})\big)= {\rm ev}_{E_n^0}\big(\mathfrak{A}(it)( w_{it})\big)$.
Hence, $t\to \mathfrak{A}(it)( w_{it})$ is a parameterizing isotopy of the geometric braid $t\to f(it)$. Since $[w_{iK}]=[w_i\circ \varphi_{b,\infty}]$, also the equality
\begin{align*}
[\mathfrak{A}(iK)(w_{iK})]=[ \mathfrak{A}(i) (   w_i\circ \varphi_{b,\infty})]
\end{align*}
holds.
Since
\begin{align*}
e_n\big(\mathfrak{A}(iK) \circ w_{iK}\big)\quad \;&=\mathfrak{A}(iK) \big (e_n( w_{iK})\big)= \mathfrak{A}(iK)\big(\mathcal{P}_\mathcal{T}(\mathcal{F}(iK))\big)\,,\\
e_n(\mathfrak{A}(i)  \circ  w_i\circ \varphi_{b,\infty})&= \mathfrak{A}(i) (S(e_n(w_i)))\;\,=S (\mathfrak{A}(i)(\mathcal{P}_\mathcal{T}(\mathcal{F}(i))\,,
\end{align*}
equation \eqref{eq5.14}   and Lemma \ref{lemm2.6} imply that the Teichm\"uller equivalent mappings $\mathfrak{A}(iK)(w_{iK})$ and $\mathfrak{A}(i)( w_{i}\circ \varphi_{b,\infty})$ are isotopic. Hence, the class of $\varphi_{b,\infty}$ is the mapping class in $\mathfrak{M}(\mathbb{P}^1;\infty,E_n^0  )$ of the geometric braid $t\to f(it),\,t\in [1,K]$. 
Hence, the conjugacy classes of the braid $b$ and of the braid represented by $t\to f(it),\,t\in[1,K]$ differ by a power of a Dehn twist.
Lemma \ref{lemm5.2} is proved. \hfill $\Box$

\medskip

\noindent {\bf Proof of Lemma \ref{lemm5.4}.}
Write
$$
{\mathcal P}_{\mathcal T} (\mathcal{F}(\zeta) )= \big(z_1 (\zeta) , z_2 (\zeta) , z_3 (\zeta) , \ldots,\,z_n (\zeta)\big)
$$
with $z_1 (\zeta) \equiv 0$, $z_2 (\zeta) \equiv \frac1n$.
For the proof of the lemma we have to find an affine mapping $\mathfrak{A} \in \mathcal{A}$ so that the following condition is satisfied
\begin{align}
\label{eq5.17}
{\mathfrak A} (K\zeta)\;\Big(\,0\,\Big)\quad \,\, & \equiv \,{\mathfrak A} (K\zeta)(z_1(K\zeta))\,  =\, {\mathfrak A} (\zeta) (z_{S(1)}(\zeta)) \, , \nonumber \\
{\mathfrak A} (K\zeta) \left( \frac1n \right)\quad \,&\equiv \,  {\mathfrak A} (K\zeta)(z_2(K\zeta))\,  = \,{\mathfrak A}
(\zeta) (z_{S(2)} (\zeta)) ,
\quad \zeta \in {\mathcal U}'(\varepsilon) \, .
\end{align}
We write ${\mathfrak A} (\zeta)$ in the form ${\mathfrak A}
(\zeta) \,( z) = \frac{z-b(\zeta)}{a(\zeta)}$, $z \in {\mathbb C}$,
for holomorphic functions $a$ and $b$ in $\mathbb{C}_+$ 
with $a \ne 0$ in $\mathbb{C_+}$. 
For a number $K>1$ and any function $q$ on ${\mathcal
U}(\varepsilon)$ we use the notation $q^K (\zeta)$, $\zeta \in
{\mathcal U}'(\varepsilon)$, for the function $q^K (\zeta) =
q(K\zeta)$. The equations \eqref{eq5.17} can be written on ${\mathcal U}' (\varepsilon)$ as
\begin{align}
\label{eq5.18}
\frac{z_{S(1)}-b}{a} &= \frac{ - b^K}{a^K} \, , \nonumber \\
\frac{z_{S(2)} - b}{a}&= \frac{\frac1n - b^K}{a^K} \,.
\end{align}
Put
$$
\chi \overset{\rm def}{=} \frac{1}{n (z_{S(2)} - z_{S(1)})}\,.
$$
$\chi$ is
an analytic function on  ${\mathcal
U}'(\varepsilon)$, $\chi \ne 0$ on  ${\mathcal
U}'(\varepsilon)$. The equations \eqref{eq5.18} can be rewritten as
\begin{align}
\label{eq5.19}
\frac{a^K}{a} &= \chi \, , \nonumber \\
\frac{b^K}{a^K} - \frac ba &= -\frac{z_{S(1)}}{a} \, , \quad \zeta
\in {\mathcal U}' (\varepsilon) \, .
\end{align}
The first equation leads to a Second Cousin Problem for the coefficient $a$ on the annulus $\mathbb{C}_+\diagup \Lambda_K$ (see Appendix A). Indeed, let $\varepsilon >0$ be small compared to $K$. Cover the annulus $\mathbb{C}_+\diagup \Lambda_K$ by the open sets
\begin{align}\nonumber
\mathcal{U}_1 &= \{ \zeta \in {\mathbb C}_+ : 1 -
\varepsilon < \vert \zeta \vert < 1+ 3 \varepsilon \}\diagup \Lambda_K\;,\nonumber\\
\mathcal{U}_2  &= \{ \zeta \in {\mathbb C}_+ : 1 +2
\varepsilon < \vert \zeta \vert < 1+ 5\varepsilon \}\diagup \Lambda_K\;,\; \mbox{and}\nonumber\\
\mathcal{U}_3 &= \{ \zeta \in {\mathbb C}_+ : 1 +4
\varepsilon < \vert \zeta \vert < (1+ \varepsilon )K\}\diagup \Lambda_K\;.
\end{align}
Then $\;\mathcal{U}_1 \cap \mathcal{U}_2 \cap \mathcal{U}_3 =\emptyset$, and $\mathcal{U}_3\cap \mathcal{U}_1= \mathcal{U}'(\varepsilon)\diagup \Lambda_K$. On $\mathcal{U}_{3,1}=\mathcal{U}_3 \cap \mathcal{U}_1$ we take the holomorphic transition function $g_{3,1}(\zeta\diagup \Lambda_K)=\chi(\zeta), \,\zeta \in \mathcal{U}'(\varepsilon)$,  on all other intersections of covering sets we take the holomorphic functions that are identically equal to one. We obtained a Cousin II distribution. Since the intersections of two covering sets are simply connected and an annulus is a Stein manifold, the Second Cousin Problem has a solution. This means, there exist nowhere vanishing holomorphic functions $g_j$ on  $\mathcal{U}_j$ such that $g_{j,k}= \frac{g_j}{g_k}$. 
Cover $\mathcal{U}_{\varepsilon}$ by lifts $\tilde{\mathcal{U}}_j$ of $\mathcal{U}_j$, $j=1,2,3$.
Let $\tilde{ g}_j$ be the lift of $g_j$ to $\mathcal{U}_j$
The function $a$ on $\mathcal{U}(\varepsilon)$ which is equal to $\tilde{g}_j$ on $\tilde{\mathcal{U}}_j$, $j=1,2,3,$ satisfies the first equation of \eqref{eq5.19}.

After we found a function $a$  satisfying the first equation, we are looking for a function $\frac{b}{a}$ which satisfies the second equation. This leads to a First Cousin Problem on $A$ which is solvable. The lemma is proved. \hfill $\Box$

\medskip

\noindent {\bf End of proof of Proposition \ref{prop5.1}.}
To prove the Proposition \ref{prop5.1} we consider the mapping
${\mathfrak a}_{\ell} (\zeta) (f(\zeta))$, $\zeta \in {\mathcal
U}(\varepsilon)$. Here $f$ is the mapping of Lemma \ref{lemm5.2} and
\begin{equation}\label{eq5.10}
{\mathfrak a}_{\ell} (\zeta)(z) = e^{\frac{2\pi i \ell}{\log K}
\cdot \log \frac\zeta i} \cdot z \, , \quad \zeta \in {\mathcal
U}(\varepsilon) \, , \quad z \in {\mathbb C} \, .
\end{equation}
Equation \eqref{eq5.10} defines a holomorphic mapping from $A = {\mathbb C}_+
\diagup \Lambda_K$ to $C_n ({\mathbb C}) \diagup {\mathcal S}_n$.
When $\zeta$ ranges over $[i,iK]$ the function $\log \frac\zeta i$
ranges over $[1,\log K]$, and ${\mathfrak a}_{\ell} (\zeta)$, $\zeta
\in [i,iK]$, defines the $\ell$-th power of the full twists about the circle $[i,iK] \diagup \Lambda_K$. Hence, 
for a suitable integer number $\ell$ the path ${\mathfrak a}_{\ell}
(\zeta)(f(\zeta))$, $\zeta \in [i,iK]$, represents the free isotopy
class $\hat b$. 
The conformal module $m(A) = m({\mathbb C}_+ \diagup
\Lambda_K)$ equals
$$
\frac\pi2 \frac1{\log \frac K2} \, =  \frac\pi2 \frac1{h(\hat b)}.
$$

\noindent Proposition \ref{prop5.1} is proved in the hyperbolic case. \hfill $\Box$

\bigskip

\chapter[Pur braids. Irreducible components. Proof of the Main Theorem.]
{
Reducible pure braids. Irreducible nodal components, irreducible braid components,  and the proof of the Main Theorem.}
\label{chapter6}
\setcounter{equation}{0}

\bigskip

\noindent The main purpose of this chapter is the proof of the Main Theorem for
conjugacy classes of reducible pure braids.
The idea is to reduce the proof to the case of irreducible braids and mapping classes. We will describe a decomposition of reducible elements of
$\widehat{\mathfrak M} ({\mathbb P}^1 ; \{\infty\}\cup  E_n) $ into
irreducible nodal components (See Section \ref{sec:6.1}). The elements of  $\widehat{\mathfrak M} ({\mathbb P}^1 ; \{\infty\}\cup E_n)$ can be reconstructed from the irreducible nodal components up to a family of commuting powers of Dehn twists.
On the other hand we provide a decomposition of conjugacy classes of reducible pure braids into irreducible braid components. The
conjugacy class of a reducible pure braid can be recovered from its irreducible braid components.

In Section \ref{sec:6.3} we will establish the relation between the
irreducible braid components of the conjugacy class of pure braids and the irreducible nodal components of the conjugacy class of mapping classes associated to the pure braid.

In Section \ref{sec:6.4} we prove that the conformal module of a conjugacy class $\hat b$ of pure braids is equal to the minimum of the conformal modules of the irreducible braid components, and the entropy of the class $\hat b$ is equal to the maximum of the entropies of the irreducible nodal components of the mapping class associated to $\hat b$. The Main Theorem for reducible pure braids is proved by applying the version of the Main Theorem for irreducible braids to the irreducible braid components and the related irreducible nodal components.

\medskip

\section {Irreducible nodal components. Pure braids.}\label{sec:6.1}
Throughout this chapter 
$b \in {\mathcal B}_n$ will be a reducible pure braid and 
$\mathfrak{m}_b \in {\mathfrak M} (\overline{\mathbb{D}} ; \partial \mathbb{D} \cup  E_n)$ its mapping class. Here $E_n\subset \mathbb{D}$ is a set consisting of $n$ points.
We will describe the decomposition of the conjugacy class $\widehat{\mathfrak{m}_b}$ into irreducible components.
First, we represent the
mapping class
${\mathfrak m}_{b,\infty}=\mathcal{H}_{\infty}(\mathfrak{m}_b) \in {\mathfrak M} ({\mathbb{P}^1} ; \{\infty\}\cup E_n)$ 
by a self-homeomorphism  $\varphi_{b,\infty}$ of $\mathbb{P}^1$ 
which is the identity outside the unit disc $\mathbb{D}$. We assume that $\varphi_{b,\infty}$ is completely reduced by
an admissible system of curves  ${\mathcal C} = \{ C_1 ,
\ldots , C_k\}$  in ${\mathbb D}
\backslash E_n \subset \mathbb{C}\backslash E_n $. In particular,
$\varphi _{b,\infty}$ leaves the union $\underset{C \in\mathcal{C}}{\bigcup} C$
invariant, and also leaves the complement ${\mathbb P}^1 \backslash
\underset{C \in \mathcal{C}}{\bigcup}C$ invariant. Notice that the decomposition into irreducible components in general depends on the isotopy class of the admissible system  
$\mathcal{C}$.

\smallskip

The Jordan Curve Theorem induces a partial order on the set of
connected components of ${\mathbb P}^1 \backslash
\underset{C\in \mathcal{C}}{{\bigcup}} C$.
The partial order is
described as follows.
\index{Jordan Curve Theorem}

For simplicity of notation we put $E_n'= E_n \cup \infty$. We will label the connected components of ${\mathbb P}^1 \backslash
\underset{C \in \mathcal{C}}{\bigcup} C$ by $S^{\ell,j}$, were the first digit indicates the number of the generation, the second digit indicates the label of the component among components of the same generation.
There is only one connected component of first generation, 
denoted by $S^{1,1}$. This is the outermost \index{$S^{1,1}$}  connected component of ${\mathbb
P}^1 \backslash \underset{C\in \mathcal{\mathcal{C}}}{\bigcup} \, C$, i.e. the component which contains the point $\infty$. 
We say that the exterior boundary of $S^{1,1}$ is empty.
The union of all boundary components of $S^{1,1}$ is denoted by
$\partial_{\mathfrak I} \, S^{1,1} = \partial S^{1,1}$ and called 
the interior boundary $\partial_{\mathfrak I} \, S^{1,1} = \partial S^{1,1}$ of $S^{1,1}$. The components of $\partial_{\mathfrak I} \, S^{1,1}$ are called the curves of generation 1.

By the Jordan Curve Theorem each connected component of  the interior boundary $\partial_{\mathfrak I} \, S^{1,1}$ bounds a disc contained
 in $\mathbb{D}$. Denote the discs 
by $\delta^{2,j}, \; j=1,\ldots, k_2$. For each $j$ there is exactly one connected component $S^{2,j}$ of ${\mathbb C}
\backslash \underset{C \in \mathcal{C}}{\bigcup} \, C$ which is contained in $\delta^{2,j}$ and has a
boundary component in common with $S^{1,1}$. (For short, $S^{2,j}$ is adjacent to a boundary component of $S^{1,1}$.) 
We call this boundary component
the exterior boundary component of $S^{2,j}$ and denote it by
$\partial_{\mathfrak E} \, S^{2,j}$, $j = 1,\ldots , k_2$.
\index{$\partial_{\mathfrak E} \, S^{2,j}$} The set
$\partial_{\mathfrak I} \, S^{2,j} = \partial S^{2,j} \backslash
\partial_{\mathfrak E} \, S^{2,j}$ is \index{$\partial_{\mathfrak I} \, S^{2,j}$} called the interior boundary
of $S^{2,j}$.
The components $S^{2,j}$ are called the components of
second generation. The exterior boundaries $\partial_{\mathfrak E} \, S^{2,j}$ of the components $S^{2,j}$, $j = 1,\ldots , k_2$, of generation $2$ are called the curves of generation $2$.

\smallskip

The connected components of ${\mathbb P}^1 \backslash
\underset{j=1}{\overset{k}{\bigcup}} C_j$ of generation $\ell$
are defined by induction as follows.
Consider the union of all connected components of ${\mathbb P}^1
\backslash \underset{j=1}{\overset{k}{\bigcup}} C_j$ of generation
not exceeding $\ell - 1$. Take its closure
\index{$Q_{\ell}$}
\begin{equation}\label{eq6.0}
\overline{Q_{\ell -1}}\stackrel{def}= \overline{\underset{1\leq \ell' \leq \ell - 1}{\bigcup}\; \underset{1
\leq j \leq k_{\ell'} }{\bigcup} S^{\ell' , j}}\,,
\end{equation}
which is the
closure of a domain $Q_{\ell - 1} \subset {\mathbb P}^1\, .\,$
$Q_{\ell - 1}$ is the union of all components of generation not
exceeding $\ell-1$ and the exterior boundaries of all components of
generation between 2 and $\ell - 1$.
The connected components of
${\mathbb P}^1 \backslash \underset{j=1}{\overset{k}{\bigcup}} C_j$
which share a boundary component with $Q_{\ell - 1}$ 
are called the components of generation $\ell$
and are labeled by $S^{\ell , j}$, $j=1,\ldots , k_{\ell}$. Each
$S^{\ell ,j}$ has exactly one boundary component in common with
$Q_{\ell - 1}$. We call this part of the boundary of $S^{\ell ,j}$
the exterior boundary of $S^{\ell ,j}$ and denote it by
$\partial_{\mathfrak E} \, S^{\ell , j}$. The exterior boundaries of the
components $S^{\ell , j}$ are also called the curves of generation $\ell$.
The remaining set
$\partial S^{\ell , j} \backslash
\partial_{\mathfrak E} \, S^{\ell,  j}$ is denoted by
$\partial_{\mathfrak I} \, S^{\ell , j}$ and is called the interior
boundary of $S^{\ell , j}$. If some component $S^{\ell , j}$ of generation $\ell$ coincides with the disc $\delta^{\ell , j}$, its interior boundary is empty.
\index{$S^{\ell ,j}$} \index{$\partial_{\mathfrak E} \, S^{\ell , j}$} \index{$\partial_{\mathfrak I} \, S^{\ell , j}$} \index{$\delta^{\ell , j}$} \index{$k_{\ell}$}  \index{$k(\ell,i)$}

\smallskip

The process terminates after we obtained sets of generation $N$ for
some finite number $N$. The components $S^{N,j}$ are the innermost components, they are equal to the discs $\delta^{N,j}$, in other words, their interior boundary is empty.

Each of the curves in ${\mathcal C}$
is the exterior boundary of a single
connected component of ${\mathbb P}^1 \setminus \bigcup_{C \in \mathcal{C}}C$.

The mapping $\varphi_{b,\infty}$ fixes each connected component $S^{\ell,j}$.
Indeed, it fixes $S^{1,1}$ since it fixes $\infty \in S^{1,1}$. Hence it fixes the interior boundary of $S^{1,1}$ and therefore, it fixes the union of the discs $\delta^{2,j}$.
Further, since the system of curves $\mathcal{C}$ is admissible, each disc $\delta^{2,j}$ contains a point of $E_n$. Hence, since $b$ is a pure braid, the mapping $\varphi_{b,\infty}$ fixes the point. Hence, it fixes each disc $\delta^{2,j}$, and therefore it fixes the component $S^{2,j}$ whose exterior boundary component is the boundary of the disc $\delta^{2,j}$. By an induction on the number of generation we see that
$\varphi_{b,\infty}$ fixes each connected component $S^{\ell,j}$.

Let
$$
w : {\mathbb C} \setminus  E_n'  \to Y
$$
be a continuous surjection onto a nodal surface $Y$ associated to
the isotopy class of the curve system ${\mathcal C}$ in $\mathbb{P}^1$
(see Section \ref{sec:2.4}). Notice that the nodal surface $Y$ has punctures.
The mapping $w$ maps each component $\,S^{\ell,j} \setminus E_n' \,$
of $\,{\mathbb P}^1 \setminus ( E_n' \cup \bigcup C_j)\,$
homeomorphically onto a part $Y^{\ell , j}$ of $Y$. \index{$Y^{\ell
, j}$}(Recall that a part of a nodal surface $Y$ with set of nodes
${\mathcal N}$ is a connected component of $Y \backslash {\mathcal
N}$.) The correspondence between the components $S^{\ell , j}
\setminus E_n'$ and the parts $Y^{\ell , j}$ is a bijection.
\index{nodal surface}    \index{nodal surface ! part of}     \index{$Y^{\ell , j}$}
\index{$\mathfrak{m}_{b,{\odot}}$} \index{$\widehat{\mathfrak{m}_{b,{\odot}}}$}

We conjugate the restriction $\varphi_{b,\infty}|\mathbb{P}^1\setminus (E_n'\cup \mathcal{C})$ of the self-homeomorphism $\varphi_{b,\infty}$ with the inverse of $w$. 
We obtain a self-homeomorphism of $Y\setminus \mathcal{N}$. Its isotopy class of self-homeomorphisms of the nodal surface $Y$ is denoted by $\mathfrak{m}_{b,{\odot}}$  and is called the isotopy class of self-mappings of the nodal surface $Y$ determined
by ${\mathfrak m}_{b,\infty}$ and the isotopy class of the admissible system $\mathcal{C}$. Let $\widehat{{\mathfrak m}_{b,{\odot}}}$ be the conjugacy class of
${\mathfrak m}_{b,{\odot}}$.

Let $Y^c$ be the compact nodal surface obtained by filling the punctures of $Y$. The surjection $w$ extends to a continuous surjection $w:\mathbb{P}^1\to Y^c$ which by an abuse of notation we denote again by $w$. The elements of the class $\mathfrak{m}_{b,{\odot}}$ extend across the punctures of $Y$ to self-homeomorphisms of $Y^c$ with set of distinguished points $\mathcal{N}\cup w(E_n')$. By an abuse of notation we will also denote the classes of these extensions by $\mathfrak{m}_{b,{\odot}}$, and   $\widehat{{\mathfrak m}_{b,{\odot}}}$ respectively.

More detailed,
the mapping $\varphi_{b,\infty}$ fixes each $S^{\ell,j}\setminus E_n' $ setwise, hence each mapping in ${\mathfrak m}_{b,{\odot}}$ fixes each nodal component $Y^{\ell,j}$ setwise.  For each $(\ell,j)$ the restriction  ${\mathfrak m}_{b,{\odot}}^{\ell,j}\stackrel{def}={\mathfrak m}_{b,{\odot}}|Y^{\ell,j}$ is obtained
in the following way. Put ${E'}^{\ell,j}\stackrel{def}=E'\cap S^{\ell,j}$. We conjugate the restriction of $\varphi_{b,\infty}$ to $S^{\ell,j}\setminus {E'}^{\ell,j}$ by the inverse of the restriction $w^{\ell,j}\stackrel{def}=w|S^{\ell,j}\setminus {E'}^{\ell,j} $. The class  ${\mathfrak m}_{b,{\odot}}^{\ell,j}$ is the mapping class of the conjugated mapping which is a self-homeomorphism of  $Y^{\ell,j}$. The conjugacy class $\widehat{{\mathfrak m}_{b,{\odot}}}$ is called the nodal conjugacy class
associated to $\widehat{{\mathfrak m}_{b,\infty}}$ and the admissible system of curves $\mathcal{C}$.

The elements of ${\mathfrak m}_{b,{\odot}}|Y^{\ell,j}$ extend continuously across the punctures of $Y^{\ell,j}$. The extensions are self-homeomorphisms  of the closed Riemann surface $(Y^{\ell,j})^c \cong \mathbb{P}^1$ with distinguished points $w({E'}^{\ell,j}) \cup (\mathcal{N} \cap  ({Y^{\ell,j})^c})$. The set of extensions is denoted by the same letter  ${\mathfrak m}_{b,{\odot}}^{\ell,j}$.
The conjugate of $\varphi_{b,\infty}\mid S^{\ell,j}\setminus {E'}^{\ell,j}$ by $(w^{\ell,j})^{-1}$
represents the class
${\mathfrak m}_{b,{\odot}}^{\ell,j}$ on $Y^{\ell,j}$, and, hence,  $\varphi_{b,\infty}\mid S^{\ell,j}\setminus {E'}^{\ell,j}$ represents the conjugacy class $\widehat{{\mathfrak m}_{b,\odot}^{\ell,j}}$.

The conjugacy classes $\widehat{{\mathfrak m}_{b,\odot}^{\ell,j}}$  will be called the irreducible nodal components of the class
$\widehat{{\mathfrak m}_{b,\infty}}$ with respect to the system $\mathcal{C}$.
\index{nodal component}
The conjugacy classes $\widehat{{\mathfrak m}_{b,\odot}^{\ell,j}}$  of the restrictions $ {\mathfrak m}_{b,\odot}^{\ell,j}$ depend only on the conjugacy class $\widehat{{\mathfrak m}_{b,\infty}}$ and the isotopy class of the curve system $\mathcal{C}$.
Notice, that the irreducible
nodal components determine the class $\widehat{{\mathfrak
m}_{b,\infty}}$ only up to products of powers of some Dehn twists.
(See Section \ref{sec:6.4}, the remark after the proof of Proposition \ref{prop7.2}).
But they are the only objects related to a decomposition of mappings
which depend only on the isotopy class of the curve system
${\mathcal C}$ and on the conjugacy class $\widehat{{\mathfrak
m}_{b,\infty}}$.

\medskip

\section{The irreducible braid components. Pure braids.}\label{sec:6.2}
In this section we will describe the decomposition of reducible braids into
irreducible braid components. The decomposition depends again on the isotopy class of an admissible system of curves that completely reduces the mapping class associated to the braid.

\index{braid ! irreducible braid components}

\smallskip

We start with defining a few objects. This will be done here for the case of arbitrary (not necessarily pure) braids.
A tube in the ``infinite'' cylinder $[0,1] \times {\mathbb
C}$ is the image of a ''finite''  cylinder $[0,1] \times \overline{\mathbb D}$ under a diffeomorphism of $[0,1] \times {\mathbb C}$ onto itself which preserves each fiber $\{t\} \times \mathbb{C},\, t\in [0,1]$.
Let $\Omega^{n_2}=\{\Omega_1,\ldots,\Omega_{n_2}\}$ be  an unordered $n_2$-tuple of pairwise disjoint closed topological discs in the complex plane, and $E_{n_1}=\{z_1,\ldots,z_{n_1}\}$ an unordered $n_1$-tuple of points in $\mathbb{C}\setminus \cup \Omega_j$. A tubular geometric $(n_1,n_2)$-braid with base point $(E_{n_1},\Omega^{n_2})$ consists of the following two objects. The first object is a collection of $n_2$ mutually disjoint tubes in the  cylinder $[0,1] \times {\mathbb
C}$, each of which intersects the fibers $\{ 0 \} \times \mathbb{C}$ and $\{ 1\} \times \mathbb{C}$  along a copy of $\Omega^{n_2}$. The second object is a geometric $n_1$-braid with base point $E_{n_1}$ whose strands do not meet the $n_2$ tubes.
\index{braid ! tubular geometric} 
\index{braid ! tubular geometric $(n_1,n_2)$-braid}

Take a point $E_{n_1} \in C_{n_1}(\mathbb{C})\diagup \mathcal{S}_{n_1}$
and a point $\mathbold{E}_{n_2} \in C_{n_2}(\mathbb{C})
\diagup \mathcal{S}_{n_2}$
such that $E_{n_1} \cup \mathbold{E}_{n_2} \in C_{n_1+n_2}(\mathbb{C})
\diagup \mathcal{S}_{n_1+n_2}$.
A geometric fat braid (of type $(n_1,n_2)$) with base point $E_{n_1}\cup \mathbold{E}_{n_2}$ is a geometric $(n_1 +n_2)$-braid such that all strands with initial point in $\mathbold{E}_{n_2}$ have also their endpoints in $\mathbold{E}_{n_2}$ and are declared to be fat. The strands with initial point in $E_{n_1}$ will be called ordinary strands. A tubular geometric braid is associated to a fat braid if there is a one-to-one correspondence between the tubes and the fat strands so that each fat strand is a deformation retraction of the corresponding tube.
\index{braid ! geometric fat braid  of type $(n_1,n_2)$} \index{braid ! geometric tubular-fat braid}

A fat braid is an isotopy class of geometric fat braids with fixed base point. \index{braid ! fat braid}
Two geometric fat braids are called isotopic if they are isotopic as geometric braids and the isotopy moves the fat strands to fat strands and the ordinary strands to ordinary strands.
A geometric tubular fat braid is a tubular geometric braid
such that forgetting the tubular strands gives a geometric fat braid.
\index{braid ! geometric tubular-fat}

We consider now again pure braids.
Take a reducible pure braid $b\in \mathcal{B}_n$ with base point $E_n^0$, and describe the decomposition into irreducible braid components. As in section \ref{sec:6.1} we consider a mapping
$\varphi_{b,\infty} \in {\mathfrak m}_{b,\infty} \in {\mathfrak
M} (\mathbb{P}^1 ; \{\infty\} \cup E_n^0)$ that is the identity outside the unit disc and is completely reduced by an admissible system ${\mathcal C}$ of
curves contained in the unit disc. (Notice that in general the decomposition into irrecucible braid components depends on the isotopy class of the admissible system $\mathcal{C}$.)
Replacing $\varphi_{b}$ by an isotopic mapping which differs from the original mapping only in small neighbourhoods of the admissible curves we may assume that
$\varphi_{b,\infty}$ fixes each curve $C $ pointwise.

\smallskip

Consider a path $\varphi_t \in {\rm Hom}^+
(\mathbb{P}^1 ; \mathbb{P}^1\setminus {\mathbb D})$, $t \in [0,1]$,
which joins $\varphi_{b,\infty}$ with the identity. More detailed, $\varphi_t$
is a continuous family of self-homeomorphisms of ${\mathbb
P}^1$ which fix the complement $ \mathbb{P}^1\setminus {\mathbb D}$ of the unit disc pointwise, such
that $\varphi_0 = {\rm id}$ and $\varphi_1 = \varphi_{b,\infty}$.
The evaluation map
\begin{equation}\label{eq6.2a}
[0,1] \ni t \to {\rm ev}_{E_n^0} \, \varphi_t = \left\{ \varphi_t
(0) , \ldots , \varphi_t \left( \frac{n-1}n \right) \right\} \in C_n
({\mathbb C}) \diagup {\mathcal S}_n \,
\end{equation}
gives a geometric braid that is contained in the cylinder $[0,1] \times {\mathbb D}$ (i.e. ${\rm ev}_{E_n^0} \, \varphi_t \in {\mathbb D}$ for each $t$), has base point
$E_n^0 = \varphi_0 (E_n^0) = \varphi_1 (E_n^0) \, ,$ and represents $b$.
The family $\varphi_t,\, t \in[0,1]$ is a parameterizing isotopy of the geometric braid \eqref{eq6.2a}.
We will write the geometric braid \eqref{eq6.2a} 
as continuous mapping $g:[0,1]\to C_n(\mathbb{C})\diagup{\mathcal S}_n$,
\begin{equation}\label{eq6.1a}
g(t)=\varphi_t (E_n),\, t \in [0,1]\,,
\end{equation}
and label the coordinate functions of $g$ 
so that for $z'\in E_n$ the coordinate function $g_{z'}$ of $g$ has initial point $g_{z'}(0)=g_{z'}(1)=z'$.

We will associate to the parameterizing isotopy tubular geometric braids and geometric fat braids. The classes obtained from the fat braids by isotopy and conjugation are the irreducible braid components. This is done as follows.

\bigskip

\noindent {\bf The outermost braid component  $\widehat{ \mathbold{b}(1,1)}$ of a pure braid $b$.}
Recall that we regard a point in $C_n(\mathbb{C})$ sometimes as ordered subset of $\mathbb{C}$, and sometimes as a point in $\mathbb{C}^n$.

\index{braid ! braid component}

We will associate now a fat braid $\mathbold{b}(1,1)$ to the outermost component $S^{1,1}$.
Recall that ${E'}^{1,1} \stackrel{def}= E'_n \cap S^{1,1}$ \index{$E^{1,1}$} is the
set of distinguished points contained in the component $S^{1,1}$. Let
$C^{2,j}$, $j = 1,\ldots , k_2$, be the interior boundary
components of $S^{1,1}$, labeled so that $C^{2,j}$ is the exterior
boundary of the component $S^{2,j}$ of $\mathbb{P}^1\setminus \cup_{C \in\mathcal{C}} C$, $j = 1,\ldots , k_2$. Denote by
$\delta^{2,j} \subset {\mathbb D}$ the \index{$\delta^{2,j}$}
topological disc bounded by $C^{2,j}$.

The tubular geometric braid associated to the parameterizing isotopy $\varphi_t$ and the component $S^{1,1}$ is
\begin{equation}\label{eq6.2}
 \{t\} \times \varphi_t \left(E^{1,1} \cup \bigcup_{j=1}^{k_2} \,
\overline{\delta^{2,j}} \right) \, , \quad t \in [0,1] \,.
\end{equation}
For each tube $ \{t\} \times \varphi_t (\overline{\delta^{2,j}}),\, t \in [0,1],$ we consider a strand contained in it and call it a fat strand. More detailed,
for each $j$ we pick a point
$z^{2,j} \in \delta^{2,j} \cap E_n$ and denote it by the fat letter $\mathbold{z}^{2,j}$. Such a point exists by the
definition of an admissible system of curves and an induction
argument. Since the braid is pure we have $\varphi_{b,\infty} (\mathbold{z}^{2,j}) =
\mathbold{z}^{2,j}$. The respective fat strand is $\varphi_t(\mathbold{z}^{2,j}), \, t \in [0,1]$.
Put $\mathbold{E}^{1,1} = \{\mathbold{z}^{2,1} , \ldots , \mathbold{z}^{2,k_2}\}$.
\index{$\mathbold{E}^{1,1} $} Note that the choice of the points in $\mathbold{E}^{1,1}$ may not be unique.
We associate to $S^{1,1}$ the set of points ${E}^{1,1} \cup \mathbold{E}^{1,1}$.

The geometric fat braid associated to $S^{1,1}$ and the parameterizing isotopy equals
\begin{equation}\label{eq6.1}
\varphi_t (E^{1,1} \cup \mathbold{E}^{1,1}) \, , \quad t \in
[0,1] \,.
\end{equation}

The strands with initial point in  $E^{1,1}$ are the ordinary strands, and the strands with initial point in  $\mathbold{E}^{1,1}$ are the fat strands.
The geometric fat braid \eqref{eq6.1} is obtained from the geometric tubular braid \eqref{eq6.2} by taking a deformation retract of each tube to a fat strand.

Let $n(1,1)$ be the number of points of $E^{1,1} \cup {\mathbold{E}^{1,1}}$.
We will write the geometric fat braid \eqref{eq6.1} as \index{$g^{1,1}$} mapping
\begin{equation}\label{eq6.3}
\mathbold{g}^{1,1} : [0,1] \to C_{n(1,1)} ({\mathbb C})\diagup{\mathcal S}_{n(1,1)} \, , \quad \mathbold{g}^{1,1}
(0) =  \mathbold{g}^{1,1} (1)= E^{1,1} \cup {\mathbold{E}^{1,1}} \,.
\end{equation}
If needed, the ''strands'' of $\mathbold{g}^{1,1} $ 
will be labeled by $ E^{1,1} \cup {\mathbold{E}^{1,1}}$, so that for $z'\in E^{1,1} \cup {\mathbold{E}^{1,1}}$ the strand corresponding to
$ \mathbold{g}^{1,1}_{z'}$ has initial point  $ \mathbold{g}^{1,1}_{z'}(0)=z'$.
The fat strands are those with initial points in ${\mathbold{E}^{1,1}}$.

We define the fat braid $\mathbold{b}(1,1)$ with base point $E^{1,1} \cup \mathbold{E}^{1,1}$
as the isotopy class of the geometric fat braid \eqref{eq6.1} (equivalently, \eqref{eq6.3}) with given base point. The fat strands are those with initial point in  $\mathbold{E}^{1,1}$.
Notice that $\mathbold{b}(1,1)$ is obtained from the braid $b$ by
discarding all strands with initial point not in $E^{1,1} \cup
{\mathbold{E}}^{1,1}$ and indicating the set of fat strands.

Finally we take the conjugacy class $\widehat{\mathbold{b}(1,1)}$ of the fat braid
$\mathbold{b}(1,1)$. This is the irreducible braid component of $\hat b$ associated to $b$ and the component $S^{1,1}$. The class $\widehat{\mathbold{b}(1,1)}$ depends only on the conjugacy class $\hat b$ and on the isotopy class of the curve system $\mathcal{C}$.

See Figure \ref{fig6.1} for an example of the geometric fat braid $ \mathbold{g}^{1,1}$
and the respective tubular geometric braid. In the
figure the set $E^{1,1}$ consists of a single point. The strand with this initial point is the only ordinary strand.
There are two
components of ${\mathbb C} \backslash \underset{C \in {\mathcal
C}}{\bigcup} C$ of generation 2, $S^{2,1}$ and $S^{2,2}$. The
respective distinguished points are $\mathbold{z}^{2,1}$ and $\mathbold{z}^{2,2}$. These are the points in
$\mathbold{E}^{1,1}$. The strands with these initial points are the fat strands of $\mathbold{g}^{1,1}$.

\begin{figure}[h]
\begin{center}
\includegraphics[width=7cm]{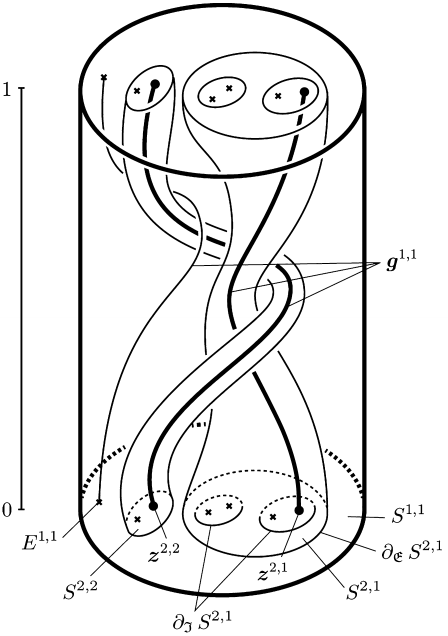}
\end{center}
\caption{The irreducible braid component $\widehat{\mathbold{b}({1,1})}$ of a pure braid $b$.}\label{fig6.1}
\end{figure}

\bigskip

\noindent {\bf The irreducible braid component $\widehat{\mathbold{b}(\ell,j)}$ associated
to $S^{\ell,j}$ with $\ell>1$.} Take a component $S^{\ell,j}$ of generation
$\ell$. Recall that $E^{\ell,j} = E \cap S^{\ell,j}$. \index{$E^{\ell,j}$}Consider
the interior boundary components of $S^{\ell,j}$. Each such boundary
component is the exterior boundary of a component $S^{\ell +1,j'}$
of ${\mathbb P}^1 \backslash \underset{C \in \mathcal{C}}{\bigcup}
C_i$ of generation $\ell +1$. Let $\delta^{\ell +1,j'} \subset
{\mathbb C}$ be the topological disc bounded by $\partial_{\mathfrak
E} \, S^{\ell +1,j'}$, and let $\mathbold{z}^{\ell +1,j'}$ be a point in $E
\cap \delta^{\ell +1,j'}$. Let ${\mathbold E}^{\ell,j}$ be the
\index{$\mathbold{E}^{\ell,j}$} collection of $\mathbold{z}^{\ell +1,j'}$
obtained in this way (one such point is assigned to each interior
boundary component of $S^{\ell ,j}$).

We obtain the tubular geometric braid
\begin{equation}
\label{eq6.5a} \varphi_t \left(E^{\ell,j} \cup
\bigcup_{j=1}^{k_{\ell+1}} \, \overline{\delta^{\ell+1,j'}} \right) \, ,
\quad t \in [0,1] \, ,
\end{equation}
and the geometric fat braid
\begin{equation}
\label{eq6.5} \varphi_t \left(E^{\ell,j} \cup {\mathbold{E}}^{\ell,j}\right) \, , \quad t \in [0,1] \, .
\end{equation}
The geometric fat braid is obtained by taking a deformation retraction of each of the cylinders
$\varphi_t \left(\overline{\delta^{\ell+1,j'}}\right)$ to the fat strand of
$b$ contained in this cylinder.

Let $n(\ell,j)$ be \index{$n(\ell,j)$} the number of points of
$E^{\ell,j} \cup {\mathbold{E}}^{\ell,j}$. We will write the
geometric fat braid (\ref{eq6.5}) as a mapping $ \mathbold{g}^{\ell,j} : [0,1]
\to C_{n(\ell,j)} ({\mathbb C})\diagup {\mathcal{S}_{n(\ell,j)}}$ from \index{$\mathbold{g}^{\ell,j}$} the unit
interval into $C_{n(\ell,j)} ({\mathbb C})\diagup {\mathcal{S}_{n(\ell,j)}}$.

Denote the isotopy class of the geometric fat braid by $\mathbold{b}(\ell,j)$.
Intuitively, the fat braid $\mathbold{b}(\ell,j)$ is obtained from the braid $b$ by
discarding all strands with initial point not in $E^{\ell,j} \cup
{\mathbold{E}}^{\ell,j}$ and indicating the set of fat strands.

The conjugacy class $\widehat{\mathbold{b}(\ell,j)}$ of the fat braid  $\mathbold{b}(\ell,j)$
is the irreducible component of the braid $b$ associated to $S^{\ell,j}$. It depends only on the conjugacy class $\hat b$ of the braid and the isotopy class of the curve system $\mathcal{C}$.

The decomposition of the conjugacy class of a braid into irreducible braid components is described.

\medskip

\section[Recovery of conjugacy classes of pure braids.]{Recovery of the conjugacy class of a pure braid from the irreducible braid components}\label{sec:6.3}

\noindent {\bf Recovery of the geometric braid $g$ from the $\mathbold{g}^{\ell,j}$.}
Before we recover the conjugacy class $\hat b$
from its irreducible braid components (with respect to an admissible system of curves that completely reduces $\mathfrak{m}_{b,\infty}$), we write
the representing geometric braid $g:[0,1]\to C_n(\mathbb{C})\diagup \mathcal{S}_n$, that is obtained from the parameterizing isotopy (see \eqref{eq6.2a}),  in terms of the
geometric fat braids $\mathbold{g}^{\ell,j}$ constructed in the previous section.

We will use the following operation on points of symmetrized configuration spaces. Consider a point $E\cup \mathbold{E}\in C_{n}(\mathbb{C})\diagup \mathcal{S}_n$ (which is also considered as
a subset of $\mathbb{C}$).
The points in the set $\mathbold{E}$ are declared to be fat points. Let $\mathbold{z}'$ be one of the fat points. Take a subset $E'$ of $\mathbb{C}$ that contains $n'$ points (also
considered as point in  $C_{n'}(\mathbb{C})\diagup \mathcal{S}_{n'}  $. ) $E'$ may contain fat points or not. Suppose that $E'$ does not intersect $E\cup (\mathbold{E} \setminus \{\mathbold{z}'\})$.
Remove the point $\mathbold{z}'$ from the set $E\cup \mathbold{E}$ and add the points of the set $E'$. The resulting set is denoted by
\begin{equation}\label{eq6.6}
(E\cup \mathbold{E})\sqcup_{ \mathbold{z}'}E' \stackrel{def}= (E\cup \mathbold{E}' \setminus \{\mathbold{z}'\}) \cup E'.
\end{equation}
The set \eqref{eq6.6} may be considered as subset of $\mathbb{C}$ consisting of
 $n+n'-1$ points,
or as point in $C_{n+n'-1}(\mathbb{C})\diagup \mathcal{S}_{n+n'-1} $.
\index{$(E\cup \mathbold{E})\sqcup_{ \mathbold{z}'}E'$}

The respective operation on mappings into symmetrized configuration spaces is defined pointwise. More detailed, let $\mathbold{g}(t), t\in [0,1],$ be a geometric fat braid, let $\mathbold{g}_1(t), t\in [0,1],$ be a fat strand of $\mathbold{g}$, and let $\mathfrak{T}= \cup_{t\in[0,1]}\mathfrak{T}(t)$ be a tube that contains the fat strand  $\mathbold{g}_1$ (i.e. $\mathbold{g}_1(t)\in \mathfrak{T}(t), t\in [0,1],  $)
and does not intersect any other strand (i.e. $\mathfrak{T}(t)$ does not intersect the set
$\mathbold{g}(t)\setminus \mathbold{g}_1(t),$
$t\in [0,1] $). Suppose $\mathbold{g}'(t), t\in[0,1],$ is another geometric fat braid, such that $\mathbold{g}'$ is contained in the tube $\mathfrak{T}$, i.e.  $\mathbold{g}'(t)\in \mathfrak{T}(t),t\in[0,1] $. By removing the fat strand ${\mathbold{g}_1}$ and inserting the geometric fat braid  $\mathbold{g}'$ which is contained in the tube $\mathfrak{T}$, we obtain a new geometric fat braid  $\mathbold{g}(t)\sqcup_{\mathbold{g}_{1}(t)}  \mathbold{g}'(t), t\in[0,1],$ that  is denoted by $\mathbold{g}\sqcup_{\mathbold{g}_{1}}  \mathbold{g}'$,
\index{$\mathbold{g}\sqcup_{\mathbold{g}_{1}}\mathbold{g}'$}

Consider now the geometric fat braid
$\mathbold{g}^{1,1}: [0,1] \to \mathcal{C}_{n(1,1)}(\mathbb{C})\diagup {\mathcal{S}_{n(1,1)}}$ that was constructed in the previous section. Its fat strands have initial point in $\mathbold{E}^{1,1}$. Denote the points in $\mathbold{E}^{1,1}$ by $\mathbold{z}^{2,1},\ldots,\mathbold{z}^{2,k_2}$. Let the fat strand with initial point $\mathbold{z}^{2,j}$ be $\mathbold{g}_{{2,j}}$.

Consider the geometric fat braids of second generation. Start with $\mathbold{g}^{2,1}:[0,1] \to \mathcal{C}_{n(2,1)}\diagup{\mathcal{S}_{n(2,1)} } $ with fat strands having initial point in $\mathbold{E}^{2,1}$ (see Section \ref{sec:6.2}).
Consider the geometric fat braid
\begin{equation}\label{eq6.12}
\mathbold{g}^{1,1} \sqcup_{\mathbold{g}_{2,1}}  \mathbold{g}^{2,1} : [0,1] \to C_{n(1,1)+n(2,1)-1}
({\mathbb C})\diagup{\mathcal{S}_{n(1,1)+n(2,1)-1} }\, ,
\end{equation}
obtained by replacing the strand $ \mathbold{g}_{2,1}$ of $ \mathbold{g}^{1,1}$ by the geometric fat braid $ \mathbold{g}^{2,1}$. The mapping \eqref{eq6.12} defines a geometric fat braid whose fat strands have initial point in $(\mathbold{E}^{1,1} \setminus\{\mathbold{z}^{2,1}\} \cup  \mathbold{E}^{2,1})$. The base point of \eqref{eq6.12} is
\begin{align}
E^{1,1} \cup (\mathbold{ E}^{1,1} \backslash
\{ \mathbold{z}^{2,1} \} ) \cup (E^{2,1} \cup \mathbold{E}^{2,1})
= (E^{1,1} \cup \mathbold{ E}^{1,1})\sqcup_{\mathbold{z}^{2,1}}(E^{2,1} \cup \mathbold{E}^{2,1})\,. \nonumber
\end{align}
We may identify
\eqref{eq6.12} with the geometric fat braid
\begin{equation}
\label{eq6.11} \varphi_t \left( E^{1,1}  \cup (\mathbold{E}^{1,1} \backslash
\{\mathbold z^{2,1} \} ) \cup (E^{2,1} \cup \mathbold{E}^{2,1}) \right) \, ,
\quad t \in [0,1] \, .
\end{equation}
This geometric fat braid is obtained by forgetting all strands of $g$ with initial point not in $ E^{1,1} \cup (\mathbold{E}^{1,1} \backslash
\{{z}^{2,1} \} ) \cup E^{2,1}  \cup \mathbold{E}^{2,1}$ and indicating the set of fat strands.

In the same way we define by induction on $j=1,\ldots, k_2$ the geometric fat braid
\begin{equation}\label{eq6.11a'}
\mathbold{g}^{1,1} \sqcup_{\mathbold{g}_{2,1}} \mathbold{g}^{2,1}\sqcup \ldots \sqcup_{\mathbold{g}_{2,k_2}}\mathbold{g}^{2,k_2}.
\end{equation}
The geometric fat braid \eqref{eq6.11a'} is associated to the set
\begin{equation}\label{eq6.11a''}
Q_2=S^{1,1} \cup S^{2,1} \cup {C}^{2,1}\cup\ldots \cup S^{2,k_2}\cup {C}^{2,k_2}
\end{equation}
(see also Section \ref{sec:6.1}). In other words, each point in $E \cap Q_2$ is the initial point of an ordinary strand, and each hole of $Q_2$ contains a point in $E$ which is the initial point of a fat strand of one of the $\mathbold{g}^{3,j}$.
Since $\mathbold{E}^{1,1} \setminus
\{\mathbold{z}^{2,1},\ldots,\mathbold{z}^{2,k_2}\}$ is the empty set,
the set of all initial points of fat strands of the
geometric fat braid \eqref{eq6.11a'}
is equal to $\bigcup _{j=1}^{k_2} \mathbold{E}^{2,j}$. The geometric fat braid \eqref{eq6.11a'} can be identified with the geometric fat braid defined by
\begin{align}\label{eq6.11a}
\varphi_t \left(E^{1,1}  \cup (E^{2,1} \cup \mathbold{E}^{2,1})
\cup \ldots \cup (E^{2,k_2} \cup \mathbold{E}^{2,k_2}) \right),\; t \in [0,1].
\end{align}
The set of all initial points of this geometric fat braid equals
\begin{align}
E^{1,1}\cup &(E^{2,1}\cup  \mathbold{E}^{2,1})\cup \ldots\cup ({E}^{2,k_2}\cup \mathbold{E}^{2,k_2})\nonumber\\
=E^{1,1}\cup &(E^{2,1}\ldots\cup E^{2,k_2})\cup (\mathbold{E}^{2,1}\cup\ldots\cup\mathbold{E}^{2,k_2})\,.\nonumber
\end{align}

Define by induction on $\ell\geq 3$ the geometric fat braid associated to $Q_{\ell}$ (see equality  \eqref{eq6.0} in Section \ref{sec:6.1}) using the fat strands of the geometric fat braid associated to $Q_{\ell-1}$. The fat strands of the geometric fat braid associated to $Q_{\ell-1}$ are those strands of $g$ that have its initial point in  $\mathbold{E}^{\ell-1,1}\cup \ldots\cup \mathbold{E}^{\ell-1,k_{\ell-1}}$. We replace each point $\mathbold{z}^{\ell,j}$ in the latter set by the respective set ${E}^{\ell,j}\cup\mathbold{E}^{\ell,j}$.
By induction the base point of the geometric braid associated to $\varphi_t$ and $Q_{\ell}$ equals
\begin{equation}\nonumber
E^{1,1}\cup (E^{2,1}\cup \ldots\cup E^{2,k_2})\cup\ldots\cup (E^{\ell,1}\cup \ldots\cup E^{\ell,k_{\ell}})\cup(\mathbold{E}^{\ell,1}\cup\ldots\cup \mathbold{E}^{\ell,k_{\ell}})\,.
\end{equation}

The innermost geometric fat braids $\mathbold{g}^{N,j}$ have only ordinary strands, hence they are merely geometric braids. Therefore, the geometric fat braid
\begin{equation}\label{eq12a}
\mathbold{g}^{1,1} \sqcup_{\mathbold{g}_{2,1}}  \mathbold{g}^{2,1}\sqcup \ldots \sqcup_{\mathbold{g}_{2,k_2}} \mathbold{g}^{2,k_2}\sqcup \ldots \sqcup_{\mathbold{g}_{N,1}} \mathbold{g}^{N,1}\sqcup \ldots \sqcup _{\mathbold{g}_{N,k_N}} \mathbold{g}^{N,k_N}
\end{equation}
is an ordinary geometric braid.
By the induction its base point equals $E$. Hence,
the braid
that is defined by \eqref{eq12a} for $\ell=N$
coincides with $g$. We recovered the particular geometric braid $g$ representing $b$ (see \eqref{eq6.1a}) from the particular geometric fat braids given by $\mathbold {g} ^{\ell,j}$.

\medskip

\noindent {\bf Recovery of the conjugacy class of a pure braid from its irreducible braid components.}
We will show now how to recover the conjugacy class of a pure braid from knowing its irreducible braid components (with respect to the isotopy class of a chosen admissible system of curves that completely reduces the class $\mathfrak{m}_{b,\infty}$ of the braid). By induction on the generation $\ell$
we will choose suitable geometric braids representing the irreducible braid components $\widehat{\mathbold{b}(\ell,j)}$ and put them together in
the same way as the $\mathbold{g}^{\ell,j}$ were put together. It needs to be done in such a way that at each step we obtain a geometric fat braid that is isotopic to the fat braid obtained by putting together the respective mappings $\mathbold{g}^{\ell,j}$.

We will use the following operations on points in $C_n(\mathbb{C})\diagup \mathcal{S}_n$ and on mappings from an interval to $C_n(\mathbb{C})\diagup \mathcal{S}_n$.
For an element $E \in C_n(\mathbb{C})\diagup \mathcal{S}_n$ and a number $\varepsilon$ we denote by $\varepsilon E$ the element $E \in C_n(\mathbb{C})\diagup \mathcal{S}_n$
which is obtained by multiplying each point in $E$ (considered as unordered set of points in $\mathbb{C}$) by $\varepsilon$. For a point $z \in \mathbb{C}$ we let
\begin{equation} \label{eq6.7'}
z \boxplus \varepsilon E
\end{equation}
be the set that is obtained from $\varepsilon E$ by adding $z$ to each point in $\varepsilon E$.\index{$z \boxplus \varepsilon E$}

Similarly, for a function $f_1:[0,1]\to \mathbb{C}$, a number $\varepsilon$, and a mapping $f^2:[0,1] \to C_n(\mathbb{C})\diagup \mathcal{S}_n$ we denote by
\begin{equation}
\label{eq6.8'}  f_1 \boxplus {\varepsilon} f^2
\end{equation}
the \index{$f_1\boxplus \varepsilon f^2$} mapping
from $[0,1]$ to symmetrized configuration space defined for each $t$
by $f_1(t) \boxplus \varepsilon f^2(t)$.

We need the following two lemmas on isotopies of geometric fat braids that are constructed in this way.

\begin{lemm}\label{lem6.1'} Let $\mathbold{g}^0$ and 
$\mathbold{g}^1$ be two geometric fat braids contained in a closed tube $\mathfrak{T}$ (i.e. for each $t \in [0,1]$ the points $\mathbold{g}^0(t)$ and $\mathbold{g}^1(t)$
are contained in $\mathfrak{T} \cap (\{t\} \times \mathbb{C})$). If $\mathbold{g}^0$ and $\mathbold{g}^1$ are isotopic then there exists an isotopy of geometric fat braids which is contained in $\mathfrak{T}$ and joins the two braids.
\end{lemm}

\noindent {\bf Proof.}
Let $\mathbold{g}^s,\, s \in [0,1]$, be the smooth family
of geometric fat braids defined by the isotopy. Let $R>0$ be a number such that all geometric braids $\mathbold{g}^s, \, s \in [0,1],\,$ together with the tube $\mathfrak{T}$, are contained in $[0,1] \times R {\mathbb{D}}$. Let $\mathfrak{T}^0 \subset \mathfrak{T}$ be a tube
which contains both geometric braids $\mathbold{g}^0$ and $\mathbold{g}^1$, such that the fiber $\mathfrak{T}^0 \cap (\{t\} \times \mathbb{C}) $ is relatively compact in the fiber  $\mathfrak{T} \cap (\{t\} \times \mathbb{C} )$ for each $t\in [0,1]$.

Let $\chi$ be a self-homeomorphism of $[0,1] \times {\mathbb{C}}$ which preserves each fiber $\{t\} \times   {\mathbb{D}}$, is equal to the identity  on $\mathfrak{T}^0 $
and outside $[0,1] \times  R_1 \overline{\mathbb{D}}$ for a large number $R_1>R$,  and maps the set $[0,1] \times  R \overline{\mathbb{D}}$ onto $\mathfrak{T}$.

Then the equalities $\chi \circ \mathbold{g}^0 = \mathbold{g}^0 $ and $\chi \circ \mathbold{g}^1 =\mathbold{ g}^1$ hold, and $\chi \circ \mathbold{g}^s, \, s \in [0,1],\,$ is a smooth family
of geometric fat braids contained in $\mathfrak{T}$ that joins $\mathbold{g}^0$ with
$\mathbold{g}^1$. The lemma is proved.
\hfill $\Box$

\begin{lemm}\label{lem6.1''} Let $\mathbold{f}^1\,$, $\mathbold{f}^2\,$, $\mathbold{g}^1$, and $\mathbold{g}^2$ be pure geometric fat braids, such that $\mathbold{f}^1$ is free isotopic to $\mathbold{f}^2$, and $\mathbold{g}^1$ is free isotopic to $\mathbold{g}^2$. For $j=0,1$ we let $\mathbold{f}_1^j$ be a fat strand  of $\mathbold{f}^j$,  and let $\mathfrak{T}(\mathbold{f}_1^j)$ be a tube around $\mathbold{f}_1^j$ which does not meet the other strands of $\mathbold{f}^j\,$ and contains $\mathbold{g}^j$.
Then  $\mathbold{f}^1 \sqcup_{\mathbold{f}_1^1} \mathbold{g}^1$ and $\mathbold{f}^2 \sqcup_{\mathbold{f}_1^2}\mathbold{ g}^2$ are free isotopic geometric fat braids.
\end{lemm}

\noindent {\bf Proof.}
Note first that for any sufficiently small number $\varepsilon$ the braids defined by the mappings $\mathbold{g}^1$ and $\mathbold{f}_1^1 \boxplus \varepsilon \mathbold{g}^1$ are free isotopic.
Further, for small $\varepsilon$ the fat braid   $\mathbold{f}_1^1 \boxplus \varepsilon \mathbold{g}^1$ is contained in the tube $\mathfrak{T}(\mathbold{f}_1^1)$ around the fat strand $\mathbold{f}_1^1$ of $\mathbold{f}^1$.
Since,
$\mathbold{g}^1$ is also contained in $\mathfrak{T}(\mathbold{f}_1^1)$,
Lemma \ref{lem6.1'} (which applies to fat braids as well) provides an isotopy that is contained in $\mathfrak{T}(\mathbold{f}_1^1)$ and
joins $\mathbold{g}^1$
and  $\mathbold{f}_1^1 \boxplus \varepsilon \mathbold{g}^1$. As a consequence, there is a free isotopy joining $\mathbold{f}^1\sqcup_{\mathbold{f}_1^1}\mathbold{g}^1$ and
$\mathbold{f}^1\sqcup_{\mathbold{f}_1^1}({\mathbold{f}_1^1} \boxplus \varepsilon \mathbold{g}^1)$. In the same way for
small $\varepsilon$ the braids $\mathbold{f}^2\sqcup_{\mathbold{f}_1^2} \mathbold{g}^2$ and
$\mathbold{f}^2\sqcup_{\mathbold{f}_1^2}(\mathbold{f}_1^2 \boxplus \varepsilon \mathbold{g}^2)$ are free isotopic. Hence,
for any a priory given small positive number $\varepsilon$ we may assume from the beginning that $\mathbold{g}^1= \mathbold{f}_1^1\boxplus \varepsilon \mathbold{\tilde g}^1$, and $\mathbold{g}^2=\mathbold{f}_1^2\boxplus \varepsilon \mathbold{\tilde g}^2$, where $\mathbold{\tilde g}^1$, and $\mathbold{\tilde g}^2$ are geometric fat braids that are contained in the cylinder $[0,1]\times \mathbb{D}$.

Let $\mathbold{f}^s,\, s \in [1,2],$ be an isotopy of braids joining $\mathbold{f}^1$ with $\mathbold{f}^2$,
and let $\mathbold{f}_1^s$ be the family of strands joining $\mathbold{f}_1^1$ with $\mathbold{f}_1^2$.
Also, let $\mathbold{\tilde g}^s,\,s \in [1,2],$ be an isotopy of braids contained in the cylinder $[0,1]\times \mathbb{D}$, that join $\mathbold{\tilde g}^1$ and $\mathbold{\tilde g}^2$.
Choose $\varepsilon>0$ so that for each $s\in [1,2]$ the $\varepsilon$-tubes around each strand of  $\mathbold{f}^s$ are disjoint. Then $\mathbold{f}^s\sqcup_{\mathbold{f}_1^s}(\mathbold{f}_1^s \boxplus \varepsilon \mathbold{\tilde g}^s),\, s \in [1,2]$, is an isotopy of braids that joins $\mathbold{f}^1\sqcup_{\mathbold{f}_1^1}(\mathbold{f}_1^1 \boxplus \varepsilon \mathbold{\tilde g}^1)$ with $\mathbold{f}^2\sqcup_{\mathbold{f}_1^2}(\mathbold{f}_1^2 \boxplus \varepsilon \mathbold{\tilde g}^2)$.
The Lemma is proved. \hfill $\Box$

\bigskip

We describe now the essential part of the recovery procedure.
Suppose we know the irreducible braid components of a pure braid $b$, i.e. we know the conjugacy classes
of the fat braids $\widehat{\mathbold{b}(\ell,j)}$
\index{$\widehat{\mathbold{b}(\ell,j)}$} for all $\ell$
and $j$. We want to recover the conjugacy class $\hat b$.

Represent each conjugacy class of fat braids $\widehat{\mathbold{b}(\ell,j)}$ by a geometric fat braid
$$
\mathbold{f}^{\ell,j}:[0,1] \to C_{n(\ell,j)}(\mathbb{C})
\diagup \mathcal{S}_{n(\ell,j)}
$$
which is contained in the tube $[0,1]\times \mathbb{D}$ and has base point $\tilde{E}^{\ell,j}\cup \tilde{\mathbold{E}}^{\ell,j}$ .
The set of initial points of the fat strands equals $\tilde{\mathbold{E}}^{\ell,j}$ and $n(\ell,j)$ is the total number of strands  (fat strands and ordinary strands together) of $\mathbold{f}^{\ell,j}$.

Let $\tilde{\mathbold{E}}^{1,1}=\{\tilde{\mathbold{z}}^{2,1},\ldots, \tilde{\mathbold{z}}^{2,k_2} \}$.
Denote by $ \mathbold{f}_{2,j}$ the strand of $\mathbold{f}^{1,1}$ \index{$\mathbold{f}_{2,j}$}
with initial point
$\mathbold{ f}_{2,j} (1) = \tilde{\mathbold{z}}^{2,j}$.
Take a small positive number $\varepsilon_2$ such that the $\varepsilon_2$-neighbourhoods of the strands of $\mathbold{f}^{1,1}$ are pairwise disjoint.

We consider the pure geometric fat braid
\begin{equation}
\label{eq6.9}\mathbold{ f}^{1,1} \sqcup_{\mathbold{f}_{2,1}}  ( \mathbold{f}_{2,1} \boxplus {\varepsilon}_2 \,
\mathbold{f}^{2,1}) : [0,1] \to C_{n(1,1)+n(2,1)-1} ({\mathbb C})\diagup \mathcal{S}_{n(1,1)+n(2,1)-1} \, .
\end{equation}
\index{$\mathbold{f}^{1,1} \sqcup_{\mathbold{f}_{2,1}}  ( \mathbold{f}_{2,1} \boxplus {\varepsilon}_2 \,
\mathbold{f}^{2,1})$}
Its base point
is the point
\begin{equation}
\label{eq6.9a} \tilde{\mathbold{E}}^{1,1} \sqcup_{\tilde{\mathbold{z}}^{2,1}} (\tilde{\mathbold{z}}^{2,1} \boxplus \varepsilon_2 \,
(\tilde{E}^{2,1} \cup \tilde{\mathbold{E}}^{2,1})) \,.
\end{equation}
Since the geometric fat braids $\mathbold{f}^{1,1}$ and  $\mathbold{g}^{1,1}$ (see \eqref{eq6.1} and \eqref{eq6.3}) both represent $\widehat{\mathbold{b}(1,1)}$, and the geometric fat braids $\mathbold{g}^{2,1}$ (see \eqref{eq6.5}) and $\mathbold{f}^{2,1}$ both represent $\widehat{\mathbold{b}(2,1)}$, Lemma \ref{lem6.1''} shows that for small $\varepsilon_2$ the geometric fat braids \eqref{eq6.9} and \eqref{eq6.12}  are free isotopic.

The free isotopy class of braids corresponding to \eqref{eq6.9}, or
equivalently to \eqref{eq6.12}, is denoted by $\widehat {\mathbold{b}(1,1)}
\sqcup \widehat {\mathbold{b}(2,1)}.\,$

\medskip

We proceed now in the same way by induction on $j=2,\ldots, k_2,\,$. 
We arrive at a geometric fat braid
\begin{equation}
\label{eq6.17a}  \mathbold{f}^{1,1} \sqcup_{\mathbold{f}_{2,1}} ( \mathbold{f}_{2,1} \boxplus \varepsilon_2
\mathbold{f}^{2,1}) \sqcup \ldots \sqcup_ {\mathbold{f}_{2,k_2}} ( \mathbold{f}_{2,k_2} \boxplus \varepsilon_2
\mathbold{f}^{2,k_2})\,,
\end{equation}
which is, by repeated application of Lemma \ref{lem6.1''}, free isotopic to
the geometric fat braid defined by \eqref{eq6.11a'} and \eqref{eq6.11a}.

We make a similar induction for $\ell=2,\ldots, N$.
Suppose we obtained the geometric fat braid
\begin{align}\label{eq6.18}
\mathbold{f}_{Q_{\ell}}\stackrel{def}=\mathbold{f}_1^{1,1} \sqcup_{\mathbold{f}_{2,1}}(\mathbold{f}_{2,1} \boxplus \varepsilon_2
\mathbold{f}^{2,1}) \sqcup  \ldots & \sqcup_{\mathbold{f}_{2,k_2}}( \mathbold{f}_{2,k_2} \boxplus \varepsilon_2
 \mathbold{f}^{2,k_2}) \nonumber\\
\sqcup  \ldots & \sqcup_{\mathbold{f}_{\ell,k_{\ell}}}( \mathbold{ f}_{\ell,k_{\ell}} \boxplus
\varepsilon_{\ell} \mathbold{ f}^{\ell,k_{\ell}})
\end{align}
that is free homotopic to the geometric fat braid \eqref{eq12a} with $N$ replaced by $\ell$. Denote the fat strands of \eqref{eq6.18} by $\mathbold{f}_{\ell+1,j}$.
Choose a  positive number $\varepsilon_{\ell+1}$ such that the $\varepsilon_{\ell+1}$-neighbourhoods of the strands of the geometric fat braid \eqref{eq6.18} are pairwise disjoint. Add strands to \eqref{eq6.18} by the operation
\begin{align}
\mathbold{f}_{Q_{\ell}}\sqcup_{\mathbold{f}_{\ell+1,1}}( \mathbold{f}_{\ell+1,1} \boxplus
\varepsilon_{\ell+1} \mathbold{f}^{\ell+1,1})\sqcup\ldots
\sqcup_{\mathbold{f}_{\ell+1,k_{\ell+1}}}( \mathbold{f}_{\ell+1,k_{\ell+1}} \boxplus
\varepsilon_{\ell+1} \mathbold{ f}^{\ell+1,k_{\ell+1}})\,.
\end{align}

We obtain a geometric fat braid of the same form as \eqref{eq6.18} with $\ell$ replaced by $\ell+1$.
The geometric fat braid $\mathbold{f}^{N,k_{N}}$ is an ordinary geometric braid. Hence, the induction terminates for $\ell=N$ at an ordinary geometric braid
\begin{align}\label{eq6.18''}
\mathbold{f}_1^{1,1}  \sqcup_{\mathbold{f}_{2,1}}( \mathbold{f}_{2,1} \boxplus \varepsilon_2
\mathbold{f}^{2,1}) \sqcup \ldots & \sqcup_{\mathbold{f}_{2,k_2}} ( \mathbold{f}_{2,k_2} \boxplus \varepsilon_2
\mathbold{f}^{2,k_2}) \nonumber\\
\sqcup \ldots & \sqcup_{\mathbold{f}_{N,k_N}} ( \mathbold{ f}_{N,k_N} \boxplus
\varepsilon_N \mathbold{f}^{N,k_N})
\end{align}
for successively chosen small numbers $\varepsilon_{\ell}$.

We denote the class represented by \eqref{eq6.18''} by
\begin{equation}
\label{eq6.18c} \widehat{\mathbold{b}(1,1)} \sqcup \widehat{\mathbold{b}(2,1)} \sqcup \ldots
\sqcup \widehat{\mathbold{b}(2,k_2)} \sqcup \ldots \sqcup \widehat{\mathbold{b}(N,k_N)} .
\end{equation}
Applying Lemma \ref{lem6.1''} at each step of the construction, we see that the class \eqref{eq6.18c} equals $\hat b$.

We showed how to recover the conjugacy class of reducible pure braids knowing their irreducible braid components.

\bigskip

\noindent {\bf The relation between irreducible nodal components and irreducible braid components.}

Recall that for each $(\ell,j)$ we denoted by $\widehat{{\mathfrak m}_{b,\odot}^{\ell,j}}$  the irreducible nodal component that is associated to ${\mathfrak m}_{b,\infty}$ and the isotopy class of the admissible system of curves $\mathcal{C}$. For the irreducible braid component $\widehat{\mathbold{b}({\ell,j})}$, that is a conjugacy class of fat braids which was associated to $b$ and the isotopy class of $\mathcal{C}$, we now consider the respective conjugacy class of ordinary braids $\widehat{{b}({\ell,j})}$ by declaring all strands to be ordinary. Let $\reallywidehat{{\mathfrak m}_{b(\ell,j),\infty}}$ be  the  conjugacy class of mapping classes corresponding to the braid class $\reallywidehat{{b}({\ell,j})}$. The following lemma holds.

\begin{lemm}\label{lem6.1}
For each $(\ell,j)$ the equality
\begin{equation}\label{eq6.27}
\widehat{{\mathfrak m}_{b,\odot}^{\ell,j}}=\reallywidehat{{\mathfrak m}_{b(\ell,j),\infty}}
\end{equation}
is satisfied.
\end{lemm}
In the statement of the lemma either both conjugacy classes are considered as classes of self-homeomorphisms of a punctured Riemann surface or both are considered as classes of self-homeomorphisms of a closed Riemann surface with distinguished points.

\smallskip

\noindent {\bf Proof of Lemma \ref{lem6.1}. }
Recall that $E'_n=E_n\cup \{\infty\}$, and
the mapping $\varphi_{b,\infty}|{S^{\ell,j}}\setminus E'_n$ represents the class $\widehat{{\mathfrak m}_{b,\odot}^{\ell,j}}$ considered as element of
$\widehat{\mathfrak{M}}(Y^{\ell,j})$. As before we will identify the conjugacy class
$\widehat{\mathfrak{M}}(Y^{\ell,j}) $ on the punctured surface with the conjugacy class
$\widehat{\mathfrak{M}}((Y^{\ell,j})^c;(\mathcal{N}\cap (Y^{\ell,j})^c) \cup w^{\ell,j}( { E'}^{\ell,j}))$ on the closed surface.

The irreducible braid component $\widehat{b(\ell,j)}$ is represented by the geometric braid \eqref{eq6.5} after forgetting that some strands are declared to be fat.
The parameterizing family  $\varphi_t$ that was chosen for the  geometric braid \eqref{eq6.1a} representing the braid $b$ 
serves also as  parameterizing family for the latter geometric braid.

The  mapping class $\mathfrak{m}_{b(\ell,j),\infty}\in  \mathfrak{M}(\mathbb{P}^1;  {E'}^{\ell,j} \cup \mathbold{E}^{\ell,j})$ that is associated to the braid $b(\ell,j)$ (see \eqref{eq6.5})
is the class of $\varphi_1=\varphi_{b,\infty}$ in $\mathfrak{M}(\mathbb{P}^1;  {E'}^{\ell,j} \cup \mathbold{E}^{\ell,j})$ identified with  $\mathfrak{M}(\mathbb{P}^1\setminus ( {E'}^{\ell,j} \cup \mathbold{E}^{\ell,j}))$. This follows from the fact that
$\varphi_t$ is a parameterizing isotopy also for the geometric braid \eqref{eq6.5} that represents
$b(\ell,j)$.

The mapping $\varphi_{b,\infty}\mid (\mathbb{P}^1\setminus ({E'}^{\ell,j}\cup \mathbold{E}^{\ell,j}))$ is obtained from the mapping $\varphi_{b,\infty}\mid (S^{\ell,j}\setminus E'_n)$ by isotopy and conjugation. Indeed,
consider all holes of $S^{\ell,j}$. These are all discs 
$\delta^{\ell+1,i'}$ that are adjacent to an interior boundary component of $S^{\ell,i}$. Each disc $\delta^{\ell+1,i'}$ contains a point $\mathbold{z}^{\ell+1,i'}\in \mathbold{E}^{\ell,j}$ and each point in $\mathbold{E}^{\ell,j}$ is contained in one of the discs. If $(\ell,j)\neq (1,1)$ then there is one more hole of $S^{\ell,j}$, namely the complement of $\overline{\delta^{\ell,j}}$. This disc contains $\infty$ and no other point of $E'^{\ell,j}$. We denote it by $\delta_{\infty}^{\ell ,j}$.

For each hole $\delta^{\ell+1,i'}$ (and  $\delta_{\infty}^{\ell ,j}$, if $\ell \neq 1$)
we take an annulus $A^{\ell+1,i'}$ (and $A_{\infty}^{\ell ,j}$ respectively, if $\ell \neq 1$),
that is contained in $S^{\ell,j}$ and shares a boundary component with the boundary of the hole. The annuli are chosen to have disjoint closure. For each disc  ${\delta}^{\ell+1,i'}$ ( $\delta_{\infty}^{\ell ,j}$, respectively) we consider the disc ${\delta'}^{\ell+1,i'}=\overline{{\delta}^{\ell+1,i'}}\cup  A^{\ell+1,i'}$ (and ${\delta'}_{\infty}^{\ell,j}= \overline{{\delta}_{\infty}^{\ell,j}}\cup A_{\infty}^{\ell,j}$, respectively).

There is a homeomorphism $\tilde{ w}^{\ell,j}$ from ${S^{\ell,j}}\setminus {E'}^{\ell,j}$ onto $\mathbb{P}^1\setminus ( {E'}^{\ell,j} \cup \mathbold{E}^{\ell,j})$ with the following properties. $\tilde{ w}^{\ell,j}$  is equal to the identical injection on the complement 
of all annuli in $S^{\ell,j}$. 
Then $\tilde{ w}^{\ell,j}$ maps each annulus $A^{\ell+1,i'}$ ($A_{\infty}^{\ell,j}$, respectively) homeomorphically onto the respective punctured disc ${\delta'}^{\ell+1,i'}\setminus \{\mathbold{z}^{\ell+1,i'}\}$ (${\delta'}_{\infty}^{\ell,j}\setminus\{\infty\}$, respectively)
and its continuous extension to the boundary component  $\partial {\delta'}^{\ell+1,i'}$ of $A^{\ell+1,i'}$
(to the boundary component $\partial {\delta'}_{\infty}^{\ell,j}$ of $A_{\infty}^{\ell,j}$, respectively)
equals the identity on the boundary component.

The mapping $(\tilde{ w}^{\ell,j})^{-1}$ conjugates $\varphi_{b,\infty}\mid S^{\ell,j}\setminus {{E}'}^{\ell,j}$ to a self-homeomorphism  $\varphi'_{b,\infty} $  of $\mathbb{P}^1\setminus
({{E}'}^{\ell,j}\cup\mathbold{E}^{\ell,j}) $ that differs from  $\varphi_{b,\infty}\mid \mathbb{P}^1\setminus ({E'}^{\ell,j}\cup\mathbold{E}^{\ell,j}) $ only on the union of the punctured discs
${\delta'}^{\ell+1,i'}\setminus \{\mathbold{z}^{\ell+1,i'}\}$ (${\delta'}_{\infty}^{\ell,j}\setminus\{\infty\}$, respectively)
and maps each punctured disc onto itself. Moreover, $\varphi'_{b,\infty} \circ \varphi_{b,\infty}^{-1}$ is equal 
to the identity on the boundary of each disc. Since two self-homeomorphisms of a punctured disc that fix the boundary circle pointwise are isotopic to the identity through self-homeomorphisms of the punctured disc that fix the boundary circle pointwise, the conjugated homeomorphism is isotopic to $\varphi_{b,\infty}\mid \mathbb{P}^1\setminus {E'}^{\ell,j}$.
The equality \eqref{eq6.27} is proved. \hfill  $\Box$

\section{Pure Braids, the reducible case. Proof of the Main Theorem.}
\label{sec:6.4}

In this section we will prove the Main Theorem for reducible pure braids. The theorem will follow from the Propositions \ref{prop7.1} and \ref{prop7.2} below which are of independent interest.

We will represent a conjugacy class of pure braids $\hat b$ by a holomorphic map from an annulus $A$  to the symmetrized configuration space $C_n(\mathbb{C})\diagup \mathcal{S}_n$ (or by a holomorphic map on $A$ that extends continuously to the closure $\bar A$). The set $\{(z,f(z));\, z \in A\}$ (or $\{(z,f(z));\, z \in \bar A\}$, respectively) is the union of $n$ connected components  which we also call strands. A map with some strands declared to be fat will be called a fat map and denoted by $\mathbold{f}$.

\begin{prop}\label{prop7.1} The conformal module of a conjugacy class $\hat b$ of pure $n$-braids is equal to the smallest conformal module among the irreducible braid components of $\hat b$. In other words, the equality
\begin{equation}
\label{eq7.1} {\mathcal M} (\hat b) = \min_{\ell,j}
{\mathcal M} \left( \widehat{b(\ell,j)} \right)
\end{equation}
holds.
\end{prop}

\medskip

\noindent {\bf Proof.} Let $A$ be any annulus of conformal module $m(A)<{\mathcal M} (\hat b)$. Then there exists a holomorphic mapping
\begin{equation}\label{eq6.50}
f:  A  \to  C_n(\mathbb{C})\diagup \mathcal{S}_n
\end{equation}
representing $\hat b$.
Recall that for all $(\ell,j)$ the class $\widehat{b(\ell,j)}$ is obtained from $\hat b$ by forgetting some strands. Forgetting the respective strands of $f$ provides us a holomorphic map $f^{\ell,j}:A \to  C_{n(\ell,j)}(\mathbb{C})\diagup \mathcal{S}_{n(\ell,j)}$
that represents 
$\widehat{b(\ell,j)}$. Hence, for all $(\ell,j)$ the inequality
$\mathcal{M}(\widehat{b(\ell,j)}) > m(A)$ holds, i.e.
$$
{\mathcal M} (\hat b) \leq \min_{\ell,j}
{\mathcal M} \left( \widehat{b(\ell,j)} \right).
$$

\smallskip

To prove the opposite inequality, let $A$ be an annulus of conformal module $m(A)< \min_{\ell,j}
{\mathcal M} \left( \widehat{b(\ell,j)} \right)$. Then for each $(\ell,j)$ there is a continuous fat map $\mathbold{f}^{\ell,j}:\bar A \to C_{n(\ell,j)}(\mathbb{D})\diagup \mathcal{S}_{n(\ell,j)}$ that is holomorphic on $A$ and 
represents the conjugacy class of fat braids $\widehat{\mathbold{b}(\ell,j)}$.
We will denote by  $\mathbold{f}_{\ell+1,j'}$ the str ands of the mappings $\mathbold{f}^{\ell,j}$ that correspond to fat strands of $\widehat{\mathbold{b}(\ell,j)}$. 
As in the recovery procedure of the conjugacy class of braids from the irreducible braid components we
successively choose small numbers  $\varepsilon_{\ell}$ and consider the mapping
\begin{equation}\label{eq6.a*}
\mathbold{ f}^{1,1} \sqcup_{\mathbold{f}_{2,1}} (\mathbold{f}_{2,1} \boxplus \varepsilon_2
\mathbold{f}^{2,1}) \sqcup \ldots \sqcup_{\mathbold{f}_{2,k_2}} (\mathbold{f}_{2,k_2} \boxplus \varepsilon_2
 \mathbold{f}^{2,k_2}) \sqcup \ldots \sqcup_{\mathbold{f}_{N,k_N}} (  \mathbold{f}_{N,k_N} \boxplus
\varepsilon_N  \mathbold{f}^{N,k_N}) \,
\end{equation}
that are defined pointwise for $z \in \bar A$ in the same way as the respective objects were defined in section \ref{sec:6.3} for geometric fat braids.
The $\varepsilon_{\ell}$ are chosen successively so that the $\varepsilon_{\ell}$-neighbourhoods of the strands of the fat map \eqref{eq6.a*}
with $N$ replaced by $\ell-1$, $\ell\leq N,$ are pairwise disjoint.
Then the mapping \eqref{eq6.a*} is an ordinary mapping and represents the conjugacy class $\hat b$. Since all mappings $\mathbold{f}^{\ell,j}$ and $\mathbold{f}_{\ell',j'}$ are holomorphic on $A$, the mapping \eqref{eq6.a*} is holomorphic by construction.
Hence, $\mathcal{M}(\hat b) \geq m(A)$ for any $A$ with $m(A) <\min_{\ell,j}
{\mathcal M} \left( \widehat{b(\ell,j)} \right)$. We obtained the inequality
$$
\mathcal{M}(\hat b)\geq \min_{\ell,j}
{\mathcal M} \left( \widehat{b(\ell,j)} \right).
$$ The proposition is proved. \hfill $\Box$

\medskip

\begin{prop}\label{prop7.2} For the mapping class $\mathfrak{m}_{b,\infty}$ associated to a pure $n$-braid $b$ and its irreducible nodal components $\widehat{\mathfrak{m}_{b,\odot}^{\ell,j}}$ the equality
\begin{equation}
h(\widehat{\mathfrak{m}_{b,\infty}})= \max_{\ell,j}h(\widehat{\mathfrak{m}_{b,\odot}^{\ell,j}})
\end{equation}
holds.
\end{prop}
In the lemma all conjugacy classes are considered as classes of self-homeomorphisms of compact Riemann surfaces with distinguished points.

\medskip

\noindent {\bf Proof.}
We prove first the inequality
\begin{align}\label{eq6.51}
h (\widehat{{\mathfrak m}_{b,\infty}}) \geq \max_{\ell,j}h(\widehat{\mathfrak{m}_{b,\odot}^{\ell,j}}) .
\end{align}
We take again a self-homeomorphism $\varphi_{b,\infty}$ of $\mathbb{P}^1$ with distinguished points $ E'_n= E_n\cup \{\infty\}$ that fixes pointwise the complement of a large disc centered at $0$ and represents the mapping class $\mathfrak{m}_{b,\infty} \in \mathfrak{M}(\mathbb{P}^1; E'_n)$ of the braid $b$.
We require that $\varphi_{b,\infty}$  is completely
reduced by an admissible system of curves $\mathcal{C}$   that is contained in the unit disc $\mathbb{D}$. 
We choose a set $\mathbold{E}^{\ell,j}\subset E_n$ as follows. For each hole of $S^{\ell,j}$ bounded by an interior boundary component of $S^{\ell,j}$  we chose a point of $E$ contained in it. These points constitute the set  $\mathbold{E}^{\ell,j}$. Then
 $\mathfrak{m}_{b(\ell,j),\infty}$ is the mapping class of $\varphi_{b,\infty}$ in $\mathfrak{M}(\mathbb{P}^1;  {E'}^{\ell,j} \cup \mathbold{E}^{\ell,j})$. We obtain
\begin{align}\label{eq7.41a}
h({\mathfrak m}_{b(\ell,j),\infty}) = {\rm inf} \{ h(\varphi) : \varphi \;
{\rm is}
\;{\rm Hom}^+  (\mathbb{P}^1 ; {E'}^{\ell,j} \cup {\mathbold{E}}^{\ell,j})-\mbox{isotopic to $\varphi_{b,\infty}$ }\} \, .
\end{align}
On the other hand,
\begin{align}\label{eq7.40}
h({\mathfrak m}_{b,\infty}) = {\rm inf} \{ h(\varphi) : \varphi \;
{\rm is} \;
{\rm Hom}^+(\mathbb{P}^1 ; E'_n) - \mbox{isotopic to $\varphi_{b,\infty}$} \} \,.
\end{align}
Since the space which appears in (\ref{eq7.40}) is contained in the
space which appears in (\ref{eq7.41a}), the infimum in (\ref{eq7.40})
is not smaller than the infimum in (\ref{eq7.41a}):
\begin{equation}
\label{eq7.42} h({\mathfrak m}_{b,\infty}) \geq h ({\mathfrak m}_{b(\ell,j),\infty})
\quad \mbox{for each $\ell,j$.}
\end{equation}
Since by Lemma \ref{lem6.1} the equality  $ \widehat{{\mathfrak m}_{b,\odot}^{\ell,j}}= \reallywidehat{\mathfrak{m}_{b(\ell,j),\infty}}$ holds, the inequality \eqref{eq6.51} is proved.

\smallskip

The proof of the opposite inequality is based on Theorem  \ref{thm3.6}.
Recall that the class $\widehat{\mathfrak{m}_{b,\odot}^{\ell,j}}$ (considered as element of $\widehat{\mathfrak{M}}(Y^{\ell,j})$ for a punctured Riemann sphere  $Y^{\ell,j}$) is represented by the mapping $\varphi_{b,\infty}|({S^{\ell,j}} \setminus E'_n)$ on a Riemann surface of second kind. 
Take an absolutely extremal representative of this class. 
This is a self-homeomorphism of a punctured Riemann surface $\tilde {Y}^{\ell,j}$. It extends across the punctures to
a self-homeomorphism  $\tilde{\varphi}^{\ell,j}$ of $\mathbb{P}^1$ with distinguished points $\tilde{E'}^{\ell,j}\cup\tilde{\mathbold{E}}^{\ell,j} $. (The set $\tilde{E'}^{\ell,j}$ corresponds to $S^{\ell,j}\cap E_n'$, the set $\tilde{\mathbold{E}}^{\ell,j} $ corresponds to the holes of  $S^{\ell,j}$.) 
$\tilde{\varphi}^{\ell,j}$ is entropy minimizing in the class of self-homeomorphisms of $\mathbb{P}^1$ with distinguished points that is obtained from $\varphi_{b,\infty}|({S^{\ell,j}} \setminus E'_n)$ by isotopy, conjugation and extension across punctures, i.e.
$$
h(\tilde{\varphi}^{\ell,j}) =\inf\{h(\varphi): \varphi \in \widehat{\mathfrak{m}_{b,\odot}^{\ell,j}}\}\,.
$$
By Theorem 3.9 for each $(\ell,j)$ there is a closed topological disc around each distinguished point in  $\tilde{\mathbold{E}}^{\ell,j} $ 
and a self-homeomorphism $\tilde{\varphi}_0^{\ell,j}$ of
$\mathbb{P}^1$ with distinguished points $\tilde{E'}^{\ell,j}\cup \tilde{\mathbold{E}}^{\ell,j} $ 
that is isotopic (with distinguished points $\tilde{E'}^{\ell,j}\cup \tilde{\mathbold{E}}^{\ell,j} $) to $\tilde{\varphi}^{\ell,j}$, has the same entropy $h(\tilde{\varphi}_0^{\ell,j})= h(\tilde{\varphi}^{\ell,j})$, and equals the identity in a neighbourhood 
of each of the discs.

Since  $\tilde{\varphi}^{\ell,j} \in \widehat{\mathfrak{m}_{b,\odot}^{\ell,j}}$ there exists a homeomorphism  $\tilde {w}^{\ell,j}: S^{\ell,j} \setminus {E'}^{\ell,j} \to\tilde {Y}^{\ell,j} $ that conjugates $\tilde{\varphi}^{\ell,j}\mid \mathbb{P}^1\setminus ({E'}^{\ell,j}\cup \tilde{\mathbold{E}}^{\ell,j})$ to a self-homeomorphism of $S^{\ell,j} \setminus {E'}^{\ell,j}$ that is isotopic on this set to $\varphi_{b,\infty} \mid S^{\ell,j} \setminus {E'}^{\ell,j}$.
The conjugate
$(\tilde {w}^{\ell,j})^{-1}\circ \tilde{\varphi}_0^{\ell,j} \circ \tilde {w}^{\ell,j}$ of the isotopic homeomorphism $\tilde{\varphi}_0^{\ell,j}$
equals the identity in a neighbourhood of all boundary components of $S^{\ell,j}$. Hence, it extends to the boundary of $S^{\ell,j}$ as a self-homeomorphism $\varphi_0^{\ell,j}$ of $\overline{S^{\ell,j}}\setminus E'_n$ that fixes the boundary pointwise. The isotopy class in $\mathfrak{M}(\overline{S^{\ell,j}}\setminus E'_n; \partial S^{\ell,j})$ of the extension  $\varphi_0^{\ell,j}$ 
differs from that of the restriction $\varphi_{b,\infty} \mid \overline{S^{\ell,j}}\setminus E'_n$ 
by products of  powers of  Dehn twists about  curves in $S^{\ell,j}$ that are homologous to one of the boundary components of $S^{\ell,j}$.
By Lemma \ref{lemm3.11} for each $(\ell,j)$ the entropy minimizing  homeomorphism $\tilde{\varphi}_0^{\ell,j}$ may be chosen from the beginning so that ${\varphi}_0^{\ell,j}$
and $\varphi_{b,\infty} \mid \overline{S^{\ell,j}}\setminus E'_n$
are $\mathfrak{M}(\overline{S^{\ell,j}}\setminus E'_n; \partial S^{\ell,j})$-isotopic.

Consider the self-homeomorphism $\varphi_0$ of $\mathbb{P}^1\setminus E_n'$ whose restriction to  $\overline{S^{\ell,j}}\setminus E'_n$ equals $\varphi_0^{\ell,j}$ for all $\ell$ and $j$. (The self-homeomorphism is well defined since each $\varphi_0^{\ell,j}$ is equal to the identity near the boundary $\partial  S^{\ell,j}$.) The mapping  $\varphi_0$ is isotopic to $\varphi_{b,\infty}$ in $\mathfrak{M}(\mathbb{P}^1\setminus E'_n)$, hence it represents $\mathfrak{m}_{b,\infty}$ (considered as class of self-homeomorphisms of a punctured surface). For the 
extensions of the mappings 
across the punctures 
the inequality $h(\hat{\varphi}_0)\leq       \max_{\ell,j}h(\hat{\varphi}_0^{\ell,j})$ holds. 
Hence, the opposite inequality
\begin{align}\label{eq6.52}
h (\widehat{{\mathfrak m}_{b,\infty}}) \leq \max_{\ell,j}h(\widehat{\mathfrak{m}_{b,\odot}^{\ell,j}})
\end{align}
holds and the proposition is proved. \hfill $\Box$

\medskip

\noindent {\bf The Proof of the Main Theorem for reducible pure braids} is an immediate consequence of
Propositions \ref{prop7.1} and \ref{prop7.2},  Lemma \ref{lem6.1}, and the Main Theorem for irreducible braids. \hfill $\Box$

\medskip

\begin{rem}\label{rem5.1}  The mapping class $\widehat{\mathfrak{m}_{b,\infty}}$ can be recovered from the irreducible nodal components of the braid up to a product of Dehn twists about simple closed curves that are homologous to the admissible curves.
\end{rem}
Indeed, this statement was obtained by the proof of inequality  \eqref{eq6.52}.

\chapter[General case. Irreducible components.  Proof of Main Theorem.]   {The general case. Irreducible nodal components, irreducible braid components, and the proof of the Main Theorem.}
\label{chapter7a}

\setcounter{equation}{0}

In this chapter we prove the Main Theorem for reducible non-pure braids.  While the strategy of the proof in this case is similar to that in the case of pure braids, there are some difficulties that need additional considerations.
For instance, for each conjugacy class of pure braids the irreducible braid components can be obtained by forgetting strands. This approach does not work for non-pure braids.
We define irreducible braid components and irreducible nodal components in the non-pure case, and describe the respective decompositions into irreducible components and the recovery procedures.

\section {Irreducible nodal components. The general case.}\label{sec:7a.1}
In this Chapter 
we consider a reducible, not necessarily pure, braid
$b \in {\mathcal B}_n$ and  its mapping class $\mathfrak{m}_b\in {\mathfrak M} (\overline{\mathbb{D}} ; \partial \mathbb{D} , E_n)$. 
We represent the mapping class
${\mathfrak m}_{b,\infty}=\mathcal{H}_{\infty}(\mathfrak{m}_b) \in {\mathfrak M} ({\mathbb{P}^1} ; \infty , E_n)$ (see  equality \eqref{eq2.8}) 
by a self-homeomorphism  $\varphi_{b,\infty}$ of $\mathbb{P}^1$ 
which is the identity outside the unit disc $\mathbb{D}$. We may choose  $\varphi_{b,\infty}$ so that 
it is completely reduced by
an admissible system of curves  ${\mathcal C} = \{ C_1 ,
\ldots , C_k\}$  in ${\mathbb D}
\backslash E_n \subset \mathbb{P}^1\backslash E'_n $. In particular,
$\varphi _{b,\infty}$ leaves the union $\underset{C \in\mathcal{C}}{\bigcup} C$
invariant, and also leaves the complement ${\mathbb P}^1 \backslash
\underset{C \in \mathcal{C}}{\bigcup}C$ invariant.
The nodal components and the irreducible braid components will be associated to the isotopy class of the chosen admissible system of curves.

The connected components of $\mathbb{P}^1\setminus \bigcup_{C \in \mathcal{C}}C$ are labelled as follows. 
As in Chapter \ref{chapter6} 
the connected component of $\mathbb{P}^1\setminus \bigcup_{C \in \mathcal{C}}C$ that contains $\infty$ is denoted by $S^{1,1}$ and is said to be of generation $1$.
For $\ell\geq 2$ the interior of the union of the closure of all components of generation not exceeding $\ell-1$ is denoted by $Q_{\ell-1}$. The components of $\mathbb{P}^1\setminus \bigcup_{C \in \mathcal{C}}C$ that share a boundary component with $Q_{\ell-1}$ are called the components of generation $\ell$. 
However,
since $b$ is not required to be a pure braid, the connected components 
of $\mathbb{C} \setminus \mathcal{C}$
of generation $\ell$ are not necessarily fixed by $\varphi_{b,\infty}$. But the components of each generation $\ell$ are permuted  along cycles. \index{$\varphi_{b,\infty}$}
Indeed, since $\varphi$ fixes $\infty \in Q_{\ell-1}$ it fixes $Q_{\ell-1}$ setwise and, hence, permutes its boundary components, and therefore permutes the components of  ${\mathbb P}^1
\backslash \underset{C \in \mathcal{C}}{\bigcup} C$ that
share a boundary component with $Q_{\ell-1}$.

We label the connected components of  ${\mathbb P}^1
\backslash \underset{C \in\mathcal{C}}\bigcup C$ of generation $\ell\geq 2$ as follows. Choose for each cycle 
of components of  ${\mathbb P}^1
\backslash \underset{C \in\mathcal{C}}\bigcup C$ of generation $\ell$ a set and denote it by  $S_1^{\ell,i}$. 
(The choice will be made in Section \ref{sec:6.4} in the proof of Lemma 5.7. Here the choice of the label does not play a role. We just assume that some label is chosen.)
Let $k(\ell,i)$ be the length of the cycle,  i.e. the smallest positive integer for which $\varphi_{b,\infty} ^{k(\ell,i)}$ maps $S_1^{\ell,i}$ onto itself. Put
\begin{equation}\label{eq7a.1'}
S_{j+1}^{\ell,i} \stackrel{def}=\varphi_{b,\infty} (S_j^{\ell,i})  \, ,\;\; j = 1,\ldots
, k(\ell,i)-1 .
\end{equation}
For all $\ell,\, $ and $i$
\index{$S_j^{\ell,i}$} \index{$\partial_{\mathfrak E}{S_j^{\ell,i}}$ } \index{$\partial_{\mathfrak I}{S_j^{\ell,i}}$ }
\begin{equation}\label{7a.1''}
\varphi_b (S_{k(\ell,i)}^{\ell,i})=S_1^{\ell,i} ,
\end{equation}
The $\varphi_{b,\infty}$-cycles of connected components  of generation $\ell$ of ${\mathbb P}^1
\backslash \underset{C \in\mathcal{C}}\bigcup C$ are denoted
by ${\rm cyc}_0^{\ell,i}= \big( S_1^{\ell,i}\,,\,\ldots\,,\, S_{k(\ell,i)}^{\ell,i}\big)    \,,\; i=1,\ldots,k_{\ell}$,
$$
S_1^{\ell,i} \overset{\varphi_{b,\infty}}{\longrightarrow} S_2^{\ell,i}
\overset{\varphi_{b,\infty}}{\longrightarrow} \ldots
\overset{\varphi_{b,\infty}}{\longrightarrow} S_{k(\ell,i)}^{\ell,i}
\overset{\varphi_{b,\infty}}{\longrightarrow} S_1^{\ell,i} \, .
$$
Note that the number of cycles $k_{\ell}$ does not exceed the number of components $k_{\ell}'$
of generation $\ell$. \index{ $k_{\ell}$} 
\index{$S^{\ell,i}_j$} \index{$k(\ell,i)$}

The set of distinguished points ${E'}_j^{\ell,i}=S_j^{\ell,i}\cap E'_n$ is permuted in the same way:
$$
{E'}_1^{\ell,i} \overset{\varphi_{b,\infty}}{\longrightarrow} {E'}_2^{\ell,i}
\overset{\varphi_{b,\infty}}{\longrightarrow} \ldots
\overset{\varphi_{b,\infty}}{\longrightarrow} {E'}_{k(\ell,i)}^{\ell,i}
\overset{\varphi_{b,\infty}}{\longrightarrow} {E'}_1^{\ell,i} \, .
$$
The cycles of connected components of  ${\mathbb P}^1
\backslash \Big((\underset{C \in\mathcal{C}}\bigcup C) \bigcup E_n'\Big)$ of generation $\ell$ are denoted 
by ${\rm cyc}^{\ell,i}=\big( S_1^{\ell,i}\setminus E_n'\,,\,\ldots\,,\, S_{k(\ell,i)}^{\ell,i}\setminus E_n'\big) ,\, i=1,\ldots,k_{\ell}$.

Consider the $\varphi_{b,\infty}$-cycle of curves  $\partial_{\mathfrak{E}}S^{\ell,i}_j$, $j=1,\ldots,k(\ell,i),$ of generation $\ell$ and the cycle of discs
$\delta^{\ell,i}_j$ in $\mathbb{C}$ that are bounded by $\partial\delta^{\ell,i}_j = \partial_{\mathfrak{E}}S^{\ell,i}_j$. The homeomorphism  $\varphi_{b,\infty}$ permutes the discs along the cycle $(\delta_1^{\ell,i},\delta_2^{\ell,i},\ldots,
  \delta_{k(\ell,i)}^{\ell,i})$,
$$
\delta_1^{\ell,i} \overset{\varphi_{b,\infty}}{\longrightarrow} \delta_2^{\ell,i}
\overset{\varphi_{b,\infty}}{\longrightarrow} \ldots
\overset{\varphi_{b,\infty}}{\longrightarrow} \delta_{k(\ell,i)}^{\ell,i}
\overset{\varphi_{b,\infty}}{\longrightarrow} \delta_1^{\ell,i} \, .
$$

Notice that a cycle of discs $\delta_j^{\ell,i}$ of length $k(\ell,i)$ corresponding to a cycle of components of ${\mathbb P}^1
\backslash \underset{C\in \mathbb{P}^1}C$
may not contain a cycle of distinguished points of this length.
However, the following lemma holds.
\begin{lemm}\label{lem7a.1} Let $\ell >1$. Take any $\varphi_b$-cycle ${\rm cyc}^{\ell,i}$ of sets $S_j^{\ell,i}, \,j=1,\ldots k({\ell},i)$ of length $k(\ell,i)$, and the associated cycles of discs $\delta_j^{\ell,i}$ and of curves $\partial_{\mathfrak{E}}{S_{j}^{\ell,i}}$. Then there is an $\mathfrak{M}(\mathbb{P}^1; \mathbb{P}^1\setminus\mathbb{D}, E_n)$-isotopy of $\varphi_{b,\infty}$, that changes $\varphi_{b,\infty}$ only in a small neighbourhood of $\partial_{\mathfrak{E}}{S_1^{\ell,i}}$, and
provides a homeomorphism $\varphi_{b,\infty}'$ which has a 
cycle of perhaps non-distinguished points $\mathbold{z}_j^{\ell,i} \in S_j^{\ell,i} \subset \delta_j^{\ell,i}$ of length $k({\ell},i)$, 
$\varphi'_{b,\infty}(\mathbold{z}_j^{\ell,i})=\mathbold{z}_{j+1}^{\ell,i},
\,j=1,\ldots, k(\ell,i)-1,\;\;
\varphi'_{b,\infty}(\mathbold{z}_{k(\ell,i)}^{\ell,i})=
\mathbold{z}_1^{\ell,i}\,.$ 
Moreover, the new homeomorphism $\varphi'_{b,\infty}$ has the following property.
\begin{equation}\label{eq7a.1}
\mbox{For} \;\, 
1\leq j \leq
k(\ell,i), \;
{\varphi'}_{b,\infty}^{k(\ell,i) } \;
\mbox{fixes a neighbourhood 
of }\; \partial_{\mathfrak E} \, S_j^{\ell,i} \cup \{
\mathbold{z}_j^{\ell,i} \}  \, \mbox{pointwise}  .
\end{equation}
\end{lemm}
We will apply Lemma  \ref{lem7a.1} to all cycles of discs $\delta_j^{\ell,i}$.
Recall that the collection of the $\delta_j^{\ell,i}, \, i=1,\ldots, k_{\ell},\, j=1,\ldots k(\ell,i),$  is the set of all holes of $Q_{\ell -1}$. Hence, there is a one-to-one correspondence between the set of holes of $Q_{\ell -1}$ and the points $\mathbold{z}_j^{\ell,i}$ contained in the holes.

\medskip

\noindent {\bf Proof of Lemma \ref{lem7a.1}.} With the requirement that $\varphi'_{b,\infty}=\varphi_{b,\infty}$ in a neighbourhood of   $\partial_{\mathfrak E} \, S_j^{\ell,i},\, j=2,\ldots,k(\ell,i),$ the equation
\begin{equation}\label{eq7.51'}
\varphi'_{b,\infty}=\varphi_{b,\infty}^{1-k(\ell,i)}
\end{equation}
in a neighbourhood of $\partial_{\mathfrak E} \, S_{1}^{\ell,i}$ implies ${\varphi'}_{b,\infty}^{k(\ell,i)}=\rm{Id}$ in a neighbourhood of  $\partial_{\mathfrak E} \, S_{1}^{\ell,i}$. Then this equation also holds in a neighbourhood of  $\partial_{\mathfrak E} \, S_{j}^{\ell,i}$, $j=2,\ldots,k(\ell,i),$ since the restriction of ${\varphi'}_{b,\infty}^{k(\ell,i)}$
to a neighbourhood of
$\partial_{\mathfrak E} \, S_{j}^{\ell,i}$ equals $ {\varphi}_{b,\infty}^{j-2} \circ\varphi'_{b,\infty}\circ \varphi_{b,\infty}^{k(\ell,i)-j+1}$. It is clear that there is an
$\mathfrak{M}(\mathbb{P}^1; \mathbb{P}^1\setminus\mathbb{D}, E_n)$-isotopy of $\varphi_{b,\infty}$ that changes $\varphi_{b,\infty}$ only in a small neighbourhood of $\partial_{\mathfrak{E}}{S_1^{\ell,i}}$
and provides a homeomorphism $\varphi_{b,\infty}'$ that satisfies \eqref{eq7.51'} in a neighbourhood of $\partial_{\mathfrak E} \, S_{1}^{\ell,i}$.

\index{$\mathbold{z}_1^{\ell,i}$}
It remains to choose a point $\mathbold{z}_1^{\ell,i}\in \delta_1^{\ell,i}$ that is contained in this neighbourhood. The homeomorphism $\varphi_{b,\infty}'$   maps $\mathbold{z}_1^{\ell,i}\in \delta_1^{\ell,i}$
along the required cycle and the $k(\ell,i)$-th iterate of  $\varphi_{b,\infty}'$ fixes a neighbourhood of $\mathbold{z}_1^{\ell,i}$.
The lemma is proved. \hfill $\Box$
\medskip

In the following the homeomorphism $\varphi_{b,\infty}$ will satisfy 
all conditions 
that are obtained for the homeomorphism $\varphi_{b,\infty}'$
of Lemma  \ref{lem7a.1}.

\medskip

As in the case of pure braids (see also Section \ref{sec:6.1} and also Section \ref{sec:2.4}) we associate to $b$ a nodal surface $Y$ and an isotopy class 
${\mathfrak{m}}_{b,\odot}$ of self-homeomorphisms of $Y$, using
a continuous surjection $w: \mathbb{P}^1\setminus E'_n \to Y$
whose preimage of each node is a curve 
$C \in\mathcal{C}$.

Recall that the set of nodes of $Y$ is denoted by $\mathcal{N}$. The surjection $w$ maps $\mathbb{P}^1\setminus (E'_n \cup \underset{C \in \mathcal{C}}\bigcup C)$ homeomorphically onto $Y \setminus \mathcal{N}$. 
Let $Y_j^{\ell,i}$ be the component $w(S_j^{\ell,i}\setminus E'_n)$   of $Y\setminus \mathcal{N}$.
Conjugate $\varphi_{b,\infty}\mid \mathbb{P}^1\setminus(E'_n \cup \underset{C \in \mathcal{C}}\bigcup C)$ by the inverse of
$w\mid \mathbb{P}^1\setminus(E'_n \cup \underset{C \in \mathcal{C}}\bigcup C)$ and denote the conjugate by ${\varphi_{b,\odot}}$. Its isotopy class 
of self-homeomorphisms of $Y\setminus \mathcal{N}$ is denoted by ${\mathfrak{m}}_{b,\odot}$. The conjugacy
class $\widehat{{\mathfrak{m}}_{b,\odot}}$ is called the nodal conjugacy class associated to  $\mathfrak{m}_{b,\infty}$ and the system of admissible curves $\mathcal{C}$.
\index{$\varphi_{b,\odot}$}

Notice that the mapping $w$ extends across the punctures to a continuous surjection from $\mathbb{P}^1$ onto the compact nodal surface $Y^c$ obtained by filling the punctures of $Y$.  The extended mapping is also denoted by $w$. The elements  of the class ${\mathfrak{m}}_{b,\odot}$ extend across the punctures of the nodal surface $Y$ to self-homeomorphisms of the compact nodal surface $Y^c$. By an abuse of notation we denote the class of extended mappings also by ${\mathfrak{m}}_{b,\odot}$ and denote also the respective conjugacy class by  $\widehat{{\mathfrak{m}}_{b,\odot}}$.

The mapping ${\varphi_{b,\odot}}$ permutes the nodal components  $Y_j^{\ell,i}$ of $Y$ along cycles corresponding to the cycles of the $S_j^{\ell,i}\setminus {E'}_j^{\ell,i}$. We denote the cycles of nodal components  $Y_j^{\ell,i}$ by 
${\rm cyc}^{\ell , i}_{b,\odot}$ and call them the nodal cycles. 
\index{${\rm cyc}^{\ell , i}_{b,\odot}$}
\index{${\rm cyc}^{\ell , i}_{b,\odot}$}

The conjugacy classes  $\widehat{{\mathfrak{m}}^{\ell,i}_{b,\odot}}$       \index{$\widehat{{\mathfrak{m}}^{\ell,i}_{b,\odot}}$}
of the restrictions  ${{\mathfrak{m}}^{\ell , i}_{b,\odot}}$     of ${\mathfrak{m}}_{b,\odot}$ 
to the cycles ${\rm cyc}^{\ell , i}_{b,\odot}$,
are called the irreducible nodal components of the class
$\widehat{{\mathfrak m}_{b,\infty}}$.  \index{irreducible nodal component}
The irreducible
nodal components determine the class $\widehat{{\mathfrak
m}_{b,\infty}}$ only up to products of powers of some Dehn twists. This can be
seen in the same way as in the case of pure braids. (See later the Remark \ref{rem6.2}.)
\index{nodal cycle}

Instead of the nodal components $Y^{\ell,i}_j$ we may, equivalently, consider the respective cycles of compact Riemann surfaces $(Y^{\ell,i}_j)^c$
and extend the homeomorphisms betwen the $Y^{\ell,i}_j$ to homeomorphisms  between the compact surfaces. The obtained conjugacy class we will also denote by  $\widehat{\mathfrak{m}_{b,\odot}^{\ell,i}}$.

If the length of the cycle is bigger than one, the class $\widehat{\mathfrak{m}_{b,\odot}^{\ell,i}}$ is the  conjugacy class of a mapping class on a not connected Riemann surface.
Lemma \ref{lemm6.5} below states that each irreducible nodal component $\widehat{{\mathfrak{m}}^{\ell , i}_{b,\odot}}$ is determined by a conjugacy class of mapping classes of self-homeomorphisms of a connected Riemann surface. 
For instance, for the length $k(\ell,i)$ of the cycle  ${\rm cyc}^{\ell , i}_{b,\odot}$,  we take the $k(\ell,i)$-th power of $\varphi_{b,\odot}$, and restrict it to a component of the cycle, say to $Y_1^{\ell,i}$. The lemma states that the conjugacy class
$\reallywidehat{{\mathfrak m}_{b^{k(\ell,i)},\odot} \mid
Y_1^{\ell,i}}$ of the restricted mapping determines the irreducible nodal component ${{\mathfrak{m}}^{\ell , i}_{b,\odot}}$.  Notice that the mappings
$\varphi_{b,\odot}^{k(\ell,i)}\mid Y_j^{\ell,i}\,, j=1,\ldots,k(\ell,i)\,,$ are conjugate, hence, the class  $\reallywidehat{{\mathfrak m}_{b^{k(\ell,i)},\odot} \mid
Y_1^{\ell,i}}$ is defined by the cycle and is independent on the choice of the set in the cycle to which the mapping $\varphi_{b,\odot}^{k(\ell,i)}$ is restricted.

The key ingredient of the proof of the following Lemma \ref{lemm6.5} is a simple Lemma on Conjugation, which was used by Bers \cite{Be1}. For convenience of the reader we give a proof in the Appendix B.
\begin{lemm}\label{lemm6.5}
Suppose a self-homeomorphism $\psi_{\odot}$ of $\,Y^{\ell,i}_1\,$ represents the
class  $\,\reallywidehat{{\mathfrak m}_{b^{k(\ell,i)},\odot} \mid
Y_1^{\ell,i}} \;$ for the $\,k(\ell,i)$-th power of the braid $\,b\,$. Then any
self-homeomorphism $\,\varphi_{\odot}\,$ of the union $Y_1^{\ell,i}\cup\ldots\cup Y_{k(\ell,i)}^{\ell,i}\,,$
that moves the sets $Y_j^{\ell,i}$ along the cycle
\begin{equation}\label{eq7a.30}
Y_1^{\ell,i} \overset{\varphi_{\odot}}{\longrightarrow} Y_2^{\ell,i}
\overset{\varphi_{\odot}}{\longrightarrow} \ldots
\overset{\varphi_{\odot}}{\longrightarrow} Y_{k(\ell,i)}^{\ell,i}
\overset{\varphi_{\odot}}{\longrightarrow} Y_1^{\ell,i} \,,
\end{equation}
and satisfies the condition
\begin{equation}\label{eq7a.31}
\varphi_{\odot}^{k(\ell,i)}\mid Y_1^{\ell,i}= \psi_{\odot},
\end{equation}
represents $\reallywidehat{{\mathfrak
m}_{b,\odot}\mid {\rm cyc}^{\ell,i}_{\odot}}$.

Vice versa, for any self-homeomorphism $\varphi_{\odot}$ of
$Y_1^{\ell,i}\cup\ldots\cup Y_{k(\ell,i)}^{\ell,i}$ that represents $\reallywidehat{{\mathfrak
m}_{b,\odot}\mid {\rm cyc}^{\ell,i}_{\odot}}$
the restriction
$\varphi_{\odot}^{k(\ell,i)}\mid Y_1^{\ell,i}$
represents  $\,\reallywidehat{{\mathfrak m}_{b^{k(\ell,i)},\odot} \mid
Y_1^{\ell,i}} $.
\end{lemm}
\index{$Y_j^{\ell,i}$}
\bigskip

\noindent {\bf Proof.} It is clear that for each $\varphi_{\odot} \in \reallywidehat{{\mathfrak
m}_{b,\odot} \mid {\rm cyc}^{\ell,i}_{\odot}}$
the inclusion $\varphi_{\odot} ^{k(\ell,i)}\mid Y_1^{\ell,i}
\in
\reallywidehat{{\mathfrak m}_{b^{k(\ell,i)},\odot} \mid
Y_1^{\ell,i}}$ holds.

Prove the first part of the lemma.
Let $\psi_{\odot}$ in  $\reallywidehat{{\mathfrak m}_{b^{k(\ell,i)},\odot}
\mid Y_1^{\ell,i}}$. Define a self-homeomorphisms $\varphi_{\odot}
\in \reallywidehat{{\mathfrak m}_{b,\odot} \mid
{\rm cyc}^{\ell,i}_{\odot}}$
as follows. For each
$j = 1,\ldots , k(\ell,i)-1$, we take any homeomorphism
$\varphi_{\odot,j}$ from $Y_j^{\ell,i}$ onto
$Y_{j+1}^{\ell,i}$. For $j = k(\ell,i)$ we take the homeomorphism
$\varphi_{\odot,k(\ell,i)}$ from $Y_{k(\ell,i)}^{\ell,i}$
onto $Y_1^{\ell,i}$ for which
\begin{equation}
\label{eq6.68} \varphi_{\odot,k(\ell,i)} \circ \ldots \circ
\varphi_{\odot,1} = \psi_{\odot} \quad {\rm on} \quad
Y_1^{\ell,i} \, .
\end{equation}
Consider the self-homeomorphism $\varphi_{\odot}$ of
$\underset{j=1}{\overset{k(\ell,i)}\bigcup} \ Y_j^{\ell,i}$ which
equals $\varphi_{\odot,j}$ on $Y_j^{\ell,i}$. 
Then
$$
\varphi^{k(\ell,i)}_{\odot} \mid Y_1^{\ell,i} =
\psi_{\odot} \, .
$$

We have to prove that any mapping $\varphi_{\odot}$ obtained in this way
is in the class $\reallywidehat{{\mathfrak m}_{b,\odot}
\mid {\rm cyc}^{\ell,i}_{\odot}}$.
Let $w$ be as before the continuous surjection from $\mathbb{P}^1\setminus E'_n$ onto $Y$ that is a homeomorphism from $\mathbb{P}^1\setminus (E'_n \cup \underset{C \in \mathcal{C}}\bigcup C)$ onto $Y\setminus \mathcal{N}$.
Let $w^{\ell,i}$ be the restriction of $w$ to  $\underset{j=1}{\overset{k(\ell,i)}\bigcup}  S_j^{\ell,i} \backslash
E'_n $. The mapping $w^{\ell,i}$ takes
$S^{\ell,i}_j\setminus E_n'$ homeomorphically onto $Y^{\ell,i}_j$ for each $j$. Conjugate 
$\varphi_{b,\infty}\mid  \underset{j=1}{\overset{k(\ell,i)}\bigcup}  S_j^{\ell,i} \backslash
E'_n    $ by the inverse of $w^{\ell,i}$ and
denote the obtained self-homeomorphism
of $\underset{j=1}{\overset{k(\ell,i)}\bigcup} Y^{\ell,i}_j$
by $\tilde{\varphi}_{\odot}$. Then  $\tilde{\varphi}_{\odot}$ represents the class $\reallywidehat{{\mathfrak
m}_{b,\odot}\mid {\rm cyc}^{\ell,i}_{\odot}}$. 
The mappings
$\varphi_{\odot}$ and $\tilde{\varphi}_{\odot}$ 
move the $Y_j^{\ell,i}$ along the same cycle, 
and both, 
$\tilde{\varphi}^{k(\ell,i)}_{\odot} \mid Y_1^{\ell,i}$ and ${\varphi}^{k(\ell,i)}_{\odot} \mid Y_1^{\ell,i}$, represent the same
conjugacy class  $\,\reallywidehat{{\mathfrak m}_{b^{k(\ell,i)},\odot} \mid
Y_1^{\ell,i}} $. Since $\tilde{\varphi}_{\odot}$ represents
$\reallywidehat{{\mathfrak
m}_{b,\odot}\mid {\rm cyc}^{\ell,i}_{\odot}}$,
by the Lemma on
Conjugation (see Appendix B) also the mapping $\varphi_{\odot}$ represents
this class. 
The lemma is proved. \hfill $\Box$

\begin{rem} \label{rem6.1}
The restriction $\varphi_{b,\infty}^{k(\ell,i)}\mid S_1^{\ell,i}\setminus E'_n$, equivalently, the mapping $\varphi_{b,\odot}^{k(\ell,i)}
\mid Y_1^{\ell,i}$, is irreducible.
\end{rem}
Suppose the contrary. Then there exists an $\mathfrak{M}(\mathbb{P}^1\setminus E'_n)$-isotopy that joins $\varphi_{b,\infty}^{k(\ell,i)}$ 
with a self-homeomorphism $\psi$ of $\mathbb{P}^1$
and changes the values of $\varphi_{b,\infty}^{k(\ell,i)}$ only on $S^{\ell,i}_1$, such that $\psi$ has the following property. 
It fixes a simple closed curve $\gamma$ in $S_1^{\ell,i}$ that is not homotopic in $\mathbb{P}^1\setminus E'_n$ to a point, or to one of the punctures $S_1^{\ell,i}\cap E'_n$ in $S_1^{\ell,i}$, or to a boundary component of
$S_1^{\ell,i}$. Then the mapping $\varphi_{b,\infty}'$, that equals $\varphi_{b,\infty}$ on $\mathbb{P}^1 \setminus S_1^{\ell,i}$ (in particular, on $S_2^{\ell,i}\cup\ldots S_{k(\ell,i)}^{\ell,i}$) and equals $(\varphi_{b,\infty})^{-k(\ell,i)+1}\circ\psi$ on $S_1^{\ell,i}$, fixes the set of loops $\gamma\cup \varphi_{b,\infty}'(\gamma)\cup\ldots\cup (\varphi_{b,\infty}')^{k(\ell,i)-1}(\gamma)$ setwise and is $\mathfrak{M}(\mathbb{P}^1\setminus E'_n)$-isotopic to $\varphi_{b,\infty}$ by the Lemma on conjugation.
This contradicts the fact that the system of curves $\mathcal{C}$ was maximal.

\section{Irreducible braid components. The general
case.}\label{sec:7a.2}

We continue to consider non-pure reducible braids $b$ with base point $E_n\subset \mathbb{D}$ 
together with the associated homeomorphism $\varphi_{b,\infty} \in {\rm Hom}^+( \mathbb{P}^1 ;
\mathbb{P}^1\setminus \mathbb{D}\,,E_n)$, that fixes an admissible system of curves $ \underset{C \in\mathcal{C}}\bigcup C$ setwise, is completely reduced by this system, and satisfies \eqref{eq7a.1} for each cycle ${\rm cyc}_0^{\ell,i}$ of connected components of  ${\mathbb P}^1
\backslash \underset{C \in\mathcal{C}}\bigcup C$. Let $\varphi_t \in {\rm Hom}^+ (\mathbb{P}^1 ;
{\mathbb{P}^1 \setminus {\mathbb D}})$, $t \in [0,1]$, be a continuous family of self-homeomorphisms of $\mathbb{C}$
such that $\varphi_0 = \rm{id}$ and $\varphi_1 = \varphi_{b,\infty}$. 
The geometric braid $\{(t,\varphi_t(E_n) ), \, t \in [0,1]\}$ is contained in the cylinder $[0,1]\times \mathbb{D}$ and represents $b$.
Put $g(t)= \varphi_t(E_n),\; t \in [0,1]$,
$g:[0,1] \to C_n(\mathbb{C}) \diagup {\mathcal{S}_n }.\,$

\bigskip

\noindent {\bf The outermost braid.}
The tubular braid associated to the outermost component $S^{1,1}$ can be defined 
similarly as in Chapter \ref{chapter6}.
Consider the set of all holes of the outermost component. We give each hole the label of the unique $S^{2,i}_j$ whose exterior boundary is the boundary of the hole, and denote the respective hole by $\delta^{2,i}_j$.

Associate the tubular geometric braid
\begin{align}
\label{eq7a.2} \Bigg\{\Big(t, \varphi_t (E^{1,1} \cup \bigcup_{i=1}^{k_2}  \,\bigcup_{j=1}^{k(2,i)}\,
\overline{\delta_j^{2,i}}) \Big) \, , \quad t \in [0,1] \Bigg\}\,
\end{align}
to $b$ and $S^{1,1}$. The set correctly defines a tubular braid, since $\varphi_0$ is the identity and $\varphi_1= \varphi_{b,\infty}$ maps the set   $E^{1,1} \cup \bigcup_{i=1}^{k_2}  \,\bigcup_{j=1}^{k(2,i)}\,
\overline{\delta_j^{2,i}} $ onto itself.

We define a geometric fat braid that is a deformation retract of the tubular braid. Recall that a cycle ${\rm cyc}_0^{2,i}$ of length $k(2,i)$ may not contain a cycle of points of $E_n$ of the same length. By this reason for each cycle ${\rm cyc}_0^{2,i}$ we choose the cycle of points $\mathbold{z}_j^{2,i}\in S_j^{2,i}$  chosen in Lemma \ref{lem7a.1} (see also relation \eqref{eq7a.1}) as initial points of the fat strands.
In other words, let $\mathbold{ E}^{1,1}$ be the collection of all points
$\mathbold{z}_j^{2,i} \in S_j^{2,i} \subset \delta_j^{2,i}\;$, $i = 1,\ldots , k_2\;$, $j = 1,\ldots , k(2,i)\;$, assigned to the set of holes of $S^{1,1}$ in Lemma \ref{lem7a.1},
\begin{equation}\label{eq7a.4}
{\mathbold{E}}^{1,1}= \bigcup_{i=1}^{k_2} \{\mathbold{z}_1^{2,i}, \ldots, \mathbold{z}_{k(2,i)}^{2,i}\}\,.
\end{equation}
Associate to $S^{1,1}$ the geometric fat braid
\begin{align}
\label{eq7a.4''}
\Big\{\big(t,\varphi_t (E^{1,1} \cup \mathbold{ E}^{1,1})\big) \, , \quad t \in
[0,1]\Big\} \, .
\end{align}
Let $\mathbold{B}(1,1)$ be the isotopy class of the geometric fat braid \eqref{eq7a.4''}, and let $\widehat {\mathbold{{B}}(1,1)}$ be its conjugacy class.

Denote by $n(1,1)$ the number of points in $E^{1,1} \cup \mathbold{ E}^{1,1}$. We
write the mapping defining the geometric fat braid \eqref{eq7a.4''} as a fat map $\mathbold{g}^{1,1}$,
\begin{equation}
\label{eq7a.5'} \mathbold{g}^{1,1}: [0,1] \to C_{n(1,1)} ({\mathbb C}) \diagup
{\mathcal S}_{n(1,1)} \, .
\end{equation}
See Figure \ref{fig7a.2}, where $E^{1,1} = \emptyset$. In this figure the
connected components of ${\mathbb C} \backslash \underset{C \in
{\mathcal C}}{\bigcup} C$ of generation $2$ are $S_1^{2,1} ,
S_2^{2,1} , S_3^{2,1}$, which are moved along a $3$-cycle by
$\varphi_{b,\infty}$. The set $\mathbold{ E}^{1,1}$ equals $\mathbold { E}^{1,1} =
\{\mathbold {z}_1^{2,1} , \mathbold {z}_2^{2,1} , \mathbold {z}_3^{2,1} \}$.
The following equality holds
\begin{lemm}\label{lemm6.2}\begin{equation}
\label{eq6.31}\reallywidehat{{\mathfrak m}_{B(1,1),\infty}}= \reallywidehat{{\mathfrak m}_{b,\odot}^{1,1}}\,.
\end{equation}
\end{lemm}
\smallskip

\noindent {\bf Proof of Lemma \ref{lemm6.2}.} The
situation differs from that of Lemma \ref{lem6.1} by the fact that $b$ is not
necessarily a pure braid and, hence, the interior boundary
components of $S^{1,1}$ may be permuted by the mapping $\varphi_{b,\infty}$.
Again, the mapping $\varphi_{b,\infty}\mid S^{1,1}\setminus {E'}^{1,1}$ 
represents the class
$\reallywidehat{{\mathfrak m}_{b,\odot}^{1,1}}\in\widehat{\mathfrak{M}}(Y^{1,1}) $. The mapping $\varphi_{b,\infty}\mid \mathbb{P}^1\setminus(\mathbold{E}^{1,1} \cup {E'}^{1,1})$ 
represents the class  $\reallywidehat{{\mathfrak m}_{B(1,1),\infty}} \in \widehat{\mathfrak{M}}(\mathbb{P}^1\setminus(\mathbold{E}^{1,1} \cup {E'}^{1,1}))     $. (Recall that ${E'}^{1,1}=E^{1,1}\cup \{\infty\}$.)

Similarly as in the proof of Lemma \ref{lem6.1} we take for each first labeled set $\delta^{2,i}_1$ of a cycle whose exterior boundary is an interior boundary component of $S^{1,1}$ an open annulus $A^{2,i}_1$ that is contained in $S^{1,1}\setminus {E'}^{1,1}$ and shares a boundary component with the boundary of $\delta^{1,1}_1$. Moreover we require that  $A^{2,i}_1$ is contained in the neighbourhood of $\partial \delta^{2,i}_1$ which is fixed by the iterate $\varphi_{b,\infty}^{k(2,i)}$. We put $A^{2,i}_j= \varphi_{b,\infty}^{j-1}(A^{2,i}_1),\; j=2,\ldots,k(2,1).$ The annuli are taken small enough so that all obtained annuli for $i=1,\ldots,k_2$, $j=1,\ldots,k(2,i)$ are pairwise disjoint.

There is a homeomorphism $\tilde{ w}^{1,1}$ from ${S^{1,1}}\setminus {E'}^{1,1}$ onto $\mathbb{P}^1\setminus ( {E'}^{1,1} \cup \mathbold{E}^{1,1})$ with the following properties. $\tilde{ w}^{1,1}$  is equal to the identical injection on the set $S^{1,1}\setminus ({E'}^{1,1} \cup\bigcup A^{2,i}_j)$.
Moreover, for each $j$ the mapping $\tilde{ w}^{1,1}$ takes each annulus $A^{2,i}_j$ onto the punctured disc ${\delta'}^{2,i}_j \setminus \{\mathbold{z}^{2,i}_j\}$, where
${\delta'}^{2,i}_j\stackrel{def}=A^{2,i}_j \cup \overline{\delta^{2,i}_j}$. 
Conjugate $\varphi_{b,\infty}\mid S^{1,1}\setminus {E'}^{1,1}$
with the inverse of $\tilde {w}^{1,1}$. The conjugate $\tilde{\varphi}^{1,1}_{b,\infty}$ is related to $\varphi_{b,\infty}^{1,1}\stackrel{def}=  \varphi_{b,\infty}\mid \mathbb{P}^1\setminus(\mathbold{E}^{1,1} \cup {E'}^{1,1})$ by isotopy and conjugation.
Indeed, the two mappings, $\varphi_{b,\infty}\mid S^{1,1}\setminus {E'}^{1,1}$ and $\tilde{\varphi}_{b,\infty}^{1,1}$, 
differ only on the punctured discs ${\delta'}^{2,i}_j\setminus \mathbold{z}^{2,i}_j$ around the points $\mathbold{z}^{2,i}_j$ of $\mathbold{E}^{1,1}$ and map the punctured discs along the same cycles. They are equal on the boundary of each ${\delta'}^{2,i}_j$.
Moreover, for each $i$ the $k(2,i)$-th iterate of both mappings is the identity on each $\partial {\delta'}^{2,i}_j$, $j=1,\ldots, k(i,j)$, 
since by \eqref{eq7a.1}  $\varphi_{b,\infty}^{k(2,i)}$ is the identity on each $A^{2,i}_1$. 
Apply for each $i$ the Lemma on conjugation to the restriction of the two mappings
${\varphi}_{b,\infty}^{1,1}$ and $\tilde{\varphi}_{b,\infty}^{1,1}$
to $\bigcup_j^{k(2,i)}{\delta'}_j^{2,i}$, and take into account the following fact. If a self-homeomorphism of a closed disc punctured at a point fixes the boundary circle pointwise, then it is isotopic to the identity through self-homeomorphisms that fix the boundary circle pointwise.
We proved that ${\varphi}_{b,\infty}^{1,1}$ and $\tilde{\varphi}_{b,\infty}^{1,1}$ 
are related by isotopy and conjugation. \hfill $\Box$

\bigskip
\noindent{\bf The irreducible braid components $\widehat{\mathbold{B}(\ell,i)}$ associated to the
$\varphi_{b,\infty}$-cycles $\rm {cyc}^{\ell,i}$, $\ell >1$.} 
Recall that, for instance, the ordinary part of the geometric fat braid representing $\mathbold{B}(1,1)$ was found by
forgetting all strands of the original braid with initial points not in $E^{1,1}=E_n\cap S^{1,1}$. It was important that the set $E^{1,1}$ is invariant under $\varphi_{b,\infty}$.

In order to apply this recipe for getting the irreducible braid component  $\widehat{\mathbold{B}(\ell,i)}$ for $\ell>1$ we wish to start with a $\varphi_{b,\infty}$-invariant set of points. The intersection of $E_n$ with the union of the sets $S^{\ell,i}_j$ of the cycle $\mbox{cyc}^{\ell,i}$ has this property.
The problem
is that the geometric braid obtained by forgetting all strands with initial point
not contained in
this set carries  also information about the tubular braid $\big(t, \varphi_t(\bigcup_j\delta^{\ell,i}_j)\big)$, which is part of the information carried by the geometric fat braids of generation less than $\ell$.
However, the power $\varphi_{b,\infty}^{k(\ell,i)}$ leaves each $S^{\ell,i}_j$ , $j=1,\ldots,k(\ell,i)$, invariant and, for instance, the geometric braid related to $\varphi_{b,\infty}^{k(\ell,i)}$ by forgetting the strands, whose base points are not contained in $S^{\ell,i}_1\cap E'_n$, does not contain information about the geometric fat braids of generation less than $\ell$.
We will therefore relate
the irreducible braid component $\widehat{B(\ell,i)}$  to the power $b^{k(\ell,i)}$ of the braid $b$ similarly  as we related the irreducible nodal component to the power $b^{k(\ell,i)}$.  Recall that, the irreducible nodal component $\widehat{\mathfrak{m}_{b,\odot}^{\ell,i}}$ can be identified with the class $\reallywidehat{\mathfrak{m}_{b^{k(\ell,i)},\odot}\mid Y^{\ell,i}_1}$
that is related to the power $b^{k(\ell,i)}$ of the braid $b$ and a set $Y^{\ell,i}_1$ of the cycle $\mbox{cyc}^{\ell,i}$.
\index{$\widehat{\mathbold{B}(\ell,i)}$}

In detail, we
extend the family $\varphi_t,\, t \in [0,1],$ to a family defined for all $t \in \mathbb{R}$ by putting $\varphi_{t} \stackrel{def}= \varphi_{t-k} \circ (\varphi_{b,\infty})^{k} , t \in [k,k+1]$. Since  $\varphi_0={\rm Id}$ and $\varphi_1=\varphi_{b,\infty}$, the family is correctly defined. 
The family $\varphi_t,\, t \in [0,k(\ell,i)],$ is a parameterizing isotopy for $b^{k(\ell,i)}$ (normalized on the interval $[0,k(\ell,i)]$ rather than on the interval $[0,1]$).

In other words, the geometric braid
\begin{equation}
\label{eq7a.14}\Big\{\big(t, \varphi_t (E_n)\big) , \quad  t \in [0,k (\ell,i)] \Big\} \,,
\end{equation}
represents $b^{k(\ell,i)}$.
Indeed, $ \varphi_0=\rm{Id}$, and $\varphi_{k(\ell,i)}=
\varphi_{b,\infty}^{k(\ell,i)}$.

Define a tubular braid in $[0,k(\ell,i)]\times \mathbb{C}$ as follows.
Let as before $S^{\ell,i}_j, \, j=1,\ldots,k(\ell,i),$ be the sets of the cycle $\rm{cyc}_0^{\ell,i}$. 
Put $E^{\ell,i}_1\stackrel{def}= E \cap S_1^{\ell,i}$.
Further, consider all interior boundary components of $S_1^{\ell,i}$ and the closed topological discs of generation $\ell+1$ in ${\mathbb{C}}$ that are bounded by them (in other words, consider all ''interior holes'' of $S_1^{\ell,i}$). They are of the form $\overline{\delta_{j'}^{\ell+1,i'}}$, where ${\delta_{j'}^{\ell+1,i'}}$ is the bounded disc in $\mathbb{C}$ that is bounded by the exterior boundary component of $S^{\ell+1,i'}_{j'}$. Let $H({S_1^{\ell,j}} ) $ be the set 
of ''interior'' holes of $S_1^{\ell,j}$.
Consider the tubular braid
\begin{align}\label{eq7a.17}
\left\{\Big(t, \;  \varphi_t \big(E^{\ell,i}_1\, \cup\, \bigcup_ {H({S_1^{\ell,j}} ) } \overline{\delta_{j'}^{\ell+1,i'}}\big)\Big),\, t \in [0,k (\ell,i)]\right\}
\end{align}
in the tube $[0,k(\ell,i)]\times \mathbb{C}$.

\begin{figure}[h]
\begin{center}
\includegraphics[width=120mm]{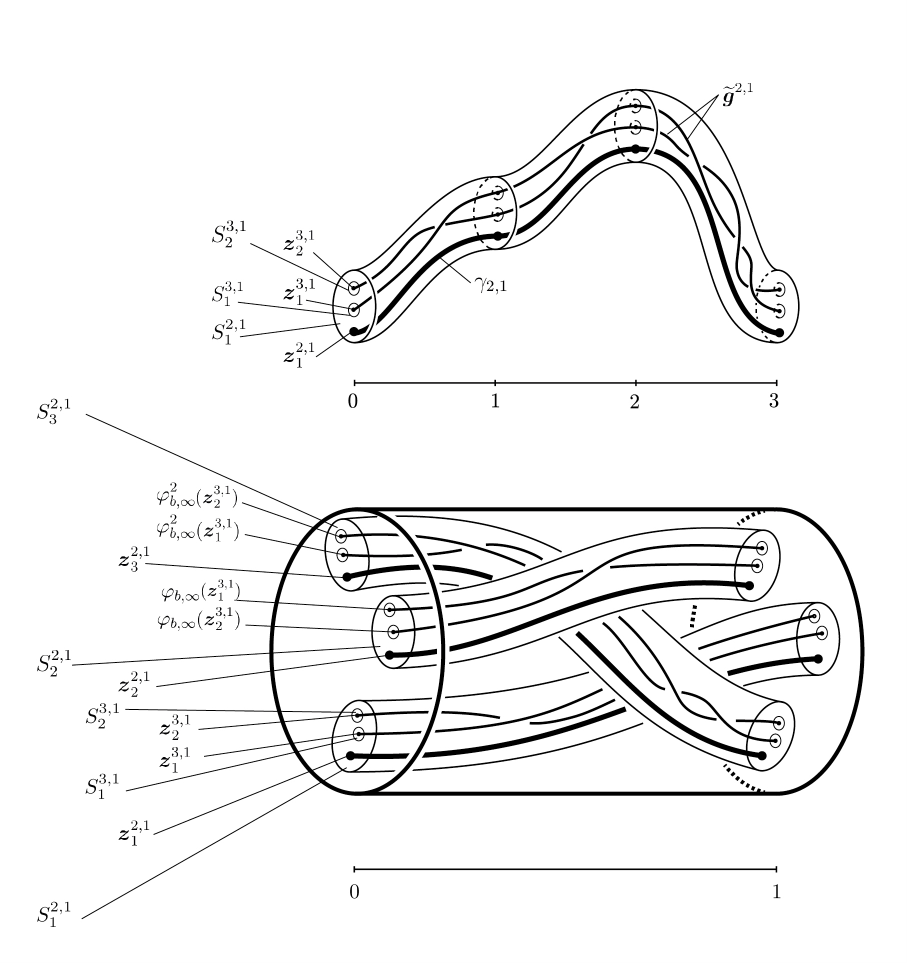}
\end{center}
\caption{The irreducible braid component $\widehat{\mathbold{B}({2,1})}$ of a non-pure braid $b$.}\label{fig7a.2}
\end{figure}

For each ''interior hole'' $ \overline{\delta_{j'}^{\ell+1,i'}}\in H(S_1^{\ell,j})$ of
$S_1^{\ell,j}$ we consider the point $\mathbold{z}_{j'}^{\ell+1,i'} \in  S_{j'}^{\ell+1,i'} \subset    \overline{\delta_{j'}^{\ell+1,i'}}$ associated to 
$\overline{\delta_{j'}^{\ell+1,i'}}$
by Lemma \ref{lem7a.1}. Let $\mathbold{E}^{\ell,i}_1$ be the set of all such points $\mathbold{z}_{j'}^{\ell+1,i'} $.
The geometric fat braid which is a deformation retract of \eqref{eq7a.17}
is defined as
\begin{equation}
\label{eq7a.18} \Big\{\big(t,\varphi_t (E^{\ell,i}_1 \cup \mathbold{ E}^{\ell,i}_1)\big) ,
\;\;  t \in [0,k (\ell,i)]\Big\}
\end{equation}
(See Figure 6.2 where $\mathbold{ E}^{2,1}_1 = \{ \mathbold{z}_1^{3,1} , \mathbold{z}_2^{3,2}
\}$, $E^{2,1}_1 = \emptyset$.)

Denote by $n(\ell,i)$ the number of points in $E^{\ell,i}_1 \cup
{\mathbold E}^{\ell,i}_1$. We write the fat mapping defining the geometric fat braid \eqref{eq7a.18} as
\begin{equation}
\label{eq7a.20} \widetilde{\mathbold{ g} ^{\ell,i}}^{k(\ell,i)} : [0,k(\ell,i)] \to
C_{n(\ell,i)} ({\mathbb C}) \diagup {\mathcal S}_{n(\ell,i)} \, .
\end{equation}
\index{$\tilde {\mathbold{g}}^{\ell,i}$}
\noindent Reparameterize the geometric fat braid $\widetilde{\mathbold{ g}^{\ell,i}}^{k(\ell,i)}$ to obtain a
mapping defined on the unit interval, in other words, put
\begin{equation}\label{eq7a.20'a}
\mathbold{g}^{\ell,i} (t) = \widetilde{\mathbold{ g}^{\ell,i}}^{k(\ell,i)} \left(
k(\ell,i)\,t \right), \;\; \mathbold{g}^{\ell,i}   : [0,1] \to C_{n(\ell,i)} ({\mathbb C})
\diagup {\mathcal S}_{n(\ell,i)} \, .
\end{equation}

The isotopy class of the geometric fat braid $\mathbold{g}^{\ell,i}$ is denoted by
$\mathbold{B}(\ell,i)$. \index{$\mathbold{B}(\ell,i)$} Its conjugacy class
$\widehat{\mathbold{B}(\ell,i)}$ is the irreducible braid component of the braid $b$ corresponding to the cycle $\rm{cyc}^{\ell,i}$.
The conjugacy class $\widehat{\mathbold{B}(\ell,i)}$
can be identified with the free isotopy class of $\mathbold{g}^{\ell,i}$. It does not depend on the choice of the set $S^{\ell,i}_1$ of the cycle $\rm{cyc}^{\ell,i}$.
\index{$\widehat{\mathbold{B}(\ell,i)}$}

The equality
\begin{equation}\label{eq7a.20''}
\reallywidehat{\mathfrak{m}_{b^{k(\ell,i)},\odot}\mid Y^{\ell,i}_1}=\reallywidehat{\mathfrak{m}_{B(\ell,i),\infty}}
\end{equation}
follows by applying the equation \eqref{eq6.31} to the braid obtained from $b^{k(\ell,i)}$ by omitting all strands, that are not contained in $\delta_1^{\ell,i}$.

\section{The building block of the recovery. 
The general case.}\label{sec:7a.3b}
In the case of non-pure braids it is more transparent to work with the associated closed braids 
rather than with the braids themselves. We obtain a closed geometric fat braid from a geometric fat braid
\begin{equation}\label{eq7a.10}
\Big\{(t,\mathbold{g}(t)), \, t \in [0,1]\Big\}
\end{equation}
in the cylinder $[0,1]\times\mathbb{C}$ by replacing the interval $[0,1]$ by the quotient $[0,1]\diagup 0\sim1$ which is diffeomorphic to the unit circle $\partial \mathbb{D}$. In other words, the closed geometric fat braid is obtained from the geometric braid by gluing together the top and the bottom fiber of the cylinder $[0,1]\times \mathbb{C}$ using the identity mapping of $\mathbb{C}$. We put $\mathbold{G}(e^{2\pi i t})=\mathbold{g}(t),\, t \in [0,1],$ which is well defined since $\mathbold{g}(0)=\mathbold{g}(1)$.
The associated closed geometric fat braid, denoted by $\mathbold{G}$, is defined as the subset of the solid torus $\partial \mathbb{D}\times \mathbb{C}$
\begin{align}\label{eq7a.11}
\Big\{\big(e^{2\pi i t}, \,\mathbold{ g}(t)\big),\,t \in [0,1]\Big\}
=\Big\{\big (e^{2\pi i t}, \,\mathbold{G}(e^{2\pi i t})\big),\, e^{2\pi i t} \in \partial \mathbb{D}\Big\}\,.
\end{align}

The connected components of the closed geometric fat braid \eqref{eq7a.11} are simple closed curves in the solid torus $\partial \mathbb{D}\times \mathbb{C}$.
For each such loop $\Gamma$ there is a natural number $k$ such that the loop intersects each fiber
$\{e^{2\pi it}\}\times \mathbb{C}$ along $k$ points. We call $k$ the covering multiplicity of $\Gamma$.

Consider the $k$-fold covering $\widetilde{\partial \, {\mathbb D}}^{k}\times \mathbb{C} \ni (\zeta,z) \overset{\tilde{p}^k}\to (\zeta^{k},z) \in \partial \mathbb{D} \times \mathbb{C}$ of $\partial \mathbb{D}\times \mathbb{C}$. Notice that  $\widetilde{\partial \, {\mathbb D}}^{k}\times \mathbb{C}$ is holomorphically isomorphic to $\partial \mathbb{D}\times \mathbb{C}$.
We may identify $ \partial \mathbb{D} \times \mathbb{C}$ with $\Big\{\big(e^{2\pi i t}, z\big),\,t \in [0,1], z \in \mathbb{C}\Big\}$, and $\widetilde{\partial \, {\mathbb D}}^{k}\times \mathbb{C}$ with $\Big\{\big(e^{2\pi i \frac{t}{k}}, z\big),\,t \in [0,k], z \in \mathbb{C}\Big\}$. Then $\tilde{p}^k((e^{2\pi i \frac{t}{k}}, z))=(e^{2\pi i t}, z)$ for $(e^{2\pi i \frac{t}{k}}, z)\in
\widetilde{\partial \, {\mathbb D}}^{k}\times \mathbb{C}$.

A loop of covering multiplicity $k$ can be written as
\begin{align}\label{eq7a.6''}
{\Gamma}=\Big\{\big(e^{2\pi it}, \, \mathbold{\gamma}(t)\big),\; t \in [0,k]\Big\}\,
\end{align}
for a function $\gamma$ with $\gamma(0)=\gamma(k)$.
If  ${\Gamma}$ 
has covering multiplicity $k$,
the lift ${\widetilde{\Gamma}°}^k$ of $\Gamma$ to the $k$-fold covering,
\begin{align}\label{eq7a.6'''}
{\widetilde{\Gamma}°}^k= \Big\{\big(e^{2\pi i \frac{t}{k}},  \, {\gamma}( t)  \big), \,t \in [0,k], z \in \mathbb{C}\Big\}
\end{align}
 has covering multiplicity $1$ as subset of $\widetilde{\partial \, {\mathbb D}}^{k}\times \mathbb{C}$. 
\index{$\widetilde{\partial \, {\mathbb D}}^{k}\times \mathbb{C}$}

\smallskip
A solid torus $T$ in $\partial \mathbb{D}\times \mathbb{C}$ that intersects each fiber  $\{e^{2\pi it}\}\times \mathbb{C}$ along a set of $k$ disjoint closed discs is called a solid torus of covering multiplicity $k$. A solid torus $T$ of covering multiplicity $k$ can be written as $\cup_{t\in[0,k]} \,\{e^{2\pi it}\}\times U_t$, where for each $t$ the sets $U_t,\,\ldots,\,U_{t+k-1}$ are disjoint topological discs in $\mathbb{C}$.
Take the lift  ${{\widetilde T}°}^k= \cup_{t\in[0,k]} \,\{e^{2\pi i \frac{t}{k}}\}\times U_t$ under $\tilde{p}^k$ of the solid torus $T$ of covering multiplicity $k$. 
The torus ${\widetilde {T}°}^k$ has covering multiplicity $1$ as subset of the $k$-fold covering
space $\widetilde{\partial \, {\mathbb D}}^{k}\times \mathbb{C}
\cong \partial \mathbb{D}\times \mathbb{C}$.

The following procedure is the basis for the recovery of conjugacy classes of not necessarily pure reducible braids.
Let $\mathbold G:\partial \mathbb{D}\to C_n(\mathbb{C})\diagup \mathcal{S}_n$ be a closed geometric fat  braid in $\partial \mathbb{D}\times \mathbb{C}$, 
written as 
$\Big\{\big(e^{2\pi it}, \, \mathbold g(t)\big),\, t \in [0,1]\Big\}\,$ 
with $\mathbold{G}(e^{2\pi it})=\mathbold{g}(t),\,t\in [0,1]$ (see equation \eqref{eq7a.11}).
Let $\mathbold{\Gamma}$ be a fat loop of covering multiplicity $k$ which is a part of the closed geometric fat braid \eqref{eq7a.11},
\begin{align}\label{eq7a.8}
\mathbold{\Gamma}=\Big\{\big(e^{2\pi it}, \, \mathbold{\gamma}(t)\big),\; t \in [0,k]\Big\}\,
\end{align} 
for a fat function $\mathbold\gamma$ with $\mathbold\gamma(0)=\mathbold\gamma(k)$. 
The lift ${\widetilde{\mathbold{\Gamma}}°}^k$ can be written as
\begin{align}\label{eq7a.6**}
{\widetilde{\mathbold{\Gamma}}°}^k=\big\{(e^{2\pi i \frac{t}{k}},\mathbold{\gamma}(t)): t\in [0,k]\big\}\,.
\end{align}
${\widetilde{\mathbold{\Gamma}}°}^k$ is the graph of a function depending on $e^{2\pi i \frac{t}{k}}\in \widetilde{\partial \mathbb{D}}^k$.

Take a solid torus $T$ in $\partial \mathbb{D} \times \mathbb{C}$ of covering multiplicity $k$, that surrounds the fat loop $\mathbold{\Gamma}$ of covering multiplicity $k$ (more precisely, $\mathbold{\Gamma}$ is a deformation retract of $T$),
such that $T$ does not intersect any other loop of the closed geometric fat braid $\mathbold{G}$. 
Replacing $\mathbold{\Gamma}$ by $T$ we obtain a closed geometric tubular fat braid. (A closed geometric tubular fat braid is defined in a similar way as a geometric tubular fat braid.)

Let ${\widetilde{\mathbold{F}}°}^k$ be a closed geometric fat braid in $\widetilde{\partial \, {\mathbb D}}^{k}\times \mathbb{C}$ that is contained in the lift ${\widetilde{T}°}^k$. 
We write
\begin{align}\label{eq7.a19'}
{\widetilde{\mathbold{F}}°}^k=\big\{(e^{2\pi i \frac{t}{k}},{\widetilde{\mathbold{f}}°}^k(t)): t\in[0,k]\big\}
\end{align}
with ${\widetilde{\mathbold{f}}°}^k(t)\stackrel{def}={\widetilde{\mathbold{F}}°}^k(e^{2\pi it}),\, t \in [0,k]\,,$ ${\widetilde{\mathbold{f}}°}^k:[0,k]\to  C_n(\mathbb{C})\diagup \mathcal{S}_n$ for some natural number $n$. 
We define the following operation that replaces the fat loop $\mathbold{\Gamma}$ of the closed fat braid $\mathbold{G}$ by ''a copy of the closed fat braid $\mathbold{F}=\tilde{p}^k({\widetilde{\mathbold{F}}°}^k)=\big\{(e^{2\pi i t},{\widetilde{\mathbold{f}}°}^k(t)): t\in[0,k]\big\}$ inserted into $T\,$'':
\begin{align}\label{eq7a.13}
\mathbold{G}\sqcup_{\mathbold{\Gamma}}{\widetilde{\mathbold{F}}°}^k\stackrel{def}= \Big\{\big(e^{2\pi it},\, \mathbold{g}(t)\sqcup_{\mathbold{\gamma}(t)}{\widetilde {\mathbold{f}}°}^k(t)\big),\, t \in [0,k]\Big\}\,.
\end{align}
The operation $\mathbold{g}(t)\sqcup_{\mathbold{\gamma}(t)} \tilde{\mathbold{f}}^k(t) \stackrel{def} =
\big(\mathbold{g}(t)\setminus \{\mathbold{\gamma}(t)\}\big) \cup \tilde{\mathbold{f}}^k(t)$ for every $t\in [0,k]$ is defined as in Chapter \ref{chapter6} equation
\eqref{eq6.6}, namely, for each $t$ we replace the point $\mathbold{\gamma}(t)$
of the unordered tuple of points $\mathbold{g}(t)$ by the unordered tuple of points ${\widetilde{\mathbold{f}}°}^k(t)$.
\index{$\mathbold{G}\sqcup_{\mathbold{\Gamma}}\mathbold{F}$}

Since the closed geometric fat braid defined by ${\widetilde{\mathbold{F}}°}^k$ is contained in ${\widetilde{ T}°}^k$, the closed geometric fat braid $\Big\{\big(e^{2\pi it},{\widetilde{\mathbold{f}}°}^k(t)\big), \,t \in [0,k]\Big\}$ is contained in $T$. Hence, \eqref{eq7a.13} defines a closed geometric fat braid.

We need the following analog of Lemma \ref{lem6.1''}.

\begin{lemm}\label{lem7a.2}
Let $\mathbold{G}^0\,$ and $\mathbold{G}^1\,$ 
be (not necessarily pure) closed geometric fat braids in $\partial \mathbb{D}\times \mathbb{C}$, such that $\mathbold{G}^0$ is free isotopic to $\mathbold{G}^1$. 
Let $\mathbold{\Gamma}_0$ be a fat loop of $\mathbold{G}^0$  of covering multiplicity $k$, and let $\mathbold{\Gamma}_1$ be the fat loop  of $\mathbold{G}^1$  of covering multiplicity $k$ that corresponds to $\mathbold{\Gamma}_0$ under the isotopy joining $\mathbold{G}^0$ and $\mathbold{G}^1$.

For $j=0,1$ we let ${T}(\mathbold{\Gamma}_j)$ be a tube around $\mathbold{\Gamma}_j$ which does not meet the other loops of $\mathbold{G}^j\,$
and has  $\mathbold{\Gamma}_j$ as a deformation retract.
Suppose for $j=0,1$ there are closed geometric fat braids $\widetilde{(\mathbold{F}^j)}^k$ in $\widetilde{\partial \, {\mathbb D}}^{k}\times \mathbb{C}$ that are contained in the lift $\widetilde {T(\mathbold{\Gamma}_j)}^k$ of $T(\mathbold{\Gamma}_j)$.

If in addition $\;\widetilde{(\mathbold{F}^0)}^k\;$ and  $\;\widetilde{(\mathbold{F}^1)}^k\;$ are free isotopic in  $\widetilde{\partial \, {\mathbb D}}^{k}\times \mathbb{C}$,
then  $\;\mathbold{G}^0 \sqcup_{\mathbold{\Gamma}_0} \widetilde{(\mathbold{F}^0)}^k\;$ and $\;\mathbold{G}^1 \sqcup_{\mathbold{\Gamma}_1} \widetilde{(\mathbold{F}^1)}^k\;$ are free isotopic closed geometric fat braids.
\end{lemm}

\noindent {\bf Proof.} 
Let $\mathbold{G}^s,\, s \in [0,1],$ be an isotopy of closed geometric fat braids joining $\mathbold{G}^1$ with $\mathbold{G}^2$,
and let $\mathbold{\Gamma}_s$ be the family of loops in $\mathbold{G}^s$ that is obtained by this isotopy and joins $\mathbold{\Gamma}_0$ with $\mathbold{\Gamma}_1$. Choose $\varepsilon$
so small that for each $s\in [0,1]$ the $\varepsilon$-neighbourhood in $\partial \mathbb{D} \times \mathbb{C}$ of the closed geometric fat braid $\Big\{\big(e^{2\pi it}, \mathbold{G}^s(e^{2\pi it})\big), \, t \in[0,1]\Big\}$ is a closed geometric tubular braid.

For $j=0,1$ the closed geometric fat braid  $\;\widetilde{(\mathbold{F}^j)}^k\;=\Big\{\big(e^{2\pi i \frac{t}{k}},\,\widetilde{(\mathbold{f}^j)}^k(t)\big), \, t \in[0,k],\Big\}$ in $\widetilde{\partial\,{\mathbb D}}^{k}\times \mathbb{C}$ is contained in $\widetilde{T(\mathbold{\Gamma_j})}^k$. 
By the proof of Lemma \ref{lem6.1'} for $j=0,1$ there exists an isotopy of closed geometric
fat braids $\widetilde{(\mathbold{F}^j_s)}^k, \, s\in [0,1],$
such that for $j=0,1$, each  $\widetilde{(\mathbold{F}^j_s)}^k$ 
is contained in $\widetilde{T(\mathbold{\Gamma}_j)}^k$, and for each $j$ the family
joins the closed geometric fat braid $\widetilde{(\mathbold{F}^j)}^k$ with a closed geometric fat braid  $\widetilde{(\mathbold{F}^j_1)}^k$ contained in the $\varepsilon$-neighbourhood ${T_{\varepsilon}(\widetilde{\mathbold{\Gamma}_j}^k)}$ of the lift $\widetilde{\mathbold{\Gamma}_j}^k$ of $\mathbold{\Gamma}_j$ to the $k$-fold covering of $\partial\mathbb{D}\times \mathbb{C}$.

Since $\mathbold{G}^j \sqcup_{\mathbold{\Gamma}_j} \widetilde{(\mathbold{F}^j_s)}^k,\, s \in [0,1]$, is a free isotopy of closed geometric fat braids for $j=0,1$, we may assume from the beginning, that $\widetilde{\mathbold{F}^j}^k$ is contained in the $\varepsilon$-tube $T_{\varepsilon}(\widetilde{\mathbold{\Gamma}_j}^k)$ around $\widetilde {\mathbold{\Gamma}_j}^k$.
Then $\widetilde{\mathbold{F}^j}^k,\, j=0,1,$ has the form  $\widetilde{\mathbold{\Gamma}_j}^k\boxplus \varepsilon \widetilde{\mathring{\mathbold{F}}^j }^k\stackrel{def}= \big\{(e^{2\pi i\frac{t}{k}}, \gamma_j(t)\boxplus \varepsilon \widetilde{\mathbold{\mathring{f}}_j}^k(t)),\, t\in[0,k]\big\}$ for a closed geometric fat braid $\widetilde{\mathring{\mathbold{F}}_j}^k$, that is defined by $\big\{(e^{2\pi i\frac{t}{k}},\widetilde{\mathbold{\mathring{f}}_j}^k(t)),\, t\in[0,k]\big\}$ and contained in $\widetilde{\partial\, \mathbb{D}}^k\times \mathbb{D}$.
Recall that for each positive number $\varepsilon$ the set $\varepsilon \widetilde{ \mathring{\mathbold{f}}_j}^k(t)$ is obtained from $\widetilde{ \mathring{\mathbold{f}}_j}^k(t)$ (considered as subset of $\mathbb{C}$) by multiplying each point of the latter set by  $\varepsilon$, and for each
$t\in [0,k]$ the set
$\gamma(t)\boxplus \varepsilon \widetilde{ \mathring{\mathbold{f}}_j}^k(t)$ is obtained by adding $\gamma(t)$ to each point of the set $\varepsilon \widetilde{\mathring{\mathbold{f}}_j}^k(t)$.
Since the closed geometric fat braids  $\widetilde{\mathbold{\Gamma}_0}^k\boxplus \varepsilon \widetilde{\mathring{\mathbold{F}}^0}^k$ and  $\widetilde{\mathbold{\Gamma}_1}^k\boxplus \varepsilon \widetilde{\mathring{\mathbold{F}}^1}^k$ are free isotopic, also the closed geometric fat braids
$\widetilde{\mathring{\mathbold{F}}_0}^k$ and $\widetilde{\mathring{\mathbold{F}}_1}^k$ are free isotopic. 
Let $\widetilde{\mathring{\mathbold{F}}^s}^k,\, s\in[0,1]$, be an isotopy of closed geometric fat braids contained in $\widetilde{\partial\, \mathbb{D}}^k\times \mathbb{D}$ and joining  $\widetilde{\mathring{\mathbold{F}}^0}^k$ and  $\widetilde{\mathring{\mathbold{F}}^1}^k$.

By the choice of $\varepsilon\,$ the family $\mathbold{G}^s\sqcup_{\mathbold{\Gamma}_s}
\widetilde{\mathbold{\Gamma}_s}^k \boxplus \varepsilon\widetilde{\mathring{\mathbold{F}}^s}^k$ 
is a free isotopy of closed geometric fat braids joining $\mathbold{G}^0\sqcup_{{\mathbold{\Gamma}_0}}
\widetilde{\mathbold{\Gamma}_0}^k \boxplus
 \varepsilon\widetilde{\mathring{\mathbold{F}}^0}^k$ and $\mathbold{G}^1\sqcup_{\mathbold{\Gamma}_1} \widetilde{\mathbold{\Gamma}_1}^k \boxplus
\varepsilon\widetilde{\mathring{\mathbold{F}}^1}^k$. We assumed that the latter two closed geometric fat braids are equal to  $\mathbold{G}^0 \sqcup_{\mathbold{\Gamma}_0} \widetilde{(\mathbold{F}^0)}^k$ and $\mathbold{G}^1 \sqcup_{\mathbold{\Gamma}_1} \widetilde{(\mathbold{F}^1)}^k$, respectively.
The Lemma is proved. \hfill $\Box$

\medskip

Notice that for any closed geometric fat braid given by a mapping $\mathbold{F}$ and any diffeomorphism $\sigma:\partial \mathbb{D}\to \partial \mathbb{D}$ the two closed fat braids
$$
\Big\{\big(e^{2\pi it},\mathbold{F}(e^{2\pi it })\big), \, t\in [0,1]\Big\}\;\mbox{ and} \;\Big\{\big(e^{2\pi it},\mathbold{F}(e^{2\pi i \sigma(t) })\big), \, t\in [0,1]\Big\}
$$
are free isotopic.

\section [Recovery of the conjugacy class of braids. The general case.]{Recovery of conjugacy classes of braids from the irreducible braid components. The general case.}
\label{sec:7a.3}
The plan is the following.
We consider a geometric braid $g$,
$g:[0,1]\to C_n(\mathbb{C})\diagup \mathcal{S}_n$
with base point $E_n$ that represents a braid $b\in\mathcal{B}_n$, 
and let  $\;{\mathbold{g}}^{1,1}:[0,1]\to C_{n(1,1) }\diagup \mathcal{S}_{n(1,1) }      \;,\;$  $\;\widetilde{\mathbold{g}^{\ell,i}}^{k(\ell,i)}: [0,k(\ell,i)]\to C_{n(\ell,i) }\diagup \mathcal{S}_{n(\ell,i) }\;$, and $\;\;{\mathbold{g}^{\ell,i}}
(t)=\widetilde{\mathbold{g}^{\ell,i}}^{k(\ell,i)}(k(\ell,i) t), \, t\in [0,1]$,  be the mappings constructed in Section \ref{sec:7a.2}. For uniformity of notation we put $\widetilde{\mathbold{g}^{1,1}}^1\stackrel{def}={\mathbold{g}}^{1,1}$. We write  $\mathbold{G}^{1,1}(e^{2\pi it}) \stackrel{def}=g^{1,1}(t),\,t\in [0,1]     $, 
${\mathbold{G}^{\ell,i}}(e^{2\pi it})={\mathbold{g}^{\ell,i}}(t),\, t\in [0,1], $, and
$\widetilde{\mathbold{G}^{\ell,i}}^{k(\ell,i)}(e^{2\pi i\frac{t}{k(\ell,i)}})\stackrel{def}=\widetilde{\mathbold{g}^{\ell,i}}^{k(\ell,i)}(t),\; t\in [0,k(\ell,i)]$, for all $\ell$ and  $i$. 
We will write the closed geometric braid $G$, that is associated to $g$, 
in terms of the $\widetilde{\mathbold{G}^{\ell,i}}^{k(\ell,i)}$.

Each irreducible braid component $\widehat{B(\ell,i)}$ will be represented by a suitable geometric fat braid $\mathbold{f}^{\ell,i}$. The associated closed braids $\widetilde{\mathbold{F}^{\ell,i}}^{k(\ell,i)}$ are put together in the same way as the $\widetilde{\mathbold{G}^{\ell,i}}^{k(\ell,i)}$ were put together. Lemma \ref{lem7a.2} will imply that this procedure gives a closed braid that represents $\hat b$.

As in Section \ref{sec:7a.2} we consider a parameterizing isotopy $\varphi_t$ for the geometric braid $\{g(t): \,t\in [0,1]\}$ representing $b$, i.e. a continuous family $\varphi_t\in {\rm Hom}(\mathbb{P}^1;\mathbb{P}^1\setminus\mathbb{D}^1),\, t\in[0,1],$ with $\varphi(0)={\rm Id}$ and $\varphi_1=\varphi_{b,\infty}\in \mathfrak{m}_{b,\infty}$, such that $g(t)=\varphi_t(E_n),\,t\in[0,1]$.
We assume as in Section \ref{sec:7a.2} that $\varphi_{b,\infty}$ fixes setwise a
system of admissible curves $\mathcal{C}$ that completely reduces the mapping class of $\varphi_{b,\infty}$. We also assume that $\varphi_{b,\infty}$ satisfies the conditions for the mapping $\varphi'_{b,\infty}$ of Lemma \ref{lem7a.1}.
The geometric tubular braid \eqref{eq7a.2} that was associated to the component $S^{1,1}$ of $\mathbb{P}^1\setminus \bigcup_{C \in \mathcal{C}} C$ and to the parameterizing isotopy $\varphi_t$ defines a closed  geometric tubular braid
\begin{equation}\label{eq7a.3}
\left\{\Big( e^{2\pi i t},\,
\varphi_t (E^{1,1} \cup \bigcup_{i=1}^{k_2}  \,\bigcup_{j=1}^{k(2,i)}\,
\overline{\delta_j^{2,i}}) \Big)\, ,t \in [0,1]\right\}.
\end{equation}
The connected components of the ordinary part $\Big\{\big(e^{2\pi i t}, \varphi_t (E^{1,1})\big) ,\, t\in [0,1]\Big\}$ of the closed tubular braid  \eqref{eq7a.3} are simple closed loops. The loops 
correspond to the $\varphi_{b,\infty}$-cycles of points in $E^{1,1}$. They are obtained by gluing together the
ordinary strands of \eqref{eq7a.2} that have initial points at points of $E^{1,1}$.

The union of the tubular strands appearing in \eqref{eq7a.3} is a union of closed solid tori $T^{2,i}$ in $\partial \mathbb{D} \times {\mathbb{C}}$. 
The  $T^{2,i}$ correspond to the $\varphi_{b,\infty}$-cycles ${\rm cyc}_0^{2,i}$, $i=1, \ldots, k_2$. Each $T^{2,i}$
intersects each fiber $\{z\} \times {\mathbb{C}}$ along $k(2,i)$ topological discs. The intersection of $T^{2,i}$ with the fiber
$\{1\}\times {\mathbb{C}}$ is the union of the closed  topological discs $\{1\} \times \overline{\delta_j^{2,i}}\,, j=1,\ldots,k(2,i)$. 
The torus $T^{2,i}$ is obtained by ''gluing together''
the $k(2,i)$ tubes $\bigcup_{t \in [0,1]}\{t\} \times \varphi_t(\delta_j^{2,i})$, $j=1, \ldots,k(2,i)$. (We used the equality $\varphi_t(\delta_j^{2,i})=\varphi_{t+j-1}(\delta_1^{2,i})$ for $t\in[0,1]$ and
$j=1, \ldots,k(2,i)$.)

With the $\mathbold{z}_j^{2,i}\in \delta_j^{2,i} $ from Lemma \ref{lem7a.1} we associate to the
geometric fat braid  \eqref{eq7a.4''} (see also \eqref{eq7a.5'})
the closed geometric fat braid
\begin{equation}\label{eq7a.6'}
\biggl\{\Big( e^{2\pi i t},
\varphi_t (E^{1,1} \cup \bigcup_{i=1}^{k_2}  \,\bigcup_{j=1}^{k(2,i)}\,
\mathbold{z}_j^{2,i}) \Big)\,,\,t \in[0,1]\biggl\}\,,
\end{equation}
which is a  subset of the torus $\partial  {\mathbb D}  \times \mathbb{C}$ and is a deformation retract of the closed geometric tubular braid \eqref{eq7a.3}.
We denote the mapping that defines the closed geometric fat braid by
\begin{equation}
\label{eq7a.6}\widetilde{\mathbold{G}^{1,1}}^1
: \partial  {\mathbb D} \to C_{n(1,1)} ({\mathbb C})
\diagup {\mathcal S}_{n(1,1)}, \quad
\end{equation}
Note that $\widetilde{  \mathbold{G}^{1,1}}^1 (e^{2\pi i t}) =\widetilde{ \mathbold{g}^{1,1}}^1 (t)=\mathbold{g}^{1,1} \, ,  t \in [0,1]$ ( see  \eqref{eq7a.4''} and \eqref{eq7a.5'}).

The connected components of the closed fat braid \eqref{eq7a.6'}
are simple closed loops in the torus $\partial \mathbb{D} \times {\mathbb{C}}$, some of them are ordinary loops, some of them are fat loops.
To each cycle ${\rm cyc}_0^{2,i}$ corresponds a fat loop denoted by 
$\mathbold{\Gamma}_{2,i}\stackrel{def}=\Big\{\big(e^{2\pi i t}, \, \varphi_t(\mathbold{E}^{2,i}_1)\big),\, t\in [0,k(2,i)]\Big\} \subset \partial {\mathbb D} \times \mathbb{C}$. 
We also write this fat loop as
$\mathbold{\Gamma}_{2,i}=\Big\{\big(e^{2\pi i t}, \, \mathbold{\gamma}_{2,i}(t)\big),\, t\in [0,k(2,i)]\Big\}$. 
Since by the definition of the extension of $\varphi_t$ to $\mathbb{R}$ the equalities $\varphi_{k(\ell,i)}(\mathbold{z}^{2,1}_1)= \varphi_{b,\infty}^{k(\ell,i)}(\mathbold{z}^{2,1}_1)
=\mathbold{z}^{2,1}_1$, and $\varphi_{j}(\mathbold{z}^{2,1}_1)= \varphi_{b,\infty}^{j}(\mathbold{z}^{2,1}_1)= \mathbold{z}^{2,1}_{j+1}$ hold for $j=1,\ldots,k(\ell,i)-1$, the fat loop $\mathbold{\Gamma}_{2,i}$
intersects 
the fiber $\{1\} \times {\mathbb{C}}$ at the points $(1,\mathbold{z}_1^{2,i}), \ldots,(1,\mathbold{z}_{k(\ell,i)}^{2,i})$. It intersects each fiber $\{e^{2\pi i t}\} \times {\mathbb{C}}$ along $k(2,i)$
points.

We consider for each $(2,i)$ the closed geometric fat braid $\;\widetilde{\mathbold{G}^{2,i}}^{k(2,i)}\;$ in
 $\widetilde{\partial \mathbb{D}}^{k(2,i)} \times {\mathbb{C}}$, that is defined as
\begin{equation}\label{eq7a.12'}
\Big\{\big(e^{2\pi i\frac{t}{k(2,i)}}, \widetilde{\mathbold{g}^{2,i}}^{k(2,i)}(t)\big),\, t \in [0,k(2,i)]\Big\}
\end{equation}
where  $\widetilde{\mathbold{g}^{2,i}}^{k(2,i)}$ 
is defined by \eqref{eq7a.18} and \eqref{eq7a.20} with $\ell=2$.

The closed geometric fat braid
\begin{equation}\label{eq7a.12}
\widetilde{ \mathbold{G}^{1,1}}^1\sqcup_{\mathbold{\Gamma}_{2,1} }\widetilde{\mathbold{G}^{2,1}}^{k(2,1)}\sqcup \ldots\sqcup_{\mathbold{\Gamma}_{2,k_2} }\widetilde{\mathbold{G}^{2,k_2}}^{k(2,k_2)}\,
\end{equation}
is associated to the domain $Q_2=S^{1,1} \cup \bigcup_{i=1}^{k_2}\bigcup_{j=1}^{k(2,i)}
\big(S_j^{2,i} \cup {C}_j^{2,i}\big)$
(compare also with \eqref{eq6.11a''}) 
in the sense that the set of its ordinary strands intersects the fiber $\{1\}\times \mathbb{C}$ of $\partial \mathbb{D}\times \mathbb{C}$ along the set 
$\{1\}\times (E\cap Q_2)$, and the set of intersection points of the fat strands with
$\{1\}\times \mathbb{C}$ contains exactly one point in $\{1\}\times \delta_j^{3,i}$
for each hole $\delta_j^{3,i}$ of $Q_2$, and contains no other point. 
Indeed,
by \eqref{eq7a.13} the closed geometric fat braid defined by
\begin{equation}\label{eq7a.14b}
\widetilde{ \mathbold{G}^{1,1}}^1\sqcup_{\mathbold{\Gamma}_{2,1} }\widetilde{\mathbold{G}^{2,1}}^{k(2,1)}
\end{equation}
can be written as
\begin{equation}\label{eq7a.14a}
\left\{ \Big(e^{2\pi it}, \widetilde{ \mathbold{g}^{1,1}}^1(t)\sqcup _{\mathbold{\gamma}_{2,1}(t) }
\widetilde{\mathbold{g}^{2,i}}^{k(2,i)}(t)
\Big),\, t \in [0,k(2,1)]\right\},
\end{equation}
where $\widetilde{\mathbold{g}^{2,i}}^{k(2,i)}$ 
is defined by \eqref{eq7a.18}, or equivalently by \eqref{eq7a.20} with $\ell=2$.

By \eqref{eq7a.18} the closed fat braid \eqref{eq7a.14a} is equal to
\begin{align}\label{eq7a.15}
\left\{ \Big(e^{2\pi it},\varphi_t (E^{1,1}\cup \mathbold{E}^{1,1})\sqcup_{\varphi_t(\mathbold{z}^{2,1}_1)}\varphi_t(E^{2,1}_1\cup \mathbold{E}^{2,1}_1)\Big) ,
\;  t \in [0,k (2,1)] \right\}\,.
\end{align}

By the equalities
$\varphi_{t+j}=\varphi_{t}\circ \varphi_{b,\infty}^{j}$ for $t\in [0,1]$, $\varphi_{b,\infty}^j(\mathbold{E}^{1,1})= \mathbold{E}^{1,1}$,
$\varphi_{b,\infty}^j(\mathbold{z}^{2,1}_1)=\mathbold{z}^{2,1}_{j+1}$ for $j=1,\ldots,k(2,1)-1$, $\varphi_{b,\infty}^{k(2,1)}(\mathbold{z}^{2,1}_1)=\mathbold{z}^{2,1}_{1}$,
$\varphi_{b,\infty}^j(E^{2,1}_1\cup\mathbold{E}^{2,1}_1)=E^{2,1}_{j+1}\cup \mathbold{E}^{2,1}_{j+1}$ for $j=1,\ldots,k(2,1)-1,$ and $\varphi_{b,\infty}^{k(2,1)}(E^{2,1}_1\cup \mathbold{E}^{2,1}_1)=E^{2,1}_1\cup \mathbold{E}^{2,1}_{1}$,
the closed geometric fat braid \eqref{eq7a.15} is equal to
\begin{equation}\label{eq7a.15'}
\left\{ \Big(e^{2\pi it},\varphi_t \big(E^{1,1} \cup (\mathbold E^{1,1}\setminus \underset{j=1}{{\overset{k(2,1)} {\bigcup }}}
\{\mathbold{z}^{2,1}_j\}) \cup
\underset{j=1}{
{\overset{k(2,1)} {\bigcup }}}
(E^{2,1}_j \cup \mathbold{ E}^{2,1}_j)\big)\Big)\;\;  t \in [0,1]\right\}\,.
\end{equation}
Induction over the closed geometric fat braids
\begin{equation}\label{eq7a.12a'}
\widetilde{ \mathbold{G}^{1,1}}^1\sqcup_{\mathbold{\Gamma}_{2,1} }\widetilde{\mathbold{G}^{2,1}}^{k(2,1)}\sqcup \ldots\sqcup_{\mathbold{\Gamma}_{2,i} }\widetilde{\mathbold{G}^{2,i}}^{k(2,i)}
\end{equation}
for $i=1,\ldots,k_2,$ shows that the closed geometric fat braid \eqref{eq7a.12} is associated to the domain $Q_2$.

Induction over the number 
of the generation $\ell=1,\ldots,N$ shows that the closed geometric fat braid
\begin{align}\label{eq7a.12''}
\widetilde{ \mathbold{G}^{1,1}}^1&\sqcup_{\mathbold{\Gamma}_{2,1} }\,\widetilde{\mathbold{G}^{2,1}}^{k(2,1)}\;\;\,\sqcup \ldots\sqcup_{\mathbold{\Gamma}_{2,k_2} }\,\widetilde{\mathbold{G}^{2,k_2}}^{k(2,k_2)}\;\; \sqcup \nonumber\\
\ldots&\sqcup_{\mathbold{\Gamma}_{N,1} }\widetilde{\mathbold{G}^{N,1}}^{k(N,1)}\sqcup \ldots\sqcup_{\mathbold{\Gamma}_{N,k_{\ell}} }\widetilde{\mathbold{G}^{N,k_N}}^{k(N,k_N)}
\end{align}
is an ordinary closed geometric braid and is equal to $G$.

Suppose now that we know the irreducible braid components $\widehat{\mathbold{B}(\ell,i)}$ of the reducible non-pure braid $b$ with respect to the admissible system of curves $\mathcal{C}$.
Represent each conjugacy class of fat braids $\widehat{\mathbold{B}(\ell,i)}$
by a closed geometric fat braid ${\widetilde{\mathbold{F}^{\ell,i}}°}^{k(\ell,i)}$ in
the bounded solid torus $\widetilde{\partial \mathbb{D}}^{k(\ell,i)} \times \mathbb{D}$ which is the $k(\ell,i)$-fold covering space of $\partial \mathbb{D} \times \mathbb{D}$.
Here $k(\ell,i)$ is the length of the cycle of $S_j^{\ell,i}$ that defines   $\widehat{\mathbold{B}(\ell,i)}$.
We write ${\widetilde{\mathbold{F}^{\ell,i}}°}^{k(\ell,i)}$ as
$\Big\{\big(e^{2\pi i\frac{t}{k(\ell,i)}}, \widetilde{\mathbold{f}^{\ell,i}}^{k(\ell,i)}(t)\big),\, t \in [0,k(\ell,i)]\Big\}$ for a continuous mapping $\widetilde{\mathbold{f}^{\ell,i}}^{k(\ell,i)}$
from $[0,k(\ell,i)]$ into the symmetrized configuration space of suitable dimension.

Let $\varepsilon_2>0$ be so small that the $\varepsilon_2$-neighbourhood in  $\partial \mathbb{D} \times \mathbb{C}$ of the closed geometric fat braid $\mathbold{F}^{1,1}={\widetilde{\mathbold{F}^{1,1}}°}^{1}$ written as
$\Big\{\big(e^{2\pi it},\,\widetilde{ \mathbold{f}^{1,1}}^1(t)\big),\, t \in [0,1]\Big\}$ is the union of disjoint solid tori that retract to the loops of the closed geometric fat braid.
Let $\mathbold{F}_{2,i},\, i=1,\ldots, k_2,$ be the fat loops of $\mathbold{F}^{1,1}$. The covering multiplicity of $\mathbold{F}_{2,i}$ is equal to $k(2,i)$.

For each $i=1,\ldots,k_2,$ we will ''insert a suitable representative of $\widehat{\mathbold{B}(2,i)}$ into the $\varepsilon_2$-neighbourhood of
the loop $\mathbold{F}_{2,i}$''.
More precisely, we consider the closed geometric fat braid given by
\begin{equation}\label{eq7a.13'}
\widetilde{ \mathbold{F}^{1,1}}^1\sqcup_{\mathbold{F}_{2,1} }(\mathbold{F}_{2,1}\boxplus \varepsilon_2 \widetilde{\mathbold{F}^{2,1}}^{k(2,1)})\,.
\end{equation}
Equality \eqref{eq7a.13'} means the following.
Write $\mathbold{F}_{2,1}$ as $\{(e^{2\pi i t}, \mathbold{f}_{2,1}(t)), t\in [0,k(2,1)]\}$. Then $\mathbold{F}_{2,1}\boxplus \varepsilon_2 \widetilde{\mathbold{F}^{2,1}}^{k(2,1)}$ is the closed geometric fat braid defined by
\begin{equation}\nonumber
\Big\{ \big(e^{2\pi it}, \mathbold{f}_{2,1}(t)\boxplus  \varepsilon_2\,
\widetilde{\mathbold{f}^{2,1}}^{k(2,1)}(t)\big),\, t \in [0,k(2,1)]\Big\}\,.
\end{equation}
The closed geometric fat braid \eqref{eq7a.13'} is defined by
\begin{equation}\label{eq7a.14''}
\Big\{ \big(e^{2\pi it}, \widetilde{\mathbold{f}^{1,1}}^1(t)\sqcup _{\mathbold{f}_{2,1}(t) }\mathbold{f}_{2,1}(t)\boxplus  \varepsilon_2\, \widetilde{\mathbold{f}^{2,1}}^{k(2,1)}(t)\big),\, t \in [0,k(2,1)]\Big\}\,.
\end{equation}
By the choice of $\varepsilon_2$  Lemma \ref{lem7a.2} applies to the pair of closed geometric fat braids ($\widetilde{\mathbold{G}^{1,1}}^1$, $\widetilde{\mathbold{F}^{1,1}}^1$) with chosen
 pair of fat loops  ($\mathbold{\Gamma}_{2,1}$, $\mathbold{F}_{2,1}$), and the pair of closed geometric  fat braids  ($\widetilde{\mathbold{G}^{2,1}}^{k(2,1)}$,   $\mathbold{F}_{2,1}\boxplus \varepsilon_2 \widetilde{\mathbold{F}^{2,1}}^{k(2,1)}$).
Hence, the closed geometric fat braid \eqref{eq7a.13'} is free isotopic to the closed geometric fat braid \eqref{eq7a.14b} 
or equivalently to  \eqref{eq7a.14a}.

By induction on the cycles of generation $2$ the closed geometric fat braid
\begin{equation}\label{eq7a.16}
\widetilde{\mathbold{F}^{1,1}}^1\sqcup_{\mathbold{F}_{2,1} }(\mathbold{F}_{2,1}\boxplus \varepsilon_2  \widetilde{\mathbold{F}^{2,1}}^{k(2,1)})\sqcup \ldots\sqcup_{\mathbold{F}_{2,k_2} }(\mathbold{F}_{2,k_2}\boxplus \varepsilon_2  \widetilde{\mathbold{F}^{2,k_2}}^{k(2,k_2)})
\end{equation}
is free isotopic to  \eqref{eq7a.12}.

By induction over the number of the 
generation $\ell=1,\ldots,N,$ 
we find
small enough numbers $\varepsilon_{\ell+1}$ and put suitable representatives
$\varepsilon_{\ell+1}\widetilde{\mathbold{F}^{\ell+1,i' }}^{\ell+1,i'}$
of $\reallywidehat{\mathbold{B}(\ell+1,i')}$ into the $\varepsilon_{\ell}$-neighbourhoods of
the fat loops $\mathbold{F}_{ \ell+1,i'}$ that correspond to the cycles of holes of $Q_{\ell}$.
For $\ell=N$ we arrive at
the closed geometric fat braid
\begin{align}\label{eq7a.60}
\widetilde{\mathbold{F}^{1,1}}^1\sqcup_{\mathbold{F}_{2,1} }(\mathbold{F}_{2,1}\boxplus \varepsilon_2  \widetilde{\mathbold{F}^{2,1}}^{k(2,1)})\sqcup \ldots\sqcup_{\mathbold{F}_{2,k_2} }(\mathbold{F}_{2,k_2}\boxplus \varepsilon_2  \widetilde{\mathbold{F}^{2,k_2}}^{k(2,k_2)}) \nonumber\\
\sqcup \ldots\sqcup_{\mathbold{F}_{N,1}}(\mathbold{F}_{N,1}\boxplus \varepsilon_N \widetilde{\mathbold{F}^{N,1}}^{k(N,1)})
\sqcup \ldots \sqcup_{\mathbold{F}_{N,k_N}}(\mathbold{F}_{N,k_N}\boxplus
\varepsilon_N \widetilde{\mathbold{F}^{N,1}}^{k(N,k_N)})\,.
\end{align}
An inductive application of Lemma \ref{lem7a.2} shows, that the closed geometric fat braid \eqref{eq7a.60} is an ordinary closed braid and is free isotopic to the closed braid \eqref{eq7a.12''} which is equal to $G$.
The recovery procedure is described.

\section[Proof of  Main Theorem for reducible braids. General
case] {Proof of  the Main Theorem for reducible braids. The general
case}\label{sec:7a.4}
The following lemma describes the entropy of the nodal conjugacy class $\reallywidehat{{\mathfrak m}_{b,\odot}}$, that is associated to $\mathfrak{m}_{b,\infty}$ and an admissible system of curves $\mathcal{C}\subset \mathbb{D}$ that completely reduces $\mathfrak{m}_{b,\infty}$,  in terms of the entropy of irreducible nodal components.

Recall that we identify mapping classes on (possibly not connetced) punctured Riemann surfaces (or on punctured nodal surfaces) with mapping classes on (possibly not connected) closed Riemann surfaces (or on closed nodal surfaces) with distinguished points. If for notational convenience we write $h(\mathfrak{m})$ for a mapping class $\mathfrak{m}$ on a punctured surface, we mean the entropy of the respective mapping class on the closed surface with distinguished points. Similarly, we call a self-homeomorphism of a
punctured Riemann surface non-periodic absolutely extremal if its extension to the closed Riemann surface with the respective set of  distinguished points  is so.

\begin{lemm}\label{lemm7.3a}
\begin{align}
h \left( \reallywidehat{{\mathfrak m}_{b,\odot}} \right)=
\max_{{\rm cyc}^{\ell,i}_\odot} \,
h \left( \reallywidehat{{\mathfrak m}_{b,\odot} \mid
{{\rm cyc}^{\ell,i}_{\odot}}} \right) =
\max_{{\rm cyc}^{\ell,i}_\odot} \,
\frac1{k(\ell,i)} \, h \left(\reallywidehat{{\mathfrak
m}_{b^{k(\ell,i)},\odot} \mid {Y_1^{\ell,i}}} \right) \, .
\end{align}
\end{lemm}
\noindent {\bf Proof.} The first equality is an easy consequence of Theorem~4 of \cite{AKM}.

To obtain the second equality it is enough to prove that the equation
\begin{align}\label{7a.20'}
h \left( \reallywidehat{{\mathfrak m}_{b,\odot} \mid
{{\rm cyc}^{\ell,i}_{\odot}}} \right) =
\frac1{k(\ell,i)} \, h \left(\reallywidehat{{\mathfrak
m}_{b^{k(\ell,i)},\odot} \mid {Y_1^{\ell,i}}} \right)\,
\end{align}
holds for each nodal cycle ${\rm cyc}^{\ell,i}_{\odot}$. 
For each mapping $\varphi_{\odot}$ representing the class
$\reallywidehat{{\mathfrak m}_{b,\odot} \mid {{\rm cyc}^{\ell,i}_{\odot}}}$ the equality $h(\varphi_{\odot}^{k(\ell,i)})=k(\ell,i)h (\varphi_{\odot})$ holds. Since $\varphi_{\odot}^{k(\ell,i)}$ fixes each $Y^{\ell,i}_j$ and for each $j$ the mapping
\begin{align}\nonumber
\varphi_{\odot}^{k(\ell,i)}\mid Y^{\ell,i}_j=(\varphi_{\odot}^{j-1}\mid Y^{\ell,i}_1)\circ(\varphi_{\odot}^{k(\ell,i)}\mid Y^{\ell,i}_1) \circ (\varphi_{\odot}^{j-1}\mid Y^{\ell,i}_1)^{-1}
\end{align}
is a conjugate of $\varphi_{\odot}^{k(\ell,i)}\mid Y^{\ell,i}_1$,
the equality
\begin{equation}\label{eq7a.61}
h(\varphi_{\odot}^{k(\ell,i)}\mid Y^{\ell,i}_1)=k(\ell,i) h( \varphi_{\odot})
\end{equation}
holds. The entropy $ h \left( \reallywidehat{{\mathfrak m}_{b,\odot} \mid
{{\rm cyc}^{\ell,i}_{\odot}}} \right)$ is equal to the infimum
\begin{align}\label{eq7a.62}
h \left( \reallywidehat{{\mathfrak m}_{b,\odot} \mid
{{\rm cyc}^{\ell,i}_{\odot}}} \right)& = \inf\{h\big(\varphi_{\odot}\big):\,\varphi_{\odot} \in \reallywidehat{{\mathfrak m}_{b,\odot} \mid
{{\rm cyc}^{\ell,i}_{\odot}}} \}\nonumber\\
& =  \inf\{ \frac{1}{k(\ell,i)}\,\{h\left(\varphi_{\odot}^{k(\ell,i)}\mid Y^{\ell,i}_1\right):\,
\varphi_{\odot}\in  \reallywidehat{{\mathfrak m}_{b,\odot} \mid
{{\rm cyc}^{\ell,i}_{\odot}}}\} \,.
\end{align}
On the other hand,
\begin{equation}\label{eq7a.63}
h(\reallywidehat{{\mathfrak
m}_{b^{k(\ell,i)},\odot} \mid {Y_1^{\ell,i}}})= \inf\{ h(\psi):\, \psi \in\reallywidehat{{\mathfrak
m}_{b^{k(\ell,i)},\odot} \mid {Y_1^{\ell,i}}}\}\,.
\end{equation}
By Lemma \ref{lemm6.5} the inclusion $ \psi \in\reallywidehat{{\mathfrak
m}_{b^{k(\ell,i)},\odot} \mid {Y_1^{\ell,i}}}$ holds if and only if
$\psi=\varphi_{\odot}^{k(\ell,i)} \mid Y_1^{\ell,i}$ for a mapping
$\varphi_{\odot} \in \reallywidehat{{\mathfrak m}_{b,\odot} \mid
{{\rm cyc}^{\ell,i}_{\odot}}} $. 
The lemma is proved. \hfill $\Box$

\bigskip

By Lemma \ref{lemm7.3a} the remaining point of the following theorem is to relate the entropy
of a mapping class on a surface to the entropy of its nodal mapping class.

\begin{thm}\label{thm7.1} For an arbitrary braid the following equality holds
\begin{equation}\label{eq7a.70}
h\left(\widehat{{\mathfrak m}_b} \right) = h(\reallywidehat{\mathfrak{m}_{b,\odot}})=\max_{{\rm
cyc}^{\ell,i}_{\odot}} \left( h \left(\reallywidehat{{\mathfrak
m}_{b^{k(\ell,i)},\odot} \mid {Y_1^{\ell,i}}}\right) \cdot
\frac1{k(\ell,i)} \right) \, .
\end{equation}
Moreover, there exists a mapping with this entropy that represents $\widehat{{\mathfrak m}_b}$.
\end{thm}

We split the proof of Theorem \ref{thm7.1} into two parts. The
first part is the lower bound for the entropy of the braid.
\begin{lemm}\label{lemm7.3}
$$
h \left( \widehat{{\mathfrak m}_{b}} \right)\geq
\max_{{\rm cyc}^{\ell,i}_{\odot}} \left( h \left(
\reallywidehat{{\mathfrak m}_{b^{k(\ell,i)},\odot} \mid
{Y_1^{\ell,i}}}\right) \cdot \frac1{k(\ell,i)} \right) \, .
$$
\end{lemm}
\bigskip

{\bf Proof.} Let $N$ be the smallest natural
number for which $b^N$ is a pure braid. Notice that
\begin{equation}
\label{eq7.43} h({\mathfrak m}_{b^N}) \leq N \cdot h ({\mathfrak
m}_b) \, .
\end{equation}
Indeed, for a chosen homeomorphism $\varphi_b \in {\mathfrak m}_b$
\begin{equation}
\label{eq7.44} h({\mathfrak m}_{b^N}) = {\rm inf} \left\{ h(\varphi)
: \varphi \ \mbox{is  ${\rm Hom}^+(\overline{\mathbb D} ;
\partial \, {\mathbb D} , E)$-isotopic to $\varphi_b^N$}
\right\} \, ,
\end{equation}
and
\begin{equation}
\label{eq7.45} N\cdot h({\mathfrak m}_b)\, = \, {\rm inf} \left\{h( \psi^N)= N
\cdot h(\psi) : \psi \ \mbox{is  ${\rm Hom}^+  (\overline{\mathbb D} ; \partial \, {\mathbb D}
, E)$-isotopic to $\varphi_b$}   \right\} \, .
\end{equation}
Since the infimum in \eqref{eq7.45} is taken over a smaller class
than in \eqref{eq7.44}, we obtain \eqref{eq7.43}.

Suppose ${\rm cyc}^{\ell,i}_{b,\odot}=\big({Y_1^{\ell,i}},\ldots, {Y_{k(\ell,i)}^{\ell,i}}\big)$ is a nodal cycle on the
nodal surface $Y$ such that the mapping class
$\reallywidehat{{\mathfrak m}_{b^{k(\ell,i)},\odot} \mid
{Y_1^{\ell,i}}}$ is pseudo-Anosov, i.e. there exists a
self-homeomorphism of $Y_1^{\ell,i}$ that represents $b^{k(\ell,i)}$ and is conjugate to a non-periodic absolutely extremal mapping $\tilde\varphi^{\ell,i}$. 
Any power of $\tilde\varphi^{\ell,i}$ is again pseudo-Anosov. Hence
the class $\reallywidehat{{\mathfrak m}_{b^N,\odot} \mid
{Y_1^{\ell,i}}}$ is pseudo-Anosov, since $N$ is divisible
by $k(\ell,i)$, i.e.
\begin{equation}\label{eq7.46a}
N = k(\ell,i) \cdot m (\ell,i)
\end{equation}
for an natural number $m (\ell,i)$
Moreover,
\begin{equation}
\label{eq7.46} h \left(\reallywidehat{{\mathfrak
m}_{b^N,\odot} \mid {Y_1^{\ell,i}}} \right) \, = \, h
\left( (\tilde\varphi^{\ell,i})^{m(\ell,i)} \right) = m(\ell,i) \,
h(\tilde \varphi^{\ell,i})
\end{equation}
since $(\tilde \varphi^{\ell,i})^{m(\ell,i)}$ represents
${\mathfrak m}_{b^N,\odot} \mid
{Y_1^{\ell,i}}$. We obtained
\begin{equation}
\label{eq7.47} h \left(\reallywidehat{{\mathfrak
m}_{b^N,\odot} \mid {Y_1^{\ell,i}}} \right) \, = \,
m(\ell,i) \, h \left( \reallywidehat{{\mathfrak
m}_{b^{k(\ell,i)},\odot} \mid {Y_1^{\ell,i}}} \right) \, .
\end{equation}
By Proposition \ref{prop7.2} for the pure braid $b^N$ the inequality $h(\reallywidehat{\mathfrak{m}_{b^N}})\geq
h(\reallywidehat{\mathfrak{m}_{b^N,\odot}\mid {Y_1^{\ell,i}}})$
holds.

By \eqref{eq7.43} and  \eqref{eq7.47} we obtain the inequality
\begin{equation}
\label{eq7.48} N\cdot h(\mathfrak{m}_b)\, \geq \, h\left(
\reallywidehat{{\mathfrak m}_{b^N}} \right) \, \geq \,  h \left(
\reallywidehat{{\mathfrak m}_{b^N,\odot} \mid
{Y_1^{\ell,i}}} \right) \, = \,  m(\ell,i) \, h \left(
\reallywidehat{{\mathfrak m}_{b^{k(\ell,i)},\odot} \mid
{Y_1^{\ell,i}}} \right) \,
\end{equation}
for all $\ell$ and $i$ for which ${\mathfrak m}_{b^{k(\ell,i)},\odot} \mid{Y_1^{\ell,i}}$ is pseudo-Anosov. 
Hence, by \eqref{eq7.46a}
\begin{equation}
\label{eq7.48a}
h(\mathfrak{m}_b)\, \geq \, \frac{1}{k(\ell,i)} h \left(
\reallywidehat{{\mathfrak m}_{b^{k(\ell,i)},\odot} \mid
{Y_1^{\ell,i}}} \right)
\end{equation}
if  ${\mathfrak m}_{b^{k(\ell,i)},\odot} \mid{Y_1^{\ell,i}}$ is pseudo-Anosov.

If $\mathfrak{m}_{b^N,\odot}\mid {Y_1^{\ell,i}}$ is not pseudo-Anosov, it is elliptic, since it is irreducible. In this case the entropy equals zero and inequality \eqref{eq7.48a} is trivially satisfied.
Lemma \ref{lemm7.3}  is
proved. \hfill $\Box$

\bigskip

The remaining inequality that is needed for the proof of Theorem \ref{thm7.1} is stated in the following lemma.

\begin{lemm}\label{lemm7.4} The following inequality holds
$$
h \left( \widehat{{\mathfrak m}_{b}} \right)\leq
\max_{(\ell,i)} \left( h \left(
\reallywidehat{{\mathfrak m}_{b^{k(\ell,i)},\odot} \mid
{Y_1^{\ell,i}}}\right) \cdot \frac1{k(\ell,i)} \right) \, .
$$
\end{lemm}

\medskip

\noindent {\bf Proof of Lemma \ref{lemm7.4}.} The goal is to represent $\mathfrak{m}_b$ by an element $\varphi\in {\rm Hom}^+(\mathbb{P}^1; {\infty},E_n)$ that maps the curve system $\mathcal{C} \subset \mathbb{D}$ onto itself and for each $\ell$ and $i$ the restriction $\varphi^{k(\ell,i)}|S_1^{\ell,i}$  has entropy equal to $h(\reallywidehat{{\mathfrak m}_{b^{k(\ell,i)},\odot} \mid {Y^{\ell,i}}})$, equivalently, the restriction $\varphi| {\rm cyc}_0^{\ell,i}$ has entropy $\frac{1}{k(\ell,i)}h(\reallywidehat{{\mathfrak m}_{b^{k(\ell,i)},\odot} \mid {Y^{\ell,i}}})$. We will use Theorem \ref{thm3.6}.

Suppose $\varphi_{b,\infty}\in {\rm Hom}^+(\mathbb{P}^1;\mathbb{P}^1\setminus \mathbb{D}, E_n)$ represents $\mathfrak{m}_{b,\infty}$ and maps the admissible set of curves $\mathcal{C}$ onto itself.
After a ${\rm Hom}^+(\mathbb{P}^1;\mathbb{P}^1\setminus \mathbb{D}, E_n)$-isotopy of $\varphi_{b,\infty}$ that changes the values only in a small neighbourhood 
 of $\mathcal{C}$
(see Lemma \ref{lem7a.1}) we may assume that for each $\ell >1$ and $i=1,\ldots k_{\ell}$ the iterate $\varphi_{b,\infty}^{k(\ell,i)}$ equals the identity on a small neighbourhood of the exterior boundary  $\partial_{\mathfrak{E}}S_1^{\ell,i}$ of the first labeled set of each cycle ${\rm cyc}_0^{\ell,i}$.
Here $k(\ell,i)$ is the length of the cycle. For uniformity of notation we write $S_1^{1,1}$ for  $S^{1,1}$.

\noindent {\bf A representative $\,{\psi}^{\ell,i}_{\circledcirc}\,$  of the isotopy class of $\,\varphi_{b,\infty}^{k(\ell,i)}\mid S_1^{\ell,i}\setminus E_n'$\,.}
By Corollary \ref{corr2.1} for each $(\ell,i)$ the class $\reallywidehat{\mathfrak{m}_{b^{k(\ell,i)},\odot}|Y^{\ell,i}}$ can be represented by an absolutely extremal self-homeomorphism $\tilde{\psi}_{\odot}^{\ell,i}$ 
of a punctured Riemann surface ${\tilde{Y}^{\ell,i}}$.  The mapping $\tilde{\psi}_{\odot}^{\ell,i}$ has the following properties.
$\tilde{\psi}_{\odot}^{\ell,i}$ minimizes the entropy in the class $\reallywidehat{\mathfrak{m}_{b^{k(\ell,i)},\odot}|Y^{\ell,i}}$.
Moreover, the continuous extension of  $\tilde{\psi}_{\odot}^{\ell,i}$ to the closed Riemann surface $({\tilde{Y}^{\ell,i}})^c$ (also denoted by  $\tilde{\psi}^{\ell,i}_{\odot}$) fixes the node corresponding to the exterior boundary component of $S_1^{\ell,i}$ (in the case of $S^{1,1}_1$ it fixes the node corresponding to $\infty$), and it 
moves the other nodes in $\mathcal{N}\cap ({\tilde{Y}_1^{\ell,i}})^c$
of $({\tilde{Y}_1^{\ell,i}})^c$ along cycles corresponding to the $\varphi_{b^{k(\ell,i)},\,\infty}$-cycles of the connected components of the boundary $\partial S_1^{\ell,i}$.

If the continuous extension of the homeomorphism $\tilde{\psi}^{\ell,i}_{\odot}$ is pseudo-Anosov, then
by Theorem \ref{thm3.6} there exists a self-homeomorphism $\tilde{\psi}^{\ell,i}_{\circledcirc}$ of $({{\tilde{Y}_1^{\ell,i}}})^c$ that is isotopic to $\tilde{\psi}^{\ell,i}_{\odot}$ by an isotopy that changes the mapping only in a punctured neighbourhood of the nodes, that has the same entropy as $\tilde{\psi}^{\ell,i}_{\odot}$, and has the following properties.

$\tilde{\psi}^{\ell,i}_{\circledcirc}$ fixes pointwise a topological disc around the node corresponding to the exterior boundary of
$({{\tilde{Y}_1^{\ell,i}}})^c$.
For each $\tilde{\psi}^{\ell,i}_{\odot}$-cycle 
of nodes there is a $\tilde{\psi}^{\ell,i}_{\circledcirc}$-cycle of closed topological discs around the nodes of 
the cycle, and, for the length of the cycle being equal to $k\geq 1$, the iterate $(\tilde{\psi}^{\ell,i}_{\circledcirc})^{k}$ fixes each disc pointwise.

If the continuous extension of $\tilde{\psi}^{\ell,i}_{\odot}$ is a periodic self-mapping of the Riemann sphere $({\tilde{Y}_1^{\ell,i}})^c$ with distinguished points, then for each cycle of distinguished points of length $k$ the extension of $\tilde{\psi}^{\ell,i}_{\odot}$ itself maps a collection of topological discs around the distinguished points along a cycle of length $k$, and the restriction of the extension of $(\tilde{\psi}^{\ell,i}_{\odot})^k$ to each of the discs is the identity.

Let ${\mathring{Y}_1^{\ell,i}}$ denote the complement in $\tilde{Y}_1^{\ell,i}$ of the collection of all respective open discs. Notice that ${\mathring{Y}_1^{\ell,i}}$ is a bordered surface. There is a homeomorphism $w^{\ell,i}_1: { \mathring{Y}^{\ell,i}_1} \to  \overline{ S^{\ell,i}_1}\setminus E'_n $, for which
the conjugate  ${\psi}^{\ell,i}_{\circledcirc}\stackrel{def}=(w^{\ell,i}_1)\circ (\tilde{\psi}^{\ell,i}_{\odot}\mid {\mathring{Y}_1^{\ell,i}})  \circ (w^{\ell,i}_1)^{-1}$ is a self-homeomorphism of $ \overline{ S^{\ell,i}_1}\setminus {E'}^{\ell,i}_1$, whose restriction to  $S^{\ell,i}_1\setminus {E'}^{\ell,i}_1$ is isotopic to $\varphi^{k(\ell,i)}_{b,\infty}|S^{\ell,i}_1\setminus {E'}^{\ell,i}_1$. Moreover,  ${\psi}^{\ell,i}_{\circledcirc}\mid S^{\ell,i}_1\setminus {E'}^{\ell,i}_1 $ 
also minimizes the entropy among mappings representing $\reallywidehat{{\mathfrak m}_{b^{k(\ell,i)},\odot} \mid
{Y_1^{\ell,i}}}$. In other words,
\begin{align}\label{eq7a.100}
h({\psi}^{\ell,i}_{\circledcirc})=h(\reallywidehat{{\mathfrak m}_{b^{k(\ell,i)},\odot} \mid
{Y_1^{\ell,i}}})\,.
\end{align}
The homeomorphism ${\psi}^{\ell,i}_{\circledcirc}$  is equal to the identity at all points of $\overline{S^{\ell,i}_1}$ that are close to the exterior boundary $\partial_{\mathfrak{E}}S^{\ell,i}_1$ (in a neighbourhood of $\mathbb{P}^1\setminus \mathbb{D}$ 
in case of $S^{1,1}_1$). Moreover, it moves the interior boundary components along  ${\psi}^{\ell,i}_{\circledcirc}$-cycles. If the length of a  ${\psi}^{\ell,i}_{\circledcirc}$-cycle equals $k$ then $({\psi}^{\ell,i}_{\circledcirc})^k$ is the identity near each boundary component of the cycle.
\index{$\psi^{\ell,i}_{\circledcirc}$}

\noindent {\bf The construction of the entropy minimizing element of $\widehat{\mathfrak{m}_b}$.}
\index{$Q_{l}$}
Recall that for $l=1,\ldots,N$ we denoted by $\overline{Q_{l}}$ the following closed subset of $\mathbb{P}^1$ (see \eqref{eq6.0})
$$
\overline{Q_{l}}=\bigcup_{l'=1,\ldots,l}\;\bigcup_{i=1,\ldots,k_{l'}}\;\bigcup_{j=1,\ldots,k(l',i)} \overline{S_j^{l',i}}.
$$
 $\overline{Q_{l}}$ is the closure of a domain $Q_l$ in $\mathbb{P}^1$. Notice that $Q_1= S^{1,1}_1$.

By induction on $l$ we will find a self-homeomorphism $\varphi_{Q_l}$  of $\overline{Q_{l}}$ with distinguished points $E_n'\cap Q_l$  and a self-homeomorphism $\varphi_{b,\infty,l}$ of $\mathbb{P}^1$ for which the following requirements hold.
The entropy of the self-homeomorphism $\varphi_{Q_l}$ equals
$\max_{(l',i):l'\leq l, i\leq k_{l'}} \big( h ({\reallywidehat{\mathfrak{m}_{b^{k(l',i)},\odot} \mid
{Y_1^{l',i}}}}) \cdot \frac1{k(l',i)} \big)$.
The self-homeomorphism $\varphi_{b,\infty,l}$ of $\mathbb{P}^1$
is ${\rm Hom}^+(\mathbb{P}^1;{\infty},{E}_n)$-isotopic to $\varphi_{b,\infty}$ by an isotopy that changes  $\varphi_{b,\infty}$ only in a small neighbourhood of the interior boundary of the $Q_{l'}$ with $l'\leq l$ and coincides with $\varphi_{Q_l}$
on $\partial Q_l$,such that the following holds. 
The homeomorphism $\varphi_{Q_{l}}$
is isotopic to $\varphi_{b,\infty,l}\mid \overline{Q_l}$ by an isotopy
that does not change the mapping in a small neighbourhood of $\partial Q_l \cup (E'_n\cap Q_l)$ and on $\mathbb{P}^1\setminus \mathbb{D}$ (in other words, 
the mapping $\varphi_{Q_{l}}\circ (\varphi_{b,\infty,l}\mid \overline{Q_l})^{-1}$ is ${\rm Hom}\big(\overline{Q_l}; (E_n'\cap Q_l)\cup \partial Q_l\cup (\mathbb{P}^1\setminus \mathbb{D})\big)$-isotopic 
to the identity on $ \overline{Q_l}$. Moreover, 
$\varphi_{Q_{l}}$ equals the identity on $\mathbb{P}^1\setminus \mathbb{D}$ 
and moves the boundary components of $Q_l$ along cycles so that for each $\varphi_{Q_{l}}$-cycle of length $k$ the homeomorphism $\varphi_{Q_{l}}^k$ is the identity near each component of the boundary belonging to the cycle. 

Let first $l=1$, i.e. $Q_1={S_1^{1,1}}$. Both mappings, $\varphi_{b,\infty}|{S_1^{1,1}}$ and ${\psi}^{1,1}_{\circledcirc} |{S_1^{1,1}}$ 
represent
$\reallywidehat{{\mathfrak m}_{b,\odot} \mid {Y_1^{1,1}}}$. Both mappings are equal to the identity on $\mathbb{P}^1\setminus \mathbb{D}$. 
Take any ${\psi}^{1,1}_{\circledcirc}$-cycle of interior boundary components of
${S_1^{1,1}}$. Let $k$ be the length of the cycle. Then
the mapping $({\psi}^{1,1}_{\circledcirc})^k$ equals the identity on each boundary
component of the cycle (and on a small neighbourhood of it in $\overline{S_1^{1,1}}$). Moreover, $h({\psi}^{1,1}_{\circledcirc})=h(\reallywidehat{\mathfrak{m}_{b^{k(1,1)},\odot}\mid Y^{1,1}_1)}$.

Consider a $ {\rm Hom}^+(\mathbb{P}^1;(\mathbb{P}^1\setminus \mathbb{D}), E_n)$-isotopy of $\varphi_{b,\infty}$, which changes the mapping only in a small annulus around the interior boundary components of $S_1^{1,1}$,so that the new self-homeomorphism $\varphi_{b,\infty,1}$ of $\mathbb{P}^1$ with distinguished points $ E_n'$ coincides with 
${\psi}^{1,1}_{\circledcirc}$ 
on $\mathbb{P}^1\setminus \mathbb{D}$ and on all boundary components of $S_1^{1,1}$. 
Then the mapping ${\psi}^{1,1}_{\circledcirc}\circ\varphi_{b,\infty,1}^{-1} $ is the
identity on $\mathbb{P}^1\setminus \mathbb{D}$ and on each boundary component of $S_1^{1,1}$. Since
both mappings, $\varphi_{b,\infty}|S_1^{1,1}$ and ${\psi}^{1,1}_{\circledcirc}|S_1^{1,1}$, represent $\reallywidehat{\mathfrak{m}_{b,\odot}\mid Y^{1,1}}$,
the isotopy classes of their continuous extensions to  $\overline{S_1^{1,1}}$ with distinguished points 
differ by a product of commuting Dehn twists around curves that are homologous to the boundary circles of $S_1^{1,1}$ or to  $\partial \mathbb{D}$. Lemma \ref{lemm3.11}
allows to change the isotopy class of ${\psi}^{1,1}_{\circledcirc}$ by any power of a Dehn twist around the circle  $\partial \mathbb{D}$ 
by changing  ${\psi}^{1,1}_{\circledcirc}$ in a small neighbourhood of this circle without increasing entropy.
The isotopy class of 
$\varphi_{b,\infty,1}\mid \overline{S_1^{1,1}}$ can be changed by any product of Dehn twists around curves that are homologous to the interior boundary circles of $S_1^{1,1}$ by an isotopy of $\varphi_{b,\infty,1}$ on $\mathbb{P}^1$ 
that does not change the mapping outside small neighbourhoods of the interior boundary of $S_1^{1,1}$.
After changing  $\varphi_{b,\infty,1}$ and ${\psi}^{1,1}_{\circledcirc}$ in this way and keeping previous notation, we may assume that 
${\psi}^{1,1}_{\circledcirc}\circ (\varphi_{b,\infty,1}|\overline{S_1^{1,1}})$
is ${\rm Hom}^+(\overline{S_1^{1,1}}; (S_1^{1,1}\cap{E'_n}^{1,1}) \cup
(\mathbb{P}^1\setminus \mathbb{D})
\cup \partial{\overline{S_1^{1,1}}})$-isotopic
to the identity. 
Thus for $\overline{Q_1}\stackrel{def}= \overline{S^{1,1}}$ the homeomorphisms $\varphi_{Q_1}\stackrel{def}={\psi}^{1,1}_{\circledcirc}$ and the homeomorphism $\varphi_{b,\infty,1}$ that was constructed above satisfy the requirements. 

Suppose for some $l\leq N-1$ we found  a self-homeomorphisms $\varphi_{Q_l}$with distinguished points $E_n'\cap Q_l$  and  a self-homeomorphism $\varphi_{b,\infty,l}$
of $\mathbb{P}^1$ with  distinguished points $E_n'$ that satisfy the requirements.
We have to find these objects for the number $l+1$.
\index{$\varphi_{Q_{l}}$}     \index{$\varphi_{b,\infty,l}$}
\index{$\chi^{Q_l}_t$} \index{$k_l$}

The boundary components of $Q_l$ are exactly the curves in $\mathcal{C}$ that are of generation $l+1$. Each such curve 
is the exterior boundary of a set $S^{l+1,i}_j$.
Take any cycle  $\varphi_{b,\infty,l}$-cycle ${\rm cyc}_0^{l+1,i}=({S_1^{l+1,i}},\ldots,S_{k(l+1,i)}^{l+1,i})$ of length $k(l+1,i)$, $1\leq i\leq k_{l+1}$. 
We consider the set $\overline{Q_{l,i}}\stackrel{def}=\overline{Q_l}\cup  \overline{S^{l+1,i}_1}\cup\ldots\cup \overline{S^{l+1,i}_{k(l+1,i)}}$ instead of $\overline{Q_{l+1}}$, and will associate to  $\overline{Q_{l,i}}$ a self-homeomorphism $\varphi_{Q_{l,i}}$ of $\overline{Q_{l,i}}$ and a homeomorphism $ \varphi_{b,\infty,l,i}\in {\rm Hom}^+(\mathbb{P}^1; \mathbb{P}^1\setminus\mathbb{D}, \mathcal{C}\cup E_n')$.

On $\overline{Q_l}$ we put $\varphi_{Q_{l,i}}=\varphi_{Q_{l}} $ and $\varphi_{b,\infty,l,i}= \varphi_{b,\infty,l}$ .
On each $\overline{S^{l+1,i}_j}$ with $j=2,\ldots, k(l+1,i)$ we put
$\varphi_{Q_{l,i}}  = \varphi_{b,\infty,l}$ and $\varphi_{b,\infty,l,i}=\varphi_{b,\infty,l}$.

To define the two mappings on $\overline{S_1^{l+1,i}}$ we consider the self-homeomorphism
${\psi}^{l+1,i}_{\circledcirc}$ of $\overline{S_1^{l+1,i}}$ with distinguished points $E_n'\cap S_1^{l+1,i}$ constructed in the first part of the proof. 
Recall that the mapping ${\psi}^{l+1,i}_{\circledcirc}$ is an entropy minimizing representative of $\reallywidehat{\mathfrak{m}_{b^{k(l+1,i)},\odot}\mid Y^{l+1,i}_1}$, i.e.
$h({\psi}^{l+1,i}_{\circledcirc})=h(\reallywidehat{{\mathfrak m}_{b^{k(l+1,i)},\odot} \mid
{Y_1^{l+1,i}}})$. 
Moreover, the homeomorphism ${\psi}^{l+1,i}_{\circledcirc}$  is equal to the identity at all points of $\overline{S^{l+1,i}_1}$ that are close to the exterior boundary $\partial_{\mathfrak{E}}S^{l+1,i}_1$, and moves the interior boundary components along  ${\psi}^{l+1,i}_{\circledcirc}$-cycles. If the length of a  ${\psi}^{l+1,i}_{\circledcirc}$-cycle equals $k$ then $({\psi}^{l+1,i}_{\circledcirc})^k$ is the identity near each boundary component of the cycle.

Define
\begin{align}\label{eq7a.32}
\varphi_{Q_{l,i}}\stackrel{def}= (\varphi_{b,\infty,l})^{-k(l+1,i)+1}\circ {\psi}^{l+1,i}_{\circledcirc}
\end{align}
on $\overline{S_1^{l+1,i}}$. On the exterior boundary $\partial_{\mathfrak{E}} S_1^{l+1,i}$ the mapping ${\psi}^{l+1,i}_{\circledcirc}$ is equal to the identity, and also   $(\varphi_{b,\infty,l})^{k(l+1,i)}$ is the identity there.
Hence, $\varphi_{Q_{l,i}}= \varphi_{b,\infty,l} $ on   $\partial_{\mathfrak{E}} S_1^{l+1,i}$. Hence, by the induction hypothesis $\varphi_{Q_{l,i}}= \varphi_{Q_l}$ on   $\partial_{\mathfrak{E}} S_1^{l+1,i}$, and  $\varphi_{Q_{l,i}}$ is a well-defined self-homeomorphism of  $Q_{l,i}$.

Since the self-homeomorphism   ${\psi}^{l+1,i}_{\circledcirc}\mid S_1^{l+1,i}\setminus E_n'$ is isotopic to  $ \varphi_{b,\infty,l}^{k(l+1,i)} \mid S_1^{l+1,i}\setminus E_n'   $,
the two mappings that are defined on $\cup_{j=1}^{k(l+1,i)} S_j^{l+1,i}\setminus E_n'$ by $\varphi_ {Q_{l,i}}$, and $ \varphi_{b,\infty,l}$ respectively, represent $\mathfrak{m}_b \mid \cup_{j=1}^{k(l+1,i)} S_j^{l+1,i}\setminus E_n'$. This follows from the Lemma on Conjugation (see Appendix \ref{ChapterB}).

Consider a  ${\rm Hom}^+(\mathbb{P}^1;{\infty},{E}_n)$-isotopy of the self-homeomorphism $\varphi_{b,\infty,l}$ of $\mathbb{P}^1$, which changes the mappings only in a small neighbourhood of the interior boundary components of $S_1^{l+1,i}$, so that for the new self-homeomorphism $\varphi_{b,\infty,l,i}$ of $\mathbb{P}^1$ with set of distinguished points $E_n'$ the restriction of the iterate $\varphi_{b,\infty,l,i}^{k(l+1,i)}\mid \overline{S_1^{l+1,i}}$
coincides with ${\psi}^{l+1,i}_{\circledcirc}$ 
on a small neighbourhood in $\overline{S_1^{l+1,i}}$ of all boundary components of $S_1^{l+1,i}$. 
Equivalently, the restriction $\varphi_{b,\infty,l,i}\mid
\overline{S_1^{l+1,i}}$
coincides with 
$\varphi_{b,\infty,l,i}^{-k(l+1,i)+1}\circ {\psi}^{l+1,i}_{\circledcirc}\mid
\partial_{\mathfrak{E}}S^{l+1,i}_1$ near  all interior boundary
components of $S_1^{l+1,i}$. 
If necessary, we may change ${\psi}^{l+1,i}_{\circledcirc}$  without increasing entropy by a power of a Dehn twist about a circle that is homologous to the exterior boundary $\partial_{\mathfrak{E}}S^{l+1,i}_1$ 
of $\overline{S^{l+1,i}_1}$, and build the  mapping  $\varphi_{Q_{l,i}} $ from the new ${\psi}^{l+1,i}_{\circledcirc}$ (see equation \eqref{eq7a.32}). We use the previous names for the changed objects. 
In the result of the changes both mappings, the self-homeomorphism $\varphi_{Q_{l,i}}$
of $\overline{\Omega^{l+1,i}}\stackrel{def}=\big(\overline{S^{l+1,i}_1}\cup\ldots\cup \overline{S^{l+1,i}_{k(l+1,i)}}\big)$ 
and  the mapping $\varphi_{b,\infty,l,i}\mid \overline{ \Omega^{l+1,i}}$ 
are isotopic on  $\overline{\Omega^{l+1,i}}$ 
with set of distinguished points $ E_n'\cap \overline{\Omega^{l+1,i}}$
by an isotopy that does not change the mappings on the boundary of the set. 
For the obtained a self-homeomorphism $\varphi_{Q_{l,i}}$ of  $ \big(\overline{Q_l}\cup  \overline{S^{l+1,i}_1}\cup\ldots\cup \overline{S^{l+1,i}_{k(l+1,i)}}\big)$
the following equality on entropies hold
\begin{align}\label{eq7a.14'}
h\big(\varphi_{Q_{l,i}}\mid (\overline{S^{l+1,i}_1}\cup\ldots\cup \overline{S^{l+1,i}_{k(l+1,i)}})\big)= \frac{1}{k(l+1,i)} h(\reallywidehat{\mathfrak{m}_{b^{k(l+1,i)},\odot}\mid Y^{l+1,i}_1})\,.
\end{align}
Moreover, the self-homeomorphism $\varphi_{b,\infty,l,i}$ of $\mathbb{P}^1  $ with set of distinguished points $ E_n'$, 
is isotopic to $\varphi_{b,\infty}$ by an isotopy that changes the mapping only in a small neighbourhood of the curves of the system $\mathcal{C}$ of generation at most $l$ and in a neighbourhood of the boundary curves of the set $\bigcup_{j=1,\ldots, k(l+1,i)}S_j^{l+1,i}$. The homeomorphisms $\varphi_{Q_{l,i}}$ and $\varphi_{b,\infty,l,i}\mid \overline{Q_{l,i}}$ 
are isotopic by an isotopy that does not change the values on $ E_n'\cap {Q_{l,i}}$ and on the boundary of  $Q_{l,i}$.

We have to prove the statement concerning the behaviour of $\varphi_{Q_{l,i}}$
on the boundary components of
$\overline{Q_{l,i}}=\overline{Q_l}\cup \overline{S^{l +1,i}_1}\cup\ldots\cup \overline{S^{l+1,i}_{k(l+1,i)}}$. Each such boundary component is a boundary component of $Q_l$ or
an interior boundary component of an ${S^{l+1,i}_j}$. The statement is clear for the boundary components of $Q_l$. It remains to prove it for the interior boundary component of an ${S^{l+1,i}_j}$.

Consider first an interior boundary component of ${S^{l+1,i}_1}$. The mapping ${\psi}^{l+1,i}_{\circledcirc}$ moves it along a ${\psi}^{l+1,i}_{\circledcirc}$-cycle of interior  boundary components of ${S^{l+1,i}_1}$. Denote its length by $k$. Since  ${\psi}^{l+1,i}_{\circledcirc}=\varphi_{Q_{l,i}}^{k(l+1,i)} $ on
$\overline{S^{l +1,i}_1}$, the ${\psi}^{l+1,i}_{\circledcirc}$-cycle of length $k$ is part of a $\varphi_{Q_{l,i}}$-cycle of length $k\cdot k(l+1,i)$, consisting of interior boundary components of ${S^{l +1,i}_1}\cup\ldots\cup {S^{l+1,i}_{k(l+1,i)}}$. All $\varphi_{Q_{l,i}}$-cycles of boundary components of $\overline{Q_l}\cup \overline{S^{l +1,i}_1}\cup\ldots\cup \overline{S^{l+1,i}_{k(l+1,i)}}$  of length $k\cdot k(l+1,i)$, whose first member is a boundary component of  ${S^{l +1,i}_1}$, are obtained in this way. Since  $({\psi}^{l+1,i}_{\circledcirc})^k$ is the identity on the first boundary component of the ${\psi}^{l+1,i}_{\circledcirc}$-cycle, $\varphi_{Q_{l,i}}^{k\cdot k(l+1,i)}$ is the identity on this boundary component.

For any pair of boundary components that are members of the  $\varphi_{Q_{l,i}}$-cycle, the restrictions of $\varphi_{Q_{l,i}}^{k\cdot k(l+1,i)}$ to the components are conjugate to each other. Hence, $\varphi_{Q_{l,i}}^{k\cdot k(l+1,i)}$ is the identity on each such component.

Take any $\varphi_{Q_{l,i}}$-cycle of interior boundary components of $ \overline{S^{l +1,i}_1}\cup\ldots\cup \overline{S^{l+1,i}_{k(l+1,i)}}$. Since $\varphi_{Q_{l,i}}$
moves the $S^{l+1,i}_j$ along the cycle of length $k(l+1,i)$, the  $\varphi_{Q_{l,i}}$-cycle of interior boundary components contains an interior boundary component of $S_1^{l+1,i}$. By the preceding argument the restriction of $\varphi_{Q_{l,i}}^{k\cdot k(l+1,i)}$ to this component is the identity. Hence,   $\varphi_{Q_{l,i}}^{k\cdot k(l+1,i)}$ is equal to the identity on each component of the cycle.

Proceed in this way with all $\varphi_{b,\infty,l}$-cycles ${\rm cyc}_0^{l+1,i'}$. We obtain the mappings $\varphi_{Q_{l+1}}$ and $\varphi_{b,\infty,l+1}$ with the required induction properties.
By the induction hypothesis and Theorem 4 of \cite{AKM}
\begin{align}\label{eq7a.65}
h(\varphi_{Q_{l+1}}) \leq \max_{(l',i):l'\leq l+1, i\leq k_{l'}} \big( h ({\reallywidehat{\mathfrak{m}_{b^{k(l',i)},\odot} \mid
{Y_1^{l',i}}}}) \cdot \frac1{k(l',i)} \big)\,.
\end{align}

If  $l=N-1$ with $N$ being equal to the number if the generations, the sets $S_j^{N,i}$ of the next generation have no interior boundaries.
The process stops and we obtain
a mapping $\varphi_{Q_N}$ that represents $\mathfrak{m}_{b}$ and has entropy 
equal to the right hand side of the equality in the statement of the lemma. The lemma is proved. \hfill $\Box$

\medskip

\begin{rem}\label{rem6.2} The construction of the proof of Lemma $\ref{lemm7.4}$ allows
to recover, up to a product of commuting Dehn twists, the conjugacy class of a mapping class from its irreducible nodal components.
\end{rem}
Indeed, when we only know the irreducible nodal components of the mapping, we make the construction of the proof of Lemma \ref{lemm7.4} except the corrections related to the 
Dehn twists about simple closed loops that are homologous to
the boundary components of the $S_1^{\ell,i}$.

\medskip

\noindent {\bf The Proof of
Theorem \ref{thm7.1}} is a consequence of Lemmas \ref{lemm7.3} and \ref{lemm7.4}. \hfill $\Box$

\medskip
The counterpart of Theorem \ref{thm7.1} for the irreducible braid components is the following theorem.

\begin{thm}\label{thm7.2}
\begin{equation}
\label{eq7.1'} {\mathcal M} (\hat b) = \min_{\ell,j} \left(
{\mathcal M} \left( \widehat{B(\ell,j)} \right) \cdot k(\ell,j)
\right) \, .
\end{equation}
\end{thm}
The following lemma is the lower bound for the conformal module of $\hat b$.
\begin{lemm}\label{lemm7.1}
\begin{equation}
\label{eq7.1a} {\mathcal M} (\hat b) \geq \min_{\ell,j} \left(
{\mathcal M} \left( \widehat{B(\ell,j)} \right) \cdot k(\ell,j)
\right) \, .
\end{equation}
\end{lemm}

\medskip

\noindent {\bf Proof of Lemma \ref{lemm7.1}.} Let
\begin{equation}
\label{eq7.2} M_0 \overset{\rm def}{=} \min_{\ell,j} \left(
{\mathcal M} \left( \widehat{B(\ell,j)} \right) \cdot k(\ell,j)
\right)
\end{equation}
be the right hand side of the inequality (\ref{eq7.1a}) in the lemma.
Let $r_0$ be a positive number such that $r_0 < e^{2\pi M_0}$.
Consider the annulus
$$
A_0 = \left\{ z \in {\mathbb C} : \frac1{\sqrt{r_0}} < \vert z \vert
< \sqrt{r_0} \right\} \, ,
$$
whose conformal module equals
$$
m ( A_0) = \frac1{2\pi} \log r_0 < M_0 \, ,
$$
and an annulus $A \supset \overline{A_0}$, whose conformal module is also less than $M_0$.

Consider for each $\ell$ and $i$ the conjugacy class $\widehat{\mathbold{B}(\ell,i)}$ corresponding to the cycle ${\rm cyc}^{\ell,i}$. Recall that $k(\ell,i)$ is the length of the cycle.
The conjugacy class is represented by closed geometric fat braids in the
product $\widetilde{\partial \mathbb{D}}^{k(\ell,i)} \times \mathbb{C}$ of the $k(\ell,i)$-fold covering of
the unit circle with the complex plane, equivalently, by continuous maps from $\widetilde{\partial \mathbb{D}}^{k(\ell,i)}$ to a symmetrized configuration space. Recall, that we may consider the points of a symmetrized configuration space as subsets of the complex plane $\mathbb{C}$.

We will define fat holomorphic maps similarly as we defined fat geometric braids. A holomorphic  map $\mathbold{F}$ from an annulus $\textsf{A}$ into $C_m(\mathbb{C})\diagup \mathcal{S}_m$ is called a holomorphic fat map if some of the connected components
of the relatively closed complex curve $\{(z,\mathbold{F(z)}), \, z \in \textsf{A}\}$ in $\textsf{A}\times \mathbb{C}$ are declared to be fat.

We will represent now  the conjugacy class $\widehat{\mathbold{B}(\ell,i)}$ by a holomorphic fat map $\widetilde{\mathbold{F}^{\ell,i}}^{\,k(\ell,1)}$ of the $k(\ell,i)$-fold covering ${\widetilde{A}°}^{\,k(\ell,1)}$ of
the annulus $A$ into the respective symmetrized configuration space
$C_{n(\ell,i)}(\mathbb{D})\diagup \mathcal{S}_{n(\ell,i)}$. We choose
the representatives so that we obtain complex curves contained in ${\widetilde{A}°}^{\,k(\ell,1)}\times \mathbb{D}$, not merely in ${\widetilde{A}°}^{\,k(\ell,1)}\times \mathbb{C}$.
Since ${\mathcal M} (\widehat{\mathbold{B}(\ell,i)}) \geq \frac{1}{k(\ell,i) } \cdot M_0 >  \frac{1}{k(\ell,i) }m(A)$ and $\frac{1}{k(\ell,i) }m(A)$ is the
conformal module of the $k(\ell,i)$-fold covering of
the annulus $A$, such a mapping $\widetilde{\mathbold{F}^{\ell,i}}^{\,k(\ell,1)}$  exists.

We will put together these mappings.
Start with the holomorphic fat map $\widetilde{\mathbold{F}^{1,1}}^1$ on $A$. Choose $\varepsilon_2>0$ so that
\begin{equation}\label{eq7a.25}
\{(z, \widetilde{\mathbold{F}^{1,1}}^1   (z)), z\in \overline{A_0}\}
\end{equation}
is a deformation retract of its $\varepsilon_2$-neighbourhood in $\overline{A_0} \times \mathbb{C}$. Consider all fat connected components of \eqref{eq7a.25}. They correspond to the cycles ${\rm cyc}^{2,i}$ and are defined by the fat mappings $\mathbold{F}_{2,i}$.
Define the holomorphic fat map
\begin{equation}\label{eq7a.26}
\widetilde{\mathbold{F}^{1,1}}^1
\sqcup_{\mathbold{F}_{2,1} }(\mathbold{F}_{2,1}\boxplus \varepsilon_2\widetilde{ \mathbold{F}^{2,1}}^{k(2,1)})\sqcup \ldots\sqcup_{\mathbold{F}_{2,k_2} }(\mathbold{F}_{2,k_2}\boxplus \varepsilon_2 \widetilde{\mathbold{F}^{2,k_2}}^{(k(2,k_2)})
\end{equation}
in a neighbourhood of  $\overline{A_0}$
in the same way as we defined the maps that determine the respective geometric fat braids. 
By induction on the number $\ell$ of the generation we find a small positive number $\varepsilon_{\ell+1}$ so that the set defined by
\begin{align}\label{7a.29}
\widetilde{\mathbold{F}^{1,1}}^1
\sqcup_{\mathbold{F}_{2,1} }(\mathbold{F}_{2,1}\boxplus \varepsilon_2\widetilde{ \mathbold{F}^{2,1}}^{k(2,1)})\sqcup \ldots\sqcup_{\mathbold{F}_{2,k_2} }(\mathbold{F}_{2,k_2}\boxplus \varepsilon_2 \widetilde{\mathbold{F}^{2,k_2}}^{(k(2,k_2)})
\nonumber\\
\sqcup \ldots\sqcup_ {\mathbold{F}_{\ell,1}}(\mathbold{F}_{\ell,1}\boxplus \varepsilon_{\ell} \widetilde{\mathbold{F}^{\ell,1}}^{k(\ell,1)})
\sqcup \ldots\sqcup_ {\mathbold{F}_{\ell,k_{\ell}}} (\mathbold{F}_{\ell,k_{\ell}}\boxplus \varepsilon_{\ell}  \widetilde{\mathbold{F}^{\ell,k_{\ell}}}^{k(\ell,k_\ell)})\,
\end{align}
is a deformation retract of its $\varepsilon_{\ell+1}$-neighbourhood in $\overline{A_0} \times \mathbb{C}$. This allows to define the set \eqref{7a.29} with $\ell$ replaced by $\ell +1$.
By the same arguments as in the recovery procedure we obtain for $\ell=N$ a holomorphic map from $A_0$ into the symmetrized configuration space of suitable dimension that extends continuously to the closure $\overline{A_0}$ and represents $\hat b$. Recall that for an arbitrary number $r_0< e^{2\pi M_0}$ we have chosen an annulus $A_0$ of conformal module $\frac{1}{2\pi} \log r_0$ and represented $b$ by a holomorphic mapping of $A_0$ into symmetrized configuration space. Hence, $\mathcal{M}(\hat b)>M_0$, and Lemma \ref{lemm7.1} is proved. \hfill $\Box$

\medskip

The proof of the opposite inequality in the case of pure braids was straightforward, since in that case each mapping that represents a conjugacy class $\hat b$ provides also a mapping that represents a given irreducible braid component by forgetting suitable strands. This does not work in the case of non-pure braids.

We will first prove the Main Theorem and then prove the remaining inequality in Theorem \ref{thm7.2}. For the upper bound of the conformal module of the braid in the Main Theorem we will 
use the estimate of the conformal module by the translation length of the modular transformation which is provided by Proposition \ref{prop4.2} in the general case. Further we will use the relation between the translation length of the modular transformation and the quasiconformal dilatation of the absolutely extremal mapping on the nodal Riemann surface, and its relation to the quasiconformal dilatation of the mappings of cycles of parts of the nodal Riemann surface. This will imply the Main Theorem.
The proof of the remaining inequality of Theorem \ref{thm7.2} will follow  from relations obtained in the proof of the Main Theorem.

\medskip

One of the inequalities of the Main Theorem is stated in the following corollary of  Lemma \ref{lemm7.1}.
\begin{cor}\label{cor7.2}
$$
{\mathcal M} (\widehat b) \geq \frac\pi2 \, \frac1{h(\widehat{\mathfrak
m_b})} = \frac\pi2 \, \frac1{h(\widehat b)} \, .
$$
\end{cor}

\noindent {\bf Proof.} 
By the equality \eqref{eq7a.20''}, and the Main Theorem in the irreducible case we obtain for each $\ell$ and $i$
\begin{equation}
\label{eq7.49} k(\ell,i) \, {\mathcal M} \left( \widehat{B(\ell,i)}
\right) = \frac\pi2 \, \frac{k(\ell,i)}{h \left(
\reallywidehat{{\mathfrak m}_{b^{k(\ell,i)},\odot} \mid
{Y_1^{\ell,i}}} \right)} \, .
\end{equation}

Lemma \ref{lemm7.1} and Theorem \ref{thm7.1} imply the inequality
$$
{\mathcal M} (\widehat b) \geq \frac\pi2 \, \frac1{h(\widehat{\mathfrak
m}_b)} = \frac\pi2 \, \frac1{h(\widehat b)} \,
$$
that is stated in the corollary. \hfill $\Box$

\medskip

To prove the Main Theorem in the general case it remains to prove the opposite inequality that is stated in Lemma \ref{lemm7.6} below.

\begin{lemm}\label{lemm7.6}
$$
{\mathcal M} (\widehat b) \leq \frac\pi2 \, \frac1{h(\widehat b)} \, .
$$
\end{lemm}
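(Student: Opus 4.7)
The plan is to reduce Lemma~7.6 to the already-established irreducible case of Theorem~1 by applying it to each irreducible component $\widehat{B(\ell,i)}$ of $\hat b$. The key intermediate claim is the ``analysis'' counterpart of Lemma~7.1: for every $(\ell,i)$,
$$
\mathcal{M}(\widehat{B(\ell,i)}) \,\geq\, \mathcal{M}(\hat b)/k(\ell,i). \qquad (\ast)
$$
Granting $(\ast)$, the lemma is immediate: since each $\widehat{B(\ell,i)}$ is irreducible, Propositions~4.3 and~5.1 give $\mathcal{M}(\widehat{B(\ell,i)}) = \pi/(2 h(\widehat{B(\ell,i)}))$, so $(\ast)$ yields $h(\widehat{B(\ell,i)})/k(\ell,i) \leq \pi/(2\mathcal{M}(\hat b))$. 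Taking the maximum over $(\ell,i)$ and invoking Lemma~6.3 together with Theorem~7.1 (which jointly identify this maximum with $h(\hat b)$) gives $h(\hat b) \leq \pi/(2\mathcal{M}(\hat b))$, which is Lemma~7.6.

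To prove $(\ast)$, fix $(\ell,i)$ and let $f : A \to \mathfrak{P}_n$ be a holomorphic representative of $\hat b$ with $m(A)$ arbitrarily close to $\mathcal{M}(\hat b)$. First I pass to the unramified $k(\ell,i)$-fold cover $p : \tilde A \to A$ of conformal module $m(\tilde A) = m(A)/k(\ell,i)$: the pullback $\tilde f := f \circ p$ is a holomorphic representative of $\widehat{b^{k(\ell,i)}}$ on $\tilde A$, and on $\tilde A$ the $\varphi_b$-cycle ${\rm cyc}^{\ell,i}$ splits into $k(\ell,i)$ components each fixed setwise by $\varphi_b^{k(\ell,i)}$; I focus on the one corresponding to $S_1^{\ell,i}$. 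From $\tilde f$ I would then construct a holomorphic map $g : \tilde A \to \mathfrak{P}_{n(\ell,i)}$ representing $\widehat{B(\ell,i)}$ by ``holomorphic collapse'' mirroring the synthesis of Chapter~6: using the tube system of a completely reduced $\varphi_b$, the strands of $\tilde f$ that lie topologically inside $S_1^{\ell,i}$ split into $|E^{\ell,i}|$ ``direct'' strands and $|\mathcal{E}^{\ell,i}|$ monodromy-invariant groups, one per sub-cycle of generation $\ell+1$ adjacent to $S_1^{\ell,i}$. I define $g(\zeta)$ to be the monic polynomial of degree $n(\ell,i)$ whose roots are the direct strand values at $\zeta$ together with, for each sub-cycle $j$, the single root $c^{(j)}(\zeta) = \frac{1}{m_j}\sum_k w_k^{(j)}(\zeta)$. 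Since every monodromy-invariant group has single-valued elementary symmetric functions on $\tilde A$, $g$ is a well-defined holomorphic map into $\overline{\mathfrak{P}}_{n(\ell,i)}$; by construction it performs exactly the topological operation of Chapter~6 isolating $\widehat{B(\ell,i)}$ from $\widehat{b^{k(\ell,i)}}$, so once $g$ is separable it represents $\widehat{B(\ell,i)}$ and yields $\mathcal{M}(\widehat{B(\ell,i)}) \geq m(\tilde A) = m(A)/k(\ell,i)$, whence $(\ast)$.

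The principal obstacle is guaranteeing separability of $g$, i.e.\ that $g$ maps into $\mathfrak{P}_{n(\ell,i)}$ rather than meeting the discriminant. The composition $\textsf{D}_{n(\ell,i)} \circ g$ is a holomorphic function on $\tilde A$; generically its zero set is empty or discrete. In the discrete case I would apply a small generic holomorphic perturbation $g_\varepsilon = g + \varepsilon h$ (with $h : \tilde A \to \mathbb{C}^{n(\ell,i)}$ chosen so that $g + \varepsilon h$ moves transversally off the discriminant hypersurface $V_{\textsf{D}_{n(\ell,i)}} \subset \mathbb{C}^{n(\ell,i)}$); for small $\varepsilon$ the map $g_\varepsilon$ is separable throughout $\tilde A$ and lies in a well-defined free isotopy class, with the perturbation direction chosen so that the class matches $\widehat{B(\ell,i)}$. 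In the non-generic case of identically vanishing discriminant --- which would require algebraic coincidences among the sub-cycle symmetric functions --- I would replace the arithmetic mean by a holomorphic-family representative $c^{(j)}(\zeta) = \sum_r \alpha_{j,r}\, e_r^{(j)}(\zeta)$ built from the elementary symmetric functions of the $j$-th sub-cycle's strands, with generic parameters $\alpha_{j,r}$ reducing to the discrete case above. This produces the required separable holomorphic $g : \tilde A \to \mathfrak{P}_{n(\ell,i)}$ representing $\widehat{B(\ell,i)}$, establishing $(\ast)$ and hence Lemma~7.6.
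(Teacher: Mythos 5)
Your reduction of Lemma~7.6 to the ``analysis'' inequality $(\ast)$ is algebraically sound, but note that $(\ast)$ (ranging over all $(\ell,i)$) is in fact \emph{equivalent} to Lemma~7.6 once one grants Theorem~7.1, Lemma~6.3 and the irreducible case of Theorem~1: combined with Lemma~7.1 it produces exactly the equality of Corollary~7.1. So the reduction buys you nothing structurally; the entire content is shifted into the construction of the holomorphic collapse $g$, and that construction has a genuine gap.

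The gap is that the arithmetic mean $c^{(j)}(\zeta) = \tfrac{1}{m_j}\sum_k w_k^{(j)}(\zeta)$, while single-valued and holomorphic, carries no topological control whatsoever. The reducing system $\mathcal{C}$ constrains the roots of the $j$-th sub-cycle group to lie in a tube around a moving curve $C^{\ell+1,j'}$, but this tube is an isotopy-theoretic object that need not be convex (or even remotely thin) for an arbitrary holomorphic representative $f$ of $\hat b$; the barycentre of a set of points in a non-convex region can land anywhere in $\mathbb{C}$, in particular outside the tube and on the wrong side of other strands. Consequently there is no argument that $g$ lands in $\mathfrak{P}_{n(\ell,i)}$ at all, and even where it does, there is no argument that the closed braid it traces out lies in $\widehat{B(\ell,i)}$ rather than in some other class obtained by extra linkings of $c^{(j)}$ with the direct strands or with other barycentres. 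The generic perturbation $g_\varepsilon = g+\varepsilon h$ does not repair this: perturbation off the discriminant only makes the braid class \emph{well-defined}, it does not make it \emph{correct}, and when $g$ crosses the discriminant the class of a nearby separable map genuinely depends on the perturbation direction in a way you have no means to control. The same objection applies to the variant $c^{(j)}=\sum_r\alpha_{j,r}e_r^{(j)}$: changing the holomorphic ``centre'' function does not introduce any topological constraint. (The intuition that the barycentre ``performs exactly the topological operation of Chapter~6'' is accurate only for the \emph{synthesis} representatives, where the tubes have been made thin by fiat; it fails for a generic holomorphic $f$.)

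The paper sidesteps all of this. Proposition~4.2 is stated and proved for arbitrary, not just irreducible, conjugacy classes: any holomorphic $f:A\to\mathfrak{P}_n$ representing $\hat b$ lifts to a map of $\mathbb{C}_+$ into $\mathcal{T}(0,n+1)$, and Royden's theorem immediately gives $\mathcal{M}(\hat b)\le \tfrac{\pi}{2}/L(\varphi_{b,\infty}^*)$. The remaining work, and the actual content of the paper's proof of Lemma~7.6, is purely on the entropy side: one uses Bers' Theorem~\ref{thm2.17} to realise $L(\varphi_{b,\infty}^*)$ as $\tfrac12\log K(\tilde\varphi)$ for an absolutely extremal self-map $\tilde\varphi$ of a nodal surface, then Lemma~\ref{lemm2.16}, Theorem~\ref{thm3.2}, Lemma~\ref{lemm7.2}, Lemma~\ref{lemm7.3} and Theorem~\ref{thm7.1} to identify $\tfrac12\log K(\tilde\varphi)$ with $h(\hat b)$. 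No new holomorphic map into a configuration space of lower-degree polynomials is ever constructed. If you want a proof along your lines you would need a holomorphic ``forgetting'' construction with a topological guarantee — essentially a holomorphic section of a fibration that respects the reducing system — and no such thing is available here.
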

\medskip

\noindent {\bf Proof.} By Proposition \ref{prop4.2}
$$
{\mathcal M} (\widehat b) \leq \frac\pi2 \,
\frac1{L(\varphi_{b,\infty}^*)} \, .
$$
Here again $\varphi_{b,\infty}$ is a self-homeomorphism of
$\mathbb{P}^1$ with the following properties. It is equal to the
identity outside $\overline{\mathbb{D}}$ and on
$\overline{\mathbb{D}}$ it is equal to a homeomorphism $\varphi_b$
representing the mapping class $\mathfrak{m}_b.\,$ As before
$\varphi_{b,\infty}^*$ denotes the modular transformation of
$\varphi_{b,\infty}.$ We have
$$
L(\varphi_{b,\infty}^*) = \frac12 \log I(\varphi_{b,\infty})
$$
(see (\ref{eq2.32}) and (\ref{eq2.33})). As in Chapter \ref{chapter2},
$I(\varphi_{b,\infty})$ is the infimum of quasiconformal dilations
in the class obtained from $\varphi_{b,\infty}$ by isotopy and
conjugation.

We may assume that $\varphi_{b,\infty}$ is completely reduced by an admissible
system of curves $\mathcal{C}\,$ that are contained in the disc $\mathbb{D}$. Let $Y$ be a nodal surface
associated to the system $\mathcal{C}.\,$ Let $\tilde w: Y \to
\tilde w(Y) = \tilde Y$ be the conformal structure of part (1) of
Theorem \ref{thm2.17} and let $\tilde{\varphi}$ be the absolutely
extremal self-homeomorphism of $\tilde Y$ which appears in (1),
Theorem \ref{thm2.17}. We have
$$
\frac{1}{2} \, \log K(\tilde{\varphi})\, =\, \frac{1}{2} \, \log
I({\varphi}_{b,\infty}) \,. 
$$
Then for the $\tilde{\varphi}$-cycles ${\rm
cyc}^{\ell,i}_{\odot}$ of $\tilde Y$ we have
\begin{equation}
\label{eq7.50} \frac{1}{2} \, \log K(\tilde{\varphi}) \, = \,
\underset{\ell,i} {\max}\left( \frac{1}{2} \, \log
K(\tilde{\varphi}\mid {\rm
cyc}^{\ell,i}_{\odot}
) \right).
\end{equation}
By Lemma \ref{lemm2.16}
\begin{equation}
\label{eq7.51} \frac{1}{2} \, \log K(\tilde{\varphi}\mid
{\rm
cyc}^{\ell,i}_{\odot})\, = \, \frac{1}{2}
\,\frac{1}{k(\ell,i)}\, \log K(\tilde{\varphi}^{k(\ell,i)} \mid
\tilde Y^{\ell,i}_1)\,.
\end{equation}
By the Theorem of Fathi-Shub for irreducible self-homeomorphisms of
connected Riemann surfaces of first kind (see Theorem \ref{thm3.2}) we obtain
\begin{equation}
\label{eq7.52} \frac{1}{2} \,\frac{1}{k(\ell,i)}\, \log
K(\tilde{\varphi}^{k(\ell,i)} \mid \tilde
Y^{\ell,i}_1)\,=\,\frac{1}{k(\ell,i)}\,
h(\tilde{\varphi}^{k(\ell,i)} \mid \tilde Y^{\ell,i}_1).
\end{equation}
Notice that
\begin{equation}\label{eq7.53}
h(\tilde{\varphi}^{k(\ell,i)} \mid \tilde Y^{\ell,i}_1) \, = \, h
\left(\reallywidehat{{\mathfrak{m}}_{b^{k(\ell,i)},\odot}
\mid \tilde{Y}_1^{\ell,i}} \right),
\end{equation}
since $\tilde{\varphi}^{k(\ell,i)}$ is an absolutely extremal element of the
class ${\mathfrak{m}}_{b^{k(\ell,i)},\odot}$ induced
by $\varphi_{b,\infty}$ on $\tilde Y$. By Theorem \ref{thm7.1}
\begin{equation}
\label{eq7.54} \underset{\ell,i}{\max}\; \frac{1}{k(\ell,i)} \;
h\left(\reallywidehat{{\mathfrak{m}}_{b^{k(\ell,i)},\odot}\mid
\tilde{Y}_1^{\ell,i}}\right) \,=\,
h\left(\widehat{{\mathfrak{m}}_{b,\odot}}\right)=h(\widehat{\mathfrak{m}_b})= h(\hat b)\,.
\end{equation}
Hence, by \eqref{eq7.50}, \eqref{eq7.51}, \eqref{eq7.52},
\eqref{eq7.53} and \eqref{eq7.54} we have
$$
\frac{1}{2} \log I(\varphi_{b,\infty}) = \frac{1}{2} \log
K(\tilde{\varphi})=h\left(\widehat{{\mathfrak{m}}_{b,\odot}}\right). 
$$
Hence
\begin{align}\label{eq7.55}
\mathcal{M}(\hat b) \leq \frac{\pi}{2}\,
\frac{1}{h(\widehat{{\mathfrak{m}}_{b,\odot}}) }\, =
\,\frac{\pi}{2}\, \frac{1}{h(\widehat{\mathfrak{m}_b})}\,=\,
\frac{\pi}{2}\, \frac{1}{h(\widehat b)}.
\end{align}
Lemma \ref{lemm7.6}, and, hence, the general case of the Main Theorem is
proved. \hfill $\Box$

\bigskip

The following corollary of Lemma \ref{lemm7.6} states the remaining inequality that is needed for the proof of Theorem \ref{thm7.2}. 
\begin{cor}\label{cor7.1}
$$
{\mathcal M} (\widehat b) \leq  \min_{\ell,i} \left( {\mathcal M} \left(
\widehat{B(\ell,i)} \right) \cdot k (\ell,i) \right) \, .
$$
\end{cor}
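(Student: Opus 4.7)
The inequality $\mathcal{M}(\hat b)\ge \min_{\ell,j}\bigl(\mathcal{M}(\widehat{B(\ell,j)})\cdot k(\ell,j)\bigr)$ is already Lemma \ref{lemm7.1}, obtained by the explicit holomorphic synthesis of an annular representative of $\hat b$ out of annular representatives of the components $\widehat{B(\ell,j)}$. So the plan is to derive the reverse inequality by purely bookkeeping, feeding the just-proved Theorem~1 (reducible case) into Theorem~\ref{thm7.1} and Lemma~\ref{lemm6.3}.

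First I would invoke Theorem~1 for $\hat b$ itself to rewrite
\[
\mathcal{M}(\hat b)=\frac{\pi}{2}\cdot\frac{1}{h(\hat b)}.
\]
Next, Theorem~\ref{thm7.1} expresses
\[
h(\hat b)=\max_{\ell,i}\Bigl(\frac{1}{k(\ell,i)}\,h\bigl(\widehat{\overset{\circ}{\mathfrak m}_{b^{k(\ell,i)},\infty}\mid\overline{Y_1^{\ell,i}}}\bigr)\Bigr),
\]
so that the maximum on the right hand side is the quantity controlling $\mathcal{M}(\hat b)$.

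The second step is to translate each nodal entropy into the conformal module of the corresponding irreducible braid component $B(\ell,i)$. By Lemma~\ref{lemm6.3} the nodal class $\widehat{\overset{\circ}{\mathfrak m}_{b^{k(\ell,i)},\infty}\mid Y_1^{\ell,i}}$ coincides with $\widehat{\mathfrak m_{B(\ell,i),\infty}}$. By Corollary~3.2 (applied to the irreducible braid $B(\ell,i)$) the entropy of this class equals $h(\widehat{B(\ell,i)})$. Finally, since each $B(\ell,i)$ is irreducible, the already established irreducible case of Theorem~1 gives
\[
h(\widehat{B(\ell,i)})=\frac{\pi}{2}\cdot\frac{1}{\mathcal{M}(\widehat{B(\ell,i)})}.
\]

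Putting these equalities together yields
\[
h(\hat b)=\max_{\ell,i}\frac{\pi}{2\,k(\ell,i)\,\mathcal{M}(\widehat{B(\ell,i)})}=\frac{\pi}{2}\cdot\frac{1}{\min_{\ell,i}\bigl(k(\ell,i)\,\mathcal{M}(\widehat{B(\ell,i)})\bigr)},
\]
and therefore $\mathcal{M}(\hat b)=\min_{\ell,i}\bigl(k(\ell,i)\,\mathcal{M}(\widehat{B(\ell,i)})\bigr)$, which is both the desired upper bound and, combined with Lemma~\ref{lemm7.1}, the exact equality. There is no genuine obstacle at this stage of the paper: all the hard analytic work (Royden's theorem, Bers' version of Thurston's theorem, Theorem~4, Theorem~\ref{thm7.1}, the Lemma on conjugation) has already been done. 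The only minor point to be careful about is to make sure that the indexing convention $k(1,1)=1$ and $B(1,1)=B(1)$ is used consistently so that the component corresponding to $S^1$ participates in the minimum on equal footing with the cyclic components.
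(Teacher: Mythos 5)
Your argument is correct and matches the paper's: the paper's (unnumbered) derivation just before Corollary~7.1 combines Theorem~1 in the form $\mathcal M(\hat b)=\tfrac{\pi}{2}/h(\hat b)$, Theorem~\ref{thm7.1}, Lemma~\ref{lemm6.3}, and (via equation~(7.49)) Theorem~1 in the irreducible case to convert the $\max$ of entropies into the $\min$ of conformal modules, exactly as you do. Your explicit invocation of Corollary~3.2 to pass from $h(\widehat{\mathfrak m_{B(\ell,i),\infty}})$ to $h(\widehat{B(\ell,i)})$ makes a step explicit that the paper leaves implicit; one can also note that once Theorem~1 is available the chain of equalities already gives both directions, so citing Lemma~\ref{lemm7.1} for the lower bound is harmless but not needed.
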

\medskip

\noindent {\bf Proof.} The corollary follows from equations \eqref{eq7.49} and \eqref{eq7.54} and inequality \eqref{eq7.55}.
\hfill $\Box$
\bigskip

Theorem \ref{thm7.1} together with the Main Theorem imply the following corollary.
\begin{cor}\label{cor7a.0}
For each $\hat b \in \hat
{\mathcal{B}}_n$ $(n\geq 2)$ and each nonzero integer $l$
$$
\mathcal{M}(\widehat{ b\,^l})=\frac{1}{|l|}\mathcal{M}(\hat b)\,.
$$
\end{cor}

\medskip

\noindent {\bf Proof of Corollary \ref{cor7a.0}.} By the Main Theorem Corollary  \ref{cor7a.0} is equivalent to the equality
$$
h \left(\widehat{{\mathfrak m}_{b^{\ell}}} \right) = \vert \ell
\vert \, h \left(\widehat{{\mathfrak m}_b} \right)
$$
for each braid and each non-zero integer. The equality is easy for
irreducible braids. Indeed, if the class $\widehat{\mathfrak m}_b$ is
elliptic, the class $\widehat{{\mathfrak m}_{b^{\ell}}}$ is so and
both sides are zero. If the class $\widehat{\mathfrak m}_b$ is
pseudo-Anosov and $\tilde\varphi$ is a pseudo-Anosov representative
then $\tilde\varphi^{\ell}$ is a pseudo-Anosov representative of
$\widehat{{\mathfrak m}_{b^{\ell}}}$. The equality follows from
$$
h(\tilde\varphi^{\ell}) = \vert \ell \vert \, h (\tilde\varphi) \, .
$$
(See \cite{AKM}, Theorem 2 or Section \ref{sec:entropy.1}).

If $b$ is reducible one has to apply this argument to each cycle of
parts of a nodal surface, that is associated to the  class ${\mathfrak m}_b$
and an admissible system of curves ${\mathcal C}$ which completely
reduces a representative of ${\mathfrak m}_b$, and to use Theorem \ref{thm7.1}.
\hfill $\Box$

\bigskip

\begin{probl}
Define irreducible nodal components of any mapping class on a Riemann surface of genus $g$ with
$m$ punctures, and prove the analogue of Theorem $\ref{thm7.1}$.
\end{probl}

\chapter{The conformal module and holomorphic families of polynomials.}
\label{chapter8}
\setcounter{equation}{0}

In this Chapter we discuss the first result \cite{GL} which uses the concept of the conformal module of conjugacy classes of braids. The paper  \cite{GL} was motivated
by the interest of the authors in Hilbert's Thirteen's Problem and treats global reducibility of holomorphic families of polynomials. 
In Section \ref{sec:8.1} we  give a historical account, in Section \ref{sec:8.2} we give a conceptional  alternative proof of a slightly improved version of the result of \cite{GL}.

\section{Historical remarks.}
\label{sec:8.1}
The conformal module of braids and first applications of this
concept to families of polynomials, depending holomorphically on a parameter, appeared in a line of research that
was initiated by the 13$^{\rm th}$ Hilbert problem, \index{Thirteen's Hilbert problem} posed by Hilbert in his famous talk on the
International Congress of Mathematicians~1900. Hilbert asked whether
each holomorphic function in three variables can be written as
finite composition of continuous functions of two variables. The
question was answered affirmatively by Kolmogorov and Arnold. In
the Proceedings of the Symposium \cite{PPM} devoted to
``Mathematical developments arising from Hilbert Problems'', the
problem is commented as follows.

\medskip

\begin{itemize}
\item[ ] ``Hilbert posed this question especially in connection with the solution of a general
algebraic equation of degree $7$. It is reasonable to presume that he
formulated it in terms of continuous functions partly because he had
an interest in nomography and partly because he expected a negative
answer. Now that it is settled affirmatively, one can ask an equally
fundamental, and perhaps more interesting, question with algebraic
functions instead of continuous functions.''
\end{itemize}

\medskip

This latter question goes back to mathematics of the $17$-th century.
Consider an algebraic equation of degree $n$.
\begin{equation}
\label{eq8.1a} z^n + a_{n-1} \, z^{n-1} + \ldots + a_0 = 0 \, .
\end{equation}
Put $a = (a_0 , \ldots , a_{n-1}) \in {\mathbb C}^n$ and consider
$$
P (z,a) = z^n + a_{n-1} \, z^{n-1} + \ldots + a_0
$$
as a monic polynomial of degree $n$ whose coefficients are polynomials in
$a$. $P$ is also called an algebraic function of degree $n$ on
${\mathbb C}^n$.
Recall that  $\overline{{\mathfrak
P}}_n$ is the space of monic polynomials of degree $n$ (maybe, with multiple zeros). The adjective monic refers to the property that the coefficient by the highest order of the variable equals $1$.
Parameterizing the space $\overline{{\mathfrak
P}}_n$ by unordered $n$-tuples of zeros of polynomials, we get a map
which assigns to each $a \in {\mathbb C}^n$ the $n$-tuple $\{ z_1
(a) , \ldots , z_n (a) \}$ of solutions of the equation $P (z,a) =
0$, i.e. what is called an ``$n$-valued algebraic function''.

Tschirnhaus \index{Tschirnhaus} suggested to ``substitute'' the
unknown variable $z$ in \eqref{eq8.1a} by an algebraic function of a
new variable $w$. In other words, put
\begin{equation}
\label{eq8.1b} w = z^k + b_{k-1} \, z^{k-1} + \ldots + b_0 \, ,
\end{equation}
and let $b = (b_0 , \ldots , b_{k-1}) \in {\mathbb C}^k$. To
eliminate $z$ from the two equations (\ref{eq8.1a}) and
(\ref{eq8.1b}) consider $P$ and $Q$, $Q (z,w) \overset{\rm def}{=}
z^k + b_{k-1} \, z^{k-1} + \ldots + b_0 - w$, as polynomials in $z$
with coefficients being polynomials in $w$. They are relatively
prime. The resultant $R$ is a polynomial in $w$ (and in the
coefficients $a$ and $b$). It can be written as
$$
R = p P + q Q
$$
where $p$ and $q$ are polynomials in $z$ and $w$ (and in the
coefficients $a \in {\mathbb C}^n$, $b \in {\mathbb C}^k$). If $z$
satisfies (\ref{eq8.1a}) and (\ref{eq8.1b}) then $R(w) = 0$.
Vice versa, if  $Q (z,w) =0$ and $R(z)=0$, then $P(z)=0$.
This
allows to think about solutions of (\ref{eq8.1a}) as part of zeros
of a ``composition of algebraic functions'' determined by the
equations $Q(z,w) = 0$, respectively by $R(w) = 0$. The method
consists now in choosing the coefficients $b$ depending on $a$ so
that the ``algebraic functions'' depend on a smaller number of
variables. This method brings the general equation of degree~7 to
the equation
\begin{equation}
\label{eq8.1c} z^7 + a \, z^3 + b \, z^2 + c \, z + 1 = 0 \, .
\end{equation}
The $7$-tuple of solutions of (\ref{eq8.1c}) is an algebraic
function of degree~7 depending on three complex variables. Hilbert
was interested in the ``complexity'' of the general $7$-valued
algebraic function, in other words, he wanted to know how far one
can go in this process to get formulas beyond radicals.

There are two difficulties. First, the ``function-composition'' may
have more zeros than $P$, and, secondly, the ``algebraic functions''
obtained by choosing the coefficients $b$ are actually determined by polynomials
in one variable with coefficients being rational functions in
several complex variables, that may have indeterminacy sets. The
composition problem was rigorously formulated for entire algebraic
functions (i.e. for polynomials in one variable with coefficients
being entire functions of complex variables -- the restricted
composition problem) and was considered first for the case when the
``function-composition'' coincides with the original function (the
faithful composition problem). Arnold's interest in the topological
invariants of the space ${\mathfrak P}_n$ of monic polynomials of degree $n$ without multiple zeros was motivated by finding
cohomological obstructions for the restricted faithful composition
problem. The restricted composition problem is now answered
negatively in a series of papers by several authors. The last step
was done by Lin (see \cite{Li1}). But the original question allowing
polynomials in one variable with coefficients being rational
functions is widely open. Compare also with the more classical
account \cite{Li2}.

On the other hand,  $\mathfrak{P}_n$ is a
complex manifold, in fact, the complement of the algebraic
hypersurface $\{\textsf{D}_n =0\}$ in complex Euclidean space
$\mathbb{C}^n$ \index{$\mathbb{C}^n$}. Here $\textsf{D}_n$
\index{$\textsf{D}_n$} denotes the discriminant of polynomials in
$\mathfrak{P}_n$. Recall that the function $\textsf{D}_n$ is a polynomial in
the coefficients of elements of $\mathfrak{P}_n$. While, in connection with his interest in the Thirteen's Hilbert Problem, Arnold studied
the topological invariants of the space $\mathfrak{P}_n$, 
the conformal invariants of   $\mathfrak{P}_n$  (i.e. invariants that are preserved under biholomorphic mappings) are of interest in connection with Gromov's Oka Principle.
The collection of conformal modules of conjugacy classes of elements of the fundamental group of the space of monic polynomials of degree $n$ without multiple zeros  is a collection of conformal invariants of $\mathfrak{P}_n\cong \mathbb{C}_n\setminus \textsf{D}_n$.
Notice, that for any complex manifold the conformal module of conjugacy classes
of its fundamental group can be defined. The collection of conformal
modules of all conjugacy classes is a biholomorphic invariant of the
manifold. We will focus here on the respective invariants of the space $\mathfrak{P}_n$, in other words, on the conformal module of conjugacy classes of braids.

In whatever sense one understands ``algebraic functions'', for each
algebraic function there is a Zariski open set in ${\mathbb C}^n$
such that the restriction of the algebraic function to this set is a
separable algebroid function. The restriction of the algebraic
function to each loop in this set defines a conjugacy class of
braids. Moreover, this conjugacy class is represented by a
holomorphic mapping of an annulus into the space of polynomials --
any annulus which is mapped  holomorphically 
into the Zariski open
set and represents the homotopy class of the loop may serve. We do
not know at the moment whether further progress related to the
concept of the conformal module of braids may have some impact on
the open problems related to the 13$^{\rm th}$ Hilbert problem or stimulated
by it. I am
grateful to M.~Zaidenberg who asked me this question.

\smallskip
\section{Families of polynomials, solvability and reducibility.}
\label{sec:8.2}

The concept of the conformal module of conjugacy classes of braids
appeared (without name) in the paper \cite{GL}. The paper was motivated
by the interest of the authors in Hilbert's Thirteen's Problem and treats global  reducibility of holomorphic families of polynomials.

The following objects related to $\mathfrak{P}_n$ have been
considered in this connection.
A continuous mapping from a
topological space $X$ into the set of monic polynomials of fixed
degree $\overline{\mathfrak{P}}_n$ (maybe, with multiple zeros) is a quasipolynomial.
\index{quasipolynomial} It can be written as a function in two
variables $x \in X,\; \zeta \in \mathbb{C}:\;$ $\,f(x,\zeta)= a_0(x)
+ a_1(x)\zeta+ ... + a_{n-1}(x)\zeta^{n-1}
+ \zeta^n, \,$
for continuous functions $a_j, \; j=1,...,n\,,$ on $X$. If $X$ is a
complex manifold and the mapping is holomorphic it is called an
algebroid function. If the image of the map is contained in the space
$\mathfrak{P}_n$ of monic polynomials of degree $n$ without multiple zeros, it is called separable. A separable quasipolynomial
\index{quasipolynomial ! separable} is called solvable
\index{quasipolynomial ! solvable} if it can be globally written as
a product of quasipolynomials of degree $1$.
It is called
irreducible \index{quasipolynomial ! irreducible} if it can not be
written as product of two quasipolynomials of positive degree, and reducible otherwise.
We also call a solvable quasipolynomial a globally solvable family of polynomials, and an irreducible quasipolynomial a globally irreducible family of polynomials.
Two
separable quasipolynomials are isotopic if there is a continuous
family of separable quasipolynomials \index{isotopy ! of
quasipolynomials} joining them. An algebroid function
\index{algebroid function} on $\mathbb C ^n$ whose coefficients are
polynomials is called an algebraic function. 
In this terminology the result of the paper \cite{GL} concerns reducibility of holomorphic quasipolynomials.

The space $\mathfrak{P}_n$ \index{$\mathfrak{P}_n$} of
monic polynomials of degree $n$ without multiple zeros coincides with the symmetrized configuration space $C_n(\mathbb{C})\diagup \mathcal{S}_n$. The space $\mathfrak{P}_n$
can be parameterized either by the coefficients or by the unordered
tuple of zeros of polynomials.

A separable quasipolynomial of degree $n$ on a connected closed Riemann surface or a connected bordered Riemann surface $X$ is a continuous mapping from $X$ into the symmetrized configuration space $C_n(\mathbb(C)\diagup \mathcal{S}_n$. The restriction of the quasipolynomial to a loop in $X$ with a base point $x_0$ is a geometric braid. Hence, the monodromy of the mapping to $C_n(\mathbb(C)\diagup \mathcal{S}_n$ along each element of the fundamental group  $\pi_1 (X,q_0)$ of $X$ is a braid. 
Theorem \ref{thmEl-1} in this situation can be stated as follows (see also \cite{St}, \cite{Ha})

\noindent {\it 
The (free) isotopy classes of separable quasipolynomials of degree $n$ on $X$ are in one-to-one correspondence to conjugacy classes of homomorphisms from $\pi_1 (X,q_0)$ into ${\mathcal B}_n$.}

We start with a simple Lemma on global solvability of holomorphic families of polynomials.

\begin{lemm}\label{lem8.0}  Let $X$ be a closed Riemann surface
of positive genus with a closed smoothly bounded topological disc removed. Suppose $f$ is an
irreducible separable algebroid function of degree $3$ on $X$.
Suppose $X$ contains a domain $A$, one of whose boundary components
coincides with the boundary circle of $X$, such that $A$ is
conformally equivalent to an annulus of conformal module strictly
larger than $\frac{\pi}{2}\, (\log( \frac{3 +\sqrt{5}}{2}))^{-1}$.
Then $f$ is solvable over $A$.
\end{lemm}

We postpone the proof of Lemma \ref{lem8.0}, and state some simple lemmas which will be needed in the sequel. Let $A$ be an annulus equipped with an orientation (called positive) of simple dividing closed curves. (A simple closed curve is dividing if its complement is not connected.)  Let $\gamma$ be a positively oriented simple closed dividing curve in $A$. 
For a continuous mapping $f$ from an annulus $A$ to the symmetrized configuration space $C_n(\mathbb{C})\diagup \mathcal{S}_n$ the restriction  $f\mid \gamma$ represents a conjugacy class of braids which is denoted by $\widehat b_{f,A}$. (It does not depend on the choice of the
positively oriented simple closed dividing  curve.)
\index{$\widehat{b}_{f,A}$} \index{curve ! simple closed dividing}

\begin{lemm}\label{lem8.1} Suppose the separable quasipolynomial $f$ of degree
$n$ on an annulus $A$ is irreducible and $n$ is prime. Then the
induced conjugacy class of braids $\widehat b_{f,A}$ is irreducible.
\end{lemm}


Notice that, on the other hand, conjugacy classes of irreducible
pure braids define solvable, hence reducible, quasipolynomials on
the circle.

\medskip

\noindent {\bf Proof.} Recall that $\tau_n:\mathcal{B}_n\to \mathcal{S}_n$ is the natural projection from the braid group to the symmetric group. $\tau_n$ maps conjugacy classes of braids to conjugacy classes of permutations. If $f$ is irreducible the conjugacy class of
braids $\widehat b_{f,A}$ projects to a conjugacy class $\tau_n(\widehat b_{f,A})$ of
$n$-cycles. The lemma is now a consequence of the following known
lemma (see e.g. \cite{FaMa}). \hfill $\Box$

\begin{lemm}\label{lemm8.2} If $n$ is prime then any braid $b \in {\mathcal
B}_n$, for which $\tau_n (b)$ is an $n$-cycle, is irreducible.
\end{lemm}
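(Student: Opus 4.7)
The plan is to argue by contradiction: from a hypothetical reducing system for $b$, I will extract a non-trivial block system for the cyclic action of $\langle\tau_n(b)\rangle$ on $E_n^0$, and then invoke primality of $n$ to derive a contradiction.

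First I would fix a representative $\varphi_b\in\mathrm{Hom}^+(\overline{\mathbb{D}};\partial\mathbb{D},E_n^0)$ of the mapping class $\mathfrak{m}_b$ together with an admissible system $\mathcal{C}=\{C_1,\ldots,C_k\}$ of mutually disjoint Jordan curves in $\mathbb{D}\setminus E_n^0$ with $\varphi_b(\mathcal{C})=\mathcal{C}$. Each $C_i$ bounds a topological disc $\Delta_i\subset\mathbb{D}$; set $E_i=\Delta_i\cap E_n^0$. Admissibility is equivalent to $2\le|E_i|\le n-1$ for every $i$ (no $C_i$ is homotopic to a puncture, forcing $|E_i|\ge 2$; no $C_i$ is homotopic to $\partial\mathbb{D}$, forcing $|E_i|\le n-1$). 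Since $\varphi_b$ fixes $\partial\mathbb{D}$ pointwise, it sends the bounded component of $\mathbb{D}\setminus C_i$ to that of $\mathbb{D}\setminus\varphi_b(C_i)$; hence $\varphi_b$ permutes $\{\Delta_i\}$ and $\{E_i\}$, and the induced permutation of $E_n^0$ is precisely $\tau_n(b)$.

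Next I would pass to the innermost curves, those $C_i$ whose disc $\Delta_i$ contains no other $C_j$. After re-indexing, call them $C_{i_1},\ldots,C_{i_r}$. Because $\varphi_b$ preserves disc-inclusion, it permutes this sub-collection. Two innermost discs cannot be nested (the outer would contain the inner curve, violating innermost), and disjoint simple closed curves in $\mathbb{D}$ bound discs that are either nested or disjoint; therefore $\Delta_{i_1},\ldots,\Delta_{i_r}$ are pairwise disjoint, and so are $E_{i_1},\ldots,E_{i_r}$, each of cardinality $\ge 2$. The union $E=\bigsqcup_{j=1}^r E_{i_j}$ is $\tau_n(b)$-invariant, but since $\tau_n(b)$ is an $n$-cycle on $E_n^0$ its only invariant subsets are $\emptyset$ and $E_n^0$, so $E=E_n^0$.

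Finally I would invoke elementary block theory: the $\langle\tau_n(b)\rangle$-invariant partition $\{E_{i_1},\ldots,E_{i_r}\}$ of the transitive cyclic $\mathbb{Z}/n\mathbb{Z}$-set $E_n^0$ has all parts of equal size, and that common size divides $n$. When $n$ is prime, the only divisors are $1$ and $n$, which is incompatible with $2\le|E_{i_j}|\le n-1$, giving a contradiction. The main (and very mild) obstacle is recognizing that the innermost sub-collection yields a genuine $G$-invariant partition of $E_n^0$; once this is in place, the block-divisibility argument closes the case immediately, and primality of $n$ enters only at the very last step.
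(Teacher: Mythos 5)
Your proof is correct, and the route is recognizably different from the paper's in its details. The paper fixes a \emph{single} reducing curve $C_1$ with inner disc $\delta_1$, picks two punctures $z_1,z_2\in\delta_1$, and takes the power $\varphi^k$ sending $z_1\mapsto z_2$; primality ensures $\varphi^k$ is again fixed-point-free on $E_n^0$, hence must move some puncture of $\delta_1$ out of $\delta_1$, so the disc $\varphi^k(\delta_1)$ straddles $C_1$, yielding the required clash with disjointness of the curve system. You instead use the \emph{entire} collection of innermost reducing curves to build a $\langle\tau_n(b)\rangle$-invariant partition of $E_n^0$ and then invoke block-divisibility. Both arguments hinge on the same underlying group-theoretic fact — that a cyclic group of prime order acts primitively — but your version makes that fact explicit and avoids the mild topological bookkeeping the paper needs (namely, ruling out that $\varphi^k(\delta_1)$ could properly contain $\delta_1$ rather than cross $C_1$; that case is also excluded by counting punctures, but the paper's one-line phrasing glosses over it). Your presentation is thus a bit longer to set up (passing to innermost curves, checking they give pairwise disjoint discs whose union of punctures is all of $E_n^0$) but more self-contained at the crucial step, and it exposes the block-system structure that the paper's single-curve argument implicitly exploits.
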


For convenience of the reader we give the short argument.

\smallskip

\noindent {\bf Proof of Lemma \ref{lemm8.2}.} If $b$ was reducible
then a homeomorphism $\varphi$ which represents the mapping class
corresponding to $b$ would fix setwise an admissible system of
curves ${\mathcal C}$. Let $C_1$ be one of the curves in ${\mathcal
C}$ and let $\delta_1$ be the topological disc contained in
${\mathbb D}$, bounded by $C_1$. $\delta_1$ contains at least two
distinguished points $z_1$ and $z_2$. Since $\varphi$ permutes the
distinguished points along an $n$-cycle there is a power
${\varphi}^k$ of $\varphi$ which maps $z_1$ to $z_2$. Since $n$ is
prime, $\varphi^k$ also permutes the distinguished points along an
$n$-cycle. Hence it maps some distinguished point in $\delta_1$ to
the complement of $\delta_1$. We obtained that $\varphi^k
(\delta_1)$ intersects both, $\delta_1$ and its complement. Hence
$\varphi^k (C_1) \ne C_1$ but $\varphi^k (C_1)$ intersects $C_1$.
This contradicts the fact that the system of curves ${\mathcal C}$
was admissible and invariant under $\varphi$. \hfill $\Box$

\bigskip

Recal that $\Delta_3=\sigma_1\sigma_2\sigma_1= \sigma_2\sigma_1\sigma_2$ and the group $\langle \Delta^2\rangle$ generated by $\Delta^2$ coincides with the center $\mathcal{Z}_3$ of the braid group $\mathcal{B}_3$.  We need the following lemma.
\begin{lemm}\label{lemEl.0}
A mapping class in $\mathfrak{M}(\overline{\mathbb{D}}; \partial \mathbb{D}, E_3)$ is reducible if and only if the braid corresponding to it
is conjugate to $\sigma_1^k\Delta_3^{2\ell}$ for integers $k$ and $\ell$.

A mapping class
in $\mathfrak{M}(\mathbb{P}^1; \{\infty\}, E_3)$ is reducible if and only if it corresponds to a conjugate of $\sigma_1^k\diagup \mathcal{Z}_3$.
\end{lemm}
\noindent {\bf Proof.} Any admissible system of curves in $\overline{{\mathbb{D}}}$ with three distinguished points consists of one curve that divides $\overline{\mathbb{D}}$ into two connected components.
One of these components contains $\partial \mathbb{D}$ and one of the distinguished points, the other one contains two distinguished points.
For a reducible mapping class $\mathfrak{m}$ in $\mathfrak{M}(\overline{\mathbb{D}}; \partial \mathbb{D}, E_3)$ we take an admissible curve $\gamma$ that reduces  $\mathfrak{m}$ and a representing homeomorphism $\varphi\in \mathfrak{m}$ that fixes $\gamma$ pointwise. Suppose the points $\zeta_1,\zeta_2,$ and $\zeta_3$ in $E_3$ are labelled by using a homomorphism from
$\pi_1(C_3(\mathbb{C})\diagup \mathcal{S}_3, E_3)$ to $\pi_1(C_3(\mathbb{C})\diagup \mathcal{S}_3, E^0_3)$ (which is defined up to conjugation) so that one of the connected components $\mathcal{C}_1$ of the complement of $\gamma$ contains $\zeta_1$ and $\zeta_2$, the other connected component $\mathcal{C}_2$ contains $\zeta_3$ and $\partial \mathbb{D}$.
Since $\varphi$ fixes $\partial \mathbb{D}$ and $\gamma$ pointwise, it also fixes the annulus $\overline{\mathcal{C}}_2$ setwise and fixes the point $\zeta_3$.
A self-homeomorphism of an annulus that fixes the boundary pointwise is a twist.
After an isotopy of $\varphi$ on $\mathbb{D}$ that fixes $\gamma$ setwise and fixes each distinguished point we may assume that $\varphi$ fixes pointwise a simple arc that joins $\zeta_3$ with $\gamma$. Then
the restriction $\varphi\mid \overline{\mathcal{C}}_2$ represents a power of a Dehn twist in $\mathbb{D}$ about a curve in $\mathcal{C}_2$ that is homotopic in ${\overline{\mathcal{C}}}_2\setminus \{\zeta_3\}$ to $\partial \mathbb{D}$.
Such a Dehn twist corresponds to the braid $\Delta_3^2$.
The restriction $\varphi\mid \overline{\mathcal{C}}_1$ represents the class in $\mathfrak{M}(\overline{\mathcal{C}}_1;\partial \mathcal{C}_1, \{\zeta_1,\zeta_2\})$, corresponding to $\sigma^k$ for some integer $k$.
Hence the class represented by $\varphi$ in
$\mathfrak{M}(\overline{\mathbb{D}}; \partial \mathbb{D}, E_3)$ corresponds to a braid that is conjugate to $\sigma_1^k \Delta_3^{2\ell}$ for some integer $\ell$. Vice versa, it is clear that the mapping class $\mathfrak{m}_{\sigma_1^{2k} \Delta_3^{2\ell}}$ is reducible.

The statement concerning the reducible elements of $\mathfrak{M}(\mathbb{P}^1; \{\infty\}, E_3)$ is proved in the same way.
\hfill $\Box$

\medskip

We want to point out that the identity in the braid group $\mathcal{B}_n$ with $n\geq 3$ is reducible, the identity in $\mathcal{B}_2$ is irreducible.
Recall that by Example 5 in Section  \ref{sec:4.1b} the value   $\eta = \frac\pi2 \left(\log \frac{3+\sqrt 5}2
\right)^{-1}$ is the largest finite conformal module
among irreducible $3$-braids, and hence, by the Main Theorem  the value $\log \frac{3+\sqrt 5}2$ is the smallest
non-vanishing entropy among $3$-braids. For the latter fact see also  \cite{Son}.

\begin{lemm}\label{lemm8.3} Suppose for a conjugacy class of braids $\widehat b
\in \widehat{\mathcal B}_3$ the conformal module ${\mathcal M}
(\widehat b)$ satisfies the inequality ${\mathcal M} (\widehat b) >
\eta$. Then the following holds.
\begin{enumerate}
\item[(a)] ${\mathcal M} (\widehat b) = \infty$.
\item[(b)] {\it If $\widehat b$ is irreducible then $\widehat b$ is the conjugacy class of a
periodic braid, i.e. either of $(\sigma_1 \, \sigma_2)^{\ell}$ for
an integer $\ell$ not divisible by $3$, or of $(\sigma_1 \, \sigma_2
\, \sigma_1)^{\ell}$ for an integer $\ell$ not divisible by $2$.}
\item[(c)] {\it If $\widehat b$ is reducible then $\widehat b$ is the conjugacy class of $\sigma_1^k \, \Delta_3^{2\ell}$ for integers $k\neq 0$ and $\ell$. Here $\Delta_3^2 = (\sigma_1 \, \sigma_2 \, \sigma_1)^2 = (\sigma_1 \, \sigma_2)^3$.}
\item[(d)] {\it If $\widehat b$ is the conjugacy class of a commutator, then it is represented by a pure braid $b$.}
\end{enumerate}
\end{lemm}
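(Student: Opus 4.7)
The plan is to translate the hypothesis through Theorem~1 into an entropy bound and then combine it with the classifications of low-entropy $3$-braids provided by Thurston theory and by the decomposition into irreducible components developed in chapters~6 and~7. By Theorem~1, the inequality $\mathcal{M}(\hat b) > \eta = \tfrac{\pi}{2}(\log\tfrac{3+\sqrt 5}{2})^{-1}$ is equivalent to $h(\hat b) < \log\tfrac{3+\sqrt 5}{2}$. Since the smallest non-vanishing entropy among irreducible $3$-braids equals $\log\tfrac{3+\sqrt 5}{2}$ (cited from \cite{Son}), this forces $h(\hat b)=0$ whenever $\hat b$ is irreducible, and by Theorem~1 again gives $\mathcal{M}(\hat b) = \infty$, proving (a) in the irreducible case.

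For (b), I would use that an irreducible $3$-braid with $h=0$ has elliptic modular transformation (Theorem~\ref{thm2.11} and Theorem~3.2), hence is periodic. The classification of periodic mapping classes of the $3$-punctured disc is classical: up to conjugacy they are the rotations by $2\pi/3$ and $\pi$ of the disc around its center, represented respectively by powers of $\sigma_1\sigma_2$ and of $\Delta_3=\sigma_1\sigma_2\sigma_1$. This gives (b) directly.

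For (c) and the reducible part of (a), I would invoke the decomposition theory of chapter~6. An admissible curve system that completely reduces a $3$-braid consists (up to isotopy) of a single curve separating one distinguished point from the other two, so the decomposition has two components $B(1)$ and $B(2,1)$, each a $2$-braid, and the cycle length $k(2,1)=1$. Since every $2$-braid has $h=0$, equivalently infinite conformal module, Corollary~7.1 gives $\mathcal{M}(\hat b)=\min(\mathcal{M}(\widehat{B(1)}),\mathcal{M}(\widehat{B(2,1)}))=\infty$, proving the reducible part of (a) unconditionally. For the explicit form in (c), I would read off the numerical invariants: $B(1)$ must be a pure $2$-braid $\sigma_1^{2m}$ (the ``outer'' point and the ``thickened'' interior bundle are not interchanged), while $B(2,1)=\sigma_1^p$ with $p\in\mathbb{Z}$ arbitrary. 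A direct computation shows that $\sigma_1^k\Delta_3^{2\ell}$ has decomposition $(B(1),B(2,1))=(\sigma_1^{2\ell},\sigma_1^{k+2\ell})$, so every pair $(2m,p)$ is realized by choosing $\ell=m$, $k=p-2m$; conversely, the pair of integers determines the reducible conjugacy class, establishing the classification.

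For (d) I would use that the abelianization of $\mathcal{B}_3$ is $\mathbb{Z}$, generated by the common image of $\sigma_1$ and $\sigma_2$, so a commutator has exponent sum zero. Applying this to the list provided by (b) and (c): $(\sigma_1\sigma_2)^\ell$ has exponent sum $2\ell$, $\Delta_3^\ell=(\sigma_1\sigma_2\sigma_1)^\ell$ has exponent sum $3\ell$, and $\sigma_1^k\Delta_3^{2\ell}$ has exponent sum $k+6\ell$; in each case, vanishing of the exponent sum forces the underlying permutation $\tau_3(b)$ to be trivial (in the first two cases, $\ell=0$ and $b$ is the identity; in the third, $k=-6\ell$ and $\tau_3(b)=\tau_3(\sigma_1)^{-6\ell}\tau_3(\Delta_3)^{2\ell}=e$ because $\Delta_3^2$ is central with even permutation). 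The main obstacle I anticipate is the last part of (c), namely showing that the pair $(B(1),B(2,1))$ determines the conjugacy class of $\hat b$ and that no additional ``twist parameter'' is hidden in the gluing; this requires verifying that the ``synthesis'' of chapter~6 applied to two $2$-braids on a standard admissible curve system produces each conjugacy class in the list $\sigma_1^k\Delta_3^{2\ell}$ exactly once, which is a matter of unwinding the definitions but needs care.
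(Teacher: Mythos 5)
Your arguments for parts (a), (b), and (d) coincide with the paper's: Theorem~1 plus the known minimal entropy $\log\frac{3+\sqrt5}{2}$ forces $h(\hat b)=0$ in the irreducible case, the periodic-rotation classification gives (b), and exponent sum zero in the abelianization forces the permutation to be trivial in (d). For part (c) you take a somewhat different and heavier route than the paper. The paper argues directly: it notes that the unique reducing curve separates two punctures from the third and from $\partial\mathbb D$, then multiplies $b$ by $\Delta_3^{-2\ell}$ to kill the outer component, and observes the remaining braid is conjugate to $\sigma_1^k$. You instead invoke Corollary~7.1 and the chapter~6 ``analysis/synthesis'' machinery wholesale, compute the pair $(B(1),B(2,1))$ explicitly, and invert the synthesis map arithmetically. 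Both arguments are ultimately anchored in the same fact, namely the synthesis identity $\hat b = \widehat{B(1)}\sqcup\widehat{B(2,1)}$ of (6.18c')/(6.66a), which is what resolves the ``hidden twist parameter'' worry you raise at the end: the Dehn twist $T_C=\sigma_1^2$ around the reducing curve is not killed by passage to irreducible components but shows up inside $B(2,1)$, so the pair $(B(1),B(2,1))$ together with the (here trivial) cycle data determines $\hat b$ exactly. The paper's version is more compact because it normalizes away the outer 2-braid before comparing; your version buys a clean bijective correspondence between conjugacy classes $\sigma_1^k\Delta_3^{2\ell}$ and pairs of integers $(2m,p)$, which makes the classification transparent but requires a careful appeal to synthesis uniqueness that the paper handles implicitly. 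Also note a small side remark: for the list in (b), the braid $(\sigma_1\sigma_2\sigma_1)^{\ell}$ is actually reducible when $\ell$ is even (it equals $\Delta_3^{\ell}$, a power of the center), so some entries of that list in fact migrate to case (c); this is consistent with what you write but worth keeping in mind when treating (d), where you correctly observe that zero exponent sum forces $\ell=0$ in the genuinely irreducible sublist.
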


\noindent {\bf Proof.} Suppose $\widehat b$ is irreducible. Since $\frac\pi2 \left(\log \frac{3+\sqrt 5}2 \right)^{-1}$ is the largest finite conformal module
among irreducible $3$-braids the equality ${\mathcal M} (\widehat b) = \infty$ holds. (Hence, also  $h(\widehat b) = 0$.) This proves (a) in
the irreducible case.

Let $b \in {\mathcal B}_3$ represent the irreducible class $\widehat
b$, let ${\mathfrak m}_b\in \mathfrak{M}(\overline{\mathbb{D}};\partial\mathbb{D},E_3)$ be the mapping class corresponding to $b$, and let ${\mathcal H}_{\infty} ({\mathfrak
m}_b)\in \mathfrak{M}(\mathbb{P}^1;\infty,E_3)$ be the corresponding mapping class on $\mathbb{P}^1$ with distinguished points. Since $\mathcal{M}(\widehat{b})=\infty$, the class ${\mathcal H}_{\infty} ({\mathfrak
m}_b)$ is represented by a periodic mapping of the
complex plane ${\mathbb C}$ with three distinguished points (see \cite{Be1}). By a conjugation we may assume that the periodic mapping fixes zero and
hence is a rotation by a root of unity. If zero is not a
distinguished point, the three distinguished points are
equidistributed on a circle with center zero and the mapping is
rotation by a power of $e^{\frac{2\pi i}3}$. The mapping representing  ${\mathcal H}_{\infty} ({\mathfrak
m}_b)$
corresponds to
$
\reallywidehat{(\sigma_1 \, \sigma_2)^{\ell} / \langle \Delta_3^2 \rangle}
\,
$
and the mapping representing  ${\mathfrak m}_b$ corresponds to $\reallywidehat{(\sigma_1 \, \sigma_2)^{\ell}}$ for an integer number $\ell$.

If zero is a distinguished point, the other two distinguished points
are equidistributed on a circle with center zero. The mapping  representing  ${\mathcal H}_{\infty} ({\mathfrak m}_b)$is a
rotation, precisely, a multiplication by a power of $-1$, and
corresponds to $\reallywidehat{(\sigma_1 \, \sigma_2 \, \sigma_1)^{\ell}
/ \langle \Delta_3^2 \rangle}$ for an integer number $\ell$.
(Note that powers of $\Delta_3^2$ are reducible and the mapping class in $\mathfrak{M}(\mathbb{P}^1;\{\infty\}, E_3)$ corresponding to $\Delta_3^2$ is the identity.)
The mapping  representing  ${\mathfrak m}_b$ corresponds to  $\reallywidehat{(\sigma_1 \, \sigma_2 \, \sigma_1)^{\ell}}$.
We proved (b).

Suppose $\widehat b$ is reducible, i.e. the mapping class
${\mathfrak m}_b$ is reducible for $b \in \widehat b$.
Then by Lemma \ref{lemEl.0} the class $\widehat b$ is the conjugacy class of $\sigma_1^k\Delta_3^{2\ell}$ for integers $k\neq 0$ and $\ell$. We obtained (c). The braid $b =
\sigma_1^k \, \Delta_3^{2\ell}$ has infinite conformal module. This
gives (a) in the reducible case.

Suppose $\widehat b$ is the conjugacy class of a commutator $b \in
{\mathcal B}_3$. Then $b$ has exponent sum zero (more detailed,
the sum of exponents of the generators in a representing word equals zero). If ${\mathcal M}
(\widehat b) = \infty$ then this is possible only if $b$ is conjugate to $\sigma_1^k
\, \Delta_3^{2\ell}$
with $k+6\ell=0$. Hence, $k$ is even and the
braid $b$ is pure. This proves (d). The lemma is proved. \hfill
$\Box$

\medskip

\noindent{\bf Proof of Lemma \ref{lem8.0}.} Let  $\mathfrak{S}_f= \{(z,\zeta)\in X\times \mathbb{C}: f(z,\zeta)=0\}$ be the zero set of the algebroid function $f$. Since $f$ is irreducible, this set is connected. We obtain
an unramified covering $\mathfrak{S}_f \rightarrow X$ of degree $3$. Hence, for the Euler characteristics $\chi(X)$ and $\chi(\mathfrak{S}_f)$ the relation
$\chi(\mathfrak{S}_f)=3\chi(X)$ holds. Let $m(X)$ and $m(\mathfrak{S}_f)$ be the number of boundary components of $X$ and $\mathfrak{S}_f$. Then
$2-2g(\mathfrak{S}_f)-m(\mathfrak{S}_f)= 3(2-2g(X)-m(X))$. Hence, since $m(X)=1$, $m(\mathfrak{S}_f)$ equals either $1$ or $3$. This means that
$f\mid A$ is either irreducible or solvable. If $f\mid A$ is irreducible, it represents an irreducible conjugacy class of braids  $\widehat b_{f,A}$. By Lemma \ref{lemm8.3}  $\hat b_{f,A}$ must be the conjugacy class of a  periodic braid which corresponds to a $3$-cycle. This is impossible for conjugacy classes of products of braid commutators. \hfill $\Box$
\index{$\chi(X)$} \index{$\widehat{b}_{f,A}$}

\medskip

We will discuss in more detail the result of Gorin, Lin and Zjuzin (\cite{GL}, \cite{Z}) and give an alternative proof of a slight improvement. For simplicity we will
restrict ourselves to the case when the degree of the
quasipolynomial is a prime number. (The general case has been
considered by Lin and later by Zjuzin. An alternative proof of the
general case can be given as well.) In \cite{GL} Gorin and Lin
proved that for each prime number $n$ there exists a number $r_n$
such that any separable algebroid function of degree $n$ on an
annulus $A$ of conformal module strictly larger than $r_n$ is
reducible provided the index of its discriminant $z \to {\sf D}_n
(f_z)$, $z \in A$, is divisible by $n$. Recall that the value at a
point $z \in A$ of a separable algebroid function of degree $n$ on
$A$ is a polynomial $f_z \in {\mathfrak P}_n$. ${\sf D}_n (f_z)$
denotes its discriminant. Recall that ${\sf D}_n$ is a function on
the space of all polynomials $\overline{\mathfrak P}_n$ of degree
$n$ which vanishes exactly on the set of polynomials with multiple
zeros. Explicitly, for a polynomial ${\sf p} \in \overline{\mathfrak
P}_n$, ${\sf p} (\zeta) = \underset{j=1}{\overset{n}{\prod}} (\zeta
- \zeta_j)$, its discriminant equals
\begin{align}\label{eq8.2}
{\sf D}_n ({\sf p}) = \prod_{i < j} (\zeta_i - \zeta_j)^2 \, .
\end{align}
The index of the mapping
$$
A \ni z \to {\sf D}_n (f_z) \in {\mathbb C} \backslash \{0\} \, ,
\quad z \in A \, ,
$$
is the degree of the map
$$
z \to \frac{{\sf D}_n (f_z)}{\vert {\sf D}_n (f_z) \vert}
$$
from $\{ \vert z \vert = 1\}$ to itself. 
Zjuzin \cite{Z} proved that one can take $r_n = n \cdot \rho_0$ for an
absolute constant $\rho_0$, and
Petunin showed that $\rho_0=10^7$ works. 
We give a short conceptional proof of the theorem of these authors with an improved constant.
\begin{thm}\label{thm8.3}
Suppose
$n$ is a prime number. If $A=\{z\in \mathbb{C}:\frac{1}{\sqrt{r}}<|z|<\sqrt{r}\}$  is an annulus of conformal module
$m(A) > \frac{2\pi}{\log 2} \, n$, and $f$ is a separable algebroid function on $A$ of degree $n$ such that the index of its discriminant is divisible
by $n$, then $f$ is reducible.
\end{thm}
The present proof of the theorem uses the Main Theorem (see also Theorem 1 of \cite{Jo})
and the following result of Penner \cite{P} on the smallest non-vanishing
entropy among irreducible $n$-braids.

\begin{thm}\label{thm8.4}{\rm (Penner)} Denote by $h_{\sf{g}}^{\sf{m}}$ the
smallest non-vanishing entropy among irreducible self-homeomorphisms
of Riemann surfaces of genus $\sf{g}$ with $\sf{m}$ distinguished points
$(2{\sf{g}}-2+{\sf{m}} > 0)$. Then
$$
h_{\sf g}^{\sf m} \geq \frac{\log 2}{12{\sf g} - 12+4 {\sf m}} \, .
$$
\end{thm}
Penner's theorem implies that the smallest
non-vanishing entropy among irreducible $n$-braids, $n \geq 3$, is
bounded from below by
$$
\frac{\log 2}{4(n+1)-12} = \frac{\log 2}{4n-8} \geq \frac{\log 2}{4}
\cdot \frac 1n \, , \quad n \geq 3 \, .
$$
By the Main Theorem  the largest finite conformal module among irreducible conjugacy classes of $n$-braid does not exceed $\frac{\pi}{2} \ \frac{4}{\log2} \cdot n $ for $ n \geq 3$.

\medskip

\noindent {\bf Proof of Theorem \ref{thm8.3}} Suppose $f$ is an irreducible
separable algebroid function on the annulus $A$ and
$$
m(A) > \frac{\pi}{2} \ \frac{4}{\log2} \cdot n = \frac{2\pi}{\log 2}
\cdot n \, .
$$
By Lemma \ref{lem8.1} the conjugacy class $\widehat b_{f,A} \in \widehat{\mathcal
B}_3$ induced by $f$ is irreducible. By the Main Theorem the entropy $h
(\widehat b_{f,A})$ of the conjugacy class of braids $\widehat b_{f,A} \in
\widehat{\mathcal B}_n$ induced by $f$ is strictly smaller than
$\frac{\log2}4 \cdot \frac 1n$. By Penner's Theorem $h(\widehat b_{f,A})
= 0$, i.e. ${\mathcal M} (\widehat b_{f,A}) = \infty$.

\smallskip

This implies that $\widehat b_{f,A}$ is the conjugacy class of a
periodic braid corresponding to an $n$-cycle, hence it is the
conjugacy class of $(\sigma_1 \, \sigma_2 \ldots \sigma_n)^k$ for an
integer $k$ which is not divisible by $n$. The isotopy class of the
algebroid function $\tilde f (z,\zeta) = \zeta^n - z^k$, $z \in A$,
induces this conjugacy class of braids. Indeed, for $z = e^{2\pi i
t}$, $t \in [0,1]$, the set of solutions of the equation $\tilde f
(z,\zeta) = 0$ is
$$
E_n(t) \stackrel{def}=\left\{ e^{\frac{2\pi ik}{n} t} , e^{\frac{2\pi ik}n + \frac{2\pi i
k}n t} , \ldots , e^{\frac{2\pi i k (n-1)}n + \frac{2\pi i k}n t}
\right\} \, , \quad t \in [0,1] \, .
$$
The path $t\to E_n(t),\,t \in [0,1],$ in ${\mathfrak P}_n$ defines a geometric braid in the
conjugacy class of $(\sigma_1 \cdot \sigma_2 \cdot \ldots \cdot
\sigma_n)^k$.

\smallskip

Compute the discriminant ${\sf D}_n (\tilde f_z)$:
\begin{equation}
{\sf D}_n (\tilde f_z) \,= \,\prod_{0 \leq m < \ell < n} \left(
e^{\frac{2\pi ik}{n} \cdot m + \frac{2\pi i k}n \cdot t} -
e^{\frac{2\pi ik}{n} \cdot \ell + \frac{2\pi i k}n \cdot t}
\right)^2 \nonumber
\end{equation}
\begin{equation}\label{eq8.19} = e^{\frac{2\pi ik}{n} \cdot t \cdot n \cdot (n-1)}
\cdot \prod_{0 \leq m < \ell < n} \left( e^{\frac{2\pi ik}{n} m}
- e^{\frac{2\pi ik}{n} \ell} \right)^2 \, . 
\end{equation}
Hence
$$
{\sf D}_n (\tilde f_z) = e^{2\pi i k \cdot (n-1) \cdot t} \cdot c_n
\, , \quad z = e^{2\pi i t} \, , \quad t \in [0,1] \, ,
$$
where $c_n$ is a non-zero constant depending only on $n$. (It is
equal to the product in the last expression of \eqref{eq8.19}.) The index of
the discriminant equals $k \cdot (n-1)$ which is not divisible by
$n$. We saw that the condition for the discriminant excludes the
only possibility for a separable algebroid function on annuli of the
given conformal module to be irreducible. Hence under the conditions
of Theorem \ref{thm8.3} the algebroid function $f$ must be reducible. \hfill $\Box$

\chapter{Gromov's Oka Principle and conformal module}
\label{chapterGrom}
\setcounter{equation}{0}

While  contemporary research on the Gromov-Oka Principle has mostly been focusing on  situations when this principle holds, we study the failure and limited validity of this important principle. The obstructions to the principle are based on the relation between conformal invariants of the source and the target. The conformal invariant
used in this chapter is the collection of conformal modules of conjugacy classes of elements of the fundamental group of the manifold.

We call a mapping $f:X\to Y$ from a finite open oriented smooth surface to a complex manifold $Y$ a Gromov-Oka mapping if for each orientation preserving conformal structure on $X$
with only thick ends the mapping is homotopic to a holomorphic mapping.

We will confirm Gromov's prediction, that mappings from annuli into a complex manifold play a special role for understanding the homotopy problem of continuous mappings
to holomorphic mappings.
More precisely, it will be proved, that there are finitely many annuli in an open  oriented smooth surface $X$  of finite positive genus, that may be chosen effectively, such that a continuous mapping to the twice punctured complex plane is a Gromov-Oka mapping, if and only if its restriction to each of the annuli is a Gromov-Oka mapping. This allows a complete description of the Gromov-Oka mappings from finite open oriented smooth surfaces to the twice punctured Riemann surface. Similarly,  the Gromov-Oka mappings from a smooth oriented surface of genus   one with one hole into $\mathfrak{P}_3$ will be completely described.

\section [Gromov's Oka Principle and the conformal module]{Gromov's Oka Principle and the conformal module of conjugacy classes of elements of the fundamental group}
\label{sec:Grom.1}
Gromov  \cite{G} formulated his Oka Principle as "an expression of an optimistic expectation with regard to the validity of the $h$-principle for holomorphic maps in the situation when the source manifold is Stein". Holomorphic maps $X\to Y$ from a
complex manifold $X$ to a complex manifold $Y$ are said to satisfy the $h$-principle if each continuous map from $X$ to $Y$ is homotopic to a holomorphic map. We call a target manifold $Y$ a Gromov-Oka manifold if the $h$-principle holds for holomorphic maps from any Stein manifold to $Y$.
Gromov \cite{G} gave a sufficient condition on a complex manifold $Y$ to be a Gromov-Oka manifold.
\index{Oka Principle} \index{Gromov's Oka Principle}  \index{Gromov-Oka manifold} \index{$h$-principle} \index{Forstneric} \index{Oka manifold}

The question of understanding Gromov-Oka manifolds received a lot of attention. It turned out to be fruitful to strengthen the requirement on the target $Y$ by combining the 
$h$-principle for holomorphic maps
with a Runge type approximation property. Manifolds $Y$ satisfying the stronger requirement are called Oka manifolds. It has been proved by Forstneric that a manifold is an Oka manifold iff Runge approximation holds for holomorphic mappings from neighbourhoods of compact convex subsets of $\mathbb{C}^n$ into $Y$ (see \cite{Forst2}).
For more details, examples of Oka manifolds and an account on modern development of Oka theory based on Oka manifolds see \cite{Forst}. Notice that among the connected Riemann surfaces exactly the non-hyperbolic ones , i.e. the Riemann sphere $\mathbb{P}^1$, the complex plane $\mathbb{C}$ , the punctured complex plane $\mathbb{C}^*$, and the tori are Oka manifolds.

On the other hand, in the same paper \cite{G}  Gromov mentioned the case of maps from annuli to the twice punctured complex plane as simplest interesting example where the $h$-principle for holomorphic mappings fails.
He proposed the conformal invariants of
the twice complex plane which are called here the conformal modules of conjugacy classes of elements of the fundamental group and ascribed to them
a role in capturing certain "conformal rigidity" of the twice punctured complex plane. He supposed that for a complex manifold $Y$, which is not a Gromov-Oka manifold, the holomorphic mappings from annuli to $Y$ play a special role for understanding
obstructions to his Oka Principle.

Recall the definition of these invariants that were introduced in Chapter \ref{chapter4}
 for general complex manifolds.
Let $\mathcal{X}$ be a topological space, and $\hat e$ a conjugacy class of elements of the fundamental group of $\mathcal{X}$. $\hat e$ can be interpreted as a free homotopy class of loops in $\mathcal{X}$.
A continuous map $g$ from an
annulus $A = \{ z \in {\mathbb C} : r < \vert z \vert < R \}$ into
${\mathcal X}$ is said to represent the conjugacy class $\hat e$
if for some (and hence for any) $\rho \in (r,R)$ the map $g :
\{\vert z \vert = \rho \} \to {\mathcal X}$ represents $\hat e$.
If the mapping is a homeomorphism we will speak about an annulus contained in $\mathcal{X}$ and representing $\hat e$. If $\mathcal{X}$ is a complex manifold and the mapping $g$ is holomorphic, we will speak about a holomorphic annulus representing $\hat e$.

\begin{defn}\label{defnGrom.1}
For any complex manifold $\mathcal{X}$ and any conjugacy class $\hat e$ of elements of the fundamental group of $\mathcal{X}$ the conformal module $\mathcal{M}(\hat{e})$ of $\hat e$ is defined as the supremum of the conformal modules of annuli $\,A= \{z \in \mathbb{C}:\; r<|z|<R\}\,\,$ that admit a holomorphic mapping into $\mathcal{X}$ representing $\hat e$.
\end{defn}
Obstructions to Gromov's Oka Principle are
based on the relation between conformal invariants of the source and the target. Let $X$ and $Y$ be  topological spaces.
For an element $e\in \pi_1(X,q_0)$ we let  $\hat e$ be its free homotopy class.
Further, for a continuous mapping $f:X\to Y$ we let $f_*:\pi_1(X,q_0)\to \pi_1(Y,f(q_0)$ be the induced homomorphism and $\widehat{f_*({e})}$  the  free homotopy class of $f_*(e)$. The following observation provides an obstruction to Gromov's Oka Principle.\\
{\it Let $f:X\to Y$ be a holomorphic mapping from a complex manifold $X$ to a complex manifold $Y$. Then $\mathcal{M}(\widehat{f_*({e})})\geq \mathcal{M}(\hat{e})$ for each free homotopy class $\hat e$ of loops
in $X$.}\\
Indeed, if a holomorphic mapping $g:A\to X$ from an annulus $A$ to $X$ represents $\hat e$, then the composition $f\circ g:A\to Y$ represents $\widehat{f_*(e)}$ Hence, $\mathcal{M}(\widehat{f_*(e)})\geq m(A)$. Taking annuli of conformal module close to $\mathcal{M}(\hat{e})$, we obtain the statement.

If we know effective estimates of  conformal modules of free homotopy classes of loops in the source and in the target, we get effective information on the obstructions.

Here and in the next chapters we will study the following basic problem.
\begin{probl}\label{probl0}
Study obstructions for Gromov' Oka  Principle. In particular, look for effective estimates of the
conformal modules of conjugacy classes of elements of the fundamental group of complex manifolds,
if possible, in algebraic terms related to the fundamental group.
\end{probl}
If the source space is an annulus $A$, the conformal module of conjugacy classes of elements of the target $Y$ almost describe, which continuous mappings $A\to X$ are homotopic to holomorphic mappings.  Namely, with a generator $e$ of the fundamental group of $A$ with some base point chosen, $f$ is homotopic to a holomorphic mapping, if $m(A)<\mathcal{M}(\widehat{f_*({e})})$ and is not homotopic to a holomorphic mapping if $m(A)>\mathcal{M}(\widehat{f_*({e})})$. The case $m(A)=\mathcal{M}(\widehat{f_*({e})})$ is more subtle. For the case when the target is the twice punctured complex plane effective estimates of the conformal modules of conjugacy classes of elements of the fundamental group will be given in Chapter \ref{chapter3-braids}, so that the obstructions to Gromov's Oka Principle can be made effective in this case.

Consider the target $Y=\mathfrak{P}_n$. Recall that $\mathfrak{P}_n\cong C_n(\mathbb{C})\diagup {\mathcal S}_n$ is the space of monic polynomials without multiple zeros, parameterized either by the coefficients or by the unordered tuple of the zeros of the polynomials (see Section \ref{sec:2.1} and Chapter \ref{chapter8}). Note that $\mathfrak{P}_n=\mathbb{C}^n\setminus \{{\sf D}_n=0\}$ is the complement of an algebraic hypersurface in $\mathbb{C}^n$. Here ${\sf D}_n$ is the discriminant  (see also equality \eqref{eq8.2}).
A continuous mapping $f:A\to \mathfrak{P}_n$ represents a conjugacy class
$\widehat{b}_{f,A}$ of braids. For $n=3$ there are conjugacy classes of braids that cannot be represented by a holomorphic mapping from an annulus of conformal module bigger than
$ \frac\pi2 \left(\log \frac{3+\sqrt 5}2
\right)^{-1}$     (see Lemma \ref{lemm8.3}). The same is true for any $n>3$.
Hence,  $\mathfrak{P}_n$ is not a Gromov-Oka manifold for $n\geq 3$.

On the other hand,  
any continuous map from an oriented connected open smooth surface $X$ into $\mathfrak{P}_2$ has the Gromov-Oka property, moreover, any such map is homotopic to a holomorphic map for any conformal structure on $X$.  This statement can be proved in the same way as Statement (2) of Theorem \ref{thmEl.0} below.

By the definition of the conformal module of a conjugacy class of $n$-braids (see Definition \ref{defnGrom.1})
a continuous mapping $f$ from an annulus $A$ to the space  $\mathfrak{P}_n$ is homotopic to a holomorphic mapping if $m(A)<\mathcal{M}(\widehat{b}_{f,A})$ and is not homotopic to a holomorphic mapping if  $m(A)>\mathcal{M}(\widehat{b}_{f,A})$. In other words,
the relation of the conformal module $\mathcal{M}(\widehat{b}_{f,A})$ to the conformal invariant $m(A)$ of $A$ "almost" determines whether $f$ is homotopic to a holomorphic mapping.
Recall that $\mathcal{M}(\widehat{b}_{f,A})$ is inverse proportional to the entropy of $\widehat{b}_{f,A}$. It is reasonable to interpret the entropy as "complexity": The complexity grows with increasing entropy, while the chance for the mapping to be homotopic to a holomorphic one decreases. For $n=3$ there are effective upper and lower bounds for the conformal module (equivalently, for the entropy) of conjugacy classes of $3$-braids that differ by multiplicative constants.  We will address this topic in Chapter \ref{chapter3-braids} following \cite{Jo2}, \cite{Jo3}. Notice that also a multiple of the inverses of the conformal modules of conjugacy classes of elements of the fundamental group $\pi_1(\mathbb{C}\setminus \{-1,1\},0)$ of the twice punctured complex plane are equal to the entropies of associated mapping classes.
\index{$\widehat{b}_{f,A}$}

The problem of understanding obstructions to Gromov's Oka Principle is interesting for other target spaces $Y$ as well. In this respect the following problem occurs.
\begin{probl}\label{problGrom.0}
Study the Gromov-Oka Principle in case the target is the complement of an arbitrary algebraic hypersurface of $\mathbb{C}^n$ or the quotient of the
$n$-dimensional round complex ball by a subgroup of its automorphism
group which acts freely and properly discontinuously. Study the conformal module of conjugacy classes of elements of the fundamental group of such spaces.
\end{probl}

The collection of conformal
modules of all conjugacy classes of elements of its fundamental group $\pi_1(Y,y_0)$ which is a biholomorphic invariant of the complex
manifold $Y$ is expected to play a special role for understanding the Gromov-Oka Principle. In the case of the space $\mathfrak{P}_n$
this may be opposed to the fact, that in connection with his interest in the Thirteen's Hilbert Problem Arnold  \cite{Ar} considered the topological (cohomological) invariants of $\mathfrak{P}_n$.

The facts known for mappings from annuli to $\mathfrak{P}_n$ motivate the following two problems related to the restricted validity of Gromov's Oka Principle in general.
\begin{probl}\label{problGrom2}
Fix a connected open (i.e. non-compact) complex manifold $X$ and a complex manifold $Y$. Obtain information about the set of continuous or smooth mappings $X\to Y$ that are homotopic to holomorphic mappings.
\end{probl}

For a smooth manifold $X$ an orientation preserving homeomorphism $\omega:X\to \omega(X)$ from $X$ to a complex manifold
$\omega(X)$ is called a complex structure on $X$. If $X$ is a smooth surface and $\omega(X)$ is a Riemann surface, we will also call $\omega$ a conformal structure on $X$. A mapping $f$ from $X$ to a complex manifold $Y$ is said to be holomorphic for the complex structure $\omega$ if the mapping $f\circ \omega^{-1}$ is holomorphic on $\omega(X)$.
\index{complex structure}
\index{Oka Principle ! soft}

The following statement is known as ''soft Oka Principle'' (see \cite{Forst1}).

\smallskip

\noindent {\bf Soft Oka Principle.} {\it Any continuous mapping from a Stein manifold $X$ to a complex manifold $Y$ gets holomorphic after changing both, the mapping and the complex structure of $X$, by a homotopy.} (Forstneric, Slapar)

\smallskip

\noindent If $X$ is a finite open Riemann surface and $Y= \mathbb{C}\setminus \{-1,1\}$, the soft Oka principle is just Runge's approximation theorem on compact subsets of Riemann surfaces.
\index{Runge's Approximation Theorem} \index{Mergelyan Approximation}
Indeed, take any continuous function $f:X\to \mathbb{C}\setminus\{-1,1\}$.  Let $Q$ be a standard bouquet of circles for $X$.
The continuous function $f \mid Q$  can be approximated uniformly on $Q$ by holomorphic functions on
$X$. See \cite{Ko} for Mergelyan type approximation (approximation
of continuous functions on $Q$ by holomorphic functions in a
neighbourhood of $Q$) and \cite{Sch} for Runge approximation by
meromorphic functions on closed Riemann surfaces (here,
approximation of holomorphic functions in a neighbourhood of $Q$ by
holomorphic functions on $X$). We obtain approximating holomorphic
functions defined on $X$ which do not necessarily omit $-1$ and $1$. But their restrictions to
$Q$ are close to $f \mid Q$ , hence their restrictions to $Q$
have their image in $\mathbb{C}\setminus\{-1,1\}$
and are homotopic to $f \mid Q$ as mappings into $\mathbb{C}\setminus\{-1,1\}$. Hence an approximating
holomorphic function $F$ maps a neighbourhood $V$ of $Q$ into $\mathbb{C}\setminus\{-1,1\}$ and $F|V$ is homotopic to $f\mid V$ through mappings into  $\mathbb{C}\setminus\{-1,1\}$. We may take $V$ to be smoothly bounded
and such that there exists a continuous family of mappings $\varphi_t:X\to X, \,t\in [0,1],$ that map $X$ homeomorphically onto a domain $X_t$ in $X$ such that $\varphi_0$ is the identity and $X_1=V$.
The homeomorphisms $\varphi_t:X\to X_t$ are considered as new complex structures on $X$. The mapping
$F\circ \varphi_1$ is holomorphic on $X$ for the complex structure $\varphi_1$.

\begin{defn}\label{defn8.2} Let $X$ be an oriented finite open smooth surface and $Y$ a
complex manifold.
A continuous mapping $f:X\to Y$ is said to be homotopic to a holomorphic mapping
for a conformal structure $\omega:X\to \omega(X)$ on $X$, if $f \circ w^{-1}$ is homotopic to a holomorphic mapping from $\omega(X)$ to $Y$.
\end{defn}
The "soft Oka Principle" motivates the following problem.
\begin{probl}\label{problGrom3}
Consider an oriented connected smooth open manifold $X$, a complex manifold $Y$, and a continuous mapping $f:X\to Y$. Obtain information on the set of orientation preserving complex structures $\omega:X\to \omega(X)$ on $X$ such that $f$ is homotopic to a holomorphic mapping for $\omega$.
\end{probl}

We restrict ourselves to surfaces $X$ as source manifold. Recall the terminology introduced in Chapter \ref{chapter2}.
A surface is called finite if its fundamental group is finitely generated.
Each finite open Riemann surface $X$ is conformally equivalent
to a domain (denoted again by $X$) on a closed Riemann surface $X^c$ such that each connected component of the complement $X^c \setminus X$ is either a point or a closed topological disc with smooth boundary \cite{Sto}.
The connected components of the complement will be called holes.
A Riemann surface is called of first kind,
if it is closed, or it is obtained from a closed Riemann surface by removing finitely many points (called punctures). Otherwise the connected Riemann surface is called of second kind. If all holes of a finite open Riemann surface are closed topological discs, the Riemann surface is said to have only thick ends.

\begin{defn}\label{defn8.2'}
Let $X$ be a connected oriented finite open smooth surface, and let $Y$ be a complex manifold. We will say that a continuous mapping $f:X\to Y$ has the Gromov-Oka property if it is homotopic to a holomorphic mapping for any orientation preserving conformal structure $\omega:X\to \omega(X)$ with only thick ends (i.e. with $\omega(X)$ having only thick ends).
\end{defn}
For short we will call mappings with the Gromov-Oka property Gromov-Oka mappings.

 \index{conformal structure}

We look first at the Gromov-Oka  property for mappings from annuli into the spaces $ \mathfrak{P}_n$.
Recall that if $n=2$ then for an annulus $A$ each continuous mapping $f:A\to\mathfrak{P}_2$
has the Gromov-Oka property, moreover, it is homotopic to a holomorphic mapping for any conformal structure on $A$.
For the case $n=3$
the following lemma holds.
\begin{lemm}\label{lemGrom0}
A continuous mapping $f$ from an annulus $A=\{ r_1<|z|<r_2\}$ to $\mathfrak{P}_3$ is homotopic to a holomorphic mapping for each orientation preserving conformal structure of second kind on $A$ if and only if it has the Gromov-Oka property. This happens if and only if the inequality $\mathcal{M}(\widehat{b}_{f,A})> \frac{\pi}{2}(\log\frac{3+\sqrt{5}}{2})^{-1}$ holds.
\end{lemm}
Recall that $\log\frac{3+\sqrt{5}}{2}$ is the smallest non-vanishing entropy among $3$-braids. Recall also that
an annulus is of second kind if and only if it has at least one thick end, and a mapping $f:A\to Y$ is a Gromov-Oka  mapping if it is homotopic to a holomorphic one for any conformal structure on $A$ with only thick ends.

\medskip

\noindent{\bf Proof of Lemma \ref{lemGrom0}.} If $f$ is homotopic to a holomorphic function for any orientation preserving conformal structure of second kind on $A$ or for any conformal structure with only thick ends, then $\mathcal{M}(\widehat{b}_{f,A})=\infty$.

Suppose on the other hand that $\mathcal{M}(\widehat{b}_{f,A})> \frac{\pi}{2}\, (\log( \frac{3 +\sqrt{5}}{2}))^{-1}$.
Then Lemma \ref{lemEl.0} implies that
$\mathcal{M}(\widehat{b}_{f,A})=\infty$. Hence $f$ is homotopic to a holomorphic mapping for any orientation preserving conformal structure on $A$ of finite conformal module. There are two orientation preserving conformal structures of infinite conformal module, one conformally equivalent to the punctured disc $\mathbb{D}\setminus \{0\}$, it is of second kind, and the other conformally equivalent to the punctured complex plane, it is of first kind. By Lemma \ref{lemEl.0} a conjugacy class of braids of infinite conformal module is either a periodic braid, or it is conjugate to $\sigma_1^k$. We claim that in the case $\widehat{b}_{f,A}$ is a periodic conjugacy class of braids, there is an integer number $k$, such that $f$ is isotopic on the punctured plane to the quasipolynomial
$f_{1,k}(z,\zeta)= z^{k}-\zeta^3, \, z\in \mathbb{C}\setminus\{0\}, \,\zeta\in \mathbb{C}$,
or to $f_{2,k}(z,\zeta)= \zeta (z^k-\zeta^2), \, z\in \mathbb{C}\setminus\{0\}, \,\zeta\in \mathbb{C}$, respectively.
If $\widehat{b}_{f,A}$ is the conjugacy class of $\sigma_1^k$, we take any constant $R>1$. We claim that the mapping $f$ is isotopic on the punctured disc to $f_{3,k}(z,\zeta)= (z^k-\zeta^2) (\zeta -R)   , \, z\in \mathbb{D}\setminus\{0\}, \,\zeta\in \mathbb{C}$ for some integer number $k$.

Indeed, for $z = \frac{1}{2} e^{2\pi i
t}$, $t \in [0,1]$, the set of solutions of the equation $f_{1,k}
(z,\zeta) = 0$ is
$$
E_1^{k}(t) \stackrel{def}= 2^{-\frac{k}{3}}
e^{\frac{ 2\pi ik}{3} t}\left\{1 , e^{\frac{2\pi ik}{3} } , e^{\frac{4\pi ik}{3} }\right\}  \, , \quad t \in [0,1] \, .
$$
The path $t\to E_1^{k}(t),\,t \in [0,1],$ in ${\mathfrak P}_3$ defines a geometric braid in the
conjugacy class of $(\sigma_1 \cdot \sigma_2)^{k} $.

The set of solutions of the equation $f_{2,k}
(z,\zeta) = 0$, $z = \frac{1}{2} e^{2\pi i
t}$, $t \in [0,1]$,  is
$$
E_2^k(t) \stackrel{def}= 
2^{-\frac{k}{2}}   e^{\frac{ 2\pi ik}{2} t}\left\{-1,0,1\emph{}\right\}  \, , \quad t \in [0,1] \, .
$$
The respective path represents $\Delta_3^k$.

Finally, the set of solutions of the equation $f_{3,k}
(z,\zeta) = 0$,  $z = \frac{1}{2} e^{2\pi i
t}$, $t \in [0,1]$, is
$$
E_3^k(t) \stackrel{def}=
  \left\{2^{-\frac{k}{2}} e^{\frac{ 2\pi ik}{2} t}, - 2^{-\frac{k}{2}}e^{\frac{ 2\pi ik}{2} t} ,R\right\}  \, , \quad t \in [0,1] \, .
$$
The respective path represents $\sigma_1^k$.

The quasipolynomial $f_3$ is not
isotopic to a holomorphic one on the punctured plane. This follows from Lemma \ref{prop8.1} below.
\hfill $\Box$

\medskip

\begin{lemm}\label{prop8.1} Let $\hat b$ be a reducible conjugacy class of
$n$-braids of infinite conformal module which is not periodic.

Then there is no
holomorphic mapping from ${\mathbb C}^* = {\mathbb C} \backslash
\{0\}$ into ${\mathfrak P}_n$ representing $\hat b$.
\end{lemm}

\noindent {\bf Proof.} We have to prove that there is no holomorphic
mapping from ${\mathbb C}^*$ into ${\mathfrak P}_n$ which represents
$\hat b$. Assume the contrary. We may assume that $\hat b$ is
represented by a pure braid $b$. Indeed, if there is a holomorphic
mapping from ${\mathbb C}^*$ into ${\mathfrak P}_n$ representing
$\hat b$, then there is also such a holomorphic mapping representing
$\widehat{b^k}$ for any $k \in {\mathbb Z} \backslash \{0\}$.
The
number $k$ can be chosen so that $b^k$ is pure. By our assumption $b^k$ is not a power of  $\Delta_n^2$.
Since $b^k$ is pure, there is a lift of the mapping that represents $\widehat{b^k}$ to a mapping $g:{\mathbb C}^*\to C_n(\mathbb{C})$.
Associate to each point $z=(z_1,z_2,\ldots,z_n)\in  C_n(\mathbb{C})$ the complex affine mapping $\mathfrak{a}_z$ on $\mathbb{C}$,
$\mathfrak{a}_z(\zeta)= \frac{\zeta-z_1}{z_2-z_1} $., that maps $z_1$ to $0$ and $z_2$ to $1$.
Consider the  holomorphic mapping ${\mathbb C}^*\ni \xi \to \mathfrak{a}_{g(\xi)}(g(\xi))$ that maps a point $\xi \in {\mathbb C}^*$ to the point $\Big(\mathfrak{a}_{g(\xi)}(g_1(\xi)),   \mathfrak{a}_{g(\xi)}(g_2(\xi)),\ldots,\mathfrak{a}_{g(\xi)}(g_n(\xi))\Big)$, where $g(\xi)=\Big(g_1(\xi),g_2(\xi),\ldots,g_n(\xi)\Big)\in C_n(\mathbb{C})$.
Then for the first two coordinates of  $\mathfrak{a}_{g(\xi)}(g(\xi))$ the equalities   $(\mathfrak{a}_{g(\xi)}(g(\xi)))_1= \mathfrak{a}_{g(\xi)}(g_1(\xi))\equiv 0$ and
 $(\mathfrak{a}_{g(\xi)}(g(\xi)))_2= \mathfrak{a}_{g(\xi)}(g_2(\xi))\equiv 1$ hold.
For $j\geq 3$ the coordinate  function $ (\mathfrak{a}_{g(\xi)}(g(\xi)))_j$ is a mapping from ${\mathbb C}^*$ to $\mathbb{C}\setminus\{0,1\}$. Its lift to the universal covering
 $\mathbb{C}\cong \tilde{\mathbb C}^*$ maps the complex plane into $\mathbb{C}_+$.
Hence by Liouville's Theorem the mapping $\xi\to \mathfrak{a}_{g(\xi)}(g(\xi))$
is constant on ${\mathbb C}^*$.
Therefore, $\widehat{b^k}$ can be represented by a mapping of the form $\xi\to\mathcal{P}_{\rm sym}((\mathfrak{a}_{g(\xi)})^{-1}(c_1,\ldots,c_n)))$ for constants $c_1,\ldots,c_n$.
This means that $b^k$ is a power of  $\Delta_n^2$ in contrary to the assumption. \hfill $\Box$

\medskip

We obtained the following facts for conjugacy classes $\hat b$ of $n$-braids with infinite conformal module.
Each annulus of finite conformal module admits a holomorphic mapping
into ${\mathfrak P}_n$, representing such a class $\hat b$. The conjugacy class of each periodic braid (irreducible or not)
can be represented by a holomorphic mapping on ${\mathbb C}^*$ (the proof given in  Lemma \ref{lemGrom0} for $n=3$ works for arbitrary $n$).
The conjugacy class of reducible mappings cannot be represented by a holomorphic mapping on ${\mathbb C}^*$, but
in some cases there exists  a holomorphic
representing mapping from ${\mathbb D}^* = {\mathbb D} \backslash
\{0\}$.

\medskip

We saw that
only very few mappings $f$ with infinite $\mathcal{M}(\widehat{b}_f)$ are homotopic to a holomorphic mapping on an annulus of first kind (i.e. on the punctured plane). This fact together with Lemma \ref{lemGrom0} motivates the use of conformal structures with only thick ends in the definition of the Gromov-Oka property.

The following question arises, which is part of Problem \ref{problGrom3}.

\smallskip

\noindent {\bf Problem 8.4a.}
{\it Given a connected oriented finite open smooth surface $X$ and a complex manifold $Y$
with a set of generators whose conjugacy classes have infinite conformal module, which mappings $f:X\to Y$ have the Gromov-Oka property? In particular, which mappings $f:X\to  \mathfrak{P}_n$ have the Gromov-Oka property? Which mappings from $X$ into the $n$-punctured complex plane have the Gromov-Oka property?}

\smallskip

We will use the notion of reducible elements of the fundamental group of the $n$-punctured complex plane. The notion is an analog of the notion of reducible braids, or reducible mapping classes, respectively.

\begin{defn}\label{defnGromov2}
Let $E'$ be a finite subset of the Riemann sphere $\mathbb{P}^1$ which consists of $n+1\geq 3$ points.
Let $X$ be a connected finite open Riemann surface with non-trivial fundamental group.  
A non-contractible continuous map
$f:X \to  \mathbb{P}^1\setminus E'$ is called reducible if it is free homotopic (as a mapping to $\mathbb{P}^1\setminus E'$) to a mapping whose image is contained in $D\setminus E'$ for an open topological disc $D\subset \mathbb{P}^1$
with $E'\setminus D$
containing at least two points of $E'$.
Otherwise the mapping is called irreducible.
\end{defn}
\index{mapping to a punctured sphere ! reducible}

Let $E=\{z_1,z_2,\ldots,z_n\}$ be a subset of the complex plane that contains exactly $n$ points  and let $E'=E\cup\{\infty\}$. We assign to each continuous mapping $f:[0,1]\to \mathbb{C}\setminus E $ with $f(0)=f(1)$ the pure geometric $(n+1)$-braid $F(t)=\{f(t),z_1,z_2,\ldots,z_n\},\, t\in [0,1]$.
Approximating we may assume that the mapping $f$ is smooth. Consider a parameterizing isotopy $\varphi_t,\, t\in[0,1],$ for the geometric braid $F$ that is contained in the cylinder $[0,1]\times R\mathbb{D}$ for a large number $R$.
The equality  $\varphi_t(\{f(0),z_1,z_2,\ldots,z_n\}) =\{f(t),z_1,z_2,\ldots,z_n\},\, t\in [0,1]         $ holds.
Let $e_f$ be the element of the fundamental group $\pi_1(\mathbb{C}\setminus E, f(0))$ with base point $f(0)$, that is represented by $f$. We associate to $e_f$ the braid $b_f$, that is represented by $F$, and the
mapping class  $\mathfrak{m}_{f,\infty}\in \mathfrak{M}(\mathbb{P}^1;E\cup \{f(0),\infty\})$   that is represented by the
self-homeomorphism $\varphi_1$ of $\mathbb{P}^1$, belonging to the family $\varphi_t$ .
We may assign to the homotopy class $e_f$ of $f$ an entropy, namely the entropy
of the mapping class  $\mathfrak{m}_f$, which is a measure of the complexity of the mapping $f$.

The following lemma holds.
\begin{lemm}\label{lemGrom1'}
For a subset $E$  of the complex plane that contains exactly $n\geq 2$ points a continuous non-contractible mapping  $f:[0,1]\to \mathbb{C}\setminus E $ is reducible if and only if the associated braid $b_f$ is reducible, equivalently, if the associated mapping class $\mathfrak{m}_{f,\infty}$ is reducible.
\end{lemm}

\noindent {\bf Proof}. We may assume that the mapping $f$ is smooth and all related objects are smooth.  Suppose that the mapping $f$ is reducible.  Then there exists a free homotopy $f_s, s\in [0,1],$ of mappings from $[0,1]$ into $\mathbb{C}\setminus E$ with $f_s(0)=f_s(1)$ and  $f_0=f$, such that for all $ t\in[0,1]$ the inclusion $f_1(t)\subset D$ holds  for a disc $D$ with $(E\cup\{\infty\})\setminus D$ containing at least two points. The pure geometric braids $F_s, \, s\in [0,1],$ with varying base point, that are defined by $f_s$, are free  isotopic,  hence, represent the same conjugacy class of braids $\widehat{b}_f$.  Let $ E$ be the set $\{z_1,\ldots,z_n\}$, where the label is chosen so that  $z_1,\ldots,z_k$ are the points of $ E$ that are contained in $D$.
Let $\varphi_t,\, t\in [0,1]$ be a parameterizing isotopy for the geometric braid $\{f_1(t),z_1,\ldots,z_k\}$,
$\varphi_t(\{f_1(0),z_1,\ldots,z_k\})=\{f_1(t),z_1,\ldots,z_k\},\, t\in[0,1]$.
Since $f_1(t)\in D$ for all $t$, the parameterizing isotopy can be chosen so that all $\varphi_t$, $t\in [0,1]$, are equal to the identity on $\mathbb{C}\setminus D$.
In particular $\varphi_1 $ maps $\partial D$ onto itself. Since
$(E\cup\{\infty\})\setminus D$ contains at least two points and $E\cap D$ contains at least one point (since $f_1$ is not contractible), the mapping $\varphi_1$ of $\mathbb{P}^1$ with set of distinguished points $\{f(0),z_1,\ldots,z_n,\infty\}$ is reducible, and hence the associated mapping class
$\mathfrak{m}_{f_1}$ is reducible.  Hence ${\widehat b_f}$, and thus $b_f$, is reducible.

Vice versa, suppose  $\mathfrak{m}_f$ is reducible. Since $b_f$ is a pure braid, there exists a mapping $\varphi\in \mathfrak{m}_{f}$ that fixes each of the points $f(0),z_1,\ldots,z_n$, and fixes a closed curve $C$ pointwise. Each of the components of $\mathbb{P}^1\setminus C$ contains at least two points among the
 $f(0),z_1,\ldots,z_n,\infty$.
After applying a M\"obius transformation and perhaps relabeling the points, we may assume that $f(0) ,z_1,\ldots,z_k$ are contained in the bounded connected component $D$ of $\mathbb{C}\setminus C$, and $z_{k+1},\ldots,z_n$ are contained in $\mathbb{C}\setminus \bar{D}$. 
Take a continuous family  $\varphi_t,\, t\in[0,1],$ of self-homeomorphisms of $\mathbb{P}^1$ that fix $\mathcal{C}$ pointwise
and
join the indentity with $\varphi$.
Then the pure geometric  braid  $\mathcal{F}(t)\stackrel{def}=\varphi_t\big(\{f(0),z_1,\ldots,z_n\}\big),\,t\in[0,1],$ is isotopic (through geometric braids with fixed base point) to $\{f(t),z_1,\ldots,z_n\}, t\in[0,1],$ and the  strands of $\mathcal{F}$ with initial points $f(0), z_1,\ldots,z_k$,  are contained in the domain $D$.

We prove now, that the geometric braid
$\,\mathcal{F}(t)\,=\,\varphi_t\big(\{f(0),z_1,\ldots,z_n)\}\big)\,,\,$ $t\in[0,1],$
is isotopic  (through geometric braids  with fixed base point) to a geometric braid of the form $G(t)= \{g(t),z_1,\ldots,z_n\},$ $ t\in [0,1]$, with $g(t)\in D$ for $t\in [0,1]$.
We look at the geometric braid $\,\varphi_t\big(\{z_1,\ldots,z_n\}\big)\,,
t\in[0,1]$, that is obtained from $\mathcal{F}(t), \,t \in[0,1],$ by forgetting the strand with initial point $f(0)$. This geometric braid
 $\varphi_t\big(\{z_1,\ldots,z_n\}\big) ,\,t\in[0,1],$
is isotopic with fixed base point to the constant geometric braid $\{z_1,\ldots z_k,z_{k+1},\ldots,z_n\},
\, t\in [0,1],$ by an isotopy that fixes $\mathcal{C}$ pointwise.

By Remark \ref{rem2.2} there exists a smooth family $\psi_{t,s}$ of diffeomorphisms of $\mathbb{P}^1$
that are equal to the identity
on $\mathcal{C}$,
such that
$\psi_{t,s}\big(\{z_1,\ldots,z_n\}\big)$ realizes an isotopy with fixed base point
that joins  $\varphi_t\big(\{z_1,\ldots,z_n\}\big) ,\,t\in[0,1],$ with the constant geometric braid.
We may choose the isotopy so that for a small positive number $\varepsilon$ we have $\psi_{t,s}=\varphi_t$
for $0\leq s\leq\varepsilon$, $\psi_{t,1}={\rm Id}$, and $\psi_{0,s}={\rm Id}$. Then the equality $\psi_{t,s}\big((z_1,\ldots,z_n)\big)=(z_1,\ldots,z_n)$ holds for $(t,s)$ in the boundary of the square $[0,1]\times [0,1]$ except its left side.

We look now at  $\psi_{t,s}\big(\{f(0),z_1,\ldots,z_n)\}\big), \, t,s\in [0,1]$.
Notice that the mapping $[\varepsilon,1]\ni s\to \psi_{1,s}(f(0))$ defines a curve with base point $f(0)$ in $D\setminus \{z_1,\ldots,z_k\}$.
Let $h$ be an orientation preserving self-homeomorphism of $[0,1]\times[0,1]$ that
equals the identiy on $[0,1]\times \{0\}$ and on $\{0\}\times [0,1]$,
maps $\{1\}\times [0,1]$ onto $\{1\}\times [0,\varepsilon]$ by a homeomorphism that fixes $(1,0)$, and takes $[0,1]\times\{1\}$ homeomorphically onto
the remaining part of the boundary. The mapping $\psi_{h(t,s)}\big((f(0),z_1,\ldots,z_k)\big)$ equals
 $\varphi_t\big((f(0),z_1,\ldots,z_k)\big)$ for $t\in[0,1]$ and $s=0$, it equals $(f(0),z_1,\ldots,z_k)$ for $t=0$ and $t=1$, and
is of the form  $t\to (g(t),z_1,\ldots,z_k),$ for $s=1$. Notice that the mapping $g$ is homotopic to the inverse of the mapping  $[\varepsilon,1]\ni s\to \psi_{1,s}(f(0))$. We found an isotopy  (through geometric braids  with fixed base point) joining
$\,\mathcal{F}(t)\,=\,\varphi_t\big((f(0),z_1,\ldots,z_n)\big)\,,\,$ $t\in[0,1],$
with a geometric braid of the form $G(t)= (g(t),z_1,\ldots,z_n),$ $ t\in [0,1]$, with $g(t)\in D$ for $t\in [0,1]$.

It remains to prove the following statement.
If two pure geometric braids $(f(t),z_1,\ldots,z_n),$ $ t\in [0,1]$, and   $(g(t),z_1,\ldots,z_n),$ $ t\in [0,1]$, with $z_1,\ldots,z_n$ being distinct complex  numbers, are isotopic through geometric braids with fixed base point  $(f(0),z_1,\ldots,z_n)=(g(0),z_1,\ldots,z_n)$, then $g$ and $f$ are homotopic mappings (with fixed base point) into $\mathbb{C}\setminus\{z_1,\ldots,z_n\}=\mathbb{P}^1\setminus\{z_1,\ldots,z_n,\infty\}$. This statement is again a consequence of Remark \ref{rem2.2}. Let $f_{t,s}$ be the isotopy  through geometric braids with fixed base point (contained in the cyinder $[0,1]\times R \mathbb{D}$ for a large number $R$). 
We take the family $\varphi_{t,s}, (t,s)\in [0,1]\times[0,1]\times R\overline{\mathbb{D} }$ of Remark \ref{rem2.2},
for which $\varphi_{t,s}\big((z_1,\ldots,z_n)\big)$ is equal to the tuple of the last $n$ entries of $f_{t,s}$, and $\varphi_{0,s}=\varphi_{t,0}= \varphi_{t,1}={\rm Id} $.
The family  $ \varphi_{t,s}^{-1}(f_{t,s})$ defines an isotopy of the form $(f_s(t),z_1,\ldots,z_n)$ that joins  $(f(t),z_1,\ldots,z_n)$  and   $(g(t),z_1,\ldots,z_n)$. The family $f_s(t)$ is the required homotopy.
The lemma is proved.
\hfill $\Box$

\medskip

\index{mapping to a punctured sphere ! reducible}

We will mostly consider the case when
 $E=\{-1,1,\infty\}$. We will often refer to $\mathbb{P}^1\setminus \{-1,1,\infty\}$ as the thrice punctured Riemann sphere or the twice punctured complex plane $\mathbb{C}\setminus \{-1,1\}$. By Definition \ref{defnGromov2}
a continuous 
mapping from a Riemann surface to the twice punctured complex plane is reducible, iff it is homotopic to a mapping with image in a once punctured disc contained in $\mathbb{P}^1\setminus E$. (The puncture may be equal to $\infty$.)

To describe the reducible mappings into $\mathbb{P}^1\setminus \{-1,1,\infty\}$,
we recall, that we denoted by $a_1$ ($a_2$, respectively) the generator
of $\pi_1(\mathbb{C}\setminus\{-1,1\},0)$ that
is represented by a curve with base point $0$ that surrounds $-1$ ($1$, respectively) positively. We called $a_1$ and $a_2$ the standard generators of $\pi_1(\mathbb{C}\setminus\{-1,1\},0)$. By Definition \ref{defnGromov2}
a mapping from a finite open Riemann surface into $\mathbb{P}^1\setminus \{-1,1,\infty\}$ is reducible if and only if its monodromies are contained in a subgroup generated by a single element which is a conjugate of a power of an element of the form
\begin{equation}\label{eq8.19''}
a_1,\; a_2,\;{\mbox{ or}}\;\, (a_1a_2)^{-1}\,.
\end{equation}
Indeed, the images under the mapping of all loops on the Riemann surface represent elements of such a subgroup of $\mathbb{P}^1\setminus \{-1,1,\infty\}$ iff
the mapping is homotopic to a mapping with image in a punctured neighbourhood of $-1$  ($+1$, or $\infty$, respectively).
\begin{lemm}\label{lemGrom3} Let $X$ be a connected finite open Riemann surface with only thick ends equipped with base point $q_0$.
For each homomorphism $h:\pi_1(X,q_0)\to \pi_1(\mathbb{C}\setminus \{-1,1\},0)$, whose image is contained in a subgroup of $\pi_1(\mathbb{C}\setminus \{-1,1\},0)$  generated by a power of one of the elements of the form \eqref{eq8.19''},
there exists a holomorphic mapping $f:X\to  \mathbb{C}\setminus \{-1,1\}$, whose monodromy homomorphism $f_*:\pi_1(X,q_0)\to \pi_1(\mathbb{C}\setminus \{-1,1\},0)$ is conjugate to $h$.
\end{lemm}

\noindent {\bf Proof of Lemma \ref{lemGrom3}.}
Let $X$ be a connected finite open Riemann surface with only thick ends, and let $h:\pi_1(X,q_0)\to \Gamma$ be a homomorphism into the group $\Gamma$ generated by an element of $\pi_1(\mathbb{C}\setminus\{-1,1\},0)$ of the form \eqref{eq8.19''}. 
The Riemann surface
$X$ is conformally equivalent to a domain on a closed Riemann surface $X^c$ such that all connected components of its complement are closed topological discs. Hence, there exists an open Riemann surface $X_1\subset X^c$ such that $X$ is relatively compact in $X_1$. Let $\omega:X\to X_1$ be a homeomorphism from $X$ onto $X_1$ that is homotopic on $X$ to the inclusion $X\hookrightarrow X_1$ for which $\omega(q_0)=q_0$.
Identify the fundamental groups $\pi_1(X,q_0)$ and $\pi_1(X_1,q_0)$ by the mapping induced by $\omega$.

By Lemma \ref{lemEl2b} $X_1$ is diffeomorphic to a standard neighbourhood of a standard bouquet $B$ of circles for $X_1$. Hence, $X_1$ can be written as union $X_1=D\cup \bigcup_j V_j$ of an open disc $D$ and half-open bands $V_j$ attached to $D$ (see Section \ref{sec:2.0}).
Let $c_j$ be the circle of the bouquet that corresponds to the generator $e_j$ of the fundamental group  $\pi_1(X_1,q_0)$. We consider an open cover of $X_1$ as follows.
Put $U_0=D$. Then
$U_0$ is an open topological disc in $X_1$ that contains the base point $\omega(q_0)$. Cover each $V_j$ by two simply connected open sets $U_{j^+}$ and $U_{j^-}$
with connected and simply connected intersection, so that the  $U_{j^{\pm}}$ are disjoint from the $U_{k^{\pm}}$ for $j\neq k$, and
for each $U_{j^{\pm}}$ its intersection with $U_0$ is connected and simply connected. We may
also assume that the following holds. The intersection of three different sets among the $U_0$, $U_{j^+}$ and $U_{j^-}$ is empty. The intersections of each $c_j$ with $U_{j^+}$ and $U_{j^-}$ are connected, and each $c_j$ is disjoint from $U_{k^+} \cup U_{k^-}$  for $k\neq j$.
Label the  $U_{j^{\pm}}$ so that walking on $c_j\setminus U_0$ (which is contained in $V_j$) in the direction of the orientation of $c_j$
we first meet $U_{j^-}$.
For each $j$ the set $U_0\cup U_{j^-}\cap U_{j^+}$ is an annulus that contains $c_j$.

Define a Cousin I distribution as follows. For each $j$ we put ${\sf F}_{0,j^-}=0$ on $U_0\cap U_{j^-}$,
${\sf F}_{j^-,j^+}=0$ on $U_{j^-}\cap U_{j^+}$, and ${\sf F}_{j^+,0}=k_j$ on $U_{j^+}\cap U_0$. Since open Riemann surfaces are Stein manifolds (see e.g.  \cite{Fo}), the respective Cousin Problem has a solution  (see \cite{H1}, or Appendix \ref{ChapterA}). This means that there are holomorphic functions $F_0$ on $U_0$, $F_{j^+}$ on $U_{j^+}$ and $F_{j^-}$ on $U_{j^-}$ such that
\begin{align}\label{eq8.20}
F_0\;\;- F_{j^-}=0 \;\;\;&\mbox{on}\; U_0\;\,\cap
U_{j^-},\nonumber\\
 F_{j^-}- F_{j^+}=0\;\;\; &\mbox{on}\; U_{j^-}\cap
U_{j^+},\,\nonumber\\
F_{j^+}-F_0\;\;=k_j \;&\mbox{on}\; U_{j^+}\cap U_0\;\;.
\end{align}
The function
\begin{equation}
F=\begin{cases}
e^{2\pi i F_0} &\mbox{on}\; {U}_0,\\
e^{2\pi i F_{j^-}}& \mbox{on}\; U_{j^-}\\
e^{2\pi i F_{j^+}}&\mbox{on}\; U_{j^+} \,
\end{cases}
\end{equation}
is well-defined and holomorphic in $X_1$. Let  ${\sf e}$ be the generator of the fundamental group of $\mathbb{C}\setminus \{0\}$ with base point $F(q_0)$. The restriction of $F$ to each $c_j$ represents ${\sf e}^{k_j}$.

The restriction of $F$ to $X\Subset X_1$ is bounded. Hence, for a positive number $C$ the mapping $-1+ \frac{1}{C} F$ maps $X$ into $\mathbb{C}\setminus \{-1,1\}$ so that the monodromy along each $e_j$ equals $a_1^{k_j}$ (after identifying $\pi_1\Big(\mathbb{C}\setminus \{-1,1\},-1+ \frac{1}{C} F(q_0)\Big)$
with  $\pi_1(\mathbb{C}\setminus \{-1,1\},0)$ by a suitable isomorphism).
The case when the monodromies are powers of other elements of form \eqref{eq8.19''} is treated by composing $F$ with a suitable conformal self-mapping of the Riemann sphere that permutes the points $-1$, $1$, and $\infty$.
\hfill $\Box$

\medskip

In Section \ref{sec:Grom.2}
we will address Problem 8.4a for the case when the target manifold is the twice punctured complex plane, and in Section \ref{sec:Grom.3} for the case when $Y=\mathfrak{P}_3$ and $X$ is a smooth surface of genus one with a hole.
In Chapter \ref{chapterfin} we address Problem \ref{problGrom2}. For instance, we give an upper bound for the number of homotopy classes of irreducible mappings from a finite open Riemann surface
to the
twice punctured complex plane
that contain a holomorphic mapping.

The estimates can be regarded as a quantitative information concerning the restricted validity of Gromov's Oka Principle for some cases when the target manifold is not a Gromov-Oka manifold.

In Sections \ref{sec:9.3}-\ref{sec:9.8} we will consider the Gromov-Oka Principle for fiber bundles over a smooth torus with a hole.

\section[Gromov-Oka mappings from open Riemann surfaces to $\mathbb{C}\setminus\{-1,1\}$] {Description of Gromov-Oka mappings from open Riemann surfaces to $\mathbb{C}\setminus\{-1,1\}$}\label{sec:Grom.2}

Let ${\mathcal X}$ be a topological space and let $\hat e$
be a conjugacy class
of  elements of the fundamental group $\pi_1 ({\mathcal X} , x_0)$ of
${\mathcal X}$.
Choose a loop $\gamma$ in ${\mathcal X}$ that represents $\hat e$. Consider a complex manifold $\mathcal{Y}$ and
a continuous mapping $f:\mathcal{X}\to \mathcal{Y}$.  The conjugacy class of elements of the fundamental group of $\mathcal{Y}$ represented by the restriction $f \circ\gamma$ depends only on $f$ and on $\hat e$ and is denoted by $\hat{f_e}$.
Recall that $\mathcal{M}(\hat{f}_e)=\infty$ iff the restriction of $f$ to an annulus representing $\hat e$ has the Gromov-Oka property.

\begin{lemm}\label{lemGrom1}
Let $X$ be a connected oriented smooth finite open surface and let $e\in \pi_1(X,q_0)$ be an element whose conjugacy class can be represented
by a simple closed curve in $X$. Then for any $a>0$ there exists
an orientation preserving
conformal structure $\omega:X\to \omega(X)$ on $X$
such that the Riemann surface $\omega(X)$ contains a holomorphic annulus $A$ that represents $\hat e$ and has conformal module $m(A)>a$.
\end{lemm}
Notice that the conjugacy class of any element of a standard basis of $\pi_1(X,q_0)$ and the commutator of two elements of a standard basis of $\pi_1(X,q_0)$ corresponding to a handle can be represented by a simple closed curve.

\medskip

\noindent {\bf Proof of Lemma \ref{lemGrom1}.} Take any conformal structure $\omega':X\to \omega'(X)$ on $X$ with only thick ends. Let $\gamma$ be a smooth
simple closed curve that represents the conjugacy class $\hat e$ of $e$.
Cut $\omega'(X)$ along $\omega'(\gamma)$. Take a neighbourhood of $\omega'(\gamma)$ that is cut by $\gamma$ into connected components $X_+$ and $X_-$, each being conformally equivalent to an annulus. Take an annulus $A$ of conformal module $m(A)>a$, and glue the $X_{\pm}$ conformally to annuli $A_{\pm}$ in $A$ that
are disjoint and adjacent to different boundary components of $A$.
We obtain a Riemann surface and a homeomorphism $\omega$ from $X$ onto this Riemann surface. The mapping $\omega:X\to \omega(X)$ gives the required complex structure. \hfill $\Box$

\medskip
We will consider now the target space $Y=\mathbb{C}\setminus\{-1,1\}$.
Let $A$ be an annulus in the complex plane. By Lemmas \ref{lemm1a} and \ref{lemGrom0} a mapping $f:A\to \mathbb{C}\setminus\{-1,1\}$
has the Gromov-Oka property if and only if for a generator $e$ of the fundamental group of the annulus
the conjugacy class $\widehat{f_*(e)}$
has infinite conformal module.
By Lemmas \ref{lemm1a}, \ref{lemm8.3}, and \ref{lemGrom0}
a conjugacy class of elements of $\pi_1(\mathbb{C}\setminus\{-1,1\},0)$ has infinite conformal module iff it is represented by an integer power of one the elements \eqref{eq8.19''}. By Lemma
\ref{lemGrom1'}
this happens exactly if the mapping $f$ is reducible.

Notice that a continuous mapping $f$ from an annulus $A=\{r^{-1}<|z|<r\}$ into $\mathbb{C}\setminus\{-1,1\} $
with the Gromov-Oka property is also homotopic to a holomorphic mapping for any orientation reversing conformal structure with only thick ends on the annulus. Indeed, the mappings $z \to f(z)$ and $z\to f(\frac{1}{\bar z})$ are homotopic on $A$ since they coincide on $\{|z|=1\}$.

The following theorem concerns Problem 8.4a for smooth mappings from oriented finite open smooth surfaces $X$ of positive genus to the twice punctured complex plane and confirms the special role of the conformal module of conjugacy classes of elements of the fundamental group of the target with respect to this problem. It states the existence of finitely many annuli contained in $X$, such that a smooth orientation preserving mapping $X\to \mathbb{C}\setminus\{-1,1\}$  has the Gromov-Oka property if and only if its restriction to each of the mentioned annuli has this property.

\begin{thm}\label{thmGrom.0}
Let $X$ be a connected smooth oriented surface of positive genus $g$ with $m\geq 1$ holes.
There exists a subset $\mathcal{E}'$ of the fundamental group $\pi_1(X,q_0)$ of $X$ consisting of at most $(2g+m-1)^3$ elements such that the conjugacy class $\hat e$ of each element of $\mathcal{E}'$ can be represented by
a simple closed curve and the following holds.

A continuous
mapping $f:X\to \mathbb{C}\setminus \{-1,1\}$ has the Gromov-Oka property if and only if for each $e\in \mathcal{E}'$ the restriction of the mapping to an annulus in X that represents $\hat e$ has the Gromov-Oka property.
\end{thm}
\index{$\mathcal{E}'$}

The following theorem describes all continuous maps $X\to \mathbb{C}\setminus \{-1,1\}$ with the Gromov-Oka property. In this theorem we allow the surface $X$ to have any genus.

\begin{thm}\label{thmGrom.0'}
A continuous
mapping $f:X\to \mathbb{C}\setminus \{-1,1\}$ from a connected smooth oriented surface $X$ of genus $g\geq 0$ with $m\geq 1$ holes, $2g+m\geq 3$, into the twice punctured complex plane has the Gromov-Oka property, if and only if $f$ is either reducible, or $X$ is a smooth oriented two-sphere $S^2$ with at least three holes
and the mapping $f$ is homotopic to
a mapping that extends to an orientation preserving diffeomorphism $F:S^2\to \mathbb{P}^1$
that maps three points of $S^2$ contained in pairwise different holes to the three points $-1,1,$ and $\infty$, respectively.
\end{thm}
\index{$\mathcal{E}$} \index{$\mathcal{E}'$}
Let $\mathcal{E}$ be a standard system of generators of the fundamental group $\pi_1(X,q_0)$   of the smooth surface $X$ of genus $g$ with $m$ holes with base point $q_0\in X$. Before proving the Theorems  \ref{thmGrom.0} and \ref{thmGrom.0'}      we will describe a set $\mathcal{E}'$ of elements of $\pi_1(X,q_0)$ such that the conjugacy class of
each element of  $\mathcal{E}'$ can be represented by a simple closed curve.
Later we will compare the Gromov-Oka property of
a mapping $X\to \mathbb{C}\setminus \{-1,1\}$
with the Gromov-Oka property of its restriction to each annulus in $X$
that represents $\hat e$ for an element $e\in \mathcal{E}'$.

\medskip

\noindent {\bf Let first} $g>0$. For the $j$-th handle we choose three elements
$e_{2j-1}$, $e_{2j}$, and $[e_{2j-1},e_{2j}]$. The conjugacy class of each of them can be represented by a simple closed curve. For the $\ell$-th hole, $\ell=1,\ldots,m-1,$ we take the element $e_{2j+\ell}$. There is a simple closed curve that represents its conjugacy class. The obtained $3g+m-1$ elements of the fundamental group will be contained in $\mathcal{E}'$.

We convert each unordered pair of different elements of $\mathcal{E}$, that is different from a pair $\{e_{2j-1}$, $e_{2j}\}$ corresponding to a handle, into an ordered pair $(e',e'')$.
There is a simple closed curve representing $\reallywidehat{e'\,e''}=\reallywidehat{e''\,e'}$.
The products $e' e''$
constitute a set of
$\frac{1}{2}(2g+m-1)(2g+m-2) - g$
elements of the fundamental group. They will be contained in $\mathcal{E}'$.

For each pair $e_{2j-1},e_{2j}$ of elements of $\mathcal{E}$ that corresponds to a handle, and each other element $e'$ among the chosen generators (if there is any) we consider the elements $e_{2j-1}^2\,e_{2j}\, e'$, $e_{2j-1}^3\,e_{2j}\, e'$,
$e_{2j-1}\,e_{2j}^2\, e'$, and $e_{2j-1}\,e_{2j}^3\, e'$. For the conjugacy class of each such element there is a simple closed curve in $X$ representing it.
The
obtained $4g\, (2g+m-3)$ elements
of the fundamental group will be contained in $\mathcal{E}'$.

Suppose $m>2$. Let $(e_1,e_2)$ be the pair of elements of $\mathcal{E}$ that corresponds to the first handle. We convert each unordered pair of different elements of $\mathcal{E}$
corresponding to holes to an ordered pair $(e',e'')$, so
that the conjugacy class of the element $e'e_1 e' e_2 e''$ can be represented by a simple closed curve in $X$. (See Figure \ref{FigGrom1}.) We obtain no more than 
$\frac{1}{2}(m-1)(m-2)$ elements of $\pi_1(X,q_0)$.
They will be contained in $\mathcal{E}'$.

\begin{figure}[H]
\begin{center}
\includegraphics[width=9cm]{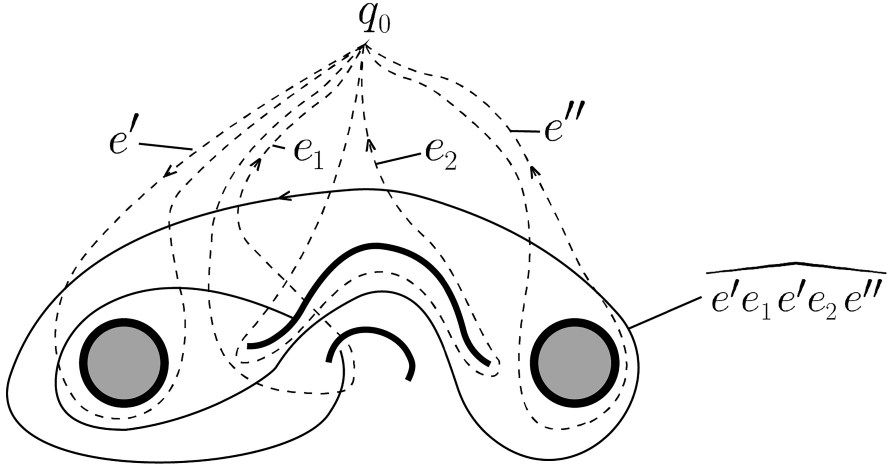}
\end{center}
\caption{A simple closed curve that represents the free homotopy class of $e'e_1 e' e_2 e''$}\label{FigGrom1}
\end{figure}

\noindent {\bf Suppose now} $g(X)=0$. If $m=1$ the fundamental group is trivial. If $m=2$ it has one generator. The set $\mathcal{E}'$ will be equal to the set $\mathcal{E}$ consisting of the generator. Let $m\geq 3$. The element $e_m\stackrel{def}=(\prod _{j=1}^{m-1}e_j)^{-1}$ is represented by a simple closed curve that surrounds $\mathcal{C}_m$ positively (i.e. walking along the curve the set $\mathcal{C}_m$ is on the left).
All elements $e\in \mathcal{E}\cup \{e_m\}$ will be contained in $\mathcal{E}'$. Further,
each unordered pair of different elements of $\mathcal{E}\cup \{e_m\}$ is converted into an ordered pair $(e',e'')$ and
the product $e'e''$ will be contained in $\mathcal{E}'$. Finally,
we convert each unordered triple of different elements of $\mathcal{E}\cup \{e_m\}$ into an ordered triple $(e',e'',e''')$ so that $\reallywidehat{e'\,e''\,e'''}$ can be represented by a simple closed curve.
The product $e'e''e'''$
will be contained in $\mathcal{E}'$.

We described all elements of $\mathcal{E}'$. Recall that the conjugacy class of each of them can be represented by a simple closed curve. We estimate now the total number $N$ of elements of $\mathcal{E}'$.

Let first $g>0$. With $x=2g+m-1\geq 2$ the number $N$ does not exceed
\begin{equation*}
\begin{cases}
x+g+ \frac{1}{2} x(x-1)-g + 4g (x-2), & 0< m\leq 2,\\
x+g+ \frac{1}{2} x(x-1)-g+  4g (x-2)+ \frac{1}{2}(m-1)(m-2),& \quad \;\;\; m>2.\\
\end{cases}
\end{equation*}

Since $2g\leq x$, for $m\leq 2$ the sum
does not exceed $x+x(x-1) +2x(x-2)$.
Hence, $N\leq 3 x^2 - 4x < x^3$.
For $m>2$ we use the inequality $m-1= x-2g \leq x-2$, hence $\big(4g+\frac{1}{2}(m-2)\big)(x-2)\leq 2 x(x-2)$.
Again $N\leq x^3$.

Let $g=0$. If $m=2$ the set $\mathcal{E}'$ contains a single element, and $N=1=(m-1)^3=x^3$.
If $x=m-1\geq 2$ the number $N$ is not bigger than $x+1 + \frac{1}{2}(x+1)x + \frac{1}{6}(x+1)x(x-1)= \frac{1}{2}(x+1)(x+2)+ \frac{1}{6} x(x^2-1)=\frac{1}{6}x^3 +\frac{1}{2}x^2 +(\frac{3}{2}-\frac{1}{6})x+1$ and since $x\geq 2$, also in this case $N\leq x^3$.
We estimated
the number of elements of $\mathcal{E}'$
from above by $x^3=(2g+m-1)^3$.

\medskip

For the proof of Theorem \ref{thmGrom.0} we need the following lemma. Its proof will given after the proof of the theorems.

\begin{lemm}\label{lemGrom2}
Let $X$ be a connected oriented smooth open surface of genus $g$ with $m$ holes, $2g+m\geq 0$, and $\mathcal{E}'$ the set of elements of the fundamental group $\pi_1(X,q_0)$ chosen above. Assume the restriction of a continuous mapping $f:X\to \mathbb{C}\setminus \{-1,1\}$ to each annulus in $X$ representing the conjugacy class $\hat e$ of an element $e\in \mathcal{E'}$ has the Gromov-Oka property.

\noindent Then one of the following statements holds.\\
$1.$ The image of the monodromy homomorphism $f_*:\pi_1(X,q_0)\to \pi_1(\mathbb{C}\setminus \{-1,1\},f(q_0))\cong
\pi_1(\mathbb{C}\setminus \{-1,1\},0)$ is contained in the group generated by a conjugate of a power of one of the elements of the form \eqref{eq8.19''}. In this case the mapping is reducible.\\
\noindent $2.$ $X$ is equal to the smooth oriented $2$-sphere denoted by $S^2$
with at least three holes removed and the mapping $f$ is homotopic to
a mapping that extends to a
diffeomorphism $F:S^2\to \mathbb{P}^1$
that maps three points of $S^2$ contained in pairwise different holes to the three points $-1,1,$ and $\infty$, respectively. \\
If $F$ is orientation preserving, than $F$ is homotopic to a holomorphic map for any orientation preserving conformal structure on $X$, including structures of first kind. Moreover, the restriction $F\mid X$ is irreducible.\\
If $F$ is orientation reversing, then for any orientation preserving conformal structure on $X$ the mapping
$F$ is homotopic to an antiholomorphic mapping, but $F$ is not a Gromov-Oka mapping.
\end{lemm}

\medskip

\noindent {\bf Proof of Theorem \ref{thmGrom.0}}.
If a mapping $f:X\to \mathbb{C}\setminus \{-1,1\}$ has the Gromov-Oka property then by Lemma \ref{lemGrom1} the restriction of $f$ to each annulus in $X$ representing the conjugacy class of an element of $\mathcal{E}'$ has the Gromov-Oka property.

Vice versa, suppose $X$ has positive genus and the restriction of a continuous mapping $f:X\to \mathbb{C}\setminus \{-1,1\}$ to each annulus in $X$ representing the conjugacy class $\hat e$ of an element $e\in \mathcal{E'}$ has the Gromov-Oka property. Then by Lemma \ref{lemGrom2} the monodromy homomorphism $f_*$
is contained in the group generated by a conjugate of a power of an element of the form \eqref{eq8.19''}, equivalently $f$ is reducible.
By Lemma \ref{lemGrom3}
each reducible map
$X\to \mathbb{C}\setminus\{-1,1\}$
is homotopic to a holomorphic mapping for each conformal structure on $X$ with only thick ends. Hence, the mapping $f$ has the Gromov-Oka property.
\hfill $\Box$

\medskip

\noindent{\bf Proof of Theorem \ref{thmGrom.0'}}.  Suppose first that $g>0$.
If a continuous mapping $f:X\to \mathbb{C}\setminus\{-1,1\}$ has the Gromov-Oka property,
then by Lemma \ref{lemGrom1} the restriction of $f$ to each annulus represented by a conjugacy class $\hat e$ for an element $e\in \mathcal{E'}$ has the Gromov-Oka property. By Lemma \ref{lemGrom2} the monodromy homomorphism $f_*$
is contained in the group generated by a conjugate of a power of an element of the form \eqref{eq8.19''} (equivalently $f$ is reducible).

Vice versa, let $X$ be an oriented smooth finite surface of any genus. By Lemma \ref{lemGrom3}
each reducible map $f:X\to \mathbb{C}\setminus\{-1,1\}$  is homotopic to a holomorphic mapping for each conformal structure on $X$ with only thick ends, i.e. reducible maps have the Gromov-Oka property.

Let $g=0$. Then $X$ is a smooth oriented $2$-sphere denoted by $S$ with at least $3$ holes removed. If an irreducible continuous mapping $f:X\to \mathbb{C}\setminus\{-1,1\}$ has the Gromov-Oka property, then by Lemmas \ref{lemGrom1} and  \ref{lemGrom2} the mapping $f$ is homotopic to a mapping that extends to
a diffeomorphism $F:S\to \mathbb{P}^1$ that maps three points contained in different holes to the points $-1$, $1$, and $\infty$. If $F$ is orientation preserving then for any conformal structure on $S$ we need a homotopy to a conformal mapping from $\mathbb{P}^1$
onto itself that takes an ordered tuple of three distinct points in  $\mathbb{P}^1$ to the tuple $(-1,1,\infty)$. This is always possible.

If the mapping $F$ is not orientation preserving,  $F$ is not homotopic to a holomorphic mapping by Lemma \ref{lemGrom2}. \hfill $\Box$

The theorem shows that for oriented finite open surfaces of positive genus only the reducible mappings $X\to \mathbb{C}\setminus\{-1,1\}$ have the Gromov-Oka property.
In case $X$ is the oriented two-sphere with at least three holes there are also irreducible homotopy classes of mappings with the Gromov-Oka property. Each consists of mappings with the following property. For each orientation preserving homeomorphism $\omega:X\to \omega(X)$ onto a Riemann surface $\omega(X)$ (maybe, of first kind) $f\circ \omega^{-1}$ is homotopic to a holomorphic mapping that extends to a conformal mapping $\mathbb{P}^1\to \mathbb{P}^1$ that maps three points in different holes to $-1,1$ and $\infty$, respectively (in some order depending on the class). These homotopy classes are the only irreducible homotopy classes with the Gromov-Oka property, and they are the only homotopy classes of mappings $X\to \mathbb{C}\setminus \{-1,1\}$ that contain a holomorphic mapping for any conformal structure on $X$ including conformal structures of first kind. Indeed,
if the image of a non-trivial monodromy homomorphism $f_*$ is generated by a conjugate of a power of one of the elements of the form \eqref{eq8.19''} the mapping $f$ cannot be homotopic to a holomorphic mapping for a conformal structure of first kind
(see the proof of Corollary \ref{corfin1a}).

\medskip

\noindent {\bf Proof of Lemma \ref{lemGrom2}.}\\
\noindent{\bf 8.6.1.} Identifying the fundamental groups $\pi_1(\mathbb{C}\setminus\{-1,1\},0)$ and $\pi_1(\mathbb{C}\setminus\{-1,1\},f(q_0))$ by a fixed isomorphism, we obtain for each $e\in\mathcal{E}'$ the equation $f_*(e)=w_e^{-1}\,b_e\,w_e$ for an element
$w_e\in \pi_1(\mathbb{C}\setminus\{-1,1\},0)$ and an element $b_e$ that is a power of an element of the form \eqref{eq8.19''}.
The images $f_*(e_{2j-1})$ and $f_*(e_{2j})$ of two elements corresponding to a handle commute. This is a particular case of the situation treated in the proof of Theorem \ref{thmGrom2} below, but can be seen easily directly.
Indeed, the sum of exponents of the terms of a word representing a commutator is equal to zero.
Since the commutator $[f_*(e_{2j-1}),f_*(e_{2j})]=f([e_{2j-1},e_{2j-1}])$ must be a power of a conjugate of an element of the form \eqref{eq8.19''} it must be equal to the identity.

Recall that a word in the generators of a free group is called reduced, if neighbouring terms are powers of different generators. We will identify elements of a free group (in particular of  $\pi_1(\mathbb{C}\setminus\{-1,1\},0)$) with reduced words in generators of the group. A word is called cyclically reduced, if either the word consists of a single term, or it has at least two terms and the first and the last term of the word are powers of different generators.

\noindent {\bf 8.6.2.} Suppose for two generators $e^{(1)}, e^{(2)} \in \mathcal{E}\subset \mathcal{E}'$ we have
 $f_*(e^{(\ell)})=w_{e^{(\ell)}}^{-1} b_{e^{(\ell)}} w_{e^{(\ell)}}$ with
$b_{e^{(\ell)}}=a_{j_{\ell}}^{k_{\ell}}$, $\ell=1,2,$ where each $a_{j_{\ell}}$ is one of the generators $a_1$, $a_2$ of $\pi_1(\mathbb{C}\setminus\{-1,1\},0)$. Then there exists an element $w_{{e^{(1)},{e^{(2)}}}}$ of $\pi_1(\mathbb{C}\setminus\{-1,1\},0)$ which conjugates both  $f_*(e^{(\ell)}) $ to
$a_{j_{\ell}}^{k_{\ell}}$, $\ell=1,2,$.
Indeed,
put $w={w_{e^{(2)}}}w_{e^{(1)}}^{-1}$. Since either $e^{(1)}e^{(2)}\in \mathcal{E}'$ or  $e^{(2)}e^{(1)}\in \mathcal{E}'$, the monodromy  $f_*(e^{(1)}e^{(2)})=f_*(e^{(1)})f_*(e^{(2)})$ has infinite conformal module and is conjugate to $a_{j_1}^{k_1} w^{-1} a_{j_2}^{k_2} w$. The element $a_{j_1}^{k_1} w^{-1} a_{j_2}^{k_2} w$ is conjugate to $a_{j_1}^{k_1} {w'}^{-1} a_{j_2}^{k_2} w'$ for an element $w'= a_{j_2}^{-k'_2}w a_{j_1}^{-k'_1}
\in \pi_1(\mathbb{C}\setminus\{-1,1\},0)$ that is either equal to the identity or
it can be written as a reduced word that starts with a power of $a_{j_1}$ and ends with a power of $a_{j_2}$. (In the case $a_{j_1}= a_{j_2}$ we allow that $w'$ is equal to a power of the other generator of $\pi_1(\mathbb{C}\setminus\{-1,1\},0)$.)
If $w'$ is not the identity, then $a_{j_1}^{k_1} {w'}^{-1} a_{j_2}^{k_2} w'$ can be written as cyclically reduced word that contains powers of different sign of generators of $\pi_1(\mathbb{C}\setminus\{-1,1\},0)$. Any cyclically reduced word that represents a conjugate of a power of an element of the form \eqref{eq8.19''} contains only powers of equal sign of the generators. Hence $w'={\rm Id}$, and $w=a_{j_2}^{k'_2} a_{j_1}^{k'_1}$, equivalently,
\begin{align}\label{eqGrom1}
a_{j_2}^{-k'_2} w_{e^{(2)}} = a_{j_1}^{k'_1} w_{e^{(1)}}\,.
\end{align}
Put $w_{{e^{(1)},{e^{(2)}}}}= a_{j_2}^{-k'_2} w_{e^{(2)}} = a_{j_1}^{k'_1} w_{e^{(1)}} $. Then $w_{e^{(1)},e^{(2)}} f_*(e^{(1)}) (w_{e^{(1)}e^{(2)}})^{-1}= a_{j_1}^{k_1}$ and $w_{e^{(1)},e^{(2)}} f_*(e^{(2)}) (w_{e^{(1)},e^{(2)}})^{-1}= a_{j_2}^{k_2}$.

\noindent {\bf 8.6.3.} As a corollary we see that $f_*(e^{(1)}e^{(2)})$ is conjugate to $a_{j_1}^{k_1} a_{j_2}^{k_2}$. Hence, either at least one of the monodromies $f_*(e^{(\ell)})$, $\ell=1,2,$ is the identity, or $j_1=j_2$, or $k_1=k_2=\pm 1$.

Further, if the monodromies $f_*(e^{(\ell)})$, $\ell=1,\ldots, k,$
along a collection of elements of $\mathcal{E}$ are conjugate to powers of a common single generator $a_j$,
there is a single element of $\pi_1(\mathbb{C}\setminus\{-1,1\},0)$  that conjugates the monodromy along each element of this collection to a power of $a_j$. Indeed,
suppose $w_{e^{(\ell)}}$ conjugates the monodromy $f_*(e^{(\ell)})$ to a power of $a_j$. Then equality \eqref{eqGrom1} implies
to each pair $e^{(1)},\,e^{(\ell)}, \, \ell=2,\ldots,k,$ of elements of this collection. By this equation $w_{e^{(\ell)}}=a_j^{k'_{\ell}}w_{e^{(1)}}$
and, hence $w_{e^{(1)}}^{-1}$ conjugates each $f_*(e^{(\ell)})$ to a power of $a_j$.

Moreover, if each monodromy of a collection $f_*(e^{(1)}),\ldots,         f_*(e^{(k)})$, $e^{(\ell)}\in \mathcal{E}, \ell =1,\ldots,k,$ is conjugate to a power of a common element of form \eqref{eq8.19''} and the monodromy $f_*(e^{(k+1)})$, $e^{(k+1)}\in \mathcal{E},$  is conjugte to a different element of form \eqref{eq8.19''}, then there exists a single element $w\in \pi_1(\mathbb{C}\setminus\{-1,1\},0)$, that conjugates
all these monodromies to powers of  elements of form \eqref{eq8.19''}.
Indeed, such a $w$ exists for the pair $f_*(e^{(1)})$ and $f_*(e^{(k+1)})$, and by the preceding arguments $w$ also conjugates $f_*(e^{(2)}),\ldots,         f_*(e^{(k)})$ to  powers of  elements of form \eqref{eq8.19''}.

\noindent {\bf 8.6.4.} The same arguments apply if for two elements $e^{(1)},e^{(2)}\in \mathcal{E}$ the monodromy $f_*(e^{(1)})$ is conjugate to a power of $a_j$ and the monodromy $f_*(e^{(2)})$ is conjugate to a power of $a_1a_2$. We replace the generators $a_1$, $a_2$ of the free group $\pi_1(\mathbb{C}\setminus\{-1,1\},0)$ by the generators
$A_1=a_j$ and $A_2=(a_1a_2)^{-1}$. Notice that $A_2$ can be considered as element of the fundamental group of the thrice punctured plane that is represented by loops surrounding $\infty$ positively.

\noindent The arguments above imply the following.\\
{\it 
If the monodromy $f_*(\tilde{e})=w^{-1}v^k w$ along an element $\tilde{e}\in \mathcal{E}$ is a conjugate to the $k$-th power of an element $v$ among  $a_1$, $a_2$, $a_1a_2$,
with $|k|>1$, then for any $e'\in \mathcal{E}$, $e'\neq \tilde e$, the monodromy $f_*(e')$ equals $w^{-1}v^{k'} w$ for an integer $k'$.}

\noindent {\bf 8.6.5.} We consider now the case when there is no element in $\mathcal{E}$ the monodromy along which is conjugate to the $k$-th power of an element of form \eqref{eq8.19''} with $|k|>1$.
We claim that if $g>0$ and the monodromy along
an element $e_{2j} \in \mathcal{E},\, j\leq g,$ (i.e. $e_{2j}$ belongs to a pair corresponding to a handle) is not trivial and, hence, is equal to a conjugate $w^{-1} v^{\pm 1} w$ for an element $v$ of form \eqref{eq8.19''},
then
all monodromies are powers of the same element $w^{-1} v w$ or   $w^{-1} v^{- 1} w$, respectively.
Indeed, by our assumption all non-trivial monodromies along elements $e$ of $\mathcal{E}$ are conjugate to $v_e^{\pm 1}$ for an element $v_e$ of the form \eqref{eq8.19''}.
Consider the element $e_{2j-1}$ for which the pair $e_{2j-1}, e_{2j}\in \mathcal{E}$ corresponds to a handle. Conjugating all monodromies by a single element, we suppose for instance that $f_*(e_{2j})=a_1$. Since the monodromies corresponding to a pair of handles commute (see part 8.6.1 of the proof), the monodromy $f_*(e_{2j-1})$ is a power of $a_1$, hence, by our assumption $f_*(e_{2j-1})$ is equal to $a_1^{\pm 1}$, or it is the identity.
Since for an element $e'\in \mathcal{E}$ different from $e_{2j-1}, e_{2j}$ the products $e_{2j-1}\, e_{2j}^2\,e'$ and $e_{2j-1}\, e_{2j}^3\,e'$ are contained in $ \mathcal{E}'$, the monodromies $f_*(e_{2j-1}\, e_{2j}^2\,e')$ and $f_*(e_{2j-1}\, e_{2j}^3\,e')$ must be  be conjugate to a power of an element of the form \eqref{eq8.19''}.
If $f_*(e_{2j-1})$ equals $a_1$ or the identity, then
the monodromy
$f_*(e_{2j-1}\, e_{2j}^2\,e')$ can only be conjugate to a power of an element of the form \eqref{eq8.19''}, if $f_*(e')$ is a power of $a_1$.
If $f_*(e_{2j-1})$ is equal to $a_1^{-1}$, then
$f_*(e_{2j-1}\, e_{2j}^3\,e')$ can only be conjugate to a power of an element of the form \eqref{eq8.19''}, if $f_*(e')$ is a power of $a_1$.

The case when $f_*(e_{2j})$ equals $a_1^{-1}$, $a_2^{\pm 1}$, or $(a_1 a_2)^{\pm 1}$,
or when the role of $e_{2j-1}$ and $e_{2j}$ is interchanged, can be treated similarly.
The claim is proved.

{\bf 8.6.6.} Suppose $g(X)>0$ but all monodromies along elements of $\mathcal{E}$ corresponding to handles are trivial.
We claim that still all monodromies are powers of the same conjugate of an element of form \eqref{eq8.19''}. If the monodromies along all but possibly one element of $\mathcal{E}$ are trivial, there is nothing to prove. Hence, we may assume that $m>2$.
By our assumption all non-trivial monodromies along elements of $\mathcal{E}$ are conjugate to an element of the form \eqref{eq8.19''} or are inverse to a conjugate of an element of the form \eqref{eq8.19''}. Let $e_1$ and $e_2$ be the elements of $\mathcal{E}$ corresponding to the first labeled handle.
Suppose there is an unordered pair $\{e',e''\}$ of different elements of $\mathcal{E}$  so that the monodromy along each element of the pair is non-trivial. Note that  none of the elements equals
$e_1$ or $e_2$. Convert the unordered pair into an ordered pair $(e',e'')$, so that the conjugacy class $\reallywidehat{e'e_1e'e_2e''}$ can be represented by a simple closed curve.
Assume that $f_*(e')=a_1$.
If $f_*(e'')$ is not a power of $a_1$ then
$f_*(e'e_1e'e_2e'')= f_*(e')^2 f_*(e'')$ (see Figure \ref{FigGrom1})
cannot be a power of an element of the form \eqref{eq8.19''}. The remaining cases in which $f_*(e')$ is of the form \eqref{eq8.19''} or is inverse to an element of the form \eqref{eq8.19''} are treated in the same way. The claim is proved.

\noindent{\bf 8.6.7.}
Let $g(X)=0$, i.e. $X$ equals the smooth  oriented $2$-sphere $S^2$ with holes.
Suppose $f:X\to \mathbb{C}\setminus\{-1,1\}$ is a continuous mapping whose restriction to each annulus representing an element $e\in\mathcal{E}'$ has the Gromov-Oka property.
Assume that the monodromies are not all conjugate to powers of a single common element of the form \eqref{eq8.19''}. Then by our assumption
no monodromy is conjugate to the $k$-th power of an element of the form  \eqref{eq8.19''} with $|k|>1$, and there are at least three holes, and
at least two elements $e_{j'}$ and $e_{j''}$ in $\mathcal{E}$ with monodromies $f_*(e_{j'})$ and $f_*(e_{j''})$ being non-trivial powers of absolute value $1$ of conjugates of different elements of the form \eqref{eq8.19''}.

Recall that the element $e_m= (\prod_{j=1}^{m-1} e_j)^{-1} \in \pi_1(X,q_0)$ is represented by a loop with base point $q_0$ that surrounds the last hole $\mathcal{C}_m$
counterclockwise. The product $\prod _{j=1}^m f_*(e_j)$ is equal to the identity.
The product of two non-trivial powers of different elements of the form \eqref{eq8.19''}
cannot be equal to the identity. Hence, by part 8.6.3. of the proof
there must be an integer number $j''' \in [1,m]$ different from $j'$ and $j''$,
for which $f_*(e_{j'''})\neq {\rm Id}$.

The monodromies along two different elements from $\mathcal{E}\cup\{e_m\}$ cannot be non-trivial powers of the same element of the form \eqref{eq8.19''}.
Otherwise by part 8.6.3. of the proof there would be an ordered couple or an ordered triple of different elements of $\mathcal{E}\cup\{e_m\}$ whose product is in $\mathcal{\mathcal{E}'}$  but the monodromy along the product cannot be a conjugate of a power of an element of the form \eqref{eq8.19''}.
Indeed, assume for instance that two monodromies $f_*(e')$ and $f_*(e'')$  and are conjugate to powers of $a_1$ and a third momodromy $f_*(e''')$
is conjugate to a power of $a_2$.  By part 8.6.3. of the proof we may assume that $f_*(e')=a_1^{k'}$, $f_*(e'')=a_1^{k''}$, and $f_*(e''')=a_2^{k'''}$ with $|k'|=|k''|=|k'''|=1$. If $k'$ and $k''$ have different sign, then one of the monodromies $f_*(e' e''')$ or $f_*(e'' e''')$ cannot be conjugate to a power of an element of form \eqref{eq8.19''}. If  $k'$ and $k''$ have equal sign, the monodromy along the product of the three elements $(e',e'',e''')$ in any order
is not a power of an element of form \eqref{eq8.19''}. But the product of the three elements $(e',e'',e''')$ for some order is an element of $\mathcal{E}'$.
For other possible combinations of powers of elements of  form \eqref{eq8.19''} the arguments are the same.

\noindent {\bf 8.6.8.} Part 8.6.7 of the proof shows that the monodromy along at most three elements of $\mathcal{E}\cup\{e_m\}$ is nontrivial.
Hence, there are exactly three elements $e_{j'}$, $e_{j''}$, and $e_{j'''}$ among the $e_k,\, k=1,\ldots,m$, with non-trivial monodromy.
After conjugating all monodromies by a single element of $\pi_1(\mathbb{C}\setminus\{-1,1\})$  the monodromies along two of them are equal to either $a_1$ and $a_2$, respectively, or to $a_1^{-1}$ and $a_2^{-1}$, respectively. (The combinations $a_1$,  $a_2^{-1}$ or $a_1^{-1}$,  $a_2$ are impossible.)
Order the three elements by $(e',e'',e''')$ so that the product is in $\mathcal{E}'$.
After a cyclic permutation which does not change the conjugacy class of the product, we may assume that the monodromy along $e'''$ is not equal to a power of an $a_j$. Then the ordered triple of monodromies along $(e',e'',e''')$ is either $(a_1,a_2,(a_1a_2)^{-1})$, or $(a_2,a_1,(a_2a_1)^{-1})$, or $(a_1^{-1},a_2^{-1},a_2a_1)$, or $(a_2^{-1},a_1^{-1},a_1 a_2)$. The elements $a_1\,,a_2$, and $(a_1a_2)^{-1}$, respectively, of the fundamental group of the twice punctured complex plane are represented by curves that surround  positively the points $-1,\,1$, and $\infty$, respectively, the elements
$a_1^{-1},a_2^{-1},a_2a_1$ are represented by curves that surround $-1,1,\infty$ negatively, and similarly for the remaining cases.

The case $(a_1,a_2,(a_1a_2)^{-1})$
for the monodromies along $(e',e'',e''')$  corresponds to the homotopy class that contains the following mapping. Let $\mathcal{C}',\mathcal{C}'',\mathcal{C}'''$ be the holes of $X$ corresponding to $e',e'', e'''$, respectively.
Take points $p'\in \mathcal{C}',\, p''\in \mathcal{C}'',$ and $p'''\in \mathcal{C}'''$, respectively. Denote by $F$ an orientation preserving
diffeomorphism from $S^2$ onto $\mathbb{P}^1$,
that maps  $p'$ to $-1$, $p''$ to $1$, and $p'''$ to $\infty$. It is straightforward to check that $F$ has the required monodromies along all generators. Hence, $f$ is homotopic to $F\mid X$. The case 
$(a_2,a_1,(a_2a_1)^{-1})$
for the monodromies along $(e',e'',e''')$ is similar. In these two cases
for any orientation preserving conformal structure $\omega:X\to \omega(X)$, including conformal structures of first kind, the mapping $(F|X)\circ{\omega}^{-1}$ is homotopic to a
mapping hat extends to $\mathbb{P}^1=\omega(X)^c$ as a conformal self-diffeomorphism of $\mathbb{P}^1$ that maps $X$ into $\mathbb{C}\setminus\{-1,1\}$.

If $f$ has monodromies $(a_1^{-1},a_2^{-1},a_2a_1)$ along $(e',e'',e''')$,
then $f$ is homotopic to $F\mid X$ for an orientation reversing
diffeomorphism $F$ from $S^2$ onto $\mathbb{P}^1$,
that maps  $p'$ to $-1$, $p''$ to $1$, and $p'''$ to $\infty$. The case $(a_2^{-1},a_1^{-1}, a_1a_2)$ is similar.
In these cases for any orientation preserving conformal structure $\omega:X\to \omega(X)$, including conformal structures of first kind, the mapping $(F|X)\circ{\omega}^{-1}$ is homotopic to a
mapping hat extends to $\mathbb{P}^1=\omega(X)^c$ as an anti-conformal self-homeomorphism of $\mathbb{P}^1$ that maps $X$ into $\mathbb{C}\setminus\{-1,1\}$. But the mapping $f$ does not have the Gromov-Oka property.

The latter fact can be seen as follows.
Assume the contrary. Let $\omega_n:X\to \omega_n(X)$ be a sequence of conformal structures of second kind on $X$ such that $X_n\stackrel{def}= \omega_n(X)$ can be identified with an increasing sequence of domains in $\mathbb{C}\setminus\{-1,1\}$ whose union equals $\mathbb{C}\setminus\{-1,1\}$. We may choose the $\omega_n$ uniformly converging on compact subsets of $X$.
We identify the fundamental groups of $X_n$ and of $\mathbb{C}\setminus\{-1,1\}$ by the isomorphism induced by inclusion.

Consider the case when the  monodromies of $f$ along $(e',e'',e''')$ are equal to $(a_1^{-1},a_2^{-1},a_2a_1)$.
By our assumption for each $n$ the mapping $f\circ \omega_n^{-1}$ is homotopic to a holomorphic mapping $f_n:X_n\to \mathbb{C}\setminus\{-1,1\}$ with monodromies $a_1^{-1}$ along $e'$ and $a_2^{-1}$ along $e''$.
By Montel's Theorem there is a subsequence $f_{n_k}$ that converges locally uniformly on $\mathbb{C}\setminus \{-1,1\}$. The limit function $F$ cannot be a constant (including the values of the constant $-1$, $1$, or $\infty$), since for curves $\gamma'$ and  $\gamma''$ representing $e'$ and $e''$, respectively, the set $f_{n_j}(\gamma')\cup f_{n_j}(\gamma'')$ separates $-1$, $1$ and $\infty$.
The limit function $F$ is a holomorphic mapping from
$\mathbb{C}\setminus \{-1,1\}$ to itself.
Hence it
extends to a meromorphic function on $\mathbb{P}^1$, and defines  therefore a branched covering of $\mathbb{P}^1$. This is impossible, since the curves $\gamma'$ and $\gamma''$ are mapped to curves that surround $-1$ and $1$, respectively, negatively.
\hfill $\Box$

\medskip

\section {Gromov-Oka mappings from tori with a hole to $ \mathfrak{P}_3$}
\label{sec:Grom.3}

A Riemann surface of genus $1$ will be called a torus
and a Riemann surface of genus $1$ with a hole will be called a torus with a hole. A smooth oriented surface of genus $1$ will be called a smooth torus.

We consider now a connected smooth oriented surface $X$ of genus one with a hole (i.e. a connected smooth oriented closed surface $X$of genus one with a point or a closed disc removed) and address the question which mappings $X\to\mathfrak{P}_n$ have the Gromov-Oka property. Recall that such a mapping can be considered as a separable quasipolynomial of degree $n$.

Recall that any continuous map from an oriented connected open smooth surface $X$ into $\mathfrak{P}_2$ has the Gromov-Oka property.

We consider Problem \ref{problGrom3} and Problem 8.4a for the case when the target manifold equals $\mathfrak{P}_3$.

We will say that a separable quasipolynomial on an open Riemann surface $X$  is isotopic to a holomorphic quasipolynomial,
if the corresponding mapping to $\mathfrak{P}_n$ is homotopic to a holomorphic one. We will say that a separable quasipolynomial $f$ on a finite open oriented smooth surface $X$  is holomorphic for the conformal structure $\omega:X\to \omega(X)$ (with $ \omega(X)$ being a Riemann surface)
if $f\circ \omega^{-1}$ is a holomorphic quasipolynomial on $\omega(X)$.
\index{conformal structure}

The soft Oka Principle states that for each separable quasipolynomial
$f$ of degree $n$ on a finite open oriented smooth surface $X$
there exists an orientation preserving  conformal
structure on $X$ for which the quasipolynomial is isotopic to a
holomorphic quasipolynomial.

In the following theorem we consider quasipolynomials of degree~3.
The theorem shows that the obstructions for a separable
quasipolynomial to be isotopic to a holomorphic quasipolynomial are
discrete.

\begin{thm}\label{thmGrom1}
Let $X$ be a torus with a hole, and let $\mathcal{E}=\{e_1,e_2\}$ be a standard system of generators of the fundamental group of $X$ with base point $q_0$. Denote by $\mathcal{E'}$ the set $\{e_1,e_2, e_2 e_1^{-1}, e_2 e_1^{-2}, e_1 e_2 e_1^{-1} e_2^{-1}\}$.

Let $f$ be a separable quasipolynomial of degree $3$ on $X$ such that for each $e\in \mathcal{E'}$ the quasipolynomial $f$ is isotopic to an algebroid function for an orientation preserving conformal structure $w_e$ on $X$ with a holomorphic annulus  representing $e$ of conformal module larger than $\frac{\pi}{2}(\log\frac{3+\sqrt{5}}{2})^{-1}$.
Suppose $X$ is a Riemann surface of second kind.
Then the quasipolynomial is isotopic to an algebroid function
on $X$.
\end{thm}
The following statement is formally slightly stronger. It follows from the proof of Theorem \ref{thmGrom1}. \\
{\it Let $X$,  $\mathcal{E}$ and  $\mathcal{E'}$ be as in the theorem.
A continuous
mapping $f:X\to \mathfrak{P}_3$ has the Gromov-Oka property if and only if for each $e\in \mathcal{E'}$ the restriction of the mapping to an annulus in X that represents $\hat e$ has the Gromov-Oka property.}
\index{$\mathcal{E'}$}

The crucial step for the proof of Theorem \ref{thmGrom1} is Theorem \ref{thmGrom2} below. The rest of this chapter is devoted to the proof of Theorem \ref{thmGrom2}. Theorem \ref{thmGrom1} and  results related to bundles will be proved in the next chapter. We first provide two lemmas that are needed for the proof of Theorem \ref{thmGrom2}.

The proof of the following lemma on permutations can be extracted for
instance from \cite{W}. For convenience of the reader we give the
short argument.
\begin{lemm}\label{lemGrom5} Let $n$ be a prime number. Any Abelian subgroup of
the symmetric group ${\mathcal S}_n$ which acts transitively on a
set consisting of $n$ points is generated by an $n$-cycle.
\end{lemm}
\noindent {\bf Proof.} Let ${\mathcal S}_n$ be the group of
permutations of elements of the set $\{ 1,\ldots , n \}$. Suppose
the elements $s_j \in {\mathcal S}_n$, $j=1,\ldots , m$, commute and
the subgroup $\langle s_1 , \ldots , s_m \rangle$ of ${\mathcal
S}_n$ generated by the $s_j$, $j = 1,\ldots , m$, acts transitively
on the set $\{ 1,\ldots , n\}$. Let $A_{s_1} \subset \{ 1,\ldots ,
n\}$ be a minimal $s_1$-invariant subset. Then $s_1 \mid A_{s_1}$ is
a cycle of length $k_1 = \vert A_{s_1} \vert$. (The order $\vert A
\vert$ of a set $A$ is the number of elements of this set.) For any
integer $\ell$ the set $s_2^{\ell} (A_{s_1})$ is minimal
$s_1$-invariant. Hence, two such sets are either disjoint or equal.
Take the minimal union $A_{s_1 s_2}$ of sets of the form $s_2^{\ell}
(A_1)$ for some integers $\ell$, which contains $A_{s_1}$ and is
invariant under $s_2$. Then $s_2$ moves the $s_2^{\ell} (A_{s_1})$
along a cycle of length $k_2$ such that $\vert A_{s_1 s_2} \vert =
k_1 \cdot k_2$. $A_{s_1 s_2}$ is a minimal subset of $\{ 1,\ldots ,
n\}$ which is invariant for both, $s_1$ and $s_2$. Continue in this
way. We obtain a minimal set $A = A_{s_1 \ldots s_m} \subset
\{1,\ldots , n\}$ which is invariant under all $s_j$. By the
transitivity condition $A = \{1,\ldots , n\}$. The order $\vert A
\vert$ equals $k_1 \cdot \ldots \cdot k_m$. Since $n$ is prime,
exactly one of the factors, $k_{j_0}$ equals $n$, the other factors
equal $1$. Then for some number $j_0$ the element
$s_{j_0}$ is a cycle of length $n$ and, if $j_0\neq 1$, then $\vert A_{s_1 \ldots s_{j_0 - 1}} \vert = 1$. Since each $s_j$ commutes with
$s_{j_0}$, each $s_j$ is a power of $s_{j_0}$. Indeed, consider a
bijection of $ \{1,\ldots , n\}$ onto the set of $n$-th roots of
unity so that $s_{j_0}$ corresponds to rotation by the angle
$\frac{2\pi}n$. In other words, put $\zeta = e^{\frac{2\pi i}n}$.
The permutation $s_{j_0}$ acts on $\{ 1 , \zeta , \zeta^2 , \ldots ,
\zeta^{n-1}\}$ by multiplication by $\zeta$. Consider an arbitrary
$s_j$. Then for some integer $\ell_j$, $s_j (\zeta) =
\zeta^{\ell_j}$, i.e. $s_j (\zeta) = \zeta^{\ell_j-1} \cdot \zeta =
(s_{j_0})^{\ell_j-1} (\zeta)$. Then for any other $n$-th root of
unity $\zeta^m$
\begin{align*}
s_j (\zeta^m) = & s_j ((s_{j_0}) \ ^{m-1}(\zeta))\\
= & (s_{j_0}) \
^{m-1}(s_j(\zeta))
=  \zeta^{m-1} \cdot \zeta^{\ell_j}
=  \zeta^{\ell_j-1} \cdot \zeta^m = (s_{j_0})^{\ell_j-1} (\zeta^m) \, .
\end{align*}
Hence $s_j = (s_{j_0})^{\ell_j-1}$. \hfill $\Box$

\smallskip

\begin{lemm}\label{lemGrom6} Let $e_1 , e_2$ be generators of a free group $F_2$.
Let $\mathcal{E'} \subset F_2$ be the finite subset $\mathcal{E'} = \{ e_1 , e_2 , e_2 \,
e_1^{-1} , e_2 \, e_1^{-2} \}$ of primitive elements of the group $F_2$. Put $\mathcal{E'}^{-1} \stackrel{def}{=}
\{e^{-1} : e \in \mathcal{E'} \}$. Suppose $\Psi : F_2 \to {\mathcal S}_3$ is a
homomorphism from $F_2$ into the symmetric group ${\mathcal S}_3$
whose image is an Abelian subgroup of ${\mathcal S}_3$ which acts
transitively on the set of three elements. Then there are elements
${\sf e}_1 , {\sf e}_2 \in \mathcal{E'} \cup \mathcal{E'}^{-1}$ such that $\Psi ({\sf e}_1)$
is a $3$-cycle, $\Psi ({\sf e}_2) = {\rm id}$ and the commutator
$[{\sf e}_1 , {\sf e}_2]$ is conjugate to $[e_1 , e_2]$.
\end{lemm}


\noindent {\bf Proof.} By
Lemma \ref{lemGrom5} the
image of one of the original generators of $F_2$ is a $3$-cycle. If $\,\Psi
(e_1)$ is a $3$-cycle, then $\Psi (e_2 \, e_1^{-q})$ is the identity
for $q$ being either $0$ or $1$ or $2$. (Recall that $\Psi (e_2)$ is
a power of $\Psi (e_1)$ and $\Psi (e_1)^3 = {\rm id}$.) Also,
$$
[e_1 , e_2 \, e_1^{-q}] \, =\,  e_1 \, e_2 \, e_1^{-q} \, e_1^{-1}
\, e_1^q \, e_2^{-1}\, = \, [e_1 , e_2] \, .
$$
We may take the pair ${\sf e}_1=e_1$, ${\sf e}_2=e_2 \, e_1^{-q}$.

If $\Psi (e_1)$ is the identity then $\Psi (e_2)$ is a $3$-cycle.
The pair $\,{\sf e}_1 = e_2\,$, ${\sf e}_2 = e_1^{-1}\,$
generates $F_2$ and the commutator $\,[{\sf e}_1 , {\sf e}_2]\, =\, e_2 \,
e_1^{-1} \, e_2^{-1} \, e_1\,$ is conjugate to the commutator
$\,[e_1 , e_2]\,.\,$ \hfill $\Box$

\medskip

Let $X$ be a torus with a disc removed. Its fundamental group $\pi_1
(X,q_0)$ with base point $q_0$ is isomorphic to the free group $F_2$
with two generators.

\begin{thm}\label{thmGrom2} Suppose $X$ is a torus with a hole.
Let $\mathcal{E}=\{e_1,e_2\}$ be a standard system of generators of the fundamental group of $X$ with base point $q_0$, and $\mathcal{E'}= \{e_1,e_2, e_2 e_1^{-1}, e_2 e_1^{-2}, e_1 e_2 e_1^{-1} e_2^{-1}\}$. Let $f$ be a separable quasipolynomial of degree $3$ on $X$ such that for each $e\in \mathcal{E'}$ the quasipolynomial $f$ is isotopic to an algebroid function for an
orientation preserving conformal structure $w_e$ on $X$ with a holomorphic annulus  representing $\hat e$ of conformal module larger than $\frac{\pi}{2}(\log\frac{3+\sqrt{5}}{2})^{-1}$.
Then the isotopy class of $f$
corresponds to the conjugacy class of a homomorphism
$$
\Phi : \pi_1 (X , q_0) \to \Gamma \subset {\mathcal B}_3
$$
for a subgroup $\Gamma$ of ${\mathcal B}_3$ \index{$\Phi$}
which is generated either by $\sigma_1 \, \sigma_2$, or by $\sigma_1 \, \sigma_2 \, \sigma_1$,
or by $\sigma_1$ and $\Delta_3^2$.

In particular, in all cases the image $\Phi ([e_1 , e_2])$ of the
commutator of the generators is the identity.
\end{thm}

For the proof of the theorem we need two lemmas which we state and prove before proving the theorem.

\begin{lemm}\label{lemGrom7} Let ${\sf b}_1 = (\sigma_1 \, \sigma_2)^{\pm 1}$ and
${\sf b}_2 = w^{-1} \, \sigma_1^{2k} \, w$ for an integer $k$ and a
braid $w \in {\mathcal B}_3$. If the commutator $[{\sf b}_1 , {\sf
b}_2]$ is conjugate to $\sigma_1^{2k^*} \Delta_3^{2\ell^*}$ for
integers $k^*$ and $\ell^*$ then ${\sf b}_2$ is the identity and
hence also the commutator is the identity.
\end{lemm}

\smallskip

\noindent {\bf Proof of Lemma \ref{lemGrom7}.} Let $b \in {\mathcal B}_n$ be a
pure braid with base point $E_n\in C_n(\mathbb{C})\diagup \mathcal{S}_n$.
Fix a point $(x_1,\ldots,x_n)\in  C_n(\mathbb{C})$ that projects to $E_n$. Label each strand of $b$ by the number $j$ of its initial point $x_j$.
Let $\{i,j\}$ be an unordered pair of distinct integer numbers between $1$ and $n$. The linking number $\ell_{\{i,j\}}$ of the
$i$-th and $j$-th strand is defined as follows. Discard all strands except the
$i$-th and the $j$-th strand. We obtain a pure braid $\sigma^{2m}$.
We call the integral number $m$ the linking number of the two
strands and denote it by $\ell_{\{i,j\}}$.

Note that the linking numbers ${\ell}_{\{i,j\}}^*$ of the braid
$\sigma_1^{2k^*} \Delta_3^{2\ell^*}$ are equal to ${\ell}_{\{12\}}^* = k^* + \ell^*$, ${\ell}_{\{23\}}^* =  \ell^*$ ,
${\ell}_{\{13\}}^* = \ell^*$. Since the braid is conjugate to a
commutator, the sum of exponents of generators in a word representing it must be zero. This means that $2k^* + 6\ell^* = 0$. Hence,
the (unordered) collection of linking numbers of $\sigma_1^{2k^*}
\Delta_3^{2\ell^*}$ is $\{ -2\ell^* , \ell^* , \ell^*\}$. Since
conjugation only permutes linking numbers between pairs of strands
of a pure braid, the unordered collection of linking numbers of
pairs of strands of $[{\sf b}_1 , {\sf b}_2]$ equals $\{ -2\ell^* ,
\ell^* , \ell^* \}$ for the integer $\ell^*$. For the pure braid
$\sigma_1^{2k}$ the linking number between the first and the second
strand equals $k$, the linking numbers of the remaining pairs of
strands equal zero.

Consider a not necessarily pure braid $w \in \mathcal{B}_3$
with base point $E_3$ and fix a lift $(x_1,x_2,x_3)$ of $E_3$.
Let $S_w$ be the permutation $S_w
= \tau_3 (w)$.
The permutation $S_w$ defines
a permutation $s_w$ acting on the set
$(1,2,3)$, so that $S_w((x_1,x_2,x_3))=(x_{s_w(1)} , x_{s_w(2)} ,
x_{s_w(3)})$.
Order the linking numbers of pairs of strands of a pure braid
$b \in \mathcal{B}_3$ as
$(\ell_{\{2,3\}},
\ell_ {\{1,3\}} , \ell_{\{1,2\}})$.
Notice that the complement of the set $\{2,3\}$ (of the set  $\{1,3\}$, or $\{1,2\}$, respectively)
in $\{1,2,3\}$ is $\{1\}$ ( $\{2\}$, or  $\{3\}$, respectively).
The ordered tuple of linking numbers of pairs of strands of $w^{-1}
\, b \, w$ equals
$$
S_w((\ell_{\{2,3\}} , \ell_{\{1,3\}} , \ell_ {\{1,2\}}))\, = \,
  (\ell_{\{s_w(2) s_w(3)\}} , \ell_{\{s_w(1) s_w(3)\}} ,
  \ell_{\{s_w(1)s_w(2)\}}).
$$
Hence, the ordered tuple of linking numbers between pairs of strands
of ${\sf b}_2 = w^{-1} \, \sigma_1^{2k} \, w$ equals $S_w (0,0,k)$.
Similarly, the ordered tuple of linking numbers between pairs of
strands of ${\sf b}_2^{-1}$ is equal to $S_w (0,0,-k)$.

Consider the commutator ${\sf b}_1 \, {\sf b}_2 \, {\sf b}_1^{-1} \,
{\sf b}_2^{-1}$.

\begin{figure}[h]
\begin{center}
\includegraphics[width=55mm]{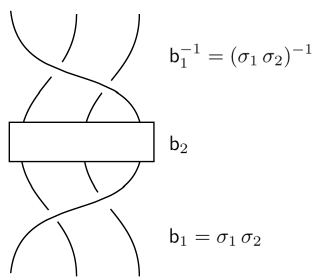}
\end{center}
\caption{The braid $b_1 b_2 b_1^{-1}= (\sigma_1 \, \sigma_2) b_2 (\sigma_1 \, \sigma_2)^{-1}$ }\label{fig8.1}
\end{figure}

The ordered tuple of linking numbers of pairs of strands of the pure
braid ${\sf b}_1 \, {\sf b}_2 \, {\sf b}_1^{-1}$ equals $(S')^{-1}
\circ S_w (0,0,k)$ for the permutation $S' = \tau_3 ({\sf b}_1)$
(see Figure \ref{fig8.1}).

\noindent Hence the ordered tuple of linking numbers of pairs of
strands of the commutator ${\sf b}_1 \, {\sf b}_2 \, {\sf b}_1^{-1}
\circ {\sf b}_2^{-1}$ equals $(S')^{-1} \circ S_w
(0,0,k) + S_w (0,0,-k)$. Since $S'$ is a 3-cycle, the mapping  $S' (x_1 , x_2 ,
x_3) = (x_{s'(1)} , x_{s'(2)} , x_{s'(3)})$ does not fix (setwise) any pair
of points among the $x_1, x_2, x_3$. Hence the unordered
$3$-tuple of linking numbers of ${\sf b}_1\, {\sf b}_2 \, {\sf
b}_1^{-1} \, {\sf b}_2^{-1}$ is $\{ k,-k,0 \}$. It can coincide with
an unordered $3$-tuple of the form $\{-2 \ell^* , \ell^* , \ell^*\}$
only if $k=\ell^*=0$. Hence $\textsf{b}_2$ is the identity and the
commutator is the identity. \hfill $\Box$

\begin{lemm}\label{lemGrom8} The centralizer of $\sigma_1^k$ in $\mathcal{B}_3$, $k \ne 0$
an integral number, equals $\{\sigma_1^{k'} \, \Delta_3^{2\ell'} :
k',\ell' \in {\mathbb Z}\}$.
\end{lemm}

\noindent {\bf Proof.} Consider for each $b\in {\mathcal B}_3$ the modular transformation $T_b$ on the Teichm\"uller space ${\mathcal T}(0,4)\cong \mathbb{C}_+$ that is associated to a mapping $\varphi_ {b,\infty}\in \mathfrak{m}_ {b,\infty}$. Recall that the mapping ${\mathcal B}_3\diagup \mathcal{S}_3\ni b\diagup \mathcal{S}_3\to T_b$ is a bijection that satisfies equality \eqref{eq2.100} (see Section \ref{sec:2.3a}). The group of modular transformations on $ \mathbb{C}_+$ is isomorphic to $SL_2(\mathbb{Z})\diagup \pm {\rm Id}$. Recall that $T_{\sigma_1}(z)=\frac{z}{1-z}$ corresponds to
$ \begin{pmatrix} 1&0 \\
-1&1 \end{pmatrix}\diagup \pm {\rm Id}$ (see Section \ref{sec:4.1b}, Example 5). Suppose the braid $b$ commutes with $\sigma_1^k$ for some non-zero integer $k$ and corresponds to
$V\diagup \pm {\rm I}$ where    $ {\sf V} =  \begin{pmatrix} v_{11}&v_{12} \\
v_{21}&v_{22} \end{pmatrix}.$
Then
$$
\begin{pmatrix} 1&0 \\ -k&1 \end{pmatrix} \begin{pmatrix} v_{11}&v_{12} \\ v_{21}&v_{22} \end{pmatrix} = \begin{pmatrix} v_{11}&v_{12} \\ v_{21}&v_{22} \end{pmatrix} \begin{pmatrix} 1&0 \\ -k&1 \end{pmatrix}
$$
i.e.
$$
\begin{pmatrix} v_{11} &\;\; v_{12}  \\-k\, v_{11}+ v_{21}&\; \;-k \,v_{12}+v_{22} \end{pmatrix} =  \begin{pmatrix} v_{11}-k\, v_{12} &\;\;  v_{12} \\ v_{21}-k\, v_{22}&\;\; v_{22} \end{pmatrix} \, .
$$
Since $k \ne 0$ we have $v_{12} = 0$ and $v_{11} = v_{22}$. Since
$\det {\sf V} = 1$ we obtain
$
{\sf V} = \pm \begin{pmatrix} 1&0 \\ -m&1 \end{pmatrix} $ for an integer $m$. Thus $b= \sigma_1^m $
for some integer $m$.
The
lemma is proved. \hfill $\Box$

\bigskip

\noindent {\bf Proof of Theorem \ref{thmGrom2}.} Let $\Phi : \pi_1 (X,x_0) \to {\mathcal B}_3$
be a homomorphism whose conjugacy class corresponds to the isotopy
class of $f$. Denote by $\Psi = \tau_3 \circ \Phi : \pi_1 (X,x_0)
\to {\mathcal S}_3$ the related homomorphism into the symmetric
group. Since the quasipolynomial $F$ is isotopic to an algebroid function for the conformal structure $\omega_{[e_1,e_2]}$ with an annulus of conformal module larger than $\frac{\pi}{2}(\log\frac{3+\sqrt{5}}{2})^{-1}$ representing the commutator $[e_1,e_2]$,
by Lemma \ref{lemm8.3} the conformal module of the commutator $[b_1,b_2]$ equals infinity. Hence, by Lemma
\ref{lemm8.3}
$\,\Psi ([e_1 , e_2]) \,= \,{\rm id}\,.\,$
\index{$\Psi$}

Consider first the case when the subgroup $\Psi
(\pi_1 (X,x_0))$ of ${\mathcal S}_3$ acts transitively on the set of
three points. Apply Lemma \ref{lemGrom6} to the free group $\pi_1
(X,x_0)$ with generators $e_1$ and $e_2$ and to the homomorphism
$\Psi$. We obtain new generators ${\sf e}_1 , {\sf e}_2 \in \mathcal{E'} \cup
\mathcal{E'}^{-1}$ of $\pi_1(X,x_0)$ (with $\mathcal{E'}$ and $\mathcal{E'}^{-1}$ being the sets of the lemma) such that
$\Psi ({\sf e}_1)$ is a $3$-cycle and $\Psi ({\sf e}_2) = {\rm id}$.
Put $s_1 = \Psi ({\sf e}_1)$, $s_2 = \Psi ({\sf e}_2)$, $b_1 = \Phi
({\sf e}_1)$, $b_2 = \Phi ({\sf e}_2)$.
Then $b_2$ and $[b_1,b_2]$
are pure braids (the latter holds since $[{\sf e}_1 , {\sf e}_2]$ is
conjugate to $[e_1 , e_2]$). Since $f$ is isotopic to an algebroid
function for the conformal structure $w_{{\sf e}_1}$,  Lemma \ref{lemm8.3} implies  ${\mathcal M} (b_1) = \infty$. Since $\tau_3 (b_1)$ is a $3$-cycle, by the same lemma
the braid $b_1$ must be conjugate to an integral power of $\sigma_1
\, \sigma_2$.

After conjugating $\Phi$ we may assume that $b_1 = (\sigma_1 \,
\sigma_2)^{3\ell \pm 1} = (\sigma_1 \, \sigma_2)^{\pm 1} \cdot
\Delta_3^{2\ell}$ for an integer $\ell$. By Lemma \ref{lemm8.3} we also have
${\mathcal M} (b_2) = \infty$ for the pure braid $b_2$. Hence, $b_2
= w^{-1} \, \sigma_1^{2k} \, \Delta_3^{2\ell'} \, w$ for integers
$k$ and $\ell'$ and a conjugating braid $w \in {\mathcal B}_3$.
Since ${\mathcal M} ([b_1 , b_2]) = \infty$ the commutator $[b_1 , b_2]$ is
conjugate to $\sigma_1^{2k^*} \Delta_3^{2\ell^*}$ for integers $k^*$
and $\ell^*$.

Take into account that $\Delta_3^{2\ell}$ commutes with each $3$-braid
and apply Lemma \ref{lemGrom7} to ${\sf b}_1= (\sigma_1 \, \sigma_2)^{\pm 1}$ and ${\sf b}_2= w^{-1} \sigma_1^{2k^*} w$.
This gives the statement of Theorem \ref{thmGrom2} for the case
when the subgroup $\Psi(\pi_1(X,x_0))$ acts transitively on the set of three points.

\medskip

Consider now the case when the subgroup $\Psi
(\pi_1 (X,x_0))$ of ${\mathcal S}_3$ does not act transitively on the set of
three points.
Then $\Psi (\pi_1 (X,x_0))$ is generated either by a
transposition (we may assume the transposition to be $(12)$ by
choosing the mapping  $\Phi$ in its conjugacy class) or is equal to
the identity in ${\mathcal S}_3$.

We may suppose that $\Psi (\pi_1 (X,x_0))$ is not the identity. There are generators ${\sf e}_1 , {\sf e}_2 \in \mathcal{E'} \cup \mathcal{E'}^{-1}$ such
that $[{\sf e}_1 , {\sf e}_2]$ is conjugate to $[e_1 , e_2]$ and
$\Psi ({\sf e}_2) = {\rm id}$. Indeed, suppose $\Psi(\pi_1(X;x_0))$
is generated by the transposition $(12)$ and neither $\Psi(e_1)$ nor
$\Psi(e_2)$ is the identity. Then $\Psi(e_1)=\Psi(e_2)=(12).\,$ Hence,
$\Psi(e_2 e_1^{-1})=\rm {id}$,
and for the generators ${\sf e}_1\stackrel{def}=e_1 $ and $ {\sf e}_2\stackrel{def}=e_2 e_1^{-1}$ the claimed statements hold.

Use the notation $b_1 = \Phi ({\sf e}_1)$, $b_2 = \Phi ({\sf e}_2)$. By Lemma
\ref{lemm8.3} we may assume, that $b_1$ is conjugate to either
$\sigma_1^{ k_1} \, \Delta_3^{2\ell_1}$ or to $(\sigma_1 \sigma_2
\sigma_1)^{k_1}= \Delta_3^{k_1}\,$ for an odd integer $k_1$ and an integer $\ell_1$,
and $b_2$ is conjugate to $\sigma_1^{2 k_2} \,
\Delta_3^{2\ell_2}$ for integers $k_2$ and $\ell_2$.
Conjugating $\Phi$, we may assume that
$b_1 = B_1 \, \Delta_3^{2 \ell_1}$ with either $B_1 =\sigma_1 ^{k_1}$ or
$B_1 = \sigma_2 \, \sigma_1^2 \,=\, \sigma_1^{-1} (\sigma_1 \sigma_2
\sigma_1) \sigma_1$.
Respectively, $b_2$ is conjugate to $B_2\,\Delta_3^{2 \ell_2}$ with $B_2= \sigma_1^{2 k_2}$. 
Since $[b_1 , b_2]$ is a pure braid with infinite conformal module,
Lemma \ref{lemm8.3} implies that $[b_1 , b_2]$ is conjugate to
$\sigma_1^{2k^*} \, \Delta_3^{2\ell^*}$. Since the commutator has
degree zero the equality $2k^* + 6\ell^* = 0$ holds and the
unordered tuple of linking numbers of $[b_1 , b_2]$ equals $\{ \ell^* , \ell^*
,-2\ell^* \}$.

This implies that the commutator $[b_1 , b_2]$ 
and the braid $B_2$ are equal to the identity.
Indeed, suppose $b_2=w^{-1} \, \sigma_1^{2 k_2} \, w \,
\Delta_3^{2 \ell_1}\,$ for a braid $w \in \mathcal{B}_3.\,$ The
ordered tuple of linking numbers of pairs of strands of $B_2=w^{-1} \, \sigma_1^{2 k_2} \, w$ equals
$S_w (0,0,k_2)$ and the respective ordered triple for $B_2^{-1}$ is
$S_w (0,0,-k_2)$. Here $S_w = \tau_3(w)$.
The ordered tuple of
linking numbers of pairs of strands of $B_1 \, B_2 \, B_1^{-1}$
equals $ (S')^{-1} \circ S_w (0,0,k_2)$, where $S' =
\tau_3 (B_1)$. 
Since $\Delta_3^2$ is  in the center of
$\mathcal{B}_3$,
the unordered tuple of linking numbers of pairs of strands of
$[b_1 , b_2]$ is equal to that of $[B_1 , B_2]$, i.e. it equals either $\{ k_2 , -k_2 , 0\}$ or $\{0,0,0\}$
(in dependence on $S_w$). Either of these unordered tuples can be
equal to $\{\ell^* , \ell^* , -2\ell^*\}$ for some integer $\ell^*$
only if $\ell^* = 0$. We obtained that $[b_1,b_2] = \rm{id}$ also in
this case. By Lemma \ref{lemGrom8} the group $\Phi(\pi_1(X,x_0))$ is generated
either by (a power of) $\sigma_1$ and $\Delta_3^2$, or by (a power of) $\sigma_2 \,
\sigma_1^2\,$ and $\Delta_3^2$. The theorem is proved. \hfill $\Box$

\medskip

We do not know the answer to the following problem.
\begin{probl}\label{problGrom6}  Can two braids in ${\mathcal B}_3$ of zero entropy
have a non-trivial commutator of zero entropy?
\end{probl}
If the answer is negative, then Theorem \ref{thmGrom2} holds with $\mathcal{E'}=\mathcal{E}$.
The proof of Theorem \ref{thmGrom2}
gives the following corollary which is weaker than a negative answer
to Problem \ref{problGrom6}.
\begin{cor}\label{corGrom1}
Let ${b}_1$ and ${b}_2$ be braids in ${\mathcal{B}}_3$ with
$h(b_1)=h(b_2)=h([b_1,b_2])=0$. If one of the braids is pure or
$$h(b_2\, b_1^{-1})=h(b_2\,b_1^{-2})=0$$
then $[b_1,b_2]= \rm{id}$.
\end{cor}

On the other hand D.Calegari and A.Walker suggested the following example of two elements
of the braid group $\mathcal{B}_3$ with non-trivial commutator of
zero entropy.
\begin{ex}\label{exGrom1}
The commutator of the non-commuting braids $b_1= (\sigma_2)^{-1} \, \sigma_1\;$ and $ b_2 =
\sigma_2 \, (\sigma_1)^{-1}\,$  has entropy zero.
\end{ex}
It is enough to show that the commutator of the two braids equals $[b_1,b_2] =
(\sigma_2)^{-6} \, \Delta_3^2\,.\,$  Since
\begin{align*}
b_1^{-1} \, b_2^{-1} \,=\,&(\sigma_1)^{-1} \, \sigma_2 \, \sigma_1 \;
(\sigma_2)^{-1}\;\;\;\;\;  =\,(\sigma_1)^{-1} \, \sigma_2 \, \sigma_1
\,\sigma_2 \; (\sigma_2)^{-2} \, \\
=\,&(\sigma_1)^{-1} \, \sigma_1 \,
\sigma_2 \,\sigma_1 \; (\sigma_2)^{-2}
\, = \, \sigma_2 \, \sigma_1 \,(\sigma_2)^{-2}\, ,\\
b_1 \, b_2 \; \, =\,&(\sigma_2)^{-1} \, \sigma_1 \, \sigma_2 \;
(\sigma_1)^{-1}\;\; \;\;\; =\, (\sigma_2)^{-2} \, \sigma_2 \, \sigma_1 \,
\sigma_2 \;(\sigma_1)^{-1}\,\\
 =\,&(\sigma_2)^{-2} \, \sigma_1 \,
\sigma_2 \, \sigma_1 (\sigma_1)^{-1}
\,\; = \, (\sigma_2)^{-2} \, \sigma_1 \,\sigma_2\, ,
\end{align*}
we obtain
\begin{align*}
[b_1,b_2] = \, b_1 b_2 b_1^{-1}b_2^{-1} \, = \,&(\sigma_2)^{-3} \,
\sigma_2 \, \sigma_1 \, \sigma_2 \, \cdot \sigma_2 \, \sigma_1 \,
\sigma_2 \; (\sigma_2)^{-3} \,\\
=  \,&(\sigma_2)^{-3} \; \Delta_3^2 \;
(\sigma_2)^{-3}
= \, (\sigma_2)^{-6} \;\Delta_3^2\,.
\end{align*}
It is clear now that the commutator $[b_1,b_2]$ has entropy zero.


Problem \ref{problGrom6} has a positive answer for braid groups on more than $3$ strands.
Consider the braids $ b_1 =\sigma_1^{-2}$ and $b_2 = (\sigma_2)^{-1} \,
(\sigma_1)^{-1} \, (\sigma_3)^{-1} \, (\sigma_2)^{-1}$ in $\mathcal{B}_4$ of zero entropy.
Their commutator equals $[b_1
, b_2] = \sigma_1^{-2} \cdot \sigma_3^{2} \ne {\rm id}$ and is a non-trivial braid of zero entropy (see Figure \ref{Fig82}).
\begin{figure}[h]
\begin{center}
\includegraphics[width=10cm]{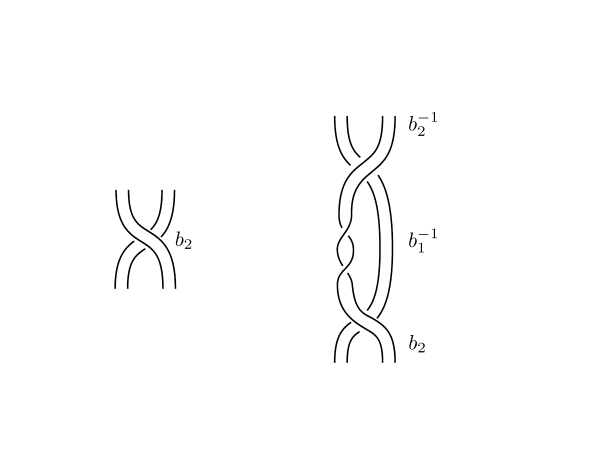}
\caption{Two $4$-braids of vanishing entropy with non-trivial commutator}\label{Fig82}
\end{center}
\end{figure}

Notice the following fact. Let $X$ be a torus with a disc removed.
Consider the separable quasipolynomial of degree four on $X$ whose
isotopy class corresponds to the conjugacy class of the homomorphism
$\Phi : \pi_1 (X,x_0) \to {\mathcal B}_4$ with $\Phi (e_1) = b_1$ and $\Phi (e_2) = b_2$.
By a similar argument as in the proof of Theorem \ref{thmGrom1}
the quasipolynomial is isotopic to an algebroid function for each
conformal structure of second kind on $X$.

\begin{probl}\label{problGrom7.} What is the analogue of Corollary
$\ref{corGrom1}$ for braid groups on more than three strands? What is the analog of Theorem $\ref{thmGrom1}$ for quasipolynomials of degree $4$?
\end{probl}

\chapter{Gromov's Oka Principle for $({\sf g},{\sf m})$-fiber bundles.}
\label{chapterEl}

In this chapter we prove theorems  related to the failure and the restricted validity of the Gromov-Oka Principle for bundles whose
fibers are Riemann surfaces of type $(1,1)$ (in other words, the fibers are once punctured tori). The problem can be reduced to the respective problem for
$(0,4)$-bundles (and, hence, for mappings into  $\mathfrak{P}_3$)  by considering double branched coverings.

\section{$({\sf g},{\sf m})$-fiber bundles. Statement of the Problem.}
\label{sec:9.1}

We will consider bundles whose fibers are connected
closed oriented surfaces of genus $\textsf{g}\geq 0$ with $\textsf{m}\geq 0$ distinguished points.
Recall that a connected smooth closed oriented surface of genus $\sf{g}$ with $\sf{m}$ distinguished
points is called a surface of type $(\sf{g},\sf{m})$. In case the surface is equipped with a complex
structure we call it a Riemann surface of type $(\sf{g},\sf{m})$.

\begin{defn}\label{def9.1}{\rm (Smooth oriented $(\textsf{g},\textsf{m})$ fiber bundles.)}
Let $X$ be a smooth oriented manifold of dimension $k$, let
${\mathcal X}$ be a smooth (oriented)
manifold of dimension $k+2$ and ${\mathcal P} : {\mathcal X} \to X$
an orientation preserving smooth proper submersion such that for each point $x \in X$ the
fiber ${\mathcal P}^{-1} (x)$ is a smooth closed oriented surface of genus $\sf{g}$.
Let $\mathbold{E}$ be a smooth submanifold of $\mathcal{X}$ that intersects each fiber $\mathcal{P}^{-1}(x)$ along a set $E_x$ of $\sf{m}$ distinguished points.
Then the tuple ${\mathfrak F}_{\textsf{g},\textsf{m}} = ({\mathcal
X} , {\mathcal P} , \mathbold{E}, X)$ is called
a smooth (oriented) fiber bundle over $X$ with fibers being smooth closed oriented surfaces of
genus $\textsf{g}$ with $\textsf{m}$ distinguished points
(for short, a smooth oriented $(\textsf{g},\textsf{m})$-bundle).
\end{defn}
If ${\sf m}=0$ the set $\mathbold E$ is the empty set and we will often denote the bundle
by $({\mathcal X} , {\mathcal P} , X)$.
If ${\sf{m}}>0$ the mapping $x\to E_x$ locally defines $\sf m$ smooth sections of the $({\sf g},0)$-bundle $({\mathcal X} , {\mathcal P} , X)$ that is obtained by forgetting all distinguished points. A section of the bundle $({\mathcal X} , {\mathcal P} , X)$ is a continuous mapping $s:X\to {\mathcal X}$ with $\mathcal{P}\circ s$ being the identity on $X$.
$(\textsf{g},0)$-bundles will also be called genus $\textsf{g}$ fiber bundles.
For $\textsf{g}=1$ and $\textsf{m}=0$ the bundle is also called an elliptic fiber
bundle.  \index{fiber bundle !  elliptic}
We will consider mostly the case when $2{\sf g}-2 + {\sf m}>0$.
\index{$\mathfrak{F}_{\textsf{g},\textsf{m}}$} \index{$\mathcal{P}$} \index{$\mathbold{E}$}
\index{$\mathcal{X}$} 

\index{$\mathfrak{F}_{{\sf g},{\sf m}}$} \index{fiber bundle ! $({\sf g},{\sf m})$-fiber bundle} \index{${\rm pr}_1$}
\index{Ehresmann ! Fibration Theorem}
Let $S$ be a smooth reference surface of genus $\sf g$ and $E\subset S$ a set of $\sf{m}$ distinguished points.
For an open subset $U$ of $X$ we consider the trivial bundle (also called product bundle) $\big(U\times S, {\rm {pr}}_1, U\times E,U\big)$
with set $\{x\}\times E$ of distinguished points in the fiber $\{x\}\times S$ over $x$. Here ${\rm pr}_1: U\times S\to U $ is the projection onto the first factor.
By Ehresmann's Fibration Theorem each smooth $(\sf{g},\sf{m})$-bundle ${\mathfrak
F}_{\textsf{g},\textsf{m}} = ({\mathcal
X} , {\mathcal P} , \mathbold{E}, X)$ with set of distinguished points $E_x \stackrel{def}=\mathbold{E}\cap{\mathcal P}^{-1}(x) $ in the fiber over $x$ is
locally smoothly trivial, i.e. each point in $X$ has a neighbourhood $U\subset X$
such that for a
surjective diffeomorphism $\varphi_U:\mathcal{P}^{-1}(U) \to U \times S$
the diagram

\[
\xymatrix{
{\mathcal{P}^{-1}(U)}      \ar[r]^{\varphi_U     } \ar[d]_{\mathcal{P}         } & {U \times
S}   \ar[dl]^{{\rm  pr}_1} \\
  U
}
\]
\index{fiber bundle ! smoothly trivial}
\noindent is commutative and $\varphi_U$ maps $\mathbold{E}\cap \mathcal{P}^{-1}(U)$ onto $U\times E$, equivalently,
for all $x\in U$ the diffeomorphism
$\varphi_U$ maps
the set of distinguished points $E_x=\mathbold{E}\cap \mathcal{P}^{-1}(x)$ in the fiber $\mathcal{P}^{-1}(x)$ to the set of
distinguished points $\{x\}\times E$ in the fiber $\{x\}\times S$.

The idea of the proof of Ehresmann's Theorem is the following. Choose smooth coordinates on $U$
by a mapping from a rectangular box in $\mathbb{R}^n$ to $U$.
Consider
smooth vector fields $v_j$ on $U$, which form a basis of the tangent
space of $U$ at each point of $U$. Take smooth vector fields $V_j$ on ${\mathcal
P}^{-1} (U)$ that are tangent to $\mathbold{E}$ at points of this set and are mapped to
$v_j$ by the differential of
${\mathcal P}$. Such vector fields can easily be obtained locally.
To obtain the globally defined vector fields $V_j$ on ${\mathcal P}^{-1} (U)$
one uses partitions of unity. The required diffeomorphism $\varphi_U$ is
obtained by composing the flows of the vector fields $V_j$ (in any fixed
order).

In this way a trivialization of the bundle can be obtained over any simply connected smooth manifold.

\smallskip

In the case when the base manifold is a Riemann surface,  a
holomorphic $(\textsf{g}$,$\textsf{m})$ fiber bundle over $X$
is defined as follows.
\begin{defn}\label{defEl2}
Let $X$ be a Riemann surface, let ${\mathcal X}$ be a complex surface, and ${\mathcal P}$
a holomorphic proper submersion from ${\mathcal X}$ onto $X$, such that
each fiber $\mathcal{P}^{-1}(x)$ is a closed Riemann surface of genus $\sf g$.
Suppose $\mathbold{E}$ is a complex one-dimensional submanifold of $\mathcal{X}$ that intersects each fiber $\mathcal{P}^{-1}(x)$ along a
set $E_x$ of $\textsf{m}$ distinguished points. Then the
tuple ${\mathfrak F}_{\textsf{g},\textsf{m}} = ({\mathcal X} , {\mathcal P}, \mathbold{E}, X)$ is called a
holomorphic $(\textsf{g}$,$\textsf{m})$ fiber bundle over $X$.
\end{defn}
Notice that the mapping $x\to E_x$
locally defines $\sf{m}$ holomorphic sections of the $({\sf g},0)$-bundle $({\mathcal X} , {\mathcal P}, X)$ that is obtained by forgetting all distinguished points.

We will call two smooth oriented (holomorphic, respectively) $(\textsf{g},\,\textsf{m})$ fiber
bundles, ${\mathfrak F}^0 = ({\mathcal X}^0 ,
{\mathcal P}^0 , \mathbold{E}^0, X^0)$ and ${\mathfrak F}^1 = ({\mathcal X}^1 , {\mathcal
P}^1 ,\mathbold{E}^1, X^1)$, smoothly isomorphic (holomorphically isomorphic, respectively) if there are smooth (holomorphic,
respectively) homeomorphisms $\Phi: \mathcal{X}^0\to \mathcal{X}^1$ and $\phi:X^0\to X^1$
such that for each $x \in X^0$ the mapping $\Phi$ takes the fiber $(\mathcal{P}^0 )^{-1}(x)$
onto the fiber $(\mathcal{P}^1 )^{-1}(\phi(x))$ and the set of distinguished points in
$(\mathcal{P}^0 )^{-1}(x)$ to the set of distinguished points in $(\mathcal{P}^1 )^{-1}(\phi(x))$.
Holomorphic bundles that are holomorphically isomorphic will be considered the same holomorphic bundles.
\index{fiber bundles ! holomorphically isomorphic} \index{fiber bundles ! smoothly isomorphic}

For a smooth $({\sf g},{\sf{m}})$-bundle ${\mathfrak F} = ({\mathcal X} ,
{\mathcal P} ,\mathbold{E}, X)$ over $X$ and a homeomorphism $\omega:X\to \omega(X)$ onto a surface $\omega(X)$ we denote by ${\mathfrak F}_{\omega}$ the bundle
\begin{equation}\label{eqEl23}
{\mathfrak F}_{\omega} = ({\mathcal X} ,
\omega\circ{\mathcal P} ,\mathbold{E},\omega(X) ).
\end{equation}
The bundles ${\mathfrak F}$ and ${\mathfrak F}_{\omega}$ are isomorphic. Indeed, we may consider the homeomorphisms
 $\omega:X\to \omega(X)$ and ${\rm Id}:{\mathcal X}\to {\mathcal X}$.

We are interested in smooth deformations of a smooth bundle to a holomorphic one.
Two smooth (oriented) $(\textsf{g},\textsf{m})$ fiber bundles
over the same oriented
smooth base manifold $X$,  ${\mathfrak F}^0 = ({\mathcal X}^0 ,
{\mathcal P}^0 ,\mathbold{E}^0, X)$, and ${\mathfrak F}^1 = ({\mathcal X}^1 , {\mathcal
P}^1 ,\mathbold{E}^1, X)$, are called (free) isotopic if for an open interval $I$ containing $[0,1]$ there
is a smooth $(\textsf{g},\textsf{m})$ fiber bundle $({\mathcal Y} , {\mathcal P} , \mathbold{E},  I \times
X)$ over the base $I \times X$ (called an isotopy) with the following property.  For each $t \in [0,1]$ we put
${\mathcal Y}^t = {\mathcal P}^{-1} (\{t\}\times X)$ and $\mathbold{E}^t= \mathbold{E}\cap {\mathcal P}^{-1} ( \{t\}\times X)$. The bundle ${\mathfrak F}^0\,$ is
equal to
$\,\left( {\mathcal Y}^0 , {\mathcal P} \mid {\mathcal Y}^0 , \mathbold{E}^0,   \{0\}
\times  X\right)\,$, and the bundle $\,{\mathfrak F}^1\,$ is
equal to $\,\bigl( {\mathcal Y}^1 , {\mathcal P} \mid {\mathcal
Y}^1 , \mathbold{E}^1,  \{1\} \times X \bigl)\,$.

\index{isotopy ! of fiber bundles}
\index{fiber bundles ! isotopic}
Notice that for
each $t \in I$ the tuple $\left( {\mathcal Y}^t , {\mathcal P} \mid {\mathcal Y}^t , \mathbold{E}^t,  \{t\}
\times X\right)$ is automatically a smooth  $({\sf{g}},{\sf{m}})$-fiber bundle.

\begin{lemm}\label{lemEl2}
Isotopic bundles are isomorphic.
\end{lemm}
{\bf Proof.}
Let $I \supset [0,1]$ be an open interval, and let   $(\mathcal{Y},\mathcal{P},\mathbold{E},I\times X)$ be a smooth $\sf(g,m)$-bundle over $I\times X$ such that the restrictions to $\{0\}\times X$ and to $\{1\}\times X$ are equal to given smooth $\sf(g,m)$-bundles over $X$. Here $X$ is a smooth finite open surface.
Consider the vector field $v$ on $I\times X$ that equals the unit vector in positive direction of $I$ at each point of $I\times X$, and let $V$ be a smooth vector field on $\mathcal{Y}$ that projects to $v$ under $\mathcal{P}$ and is tangent to ${\mathbold{E}}$  at points of this set. For each $t\in I$ we let $\varphi_t$
be the time $t$ map of the flow of $V$, more precisely,
\begin{align*}
\frac{\partial}{\partial t}  \varphi_t(y)=V(\varphi_t(y)),\;\; \varphi_0(y)=y, \;\;\; t\in I,\; y \in \mathcal{P}^{-1}(\{0\}\times X)\,.
\end{align*}
Then $ \varphi_1$ is a diffeomorphism from $\mathcal{P}^{-1}(\{0\}\times X)$ onto $\mathcal{P}^{-1}(\{1\}\times X)$, that maps the fiber over $(0,x)$ onto the fiber over $(1,x)$ and maps distinguished points to distinguished points.
\hfill $\Box$

In Section \ref{sec:9.2} we will prove that vice versa, isomorphic bundles are isotopic.

The following problem on isotopies of smooth
objects to the respective holomorphic objects concerns another version of the restricted validity of Gromov's Oka Principle.

\begin{probl}\label{problEl.1a}
Let $X$ be a finite open Riemann surface. Can a given smooth $(\textsf{g,m)}$-fiber
bundle over $X$ be smoothly deformed to a
holomorphic fiber bundle over $X$?

More precisely,, does there exist an isotopy $({\mathcal Y} , {\mathcal P} , \mathbold{E},  X \times
I)$ over the base $X \times I$ with $[0,1]\subset I$, for which the restriction to $\{0\}\times X$ is equal to the given bundle, and the restriction to $\{1\}\times X$ can be equipped with the structure of a holomorphic bundle?
\end{probl}
The answer is in general negative. There is a similar notion that describes obstructions for the existence of such
isotopies for $(\textsf{g,m)}$-bundles as for the existence of isotopies of smooth separable
quasipolynomials to holomorphic ones.
It is called the conformal module of isotopy classes of
$(\textsf{g,m)}$-fiber bundles over the circle, and is defined as follows.
\index{conformal module ! of isotopy
classes of fiber bundles} Consider a smooth oriented $({\sf g},{\sf m})$-fiber
bundle ${\mathfrak F} = {\mathfrak F}_{({\sf g},{\sf m})} = ({\mathcal X} , {\mathcal
P} , \mathbold{E},
\partial {\mathbb D})$ over the circle. Denote by
$\widehat{\mathfrak F}$ the \index{$\widehat{\mathfrak F}$}
isotopy class of fiber bundles over $\partial {\mathbb D}$ that
contains ${\mathfrak F}$. Let $A_{r,R} = \{ z \in {\mathbb C} : r <
\vert z \vert < R \}$, $r < 1 < R$, be an annulus  containing the
unit circle. A smooth $({\sf g},{\sf m})$-fiber bundle on $A_{r,R}$ is said to represent
$\widehat{\mathfrak F}$ if its restriction to the unit circle
$\partial {\mathbb D}$ is an element of $\widehat{\mathfrak F}$.

\begin{defn}\label{defEl3}
{\rm (The conformal module of isotopy classes of $({\sf g},{\sf m})$-fiber
bundles.)} Let $\widehat{\mathfrak F} = \widehat{\mathfrak F}_{{\sf g},{\sf m}}$ be
the isotopy class of an oriented $({\sf g},{\sf m})$-fiber bundle over
the circle $\partial {\mathbb D}$. Its conformal module is defined
as
\begin{align}\label{eqEl8}
{\mathcal M} (\widehat{\mathfrak F}) = \sup \{ m(A_{r,R}) :\, & \mbox{there exists a
holomorphic fiber bundle} \nonumber\\ & \mbox{on}\; A_{r,R}\; \mbox{that represents}\;
\widehat{\mathfrak F}\}.
\end{align}
\end{defn}

\section[Monodromies. Isotopy and isomorphism of bundles]  {The monodromy of $(\sf{g},\sf{m})$-bundles.  Isotopy and isomorphism of bundles.}\label{sec:9.2}
\noindent {\bf Monodromy and maping torus.} Consider a smooth $(\sf{g},\sf{m})$-bundle
$\mathfrak{F}_{\sf{g},\sf{m}}=(\mathcal{X},\mathcal{P},\mathbold{E},\partial{\mathbb{D}})$ over the unit
circle $\partial{\mathbb{D}}$. Denote as before the set of distinguished points $\mathbold{E}\cap \mathcal{P}^{-1}(x)$ in the fiber
over $x$ by $E_x$.
Let $v$ be the unit tangent vector field to $\partial
{\mathbb D}$. The argument used for the proof of Ehresmann's
Theorem provides a smooth vector field
$V$ on ${\mathcal X}$ which is  tangent to $\mathbold{E}$ at points of this set
and projects to $v$, i.e. $(d \, {\mathcal P})(V) = v$. Cover $\partial {\mathbb
D}$ by its universal covering $\mathbb{R}\overset{p}\to\partial{\mathbb D}$, using the mapping $p(t)=
 e^{2\pi i t}$, $t \in {\mathbb R}$. Lift the bundle ${\mathfrak
F}_{\sf{g},\sf{m}}$ over $\partial {\mathbb D}$ to a bundle $\widetilde{\mathfrak
F}_{\sf{g},\sf{m}} = (\widetilde{\mathcal X} , \widetilde{\mathcal P} , \widetilde{\mathbold{E}}, {\mathbb
R})$ over ${\mathbb R}$.

The fiber $\widetilde{\mathcal{P}}^{-1}(t)$ with set of distinguished points $\tilde{E}_t$ equals
$\mathcal{P}^{-1}(e^{2\pi i t})$ with set of distinguished points $E_{e^{2\pi i t}}$, hence for $k
\in \mathbb{Z}$ and each $t \in \mathbb{R}$ the sets
$\widetilde{\mathcal{P}}^{-1}(t+2\pi k)$ and   $\widetilde{\mathcal{P}}^{-1}(t)$,
are equal, and $\tilde{E}_{t+2\pi k}$ is equal to $\tilde{E}_{t}$.
Lift the vector field $V$ to a
vector field $\widetilde V$ on $\widetilde{\mathcal{X}}$ with the following property. The equality
$\widetilde V(z_1) =\widetilde V(z_2) $ holds, if $z_1, z_2 \in  \widetilde{\mathcal{X}}$ are
mapped to the same point $z  \in \mathcal{X}$ under the projection $\widetilde{\mathcal{X}}\overset{{\tilde p}}{-\!\!\!\longrightarrow}
{\mathcal{X}}$.

For $t \in \mathbb{R},\, \zeta \in \widetilde{\mathcal{X}}$,
we let $\widetilde\varphi_t (\zeta)\in \widetilde{\mathcal{X}}$  be
the solution of the differential equation
\begin{equation}\label{eqEl1}
\frac\partial{\partial t} \, \widetilde\varphi_t(\zeta) = \widetilde
V (\widetilde\varphi_t (\zeta)),\; \widetilde\varphi_0(\zeta) = \zeta  .
\end{equation}
Put $S \stackrel{def}{=}{\mathcal P}^{-1} (1)\cong \widetilde{\mathcal P}^{-1} (0)$ and $E\stackrel{def}= \mathbold{E}\cap  {\mathcal P}^{-1} (1) \subset S$. Let
$\zeta \in S$. Then $\widetilde \varphi_t(\zeta) \in  \widetilde{\mathcal P}^{-1} (t)$. The
time $t$ map $\widetilde\varphi_t (\zeta)$, $\zeta \in S ,$
of the vector field $\widetilde V$ defines a homeomorphism from the
fiber $S \cong \widetilde{\mathcal P}^{-1} (0)$ onto the fiber
$\widetilde{\mathcal P}^{-1} (t)$, that maps the set of distinguished points $E\subset S$ to the set of distinguished points $\widetilde{E }_t\subset \widetilde{\mathcal P}^{-1} (t)$. In the same way $\widetilde \varphi_t$ defines a
homeomorphism from the fiber $\widetilde{\mathcal P}^{-1} (t_1)$ onto the fiber
$\widetilde{\mathcal P}^{-1} (t_1+t)$ that maps the distinguished points $\tilde{E }_{t_1}$ to the distinguished points $\tilde{E }_{t_1+t}$. The mappings $\widetilde \varphi_t$ form a group:
\begin{equation}\label{eqEl2}
\widetilde \varphi_{t_1+t_2}(\zeta) = \widetilde \varphi_{t_1}(\widetilde \varphi_{t_2}(\zeta)),
\;  \zeta \in \widetilde{\mathcal{X}}, \; t_1, t_2 \in \mathbb{R}.
\end{equation}
Let $\big(\mathbb{R} \times S ,\, {\rm pr}_1\,, \mathbb{R}\times E,\, \mathbb{R}\big)$ be the trivial bundle.
Here ${\rm pr}_1:\mathbb{R} \times S \to \mathbb{R}$ is the projection onto the first factor.
The mapping $\Phi$,
\begin{equation}\label{eqEl3}
\mathbb{R}\times S \ni (t,\zeta) \to \Phi(t,\zeta)\stackrel{def}{=}
\widetilde{\varphi}_t(\zeta)\in \widetilde{\mathcal{X}}
\end{equation}
provides an isomorphism from the trivial bundle
$\big(\mathbb{R} \times S ,\, {\rm pr}_1\,, \mathbb{R}\times E,\, \mathbb{R}\big)$
to the lifted
bundle $\widetilde{\mathfrak F}_{\sf{g},\sf{m}}$. Define by $\varphi_t$ the projection
\begin{equation}\label{eqEl5}
\tilde{\mathcal{X}}\supset \widetilde{\mathcal P}^{-1} (t)
\ni \widetilde{\varphi}_t(\zeta) \overset{{\tilde p}}{-\!\!\!\longrightarrow} \varphi_t(\zeta)= \tilde{p}(\widetilde{\varphi}_t(\zeta))\in {\mathcal P}^{-1}(e^{2\pi i t}) \in \mathcal{X}\,.
\end{equation}
Since $\widetilde{\mathcal{P}}^{-1}(t)={\mathcal{P}}^{-1}(e^{2\pi i t}) $
and $\tilde{E}_t=E_{e^{{2\pi i t}}}$,
we obtain a smooth family
of homeomorphisms $\varphi_t : {\mathcal P}^{-1} (1) \to {\mathcal
P}^{-1} (e^{2\pi it})$ that map distinguished points to distinguished points.
We call the family $\varphi_t, \,t \in \mathbb{R}, $ a trivializing family of homeomorphisms.
Consider the time-$1$ map
\begin{equation}\label{eqEl4}
\varphi_1 : {\mathcal P}^{-1} (1) \to {\mathcal P}^{-1} (e^{2\pi i})
= {\mathcal P}^{-1} (1) \,
\end{equation}
which is a  self-homeomorphism of the fiber over $1$ that maps the set of distinguished points $E\subset S\cong {\mathcal P}^{-1} (1)$ to itself.
For $k \in \mathbb{Z}$ we have $\varphi_{t+k}=\varphi_t \circ \varphi_1^k.$
For each $n \in \mathbb{Z}$ the projection \eqref{eqEl5}
maps the points $\Phi(t,\zeta)=\widetilde{\varphi}_t(\zeta)$ and $\Phi(t+n,
{\varphi}_1^{-n}(\zeta))=\widetilde{\varphi}_{t+n}(\varphi_1^{-n}(\zeta))$
to the same point $ \varphi_t(\zeta) \in {\mathcal P}^{-1} (e^{2\pi i t})$.
The group ${\mathbb Z}$ of integer numbers acts on $\Phi({\mathbb R}
\times S) = \widetilde{\mathcal X}$ by
\begin{equation}\label{eqEl6}
\Phi({\mathbb R} \times S) \ni \Phi (t,\zeta) \to
\Phi(t+n,\varphi_1^{-n}(\zeta)) \, , \quad n \in {\mathbb Z} \, .
\end{equation}
The quotient
\begin{equation}\label{eqEl6a}
{\mathcal X}_1 \stackrel{def}{=} {\mathbb R} \times S \diagup \big((t,\zeta) \sim (t+1,
\varphi_1^{-1}(\zeta))\big)
\end{equation}
is a smooth manifold that is diffeomorphic to $\mathcal{X}$.
Indeed, the mapping ${{\tilde p}}\circ\Phi: \mathbb{R}\times S\to \mathcal{X}$,
${\tilde{p}}\circ\Phi(t,\zeta)= \varphi_t(\zeta)\in {\mathcal P}^{-1} (e^{2\pi i t})$, satisfies the condition ${\tilde{p}}\circ\Phi\big(t+1,\varphi_1^{-1}(\zeta)\big)={\tilde{p}}\circ\Phi(t,\zeta)$. Hence, this mapping descends to a mapping from $\mathcal{X}_1$ to $\mathcal{X}$, that is a local diffeomorphism and is bijective by the construction. Hence, it is a diffeomorphism from $\mathcal{X}_1$ onto $\mathcal{X}$.
Put
\begin{equation}\label{eqEl6b}
\mathbold{E}_1\stackrel{def}= \mathbb{R}\times E\diagup \Big((t,\zeta) \sim (t+1,
\varphi_1^{-1}(\zeta))\Big)\,,
\end{equation}
and recall that ${\tilde p}(\Phi(t,E))={\tilde p}(\widetilde{\varphi}_t(E))= \mathbold{E}\cap \mathcal{P}^{-1}(e^{2\pi i t})$, since $\widetilde{\varphi}_t$ maps the distinguished points $E\subset S= \mathcal{P}^{-1}(1)$ to the distinguished points $\mathbold{E}\cap \mathcal{P}^{-1}(e^{2\pi i t})$ in the fiber $\widetilde{\mathcal{P}}^{-1}(t)=\mathcal{P}^{-1}(e^{2\pi i t})$.
Define the projection $\mathcal{P}_1:{\mathcal X}_1 \to
\partial \mathbb{D}$, so that $\mathcal{P}_1$ takes the value $e^{2\pi it}$ on the class
containing $(t,\zeta)$. We obtain a bundle $({\mathcal X}_1,\mathcal{P}_1,\mathbold{E}_1, \partial\mathbb{D})$ that is smoothly isomorphic to
$({\mathcal X},\mathcal{P},\mathbold{E}, \partial \mathbb{D})$.
We will also say for short that
${\mathfrak F}_{\sf{g},\sf{m}}$ is smoothly isomorphic to the
mapping torus
\index{mapping ! mapping torus}
\begin{equation}\label{eqEl7}
\left([0,1] \times S\right) \diagup \Big((0 , \zeta) \sim
(1,\varphi (\zeta)) \Big)\,
\end{equation}
where $\varphi = \varphi_1^{-1}$ is a smooth orientation preserving
self-homeomorphism of the fiber $S=\mathcal{P}^{-1}(1)$
with set of distinguished points $E\subset S$.
The mapping $\varphi$ depends
on the trivializing vector field. However, the mapping class of $\varphi$ is independent on
this vector field, it is merely determined by the bundle.
We denote it by $\mathfrak{m}_{\mathfrak{F}}$ (with $\mathfrak{F}={\mathfrak
F}_{\sf{g},\sf{m}}$).
The mapping class $\mathfrak{m}_{\mathfrak{F}} \in \mathfrak{M}(S;\emptyset, E)$
is called the monodromy mapping class
of the bundle ${\mathfrak F}$ over the circle, or the monodromy, for short. \index{monodromy}

For diffeomorphic closed surfaces  $S_1$ and $S_2$ and sets of distinguished points $E_1$ and $E_2$ of the same cardinality the groups $\mathfrak{M}(S_1;\emptyset, E_1)$ and $\mathfrak{M}(S_2;\emptyset, E_2)$ are isomorphic.
Any diffeomorphism $\varphi:S_1\to S_2\,$ with $\varphi(E_1)=E_2\,$ induces an isomorphism ${\rm Is}_{\varphi}:\mathfrak{M}(S_1;\emptyset,E_1)\to \mathfrak{M}(S_2;\emptyset,E_2)$. Two such isomorphisms $\mathfrak{M}(S_1;\emptyset,E_1)\to \mathfrak{M}(S_2;\emptyset,E_2)$ differ
by conjugation with an element of $\mathfrak{M}(S_2;\emptyset, E_2)$. Hence, there is a canonical one-to-one correspondence between the sets of conjugacy classes $\reallywidehat{\mathfrak{M}(S_1;\emptyset,E_1)}$ and $\reallywidehat{\mathfrak{M}(S_2;\emptyset,E_2)}$, and we will identify conjugacy classes according to this correspondence. Similarly, let $h:F_k\to \mathfrak{M}(S_1;\emptyset, E_1)$ be a homomorphism from a free group $F_k$ of rank $k$. Any diffeomorphism $\varphi:S_1\to S_2\,$ with $\varphi(E_1)=E_2\,$ induces another homomorphism $h_{\varphi}:
F_k\to \mathfrak{M}(S_2;\emptyset,E_2)$.
Replacing $\varphi$ by another diffeomorphism we arrive at a conjugate homomorphism.
We obtain a canonical bijection between conjugacy classes of homomorphisms $F_k\to \mathfrak{M}(S_j;\emptyset,E_j), \, j=1,2$.

\begin{probl}\label{problEl.1}
Prove that for each isotopy class $\widehat{\mathfrak{F}}_{\sf{g},\sf{m}}$ of   $(\sf{g},\sf{m})$-bundles over the circle and the associated conjugacy class $\widehat{\mathfrak{m}}_{{\mathfrak{F}}_{\sf{g},\sf{m}}}$ of elements of the mapping class of the fiber over the point $1$ the equality
\begin{align*}
\mathcal{M}(\widehat{\mathfrak{F}}_{\sf{g},\sf{m}})=\frac{\pi}{2} \frac{1}{h(\widehat{\mathfrak{m}}_{{\mathfrak{F}}_{\sf{g},\sf{m}}})}
\end{align*}
holds.
\end{probl}

In the sequel we will use the mapping class group $\mathfrak{M}(S;\emptyset,E)$ on a reference (Riemann) surface of type $(\sf{g},\sf{m})$ and the set of its
conjugacy classes $\reallywidehat{\mathfrak{M}(S;\emptyset,E)}$, or the set of conjugacy classes of mappings from a free group to $\mathfrak{M}(S;\emptyset,E)$, respectively. Recall that the group $\mathfrak{M}(S;\emptyset,E)$ is isomorphic to the modular group ${\rm Mod}(\sf{g},\sf{m})$.

The
following theorem holds.
(See, e.g. \cite{FaMa} for the case of closed fibers of genus $g\geq 2$ and paracompact Hausdorff spaces $X$.)
\begin{thm}\label{thmEl1}
Let $X$ be a connected smooth finite open oriented surface with base point $q_0$. The set of isomorphism classes \index{isotopy class ! of fiber
bundles}of smooth oriented $({\sf g},{\sf m})$-bundles on $X$ is in
one-to-one correspondence to the set of conjugacy classes of
homomorphisms from the fundamental group $\pi_1 (X,q_0)$ into the
modular group ${\rm Mod}(\sf{g},\sf{m})$.
\end{thm}
{\bf Proof.} Let first $\mathfrak{F}_0=(\mathcal{X}_0,\mathcal{P}_0,\mathbold{E}_0,\partial \mathbb{D})$ and $\mathfrak{F}_1=(\mathcal{X}_1,\mathcal{P}_1,\mathbold{E}_1,\partial \mathbb{D})$ be isomorphic bundles over the circle $\partial\mathbb{D}$. Suppose the isomorphism is given by a diffeomorphism $\phi: \partial \mathbb{D}\toitself$ and a diffeomorphism $\Phi:\mathcal{X}_0\to\mathcal{X}_1$. We may assume  that $\phi$ preserves the base point $1$, hence, $\Phi$ maps each fiber $\mathcal{P}_0^{-1}(e^{2\pi i t})$ of the first bundle onto the fiber $\mathcal{P}_1^{-1}(e^{2\pi i \phi_1(t)})$ of the second bundle for a homeomorphism $\phi_1:[0,1]\toitself$.
Let $\varphi_t:S\to \mathcal{P}_0^{-1}(e^{2\pi i t}) $ be a trivialising family of homeomorphisms for the bundle $\mathfrak{F}_0$. Then $\psi_t\stackrel{def}= \Phi|\mathcal{P}_0^{-1}(e^{2\pi i t}) \circ \varphi_t \circ (\Phi|\mathcal{P}_0^{-1}(1))^{-1}$ is a trivialising family of homeomorphisms of the second bundle. We obtain
$\psi_1=\Phi|\mathcal{P}_0^{-1}(1) \circ \varphi_1 \circ (\Phi|\mathcal{P}_0^{-1}(1))^{-1}$. This means that
$\psi_1$ and $\phi_1$ represent the same element of $\reallywidehat{\mathfrak{M}(\mathcal{P}_0^{-1}(1))}$.  We saw that the monodromies of isomorphic $(\sf{g},\sf{m})$-bundles over the circle belong to the same conjugacy class  $\reallywidehat{{\rm Mod}(\sf{g},\sf{m})}$.

Vice versa, suppose the monodromies of two $(\sf{g,m})$-bundles $\mathfrak{F}_0$ and $\mathfrak{F}_1$ over the circle
with base point $1$ represent the same element of $\reallywidehat{{\rm Mod}(\sf{g},\sf{m})}$. We will prove that  the bundles are isomorphic.
The assumption means that the monodromies $\mathfrak{m}_j$ of $\mathfrak{F}_j, \, j=0,1$ are represented by mappings
$\varphi_j \in \mathfrak{m}_j$, such that $\varphi_1=\varphi \circ \varphi_0\circ\varphi^{-1}$ for a homeomorphism
$\varphi:\mathcal{P}_0^{-1}(1)\to \mathcal{P}_1^{-1}(1)$ that maps the set of distinguished points of $\mathcal{P}_0^{-1}(1)$ onto the set of distinguished points of $\mathcal{P}_1^{-1}(1)$.
The bundle $\mathfrak{F}_0$ is isomorphic to
the mapping torus of $\varphi_0$. This mapping torus has total space $\mathcal{X}_0=\big(\mathbb{R}\times S\big)\diagup \big((t,\zeta)\sim (t+1,\varphi_0(\zeta))\big)$ and set of distinguished points
$E_0=\big(\mathbb{R}\times E\big)\diagup \big((t,\zeta)\sim (t+1,\varphi_0(\zeta))\big)$.

The mapping torus $\mathfrak{F}_0'$ with total space
$\mathcal{X}_0'=\big(\mathbb{R}\times \varphi(S)\big)\diagup \big((t,\zeta)\sim (t+1,\varphi\circ \varphi_0\circ \varphi^{-1} (\zeta))\big)$ and set of distinguished points $E_0'=\big(\mathbb{R}\times \varphi(E)\big)\diagup \big((t,\zeta)\sim (t+1,\varphi \circ \varphi_0 \circ \varphi^{-1} (\zeta))\big)$ is isomorphic to $\mathfrak{F}_0$ and has monodromy represented by $\varphi\circ \varphi_0\circ \varphi^{-1} $.
Indeed, the mapping
$\mathbb{R}\times S \ni(t,\zeta)\to (t,\varphi(\zeta)) \in \mathbb{R}\times S$ descends to a diffeomorphism $\mathcal{X}_0\to \mathcal{X}_0'$ that takes fibers to fibers.

The mapping torus $\mathfrak{F}_0'$ with total space $\mathcal{X}_0'$ is isomorphic to $\mathfrak{F}_1$. Indeed, the monodromy mappings of  $\mathfrak{F}_0'$ and $\mathfrak{F}_1$ coincide, and the mapping tori of isotopic mappings are isotopic, and, hence, isomorphic. Since the mapping torus $\mathfrak{F}_0'$ is also isomorphic to $\mathfrak{F}_0$ , the mapping tori $\mathfrak{F}_0$ and $\mathfrak{F}_1$ are isomorphic. Hence, $\mathfrak{F}_0$ and $\mathfrak{F}_1$ are isomorphic.

\smallskip

Let now $X$ be a connected smooth  oriented open surface of genus $g$ with $m$ holes with base point $q_0$. Suppose $\mathfrak{F}=(\mathcal{X},\mathcal{P}, \mathbold{E},X)$ is a smooth  $({\sf g},{\sf m})$-bundle over $X$. Denote the fiber $\mathcal{P}^{-1}(q_0)$ over $q_0$ by $S$ and the set of distinguished points $\mathbold{E}\cap \mathcal{P}^{-1}(q_0)$ by $E$.
We assign to $\mathfrak{F}$ a homomorphism $\pi_1(X,q_0)\to \mathfrak{M}(S, \emptyset, E)$ as follows.
Take smooth closed curves in $X$ parameterized by $\gamma_j:[0,1]\to X$ that represent the elements $e_j$ of a standard system of generators of  the fundamental group $\pi_1(X,q_0)$, $j=1,\ldots, 2g+m-1$. Associate to each generator $e_j$ of $\pi_1(X,q_0)$ the monodromy mapping class of the restricted bundle $\mathfrak{F}|{{\gamma_j}} $. This is
a mapping class on the fiber $S=\mathcal{P}^{-1}(q_0)$ over $q_0$ with distinguished points $E=\mathbold{E}\cap \mathcal{P}^{-1}(q_0)$, that depends only on the $e_j$ and on
the bundle.
We obtain a well-defined mapping that associates to each generator of the fundamental group $\pi_1(X,q_0)$ a mapping class. This mapping extends to a homomorphism
from the fundamental group $\pi_1(X,q_0)$ to the mapping class group $\mathfrak{M}(S, \emptyset, E)$.
In the same way as in the case of bundles over the circle we may prove that the respective homomorphisms corresponding to isomorphic bundles represent the same conjugacy class of homomorphisms
from $\pi_1(X,q_0)$ to ${\rm Mod}(\sf{g,m})$.

Vice versa, take a homomorphism $h$ from the fundamental group $\pi_1(X,q_0)$ to the modular group ${\rm Mod}(\sf{g,m})$.
A bundle over $X$
whose monodromy homomorphism coincides with $h$ can be obtained as follows.

By Lemma \ref{lemEl2b} the surface $X$ is diffeomorphic to a standard neighbourhood of a standard bouquet $B$ of circles for $X$. Hence, it can be written as union $X=D\cup \bigcup_j V_j$ of an open disc $D$ and half-open bands $V_j$ attached to $D$.  The disc $D$ contains the base point $q_0$. The boundary $\partial D$ of $D$ in $X$ is smooth.
The set $B\setminus D$ is the union of disjoint closed arcs  ${\sf s}_j$ with endpoints on $\partial D$. Each circle $c_j$ of the bouquet contains exactly one of the  ${\sf s}_j$.
For each $j$ the set $V_j$ is a neighbourhood in $X\setminus D$ of  ${\sf s}_j$.

Consider the universal covering $\tilde{X} \overset{{ \sf P}}{-\!\!\!\longrightarrow} X$.
Take a point $\tilde{q}_0\in \tilde{X}$ with ${\sf P}(\tilde{q}_0)=q_0$.
For each $j$ we take the lifts $\widetilde{c}_j$ of $c_j$ to $\tilde X$ with initial point $\tilde{q}_0$, and the lift $\widetilde{V}_j$ of $V_j$
that intersects $\widetilde{c}_j$.
Let $\widetilde{D}_0$ be the lift of $D$ to $\tilde X$, that contains $\tilde{q}_0$, and  let $\widetilde{D}_j,\, j=1,\ldots,2{g}+{m}-1, $ be the lift of $D$, that contains the endpoint $\tilde{x}_j\stackrel{def}=\widetilde{c}_j(1)$ of $\widetilde{c}_j$. Let $\tilde U$ be the domain in $\tilde X$ that is the union of $\tilde{D}_j,\, j=0,1,\ldots, 2g+m-1,$ and $\tilde{V}_j,\, j=1,\ldots,2g+m-1$.
\index{$D$} \index{$V_j$}
\index{$\widetilde{D}_0$} \index{$\widetilde{D}_j$} \index{$\widetilde{V}_j$}

Choose for each $j\geq 1$ a mapping $\varphi_j$ in the mapping class $h(e_j)$, and define a bundle over $X$ by taking the trivial bundle over $\tilde U$ and
making the following identifications. For each $x'\in D$ and each $j$ we glue the  fiber over the point $\tilde{x}'_j\in \widetilde{D}_j$ for which $p(\tilde{x}'_j)=x'$ to the fiber over the point $\tilde{x}'_0\in \widetilde{D}_0$ for which $p(\tilde{x}'_0)=x'$  using the mapping $\varphi_j$. We obtained a bundle with the given monodromy homomorphism.

As in the case of bundles over the circle one can see that each bundle is isomorphic to a bundle that is constructed in the just described way, and bundles constructed in this way with equal conjugacy class of monodromy homomorphisms are isomorphic. We obtained a bijective correspondence of isomorphism classes of smooth $(\sf{g,m})$-bundles over a connected finite open  smooth oriented surface $X$ of genus $g$ with $m>0$ holes and conjugacy classes of homomorphisms from the fundamental group of $X$ to
 ${\rm Mod}(\sf{g,m})$. \hfill $\Box$

\smallskip

\noindent {\bf Isotopy classes and ismorphism classes of bundles.}
We will consider now the relation between isotopy classes and isomorphism classes of $({\sf g,m})$-bundles, and their relation to the monodromies of the bundles.

Lemma \ref{lemEl2} says that isotopic smooth $({\sf g,m})$-bundles over the circle are smoothly isomorphic. The following lemma states in particular, that vice versa isomorphic bundles are isotopic.
\begin{lemm}\label{lemEl2a}
Two  smooth $({\sf g,m})$-bundles over a connected smooth finite open oriented surface $X$ are isomorphic if and only of they are isotopic.
\end{lemm}
{\bf Proof.}  By Lemma \ref{lemEl2} and Theorem \ref{thmEl1} it remains to prove that two
 smooth $({\sf g,m})$-bundles over $X$ are isotopic if their monodromy homomorphisms represent the same conjugacy class of homomorphisms.

We first consider two smooth $({\sf g,m})$-bundles $\mathfrak{F}_0$ and $\mathfrak{F}_1$ over the circle
with fiber and distinguished points over the base point $1$ being equal. Suppose  the bundles $({\sf g,m})$-bundles $\mathfrak{F}_0$ and $\mathfrak{F}_1$  have the same monodromy. We will prove that they are isotopic by an isotopy that fixes the fiber over the base point and the set of distinguished points in this fiber (by a based isotopy, for short). Denote the fiber over $1$ by $S$ and the set of distinguished points in $S$ by $E$. Lift each bundle $\mathfrak{F}_j,\,j=0,1,$ to a bundle over the vertical line $\{j\}\times\mathbb{R} \subset \mathbb{R}^2$ by the covering $\{j\}\times \mathbb{R}\ni (j,t)\to e^{2\pi i t}$.
Let $\varepsilon$ be a small positive number.
Restrict the lift of the bundle $\mathfrak{F}_j$ to the open interval $\{j\}\times (-\varepsilon,1+\varepsilon)\supset\{j\}\times [0,1]$, $\varepsilon>0$, and extend for each $j$ the restricted bundle to a bundle over the rectangle $(j-\varepsilon,j+\varepsilon)\times (-\varepsilon,1+\varepsilon)$, so that the restriction of the extended bundle $\mathfrak{F}_j'$ to each horizontal segment is a product bundle.

Consider the product bundle with fiber $S$ and set of distinguished points $E$ over the horizontal line segments $(-\varepsilon,1+\varepsilon)\times \{k\}$, $k=0,1$. Take an extension of the product bundle over each of the horizontal line segments $ (-\varepsilon,1+\varepsilon)\times\{k\}$ to  a smooth $({\sf g,m})$-bundle over  the open rectangle $ (-\varepsilon,1+\varepsilon)\times(k-\varepsilon,k+ \varepsilon)$  around the segment, so that for $y \in (-\varepsilon,\varepsilon)$ and $x\in (-\varepsilon,1+\varepsilon)$ the fibers and the distinguished points over $(x,y)$ and $(x,y+1)$ are the same, and
the bundles over the four rectangles   $(-\varepsilon,1+\varepsilon)\times (k-\varepsilon,k+ \varepsilon)$, $k=0,1$ and  $(j-\varepsilon,j+\varepsilon)\times (-\varepsilon,1+\varepsilon)$, $j=0,1,$  match to a smooth bundle $\tilde{\mathfrak{F}}_0$ over the neighbourhood $V\stackrel{def}=\Big((-\varepsilon, 1+\varepsilon)\times (-\varepsilon, 1+\varepsilon)\Big) \, \setminus \,\Big((\varepsilon, 1-\varepsilon)\times (\varepsilon, 1-\varepsilon)\Big) $  of the boundary of the square $[0,1]  \times[0,1]$. Let $(0,0)$ be the base point of $V$. The monodromy of the obtained bundle along the generator of the fundamental group $\pi_1(V,0)$ of $V$
is the identity. Hence, the bundle is smoothly isomorphic to the trivial bundle.

Let $\varphi$ be a diffeomorphism of the total space of the bundle  $\tilde{\mathfrak{F}}_0$ to the total space of the trivial bundle over $V$, that maps the fiber of the bundle $\tilde{\mathfrak{F}}_0$ over each point in $V$ to the fiber of the trivial bundle over this point, and maps distinguished points to distinguished points.
The trivial bundle over $V$
extends to the trivial bundle over the neighbourhood $(-\varepsilon, 1+\varepsilon)\times (-\varepsilon, 1+\varepsilon)$ of the square. Glue the
trivial bundle over the square $(\frac{\varepsilon}{2}, 1-\frac{\varepsilon}{2})\times (\frac{\varepsilon}{2}, 1-\frac{\varepsilon}{2})$, that is relatively compact in the open unit square, to the bundle $\tilde{\mathfrak{F}}_0$ over $V$ along $V\cap  \big((\frac{\varepsilon}{2}, 1-\frac{\varepsilon}{2})\times (\frac{\varepsilon}{2}, 1-\frac{\varepsilon}{2})\big)$
using the mapping $\varphi$. We obtain a smooth bundle over
the neighbourhood $(-\varepsilon, 1+\varepsilon)\times (-\varepsilon, 1+\varepsilon)$ of the unit square, that coincides with $\tilde{\mathfrak{F}}_0$ over $V$.
For $(x,y)\in (-\varepsilon,1+\varepsilon)\times (-\varepsilon, \varepsilon)$ we glue the fiber over $(x,y)$ to the fiber over $(x,y+1)$
using the identity mapping. We get a bundle over the product of the interval $(-\varepsilon, 1+\varepsilon)$ with the circle, which provides
an isotopy with fixed fiber over the base point and fixed set of distinguished points (a based isotopy for short).

If two bundles over the circle with the same fiber over the point $1$ and the same set of distinguished points over the base point have conjugate monodromy, we consider instead of the product bundle on each of the two horizontal line segments $(-\varepsilon,1+\varepsilon)\times \{k\}$, $k=0,1$, the bundle obtained from the product bundle by the mapping $\Phi,\, \Phi(t,\zeta)=\varphi_t(\zeta), \, t\in (-\varepsilon,1+\varepsilon), \zeta \in S,$ between total spaces, where $\varphi_t$ is a smooth family of diffeomorphisms with $\varphi_0={\rm Id}$ and $\varphi_1$ representing the conjugating mapping class.
Along the same lines as above one can prove that the bundles are free isotopic.

If two bundles over the circle have different fibers $S_1$ and $S_2$ over the point $1$ with sets of distinguished points $E_1$ and $E_2$, respectively, and (after an isomorphism) their monodromy maps are conjugate, then the bundles are free isotopic. Indeed, the condition means that the monodromies of the two bundles differ by ''conjugation'' by a homeomorphism $\psi:S_1\to S_2$ that maps $E_1$ onto $E_2$. Instead of the product bundle on the horizontal segments $(-\varepsilon,1+\varepsilon)\times \{k\}$, $k=0,1$, we use in this case the bundle obtained from the product bundle by the mapping $\Phi',\, \Phi'(t,\zeta)=\varphi'_t(\zeta), \, t\in (-\varepsilon,1+\varepsilon), \zeta \in S,$ between total spaces, where $\varphi'_t$ is a smooth family of diffeomorphisms with $\varphi'_0={\rm Id}$ and $\varphi'_1=\psi$.

The proof for bundles over a smooth finite open oriented surface $X$ follows along the same lines using the fact that such a surface $X$ is diffeomorphic to a standard neighbourhood of a standard bouquet of circles.

We obtain a bijective correspondence between smooth isomorphism classes and free isotopy classes of $({\sf g,m})$-bundles over connected oriented smooth finite open surfaces. \hfill $\Box$

\medskip

A smooth $(\sf{g},\sf{m})$-bundle $(\mathcal{X},\mathcal{P}, \mathbold{E},X)$ over a connected smooth finite open oriented  surface $X$ admits a smooth section, i.e. a smooth mapping $X\ni x \to s(x)\in \mathcal{X}\setminus\mathbold{E} $ for which $\mathcal{P}\circ s(x)=x$ for all $x\in X$.
Indeed, let $N(B)$ be a neighbourhood of a standard bouquet of circles for $X$ as in Section \ref{sec:2.0},  and let $D$, $V_j$ and ${\sf s}_j$ be as in Section \ref{sec:2.0}.
There is a smooth section over
$D\cup_j{\sf s}'_j$, where ${\sf s}'_j$
are the arcs of $\partial D$ to which the horizontal sides of the $V_j$ are glued. This follows from the fact, that there is a simply connected neighbourhood of this set, and the bundle is smoothly trivial over any simply connected domain.
Extend the smooth section to a smooth section over a neighbourhood of $D\cup_j({\sf s}'_j\cup  {\sf s}_j)$.
Since a neighbourhood of each $V_j$ is simply connected and there is a smooth deformation retraction of a neighbourhood of $V_j$ to a neighbourhood of ${\sf s}'_j\cup  {\sf s}_j$,  we obtain a smooth section on the whole $N(B)$. By Lemma \ref{lemEl2b} we obtain a smooth

\medskip

\noindent {\bf Reducible bundles and irreducible components.}
Thurston's notion of reducible mapping classes takes over to families of mapping classes on a surface of type $({\sf{g}},{\sf{m}})$, and therefore to $({\sf{g}},{\sf{m}})$-bundles. Namely,
an admissible system of
curves on a connected oriented  closed surface $S$ of genus $\sf g$ with set of $\sf m$ distinguished points $E$ is said to reduce a family of mapping classes $\mathfrak{m}_j \in \mathfrak{M}(S; \emptyset, E) $ if it reduces each $\mathfrak{m}_j$. If the family consists of a single mapping class we arrive at the notion of a reduced mapping class (see Section \ref{sec:2.4}).

Similarly, a $({\sf{g}},{\sf{m}})$-bundle over a finite open connected oriented  surface $X$ with fiber $S$ over the base point $q_0$ and set of distinguished points $E\subset S$ is called reducible if there is an admissible system of curves in the fiber over the base point
that reduces all monodromy mapping classes simultaneously. Otherwise the bundle is called
irreducible.

Consider any isotopy class of reducible $(\sf{g},\sf{m})$-bundles with fiber $S$ and set of distinguished points $E\subset S$ over the
base point $q_0$. Take an admissible system $\mathcal{C}$ of simple closed curves on $S\setminus E$
that reduces each monodromy mapping class and is maximal in the sense that there is no
strictly larger admissible system with this property. For each monodromy mapping class
$\mathfrak{m}_j$ we choose a representing homeomorphism $\varphi_j$ that fixes $\mathcal{C}$
setwise. Then all homeomorphisms $\varphi_j$ fix $S\setminus \mathcal{C}$. Decompose
$S\setminus \mathcal{C}$ into disjoint open sets $S_k$, each of which being a minimal union
of connected components of $S\setminus \mathcal{C}$ that is invariant under each
$\varphi_j$.
Each $S_k$ is a union of surfaces with holes and possibly with distinguished points and is homeomorphic to a union
$\overset{\circ}{S}_k$ of closed surfaces with a set $E_k$ of finitely many punctures and, possibly, a set $E_k'$ of finitely many distinguished points. The restrictions of the $\varphi_j$ to
each $S_k$ (possibly, with distinguished points) are conjugate to self-homeomorphisms of $\overset{\circ}{S}_k$ (possibly with set of distinguished points $E_k'$).

For each $k$ we denote by $S_k^c$ the (possibly non-connected) closed Riemann surface which is obtained from $\overset{\circ}{S}_k$ by filling the punctures.
The Riemann surface $S_k^c$ is equipped with distinguished points $E_k\cup E'_k$. For each $k$ we obtain
a conjugacy class of homomorphisms from the fundamental group $\pi_1(X,x_0)$
to the modular group of the possibly non-connected Riemann surface $S_k^c$ with set of distinguished points $E_k\cup E'_k$. By Theorem \ref{thmEl1} (more precisely, by its analog for possibly non-connected fibers) we obtain for each $k$ an isotopy class of bundles over
$X$ with fiber a union of closed surfaces with distinguished points. The obtained isotopy classes of bundles are
irreducible and are called irreducible bundle components of the  $(\sf{g},\sf{m})$-bundle
with fiber $S$. (There may be different decompositions into irreducible bundle components.)
The reducible bundle can be recovered from the irreducible bundle components up to commuting Dehn twists
in the fiber over the base point. (See the proof of Theorem  \ref{thmEl.0} below for the case of $(0,4)$-bundles.)

\noindent {\bf Locally holomorphically trivial and isotrivial bundles.}
Let $X$ be a Riemann surface and $2{\sf g}-2 + {\sf m}>0$.
A holomorphic $({\sf g},{\sf m})$-bundle over $X$ is called locally holomorphically trivial if it is locally holomorphically isomorphic to the trivial  $({\sf g},{\sf m})$-bundle. All fibers of a locally holomorphically trivial $({\sf g},{\sf m})$-bundle ${\mathfrak{F}}=({\mathcal{X}}, {\mathcal{P}}, {\mathbold{E}}, {X})$
are conformally equivalent to each other.

For a locally trivial holomorphic $({\sf g},{\sf m})$-bundle there exists a finite unramified covering $\hat{\sf P} :\hat{X}\to X$ and a lift
$\hat{\mathfrak{F}}=(\hat{\mathcal{X}}, \hat{\mathcal{P}}, \hat{\mathbold{E}}, \hat{X})$
of $\mathfrak{F}$ to $\hat X$ such that $\hat{\mathfrak{F}}$ is holomorphically isomorphic to the trivial bundle. This can be seen as follows. Consider the lift $\tilde{\mathfrak{F}}$ of the bundle $\mathfrak{F}$ to the universal covering ${\sf P}:\tilde X\to X$ of $X$, i.e.
$\tilde{\mathfrak{F}}=(\tilde{\mathcal{X}}, \tilde{\mathcal{P}}, \tilde{\mathbold{E}}, \tilde{X})$, where the fiber $\tilde{\mathcal{P}}^{-1}(\tilde{x})$ with distinguished points $\tilde{\mathbold{E}}\cap \tilde{\mathcal{P}}^{-1}(\tilde{x})$ is conformally equivalent to the fiber ${\mathcal{P}}^{-1}({x})$ with distinguished points ${\mathbold{E}}\cap {\mathcal{P}}^{-1}({x})$ with $x={\sf P} (\tilde{x})$. Let $\tilde{P}:\tilde{\mathcal{X}}\to \mathcal{X}$ be the respective fiber preserving projection. The bundle $\tilde{\mathfrak{F}}$ is locally holomorphically trivial. Since $\tilde X$ is simply connected, $\tilde{\mathfrak{F}}$ is holomorphically trivial on $\tilde X$, hence,  there is a biholomorphic mapping $\Phi: \tilde{\mathcal{X}}\to \tilde{X}\times S$ that maps $\tilde{\mathcal{P}}^{-1}(\tilde{x})$ to $\{\tilde{x}\}\times S$ for each $\tilde{x}\in\tilde{X}$ and maps $\tilde{\mathbold{E}}$ to $\tilde{X}\times E$.
Here $S$ is the fiber $\tilde{\mathcal{P}}^{-1}(\tilde{q}_0)\cong {\mathcal{P}}^{-1}({q}_0)$ over a chosen point $\tilde{q_0}\in \tilde{X}$ over the base point $q_0\in X$ and $E= \tilde{\mathbold{E}}\cap {\mathcal{P}}^{-1}(\tilde{x})$. The mapping $\Phi^{-1}$ provides a
holomorphic family of conformal mappings $ \tilde\varphi_{\tilde{x}}: S=\tilde{\mathcal{P}}^{-1}(\tilde{q}_0) \to \tilde{\mathcal{P}}^{-1}(\tilde{x})$, $\tilde{x}\in \tilde{X}$. Each mapping of the family takes the set of distinguished points in the fiber $S$ to  the set of distinguished points in the respective fiber of the bundle $\tilde{\mathfrak{F}}$.
The total space ${\mathcal{X}}$ of the bundle $\mathfrak{F}$ is holomorphically equivalent to the quotient of $\{\tilde{x}\}\times S$
by the following equivalence relation.
Two points $(\tilde{x}_1,\zeta_1)$ and $(\tilde{x}_2,\zeta_2)$ in $\tilde{X}\times S$ are equivalent if $\tilde{P}\Phi^{-1}(\tilde{x}_1,\zeta_1)=\tilde{P}\Phi^{-1}(\tilde{x}_2,\zeta_2)$, i.e. if
${\sf P}(\tilde x_1)={\sf P}(\tilde x_2)$ and
$(\tilde{P}\varphi_{\tilde{x}_1})(\zeta_1)=(\tilde{P}\varphi_{\tilde{x}_2})(\zeta_2)$.
Put $\varphi_{\tilde{x}}=    \tilde{P}\tilde\varphi_{\tilde{x}}:S\to  {\mathcal{P}}^{-1}(x)$ with $x={\sf P}(\tilde x)$.
If  ${\sf P}(x_1)={\sf P}(x_2)$, the mapping $\varphi_{\tilde{x}_2}^{-1}
\varphi_{\tilde{x}_1}$
is a holomorphic self-homeomorphism of the fiber $S$.
The set of such self-homeomorphims is finite.
\index{fiber bundle ! holomorphically trivial} \index{fiber bundle ! locally holomorphically trivial}
\index{fiber bundle ! isotrivial}

Let $\tilde{x}= \sigma(\tilde q_0)$ for a covering transformation $\sigma$. The mapping  $\varphi_{\sigma(\tilde q_0)}$ maps $S$ to $\mathcal{P}^{-1}(q_0)\cong S$. For two covering transformations $\sigma_1$ and $\sigma_2$ the equality
\begin{align}\label{eqEl17'}
\varphi_{\sigma_1 \sigma_2(\tilde q_0)}=\varphi_{\sigma_2(\tilde q_0)} \varphi_{\sigma_1(\tilde q_0)}
\end{align}
holds.

As before  $({\rm Is}^{\tilde{q}_0})^{-1}$ denotes the isomorphism from the fundamental group to the group of covering transformations.
Consider the set $N$ of elements $e\in \pi_1(X,q_0)$ for which $\varphi_{({\rm Is}^{\tilde{q}_0})^{-1}(e)(\tilde{q}_0)}$
is the identity. By \eqref{eqEl17'} the set $N$ is a normal subgroup of the fundamental group. It is of finite index, since two cosets $e_1 \,N$ and $e_2\, N$ are equal if
$\tilde{P}\varphi_{({\rm Is}^{\tilde{q}_0})^{-1}(e_2 e_1^{-1})(\tilde{q}_0)}={\rm Id}$,
and there are only finitely many distinct holomorphic self-homeomorphisms of $S\cong {\mathcal{P}}^{-1}({q}_0)$.
Hence, $\hat{X}\stackrel{def}=\tilde{X}\diagup ({\rm Is}^{\tilde{q}_0})^{-1}(N)$
is a finite unramified covering of $X$ and the lift of the bundle $\mathfrak{F}$ to $\hat{X}$ has the required property.

Vice versa, if for a holomorphic $({\sf g},{\sf m})$-bundle $\mathfrak{F}$ there exists a finite unramified covering $\hat{\sf P}:\hat{X}\to X$, such that the lift
$\hat{\mathfrak{F}}=(\hat{\mathcal{X}}, \hat{\mathcal{P}}, \hat{\mathbold{E}}, \hat{X})$
of $\mathfrak{F}$ to $\hat X$
is holomorphically isomorphic to the trivial bundle, then $\mathfrak{F}$ is locally holomorphically trivial.

A smooth (holomorphic, respectively) bundle is called isotrivial, if it has a finite covering by the trivial bundle. If all monodromy mapping classes of a smooth bundle are periodic, then the bundle is isotopic (equivalently, smoothly isomorphic) to an
isotrivial bundle. This can be seen by the same arguments as above.

\section{$(0,4)$-bundles over genus $1$ surfaces with a hole}
\label{sec:9.3}
The $(0,n)$-bundles over a manifold $X$ are closely related to the separable quasipolynomials on $X$. In this section we will
state a theorem that concerns $(0,4)$-bundles and is related to Theorem \ref{thmGrom1}. We
will prove it together with Theorem \ref{thmGrom1}.

Let $X$ be a connected finite open Riemann surface  (a connected  oriented finite open smooth surface, respectively).
By a holomorphic (smooth, respectively) $(0,n)$-bundle with a section over $X$ we mean a holomorphic (smooth, respectively) $(0,n+1)$-bundle
$(\mathcal{X},\mathcal{P}, \mathbold{E}, X)$,
such that the complex manifold (smooth manifold, respectively) $\mathbold{E}\subset \mathcal{X}$ is the disjoint union of two complex manifolds (smooth manifolds, respectively) $\mathring{\mathbold{E}}$ and $\mathbold{s}$, where $\mathring{\mathbold{E}}\subset \mathcal{X}$
intersects each fiber $\mathcal{P}^{-1}(x)$ along a set $\mathring{E}_x$ of $n$ points, and $\mathbold{s}\subset \mathcal{X}$ intersects each fiber $\mathcal{P}^{-1}(x)$ along a
single point $s_x$. In other words, the mapping $x\to s_x,\, x\in X$, is a (holomorphic, smooth, respectively) section of the $(0,n)$-bundle with set of distinguished points $\mathring{E}_x$ in the fiber over $x$. Two smooth $(0,n)$-bundles with a section are called
isomorphic if they are isomorphic as smooth $(0,n+1)$-bundles and the diffeomorphism between the total spaces takes the section of one bundle to the section of the other bundle.
Two smooth $(0,n)$-bundles with a section are called isotopic if they are isotopic as $(0,n+1)$-bundles with an isotopy that joins the sections of the bundles.
Two smooth $(0,n)$-bundles with a section are isotopic iff they are isomorphic.
\index{fiber bundle ! $(0,n)$-bundle with a section} \index{fiber bundle ! special $(0,n+1)$-bundle}
\index{$\mathring{\mathbold{E}}$} \index{$\mathbold{s}$}

A special smooth (holomorphic, respectively) $(0,n+1)$-bundle over $X$ is a smooth (holomorphic, respectively) bundle over $X$ of the form $(X\times \mathbb{P}^1, {\rm pr}_1, \mathbold{E}, X)$,
where ${\rm pr}_1 : X\times \mathbb{P}^1 \to X$ is the projection onto the first factor, and the smooth submanifold $\mathbold{E}$ of $X\times \mathbb{P}^1$  is equal to the disjoint union
$\mathring{\mathbold{E}}\cup \mathbold{s}^{\infty}$,
where $\mathring{\mathbold{E}}$ is a smooth submanifold of $X\times \mathbb{P}^1$ that intersects each fiber along $n$ finite points and $\mathbold{s}^{\infty}$ is the smooth submanifold of $X\times \mathbb{P}^1$ that intersects each fiber $\{x\}\times \mathbb{P}^1$ along the point $\{x\}\times \{\infty\}$. A special $(0,n+1)$-bundle is, in particular, a $(0,n)$-bundle with a section.
\index{${\rm pr}_1$} \index{$\mathbold{s}^{\infty}$}

Each smooth mapping $f:X \to C_n ({\mathbb C}) \diagup
{\mathcal S}_n$ from $X$ to the symmetrized configuration space defines a smooth special $(0,n+1)$-bundle over $X$. The smooth submanifold $\mathbold{E}$ of $X\times \mathbb{P}^1$  is equal to the union $\bigcup_{x \in X}\,\big(x,f(x)\cup\{\infty\}\big)$. Vice versa, for each special $(0,n+1)$-bundle the mapping $X\ni x\to \mathring{\mathbold{E}}\cap ({\rm pr}_1)^{-1}(x)$ defines a smooth separable quasi-polynomial $f$ of degree $n$ on $X$.
The special $(0,n+1)$-bundle is holomorphic if and only if the mapping $f$ is
holomorphic.

Choose a base point $x_0\in X$. Denote by $f_*$ the mapping from  $\pi_1(X,x_0)$ to the fundamental group $\pi_1(C_n ({\mathbb C}) \diagup {\mathcal S}_n, f(x_0))\cong \mathcal{B}_n$ induced by $f$.
Two smooth mappings $f_1$ and $f_2$ from $X$ to $C_n ({\mathbb C}) \diagup {\mathcal S}_n$
define special $(0,n+1)$-bundles that are isotopic through special $(0,n+1)$-bundles
if and only if the quasipolynomials $f_1$ and $f_2$ are free isotopic.
They define bundles that are isomorphic (equivalently, they are isotopic through $(0,n)$-bundles with a section) if and only if for a set of generators $e_j$ of the fundamental group $\pi_1(X,x_0)$, for a braid $w\in \mathcal{B}_n$ and integer numbers $k_j$ the equalities
$(f_1)_*(e_j)=w^{-1}(f_2)_*(e_j)w\Delta_n^{2k_j}$ hold.
Indeed, the bundles are isomorphic iff their monodromy homomorphisms are conjugate.
The monodromy mapping classes of the bundles are elements of $\mathfrak{M}(\mathbb{P}^1; \{\infty\},f(x_0))$ and
the braid group on $n$ strands modulo its center $\mathcal{B}_n\diagup \mathcal{Z}_n$
is isomorphic to the mapping class group $\mathfrak{M}(\mathbb{P}^1;\{\infty\},f(x_0))$.

Each smooth $(0,n)$-bundle $\mathfrak{F}= (\mathcal{X},\mathcal{P},\mathring{\mathbold{E}}\cup \mathbold{s},X)$ with a section
over an oriented smooth finite open surface $X$ is isomorphic (equivalently, isotopic through $(0,n)$-bundles with a section)
to a smooth special $(0,n+1)$-bundle. Indeed,
put $\mathring{E}=\mathring{\mathbold{E}}\cap\mathcal{P}^{-1}(x_0)$, $s=\mathbold{s}\cap\mathcal{P}^{-1}(x_0)$ for the base point $x_0\in X$.
A homeomorphism $\varphi: \mathcal{P}^{-1}(x_0)\to \{x_0\}\times\mathbb{P}^1$ with $\varphi(s)=(x_0,\infty)$ and $\varphi(\mathring{E})= \{x_0\}\times \mathring {E}'$ for a set
$\mathring{E}'\subset C_n(\mathbb{C})\diagup \mathcal{C}_n$ induces
an isomorphism ${\rm Is}:\mathfrak{M}(\mathcal{P}^{-1}(x_0); s, \mathring{E})  \to\mathfrak{M}(\mathbb{P}^1;\{\infty\},\mathring {E}')$. The bundle $\mathfrak{F}$ corresponds to a conjugacy class of homomorphisms $\pi_1(X,x_0)\to \mathfrak{M}(\mathcal{P}^{-1}(x_0); s, \mathring{E}) $. The isomorphism between mapping class groups gives us a conjugacy class of homomorphisms $\pi_1(X,x_0)\to \mathfrak{M}(\mathbb{P}^1;\{\infty\},\mathring {E}')$. There is a special $(0,n+1)$-bundle that corresponds to the latter conjugacy class of homomorphisms.
This bundle is isomorphic to $\mathfrak{F}$.

\begin{lemm}\label{lemEl*}
Each holomorphic $(0,n)$-bundle $\mathfrak{F}\stackrel{def}=(\mathcal{X},\mathcal{P},\mathring{\mathbold{E}}
\cup\mathbold{s},X)$ with a holomorphic section over a connected finite open Riemann surface $X$ is holomorphically isomorphic
to a special holomorphic $(0,n+1)$-bundle.
\end{lemm}

\noindent {\bf Proof}.
Choose $D$, $\tilde{D}_j,\, j=0,\ldots,$ and $\tilde U$ as in the proof of Theorem \ref{thmEl1} in Section
\ref{sec:9.2} (see also Section
\ref{sec:2.0}). Lift the bundle $\mathfrak{F}$ to a bundle on the universal covering $\tilde X$ and restrict the lift to a bundle $\tilde{\mathfrak{F}}= (\tilde{\mathcal{P}}^{-1}(\tilde{U}),\tilde{\mathcal{P}},\widetilde{\mathring{\mathbold{E}}}
\cup\widetilde{\mathbold{s}},\tilde{U})$  on $\tilde U$. The lift $\widetilde{\mathring{\mathbold{E}} }$ of ${\mathring{\mathbold{E}} }$
to $\tilde U$ is the union of $n$ connected components $\widetilde{\mathring{\mathbold{E}} }_j,\,j=1,\ldots,n,$ each intersecting each fiber along a single point.

Consider the holomorphic $(0,0)$-bundle $(\tilde{\mathcal{P}}^{-1}(\tilde{U}),\tilde{\mathcal{P}},\tilde{U})$
that is obtained from $\tilde{\mathfrak{F}}$
by forgetting the
distinguished points and the section. All fibers of the $(0,0)$-bundle $(\tilde{\mathcal{P}}^{-1}(\tilde{U}),\tilde{\mathcal{P}},\tilde{U})$ are conformally equivalent compact Riemann surfaces.
Hence, by a theorem of Fischer and Grauert \cite{FiGr} the bundle is locally holomorphically
trivial. This means that for each point $\tilde{x}\in \tilde{U}$ there exists a simply connected open subset $U_{\tilde{x}} \subset \tilde{U}$
containing $\tilde{x}$, and a biholomorphic map $\varphi_{U_{\tilde{x}}}:(\tilde{\mathcal{P}})^{-1}(U_{\tilde{x}})\to U_{\tilde{x}}\times
\mathbb{P}^1$ such that the diagram
\[
\xymatrix{
{\tilde{\mathcal{P}}^{-1}(U_{\tilde{x}})}      \ar[r]^{\varphi_{U_{\tilde{x}}}     } \ar[d]_{\tilde{\mathcal{P}}         } &
{U_{\tilde{x}} \times \mathbb{P}^1}   \ar[dl]^{{\rm  pr}_1} \\
  U_{\tilde{x}}
}
\]
commutes. For each $\tilde{x}'\in U_{\tilde{x}}$ and $j=1,\ldots,n,$ the mapping
$\varphi_{U_{\tilde{x}}}$
takes the point
$\widetilde{\mathring{\mathbold{E}}}_j \cap (\tilde{\mathcal{P}})^{-1}(\tilde{x}')$
to a point denoted by $(\tilde{x}', \zeta_j(\tilde{x}'))$, and takes the point of the section $\mathbold{s} \cap (\tilde{\mathcal{P}})^{-1}(\tilde{x}')$ to a point denoted by $(\tilde{x}',\zeta_{n+1}(\tilde{x}'))$. The $\zeta_j,\, j=1,\ldots,n+1,$ are holomorphic.
Composing if necessary $\varphi_{U_{\tilde{x}}}$ with a holomorphic mapping from $U_{\tilde{x}} \times \mathbb{P}^1$ onto itself which preserves each fiber, we may assume that
$\varphi_{U_{\tilde{x}}}$ maps no point in $\tilde{\mathcal{P}}^{-1}(\tilde{x})\cap (\widetilde{\mathring{\mathbold{E}}} \cup \mathbold{s})$ to $(\tilde{x},\infty)$.

Maybe, after shrinking the $U_{\tilde x}$, the mapping
$$
U_{\tilde x}\times \mathbb{P}^1\ni (\tilde{x}',\zeta)\to w_{\tilde x}(\tilde{x}',\zeta)=\Big(\tilde{x}', -1+2\frac{(\zeta_2(\tilde{x}')-\zeta)( \zeta_{n+1}(\tilde{x}')-\zeta_1(\tilde{x}'))          }{(\zeta_2(\tilde{x}')- \zeta_{1}(\tilde{x}')) (\zeta_{n+1}(\tilde{x}')-\zeta)}\Big)
$$
provides a biholomorphic map $ U_{\tilde x}\times \mathbb{P}^1\toitself$, that takes for each $\tilde{x}'\in U_{\tilde{x}}$ the point $\zeta_1(\tilde{x}')$ to $1$, it maps $\zeta_2(\tilde{x}')$ to $-1$, and $\zeta_{n+1}(\tilde{x}')$ to $\infty$.

Denote by $\psi_{\tilde x}$ the mapping $w_{\tilde x}\circ \varphi_{U_{\tilde{x}}}$
from  $\tilde{\mathcal{ P}}^{-1}(U_{\tilde{x}})$ onto $U_{\tilde{x}}\times \mathbb{P}^1$.
If for two points $\tilde{x},\tilde{y}\in \tilde{U}$ the intersection $U_{\tilde{x}}\cap U_{\tilde{y}}$ is not empty and connected, then the mappings $\psi_{\tilde{ x}}$ and $\psi_{\tilde{ y}}$ coincide on $\tilde{\mathcal{ P}}_1^{-1}(U_{\tilde{x}}\cap U_{\tilde{y}})$, since in each fiber there is an ordered triple of points that is mapped by each of the mappings to the ordered triple $(-1,1,\infty)$.
We get a holomorphic isomorphism $\tilde{\psi}$ from the total space $\tilde{\mathcal{U}}$ of the bundle $\tilde{\mathfrak{F}}$
onto $\tilde{U}\times \mathbb{P}^1$
that maps the first component of $\widetilde{\mathring{\mathbold{E}}}$ to $\tilde{U}\times \{+1\}$, the second to $\tilde{U}\times \{-1\}$, and the section to  $\tilde{U}\times \{\infty\}$. The isomorphism takes the set $\widetilde{\mathring{\mathbold{E}}}$ to a set $\widetilde{\mathring{\mathbold{E'}}}\subset \tilde{U}\times \mathbb{P}^1$, and the section $\widetilde{\mathbold{s}}$ to the set
$\widetilde{\mathbold{s}}^{\infty}=\tilde{U}\times \{\infty\}$.
The identity mapping from $\tilde U$ onto itself together with the mapping $\tilde{\psi}$ define
a holomorphic isomorphism from the bundle $\tilde{\mathfrak{F}}$ onto the bundle $(\tilde{U}\times \mathbb{P}^1,\,{\rm pr}_1,\,\widetilde{\mathring{\mathbold{E'}}}\cup \widetilde{\mathbold{s}}^{\infty}\,, \tilde{U})$.

Recall that the original bundle $\mathfrak{F}$ is obtained from the bundle $\tilde{\mathfrak{F}}$ by gluing for each $j$ the restriction $\tilde{\mathfrak{F}}|\tilde{D}_j$ to the restriction $\tilde{\mathfrak{F}}|\tilde{D}_0$ using a holomorphic isomorphism
$\varphi_j$ from $\tilde{\mathcal{P}}^{-1}(\tilde{D}_0)$ onto $\tilde{\mathcal{P}}^{-1}(\tilde{D}_j)$. A bundle that is holomorphically isomorphic to $\mathfrak{F}$ is obtained as follows. Take the bundle $(\tilde{U}\times \mathbb{P}^1,\,{\rm pr}_1,\,\widetilde{\mathring{\mathbold{E'}}}\cup \widetilde{\mathbold{s}}^{\infty},\, \tilde{U})$ and glue 
for each $j=1,\ldots,n,$ its restrictions to $\tilde{D}_j$ and  $\tilde{D}_0$
together
using the biholomorphic mapping $\tilde{\psi}_{0,j}\stackrel{def}=
\tilde{\psi}\circ \varphi_j\circ (\tilde{\psi})^{-1}\mid \tilde{D}_0 \times \mathbb{P}^1$
between the subsets $\tilde{D}_0 \times \mathbb{P}^1$ and $\tilde{D}_1 \times \mathbb{P}^1$ of the total space $\tilde{U}\times \mathbb{P}^1$ of the bundle  $(\tilde{U}\times \mathbb{P}^1,\,{\rm pr}_1,\,\widetilde{\mathring{\mathbold{E'}}}\cup \widetilde{\mathbold{s}}^{\infty},\, \tilde{U})$.

Since $\tilde{\psi}_{0,j}$ maps conformally the fiber of
$\tilde{\psi}(\tilde{\mathfrak{F}})$ over $(\tilde{x}_0,\zeta) \in  \tilde{D}_0 \times \mathbb{P}^1$ onto the fiber
over $(\tilde{x}_j,\zeta) \in  \tilde{D}_j \times \mathbb{P}^1$, the following equality holds
\begin{align}\label{eqEl50a}
\tilde{\psi}_{0,j}(\tilde{x}_0,\zeta)= \big(\tilde{x}_j,\tilde{a}_j(\tilde{x}_0)+\tilde{\alpha}_j(\tilde{x}_0) \zeta\big)\,, \; (\tilde{x}_0,\zeta) \in \tilde{D}_0\times\mathbb{C}\,,
\end{align}
for holomorphic functions $\tilde{a}_j$ and $\tilde{\alpha}_j$ with $\tilde{\alpha}_j$ nowhere vanishing on $\tilde{D}_0$.

We want to find a biholomorphic fiber preserving holomorphic mapping
$v:\tilde{U}\times \mathbb{P}^1\toitself$ such that for each $j$ the mappings $v\circ
\tilde{\psi}_{0,j}\circ v^{-1}$ have the form
\begin{align}\label{eqEl50c}
v\circ
\tilde{\psi}_{0,j}\circ v^{-1}(\tilde{x}_0,\zeta)=(\tilde{x}_j,\zeta)\,,\;
(\tilde{x}_0,\zeta)\in \tilde{D}_0\times \mathbb{P}^1\,,
\end{align}
where  $\tilde{x}_j\in  \tilde{D}_j$ is the point for which ${\sf P}(\tilde{x}_0)={\sf P}(\tilde{x}_j)$ for the projection  ${\sf P}:\tilde{X}\to X$.
Then the bundle $(v\circ \tilde{\psi})(\tilde{\mathfrak{F}})$ provides a special holomorphic $(0,n+1)$-bundle that is holomorphically isomorphic to $\mathfrak{F}$ by gluing the fibers over $\tilde{x}_j\in  \tilde{D}_j$ to the fibers over $\tilde{x}_0\in  \tilde{D}_0$ using the mapping $(\tilde{x}_0,\zeta)\to (\tilde{x}_j,\zeta)$.

For $ (\tilde{x},\zeta)\in \tilde{U}\times \mathbb{C}$ we may write
\begin{align}\label{eqEl50b}
v^{-1} (\tilde{x},\zeta)=\big(\tilde{x}, a(\tilde{x})+\alpha(\tilde{x})\,\zeta\big)\,,\;  (\tilde{x},\zeta)\in \tilde{U}\times \mathbb{C}\,.
\end{align}
Condition \eqref{eqEl50c} is by \eqref{eqEl50a} and \eqref{eqEl50b} equivalent to the following requirements on $a$ and $\alpha$:
\begin{align}\label{eqEl50'}
\;{\alpha}(\tilde{x}_j) & =\tilde{\alpha}_j(\tilde{x}_0)\,\cdot \, {\alpha}(\tilde{x}_0),\,\,\;\;\;\;\;\;\;\quad\quad j =1,\ldots,n\,,
\end{align}
\begin{align}\label{eqEl50}
{a}(\tilde{x}_j) & =\tilde{a}_j(\tilde{x}_0)+\tilde{\alpha}_j(\tilde{x}_0)\, {a}(\tilde{x}_0)\,,\;\; \;\;  j =1,\ldots,n\,.
\end{align}
The equation \eqref{eqEl50'} leads to
a Second Cousin Problem for ${\alpha_j}$.
If equation \eqref{eqEl50'} is granted, equation \eqref{eqEl50} can be rewritten as
\index{Cousin Problem ! Second}
\begin{equation}\label{eqEl50''}
\frac{{a}(\tilde{x}_j)}{{\alpha}(\tilde{x}_j)}=\frac{{a}(\tilde{x}_0)}{{\alpha}(\tilde{x}_0)}+\frac{\tilde{a}_j(\tilde{x}_0)}{{\alpha}(\tilde{x}_j)}
\,.
\end{equation}
For convenience of the reader we provide the reduction of equation \eqref{eqEl50'} to the Second Cousin Problem.
Let $D$, $V_j$ and ${\sf s}_j$ be the same objects as in Section \ref{sec:9.2}.
We consider an open cover of $X$ as follows. Put $U_0=D$.
$U_0$ is an open topological disc in $X$ that contains the base point $x_0$. Cover each $V_j$ by two simply connected open sets $U_{j^+}$ and $U_{j^-}$
with connected and simply connected intersection, so that the  $U_{j^{\pm}}$ are disjoint from the $U_{k^{\pm}}$ for $j\neq k$, and
for each $U_{j^{\pm}}$ its intersection with $U_0$ is connected and simply connected. We may
also assume that the intersection of at least three of the sets is empty, the intersections of each ${\sf s}_j$ with $U_{j^+}$ and $U_{j^-}$ are connected, and each ${\sf s}_j$ is disjoint from $U_{k^+} \cup U_{k^-}$  for $k\neq j$.
Label the  $U_{j^{\pm}}$ so that walking on ${\sf s}_j\setminus U_0$ (which is contained in $V_j$) in the direction of the orientation of ${\sf s}_j$
we first meet $U_{j^-}$.

\begin{figure}[h]
\begin{center}
\includegraphics[width=60mm]{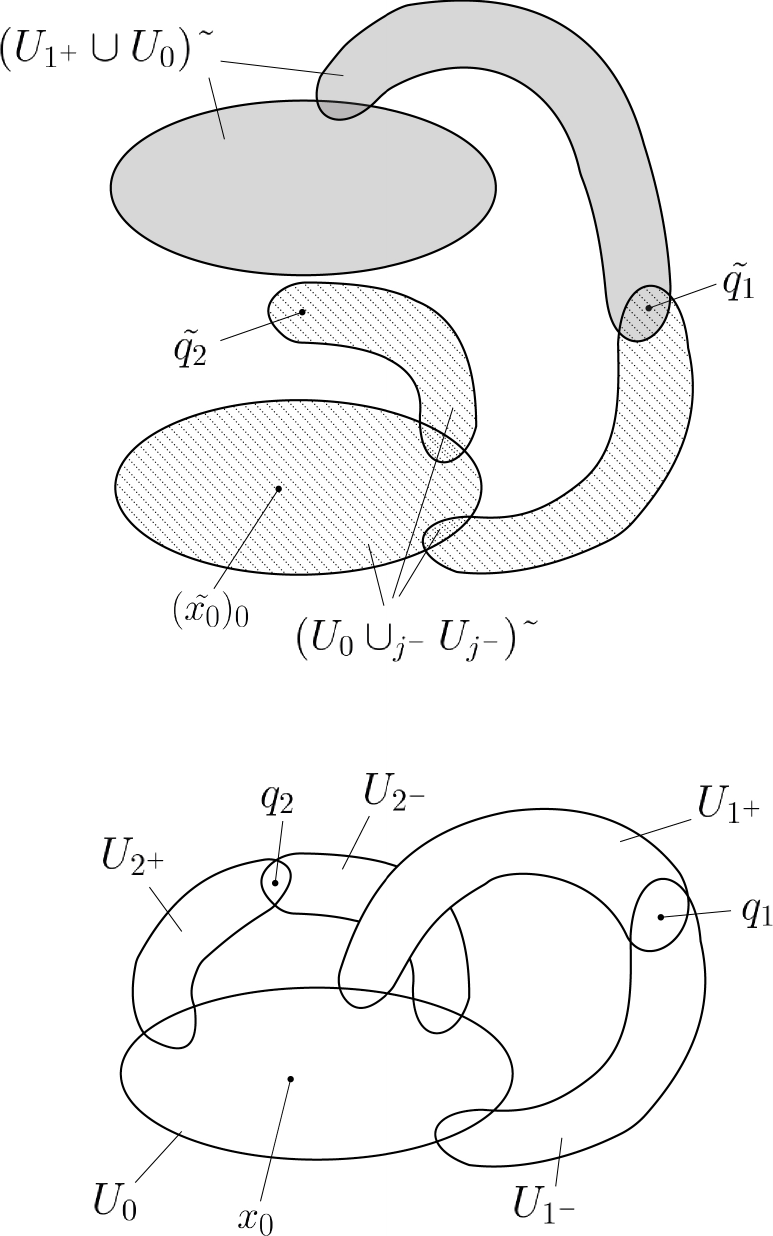}
\end{center}
\caption{The open cover of $X$ and its lift to $\tilde X$}\label{figEl4}
\end{figure}

We consider the following subsets of $\tilde X$. Let
$(U_0\cup_{j^-}\, U_{j^-}){\;\widetilde{}}\;$  be a lift of $U_0\cup_{j^-}\, U_{j^-}$ to
$\tilde X$. Let $(\tilde{x_0})_0$ be the point in the lifted set that projects to $x_0$. For
each $j\geq 1$ we consider a point ${q}_j\in U_{j^-} \cap U_{j^+}$ and the point
$\tilde{q}_j$ in
$(U_0\bigcup \cup_{j^-} U_{j^-}){\;\widetilde{}}\;$ that projects to $q_j$.
For each $j$ we denote by
$(U_{j^+}\cup U_0){\;\widetilde{}}\;$  the lift of  $U_{j^+}\cup U_0$ to $\tilde X$ that
contains $\tilde{q}_j$. Let $\widetilde{U_0^{j^+}}$ be the subset of
$(U_{j^+}\cup U_0){\;\widetilde{}}\;$ that projects to $U_0$.

The sets $U_0$, $U_{j^{\pm}}$ cover $X$.
The intersection of three distinct sets of the cover is empty. On the intersection of
pairs of covering sets we define a non-vanishing holomorphic function as follows.
Put
\begin{align}\label{eqEl26}
\alpha_{0,j^-}\;\;=(\,\alpha_{j^-,0}\;)^{-1}= 1 \quad \quad \quad \;&\mbox{on}\; U_0 \;\;
\cap U_{j^-} \quad  \,j =1,\ldots, 2g+m\,,\nonumber\\
\alpha_{j^-,j^+}=(\alpha_{j^+,j^-})^{-1}= 1 \quad \quad \quad \;&\mbox{on}\; U_{j^-} \cap
U_{j^+} \quad  \,j =1,\ldots, 2g+m\,,\nonumber\\
\alpha_{j^+,0}\;\; = (\,\alpha_{0,j^+}\;)^{-1}= \tilde{\alpha}_{j,0} \quad 
\quad \,&\mbox{on}\;U_{j^+}\cap  U_0
\;\; \quad \,j =1,\ldots, 2g+m\,,
\end{align}
where for $U_{j^+} \cap U_0,$  the function $\tilde{\alpha}_{j,0}$ is defined as
$\tilde{\alpha}_{j,0}(x)= \tilde{\alpha}_j(\tilde{x}_0)$ for the lift $\tilde{x}_0$ of the point $x\in U_0$ to  $\widetilde{U_0^{j^+}}$.

Since there are no triple intersections, the equations \eqref{eqEl26} define a Cousin II distribution.
A second Cousin distribution on a complex manifold defines a holomorphic line bundle. The Second Cousin Problem has a solution if and only if the line bundle is holomorphically trivial. Since on an open Riemann surface each holomorphic line bundle is holomorphically trivial the Cousin Problem has a solution (see e.g. \cite{Fo}).

The solution of the Cousin Problem for the Cousin distribution
\eqref{eqEl26}
provides non-vanishing holomorphic functions $\alpha_0$ on $U_0$ and
$\alpha_{j^\pm}$ on $U_{j^\pm}$ such that
\begin{align}\label{eqEl27}
\alpha_0 \;\;\;\;\quad& =\alpha_{j^-} \quad\quad \;\;\;\; \mbox{on}\; U_0\;\,\cap
U_{j^-},\nonumber\\
\alpha_{j^-}\;\;\quad& = \alpha_{j^+} \quad\quad \;\;\;\; \mbox{on}\; U_{j^-}\cap
U_{j^+},\,\nonumber\\
\alpha_{j^+} \alpha_{0}^{-1}& =\tilde{\alpha}_{j,0} \quad\quad
\;\;\;\; \mbox{on}\; U_{j^+}\cap U_0\;\;.
\end{align}
Put
\begin{equation}
{\alpha}(\tilde{x})=\begin{cases}
\alpha_0({x})& \tilde{x}\in\tilde{U}_0\\
\alpha_{j^-}({x})& \tilde{x}\in\tilde{U}_{j^-}\\
\alpha_{j^+}({x})& \tilde{x}\in\tilde{U}_{j^+}\\
\tilde{\alpha}_{j,0}(x)\cdot \alpha_0({x})& \tilde{x}\in \widetilde{U_0^{j^+}}\,.
\end{cases}
\end{equation}
Here $x={\sf P}(\tilde{x})$ for the projection ${\sf P}:\tilde{X}\to X$.
The function ${\alpha}$ is a well-defined holomorphic function on $\tilde U$ that satisfies \eqref{eqEl50'}.

The reduction of \eqref{eqEl50''} to a First Cousin Problem for the
$\frac{{a}(\tilde{x}_j)}{{\alpha}(\tilde{x}_j)}$, $j=0,\ldots 2g+m-1,$
is similar. The First Cousin Problem is solvable since open Riemann surfaces are Stein manifolds (see e.g. \cite{Fo} and \cite{H1}, and also Appendix \ref{ChapterA}).
\hfill $\Box$
\index{Cousin Problem ! First}
\medskip
\section[$(0,4)$-bundles over tori with a hole. Proof of Theorem 8.4]  {
$(0,4)$-bundles over tori with a hole. Proof of Theorem \ref{thmGrom2}.}\label{sec:9.4a}

We will consider now deformations of a smooth $(0,3)$-bundle with a section over a connected finite open Riemann surface $X$  to a holomorphic bundle.
Statement $(1)$ of Theorem \ref{thmEl.0} below  is the analog of Theorem \ref{thmGrom1} for $(0,3)$-bundles with a section.
\begin{defn}\label{defnEl.3'}
We will say that a smooth $({\sf g,m})$-bundle $\mathfrak{F}$ over a connected smooth finite open oriented surface $X$ is isotopic to a holomorphic bundle for the conformal structure $\omega:X\to \omega(X)$, if the pushed forward bundle $\mathfrak{F}_{\omega}$ (see equation \eqref{eqEl23}) is isotopic to a holomorphic bundle on $\omega(X)$. If the bundle is isotopic to a holomorphic bundle for each conformal structure with only thick ends on $X$, then $\mathfrak{F}$ is said to have the Gromov-Oka property.

A $(0,n)$-bundle on $X$ with a section, in particular, a special $(0,n+1)$-bundle on $X$, is said to have the Gromov-Oka property, if and only if for each conformal structure on $X$ with only thick ends the bundle is isotopic through
$(0,n)$-bundles with a section to a holomorphic bundle.
\end{defn}
\index{Gromov-Oka property} \index{$\mathfrak{F}_{\omega}$}

In the following theorem we start with a smooth special $(0,4)$-bundle, since each smooth $(0,3)$-bundle with a section is isotopic to a special $(0,4)$-bundle.
Recall that each connected finite open Riemann surface $\mathcal{X}$ is conformally equivalent to a domain in a connected closed Riemann surface $\mathcal{X}^c$.
Notice that a special $(0,4)$-bundle is reducible if and only if all monodromies are powers of a single conjugate of $\sigma_1$.

\begin{thm}\label{thmEl.0}
$(1)$ Let $X$ be a connected smooth oriented surface of genus one with a hole with base point $x_0$, and with a chosen set
$\mathcal{E}=\{e_1,e_2\}$ of generators of $\pi_1(X,x_0)$. Define the set
$\mathcal{E}_0=
\{e_1,e_2, e_1 e_2^{-1},  e_1 e_2^{-2}, e_1e_2 e_1^{-1} e_2^{-1}\}$ as in Theorem  $\ref{thmGrom1}$. Consider a smooth special $(0,4)$-bundle $\mathfrak{F}$ over $X$. Suppose for each $e\in \mathcal{E}_0$ the restriction of the bundle $\mathfrak{F}$ to an annulus
in $X$ representing $\hat e$ has the Gromov-Oka property.
Then the bundle over $X$ has the Gromov-Oka property.\\
$(2)$ If a bundle $\mathfrak{F}$ as in $(1)$ is irreducible, then it is isotopic to an isotrivial bundle, and, hence,
for any conformal structure $\omega$ on $X$
the bundle $\mathfrak{F}_{\omega}$ is isotopic to a bundle that extends to a holomorphic bundle on $\omega(X)^c$. In particular, the bundle is isotopic to a holomorphic bundle for any conformal structure on $X$ (including conformal structures of first kind).\\
$(3)$ Let $\mathfrak{F}$ be any smooth reducible special $(0,4)$-bundle over $X$ (without any further requirement). Then each irreducible bundle component of  $\,\mathfrak{F}$ is isotopic to an
isotrivial bundle. There exists a Dehn twist in the fiber over the base point
such that the bundle $\mathfrak{F}$ can be recovered from the irreducible bundle components up to composing each monodromy with a power of this Dehn twist. The bundle $\mathfrak{F}$ itself is isotopic to a holomorphic bundle for each conformal structure of second kind on $X$.\\
$(4)$ Any reducible holomorphic $(0,3)$-bundle with a holomorphic section over a ${\rm punctured}$ Riemann surface is holomorphically trivial.
\end{thm}

Recall that a bundle is called isotrivial if its lift to a finite covering of $X$ is the trivial bundle.
We will prove now Theorems \ref{thmGrom1} and \ref{thmEl.0} using Theorem \ref{thmGrom2}.
\medskip

\noindent {\bf Proof of Theorem \ref{thmEl.0}.}
The special $(0,4)$-bundle $\mathfrak{F}$ defines a smooth quasipolynomial $f$ on $X$. Under the conditions of Statement (1) for each $e\in \mathcal{E}_0$ the restriction of the bundle to an annulus $A_{\hat{e}}$ in $X$ representing $\hat e$ has the Gromov-Oka property. By Lemma \ref{lemEl*}
we may assume that for a conformal structure of conformal module bigger than $\frac{\pi}{2}\log(\frac{3+\sqrt{5}}{2})^{-1}$ on $A_{\hat{e}}$ the restriction $\mathfrak{F}\mid A_{\hat{e}}$ is isotopic (through $(0,3)$-bundles with a section) to a special holomorphic $(0,4)$-bundle $\mathfrak{F}_{\hat{e}}$ (not merely to a holomorphic $(0,3)$-bundle with a section). The bundle $\mathfrak{F}_{\hat{e}}$ defines a holomorphic quasipolynomial $f_{\hat{e}}$ on $A_{\hat{e}}$. The monodromies of the isotopic (hence, isomorphic) bundles  $\mathfrak{F}\mid A_{\hat{e}}$ and $\mathfrak{F}_{\hat{e}}$ differ by conjugation (after identification of the mapping class groups on the fiber over the base point by an isomorphism).
Hence, for each $e\in\mathcal{E}$ there exists an integer number $k_e$,
such that the quasipolynomial $f \mid A_{\hat{e}}$ is (free) isotopic to the holomorphic quasipolynomial
$z \to e^{2\pi ik_e z} \,f_{\hat{e}}(z),\, z\in  A_{\hat{e}}$. We proved that the quasipolynomial $f$ satisfies the conditions of Theorem \ref{thmGrom2}.

The monodromy mapping classes of the quasipolynomial $f$ are elements of the braid group $\mathcal{B}_3$, the monodromy mapping classes of the bundle
$\mathfrak{F}$ can be identified with elements of $\mathcal{B}_3\diagup \mathcal{Z}_3$.
In terms of the $(0,4)$-bundle $\mathfrak{F}$ the Theorem \ref{thmGrom2}
implies the following. The isotopy class of the special $(0,4)$-bundle
$\mathfrak{F}$ on $X$ corresponds to the conjugacy class of a homomorphism $\mathbold{\Phi}:\pi_1(X,x_0)\to
\mathbold{\Gamma} \subset \mathcal{B}_3\diagup \mathcal{Z}_3$, where
$\mathbold{\Gamma}$
is generated either by $\sigma_1 \,\sigma_2\diagup \mathcal{Z}_3$, or by
$\Delta_3\diagup\mathcal{Z}_3$, or by $\sigma_1\diagup\mathcal{Z}_3$.

The group $\mathbold{\Gamma}$ is Abelian. Let $\omega$ be any conformal structure on $X$ (maybe, of first kind). The
fundamental group $\pi_1 (\, \omega(X)^{c}, \omega(x_0))$ of the closed torus
$\,\omega(X)^{c}$ (that contains a conformal copy of $\omega(X)$) is the Abelianization of $\pi_1 (\omega(X),\omega(x_0))\,\cong\, \pi_1 ( \, X , x_0)\,$. Hence
$\,\mathbold{\Phi}\,$ defines a homomorphism from $\;\pi_1 (\, \omega(X)^{c}, \omega(x_0))\,$ to
$\mathbold{\Gamma}$ which we also denote by $\mathbold{\Phi}$.

Consider first the case when the generator of $\mathbold{\Gamma}$ is either
$\sigma_1\sigma_2 \diagup \mathcal{Z}_3$, or
$\Delta_3\diagup \mathcal{Z}_3$. By Lemma \ref{lemEl.0} this is exactly the case
when the bundle $\mathfrak{F}$ is irreducible.
The generator of $\mathbold{\Gamma}$ corresponds to an element of the
mapping class group  $\mathfrak{M}(\mathbb{P}^1; \{\infty\},\mathring{ E}_3)$,
that is represented by a periodic mapping $\zeta\to \theta \cdot \zeta,\,
\zeta \in \mathbb{P}^1$ (see \cite{Be1}). In the first case $\theta = e^{\frac{2\pi i}3}$ and $\mathring{E}_3$ consists of
$3$ equidistributed points on a circle with center zero,
in the second case  $\theta= e^{\frac{2\pi i}{2}}$ and $\mathring{E}_3$ consists of the origin and two
equidistributed points on a circle with center zero.

Consider the homomorphism from $\mathbold{\Gamma}$ to the group of complex
linear transformations of ${\mathbb P}^1$, which assigns to the
generator $\mathbold{b} \in \mathbold{\Gamma}$ the complex linear transformation $\zeta \to \theta
\cdot \zeta$, $\zeta \in {\mathbb P}^1$.
Composing $\mathbold{\Phi}$ with
this homomorphism we obtain a homomorphism $\Xi$ from $\pi_1 (\, \omega(X)^{c}, \omega(x_0))$
to the group of complex linear transformations of ${\mathbb
P}^1$.

Represent the closed torus $\, \omega(X)^{c}$ as quotient $ \,\omega(X)^{c}  = {\mathbb C} \diagup \Lambda$ for a lattice $\Lambda
=\{\lambda_1 \, k_1 + \lambda_2 \, k_2 : k_1 , k_2 \in {\mathbb
Z}\}$ with
complex numbers $\lambda_1 , \lambda_2
\in {\mathbb C}$ that are linearly independent over $\mathbb{R}$. Denote by $p_1$ the covering map $p_1 : {\mathbb
C} \to {\mathbb C} \diagup \Lambda$. Each element $\lambda\in \Lambda$ defines a covering transformation $z\to z+\lambda,\, z\in \mathbb{C}$. We identify $\Lambda$ with the group of covering transformations.
Recall that the fundamental group
$ \pi_1 ( \, \omega(X)^{c},
\omega(x_0))$ is isomorphic to the group of covering transformations of  $ \, \omega(X)^{c} \cong {\mathbb C}
\diagup \Lambda$.
\index{lattice} \index{$\Lambda$}

The group $\Lambda$ acts on ${\mathbb C} \times {\mathbb P}^1$ as
follows. Associate to $\lambda \in \Lambda$ the mapping
\begin{equation}
\label{eqEl81} (z,\zeta) \overset{\lambda}{\longrightarrow} (z+\lambda, \Xi (\lambda) (\zeta)) \, , \quad (z,\zeta) \in {\mathbb C}
\times {\mathbb P}^1 \, .
\end{equation}
The thus defined mapping $\lambda$ takes the set $\mathbb{C}\times\mathring{E}_3$ to itself and fixes the set $\mathbb{C}\times\{\infty\}$.
The action is free and properly discontinuous. Hence the quotient
$({\mathbb C} \times {\mathbb P}^1) \diagup \Lambda$ is a complex
manifold. For each element of the quotient its projection to
${\mathbb C} \diagup \Lambda$ is well-defined. Indeed, the following diagram commutes:
$$
\xymatrix{
(z,\zeta) \ar[d] &\sim&(\lambda + z , \Xi (\lambda) (\zeta)) \ar[d] \\
z &\sim&\lambda + z }
$$
The equivalence relation in the upper line gives the quotient
$({\mathbb C} \times {\mathbb P}^1) \diagup \Lambda$, the relation
in the lower line gives ${\mathbb C} \diagup \Lambda$.

Denote by $ \mathcal{P}$
the projection $\mathcal{P}:\big({\mathbb C} \times {\mathbb P}^1\big)\diagup \Lambda
\to {\mathbb C} \diagup \Lambda$.
We obtain a holomorphic $(0,3)$-bundle
$$
\mathfrak{F}_{\omega}\stackrel{def}=\Bigg(({\mathbb C} \times {\mathbb P}^1)\diagup \Lambda,\;\mathcal{P},\; \Big((\mathbb{C}\times\mathring{E}_3)\diagup\Lambda\Big) \cup\Big((\mathbb{C}\times\{\infty\})\diagup \Lambda\Big),\;{\mathbb C} \diagup \Lambda\Bigg)
$$
with a holomorphic section over the closed torus $\omega(X)^c$. (See also  \eqref{eqEl23} for notation.)
By construction the restriction of this bundle to a representative of each generator
$e_j$ of $\pi_1(\omega(X)^{c},x_0)$ gives the mapping torus corresponding to $\Xi(e_j)$. Hence, the
monodromy mapping classes of the bundle are equal to $\mathbold{\Phi}(e_j)$. We proved that
in the irreducible case of Theorem \ref{thmEl.0} for any conformal structure $\omega$ on $X$ the bundle $\mathfrak{F}_{\omega}$ is smoothly isomorphic (equivalently, isotopic) to a holomorphic
bundle that extends to a holomorphic bundle $(\mathfrak{F}_{\omega})^c$ on the closed torus $\omega(X)^c$.

Since the monodromy mapping classes of the bundle $\mathfrak{F}_{ \omega}$ are periodic, the lift of the bundle $(\mathfrak{F}_{ \omega})^c$ to a finite covering of $X$ is isotopic to an isotrivial bundle.
Statement (2) is proved and also statement (1) in the irreducible case.

To prove Statement (3), we
suppose $\mathfrak{F}$ is an arbitrary reducible smooth special $(0,4)$-bundle.
Let $\mathring{E}_3$ be the set of finite distinguished points in the fiber over the base point.
There is an admissible curve $\gamma\subset \mathbb{P}^1\setminus (\mathring{E}_3\cup \{\infty\})$ that reduces each monodromy mapping class of the bundle. Hence, by (the proof of) Lemma \ref{lemEl.0} each monodromy mapping class of the bundle is a non-trivial
power of a single conjugate of the mapping class $\mathfrak{m}({\sigma_1\diagup \mathcal{Z}_3})$ in
$\mathfrak{M}(\mathbb{P}^1;\infty,\mathring{E}_3)$ that corresponds to  ${\sigma_1\diagup \mathcal{Z}_3}$. We are in the case when $\mathbold{\Gamma}$ is generated by $\sigma_1\diagup \mathcal{Z}_3$.
Label the points of $\mathring{E}_3$ as in the proof of Lemma \ref{lemEl.0} so that
the simple closed curve $\gamma$ separates $\{\zeta_1,\zeta_2\}$ from $\{\zeta_3,\infty\}$.
Let $\mathcal{C}_1$ be the connected component of the complement of $\gamma$ that contains $\{\zeta_1,\zeta_2\}$, and let $\mathcal{C}_2$ be the connected component of the complement of $\gamma$ that contains $\{\zeta_3,\infty\}$.

There are two irreducible components of the
mapping class $\mathfrak{m}({\sigma_1\diagup \mathcal{Z}_3})$. The irreducible component  that corresponds to $\mathcal{C}_1$ is described as follows.
The connected component $\mathcal{C}_1$ is
homeomorphic to $\mathbb{P}^1\setminus \{\infty\}$ with set of distinguished points $\{\zeta_1,\zeta_2\}$. Each self-homeomorphism of $\mathbb{P}^1\setminus \{\infty\}$ with these two distinguished points extends to a self-homeomorphism of $\mathbb{P}^1$ that maps the set $\{\zeta_1,\zeta_2, \infty\}$ of three distinguished points to itself.
The irreducible component of the mapping class $\mathfrak{m}({\sigma_1\diagup
\mathcal{Z}_3})$ corresponding to $\mathcal{C}_1$ is an element of
the mapping class group $\mathfrak{M}(\mathbb{P}^1; \{\infty\}, \{\zeta_1,\zeta_2\})$.
This mapping class group is generated by the element $\mathbold{\sigma}=\sigma\diagup \mathcal{Z}_2$ which
corresponds to the braid $\sigma$ in the braid group $\mathcal{B}_2$ on two strands. The
square of $\mathbold{\sigma}$ is the identity. We put  $\mathbold{\Gamma}_1=
\mathfrak{M}(\mathbb{P}^1; \{\infty\}, \{\zeta_1,\zeta_2\})$.

To describe the irreducible component corresponding to $\mathcal{C}_2$ of the mapping class $\mathfrak{m}(\sigma_1\diagup \mathcal{Z}_3)$, we notice that
$\mathcal{C}_2$ is homeomorphic to $\mathbb{P}^1\setminus \{\zeta_4\}$ with distinguished points $\{\zeta_3,\infty\}$ for a point $\zeta_4\in \mathbb{P}^1, \, \zeta_4\neq \zeta_3,\infty$.
Each self-homeomorphism of $\mathcal{P}^1\setminus \{\zeta_4\}$ with distinguished points
$\{\zeta_3,\infty\}$ extends to a self-homeomorphism of
$\mathbb{P}^1$ with set of distinguished points $\{\zeta_3,\zeta_4,\infty\}$. The respective irreducible component of $\mathfrak{m}({\sigma_1\diagup \mathcal{Z}_3})$ is the identity in the group $\mathfrak{M}(\mathbb{P}^1; \{\zeta_4, \infty\},z_3)$.

All monodromy mapping classes of a bundle that is isomorphic to $\mathfrak{F}$ are contained in $\mathbold{\Gamma}$, hence are powers of $\mathfrak{m}({\sigma_1\diagup \mathcal{Z}_3})$.
This implies that there are two irreducible bundle components, corresponding to $\mathcal{C}_1$ and to $\mathcal{C}_2$, respectively.
They are described as follows.

The irreducible bundle component of the bundle $\mathfrak{F}$, corresponding to $\mathcal{C}_2$, is associated to the trivial homomorphism. Hence, this irreducible bundle component is smoothly isomorphic to the trivial $(0,3)$-bundle over $X$.

The irreducible bundle component of $\,\mathfrak{F}\,$ related to $\mathcal{C}_1$ corresponds to the conjugacy
class of the homomorphism $\;\mathbold{\Phi}_1: \pi_1(X,x_0)\to \mathbold{\Gamma}_1$
induced by the homomorphism $\mathbold{\Phi}$. The same proof as in the case,
when $\mathbold{\Gamma}$ is generated either by
$\sigma_1\sigma_2 \diagup \mathcal{Z}_3$, or by
$\Delta_3\diagup \mathcal{Z}_3$,
shows that for any conformal structure $\omega$ on $X$ there is a holomorphic
$(0,2)$-bundle $(\mathfrak{F}'_1)^{\omega}$
with a holomorphic section
on the closed torus $\omega(X)^{c}$ whose isomorphism class corresponds to
the conjugacy class of the homomorphism $\pi_1(\omega(X)^c,\omega(x_0)) \to
\mathbold{\Gamma}_1$ that is induced by $\mathbold{\Phi}_1$.
The restriction of $(\mathfrak{F}'_1)^{\omega}$ to a punctured torus $\omega({X})'$, $\omega(X)\Subset \omega({X})'$,
is isomorphic to a special holomorphic $(0,3)$-bundle $(\mathfrak{F}_1)^{\omega}$ with set of distinguished points
$\{\zeta_1(x),\zeta_2(x),\infty\}$ in the fiber $\{x\}\times \mathbb{P}^1$ over $x$. For
$x\in \omega(X) \Subset \omega(X)'$ the points $\zeta_1(x),\zeta_2(x)$ are contained in a large closed disc in
$\mathbb{C}$ centered at the origin.

Take a point $\zeta_3\in \mathbb{C}$ outside this closed disc.
Consider the special holomorphic $(0,4)$-bundle $(\mathfrak{F})^{\omega}
\stackrel{def}=\Big(X \times \mathbb{P}^1, {\rm pr}_1, \mathring{\mathbold{E}}_3\cup\mathbold{s}^{\infty}, X\Big)$,
where the set of finite distinguished points $\mathring{\mathbold{E}}_3\cap(\{x\}\times \mathbb{P}^1)$ in the fiber over $x$ equals
$\{x\}\times\{\zeta_1(x) ,\zeta_2(x), \zeta_3\}$ and $\mathbold{s}^{\infty}\cap(\{x\}\times \mathbb{P}^1)=(x,\infty)$.
The monodromy mapping class of the bundle $(\mathfrak{F})^{ \omega}$ along $e_j$ differs from the monodromy mapping class $\mathfrak{F}_{ \omega}$, which is the push-forward  to $\omega(X)$ of the bundle $\mathfrak{F}$, by the $k_j$-th power of a Dehn twist about the curve $\gamma$.
Consider again the punctured torus $\omega(X)'\supset \omega(X)$, and assume
that $\omega(X)'=(\mathbb{C}\setminus \Lambda) \diagup \Lambda$
with
$\Lambda = \{ \lambda_1 \, n_1 + \lambda_2 \, n_2 \, , \ n_1 ,
n_2 \in {\mathbb Z}\}$, and $\omega(x_0)=\frac{\lambda_1+\lambda_2}{2}\diagup \Lambda$, and that the segments $\frac{\lambda_1+\lambda_2}{2}+[0,\lambda_1]$ and $\frac{\lambda_1+\lambda_2}{2}+[0,\lambda_2]$ are lifts to $\mathbb{C}\setminus \Lambda$ of curves representing a pair of generators of $\pi_1 (\omega(X)', \omega(x_0))$.

For all integers $\ell_1$ and $\ell_2$ there is a holomorphic function $F$ on ${\mathbb C}
\backslash
\Lambda$ such that
\begin{equation}
\label{eqEl018} F(x+\lambda_1) = \ell_1 \quad {\rm and} \quad F(x +
\lambda_2) = \ell_2, \; x \in \mathbb{C}\setminus \Lambda\,.
\end{equation}
This follows from the proof of Lemma \ref{lemGrom3}.
A quick way to see this for $\ell_2 = 0$ and any $\ell_1 \in
{\mathbb Z}$ is the following. For each $x\in \mathbb{C}$ there are uniquely determined
real numbers $t_1(x)$ and $t_2(x)$ depending smoothly on $x$ such that $x=\lambda_1 t_1(x)+\lambda_2 t_2(x)$.
Consider a function $\chi_0$ on the real axis which vanishes near zero such that
$\chi_0(t+1)=\chi_0(t)+\ell_1$, $t \in \mathbb{R}$. Consider the function $\chi(x)\stackrel{def}=\chi_0(t_1(x)),\, x\in \mathbb{C}$.
The
1-form $\overline\partial \, \chi$ descends to a smooth 1-form
$\delta$ on the torus ${\mathbb C} \diagup \Lambda$. Let $\mathring{g}$ be a
solution of the equation $\overline\partial \, \mathring{g} = \delta$ on the
punctured torus ${\mathbb C} \backslash \Lambda \diagup \Lambda$.
Let $g$ be the lift of $\mathring{g}$ to ${\mathbb C}
\backslash \Lambda$. Put $F = \chi - {g}$.

Let $F$ be the holomorphic function on
${\mathbb C} \backslash \Lambda$ that satisfies equation
\eqref{eqEl018}
for the integers $\ell_1=-k_1$ and $\ell_2=-k_2$.
Then the monodromy mapping classes along the $e_j$ of the special holomorphic $(0,4)$-bundle $(\mathfrak{F}')^{\omega}$ over $\omega(X)$
with set of distinguished points
$\{\zeta_1(x) ,\zeta_2(x), \zeta_3 \,e^{2\pi i F(x)},\infty\}$ in each fiber $\{x\}\times \mathbb{P}^1$, $x\in \omega(X)$,
are the same as the monodromy mapping classes of the bundle $\mathfrak{F}_{ \omega}$. Statement (3) and, hence, statement (1) in the reducible case are proved.

It remains to prove Statement (4), namely the fact that any reducible holomorphic $(0,3)$-bundle $\mathfrak{F}$
with a holomorphic section over a {\it punctured} Riemann surface $X$ is holomorphically isomorphic to the trivial bundle. The respective statement is known in more general situations,
but the proof of the present statement is a simple reduction to Picard's Theorem and we include it.

By Lemma \ref{lemEl*} we may assume that the bundle is equal to a special $(0,4)$-bundle
$\mathfrak{F}=(X\times \mathbb{P}^1,{\rm pr}_1, \mathring{\mathbold{E}}\cup \mathbold{s}^{\infty},X)$.
There is a finite unramified covering $\hat X$ of the punctured Riemann surface $X$
so that for the lift $\hat {\mathfrak{F}}= \big(\hat{X}\times \mathbb{P}^1,{\rm pr}_1,\widehat{\mathring{\mathbold{E}}}\cup \hat{\mathbold{s}}^{\infty},\hat{X}\big)$ of the bundle $\mathfrak{F}$ to $\hat X$ the set
$\widehat{\mathring{\mathbold{E}}}\cup \hat{\mathbold{s}}^{\infty}$ is the union of four disjoint complex curves each intersecting each fiber along a single point. After a holomorphic isomorphism (see the proof of Lemma \ref{lemEl*}) we may assume that the connected components of $\widehat{\mathring{\mathbold{E}}}$ are $\hat{X}\times\{-1\}$, $\hat{X}\times\{1\}$, and a component that intersects each fiber $\hat{x}\times \mathbb{P}^1$ along a point $(\hat{x},\hat {g}(\hat{x}))$ for a holomorphic function $g$ on $\hat X$ that omits the values $-1,\,1$, and $\infty$.

Since the bundle ${\mathfrak{F}}$ is reducible, the bundle $\hat{\mathfrak{F}}$ is reducible.
Hence, $\hat{g}:\hat{X}\to \mathbb{C}\setminus \{-1,1\}$ is free homotopic to a mapping whose image is contained in a punctured disc
around one of the points $-1,\, 1,$ or $\infty$.
We may assume, after applying to each fiber the same conformal self-mapping of $\mathbb{P}^1$, that this point is $-1$.

By Picard's Theorem the mapping $\hat g$ extends to a meromorphic mapping $\hat{g}^c$ from the closure $\hat{X}^c$ of $\hat X$ to $\mathbb{P}^1$. \index{Picard Theorem}
Then the meromorphic extension $\hat{g}^c$ omits the value $1$. Indeed, if $\hat{g}^c$ was equal to $1$ at some puncture of $\hat X$, then $\hat g$ would map a simple closed curve on $\hat X$ that surrounds the puncture positively to a loop in $\mathbb{C}\setminus \{-1,1\}$ that surrounds $1$ with positive winding number. This
contradicts the fact that $\hat g$ is homotopic to a mapping into a disc punctured at $-1$ and contained in $\mathbb{C}\setminus \{-1,1\}$. Hence, $\hat{g}^c$ is a meromorphic function on a compact Riemann surface that omits a value, and, hence $\hat g$ is constant.

This means that the bundle $\hat{\mathfrak{F}}$ over $\hat X$ is holomorphically isomorphic to the trivial bundle. Hence, the monodromy mappings of the original bundle $\mathfrak{F}$ are periodic. By Lemma \ref{lemEl.0}
this is possible for a reducible bundle only if the monodromies are equal to the identity. Repeat the same reasoning as above for the bundle $\mathfrak{F}$ instead of $\hat{\mathfrak{F}}$,
we see that the bundle $\mathfrak{F}$ is holomorphically trivial.
Theorem \ref{thmEl.0} is proved. \hfill $\Box$

\medskip

\noindent {\bf Proof of Theorem \ref{thmGrom1}.}
By Theorem \ref{thmGrom2} the isotopy class of the separable quasipolynomial $f$ corresponds to the
conjugacy class of a homomorphism $\Phi: \pi_1(X,x_0) \to \Gamma \subset \mathcal{B}_3$,
where $\Gamma$ is generated either by $\sigma_1\sigma_2$, or by
$\Delta_3$, or by $\sigma_1$ and $\Delta_3^2$. Consider the special $(0,4)$-bundle that is
associated to the quasipolynomial.
The isotopy class of the bundle over $X$
(with respect to isotopies through $(0,3)$-bundles with a section)
corresponds to the conjugacy class of the homomorphism $\mathbold{\Phi}: \pi_1(X,x_0) \to
\mathbold{\Gamma}=\Gamma\diagup\mathcal{Z}_3 $ that is obtained from $\Phi$ by taking the
quotient with respect to $\mathcal{Z}_3$.

By Statement (1) of Theorem \ref{thmEl.0} the conditions of Theorem \ref{thmGrom1} imply that for any a priori given conformal structure $\omega$ of second kind on $X$ the isotopy class of the bundle contains a special holomorphic $(0,4)$-bundle.
The special holomorphic
$(0,4)$-bundle determines a holomorphic quasipolynomial $f^{\omega}$ for this conformal structure by
assigning to each $x\in \omega(X)$ the triple of finite distinguished points in the fiber
$\{x\}\times \mathbb{P}^1$ and associating to it the monic polynomial with this set of zeros.
We may assume by conjugating the monodromy mapping of the special $(0,4)$-bundle, that
the monodromies of the quasipolynomial $f^{\omega}$ differ from the monodromies of the push forward $f_{\omega}$ of the original quasipolynomial by powers of $\Delta_3^2$.
Using the function $F$ that satisfies
\eqref{eqEl018} for suitable integers $\ell_1$ and $\ell_2$ we obtain a holomorphic quasipolynomial $f^{\omega}(x) e^{2\pi i F(x)},\, x \in \omega(X),$    that is isotopic to $f_{\omega}$.
Theorem \ref{thmGrom1} is proved. \hfill $\Box$

\section[Smooth fiber bundles and smooth families of complex manifolds.] {Smooth elliptic fiber bundles and differentiable families of complex manifolds.}
\label{sec:9.4}
\index{fiber bundle !  elliptic}

Recall that Kodaira (see \cite{Kodai}) defines a complex analytic family (also called holomorphic family) $\mathcal{M}_t, \,t
\in \mathcal{B},$ of compact complex manifolds over the base $\mathcal{B}$
as a triple $(\mathcal{M}, \mathcal{P}, \mathcal{B})$, where $\mathcal{M}$ and $\mathcal{B}$
are complex  manifolds, and $\mathcal{P}$ is a proper holomorphic submersion with
$\mathcal{P}^{-1}(t) = \mathcal{M}_t$. Thus, each holomorphic genus $\sf{g}$ fiber bundle
over a Riemann surface is a complex analytic family of compact Riemann surfaces of genus
$\sf{g}$.

We will consider holomorphic $(\sf{g},\sf{m})$-bundles as complex analytic families of
Riemann surfaces of genus $\sf{g}$ equipped with $\sf{m}$ distinguished points, for short, as complex analytic (or holomorphic) families of Riemann surfaces of type $({\sf g,m})$,
or holomorphic $(\sf{g},\sf{m})$-families.
\index{family of complex manifolds ! differentiable} \index{family of complex manifolds ! holomorphic} \index{Kodaira}

Kodaira (s. \cite{Kodai}) defines a differentiable family of compact complex manifolds as
a smooth fiber bundle with compact fibers and the following additional structure. The fibers
are equipped with complex structures that depend smoothly on the parameter and induce on
each fiber the smooth fiber structure. The formal definition of a differentiable family of
Riemann surfaces of type $(\sf{g},\sf{m})$ is the following.

\begin{defn}\label{defEl4}
A differentiable
family of Riemann surfaces $\mathcal{M}_t,\, t \in \mathcal{B}$, of type
$(\sf{g},\sf{m})$ is a tuple $(\mathcal{M},\, \mathcal{P},\,\mathbold{E},\, \mathcal{B})$, where $\mathcal{M}$ and $\mathcal{B}$
are oriented $C^{\infty}$ manifolds, and $\mathcal{P}$ is a smooth proper orientation preserving submersion with the
following property. For each $t \in \mathcal{B}$ the fiber $\mathcal{P}^{-1}(t)$, denoted by
$\mathcal{M}_t$, is equipped with the structure of a compact Riemann surface of genus $\sf{g}$,
such that the complex structure
induces the differentiable structure defined on $\mathcal{P}^{-1}(t)$ as a submanifold of
$\mathcal{M}$. $\mathbold{E}\subset \mathcal{M}$ is a smooth submanifold of $\mathcal{M}$ that intersects each fiber $\mathcal{M}_t$ along a set $E_t$ of $\sf{m}$ distinguished points. Moreover, the complex structures of the $\mathcal{M}_t$ depend smoothly on the parameter
$t$, more precisely, there is an open locally finite cover $U_j,\, j=1,2,\ldots$, of
$\mathcal{M}$  and complex valued $C^{\infty}$ functions $(z_1^j,\ldots, z_n^j)$ on each
$U_j$ (with $n$ the complex dimension of $\mathcal{M}_t$) such that for each $j$ and $t$
these functions define holomorphic coordinates on $U_j \cap \mathcal{M}_t$.
 \end{defn}
We will also call such families smooth families of Riemann surfaces of type $(\sf{g},\sf{m})$, or smooth $(\sf{g},\sf{m})$-families.
\index{$\mathcal{M}$} \index{$\mathcal{B}$}

Notice that each smooth special $(0,n+1)$-bundle over a finite oriented smooth surface carries automatically the structure of a smooth family of Riemann surfaces of type $(0,n+1)$.

Two smooth families of Riemann surfaces of type $(\sf{g},\sf{m})$ will be called isomorphic (as families of Riemann surfaces) if they are isomorphic as smooth bundles (smoothly isomorphic, for short) and there is a bundle isomorphism that is holomorphic on each fiber.

We will show in this section that each smooth elliptic fiber bundle with a section is smoothly isomorphic (equivalently, isotopic) to a bundle that carries the structure of a differentiable
family of Riemann surfaces of type $(1,1)$, also called a differentiable family of
closed Riemann surfaces of genus $1$ with a smooth section.

Recall that each compact Riemann surface of genus $1$ is conformally equivalent to the quotient
$\mathbb{C} \diagup \Lambda$ for a lattice $\Lambda=a\mathbb{Z}+b\mathbb{Z}$ where $a$ and
$b$ are two complex numbers that are independent over the real numbers. A torus given in the
form  $\mathbb{C} \diagup \Lambda$ will be called here a canonical torus.
The torus $\mathbb{C} \diagup (\mathbb{Z}+i \mathbb{Z})$ will be called standard.

A family of lattices $\Omega \ni x \to \Lambda(x)$ on a smooth manifold $\Omega$ is called
smooth if for each $x_0 \in X$
there is a neighbourhood $U(x_0) \subset \Omega$ of $x_0$ such that the lattices
$\Lambda (x)$ can be written as
\begin{equation}\label{eqEl10a}
\Lambda(x) = a(x){\mathbb Z}  + b(x){\mathbb Z}\,.
\end{equation}
for smooth functions $a$ and $b$ on $U(x_0)$. If $\Omega$ is a complex manifold,
the family is called holomorphic if it can be locally represented by
\eqref{eqEl10a} with holomorphic functions $a$ and $b$.

If on $U(x_0)$ we have $\widetilde\Lambda (x) = \Lambda (x),\,$ with
$\widetilde\Lambda (x) = \widetilde a(x){\mathbb Z} + \widetilde
b(x){\mathbb Z}$ for other smooth functions $\widetilde a$ and
$\widetilde b$, then there is a matrix $A= \begin{pmatrix} \alpha & \beta \\
\gamma & \delta \end{pmatrix} \in {\rm SL}_2 ({\mathbb Z})\,$ not depending on $x$,
such that
\begin{equation}\label{eqEl2a}
\widetilde a(x) = \alpha \, a(x) \,+\gamma \,b(x),\;\;
\widetilde b(x) = \beta \,a(x) \,+ \delta\, b(x)\,.
\end{equation}
Indeed, $\Lambda(x)$ and $\widetilde{\Lambda}(x)$ are equal lattices
and $(a(x),b(x))$ and $(\widetilde a(x), \widetilde b(x))$ are pairs
of generators of the lattice. Put  $B(x)= \begin{pmatrix} {\rm{Re}}\,a(x) & {\rm{Re}}\,b(x) \\
{\rm{Im}}\,a(x) & {\rm{Im}}b(x) \end{pmatrix}$ and
$\widetilde{ B}(x)= \begin{pmatrix} {\rm{Re}}\,\widetilde a(x) & {\rm{Re}}\,\widetilde b(x) \\
{\rm{Im}}\,\widetilde a(x) & {\rm{Im}} \, \widetilde b(x)
\end{pmatrix}\,.$
Then the real linear self-map of $\mathbb{C}$, defined by $B(x)$ (by
$\widetilde {B}(x)$, respectively) takes $1$ to $a(x)$ and $i$ to $b(x)$ ($1$ to $\widetilde{a}(x)$ and $i$ to $\widetilde{b}(x)$, respectively). Hence, both maps take
the standard lattice $\mathbb{Z} + i \mathbb{Z}$
onto $\Lambda (x) = \widetilde \Lambda (x)$. Then $ B^{-1}(x)\circ\widetilde B (x)$ maps the standard lattice onto itself, hence, it is
in ${\rm SL}_2 ({\mathbb Z})$. Since $B(x)$ and $\widetilde B (x)$
depend continuously on $x$, the matrices $ B ^{-1}(x) \circ \widetilde B(x) $ depend
continuously on $x$ and, hence, are equal to a matrix $A= \begin{pmatrix} \alpha &
\beta \\
\gamma & \delta \end{pmatrix} \in {\rm SL}_2 ({\mathbb Z})\,$ that does not depend on $x$.
Equation \eqref{eqEl2a} is satisfied for the entries of this matrix.

Two smooth families of lattices $\Lambda_0(x)$ and $\Lambda_1(x)$ depending on a parameter $x$ in a
smooth manifold $\Omega$ are called isotopic, if for an interval $I\supset [0,1]$ there is a smooth family of lattices
${\Lambda}(x,t)$,  $(x,t)\in \Omega \times I$, such that
${\Lambda}(x,0)=\Lambda_0(x)$ and  ${\Lambda}(x,1)=\Lambda_1(x)$ for all $x\in \Omega$.

Let $X$ be a finite open Riemann surface and $\Lambda(x),\, x\in X,$ a smooth family of lattices on $X$. Take an
open subset of $U$ of $X$ on which there are smooth function
$a(x)$ and $b(x)$ such that
$\Lambda(x)=a(x)\mathbb{Z}+b(x)\mathbb{Z},\, x\in U$. Then the family $\Lambda= \Lambda(x),\, x \in U,$ defines a free and properly discontinuous group action (see e.g. \cite{Lehn}, III, 3K)
\begin{equation}\label{eqEl11a}
U\times \mathbb{C} \ni (x,\zeta) \to \big(x, \zeta+ a(x)n+ b(x) m\big),\,n,m\in \mathbb{Z}\,,
\end{equation}
on $U\times \mathbb{C}$. The quotient of  $U\times \mathbb{C}$ by this action depends only on $\Lambda$, not on the choice of the generators $a(x)$ and $b(x)$ of the lattice.
Denote by $(U\times \mathbb{C})\diagup \Lambda$ the quotient of $U\times
\mathbb{C}$ by this action \eqref{eqEl11a}. Let
\begin{equation}\label{eqEl11b}
\mathcal{P}_{U,\Lambda}:(U\times \mathbb{C})\diagup \Lambda \to U
\end{equation}
be the mapping whose value at the equivalence class of $(x,\zeta)$ equals  $x$.
Consider in each fiber $\mathcal{P}_{\Lambda}^{-1}(x)$ the distinguished point $\;s_{\Lambda,x}=(x,0)\diagup \Lambda(x)\;$. The mapping $\;x\to s_{\Lambda,x},\, x \in U,\; $ is smooth, hence defines a smooth section. Put $\;\;\;\,\mathbold{s}_{\Lambda,U}\stackrel{def}=\cup_{x\in U}\{s_{\Lambda,x}\}$.
Then
the tuple $\mathfrak{F}_{\Lambda,U}\stackrel{def}= \big((U\times \mathbb{C}) \diagup \Lambda,\mathcal{P}_{U,\Lambda}, \,\mathbold{s}_{\Lambda,U}\,,U\big)$
defines a smooth $(1,1)$-bundle over $U$. Moreover, $\mathfrak{F}_{\Lambda,U}$
is equipped with the structure of a differentiable family of Riemann surfaces of
type $(1,1)$.
If the family $\Lambda$ of lattices on $X$ is holomorphic, then
$\mathfrak{F}_{\Lambda,U}$ is
a holomorphic $(1,1)$-bundle, equivalently, a
holomorphic family of Riemann surfaces of type $(1,1)$. The bundle $\mathfrak{F}_{\Lambda,U}$ depends only on $\Lambda\mid U$, not on the choice of the generators $a(x)$ and $b(x)$ of the lattice $\Lambda(x)$, $x\in U$.

Take an
open cover of $X$ by sets $U_j$ on which there are smooth functions
$a_j(x)$ and $b_j(x)$ such that
$\Lambda(x)=\Lambda_j(x)=a_j(x)\mathbb{Z}+b_j(x)\mathbb{Z},\, x\in U_j$.
Consider the set $\Big\{\big(x,\zeta\big)\diagup \Lambda(x),\, x\in X,\, \zeta \in \mathbb{C}\Big\}$.
The family $(U_j\times \mathbb{C}) \diagup \Lambda_j$ provides a system of smooth  coodinates on this set that are holomorphic for wach fixed value of $x$.
We denote the set  $\big\{(x,\zeta)\diagup \Lambda(x),\, x\in X,\, \zeta \in \mathbb{C}\big\}$ with the thus obtained structure by
$\mathcal{X}_{\Lambda}=
(X\times \mathbb{C})\diagup \Lambda$.
Define a mapping $\mathcal{P}_{\Lambda}$ by the equalities $\mathcal{P}_{\Lambda}=\mathcal{P}_{U_j,\Lambda}$ on $U_j$, and define $\mathbold{s}_{\Lambda}$ by $\mathbold{s}_{\Lambda}\cap  (\mathcal{P}_{\Lambda}^{-1}(U_j))=\mathbold{s}_{\Lambda,U_j}$ for each $j$.
We obtain a smooth  family of Riemann surfaces of type $(1,1)$ on $X$, which we denote by
$\mathfrak{F}_{\Lambda,X}=(\mathcal{X}_{\Lambda},\mathcal{P}_{\Lambda}, \,\mathbold{s}_{\Lambda}\,,X)$.

\index{$\mathfrak{F}_{\Lambda,X}=(\mathcal{X}_{\Lambda},\mathcal{P}_{\Lambda}, \,\mathbold{s}_{\Lambda}\,,X)$}

If the family $\Lambda$ of lattices on $X$ is holomorphic, then
$\mathfrak{F}_{\Lambda,X}$ is
a holomorphic $(1,1)$-bundle, equivalently, a
holomorphic family of Riemann surfaces of type $(1,1)$.

The following lemma holds.
\begin{lemm}\label{lemEl1}
Each smooth $(1,1)$-bundle $\mathfrak{F}$ over a smooth finite open oriented surface $X$ is
smoothly isomorphic (equivalently, isotopic) to a bundle of the form
\begin{equation}\label{eqEl10b}
\mathfrak{F}_{\Lambda,X}=(\mathcal{X}_{\Lambda},\mathcal{P}_{\Lambda}, \,\mathbold{s}_{\Lambda}\,,X)\,.
\end{equation}
\end{lemm}
Recall that the bundle \eqref{eqEl10b} carries the structure of a smooth family of Riemann surfaces of type $(1,1)$.

\medskip

\noindent {\bf Proof.} We need to find a smooth bundle of the form \eqref{eqEl10b} whose monodromy homomorphism is conjugate to that of the original bundle $\mathfrak{F}$. This can be done as in Section \ref{sec:9.2}. We represent $X$ as the union of an open disc $D$
containing the base point of $X$, and a collection of $\ell$  attached
bands $V_j$ that are relatively closed in $X$ and correspond to the generators of the fundamental group of $X$. Let as in Section \ref{sec:9.2} the set $\tilde{U}$ be a simply connected domain on the universal covering of $X$ such that each point of $D$ is covered $\ell+1$ times and each other point of $X$ is covered once. Label the preimages of $D$ under the projection $\tilde{U}\to X$ by $\tilde{D}_j,\; j=0,1,\ldots \ell,$ so that the preimage $\tilde{V}_j$ of the band $V_j,\,j=1,\ldots,\ell,$ is attached to $\tilde{D}_0$ and $\tilde{D}_j$. Denote by $\mathfrak{m}_j$ the monodromy of $\mathfrak{F}$ along the generator $e_j$ of the fundamental group of $X$ that corresponds to the $j$-th band $V_j$.

Map
the fiber of $\mathfrak{F}$ over the base point diffeomorphically onto the standard torus
$\mathbb{C}\diagup(\mathbb{Z}+i \mathbb{Z})$ so that the distinguished point is mapped to $0\diagup (\mathbb{Z}+i \mathbb{Z})$. By an isomorphism between mapping class groups
we identify each monodromy mapping class $\mathfrak{m}_j, \, j=1,\ldots,\ell,$ of $\mathfrak{F}$ with a mapping class on the standard torus
$\mathbb{C}\diagup(\mathbb{Z}+i \mathbb{Z})$ with distinguished point $0\diagup (\mathbb{Z}+i \mathbb{Z})$. We denote the new mapping class by the same letter $\mathfrak{m}_j$.
Represent the new class $\mathfrak{m}_j$
by a mapping
$ \varphi_j^{-1}:\mathbb{C}\diagup(\mathbb{Z}+i \mathbb{Z})\toitself$,                    such
that $\varphi_j$ lifts  to  a real linear self-mapping $\widetilde{\varphi}_j$ of $\mathbb{C}$
that maps the lattice $\mathbb{Z}+i \mathbb{Z}$ onto itself. In other words,
$\widetilde{\varphi}_j$ corresponds to a $2 \times 2$ matrix $A_j$ with
integer entries and determinant $1$, $\widetilde {\varphi}_j (x + iy) = A_j
\begin{pmatrix} x \\ y \end{pmatrix}$, $A_j \in {\rm SL}_2 ({\mathbb
Z})$.

Consider on each set $\widetilde{\Omega}_j\stackrel{def}=\tilde{D}_0\cup \tilde{V}_j\cup \tilde{D}_j$ (which is a simply connected domain)
a smooth family of real linear self-maps $(\widetilde\varphi_j)_z$, $z \in \widetilde{\Omega}_j$, of the
complex plane ${\mathbb C}$ such that
\begin{align}\label{eqEl10a''}
(\widetilde\varphi_j)_z=\mbox{Id} \;\mbox{for $z\in \tilde{D}_0$ and}\,
(\widetilde\varphi_j)_z=\widetilde \varphi_j\; \mbox{for $z\in \tilde{D}_j$}\,,
\end{align}
and put
\begin{equation}\label{eqEl10a'}
\Lambda_j(z)=(\widetilde\varphi_j)_z(\mathbb{Z}+i\mathbb{Z}),\;z \in \widetilde{\Omega}_j\,.
\end{equation}
For points $\tilde{z}_j\in \tilde{D}_j,\, j=0,\ldots \ell,$ that project to the same point in $D$,
the lattices $\Lambda_j(\tilde{z}_j)$ coincide. Hence, there exists a well defined smooth family of lattices $\Lambda(x),\, x\in X,$ that lifts to the family of lattices  \eqref{eqEl10a'} on  $\widetilde{\Omega}_j$.
We obtain a bundle
$\mathfrak{F}_{\Lambda,X}=\big(\,(X\times \mathbb{C})\diagup \Lambda\,,\, \mathcal{P}_{\Lambda}\,,\, \mathbold{s}_{\Lambda}\,,\, X\,\big)$ over $X$, which is isomorphic (as a smooth bundle) to
the original bundle over $X$, since the monodromy homomorphisms of the two bundles are conjugate to each other.
The lemma is proved. \hfill $\Box$
\medskip

By Lemma  \ref{lemEl1} the Problem \ref{problEl.1a}  can be reformulated as follows.

\smallskip

\noindent {\bf Problem $9.1'$} {\it Let $X$ be a finite open Riemann surface. Is a given
smooth family of Riemann surfaces of type $(1,1)$ on $X$
isotopic to a
complex analytic family of Riemann surfaces of type $(1,1)$?}

\section{Complex analytic families of canonical tori.      }
\label{sec:9.5}
Let now ${\mathfrak F} = ({\mathcal X} , {\mathcal P} , X)$ be a
{\it holomorphic} elliptic fiber bundle over a finite open Riemann
surface $X$.
The fiber bundle is, in particular, a smooth elliptic fiber bundle. For each disc $\Delta
\subset X$ there is a smooth family of diffeomorphisms $\varphi_t:S \to
\mathcal{P}^{-1}(t),\; t \in \Delta,$ from the reference Riemann surface $S=\mathbb{C}\diagup (\mathbb{Z}+i \mathbb{Z})$ of genus one onto the fiber over $t$.
Consider the Teichm\"uller class $[\varphi_t],\; t \in \Delta$.
The following Lemma \ref{lemEl3} states that these Teichm\"uller classes depend
holomorphically on the parameter.

\begin{lemm}\label{lemEl3}
Let $\mathfrak{F}$ be a holomorphic elliptic fiber bundle over a
Riemann surface $X$. For each small enough disc $\Delta \subset X$
there is a holomorphic map $z \to \tau (z)$, $z \in \Delta$, into the Teichm\"uller space
$\mathcal{T}(1,0)$ of the standard torus, such that each fiber ${\mathcal P}^{-1}(z) $ is conformally equivalent to $\mathbb{C}\diagup
(\mathbb{Z}+\tau(z)\mathbb{Z})$.
\end{lemm}

For convenience of the reader we will provide a proof. The key ingredient is a lemma of
Kodaira which we formulate now.

Let ${\mathcal X}$ and $X$ be complex manifolds and let ${\mathcal
P} : {\mathcal X} \to X$ be a proper holomorphic submersion such
that the fibers ${\mathcal X}_z = {\mathcal P}^{-1} (z)$, $z\in X$,
are compact complex manifolds of complex dimension $n$. For each $z
\in X$  we denote by $\Theta_z$ the sheaf of germs of holomorphic
tangent vector fields of the complex manifold ${\mathcal X}_z$.
Denote by $H^0 ({\mathcal X}_z , \Theta_z)$ the space of global
sections of the sheaf.

\begin{lemm}\label{lemEl4}
{\rm (Kodaira, \cite{Kodai}, Lemma 4.1, p. 204.)} If the dimension
$d \stackrel{def}= {\rm dim} (H^0 ({\mathcal X}_z , \Theta_z))$ is independent of $z$ then
for any small enough (topological) ball $\Delta$ in $X$ there is for
each $z \in \Delta$ a basis $(v_1 (z) , \ldots , v_d (z))$ of $H^0
({\mathcal X}_z , \Theta_z)$ such that $(v_1 (z) , \ldots , v_d
(z))$  depends holomorphically on $z \in \Delta$.
\end{lemm}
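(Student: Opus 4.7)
The plan is to realize the family $\{H^0(\mathcal{X}_z, \Theta_z)\}_{z \in X}$ as the fibers of a single coherent analytic sheaf on the base $X$, and then to invoke Grauert's base change/semicontinuity theorem: constancy of the fiber dimension will force this sheaf to be locally free, whereupon its local holomorphic frames deliver exactly the desired holomorphic bases of tangent vector fields along the fibers.

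First I would construct the relative tangent sheaf $\Theta_{\mathcal{X}/X}$ on $\mathcal{X}$ as the kernel of the natural surjection $d\mathcal{P}: T\mathcal{X} \to \mathcal{P}^* TX$ of holomorphic vector bundles; the surjection is genuine because $\mathcal{P}$ is a submersion, so $\Theta_{\mathcal{X}/X}$ is a locally free $\mathcal{O}_{\mathcal{X}}$-module of rank $n$. Its restriction to the fiber $\mathcal{X}_z$ is canonically the holomorphic tangent sheaf $\Theta_z$ of that fiber, so the spaces of global sections over fibers are precisely $H^0(\mathcal{X}_z, \Theta_z)$.

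Next, consider the direct image $\mathcal{F} \overset{\mathrm{def}}{=} \mathcal{P}_* \Theta_{\mathcal{X}/X}$. By Grauert's coherence theorem for proper holomorphic maps, $\mathcal{F}$ is a coherent analytic sheaf on $X$, and for each $z \in X$ there is a canonical base change homomorphism
\[
\varphi_z : \mathcal{F}_z \otimes_{\mathcal{O}_{X,z}} \mathbb{C} \to H^0(\mathcal{X}_z, \Theta_z).
\]
The crucial analytic input is the cohomology and base change theorem of Grauert: if the function $z \mapsto \dim_{\mathbb{C}} H^0(\mathcal{X}_z, \Theta_z)$ is locally constant (as is assumed here, with value $d$), then $\mathcal{F}$ is locally free of rank $d$ and $\varphi_z$ is an isomorphism for every $z$.

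To finish, over any sufficiently small ball $\Delta \subset X$ I would pick a holomorphic frame $v_1, \ldots, v_d$ of $\mathcal{F}|_\Delta$; each $v_j$ corresponds to a holomorphic section of $\Theta_{\mathcal{X}/X}$ over $\mathcal{P}^{-1}(\Delta)$, i.e.\ a holomorphic vector field on $\mathcal{P}^{-1}(\Delta)$ tangent to the fibers. Restricting to $\mathcal{X}_z$ yields $v_j(z) \in H^0(\mathcal{X}_z, \Theta_z)$, and the isomorphism $\varphi_z$ guarantees that $(v_1(z), \ldots, v_d(z))$ is a basis at every $z \in \Delta$, with holomorphic dependence on $z$ built into the construction. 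The hard part is the base change theorem in the analytic category, which supplies the jump from ``constant dimension of $H^0$'' to ``local freeness of the direct image plus compatibility with restriction to fibers''. An alternative more pedestrian route would go through Hodge theory: use a smoothly varying Hermitian metric on $\mathcal{X}$ to obtain a smooth family of $\bar\partial$-Laplacians $\Box_z$ on $\Theta_z$, note that constancy of $\dim \ker \Box_z$ gives smooth dependence of the harmonic projectors, and then correct a smooth frame to a holomorphic one by solving a family of $\bar\partial$-equations with holomorphic parameter --- the obstacle there migrates to this last $\bar\partial$-step.
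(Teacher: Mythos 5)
Your proposal is correct, but it follows a genuinely different route from the one in Kodaira's book, which the paper cites as the source of the lemma rather than proving it. Kodaira's own argument is analytic and hands-on: he introduces a smooth family of Hermitian metrics, forms the family of $\bar\partial$-Laplacians on the fibers, shows (via eigenvalue estimates and the Green operator) that when $\dim H^q$ is constant the harmonic projectors and hence harmonic bases depend $C^\infty$ on the parameter, and then performs an extra correction to upgrade smooth to holomorphic dependence of the $H^0$-basis. Your primary route instead passes through the relative tangent sheaf $\Theta_{\mathcal{X}/X} = \ker d\mathcal{P}$, Grauert's direct image (coherence) theorem for the proper map $\mathcal{P}$, and Grauert's cohomology-and-base-change theorem, which says that local constancy of $z \mapsto \dim H^0(\mathcal{X}_z, \Theta_z)$ forces $\mathcal{P}_*\Theta_{\mathcal{X}/X}$ to be locally free with base-change maps isomorphisms; a holomorphic local frame of that locally free sheaf is exactly the desired holomorphic family of bases. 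This is a clean, more modern sheaf-theoretic argument that buys brevity and conceptual transparency at the cost of invoking heavy machinery (Grauert's theorems) that Kodaira's book deliberately avoids by staying within elliptic PDE theory. The alternative Hodge-theoretic sketch you offer at the end is essentially Kodaira's own method, and you correctly identify that its delicate point is the final $\bar\partial$-step that promotes a $C^\infty$ frame to a holomorphic one.
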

For a proof we refer to \cite{Kodai}.
Let $m$ be the dimension of the complex manifold $X$ in the statement of Kodaira's Lemma. The condition that the $v_j (z)$, $j =
1,\ldots , d$, depend holomorphically on $z$, means the following.
Let $U_{\alpha} \subset {\mathcal X}$ be a small open subset of
${\mathcal X}$ on which there are local holomorphic coordinates
$(\zeta_1^{\alpha} , \ldots , \zeta_n^{\alpha} , z_1 , \ldots ,
z_m)$, where $z = (z_1 , \ldots , z_m) \in \Delta$ and for fixed $z
= (z_1 , \ldots , z_m)$ the $\zeta^{\alpha} = (\zeta_1^{\alpha} ,
\ldots , \zeta_n^{\alpha})$ are local holomorphic coordinates on the
fiber over $z$. In these local coordinates the vector field $v_j$,
$j = 1,\ldots , n$, can be written as
\begin{equation}\label{eqEl17}
v_j (z) = \sum_{k=1}^n v_{j \, k}^{\alpha}
(\zeta^{\alpha} , z) \frac{\partial}{\partial \zeta_k^{\alpha}} \, ,
\end{equation}
where the $v_{j \, k}^{\alpha}$ are holomorphic in $(\zeta^{\alpha}
, z)$. The condition does not depend on the choice of local
coordinates with the described properties.
\index{Kodaira ! Lemma}

We prepare now the proof of Lemma
\ref{lemEl3}.
Kodaira's Lemma applies in the situation of Lemma \ref{lemEl3}. Indeed, in the situation of Lemma \ref{lemEl3}
the base $X$ has complex dimension one and the fibers are compact Riemann surfaces of genus $1$.
The space of holomorphic sections
$H^0 ({\mathcal X}_z , \Theta_z)$ of the sheaf of germs of holomorphic tangent vector
fields of each fiber ${\mathcal X}_z$ has complex dimension one.
Indeed, let $\mathbb{T}$ be a compact Riemann surface of genus $1$. The
complex structure of $\mathbb{T}$ may be given by a system of local holomorphic coordinates
with transition functions $\zeta(z)=z+c$ for complex constants $c$ (so called flat
coordinates). A holomorphic vector field (equivalently, a holomorphic section in the
holomorphic tangent bundle) on $\mathbb{T}$ assigns to each chart  $\mathbb{C} \supset U_j \to V_j \subset \mathbb{T}$ a function $v_j(z_j),\, z_j\in U_j$, such that if $V_j\cap V_k\neq
\emptyset$ then $v_j(z_j(z_k)) \frac{dz_k}{dz_j}=v_k(z_k)$. Since the complex structure on $\mathbb{T}$ is given by a system of flat local
coordinates, the equality $\frac{dz_k}{dz_j}\equiv 1$ holds for each $j,k$ with $V_j\cap V_k\neq \emptyset$.
Hence, the $v_j(z_j)$ define a holomorphic function $v$ on $\mathbb{T}$. Each holomorphic
function on a compact complex manifold is constant. Hence,
the space of holomorphic tangent vector fields to any compact Riemann surface of genus one has complex dimension $1$.
In particular, for each $z \in X$ the space $H^0
({\mathcal X}_z , \Theta_z)$ is generated by a single holomorphic
tangent vector field $v(z)$ on ${\mathcal X}_z$.
By Kodaira's Lemma
$v(z)$ may be chosen to depend holomorphically on $z \in \Delta$ for
each small enough disc $\Delta$ in $X$. We obtain a holomorphic vector field
$v$ on ${\mathcal X}_{\Delta}\stackrel{def}= {\mathcal P}^{-1} (\Delta) $ whose
restriction to each fiber ${\mathcal X}_z$ equals $v(z)$.

Consider the (holomorphic) universal covering space $\widetilde{\mathcal
X}_{\Delta}$ of ${\mathcal X}_{\Delta} $. Denote by $p$ the covering map, $p:\widetilde{\mathcal
X}_{\Delta}\to {\mathcal X}_{\Delta}$.
Consider the triple $(\widetilde{\mathcal
X}_{\Delta}, \mathcal{P}\circ p, \Delta)$.

\begin{lemm}\label{lemEl5}
$(\widetilde{\mathcal X}_{\Delta},\mathcal{P}\circ p, \Delta) $ is holomorphically isomorphic to the trivial
holomorphic fiber
bundle over $\Delta$ with fiber ${\mathbb C}$.
\end{lemm}

\noindent {\bf Proof of Lemma \ref{lemEl5}.} For each $z\in \Delta$ the fiber $\mathcal{P}^{-1}(z)$ of the bundle
$({\mathcal X}_{\Delta},\mathcal{P}, \Delta) $ is
a compact Riemann surface of genus $1$.
Its preimage under $p$ is the set $(\mathcal{P}\circ p)^{-1}(z)$ which is the fiber of the bundle $(\widetilde{\mathcal X}_{\Delta},\mathcal{P}\circ p, \Delta) $ over $z$.
The set $\mathcal{P}^{-1}(z)$
is a complex one-dimensional submanifold of ${\mathcal X}_{\Delta}$ which is the zero set of the holomorphic function $\mathcal{P}-z$.
The set $(\mathcal{P}\circ p)^{-1}(z)$ is a complex one-dimensional submanifold of $\widetilde{\mathcal X}_{\Delta}$ which is the zero set of the holomorphic function
$\mathcal{P}\circ p-z$.
Hence, the restriction $p\mid (\mathcal{P}\circ p)^{-1}(z): (\mathcal{P}\circ p)^{-1}(z)\to \mathcal{P}^{-1}(z)$ defines a holomorphic covering. Indeed, since $p:\widetilde{\mathcal X}_{\Delta}\to {\mathcal X}_{\Delta}$ is a covering, each point $z'$  of $\mathcal{P}^{-1}(z)$ has a neighbourhood $U_{z'}\subset  {\mathcal X}_{\Delta}$ such that $p^{-1}(U_{z'})$ is the disjoint union of open sets $\tilde U _{z'}^k\subset \widetilde{\mathcal{X}}$ and $p$ maps each $\tilde U _{z'}^k$ biholomorphically onto $U_{z'}$. Then $p$  maps each $\tilde U _{z'}^k \cap (\mathcal{P}\circ p)^{-1}(z) $ conformally onto $ U_{z'}\cap \mathcal{P}^{-1}(z)$.

Each fiber $(\mathcal{P}\circ p)^{-1}(z)$ is simply connected, since any loop in the fiber $(\mathcal{P}\circ p)^{-1}(z)$ is contractible in $\widetilde{\mathcal X}_{\Delta}$, and the bundle  $(\widetilde{\mathcal X}_{\Delta},\mathcal{P}\circ p, \Delta) $ is smoothly isomorphic to the trivial bundle $(\Delta\times (\mathcal{P}\circ p)^{-1}(z), {\rm pr}_1, \Delta)$.

Since for each $z\in \Delta$ the covered manifold $\mathcal{P}^{-1}(z) $
is a Riemann surface of genus $1$, the covering manifold $(\mathcal{P}\circ p)^{-1}(z) $ is conformally equivalent to the complex plane $\mathbb{C}$.
Hence, the
triple $(\widetilde{\mathcal X}_{\Delta} , {\mathcal P} \circ p ,
\Delta)$ is a holomorphic fiber bundle with fiber ${\mathbb C}$.

Take a holomorphic section $s(z),\, z\in \Delta,$ of the mapping $\widetilde{\mathcal
X}_{\Delta} \overset{{\mathcal P} \, \circ \,
p}{-\!\!\!-\!\!\!-\!\!\!\longrightarrow} \Delta$. It exists after,
perhaps, shrinking $\Delta$. Let $v$ be the holomorphic
vector field on ${\mathcal X}_{\Delta}$ from Kodaira's Lemma, and
$\widetilde v$ its lift to the universal
covering $\widetilde{\mathcal X}_{\Delta}$. Define a mapping
${\mathcal G} : \Delta \times {\mathbb C} \to \widetilde{\mathcal
X}_{\Delta}$ by
\begin{equation}\label{eqEl18}
{\mathcal G} (z,\zeta) = \gamma_{s(z)} (\zeta) \, ,
\quad (z,\zeta) \in \Delta \times {\mathbb C} \, , 
\end{equation}
where for each $z$ the mapping $\gamma_{s(z)}$ is the solution of
the holomorphic differential equation
\begin{equation}\label{eqEl19}
\gamma'_{s(z)} (\zeta) = \widetilde v (\gamma_{s(z)}
(\zeta)) \, , \quad \gamma_{s(z)} (0) = s(z) \in ({\mathcal P} \circ
p)^{-1} (z) \, , \quad \zeta \in {\mathbb C} \, .
\end{equation}
Since $\widetilde v$ is tangential to the fibers we have the
inclusion
\begin{equation}\label{eqEl20}
\gamma_{s(z)} ({\mathbb C}) \subset ({\mathcal P}
\circ p)^{-1} (z) \cong {\mathbb C} \quad \mbox{for each} \quad z
\in \Delta \, .
\end{equation}
For each $z$ the solution exists for all $\zeta \in {\mathbb C}$
since the restriction of $\widetilde v$ to the fiber over $z$ is the
lift of a vector field on a closed torus. The mapping
\begin{equation}\label{eqEl21}
(z,\zeta) \to \gamma_{s(z)} (\zeta) \, , \quad z \in
\Delta \, , \quad \zeta \in {\mathbb C} \, ,
\end{equation}
is a local holomorphic diffeomorphism.
By the Poincar\'{e}-Bendixson Theorem for each $z$ it maps
${\mathbb C}$ one-to-one onto an open subset of the fiber $({\mathcal P} \circ p)^{-1}
(z) \cong {\mathbb C}$, hence, it maps ${\mathbb C}$ {\it onto}
$({\mathcal P} \circ p) ^{-1} (z)$. Hence, ${\mathcal G}$ defines a
holomorphic isomorphism of the trivial bundle $(\Delta \times\mathbb{C},{\rm pr}_1,
\Delta)$ onto the bundle $(\widetilde{\mathcal
X}_{\Delta},{\mathcal P} \, \circ \,p,\, \Delta)$.
Lemma \ref{lemEl5} is
proved.  \hfill $\Box$

\medskip

\noindent {\bf Proof of Lemma \ref{lemEl3}.}
Consider the covering transformations of the covering
$\widetilde{\mathcal X}_{\Delta} \to {\mathcal X}_{\Delta}$.
In terms of the isomorphic bundle $(\Delta \times \mathbb{C},{\rm pr}_1,
{\mathbb C})$
the restrictions of the covering transformations to each
fiber $\{z\} \times {\mathbb C}$, are translations, hence, the
covering transformations have the form
\begin{equation}\label{eqEl22}
\psi_{n,m} (z,\zeta) =\big(z, \zeta + n \, a(z) + m \,
b(z) \big)\, , \quad (z,\zeta) \in \Delta \times {\mathbb C} \, ,
\end{equation}
for integral numbers $n$ and $m$. Here $a(z)$ and $b(z)$ are complex
numbers which are linearly independent over ${\mathbb R}$ and depend
on $z$. Since the covering transformations $\psi_{1,0}$ and $\psi_{0,1}$ are
holomorphic, the numbers $a(z)$ and $b(z)$ can be taken to depend holomorphically on
$z \in \Delta$. After perhaps interchanging $a(z)$ and $b(z)$ we may assume that
$\mbox{Re}\frac{a(z)}{b(z)}  >0\,.$
Hence, $\tau (z) = \frac{a(z)}{b(z)}$ depends
holomorphically on $z \in \Delta$ and can be interpreted as a holomorphic mapping from
$\Delta$ into the Teichm\"uller space (which is identified with the upper half-plane
$\mathbb{C}_+$). Lemma \ref{lemEl3} is proved.  \hfill $\Box$


\begin{cor}\label{cor1}
Let $\mathfrak{F}$ be a holomorphic elliptic fiber bundle over a connected
simply connected Riemann surface $X$, or let $\mathfrak{F}$ be a holomorphic elliptic fiber bundle over a connected finite open Riemann surface $X$, such that all monodromies are equal to the identity. Then
there is a holomorphic map $z \to \tau (z)$, $z \in X$, into the Teichm\"uller space $\mathcal{T}(1,0)$, such
that each fiber ${\mathcal P}^{-1}(z) $ is conformally equivalent to $\mathbb{C}\diagup
(\mathbb{Z}+\tau(z)\mathbb{Z})$.
\end{cor}

\noindent {\bf Proof.} Let first $X$ be simply connected. Consider a smooth family of diffeomorphisms $\varphi_z: S \to
\mathcal{P}^{-1}(z),\ z \in X$, where $S=\mathbb{C}\diagup (\mathbb{Z}+ i \mathbb{Z})$ is the standard torus. Then the Teichm\"uller
classes $\tau(z) \stackrel{def}{=}[\varphi_z], z \in X,$ depend smoothly on $z$ and
$\mathcal{P}^{-1}(z)$ is conformally equivalent to $\mathbb{C}\diagup
(\mathbb{Z}+[\varphi_z]\mathbb{Z})$. By Lemma \ref{lemEl3}
$[\varphi_z]$ depends holomorphically on $z$. Indeed, on any small enough disc $\Delta
\subset X$ there is a holomorphic map $\tau_{\Delta}$ to the Teichm\"uller space such that
$\mathcal{P}^{-1}(z)$ is conformally equivalent to $\mathbb{C}\diagup (\mathbb{Z}+ \tau_{\Delta}(z)
\mathbb{Z})$. Hence, $[\varphi_z]=\varphi_{\Delta}^*(\tau_{\Delta}(z)),\; z \in \Delta,\;$ for a modular
transformation $\varphi^*_{\Delta}$ of the Teichm\"uller space. The corollary follows from the fact that modular
transformations are biholomorphic selfmaps of the Teichm\"uller space.

Let now $X$ be a finite open Riemann surface, and let all monodromies of the bundle $\mathfrak{F}$ be equal to the identity. Consider a lift $\tilde{\mathfrak{F}}$ of the bundle to the universal covering $\tilde X \overset{\sf P}{-\!\!\!\longrightarrow}    X$. There exists a holomorphic mapping $\tilde{\tau}:\tilde{X}\to \mathcal{T}(1,0)$ such that for each $\tilde{x}\in\tilde{X}$
the value $\tilde{\tau}(\tilde{x})$ represents the conformal class of the fiber of $\tilde{\mathfrak{F}}$ over $\tilde{x}$ (which is equal to the conformal class of the fiber of $\mathfrak{F}$ over ${\sf P}(\tilde{x})$). Let $e$ be an element of the fundamental group of $X$ with base point $x_0$.
We let $\varphi_e$ be a self-homeomorphism of the fiber of $\mathfrak{F}$ over $x_0$, that represents the monodromy mapping class of the bundle along $e$. Denote by $\varphi_e^*$
the modular transformation on $\mathcal{T}(1,0)$ corresponding to $\varphi_e$. Then
$\tilde{\tau}(e(\tilde{x}))=\varphi_e^*(\tilde{\tau}(\tilde{x}))$ for each $e$.
Since all monodromies of the bundle $\mathfrak{F}$ are trivial, each $\varphi_e^*$ is equal to the identity, and therefore $\tilde{\tau}$ descends to a well-defined mapping $\tau$ on $X$ with the required property.
\hfill $\Box$

\medskip

The following proposition holds.

\begin{prop}\label{propEl.2a} Each holomorphic $(1,1)$-bundle
$(\mathcal{X},\mathcal{P},\mathbold{s}, X)$ over a finite open Riemann surface $X$ is holomorphically isomorphic to a
holomorphic bundle of the form
\begin{equation}\label{eqEl11e}
(\mathcal{X}_{\Lambda},\mathcal{P}_{\Lambda}, \,\mathbold{s}_{\Lambda}\,,X)\,.
\end{equation}
for a holomorphic family of lattices $\Lambda=\{\Lambda(x)\}_{x\in X}$.
\end{prop}
Here as before, $\mathcal{X}_{\Lambda}=(X\times \mathbb{C})\diagup \Lambda$, and
$\mathcal{P}_{\Lambda}$ assigns to each class $(x,\zeta)\diagup \Lambda$ in the quotient
 the point $x\in X$.
The value at the point $x\in X$ of the holomorphic section $\mathbold{s}_{\Lambda}$ of the bundle \eqref{eqEl11e}
is the class in the quotient that contains $(x,0)$.
\index{$\mathcal{X}_{\Lambda}$} \index{$\mathcal{P}_{\Lambda}$} \index{$\mathbold{s}_{\Lambda}$}
\medskip

\noindent {\bf Proof.} For each $x\in X$ we let $\Delta_x$ be a topological disc, $x \in\Delta_x \subset X$, for which Kodaira's Lemma holds. As in Lemma \ref{lemEl5} we let
$p_{\Delta_x}:\widetilde{\mathcal{X}}_{\Delta_x} \to {\mathcal{X}}_{\Delta_x} $ be the universal covering of
$\mathcal{X}_{\Delta_x}=\mathcal{P}^{-1}(\Delta_x)$. Let $\tilde{\mathbold{s}}_{\Delta_x}  $ be a lift of ${\mathbold{s}}_{\Delta_x} \stackrel{def}=  \mathbold{s}_{\Lambda}
\cap \mathcal{X}_{\Delta_x}$ to $\widetilde{\mathcal{X}}_{\Delta_x}$.
By Lemma \ref{lemEl5} there exists (after perhaps shrinking $\Delta_x$)
a holomorphic bundle isomorphism
\begin{equation}\label{eqEl06}
\Big(\,\widetilde{\mathcal{X}}_{\Delta_x},\, (\mathcal{P}\circ p_{\Delta_x})| \widetilde{\mathcal{X}}_{\Delta_x}\,, \tilde{\mathbold{s}}_{\Delta_x},\,\Delta_x\,\Big) \to \Big(\,\Delta_x\times \mathbb{C},\, {\rm pr}_1,\,\Delta_x\times \{0\},\, \Delta_x\,\Big)\,.
\end{equation}
The bundle isomorphism is given by the identity mapping on $\Delta_x$ and a holomorphic mapping $\tilde{\Phi}_{\Delta_x}$ from $\widetilde{\mathcal{X}}_{\Delta_x}$ to $\Delta_x\times \mathbb{C}$, which maps the fiber of the first bundle over each point $x'\in \Delta_x$ to the fiber of the second bundle over the same point $x'$ and maps the section of the first bundle to the section of the second bundle.

Cover $X$ by a locally finite set of such discs $\Delta_j$.
For each $\Delta_j$ we consider the covering transformations of the projection $p_{\Delta_j}:\widetilde{\mathcal{X}}_{\Delta_j}\to \mathcal{X}_{\Delta_j}$. The bundle isomorphism
conjugates the group of these covering transformations to a group of fiber preserving transformations of $\Delta_j\times \mathbb{C}$ with free and properly discontinuous action.
The conjugated group acts on each fiber $\{x\}\times \mathbb{C}$ as a lattice
$\Lambda_j(x)$, and the lattices depend holomorphically on $x\in \Delta_j$.
Taking the quotient, we obtain for each $j$ a holomorphic bundle isomorphism
\begin{equation}\label{eqEl07}
\big(\mathcal{X}_{\Delta_j},\, \mathcal{P}_{\Delta_j},\; \mathbold{s}_{\Delta_j},\, \Delta_j\,\big)\to \big((\Delta_j\times \mathbb{C})\diagup {\Lambda}_{j},\; {\rm pr}_1,\, (\Delta_j\times\{0\})\diagup {\Lambda}_{j},\; \Delta_j\,\big).
\end{equation}
The bundle isomorphism  \eqref{eqEl07} is given by the identity mapping from $\Delta_j$ to itself and by a biholomorphic mapping
\begin{align}\label{eqEl07'}
\Phi_j:\mathcal{X}_{\Delta_j}\to
(\Delta_j\times \mathbb{C})\diagup {\Lambda}_{j}
\end{align}
that maps the fiber of the first bundle over any point $x\in \Delta_j$ to the fiber of the second bundle over the same point $x$ and maps the section of the first bundle to the section of the second bundle.

Take $j$ and $k$ so that $\Delta_j\cap \Delta_k\neq \emptyset$.
Then $\Phi_j\circ (\Phi_k)^{-1}$ is a fiber preserving biholomorphic mapping from  $((\Delta_j\cap \Delta_k)\times \mathbb{C})\diagup {\Lambda}_{k}$ onto
$((\Delta_j\cap \Delta_k)\times \mathbb{C})\diagup {\Lambda}_{j}$.
Moreover, $\Phi_j\circ (\Phi_k)^{-1}$ maps $((\Delta_j\cap \Delta_k)\times\{0\})\diagup {\Lambda}_{k}$ onto $((\Delta_j\cap \Delta_k)\times\{0\})\diagup {\Lambda}_{j}$.
The mapping $\Phi_j\circ (\Phi_k)^{-1}$ lifts to a fiber preserving biholomorphic mapping  $A_{j,k}$
of $ (\Delta_j\cap \Delta_k)\times \mathbb{C}$ onto itself, that takes $(\Delta_j\cap \Delta_k)\times\{0\}$ onto itself. Then
\begin{equation}\label{eqEl11j}
A_{j,k}(x,\zeta)= (x,\alpha_{j,k}(x)\cdot \zeta),\; (x,\zeta)\in (\Delta_j\cap \Delta_k)\times \mathbb{C}\,
\end{equation}
for a nowhere vanishing holomorphic function $\alpha_{j,k}$ on $(\Delta_j\cap \Delta_k)$.
Moreover, the mapping $A_{j,k}$
takes each $\{x\}\times {\Lambda}_k(x)\subset \{x\}\times \mathbb{C}$ onto $\{x\}\times {\Lambda}_j(x)\subset \{x\}\times \mathbb{C}$.

The $\alpha_{j,k}$ form a Cousin II cocycle on $X$ (see \cite{H1} and Appendix \ref{ChapterA}). Since $X$ is a finite open Riemann surface, the Cousin II problem is solvable (see \cite{Fo}), i.e. there exist holomorphic nowhere vanishing functions $\alpha_j$ on $\Delta_j$, such that on $(\Delta_j\cap \Delta_k)$ the equality
$$
\alpha_{j,k}=\alpha_j\cdot \alpha_k^{-1}
$$
holds.
Then for each $j,k$ with $\Delta_j\cap \Delta_k\neq \emptyset$ the equality
$(\alpha_j(x))^{-1}\Lambda_j(x)=(\alpha_k(x))^{-1}\Lambda_k(x),\; x\in \Delta_j\cap \Delta_k$ holds.
Hence, we obtain a well-defined holomorphic family of lattices $\Lambda$ on $X$, $\Lambda(x)=(\alpha_j(x))^{-1}\Lambda_j(x),\, x \in \Delta_j$.
For this family of lattices we consider the bundle
$\big((X\times\mathbb{C})\diagup \Lambda, \mathcal{P}_{\Lambda}, \mathbold{s}_{\Lambda},X\big)$
of the form \eqref{eqEl11e}.

Let $c_j^{-1}:(\Delta_j\times \mathbb{C})\diagup {\Lambda}_j\to (\Delta_j\times \mathbb{C})\diagup {\Lambda}$ be the biholomorphic mapping, that lifts to the biholomorphic self-mapping $(x,\zeta)\to(x,( \alpha_j(x))^{-1}\,\zeta)$ of $\Delta_j\times \mathbb{C}$, which maps each $\{x\}\times\Lambda_j(x)$ onto $\{x\}\times
(\alpha_j(x))^{-1}\Lambda_j(x)=\{x\}\times\Lambda(x)$.

Put $\Phi'_j= c_j^{-1} \Phi_j$ on $\mathcal{X}_{\Delta_j}$. Then on each non-empty intersection $(\Delta_j\cap \Delta_k)$ the equality $\Phi'_j (\Phi'_k)^{-1}=c_j^{-1}\Phi_j (c_k^{-1}\Phi_k)^{-1}= c_j^{-1} (\Phi_j (\Phi_k)^{-1})c_k$
holds. The latter mapping lifts to
\begin{align*}
(x,\zeta)\to (x, \alpha_j(x)^{-1}\alpha_{j,k}(x)\alpha_k(x)\zeta)=(x,\zeta),\;\;\; (x,\zeta) \in (\Delta_j\cap \Delta_k)\times \mathbb{C}\,.
\end{align*}
Hence, we obtain
$\Phi'_j=\Phi'_k$ on $\mathcal{X}_{\Delta_j}\cap \mathcal{X}_{\Delta_k}$. The mapping $\Phi':\mathcal{X}\to (X\times \mathbb{C})\diagup \Lambda$, for which $\Phi'(x)=\Phi'_j(x)$ for $x\in \mathcal{X}_{\Delta_j}$, is well defined and determines a holomorphic isomorphism from the original bundle to a bundle of the form \eqref{eqEl11e}. Proposition \ref{propEl.2a} is proved. \hfill $\Box$

\smallskip

Similar arguments as used in the proof of Proposition \ref{propEl.2a} give the following statement.\\
\smallskip

\noindent {\bf Proposition $9.1'$.} {\it Each holomorphic elliptic fiber bundle over a finite open Riemann surface $X$ is holomorphically isomorphic to a holomorphic bundle that admits a holomorphic section.}\\

\smallskip

\noindent {\bf Proof.} Cover $X$ by small discs $\Delta_j$ on which a holomorphic section $\mathbold{s}_{\Delta_j}$ of the  original bundle can be chosen.
For each $j$ we consider the holomorphic bundle isomorphism \eqref{eqEl07},
and obtain a holomorphic isomorphism ${\Phi}_j$ of the total space of the bundle on the left onto the total space of the bundle on the right of \eqref{eqEl07}, that maps fibers of the first bundle to fibers of the second one
and the chosen section $\mathbold{s}_{\Delta_j}$ of the first bundle to the section $(\Delta_j\times\{0\})\diagup {\Lambda}_{j}$ of the second one.

The isomorphism ${\Phi}_j: \mathcal{X}_{\Delta_j}\to(\Delta_j\times \mathbb{C})\diagup \Lambda_j  $ lifts to a holomorphic isomorphism  ${\tilde\Phi}_j: \widetilde{\mathcal{X}}_{\Delta_j}\to\Delta_j\times \mathbb{C} $, that maps a lift $\tilde{\mathbold{s}}_{\Delta_j}$
of $\mathbold{s}_{\Delta_j}$ to ${\Delta_j}\times \{0\}$.
For $\Delta_j\cap \Delta_k\neq \emptyset$ the mapping
${\Phi}_{j,k}={\Phi}_j\circ ({\Phi}_k)^{-1}$ on $\big((\Delta_j\cap \Delta_k)\times \{\mathbb{C}\}\big)\diagup \Lambda_k$
lifts to a fiber preserving biholomorphic mapping $\tilde{\Phi}_{j,k}=\tilde{\Phi}_{j}\circ \tilde{\Phi}_{k}^{-1} $
of $ (\Delta_j\cap \Delta_k)\times \mathbb{C}$ onto itself.
In contrast to the situation in Proposition \ref{propEl.2a} we have no control about the image of  $\big((\Delta_j\cap\Delta_k)\times\{0\}\big)\diagup {\Lambda}_{k}$
under the mapping ${\Phi}_{j,k}={\Phi}_j\circ ({\Phi}_k)^{-1}$. Instead of equation
\eqref{eqEl11j} we obtain
the equation
\begin{equation}\label{eqEl010}
\tilde{\Phi}_{j,k}(x,\zeta)= (x, a_{j,k}(x) + \alpha_{j,k}(x)\cdot \zeta),\; (x,\zeta)\in (\Delta_j\cap \Delta_k)\times \mathbb{C}\,
\end{equation}
for a holomorphic function $a_{j,k}$ and a nowhere vanishing holomorphic function $\alpha_{j,k}$ on $\Delta_j\cap \Delta_k$.
Since $\tilde{\Phi}_{j,k}\,\tilde{\Phi}_{k,i}\, \tilde{\Phi}_{i,j}={\rm Id}$ on $\Delta_j\cap \Delta_k\cap \Delta_i $, we obtain
\begin{align}\label{eqEl011}
\Big(a_{j,k}(x) +\alpha_{j,k}(x) a_{k,i}(x) +\alpha_{j,k}(x)\alpha_{k,i}(x) a_{i,j}(x)\Big)\nonumber\\
+ \Big(\alpha_{j,k}(x)\alpha_{k,i}(x)\alpha_{i,j}(x)\Big)\cdot \zeta \,=\,\zeta,\;\quad
x \in \Delta_j\cap \Delta_k\cap \Delta_i,\; \zeta \in \mathbb{C}\,.
\end{align}
Hence, $\alpha_{j,k}(x)\alpha_{k,i}(x)\alpha_{i,j}(x) \equiv 1$, in other words, the $\alpha_{j,k}$ form a Cousin II cocycle. Since $X$ is an open Riemann surface the Cousin Problem has a solution, i.e. there exist nowhere vanishing holomorphic functions $\alpha_j$ on $\Delta_j$ such that $\alpha_{j,k}=\frac{\alpha_j}{\alpha_k}$ on $(\Delta_j\cap \Delta_k)$.
Then by equation \eqref{eqEl011}
\begin{align*}
\frac{ a_{j,k}}{\alpha_j} + \frac{a_{k,i}}{\alpha_k}  + \frac{a_{i,j}}{\alpha_i} =0\,.
\end{align*}
In other words, the $\frac{ a_{j,k}}{\alpha_j}$ form a Cousin I cocycle. Since $X$ is a Stein manifold the Cousin Problem is solvable, i.e. there exist
holomorphic functions $a_j$ on $\Delta_j$ such that
\begin{align}\label{eqEl012}
\frac{a_{j,k}}{\alpha_j} =\frac{a_j}{\alpha_j}-\frac{a_k}{\alpha_k}
\end{align}
on $(\Delta_j\cap \Delta_k)$.
Consider for all $j$ the mapping $(x,\zeta)\to A_j(x,\zeta)=(x,a_j(x)+\alpha_j(x)\zeta)$. The inverse mapping has the form $A_j^{-1}(x,\zeta)=
\big(x, -\frac{a_j(x)}{\alpha_j(x)}+\frac{1}{\alpha_j(x)}\zeta\big)$. Then by equation \eqref{eqEl012}
we obtain
\begin{align}\label{eqEl012'}
(A_j \, A_k^{-1})(x,\zeta)=  &\Big(x,a_j(x) +\alpha_j(x)(-\frac{a_k(x)}{\alpha_k(x)}+\frac{\zeta}{\alpha_k(x)})\Big)\nonumber\\ =&\Big(x,a_{j,k}(x) +\alpha_{j,k}(x) \zeta\Big)=\tilde{\Phi}_{j,k}(x,\zeta)\,.
\end{align}
Consider the biholomorphic mapping
$\tilde{\Phi}_j^*:\tilde{\mathcal{X}}_{\Delta_j}\to \Delta_j\times \mathbb{C}$ which is the composition $\tilde{\Phi}_j^*=A_j^{-1}\circ\tilde{\Phi}_j$ of the mapping  $A_j^{-1}:\Delta_j\times \mathbb{C}\, \toitself$ with $\tilde{\Phi}_j$.
By equations \eqref{eqEl012'} and \eqref{eqEl010} we obtain
\begin{align*}
\tilde{\Phi}_j^*(\tilde{\Phi}_k^*)^{-1} (x,\zeta) =  A_j^{-1} \tilde{\Phi}_j \tilde{\Phi}_k^{-1} A_k(x,\zeta)= A_j^{-1} \tilde{\Phi}_{j,k}( A_k A_j^{-1}) A_j(x,\zeta)
= (x,\zeta)\,.
\end{align*}
We obtained biholomorphic mappings $\tilde{\Phi}_j^*:\tilde{\mathcal{X}}_{\Delta_j}\to \Delta_j\times \mathbb{C}$, such that on $\Delta_j\cap \Delta_k$ the mappings $\tilde{\Phi}_j^*$ and $\tilde{\Phi}_k^*$ coincide.

Each ${\mathcal{X}}_{\Delta_j}$ is the quotient of $\tilde{\mathcal{X}}_{\Delta_j}$ by the group of covering transformations of the covering $p_{\Delta_j}: \tilde{\mathcal{X}}_{\Delta_j}\to\mathcal{X}_{\Delta_j}   $.
Conjugating for each $j$ the group of covering transformations
by the mapping $\tilde{\Phi}_j^*$, we obtain a group acting on
$\Delta_j\times \mathbb{C}$
which can be identified with a holomorphic family of lattices $\Lambda_j$ on $\Delta_j$.
The mapping $\tilde{\Phi}_j^*$ descends to a biholomorphic mapping $\Phi^*_j:\mathcal{X}_{\Delta_j}\to (\Delta_j\times \mathbb{C})\diagup \Lambda_j$. Since on $\Delta_j\cap \Delta_k$ the mappings $\tilde{\Phi}_j^*$ and $\tilde{\Phi}_k^*$ coincide,
the lattices $\Lambda_j$ and $\Lambda_k$ coincide on  $\Delta_j\cap\Delta_k$, and $\Phi^*_j=\Phi^*_k$ on $\Delta_j\cap\Delta_k$.
For the lattice $\Lambda$ on $X$ that equals $\Lambda_j$ on $\Delta_j$
we obtain a bundle
$((X\times \mathbb{C})\diagup \Lambda,\mathcal{P}_{\Lambda}, \,\mathbold{s}_{\Lambda}\,,X)$
of the form \eqref{eqEl11e}, and the mappings $\Phi_j$ define a holomorphic bundle isomorphism from the original bundle to $((X\times \mathbb{C})\diagup \Lambda,\mathcal{P}_{\Lambda},X)$. The proposition is proved. \hfill $\Box$

\section{Special $(0,4)$-bundles and double branched coverings}
\label{sec:9.6}
In Section \ref{sec:9.8} we will study the Gromov-Oka Principle for elliptic fiber bundles, or, in other words, for families of Riemann surfaces of type $(1,1)$ (see Problem \ref{problEl.1a} and Problem $9.1'$). This will be done by a reduction to the  Gromov-Oka Principle in the case of special $(0,4)$-bundles, which we will prepare now.
We will represent tori with a distinguished point as double branched coverings over $\mathbb{P}^1$ with a set of four branch points. This is done as follows.

Take any element $\mathring{E}=\{z_1,z_2,z_3\}\in C_3(\mathbb{C})\diagup \mathcal{S}_3$, and put $E\stackrel{def}=\mathring{E}\cup \{\infty\}$. The set
\begin{equation}\label{eqEL31a}
\mathring{Y}_E\stackrel{def}= \Big\{(z,w)\in \mathbb{C}^2: w^2 = 4 \, (z - z_1) (z-z_2)
(z-z_3)\Big\}
\end{equation}
is a one-dimensional complex submanifold of $\mathbb{C}^2$.
Each point of $\mathring{Y}_E$ has a neighbourhood in $\mathring{Y}_E$ on which one of the functions, $z$ or $w$, defines local holomorphic coordinates.
The mapping
\begin{equation}\label{eqEl31d}
\mathring{Y}_E\ni (z,w)\to z \in \mathbb{C}
\end{equation}
is a branched holomorphic covering of $\mathbb{C}$ with branch locus $\mathring{E}$.
\index{$\mathring{Y}_E$}

Consider the $1$-point compactification $Y_E$ of $\mathring{Y}_E$. The complex structure on it is obtained as
follows. Let
$\mathring{Y}_E^{\,r}$ be the subset of $\mathring{Y}_E$ where $|z|>r$ for a large positive number $r$. On this set $|\frac{z}{w}|$ is small. Put $(\tilde{z},\tilde{w})=(\frac{1}{z},
\frac{z}{w})$ on a small neighbourhood in $\mathbb{C}^2$ of
$\mathring{Y}_E^{\,r}$. In these coordinates
the equation for $\mathring{Y}_E^{\,r}$ becomes
\begin{equation}\label{eqEl36b}
\tilde{w}^2= \tilde{z} \frac{1}{4 \prod_{j=1}^3(1- z_j\tilde{z})}\,,
\end{equation}
\index{$\mathring{Y}_E$}
and $\mathring{Y}_E^{\,r}$ can be identified with the subset
\begin{align}\label{eqEl36a}
\left\{(\tilde{z},\tilde{w})\in \mathbb{C}^2,\, 0<|\tilde{z}|<\frac{1}{r}:\tilde{w}^2=
\tilde{z} \frac{1}{4 \prod_{j=1}^3(1-z_j\tilde{z})}\right\}
\end{align}
of $\mathbb{C}^2$. Adding the point $(\tilde{z},\tilde{w})=(0,0)\in \mathbb{C}^2$ to
$\mathring{Y}_E^{\,r}$  we obtain a complex manifold ${Y}_E^{\,r}$.
The two manifolds ${Y}_E^{\,r}$  and $\mathring{Y}_E$
form an open cover of the desired compact complex manifold $Y_E$.
Denote the point $(0,0)$ in coordinates $(\tilde{z},\tilde{w})$ on $Y^r_{E}$ by $s^{\infty}$.
Each point of ${Y}_E$ has
a neighbourhood where one of the functions $z$, $w$, $\tilde{z}$, or
$\tilde{w}$ defines local holomorphic coordinates. The holomorphic projection $Y_E\to
\mathbb{P}^1$ is correctly defined by $(z,w)\to z,\, (z,w)\in \mathring{Y}_E,\,$
and $(\tilde{z},\tilde{w})\to  \tilde{z},\,  (\tilde{z},\tilde{w})\in {Y}_E^{\,r}$.
We obtain a double branched covering  $Y_E\to \mathbb{P}^1$
over $\mathbb{P}^1$ with
branch locus equal to $E\stackrel{def}=\mathring{E} \cup \{\infty\}$. The manifold $Y_E$ is a closed Riemann surface of genus $1$. We will consider
it as a closed Riemann surface of genus $1$ with distinguished point being the preimage $s^{\infty}$ of $\infty$ under the branched
covering.
The set $\mathring{E}$ (considered as subset of $\mathbb{C}$) will be called the finite branch locus of
the covering $Y_E\to \mathbb{P}^1$.

Let $Y$ be a closed Riemann surface of genus $1$ with distinguished point $s$ and let $Y_1$ be equal to $\mathbb{P}^1$ with
set of distinguished points $\mathring{E}\cup \{\infty\}$ for $\mathring{E}\subset C_3(\mathbb{C})\diagup
\mathcal{S}_3$. Suppose ${\sf{Pr}}:Y\to Y_1$ is a double branched covering with branch
locus $\mathring{E}\cup \infty$ and ${\sf{Pr}}(s)=\infty$. A mapping class $\mathfrak{m}\in
\mathfrak{M}(Y;s,\emptyset)$ is called a lift of a mapping class $\mathfrak{m}_1\in
\mathfrak{M}(Y_1;\infty,\mathring{E})$ if there are representing homeomorphisms $\varphi \in
\mathfrak{m}$ and $\varphi_1 \in \mathfrak{m}_1$, such that $\varphi$ lifts $\varphi_1$,
i.e. $\varphi_1({\sf{Pr}}(\zeta))={\sf{Pr}}(\varphi(\zeta))$, $\zeta\in Y$.

We define double branched coverings of smooth families of Riemann surfaces with distinguished points as follows.
\begin{defn}\label{defnEl5}
Let $X$ be an oriented smooth surface (a Riemann surface, respectively). Suppose $\mathfrak{F}=\big(\mathcal{X},\mathcal{P}, \mathbold{s},X\big)$ is a smooth
 (complex analytic, respectively) family of Riemann
surfaces of type $(1,1)$ over $X$. Let  $\mathbold{E} \subset X\times \mathbb{P}^1$ be a smooth (complex, respectively) submanifold of $X\times \mathbb{P}^1$, that intersects each fiber
$\{x\}\times\mathbb{P}^1 $ along a set of distinguished points $E_x= \{x\}\times (\mathring{E}_x\cup \{\infty\})$ with  $\mathring{E}_x\subset C_3(\mathbb{C})\diagup \mathcal{S}_3$.

The family $\mathfrak{F}$ is called a double branched covering of the special
 smooth (holomorphic, respectively)
$(0,4)$-bundle $\big(X\times \mathbb{P}^1,{\rm pr}_1,\mathbold{E} ,\,X\big)$
if there exists
a smooth (holomorphic, respectively) mapping ${\sf{Pr}}: \mathcal{X} \to   X\times\mathbb{P}^1$
that maps each fiber $\mathcal{P}^{-1}(x)$ of the $(1,1)$-family $\mathfrak{F}$
onto the fiber
$\{x\}\times \mathbb{P}^1$
of the $(0,4)$-bundle over the same point $x$, such that the
restriction
${\sf{Pr}}: \mathcal{P}^{-1}(x) \to \{x\}\times \mathbb{P}^1$ is a holomorphic double
branched covering with branch locus being the set $\{x\}\times (\mathring{E}_x\cup \{\infty\})$ of distinguished
points in the fiber $\{x\}\times \mathbb{P}^1$, and $\sf{Pr}$
maps the distinguished point $s_x=\mathbold{s}\cap \mathcal{P}^{-1}(x)$  in the fiber $\mathcal{P}^{-1}(x)$ over $x$ to
the point $\{x\}\times \{\infty\}$ in the fiber $\{x\}\times \mathbb{P}^1$ of the special $(0,4)$-bundle.
\end{defn}
We will also write
$(X\times \mathbb{P}^1  , {\rm pr}_1  ,\mathbold{E}, X)={\sf{Pr}}((\mathcal{X},\mathcal{P},\mathbold{s},X))$, and call
the family $(\mathcal{X},\mathcal{P},\mathbold{s}, X)$ a lift of $(X\times\mathbb{P}^1, {\rm pr}_1 ,\mathbold{E} ,X)$.
\index{covering ! double branched covering of fibre bundles}

\begin{lemm}\label{lemEl6}
Each special holomorphic 
$(0,4)$-bundle $\big(X\times \mathbb{P}^1,{\rm pr}_1,\mathbold{E} ,\,X\big)$
over a Riemann surface  $X$ admits a double
branched covering by a complex analytic family of Riemann surfaces of type $(1,1)$.
Each special smooth
$(0,4)$-bundle over an oriented differentiable manifold $X$ has a double branched covering by a
differentiable family of Riemann surfaces of type $(1,1)$.
\end{lemm}
Notice that the statement for the smooth case is true also for families over products $X\times I$ where $X$ is an oriented surface and $I$ is an interval, in other words, it is true for isotopies of families of complex manifolds over Riemann surfaces or orientble smooth surfaces.

\noindent {\bf Proof.} We prove the statement for the holomorphic case. In the smooth case the dependence on the variable $x$ is only smooth, otherwise
the smooth case is treated similarly as the holomorphic case.
Assume the sets $\mathring{ E}_x,\, x\in X,$ (with $\mathbold{E}\cap (\{x\}\times \mathbb{P}^1)=\{x\}\times (\mathring{E}_x \cup \{\infty\})$) are uniformly
bounded in $C_3(\mathbb{C})\diagup\mathcal{S}_3$.
Consider the set
\begin{equation}\label{eqEl37}
\mathring{\mathcal Y}_{\mathbold{E}} = \left\{ (x,z,w) \in X
\times {\mathbb C}^2 : w^2 = 4 \prod_{z_j \in \mathring{E}_x} (z
- z_j) \right\} \,,
\end{equation}
equipped with the structure of an embedded complex hypersurface  in $X\times\mathbb{C}^2$.
In a neighbourhood of a point $(x_0 , z_0 , w_0)$ on
$\mathring{\mathcal Y}_{\mathbold{E}}$ with $z_0 \notin \mathring{E}_{x_0}$ the pair $(x,z)$
defines holomorphic coordinates. If $z_0 \in \mathring{E}_{x_0}$ the pair
$(x,w)$ defines holomorphic coordinates in a neighbourhood of $(x_0
, z_0 , w_0)$ on $\mathring{\mathcal Y}_{\mathbold{E}}$.
The projection ${\mathcal P} :\mathring{\mathcal Y}_{\mathbold{E}} \to X$, ${\mathcal P}
(x,z,w) = x$ is holomorphic.

For some large positive number $r$ we may define the set
\begin{align}\label{eqEl36c}
{\mathcal Y}_{\mathbold{E}}^{\,r}\stackrel{def}=\left\{(x,\tilde{z},\tilde{w})\in X\times
\mathbb{C}^2,\, |\tilde{z}|<\frac{1}{r}:\tilde{w}^2= \tilde{z} \frac{1}{4\prod_{z_j \in \mathring{E}_x}
(1-{z}_j\tilde{z})}\right\}\,.
\end{align}
It is a complex hypersurface of complex dimension two of $X\times \mathbb{C}^2$.
Each of its points has a neighbourhood on which either $(x,\tilde{z})$ or
$(x,\tilde{w})$ defines holomorphic coordinates. The mapping $(x,\tilde{z},\tilde{w})\to x$ is
holomorphic.
The part
$\mathring{\mathcal{Y}}_{\mathbold{E}}^{\,r}\stackrel{def}=\big\{(x,\tilde{z},\tilde{w})
\in{\mathcal{Y}}_{\mathbold{E}}^{\,r}:
\tilde{z}\neq 0\big\}$ of ${\mathcal Y}_{\mathbold{E}}^{\,r}$ can be identified with the subset
$\big\{(x,z,w) \in \mathring{\mathcal{Y}}_{\mathbold{E}}: \, |z|>r\big\}$  of
$\mathring{\mathcal{Y}}_{\mathbold{E}}$ using the transition functions
$(x,z,w)\to (x,\tilde{z} ,\tilde{w})=(x,\frac{1}{z}, \frac{z}{w})$.
The sets $\mathring{\mathcal{Y}}_{\mathbold{E}}$ and ${\mathcal Y}_{\mathbold{E}}^{\,r}$
form an open cover of a complex manifold denoted by ${\mathcal Y}_{\mathbold{E}}$ equipped
with a proper holomorphic submersion $\mathcal{P}:{\mathcal Y}_{\mathbold{E}}\to X$, such that $\mathcal{P}^{-1}(x)$ is a torus for each $x\in X$.
We proved that $({\mathcal Y}_{\mathbold{E}},\mathcal{P},X)$ is a holomorphic elliptic fiber bundle. Let $\mathbold{s}^{\infty}$ be the submanifold of ${\mathcal Y}_{\mathbold{E}}$ that intersects
each fiber $\mathcal{P}^{-1}(x)$ along the
distinguished point $s_x^{\infty}\in \mathcal{Y}^r_{\mathbold{E}}$ that is written in coordinates on $\mathcal{Y}^r_{\mathbold{E}}$ as $(x,0,0)$.

Let ${\sf Pr}: \mathcal{Y}_{\mathbold{E}}\to X\times \mathbb{P}^1$ be the map that assigns to $(x,z,w)$ the point $(x,z)$ (and to  $(x,\tilde{z},\tilde{w})$ the point $(x,\tilde{z})$).
By the construction it is clear that the obtained $(1,1)$-bundle $({\mathcal
Y}_{\mathbold{E}},\mathcal{P},\,\mathbold{s}^{\infty},\,X)$ is a double branched covering of the $(0,4)$-bundle $\big(X\times \mathbb{P}^1,{\rm pr}_1,\mathbold{E} ,\,X\big)$ whose set of finite distinguished points in
the fiber over $x$ is equal to the finite branch locus $\{x\}\times \mathring{E}_x$. The
double branched covering map ${\sf{Pr}}: {\mathcal Y}_{\mathbold{E}}\to X\times \mathbb{P}^1$
maps the distinguished point $s_x^{\infty} \in \mathcal{P}^{-1}(x)$ to the point $\{x\}\times \infty$.
\index{${\sf Pr}$}

In the general case (i.e. without the assumption that the sets $\mathring{E}_x,\,x \in X$, are
uniformly bounded) the statement is proved by considering an exhaustion of $X$ by
relatively compact open sets. \hfill $\Box$

Let $\big(X\times \mathbb{P}_1, {\rm pr}_1, \mathbold{E}_j,X\big)$, $j=0,1,$ be
two special holomorphic (smooth, respectively) $\,(0,4)$-bundles that are isotopic through smooth special $\,(0,4)$-bundles.
Then the
families $({\mathcal Y}_{\mathbold{E}_j},\mathcal{P}_j, \mathbold{s}_j^{\infty},\,X),\, j=0,1,$ of Riemann surfaces of type $(1,1)$ are isotopic.
Indeed, let $I$ be an open interval containing $[0,1]$.
The isotopy of the special $(0,4)$-bundles is given by a bundle
$$
\big((I\times X)\times \mathbb{P}^1, {\rm pr}_1, {\mathbold{E}},I\times X\big)
$$
over $I\times X$.
Here ${{\mathbold{E}}}$
 is a smooth submanifold of $(I\times X)\times \mathbb{P}^1$ such that for each $(t,x)\in I\times X$ the intersection of the set ${\mathbold{E}}$ with the fiber over $(t,x)$ equals $\{(t,x)\}\times {E}(t,x)$ for subsets ${E}(t,x)$ that are the union of
the point $\infty$ with an element of $C_3(\mathbb{C}\diagup \mathcal{S}_3)$. Moreover, ${{\mathbold{E}}}\cap ((\{j\}\times X) \times \mathbb{P}^1)=\mathbold{E}_j,\,j=0,1$.
The smooth special $(0,4)$-bundle $\big((I\times X)\times \mathbb{P}^1, {\rm pr}_1, {\mathbold{E}},I\times X\big)$ has a double branched covering by a smooth family of Riemann surfaces of type $(1,1)$ which defines the required isotopy for the families $({\mathcal Y}_{\mathbold{E}_j},\mathcal{P}_j, \mathbold{s}_j^{\infty},\,X),\, j=0,1,$ of Riemann surfaces of type $(1,1)$.

Notice that for a non-contractible oriented finite open smooth surface $X$ and a
smooth special $(0,4)$-bundle $\big(X\times \mathbb{P}^1, {\rm pr}_1, \mathbold{E},X\big)$ over $X$ the obtained double branched covering
$({\mathcal Y}_{\mathbold{E}},\mathcal{P},\,\mathbold{s}^{\infty},\,X)$ is not the only double branched covering of the $(0,4)$-bundle (see Section \ref{sec:9.7}). We will call the bundle $({\mathcal Y}_{\mathbold{E}},\mathcal{P},\,\mathbold{s}^{\infty},\,X)$ the
canonical double branched covering of the special $(0,4)$-bundle
 $\big(X\times \mathbb{P}^1, {\rm pr}_1, \mathbold{E},X\big)$.

\section{Lattices and double branched coverings}
\label{sec:9.7}
\noindent {\bf The Weierstra\ss\ $\wp$-function.}
Consider the quotient $\mathbb{C}\diagup \Lambda$ of the complex plane by a lattice $\Lambda$. We want to associate to the quotient
a double branched covering over the Riemann sphere with covering space being conformally
equivalent to $\mathbb{C}\diagup \Lambda$. The standard tool for this purpose is
the Weierstra\ss\ $\wp$-function  $\wp_{\Lambda}$.
Put $\Lambda\stackrel{def}=a {\mathbb Z} + b{\mathbb Z}$, where $a$ and $b$ are real linearly independent complex numbers.
The Weierstra\ss\
$\wp$-function,\index{Weierstra\ss \ $\wp$-function}
\begin{equation}\label{eqEl30}
\wp_{\Lambda} (\zeta) = \frac1{\zeta^2} + \sum_{(n,m)
\in {\mathbb Z}^2 \atop (n,m) \ne (0,0)} \left( \frac1{(\zeta -a\, n -b
\,m )^2} - \frac1{(a\,n+b\,m )^2} \right) , \quad \zeta \in
{\mathbb C} \setminus \Lambda\,,
\end{equation}
is meromorphic on ${\mathbb C}$, has poles
of second order at points of $ \Lambda$ and
is holomorphic on ${\mathbb C}\setminus \Lambda$. It has
periods $a$ and $b$ and
principal part $\zeta \to \frac1{\zeta^2}$ at $0$.
The summation is over all non-zero elements of the lattice. Hence, the function depends
only on the lattice, not on the choice of the generators $a$ and $b$ of the lattice.
\index{$\wp_{\Lambda}$}

The Weierstra\ss\ $\wp$-function satisfies the following differential equation
\begin{equation}\label{eqEl30a'}
(\wp'_{\Lambda})^2 (\zeta)=4 (\wp_{\Lambda})^3(\zeta)-g_2(\Lambda) (\wp_{\Lambda})(\zeta)-g_3(\Lambda)
\end{equation}
for complex numbers $g_2(\Lambda)$ and $g_3(\Lambda)$ that depend only on the lattice.
This can be proved using the Laurent series expansion near zero of $\wp_{\Lambda}$ and $
\wp'_{\Lambda}$ and taking a linear combination that has no pole and is therefore constant. For details see \cite{Ap}. The numbers $g_2(\Lambda)$ and $g_3(\Lambda)$ are called the elliptic invariants of $\Lambda$.

Denote the zeros of the equation $\;4t^3-g_2(\Lambda)t-g_3(\Lambda)\,=\,0\;$ by
$e_1(\Lambda), e_2(\Lambda), e_3(\Lambda)$. It follows by symmetry considerations that the zeros of this equation are the values of $\wp_{\Lambda}$ at the half-periods
$\frac{a}{2}$, $\frac{b}{2}$, $\frac{a+b}{2}$.
Indeed, since the doubly periodic function $\wp'_{\Lambda}$ is odd and has no pole at its half-periods, it vanishes at its half-periods. Since  $\wp'_{\Lambda}$
is of order $3$, these are all its zeros in a period-parallelogram. An argument using that the order of  $\wp_{\Lambda}$ is $2$, shows that  all roots of $\wp_{\Lambda}$  of are distinct.
The differential equation for $\wp_{\Lambda}$ becomes
\begin{equation}\label{eqEl29b}
(\wp'_{\Lambda})^2 (\zeta) = 4 (\wp_{\Lambda} (\zeta) - e_1
(\Lambda))(\wp_{\Lambda} (\zeta) - e_2 (\Lambda)) (\wp_{\Lambda} (\zeta) - e_3
(\Lambda)) \,,
\end{equation}
where
\begin{equation}\label{eqEl32}
e_1 (\Lambda) = \wp_{\Lambda} \left(\frac{a}{2} \right) \, ,
\quad e_2 (\Lambda) = \wp_{\Lambda} \left(\frac b2 \right) \, , \quad
e_3 (\Lambda) = \wp_{\Lambda} \left(\frac{a+b}2 \right)
\end{equation}
are the values of $\wp_{\Lambda}$ at points which are contained in the
lattice $\frac a2 \, {\mathbb Z} + \frac b2 \, {\mathbb Z}$ but are
not contained in the lattice $a{\mathbb Z} + b\, {\mathbb Z}$.

The half-periods can be found from the elliptic invariants.
The integral
\begin{equation}\label{eqEl100}
\int_y^{\infty} (4t^3-g_2 t -g_3)^{-\frac{1}{2}}dt
\end{equation}
with constants $g_2=g_2(\Lambda)$ and $g_3=g_3(\Lambda)$
defines a holomorphic function in $y$ on any simply connected domain in the complex plane which does not contain a zero of the function $t\to 4t^3-g_2 t -g_3$.
The derivative in $z$ of the function defined by the integral \eqref {eqEl100} with $y=\wp_{\Lambda}(z)$ equals
\begin{align}\label{eqEl101}
\frac{\wp'_{\Lambda}(z)}{\big(4\wp_{\Lambda}(z)^3-g_2({\Lambda})\wp_{\Lambda}(z)-g_3({\Lambda})\big)^{\frac12}},
\end{align}
which equals either $1$ or $-1$ on each suitable domain for $z$.
This shows that (after multiplying by $\pm 1$) the integral provides an inverse of $\wp_{\Lambda}$. Knowing $e_1,e_2,$ and $e_3$ we obtain the half-periods.

Put for instance $e_1=1,\;e_2=-1$ and $e_3=0$. Then $g_2=4$, $g_3=0$. The integral $\int_y^{\infty} (4t^3-4 t )^{-\frac{1}{2}}dt$ defines a holomorphic function on $\mathbb{C}\setminus \big(i[0,\infty)\cup [-1,1]\big)$. Its continuous extension $\omega_1$ to $1$ is positive, and its continuous extension $\omega_2$ to $-1$ equals $i\omega_1$.
Hence, the Weierstrass $\wp$-function corresponding to the periods $2\omega_1$ and $2\omega_2=i2\omega_1$ takes the values $1$, $-1$ and $0$ at the points $\omega_1$, $\omega_2$, $\omega_1+\omega_2$, respectively.

The lattice $\Lambda_{\tau}\stackrel{def}={\mathbb Z} + \tau \,{\mathbb Z}$
often plays a special role. If the pair $1$ and $\tau$ generates the lattice, then also the pair $1$ and $-\tau$ generates it. Hence, we may assume that ${\rm{Im}}{\tau} >0$. We will write $\wp_{\tau}\stackrel{def}=
\wp_{\Lambda_{\tau}}$, \index{$\wp_{\tau}$}
\begin{equation}\label{eqEl30a}
\wp_{\tau} (\zeta) = \frac{1}{\zeta^2} + \sum_{(n,m)
\in {\mathbb Z}^2 \atop (n,m) \ne (0,0)} \left( \frac{1}{(\zeta -\, n -
\,m \, \tau)^2} - \frac{1}{(\,n+ \tau\,m )^2} \right) , \quad \zeta \in
{\mathbb C} \setminus \Lambda_{\tau}\,.
\end{equation}
The function $\wp_{\tau}$ satisfies the differential equation
\begin{equation}\label{eqEl31}
(\wp'_{\tau})^2 (\zeta) = 4 (\wp_{\tau} (\zeta) - e_1
(\tau))(\wp_{\tau} (\zeta) - e_2 (\tau)) (\wp_{\tau} (\zeta) - e_3
(\tau)) \, ,
\end{equation}
where
\begin{equation}\label{eqEl32a}
e_1 (\tau) = \wp_{\tau} \left(\frac12 \right) \, ,
\quad e_2 (\tau) = \wp_{\tau} \left(\frac\tau2 \right) \, , \quad
e_3 (\tau) = \wp_{\tau} \left(\frac{1+\tau}2 \right)
\end{equation}
are the values of $\wp_{\tau}$ at points which are contained in the
lattice $\frac12 \, {\mathbb Z} + \frac\tau2 \, {\mathbb Z}$ but are
not contained in the lattice ${\mathbb Z} + \tau \, {\mathbb Z}$.

An arbitrary lattice $\Lambda$ can be written as $\Lambda=\alpha(\mathbb{Z} +\tau \mathbb{Z})$. By \eqref{eqEl30} the equality $ \wp_{\Lambda}(\zeta)= \alpha^{-2} \, \wp_{\tau}\left(\frac\zeta\alpha\right)$ holds, hence
\begin{equation}\label{eqEl29x}
\{e_1 (\Lambda),e_2 (\Lambda),e_3 (\Lambda)\}= \{\alpha^{-2}e_1(\tau),\alpha^{-2}e_2(\tau),\alpha^{-2}e_3(\tau)\}\,.
\end{equation}

\noindent {\bf The double branched covering defined by the Weierstra\ss  $\wp$-function.}
Let $\Lambda$ be an arbitrary lattice.
Put $z (\zeta)= \wp_{\Lambda}(\zeta)= \alpha^{-2} \, \wp_{\tau}
\left(\frac\zeta\alpha\right)$ and
$w(\zeta) = \wp'_{\Lambda}(\zeta) =\alpha^{-3} \, \wp'_{\tau}
\left(\frac\zeta\alpha\right)$. The mapping
\begin{equation}\label{eqEl29}
{\mathbb C} \backslash \Lambda \ni \zeta \to \big(z(\zeta),w(\zeta)\big)\,=\, \big(
 \wp_{\Lambda} ( \zeta ) ,
 \wp'_{\Lambda}({\zeta})
\big) \in {\mathbb C}^2 \,
\end{equation}
descends to a conformal mapping $\omega_{\Lambda}$ from the punctured torus
${\mathbb C} \backslash \Lambda \diagup \Lambda$ onto the complex hypersurface
$\mathring{{Y}}({\Lambda})$ of ${\mathbb C}^2$,
\begin{align}\label{eqEl34}
\omega_{\Lambda}:(\mathbb{C}\setminus \Lambda)\diagup \Lambda \to       \mathring{{Y}}({\Lambda})\stackrel{def} =
\Big\{(z,w)\in \mathbb{C}^2: w^2  = 4 \prod_{j=1}^3\,(z -  \, e_j (\Lambda))
\Big\}\,.
\end{align}
\index{$\omega_{\Lambda}$}
To see this we notice first that the mapping $\omega_{\Lambda}$
is one-to-one. Indeed, the mapping $\wp_{\Lambda}$ descends to a holomorphic mapping ${\sf{p}}_{\Lambda}:(\mathbb{C}\setminus \Lambda)\diagup\Lambda \to \mathbb{C}$, such that
\begin{align}\label{eqEl34a'}
\wp_{\Lambda} = {\sf{p}}_{\Lambda}\circ p\,.
\end{align}
Here $p$ denotes the projection $p:\mathbb{C}\setminus \Lambda \to (\mathbb{C}\setminus \Lambda)\diagup \Lambda$. The mapping ${\sf{p}}_{\Lambda}$ is $2$ to $1$. Further,
$\wp_{\Lambda}(\zeta)=\wp_{\Lambda}(-\zeta)$.
Hence, the preimage under $\wp_{\Lambda}$ of each point in $\mathbb{C}\setminus \{0\}$
equals $(\{\zeta\}+\Lambda)\cup (\{-\zeta\}+\Lambda)$ for some $\zeta \in \mathbb{C}$.
Since,
$\wp_{\Lambda}'(\zeta)=-\wp_{\Lambda}'(-\zeta)$, the mapping $\omega_{\Lambda}$ is one-to-one.
\index{${\sf{p}}_{\Lambda}$}

Moreover, the mapping \eqref{eqEl29} is locally conformal. Indeed, if $\wp_{\Lambda}(\zeta_0)\neq e_j({\Lambda}),\, j=1,2,3,$ then $\wp_{\Lambda}'(\zeta_0)\neq 0$
and the mapping $\zeta\to \wp_{\Lambda}(\zeta)$ is conformal in a neighbourhood of $\zeta_0$. Suppose
$\wp_{\Lambda}(\zeta_0)=e_j(\Lambda)$. The differential equation \eqref{eqEl29b}  implies
that $\wp_{\Lambda}' \wp_{\Lambda}''= 2\wp_{\Lambda}'\sum_{\ell=1}^3 \prod_{k\neq \ell}  (\wp_{\Lambda}-e_k(\Lambda))$. Hence, $\wp_{\Lambda}''(\zeta_0)=2 \prod_{k\neq j}(e_j(\Lambda)- e_k(\Lambda))\neq 0$. Hence, in this case  the mapping $\zeta\to \wp_{\Lambda}'(\zeta)$ is conformal in a neighbourhood of $\zeta_0$.

The set $\mathring{{Y}}({\Lambda})$ (see \eqref{eqEl34}) is the covering space of the double branched covering
\begin{equation}\label{eqEl34a}
\mathring{{Y}}({\Lambda})\ni (z,w)\to z \in\mathbb{C}
\end{equation}
of ${\mathbb C}$ with branch locus
\begin{equation}\label{eqEl35}
\mathring{BL}(\Lambda)\stackrel{def} =\big\{e_1(\Lambda),e_2(\Lambda),e_3(\Lambda)\big\}\,.
\end{equation}
With $BL(\Lambda)\stackrel{def}=\mathring{BL}(\Lambda)\cup\{\infty\}$ the equality
$\mathring{{Y}}({\Lambda})=\mathring{{Y}}_{BL(\Lambda)}$ holds.
\index{$\mathring{BL}(\Lambda)$} \index{$BL(\Lambda)$}

For a family of lattices $\Lambda (z)$ depending holomorphically on a complex parameter $z$ the sets $\mathring{BL}(\Lambda (z))$ (see \eqref{eqEl35})
 depend holomorphically on
$z$. Indeed, the $e_j(\Lambda)$ are the values of $\wp_{\Lambda}$ at the
half-periods of the lattice.
The half-periods of the lattice depend holomorphically on $z$, and also the Weierstra\ss\ function $\wp_{\Lambda}$ depends holomorphically on $z$ (see equation \eqref{eqEl30}).

If the family of lattices is merely smooth, then the set of finite branch points in the
fiber over $x$ depends smoothly on the point $x \in X$.

The one-point compactification $Y_{BL(\Lambda)}$ of $\mathring{Y}_{BL(\Lambda)}$ (see
Section \ref{sec:9.6}) is the covering manifold of a double branched covering of the Riemann
sphere $\mathbb{P}^1$ with branch locus $BL=\mathring{BL}(\Lambda)\cup\{\infty\}$ and finite branch locus $\mathring{BL}(\Lambda)$. We associate to the
Riemann surface $\mathbb{C}\diagup \Lambda$ the conformally equivalent Riemann surface
$Y_{BL(\Lambda)}$ which we also  denote by $Y(\Lambda)$. Notice that $\omega_{\Lambda}$ extends to a conformal mapping between the closed Riemann surfaces $\mathbb{C}\diagup \Lambda$ and $Y(\Lambda)$.
\index{$Y(\Lambda)$}

\noindent {\bf Lifts of mappings to the double branched covering and the involution.}
For each lattice $\Lambda$ the mapping ${\mathbb C} \ni \zeta \to -
\zeta$ maps $\Lambda$ onto itself. This mapping descends to an
involution of ${\mathbb C} \diagup \Lambda$, i.e. to a
self-homeomorphism $\iota_{\Lambda}$ of ${\mathbb C} \diagup \Lambda$ such
that $\iota_{\Lambda}^2 = {\rm id}$.
\index{$\iota_{\Lambda}$} \index{involution}
\smallskip

Formula (\ref{eqEl30}) implies the following equality
\begin{equation}
\label{eqEl39} \left( \wp_{\Lambda}
\left(-\zeta\right) , \wp'_{\Lambda}
\left(-\zeta\right)\right) = \left(
\wp_{\Lambda} \left(\zeta\right) ,
-\wp'_{\Lambda} \left(\zeta\right)\right) \, .
\end{equation}

Conjugate the restriction $\iota_{\Lambda}\mid (\mathbb{C}\setminus \Lambda)\diagup \Lambda$ by the conformal mapping $\omega_{\Lambda}^{-1}$.
We obtain a self-homeomorphism of $\mathring{Y}({\Lambda})$ which we denote by $\iota$. The
mapping $\iota$ satisfies the equality
\begin{equation} \label{eqEl39a}
\iota(z,w)=(z,-w)\,.
\end{equation}
\index{$\iota$}
The involution $\iota$ extends to an involution of ${Y}({\Lambda})$, denoted also by $\iota$.
By \eqref{eqEl39a} the involution $\iota$ fixes the projection to
${\mathbb P}^1$ of each point of the double branched covering space $Y(\Lambda)$, and interchanges the
sheets over each point. Hence, it fixes each of the three finite branch
points, it also fixes $\infty$, and it does not fix any other point.

\smallskip

Let $\widetilde\varphi$ be any real linear self-homeomorphism of
${\mathbb C}$ that maps $\Lambda$ onto itself, and let $\varphi$ be
the induced mapping on ${\mathbb C} \diagup \Lambda$. Then $\varphi$
commutes with $\iota_{\Lambda}$. Indeed, $\widetilde\varphi (-\zeta) = -
\widetilde\varphi (\zeta)$, $\zeta \in {\mathbb C}$.

\smallskip

Each mapping class ${\mathfrak
m}$ in $\mathfrak{M}({\mathbb C} \diagup \Lambda;\,0\diagup \Lambda,\emptyset)$ can be represented by a
self-homeomorphism of ${\mathbb C} \diagup \Lambda$ which commutes
with $\iota_{\Lambda}$. Indeed, $\mathfrak{M}({\mathbb C} \diagup \Lambda;\;0\diagup \Lambda,\,\emptyset)$ contains a mapping $\varphi$ that lifts
to a real linear self-map
$\widetilde\varphi$ of ${\mathbb C}$ which maps $\Lambda$ onto
itself. This mapping $\varphi$ commutes with $\iota_{\Lambda}$.

\smallskip

As a corollary, each mapping class ${\mathfrak
m}$ in $\mathfrak{M}(Y(\Lambda);\, s^{\infty} ,\,\emptyset)\cong \mathfrak{M}({\mathbb C} \diagup \Lambda;\,0\diagup \Lambda,\emptyset)$
is a lift of a mapping class $\mathfrak{m}_1\in \mathfrak{M}(\mathbb{P}^1;\{\infty\},\mathring{E})$ for the set $\mathring{E}\stackrel{def}= \mathring{BL}(\Lambda)\subset C_3(\mathbb{C})\diagup \mathcal{S}_3$.

This can be seen as follows.
If $\varphi \in \mathfrak{m} \in  \mathfrak{M}({\mathbb C} \diagup \Lambda;\,0\diagup \Lambda,\,\emptyset)$ commutes with  $\iota_{\Lambda}$, then $\psi=\omega_{\Lambda}\circ \varphi \circ \omega_{\Lambda}^{-1}$ commutes with $\iota$ and represents the mapping class in $\mathfrak{M}(Y(\Lambda);\, s^{\infty} ,\,\emptyset)$ that corresponds to $\mathfrak{m}$.
Suppose $\psi$ represents a mapping class $\mathfrak{m}\in \mathfrak{M}(Y(\Lambda);\, s^{\infty} ,\,\emptyset)$ and commutes with $\iota$.
Denote by $\mathring{\psi}$ the restriction of $\psi$ to $\mathring{Y}$.
In coordinates $(z,w)$ on $\mathring {Y}(\Lambda)$ we write
\begin{equation}\label{eqEl19a}
\mathring{\psi}(z,w) = (\mathring{\psi}_1 (z,w) ,\mathring{ \psi}_2 (z,w)) \,.
\end{equation}
Since $\mathring{\psi}$ commutes with $\iota$ we obtain by (\ref{eqEl39a})
\begin{equation}\label{eqEl39d}
(\mathring{\psi}_1 (z,-w) ,\mathring{ \psi}_2 (z,-w)) = \mathring{\psi} \circ \iota (z,w) = \iota \circ \mathring{\psi} (z,w)=
(\mathring{\psi}_1 (z,w) , -\mathring{\psi}_2 (z,w))\,.
\end{equation}
Hence $\mathring{\psi}_1(z,w)=\mathring{\psi}_1(z,-w)$. Since each point in $(z,w)\in \mathring{Y}_{\Lambda}$ is determined by $z$ and the sign of $w$, this means that $\mathring{\psi}_1(z,w)$ depends only on the coordinate $z \in {\mathbb C}$, not on the sheet (determined by $w$). Further, if $\iota$ fixes
$(z,w)$, then $w=0$, and by  \eqref{eqEl39d} for $w=0$, $\mathring{\psi}_2(z,w)=\mathring{\psi}_2 (z,-w)= -\mathring{\psi}_2(z,w)$. In other words, if $\iota$ fixes $(z,w)$, then it also fixes $\mathring{\psi}(z,w)$. Hence, $\mathring{\psi}$
maps the set of finite branch points (the preimage of $\mathring{BL}({\Lambda})$ under the branched covering map) onto itself, and its extension $\psi$ maps the preimage $s^{\infty}$ of
$\infty$ to itself.
We saw, that
${\psi}$ induces a self-homeomorphism ${\sf Pr}(\psi)$ of ${\mathbb P}^1$ in the
class ${\mathfrak M} ({\mathbb P}^1 ; \{\infty\} , \mathring{BL}({\Lambda}))$,
${\sf Pr}(\psi) |\mathbb{C}= {\sf Pr}( \mathring{\psi}) $, ${\sf Pr} (\mathring{\psi})(z)=\mathring{\psi}_1(z,w), \, (z,w)\in \mathbb{C}^2$.
We call ${\sf Pr}(\psi)$ the projection of $\psi$. \index{${\sf Pr}(\psi)$}
The mapping class $\mathfrak{m}$ is a lift of the mapping class $\mathfrak{m}_1$ of $\psi_1$.

Notice that the provided arguments imply also the following fact. For two mapping
classes $\mathfrak{m},\mathfrak{m}'\in\mathfrak{M}(Y(\Lambda);s^{\infty}, \emptyset)$ the equality ${\sf Pr}(\mathfrak{m}\, \mathfrak{m}')={\sf Pr}(\mathfrak{m}){\sf Pr}(\mathfrak{m}')$ holds. Indeed,
take representing maps
$\psi^1$ and $\psi^2$, that commute with the involution $\iota$. Using the arguments given above, the equality ${\sf Pr}(\psi^1\circ \psi^2)={\sf Pr}(\psi^1)\circ {\sf Pr}(\psi^2)$ follows.

Let again $\mathring{E}$ be an element of $C_3(\mathbb{C})\diagup \mathcal{S}_3$ and $E=\mathring{E}\cup \{\infty\}$. Any mapping class $\mathfrak{m}_1\in \mathfrak{M}(\mathbb{P}^1;\{\infty\},\mathring{E})$ has exactly two lifts $\mathfrak{m}_{\pm} \in \mathfrak{M}(Y_E;s^\infty,\emptyset)$. Here $Y_E$ is the double branched covering of $\mathbb{P}^1$ with branch locus ${E}$, and $s^\infty$ is the branch point over $\infty$. Indeed, for any representative $\psi_1$ of $\mathfrak{m}_1$ there are exactly two self-homeomorphisms of the double branched covering that lift $\psi_1$.
They are obtained as follows. Cut $\mathbb{P}^1$ along two disjoint simple curves $\gamma_1$ and $\gamma_2$, that join disjoint pairs of points of $E$. $\psi_1$ maps the pair of curves $\gamma_1$ and $\gamma_2$ to another pair of curves $\gamma'_1$ and $\gamma'_2$ joining (maybe different) disjoint pairs of points of $E$. The double branched covering over $\mathbb{P}^1$ is obtained in two different ways, either gluing two sheets of $\mathbb{P}^1\setminus (\gamma_1\cup\gamma_2)$ crosswise together, or gluing two sheets of $\mathbb{P}^1\setminus (\gamma'_1\cup\gamma'_2)$ crosswise together. $\psi_1$ maps $\mathbb{P}^1\setminus (\gamma_1\cup\gamma_2)$ homeomorphically onto $\mathbb{P}^1\setminus (\gamma'_1\cup\gamma'_2)$. Consider the mapping that takes the first sheet of
$\mathbb{P}^1\setminus (\gamma_1\cup\gamma_2)$ onto the first sheet of $\mathbb{P}^1\setminus (\gamma'_1\cup\gamma'_2)$ and the second sheet of
$\mathbb{P}^1\setminus (\gamma_1\cup\gamma_2)$ onto the second sheet of $\mathbb{P}^1\setminus (\gamma'_1\cup\gamma'_2)$ and lifts $\psi_1\mid \mathbb{P}^1\setminus (\gamma_1\cup\gamma_2)$. This mapping extends to a self-homeomorphism $\psi$ of the double branched covering that lifts $\psi_1$.
There is exactly one more self-homeomorphism of the double branched covering that lifts
$\psi_1$. This self-homeomorphism maps the first sheet of
$\mathbb{P}^1\setminus (\gamma_1\cup\gamma_2)$ onto the second sheet of $\mathbb{P}^1\setminus (\gamma'_1\cup\gamma'_2)$ and the second sheet of
$\mathbb{P}^1\setminus (\gamma_1\cup\gamma_2)$ onto the first sheet of $\mathbb{P}^1\setminus (\gamma'_1\cup\gamma'_2)$.
In other words, there are
two lifts of $\psi_1$ to the double branched covering of ${\mathbb P}^1$ with branch locus
$E$, and they differ by involution.

We proved the following lemma.

\begin{lemm}\label{lemEl100}
Each mapping class  $\mathfrak{m}\in \mathfrak{M}({\mathbb C} \diagup \Lambda;\,0\diagup \Lambda,\emptyset)$
is the lift of a mapping class
$\mathfrak{m}_1={\sf Pr}(\mathfrak{m})\in \mathfrak{M}(\mathbb{P}^1;\{\infty\},\mathring{BL}(\Lambda))$. Vice versa,  each  $\mathfrak{m}_1\in \mathfrak{M}(\mathbb{P}^1;\{\infty\},\mathring{BL}(\Lambda))$ has two lifts $\mathfrak{m}_{\pm}\in\mathfrak{M}({\mathbb C} \diagup \Lambda;\,0\diagup \Lambda,\emptyset)$. The lifts $\mathfrak{m}_{\pm}$ differ by involution. For two mapping classes $\mathfrak{m},\,\mathfrak{m}'\in \mathfrak{M}({\mathbb C} \diagup \Lambda;\,0\diagup \Lambda,\emptyset)$ the equality ${\sf Pr}(\mathfrak{m} \mathfrak{m}')={\sf Pr}(\mathfrak{m}){\sf Pr}(\mathfrak{m}')$ holds.
\end{lemm}

\noindent {\bf Lifts of special $(0,4)$-bundles to elliptic fiber bundles with a section.}
The following proposition relates Theorem \ref{thmEl.0} to the respective Theorem \ref{thmEl.9} for
elliptic bundles that will be formulated below.
\begin{prop}\label{propEl.2} Let $X$ be a connected Riemann surface (connected oriented smooth surface, respectively) of genus $g$ with ${m}\geq 1$ holes with base point $x_0$ and curves denoted by $\gamma_j$ that represent a  standard system of generators $e_j\in \pi_1(X,x_0)$.\\
\noindent $(1)$ Each complex analytic (differentiable, respectively) family of Riemann surfaces of type $(1,1)$ over $X$ is holomorphically (smoothly, respectively) isomorphic to the canonical double branched covering $\mathfrak{F}$ of a special holomorphic (smooth, respectively) $(0,4)$-bundle $\mathfrak{F}_1$ over $X$. The monodromy of the bundle $\mathfrak{F}$ along each $\gamma_j$ is a lift of the respective monodromy of the bundle $\mathfrak{F}_1$.\\
\noindent $(2)$ Vice versa, for each special holomorphic (smooth, respectively) $(0,4)$-bundle over $X$ and each collection $\mathfrak{m}^j$ of lifts of the $2{ g} +{ m}-1 $ monodromy mapping classes $\mathfrak{m}_1^j$ of the bundle along the $\gamma_j$ there exists a double branched covering by
a complex analytic (differentiable, respectively) family of Riemann surfaces of type $(1,1)$ with collection of monodromy mapping classes equal to the $\mathfrak{m}^j$.
For each given holomorphic (smooth, respectively) special $(0,4)$-bundle over $X$ there are up to holomorphic (smooth, respectively) isomorphisms exactly
$2^{2{g}+ {m}-1}$ holomorphic (smooth, respectively) families of Riemann surfaces of type $(1,1)$ that lift the $(0,4)$-bundle.\\
\noindent $(3)$ A lift of a special $(0,4)$-bundle is reducible if and only if the special
 $(0,4)$-bundle is reducible.
\end{prop}

\noindent {\bf Proof.}
We start with the proof of the {\bf first statement} of the proposition.
Consider a complex analytic (smooth, respectively) family of Riemann surfaces of type $(1,1)$ over $X$. By Lemma \ref{lemEl1} and Proposition \ref{propEl.2a}
we may assume that the family has the form
$(\mathcal{X}_{\Lambda},\mathcal{P}_{\Lambda}, \,\mathbold{s}_{\Lambda}\,,X)\,$
for a
holomorphic (smooth, respectively) family of lattices $\Lambda(x),\, x \in X$.

\index{$\mathring{\mathbold{BL}}(\Lambda)$} \index{$\mathbold{BL}(\Lambda)$}
We consider the complex (smooth, respectively) submanifold $\mathring{\mathbold{BL}}(\Lambda)$ of $X\times \mathbb{C}$ that intersects each fiber $\{x\}\times \mathbb{C}$ along
the set $\{x\}\times \mathring{BL}(\Lambda(x))$ (see equations
\eqref{eqEl32} and \eqref{eqEl35})
and  the complex (smooth, respectively) submanifold $\mathbold{BL}(\Lambda)$ of $X\times \mathbb{P}^1$ that intersects each fiber $\,\{x\}\times \mathbb{P}^1\,$ along the set $\,\{x\}\times BL(\Lambda(x))\,$ with $BL(\Lambda(x))\,=\, \mathring{BL}(\Lambda(x))\cup \{\infty\}$.

We prove first that
the holomorphic (smooth, respectively) family
$$
(\mathcal{X}_{\Lambda},\mathcal{P}_{\Lambda}, \,\mathbold{s}_{\Lambda}\,,X)\,
$$
is holomorphically isomorphic (isomorphic as a smooth family of Riemann surfaces, respectively) to the canonical double branched covering
$$
(\mathcal{Y}_{\mathbold{BL}(\Lambda)}, \mathcal{P},\mathbold{s}^{\infty},X)
$$
(see Section \ref{sec:9.6}) of the
special $(0,4)$-bundle $(X\times \mathbb{P}^1, {\rm pr}_1, \mathbold{BL}(\Lambda),X  )$.
To see this we recall that for each $x\in X$ the mapping $\omega_{\Lambda_x}:(\mathbb{C}\setminus \Lambda_x)\diagup \Lambda_x \to \mathring{Y}_{{BL}(\Lambda_x)}$ is a surjective conformal mapping that depends holomorphically (smoothly, respectively) on $x$ (see equalities \eqref{eqEl29} and \eqref{eqEl34}).
Hence,
the mapping defines a holomorphic (smooth, respectively) homeomorphism from $\mathcal{X}_{\Lambda}\setminus \mathbold{s}_{\Lambda}$ onto $\mathring{\mathcal{Y}}_{\mathbold{BL}(\Lambda)}$, that maps the fiber over $x$ of the first bundle conformally onto the fiber over $x$ of the second bundle.

The set ${\mathcal{Y}}_{\mathbold{BL}(\Lambda)}$
is obtained as in Section \ref{sec:9.6}
by ''adding a point to each fiber''.
As in Section \ref{sec:9.6} the mapping extends to a holomorphic (smooth, respectively) homeomorphism $\mathcal{X}_{\Lambda} \to  {\mathcal{Y}}_{\mathbold{BL}(\Lambda)}$,
that maps the fiber over $x$ conformally onto the fiber over $x$. The extension maps $\mathbold{s}_{\Lambda}$ to $\mathbold{s}^{\infty}$.
We proved that the bundles $(\mathcal{X}_{\Lambda},\mathcal{P}_{\Lambda}, \,\mathbold{s}_{\Lambda}\,,X)\,$ and $(\mathcal{Y}_{\mathbold{BL}(\Lambda)}, \mathcal{P},\mathbold{s}^{\infty},X)$ are holomorphically isomorphic (isomorphic as smooth families of Riemann surfaces, respectively).
Hence, each holomorphic (smooth, respectively) $(1,1)$-family over a finite open Riemann surface is holomorphically isomorphic (smoothly isomorphic), and in particular, isotopic to the canonical double branched covering of a holomorphic (smooth, respectively) $(0,4)$-bundle.

We identify the mapping class groups in the fiber over the base point $x_0$ of the bundles
$(\mathcal{X}_{\Lambda},\mathcal{P}_{\Lambda}, \,\mathbold{s}_{\Lambda}\,,X)\,$ and $(\mathcal{Y}_{\mathbold{BL}(\Lambda)}, \mathcal{P},\mathbold{s}^{\infty},X)$ using the extension of the isomorphism $\omega_{\Lambda_0}$ to the closed fiber $\mathbb{C}\diagup {\Lambda_0}$.
Having in mind this identification, we  prove  now  that  the  monodromy  mapping  class along each $\gamma_j$ of  the  $(1,1)$-bundle
$(\mathcal{X}_{\Lambda},\mathcal{P}_{\Lambda}, \,\mathbold{s}_{\Lambda}\,,X)\,$
is a lift of the monodromy mapping class along $\gamma_j$ of the special $(0,4)$-bundle $(X\times \mathbb{P}^1, {\rm pr}_1, \mathbold{BL}(\Lambda),X  )$. Parameterise $\gamma_j$ by the unit interval $[0,1]$.
Write $\Lambda_t\stackrel{def}=\Lambda(\gamma_j(t))=a(t)\mathbb{Z}+b(t)\mathbb{Z},\, t\in[0,1],$ for  smooth functions $a$ and $b$ on $[0,1]$.
For $t \in [0,1]$ we denote by $\widetilde{\varphi}^t$ the real linear self-homeomorphism of $\mathbb{C}$ that maps $a(0)$ to $a(t)$ and $b(0)$ to $b(t)$. Let $\varphi^t$ be the homeomorphism from the fiber over $\gamma_j(0)$ onto the fiber over $\gamma_j(t)$ of the $(1,1)$-bundle that lifts to $\widetilde{\varphi}^t$. Then ${\varphi}^0= {\rm Id}$ and $({\varphi}^1)^{-1}$ represents the monodromy mapping class of the $(1,1)$-bundle $(\mathcal{X}_{\Lambda},\mathcal{P}_{\Lambda}, \,\mathbold{s}_{\Lambda}\,,X)\,$
along $\gamma_j$. Let $\mathring{\psi}^t:\mathring{Y}(\Lambda_0) \to \mathring{Y}(\Lambda_t)$ be
obtained from the commutative diagram
$$
\xymatrix{(\mathbb{C}\setminus \Lambda_0)\diagup \Lambda_0
 \ar[rr]^{\mathring{\varphi}^t}  \ar[d]^{\omega_{\Lambda_0}} &&(\mathbb{C}\setminus \Lambda_t)\diagup \Lambda_t \ar[d]^{\omega_{\Lambda_t}}\\
\ar[rr]^{\mathring{\psi}^t} {\mathring{Y}(\Lambda_0)}  && \mathring{Y}(\Lambda_t)}
$$
where $\mathring{\varphi}^t=\varphi^t|(\mathbb{C}\setminus \Lambda_0)\diagup \Lambda_0$.

We use the restriction to $\mathring{Y}(\Lambda_0)$ of the coordinates $(z,w)$ on $\mathbb{C}^2$. Let $\iota_0$ be the involution on $\mathring{Y}(\Lambda_0)$ and $\iota_t$ the involution on $\mathring{Y}(\Lambda_t)$. Choose respective coordinates on
$\mathring{Y}(\Lambda_t)$ and write $\mathring{\psi}^t(z,w)=(\mathring{\psi}^t_1(z,w), \mathring{\psi}^t_2(z,w)), \; (z,w) \in \mathring{Y}(\Lambda_0)$. The equality $\widetilde{\varphi}^t(-\zeta)\,=\,-\widetilde{\varphi}^t(\zeta),\; \zeta \in\mathbb{C},\,$ can be written as
 $\mathring{\varphi}^t \,\iota_{\Lambda_0}= \iota_{\Lambda_t}\, \mathring{\varphi}^t$. From the diagram we obtain
with $\iota_t= \omega_{\Lambda_t} \iota_{\Lambda_t} (\omega_{\Lambda_t})^{-1}$,  $\iota_0= \omega_{\Lambda_0} \iota_{\Lambda_0} (\omega_{\Lambda_0})^{-1}$
\begin{equation}\label{eqEl19d}
\mathring{\psi}^t\,\iota_0= \iota_t \, \mathring{\psi}^t\,.
\end{equation}
In coordinates $(z,w)$ this means 
$(\mathring{\psi}^t_1(z,-w), \mathring{\psi}^t_2(z,-w))=
(\mathring{\psi}^t_1(z,w), -\mathring{\psi}^t_2(z,w))$. Hence, $\mathring{\psi}^t_1(z,w)=\mathring{\psi}^t_1(z,-w)$, so that $\mathring{\psi}^t_1$ depends only on the coordinate $z \in \mathbb{C}$.
Moreover, if $\iota_0$ fixes $(z,w)$ (equivalently, if $z\in \mathring{BL}(\Lambda_0)$ ), then $w=0$, hence $\mathring{\psi}^t_2(z,w)=\mathring{\psi}^t_2(z,-w)=-\mathring{\psi}^t_2(z,w)$, i.e. $\iota_t$ fixes $\mathring{\psi}^t(z,w)$ (equivalently, $\mathring{\psi}^t_1(z,w)\in \mathring{BL}(\Lambda_t)$).
We saw that
the self-homeomorphism $\mathring{\psi}^t_1$ of $\mathbb{C}$ maps the set  $\mathring{BL}(\Lambda_0)$ onto the set $\mathring{BL}(\Lambda_t)$.
Each $\mathring{\psi}^t_1$ extends to a self-homeomorphism $\psi^t_1$ of $\mathbb{P}^1$ that maps the set of distinguished points $BL(\Lambda_0)\stackrel{def}= \mathring{BL}(\Lambda_0)\cup\{\infty\}$ onto the set of distinguished points $BL(\Lambda_t)\stackrel{def}= \mathring{BL}(\Lambda_t)\cup\{\infty\}$. Hence,  $\psi^1_1$ represents the monodromy mapping class along $\gamma_j$ of the special $(0,4)$-bundle with set of finite distinguished points $\mathring{BL}(\Lambda_t)$ in the fiber over $\gamma_j(t)$.
The mapping $\mathring{\psi}^1=(\mathring{\psi}_1^1,\mathring{\psi}_2^1)$ is equal to $\omega_{\Lambda_0} \circ \mathring{\varphi}^1 \circ \omega_{\Lambda_0}^{-1}$. Identifying the mapping class groups of $\mathbb{C}\diagup \Lambda_0$ with the mapping class group of $Y_{BL(\Lambda_0)}$ by the isomorphism induced by $\omega_{\Lambda_0}$, we may identify the monodromy of the bundle
$(\mathcal{X}_{\Lambda},\mathcal{P}_{\Lambda}, \,\mathbold{s}_{\Lambda}\,,X)\,$ along $\gamma_j$ with the mapping class of the extension $\psi^1$ of
$\mathring{\psi}^1=(\mathring{\psi}_1^1,\mathring{\psi}_2^1)$. The
mapping $\psi_1^1$ is the projection of
$\psi^1$ under the double branched covering from $\mathbb{C}\diagup\Lambda_0$ with distinguished point $0\diagup\Lambda_0$ onto $\mathbb{P}^1$ with distinguished points ${BL}(\Lambda_0)\cup\{\infty\}$. We proved that the monodromy mapping class of the bundle $(\mathcal{X}_{\Lambda}, \mathcal{P}_{\Lambda}, \mathbold{s}_{\Lambda},X)$ along each $\gamma_j$ is a lift of the respective monodromy mapping class of the bundle $(X\times \mathbb{P}^1, {\rm pr}_1, \mathbold{BL}(\Lambda),X)$. The first statement is proved.

We prove now the {\bf second statement}.
Consider the special holomorphic, or smooth, respectively, $(0,4)$-bundle $\mathfrak{F}_1\stackrel{def}=\big(X\times \mathbb{P}^1, {\rm pr}_1, \mathbold{E}, X\big)$. The complex or smooth, respectively, submanifold $\mathbold{E}$ of $X\times \mathbb{P}^1$  intersects each fiber $\{x\}\times \mathbb{P}^1$ along the set $\{x\}\times E_x$, with  $E_x= (\mathring{E}_x\cup\{\infty\})$, where $\mathring{E}_x\subset C_3(\mathbb{C})\diagup \mathcal{S}_3$.
In Section \ref{sec:9.6} we obtained the canonical lift $(\mathcal{\mathcal{Y}}_{\mathbold{E}},\mathcal{P},\mathbold{s}^{\infty},X)$. Denote the monodromy mapping class along $\gamma_j$ of the canonical lift
by $\mathfrak{m}^j_+$.
Recall that each mapping class $\mathfrak{m}_1^j$ has two lifts and they differ by involution.
Let $\mathfrak{m}^j_-$ be the mapping class whose representatives differ from those of $\mathfrak{m}^j_+$ by composition with the involution $\iota$ of the fiber over the base point.

Take any subset $J$ of $\{1,2,\ldots,2{g}+{m}-1 \}$. The lift of the $(0,4)$-bundle whose monodromy mapping class along $\gamma_j$ equals $\mathfrak{m}^j_+$ if $j\notin J$, and
$\mathfrak{m}^j_-$ if $j\in J$, is obtained as follows.
Let $\tilde U$ be the subset of the universal covering
$\tilde X  \overset{{ \sf{P}}}{-\!\!\!\longrightarrow} X$ that was used in Section \ref{sec:9.2} in the proof of Theorem \ref{thmEl1}, and let $D$, and $\tilde{D}_j,\,j=0,\ldots,$ be as in Section \ref{sec:9.2}. Consider the lift of the bundle $(\mathcal{\mathcal{Y}}_{\mathbold{E}},\mathcal{P},\mathbold{s}^{\infty},X)$
to $\tilde X$ and restrict the lifted bundle to $\tilde U$. Denote the obtained bundle on $\tilde U$ by $\tilde{\mathfrak{F}}$.
For each point $x\in D$ we let $\tilde{x}_j\in \tilde{D}_j,\, j=0,\ldots,$ be the points
with ${\sf{P}}(\tilde{x}_j)=x$.
We identify the fiber of $\tilde{\mathfrak{F}}$ over $\tilde{x}_j,\,j=0,\ldots,$ with the fiber ${Y}_x$, $x={\sf{P}}(\tilde{x}_j)$, of the canonical double branched covering of the
$(0,4)$-bundle. $\mathring{Y}_x$ is obtained from $Y_x$ by removing the point of $\mathbold{s}^{\infty}$ from $Y_x$.

Glue for each $x\in D$ and each $j=1,\ldots,$ the fiber of $\tilde{\mathfrak{F}}$ over $\tilde{x}_j$ to the fiber of $\tilde{\mathfrak{F}}$ over $\tilde{x}_0$ using the identity if $j\notin J$ and the mapping $\iota$
if $j\in J$.
More detailed, the gluing mapping of the punctured fibers in the case $j\in J$ equals
\begin{equation}\label{eqEl19e}
(\tilde{x}_j,z,w)\to (\tilde{x}_0,z,-w),\;\; x\in D,\; (z,w)\in \mathring{Y}_x\,.
\end{equation}
Since the gluing mappings are holomorphic (smooth, respectively), we obtain a complex analytic (smooth, respectively) family over $X$ of double branched coverings of $\mathbb{C}$.
Extend the family to a complex analytic (smooth, respectively) family of double branched coverings of $\mathbb{P}^1$ over $X$. This can be done as in Section \ref{sec:9.6} by considering
the families $\mathcal{Y}^{r_n}_{\mathbold{E}|{X}_n}$ for an exhausting sequence of relatively compact open subsets $X_n$ of $X$ and a suitable sequence of real numbers $r_n$.
We obtained
for each collection of lifts of the  $\mathfrak{m}^j_1$ a complex analytic (smooth, respectively) family of Riemann surfaces of type $(1,1)$ with given fiber over the base point and with monodromy mapping classes equal to this collection. It is easy to see that
two holomorphic (smooth, respectively) $(1,1)$-bundles that lift a given $(0,4)$-bundle over a finite open Riemann surface (smooth oriented surface, respectively) are holomorphically isomorphic
(smoothly isomorphic, respectively), if and only if their monodromies coincide.
Hence, for each given holomorphic (smooth, respectively) special $(0,4)$-bundle over $X$ there are up to holomorphic (smooth, respectively) isomorphisms exactly
$2^{2{g}+ {m}-1}$ holomorphic (smooth, respectively) families of Riemann surfaces of type $(1,1)$ that lift the $(0,4)$-bundle. The second statement is proved.

It remains to prove the {\bf third statement}. Let
$$
{\sf Pr}:(\mathcal{X},\mathcal{P},\mathbold{s},X)\to (X\times \mathbb{P}^1,{\rm pr}_1,\mathbold{E},X)
$$
be a double branched covering of a $(0,4)$-bundle by a $(1,1)$-family of Riemann surfaces.
If the special $(0,4)$-bundle is reducible, then there is a simple closed curve $\gamma$ that
divides the fiber $\mathbb{P}^1$ over the base point $x_0$ into two connected components, each of which contains two
distinguished points, such that the following holds. The curve $\gamma$ is mapped by each monodromy mapping class of the $(0,4)$-bundle to a curve that is free homotopic to
$\gamma$.
The preimage of the closed curve $\gamma$ under the covering map consists of two simple closed curves
$\tilde{\gamma}_1$ and $\tilde{\gamma}_2$, that are homotopic to each other in $\mathcal{P}^{-1}(x)\setminus\mathbold{s}$, and each of
the two curves cuts the torus into an annulus. Each monodromy mapping class of the $(1,1)$-bundle, being a lift
of the respective monodromy mapping class of the $(0,4)$-bundle, takes
$\tilde{\gamma}_1$ to a curve that is free homotopic to
$\tilde{\gamma}_1$ (and also to $\tilde{\gamma}_2$). Hence, the $(1,1)$-bundle is also reducible.

Suppose now that a double branched covering $\mathfrak{F}=(\mathcal{X},\mathcal{P},\mathbold{s},X)$ of
a special $(0,4)$-bundle ${\sf Pr}(\mathfrak{F})=(X\times \mathbb{P}^1, {\rm pr}_1, \mathbold{E},X)$
is reducible and prove that the $(0,4)$-bundle is reducible. Let
$x_0\in X$ be the base point of $X$, and let the fiber of the bundle $\mathfrak{F}$ over $x_0$
be $\mathbb{C}\diagup\Lambda$ with distinguished point $0\diagup\Lambda$.
Any admissible system of curves on the torus $\mathbb{C}\diagup\Lambda$ with a distinguished point contains exactly one curve.

Let $\gamma_0$ be a simple closed curve in $(\mathbb{C}\setminus \Lambda)\diagup\Lambda$  that reduces all monodromy mapping classes of the bundle $\mathfrak{F}$.
The curve  $\gamma_0$ is free isotopic in $\mathbb{C}\diagup\Lambda$ to a curve $\gamma$ with base point $0\diagup \Lambda$ that lifts under the covering map ${\sf P} :\mathbb{C}\to \mathbb{C}\diagup\Lambda$ to the straight line segment that joins $0$ with another lattice point $\lambda$ in $\Lambda$. Since $\gamma_0$ is free isotopic in $\mathbb{C}\diagup\Lambda$ to
a simple closed curve on the torus, $\lambda$ is a primitive element of the lattice. Indeed, with $\Lambda=\{na+mb:n,m\in \mathbb{Z}\}$ for real linearly independent complex numbers $a$ and $b$ we get $\lambda=n(\lambda)a+m(\lambda)b$, where $n(\lambda)$ and $m(\lambda)$ are relatively prime integer numbers. Then there is another element $\lambda'=n(\lambda')a+m(\lambda')b\in \Lambda$ such that
\[
\begin{vmatrix}
n(\lambda) & n(\lambda') \\
m(\lambda) & m(\lambda') \\
\end{vmatrix}
= 1\,.
\]
\noindent The two elements $\lambda$ and $\lambda'$ generate $\Lambda$. Multiplying $\Lambda$ by a non-zero complex number and changing perhaps $\lambda'$ to $-\lambda'$, we may assume that $\lambda=1$ and $\lambda'=\tau$ with ${\rm Im}\tau>0$.
After similar changes of all fibers we obtain an isomorphic bundle
for which the fiber
over $x_0$ equals $\mathbb{C}\diagup\Lambda_{\tau}$. After a free isotopy in $(\mathbb{C}\setminus\Lambda_{\tau}) \diagup\Lambda_{\tau}$ we may assume that the reducing curve $\gamma_0$ lifts under the covering map $ {\sf P}:\mathbb{C}\to \mathbb{C}\diagup \Lambda_{\tau}$ to the segment $\frac{1+\tau}{2}+[0,1]\subset \mathbb{C}$.
Denote by $\gamma$ the curve in the closed torus $\mathbb{C}\diagup \Lambda_{\tau}$ that lifts to $[0,1]$,
and by $\gamma'$ the curve on the closed torus that lifts to $[0,\tau]$. $\gamma$ and $\gamma'$ represent a pair of generators of the fundamental group of the closed torus with base point $0\diagup \Lambda_{\tau}$.

The complement $(\mathbb{C}\diagup\Lambda_{\tau}) \setminus \gamma_0$ of $\gamma_0$ in the fiber over $x_0$ is a topological annulus with distinguished point $0\diagup  \Lambda_{\tau}$. Since $\gamma_0$ reduces all monodromy mapping classes of the $(1,1)$-family $\mathfrak{F}$, each monodromy mapping class has a representative that fixes $\gamma_0$ pointwise. These representatives map the topological annulus $(\mathbb{C}\diagup\Lambda_{\tau}) \setminus \gamma_0$ homeomorphically onto itself, fixing the distinguished point and fixing the boundary pointwise, or fixing the boundary pointwise after an involution. Hence, each monodromy is a power of a Dehn twist about $\gamma_0$, maybe, composed with an involution.

The
Dehn twist on $\;{\mathbb C} \diagup \Lambda_{\tau}\;$  about $\,\gamma_0\,$ can be represented by a
self-homeomorphism $\psi_{\tau}$ of ${\mathbb C} \diagup \Lambda_{\tau}$
which lifts under ${\sf P}$ to the real linear self-map $\widetilde{\psi}_{\tau}$ of
${\mathbb C}$ which maps $1$ to $1$ and $\tau$ to $1+\tau$. This can
be seen by looking at the action of $\widetilde\psi_{\tau}$ on the lifts of the
curves $\gamma$ and $\gamma'$. $\widetilde\psi_{\tau}$ takes $\widetilde{\gamma}=[0,1]$ to itself, and $\widetilde{\gamma}'=[0,\tau]$ to a curve that
is isotopic to $[0,\tau+1]$ through simple closed arcs in $\mathbb{C}$ with fixed endpoints, the interiors of the arcs avoiding $\Lambda_{\tau}$.
Note that the
real $2 \times 2$ matrix corresponding to $\widetilde\psi_{\tau}$ is
$\begin{pmatrix} 1 &({\rm Im} \, \tau)^{-1} \\ 0 & 1 \end{pmatrix}$.

We consider the double branched covering ${\sf{p}}_{\Lambda_{\tau}}: ({\mathbb C}\setminus \Lambda_{\tau})
\diagup \Lambda_{\tau}\to \mathbb{C}$ (see \eqref{eqEl34a'}) with branch locus $\mathring{BL}(\Lambda_{\tau})=\{e_1(\tau), e_2(\tau), e_3(\tau)\}$ determined by the Weierstra\ss\ $\wp$-function,
$\wp_{\Lambda_{\tau}}=      {\sf{p}}_{\Lambda_{\tau}} \circ p$ (with $p$ being the projection $p: \mathbb{C}\setminus \Lambda_{\tau} \to (\mathbb{C}\setminus \Lambda_{\tau})\diagup \Lambda_{\tau}$). Consider the continuous extension ${\sf{p}}_{\Lambda_{\tau}}^c: {\mathbb C} \diagup \Lambda_{\tau}\to \mathbb{P}^1$ of ${\sf{p}}_{\Lambda_{\tau}}$. The Weierstrass $\wp$-function extends to a mapping, also denoted by $\wp_{\Lambda_{\tau}}$,
$\wp_{\Lambda_{\tau}}: \mathbb{C}\to \mathbb{P}^1$, that takes $0$ to $\infty$, $\frac{1}{2}$ to $e_1$, $\frac{\tau}{2}$ to $e_2$, and $\frac{1+\tau}{2}$ to $e_3$.
The line segments $[0,\frac{1}{2}]$, $[0,\frac{\tau}{2}]$,
$[\frac{1}{2}, \frac{1+\tau}{2}]$,
and $[\frac{\tau}{2}, \frac{1+\tau}{2}]$
are mapped under ${\wp}_{\Lambda_{\tau}}$  to simple curves $\gamma_{\infty ,e_1}$, $\gamma_{\infty ,e_2}$, $\gamma_{e_1,e_3}$, and
$\gamma_{e_2,e_3}$
in $\mathbb{P}^1$, each of which joins the first-mentioned point with the last-mentioned point. (See Figure \ref{figwp}.) The union of the four curves (each with suitable orientation) is a closed curve in $\mathbb{P}^1$ (the union of the real axis with the point $\infty$ on Figure \ref{figwp}) that divides $\mathbb{P}^1$ into two connected components $\mathcal{C}_1$ and $\mathcal{C}_2$. The Weierstrass $\wp$-function ${\wp}_{\Lambda_{\tau}}$ maps the open parallelogram $R$ in the complex plane with vertices $0$, $1$, $\frac{\tau}{2} $, $1+\frac{\tau}{2}$ conformally onto its image. Indeed, the "double" of this parallelogram, i.e. the parallelogram with vertices $0$, $1$, $\tau $, $\tau +1$, is the interior of a fundamental polygon for the covering
$\mathbb{C}\to \mathbb{C}\diagup {\Lambda_{\tau}}$ and the equality ${\wp}_{\Lambda_{\tau}}(\zeta)= {\wp}_{\Lambda_{\tau}}(-\zeta + n + m \tau ) ,\, n,m\in \mathbb{Z}  $ holds.
By the last equation we get in particular
\begin{equation}\label{eqEl19d'}
{\wp}_{\Lambda_{\tau}}(\frac{\tau +1}{2}+t)={\wp}_{\Lambda_{\tau}}(\frac{\tau +1}{2}-t)\;\;,t\in [0,\frac{1}{2}]\,.
\end{equation}
After perhaps relabeling $\mathcal{C}_1$ and $\mathcal{C}_2$, we may assume that ${\wp}_{\Lambda_{\tau}}$ maps the open parallelogram $R_-$ with vertices $0,\,\frac{1}{2},\frac{\tau}{2},\frac{1+\tau}{2},$ (the ''left half'' of $R$) conformally onto $\mathcal{C}_1$ (the lower half-plane (shadowed) in the case of Figure \ref{figwp}), and then it maps the parallelogram $R_+$ with vertices $\frac{1}{2},\,1,\, \frac{1+\tau}{2},1+\frac{\tau}{2},$ (the ''right half'' of $R$) conformally onto $\mathcal{C}_2$ (the upper half-plane in the case of Figure \ref{figwp}).

\begin{figure}[h]
\begin{center}
\includegraphics[width=8cm]{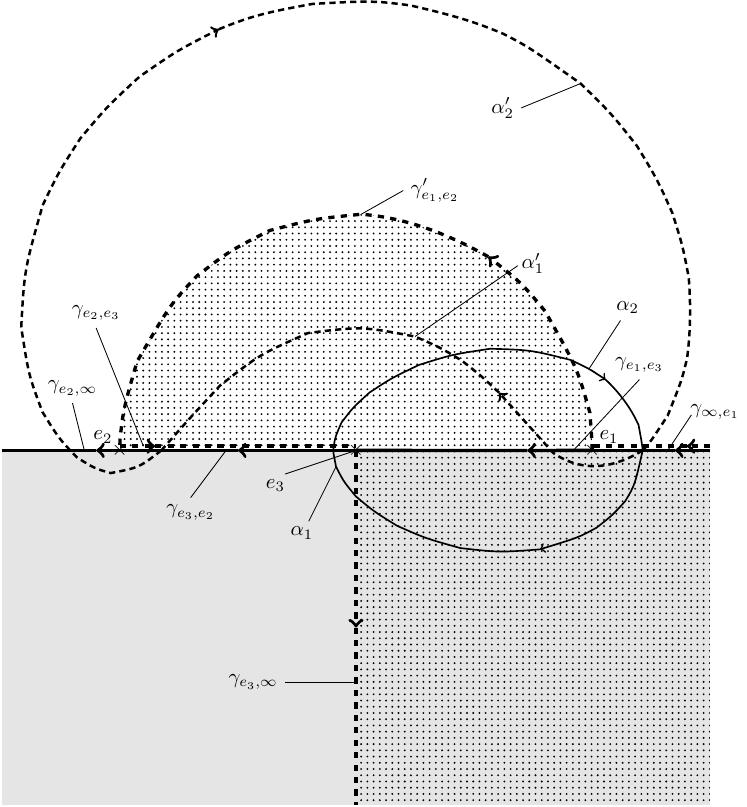}
\end{center}
\caption{The semi-conjugation of a Dehn twist to a half-twist.\\ In the figure we consider the torus $\mathbb{C}\diagup \Lambda$ for $\Lambda=
2\alpha(\mathbb{Z}+i\mathbb{Z})$ with a real number $\alpha$\\ such that $e_1=\wp_{\Lambda}(\alpha)=1$, $e_2=\wp_{\Lambda}(i \alpha)=-1$, $e_3=\wp_{\Lambda}((1+i)\alpha)=0$.}
\label{figwp}
\end{figure}

The mapping $\tilde{\psi}_{\tau}$ fixes the segment $[0,\frac{1}{2}]$, and translates the segment $[\frac{\tau}{2}, \frac{1+\tau}{2}]$ by $\frac{1}{2}$. It maps $R_-$ to the parallelogram $\tilde{\psi}_{\tau}(R_-) \subset R$ whose horizontal sides are $[0,\frac{1}{2}]$ and $[\frac{1+\tau}{2}, \frac{2+\tau}{2}]$.

The Weierstraß $\wp$-function ${\wp}_{\Lambda_{\tau}}$ maps
$[\frac{1}{2}+\frac{\tau}{2}, 1+ \frac{\tau}{2}]$
to $\gamma_{e_3,e_2}$ which equals $\gamma_{e_2,e_3}$ with inverted
orientation (see \eqref{eqEl19d'}), and takes the segment $[0,\frac{1}{2}]$ to a curve  $\gamma_{\infty, e_1}$ that joins $\infty$ with $e_1$.
It maps the segment $[0,\frac{1+\tau}{2}]$ to a curve $\gamma_{\infty, e_3}$ which is contained in $\mathcal{C}_1$ except of its endpoints and joins $\infty$ and $e_3$. The
segment $[\frac{1}{2},1+\frac{\tau}{2}]$ is mapped to a curve $\gamma'_{e_1,e_2}$ with initial point $e_1$ and terminal point $e_2$,
which is contained in  $\mathcal{C}_2$ except its endpoints, so that
the union of the curves $\gamma_{\infty,e_1}$, $\gamma'_{e_1,e_2}$,
$\gamma_{e_2,e_3}$, and $\gamma_{e_3,\infty}$
is the oriented boundary of a domain $\mathcal{C}'_1$,
i.e. the domain $\mathcal{C}'_1$ is ''on the left'' when walking along the oriented curve. (In Figure \ref{figwp}  $\mathcal{C}'_1$ is the dotted domain.)  Here $\gamma_{e_3,\infty}$ equals $\gamma_{\infty,e_3}$
with inverted orientation. Hence,  ${\wp}_{\Lambda_{\tau}}$ maps $\tilde{\psi}_{\tau}(R_-)$ onto $\mathcal{C}'_1$.

The mapping ${\wp}_{\Lambda_{\tau}}\circ \tilde{\psi}_{\tau}$ takes $R_-$ to the domain $\mathcal{C}'_1$.
The mapping $\varphi_{\tau,1}\stackrel{def}={\wp}_{\Lambda_{\tau}}\circ \tilde{\psi}_{\tau} \circ ({\wp}_{\Lambda_{\tau}}|R_-)^{-1}$ takes $\mathcal{C}_1$ onto $ \mathcal{C}_1' $. It extends continuously to a homeomorphism between closures. The extension
fixes $\gamma_{\infty,e_1}$ pointwise, it takes $\gamma_{e_1,e_3}$ to $\gamma'_{e_1,e_2}$ fixing $e_1$, it takes $\gamma_{e_3,e_2}$ to $\gamma_{e_2,e_3}$ mapping $e_2$ to $e_3$
and $e_3$ to $e_2$, and
it takes $\gamma_{e_2,\infty}$ to $\gamma_{e_3,\infty}$ fixing $\infty$.
A simple arc  $\alpha_1$ in the closure $\overline{\mathcal{C}_1}$ (i.e. an arc without self-intersection)  with initial point on $\gamma_{\infty,e_1}$ and terminal point on
$\gamma_{e_3,e_2}$ is mapped by the extension of $\varphi_{\tau,1}$ to a simple arc $\alpha_1'$ in $\overline{\mathcal{C}'_1}$ with initial point on $\gamma_{\infty,e_1}$ and terminal point on $\gamma_{e_2,e_3}$.

A similar argument for the parallelogram $R$ replaced by the parallelogram with vertices $\pm\frac{1}{2}$, $\frac{\tau}{2}\pm \frac{1}{2}$ and the domain $\mathcal{C}_1$ replaced by $\mathcal{C}_2$ shows, that with $R_+ -1\stackrel{def}=\{z-1:z\in R_+\}$
the mapping $\varphi_{\tau,2}\stackrel{def}={\wp}_{\Lambda_{\tau}}\circ \tilde{\psi}_{\tau} \circ ({\wp}_{\Lambda_{\tau}}|R_+ -1)^{-1} $ takes $\mathcal{C}_2$ homeomorphically onto  $\mathcal{C}'_2$ and extends to a homeomorphism between closures. Here $\mathcal{C}'_2$
is the domain that is bounded by the oriented curve $  \gamma'_{e_2,e_1}
\cup\gamma_{e_1,\infty}  \cup  \gamma_{\infty,e_3}\cup \gamma_{e_3,e_2}$.
Moreover, by the periodicity properties of ${\wp}_{\Lambda_{\tau}}$ (see equality
\eqref{eqEl19d'})
the extensions of  $\varphi_{\tau,1}$ and  $\varphi_{\tau,2}$ coincide on the boundary $\partial \mathcal{C}_1=\partial \mathcal{C}_2$.
Similarly as before
the extension of the mapping $\varphi_{\tau,2}\stackrel{def}={\wp}_{\Lambda_{\tau}}\circ \tilde{\psi}_{\tau} \circ ({\wp}_{\Lambda_{\tau}}|R_+ -1)^{-1} $ to the closure of $\mathcal{C}_2$ takes simple arcs $\alpha_2$ in $\overline{\mathcal{C}_2}$ with initial point on $\gamma_{e_3,e_2}$  and terminal point on $\gamma_{\infty,e_1}$
to simple arcs  $\alpha_2'$ in $\overline{\mathcal{C}'_2}$ with initial point on $\gamma_{e_2,e_3}$
and terminal point on $\gamma_{\infty,e_1}$.

It follows that
the mapping $\varphi_{\tau}$ that is equal to $\varphi_{\tau,j}$ on $\mathcal{C}_j$ extends to a self-homeomorphism of $\mathbb{P}^1$, denoted again by $\varphi_{\tau}$, whose mapping class is conjugate to $\mathfrak{m}_{\sigma_1}$. Indeed,  $\varphi_{\tau}$ takes a simple closed curve $\alpha_1\cup \alpha_2$ with $\alpha_1$ and $\alpha_2$ as before to the simple closed curve $\alpha'_1\cup \alpha'_2$ and takes a simple closed curve that surrounds $\gamma_{\infty,e_1}$
and no other  point in the branch locus
to a simple closed curve that surrounds $\gamma_{\infty,e_1}$ and no other  point in the branch locus.
The mapping class,
that is represented by a half-twist around the interval $[e_2,e_3]$ (see Section  \ref{sec:2.2a}), is determined by the following properties. It takes the homotopy class of
 $\alpha_1\cup \alpha_2$ to the homotopy class of $\alpha'_1\cup \alpha'_2$,  and it takes the homotopy class of
a simple closed curve on $\mathbb{P}^1$, that surrounds $\gamma_{\infty,e_1}$
and no other  point in the branch locus, to a curve that is isotopic to this curve on $\mathbb{C}\setminus \{e_1,e_2,e_3\}$.

Since $\tilde{\psi}_{\tau}|R_{-}=( p|R)^{-1} \circ \psi_{\tau}\circ  p|R_- $
and ${\wp}_{\Lambda_{\tau}}\circ \tilde{\psi}_{\tau}|R_-= \varphi_{\tau,1}\circ{\wp}_{\Lambda_{\tau}}|R_-$ we obtain
${\wp}_{\Lambda_{\tau}}\circ (p |R)^{-1}\circ \psi_{\tau} =\varphi_{\tau,1}\circ{\wp}_{\Lambda_{\tau}}\circ (p |R_-)^{-1}  $,
hence, since ${\wp}_{\Lambda_{\tau}}={\sf p}_{\Lambda_{\tau}}\circ p$ (see \eqref{eqEl34a'}), the equality
${\sf p}_{\Lambda_{\tau}} \circ \psi_{\tau}=\varphi_{\tau,1}\circ {\sf p}_{\Lambda_{\tau}}=\varphi_{\tau}\circ {\sf p}_{\Lambda_{\tau}}$ holds on $p(R_-)$. By the same reason the equality
${\sf p}_{\Lambda_{\tau}} \circ \psi_{\tau}=\varphi_{\tau,2}\circ {\sf p}_{\Lambda_{\tau}}
=\varphi_{\tau}\circ {\sf p}_{\Lambda_{\tau}}$
holds on $p (R_+)$. Since $ p(R_-\cup R_+)$ is dense in $\mathbb{C}\diagup \Lambda_{\tau}$, the equality ${\sf p}_{\Lambda_{\tau}} \circ \psi_{\tau}=\varphi_{\tau}\circ {\sf p}_{\Lambda_{\tau}}$
holds on $\mathbb{C}\diagup \Lambda_{\tau}$. We proved that the projection of $\psi_{\tau}$ (see Section \ref{sec:9.6}) is homotopic to a conjugate of $\mathfrak{m}_{\sigma_1}$, which is a reducible mapping class. We proved that the  $(0,4)$-bundle is reducible.
Proposition \ref{propEl.2} is proved. \hfill $\Box$

\medskip

Let $\widehat{\mathfrak{F}}$ be an isomorphism class of smooth $(1,1)$-bundles over the circle. By the analog of Proposition \ref{propEl.2} for bundles over the circle  each smooth $(1,1)$-bundle $\mathfrak{F}$ over $\partial {\mathbb D}$  is isomorphic to the canonical double branched covering of a smooth special $(0,4)$-bundle $\mathfrak{F}_1={\sf Pr} (\mathfrak{F})$ over $\partial {\mathbb D}$ (the projection of $\mathfrak{F}$). Moreover, by Lemma \ref{lemEl100} the projections of the monodromy homomorphisms of isomorphic bundles over $\partial {\mathbb D}$ are conjugate, hence the projections of isomorphic bundles are isomorphic. Vice versa, isomorphic special $(0,4)$-bundles over $\partial {\mathbb D}$ lift to isomorphic  $(1,1)$-bundles.  An isomorphism class of  special $(0,4)$-bundles over $\partial {\mathbb D}$
corresponds to a conjugacy class of $3$-braids modulo center $\widehat{b\diagup \mathcal{Z}_3}$. By Corollary \ref{cor3.2}  for any braid $b$ that is associated to a bundle representing the class ${{\sf Pr}(\widehat{\mathfrak{F}})}$ the equality $\mathcal{M}(\hat{b})=\mathcal{M}(\reallywidehat{b \Delta_3^{2k}})= \mathcal{M}(\reallywidehat
{\mathfrak{m}_{b,\infty}})=\mathcal{M}(\widehat{{\sf Pr}(\mathfrak{F})})=
\mathcal{M}({{\sf Pr}(\widehat{\mathfrak{F}})})$ holds.
Proposition \ref{propEl.2} implies the following corollary.

\begin{cor}\label{corEl2}
\begin{align*}
\mathcal{M}(\widehat{\mathfrak{F}})=\mathcal{M}(\hat b)\,.
\end{align*}
\end{cor}

\noindent {\bf Proof.}
$\mathcal{M}(\widehat{\mathfrak{F}})$ is the supremum of the conformal modules of annuli on which there exists a holomorphic $(1,1)$-bundle that represents $\widehat{\mathfrak{F}}$.
$\mathcal{M}({{\sf Pr}(\widehat{\mathfrak{F}})})$ is the supremum of the conformal modules of annuli on which there exists a special holomorphic $(0,4)$-bundle that represents ${\sf Pr}(\widehat{\mathfrak{F}})$.
Statement (1) of Proposition \ref{propEl.2} implies the inequality $\mathcal{M}(\widehat{\mathfrak{F}})\leq \mathcal{M}({{\sf Pr}(\widehat{\mathfrak{F}})})$, Statement (2) implies the opposite inequality. The corollary is proved. \hfill $\Box$

\section{The Gromov-Oka Principle for $(1,1)$-bundles over tori
with a hole}\label{sec:9.8}
In the following Theorem \ref{thmEl.9} we consider isotopies of smooth $(1,1)$-families
over tori with a hole to complex analytic $(1,1)$-families.

\begin{thm}\label{thmEl.9}
$(1)$ Let $X$ be a smooth surface of genus one with a hole with base point $x_0$, and with a chosen set
$\mathcal{E}=\{e_1,e_2\}$ of generators of $\pi_1(X,x_0)$. Define the set
$\mathcal{E}_0=
\{e_1,e_2, e_1 e_2^{-1},  e_1 e_2^{-2}, e_1e_2 e_1^{-1} e_2^{-1}\}$ as in Theorem {\rm \ref{thmGrom1}}. Consider a smooth $(1,1)$-bundle $\mathfrak{F}$
on $X$. Suppose for each $e\in \mathcal{E}_0$ the restriction of the bundle $\mathfrak{F}$ to an annulus representing $\hat e$ has the Gromov-Oka property.
Then the bundle $\mathfrak{F}$ has the Gromov-Oka property on $X$.\\
$(2)$ If a bundle $\mathfrak{F}$ as in $(1)$ is irreducible, then it is isotopic
to an isotrivial bundle, and hence, for each conformal structure $\omega$ on $X$ the bundle
$\mathfrak{F}_{\omega}$ is isotopic
to a bundle that extends to a holomorphic $(1,1)$-bundle on
the closed torus $\omega(X)^{c}$. In particular, $\mathfrak{F}$ is isotopic to a holomorphic bundle for any conformal structure $\omega$ on $X$ (maybe, of first kind). \\
$(3)$ Any smooth reducible bundle $\mathfrak{F}$ on $X$ has a single irreducible bundle component. This
irreducible bundle component is isotopic
to an isotrivial bundle. There is a Dehn twist in the fiber over the base point such that the
$(1,1)$-bundle $\mathfrak{F}$ can be
recovered from the irreducible bundle component up to composing each monodromy by a power of the Dehn twist. 
Any smooth reducible bundle on $X$ is isotopic to a holomorphic $(1,1)$-bundle for each conformal structure of second kind on $X$.\\
$(4)$ A reducible holomorphic $(1,1)$-bundle over a punctured Riemann surface is locally holomorphically trivial.
\end{thm}

\noindent {\bf Proof.}
By
Proposition \ref{propEl.2}, Statement (1), we may assume that $\mathfrak{F}$ is the canonical double branched covering of a smooth special $(0,4)$-bundle, denoted by ${\sf Pr}(\mathfrak{F})$.
Suppose that for each $e\in \mathcal{E}_0$
the restriction of the smooth $(1,1)$-bundle $\mathfrak{F}$ to an annulus $A_{\hat{e}}$ representing $\hat e$ has the Gromov-Oka property. Then for each $e\in \mathcal{E}_0$ there is a conformal structure $\omega_e:A_{\hat{e}}\to \omega_e(A_{\hat{e}})$ such that
$\omega_e(A_{\hat{e}})$ has conformal module bigger than $\frac{\pi}{2}\log(\frac{3+\sqrt{5}}{2})^{-1}$ and the pushed forward bundle
$(\mathfrak{F}|A_{\hat{e}})_{\omega_e}$
is isotopic
to a holomorphic bundle $\mathfrak{F}_e$  on $\omega_e( A_{\hat{e}}  )$.
By Proposition \ref{propEl.2a} we may assume that
$\mathfrak{F}_e$ is of the form \eqref{eqEl11e}, and by Proposition
\ref{propEl.2} we may assume that
$\mathfrak{F}_e$ is the canonical double branched covering of a holomorphic special $(0,4)$-bundle on $\omega_e(A_{\hat{e}})$,
denoted by ${\sf Pr}(\mathfrak{F}_e)$.
Since the monodromy homomorphisms of $\mathfrak{F}|A_{\hat{e}}$ and $\mathfrak{F}_e$ are conjugate,
the monodromy homomorphisms of ${\sf Pr}(\mathfrak{F}|A_{\hat{e}})$ and ${\sf Pr}(\mathfrak{F}_e)$ are also conjugate. Hence, ${\sf Pr}(\mathfrak{F}|A_{\hat{e}})$ and ${\sf Pr}(\mathfrak{F}_e)$ are isotopic.
We saw that for each $e\in \mathcal{E}$ the restriction of the special $(0,4)$-bundle ${\sf Pr}(\mathfrak{F})$ to an annulus representing $\hat e$ is isotopic to a holomorphic
bundle for a conformal structure on the annulus of conformal module bigger than $\frac{\pi}{2}\log(\frac{3+\sqrt{5}}{2})^{-1}$.
By Theorem \ref{thmEl.0} the special $(0,4)$-bundle ${\sf Pr}(\mathfrak{F})$ is isotopic to a holomorphic special $(0,4)$-bundle for any conformal structure of second kind on $X$.
Hence, the canonical double branched covering of ${\sf Pr}(\mathfrak{F})$ is isotopic to a holomorphic $(1,1)$-bundle for each conformal structure of second kind on $X$.
Therefore, $\mathfrak{F}$ is isotopic to a holomorphic $(1,1)$-bundle for each conformal structure of second kind on $X$.
Statement (1) is proved.

Suppose a bundle $\mathfrak{F}$ as in (1) is irreducible. Then by Proposition \ref{propEl.2}
the special $(0,4)$-bundle ${\sf{P}}(\mathfrak{F})$ is also irreducible. Moreover,
by Theorem \ref{thmEl.0}, Statement (1), the $(0,4)$-bundle ${\sf{P}}(\mathfrak{F})$
is isotopic to an isotrivial bundle, i.e. its lift to a finite covering $\hat X$ is isotopic to the trivial bundle, and therefore, for each conformal structure $\omega$ on $X$
(maybe, of first kind)
the $(0,4)$-bundle ${\sf{P}}(\mathfrak{F})_{\omega}$ is isotopic to a bundle
that extends holomorphically to the closed torus $\omega(X)^c$. Then also the canonical double branched covering of the $(0,4)$-bundle is isotopic to an isotrivial bundle.
Hence, the bundle $\mathfrak{F}$ is isotopic to an isotrivial bundle and extends to a holomorphic bundle on the closed torus. In particular, for each conformal structure (maybe, of first kind) on $X$ the bundle $\mathfrak{F}_{\omega}$ is isotopic to a holomorphic bundle on $\omega(X)$.
This gives statement (2).

Suppose now $\mathfrak{F}$ is any smooth reducible bundle on $X$. Then
by Proposition \ref{propEl.2} the bundle ${\sf Pr}(\mathfrak{F})$ is reducible. By Theorem  \ref{thmEl.0} the special $(0,4)$-bundle ${\sf Pr}(\mathfrak{F})$ is isotopic to a holomorphic special $(0,4)$-bundle for any conformal structure of second kind on $X$. Hence, for any conformal structure of second kind on $X$ the bundle $\mathfrak{F}$  is isotopic to a holomorphic $(1,1)$-bundle which is a double branched covering.

Each admissible set of curves on a punctured torus consists of exactly one curve. Hence, there is an admissible closed curve $\gamma$ in the punctured
fiber $\mathcal{P}^{-1}(x_0)\setminus \mathbold{s}^{\infty}$ over the base point $x_0$,
that is mapped by the monodromy mapping class along each curve $\gamma_j$ representing an element $e_j\in \mathcal{E}$ to a curve that is isotopic to $\gamma$ in the punctured fiber over $x_0$. Choose for each monodromy mapping class a representative that maps the curve $\gamma$ onto itself.

The complement of $\gamma$ on the punctured torus $\mathcal{P}^{-1}(x_0)\setminus \mathbold{s}^{\infty}$ is connected and homeomorphic to the thrice punctured Riemann sphere. This implies first that there is a single irreducible component of $\mathfrak{F}$. Further, the restrictions of suitable representatives of the monodromy mapping classes to the complement of $\gamma$ on the punctured torus are conjugate to self-homeomorphisms of the thrice punctured Riemann sphere. The conjugated homeomorphisms extend to self-homeomorphisms
of the Riemann sphere with three distinguished points. The self-homeomorphisms may interchange the two points that come from different edges of $\gamma$, but each of them fixes the third distinguished point. Except the identity there is only one isotopy class of such mappings and its square is the identity (in other words, the mapping class is the class of an involution).
Hence, the monodromy mapping classes of the irreducible bundle component are powers of a single periodic mapping class.
Therefore the irreducible bundle component is isotopic to an isotrivial bundle.

The monodromy mapping classes of the original bundle $\mathfrak{F}$ are powers of Dehn twists about $\gamma$,
maybe, composed with an involution, and up to powers of Dehn twists about $\gamma$ in the fiber $\mathcal{P}^{-1}(x_0)$ the bundle can be recovered from the irreducible bundle component.
Statement (3) is proved.

We prove now statement (4). Consider a reducible holomorphic (1,1)-bundle $\mathfrak{F}$
over a punctured Riemann surface of genus $1$. By Proposition \ref{propEl.2}
it is holomorphically isomorphic to the double branched covering of a holomorphic special $(0,4)$-bundle ${\sf Pr}(\mathfrak{F})$ over a punctured Riemann surface.
By Proposition \ref{propEl.2} the special holomorphic$(0,4)$-bundle ${\sf Pr}(\mathfrak{F})$ is also reducible.
By Theorem \ref{thmEl.0} the bundle ${\sf Pr}(\mathfrak{F})$ is holomorphically trivial.
Then the double branched covering $\mathfrak{F}$ of ${\sf Pr}(\mathfrak{F})$ is locally holomorphically trivial.
Theorem \ref{thmEl.9} is proved. \hfill $\Box$

\chapter{Fundamental groups and bounds for the extremal length}

\label{chapter3-braids}
\setcounter{equation}{0}

In Chapter \ref{chapter4} we computed several versions of the extremal length of certain elements of the fundamental group of the twice punctured complex plane. It becomes impracticable to compute the versions of extremal length for all elements regardless of their ''complexity''. In this chapter we give effective upper and
lower bounds for the extremal length with totally real horizontal boundary values of any element of the fundamental group of the twice punctured complex plane. The bounds differ by a multiplicative constant not depending on the element.
The estimates are provided in terms of a natural syllable decomposition
of the reduced word representing the element. Estimates for the extremal length of  conjugacy classes of elements of the fundamental group (i.e. of the invariants mentioned by Gromov) are also obtained.
The extremal length with totally real horizontal boundary values
is capable to give more subtle information regarding Gromov's Oka Principle and limitations of its validity than the respective invariant of conjugacy classes.

In the last section (Section \ref{sec:3-braids5})
we give estimates for the extremal length
with totally real horizontal boundary values of $3$-braids,
and estimates of the extremal length of conjugacy classes of $3$-braids (and, hence, of the entropy of $3$-braids).

\section{The fundamental group and extremal length. Two Theorems. }
\label{sec:3-braids1}

It will be convenient here to normalize the twice punctured complex plane as $\mathbb{C}\setminus\{-1,1\}$. With this normalization the fundamental group $\pi_1\stackrel{def}=\pi_1(\mathbb{C}\setminus\{-1,1\},0)$ is a free group in two generators $a_j$, $j=1,2,$ where $a_1$ is represented by simple closed curves that surround $-1$ counterclockwise, and $a_2$ is represented by simple closed curves that surround $1$ counterclockwise.
Recall that there is a canonical isomorphism $\pi_1( \mathbb{C}\setminus\{-1,1\},0)\to \pi_1^{tr}\stackrel{def}=\pi_1( \mathbb{C}\setminus\{-1,1\},(-1,1))$.
The elements of the relative fundamental group $\pi_1( \mathbb{C}\setminus\{-1,1\},(-1,1))$ are homotopy classes of arcs in $ \mathbb{C}\setminus\{-1,1\}$, each of whose endpoints is contained in $(-1,1)$.
The isomorphism  $\pi_1( \mathbb{C}\setminus\{-1,1\},0)\to \pi_1^{tr}$ assigns to the element $e\in\pi_1( \mathbb{C}\setminus\{-1,1\},0)$, that is represented by a curve $\gamma$ with base point $0$, the
element  $e_{tr}\in \pi_1^{tr}=\pi_1( \mathbb{C}\setminus\{-1,1\},(-1,1))$, that is represented by $\gamma$.
The extremal length $\Lambda(e_{tr})$ of the homotopy class $e_{tr}$ in the sense of Definition \ref{def4.1} is called the extremal lengths of $e$ with totally real horizontal boundary values and is sometimes denoted by $\Lambda_{tr}(e)$. Recall that $\Lambda(e_{tr})$ is the infimum of the extremal length of all rectangles that admit a holomorphic mapping into $\mathbb{C}\setminus\{-1,1\}$ whose restriction to each maximal vertical segment represents $e_{tr}$. The name ''totally real''
refers to the notion for $3$-braids which is related to the present notion. The conformal module $\mathcal{M}(e_{tr})=(\Lambda(e_{tr}))^{-1}$ is also denoted by $\mathcal{M}_{tr}(e)$.
\index{$\pi_1(\mathbb{C}\setminus\{-1,1\},0)$} \index{$\pi_1$}
\index{$\pi_1(\mathbb{C}\setminus\{-1,1\},(-1,1))$}
\index{$\pi_1^{tr}$} \index{$a_j$} \index{$e_{tr}$} \index{$\Lambda_{tr}(e)$} \index{$\Lambda_{tr}(e)$} \index{$\mathcal{M}_{tr}(e)$} \index{$\mathcal{M}_{pb}(e)$} \index{extremal length of elements of $\pi_1$ ! with $tr$ horizontal boundary values}
\index{extremal length of elements of $\pi_1$ ! with $pb$ horizontal boundary values} \index{extremal length of elements of $\pi_1$ ! with mixed horizontal boundary values}

We will also consider the relative fundamental group  $\pi_1^{pb}\stackrel{def}=\pi_1( \mathbb{C}\setminus\{-1,1\}, i\mathbb{R})$ whose elements are homotopy classes of arcs in $\mathbb{C}\setminus\{-1,1\}$ with endpoints on the imaginary axis $i\mathbb{R}$. The canonical group isomorphism
$\pi_1( \mathbb{C}\setminus\{-1,1\},0)\to \pi_1^{pb}\stackrel{def}=\pi_1( \mathbb{C}\setminus\{-1,1\}, i\mathbb{R})$ assigns to each element $e$ of $\pi_1( \mathbb{C}\setminus\{-1,1\},0)$ represented by a curve $\gamma$ with base point $0$ the
element $e_{pb}$ of $\pi_1( \mathbb{C}\setminus\{-1,1\}, i\mathbb{R})$ represented by $\gamma$. The extremal length $\Lambda(e_{pb})$ in the sense of Definition \ref{def4.1} is called the extremal length with $pb$ boundary values of $e$ and is also denoted by $\Lambda_{pb}(e)$. The conformal module $\mathcal{M}(e_{pb})=(\Lambda(e_{pb}))^{-1}$ is also denoted by $\mathcal{M}_{pb}(e)$.
("$pb$" stands for ''perpendicular bisector''. In fact, the imaginary axis is the
perpendicular bisector of the line segment $(-1,1)$.)

More generally, it will be sometimes convenient to use the fact that $\pi_1( \mathbb{C}\setminus\{-1,1\},0)$ is canonically isomorphic to the relative fundamental group $\pi_1^+\stackrel{def}=\pi_1( \mathbb{C}\setminus\{-1,1\}, i\mathbb{R}\cup (-1,1))$ whose elements are homotopy classes of arcs in $\mathbb{C}\setminus\{-1,1\}$ with endpoints  in $ i\mathbb{R}\cup (-1,1))$. The group isomorphism assigns to each element $e$ of $\pi_1( \mathbb{C}\setminus\{-1,1\},0)$, represented by a closed curve $\gamma$, the
element $e_{+}$ of $\pi_1( \mathbb{C}\setminus\{-1,1\}, i\mathbb{R}\cup (-1,1))$, represented by $\gamma$.
For each element $e\in \pi_1( \mathbb{C}\setminus\{-1,1\},0)$ all its representatives are contained in both classes, $e_{tr}$ and $e_{pb}$, associated to $e$, and all representatives of $e_{tr}$ and $e_{pb}$ are contained in the element $e_+$ of $\pi_1^+$ that is associated to $e$.

We will identify each element of  $\pi_1(\mathbb{C}\setminus\{-1,1\},0)$ with the reduced word in the generators representing it. A word $w$ in the generators is reduced if it is the identity or it has the form
$w= w_1^{n_1} \cdot
w_2^{n_2} \cdot \ldots ,\,$ where the $n_j$ are non-zero integers and
the $w_j$
are alternately equal to either $a_1$ or $a_2$. We refer to the
$w_j^{n_j}$ as the terms of the word.
\index{$e_{pb}$} \index{$\Lambda(e_{pb})$} \index{$\Lambda_{pb}(e)$} \index{bisector ! perpendicular} \index{$\pi_1^+$} \index{$\pi_1^{pb}$}

Any reduced word $\;w\;$ in  $\;\pi_1(\mathbb{C}\setminus \{-1,1\},0)\;$ can be uniquely decomposed into syllables. They are defined as follows.
\begin{defn}\label{defnxxx}
The syllables of any reduced word $w\in \pi_1(\mathbb{C}\setminus \{-1,1\},0)\;$ are all its terms $a_{j_i}^{k_i}$ with $|k_i|\geq 2$ (called syllables of form $(1)$), and all maximal sequences of consecutive terms  $a_{j_i}^{k_i}$  for which $|k_i|=1$ and all $k_i$ have the same sign. A syllable of the latter form is called a syllable of form $ (2)$, if it contains more than one term and is called a singleton or a syllable of form $(3)$ if it consists of a single term.
\end{defn}
\index{syllable} \index{syllable ! degree of} \index{word ! reduced} \index{word ! terms of} \index{word ! syllables of} \index{word ! elementary} \index{singleton}
(See also \cite{Jo2}, \cite{Jo3}). Define the degree of a syllable $\mbox{deg}(\mbox{syllable})$
to be the sum of the absolute values of the powers of terms entering
the syllable.
For example, the syllables of the word $a_2^{-1}\,a_1^2\,
a_2^{-3}\,a_1^{-1}\,a_2^{-1}\,a_1^{-1}\,a_2\,a_1^{-1}$  from
left
to right are the singleton $a_2^{-1}$, the syllable $a_1^2$ of form $(1)$ and
degree $2$, the
syllable $a_2^{-3}$ of form (1) and degree $3$, the syllable
$a_1^{-1}\,a_2^{-1}\,a_1^{-1}$ of form $(2)$ and
of degree $3$, the singleton $a_2$ and the singleton
$a_1^{-1}$.

We call words consisting of a single syllable elementary words.
Label the syllables of a non-elementary word from left to right by consecutive
integral numbers $j=1,2,\ldots \;$ . Let
$d_j$ be the degree of the $j$-th syllable $\mathfrak{s}_j$.
(We consider each syllable as a reduced word in the elements of the fundamental group.)
\index{$\mathcal{L}(w)$} \index{$\mathfrak{s}_j$}
Put
\begin{equation}\label{eq3+}
\mathcal{L}(w)\stackrel{def}=   \sum_j \log(2 d_j+\sqrt{4 d_j^2-1})\,,
\end{equation}
where the sum runs over the degrees of all syllables of $w$.
If $w$ is the identity we put $\mathcal{L}(w)=0$.
We want to point out that  $\mathcal{L}(w^{-1})=\mathcal{L}(w)$.
Notice that for the word consisting of the single syllable $\mathfrak{s}_j$ we have $\mathcal{L}(\mathfrak{s}_j)= \log ( 2 d_j+\sqrt{4 d_j^2-1}   )$. Thus, $\mathcal{L}(w) = \sum \mathcal{L}(\mathfrak{s}_j)$ where the sum runs over the syllables of $w$.

In this chapter we will prove the following theorems. The present proof of the upper bound is shorter and less technical than the proof in \cite{Jo2}, but the proof in  \cite{Jo2} gives a better constant.
\begin{thm}\label{thm1} For any element $w\in \pi_1(\mathbb{C}\setminus \{-1,1\},0)$ the following estimates hold.
\begin{equation}\label{eq1a}
\frac{1}{2\pi} \, \mathcal{L}(w) \leq \Lambda_{tr}(b)
=\frac{1}{\mathcal{M}_{tr}(w)} \leq  2^{14}\cdot
\mathcal{L}(w),
\end{equation}
except  in the following cases: $w=a_1^n$ or
$w=a_2^n$
for an integer $n$. In these cases $\Lambda_{tr}(b)=0$, i.e
$\mathcal{M}_{tr}(b)=\infty$.

Moreover,
\begin{equation}\label{eq1b}
\frac{1}{2\pi} \, \mathcal{L}(w) \leq \Lambda_{pb}(w)=
\frac{1}{\mathcal{M}_{pb}(w)} \leq   2^{13}
\cdot \mathcal{L}(w),
\end{equation}
except in the following case:
each term in the reduced word $w$ has the same power, which
equals either $+1$ or $-1$. In these cases $\Lambda_{pb}(w)=0$,
i.e. $\mathcal{M}_{pb}(w)=\infty$.
\end{thm}

We will also consider the relative fundamental
groups $^{pb}\pi_1^{tr}$ and $^{tr}\pi_1^{pb}$ with mixed horizontal
boundary values.
Here the elements of $^{pb}\pi_1^{tr}$ are the homotopy
classes $_{i\mathbb{R}}\textsf{h}_{(-1,1)}$ and the elements of
$^{tr}\pi_1^{pb}$ are the homotopy classes
$_{(-1,1)}\textsf{h}_{i\mathbb{R}}$ in the space $X=\mathbb{C} \setminus \{-1,1\}$ .
There is a canonical isomorphism from $\pi_1$ onto $^{pb}\pi_1^{tr}$ (onto $^{tr}\pi_1^{pb}$, respectively) that assigns to an element $e\in \pi_1$  the element $_{pb}e_{tr}$ that contains all representatives of $e$ (the element $_{tr}e_{pb}$, respectively, that contains all representatives of $e$).
\index{$^{pb}\pi_1^{tr}$}   \index{$^{tr}\pi_1^{pb}$} \index{$_{pb}e_{tr}$}
\index{$_{tr}e_{pb}$}

The statement for mixed boundary values is given in the following Theorem \ref{thm10.1'}. Note that for mixed boundary values the estimate of the extremal length holds always while for $pb$ or $tr$ boundary values there are exceptional cases.

\medskip
\begin{thm}\label{thm10.1'}
For all $w \in \pi_1$ the following inequalities hold
\begin{align}
\frac{1}{2\pi} \, \mathcal{L}(w) \leq \Lambda(_{tr}w_{pb})
\leq   2^{14} \cdot
\mathcal{L}(w), \label{eq13a}\\
\frac{1}{2\pi} \, \mathcal{L}(w) \leq \Lambda(_{pb}w_{tr})
\leq   2^{14}
\cdot \mathcal{L}(w).\label{eq13b}
\end{align}
\end{thm}
\medskip

Recall that the conjugacy class $\hat w$ of an element $w\in \pi_1(\mathbb{C}\setminus\{-1,1\},0)$ can be identified with the free homotopy class of closed curves that contains $w$.

\begin{defn}\label{defnxxx'}
A word $w \in\;\pi_1(\mathbb{C}\setminus \{-1,1\},0)\;$  is called cyclically reduced, if either the word consists of a single term, or it has at least two terms and the first and the last term of the word are powers of different generators. Each reduced word which is not the identity is conjugate to a cyclically reduced word.
\end{defn}
\index{word ! cyclically reduced}

The following theorem gives upper and lower bounds of the invariant $\mathcal{M}(\hat w)$  of conjugacy classes of elements of the twice punctured complex plane, that was first mentioned in Gromov's seminal paper \cite{G} in connection with obstructions for the Gromov-Oka principle.
Each element of the fundamental group $\pi_1(\mathbb{C}\setminus\{-1,1\},0)$ corresponds to an element of $\mathcal{PB}_3\diagup\mathcal{Z}_3$ and, hence, to a mapping class of the twice punctured complex plane.
By the Main Theorem the conformal module
$\mathcal{M}(\hat w)$ can be interpreted as a multiple of the inverse of the entropy of the conjugay class
$\hat w$.

\begin{thm}\label{thm2} Let $\hat w$ be a conjugacy class of elements of $\pi_1(\mathbb{C}\setminus\{-1,1\},0)$,
and let $w$ be a
cyclically reduced word representing the conjugacy class $\hat w$. Then
$$
\frac{1}{2 \pi} \cdot \mathcal{L}(w) \,\leq \, \Lambda (\hat w)
\, \leq \,   2^{13} \cdot \mathcal{L}(w),
$$
with the following exceptions: $\hat{w}=\widehat{a_1^n}$, $\hat{w}=\widehat{a_2^n}$ and
$\hat{w}=\widehat{(a_1a_2)^n}$. In the exceptional cases $\Lambda(\hat w)= 0$
and
$\mathcal{M}(\hat w) =\infty$.
\end{thm}

\section{Coverings of $\mathbb{C} \setminus \{-1,1\}$ and
slalom curves}\label{sec:3-braids1a}
For the proof of the theorems 
it will be convenient to use instead
of the universal covering of $\mathbb{C} \setminus \{-1,1\}$
two different coverings of $\mathbb{C} \setminus \{-1,1\}$ by
$\mathbb{C}
\setminus i \mathbb{Z}$. Though the universal covering is well
studied,
factorizing the universal covering through two different coverings by
$\mathbb{C} \setminus i \mathbb{Z}$ has the advantage to make the
contribution
of the syllables to the extremal length transparent.

To obtain the first covering $\mathbb{C} \setminus i \mathbb{Z} \to
\mathbb{C}
\setminus \{-1,1\}$ we take the
universal covering of the twice punctured Riemann sphere
$\mathbb{P}^1 \setminus \{-1,1\}$ (the logarithmic covering) and
remove all preimages of $\infty$ under the covering map.
Geometrically the logarithmic covering of $\mathbb{P}^1
\setminus
\{-1,1\}$ can be described as follows. Take copies of
$\mathbb{P}^1
\setminus[-1,1]$ labeled by the set $\mathbb{Z}$ of integer
numbers. Attach to each copy two copies of
$(-1,1)$, the
$+$-edge (the set of accumulation points contained in $(-1,1)$ of the upper
half-plane)
and the $-$-edge (the set of  accumulation  points  contained in $(-1,1)$ of the lower
half-plane). For each $k \in \mathbb{Z}$ we glue the $+$-edge
of the
$k$-th copy to the $-$-edge of the $k+1$-st copy (using the
identity
mapping on $(-1,1)$ to identify points on different edges).
Denote
by $U_{\log}$ the set obtained from the described covering by
removing all preimages of $\infty$. \index{$U_{\log}$}

Choose curves $\alpha_j(t), t \in [0,1],\, j=1,2,$  in
$\mathbb{C}
\setminus \{-1,1\}$  that represent the
generators $a_j,j=1,2,$ of the fundamental group $\pi_1(\mathbb{C}
\setminus \{-1,1\},\,0\,\})$ and have the following properties. The
initial point and the terminal point of each of the curves are the
only points of the curve on the
interval $(-1,1)$ and are also the only points of the curve on the
imaginary axis.
The following proposition holds.

\begin{prop}\label{prop2} The set $U_{\log}$ is conformally
equivalent to $\mathbb{C}\setminus  i\,\mathbb{Z}$. The
mapping
$f_1 \circ f_2$, $f_2(z)= \frac{e^{\pi z} -1}{e^{\pi z} +1},
\, z
\in
\mathbb{C}\setminus  i\,\mathbb{Z}$, $f_1(w)=
\frac{1}{2}(w+\frac{1}{w}),\, w \in \mathbb{C}\setminus\{-1,0,1\}$,
is
a covering map from $\mathbb{C}\setminus  i \,\mathbb{Z}$ to
$\mathbb{C} \setminus \{-1,1\}$.  The mapping $f_1 \circ f_2$  has period $i$ and takes the punctured strip $\{z\in \mathbb{C}: -\frac{1}{2}  <  {\rm Im} z < \frac{1}{2}\}\setminus \{0\}$ conformally onto
$\mathbb{C}\setminus [-1,1]$ mappings points in the upper half-plane to the lower half-plane and vice versa. It maps the set $(-\frac{i}{2},\frac{i}{2})\setminus \{0\}$ onto $i \mathbb{R}\setminus\{0\}$,
and the line $\mathbb{R}+\frac{i}{2}$ onto $(-1,1)$.

For each $k \in \mathbb{Z}$ the lift of $\alpha_1$ with initial point $\frac{- i}{2} +  i
k$ is a curve which joins $\frac{- i}{2} +  i k$ with
$\frac{- i}{2} +  i (k+1)$ and is contained in the closed
left half-plane. The only points on the imaginary axis are
the
endpoints.

The lift of $\alpha_2$ with initial point $\frac{- i}{2} +  i
k$ is a curve which joins $\frac{- i}{2} +  i k$ with
$\frac{- i}{2} +  i (k-1)$ and is contained in the closed
right half-plane. The only points on the imaginary axis are
the
endpoints.
\end{prop}

Figure 1 below shows the curves $\alpha_1$ and $\alpha_2$ which represent the
generators of the fundamental group $\pi_1(\mathbb{C} \setminus
\{-1,1\} ,0)$
and their lifts under the covering maps $f_1$ and $f_2 \circ f_1$ . For
$j=1,2$
the curves $\alpha_j '$ and $\alpha_j ''$ are the two lifts of
$\alpha_j$
under the double covering $f_1: \mathbb{C} \setminus \{-1,0,1\} \to
\mathbb{C}  \setminus \{-1,1\}$. The curve $\hat {\alpha}
_1'$ is the lift of $\alpha _1'$ under the mapping $f_2$ with initial
point
$\frac{-i}{2}$, the curve $\hat {\alpha} _1''$ is
the lift of $\alpha _1''$ under the mapping $f_2$ with initial point
$\frac{i}{2}$,
the curve $\hat {\alpha} _2'$ lifts $\alpha _2'$ and has
initial point $\frac{i}{2}$, and the curve $\hat {\alpha} _2''$ lifts
$\alpha
_2''$ and has
initial point $-\frac{i}{2}$.

\medskip

\noindent \textbf{Proof}.
The mapping $f_1$ is the restriction of the Zhukovsky function to
$\mathbb{C}\setminus\{-1,0,1\}$. The
Zhukovski function defines a double
branched covering of the Riemann sphere $\mathbb{P}^1$ with branch
locus $\{-1,1\}$. In particular, the Zhukovsky function provides a
conformal mapping
from the unit disc
$\mathbb{D}$ onto $\mathbb{P}^1 \setminus [-1,1]$. It maps
$-1$ to
$-1$, $1$ to $1$ and $0$ to $\infty$. The upper half-circle is
mapped onto the $-$-edge, the upper half-disc is mapped onto the
lower
half-plane, the lower half-circle is mapped onto the $+$-edge
and the
lower half-disc is mapped onto the upper half-plane.
Similarly, it provides a conformal mapping of the exterior of the
closed unit
disc onto $\mathbb{P}^1 \setminus [-1,1]$ which preserves the upper
half-plane
and also preserves the lower half-plane. \index{function ! Zhukovski}

The mapping $f_2$ extends through $i\mathbb{Z}$ to an infinite covering of $\mathbb{P}^1
\setminus \{-1,1\}$ by $\mathbb{C}$. By an abuse of notation we denote this extension also by $f_2$. (The extension of) $f_2$ provides a
conformal mapping $f_2^0$ from the strip $\{z \in \mathbb{C}:
-\frac{1}{2}
< \mbox{Im} \, z < \frac{1}{2}\}$ onto the unit disc, which takes the real
axis onto the segment $(-1,1)$ , such that
$\lim _{x \in \mathbb{R}, x \to -\infty}= -1$, $\lim _{x \in
\mathbb{R}, x
\to +\infty}=1$. Further, $f_2$ maps
$\frac{i}{2}$ to $i$, $-\frac{i}{2}$ to $-i$, and $0$ to $0$.
The line $\{z \in \mathbb{C}: \mbox{Im} \, z
=\frac{1}{2}\}$ is mapped onto the upper half-circle, the upper
half-strip $\{z \in \mathbb{C}: 0 < \mbox{Im}\, z <
\frac{1}{2}\}$ is mapped onto the upper half-disc, the line $\{z
\in
\mathbb{C}: \mbox{Im} \, z =-\frac{1}{2}\}$ is mapped onto the lower
half-circle and the lower half-strip is mapped onto the lower
half-disc. It follows that $f_1\circ f_2$ takes the punctured strip  $\{z \in (\mathbb{C}\setminus \{0\}):-\frac{1}{2}< \mbox{Im} \, z < \frac{1}{2}\}$ conformally onto $\mathbb{C}\setminus [-1,1]$.

\begin{figure}[h]
\begin{center}
\includegraphics[width=11cm]{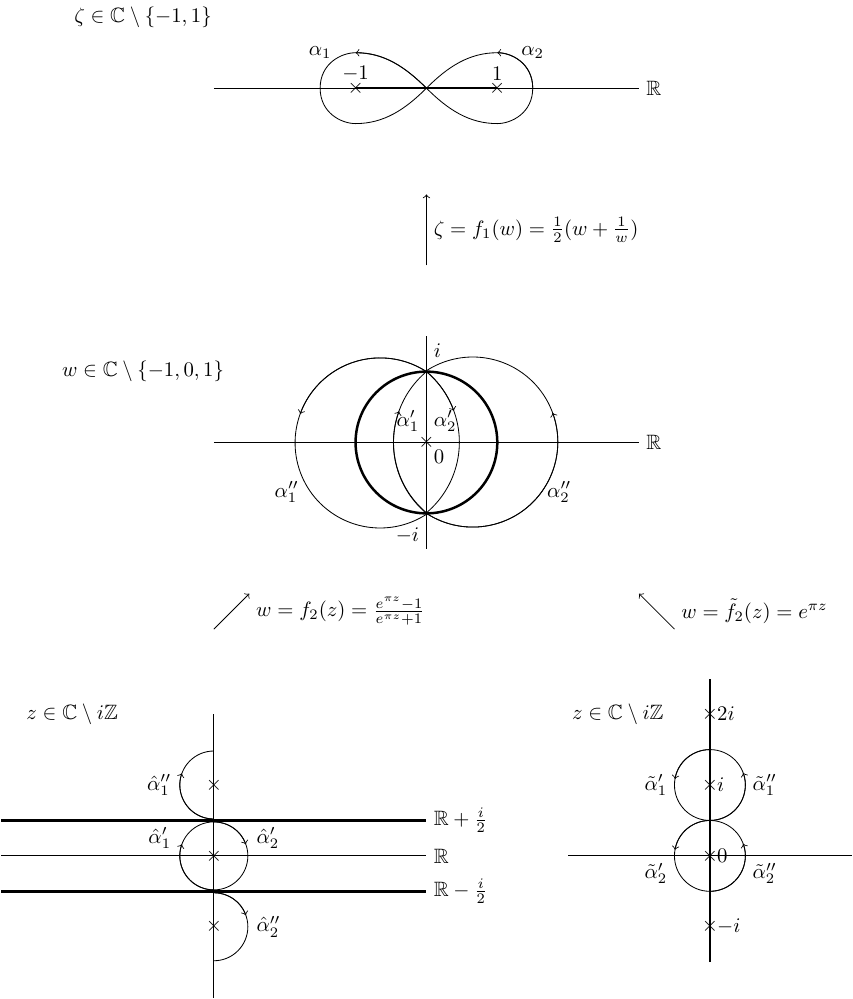}
\end{center}
\caption{The generators of $\pi_1(\mathbb{C}\setminus\{-1,1\},0)$ and their lifts under two
coverings by $\mathbb{C}\setminus i\mathbb{Z}$ .}
\label{Fig3-braids1}
\end{figure}

The mapping $f=f_1 \circ f_2$ has period $i$. Indeed, $f_2$ has period $2i$,
$f_2(z+i)=\frac{1}{f_2(z)}$, and $f_1(\frac{1}{w})=f_1(w)$. The statements concerning the lift of curves
under $f_1 \circ f_2$ are now clear.

To see that $U_{\log}$ is conformally equivalent to
$\mathbb{C}
\setminus  i\,\mathbb{Z} $ we identify the set $\mathbb{C} \setminus
[-1,1] $ with the sheet of $U_{\log}$ labeled by $0$. The
conformal
mapping $f \mid \{z \in \mathbb{C} \setminus \{0\}:- \frac{1}{2} <
\mbox{Im} \, z
< \frac{1}{2}\}$ (whose image is $\mathbb{C} \setminus
[-1,1] $ ) extends by Schwarz's Reflection Principle through
the line $\{z \in \mathbb{C}: \mbox{Im} \, z =\frac{1}{2}\}$ which is
mapped onto the $-$-edge of $\mathbb{C} \setminus [-1,1] $. The
reflected mapping takes
$\{z \in \mathbb{C}\setminus\{i\}: \frac{1}{2} < \mbox{Im} \, z
< \frac{3}{2}\}$ conformally onto  $\mathbb{C} \setminus
[-1,1] $.
We identify the image of the punctured strip $\{z \in \mathbb{C}\setminus \{i\}:
\frac{1}{2} < \mbox{Im} \, z
< \frac{3}{2}\}$ with the sheet
of $U_{\log}$ labeled by $-1$.
Induction
on reflection through the lines  $\{z \in \mathbb{C}:
\mbox{Im} \,z
=\frac{1}{2} +j\},\,j \in \mathbb{Z},$ gives the conformal
mapping
from $C \setminus  i \mathbb{Z} $ onto $U_{\log}$ .

The remaining statements are easy to see.
\hfill $\Box$

\medskip

The second covering is given by the mapping $f_1\circ \tilde f_2:
\mathbb{C}
\setminus i \mathbb{Z} \to \mathbb{C} \setminus\{-1,1\}$ , where $f_1$
is as
before and $\tilde f_2$ is the exponential map, $\tilde f_2(z)=e^{\pi
z}$. Recall that each curve $\alpha_j,\, j=1,2,$ has two lifts
$\alpha_j'$
and
$\alpha_j''$ under $f_1$. For an illustration of the following
proposition see the right part of Figure \ref{Fig3-braids1}.

\begin{prop}\label{prop5a} The mapping
$\tilde f_2$ takes $\mathbb{C} \setminus i\mathbb{Z}$ to
$\mathbb{C}\setminus
\{-1,0,1\}$, and $f_1$ takes the latter set to $\mathbb{C}\setminus
\{-1,1\}$. The composition $\tilde f_2\circ f_1$ is a covering of $\mathbb{C}\setminus \{-1,1\}$ by $\mathbb{C}\setminus
\{-1,0,1\}$ and has period $2i$. For each integer $k$ it takes the interval $(ki,(k+1)i)$ to $(-1,1)$, it maps $\frac{i}{2}+\mathbb{R}$ to $i(0,\infty)$, and  maps $-\frac{i}{2}+\mathbb{R}$ to $i(-\infty,0)$.

The lifts  $\tilde
\alpha_j'$ and
$\tilde \alpha_j''$, $j=1,2,$ of $\alpha_j'$ and
$\alpha_j''$ under $\tilde f_2$ have the following properties. The
lifts
$\tilde
\alpha_j',\, j=1,2,\,$   are contained in the closed left half-plane
and are
directed
downwards (i.e in the direction of decreasing $y$), the lifts  $\tilde
\alpha_j'',\,j=1,2,\,$ are contained in the closed right half-plane and
directed
upwards.
The initial point of $\tilde \alpha_1'$ is  $i + \frac{1}{2} i$, the
initial
point of $\tilde \alpha_1''$ and $\tilde \alpha_2'$ is
 $\frac{1}{2} i$, the initial point of $\tilde \alpha _2''$ is
 $-\frac{1}{2}
 i$. All other lifts are obtained by translation by an integer
 multiple of
 $2i$.
\end{prop}

The straightforward proof is left to the reader.

Consider the curve $\alpha_1^n,\;n\in \mathbb{Z}\setminus
\{0\}$. It
runs $n$ times along the curve $\alpha_1$ if $n>0$, and $|n|$
times
along the curve $\alpha_1$ with inverted orientation if $n<0$. It is
homotopic in $\mathbb{C} \setminus \{-1,1\}$ with base point
$0$ to a curve whose interior (i.e. the complement of its endpoints) is
contained in the open left
half-plane. We call a representative of $a_1^n$ whose interior is in the open left half-plane a standard representative of
$a_1^n$. In the same way we define standard
representatives of $a_2^n$.
For each $k \in
\mathbb{Z}$ the curve $\alpha_1^n$ lifts under $f_1\circ f_2$ to a
curve with
initial point
 $\frac{- i}{2} +  i k$ and terminal point  $\frac{- i}{2}
 +  i k +
 i n$
which is contained in the closed left half-plane and omits the
points in $ i \mathbb{Z}$. Respectively, $\alpha_2^n,\;n\in
\mathbb{Z}\setminus \{0\},$ lifts under $f_1\circ f_2$ to a curve with
initial
point
 $\frac{+ i}{2} +  i k$ and terminal point  $\frac{+ i}{2}
 +  i k -
 i n$
which is contained in the closed right half-plane and omits the
points in $ i \mathbb{Z}$. The mentioned lifts are homotopic
through curves in $\mathbb{C} \setminus  i \mathbb{Z}$ with
endpoints on $i\,\mathbb{R} \setminus  i \mathbb{Z}$ to curves
with interior contained either in the open right half-plane or in the open
left half-plane. Standard representatives of $a_1^n$ and $a_2^n$ lift to
such curves.
\index{$a_j$ ! standard representatives of}

\begin{defn}\label{def4} A simple curve in  $\mathbb{C}
\setminus  i
\mathbb{Z}$ with endpoints on different connected components
of
$i\mathbb{R}
\setminus  i \mathbb{Z}$ is called an elementary slalom curve
if
its interior is
contained in
one of the open half-planes $\{z \in \mathbb{C}: \,\mbox{Re} \, z
>0\}$ or
$\{z \in \mathbb{C}: \,\mbox{Re} \, z <0\}$.

A curve in  $\mathbb{C} \setminus  i \mathbb{Z}$ is called an
elementary half slalom curve if one of the endpoints is
contained in a
horizontal line $\{z \in \mathbb{C}:\mbox{Im} \, z =k+\frac{1}{2}\}$ for
an
integer $k$
and the union of the curve with its (suitably oriented)
reflection in the line
$\{z \in \mathbb{C}:\mbox{Im} \, z = k+\frac{1}{2}\}$
is an elementary slalom curve.

 A slalom curve in  $\mathbb{C}
\setminus  i \mathbb{Z}$ is a curve which can be divided into
a
finite number of elementary slalom curves so that consecutive
elementary slalom curves are contained in different
half-planes.

A curve which is homotopic to a slalom curve (elementary slalom curve,
respectively) in $\mathbb{C} \setminus  i
\mathbb{Z}$ through curves with endpoints in $i\mathbb{R}
\setminus
 i \mathbb{Z}$ is called a homotopy slalom curve (elementary homotopy
 slalom
 curve, respectively).

A curve which is homotopic  to an elementary half-slalom curve in
$\mathbb{C}
\setminus  i
\mathbb{Z}$ through curves with one endpoint in $i\mathbb{R}
\setminus
 i \mathbb{Z}$ and the other endpoint on the line
 $\{z \in \mathbb{C}:\mbox{Im} \,z =k+\frac{1}{2}\}$ for an
integer $k$ is called an elementary homotopy half-slalom curve.
\end{defn}
\index{curve ! slalom curve} \index{curve ! half-slalom curve}
\index{curve ! elementary slalom curve}
\index{curve ! elementary half-slalom curve}
\index{curve ! homotopy slalom curve}
\index{curve ! trivial homotopy slalom curve}

We call an elementary slalom curve non-trivial if its endpoints are
contained in
intervals $(ik,i(k+1))$ and $(i\ell, i(\ell+1))$ with $|k-\ell|\geq 2$.
Note
that the union of an elementary half-slalom curve with its
reflection in
the horizontal line that contains one endpoint is always a non-trivial
elementary slalom curve (i.e. an elementary half-slalom curve is "half" of a
non-trivial elementary slalom curve).
We saw that the lifts under $f_1 \circ f_2$  of representatives of
terms
$_{pb}(a_j^n)_{pb} \in \pi_1^{pb}$ with $|n|\geq 2$ are non-trivial
elementary
homotopy slalom curves.

The lifts of representatives of elements of $\pi_1$ under
$f_1\circ \tilde
f_2$ look different.
The representatives of $(a_j^n)_{tr}$, $|n|\geq 1,$ lift to curves which
make $|n|$ half-turns around a point in $i\mathbb{Z}$ (positive
half-turns if
$n>0$, and negative half-turns if $n<0$). Each such
representative lifts
under $f_1\circ \tilde f_2$ to the composition of $|n|$ trivial
elementary homotopy
slalom curves.

Take any syllable of from (2), i.e. any maximal sequence of at least two
consecutive terms of
the word which enter with equal power being either $1$ or $-1$. Recall
that $d$ denotes the sum of the absolute values of the powers of the terms of
the syllable. There is a representing curve that makes $d$ half-turns
around the interval $[-1,1]$ (positive half-turns, if the exponents of terms
in the
syllable are $1$, and negative half-turns otherwise). The lift under $f_1 \circ \tilde f_2$
of this representative is a non-trivial elementary slalom curve.

We will call homotopy classes of homotopy slalom curves for short slalom classes.

\begin{figure}[h]
\begin{center}
\includegraphics[width=8cm]{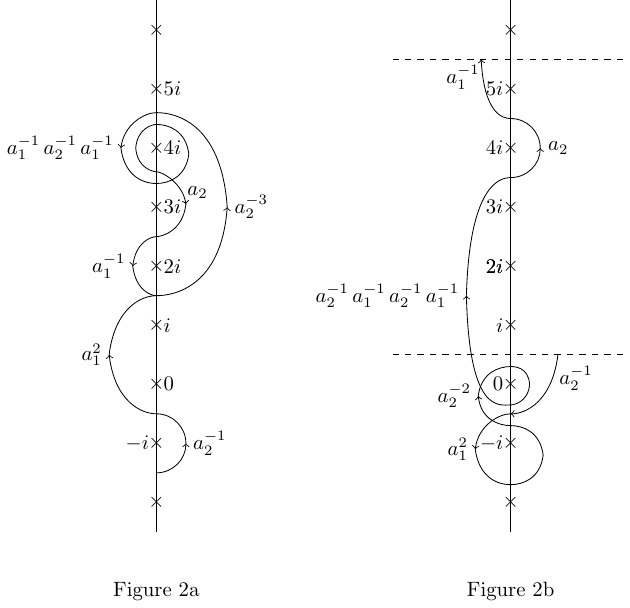}
\end{center}
\caption{Lifts of a curve in $\mathbb{C}\setminus  \{-1,1\}$ under two different coverings by $\mathbb{C}\setminus i\mathbb{Z}$. }\label{Fig3-braids2.pdf}
\end{figure}

Figure \ref{Fig3-braids2.pdf}
shows two slalom curves.  The curve in Figure 10.2a
is a lift under $f_1 \circ f_2$ of a  curve in
$\mathbb{C}\setminus  \{-1,1\}$ with initial  point and terminal
point equal to $0 \in i\mathbb{R}$ representing the word
$
a_2^{-1}\,a_1^2\,
a_2^{-3}\,a_1^{-1}\,a_2^{-1}\,a_1^{-1}\,a_2\,a_1^{-1}\,
$
in the relative fundamental group $\pi_1(\mathbb{C}\setminus
\{-1,1\},i\mathbb{R})$.
The curve in Figure 10.2b is a lift under $f_1 \circ \tilde f_2$ of a
curve with initial and terminal point in $i\mathbb{R} \setminus \{0\}$
representing the same word. For each elementary slalom curve or half-slalom curve in Figure  \ref{Fig3-braids2.pdf}   we indicate
the element of $\pi_1$ which lifts under the considered mapping to the respective elementary slalom class or half-slalom class.

Non-trivial elementary slalom classes and elementary half-slalom classes have positive
extremal length (in the sense of Definition \ref{def4.1})
which can be effectively estimated from above and from below.
In this sense homotopy classes of curves in $\mathbb{C}\setminus \{-1,1\}$ whose lifts
under $f_1 \circ f_2$ or $f_1 \circ \tilde f_2$  are
non-trivial elementary slalom classes or elementary half-slalom classes serve as building blocks. We obtained the following fact. For an element of a relative fundamental group of the
twice
punctured complex plane the pieces representing syllables of form (1)
with $pb$
boundary values lift to non-trivial elementary slalom classes
under
$f_1\circ f_2$, while the pieces representing syllables of form (2)
with $tr$
boundary values  lift to non-trivial elementary slalom classes
under
$f_1\circ \tilde f_2$.
For a word in $\pi_1$ containing a syllable that is a singleton we may select a piece of a representing curve that lifts under $f_1\circ f_2$ to a non-trivial elementary homotopy half-slalom curve. These facts will be
used to obtain a lower bound for the extremal length of elements of the
relative fundamental groups.
Recall that the extremal length of a homotopy class of curves is equal
to the
extremal length of the class of their lifts under a holomorphic
covering.

The method to obtain the upper bound is roughly to patch together in a
quasiconformal way the holomorphic mappings of rectangles representing
syllables and to perturb the obtained quasiconformal mapping to a holomorphic mapping.

We conclude this section with  relating the two explicitly given
coverings of
$\mathbb{C}\setminus \{-1,1\}$ by $\mathbb{C} \setminus i\mathbb{Z}$ to
the
universal covering of the twice punctured plane.
 Denote by ${\sf P}:U\to \mathbb{C}
\setminus  i\mathbb{Z}$ the
universal covering of $\mathbb{C}
\setminus  i\mathbb{Z}$. Geometrically the set $U$ is obtained in the
following
way. Consider the left half-plane $\mathbb{C}_{\ell}$ and call it the Riemann surface of generation $0$. The first step is the following. For each
integer $k$  we take a copy of the right
half-plane $\mathbb{C}_r$  and glue it to $\mathbb{C}_{\ell}$ along the
interval $(ki,(k +1)i)$ (using the identity mapping for gluing). We obtain a Riemann
surface with a natural projection to
$\mathbb{C} \setminus i \mathbb{Z}$ called the Riemann surface of first generation. At the second step we consider the
Riemann surface of first generation and proceed similarly by gluing
the left half-plane along each copy
of
intervals
$(ki, (k + 1)i)$ which is an end of the Riemann
surface of first generation. By
induction
we obtain the universal covering of $\mathbb{C} \setminus i
\mathbb{Z}$.
\index{$U$}

Denote by $\widetilde{\mathbb{C}}_{\ell}$ the lift under $\sf{P}$ of the left
half-plane to the
first sheet of $U$ over $\mathbb{C}_{\ell}$.
Let $\mathbb{C}_{\ell}^{Cl}$ be the closure of
$\mathbb{C}_{\ell}$
in $\mathbb{C} \setminus i \mathbb{Z}$,  let
$\widetilde{\mathbb{C}}_{\ell}^{Cl}$ be	the closure of
$\widetilde {\mathbb{C}}_{\ell}$  in the
universal
covering of $\mathbb{C} \setminus i \mathbb{Z}$ and let
${(ki, (k + 1)i)}{\;\widetilde{}} \subset \widetilde{\mathbb{C}}_{\ell}^{Cl}$
be
the
lift of the intervals $(ki, (k + 1)i)$.
\index{$\widetilde{\mathbb{C}}_{\ell}$} \index{$\widetilde{\mathbb{C}}_{\ell}^{Cl}$}

For each $k$ we denote by $\mathfrak{D}^{\ell}_k$  the half-disc
$\{z \in \mathbb{C}_{\ell}: |z-i(k+\frac{1}{2})|<\frac{1}{2}\}$ with
diameter
$(ik,i(k+1))$ which is contained in the left half-plane and by
$\rho_k$ the
respective open half-circle $\{z \in \mathbb{C}_{ \ell}:
|z-i(k+\frac{1}{2})|=\frac{1}{2}\}$. We call the $\rho_k$ half-circles of generation $0$.
\index{$\mathfrak{D}^{\ell}_k$} \index{$\rho_k$}

\begin{lemm}\label{lemm3} There is a conformal mapping
$\varphi : U
\to \mathbb{C}_{\ell}$ that takes
$\widetilde{\mathbb{C}}_{\ell}$ onto $\mathbb{C}_{\ell} \setminus
\bigcup
_{k=-\infty}^{\infty} \overline{\mathfrak{D}^{\ell}_k}$ so that for each $k$ the set
${(ki, (k + 1)i)}{\;\widetilde{}}$ is mapped onto $\rho_k$ under the continuous extension of $\varphi$.
\end{lemm}
\noindent {\bf Proof.} Consider the half-strip \index{$\mathfrak{H}_0$}
$\mathfrak{H}_0\stackrel{def}{=}\{z \in {\mathbb{C}}_{\ell}:
0<\mbox{Im} \, z<1\}$.
By a theorem of Caratheodory (\cite{Go}, Chapter II.3, Theorem 4 and Theorem $4'$,  and also \cite{Ma} , Theorem 2.24 and Theorem 2.25) each conformal mapping that  takes the lift
$\widetilde{\mathfrak{H}}_0$  of $\mathfrak{H}_0$ onto the set $\mathfrak{H}_0\setminus
\overline{\mathfrak{D}^{\ell}_0}$, extends continuously to a
homeomorphism
between closures. Let
$\varphi_0$  be the conformal mapping from $\widetilde{\mathfrak{H}}_0$   onto the set $\mathfrak{H}_0\setminus
\overline{\mathfrak{D}^{\ell}_0}$ whose extension to the boundary
takes the
point $\tilde 0$ over $0$ to $0$, the point $\tilde 1$ to $1$ and the
point
$\tilde{\infty}$ (considered as prime end of
$\widetilde{\mathfrak{H}}_0$)   to
$\infty$. The extension of the conformal mapping to the boundary takes
the lift
${\{z \in {\mathbb{C}}_{\ell}: \mbox{Im} \, z=1\}}{\;\widetilde{}}$
of ${\{z
\in
{\mathbb{C}}_{\ell}: \mbox{Im} \, z=1\}}$
onto ${\{z
\in
{\mathbb{C}}_{\ell}: \mbox{Im} \, z=1\}}$, it takes the lift ${\{z \in
{\mathbb{C}}_{\ell}: \mbox{Im} \, z=0\}}{\;\widetilde{}}$ of  ${\{z \in
{\mathbb{C}}_{\ell}: \mbox{Im} \, z=0\}}$ onto ${\{z \in {\mathbb{C}}_{\ell}:
\mbox{Im} \, z=0\}}$, and maps the lift ${(0, i)}{\;\widetilde{}}$ of $(0,i)$ onto $\rho_0$. By
induction we
extend $\varphi_0$ by Schwarz's Reflection Principle across the
half-lines
${\{z \in {\mathbb{C}}_{\ell}: \mbox{Im} \, z=k\}}{\;\widetilde{}}$ which are the lifts of the respective half-lines in ${\mathbb{C}}_{\ell}$. We obtain a conformal
mapping of $\widetilde{\mathbb{C}}_{\ell}$ onto $\mathbb{C}_{\ell}
\setminus
\bigcup _{k=-\infty}^{\infty} \overline{\mathfrak{D}^{\ell}_k}$, denoted again by $\varphi_0$, whose extension
to $\widetilde{\mathbb{C}}_{\ell}^{Cl}$  takes the segment
${(ik,
i(k+1))}{\;\widetilde{}}$ onto the half-circle $\rho_k$ for each integer number $k$. (See Figure \ref{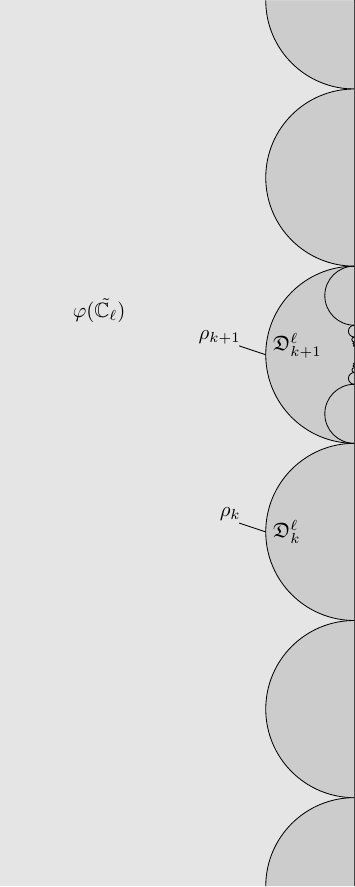}.)
\index{$\varphi_0$} \index{$\varphi$}

Schwarz's Reflection Principle across each segment  ${(ik,
i(k+1))}{\;\widetilde{}}$ provides an extension of the conformal mapping  $\varphi_0$ to the Riemann surface of first generation. Note that for each $k_0$ the image of the half-circles $\rho_{k}$, $k\neq k_0,$ under reflection of
$\mathbb{C}_{\ell} \setminus \bigcup _{k=-\infty}^{\infty}
\overline{\mathfrak{D}^{\ell}_k}$ across $\rho_{k_0}$
are half-circles with diameter on the imaginary axis. We call them half-circles of the first generation. We obtained a conformal mapping of the Riemann surface of first generation to the unbounded connected component of the left half-plane with the half-circles of first generation removed.
\index{Schwarz ! reflection principle}

Apply the reflection principle by induction to the Riemann surface of generation $n$ and all copies of intervals $(ik,i(k+1))$ in its "boundary". We obtain the Riemann surface of generation $n+1$, half-circles of generation  $n+1$ and a conformal mapping of this Riemann surface onto the unbounded connected component of the left half-plane with the half-circles of generation $n+1$ removed.

\begin{figure}[h]
\begin{center}
\includegraphics[width=4cm]{Fig3-braids3.pdf}
\end{center}
\caption{A conformal image  of the Riemann surface of  generation $2$     }\label{Fig3-braids3.pdf}
\end{figure}


The supremum of the diameters of the half-circles of generation $n$ does not exceed $2^{-n}$. Indeed, the half-circles of generation $n$ are obtained as follows. Take a half-circle $\rho$ of generation $n-1$ and reflect all other half-circles of generation $n-1$ across it. Each half-circle of generation $n-1$ different from $\rho$ has diameter on one side of the diameter of  $\rho$.
Hence, the image of each half-circle of generation
$n-1$ different from $\rho$ under reflection across $\rho$ is contained in a quarter disc, hence has diameter not exceeding half of the diameter of $\rho$. The statement is obtained by induction. Since the supremum of the diameters of the half-circles of generation $n$  tends to zero for $n\to \infty$, we obtain in the limit a conformal mapping $\varphi$ from
$U$ onto ${\mathbb{C}}_{\ell}$.  \hfill $\Box$

Consider the inverse of the mapping $\varphi_0: \widetilde{\mathbb{C}}_{\ell}\to\mathbb{C}_{\ell}
\setminus
\bigcup _{k=-\infty}^{\infty} \overline{\mathfrak{D}^{\ell}_k}$. By construction the composition ${\sf P}\circ \varphi_0^{-1}$ of $\sf P$ with its inverse has the following property: the restriction to $ (\mathfrak{H}_0+ik)\setminus (\mathfrak{D}_0+ik)$ of
${\sf P}\circ \varphi_0^{-1}:\mathbb{C}_{\ell}
\setminus
\bigcup _{k=-\infty}^{\infty} \overline{\mathfrak{D}^{\ell}_k}\to{\mathbb{C}}_{\ell}$  is a conformal mapping onto $\mathfrak{H}_0+ik$ that takes $ik$ to $ik$, $i(k+1)$ to $i(k+1)$, and $\infty$ to $\infty$. Hence,  ${\sf P}\circ \varphi_0^{-1}$ commutes with translation by $i$, i.e.
\begin{align}\label{eq3braids.102}
{\sf P}\circ \varphi_0^{-1}(z+i)= {\sf P}\circ \varphi_0^{-1}(z)+i \;\;\mbox{ for} \;\;z\in \mathbb{C}_{\ell}
\setminus
\bigcup _{k=-\infty}^{\infty} \overline{\mathfrak{D}^{\ell}_k}\,.
\end{align}

\section{Building blocks and their extremal length}\label{sec:3-braids2b}

Consider an elementary slalom curve in the left half-plane with initial
point in the interval $(ik, i(k + 1))$ and with terminal point in the interval $(i j,i(j + 1))$.
Suppose that
$|k-j| \geq 2$, i.e.  the slalom curve is non-trivial.
After a
translation by a (half-integer or integer) multiple of $i$ we
obtain  a
curve
that has endpoints in the intervals $( -i(M +1),-
iM)$ and $(iM, i(M +1))$ with $M = \frac{|k-j|-1}{2}\geq
\frac{1}{2}$.
The homotopy class of the  slalom curve has the same extremal length as the translated class.

Let $M$ be any positive number.
Denote a curve with interior in the open left half-plane, with initial point in $( -i(M +1),-
iM)$ and terminal point in $(iM, i(M +1))$, by $\gamma_{\ell,M}$, and the curve with inverted orientation by $\gamma_{\ell,M}^{-1}$.
Notice that with the afore mentioned value $M = \frac{|k-j|-1}{2}$ the curve $\gamma_{\ell,M}$ is not an elementary slalom curve for
even $|k-j|$, but the curve $\gamma_{\ell,M} + iM$ is always an elementary
slalom curve. It will
be convenient to work  with the normalized curve $\gamma_{\ell,M}$
for computing the extremal length.
\index{$\gamma_{\ell,M}$} \index{$\gamma_{\ell,M}^{-1}$} \index{$\gamma^*_{\ell,M}$} \index{$\gamma_{r,M}$}
\index{$\gamma^*_{r,M}$}

For any integer or half-integer number $M\geq \frac{1}{2}$ we assign to the curve $\gamma_{\ell,M}$ the
class
$\gamma^*_{\ell,M}$ of curves which are homotopic to
$\gamma_{\ell,M}$
through curves in $\mathbb{C} \setminus i(\mathbb{Z}-M)$ with initial
point
in
$(-i(M + 1), -iM)$ and terminal point in $(iM, i(M+ 1))$.
The class assigned to $\gamma_{\ell,M}^{-1}$ in this way is denoted
by $(\gamma_{\ell,M}^*)^{-1}$.
Respectively, let  $\gamma_{r,M}$ be a curve with
interior  contained
in the right half-plane ${\mathbb{C}_{r}}$ with initial point  in the
interval
$(-i(M+1),-iM)$ and terminal point in $(iM,i(M+1)$. Consider the
class
$\gamma^*_{r,M}$ of curves which are homotopic to
$\gamma_{r,M}$
through curves in $\mathbb{C} \setminus i(\mathbb{Z}-M)$ with initial
point
in
$(-i(M + 1), -iM)$ and terminal point in $(iM, i(M+ 1))$.

We will call a homotopy class of elementary slalom curves in $\mathbb{C} \setminus i\mathbb{Z}$ an elementary slalom class with parameter $M$,
if after a translation by an imaginary integer or half-integer it becomes equal to
$(\gamma_{\ell,M}^*)^{\pm 1}$ or $(\gamma_{r,M}^*)^{\pm 1}$ .
A homotopy class  of elementary half-slalom curves in $\mathbb{C} \setminus i\mathbb{Z}$ with endpoints in  a real line $\frac{i}{2}+ i k+\mathbb{R}$ ($k\in \mathbb{Z}$) and a component of $i\mathbb{R}\setminus i\mathbb{Z}$
is called an elementary half-slalom class with parameter $M$,
if for a representative $\gamma$ the union $\bigcup_{\gamma\in \Gamma}(\gamma\cup \gamma_{\rm refl})$ represents an elementary slalom class with parameter $M$.
Here $\gamma_{\rm refl}$ is the reflection of $\gamma$ through the real line $\frac{i}{2}+ik +\mathbb{R}$. Notice that for a given $M$ the extremal lengths of all four classes
$(\gamma_{\ell,M}^*)^{\pm 1}$ or $(\gamma_{r,M}^*)^{\pm 1}$ are equal. The same remark concerns the respective half-slalom classes.
\index{elementary slalom class with parameter $M$}

\begin{lemm}\label{lemxx}
The extremal length of an elementary slalom class $\gamma^*$ with parameter $M$ equals
\begin{align}\label{eq6}
\Lambda(\gamma^*)&= \frac{4}{\pi}\log(\sqrt{M}+\sqrt{M+1})\,.
\end{align}
The extremal length of an elementary half-slalom class $\tilde\gamma^*$ with parameter $M$ equals
\begin{align}\label{eq6'}
\Lambda(\tilde\gamma^*)&= \frac{2}{\pi}\log(\sqrt{M}+\sqrt{M+1})\,.
\end{align}
\end{lemm}
\noindent {\bf Proof.}$\,$  It is enough to prove that
$\;\Lambda(\gamma^*_{\ell,M})=\frac{4}{\pi}\log(\sqrt{M}+\sqrt{M+1})
\,.\;\,$
Let $\,$ $\varphi:U\to \mathbb{C}_{\ell}$ be the mapping considered in the previous section.
For a rectangle $R$ we take a holomorphic mapping $f:R\to \mathbb{C}\setminus i\mathbb{Z}$,
that represents $\gamma^*_{\ell,M}+iM$. Consider its lift $\tilde f:R\to U$ to a mapping to the universal covering $U$ of $\mathbb{C}\setminus i\mathbb{Z}$. Define the mapping $G(\zeta)= -M-i\varphi(\zeta),\; \zeta \in U,$ from $U$ onto the upper half-plane.
The composition $G\circ \tilde{f} $
maps $R$ into $\mathbb{C}_+$, and its extension to the closure of $R$ takes the horizontal sides to the circles $\{|z\pm (M+\frac{1}{2})|=\frac{1}{2}\}$.
Lemmas \ref{lemm5} and  \ref{lem4.1'} imply the inequality $\lambda(R)\geq \frac{4}{\pi}\log(\sqrt{M}+\sqrt{M+1})$.
(Compare also with the proof of the statement in Example 1 of Section \ref{sec:4.1b}.)

Take the conformal mapping $f_0$ of a rectangle $R_0$ with $\lambda(R_0)= \frac{4}{\pi}\log(\sqrt{M}+\sqrt{M+1})$ onto $\mathbb{C}_+\setminus \{|z\pm  (M+\frac{1}{2})|\leq\frac{1}{2}\}$ that maps the horizontal sides to the two boundary circles $\{|z\pm  (M+\frac{1}{2})|=\frac{1}{2}\}$. Precompose this mapping with the inverse of $G$, and project the obtained mapping to
$\mathbb{C}\setminus i\mathbb{Z}$. We obtain a mapping that represents $\gamma^*_{\ell,M}+iM$. Hence,  $\Lambda(\gamma^*_{\ell,M})=     \Lambda(\gamma^*_{\ell,M}+iM)= \frac{4}{\pi}\log(\sqrt{M}+\sqrt{M+1})$.

The statement for the half-slalom class $\tilde{\gamma}^*$ follows in the same way.
\hfill $\Box$

\begin{lemm}\label{cor2}  The extremal length of an element of each of
the relative fundamental groups
$\pi_1^{pb}$, $\pi_1^{tr}$, $^{tr}\pi_1^{pb}$, and  $^{pb}\pi_1^{tr}$
is
realized on a locally conformal mapping of a rectangle representing the
element. The extremal mapping extends locally conformally across the open
horizontal sides of the rectangle.

The extremal length of a conjugacy class of elements of the
fundamental group of the twice punctured plane is realized on a locally
conformal mapping of an annulus into the twice punctured plane.
\end{lemm}

\noindent {\bf Proof of Lemma \ref{cor2}.} For this proof it is more convenient to consider the fundamental group of the complex plane punctured at $0$ and $1$ rather than at $-1$ and $1$, and to consider the upper half plane $\mathbb{C}_+$ as universal covering of $\mathbb{C} \setminus \{0,1\}$.

Each holomorphic mapping of a
rectangle into $\mathbb{C} \setminus \{0,1\}$ that represents an element of one of the relative fundamental groups lifts to the universal covering. The lift
takes the
open horizontal sides of the rectangle to certain half-circles with
diameter on
the real axis (maybe, after a conformal self-map of the half-plane)
and
represents the class of curves that are contained in the half-plane and join the two
half-circles.
As in the proof of Lemma \ref{lem4.1'} the extremal length is realized on a conformal
mapping of a rectangle
onto the half-plane with two deleted half-discs such that the horizontal sides are mapped onto the half-circles. Composing with the
covering map
we obtain a locally conformal mapping that extends locally conformally across the open horizontal sides of the rectangle.

We will now prove the statement concerning conjugacy classes of elements of the fundamental group of $\mathbb{C} \setminus
\{0,1\}$ with base point.

Recall that each element of the fundamental group corresponds to a covering
transformation.  Each covering transformation is a holomorphic
self-homeomorphism of the universal covering $\mathbb{C}_+$  that
extends to a M\"obius transformation $T(z)=\frac{az+b}{cz+d}$ of
the Riemann sphere with integer coefficients $a,\,b,\,c,\,d\, ,$ such that
$ad-bc=1$. Moreover, $T$ is either parabolic (i.e. $T$ has one fixed
point and is conjugate to the mapping $z \to z+b'$ for a constant
$b'$), or $T$ is hyperbolic (i.e., T has two fixed points and is
conjugate to $z \to\kappa z$ for a positive real number $\kappa$), or
$T$ is elliptic (i.e., T has two complex fixed points symmetric with
respect to the imaginary axis and is conjugate to $z \to e^{i\theta}z$
for a real number $\theta$). See \cite{Lehn}, Chapter II, 9D and 9E.

Let $\hat a$ be a conjugacy class of
elements of the fundamental group of $\mathbb{C} \setminus
\{0,1\}$ and let $a$ be an element of the fundamental group
that represents $\hat a$. Denote by $T_a$
the covering transformation corresponding to $a$, and by $\langle
T_a\rangle$ the subgroup of the group of covering transformations
generated by $T_a$. Then the quotient $\mathbb{C}_+ \diagup
\langle T_a\rangle$ is an annulus. It has extremal length $0$ if $T_a$
is parabolic or elliptic and has positive extremal length if $T_a$ is
hyperbolic. If $f:A \to \mathbb{C} \setminus\{0,1\}$ is a holomorphic
mapping of an annulus $A$ to  $\mathbb{C} \setminus\{0,1\}$ that
represents $\hat a$, then $f$ lifts to a holomorphic map of $A$ into
$\mathbb{C}_+ \diagup \langle T_a\rangle$. The lift represents the
class of  a generator of the fundamental group of $\mathbb{C}_+
\diagup \langle T_a\rangle$. The corollary follows from Lemma \ref{lemm6}.
\hfill $\Box$

\medskip

The following proposition considers elements of the fundamental group
$\pi_1$, that are represented by elementary words (i.e. by words consisting of a single syllable), and gives upper and lower bounds for
the
extremal length of the corresponding classes of curves in the relative
fundamental groups with $pb$, $tr$ or mixed boundary values.

\begin{prop}\label{prop4b} The following statements hold for
elements of the relative fundamental groups that are represented by elementary words.
\begin{itemize}
\item[$1$.] Each syllable $\mathfrak{s}$ of form $(1)$ and degree $d \geq2$ with $pb$ horizontal boundary values lifts under $f_1\circ f_2$ to an elementary slalom class with parameter $M=\frac{d-1}{2}$.
The following equality for the extremal length holds
\begin{align*}
\Lambda(_{pb}(\mathfrak{s})_{pb})= \frac{2}{\pi} \log(d+\sqrt{d^2-1})\,.
\end{align*}
\item [$2$.] Each syllable $\mathfrak{s}$ of form $(2)$ and degree $d\geq 2$ with $tr$ horizontal boundary values lifts under $f_1\circ{\tilde f}_2$ to an elementary slalom class with parameter $M=\frac{d-1}{2}$.
    The following equality for the extremal length holds
    \begin{align*}
\Lambda(_{tr}(\mathfrak{s})_{tr})= \frac{2}{\pi} \log(d+\sqrt{d^2-1})\,.
\end{align*}
\item [$3$.] Any syllable $\mathfrak{s}$ of degree $d\geq 1$ with mixed boundary values lifts under $f_1\circ f_2$ to an elementary half-slalom class with parameter $M=d-\frac{1}{2}$. For the extremal length the equalities
    \begin{align*}
\Lambda(_{pb}(\mathfrak{s})_{tr})= &\frac{1}{\pi} \log(2d+\sqrt{4d^2-1})\,,\\
\Lambda(_{tr}(\mathfrak{s})_{pb})= &\frac{1}{\pi} \log(2d+\sqrt{4d^2-1})
\end{align*}
hold.
\end{itemize}
\end{prop}
\noindent {\bf Proof of Statement 1.}
Assume first that  $n=d>0$. For any integer number $k$
the class $_{pb}(a_1^n)_{pb}$ lifts
under $f_1 \circ f_2$ to an elementary homotopy slalom class with
initial point in ($i(k-1),ik)$ and terminal point in $i(k-1+d), i(k+d))$. There is a representative in the closed left half-plane.
The slalom class has parameter $M=\frac{d-1}{2}$.
A lift of  the class $_{pb}(a_1^{-d})_{pb}$ is obtained by inverting the orientation.
\begin{figure}[H]
\begin{center}
\includegraphics[width=11cm]{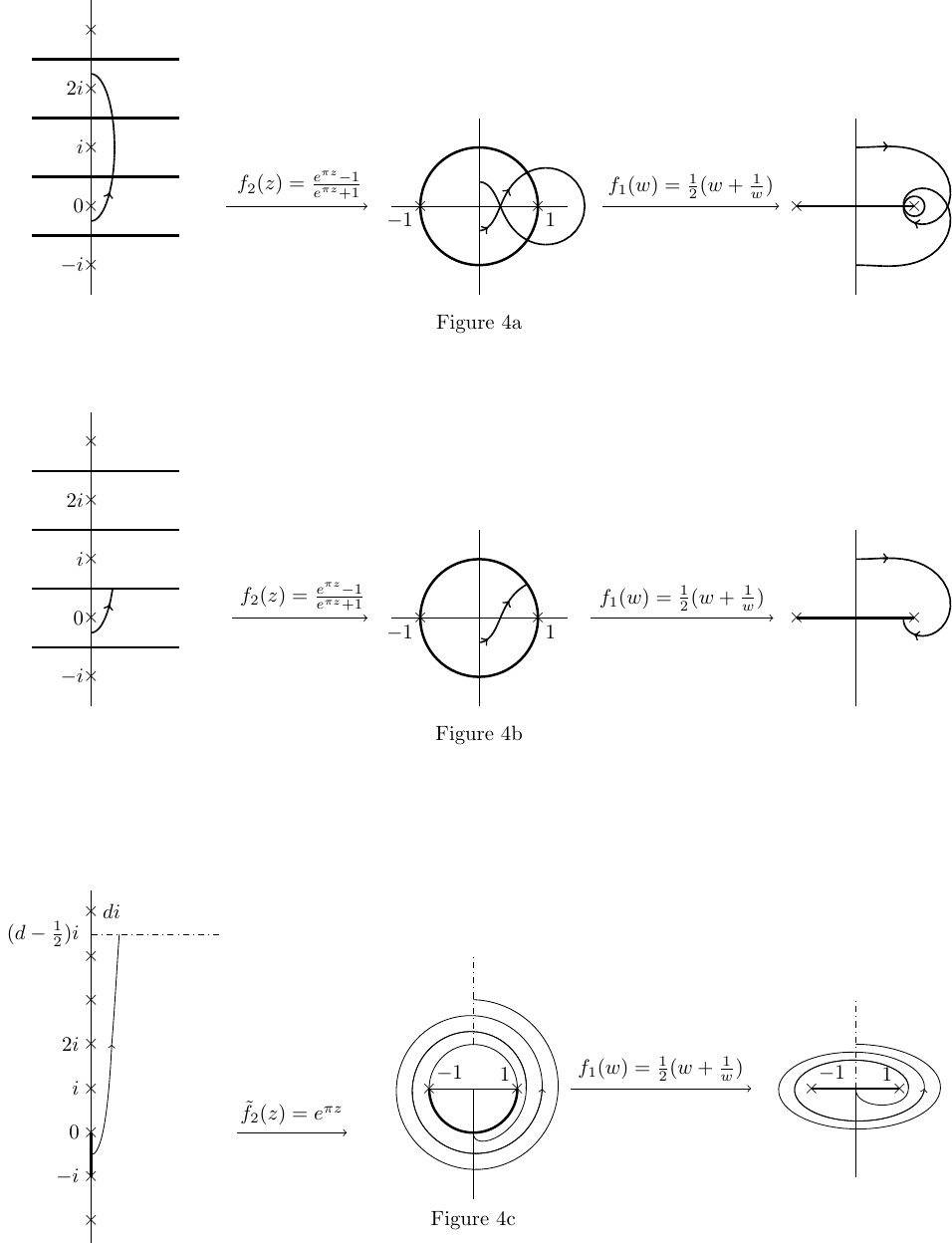}
\end{center}
\caption{Some elementary slalom curves and their projections under different coverings}\label{Fig3-braids4.pdf}
\end{figure}
For $n=d$ the class $_{pb}(a_2^n)_{pb}$ lifts under $f_1\circ f_2$ to an elementary homotopy slalom class, represented by a curve in the closed right half-plane with
initial point in $\big(i(k-1), ik\big)$ and terminal point in $\big(i(k-1-d),i(k-d)\big)$. The class has parameter $M=\frac{d-1}{2}$.
A lift of the class $_{pb}(a_2^{-d})_{pb}$ is obtained by inverting the orientation.
See Figure 4a for an elementary slalom
curve with $d=3$ that is the lift under $f_1 \circ f_2$ of a
representative of $_{pb}(a_2^{-3})_{pb}$ with initial point not equal
to $0$.

The estimate for the extremal length follows
from formula \eqref{eq6}, since
\begin{align}\label{eq3braids.100}
\frac{2}{\pi}\log(\sqrt{M}+\sqrt{M+1})^2&=\frac{2}{\pi}\log(M+M+1+2\sqrt{M(M+1)})\nonumber\\
=\frac{2}{\pi}\log(d+2\sqrt{\frac{d-1}{2} \cdot\frac{d+1}{2}})&=\frac{2}{\pi}\log(d+\sqrt{d^2-1})\,.
\end{align}


\noindent {\bf Proof of Statement 3 for syllables of the form $a_j^n$, $|n|\geq 1$, with mixed boundary values.} Let $n=d>0$. For each $k\in \mathbb{Z}$ the class $_{pb}(a_1^n)_{tr} \in\,^{pb}\pi_1^{tr}$
lifts under the covering $f_1 \circ f_2$ to a class of elementary
homotopy half-slalom curves with  initial point in
$(ik,i(k+1))$ and terminal point in
$i(k+d+\frac{1}{2})+\mathbb{R}$.  The half-slalom class has parameter $M=d-\frac{1}{2}$, and can be represented by a curve contained in the closed left half-plane.
In the remaining cases ($n<0$, or $a_1$ replaced
by $a_2$, or both) a lift of the curve can be obtained
from the present one
by suitable choices of inverting orientation or
reflection in the imaginary line or both.
See Figure 4b for $_{pb}(a_2^{-1})_{tr}$ with
$d=1$.

The estimate for the extremal length follows by Lemma \ref{lemxx},
\eqref{eq6'}.

The case $_{tr}(a_j^n)_{pb}$ is treated symmetrically.

\medskip

\noindent {\bf Proof of Statement 3 for syllables of form (2) with mixed boundary values.}
Recall that  for each integer $k$ the mapping $f_1 \circ \tilde f_2$ takes $(ik,i(k+1))$ onto $(-1,1)$, and $\frac{i}{2} +ki +\mathbb{R}$ onto $i(0,\infty)$, if $k$ is even and onto  $-i(0,\infty)$, if $k$ is odd.
Hence, for each $k\in \mathbb{Z}$ the class
$_{tr}(\mathfrak{s})_{pb}=\,_{tr}(a_1 \,
a_2 \ldots)_{pb}$ lifts under $f_1 \circ \tilde f_2$  to an elementary homotopy half-slalom class
in the closed left half-plane
with initial point in $((2k-1)i,2ki)$
and terminal point in
$i(2k-d-\frac{1}{2}) + \mathbb{R}$, and it lifts also under $f_1 \circ \tilde f_2$  to an elementary homotopy half-slalom class
in the closed right half-plane
with initial point in $((2ki,(2k+1)i)$
and terminal point in
$i(2k+d+\frac{1}{2}) + \mathbb{R}$.
Here $d$ is the number of letters in
$\mathfrak{s}$. See also
Proposition \ref{prop5a}, as well as Figure \ref{Fig3-braids1}.
The class has parameter $M= d-\frac{1}{2}$.

For each $k\in \mathbb{Z}$ the class $_{tr}(\mathfrak{s})_{pb}=\,_{tr}(a_2 \,
a_1 \ldots)_{pb}$ lifts under $f_1 \circ \tilde f_2$  to an elementary homotopy half-slalom class
in the closed right half-plane with initial point in
$ ((2k-1)i,2ki)$ and
with terminal point in
$ i(2k+d-\frac{1}{2})+\mathbb{R}$, and it also lifts under $f_1 \circ \tilde f_2$  to an elementary homotopy half-slalom class
in the closed left half-plane with initial point in
$ (2ki,(2k+1)i)$ and
with terminal point in
$ i(2k-d+\frac{1}{2})+\mathbb{R}$.
Again $d$ is the
degree of the elementary word.
See Figure 4c for $\,_{tr}(a_2 \,
a_1 a_2 a_1 a_2)_{pb}$ with number of half-turns equal to $d=5$.


If all generators enter with power $-1$ the orientation is reversed.
In all cases we obtain elementary half-slalom curves with parameter $M=d-\frac{1}{2}$. The estimate of the extremal length follows from
\eqref{eq6'}.

\noindent{\bf Proof of Statement 2 (Syllables of form (2) with $tr$ boundary values).} The proof of Statement 2 is related to the proof of Statement 1 in the same way
as the proof of Statement 3 for syllables of form (2) is related to the proof of Statement 3 for syllables of form (1). We
leave it to the reader.

The proposition is proved. \hfill $\Box$

\medskip

\section{The extremal length of words in $\pi_1$. The lower bound}\label{sec:3-braids2}
Take an element of a relative fundamental group
of the twice punctured complex plane.
We will break representing curves into elementary pieces. The pieces will be
chosen so that we have a good lower bound of the extremal length of the
homotopy class of each piece. Ahlfors' Theorem B and Corollary \ref{cor4.0} will give a lower
bound for the extremal length of the element of the relative
fundamental group by the sum of the extremal lengths of the classes of
the elementary pieces.

We will use the following terminology.
Let $R$ be a rectangle. By a curvilinear subrectangle $R'$ of $R$ we mean a simply
connected domain in $R$ whose boundary looks as follows. It consist of two
vertical segments, one in each vertical side of $R$, and either two disjoint
simple arcs with interior contained in $R$ and endpoints on
opposite open vertical sides of $R$, or one such arc and a horizontal side
of the rectangle $R$. The rectangle $R$ itself may also be considered as a
curvilinear subrectangle of $R$. The vertical segments in the boundary of the curvilinear rectangle $R'$ are
considered as its vertical curvilinear sides, the remaining curvilinear sides are considered as the horizontal curvilinear sides of $R'$.
Recall that each curvilinear rectangle admits a conformal mapping onto a true rectangle whose
continuous  extension to the boundary takes curvilinear horizontal sides to horizontal sides and curvilinear vertical sides to vertical sides.
\index{curvilinear subrectangle}

We will use the symbol $\#$ for the boundary values if we are
free to choose either $pb$ or $tr$ boundary values.
Let $v_1$ and $v_2$ be words in $\pi_1$. For the word $_{\#}(v)_{\#} =
\,_{\#}(v_1 \,v_2)_{\#} \in \,^{\#}\pi_1\,^{\#}$ we say
that there is a sign change of exponents for the pair $(v_1,v_2)$ if
the sign of
the exponent of the last term of $v_1$ is different from the sign of
the
exponent of the first term of $v_2$.

\medskip

The following lemma is a key part for the proof of the lower bound of the
extremal length of slalom classes, respectively, of elements of the relative fundamental group of $\mathbb{C}\setminus \{-1,1\}$.

\begin{lemm}\label{lemm15}
Consider a reduced word  $w \in \pi_1$.
Take a locally conformal mapping $g$ from a neighbourhood of the closure $\bar R$ of a rectangle $R$ into the twice punctured plane $\mathbb{C} \setminus\{-1,1\}$, such that the restriction $g|R$
represents the word $w$  with $pb$, $tr$ or mixed
boundary values.

The following statements hold.
\begin{itemize}
\item[1.]  If $w$ has at least two terms, then $R$ can be divided into a collection of pairwise disjoint
curvilinear subrectangles $R_j$ of $R$ which are in bijection to the
terms of $w$ in such a way
that the restriction of g to $R_j$ represents the $j$-th
term with $pb$ boundary values if the term is not at
the (right or left) end of the word and, possibly, with mixed
boundary values if the term is at the end of the word.
\item [2.]
If $w$ has at least two syllables and among them there is at least one syllable of form $(1)$ then $R$
contains a collection of mutually disjoint curvilinear subrectangles
$R_{j}'$  of $R$, $ j \in J,\,$
which are in bijective correspondence to the collection of
syllables $\mathfrak{s}_{j}$ of $w$ that are not of the form
$(1)$ such that either the
restriction of $g$ to $R_{j}'$ represents the respective
syllable $\mathfrak{s}_{j}$ with mixed boundary values,
or the restriction represents an elementary
word of form $(2)$ with one more letter than
$\mathfrak{s}_{j}$ and mixed
boundary values.

\item[3.] If the word has at least two syllables and all syllables $\mathfrak{s}_j,\, j=1,\ldots,N,$ of the word $w$  are of form $(2)$ or $(3)$ then there is a
division of $R$ into curvilinear subrectangles $R_j'$ of $R$, $ j=1,\ldots,N,\,$
such that
for $j=1,\ldots,N-1,\,$ the restriction $g\mid R_j'$  represents the
$j$-th
syllable with mixed boundary values. In the same way there is a
decomposition
of $R$ into curvilinear subrectangles $R_j''$ of $R$, $ j=1,\ldots,N,$ such
that for
$j=2,\ldots,N,$ the restriction $g\mid R_j''$  represents the $j$-th
syllable
with mixed boundary values.
\end{itemize}
\end{lemm}

For the proof of Statements 2 and 3 of Lemma \ref{lemm15} and we need Statements 2 and 3 of the following lemma.
\begin{lemm}\label{lemm15'}
\begin{itemize}
\item[ $1.$] Let $\hat{w}$ be a conjugacy class of elements of $\pi_1$, $\hat{w}\neq\widehat{a_j^k}$ for $j=1$ or $j=2$ and integers $k$. Take a cyclically reduced word $w$ that represents the conjugacy class $\hat w$, and a mapping $g:A\to \mathbb{C}\setminus \{-1,1\}$ of an annulus $A$ to $ \mathbb{C}\setminus \{-1,1\}$, that extends to a locally conformal mapping on a neighbourhood of the closure $\bar A$
and represents $\hat w$. Then there exists a smooth arc $L^0\subset \bar A$ with endpoints on different boundary components of $A$, such that $g\mid A\setminus L^0$ represents $_{pb}w_{pb}$.
\item[ $2.$] Suppose $g:R\to \mathbb{C}\setminus\{-1,1\}$ extends to a locally conformal mapping in a neighbourhood of the closure $\bar R$ of the rectangle $R$ and represents  $_{\#}({a_{j_1}^{n_1}\,a_{j_2}^{n_2}})_{\#}$, where $a_{j_k}$ are different standard generators of $\pi_1$, and the integer numbers $n_1$ and $n_2$ have different sign. Then
there exists a smooth arc $L^0\subset \bar R$ with endpoints on different open vertical sides of $R$, such that
$R\setminus L^0=R_1\cup R_2$ for two disjoint open curvilinear rectangles,
the restriction $g\mid R_1$ represents  $_{\#}(a_{j_1}^{n_1})_{tr}$, and the restriction $g\mid R_2$ represents  $_{tr}(a_{j_2}^{n_2})_{\#}$.
\item[ $3.$] Suppose $g:R\to \mathbb{C}\setminus\{-1,1\}$ extends to a locally conformal mapping in a neighbourhood of the closure $\bar R$ of the rectangle $R$ and represents   $_{\#}(a_{j}^{n})_{\#}$ for a standard generator $a_j$ and an
integer number $n$ with $|n|>1$. Let $n_1$ and $n_2$ be non-zero integer numbers of the same sign, such that $n=n_1+n_2$. Then there is  a smooth arc $L^0\subset \bar R$ with endpoints on different open vertical sides of $R$, such that
$R\setminus L^0=R_1\cup R_2$ for two disjoint open curvilinear rectangles, $g\mid R_1$ represents  $_{\#}(a_{j}^{n_1})_{tr}$ and $g\mid R_2$ represents  $_{tr}(a_{j}^{n_2})_{\#}$.
\end{itemize}
 \end{lemm}
We prove Lemma \ref{lemm15'} after Lemma  \ref{lemm15} is proved.
\medskip

\noindent {\bf Proof of Lemma \ref{lemm15}}. We start with the {\bf proof of Statement 1.}
Notice first that for the reduced word $w=w_1^{n_1} \,
w_2^{n_2} \ldots
w_k^{n_k}\in \pi_1$ the corresponding element  $_{\#}(w)_{\#}$
can be represented by a smooth curve $\beta'$ that intersects $\mathbb{R}$
and $i\mathbb{R}$ transversally and lifts under $f_1\circ f_2$ to a slalom curve (not merely to a homotopy slalom curve) in the case of $pb$ horizontal boundary values, or to the union of a slalom curve with some elementary half-slalom curve(s) if some horizontal boundary values of $w$ are $tr$. The intersection points of $\beta'$ with the imaginary axis divide $\beta'$ into pieces $\beta'_j$ that represent $w_j^{n_j}$, $j=1,\ldots k$. The $\beta'_j$ represent elements $_{\#}(w_1^{n_1})_{pb}$ if $j=1$, $_{pb}(w_j^{n_j})_{pb}$ for $j=2,\ldots,k-1,$ and $_{pb}(w_k^{n_k})_{\#}$ if $j=k$.

Consider the mapping $g$ of the lemma.
By our condition
$0$ is a regular value of both, $\mbox{Re}\,g(z)$ and $\mbox{Im}\,g(z)$  in a neighbourhood of the closed rectangle $\bar R$.
Normalize the rectangle so that we have $R=\{z \in \mathbb{C}: x \in (0,1),\, y \in (0,\sf{a})\}$. Consider the mentioned curve $\beta'$.

Take a homotopy that joins the mentioned curve $\beta'$ with the restriction $\beta^0$ of $g$ to the left vertical side of $R$, i.e. we consider a smooth mapping $h$ from $\overline{R'}\stackrel{def}= \{z \in \mathbb{C}: x \in [-1,0],\, y \in [0,\sf{a}]\}$ to $\mathbb{C}\setminus\{-1,1\}$, whose restriction to the left side of $R'$ equals $\beta'$ and whose restriction to the right side of $R'$ equals $\beta_0$.
Since $\beta'$ and $\beta^0$ are smooth
curves that intersect $(-1,1) \cup i\mathbb{R}$ transversely we may assume that zero is a regular value of both, $\mbox{Re}\,h$ and $\mbox{Im}\,h$. Indeed, there is a
neighbourhood $U$
of the two closed vertical sides of the rectangle
such that $0$ is a regular value of both, $\mbox{Re}\,h$ and $\mbox{Im}\,h$,  on $U$, and
we may assume by choosing the homotopy and extending it to aneighbourhood of $\overline{R'}$  suitably that
in a neighbourhood $U'$ in $\overline{R'}$ of the closed horizontal sides this is so.
Let $ \Psi$ be a non-negative smooth function in a neighbourhood of the closed
rectangle $\overline{R'}$ which equals $1$ outside $U \cup U'$ and equals zero
near the boundary of the rectangle. If $\varepsilon$ is a sufficiently small complex value such that $\mbox{Re}\,\varepsilon $ is a regular value  of $\mbox{Re}\,h$ on the closed rectangle, and $\mbox{Im}\,\varepsilon $ is a regular value of
$\mbox{Im}\,h$ on the closed rectangle,
then $h-\varepsilon \Psi$ is another
smooth homotopy joining $\beta^0$ with $\beta^1$, and
zero is
a regular value of both, the real part and the imaginary part of this mapping
on the closed rectangle.
We may choose the homotopy $h$ so that we obtain a smooth mapping  $\tilde g$ from a neighbourhood of the closed rectangle
$\overline{ R'} \cup \bar R$ to $\mathbb{C}\setminus\{-1,1\}$,
which equals $g$ on $\bar R$ and $h$ on $\overline{R'}$ such that $0$ is a regular value of both, $\mbox{Re}\,{\tilde g}$ and $\mbox{Im}\,{\tilde g}$. We defined $ {\tilde  R}$ as the interior of $\overline{ R'} \cup \bar R$.

Each connected component of the level set $\{L_0 \stackrel{def}= z \in \tilde{ R}: \mbox{Re} \,h(z)= 0\}$ on the rectangle $\tilde{ R}$ is
either an open arc (i.e. it is non-compact), and both end points of its closure are on
the boundary of $\tilde{ R}$, or it is a circle (i.e. it is compact). Consider the closures of all non-compact  connected components of $L_0$. This is a collection of closed arcs. The endpoints of these arcs that are contained in
the left side of the rectangle divide the left side into connected
components. The restrictions of $h$ to these components
are the $\beta'_j$.
The closure of each non-compact component of $L_0$ has at most one endpoint on the open left side of the rectangle.
Indeed, assume the contrary. Then the restriction of $\beta'$ to the interval $I$ on the left side of the
rectangle between the two endpoints of the arc is equal to the product of at least one or more successive curves $_{pb}(\beta'_{j}) _{pb}$ and, hence is not homotopic to a constant curve through curves with $pb$ boundary values.
On the other hand, the existence of an arc
in the level set $\mbox{Re}\,h= 0$ joining the two endpoints of $I$
would
provide a relative homotopy in $\mathbb{C}\setminus \{-1,1\}$
(with endpoints in the imaginary axis) joining the restriction
of $h$ to the interval $I$ with a constant curve contained in the imaginary
axis, which is a contradiction.

The same reasoning shows that the closure of each non-compact component of $L_0$
with
one endpoint on the open left side of the rectangle cannot have its other endpoint on a closed horizontal side of $R$.
Indeed, otherwise the restriction of $h$ to
the interval on the vertical side between the endpoint of the arc and the vertex of the rectangle belonging to the closed horizontal side would be homotopic to a constant through curves with endpoints in $(-1,1) \cup i\mathbb{R}$, which is impossible.

We may ignore the non-compact components of $L_0$ whose closures have no endpoint on the open left side of $R$,
and will also ignore the circles contained in $L_0$.
We consider the connected components of $L_0$ whose closures have one
endpoint on the open left side of $R$,
and the other endpoint on the open right
side. These arcs divide the rectangle into
curvilinear rectangles which are in bijective correspondence
to the intervals of division on the left side. We call them dividing arcs. Take the
curvilinear rectangle whose left side corresponds to
$\beta'_j$. The restriction of $h$ to this curvilinear rectangle
provides a homotopy with boundary values in
the imaginary axes (in $(-1,1) \cup i\mathbb{R}$, respectively), that joins $\beta'_j$ to the restriction $\beta^1_j$ of $h$ to the right side
of the curvilinear rectangle $\tilde{R}$.
Since the division of the rectangle  $\tilde{R}$ into
curvilinear rectangles induces a division of the right side of
the rectangle into intervals, the curve $\beta^1$, which is the restriction of $g$ to the right side of $R$, is the
composition of the curves
$\beta^1_j$. Denote by $E$
the subset of the right side of $R$ that consists of the endpoints of the mentioned intervals.
\index{arc ! dividing}

Consider all connected components of the level set $\{z \in  R: \mbox{Re} \, {g}=0\}$ in the original rectangle $R$ that have an endpoint on the right side of $R$ contained in $E$. Each such component is a part of a dividing arc for $\tilde R$. Hence, each such component has its other endpoint on the left side of the original rectangle $R$. We call these components the dividing arcs for $R$. The dividing arcs for $R$ provide a division of the rectangle $R$ into curvilinear rectangles $R_j$. The right side of each $R_j$ is the connected component $I_j$ of the complement of $E$ in the right side of $R$ and the restriction of $g$ to $I_j$  equals $\beta^1_j$. Since $\beta^1_j$ represents the term $w_j^{n_j}$  with the required horizontal boundary conditions, we obtained the required collection of curvilinear rectangles $R_j$.  Statement 1 is proved.

\medskip

\noindent {\bf Proof of Statement 2.} Choose a syllable $\mathfrak{s}_k$ of form (1).
Suppose there are syllables on the left of $\mathfrak{s}_k$ which are not of form (1). Consider them in the order from left to right.

{\bf If the left boundary values of $w$ are $pb$} then
we consider the most left syllable $\mathfrak{s}_{j_1}$ with $j_1< k$
which is not of form (1). 
For the curvilinear rectangle  $R_{j_1}$ associated to  $\mathfrak{s}_{j_1}$ the restriction $g\mid R_{j_1}$ has $pb$ left boundary values.

{\bf Suppose the next syllable $\mathfrak{s}_{j_1+1}$ to the right of
$\mathfrak{s}_{j_1}$ is not of form (1).} Then  $j_1+1 <k$ and there is a sign change of exponents for the pair $(\mathfrak{s}_{j_1},\mathfrak{s}_{j_1+1})$ of consecutive syllables.
For the curvilinear rectangles $R_j$ of Statement 1 the restriction of
$g$ to $R_{j_1,j_1+1} \stackrel{def}{=} \mbox{Int}(\overline R_{j_1} \cup \overline
R_{j_1+1})$ represents $_{pb}(\mathfrak{s}_{j_1} \mathfrak{s}_{j_1+1})_{pb}$.
(Recall that $ \mbox{Int}X$ denotes the interior of a subset $X$ of a topological space.)
By Lemma \ref{lemm15'}, Statement 2
the curvilinear rectangle $R_{j_1,j_1+1}$ can be divided into two curvilinear rectangles such that the restriction of $g$ to
them represents the syllables
$_{pb}(\mathfrak{s}_{j_1})_{tr}$ and
$_{tr}(\mathfrak{s}_{j_1+1})_{pb}$. We obtained the required representation of the two syllables
$\mathfrak{s}_{j_1}$ and $\mathfrak{s}_{j_1+1}$ which are both not of
form (1).

{\bf Suppose the next syllable $\mathfrak{s}_{j_1+1}$ to the right of
$\mathfrak{s}_{j_1}$ is of
form (1)}, i.e
it equals $w_{j_1+1}^n$ for an integer $n\geq 2$ with $w_{j_1+1}$ being
a standard generator of $ \pi_1$. We include the case when $j_1+1=k$.
If there
is a sign change of exponents for the pair $(\mathfrak{s}_{j_1},w_{j_1+1}^n)$
the preceding argument applies.
Suppose there is no sign change. By Lemma \ref{lemm15'} Statement 3
the curvilinear rectangle $R_{j_1,j_1+1}$ can be divided into
two curvilinear rectangles such that the restriction of $g$ to them
represents the syllables
$_{pb}(\mathfrak{s}_{j_1}\, w_{j_1+1}^{{\rm{sgn}}(n)})_{tr}$
and $_{tr}(w_{j_1+1}^{n-{{{\rm{sgn}}(n)}}})_{pb}$ with mixed boundary values.
Since there is no sign change, the word $s_{j_1}w_{j_1+1}^{{\rm sgn}(n)}$ is of form $(2)$.

{\bf  If the left boundary values of $w$ are $tr$} then $g\mid R_1$ represents $\mathfrak{s}_1$ with mixed boundary values.
Consider the most left syllable $\mathfrak{s}_{j_1}$ with $1<j_1<k$ which is not of form (1)   (if there is any) and proceed as in the previous case .

{\bf The inductive procedure.} 
We obtained the following facts. If there is a syllable  $\mathfrak{s}_{j_1}$ not of form (1) on the left of
$\mathfrak{s}_k$ then Statement 2 holds for all syllables not of form (1) with label
$j\leq j_1+1$. The disjoint curvilinear rectangles contained in $R$
used for representing these syllables (or syllables with one letter
more) are contained in the union of the closure of the rectangles
$R_j,1\leq j\leq j_1+1,$ of Statement 1 of the lemma. Notice that the restriction $g\mid {\rm Int}(\bigcup_{j=1}^{j_1+1} \overline{R_j}) $ has $pb$ right boundary values.

We proceed by induction as follows. Suppose for some
$l<k$ we achieved the following. We found disjoint curvilinear
rectangles contained in $\bigcup_{j\leq l} \overline{R}_{j} $
which are
in bijective correspondence to all syllables
$\mathfrak{s}_j$ with $j\leq l$ that are not of form (1) such that
the restrictions of $g$ to these rectangles represent the syllable, or
a syllable with one more letter, with mixed boundary values. Moreover,
the restriction $g\mid {\rm Int}(\bigcup_{j=1}^{l} \overline{R_j}) $ has $pb$ right boundary values.

Consider the first syllable $\mathfrak{s}_{j_2}$ with $j_2<k$ on the right
of $\mathfrak{s}_{l}$ with $\mathfrak{s}_{j_2}$
not of form (1) (if there is any). If $j_2+1<k$ we
proceed with $\mathfrak{s}_{j_2}$ in the same way as it was done
for $\mathfrak{s}_{j_1}$ in the case $j_1+1< k$ and continue the process.
If there is no such syllable we stop the process.

Make the same procedure from right to left until each
syllable not of form (1) on the right of $\mathfrak{s}_k$ is
represented in the
desired way. The curvilinear rectangles obtained by the construction
which starts from the left do not intersect the curvilinear rectangles
obtained by the construction which starts from the right because
$\mathfrak{s}_k=w_k^{n_k}$ with $|n_k|\geq 2$.
Statement 2 is proved.

\noindent {\bf Proof of Statement 3.} Under the conditions of Statement 3 there is a sign change of exponents for any pair of
consecutive syllables.

If the left boundary values of $w$ are $pb$ we
may consider curvilinear rectangles $R_{2j-1,2j}, \, j=1,2,\ldots ,$
such that
the restriction of $g$ to $R_{2j-1,2j}, \, j=1,2,\ldots , $ represents
$_{pb}(\mathfrak{s}_{2j-1} \,\mathfrak{s}_{2j})_{pb}$. By Lemma \ref{lemm15'} Statement 2 there exists a smooth arc
that divides $R_{2j-1,2j}$ into two curvilinear rectangles
such that the restriction of $g$ to the first curvilinear rectangle $R_{2j-1}'$
represents the
syllable $\mathfrak{s}_{2j-1}$  with right $tr$  boundary values and
the restriction of $g$ to the
second curvilinear rectangle $R_{2j}'$  represents the syllable with
left $tr$ boundary values. In this way each syllable except, maybe, the last one
is represented
with mixed boundary values by restricting $g$ to a member of the
collection of the obtained  pairwise
disjoint curvilinear rectangles.

If the left boundary values of $w$ are $tr$ then $g\mid R_1$ represents $\mathfrak{s}_1$ with mixed boundary values and we consider instead the rectangles $R_{2j,2j+1},\,j=1,\ldots$. We obtained the collection $R'_j$ for both cases of the left boundary values.

Repeating the procedure from right to
left gives the rectangles $R_j''$.
This finishes the proof of Statement 3. \hfill $\Box$

\medskip
\noindent {\bf Proof of Lemma \ref{lemm15'}, Statement 1.}  We may assume that the annulus in the Statement 1 equals $A=\{z\in\mathbb{C}: 2<|z|<r\}$ for a positive number $r>2$.
Take a smooth closed curve $\beta':\partial \mathbb{D}\to \mathbb{C}\setminus \{-1,1\}$ that represents $\hat w$, intersects $i\mathbb{R}$ transversally, and has minimal possible number of intersection points with the imaginary axis. Then the intersection points of $\beta'$ with the imaginary axis divide the curve $\beta'$ into curves $_{pb}(\beta'_l)_{pb}$ whose interior does not intersect $i\mathbb{R}$.
No $_{pb}(\beta'_l)_{pb}$ is homotopic with endpoints in $i\mathbb{R}$ to a constant curve, since otherwise the curve $\beta$ would be homotopic to a curve with smaller number of intersection points with $i\mathbb{R}$. Then the   $_{pb}(\beta'_l)_{pb}$ represent words of the form $_{pb}(a_{j_l})^k_{pb}$, and
neighbouring  $_{pb}(\beta'_{l})_{pb}$ on the closed curve $\beta'$ are powers of different generators $a_{j_l}^k$ among the $a_1$ and $a_2$. Since the word $w$ is cyclically reduced, we may suppose (after perhaps relabeling the $\beta'_l$) that $w=\beta'_1 \, \beta'_ 2\,\ldots\, \beta'_N \,$.

The proof
follows now along the same lines as the proof of Statement 1 of Lemma \ref{lemm15}.  We consider a smooth homotopy that joins the curves $\beta'$ and $g\mid \{|z|=2\}$, more precisely, we consider a smooth mapping $h:\{1\leq |z|\leq r\}\to \mathbb{C}\setminus \{-1,1\}$ for which zero
is a regular value of the real part, whose restriction to $\{|z|=1\}$ coincides with $\beta'$, and whose restriction to $\bar A$ equals $g$. Similarly as in the proof of  Lemma \ref{lemm15} the division of $\beta'$ into $_{pb}(\beta'_l)_{pb}$ induces a division of the annulus $A$ into curvilinear rectangles $R'_l$ by smooth arcs in ${A}$, each with endpoints on different boundary circles of $A$ and contained in the preimage $g^{-1}(i\mathbb{R})$. The restriction $g\mid R'_l$
represents $_{pb}(\beta'_l)_{pb}$.
The arc that is the common part of the boundaries of $R'_1$ and $R'_N$ satisfies the requirement of Statement 1 of the lemma.\\

\noindent {\bf Proof of Statement 2.} Since there is a sign change of exponents for the two terms of the word, we may represent  $_{\#}({a_{j_1}^{n_1}\,a_{j_2}^{n_2}})_{\#}$ by a smooth curve that intersects the real axis transversally and equals the product $_{\#}(\beta'_1)_{tr}\, _{tr}(\beta'_2)_{tr}       \ldots_{tr}(\beta'_{n_1+n_2})_{\#}$ of curves, where each of the first $j_1$ curves represents  $a_{j_1}^{{\rm sgn}(n_1)}$ and the remaining curves represent  $a_{j_2}^{{\rm sgn} (n_2)}$, and the interior of each curve
does not intersect $\mathbb{R}$. The proof follows now along the lines of proof of Statement 1 of Lemma \ref{lemm15}.\\

\noindent {\bf Proof of Statement 3.}
We choose a representative $\beta'$ of $_{\#}(a_j^n)_{\#}$
that has minimal number of intersection points with the real axis.
The curve $\beta'$ can be written as $_{\#}(\beta'_1)_ {tr}\ldots _{tr}(\beta'_{n-1})_{\#}$ for curves $\beta_j'$ that represent $_{\#}(a_j)_{\#}$ whose interior does not intersect $(-1,1)$.
The proof now follows along the same lines of arguments as in the proof of Statement 1 of Lemma \ref{lemm15}.  \\
The lemma is proved. \hfill $\Box$

\medskip

We will  now give the proof of the lower bound in Theorem \ref{thm1}
and in Theorem \ref{thm10.1'}. The plan is the following. We consider a locally conformal mapping in a neighbourhood of a closed rectangle $R$, that represents an element $w\in \pi_1^{pb}$. The rectangle will be covered by the closures of curvilinear subrectangles for which the following holds. Each point of $R$ is contained in at most two subrectangles. The restriction of the mapping to each subrectangle $R_j$ represents a syllable $\mathfrak{s}_j$ for which the horizontal boundary values are chosen so that Proposition \ref{prop4b} provides
a suitable lower bound of the extremal length of the subrectangle in terms of $\mathcal{L}(\mathfrak{s}_j)$ .

\noindent Figure  \ref{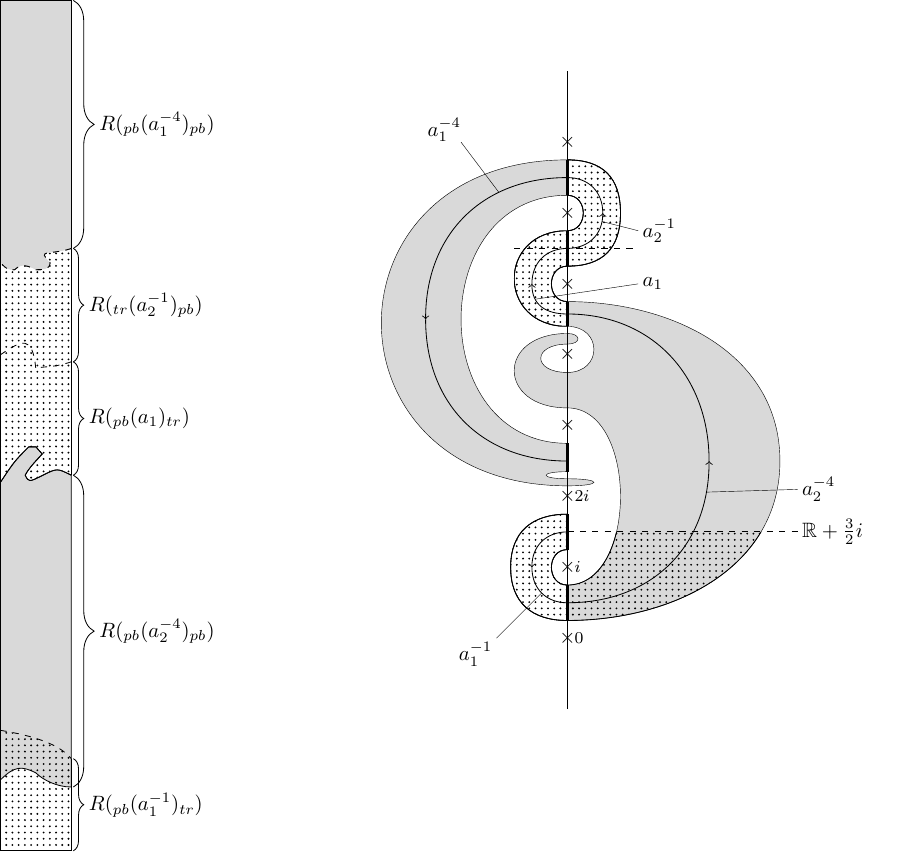}  below illustrates a mapping $g$ from a rectangle $R$ to $\mathbb{C} \setminus i\mathbb{Z}$ which represents a lift under $f_1 \circ f_2$ of an element $w \in \pi_1$ with $pb$ boundary values. The rectangle $R$ is covered by relatively closed curvilinear rectangles. For each curvilinear rectangle we indicate the word with suitable horizontal boundary conditions (the ''building block'') whose lift under $f_1 \circ f_2$ is represented by
the restriction of $g$ to the curvilinear rectangle. Notice that for some choices of the boundary values the curvilinear rectangles may intersect, but each point of $R$ is contained in no more than two relatively closed curvilinear rectangles.

The right part of the figure shows the image $g(R)$ of $R$ in $\mathbb{C} \setminus i\mathbb{Z}$
and a slalom curve that lifts a representative of $w$. For the elementary pieces of the slalom curve we indicate the element of $\pi_1$ a lift of which the piece represents.

\begin{figure}[H]
\begin{center}
\includegraphics[width=12cm]{Fig3-braids5.pdf}
\end{center}
\caption{A representative of a lift to $\mathbb{C}\setminus i\mathbb{Z}$  of an element of $\pi_1$ and building blocks}\label{Fig3-braids5.pdf}
\end{figure}

\noindent {\bf Proof of the lower bound in Theorem \ref{thm1} and in Theorem \ref{thm10.1'}.}
By Lemma \ref{cor2} the extremal length of an element of a relative fundamental group of $\mathbb{C}\setminus \{-1,1\}$ is attained on
a locally conformal mapping $g$ from an open rectangle $R$ with sides parallel
to the axes into $\mathbb{C} \setminus \{-1,1\}$, which  represents the
word $w$ in the relative fundamental group of the twice punctured
complex plane with $tr$, $pb$, or mixed horizontal boundary values.
Since the continuous extension of
$g$ maps each horizontal side to the real axis or to the imaginary axis,
we may extend $g$ holomorphically through the horizontal sides. Shrinking the rectangle slightly in the horizontal direction (and hence, increasing the extremal length slightly), we may assume, that the mapping $g$ is locally conformal in a neighbourhood of the closed rectangle $\bar R$.

We assume first that the word has at least two syllables,
and the word has at least one syllable of form (1). Let
$I$ be the set of the natural numbers $j$ for which the $j$-th
term of $w$ is a syllable of form (1) and let $R_j, \, j \in I,$ be the curvilinear
rectangles of Statement (1) of Lemma \ref{lemm15}.

By Corollary \ref{cor4.0}
\begin{align*}
 \sum _{j \in I} \lambda(R_j) \leq \lambda(R)\,.
\end{align*}
The rectangle ${R}_j$ admits the
holomorphic mapping $g \mid R_j$
to
$\mathbb{C} \setminus \{-1,1\}$ which represents the syllable
corresponding to $R_j$ with $pb$   boundary values if the syllable is not at the end of the word, and, possibly, with mixed boundary values if the syllable is at the end of the word. Let $d_j$ be the degree of the $j$-th syllable.
By Proposition  \ref{prop4b}  the extremal length
of  ${R}_j$ is not smaller than
$\frac{1}{\pi}\log(2d_j+\sqrt{4d_j^2-1})$
if $g\mid R_j$ has mixed boundary values and $d_j\geq2$, and it is not smaller
than
\begin{align}\label{eq3braids.101}
\frac{2}{\pi}\log(d_j+\sqrt{d_j^2-1})\geq \frac{1}{\pi}\log(2d_j+\sqrt{4d_j^2-1})
\end{align}
if $g\mid R_j$ has $pb$ boundary
values and $d_j\geq2$.

We obtain
\begin{equation}\label{eq8}
\frac{1}{2}\lambda(R)\geq \frac{1}{2}\sum _{j \in I}
\lambda(R_j) \geq
\frac{1}{2}\sum _{j \in I} \frac{1}{\pi}\log(2d_j+\sqrt{4d_j^2-1})\,.
\end{equation}
Suppose as before that $w$ contains at least one syllable of form (1).
Denote by $J$ the set of all natural numbers $j$ for which
$\mathfrak{s}_j$ is
not of form (1). For $j \in J$ we denote
by $ R_j'$ the curvilinear rectangle of Statement 2 of Lemma \ref{lemm15} corresponding to $\mathfrak{s}_j$
(or to an elementary word of form $(2)$ with one more letter than $\mathfrak{s}_j$).

Since $g\mid R_j'$ has mixed boundary values,
Proposition  \ref{prop4b}  implies that the extremal length
of  $R_j'$ is not smaller than $\frac{1}{\pi}\log(2d_j+\sqrt{4d_j^2-1})$.
(If $R_j'$ represents an elementary word of form $(2)$ with one letter more than $\mathfrak{s}_j$ then the lower bound is even bigger, namely, $d_j$ may be replaced by $d_j+1$.)
As before $d_j\geq 1$ is the degree of
the syllable $\mathfrak{s}_j$.
Hence,
\begin{equation}\label{eq9}
\frac{1}{2}\lambda(R)\geq \frac{1}{2}\sum _{j \in J}
\lambda(R_j') \geq
\sum_{j \in J} \frac{1}{2\pi}\log(2d_j+\sqrt{4d_j^2-1})\,.
\end{equation}
For the case when  $w$ contains syllables of form (1) and syllables not of form (1) we add the two inequalities
\eqref{eq8} and \eqref{eq9}. We obtain
\begin{align}\label{eq10}
\lambda(R) \geq & \sum_{j \in J} \frac{1}{2\pi} \log(2d_j+\sqrt{4d_j^2-1})
+ \sum_{j \in I} \frac{1}{2\pi} \log(2d_j+\sqrt{4d_j^2-1})\nonumber \\
= &\sum _{\mathfrak{s}_j}\frac{1}{2\pi}\log(2d_j+\sqrt{4d_j^2-1})\,.
\end{align}
The last sum is extended over all syllables  $\mathfrak{s}_j$ of the word $w$, and $d_j$ is the degree of $\mathfrak{s}_j$.

Suppose that the word does not contain syllables of form (1) and the
syllables
are labeled from left to right by $j=1,\ldots,N$. Let $R_j'$ be the
rectangles from Statement 3 of Lemma \ref{lemm15}.
Since $g\mid R'_j$ represents $\mathfrak{s}_j$with mixed boundary values we obtain
\begin{equation}\label{eq11}
\lambda(R) \geq \sum_{j=1}^{N-1} \lambda(R_j') \geq \sum
_{j=1}^{N-1} \frac{1}{\pi}\log(2d_j+\sqrt{4d_j^2-1})\,.
\end{equation}

On the other hand, with the curvilinear rectangles $R_j''$ from
Statement 3 of Lemma \ref{lemm15}
we obtain the
inequality
\begin{equation}\label{eq12}
\lambda(R) \geq \sum_{j=2}^{N} \lambda(R_j'') \geq \sum
_{j=2}^{N}\frac{1}{\pi} \log(2d_j+\sqrt{4d_j^2-1})\,.
\end{equation}
It follows that for any word $w \in \pi_1$ with at least two syllables for the respective family $\Gamma$ the
inequality

\begin{equation}\label{eq13}
\lambda(R) \geq   \sum _{\mathfrak{s}_j}
 \frac{1}{2\pi} \log(2d_j+\sqrt{4d_j^2-1})\,
\end{equation}
holds.

If the word consists of a single syllable, Proposition \ref{prop4b} implies \eqref{eq13} in the case of mixed boundary values as well as in the non-exceptional cases with both boundary values being $tr$ or $pb$.

Consider the exceptional cases. If $w=a_1^n$ with $n>0$ the mapping $\zeta \to -1+ e^{\zeta}, \; \zeta \in R,$ with $R=\{ \xi + i \eta,\, \xi \in (-\infty,0),\, \eta \in (0, 2\pi n)\}$, represents $w_{tr}$. Hence,  $\Lambda(w_{tr})=0$.

If $w=a_1 a_2\ldots$ and has degree $d \geq 2$ then the mapping $\zeta \to e^{\zeta}, \; \zeta \in R,$ with $R=\{ \xi + i \eta,\, \xi \in (1,\infty),\, \eta \in (\frac{\pi}{2},\frac{\pi}{2} + \pi d)\}$, represents $w_{pb}$. Hence, $\Lambda(w_{pb})=0$.

The other exceptional cases are similar.
The lower bound in Theorem \ref{thm1} and in Theorem \ref{thm10.1'} is proved. \hfill $\Box$

\medskip
It will be useful to have in mind the following corollary.
\begin{cor}\label{lem3-braids1} Let $\hat w$ be a conjugacy class
of elements of $\pi_1$ that is not among the exceptional cases of Theorem \ref{thm2}.\\
Then for any cyclically reduced representative $w \in  \pi_1$ of $\hat w$ the inequality
\begin{align}\label{eq14}
\Lambda(w_{pb}) \leq \Lambda(\hat w)
\end{align}
holds.  The inequality may be strict.

Further, let $w$  be any representative of $\hat w$, that is obtained by a cyclic permutation of a cyclically reduced representative of $\hat w$ and has one of the following properties. Either the first and the last term of $w$ are powers of the same sign of the same standard generator of $\pi$, or  the first and the last term of $w$ are powers of different sign of different standard generators of $\pi_1$. Then
\begin{align}\label{eq14'}
\Lambda(w_{tr}) \leq \Lambda(\hat w)\,.
\end{align}
The inequality may be strict.
\end{cor}

\noindent {\bf Proof.} Let $\hat w\neq \widehat{a_j^k}$
and $\hat g$  a locally conformal mapping from an annulus $A$ into the twice punctured complex plane $\mathbb{C}\setminus \{-1,1\}$ that represents $\hat w$.
We may assume that $\hat g$ is  locally conformal in a neighbourhood of $\bar A$
and $\lambda(A)<\Lambda(\hat{w})-\varepsilon$ for an a priori given small positive number $\varepsilon$ .
By Statement 1 of Lemma \ref{lemm15'} there is a smooth arc $L_0\subset \bar A$
with endpoints on different boundary circles of $A$, such that $\hat{g}\mid A\setminus L^0$ represents  the cyclically reduced word $w$ with $pb$ boundary values.
Hence, by Corollary  \ref{cor4.0}
the inequality
$\Lambda(w_{pb})\leq\lambda(A\setminus L^0)\leq \lambda(A)\leq\Lambda(\hat{w})  -\varepsilon$. This proves inequality \eqref{eq14}.

Inequality \eqref{eq14'} is proved in the same way using Corollary  \ref{cor4.0} and
the analog of Statement 2  or Statement 3  of Lemma \ref{lemm15'} for annuli instead of rectangles.
Examples 2 and 3 of Section 4.2 show that the inequality may be strict. \hfill $\Box$

\medskip

\noindent {\bf Proof of the lower bound of $\Lambda(\hat{w})$ in Theorem \ref{thm2}.}
The lower bound of $\Lambda(\hat{w})$ in Theorem \ref{thm2} for the non-exceptional case follows immediately from Lemma \ref{lemm15'}, Statement 1,
Theorem  \ref{thm1}, and Corollary  \ref{cor4.0}.

Similarly as in the proof of the lower bound in Theorem  \ref{thm1} we see that
in the exceptional cases the extremal length of $\hat b$ equals zero. \hfill $\Box$

\section[Upper bound of extremal length. All syllables of degree $2$.] {The upper bound of the extremal length for words whose syllables have degree $2$.}\label{sec:3-braids3}
We first obtain the upper bound of the extremal length of reduced words of the form
\begin{align}\label{eq3-braids1}
 w= a_1^{\pm 2}\,a_2^{\pm 2}\, \ldots
\end{align}
with at least two syllables and $pb$, $tr$, or mixed horizontal boundary values, because the proof in this case avoids technical details and gives a better estimate than in the general case. We will use this estimate
in Chapter \ref{Ch9}. The proof in the general situation is given in Section \ref{sec:3-braids4}.

We first represent the lift under $f_1\circ f_2$ of the elementary word
$_{pb}(a_2^{- 2})_{pb}$  by a holomorphic mapping $g_{a_2^{- 2}}$ of a rectangle $R_{a_2^{- 2}}$ into $\mathbb{C}\setminus i\mathbb{Z}$.

Consider the rectangle $R_{a_2^{- 2}}=\sf R$
in the plane with vertices $\pm  \frac{1}{5} \pm i \frac{\pi}{2} $ and the mapping
\begin{equation}\label{eq302'}
g_{a_2^{-2}}(\zeta)=\frac{i}{2}+ e^{\zeta},\;\zeta \in {\sf R}\, .
\end{equation}
The mapping $g_{a_2^{-2}}$ takes the rectangle $\sf R$
conformally onto a half-annulus in the right half-plane.
It maps the point $-\frac{\pi i}{2}$  to $-\frac{i}{2}$, and  maps $\frac{\pi i}{2}$ to  $\frac{3 i}{2}$.
The upper side of the rectangle is mapped to the interval $\frac{i}{2}+i( e^{-\frac{1}{5}},e^{\frac{1}{5}})$. This interval is contained in $(i, 2i  )$, and has distance to the endpoints of this interval equal to $\min \{e^{-\frac{1}{5}}-\frac{1}{2}, \frac{3}{2}-
e^{\frac{1}{5}}\}\geq 0.25$.
The lower side is mapped to the interval $\frac{i}{2}+i(- e^{\frac{1}{5}},-e^{-\frac{1}{5}})$, that is contained in $(-i,0)$ and has distance from the endpoints of $(-i,0)$ at least equal to
$0.25$.
Hence, the mapping represents a lift of $_{pb}(a_2^{- 2})_{pb}$ under $f_1\circ f_2$. Its image has distance at least $\frac{1}{4}$ to $i \mathbb{Z}$.
Moreover, $g_{a_2^{-2}}'(-\frac{\pi i}{2})= -i$,  $g_{a_2^{-2}}'(\frac{\pi i}{2})= i$, ${\rm vsl}({\sf R})=\pi$, ${\rm hsl}({\sf R})=\frac{2}{5}$.
The mapping  $g_{a_2^{-2}}$ extends holomorphically across the horizontal sides of ${\sf R}$. The extension is denoted again by  $g_{a_2^{-2}}$.

A lift of the elementary word $_{pb}(a_2^{2})_{pb}$ under $f_1\circ f_2$  is represented by the mapping $g_{a_2^{2}}(\zeta)\stackrel{def}=   g_{a_2^{-2}}(-\zeta)\,,$
$\zeta\in \sf R$, whose derivatives at $\pm \frac{\pi i}{2}$ coincide with those of $ g_{a_2^{-2}}$ at these points, and lifts of the words $_{pb}(a_1^{\pm 2})_{pb}$ under $f_1\circ f_2$ are represented by the mappings $g_{a_1^{\pm 2}}(\zeta)\stackrel{def}=-  g_{a_2^2}(\pm\zeta),\,$ $ \zeta\in \sf R$, whose derivatives at $\pm \frac{\pi i}{2}$ are $\mp i$.

A lift under $f_1\circ f_2$  of the syllable $_{tr}(a_2^{-2})_{pb}$  with mixed horizontal boundary values can be represented by the mapping $g_{_{tr}(a_2^{-2})_{pb}}(\zeta)= -\frac{i}{2}+ 2e^{\frac{\zeta}{2}}$ on the rectangle $R_{_{tr}(a_2^{-2})_{pb}}={\sf R}_{\rm mix}\stackrel{def}=\{\zeta \in \mathbb{C}: {\rm Im}\zeta \in(0,\pi),\, |{\rm Re}\zeta|<\frac{1}{5}\}$. Indeed,  $g_{_{tr}(a_2^{-2})_{pb}}$ maps
the rectangle  $R_{_{tr}(a_2^{-2})_{pb}}$ into the right half-plane, $0$ is mapped into $-\frac{i}{2} +\mathbb{R}$, and $\pi i$ is mapped to $\frac{3i}{2}$. Moreover, the lower side of the rectangle is mapped to a relatively compact subset of  $-\frac{i}{2} +\mathbb{R}$, and the upper side is mapped to
the interval $-\frac{i}{2}+2i( e^{-\frac{1}{10}},e^{\frac{1}{10}})$. The interval is contained in $(i,2i)$ and has distance to the endpoints of $(i,2i)$ equal to $\min\{-\frac{3}{2}+2 e^{-\frac{1}{10}},\frac{5}{2}-2 e^{\frac{1}{10}}\}\geq 0.3$.

For the derivative of the mapping the equality
 $g_{_{tr}(a_2^{-2})_{pb}}'(i\pi)=i $ holds. Moreover,  ${\rm vsl}(R_{_{tr}(a_2^{-2})_{pb}})=\pi$ and ${\rm hsl}(R_{_{tr}(a_2^{-2})_{pb}})=  \frac{2}{5}$.

Lifts under $f_1\circ f_2$ of all syllables of the form \eqref{eq3-braids1} and degree $2$  with mixed horizontal boundary values can be represented in a similar way by holomorphic mappings on
${\sf R}_{\rm mix }$.

There exists a universal constant $C'>1$ such the following holds.

For each $j=1,2$, each choice of the sign, and each integral number $k$ the compositions  $\;f_1\circ f_2\circ (g_{a_j^{\pm 2}}+ik)\mid {\sf R}\;,\;$
$\;f_1\circ f_2\circ (_{tr}(g_{a_j^{\pm 2}})_{pb}+ik)\mid {\sf R}_{\rm mix }\;,\;$ and  $\;f_1\circ f_2\circ (_{pb}(g_{a_j^{\pm 2}})_{tr}+ik)\mid {\sf R}_{\rm mix }\;\;$
have its image in the domain $\{z\in\mathbb{C}: |z|<C', |z\pm 1|>\frac{1}{C'}\}$. Indeed, $g_{a_j^{\pm 2}}( {\sf R})$,
$_{pb}(g_{a_j^{\pm 2}})_{tr}({\sf R}_{\rm mix })$ and
$_{tr}(g_{a_j^{\pm 2}})_{pb}({\sf R}_{\rm mix })$
have distance at least $\frac{1}{4}$ from
$\mathbb{C}\setminus i\mathbb{Z}$, hence the statement is true for $k=0$. For arbitrary integer numbers $k$ it follows from the periodicity of $f_1\circ f_2$.

Consider the word $_{pb}w_{pb}=_{pb}(a_1^{\pm 2} a_2^{\pm 2}\dots)_{pb}$ with $N\geq 2$ syllables. Denote the $j$-th syllable by $\mathfrak{s}_j$. Represent the lift under $f_1\circ f_2$ of each   $_{pb}(\mathfrak{s}_j)_{pb}$ by the mapping $g_{\mathfrak{s}_j}:{\sf R}\to \mathbb{C}\setminus i\mathbb{Z}$ described above.

Put $R_j\stackrel{def}={\sf R}+\pi i c_j$ for integer numbers $c_j$, so that for the midpoints $\xi_j^-$ and $\xi_j^+$, respectively, of the lower and upper side of $R_j$ the equalities $\xi_j^+={\xi}_{j+1}^- , \, j=1,\ldots N-1,$ hold, where $N$ is the number of syllables of the word $w$. Take $g_j(\zeta)\stackrel{def}=g_{\mathfrak{s}_j}(\zeta-\pi i c_j)+\pi i b_j,\, \zeta \in \overline{R_j},$
for integer numbers $b_j$ such that $g_j(\xi_j^+)=g_{j+1}(\xi_{j+1}^-)$ .

The rectangle $R_w\stackrel{def}={\rm Int}(\cup\overline{R_j})$
has vertical side length ${\rm vsl}(R_w)=\pi N$ and horizontal side length $\frac{2}{5}$.
The mappings $g_j:R_j\to\mathbb{C}\setminus i\mathbb{Z}$ represent  $_{pb}(\mathfrak{s}_j)_{pb}$ and extend by the Reflection Principle holomorphically across the horizontal sides
to the rectangle of thrice the vertical side length, with the same horizontal side length and the same center as $R_j$.
The extended mappings are denoted again by $g_j$. The distance of the images of the extended mappings from $i\mathbb{Z}$ is the same as that of the original mappings.
We will perform quasiconformal gluing of the $g_j$ using the fact that $g_j( \xi_j^+)=g_{j+1}(\xi_{j+1}^-)$ and $g'_j( \xi_j^+)=g'_{j+1}(\xi_{j+1}^-)$.
The latter statement follows from the fact that one of two consecutive syllables is a power of $a_1$ and the other is a power of $a_2$.

Consider the $C^1$-function $\chi_0$ on the interval $[0,1]$, $\chi_0(t)=6 \int_0^t \tau (1-\tau) d\tau$. Then $\chi_0(0)=0$, $\chi_0(1)=1$, and $0\leq \chi_0'(t)= 6t(1-t)\leq \frac{3}{2}$.
Put $\chi(t)=\chi_0(\frac{5}{2} t)$.
Define a function $g_w$ on $R_w$ as follows.
Each point $\xi$ in $R_w$, for which $|\mbox{Im}\,(\xi -\xi_j^+)|> \frac{1}{5}$ for all $j<k$,
belongs to a single rectangle $\overline{R_j }$ (depending on $\xi$) and we put $g_w(\xi)\stackrel{def}{=}g_j(\xi)$ for such a point.
\index{$\chi_0$}

Fix a number $j<N$
and consider the set $Q_j\stackrel{def}{=}\{\xi\in R_w:|\mbox{Im}\,(\xi -\xi_j^+)|\leq \frac{1}{5}\}$. Put
$\chi_j(\xi)=\chi(\mbox{Im}\,(\xi -\xi_j^+)+\frac{1}{5})\, $ for $\xi \in  Q_j$. Let $g_w\stackrel{def}{=} (1-\chi_j)  g_j + \chi_j  g_{j+1}$ on $ Q_j$.
For $\{\xi \in Q_j: \mbox{Im}\,\xi = \xi_j^+ -\frac{1}{5}\}$ the equalities
$\chi_j(\xi )= \chi_0(0)=0$ and $\chi'_j(\xi)=\chi_0'(0)=0$ hold. Hence, the function $g_w$ is $C^1$ smooth near such points $\xi$. Further, for $\{\xi \in Q_j:\mbox{Im}\,\xi = \xi_j^+ +\frac{1}{5}\}$ the equalities
$\chi_j(\xi)= \chi_0(1)=1$ and $\chi_j'(\xi)=\chi_0'(1)=0$ hold, hence, the function $g_w$ is $C^1$ smooth near such $\xi$.

Since $g_{j+1} - g_j$ and $g'_{j+1} - g'_j$ vanish at $\xi_j^+=\xi_{j+1}^-$, and the absolute value of the second derivative of $g_j$ and of $g_{j+1}$ on $Q_j$ does not exceed
$e^{\frac{1}{5}}<1.222$, the estimate
\begin{align}\label{eq3braids25}
|(g_{j+1} - g_j)(\xi)|\leq 2 e^{\frac{1}{5}}\frac{|\xi-\xi_j^+|^2}{2} \leq  e^{\frac{1}{5}} \frac{2}{25} <\frac{1}{10}
\end{align}
holds on $Q_j$.

Make the same definition for all but the last number $j$. We obtain a smooth mapping $g_w$ from the rectangle $\overline{ R_w}$ to $\mathbb{C} \setminus i\mathbb{Z}$ which represents a lift of $_{pb}w_{pb}$ under $f_1 \circ f_2$.
Since the distance of
the image of each mapping $g_j$ from $i\mathbb{Z}$ is not smaller than $\frac{1}{4}$, by inequality \eqref{eq3braids25} the distance of the image  $g_w(Q_j)$  from $i\mathbb{Z}$ is not smaller than $0.15$.
It follows that the image $f_1\circ f_2\circ g_w(\overline{ R_w})   $ is contained in the closure of a domain $\{z\in\mathbb{C}: |z|<C, |z\pm 1|>\frac{1}{C}\}$ with another universal constant $C$.

\begin{lemm}\label{lemm221} The mapping $g_w$ is a quasiconformal mapping from $R_w$ onto its image. The Beltramy differential $\mu_{g_w}$ of $g_w$ has absolute value $|\mu_{g_w}|< \frac{2}{5}$.
\end{lemm}

\medskip
\noindent {\bf Proof}. Put $\xi=u+i v$. If $\xi \in R_w$, $|\mbox{Im}\, (\xi -\xi_j^+)|\leq \frac{1}{5}$ for some $j, \, 1 \leq j <N,\,$  then the Beltrami differential at $\xi$ equals
\begin{align}\label{eq3braids25A}
\mu_{g_w}(\zeta)= \frac{\frac{\partial}{\partial \overline \zeta}g_w(\zeta)}{\frac{\partial}{\partial \zeta}g_w(\zeta)} = \frac{\frac{i}{2} \Big(\frac{\partial}{\partial v}{\chi_j}\cdot (g_{j+1} -g_{j})\Big)(\zeta)}{\Big(\frac{-i}{2} \frac{\partial}{\partial v}{\chi_j}\cdot (g_{j+1} -g_{j})+ (1-\chi_j)\cdot g_j' + \chi_j\cdot g_{j+1}'\Big)(\zeta)}.
\end{align}
On the rest of the rectangle $R_w$ the function is analytic.
Recall that
$$
|\frac{\partial}{\partial v}{\chi_j}|\leq\frac{3}{2} \cdot \frac{5}{2}
$$
on $Q_{j}$. By inequality \eqref{eq3braids25} the numerator in the right hand side of
 \eqref{eq3braids25A} has absolute value smaller than
${e^{\frac{1}{5}} \cdot \frac{3}{20}      }<0.1833$.

Since $\max|g''_j|\leq e^{\frac{1}{5}} $ on $Q_{j}$, the inequality
$$
\max\{|g_j'-g_j'(\xi_j^+)|,|g_{j+1}'-g_j'(\xi_j^+)|\}< e^{\frac{1}{5}} \cdot \frac{\sqrt{2}}{5}
$$
holds on the  $\frac{\sqrt{2}}{5}$-neighbourhood of $\xi_j^+$.
Since $g_j'(\xi_j^+)=g_{j+1}'(\xi_{j+1}^-)$ has absolute value $1$, the denominator of the right hand side of \eqref{eq3braids25} is not smaller than
$1- e^{\frac{1}{5}} \cdot \frac{3}{20}        -e^{\frac{1}{5}} \cdot \frac{\sqrt{2}}{5}  > 1- 0.53= 0.47$.

We obtain
$$
k_w = \sup_{R_w} |\mu_{g_w}(\zeta)| < \frac{0.1833}{0.47}<0.4\,.$$
The quasiconformal dilatation $K_w=\frac{1+k_w}{1-k_w}$ does not exceed $\frac{7}{3}$.
\hfill $\Box$

\medskip

Let $\omega_w$ be the normalized solution of the Beltrami equation
$$
\frac{\partial}{\partial \overline z} \omega_w = \tilde \mu_{g_w} \frac{\partial}{\partial z} \omega_w
$$
on the complex plane. Here $\tilde \mu_{g_w}$ equals $\mu_{g_w}$ on $\overline{ R_w}$ and equals $0$ outside $ \overline{ R_w}$. $\omega_w$ is a H\"older continuous self-homeomorphism of the complex plane. The mapping $g_w \circ  {\omega_w}^{-1}$ is holomorphic on $\omega_w(R_w)$ (see \cite{A1}, Chapter I C).
The image $\omega_w(R_w)$  can be considered as a curvilinear rectangle. The curvilinear sides are the images of the sides of $R_w$. By \cite{A1} (chapter I, Theorem 3) the extremal length of $\omega_w(R_w)$  does not exceed $K_w \cdot \lambda(R_w)$. In other words, there is a conformal mapping $\psi_w$ of a true rectangle $\mathcal{R}_w$ of extremal length not exceeding  $K_w \cdot \lambda(R_w)$ onto $\omega_w(R_w)$, which takes the sides of $\mathcal{R}_w$ to the respective curvilinear sides of $\omega_w(R_w)$ . The mapping $g_w\circ {\omega_w}^{-1} \circ \psi_w: \mathcal{R}_w \to \mathbb{C} \setminus i \mathbb{Z}$ is a holomorphic mapping from the rectangle $\mathcal{R}_w$ of extremal length not exceeding  $K _w\cdot \lambda(R_w)$ to $\mathbb{C}\setminus i\mathbb{Z}$ that represents a lift of $_{pb}(w)_{pb}$.
Notice that the image of $f_1\circ f_2 \circ g_w\circ {\omega_w}^{-1} \circ \psi_w$ is contained in $\{z\in\mathbb{C}: |z|<C, |z\pm 1|>\frac{1}{C}\}$.

We obtained the inequality
\begin{align*}
\lambda(\mathcal{R}_w)\leq K_w \frac{ \pi N }{\frac{2}{5}} \leq
\frac{7}{3}\, \cdot \frac{ 5}{ 2}  \pi N =\frac{35}{6}  \pi N
\end{align*}
Since $\mathcal{L}(w)=N \cdot \log(4+\sqrt{15})$ and $\frac{35 \pi}{6\log(4+\sqrt{15})}< 9$
we obtain
$$
\Lambda_{pb}(w) <9 \mathcal{L}(w)\,.
$$
The case of words of the form \eqref{eq3-braids1}  with $tr$ or mixed boundary values is treated in the same way.
For instance, the first syllable of the word may have totally real left boundary values. But it has always $pb$ right boundary values.
We described a holomorphic mapping from a rectangle of vertical side length $\pi$ and horizontal side length $\frac{2}{5}$, that represents the
lift of such a syllable under $f_1\circ f_2$. This representing mapping has the same properties near the upper side of the rectangle as the chosen representing mapping of the lift of the syllable with $pb$ boundary values. Therefore the gluing procedure is the same as in the case of $pb$ boundary values. The argument concerning the last syllable is the same with left and right boundary values interchanged.
We proved the following
\begin{prop}\label{propxx}
If  $w\in \pi_1$ is a word with $N\geq 2$ syllables, all of the form $a_1^{\pm 2}$ or $a_2^{\pm 2}$, then
\begin{align*}
\Lambda_{pb} (w)\leq \frac{35}{6}\pi N< 9  \mathcal{L}(w)\,.
\end{align*}
The same estimate holds for the extremal length with $tr$ and with mixed boundary values.
\end{prop}

The following proposition is obtained in the same way. The only difference is that we have to perform also quasiconformal gluing of
the last syllable with $pb$ horizontal boundary values to the first syllable of the word with $pb$ horizontal boundary values, solve the Beltrami equation on an annulus and consider a conformal mapping from a round annulus to a curvilinear annulus.
\begin{prop}\label{propxxx}
Let $w$ be cyclically reduced word with $N\geq 2$ syllables, all of the form $a_1^{\pm 2}$ or $a_2^{\pm 2}$ (in particular, $w$ has an even number of syllables).
Then the  upper bound of the
extremal length of the free homotopy class $\hat w$ is given by the following inequality
$$
\Lambda(\hat{w})\leq \frac{35}{6}\pi N\leq 9 \mathcal{L}(\hat{w})\,.
$$
\end{prop}
The proofs of the propositions gave the following slightly more comprehensive statement that will be used later.
\begin{rem}\label{rem3-braids.1}
Let $R$ be a rectangle in the complex plane with horizontal side length $\sf b$ and vertical side length $\sf a$. Then for any word $w$ of the form  \eqref{eq3-braids1} with $N$ terms and
$6 N\pi<\frac{{\sf  a}}{\sf b}$ there exists a holomorphic map $g_w:R\to \mathbb{C}\setminus \{-1,1\}$ that represents $w$ with $pb$ horizontal boundary values and has  its image in $\{z\in \mathbb{C}: |z|<C,\,|z\pm 1|>\frac{1}{C}\}$ for a universal constant $C>1$. Moreover, $g_w$ takes the value zero at the midpoints of the horizontal sides of the rectangle.

Further, for positive numbers $\;\alpha\;$ and $\;\delta\;$ we consider the annulus
$\;A^{\alpha,\delta}\stackrel{def}= \{z\in \mathbb{C}: |{\rm Re}z|<\frac{\delta}{2}\}\diagup (z\sim z+i\alpha)\,$. If  $\lambda(A^{\alpha,\delta})=\frac{\alpha}{\delta}>6 N\pi$ for an even number $N$, then
for each word $w$ of the form  \eqref{eq3-braids1}
with $N$ terms there exists
a holomorphic mapping $\mathfrak{g}_w:  (A^{\alpha,\delta},0\diagup (z\sim z+i\alpha)) \to (\mathbb{C}\setminus \{-1,1\},0)$ from $A^{\alpha,\delta}$ with base point $0\diagup (z\sim z+i\alpha)$ to  $\mathbb{C}\setminus \{-1,1\}$ with base point $0$, that represents $\hat w$. Moreover, the image of $\mathfrak{g}_w$ is contained in the domain  $\{z\in\mathbb{C}: |z|<C, |z\pm 1|>\frac{1}{C}\}$ for a constant $C>1$.
\end{rem}
Indeed, for instance to obtain the second statement we consider the holomorphic mapping $g_w\circ {\omega_w}^{-1} \circ \psi_w$ that represents a lift of $\hat w$ and was constructed in the proofs of Propositions  \ref{propxx} and
 \ref{propxxx}. The mapping is defined on an annulus of the form $A^{\alpha,
\delta}$ with $\lambda(A^{\alpha,\delta})=\frac{\alpha}{\delta}>6N\pi$. The image $f_1\circ f_2 \circ g_w\circ {\omega_w}^{-1} \circ \psi_w(A^{\alpha,
\delta})$ under the composition  is contained in $\{z\in\mathbb{C}: |z|<C, |z\pm 1|>\frac{1}{C}\}$.
Perhaps after precomposing with a rotation of the annulus  $A^{\alpha,
\delta}$ it maps $0\diagup (z\sim z+i\alpha)$ to $0$.

\section{The extremal length of arbitrary words in $\pi_1$. The upper bound.}\label{sec:3-braids4}
We take an arbitrary word $w$ with at least two syllables and write $_{\#}w_{\#}$  as product of its syllables $_{\#}w_{\#}=_{\#}(\mathfrak{s}_1)_{pb} \, \ldots _{pb}(\mathfrak{s}_N)_{\#}$.
For each elementary word $_{pb}w_{pb}$ with $pb$ horizontal boundary conditions (or for the respective word with other horizontal boundary conditions) we will represent its lift under $f_1\circ f_2$ by a holomorphic mapping  to $\mathbb{C}\setminus i\mathbb{Z}$ from a rectangle of extremal length not exceeding a constant times
$\mathcal{L}(w)$.
The properties of the representing mappings will allow quasiconformal gluing.
The building block is the following lemma.

\begin{lemm}\label{lemxxx}
For each elementary word $_{pb}w_{pb}\in \pi_1^{pb}$
there exists
a rectangle $R_w$ with horizontal side length
${\rm hsl}(R_w)\geq \frac{1}{16\sqrt{2}}$,
and vertical side length ${\rm vsl}(R_w)\leq
\pi \mathcal{L}(w)$,
and a holomorphic mapping $g_w: R_w\to
\mathbb{C}\setminus i \mathbb{Z}$ with the following properties.

There are two points $\xi_w^+$ and $\xi_w^{-}$ in the boundary of $R_w$
with ${\rm Re} \xi_w^+={\rm Re} \xi_w^-$,
such that the restriction of $g_w$ to the (vertical) line segment that joins $\xi_w^{-}$ with $\xi_w^{+}$ represents a lift of  $_{pb}w_{pb}$ under $f_1\circ f_2$.
The values of $g_w$ at the points $\xi_w^{\pm}$
are imaginary half-integers that are not imaginary integers.
The equalities for the derivatives at these points are $g_w'(\xi_w^-)= i$ if the first letter of the word is $a_1^{\pm 1}$ and $-i$ if the first letter is $a_2^{\pm 1}$, and  $g_w'(\xi_w^+)= -i$ if the last letter of the word is $a_1^{\pm 1}$ and $i$ if the last letter is $a_2^{\pm 1}$. Further,
the mapping $g_w$ extends holomorphically to the discs $| \zeta-\xi_w^{\pm}|<\frac{1}{16}$ of radius $\frac{1}{16}$ around $\xi_w^{\pm}$ and  the inequality $|g_w''(\zeta)|\leq 2^8$ holds for the extended function if $| \zeta-\xi_w^{\pm}|<\frac{1}{16}$.

For each elementary word $_{tr}w_{pb}$ or  $_{pb}w_{tr}$ of the form $(1)$ or $(3)$  with mixed horizontal boundary values there is a lift under $f_1\circ f_2$ that can be represented by a holomorphic mapping from a  rectangle with horizontal side length
${\rm hsl}(R_w)\geq \frac{1}{16\sqrt{2}}$,
and vertical side length ${\rm vsl}(R_w)\leq
\pi \mathcal{L}(w)$. Moreover, the horizontal side, that is mapped to $i\mathbb{R}$, contains a point for which the previous statements hold.
\end{lemm}
\index{${\rm hsl}$} \index{${\rm vsl}$} \index{$R_w$} \index{$g_w$} \index{$\xi_w^{\pm}$}

\medskip

\noindent {\bf Proof of Lemma \ref{lemxxx}.}\\
{\bf 1. Syllables of form (1) with $pb$ horizontal boundary conditions.} We first consider syllables $_{pb}w_{pb}$ of the form $_{pb}(a_1^d)_{pb},\,$ $ d\geq 2$. Let $M=\frac{d-1}{2}$.
Consider the holomorphic mapping $T^{{\sf a},{\sf b}}$ of Example 1, Section \ref{sec:4.1b} with ${\sf a}=M$ and ${\sf b}=M+1$. Denote the obtained mapping by $T_M$.  Let $t_M= -M+\sqrt{M(M+1)}$ (compare with equality \eqref{eq4.1c}).
The inverse $T_M^{-1}$ provides a conformal mapping
from the half-annulus with center $\frac{1}{2}$ and radii $\frac{1}{2}$ and $\frac{1}{2}-t_M$  to $\{z\in \mathbb{C}: {\rm Im} z>0, {\rm Re} z >0, |z-\frac{M+1}{2}|> \frac{1}{2}\}$, that takes the half-circle with diameter $(0,1)$ to the imaginary half-axis and the half-circle with diameter $(t_M,1-t_M)$ to the half-circle with diameter $(M,M+1)$. The mapping takes $0$ to $0$, $1$ to $\infty$,
$t_M$ to $M$ and $1-t_M$ to $M+1$. The first two conditions imply that the mapping $T_M^{-1}$ has the form $z\to \frac{a}{c}\frac{z}{z-1}$. The last two conditions imply that $(\frac{a}{c})^2=M(M+1)$ and $\frac{a}{c}<0$. This implies that
\begin{align}\label{eqbr.10}
T_M^{-1}(z)=\sqrt{M(M+1)}\frac{z}{1-z}\,,\;\;{\rm and}\; T_M(z)=\frac{z}{\sqrt{M(M+1)}+z}\,.
\end{align}
\index{$T_M$} \index{$t_M$}
Further, the entire mapping
\begin{equation}\label{eqbr.11}
\omega(z)= \frac{1}{2}+\frac{1}{2} e^{i z}
\end{equation}
takes the rectangle $\mathring{R}_M^+$ with vertices $0$, $\pi$, $\pi+i\log\frac{1}{1-2 t_M}$, $i\log\frac{1}{ 1-2 t_M}$  conformally onto the half-annulus with center $\frac{1}{2}$ and radii $\frac{1}{2}$ and $\frac{1}{2}-t_M$. The lower side of the rectangle is mapped onto the larger half-circle, the upper side is mapped onto the smaller half-circle.
Let $\mathring{R}_M\supset \mathring{R}_M^+$ be the rectangle that is symmetric with respect to the real axis, has the same horizontal side length ${\rm hsl}(\mathring{R}_M)={\rm hsl}(\mathring{R}_M^+)=\pi$, and
(see equations \eqref{eq4.100}, \eqref{eq3braids.100} and \eqref{eq3braids.101}) the double vertical side length
\begin{align}\label{eq3braids.5}
{\rm vsl}(\mathring{R}_M)=&2{\rm vsl}(\mathring{R}_M^+)=
2 \log\frac{1}{1-2 t_M}
= 2 \log(\sqrt{M}+\sqrt{M+1})^2\nonumber \\
=& 2\log(d+\sqrt{d^2-1})
\leq 2   \log(2d+\sqrt{4d^2-1}).
\end{align}
The rectangle $\mathring{R}_M$ has extremal length
\begin{align}\label{eqbr.12}
\lambda(\mathring{R}_M)= \frac{2 \log\frac{1}{1-2 t_M}}{\pi}
\leq  \frac{2}{\pi}    \log(2d+\sqrt{4d^2-1})=  \frac{2}{\pi} \mathcal{L}(w)\,.
\end{align}
The mapping $\omega$ takes $\mathring{R}_M$ onto the half-annulus in $\mathbb{C}_+$
with center $\frac{1}{2}$ and radii of the circles equal to $\frac{1}{2}-t_M$ and $\frac{1}{ \frac{1}{2}-t_M}$.
\index{$\mathring{R}_M$}
\index{$\mathring{R}_M^+$}
\index{$R_M'$}
\index{$\mathfrak I$}

Denote by $\mathfrak I$ the mapping, that acts by multiplication with the imaginary unit,  $\mathfrak{I}(z)=i z,\, z\in \mathbb{C}$.  The mapping  $ \mathfrak{I}\circ T_M^{-1}\circ \omega = iT_M^{-1}\circ \omega$ takes $\mathring{R}_M$ conformally onto $\mathbb{C}_{\ell}\setminus \big(\{|\zeta-i(M+\frac{1}{2})|\leq \frac{1}{2}\}\cup\{|\zeta+i(M+\frac{1}{2})|\leq \frac{1}{2}\}\big) $.

Recall that $\tilde{\mathbb{C}}_{\ell}$ is the lift of the left half-plane ${\mathbb{C}}_{\ell}$ to the
first sheet of the universal covering $U$ of  $\mathbb{C}\setminus i \mathbb{Z}$.
We consider the conformal mapping $\varphi_0$  from $\tilde{\mathbb{C}}_{\ell}$ onto
$\mathbb{C}_{\ell}\setminus \bigcup_{k=-\infty}^{\infty} \{|z-i(k+\frac{1}{2})|\leq \frac{1}{2}\}$, whose continuous extension to the boundary takes each interval $(ki,(k+1)i)$ to the half-circle $\mathbb{C}_{\ell}\cap\{|z-i(k+\frac{1}{2})|= \frac{1}{2}\}$ (see Section \ref{sec:3-braids1a}).  Let $\varphi$ \index{$\varphi$}
be the extension of $\varphi_0$ \index{$\varphi_0$} to a conformal mapping of the universal covering $U$ of $\mathbb{C}\setminus i\mathbb{Z}$ onto $\mathbb{C}_{\ell}$ (see Lemma \ref{lemm3}), and let $\varphi^{-1}$ be its inverse. Denote by ${\sf P}$ the covering map from the universal covering $U$ of $\mathbb{C}\setminus i\mathbb{Z}$ to the set $\mathbb{C}\setminus i\mathbb{Z}$.
The composition $({\sf P}\varphi^{-1})$ of ${\sf P}$ with the inverse $\varphi^{-1}$
of $\varphi$ maps  $\mathbb{C}_{\ell}$ locally conformally to  $\mathbb{C}\setminus i\mathbb{Z}$. Moreover, it maps the subset $\mathfrak{H}_0\setminus \{|z-\frac{i}{2}|<\frac{1}{2}\}$ of the half-strip $\mathfrak{H}_0\stackrel{def}=\{z\in \mathbb{C}_{\ell}:0<{\rm Im}z<1\}$ \index{$\mathfrak{H}_0$} \index{$\mathfrak{H}$} conformally onto $\mathfrak{H}_0$. Hence, by reflection in the
half-circle $ \{z\in \mathbb{C}_{\ell}:|z-\frac{i}{2}|<\frac{1}{2}\}$ and in the   interval $(0,i)$, respectively, it follows that  $({\sf P}\varphi^{-1})$ maps
 $Q\stackrel{def}=\mathfrak{H}_0\setminus \big(\{|z-\frac{i}{4}|<\frac{1}{4}\}\cup \{|z-\frac{3i}{4}|<\frac{1}{4}\}$
conformally onto the strip $\mathfrak{H}=\{z\in \mathbb{C}: 0<{\rm Im}z<1\}$.
The mapping ${\sf P}\circ \varphi^{-1}\mid Q$ takes $\frac{i-1}{2}$ to $\frac{i}{2}$.

The mapping $({\sf P}\varphi^{-1})_M,\, ({\sf P}\varphi^{-1})_M(z)\stackrel{def}={\sf P}\varphi^{-1}(z-iM)+iM$ takes $\mathbb{C}_{\ell}\setminus \{|z-i(M\pm\frac{1}{2})|< \frac{1}{2}\}$ locally conformally into $\mathbb{C}\setminus ( i\mathbb{Z}+iM)$ so that its continuous extension takes the half-circle $\mathbb{C}_{\ell}\cap\{|z-i(M+\frac{1}{2})|= \frac{1}{2}\}$ to  $(iM,i(M+1))$, and the half-circle $\mathbb{C}_{\ell}\cap\{|z+i(M+\frac{1}{2})|= \frac{1}{2}\}$ to $(-i(M+1),-iM)$.
The composition
\begin{align}\label{eq10.100}
\mathcal{G}_M\stackrel{def}= ({\sf P}\circ \varphi^{-1})_M\circ \mathfrak{I}\circ T_M^{-1}\circ\omega
\end{align}
maps $\mathring{R}_M$ locally conformally into $\mathbb{C}\setminus ( i\mathbb{Z}+iM)$, and its continuous extension takes the lower side of $\mathring{R}_M$ to $(-(M+1)i, -Mi)$ and the upper side to the interval $(iM,i(M+1))$. See Figure \ref{Fig3-braids.9}.

\begin{figure}[h]
\begin{center}
\includegraphics[width=11.5cm]{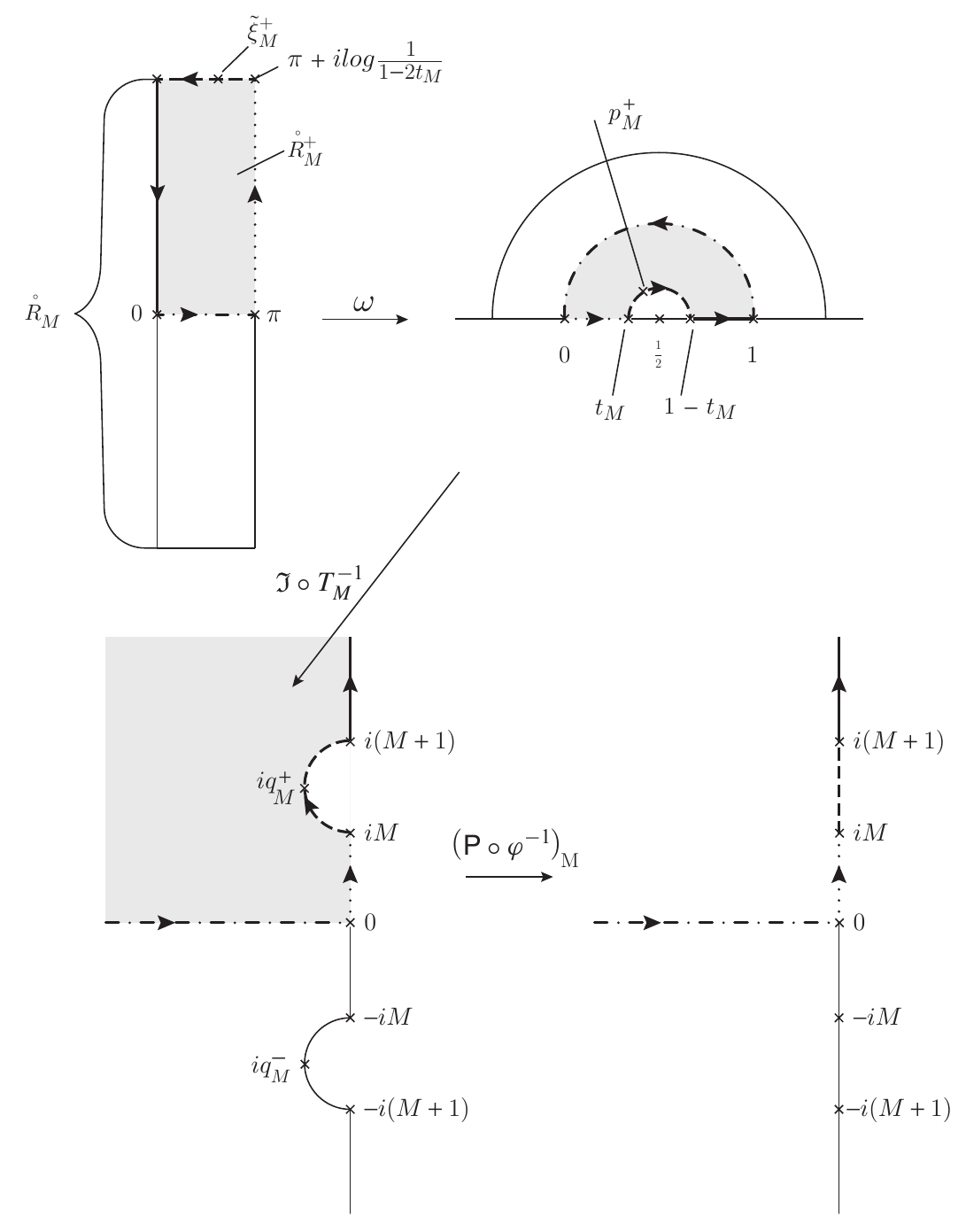}
\end{center}
\caption{A conformal mapping representing a lift of a syllable of form (1).}
\label{Fig3-braids.9}
\end{figure}

The mapping $\mathcal{G}_M -iM:\mathring{R}_M\to \mathbb{C}\setminus i\mathbb{Z}$ represents a lift under $\mathcal{F}\stackrel{def}=f_1\circ f_2$ of $_{pb} (a_1^d)_{pb}$ (see Section \ref{sec:3-braids1a}, and also Figure \ref{Fig3-braids1}). In other words, $\mathcal{F} \circ (\mathcal{G}_M-iM)$ represents
$_{pb} (a_1^d)_{pb}$. Let $R'_M\supset \mathring{R}_M$ be the rectangle of vertical side length $3 \cdot {\rm vsl}(\mathring{R}_M)$ and horizontal side length $ {\rm hsl}(\mathring{R}_M)$ which is symmetric with respect to the real axis. By the reflection principle $\mathcal{G}_M$ extends holomorphically through each of the horizontal sides of $\mathring{R}$ to a holomorphic mapping from  the rectangle $R'_M$
into $\mathbb{C}\setminus i\mathbb{Z}$. The extension is denoted again by $\mathcal{G}_M$. (see Figure \ref{Fig3-braids.9}.)
\index{$\mathcal{G}_M$} \index{$\mathfrak{F}$}

We put $q_M^{\pm}\stackrel{def}=\pm\frac{2M+1}{2}+\frac{i}{2}$. Notice that $iq_M^{\pm}$ is the midpoint of the half-circle in the left half-plane with diameter $(iM,i(M+1)$ ($(-i(M+1),-iM)$, respectively).
Let $\tilde{\xi}_M^{\pm}$ and $p_M^{\pm}$ be the points for which
\begin{align*}
\tilde{\xi}_M^{\pm} \overset {\omega} {-\!\!\!-\!\!\!\longrightarrow}   p_M^{\pm}\overset{\mathfrak{I}\circ T_M^{-1}}   {-\!\!\!-\!\!\!-\!\!\!\longrightarrow}    i q_M^{\pm}\,.
\end{align*}
\index{$q_M^{\pm}$}  \index{$\tilde\xi_M^{\pm}$} \index{$p_M^{\pm}$}

We will consider the inverse of $\mathfrak{I}\circ T_M^{-1}\circ \omega$,
$  (\mathfrak{I}\circ  T_M^{-1}\circ \omega)^{-1}= {\omega}^{-1}\circ T_M\circ \mathfrak{I}^{-1}$.

By equality \eqref{eqbr.10}, and since for the branch of the logarithm on $\mathbb{C}_+$ with imaginary part in $(0,\pi)$ the equality $\omega^{-1}(z)=\frac{1}{i} \log(2 z-1)$ holds, we obtain
\begin{align}\label{eq3braids6}
\omega^{-1}\circ T_M(z)= \frac{1}{i} \log(2T_M(z)-1)= \frac{1}{i} \log\frac{-\sqrt{M(M+1)} +z}{+\sqrt{M(M+1)} +z}\,.
\end{align}
For the derivative we get
\begin{align}\label{eqbr.30}
(\omega^{-1}\circ T_M)'(z)=& \frac{1}{i}\left(\frac{1}{-\sqrt{M(M+1)}+z} -\frac{1}{+\sqrt{M(M+1)}+z}\right)\nonumber\\
=&i\frac{2\sqrt{M(M+1)}}{M(M+1)-z^2}\,.
\end{align}
Hence,
\begin{align*}
(T_M^{-1}\circ\omega)'(\tilde{\xi}_M^+)=&\frac{1}{(\omega_M^{-1}\circ T_M)'(M+\frac{1}{2}+\frac{i}{2})}\nonumber\\
=&\frac{1}{2i} \frac{M(M+1)-(M+\frac{1}{2}+\frac{i}{2})^2}{\sqrt{M(M+1)}}=-\frac{1}{2}\frac{M+\frac{1}{2}}{\sqrt{M(M+1)}}\,.
\end{align*}
By equation \eqref{eq3braids.102} the equality $({\sf P}\circ \varphi_0^{-1})'(z+i)=({\sf P}\circ \varphi_0^{-1})'(z)$ holds on ${\mathbb{C}_{\ell}}\setminus \bigcup_{k=-\infty}^{\infty} \{|z-i(k+\frac{1}{2})|\leq \frac{1}{2}\}$, hence $({\sf P}\circ \varphi^{-1})'(z+i)=({\sf P}\circ \varphi^{-1})'(z)$ holds for $z\in {\mathbb{C}_{\ell}}$. This implies that
\begin{align}\label{eqbr.19}
(\mathcal{G}_M)'(\tilde\xi_M^+)=&\big(({\sf P}\circ \varphi^{-1})_M\circ (i T_M^{-1})\circ \omega\big)'(\tilde\xi_M^+)=\nonumber \\
&({\sf P}\circ \varphi^{-1})_M'(iq_M^+)\cdot
(i T_M^{-1}\circ \omega)'(\tilde\xi_M^+) =\nonumber \\ &({\sf P}\circ \varphi^{-1})'(\frac{i-1}{2})\cdot  \left(-\frac{i}{2}\frac{M+\frac{1}{2}}{\sqrt{M(M+1)}}\right)\,.
\end{align}
We used that $iq_M^+ + iM=-\frac{1}{2}+i(M+\frac{1}{2}) +iM$ differs by an imaginary integer from $-\frac{1}{2}+\frac{1}{2}i$.

The derivative $({\sf P}\circ \varphi^{-1})'(\frac{i-1}{2})$
is a positive real number. Indeed, the composition ${\sf P}\circ \varphi^{-1} $ maps $Q=\mathfrak{H}_0\setminus \big(\{|z-\frac{i}{4}|<\frac{1}{4}\}\cup \{|z-\frac{3i}{4}|<\frac{1}{4}\}$ conformally onto $\mathfrak{H}_0$, and by the symmetry properties of the source and the target it maps the horizontal line $\{{\rm Im}z=\frac{1}{2}\}$ to itself, and its real part is increasing in $x={\rm Re}z$.
Hence, $\frac{\partial}{\partial z }({\sf P}\circ \varphi^{-1})(\frac{i-1}{2})=
\frac{\partial}{\partial x }({\sf P}\circ \varphi^{-1})(\frac{i-1}{2})$ is a positive real number $\alpha_{\varphi}$ that does not depend on $M$.

We will prove now, that $\alpha_{\varphi}\leq \frac{8}{\pi}$.
The set $Q$ contains the disc of radius $\frac{1}{4}$ around the point $\frac{i}{2}-\frac{1}{2}$. The mapping $\varrho,\, \varrho(z)=
\frac{e^{-\pi  z}i-1}{e^{- \pi  z}i +1}$, is a conformal mapping from
the strip $\mathfrak{H}$ onto the unit disc. It maps $\frac{i}{2}$ to $0$. Hence the composition $\varrho\circ{\sf P}\circ \varphi^{-1}:Q\to \mathbb{D}$ maps the disc of radius $\frac{1}{4}$ around the point $\frac{i}{2}-\frac{1}{2}$ conformally onto a subset of the unit disc.
\index{$\varrho$} \index{$Q$}

By Cauchy's formula the derivative of the composition $\varrho\circ{\sf P}\circ \varphi^{-1}$ at
$\frac{i}{2}-\frac{1}{2}$ does not exceed $4$. The derivative of $\varrho$ at $\frac{i}{2}$ equals  $\varrho'(\frac{i}{2})=-\frac{\pi}{2} $. Since
$(\varrho\circ{\sf P}\circ \varphi^{-1})'(\frac{i}{2}-\frac{1}{2})=\varrho'(\frac{i}{2})\cdot({\sf P}\circ \varphi^{-1})'(\frac{i}{2}-\frac{1}{2}) $, the inequality
$|({\sf P}\circ \varphi^{-1})'(\frac{i}{2}-\frac{1}{2})|\leq \frac{8}{\pi}$ holds.

We proved that
\begin{align}\label{eqbr.20}
(\mathcal{G}_M)'(\tilde\xi_M^+)= -i \cdot
\frac{\alpha_{\varphi} }{2}\frac{M+\frac{1}{2}}{\sqrt{M(M+1)}}\,
\end{align}
with $0<\alpha_{\varphi}\leq \frac{8}{\pi}$. \index{$\alpha_{\varphi}$}  By symmetry reasons
\begin{align}\label{eqbr.20a}
(\mathcal{G}_M)'(\tilde\xi_M^-)= i \cdot
\frac{\alpha_{\varphi} }{2}\frac{M+\frac{1}{2}}{\sqrt{M(M+1)}}\,.
\end{align}
Since
\begin{align*}
(\frac{M+\frac{1}{2}}{\sqrt{M(M+1)}})^2=1+\frac{1}{4M(M+1)}\leq \frac{4}{3}\; \mbox{for}\; M\geq \frac{1}{2}\,,
\end{align*}
we get
\begin{align}\label{eq3-braids.2}
|(\mathcal{G}_M)'(\tilde\xi_M^{\pm})|\leq \frac{8}{\sqrt{3}\pi}\,.
\end{align}
We put \index{$R_M$} \index{$g_M$} \index{$\xi_M^{\pm}$}
\begin{align}\label{eq3-braids.3}
R_M=& |(\mathcal{G}_M)'(\tilde{\xi}_M^+)|\cdot \mathring{R}_M\,,\nonumber\\
\xi_M^{\pm}=&{|(\mathcal{G}_M)'(\tilde{\xi}_M^{+})|}\cdot{{\tilde \xi}_M^{\pm}}\,\nonumber,\\
g_M(\zeta)=&(\mathcal{G}_M)\big(\frac{\zeta}{|(\mathcal{G}_M)'(\tilde{\xi}_M^{+})|}\big),\, \zeta \in R_M\,.
\end{align}
By equations \eqref{eqbr.20} and \eqref{eqbr.20a} the equality $g_M'(\xi_M^{\pm})=\mp i$ holds.

We will prove now that $g_M$ extends holomorphically to
a disc of radius $\frac{1}{8}$ around $\xi_M^{\pm}$, and for the extended function the inequality $|g_M''(\zeta)|<2^8$
holds on a disc of radius $\frac{1}{16}$ around  $\xi_M^{\pm}$. Let $R_{M,+}$ be the rectangle of twice the vertical
side length and the same horizontal side length as $R_{M}$, that is symmetric with respect to the upper side of $R_M$.
The mapping $g_M$ takes $R_M$ locally conformally to $\mathbb{C}\setminus (i\mathbb{Z}+iM)$ and its continuous extension to the boundary maps the upper side to
the segment $(iM, i(M+1))$. Hence, $g_M$ extends by reflection through the upper side of the rectangle and the segment $(iM,i(M+1))$, respectively, to a locally conformal mapping from $R_{M,+}$ into  $\mathbb{C}\setminus (i\mathbb{Z}+iM)$.
Denote the extension again by $g_M$. Put $V= |(\mathcal{G}_M)'(\tilde{\xi}_M^+)|\cdot (\mathfrak{I}\circ T_M^{-1}\circ \omega)^{-1}(Q)$. Note that $V\subset R_{M,+}$. Since ${\sf P}\circ \varphi^{-1}\mid Q$ is a conformal mapping, and ${\mathcal G}_M= {\sf P}\circ \varphi^{-1}\circ (\mathfrak{I}\circ T_M^{-1}\circ \omega -iM)+iM$, the restriction $g_M\mid V$ is by \eqref{eq3-braids.3} a conformal mapping onto a subset of $\mathfrak{H}+iM$. The
image of the mapping $g_M\mid V$ contains  the disc of radius $\frac{1}{2}$ around $i(M+\frac{1}{2})$. Let $\mathfrak{f}_M$ be  the inverse of  $g_M\mid V$.

The restriction of $\mathfrak{f}_M$ to the disc of radius $\frac{1}{2}$ around $i(M+\frac{1}{2})$ is a conformal mapping with image in   $R_{M,+}$.
The mapping \index{$R_{M,+}$}
$\zeta\to 2\mathfrak{f}_M(i\frac{M+1}{2}+\frac{\zeta}{2})$ takes the unit disc $\mathbb{D}$ conformally onto its image.
It takes $0$ to $\xi_M^+$, and has derivative of absolute value equal to $1$ at $0$. By Koebe's $\frac{1}{4}$ Covering Theorem (see \ref{thmKoebe}) \index{Koebe ! $\frac{1}{4}$ Covering Theorem} the image of the unit disc under this mapping contains the disc of radius $\frac{1}{4}$ around $\xi_M^+$. Hence, the image $\mathfrak{f}_M(\{|z-i\frac{M+1}{2}|<\frac{1}{2}\})$ covers the disc of radius $\frac{1}{8}$ around $\xi_M^+$.
In particular, the disc of radius $\frac{1}{8}$
around $\xi_M^+$ is contained in $R_{M,+}$, hence ${\rm hsl}(R_M)\geq \frac{1}{4}$.
Moreover, the mapping  $g_M$, being the inverse of $\mathfrak{f}_M$, takes a disc of radius  $\frac{1}{8}$ around $\xi_M^+$ into a disc of radius  $\frac{1}{2}$.
By Cauchy's formula
the second derivative of $g_M$ on the disc of radius $\frac{1}{16}$
around $\xi_M^+$ is estimated by the inequality $|g_M''(\zeta)|\leq 2^8$.

The same arguments apply to the point $\xi^-_M$ instead of $\xi_M^+$ obtained from
$\xi_M^+$ by reflection in the real axis. We obtain
\begin{align}\label{eqbr.21}
|g_M''(\zeta)|\leq 2^8 \;\mbox{for}\; |\zeta-\xi_M^{\pm}| <\frac{1}{16}  \,.
\end{align}

For $w=a_1^d$ and $M=\frac{d-1}{2}$ we put  $\xi_w^{\pm}=\xi_M^{\pm}-\frac{\xi_M^+ +\xi_M^-}{2}$. The rectangle $R_M-\frac{\xi_M^+ +\xi_M^-}{2} $ is symmetric with respect to the imaginary axis and with respect to the real axis..
We define
$R_w$ to be the intersection of $R_M-\frac{\xi_M^+ +\xi_M^-}{2} $ with the vertical strip $\{|{\rm Re}(z)|<\frac{1}{8}\}$.  The points $\xi_w^{\pm}$ are the midpoints of the horizontal sides of $R_w$.

Finally we put $g_w(\zeta)=g_M(\zeta+\frac{\xi_M^+ +\xi_M^-}{2})+iM$.
The horizontal boundary values of the mapping $g_w$ are contained in the intervals $(-i,0)$ and $(2Mi,(2M+1)i)
=((d-1)i,di)$.
Hence $g_w$ represents a lift of $a_1^d$ under $f_1\circ f_2$.
By \eqref{eq3braids.5},
\eqref{eq3-braids.2}, and \eqref{eq3-braids.3} the vertical side length of $R_w$ is estimated by
\begin{align*}
\mbox{vsl}(R_w)=  \mbox{vsl}(R_M)\leq \frac{16}{\sqrt{3}\pi}\mathcal{L}(w)<\pi \mathcal{L}(w)\,.
\end{align*}
The lemma is proved for syllables of the form $a_1^d$ with $d\geq 2$.

The statement for
syllables of the form $a_1^{-d}$ and $a_2^{\pm d}$ with $d\geq 2$ follows by symmetry. Indeed, the mapping $z\to -z$ takes $R_{a_1^d}$, homeomorphically onto itself, and takes $\mathbb{C}\setminus i\mathbb{Z}$ homeomorphically onto itself.
The mapping $z\to -g_{a_1^d}(z),\, z\in R_{a_1^d}$, represents a lift of  $a_2^d$ under $f_1\circ f_2$. The mapping $z\to g_{a_1^d}(-z),\, z\in R_{a_1^d}$, represents a lift of $a_1^{-d}$, and the mapping $z\to -g_{a_1^d}(-z),\, z\in R_{a_1^d}$ represents a lift of $a_2^{-d}$ under $f_1\circ f_2$. The lemma is proved for syllables of the form (1) with $pb$ horizontal boundary conditions.
\index{$R_w$} \index{$g_w$} \index{$\xi_w^{\pm}$}

\medskip

\noindent {\bf 2.  Syllables of form (1) with mixed horizontal boundary conditions.}\\
Consider the elementary word $_{tr}(a_1^d)_{pb}$ with mixed horizontal boundary values. The other cases are similar. We represent this elementary word as follows.
Put $M'=d-\frac{1}{2}$. The mapping $(\mathcal{G}_{M'}-\frac{i}{2})\mid \mathring{R}_{M'}^+$ represents a lift under $f_1\circ f_2$ of the word $_{tr}(a_1^d)_{pb}$. (Indeed, the restriction to vertical line segments join $-\frac{i}{2} +\mathbb{R}$ with $(i(M'-\frac{1}{2}),  i(M'+\frac{1}{2}))$.) The extremal length of the rectangle is
\begin{align*}
\lambda (\mathring{R}_{M'}^+)=&\frac{1}{\pi} \log(\sqrt{M'} +\sqrt{M' +1})^2
=\frac{1}{\pi} \log(2 M'+1+2\sqrt{M'(M'+1)})\\ =&\frac{1}{\pi} \log(2d+2 \sqrt{(d-\frac{1}{2})(d+\frac{1}{2}})=\frac{1}{\pi}  \log(2d+\sqrt{4d^2-1})=\frac{1}{\pi}\mathcal{L}(w)\,.
\end{align*}
The rest of the proof is the same as above.

\medskip

\noindent{\bf 3.  Singletons.} We consider syllables of degree $d=1$. We start with the singleton $w=_{pb}(a_2^{-1})_{pb}$. A lift under $f_1\circ f_2 $ is represented by the mapping $g_0(\zeta)\stackrel{def}=\frac{1}{2} e^{2 \zeta}$ from the rectangle $\mathring{R}_0$ with vertices  $\pm \log 2 \pm \frac{\pi}{4} i$
into $\mathbb{C}\setminus i\mathbb{Z}$. The points ${\xi}_0^{\pm}= \pm i\frac{\pi}{4}$ are mapped to $\pm \frac{i}{2}$. The derivative of the mapping at these points equals $(g_0)'({\xi}_0^{\pm})=\pm i $.Moreover, $|g_0''(\zeta)|\leq 8$, $\mbox{hsl}(\mathring{R}_0)=\log 4 >\frac{\sqrt{2}}{16}$, $ \mathcal{L}(w)=\log(2d+\sqrt{4d^2-1})=\log(2+\sqrt{3})$, and $\mbox{vsl}(\mathring{R}_0)= \frac{\pi}{2}  =\frac{\pi}{2\log(2+\sqrt{3})}\mathcal{L}(w)<\pi \mathcal{L}(w)$.
For $w=a_2^{-1}$ we put $R_w=\mathring{R}_0  $, $\xi_w^{\pm}=\xi_0^{\pm}$,
and $g_w=g_0$. The points  $\xi_w^{\pm}$ are the midpoints of the horizontal sides of $R_w$. The lemma is proved for $_{pb}(a_2^{-1})_{pb}$.

For the syllables $a_2$ and $a_1^{\pm 1}$ with $pb$ horizontal boundary values the statement is obtained by the symmetry arguments used for syllables of form $(1)$.
\index{$\mathcal{G}_0$} \index{$\mathring{R}_0$} \index{$g_0$} \index{$R_0$} \index{$\xi_0^{\pm}$} \index{$\tilde{\xi}_0^{\pm}$}
\index{$g_w$} \index{$R_w$} \index{$\xi_w^{\pm 1}$}

Consider a singleton with mixed boundary values. For instance, a lift under $f_1\circ f_2$ of the word $_{tr}(a_2^{-1})_{pb}$ is represented by the mapping $ \tilde{g}(\zeta)=-\frac{i}{2} +e^{\zeta},\, \zeta \in R_0^{\rm mixed}$,
where $R_0^{\rm mixed}\stackrel{def}=
\{\zeta \in \mathbb{C}: {\rm Im}\zeta \in (0,\frac{\pi}{2}),\, |{\rm Re}\zeta| <\frac{1}{5}\}$.
 Indeed, the lower side of the rectangle is mapped to $-\frac{i}{2} +\mathbb{R}$, and the upper side is mapped to $(0,i)$.
Further, $g'(\frac{\pi}{2}i)=i$, $|g''|< e^{\frac{1}{5}}<
\frac{3}{2}$,
${\rm hsl}(R_0^{\rm mixed})=\frac{2}{5}>\frac{\sqrt{2}}{16}$,
and ${\rm vsl}(R_0^{\rm mixed})=\frac{\pi}{2}$.  The other cases with mixed horizontal boundary values follow by symmetry arguments.

\medskip
\noindent {\bf 4. Syllables of form (2) with $pb$ horizontal boundary values of degree at least $2$.}
We give the proof for the syllables $w=_{pb}(a_2^{-1} a_1^{-1} \ldots )_{pb}$. For other syllables of form (2) and degree $d$ the statement follows by symmetry reasons. Put $M=\frac{d+1}{2}$, where $d\geq 2$ is the degree, i.e. the number of letters of the word.
The holomorphic function \index{$\mathfrak{g}_M$}
\begin{align}\label{eq50}
\mathfrak{g}_M(z)=\frac{1}{2}\exp(\pi(z+i(M-1)),\, z \in \mathbb{C}\,,
\end{align}
takes the set $\{z\in\mathbb{C}: {\rm Re}z<\frac{\log 2}{\pi},\,|{\rm Im}z|<M\}$
to the punctured unit disc $\{0<|\zeta|<1\} \subset \mathbb{C}\setminus i\mathbb{Z}$.
The mapping $\mathfrak{g}_M$ takes $\{{\rm Im}z=-(M-\frac{1}{2}), \,{\rm Re}z<\frac{\log 2}{\pi} \}$ to the
line segment $(-i,0)$. Further,  $\mathfrak{g}_M$ takes $\{{\rm Im}z=M-\frac{1}{2},\, {\rm Re}z<\frac{\log 2}{\pi}\}$ to the line segment
$(0,i)$ if $d=2M-1$ is odd, and takes it to $(-i,0)$ if $d=2M-1$ is even.
The restriction of $\mathfrak{g}_M$ to $(-i(M-\frac{1}{2}),i(M-\frac{1}{2}))$ is the curve
$  \theta \to {\frac{1}{2}}\cdot e^{\pi i \theta}, \theta \in (-\frac{1}{2},d-\frac{1}{2})$.
The initial point of this curve is $-\frac{i}{2}$ and the curve makes $d$ half-turns around $0$. The curve $  \theta \to   {\frac{1}{2}}\cdot e^{\pi i \theta},\,\theta \in (-\frac{1}{2},d-\frac{1}{2}),$ represents a lift under $f_1 f_2$ of the syllable $a_2^{-1} a_1^{-1} \ldots$ of degree $d$ with $pb$ boundary values, and the restriction $\mathfrak{g}_M\mid \{z\in \mathbb{C}:{\rm Re}z<\frac{\log 2}{\pi} ,\, |{\rm Im}z|<M-\frac{1}{2}\}$ represents a lift of  $_{pb}(a_2^{-1} a_1^{-1} \ldots )_{pb}$ under $f_1 f_2$.
The derivative of the mapping $\mathfrak{g}_M$ at $- i(M-\frac{1}{2})$ equals
\begin{align}\label{eq51}
(\mathfrak{g}_M)'( -i(M-\frac{1}{2}))=\pi \mathfrak{g}_M( -i(M-\frac{1}{2})) = -\frac{\pi i}{2}\,.
\end{align}
and
\begin{equation}\label{eq52}
(\mathfrak{g}_M)'( +i(M-\frac{1}{2}))=
\begin{cases}
+\frac{\pi i}{2} ,\, & \mbox{if}\, d=2M-1 \, \mbox{is\, odd}\,,\\
-\frac{\pi i}{2}  ,\, & \mbox{if}\, d=2M-1 \, \mbox{is\, even}\,.
\end{cases}
\end{equation}

For each natural number $k$
the set  $\{z\in\mathbb{C}: {\rm Re}z<\frac{\log 2}{\pi}   ,\, |{\rm Im}z|<M- \frac{1}{2}\}$ contains the rectangle  $\mathcal{R}_k\stackrel{def}=\{z\in\mathbb{C}: -k<{\rm Re}z<\frac{\log 2}{\pi},\, |{\rm Im}z|<M-\frac{1}{2}\}$ \index{$\mathcal{R}_k$}
of extremal length $\frac{2M-1}{k+\frac{\log 2}{\pi}}$, which can be made
arbitrarily small  by choosing large $k$. The restriction of $\mathfrak{g}_M$
to each of these rectangles represents a lift of  $_{pb}(a_2^{-1} a_1^{-1} \ldots )_{pb}$ under $f_1 f_2$.
However, we need to represent lifts of syllables of form (2) with parameter $M$ by holomorphic mappings from
rectangles with horizontal side length uniformly bounded from above and from below for all $M$, with vertical side length comparable to $\log M$, and the values of the derivatives of the mappings at the chosen points equal to $\pm i$ where the sign is prescribed by the situation. After normalizing the rectangle and the mapping so that the condition for the derivatives of the mapping is satisfied, the described approach gives rectangles of vertical side length comparable to $d=2M-1$, which does not give the optimal estimate.

For dealing with this difficulty, we precompose $\mathfrak{g}$ with the mapping used for syllables of form $(1)$.
We consider the rectangle $\mathring{R}_M$ \index{$\mathring{R}_M$} and the mapping $\mathfrak{I}\circ T_M^{-1}\circ \omega$ on $\mathring{R}_M$ as in the case of syllables of form (1). Recall that
\begin{align*}
\mbox{vsl}(\mathring{R}_M)=2\log(\sqrt{M}+\sqrt{M+1})^2 
\leq 2\log(2d+\sqrt{4d^2-1})=2\mathcal{L}(w)\,.
\end{align*}
Let $\tilde{R}_M$ be the union of $\mathring{R}_M$ with its open right side and with its reflection through the line $\{{\rm Re} z= \pi\}$. The mapping $\mathfrak{I}\circ T_M^{-1}\circ \omega$ extends holomorphically to $\tilde{R}_M$
by Schwarz Reflection Principle.
Denote the extended mapping again by $\mathfrak{I}\circ T_M^{-1}\circ \omega$. The extended mapping  $\mathfrak{I}\circ T_M^{-1}\circ \omega$    takes $\tilde{R}_M$ conformally onto the complex plane with the discs of radius $\frac{1}{2}$ around $\pm i(M+\frac{1}{2})$ and the rays $(-\infty,-  i(M+1)]$ and
$ [i(M+1), \infty)$ removed.
Put \index{$\tilde{R}_M$}
\begin{align*}
\mathring{R}_{M,2}\stackrel{def}=&(\mathfrak{I}\circ T_M^{-1}\circ \omega)^{-1}\Big(\big\{z\in\mathbb{C}_{\ell}:  |\mbox{Im}z |<M-\frac{1}{2}\big\}\Big)\,,\\
R'_{M,2}\stackrel{def}=&(\mathfrak{I}\circ T_M^{-1}\circ \omega)^{-1}\Big(\big\{z\in\mathbb{C}: {\rm Re z}<\frac{\log 2}{\pi},\,  |\mbox{Im}z |<M\big\}\Big)\,.
\end{align*}
\index{$\mathring{R}_{M,2}$} \index{$R'_{M,2}$}
Then $\mathring{R}_{M,2}\subset \mathring{R}_{M}\subset \tilde{R}_M$, and
$\mathfrak{I}\circ  T_M^{-1}\circ \omega$ takes $\mathring{R}_{M,2}$ conformally onto $\{z\in\mathbb{C}_{\ell}:  |\mbox{Im}z |<M-\frac{1}{2}\}$, and takes $R'_{M,2}$ conformally onto
$\{z\in\mathbb{C}: \mbox{Re}z<\frac{\log 2}{\pi} ,\, |\mbox{Im}z |<M\}$. The composition
$\mathcal{G}_M\stackrel{def}= \mathfrak{g}_M \circ \mathfrak{I}\circ  T_M^{-1}\circ \omega$ takes $R'_{M,2}$ into the punctured disc
$\{0<|z|<1\}$. \index{$\mathcal{G}_M$}
We put
$$
\tilde\eta_M^{\pm}\stackrel{def}=(\mathfrak{I}\circ  T_M^{-1}\circ \omega)^{-1}(\pm i(M-\frac{1}{2}))=(T_M^{-1}\circ \omega)^{-1}(\pm (M-\frac{1}{2}))\,.
$$
The points $\tilde\eta_M^{\pm} $ lie on the boundary of $\mathring{R}_{M,2}$ and on the open right side of $\mathring{R}_{M}$ (which is contained in $\{{\rm Re}z=\pi\}\cap R'_{M,2}$).

We prove now that the set $\mathring{R}_{M,2}$ contains the set
\begin{align}\label{eqbr.28}
\widehat{R}_M\stackrel{def}=\big\{z\in \mathbb{C}:-  \frac{2}{5}<    {\rm Re}(z-\pi)<0,\; {\rm Im}(z-\tilde{\eta} _M^+)<&{\rm Re} (z-\pi),\,
\nonumber\\
{\rm Im}(z-\tilde{\eta} _M^-)>&-{\rm Re} (z-\pi)\big\}\,.
\end{align}
\index{$\widehat{R}_M$}
We describe first the curves
\begin{align*}
\Gamma^{\pm}\stackrel{def}=(\mathfrak{I}\circ  T_M^{-1}\circ\omega)^{-1}\Big(\big\{z\in \mathbb{C}: \,  -  \frac{2}{5}<    {\rm Re}z<0   ,\,{\rm Im} z=\pm(M-\frac{1}{2})\big\}\Big)
\end{align*}

\begin{figure}[h]
\begin{center}
\includegraphics[width=12.5cm]
{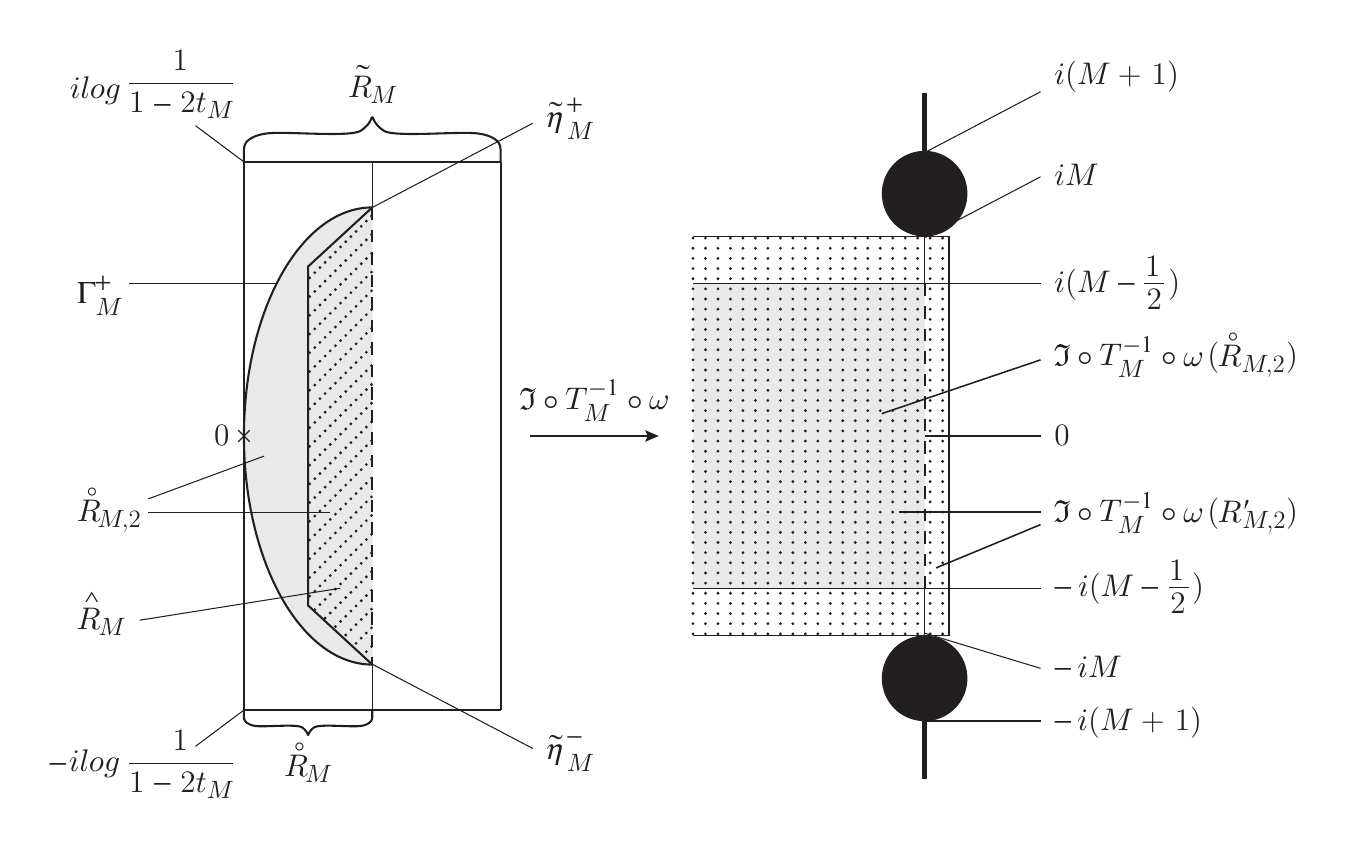}
\end{center}
\caption{The construction of a rectangle that admits a holomorphic mapping representing a syllable of form (2). }
\label{Fig3-braids.10}
\end{figure}

Since $(\mathfrak{I}\circ  T_M^{-1}\circ \omega)^{-1}= {\omega}^{-1}\circ T_M\circ \mathfrak{I}^{-1}$, we may also write
\begin{align*}
\Gamma^{\pm}=\omega^{-1}\circ T_M \Big(\{z\in \mathbb{C}: \,0<{\rm Im} z < \frac{2}{5}   ,\,{\rm Re} z=\pm (M-\frac{1}{2})\}\Big)\,.
\end{align*}
To describe the curve $\Gamma^{+}$,
we consider the derivative (see equation \eqref{eqbr.30})
\begin{align}\label{eqbr.29}
\frac{\partial}{\partial t} (\omega^{-1}\circ T_M)(M-\frac{1}{2} +it)= &
i(\omega^{-1}\circ T_M)'(M-\frac{1}{2} +it)\nonumber\\
=&\frac{2\sqrt{M(M+1)}}{(M-\frac{1}{2}+ it)^2-M(M+1)}\,,\, t\geq 0\,.
\end{align}
(We used that for a holomorphic function $f$ the equality $\frac{\partial}{z}f=-i\frac{\partial}{y}f$ holds.)
The denominator of the last fraction equals
$-2M -t^2 +\frac{1}{4} +2t(M-\frac{1}{2})i$. Hence, (see also equation \eqref{eq3braids6})
\begin{align}\label{eqbr.27}
{\rm Re}\frac{\partial}{\partial t} \frac{1}{i}\log(2T_M(M-\frac{1}{2} +it)-1)&= \frac{2\sqrt{M(M+1)}(-2M -t^2 +\frac{1}{4})     }
{(-2M -t^2 +\frac{1}{4})^2+(2t(M-\frac{1}{2}))^2}<0 \,,\nonumber
\\
{\rm Im}\frac{\partial}{\partial t} \frac{1}{i}\log(2T_M(M-\frac{1}{2} +it)-1)&=  \frac{-2\sqrt{M(M+1)}2t(M-\frac{1}{2})}
{(-2M -t^2 +\frac{1}{4})^2+(2t(M-\frac{1}{2}))^2}<0\,.
\end{align}
We obtain
\begin{align*}
\frac{{\rm Im}\frac{\partial}{\partial t} \frac{1}{i}\log(2T_M(M-\frac{1}{2} +it)-1)}{{\rm Re}\frac{\partial}{\partial t} \frac{1}{i}\log(2T_M(M-\frac{1}{2} +it)-1)}
=\frac{2t(M-\frac{1}{2})}{2M+t^2-\frac{1}{4}}>0\,.
\end{align*}
For $0\leq t\leq \frac{1}{2}$
\begin{align}\label{eq53}
\frac{{\rm Im}\frac{\partial}{\partial t} \frac{1}{i}\log(2T_M(M-\frac{1}{2} +it)-1)}{{\rm Re}\frac{\partial}{\partial t} \frac{1}{i}\log(2T_M(M-\frac{1}{2} +it)-1)}
\leq \frac{M-\frac{1}{2}}{2M-\frac{1}{4}}<\frac{1}{2}\,.
\end{align}
Further, since $M\geq \frac{3}{2}$, the following estimate holds for $0\leq t\leq \frac{1}{2}$
\begin{align}\label{eq54}
\Bigl|
{\rm Re}\frac{\partial}{\partial t} \frac{1}{i}\log(2T_M(M-\frac{1}{2} +it)-1)
\Bigl|\geq &\frac{2M(2M -\frac{1}{4})}
 {(2M)^2+(M-\frac{1}{2})^2}\nonumber \\
 =&\frac{4M^2-\frac{M}{2}}{5M^2-M+\frac{1}{4}}>\frac{4}{5}\,.
\end{align}
By inequality \eqref{eq54} for $t=\frac{1}{2}$ the point $\frac{1}{i}\log(2T_M(M-\frac{1}{2} +it)-1)$ lies in $\{z\in \mathbb{C}: {\rm Re}z<\pi -\frac{2}{5}\}$.
Recall that $\frac{1}{i}\log(2T_M(M-\frac{1}{2})-1)=\tilde{\eta}_M^+\in \{ {\rm Re}z=\pi\}$.
Inequality \eqref{eq53} shows that the part of  the curve $\Gamma^+$
that is contained in $\{z\in \mathbb{C}: \pi-\frac{2}{5}< {\rm Re}z<\pi\}$, is contained in the set $\{z\in\mathbb{C}: \,{\rm Im}(z-\tilde{\eta} _M^+)>
{\rm Re} (z-\pi)\}$. A similar fact holds for  the curve $\Gamma_M^-$.
Hence, the set
\begin{align*}
\omega^{-1}\circ T_M&\Big(\big\{z\in \mathbb{C}: \,\;\;\;0<{\rm Im} z\;<\frac{2}{5} ,\,|{\rm Re} z|<M-\frac{1}{2}\big\}\Big)\nonumber\\=
(\mathfrak{I}\circ  T_M^{-1}\circ\omega)^{-1}&\Big(\big\{z\in \mathbb{C}: \,  -  \frac{2}{5}<    {\rm Re}z<\;0   ,\,|{\rm Im} z|<M-\frac{1}{2}\big\}\Big)
\end{align*}
contains the set \eqref{eqbr.28}.

By equation \eqref{eqbr.30} we obtain
\begin{align}\label{eqbr.25}
(\omega^{-1}\circ T_M)'(\pm (M-\frac{1}{2}))=i\frac{2\sqrt{M(M+1)}}{-(M-\frac{1}{2})^2+M(M+1)}= i\frac{\sqrt{M(M+1)}}{M-\frac{1}{8}}
\end{align}
Since $(T_M^{-1}\circ\omega)'(\tilde\eta_M^{\pm})=\frac{1}{(\omega^{-1}\circ T_M)'(\pm(M-\frac{1}{2}))}  $
we obtain
\begin{align}\label{eqbr.25a}
(T_M^{-1}\circ\omega)'(\tilde\eta_M^{\pm})=\frac{- i(M-\frac{1}{8})}{\sqrt{M(M+1)}}\,.
\end{align}
The derivative of the composition $\mathcal{G}_M^{(2)}\stackrel{def}=\mathfrak{g}_M\circ(iT_M^{-1})\circ \omega$ at the points $\tilde\eta^{\pm}$ equals
\begin{align}\label{eqbr.26}
(\mathcal{G}_M^{(2)})'(\tilde\eta_M^{\pm})=& \mathfrak{g}_M'(\pm i(M-\frac{1}{2}))\cdot ( (i T_M^{-1})\circ \omega)'(\tilde\eta_M^{\pm})\,.
\end{align}
By equation \eqref{eqbr.25a}
\begin{align}\label{eqbr26a}
(\mathcal{G}_M^{(2)})'(\tilde\eta_M^{-})= &- \frac{ \pi }{2} \frac{M-\frac{1}{8}}{\sqrt{M(M+1)}}i\,,\nonumber\\
(\mathcal{G}_M^{(2)})'(\tilde\eta_M^{+})= &- \frac{ \pi }{2} \frac{M-\frac{1}{8}}{\sqrt{M(M+1)}}i\,, \; \mbox{if}\; d=2M-1 \;\mbox{is even}\,, \nonumber\\
(\mathcal{G}_M^{(2)})'(\tilde\eta_M^{+})= &\;\;\;\;\frac{ \pi }{2} \frac{M-\frac{1}{8}}{\sqrt{M(M+1)}}i\,,\; \mbox{if}\; d \;\mbox{is odd}\,.
\end{align}
Note that  $\frac{M- \frac{1}{8}}{M+1} <\frac{M-\frac{1}{8}}{\sqrt{M(M+1)}}<1$. Further, the mapping $M\to\frac{M- \frac{1}{8}}{M+1}$   is increasing for $M$ increasing, hence $\frac{M-\frac{1}{8}}{\sqrt{M(M+1)}}>\frac{\frac{3}{2}-\frac{1}{8}}{\frac{5}{2}}>\frac{1}{2}$. It follows that
\begin{align}\label{eq3braids13}
\frac{\pi}{4 }<|(\mathcal{G}_M^{(2)})'(\tilde\eta_M^{\pm})|<\frac{\pi}{2}\,.
\end{align}
Put
\begin{align}\label{eq3braids11}
r_M\stackrel{def}=|(\mathcal{G}_M^{(2)})'(\tilde\eta_M^{\pm})|\,.
\end{align}
We define
\begin{align}\label{eq3braids7}
g_{M,2}(\zeta)=&
(\mathcal{G}_M^{(2)})(\frac{\zeta}{r_M})
\,, \; \zeta \in r_M{R}'_{M,2}\,,\nonumber\\
\eta_M^{\pm}\;\;\;=&r_M
\cdot \tilde\eta^{\pm}_M\,.
\end{align}

We will prove now that $r_M R'_{M,2}$ contains a disc of radius $\frac{1}{8}$ around $\eta_M^{\pm}$ and prove the required estimate of the second derive of $g_{M,2}$ in a disc of radius $\frac{1}{16}$ around $\eta_M^{\pm}$.
This will be done similarly as for elementary words of form (1).
Consider the half-strip $\mathfrak{S}_M^-\stackrel{def}=\{z\in\mathbb{C}: {\rm Re}z<\frac{\log 2}{\pi},\, |{\rm Im}z+M-\frac{1}{2}|<\frac{1}{2}\}\subset \{z\in\mathbb{C}: {\rm Re}z<\frac{\log 2}{\pi},\, |{\rm Im}z|<M\}$.
The restriction $\mathfrak{g}_M\mid \mathfrak{S}_M^-$
is a conformal mapping onto its image.
The image is the set
\begin{align*}
\Big\{\frac{1}{2}\cdot\exp\big(\pi(-i(M-\frac{1}{2})+\zeta)+\pi i(M-1)\big):\, \mbox{Re}\zeta<\frac{\log 2}{\pi}\,, |\mbox{Im}\zeta|<\frac{1}{2}&\Big\}\\
=\Big\{\frac{1}{2}\exp\big(-\frac{\pi}{2}i+\pi\zeta\big): {\rm Re}\zeta<\frac{\log 2}{\pi}   , |{\rm Im}\zeta|<\frac{1}{2}&\Big\}\\
=\Big\{re^{i\varphi}:0<r<1, -\pi<\varphi<0 &\Big\}\,.
\end{align*}
This set contains
the disc $\{|\zeta+\frac{i}{2}|<\frac{1}{2}\}$. Put $V_M^-=(\mathfrak{g}_M\mid \mathfrak{S}_M^-)^{-1}(\{|\zeta+\frac{i}{2}|<\frac{1}{2}\})$.
Let $\tilde{Q}_M^-=(\mathfrak{I}\circ T_M^{-1}\circ\omega)^{-1}(V_M^-)$.
We have the following sequence of conformal mappings
\begin{align}\label{eq3braids9}
\tilde{\eta}^-_M \; \overset{\mathfrak{I}\circ T_M^{-1}\circ\omega}{-\!\!\!-\!\!\!-\!\!\!-\!\!\!-\!\!\!-\!\!\!
\longrightarrow}
\; &-(M-\frac{1}{2})i \;\overset{\mathfrak{g}_M} {-\!\!\!-\!\!\!-\!\!\!-\!\!\!\longrightarrow} \;\;\; \;\;\; \;\;- \frac{1}{2}i\,\nonumber\\
\begin{sideways}{\begin{sideways}{\begin{sideways}$\in$\end{sideways}}\end{sideways}}\end{sideways}\quad \quad \quad \quad \quad  &\qquad \quad  \begin{sideways}{\begin{sideways}{\begin{sideways}$\in$\end{sideways}}\end{sideways}}\end{sideways}
\qquad \quad \quad\quad\quad \quad\quad\quad
\begin{sideways}{\begin{sideways}{\begin{sideways}$\in$\end{sideways}}\end{sideways}}\end{sideways}
\nonumber\\
\tilde{Q}_M^- \overset{\mathfrak{I}\circ T_M^{-1}\circ\omega}{-\!\!\!-\!\!\!-\!\!\!-\!\!\!-\!\!\!-\!\!\!\longrightarrow}& \;\;\; \;\;\; \;\;\;V_M^-\;\;\; \;\;\;\overset{\mathfrak{g}_M}{-\!\!\!-\!\!\!-\!\!\!-\!\!\!\longrightarrow} \{|\zeta+\frac{i}{2}|<\frac{1}{2}\}\;\; .\quad
\end{align}
The composition $\mathcal{G}_M^{(2)}=\mathfrak{g}_M\circ\mathfrak{I}\circ T_M^{-1}\circ\omega$ maps $\tilde{Q}_M^-$ conformally onto $\{|\zeta+\frac{i}{2}|<\frac{1}{2}\}$. The mapping $g_{M,2},\, g_{M,2}(\zeta)=\mathcal{G}_M^{(2)}(\frac{\zeta}{r_M})$
takes ${Q}_M^-\stackrel{def}=r_M\tilde{Q}_M^-$
conformally onto $\{|\zeta+\frac{i}{2}|<\frac{1}{2}\}$.  Moreover, the derivative of $g_{M,2}$ at
$\eta^{-}$ has absolute value equal to one (see equality \eqref{eq3braids11}).
Consider the inverse of the restriction of
$g_{M,2}$ to $Q_M^-$.
In the same way as for syllables of form (1) we see that $Q_M^-\subset r_M R'_{M,2}$ contains a disc of radius $\frac{1}{8}$ around $\eta_M^{-}$, and the absolute value of the second derivative of $g_{M,2}$ in a disc of radius $\frac{1}{16}$ around $\eta_M^{-}$ does not exceed $2^8$. The same arguments apply to $\eta_M^+$ instead of $\eta_M^-$.

The set $r_M R'_{M,2}$
also contains the set $r_M\widehat{R}_M$, and by \eqref{eqbr.28}
and inequality \eqref{eq3braids13} the set $r_M\widehat{R}_M$
contains the set
\begin{align}\label{eq3braids10}
\{z\in \mathbb{C}:- \frac{2}{5} \cdot\frac{\pi}{4}<{\rm Re}(z-\pi r_M)< 0,\;& {\rm Im}z-{\eta} _M^+<{\rm Re}( z-\pi r_M),\nonumber\\
&{\rm Im}z-{\eta} _M^->-{\rm Re}(z-\pi r_M)\}\,.
\end{align}
Since $\frac{1}{16}=0.0625<0.1314<\frac{\pi}{10}$, the union of the discs of radius $\frac{1}{16}$ around $\eta_M^{\pm}$ with the set \eqref{eq3braids10}  contains the rectangle
of horizontal side length equal to $\frac{1}{16\sqrt{2}}$ whose right vertices are  $\eta_M^+$ and $\eta_M^-$, respectively. For the syllable $_{pb}w_{pb}=_{pb}(a_2^{-1} a_1^{-1}\ldots)_{pb}$ of degree $d$ and
$M=\frac{d+1}{2}  $ we let $R_w$ be equal to this rectangle.  Further, we put
$\xi_w^{\pm}=\eta_M^{\pm}=r_M\tilde{\eta}_M^{\pm}$.  The points $\xi_w^{\pm}$ are the right vertices of the rectangle $R_w$. Hence, $R_w\subset r_M \mathring{R}_M$.
By  inequalities   \eqref{eq3braids.5} and    \eqref{eq3braids13}, and by equality \eqref{eq3braids11} the inequalities
\begin{align*}
 \mbox{vsl}(R_w) \leq r_M \mbox{vsl}( \mathring{R}_M)
\leq \frac{\pi}{2}\cdot 2 \log(2d+\sqrt{(4d^2-1)})= \pi
 \mathcal{L}(w)\,.
\end{align*}
hold.

Put $g_w=g_{M,2}$. The conditions of the lemma concerning the first derivatives $g'_w(\xi_w^{\pm})$ are satisfied.
\index{$R_{M,2}$} \index{$g_{M,2}$} \index{$\eta_M^{\pm}$}

Lemma \ref{lemxxx}  is proved for syllables of the form $_{pb}w_{pb}=_{pb}(a_2^{-1}a_1^{-1}\ldots)_{pb}$ of degree $d\geq 2$. For the other elementary words of form $(2)$ with $pb$ horizontal boundary values and degree $d\geq 2$ the
Lemma is obtained by symmetry arguments. The points $\xi_w^{\pm}$ are either the right vertices or the left vertices of the rectangle $R_w$.
\hfill $\Box$

\medskip

\noindent {\bf Proof of the upper bound in Theorems  \ref{thm1}, \ref{thm10.1'}, and \ref{thm2}.}
Take any word $w$ with $pb$ horizontal boundary values. Write it as product
of syllables $_{pb}(w_j)_{pb}$ with $pb$ horizontal boundary values.
By Lemma \ref{lemxxx} each $_{pb}(w_j)_{pb}$ can be represented by a holomorphic mapping $g_{w_j}$ from the rectangle $R_{w_j}$ into $\mathbb{C}\setminus\{-1,1\}$. Recall that the vertical side length of  $R_{w_j}$ does not exceed $\pi \, \mathcal{L} (w_j)$, the horizontal side length of $R_{w_j}$ is at least $\frac{1}{16\sqrt{2}}$. We shrink the rectangles $R_{w_j}$ in the horizontal direction, so that its horizontal side length is exactly equal to $\frac{1}{16\sqrt{2}}$, and the points  $\xi_{w_j}^{\pm}$ are on the boundary of the shrinked rectangle $R_{w_j}$. Moreover, they are midpoints of the open horizontal sides of the shrinked rectangles, if they were midpoints of the open horizontal sides of the original rectangle, and they are on the open right (left) side of the shrinked rectangle, if they were on the open right (left) side of the original rectangle.

We may assume that  $g_{w_j}$ extends holomorphically to discs of radius $\frac{1}{16}$ around the points  $\xi_{w_j}^{\pm}$.

Take complex numbers $a_j, , j=1,\ldots, d,$ such that for the translated rectangles $R_j\stackrel{def}=R_{w_j}+a_j$ and the translated points $\xi_{j}^{\pm}\stackrel{def}=\xi_{w_j}^{\pm}+a_j$  the equalities $\xi_j^+=\xi_{j+1}^-$ hold for $j\leq N-1$. Choose complex numbers $b_j$ so that for the translated functions  $g_j(\zeta)\stackrel{def}=b_j+ g_{w_j}(\zeta-a_j) $
the equalities $g_j(\xi_j^+)=g_{j+1}(\xi_{j+1}^-),\, j=1,\ldots,N-1,$ hold.

We will perform quasiconformal gluing of the $g_j$ using the fact that each $g_j$ extends holomorphically to a disc of radius $\frac{1}{16}$ around $\xi_j^{\pm}$.
 Denote the extended functions again by $g_j$. The disc of radius  $\frac{1}{16}$
around  $\xi_j^{+}$ contains the square $Q_j$ of side length $\frac{\sqrt{2}}{16}$ with center $\xi_j^{+}$.
We define a smooth mapping $g$ on
\begin{align*}
\mathfrak{R}'_w\stackrel{def}= \mbox{Int}\left(\,\overline{\bigcup_{j=1}^N {R_j}}\,\right)\cup \bigcup _{j=1}^{N-1} Q_j
\end{align*}
as follows.

Consider the $C^1$-function $\chi_0$ on the interval $[0,1]$, $\chi_0(t)=6 \int_0^t \tau (1-\tau) d\tau$. Then $\chi_0(0)=0$, $\chi_0(1)=1$, and $0\leq \chi_0'(t)\leq 6t(1-t)\leq \frac{3}{2}$.
Let $a<2^{-5}$ be a positive number that will be chosen in a moment.

If $\zeta\in \mathfrak{R}'_w$ and $|\mbox{Im}\zeta-\xi_j^+|\leq \frac{a}{2}$ for some $j=1,\dots,N-1,$  then $\zeta \in Q_j$, and
$g_j$ and $g_{j+1}$ are defined and holomorphic in $Q_j$.
In this case we put on $Q_j$
\begin{align*}
g(\zeta)=(1-\chi_j)(\mbox{Im}\zeta)\cdot g_j(\zeta)+\chi_{j}(\mbox{Im}\zeta)\cdot g_{j+1}(\zeta)\,.
\end{align*}
Here $\chi_j(t)=
\chi_0(\frac{t-\mbox{Im}\xi_j^+  +\frac{a}{2}} {a})$. If $|{\rm Im}\zeta-\xi_j^{\pm}|>\frac{a}{2}$ for all $j=1,\ldots,N-1,$ then $\zeta$ is contained in  a single rectangle $R_j$, and we put $g=g_j$. The mapping $g$ is of class $C^1$. If $|{\rm Im}\zeta-\xi_j^+|\leq \frac{a}{2}$ for some $j=1,\dots,N-1,$ then for the Beltrami coefficient we obtain with $\zeta=u+iv$
$$
\mu_g(\zeta)= \frac{\frac{\partial}{\partial \overline \zeta}g(\zeta)}{\frac{\partial}{\partial \zeta}g(\zeta)} = \frac{\frac{i}{2}( \frac{\partial}{\partial v}{\chi_j}\cdot (g_{j+1} -g_{j}))(\zeta)}{(\frac{-i}{2} \frac{\partial}{\partial v}{\chi_j}\cdot (g_{j+1} -g_{j})+ (1-\chi_j)\cdot g_j' + \chi_j\cdot g_{j+1}')(\zeta)}.
$$

At all other points the Beltrami coefficient equals zero.
For $\zeta$ in the square of side length  $a$ with center $\xi^+_j$ the inequalities $|g'_j(\zeta)-g'_{j}(\xi_j^+)|\leq  2^8 |\zeta-\xi_j^+|$ and  $|g'_{j+1}(\zeta)-g'_{j+1}(\xi_j^+)|\leq  2^8 |\zeta-\xi_j^+|$ hold. Since $g_j(\xi_j^+)=g_{j+1}(\xi_j^+)$,
and $g'_j(\xi_j^+)=g'_{j+1}(\xi_j^+)$,
we obtain on this square
$$
|g_j-g_{j+1}|\leq 2\cdot\frac{1}{2}\cdot 2^8 \cdot\frac{a^2}{2}=2^7 a^2,
$$
and
$$
 (1-\chi_j)\cdot| g_j' - g'_j(\xi_j^+)|+ \chi_j\cdot |g_{j+1}'-(g_{j+1}')(\xi_j^+)|\leq 2^7\sqrt{2}a\,.
$$
Since  $g'_j(\xi_j^+)=g'_{j+1}(\xi_j^+)$ has absolute value equal to $1$ and $|\chi_j'|\leq \frac{3}{2}\cdot \frac{1}{a}$, we obtain
\begin{align*}
|\mu_g(\xi)|\leq \frac{\frac{1}{2}\cdot\frac{3}{2}\cdot 2^7 a}{1-\frac{1}{2}\cdot\frac{3}{2}\cdot 2^7 a-2^7\sqrt{2} a}=\frac{\frac{3}{4}2^7a} {1 -(\frac{3}{4}+\sqrt{2})2^7a}\,.
\end {align*}
Put $a=\frac{1}{3}2^{-8}$.  Since $\frac{3}{4}+\sqrt{2}<3$,we obtain  the inequality
\begin{align*}
|\mu_g(\zeta)|\leq \frac{\frac{3}{4}\cdot \frac{1}{6}}{1- (\frac{3}{4}+\sqrt{2}) \cdot\frac{1}{6}}  < \frac{1}{4}   \,.
\end{align*}

The quasiconformal dilatation $K$ is less than $\frac{5}{3}$.
The set $\mathfrak{R}'_w$ contains a curvilinear rectangle $\mathfrak{R}_w$ of the form $R_{J,\Phi, {\sf b}}$ (see Example 3, Section \ref{sec:4.1a}) with $|J|\leq \pi\, \mathcal{L}(w)$, ${\sf b}=a$, and a smooth function $\Phi$. For each positive $\varepsilon$ the function $\Phi$ can be chosen with $|\Phi'|\leq 1+\varepsilon$. Indeed, replace first each $R_j$ by a rectangle $R_j^a$ contained in $R_j$ of horizontal side length $a$ and vertical side length ${\rm vsl}(R_j^a)={\rm Im}(\xi_j^+-\xi_j^-)$, so that the $\xi_j^{\pm}$ are the midpoints of the horizontal sides of the new rectangle $R_j^a$ if they are midpoints of the horizontal sides of $R_j$, and they are equal to the endpoints of the right (left) sides of the $R_j^a$, if they are contained in the open right (left) sides of the $R_j$.
For $j\leq N-1$ we consider the intersection of $\overline{R^a_j}\cup \overline{R^a_{j+1}}$ with the strip $\{|{\rm Im}\zeta-{\rm Im}\xi_j^+|\leq \frac{a}{2}\}$. Replace the intersection by the parallelogram whose horizontal sides are equal to $R_j^a \cap \{{\rm Im}\zeta
={\rm Im}\xi_j^+ -\frac{a}{2}\}$ and $R_{j+1}^a \cap \{ {\rm Im}\zeta
={\rm Im}\xi_j^+ +\frac{a}{2}\}$, respectively. Doing so for all $j\leq N-1$  we obtain a curvilinear rectangle of the form   $R_{J,\Phi_0, {\sf b}}$ for a piecewise $C^1$ function $\Phi_0$ with $|\Phi_0'|\leq 1$.  For any positive $\varepsilon$ we obtain after suitable smoothing a curvilinear rectangle of the required form with a function $\Phi$ that satisfies $|\Phi'|\leq 1+\varepsilon$. See Figure \ref{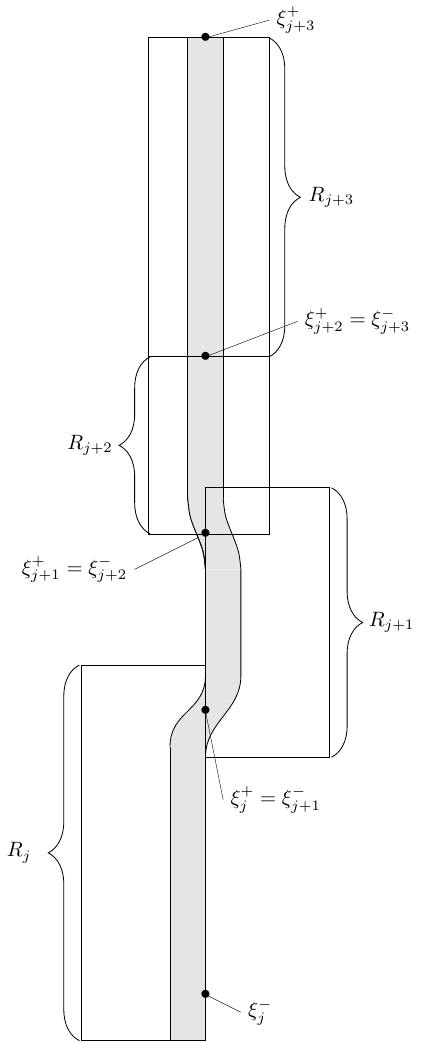}.

\begin{figure}[H]
\begin{center}
\includegraphics[width=5cm]{Fig3-braids11.pdf}
\end{center}
\caption{A curvilinear rectangle $\mathfrak{R}_w$ that admits a holomorphic mapping
representing $_{pb}w_{pb}$}  \label{Fig3-braids11.pdf}
\end{figure}

Choose $\varepsilon$ and consider the obtained curvilinear rectangle  $\mathfrak{R}_w$. The restriction $g\mid \mathfrak{R}_w$ represents $_{pb}w_{pb}$. Since $\mbox{vsl}(R_w)< \pi \mathcal{L}(w)$, the extremal length $\lambda(\mathfrak{R}_w)$ is less than $(1+(1+\varepsilon)^2) \cdot \frac{\pi \mathcal{L}(w)}{a}$ (see Lemma \ref{lemm216}).
Using the normalized solution $\omega$ of the Beltrami equation on the complex plane with Beltrami coefficient $\mu_g$ on $\mathfrak{R}_w  $ and $0$ else,  and the conformal mapping $\psi$ from a true rectangle $\mathcal{R}_w$ onto the curvilinear rectangle $\omega(\mathfrak{R}_w)$,  we obtain the holomorphic mapping $g\circ\omega^{-1}\circ \psi:\mathcal{R}_w \to \mathbb{C}\setminus\{-1,1\}$ representing  $_{pb}w_{pb}$. The extremal length of $\mathcal{R}_w$ is less than
\begin{align*}
&K \cdot (1+(1+\varepsilon)^2) \cdot \frac{\pi \mathcal{L}(w)}{a}<
\frac{5}{3}\cdot  (1+(1+\varepsilon)^2) \cdot  \frac{ \pi}{a} \cdot  \mathcal{L}(w)\nonumber\\
= & (1+(1+\varepsilon)^2) \cdot 5\pi\cdot 2^{8}  \cdot\mathcal{L}(w)< 2^{13} (1+(1+\varepsilon)^2) \mathcal{L}(w) \,.
\end{align*}
We used the inequality $5\pi<2^4$.
Choosing $\varepsilon$ small enough, we get a curvilinear rectangle $\mathcal{R}_w$ with $\Lambda(\mathcal{R}_w)<  2^{13} \cdot \mathcal{L}(w) $, that admits a holomorphic mapping representing $w$. This proves Theorem \ref{thm1}  in case of $pb$ boundary values.

In the case, when $w$ has at least two syllables and totally real right boundary values, or totally real left boundary values, or both boundary values are totally real, the first or last syllable, or both have mixed boundary values. Suppose, the respective syllable $\mathfrak{s}$ is of form $(1)$ or $(3)$.
A lift of the syllable  $\mathfrak{s}$ with the required mixed boundary values can be represented by a holomorphic mapping from a rectangle with the same estimates for the vertical side length and the horizontal side length as in the case of $pb$ boundary values. Moreover, the value of the derivative of the mapping at the point, in a neighbourhood of which the gluing is performed, is the same as for $pb$ boundary values, and the estimate for the second derivative is the same.

Suppose the respective syllable $\mathfrak{s}$ is of form $(2)$. In this case instead of representing the lift of  $\mathfrak{s}$ with mixed boundary values, we represent the lifts under $f_1\circ f_2$ of a singleton $\mathfrak{s}'$ with mixed boundary values and the lift of a word $\mathfrak{s}''$ with one letter less than  $\mathfrak{s}$ with $pb$ horizontal boundary values.
Let $g_{\mathfrak{s'}}:R _{\mathfrak{s'}}\to \mathbb{C}\setminus i\mathbb{Z},$ and  $g_{\mathfrak{s''}}:R _{\mathfrak{s''}}\to \mathbb{C}\setminus i\mathbb{Z},$ be the respective representing mappings. We may choose the mappings so that the derivative at the points  in a neighbourhood of which the gluing is performed equals $\pm i$, the estimate for the horizontal side length of each rectangle and the estimate of the second derivative of the mappings is as in the case of $pb$ boundary values, and
${\rm vsl}(R _{\mathfrak{s'}}) \leq \pi \mathcal{L}(\mathfrak{s'})\leq \pi \mathcal{L}(\mathfrak{s})$ and
${\rm vsl}(R _{\mathfrak{s''}}) \leq \pi \mathcal{L}(\mathfrak{s''})\leq \pi \mathcal{L}(\mathfrak{s})$.
The quasiconformal gluing and the choice of the curvilinear rectangle $\mathcal{R}$ is done as in the case of $pb$ boundary values. The extremal length of the curvilinear rectangle $\mathcal{R}$ differs no more than by a factor $2$ from the estimate in the case of $pb$ boundary values.

If $w$ consists of a single syllable the upper bound in the case of mixed boundary values
(Theorem \ref{thm10.1'}) follows directly from Proposition \ref{prop4b}. The exceptional cases were treated in the proof of the lower bound in Theorem \ref{thm1}.
Theorem \ref{thm1}  and Theorem  \ref{thm10.1'} are proved.

Theorem \ref{thm2} in the non-exceptional cases is obtained in the same way as Theorem  \ref{thm1}  in the case of $pb$ horizontal boundary values. The only difference is that we have to perform also quasiconformal gluing of
the last syllable to the first syllable of the word.
This proves Theorem  \ref{thm2}. \hfill $\Box$

\section{The extremal length of $3$-braids.}\label{sec:3-braids5}
In this section we come back to braids. Recall that we may consider $n$-braids as elements of the fundamental
group of $C_n(\mathbb{C})\diagup\mathcal{S}_n$.

In Section \ref{sec:4.1b} we saw that the totally real subspace $C_n (\mathbb {R}) \diagup {\mathcal S}_n$ of the $n$-dimensional symmetrized configuration space is connected and simply connected, and
the fundamental group $\pi_1(C_n (\mathbb {C}) \diagup {\mathcal S}_n, E_n)$ of the symmetrized configuration space with base point $E_n\in C_n (\mathbb {R}) \diagup {\mathcal S}_n$ is canonically isomorphic to the relative fundamental group
$\pi_1(C_n (\mathbb {C}) \diagup {\mathcal S}_n, C_n (\mathbb {R}) \diagup {\mathcal S}_n   )$.

We recall the definition of the extremal length of braids with totally real horizontal boundary values that was given in Section \ref{sec:4.1b}, Definition \ref{def1}.
For a braid $b \in \mathcal{B}_n$ the value $\Lambda_{tr}(b)=\Lambda(b_{tr})$ is the infimum of the extremal length of rectangles that admit a
holomorphic mapping into $C_n(\mathbb{C})\diagup \mathcal{S}_n$ that represents the image $b_{tr}$ of $b$ under the canonical isomorphism
$\pi_1(C_n(\mathbb {C}) \diagup {\mathcal S}_n, E_n)\to \pi_1(C_n(\mathbb {C}) \diagup {\mathcal S}_n,  C_n(\mathbb {R}) \diagup {\mathcal S}_n    )$.

We
 will consider now the case $n=3$.
By Lemma \ref{lemm1a} the group $\mathcal{PB}_3\diagup {\langle\Delta_3^2 \rangle}$ is isomorphic to the fundamental group of $\mathbb{C}\setminus \{-1,1\}$. Hence for pure braids $b\in \mathcal{PB}$ the estimate of the extremal length $\Lambda(b_{tr})$
with totally real boundary values can be given in terms of the class of $b$ in  $\mathcal{PB}_3\diagup {\langle\Delta_3^2 \rangle}$ which can be identified with an element of the fundamental group of the twice punctured complex plane (see Theorem \ref{thm1}).

We consider now arbitrary $3$-braids (not necessarily pure braids).
The following lemma is needed to reduce the case of arbitrary $3$-braids to the case of pure $3$-braids.
\begin{lemm}\label{lemm1'}
Any braid $b\in \mathcal{B}_3$ which is not a power of $\Delta_3$ can be written in a unique way in the form
\begin{equation}\label{eq2'}
\sigma_j^k \, b_1 \, \Delta_3^{\ell}\,
\end{equation}
where $j=1$ or $j=2$, $k\neq 0$ is an integer, $\ell$ is a (not necessarily even) integer, and $b_1$ is a word in $\sigma_1^2$ and $\sigma_2^2$ in reduced form. If
$b_1$ is not the identity, then the first term of $b_1$ is a non-zero even
power of $\sigma_2$ if $j=1$, and  $b_1$ is a non-zero even  power of
$\sigma_1$ if $j=2$.
\end{lemm}

For an integer $j\neq 0$ we denote by $q(j)$ that even integer neighbour
of $j$, which is closest to zero. In other words, $q(j)=j$
for each even integer $j\neq 0$. For each odd integer $j\,,$  $q(j)= j
-\mbox{sgn}(j)$, where  $\mbox{sgn}(j)$
for a non-zero integer number $j$ equals $1$ if $j$ is positive,
and $-1$ if $j$ is negative. For a braid in form \eqref{eq2'} we put
$\vartheta(b) \stackrel{def}{=}\sigma_j^{q(k)} \, b_1$. \index{$\vartheta(b)$}
\begin{thm}\label{thmbr.3}
Let $b \in \mathcal{B}_3$ be a (not necessarily pure) braid which is not a power of $\Delta_3$, and let $w$ be the word representing the image of $\vartheta(b)$ in $\mathcal{B}_3 \diagup \langle \Delta_3^2\rangle$.
Then
$$
\frac{1}{2\pi}\mathcal{L}(w) \leq \Lambda_{tr}(b)  \leq 2^{14} \cdot
\mathcal{L}(w)  \,,
$$
except in the case when $b=\sigma_j^{k}\,\Delta_3^{\ell}$ where $j=1$ or  $j=2$, $k\neq 0$ is an integer number, and $\ell$ is an arbitrary integer. In the exceptional case $\Lambda_{tr}(b)=0$.
\end{thm}
Notice that the periodic $3$-braids are among the exceptional cases. We have $\vartheta(\Delta_3^{\ell})={\rm id}$, and  $\vartheta(\sigma_1\sigma_2\Delta_3^{\ell})= \vartheta(\sigma_2^{-1}\Delta_3^{\ell+1})= id$, and similarly for $(\sigma_1\sigma_2)^{-1}\Delta_3^{\ell}$ and  $(\sigma_2\sigma_1)^{\pm 1} \Delta_3^{\ell}$ .

\smallskip

\noindent {\bf Proof of Lemma \ref{lemm1'}.}
Let $\tau_3$ be the natural homomorphism from the braid group $\mathcal{B}_3$ to the symmetric group $\mathcal{S}_3$. When $\tau_3(b)$ is the identity then the braid $b$ is pure and the statement is clear.

Assume $\tau_3(b)$ is not the identity. We consider first the case when $\tau_3(b)$ is a transposition. If $\tau_3(b)=(13)$ then $b \Delta_3^{-1}$ is a pure braid, hence if $b$ is not a power of $\Delta_3$ it can written in the form \eqref{eq2'} with $k$ even and $\ell$ odd.
If $\tau_3(b)=(12)$ then $\sigma_1^{-1}b$ is a pure braid. Hence,  $b$
can be written in the form \eqref{eq2'} with $j=1$, $k$ odd and $\ell$ even. If $\tau_3(b)=(23)$ then $\sigma_2^{-1}b$ is a pure braid. Hence, $b$ can be written in the form \eqref{eq2'} with $j=2$, $k$ odd and $\ell$ even.

It remains to consider the case when $\tau_3(b)$ is a cycle. Suppose, $\tau_3(b)=(123)$. Since $(123)(13)=(12)$ the relation $\tau_3(b)(13)^{-1}=(12)$ holds. Hence, $\sigma_1^{-1}b \Delta_3^{-1}$ is a pure braid and $b$ has the form \eqref{eq2'} with $j=1$, and $k$ and $\ell$ odd. If $\tau_3(b)=(132)$ then, since $(132)(13)=(23)$ the braid  $\sigma_2^{-1}b \Delta_3^{-1}$ is a pure braid and $b$ can be written in the form \eqref{eq2'} with $j=2$, and $k$ and $\ell$ odd. \hfill $\Box$

\medskip

\noindent {\bf Proof of the lower bound in Theorem \ref{thmbr.3}.}
Since $\Lambda(b_{tr})=\Lambda((b \Delta_3)_{tr})$ (see Lemma \ref{lemm1}) we may suppose that
$b=\sigma_j^{k} b_1$ for a pure braid $b_1$ which is a reduced word in $\sigma_1^2$ and $\sigma_2^2$. We may suppose that $k$ is an odd integer. The case when $k$ is even follows from Theorem \ref{thm1} and Lemma \ref{lemm1a}. Let $j=1$.
Consider a holomorphic mapping $g$ of a rectangle $R$ to $C_3(\mathbb{C)} \diagup \mathcal{S}_3$ that represents the braid and lift it to a mapping $\tilde g$ into $C_3(\mathbb{C)}$ so that the continuous extension of the mapping to the closed rectangle takes the open lower side to  $\{(x_1,x_2,x_3) \in
\mathbb{R}^3: x_2<x_1<x_3\}$.
Apply the mapping $\mathfrak{C}$. (For notation see also the proof of Lemma \ref{lemm1a}). We obtain a holomorphic mapping $\mathfrak{C}(\tilde g):R\to \mathbb{C}\setminus \{-1,1\}$,   $\mathfrak{C}(\tilde{g})(z) = 2
\,\frac{\tilde g_2(z)-\tilde g_1(z)}{\tilde g_3(z)-\tilde g_1(z)}\,-1,\, z\in R$,
whose continuous extension to the closure takes the open lower side of the rectangle to $(-\infty,-1)$.

Consider first the case when $|k|\geq 3$. We represent $b_{tr}$ by a curve $\gamma$ in $C_3(\mathbb{C})\diagup \mathcal{S}_3$, such that taking the lift $\tilde{\gamma}$ with initial point in $\{(x_1,x_2,x_3) \in \mathbb{R}^3: x_2<x_1<x_3\}$, the curve $\mathfrak{C}(\tilde{\gamma})$
is the union of the following two curves: a half-circle in $\mathbb{C}\setminus \{-1,1\}$ with endpoints in $\mathbb{R}\setminus \{-1,1\}$ that is contained in the lower half-plane if $k>0$, or in the upper half-plane if $k<0$, and joins a point in $(-\infty,-1)$ with a point in $(-1,1)$, and a curve representing the element $\mathfrak{C}_*(\vartheta(b)_{tr}) \in \pi_1^{tr}$. It can be proved along the same lines as the proof of Statement 3 of Lemma \ref{lemm15'}, that the rectangle contains a curvilinear rectangle such that the restriction of the mapping $\mathfrak{C}(\tilde g)$ to it represents $\mathfrak{C}_*(\vartheta(b)_{tr})$. Therefore $\Lambda(b_{tr}) \geq \Lambda(\vartheta(b)_{tr})$. If $\vartheta(b)\neq \sigma_1^{2k'} $ for an integer $k'$, in other words, if $\vartheta(b)$ is not among the exceptional cases of
Theorem \ref{thm1}, the lower bound holds.

Suppose $|k| =1$. If $b\neq \sigma_1^{\pm 1} $
then $b_{tr}$ can be represented by a curve $\gamma$ for whose lift $\tilde \gamma$ with initial point in $\{(x_1,x_2,x_3) \in \mathbb{R}^3: x_2<x_1<x_3\}$ the curve $\mathfrak{C}(\tilde{\gamma})$
is the union of the following two curves:
a quarter-circle $\Gamma_0$ in $\mathbb{C}\setminus \{-1,1\}$
in the upper or lower half-plane which joins a point in $(-\infty,-1)$ with a point in $i\mathbb{R}$, and a curve $\Gamma_1$ that represents $\mathfrak{C}_*(_{pb}\vartheta(b)_{tr})$.  Following along the lines of proof of Statement 3 of Lemma \ref{lemm15'}, we see that the rectangle contains a curvilinear rectangle such that the restriction of the mapping $\mathfrak{C}(\tilde g)$ to it represents $\mathfrak{C}_*(_{pb}\vartheta(b)_{tr})$. The lower bound
for this case follows from Theorem \ref{thm10.1'}.

The proof
for the case $b=\sigma_2^k b_1$ (with $b_1$ a reduced word in $\sigma_1^2$ and $\sigma_2^2$)  is similar and is left to the reader. In this case the lift to $C_3(\mathbb{C})$ of a representing mapping for $b_{tr}$ is chosen with initial point in $\{x_1<x_3<x_2\}$.

The lower bound of Theorem \ref{thmbr.3} is proved. \hfill $\Box$

\medskip

\noindent {\bf Proof of the upper bound of Theorem \ref{thmbr.3}}.
We prove the upper bound for $3$-braids that are not pure and not among the exceptional cases. We write again such a braid in the form \eqref{eq2'}, and assume that the braid equals $b=\sigma_1^{k} b_1$
for an odd natural number $k$ and $b_1$ a pure braid as in Lemma \ref{lemm1'} which is not the identity. We want to find a holomorphic mapping $g_2$  from a rectangle $\mathbb{C}\setminus\{-1,1\}$, such that the mapping
$(-1,g_2,1)$
from the rectangle into $C_3(\mathbb{C})$ projects under $\mathcal{P}_{\rm sym}$ to a holomorphic maping that represents  $b_{tr}$.
If $k\geq 3$ (or $k\leq -3$, respectively) we first represent a lift under $f_1\circ f_2$ of the homotopy class of a half-circle in the lower half-plane (in the upper half-plane, respectively),
that joins $(-\infty,-1)$ with $(-1,1)$, by a suitable holomorphic mapping $g$ from a rectangle to $\mathbb{C}\setminus iZ$. Then we represent
a lift of $\mathfrak{C}_*(_{tr}\vartheta(b)_{tr})$ under $f_1\circ f_2$ by a suitable holomorphic mapping from a rectangle to $\mathbb{C}\setminus iZ$. Quasiconformal gluing of the two mappings yields a holomorphic mapping from a rectangle to  $\mathbb{C}\setminus iZ$. Denote the projection under $f_1\circ f_2$ of this mapping by $g_2$. The mapping $(-1,g_2,1)$ defines a holomorphic
mapping of a rectangle into $C_3(\mathbb{C})$ whose projection under $\mathcal{P}_{\rm sym}$ represents  $b_{tr}$.

If $k=1$ we represent a lift under $f_1\circ f_2$ of the homotopy class of a quarter-circle
that joins $(-\infty,-1)$ with the imaginary axis by a suitable holomorphic mapping from a rectangle to $\mathbb{C}\setminus i\mathbb{Z}$,
and represent
a lift of $\mathfrak{C}_*(_{pb}\vartheta(b)_{tr})$ under $f_1\circ f_2$ by a suitable holomorphic mapping from a rectangle to $\mathbb{C}\setminus iZ$. After quasiconformal gluing of the two mappings we finish the proof of the upper bound as in the previous case. We leave the details to the reader.
\hfill $\Box$

\medskip

The space $C_3(\mathbb{R}\diagup \mathcal{S}_3)$ has a disadvantage especially when we want to deal with conjugacy classes of $3$-braids. Namely,
the codimension of   $C_3(\mathbb{R}\diagup \mathcal{S}_3)$ in  $C_3(\mathbb{C}\diagup \mathcal{S}_3)$   is bigger than one, hence, each curve in generic position avoids this space. It is convenient to consider the set   \index{$\mathcal{H}$}
\begin{align}\label{eq3-braids.4}
\mathcal{H} \stackrel{def}= & \{ \{z_1,z_2,z_3\} \in C_3(\mathbb{C})\diagup \mathcal{S}_3: \mbox{the three points}\; z_1,z_2, z_3  \nonumber\\
& \mbox{are contained in a real line in the complex plane}\}\,.
\end{align}
The set $\mathcal{H}$ is a smooth real hypersurface of $C_3(\mathbb{C})\diagup \mathcal{S}_3$. Indeed, let $\{z_1^0,z_2^0,z_3^0\}$ be a point of the symmetrized configuration space. Introduce coordinates near this point by lifting a neighbourhood of the point to a connected open set
in $C_3(\mathbb{C})$ equipped with coordinates $(z_1,z_2,z_3)$. Since the linear map  $M(z)\stackrel{def}= \frac{z-z_1}{z_3-z_1},\, z \in \mathbb{C},$ maps the points $z_1$ and $z_3$ to the real axis, the three points $z_1, \, z_2,$ and $z_3$  lie on a real line in the complex plane iff the imaginary part of $z'_2\stackrel{def}=M(z_2)= \frac{z_2-z_1}{z_3-z_1}$ vanishes. The equation $\mbox{Im}\frac{z_2-z_1}{z_3-z_1}=0$ in local coordinates $(z_1,z_2,z_3)$ defines a local piece of a smooth real hypersurface.

The set $\mathcal{H}$ is not simply connected. 
Nevertheless we may define homotopy classes of curves in $C_3(\mathbb{C})\diagup \mathcal{S}_3$ with endpoints in this set.

For later use we need the following lemmas.

\begin{lemm}\label{lemm3-braids1}
Let $\gamma$ be a loop in $\mathcal{H}$ with base point $\{-1,0,1\}\in \mathcal{H}$. Then  for some $k\in \mathbb{Z}$ the loop $\gamma$ represents the braid $\Delta_3^k$
with this base point.
\end{lemm}

\noindent {\bf Proof.}
Consider the lift  $\tilde{\gamma}=(\tilde{\gamma}_1, \tilde{\gamma}_2, \tilde{\gamma}_3):[0,1]\to C_3(\mathbb{C})$  of $\gamma$ with $\tilde{\gamma}(0)=(-1,0,1)$. By a homotopy of $\gamma$ in $ \mathcal{H}$ with fixed endpoints we may assume that for the lift  $\tilde{\gamma}=(\tilde{\gamma}_1, \tilde{\gamma}_2, \tilde{\gamma}_3)$ for all  $t\in [0,1]$ the equality $\tilde{\gamma}_2(t)=0$ holds and 
the segment $[\tilde{\gamma}_1(t),\tilde{\gamma}_3(t)]$ has length $2$. Then
for each point $\tilde{\gamma}(t)$ there is a unique complex number $e^{i\alpha(t)}$ 
such that $\alpha(0)=0$ and 
$ \tilde{\gamma}(t)=e^{i\alpha(t)}\big((-1,0,1)\big)$ 
and the $\alpha(t)$ 
depend continuously on $t\in [0,1]$. Since $\tilde{\gamma}(1)$ is equal to $(-1,0,1)$ or to $(1,0,-1)$, the equality $e^{i\alpha(1)} = \pm 1$ holds. 
The curve $t\to e^{i\alpha(t)},\,t\in[0,1],$ is homotopic with fixed endpoints as a curve in the unit circle 
to the curve $t\to e^{\pi k t i},\,t\in[0,1] $. Hence, $ \tilde{\gamma}$ is homotopic with fixed endpoints to the curve $t\to    e^{\pi k t i}(-1,0,1),\,t\in[0,1] $ for an integer number $k$.

After applying the projection $\mathcal{P}_{\rm sym}$ we obtain the curve
$t\to    e^{\pi k t i}\{-1,0,1\}$, $t\in[0,1] $,
in  $C_3(\mathbb{C})\diagup \mathcal{S}_3$ that is homotopic with fixed base point to $\gamma$
and represents $\Delta_3^k$.  \hfill $\Box$

\begin{lemm}\label{lemm3-braids2}
Suppose for a conjugacy class $\hat b$ there exists a representing loop that avoids $\mathcal{H}$. Then  $\hat b$ is the conjugacy class of a periodic braid of the form $(\sigma_1 \sigma_2)^k$ for an integer $k$.
\end{lemm}

\noindent {\bf Proof.} Let $\gamma$ be a loop that represents $\hat b$ and avoids
$\mathcal{H}$. Choose a base point on $\gamma$, so that $\gamma$ represents a braid $b$. For some natural number $k$ the power $b^k$ of $b$ is a pure braid, and the curve
 $\gamma^k\stackrel{def}=\underbrace{\gamma\circ\ldots \gamma}_{\substack{k}}$ represents the pure braid $b^k$ and does not meet $\mathcal{H}$.
Then $\mathfrak{C}({\gamma^k})=1-2\frac{\gamma^k_2-\gamma^k_1}{\gamma^k_3-\gamma^k_1}$
is a loop in $\mathbb{C}\setminus\{-1,1\}$ that avoids $\mathbb{R}$. Here $\gamma^k_j$ are the coordinate functions of $\gamma^k$. This means that $\mathfrak{C}({\gamma^k})$ is contractible, hence
$\gamma^k$ represents $\Delta^{2\ell}$ for an integer $\ell$, and hence $\gamma$ represents a periodic braid.

If a representative $\gamma:[0,1]\to C_3(\mathbb{C})\diagup \mathcal{S}_3,\, \gamma(0)=\gamma(1),$ of a $3$-braid $b$ avoids $\mathcal{H}$, then the associated permutation $\tau_3(b)$ cannot be a transposition. Indeed, assume the contrary. Then there is a lift $\tilde \gamma$ of $\gamma$ to $C_3(\mathbb{C})$, for which $(\tilde{ \gamma}_1(1),\tilde{ \gamma}_2(1),\tilde{ \gamma}_3(1))=(\tilde{ \gamma}_3(0),\tilde{ \gamma}_2(0),\tilde{\gamma}_1(0))$.
Let $L_t$ be the line in $\mathbb{C}$ that contains  $\tilde{\gamma}_1(t)$ and $\tilde{\gamma }_3(t)$, and is oriented so that running along $L_t$ in positive direction we meet first $\tilde{\gamma }_1(t)$ and then $\tilde{ \gamma}_3(t)$.
The point $\tilde\gamma_2(0)$ is not on $L_0$. Assume without loss of generality, that it is on the left of $L_0$ with the chosen orientation of $L_0$. Since for each $t\in [0,1]$ the three points $\tilde{\gamma }_1(t),\,\tilde{ \gamma}_2(t)$ and $\tilde{\gamma}_3(t)$ in $\mathbb{C}$ are not on a real line, the point $\tilde{\gamma}_2(t)$ is on the left of $L_t$ with the chosen orientation. But the unoriented lines $L_0$ and $L_1$ coincide, and their orientation is opposite. This implies $\tilde{\gamma}_2(1)\neq\tilde{\gamma}_2(0)$, which is a contradiction.
We proved that the braid $b$ is periodic with period $3$ or a power of $\Delta_3^{2\ell}$. \hfill $\Box$

\begin{lemm}\label{lem10.30}
Let $A=\{z\in \mathbb{C}: r_1<|z|<r_2\}$ be an annulus and
$g:A\to C_3(\mathbb{C})\diagup \mathcal{S}_3$  a holomorphic mapping that extends holomorphically to a neighbourhood of the closure $\bar A$ and represents a conjugacy class $\hat b$ of braids.
Suppose there exists smooth arc in a neighbourhood of $\bar A$ that intersects $ A$ along an arc $L_0$, 
with endpoints on different boundary circles, such that $g(L_0)\subset \mathcal{H}$.

Take any point $q\in L_0$ and a positively oriented dividing curve $\alpha$ in $A$ with initial and terminal points equal to $q$ that does not intersect $L_0$ at any other point.
Let $\mathfrak{A}$ be a complex linear mapping for which $\mathfrak{A}(q)=\{0,x,1\}\in  C_3(\mathbb{R})\diagup \mathcal{S}_3$ with $x\in(0,1)\,.\;$
Denote by $b_{tr}$ the element of $\pi_1( C_3(\mathbb{C})\diagup \mathcal{S}_3,C_3(\mathbb{R})\diagup \mathcal{S}_3)$ represented by $\mathfrak{A}(g)\mid \alpha$, and by $b$ the respective braid.
Then $b\in \hat b$ and $\Lambda(b_{tr})\leq \lambda(A)$.

If $\hat b$ is represented by non-periodic braids and $g$ intersects $\mathcal{H}$ transversally, then an arc $L_0$ as above exists.
\end{lemm}

\noindent {\bf Proof.}
Since the mapping $g$ is free homotopic to the mapping  $\mathfrak{A}(g)$ for any complex affine mapping $\mathfrak{A}$, we may assume that for the mapping $g$ itself $g(q)\in C_3(\mathbb{R})\diagup \mathcal{S}_3$, and, moreover,  $g(q)=\{0,x,1\}$.
The set $A\setminus L_0$ can be considered as curvilinear rectangle with curvilinear horizontal sides being copies of $L_0$. 
Let $\omega(z):A\setminus L_0\to R$ be the conformal mapping of the curvilinear rectangle onto a rectangle of the form $R=\{z\in \mathbb{C}:{\rm Re}z \in (0,1),\, {\rm Im}z\in (0,{\sf{a}})\}$, that takes the curvilinear side of $A\setminus L_0$, that is attained by moving in $A\setminus L_0$ clockwise towards
 $L_0$, to the lower side of $R$. (Note that the number $\sf{a}$ is uniquely defined by $A\setminus L_0$.)

Consider the restriction $g\mid A\setminus L_0$ and take the lift
$\tilde g=(\tilde {g}_1,\tilde {g}_2,\tilde {g}_3):A\setminus L_0 \to \mathcal{C}_3(\mathbb{C})$ of $g$
to a mapping  from $A\setminus L_0$ to the configuration space $\mathcal{C}_3(\mathbb{C})$, for which the clockwise continous extension to $q$ equals $(0,x,1)$ with $x\in(0,1)$.
For $z\in  A\setminus L_0  $ we consider  the complex affine mapping
\begin{align*}
\mathfrak{A}_{z}(      \zeta)=\mathfrak{a}(z)\zeta +\mathfrak{b}(z)\stackrel{def}= \frac{\zeta-\tilde{g}_1(z)}{\tilde{g}_3(z)-\tilde{g}_1(z)}, \, \zeta \in \mathbb{C}\,.
\end{align*}
We define the holomorphic mapping
\begin{align*}
\hat{g}(z)\stackrel{def}=\mathfrak{A}_{z}(\tilde{g}(z))=(0, \frac{\tilde{g}_2(z)-\tilde{g}_1(z)}{\tilde{g}_3(z)-\tilde{g}_1(z)},1)
,\,z \in A\setminus L_0 \,,
\end{align*}
and put $\hat{g}_{\rm sym}(z)\stackrel{def}=   \mathcal{P}_{\rm sym}(\hat{g})(z)=\mathfrak{A}_{z}(g(z))=
\{0, \frac{\tilde{g}_2(z)-\tilde{g}_1(z)}{\tilde{g}_3(z)-\tilde{g}_1(z)},1\}, z\in A\setminus L_0  $.

The mapping $\hat{ g}_{\rm sym}$ extends continuously to the horizontal sides of $ A\setminus L_0$. Since for a triple of complex numbers $\{Z_1,Z_2,Z_2\}$ the ratio $\frac{Z_2-Z_1}{Z_3-Z_1}$ is real if and only if
the points $Z_1, Z_2$, and $Z_3$ lie on a real line and $g$ maps the horizontal sides of $A\setminus L_0$ to $\mathcal{H}$, the mapping $\hat {g}_{\rm sym}$ takes the horizontal sides of $A\setminus L_0$ into $ C_3(\mathbb{R})\diagup \mathcal{S}_3$.  Hence, the mapping $\hat {g}_{\rm sym}$ on $A\setminus L_0  $ represents an element $b'_{tr}\in \pi_1( C_3(\mathbb{C})\diagup \mathcal{S}_3,C_3(\mathbb{R})\diagup \mathcal{S}_3)$. We obtain
$\Lambda(b'_{tr})\leq \lambda(  A\setminus L_0)\,.$
By Corollary \ref{cor4.0}
$$
\Lambda(b'_{tr})\leq \lambda(A)\,.
$$

By construction
the mappings $t\to g(\alpha (t))$ and $t\to \hat{ g}_{\rm sym}(\alpha (t))$, $t\in[0,1]$, take the same value at $0$.
Moreover, for each $t\in (0,1)$ the value $\hat{ g}_{\rm sym}(\alpha (t))$ is obtained by applying to the value $ g(\alpha (t))$ a complex linear mapping, 
i.e.  $\hat{ g}_{\rm sym}(\alpha (t))=  \mathfrak{a}(\alpha(t))\cdot g(\alpha(t)) + \mathfrak{b}(\alpha(t)),\, t\in [0,1]$,  for continuous functions $ \mathfrak{a}$ and $ \mathfrak{b}$ with $ \mathfrak{a}$ nowhere vanishing, $ \mathfrak{b}(\alpha(0))=0$, $ \mathfrak{a}(\alpha(0))=1$, and $ \mathfrak{b}(\alpha(1))$ and $ \mathfrak{a}(\alpha(1))$ real valued.
The function $ \mathfrak{b}\circ\alpha:[0,1]\to \mathbb{C}$ is homotopic with endpoints in $\mathbb{R}$ to the function that is identically equal to zero. The mapping $ \mathfrak{a}\circ\alpha:[0,1]\to \mathbb{C}\setminus \{0\}$ is homotopic with endpoints in $\mathbb{R}$ to $\frac{ \mathfrak{a}\circ\alpha}{| \mathfrak{a}\circ\alpha|}$.
Hence, the mappings $\hat{ g}_{\rm sym}({\alpha})$ and $\frac{ \mathfrak{a}(\alpha)}{| \mathfrak{a}(\alpha)|}{g}({\alpha})$ from $[0,1]$ to $C_3(\mathbb{C})\diagup \mathcal{S}_3$ are homotopic with endpoints in $C_3(\mathbb{R})\diagup \mathcal{S}$.

There is a continous function $\beta$ on $[0,1]$ such that  $\frac{\mathfrak{a}\circ\alpha(t)}{|\mathfrak{a}\circ\alpha(t)|}=e^{i\beta(t)}$ and $\beta(0)=0$. Then $\beta(1)=k\pi $ for an integer number $k$. 
Put $\mathring{g}(z)= e^{-\frac{\pi k i z}{\sf a}} \cdot \hat{ g}_{\rm sym}(z),\, z\in R$.
Since the curves $t\to \frac{\mathfrak{a}\circ\alpha(t)}{|\mathfrak{a}\circ\alpha(t)|}$ and $t\to e^{i\beta(t)},\, t\in[0,1]$, are homotopic with fixed endpoints, the
restrictions of $\mathring{ g}\mid \alpha$ and $g\mid\alpha$ are homotopic. Hence, the equality  $(b'\Delta_3^{-k})_{tr}=b_{tr}$ holds,
and by Lemma \ref{lemm1}
$$
\Lambda(b_{tr})\leq \lambda(A)\,.
$$
To prove the last statement of the lemma we notice that
the set $\{z\in A: g(z)\in \mathcal{H}\}$ is a smooth manifold of codimension $1$ in a neighbourhood of $\bar A$. If it did not contain an arc that joins the two boundary circles there would exist a simple closed dividing curve for the annulus whose image under $g$ avoids $\mathcal{H}$. This would contradict Lemma \ref{lemm3-braids2}.
The lemma is proved.
\hfill $\Box$

\chapter{Counting functions}
\label{Ch9}
\setcounter{equation}{0}

The first result that states exponential growth of the entropy counting function is due to Veech \cite{Ve}. More precisely, he
considered
conjugacy classes of pseudo-Anosov elements of the mapping class group of a closed  Riemann surface $S$, maybe with distinguished points $E_n$, with hyperbolic universal covering of $S\setminus E_n$, and proved that the number of classes  with entropy not exceeding a positive number $Y$, grows exponentially in $Y$.   Notice that the $3$-braids with positive entropy are exactly the $3$-braids that correspond to pseudo-Anosov elements of the mapping class group of $\,\mathbb{P}^1$ with fixed point $\infty$ and three other distinguished points.
The precise asymptotic for the entropy counting function is given by Eskin and Mirzakhani \cite{EsMi} for closed Riemann surfaces of genus at least $2$. Both papers, \cite{Ve} and \cite{EsMi}, use deep techniques of Teichm\"uller theory, in particular, these papers are based on the study of the Teichm\"uller flow.
\index{Veech} \index{Eskin} \index{Mirzakhani}

We give special attention to the growth estimates for the counting function related to the extremal length $\Lambda_{tr}$ of braids. The reason is that
the extremal length with totally real boudary values is more flexible for applications to  Gromov's Oka Principle than the entropy (equivalently the extremal length of conjugacy classes of braids). In particular, estimates of  the counting function related to the extremal length $\Lambda_{tr}$ allow to obtain effective finiteness theorems in the spirit of the Geometric Shafarevich Conjecture for the case when the base manifold is of second kind (\cite{Jo4}). We will address this application of the concept of extremal length in Chapter \ref{chapterfin}. In this chapter we obtain as a corollary of the results on the the counting function related to $\Lambda_{tr}$ an alternative proof of the exponential growth of the number of conjugacy classes of elements of $\mathcal{B}_3\diagup\mathcal{Z}_3$ that have positive entropy not exceeding  $Y$. Our proof does not use deep techniques from Teichm\"uller theory.
\index{Geometric Shafarevich Conjecture}

Notice that the bounds in the theorems of the present chapter can be improved. 
For the sake of simplicity of the proof we restrict ourselves to these estimates.

The first theorem of this chapter treats the
pure braid group modulo its center, ${\mathcal{PB}_3}\diagup \mathcal{Z}_3$. Recall that ${\mathcal{PB}_3}\diagup \mathcal{Z}_3$ is a free group in two generators $\sigma_j^2\diagup \mathcal{Z}_3$, $j=1,2$, which is isomorphic to the fundamental group $\pi_1(\mathbb{C}\setminus \{-1,1\},0)$ with standard generators $a_j$ (see Lemma \ref{lemm1a}).
The isomorphism takes the generators  $\sigma_j^2\diagup \mathcal{Z}_3$
to the generators $a_j$.

By Lemma \ref{lemm1} the quantities  $\Lambda_{tr}(\mathbold{b})$ and
$\Lambda(\hat{\mathbold{b}})$ are well defined by $\Lambda_{tr}(\mathbold{b})=\Lambda_{tr}({b}) $
 and $\Lambda(\hat{\mathbold{b}})= \Lambda(\hat{{b}})$ for any $3$-braid $b$ representing the class
$\mathbold{b}$.

The counting function $N^{\Lambda}_{\mathcal{PB}_3}(Y),\, Y \in (0,\infty),$ is defined as follows. For each positive parameter $Y$ the value of $N^{\Lambda}_{\mathcal{PB}_3}(Y)$ is equal to the number of elements
$\mathbold{b} \in \mathcal{PB}_3\diagup \mathcal{Z}_3$ 
with $0 < \Lambda_{tr}(\mathbold{b})\leq Y$. Note that here we count elements of $\mathcal{PB}_3\diagup \mathcal{Z}_3$ rather than conjugacy classes of such elements. We wish to point out that (with the mentioned choice of the generators) the condition $\Lambda_{tr}(\mathbold{b})>0$ excludes exactly the classes in  $\mathcal{PB}_3\diagup \mathcal{Z}_3$ of the even powers $\sigma_j^{2k}$ of the standard generators of the braid group $\mathcal{B}_3$ (see Theorem \ref{thm1} in Section \ref{sec:3-braids1}).
Notice that $N^{\Lambda}_{\mathcal{PB}_3}=N^{\Lambda}_{\mathbb{C}\setminus\{-1,1\}}$ for the counting function $N^{\Lambda}_{\mathbb{C}\setminus\{-1,1\}}(Y)$, that counts the number of elements of the fundamental group
$\pi_1(\mathbb{C}\setminus \{-1,1\},0)$ with positive $\Lambda_{tr}$ smaller than $Y$.
 \index{$N^{\Lambda}_{\mathcal{PB}_3}(Y)$}

\begin{thm}\label{thm10.1} For all positive numbers $Y\geq 12 \pi $ the inequality
\begin{equation}\label{eq10.1}
\frac{1}{2} \exp( \frac{\log 2}{6\pi}Y)
\leq N^{\Lambda}_{\mathcal{PB}_3}(Y)\leq  \frac{1}{2}e^{6 \pi Y}
\end{equation}
holds. The upper bound is true for all $Y>0$.
\end{thm}

Consider arbitrary $3$-braids. The counting function $N^{\Lambda}_{\mathcal{B}_3}(Y),\, Y \in (0,\infty),$ \index{$N^{\Lambda}_{\mathcal{B}_3}(Y)$} is defined as the number of elements  $\mathbold{b}\in \mathcal{B}_3 \diagup \mathcal{Z}_3$ 
with $0 < \Lambda_{tr}(\mathbold{b})\leq Y$. Note that the condition $\Lambda_{tr}(\mathbold{b})>0$ excludes exactly the elements $\mathbold{b}\in \mathcal{B}_3 \diagup \mathcal{Z}_3$ that are represented by $b=\sigma_j^k \Delta_3^{\ell}$
for $j=1$ or $2$, and $\ell = 0$ or $1$. (See Lemma \ref{lemm1'} and Theorem \ref{thmbr.3}.)

\begin{thm}\label{thm10.2} For any positive number $Y\geq 12 \pi $ 
the inequality
\begin{equation}\label{eq10.12}
\frac{1}{2} \exp( \frac{\log 2}{6\pi}Y)
\leq N^{\Lambda}_{\mathcal{B}_3}(Y)\leq 4 e^{6 \pi Y}
\end{equation}
holds. The upper bound is true for all $Y > 0$.
\end{thm}

The entropy counting function $N^{entr}_{\mathcal{B}_n}(Y),\, Y>0,$ for $n$-braids is defined 
as the number of conjugacy classes of pseudo-Anosov elements of $\mathcal{B}_n \diagup \mathcal{Z}_n$ with entropy not exceeding $Y$. For $3$-braids the value $N^{entr}_{\mathcal{B}_3}(Y),\, Y>0$, is also equal to the number of  conjugacy classes of elements of $\mathcal{B}_3 \diagup \mathcal{Z}_3$ with positive entropy not exceeding $Y$. Indeed, by Lemma \ref{lemEl.0}
the reducible elements of $\mathcal{B}_3\diagup \mathcal{Z}_3$  are exactly the conjugates of powers of $\sigma_1 \diagup \mathcal{Z}_3$. By Example 5 of Section \ref{sec:4.1b} their entropy equals zero. Since periodic elements of $\mathcal{B}_3\diagup \mathcal{Z}_3$ have zero entropy, the pseudo-Anosov $3$-braids are exactly the $3$-braids of positive entropy. \index{$N^{entr}_{\mathcal{B}_n}(Y)$} \index{entropy counting function} 
The following theorem will be obtained as a corollary of the Main Theorem and Theorem \ref{thm10.2}.

\begin{thm}\label{thm10.0} 
For any number $Y \geq \frac{70}{3}
\pi^2$ the estimate
\begin{equation}\label{eq10.0}
\frac{1}{4} e^{\frac{\log 2}{6\pi^2}\,Y}   \leq N^{entr}_{\mathcal{B}_3}(Y)\leq 4e^{12 Y}
\end{equation}
holds. The upper bound holds for all positive $Y$.
\end{thm}

We will prove now Theorem \ref{thm10.1} using the
isomorphism from  $\mathcal{PB}_3 \diagup \mathcal{Z}_3$ onto 
$\pi_1(\mathbb{C} \setminus\{-1,1\},0)$ that takes for $j=1,2$ the generator  $\sigma_j^2\diagup \mathcal{Z}_3$
to the generator $a_j$. 
An element of $\mathbold{b}\in\mathcal{PB}_3 \diagup \mathcal{Z}_3\cong \pi_1(\mathbb{C} \setminus\{-1,1\},0)$ will often be identified with the
reduced word in the $a_j \cong \sigma_j^2\diagup \mathcal{Z}_3$ that represents $\mathbold{b}$.

The proof of Theorem \ref{thm10.1} uses the syllable decomposition of words $w \in \pi_1(\mathbb{C}\setminus \{-1,1\},0)$ (see Definition \ref{defnxxx}). 
Recall that for a word $w \in \pi_1(\mathbb{C}\setminus \{-1,1\},0)$ in reduced form
$w= a_{j_1}^{k_1}\, a_{j_2}^{k_2} \, \ldots\,$ 
each term $a_{j_i}^{k_i}$ with $|k_i|\geq 2$ is a syllable of form $(1)$. Further, any maximal sequence of consecutive terms  $a_{j_i}^{k_i}$,  for which $|k_i|=1$ and all $k_i$ have the same sign, is a syllable of form (2) 
(if the number of terms is bigger than $1$) or of form $(3)$ (if the number of terms is equal to $1$). This gives a uniquely defined decomposition into syllables. Recall that the degree or length of the syllable is the sum of absolute values of the exponents of terms appearing in the syllable. We recall also the convention that the number of syllables of the identity equals zero. 

Recall (see \eqref{eq3+}) that we associated to each word $w$ in the $a_j$ the value  $\mathcal{L}(w)\stackrel{def}=   \sum_j \log(2 d_j+\sqrt{4 d_j^2-1})$, where the sum runs over the degrees $d_j$ of all syllables of the word.
For convenience we estimate the value $\mathcal{L}(w)$ from below 
by a slightly simpler value. More precisely, for
a non-trivial word $w \in \pi_1(\mathbb{C}\setminus \{-1,1\},0) \cong \mathcal{PB}_3\diagup \mathcal{Z}_3$    we put $\mathcal{L}_-(w)\stackrel{def}= \sum \log(3 d_k)\leq \mathcal{L}(w)=\sum\log(2d_k+\sqrt{4d_k^2-1})$, 
where each sum runs over the degrees $d_k$ of all syllables of $w$.

The main ingredient of the proof 
is the following lemma.

\begin{lemm}\label{lem10.1a}
Let $N^{\mathcal{L}_{-}}_{\mathcal{PB}_3}$ be the function whose value at any $Y >0$ is the number of reduced words  $w \in \pi_1(\mathbb{C}\setminus \{-1,1\},0)$, $w \neq \mbox{Id}$, for 
which $\mathcal{L}_{-}(w)\leq Y$. The following inequality
\begin{equation}\label{eq10.1a'}
N^{\mathcal{L}_-}_{\mathcal{PB}_3}(Y) \leq \frac{1}{2}e^{3Y}
\end{equation}
holds.
\end{lemm}
We need some preparation for the proof of Lemma \ref{lem10.1a}. 
Consider all finite tuples $(d_1,\ldots,d_j)$,
where $j\geq 1$ is any natural number (depending on the tuple) and the $d_k \geq 1$ are natural numbers.
Before proving Lemma \ref{lem10.1a} we estimate for $Y>0$ the number $N^*(e^Y)$ of different ordered tuples $(d_1, d_2,\ldots,d_j)$ (with varying $j$) that may serve as the degrees of the syllables of words (counted from left to right) with $\sum_1^j \log(3 d_k) \leq Y$. 
Put $X=e^Y$. Then $N^*(X)$ is the number of distinct tuples 
with $\prod (3d_k)\leq X$.

Fix a natural number $j$. Denote by $N_j^*(X),\, X \geq 1,$  the number of tuples $(d_1,\ldots,d_j)$ for which $\prod_1^j (3d_k) \leq X$. The number is not zero if and only if $j \leq \frac{\log X}{\log 3}$. For $X \geq 1$ the equality
\begin{equation}\label{eq10.4}
N^*(X) = \sum_{j \in \mathbb{Z}: \,1\leq j\leq  \frac{\log X}{\log 3}} N_j^*(X)
\end{equation}
holds.

Notice first that
$N_1^*(X)= [\frac{ X}{ 3}]$, where $[x]$ denotes the largest integer not exceeding the positive number $x$. Indeed we are looking for the number of $d_1$'s for which $1 \leq d_1 \leq \frac{ X}{ 3}$.

The value of $N_j^*$ for $j \geq 2$ is estimated by the following lemma.
Notice that the lemma holds also for $j=1$ if in the inequality
\eqref{eq10.7} we define $ 0! \stackrel{def}= 1$.

\begin{lemm}\label{lem10.2}
Let $j\geq 2$. Then $N_j^*(X)=0$ for $X< 3^j$. For  $X\geq 3^j$
\begin{equation}\label{eq10.7}
N_j^*(X) \leq \frac{1}{(j-1)!} \frac{1}{3} (\frac{2}{3})^{j-1} X \Big(\log \big(\frac{1}{3} (\frac{2}{3})^{j-1} X\big)\Big)^{j-1}.
\end{equation}
\end{lemm}

\noindent {\bf Proof.}
The number $N_2^*(X)$ is the number of tuples $(d_1,d_2)$ for which $3d_1 \cdot 3d_2 \leq X$. Since $d_1 d_2 \geq 1$, $N_2^*(X)=0$ for $X<3^2$.  If
$X\geq 3^2$ the inequality $d_1 \leq \frac{X}{9}$ holds, and for given $d_1$ the number $d_2$ runs through all natural numbers with $1\leq d_2 \leq \frac{X}{9d_1}$. Hence, for $X\geq 3^2$

\begin{equation}\label{eq10.5}
N_2^*(X) \leq  \sum_{k \in \mathbb{Z}: \, 1 \leq k \leq  \frac{X}{3^2}} \frac{X}{3^2 k}.
\end{equation}
Put $a=\frac{X}{3^2}$ and $k'= \frac{k}{a}$. Since for positive numbers $k'$ and $\alpha$ with $k'>\alpha$ the inequality $\frac{1}{k'}\leq \frac{1}{\alpha} \int_{k'-\alpha}^{k'} \frac{dx}{x}$ holds, we obtain
\begin{align}\label{eq10.6}
N_2^*(X) \leq   \sum_{k' \in \frac{1}{a}\mathbb{Z}:\, \frac{1}{a} \leq k' \leq  1 } \frac{1}{k'}
\leq  2a \int_{\frac{1}{2a}}^1 \frac{dx}{x} = 2a \log(2a) =
\frac{1}{3} \cdot \frac{2}{3}X  \cdot \log(\frac{1}{3}\cdot \frac{2}{3}X ).
\end{align}

For $j\geq 3$ we provide induction using the following fact. Let $0<x<1$. Then for any positive integer $j$ the value
$(\frac{(-\log x)^j}{x})'= -\frac{(-\log x)^{j-1}}{x^2}(j-\log x) $ is negative.
Hence, for $k'\in (0,1)$ and $0<\alpha < k'$ 
\begin{equation}\label{eq10.6a}
\frac{1}{k'}(-\log(k'))^j  \leq \frac{1}{\alpha} \int_{k'-\alpha}^{k'} \frac{(-\log x)^j}{x}dx \,.
\end{equation}
We saw that the lemma is true for $j=2$. Suppose it is true for $j$. Prove that then it holds for $j+1$.  The number $N_{j+1}^*(X)$ is the number of tuples $(d_1,\ldots,d_{j+1})$ for which $3d_1 \cdot \ldots \cdot 3d_{j+1} \leq X$. Hence, $N_{j+1}^*(X)=0$ if $X <3^{j+1}$. If $X \geq 3^{j+1}$, then 
$d_1 \leq \frac{X}{3^{j+1}}$ and for given $d_1$ the tuples $(d_2,\ldots,d_{j+1})$ run through all tuples with $3 d_2 \cdot \ldots \cdot 3 d_{j+1} \leq \frac{X}{3d_1}$. Hence, for $X \geq 3^{j+1}$

\begin{align}\label{eq10.8}
N_{j+1}^*(X) =  & \sum_{k \in \mathbb{Z}:\, 1 \leq k \leq  \frac{X}{3^{j+1}}} N_j^*(\frac{X}{3k})\nonumber\\
\leq & \sum_{k \in \mathbb{Z}:\, 1 \leq k \leq  \frac{X}{3^{j+1}}} \frac{1}{(j-1)!}  \frac{1}{3} (\frac{2}{3})^{j-1}  \frac{X}{3k} \Big( \log\big(\frac{1}{3} (\frac{2}{3})^{j-1}  \frac{X}{3k}\big)\Big)^{j-1}.
\end{align}

Put $a= \frac{1}{9} (\frac{2}{3})^{j-1} X$ and $k'= \frac{k}{a}$. Then
\begin{align}\label{eq10.9}
N_{j+1}^*(X) \leq  & \frac{1}{(j-1)!}\sum_{k' \in \frac{1}{a}\mathbb{Z}:\frac{1}{a} \leq k' \leq  \frac{1}{2^{j-1}}} \frac{1}{k'} \big(\log(\frac{1}{k'})\big)^{j-1}\nonumber \\
\leq &  \frac{1}{(j-1)!} 2a \int_{\frac{1}{2a}}^{\frac{1}{2^{j-1}}} \frac{1}{x} (-\log x )^{j-1} dx
\leq   \frac{1}{(j-1)!} 2a \int_{\frac{1}{2a}}^1 \frac{1}{x} (-\log x )^{j-1} dx
\nonumber\\
= & (-1)^{j-1} \frac{1}{(j-1)!} 2a \frac{1}{j}(\log x)^j\mid_{\frac{1}{2a}}^{1}.
\end{align}
We obtain
\begin{align}\label{eq10.10}
N_{j+1}^*(X)
\leq & \frac{1}{j!} 2a \big(\log (2a)\big)^j = \frac{1}{j!}\frac{1}{3} (\frac{2}{3})^{j}X \Big(\log\big( \frac{1}{3} (\frac{2}{3})^{j}X\big)\Big)^j.
\end{align}
Lemma \ref{lem10.2} is proved.
\hfill $\Box$

\bigskip

Lemma \ref{lem10.2} implies the following upper bound for $N^*$.

\begin{lemm}\label{lem10.1}
For $X<3$ the function $N^*(X)$ vanishes. Moreover, for any positive number X
\begin{equation}\label{eq10.0a}
N^*(X) \leq (\frac{X}{3})^{\frac{5}{3}}.
\end{equation}
\end{lemm}

\noindent {\bf Proof of Lemma \ref{lem10.1}.}
Since all $N_j^*(X)$ vanish for $X<3$ the value $N^*(X)$ vanishes for such $X$. 
For $X\geq 3$ equation \eqref{eq10.4} and Lemma \ref{lem10.2} imply
\begin{align}\label{eq10.11}
N^*(X) \leq & \sum_{j \in \mathbb{Z}: \, 1 \leq j \leq \frac{\log X}{\log 3}}  \frac{1}{(j-1)!} \frac{1}{3} (\frac{2}{3})^{j-1} X \Big(\log\big(\frac{1}{3} (\frac{2}{3})^{j-1} X\big)\Big)^{j-1} \nonumber \\
\leq & \; \frac{1}{3} X \sum_{j \in \mathbb{Z}: \, 1 \leq j \leq \frac{\log X}{\log 3}} \frac{1}{(j-1)!} \big(\frac{2}{3} \log (\frac{1}{3}X)\big)^{j-1} \nonumber \\  \leq & \; \frac{1}{3} X \exp\big(\frac{2}{3}\log(\frac{1}{3}X)\big)=
(\frac{X}{3})^{\frac{5}{3}}.
\end{align}
Lemma \ref{lem10.1} is proved. \hfill $\Box$

\bigskip

\noindent {\bf Proof of Lemma \ref{lem10.1a}.} 
We assume that $[\frac{Y}{\log 3}]\geq 1$. Otherwise $N^*(e^Y)$ vanishes, and therefore $N^{\mathcal{L}_-}_{\mathcal{PB}_3}(Y)=0$, and the inequality is satisfied.
We will use the notation $N_j^{\mathcal{L}_-}(Y), \, j\geq 1,$ for the number of different reduced words $w$ in $\pi_1(\mathbb{C} \setminus \{-1,1\}, 0)$ that  consist of $j$ syllables and satisfy the inequality $\mathcal{L}_-(w)= \sum_{k=1}^j \log(3d_k) \leq  Y$.
Then $N^{\mathcal{L}_-}_{\mathcal{PB}_3}(Y)=  \sum_{j=1}^{j_0}N_j^{\mathcal{L}_-}(Y)$ with $j_0\stackrel{def}=[\frac{Y}{\log 3}]$. 
We will estimate $N_j^{\mathcal{L}_-}(Y)$ by $N_j^*(X)$ with $X= e^{ Y}$. Recall that $N^*(e^Y)$ is the number of different tuples $(d_1,\ldots,d_j)$ with $d_k \geq 1$ for which $\prod_{k=1}^j (3d_k)\leq e^{ Y}$.

For this purpose we take a tuple $(d_1,\ldots,d_j)$ and estimate the number of different reduced words with tuple of lengths of syllables (from left to right) equal to  $(d_1,\ldots,d_j)$. The first syllable is completely determined by its first letter  (which may be $a_1^{\pm 1}$ or $a_2^{\pm 1}$), its length, and the fact which of the following two options hold: either it is of form (1), or of form (2) or (3). 
Hence, there are at most $8$ different choices for the first syllable if we require the syllable to have exactly degree $d_1$. For all other syllables
the first letter of the syllable cannot be $a_i^{\pm 1}$ if the last letter in the preceding syllable is $a_i^{\pm 1}$ for the same $a_i$. Hence, for all but the first syllable there are at most $4$ choices given the degree of the syllable and the preceding syllable.

We showed that for all $j=1,\ldots, j_0 =[\frac{Y}{\log 3}]$, and each tuple $(d_1,\ldots,d_j)$,  there are at most $2 \cdot 4^j$ different reduced words with tuple of lengths of syllables  equal to  $(d_1,\ldots,d_j)$. Hence, for $Y  \geq \log 3$ the number $N^{\mathcal{L}_-}_{\mathcal{PB}_3}(Y)$ of reduced words
$w \in   \pi_1(\mathbb{C}\setminus \{-1,1\},0)$, $w\neq \mbox{Id}$,  with $\prod_1^{j_0} (3d_k) \leq \exp(Y)$ equals 
\begin{equation}\label{eq10.3}
N^{\mathcal{L}_-}_{\mathcal{PB}_3}(Y)= \sum_{j=1}^{j_0}N_j^{\mathcal{L}_-}(Y)\;\,\leq\;\;
\sum_{j=1}^{j_0} 
2\cdot 4^{j_0}N_j^*(e^{Y})\, =\,
2\cdot 4^{j_0} \cdot N^*(e^{Y}).
\end{equation}

Using the inequality $j_0 
\leq \frac{Y}{\log 3}$ and Lemma \ref{lem10.1} with $X=e^{Y}$ we obtain
the requested estimate by the value
\begin{equation}\label{eq10.3a}
2\cdot 3^{-\frac{5}{3}} \exp((\frac{ \log 4}{\log 3}+\frac{5}{3}) Y)< \frac{1}{2}\exp(3 Y).
\end{equation}
We used the inequalities $(\frac{ \log 4}{\log 3}+(\frac{5}{3}))<2.93$ and $4 \cdot 3^{-\frac{5}{3}} < 0.65<1$. 
Lemma \ref{lem10.1a} is proved.
\hfill $\Box$

\bigskip

\noindent {\bf Proof of 
Theorem \ref{thm10.1}.}  By Theorem \ref{thm1} the inequality  
$ \frac{1}{2 \pi}\mathcal{L}_-(w)\leq\frac{1}{2 \pi} \mathcal{L}(w)    \leq \Lambda_{tr}(w) $ 
holds for all reduced words $w$ representing elements in $\mathcal{PB}_3\diagup \mathcal{Z}_3\cong \pi_1(\mathbb{C}\setminus \{-1,1\},0)$ that are not equal to a power of $a_1$ or of $a_2$  or to the identity (equivalently, for which $\Lambda_{tr}(w)>0$). 
This inequality implies the inclusion
$\{w:0<\Lambda_{tr}(w) \leq Y\} \subset \{w \neq \mbox{Id}: \mathcal{L}_-(w) \leq 2\pi Y \}$. We obtain
the inequality
$N_{\mathcal{PB}_3}^{\Lambda}(Y) \leq N^{\mathcal{L}_-}_{\mathcal{PB}_3}(2 \pi Y)$  and by
Lemma \ref{lem10.1a} the right hand side of this inequality does not exceed $\frac{1}{2}e^{6\pi Y}$. This gives the upper bound.

The lower bound is obtained as follows.
Consider all reduced words in $\pi_1(\mathbb{C}\setminus\{-1,1\},0)$ of the form
\begin{equation}\label{eq10.2''}
a_1^{2k_1} a_2^{2k_2} \ldots
\end{equation}
where each $k_i$ is equal to $1$ or $-1$. If $j$ is the number of syllables (i.e the number of  the $a_i^{k_i}$) of a word $w$ of the form \eqref{eq10.2''}, then 
by Proposition \ref{propxx} the inequality
$\Lambda_{tr}(w) \leq \frac{35}{6}\pi j$ 
holds.
Consider the words of the mentioned form for which
$j=j_0 \stackrel{def}=[\frac{Y}{{6}\pi}]$.  
Since $j_0$ must be at least equal to $2$ we get the condition
$Y\geq  12\pi$. 
For the chosen $j_0$  the extremal length of the considered words does not exceed $Y$. The number of different words of such kind equals 
$2^{j_0}= 2^ {[\frac{Y}{6 \pi}]}\geq \exp(\log 2 \cdot( \frac{Y}{6 \pi}-1))= \frac{1}{2} \exp( \frac{\log 2}{6\pi}Y)$. 
Theorem \ref{thm10.1} is proved. \hfill $\Box$

\medskip

Consider now arbitrary elements of the braid group modulo its center $\mathcal{B}_3\diagup \mathcal{Z}_3$ and their extremal length with totally real horizontal boundary values.

\medskip

\noindent {\bf Proof of Theorem \ref{thm10.2}.} 
We will use the fact that by Lemma \ref{lemm1'} any $b\in \mathcal{B}_3$ which is not a power of $\Delta_3$ can be uniquely written in the form
$\sigma_j^k \, b_1 \, \Delta_3^{\ell}\,$, where $k$ and $\ell$ are integers, $b_1$ is a word in $\sigma_1^2$ and $\sigma_2^2$ in reduced form whose first term is a non-zero  power of $\sigma_{j'}\neq \sigma_j$ if $b_1\neq {\rm id}$. For a braid in this form we defined
$\vartheta(b) =\sigma_j^{q(k)} \, b_1$
where $q(j)$ is that even integer neighbour
of $j$ (including perhaps $j$ itself), which is closest to zero.
For each element  $\mathbold{b}$
of $\mathcal{B}_3 \diagup \mathcal{Z}_3$ the value $\vartheta(\mathbold{b})$ is well defined by putting it equal to
$\vartheta({b})$ for any $b\in \mathcal{B}_3$ that represents $
\mathbold{b}$.
\index{$\vartheta(b)$}
\index{$\vartheta(\mathbold{b})$}

Take any element $\mathbold{b}$ of $\mathcal{PB}_3 \diagup \mathcal{Z}_3$. Choose its unique representative that can be written as a reduced word $w$ in $\sigma_1^2$ and $\sigma_2^2$. 
We describe now all elements $\mathbold{b}$
of $\mathcal{B}_3 \diagup \mathcal{Z}_3$ with 
$\vartheta(\mathbold{b})=w$. 
If $w\neq \mbox{Id}$ these are the elements represented by the following braids.
If the first term of $w$ is 
$\sigma_j^{2k}$ with $k\neq 0$, then the possibilities are ${b}=w \Delta_3^{\ell}$ with $\ell=0$ or $1$,  ${b}=\sigma_j^{{\rm{sgn}} k} w \Delta_3^{\ell}$ with $\ell=0$ or $1$, or  ${b}=\sigma_{j'}^{\pm 1} w \Delta_3^{\ell}$ with $\ell=0$ or $1$ and $\sigma_{j'}\neq
\sigma_{j}$. Hence, for $w\neq \mbox{Id}$ there are $8$ possible choices of elements $\mathbold{b}\in \mathcal{B}_3 \diagup \mathcal{Z}_3$ with $\vartheta(\mathbold{b})=w$. 
By Theorem \ref{thmbr.3} the set of $\mathbold{b} \in \mathcal{B}_3 \diagup \mathcal{Z}_3$ with $0 < \Lambda_{tr}(\mathbold{b}) \leq Y$ is contained in the set of  $\mathbold{b} \in \mathcal{B}_3 \diagup \mathcal{Z}_3$ with $\vartheta({\mathbold{b}}) = w \neq \mbox{Id}$, $ \mathcal{L}_-(w) \leq 2\pi Y$. We obtain
\begin{equation}\label{eq10.13}
N^{\Lambda}_{\mathcal{B}_3}(Y)\leq  8 N^{\mathcal{L}_-}_{{\mathcal{PB}}_3}(2\pi Y) .
\end{equation}
By Lemma \ref{lem10.1a} we obtain  $N^{\Lambda}_{\mathcal{B}_3}(Y)\leq 4 e^{6 \pi Y}$.

Since each pure $3$-braid is also an element of the braid group $\mathcal{B}_3$ the lower bound of Theorem \ref{thm10.1} provides also a lower bound for Theorem \ref{thm10.2}. Theorem \ref{thm10.2} is proved. \hfill $\Box$

\bigskip

\noindent {\bf Proof of the upper bound of Theorem \ref{thm10.0}.} 
For each conjugacy class $\hat{\mathbold{ b}}$ of elements of $\mathcal{B}_3\diagup\mathcal{Z}_3$ with $h(\hat{\mathbold{ b}})>0$ and each positive number $\varepsilon$
we will find a braid $b$ for which $\widehat{b\diagup \mathcal{Z}_3}=\hat{\mathbold{ b}}$ and
\begin{equation}\label{eq10.30}
\Lambda_{tr}(b)\leq \frac{2}{\pi}h(\hat{\mathbold{ b}})+\varepsilon\,.
\end{equation}
For this purpose we represent a conjugacy class $\hat b$ that represents $\hat{\mathbold{ b}}$ by a holomorphic map $g:A\to C_3(\mathbb{C})\diagup \mathcal{S}_3$ from an annulus $A$ of extremal length
\begin{equation}\label{eq10.29}
\lambda(A)<\frac{2}{\pi}h(\hat{\mathbold{b}})+\varepsilon
\end{equation}
to the symmetrized configuration
space. 
By the Holomorphic Transversality Theorem \cite{KZ} we may assume, after shrinking $A$ (keeping inequality \eqref{eq10.29}) and approximating $g$, that $g$ is holomorphic in a neighbourhood $A'$ of the closure $\bar A$ of $A$ and is transversal to  the smooth real hypersurface $\mathcal{H}$ (see equation \eqref{eq3-braids.4}). 
Since $h(\hat{\mathbold{b}})>0$, the representing braids of  $\hat{\mathbold{b}}$ are non-periodic and the conditions of Lemma \ref{lem10.30} 
are satisfied. Hence, there exists a braid $b$ for which $b\diagup\mathcal{Z}_3$ represents $\hat{\mathbold{b}}$ such that
$$
\Lambda(b_{tr})\leq \lambda(A)\,.
$$
By inequality \eqref{eq10.29} we obtain $\Lambda_{tr}(b)\leq \frac{2}{\pi} h(\hat{\mathbold{b}})+\varepsilon$. 
Using also Lemma \ref{lemm1}, we see that the number of conjugacy classes $\hat{\mathbold{b}}$ of $\mathcal{B}_3\diagup\mathcal{Z}_3$ of positive entropy not exceeding $Y$
does not exceed the number of elements  $\mathbold{b}\in \mathcal{B}_3\diagup\mathcal{Z}_3 $ with $\Lambda_{tr}(\mathbold{b})< \frac{2}{\pi} Y +\varepsilon$. In other words,
$N^{entr}_{\mathcal{B}_3}(Y)
 \leq N^{\Lambda}_{\mathcal{B}_3}(\frac{2}{\pi} Y +\varepsilon)$. Since for $\varepsilon$ we may take any a priory given positive number, Theorem \ref{thm10.2} implies
\begin{equation}\label{eq10.32}
N^{entr}_{\mathcal{B}_3}(Y)
 \leq N^{\Lambda}_{\mathcal{B}_3}(\frac{2}{\pi} Y) \leq 4 e^{12 Y}\,.
\end{equation}
We obtained the upper bound. \hfill $\Box$

\medskip

For obtaining the lower bound we need the following preparations.

\begin{lemm}\label{lem10.20}
Suppose $b_1$ and $b_2$ are elements of the free group $\mathbb{F}_n$ in $n$ generators, that are both represented by cyclically reduced words. Then $b_1 $ and $b_2$ are conjugate in $\mathbb{F}_n$ if and only if 
the word representing $b_2$ is obtained from the word representing $b_1$ by a cyclic permutation of terms.
\end{lemm}
\index{$\mathbb{F}_n$} \index{group ! free group}

\noindent {\bf Proof.} 
It is enough to prove the following statement. If under the conditions of the lemma
$b_2=w^{-1} b_1 w$ for an element $ w\in \mathbb{F}_n$,
$w\neq\mbox{Id}$, then $b_2=w'^{-1}b'_1 w'$ where $b'_1$ is represented by a cyclically reduced word that is obtained from the reduced word representing $b_1$ by a cyclic permutation of terms, and $w'$ is represented by a reduced word which has less terms than the word representing $w$.

This statement is proved as follows. Write $w=w_1 w'$ where $w_1\in \mathbb{F}_n$ is represented by the first term of the word representing $w$, and $b_1= a' B_1' a''$ where $a'$ and $a''$ are represented by the first and the last term, respectively, of the word
that represents $b_1$. Then $b_2=w'^{-1}w_1^{-1} a' B_1' a'' w_1 w'$. If both relations $w_1^{-1} a'\neq {\rm Id}$ and $a'' w_1\neq {\rm Id}$ were true, then the first and the last term of the  reduced word representing $b_2$ would be a power of the same generator of $\mathbb{F}_n$, which contradicts the fact that $b_2$ can be represented by a cyclically reduced word. If either $w_1^{-1} a'= {\rm Id}$ or $a'' w_1= {\rm Id}$, then the reduced words representing $b_1'\stackrel{def}=w_1^{-1} a' B_1' a'' w_1$ and $b_1$ are cyclic permutations of each other. Hence, the statement is true.
\hfill $\Box$

\begin{lemm}\label{lem10.21} The following equalities hold.
\begin{align}\label{eq10.16a}
\Delta_3 \sigma_1 = & \sigma_2 \Delta_3\,, \nonumber\\
\Delta_3 \sigma_2 = & \sigma_1 \Delta_3\,, \nonumber\\
\sigma_1^{-1}(\sigma_2^{-4}\Delta_3^4)\sigma_1= & \sigma_2^2 \sigma_1^2 \sigma_2^2 \sigma_1^2\,,\nonumber\\
\sigma_2^{-1}(\sigma_1^{-4}\Delta_3^4)\sigma_2= & \sigma_1^2 \sigma_2^2 \sigma_1^2 \sigma_2^2\,.
\end{align}
\end{lemm}

\noindent{\bf Proof.}
The third equality is obtained as follows
\begin{align}\label{eq10.16b}
& \sigma_1^{-1} (\sigma_2^{-4}\Delta_3^4)\sigma_1
=\sigma_1^{-1} \sigma_2^{-1}\Delta_3^2\sigma_2^{-2}\Delta_3^2\sigma_2^{-1}\sigma_1\nonumber\\
=&(\sigma_1^{-1}\sigma_2^{-1})(\sigma_2 \sigma_1 \sigma_2)(\sigma_2 \sigma_1 \sigma_2)\sigma_2^{-1}\sigma_2^{-1}(\sigma_2 \sigma_1 \sigma_2)(\sigma_2 \sigma_1 \sigma_2)\sigma_2^{-1}\sigma_1\nonumber\\
=&\sigma_2^2 \sigma_1^2 \sigma_2^2 \sigma_1^2\,.
\end{align}
The fourth equality is obtained by conjugating the third equality by $\Delta_3$. \hfill $\Box$

\medskip

Consider two elements $\mathbold{b}_1$ and $\mathbold{b}_2$ of
the pure braid group modulo center
$\mathcal{PB}_3\diagup \mathcal{Z}_3$.
In particular,  $\mathbold{b}_1$ and $\mathbold{b}_2$ are elements of the full braid group  modulo center
$\mathcal{B}_3\diagup \mathcal{Z}_3$.
We write  $a_1$ and $a_2$ for the generators of  $\mathcal{PB}_3\diagup \mathcal{Z}_3$.
\begin{lemm}\label{lem10.22} Suppose both elements $\mathbold{b}_1$ and $\mathbold{b}_2$ of $\,\mathcal{PB}_3\diagup \mathcal{Z}_3$  are of the form
\begin{equation}\label{eq10.15c}
a_1^{\pm 2}\, a_2 ^{\pm 2}\cdots a_1^{\pm 2}\, a_2 ^{\pm 2}\;\;\;\;\mbox{or}\;\;\;\; a_2^{\pm 2}\, a_1 ^{\pm 2}\cdots a_2^{\pm 2}\, a_1 ^{\pm 2}\,,
\end{equation}
with a positive number of terms,
and the reduced word representing $\mathbold{b}_1$ has at least four terms.
Then $\mathbold{b}_2$ cannot be conjugated to $\mathbold{b}_1$ by an element $\mathbold{\beta}$ of  $\mathcal{B}_3\diagup \mathcal{Z}_3$ that can be represented by a braid
$\beta=\sigma_j\beta_1 \Delta_3^{\ell}$
with some $j$ and $\ell$ and $\beta_1$ being a word in $\sigma_1^2$ and $\sigma_2^2$.
\end{lemm}
(See Lemma \ref{lemm1'} for writing a $3$-braid in a special form.)

\smallskip

\noindent{\bf Proof of Lemma \ref{lem10.22}.}
Indeed, suppose the contrary,
\begin{equation}\label{eq10.15a}
\mathbold{b}_1=\mathbold{\beta}^{-1} \mathbold{b}_2 \mathbold{\beta}
\end{equation}
with $\mathbold{\beta}$ represented by $\beta=\sigma_j\beta_1 \Delta_3^{\ell}$, 
where  $\ell=0,1,$ and $\beta_1\in \mathcal{PB}_3$ is a word in $\sigma_1^2$ and $\sigma_2^2$. For $j=1,2,$ we let $b_j$ be the representative of $\mathbold{b}_j$ which can be written as reduced word in $\sigma_1^2$ and $\sigma_2^2$.  (In other words, for the representing braids $b_j$ the first and third strand have linking number zero.)
By equation \eqref{eq10.16a} there is an integer number $n$ such that the braid $b_2'\stackrel{def}=\sigma_j^{-1} b_2 \sigma_j \Delta_3^{2n}$ is a product of a positive even number of factors which either have alternately the form $\sigma_1^{\pm 4}$ and $(\sigma_2^2 \sigma_1^2 \sigma_2^2 \sigma_1^2)^{\pm 1}$, or they have alternately the form $\sigma_2^{\pm 4}$ and $(\sigma_1^2 \sigma_2^2 \sigma_1^2 \sigma_2^2)^{\pm 1}$. The braid ${\beta_1} \Delta_3^{\ell} b_1 \Delta_3^{-\ell}\beta_1^{-1}$ can also be written as reduced word in $\sigma_1^2$ and $\sigma_2^2$. Hence, by equation \eqref{eq10.15a} the two braids $b_2'$ and ${\beta_1} \Delta_3^{\ell} b_1 \Delta_3^{-\ell}\beta_1^{-1}$ must be equal.

The element $b'_2\diagup {\mathcal Z}_3$ is the product of at least two factors equal to $a_1^{\pm 2}$ and $(a_1a_2a_1a_2)^{\pm 1}$ alternately, or it is the product of at least two factors equal to $a_2^{\pm 2}$ and $(a_2a_1a_2a_1)^{\pm 1}$ alternately.
Hence, the reduced word in $a_1$ and $a_2$ representing $b'_2$ contains at least $4$ terms, and each sequence of four consecutive terms of this word contains at least
two terms that appear with power $+1$ or $-1$.

Indeed, put $A=a_1a_2a_1a_2$. Replace in the product $A^{\ell_1}a_1^{\pm 2} A^{\ell_2}$ with $\ell_1=0,\pm 1,$ $\ell_2=0,\pm 1,$ each factor $A^{\ell_1}$ by the reduced word in $a_1$, $a_2$, representing it. We obtain a (possibly not reduced) word in $a_1$, $a_2$.

If $\ell_1=0$ or $\ell_1=1$, and $\ell_2=0$ or $-1$, the word is already reduced.

If $\ell_1=-1$, and $\ell_2=0$ or $-1$,
the reduced word representing the product, consists of the first three letters of the word representing $A^{-1}=A^{\ell_1}$, a non-trivial power of $a_1$ and all letters of the word representing  $A^{\ell_2}$.

If $\ell_1=0$ or $\ell_1=1$, and $\ell_2=1$,
the reduced word representing the product, consists of all letters of the word representing $A^{\ell_1}$, a non-trivial power of $a_1$ and the last three letters of the word representing  $A= A^{\ell_2}$.

Finally , if $\ell_1=-1$ and $\ell_2=1$ then the reduced word representing the product, consists of the first three letters of the word representing $A^{-1}=A^{\ell_1}$, a non-trivial power of $a_1$ and the last three letters of the word representing $A=A^{\ell_2}$.

Hence, the reduced word representing $b_2'\diagup {\mathcal Z}_3$ contains for each factor $A^{\ell_j}$ in the whole product at least the two middle letters of the word representing  $A^{\ell_j}$.
The case, when $b_2'\diagup {\mathcal Z}_3$ is the alternating product of  $a_2a_1a_2a_1$ and $a_2^2$ is obtained by replacing the role of the generators $a_1$ and $a_2$.

Since the element $(\Delta_3^{\ell} b_1 \Delta_3^{-\ell})\diagup {\mathcal Z}_3$  can be represented by a word of the form \eqref{eq10.15c} with at least four terms, 
we arrived at the following conjugation problem in the free group $\mathbb{F}_2$. We have an element $B_1\in \mathbb{F}_2$ of the from \eqref{eq10.15c} with at least $4$ terms, and an element $B_2\in \mathbb{F}_2$ written as reduced word with at least $4$ terms, such that each sequence of four consecutive terms of $B_2$
contains  at least two terms that appear with power $+1$ or $-1$. We have to prove that there is no element of $\mathbb{F}_2$ that conjugates $B_1$ to $B_2$.

To prove this statement we assume the contrary.
Suppose the reduced word $a_{j_1}^{k_1}\ldots a_{j_{\ell}}^{k_{\ell}}$ representing $B_2$ is not cyclically reduced. Identify each term $ a_{j_{\ell'}}^{k_{\ell'}}$ of the word with the element of $\mathbb{F}_2$ represented by it. 
The conjugate $(a_{j_1}^{k_1})^{-1}   B_2 a_{j_1}^{k_1}$ can be represented by a reduced word. If this reduced word is not cyclically reduced, then
the consecutive sequence of all its terms 
is also a consecutive sequence of terms of the reduced word representing $B_2$. Continue by induction in this way
with the $ a_{j_{\ell'}}^{k_{\ell'}}$ 
until we arrive at an element $B_2'\in \mathbb{F}_2$ that can be represented by a cyclically reduced word.
At each inductive step the consecutive sequence of all terms of the new reduced but not cyclically reduced word is a consecutive sequence of terms of the previously obtained word and, hence, also a consecutive sequence of terms of $B_2$.
Since $B_2$ is not the identity, the reduced word representing $B_2'$ is not the identity and is 
cyclically reduced.
The sequence of all its consecutive terms except the last one 
is also a sequence of consecutive terms of the reduced word representing $B_2$.

Since by our assumption $B_2'$ is conjugate to $B_1$ by an element of $\mathbb{F}_2$, and both,
$B_2'$ and $B_1$, are represented by cyclically reduced words, the representing words have the same number of terms  (see Lemma \ref{lem10.20} ). By the assumption for $B_1$ the number of terms is at least $4$. Hence, the words $B_2'$ and $B_2$ have at least $3$ consecutive terms in common. Therefore $B_2'$ contains a term that appears with power $\pm 1$ which contradicts Lemma \ref{lem10.20}. The lemma is proved. \hfill $\Box$

\medskip

\noindent {\bf Proof of the lower bound of Theorem \ref{thm10.0}.}
Consider the elements $\mathbold{b}\in\mathcal{PB}_3\diagup \mathcal{Z}_3 \subset \mathcal{B}_3\diagup \mathcal{Z}_3$ which can be represented by words of the form 
\begin{equation}\label{eq10.15}
w=a_1^{\pm 2}\, a_2 ^{\pm 2}\cdots a_1^{\pm 2}\, a_2 ^{\pm 2}
\end{equation}
in the generators $a_1\cong \sigma_1^2\diagup \mathcal{Z}_3$ and $a_2\cong \sigma_1^2\diagup \mathcal{Z}_3$ of $\mathcal{PB}_3\diagup \mathcal{Z}_3$ with at least $4$ terms.
We denote the number of syllables (in this case the number of the terms $a_i^{k_i}$) of the word \eqref{eq10.15} by $2j$.
Since each word of the form \eqref{eq10.15} is cyclically 
reduced, the Main Theorem  and Proposition \ref{propxxx}  imply that the entropy $h(\hat{\mathbold{b}})$ of the conjugacy class of the element $\mathbold{b}\in \mathcal{PB}_3\diagup \mathcal{Z}_3$ represented by $w$ satisfies the inequality
\begin{align}\label{eq10.16}
h(\hat{\mathbold{b}}) \leq \frac{\pi}{2}\cdot\frac{35}{6}\cdot 2\pi j
\end{align}
Take $j=j'_0 \stackrel{def}= [\frac{Y}{\frac{35}{6} \pi^2}]$. 
For this choice of $j'_0$ the inequality $h(\hat{\mathbold{b}}) \leq Y$ 
holds. Since we required that 
the number of syllables $2j'_0$ is at least $4$, we get the condition $Y \geq \frac{70}{3} \pi^2$. 
The number of different words of such kind equals $2^{2 j'_0}$.

We prove now that the number of different conjugacy classes of $\mathcal{B}_3\diagup\mathcal{Z}_3$ that can be represented by elements in $\mathcal{PB}_3$
corresponding to words \eqref{eq10.15} with $2j'_0$ syllables is not smaller than $\frac{2^{2j'_0}}{2j'_0}$. It is enough to prove the following claim. For each element
$\mathbold{b} \in \mathcal{PB}_3\diagup\mathcal{Z}_3$ of form \eqref{eq10.15} the number of
elements of $\mathcal{PB}_3\diagup\mathcal{Z}_3$ of form \eqref{eq10.15} that are conjugate to $\mathbold{b}$ by an element of $\mathcal{B}_3\diagup\mathcal{Z}_3$ does not exceed $2j'_0$.

Suppose two elements $\mathbold{b}_1$ and $\mathbold{b}_2$ of  $\mathcal{PB}_3\diagup \mathcal{Z}_3 \subset
\mathcal{B}_3\diagup \mathcal{Z}_3$ are represented by a word of form \eqref{eq10.15} and belong to the same conjugacy class, i.e. $\mathbold{b}_2=\mathbold{\beta} \mathbold{b}_1 \mathbold{\beta}^{-1}$ for an element $\mathbold {\beta} \in \mathcal{B}_3\diagup \mathcal{Z}_3 $. Then by the Lemmas
\ref{lemm1'} and  \ref{lem10.22} for the element $\mathbold{\beta}$ the equality
$\mathbold{\beta}=\mathbold{ \beta}_1 \mathbold{\Delta}_3^{\ell}$ holds 
with $\mathbold{\beta}_1$ being a word in 
$\sigma_1^2\diagup \mathcal{Z}_3 $ and $\sigma_2^2\diagup \mathcal{Z}_3 $,
$\mathbold{\Delta}_3=\Delta_3 \diagup \mathcal{Z}_3  $
and $\ell=0,1$.

Put $\mathbold{b}'_1= {\mathbold{\Delta}_3}^{\ell} \mathbold{b}_1{\mathbold{\Delta}_3}^{-\ell}$. 
Then $\mathbold{b}'_1$ is represented by a word of form \eqref{eq10.15c}. 
The element $\mathbold{b}_2$ is conjugate by an element of $\mathcal{PB}_3\diagup \mathcal{Z}_3$ either to  $\mathbold{b}_1$ or to  $\mathbold{b}'_1$. 
By Lemma \ref{lem10.20} 
the reduced word representing the element $\mathbold{b}_2$ is obtained form the reduced word representing either    $\mathbold{b}_1$ or to  $\mathbold{b}'_1$ 
by a cyclic permutation of terms. The number of cyclic permutations of $2j'_0$ letters, equals $2j'_0$. 
Hence the number of different elements of $\mathcal{PB}_3\diagup \mathcal{Z}_3$ that can be represented by words of form  \eqref{eq10.15}
and are obtained from $\mathbold{b}_1$ by conjugation with an element of $\mathcal{B}_3\diagup \mathcal{Z}_3$ does not exceed twice the number of cyclic permutations of $2j'_0$ letters, i.e. it does not exceed  $2\cdot2j'_0$. (Notice, that e.g. by cyclically permuting the terms of a word with symmetries we may sometimes arrive at the same word.)

We proved that the number of different conjugacy classes of $\mathcal{PB}_3\diagup \mathcal{Z}_3$ represented by words of form  \eqref{eq10.15}
with $2j'_0$ terms is not smaller than $\frac{2^{2j'_0}}{2\cdot 2j'_0} $.

We obtain
\begin{equation}\label{eq10.17}
N^{entr}_{\mathcal{B}_3}(Y) \geq \frac{2^{2j'_0-1}}{2\cdot 2j'_0}\,.
\end{equation}
Notice that $\frac{2^j}{j}\geq 2 $ for natural $j$. Indeed, the function $x \to \frac{2^x}{x}$ increases for $x \geq 2$ (since $ (\frac{2^x}{x})' = -\frac{2^x}{x^2}+\frac{2^x \log 2}{x}$ , and $\log 2 >0.6$) and for $j=1$ and $2$ the expression equals $2$).
Hence, $\frac{2^{2j}}{2j}\geq 2^{j}$. Hence, for $Y \geq \frac{70}{3}
\pi^2$
\begin{equation}\label{eq10.18}
N^{entr}_{\mathcal{B}_3}(Y) \geq  2^{j'_0-1}\geq \frac{1}{4}
2^{\frac{Y}{\frac{35}{6}\pi^2}}
\geq \frac{1}{4} e^{\frac{\log 2}{6\pi^2}\,Y}\,.
\end{equation}
The lower bound of the theorem is proved. \hfill $\Box$

\chapter{Riemann surfaces of second kind and finiteness theorems}
\label{chapterfin}
\chaptermark{Finiteness Theorems.}

\setcounter{equation}{0}

This chapter is our deepest application of the concept of conformal module and extremal length to
Gromov's Oka Principle. We address Problem \ref{problGrom2} that proposes to obtain information on the set of homotopy classes of continuous mappings from a connected open Riemann surface $X$ to a complex manifold $Y$. We consider the case when the target $Y$ is the twice punctured complex plane and
focus on the irreducible homotopy classes, since the reducible classes are well described and have the Gromov Oka property.
The information we are interested in is the number of irreducible homotopy classes of mappings $X\to Y$ that contain a holomorphic mapping.
More specifically, for any connected finite open Riemann surface $X$ (maybe, of second kind) we
give an effective upper bound for the number of irreducible holomorphic mappings up to homotopy from $X$ to the twice punctured complex plane, and an effective upper bound for the number of irreducible holomorphic torus bundles up to isotopy on such a Riemann surface.
The bound depends on a conformal invariant of the Riemann surface that is expressed in terms of the conformal module of a finite number of annuli in $X$.
On the other hand, in the proof we use instead of the conformal modules of conjugacy classes of elements of the fundamental group of  the twice punctured complex plane the more powerful extremal length with totally real boundary values of elements of the fundamental group themselves.

The estimates are in some sense asymptotically sharp:
If $X_{\sigma}$ is the $\sigma$-neighbourhood of a skeleton of a connected finite open hyperbolic Riemann surface with a Kähler metric,
then the number of irreducible holomorphic mappings up to homotopy from $X_{\sigma}$ to the twice punctured complex plane grows exponentially in $\frac{1}{\sigma}$.

These statements are analogs for Riemann surfaces of second kind of the Geometric Shafarevich Conjecture and the Theorem of de Franchis, that state the finiteness of the number of certain holomorphic objects on closed or punctured Riemann surfaces.

\section[Finiteness theorems.]{Riemann surfaces of first and of second kind and finiteness theorems. }\label{sec:fin.1}

It seems that the oldest finiteness theorem for mappings between complex manifolds is the following theorem, which was published by de Franchis \cite{Fr} in 1913. \index{de Franchis ! Theorem of}

\medskip

\noindent {\bf Theorem A (de Franchis).} {\it For closed connected Riemann surfaces $X$ and $Y$ with $Y$ of genus at least $2$ there are at most finitely many non-constant holomorphic mappings from $X$ to $Y$.}

\medskip

There is a more comprehensive Theorem in this spirit.

\medskip

\noindent {\bf Theorem B (de Franchis-Severi).} {\it For a closed connected Riemann surface $X$ there are
(up to isomorphism) only finitely many non-constant holomorphic mappings $f:X\to Y$ where $Y$ ranges over all closed Riemann surfaces of genus at least $2$.}
\index{de Franchis-Severi ! Theorem of}
\medskip

A finiteness theorem which became more famous because of its relation to number theory was conjectured by Shafarevich  \cite{Sh}.

\medskip

\noindent {\bf Theorem C (Geometric Shafarevich Conjecture.)} {\it For a given compact or punctured Riemann surface $X$ and given non-negative numbers $\textsf{g}$ and $\textsf{m}$ such that $2\textsf{g}-2+\textsf{m} >0$ there are only finitely many locally holomorphically non-trivial holomorphic fiber bundles over $X$ with fiber of type $(\textsf{g},\textsf{m})$.}
\index{Geometric Shafarevich Conjecture}
\smallskip

Theorem C was conjectured by Shafarevich \cite{Sh} in the case of compact base and fibers of type $(\textsf{g},0)$. It was
proved by Parshin \cite{Pa} in the case of compact base and fibers of type $(\textsf{g},0),\,\textsf{g}\geq 2,$ and by Arakelov \cite{Ara} for punctured Riemann surfaces as base and fibers of type $(\textsf{g},0)$. Imayoshi and Shiga \cite{IS} gave a proof of the quoted version using Teichm\"uller theory.
\index{Parshin} \index{Arakelov} \index{Imayoshi and Shiga} \index{Mordell} \index{McMullen} \index{Mazur}

The statement of Theorem C ''almost'' contains the so called Finiteness Theorem of Sections which is also called
the Geometric Mordell Conjecture (see \cite{Mc}),
giving an important conceptional connection between geometry and number theory.
For more details we refer to the surveys by C.McMullen \cite{Mc} and B.Mazur \cite{Maz}.

\medskip
Theorem A is a consequence of Theorem C, and Theorem A has analogs for the source $X$
and the target $Y$ being punctured Riemann surfaces. Indeed,
we may associate to any holomorphic mapping $f:X\to Y$ of Theorem A the bundle over $X$ with fiber over $x\in X$ equal to $Y$ with distinguished point $\{f(x)\}$. Thus, the fibers are of type $(\textsf{g},1)$.
A holomorphic self-isomorphism of a locally holomorphically non-trivial $(\textsf{g},1)$-bundle may lead to a new holomorphic mapping from $X$ to $Y$, but there are only finitely many different holomorphic self-isomorphisms.

We will consider here analogs of Theorems A and C for the case when the base $X$ is a Riemann surface of second kind. Notice that finite hyperbolic Riemann surfaces of second kind are interesting from the point of view of spectral theory of the Laplace operator with respect to the hyperbolic metric (see also \cite{Bo}). There are interesting relations to scattering theory and
(the Hausdoff dimension of) the limit set of the Fuchsian group defining $X$.

The Theorems A and C do not hold literally if the base $X$ is of second kind.
If the base is a Riemann surface of second kind the problem to be considered is the finiteness of the number of irreducible isotopy classes (homotopy classes, respectively) containing holomorphic objects. In case the base is a punctured Riemann surface this is equivalent to the finiteness of the number of holomorphic objects. For more detail see Sections \ref{sec:fin2} and \ref{sec:fin3}.

We will present here finiteness theorems with effective estimates for the case when the base is a Riemann surface of second kind.
The estimates depend on a conformal invariant of the Riemann surface.
We will now prepare the definition of this invariant.

Let $X$ be a connected open Riemann surface of genus $g\geq 0$ with $m+1$ holes, $m\geq 0$, equipped with a base point $q_0$. Recall that
the fundamental group $\pi_1(X,q_0)$ of $X$ is a free group in $2g+m$ generators.
We describe now the conformal invariant of the Riemann surface $X$ that will appear in the mentioned estimate.
We  fix a standard system of generators $\mathcal{E}$ of $\pi_1(X,q_0)$ that is associated to a standard bouquet of circles for $X$
(see Section \ref{sec:2.0}). The circles are denoted by $\alpha_j$, $\beta_j$ and $\gamma_j$ (see Figure \ref{fig8.3}).
The generators $\mathcal{E}\subset  \pi_1(X,q_0)$
are labeled as follows.
The elements $e_{2j-1,0}\in \pi_1(X,q_0),$  $j=1,\ldots ,g,$  are represented by the curves $\alpha_j$, the elements $e_{2j,0}\in \pi_1(X,q_0) , \, j=1,\ldots ,g,$  are represented by the curves $\beta_j$, and the elements $e_{2g+k,0}\in \pi_1(X,q_0), \, k=1,\ldots ,m,$  of $\pi_1(X,q_0)$ are represented by the curves $\gamma_k$.
\index{$\mathcal{E}$}

Let $\tilde X$ be the universal covering of $X$.
For each element $e_0 \in \pi_1(X,q_0)$ we consider the subgroup $\langle e_0\rangle$ of $\pi_1(X,q_0)$ generated by $e_0$.
Fix a point $\tilde{q}_0$ in $\tilde X$ that projects to the base point $q_0\in X$,
let  $({\rm Is}^{\tilde{q}_0})^{-1}(e_0)$
be the covering transformation corresponding to $e_0$ (see Section \ref{sec:2.0}), and $\langle ({\rm Is}^{\tilde{q}_0})^{-1}(e_0)\rangle$
the group generated by  $({\rm Is}^{\tilde{q}_0})^{-1}(e_0)$.
\begin{defn}\label{defnfin0}
Denote by $\mathcal{E}_j,\, j=2,\ldots,10,$ the set of primitive elements of $\pi_1(X,q_0)$ which can be written as product of at most $j$ factors with each factor being either an element of $\mathcal{E}$ or an element of $\mathcal{E}^{-1}$, the set of inverses of elements of $\mathcal{E}$. Define $\lambda_j=\lambda_j(X)$ as the maximum over $e_0 \in \mathcal{E}_j$ of the extremal length of the annulus $\tilde{X} \diagup \langle ({\rm Is}^{\tilde{q}_0})^{-1}(e_0)\rangle$.
\end{defn}
The quantity
$\lambda_7(X)$ (for mappings to the twice punctured complex plane), or $\lambda_{10}(X)$ (for $(1,1)$-bundles) is the mentioned conformal invariant.
\index{$\lambda_j(X)$}

\medskip

\index{mapping to a punctured sphere ! reducible}
In the following theorem we put $E=\{-1,1,\infty\}$. We will often refer to $\mathbb{P}^1\setminus \{-1,1,\infty\}$ as the thrice punctured Riemann sphere or the twice punctured complex plane $\mathbb{C}\setminus \{-1,1\}$. Recall that a continuous
mapping from a Riemann surface to the twice punctured complex plane is called  reducible, iff it is homotopic to a mapping with image in a once punctured disc contained in $\mathbb{P}^1\setminus E$. (The puncture may be equal to $\infty$.) See Definition \ref{defnGromov2}. For any connected finite open Riemann surface $X$ with only thick ends and non-trivial fundamental group
there are countably many non-homotopic reducible holomorphic mappings from $X$ to the twice punctured complex plane
(see Lemma \ref{lemGrom3}).
On the other hand the following theorem holds.
\begin{thm}\label{thmfin1}
For each open connected Riemann surface $X$ of genus $g\geq0$ with $m+1\geq 1$ holes there are up to homotopy at most
$3(\frac{3}{2}e^{36\pi \lambda_7(X)})^{2g+m}$
irreducible holomorphic mappings from $X$ into
$Y\stackrel{def}=\mathbb{P}^1\setminus \{-1,1,\infty\}$.
\end{thm}
Notice that the Riemann surface $X$ is allowed to be of second kind.
If $X$ is a torus with a hole, $\lambda_7(X)$ may be replaced by $\lambda_3(X)$.
If $X$ is a planar domain, $\lambda_7(X)$ may be replaced by $\lambda_4(X)$
\smallskip

Recall that a holomorphic $(1,0)$-bundle is also called a holomorphic torus bundle. A holomorphic torus bundle equipped with a holomorphic section is also considered as a holomorphic $(1,1)$-bundle.
Moreover, a smooth
$(1,0)$-bundle over a finite connected open smooth surface
admits a smooth section. A holomorphic torus bundle over a finite connected open Riemann surface
is (smoothly) isotopic to a holomorphic torus bundle that admits a holomorphic section.
The following theorem for torus bundles holds.
\begin{thm}\label{thmfin2}
Let $X$ be an open connected Riemann surface of genus $g\geq 0$ with $m+1\geq 1$ holes.
Up to isotopy
there are no more than
$\big(2 \cdot 15^6\cdot\exp(36 \pi \lambda_{10}(X))\big)^{2g+m}$
irreducible holomorphic $(1,1)$-bundles over $X$.
\end{thm}

For the definition of irreducible $(\sf g,\sf m)$-bundles
see Section \ref{sec:9.2}.
Since on each finite open Riemann surface with only thick ends and non-trivial fundamental group there are
countably many non-homotopic reducible holomorphic mappings with target being the twice punctured complex plane, there
are also countably many non-isotopic reducible holomorphic $(1,1)$-bundles over each such Riemann
surface. This follows from
Proposition \ref{propEl.2}.

Notice that Caporaso proved the existence of a uniform bound for the number of locally holomorphically non-trivial holomorphic fiber bundles
over closed Riemann surfaces of genus $g$ with $m\geq 0$ punctures
with fibers being closed Riemann surfaces of genus  $ \textsf{g}\geq 2$. The bound depends only on the numbers $g$, $\textsf{g}$ and $m$.  Heier gave effective uniform estimates, but the constants are huge and depend in a complicated way on the parameters.
\index{Caporaso} \index{Heier}

Theorems \ref{thmfin1} and \ref{thmfin2} imply effective estimates for the number of locally holomorphically non-trivial holomorphic $(1,1)$-bundles over punctured Riemann surfaces, however, the constants depend also on the conformal type of the base. More precisely, the following corollaries hold.
\begin{cor}\label{corfin1a}
There are no more than
$3(\frac{3}{2}e^{36 \pi \lambda_7(X)})^{2g+m}$
non-constant holomorphic mappings from a Riemann surface $X$ of type $(g,m+1)$ to $\mathbb{P}^1\setminus \{-1,1,\infty\}$.
\end{cor}

\begin{cor}\label{corfin1b}
There are no more than
$\big(2 \cdot 15^6\cdot\exp(36 \pi \lambda_{10}(X))\big)^{2g+m}$
locally holomorphically non-trivial holomorphic $(1,1)$-bundles over a Riemann surface $X$ of type $(g,m+1)$.
\end{cor}

The following examples demonstrate the different nature of the problem in the two cases, the case when the base is a punctured Riemann surface, and when it is a Riemann surface of second kind.

\medskip

\noindent {\bf Example 1.}
There are no non-constant holomorphic mappings from a torus with one puncture to the twice punctured complex plane. Indeed, by Picard's Theorem each such mapping extends to a meromorphic mapping from the closed torus to the Riemann sphere. This implies that the preimage of the set $\{-1,1,\infty\}$ under the extended mapping must contain at least three points, which is impossible.

The situation changes if $X$ is a torus with a large enough hole.
Let $\alpha\geq 1$ and $\sigma \in (0,1)$. Consider the torus with a hole $T^{\alpha,\sigma}$ that is obtained from $\mathbb{\mathbb{C}}\diagup (\mathbb{Z} + i \alpha \mathbb{Z}),$ (with $\alpha\geq 1$ being a real number) by removing a closed geometric rectangle of vertical side length $\alpha-\sigma$ and horizontal side length $1-\sigma$ (i.e. we remove a closed subset that lifts to such a closed rectangle in $\mathbb{C}$). A fundamental domain for
this Riemann surface is ''the golden cross on the Swedish flag'' turned by $\frac{\pi}{2}$ with width of the laths being $\sigma$ and length of the laths being $1$ and $\alpha$.

\begin{prop}\label{propfin1a}
Up to homotopy there are at most $7e^{2^5 \cdot 3^2 \pi \frac{2\alpha +1}{\sigma}}$
irreducible holomorphic mappings from  $T^{\alpha,\sigma}$
to the twice punctured complex plane.

On the other hand, there are positive constants $c$, $C$, and $\sigma_0$ such that for any positive number $\sigma<\sigma_0$ and any $\alpha \geq 1$ there are at least $ce^{C\frac{\alpha}{\sigma}}$ non-homotopic holomorphic mappings from $T^{\alpha,\sigma}$
to the twice punctured complex plane.

\end{prop}

\noindent {\bf Example 2.}
There are only finitely many holomorphic maps from a thrice punctured Riemann sphere to another thrice punctured Riemann sphere. Indeed, after normalizing both, the source and the target space,
by a M\"obius transformation we may assume that both
are equal to $\mathbb{C} \setminus \{-1,1\}$. Each holomorphic map from
$\mathbb{C} \setminus \{-1,1\}$ to itself extends to a meromorphic map from the Riemann sphere to itself, which maps the set $\{-1,1,\infty\}$ to itself and maps no other point to this set. By the Riemann-Hurwitz formula
the meromorphic map takes each value exactly once. Indeed, suppose it takes each value $l$ times for a natural number $l$. Then each point in $\{-1,1,\infty\}$ has ramification index $l$.
Apply the Riemann Hurwitz formula for the branched covering $ X=\mathbb{P}^1\to Y=\mathbb{P}^1$ of multiplicity $l$
$$
\chi(X)= l \cdot \chi(Y) -\sum_{x\in Y} (e_x-1).
$$
Here $e_x$ is the ramification index at the point $x$. For the Euler characteristic we have $\chi(\mathbb{P}^1)=2$, and $\sum_{x\in Y} (e_x-1)\geq \sum_{x=-1,1,\infty} (e_x-1)=3\,(l-1)$. We obtain $2\leq 2\,l-3\,(l-1)$ which is possible only if $l=1$.
We saw that each non-constant holomorphic mapping from $\mathbb{C}\setminus\{-1,1\}$ to itself extends to a conformal mapping from
the Riemann sphere to the Riemann sphere that maps the set $ \{-1,1,\infty\}$ to itself. There are only finitely may such maps, each a M\"obius transformation permuting the three points.
\index{Riemann-Hurwitz formula}

For Riemann surfaces of second kind
the situation changes, as demonstrated in the following proposition. The proposition does not only concern the
case when the Riemann surface equals $\mathbb{P}^1$ with three holes.
We consider an open Riemann surface $X$ of genus $g$ with $m\geq 1$ holes.

\begin{prop}\label{propfin1b} Let $X$ be a connected finite open hyperbolic Riemann surface of genus $g$ with $m+1$ holes, that is equipped with a K\"ahler metric. Suppose $S$ is a standard bouquet of piecewise smooth circles for $X$
with base point $q_0$.
We assume that $q_0$ is the only non-smooth point of the circles, and
all tangent rays to circles in $S$ at $q_0$ divide a disc in the tangent space into equal sectors.
Let $S_{\sigma}$ be the
$\sigma$-neighbourhood of $S$ (in the K\"ahler metric on $X$).

Then there exists a constant $\sigma_0>0$, and positive constants $C'$, $C''$,  $c'$, $c''$,
depending only on $X$, $S$ and the K\"ahler metric, such that for each positive $\sigma<\sigma_0$ the number $N_{S_{\sigma}}^{\mathbb{C}\setminus \{-1,1\}}$ of non-homotopic irreducible holomorphic mappings from $S_{\sigma}$ to the twice punctured complex plane satisfies the
inequalities
\begin{align}\label{eqfin4}
c'e^{\frac{c''}{\sigma}} \leq N_{S_{\sigma}}^{\mathbb{C}\setminus \{-1,1\}}\leq C'e^{\frac{C''}{\sigma}}\,.
\end{align}
\end{prop}
\index{Kähler metric}

The following proposition demonstrates the role of the conformal module (extremal length, respectively) with suitable horizontal boundary values for the homotopy problem of individual continuous mappings to holomorphic ones. The proposition concerns mappings
from small neighbourhoods of standard bouquets of circles for open Riemann surfaces and uses the estimates of the extremal lengths in terms of the $\mathcal{L}$-invariant of the words representing the monodromies along the standard generators of the fundamental group.

\begin{prop}\label{propfin5} Let $X$,  $q_0$, $S$ and $S_{\sigma}$ be as in Proposition $\ref{propfin1b}$. For each $j=1,\ldots2g+m$ we let $\gamma_j$ be the circle of the standard bouquet $S$ for $X$ that represents the standard generator $e_j$ of $\pi_1(X,q_0)$.
Denote by $\ell_j$ the length of $\gamma_j$ in the Kähler metric.
There exist positive constants $\sigma_0$ and $C_1$, depending only on $X$, $S$, $q_0$ and the Kähler metric, such that for each positive $\sigma<\sigma_0$
any continuous mapping  $f:S_{\sigma}\to \mathbb{C}\setminus \{-1,1\}$ with $f(q_0)=0$  is homotopic to a holomorphic mapping if the inequalities
\begin{align}\label{eqfin30}
\mathcal{L}(f_*(e_j))\leq C_1 \frac{\ell_j}{\sigma}
\end{align}
hold for all $j=1,\ldots2g+m$ for the induced mapping $f_*$ on fundamental groups with base points.

Vive versa, there exists a constant $C_2$ depending only on $X$, $S$, $q_0$ and the Kähler metric, such that for each holomorphic mapping $f:S_{\sigma}\to \mathbb{C}\setminus \{-1,1\}$ with $f(q_0)=0$ and each $j=1,\ldots2g+m$ the inequality
\begin{align}\label{eqfin31}
\mathcal{L}(f_*(e_j))\leq C_2 \frac{\ell_j}{\sigma}
\end{align}
holds for the induced mapping $f_*$ on fundamental groups with base points.
\end{prop}

Theorem \ref{thmfin1} and Propositions \ref{propfin1a}, \ref{propfin1b} and \ref{propfin5} contribute to Problem \ref{problGrom2} on Gromov's Oka Principle. Theorem \ref{thmfin1} gives an upper bound for the number of homotopy classes of holomorphic mappings from connected finite open Riemann surfaces  to the twice punctured complex plane. Theorem
\ref{thmfin2} gives an upper bound for the number of the isotopy classes of $(1,1)$-bundles
on connected finite open Riemann surfaces that contain holomorphic bundles.
Propositions \ref{propfin1a} and \ref{propfin1b} consider families of Riemann surfaces $Y_{\sigma},\,\sigma\in (0,\sigma_0) $, obtained by continuously changing the conformal structure of a fixed Riemann surface, and determine the growth rate for $\sigma\to 0$ of the number of irreducible holomorphic mappings $X_\sigma\to \mathbb{C}\setminus\{-1,1\}$ up to homotopy. In Proposition
\ref{propfin1a} the family of Riemann surfaces depends also on a second parameter $\alpha$, and the growth rate is determined in $\alpha$ and $\sigma$. The proof of the lower bound in both propositions uses solutions of a $\overline{\partial}$-problem. The solution in the case of Proposition \ref{propfin1a} uses a simple explicit formula.
Proposition \ref{propfin5} concerns the existence of homotopies of individual continuous mappings to holomorphic ones.

For the proof of the upper bound in the theorems and propositions we have to understand
what prevents a continuous map from a finite open Riemann surface  $X$ to $\mathbb{C}\setminus \{-1,1\}$  to be homotopic to a holomorphic map.
Proposition \ref{propfin2} below says that an irreducible map $X\to\mathbb{C}\setminus \{-1,1\}$ can only be homotopic to a holomorphic map, if the ''complexity'' of the monodromies of the map are compatible with conformal invariants of the source manifold.

\section[preliminaries on mappings, coverings, and extremal length] {Some further preliminaries on mappings, coverings, and extremal length}
\label{sec:fin1a}
In this section we will prepare the proofs of the Theorems.

\noindent {\bf Regular zero sets.}
We will call a subset of a smooth manifold $\mathcal{X}$ a simple relatively closed curve
if it is a connected component of a regular level set of a smooth real-valued function on $\mathcal{X}$.

Let $\mathcal{X}$ be a connected finite open Riemann surface. Suppose the zero set $L$ of a non-constant smooth real valued function on $\mathcal{X}$ is regular.
Each component of $L$ is either a simple closed curve or
it can be parameterized by a continuous mapping $\ell:(-\infty,\infty)\to \mathcal{X}$. We call a component of the latter kind a simple relatively closed arc in $\mathcal{X}$.

A relatively closed curve $\gamma$ in a connected finite open Riemann surface $\mathcal{X}$
is said to be contractible to a hole of $\mathcal{X}$, if the following condition holds. Consider $\mathcal{X}$ as domain $\mathcal{X}^c\setminus \cup\mathcal{C}_j$ on a closed Riemann surface $\mathcal{X}^c$. Here the $\mathcal{C}_j$ are the holes, each is either a closed topological disc  with smooth boundary or a point. The condition on $\gamma$ is the following. For each pair  $U_1$, $U_2$ of open subsets of $ \mathcal{X}^c$, $\cup \mathcal{C}_j\subset U_1\Subset U_2$, there exists a homotopy of $\gamma$ that fixes $\gamma\cap U_1$ and moves $\gamma$ into $U_2$.
Taking for $U_2$ small enough neighbourhoods of  $\cup \mathcal{C}_j$ we see that the homotopy moves $\gamma$ into an annulus adjacent to one of the holes.

For each relatively compact domain $\mathcal{X}'\Subset \mathcal{X}$ in $\mathcal{X}$ there is a finite cover of $L\cap \overline{\mathcal{X}'}$ by open subsets $U_k$ of $\mathcal{X}$ such that each $L\cap U_k$ is connected. Each set $L\cap U_k$ is contained in a component of $L$. Hence, only finitely many connected components of $L$ intersect $\overline{\mathcal{X}'}$.
Let $L_0$ be a connected component of $L$ which is a simple relatively closed arc parameterized by $\ell_0:\mathbb{R}\to \mathcal{X}$. Since each set $L_0\cap U_k$ is connected it is the image of an interval under $\ell_0$. Take real numbers $t_0^-$ and $t_0^+$ such that all these intervals are contained in $(t_0^-,t_0^+)$. Then the images $\ell\big((-\infty,t_0^-)\big)$ and
$\ell\big((t_0^+ ,+\infty)\big)$ are contained in $\mathcal{X}\setminus \overline{\mathcal{X}'}$, maybe, in different components. Such parameters $t_0^-$ and $t_0^+$ can be found for each relatively compact deformation retract $\mathcal{X}'$ of $\mathcal{X}$. Hence for each relatively closed arc $L_0\subset L$ the set of limit points $L_0^+$ of $\ell_0(t)$ for $t\to \infty$ is contained in a boundary component of $\mathcal{X}$. Also, the set of limit points $L_0^-$ of $\ell_0(t)$ for $t\to -\infty$ is contained in a boundary component of $\mathcal{X}$. The boundary components may be equal or different.

Moreover, if
$\mathcal{X}'\Subset \mathcal{X}$ is a relatively compact domain in $\mathcal{X}$ which is a deformation retract of $\mathcal{X}$, and a connected component $L_0$ of $L$ does not intersect $\overline{\mathcal{X}'}$ then $L_0$ is contractible to a hole of $\mathcal{X}$. Indeed, $\mathcal{X}\setminus \overline{\mathcal{X}'}$ is the union of disjoint annuli, each of which is adjacent to a boundary component of $\mathcal{X}$, and the connected set $L_0$ must be
contained in a single annulus.

Further, denote by $L'$ the union of all connected components of $L$ that are simple relatively closed arcs. Consider those components $L_j$ of $L'$ that intersect $\mathcal{X}'$. There are finitely many such $L_j$. Parameterize each $L_j$ by a mapping $\ell_j:\mathbb{R}\to \mathcal{X}$. For each $j$ we let $[t_j^-,t_j^+]$ be a compact interval for which
\begin{equation}\label{eqfin2d}
\ell_j(\mathbb{R}\setminus [t_j^-,t_j^+]) \subset \mathcal{X}\setminus \overline{\mathcal{X}'}\,.
\end{equation}
Let $\mathcal{X}''$, $\mathcal{X}'\Subset \mathcal{X}''\Subset \mathcal{X}$, be a domain which is a deformation retract of $\mathcal{X}$ such that  $\ell_j([t_j^-,t_j^+])\subset \mathcal{X}''$ for each $j$. Then all connected components of $L'\cap \mathcal{X}''$, that do not contain a set $\ell_j([t_j^-,t_j^+])$, are contractible to a hole of $\mathcal{X}''$. Indeed, each such component is contained in the union of annuli $\mathcal{X}''\setminus \overline{\mathcal{X}'}$.

\smallskip

\noindent {\bf Subgroups of covering transformations.}
Let again $q_0$ be the base point of a Riemann surface $X$, and let $\tilde{q}_0$ be the base point in the universal covering $\tilde X$ with ${\sf P}(\tilde{q}_0)=q_0$ for the covering map ${\sf P}: \tilde{ X}\to X$.
Let $N$ be a subgroup of $\pi_1(X,q_0)$, and $({\rm Is}^{\tilde{q}_0})^{-1}(N)$ the respective subgroup of covering transformations.. Denote by $X(N)$ the quotient $\tilde X \diagup ({\rm Is}^{\tilde{q}_0})^{-1}(N)$. We obtain a covering
$\omega^N_{{\rm Id}} :\tilde X \to X(N)$ with group of covering transformations isomorphic to $N$.
The fundamental group of $X(N)$ with base point $(q_0)_N\stackrel{def}= \omega_{\rm Id}^N(\tilde{q}_0)$ can be identified with $N$. \index{$X(N)$}

If $N_1$ and $N_2$ are subgroups of $\pi_1(X,q_0)$ and $N_1$ is a subgroup of $N_2$ (we write $N_1 \leq N_2$), then there is a covering map $\omega^{N_2}_{N_1} :\tilde X \diagup  ({\rm Is}^{\tilde{q}_0})^{-1}(N_1) \to \tilde X \diagup  ({\rm Is}^{\tilde{q}_0})^{-1}(N_2)$, such that $\omega^{N_2}_{N_1} \circ \omega^{N_1}_{\rm Id}=\omega^{N_2}_{\rm Id}$. Moreover,
the diagram Figure \ref{fig3} below is commutative. \index{$\omega^{N_2}_{N_1}$}

\begin{figure}[h]
\begin{center}
\includegraphics[width=6cm]{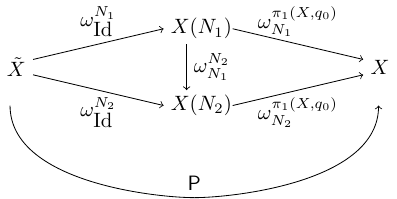}
\end{center}
\caption{A commutative diagram related to subgroups of the group of covering transformations}\label{fig3}
\end{figure}

\medskip

Indeed, take any point $x_1 \in \tilde X \diagup  ({\rm Is}^{\tilde{q}_0})^{-1}(N_1)$
and a preimage $\tilde x$ of $x_1$ under $\omega^{N_1}_ {\rm Id} $.
There exists
a neighbourhood $V(\tilde x)$ of $\tilde x$ in $\tilde X$ such that $V(\tilde x)\cap \sigma(V(\tilde x))=\emptyset$ for all covering transformations $\sigma\in {\rm Deck}(\tilde{X}, X)$.
Then for $j=1,2$ the mapping $\omega^{N_j,\tilde x} _{\rm Id} \stackrel{def}{=} \omega^{N_j}_{\rm Id}  \mid V(\tilde x)$ is a homeomorphism from $V(\tilde x)$ onto its image denoted by $V_j$. Put $x_2=\omega^{N_2,\tilde x} _{\rm Id}(\tilde{x})$. The set $V_j\subset \tilde{X}\diagup ({\rm Is}^{\tilde{q}_0})^{-1}(N_j)  $ is a neighbourhood of $x_j$ for $j=1,2$.

For each preimage $\tilde{x}' \in (\omega^{N_1}_{\rm Id})^{-1}(x_1)$
there is a covering transformation $\varphi_{\tilde x,\tilde{x}'}$ in $({\rm Is}^{\tilde{q}_0})^{-1}(N_1)$ which maps a neighbourhood $V(\tilde {x}')$ of $\tilde{x}'$ conformally onto the neighbourhood $V(\tilde {x})$ of $\tilde{x}$ so that
on $V(\tilde{x}')$ the equality
$\omega^{N_1,\tilde{x}'} _{\rm Id}  =  \omega^{N_1,\tilde x}_{\rm Id} \circ \varphi _{\tilde x,\tilde{x}'}$ holds. Choose $\tilde{x} \in (\omega^{N_1} _{\rm Id})^{-1}(x_1)$ and define
\begin{equation}\label{eqfin1a'}
\omega^{N_2}_{N_1}(y) = \omega^{N_2, \tilde x}_{N_1}(y) \stackrel{def}= \omega^{N_2, \tilde x} _{\rm Id} ((\omega^{N_1,\tilde x} _{\rm Id} )^{-1}(y)) {\mbox{ \;for each\;}} y \in V_1\,.
\end{equation}
We get a correctly defined mapping from $V_1$ onto $V_2$. Indeed, since $N_1$ is a subgroup of $N_2$, for another point $\tilde{x}' \in (\omega^{N_1}_ {\rm Id})^{-1}(x_1)$ the covering transformation $\varphi_{\tilde x,\tilde{x}'}$ is contained in $({\rm Is}^{\tilde{q}_0})^{-1}(N_2)$, and we get  the equality  $\omega^{N_2, \tilde{ x}'} _{\rm Id}  =  \omega^{N_2, \tilde x} _{\rm Id} \circ \varphi _{\tilde x,\tilde{x}'}$. Hence, for $y \in V_1(x_1)$
\begin{align}\label{eqfin1b'}
\omega^{N_2, \tilde{ x}'} _{\rm Id} \circ(\omega^{N_1,\tilde{x}'} _{\rm Id})^{-1} (y)  =& (\omega^{N_2, \tilde x} _{\rm Id} \circ \varphi_{\tilde x,\tilde{x}'})\circ (\omega^{N_1,\tilde x} _{\rm Id} \circ \varphi_{\tilde x,\tilde{x}'})^{-1}(y)\nonumber\\
=&
\omega^{N_2,\tilde x} _{\rm Id} \circ(\omega^{N_1,\tilde x} _{\rm Id} )^{-1}(y).
\end{align}
Since each mapping $\omega^{N_j,\tilde{x}}_ {\rm Id}$, $j=1,2,$ is a homeomorphism from
$V(\tilde{x})$ onto its image, the mapping $\omega^{N_2} _{N_1}$ is a homeomorphism
from $V(x_1)$ onto $V(x_2)$. The same holds for all preimages of
$V(x_2)$ under $\omega^{N_2} _{N_1}$. Hence, $\omega^{N_2} _{N_1}$
is a covering map. The commutativity of the part of the diagram
that involves the mappings $\omega^{N_1} _{\rm Id} $, $\omega^{N_2} _{\rm Id} $, and $\omega_{N_1}^{N_2}$ follows from equation \eqref{eqfin1a'}.

The existence of $\omega_{N_1}^{\pi_1(X,q_0)}$ and the equality ${\sf P}= \omega_{N_1}^{\pi_1(X,q_0)} \circ \omega^{N_1} _{\rm Id}$ follows by applying the above arguments with $N_2= \pi_1(X,q_0)$. The equality ${\sf P}= \omega_{N_2}^{\pi_1(X,q_0)} \circ \omega^{N_2} _{\rm Id}$ follows in the same way. Since
\begin{align}
{\sf P} =  &\omega_{N_2}^{\pi_1(X,q_0)} \circ \omega_{N_1}^{N_2}\circ  \omega^{N_1} _{\rm Id} \nonumber \\
=  &\omega_{N_1}^{\pi_1(X,q_0)}\quad\quad\;\;\,  \circ \omega^{N_1} _{\rm Id},\nonumber
\end{align}
we have
\begin{equation}\nonumber
\omega_{N_2}^{\pi_1(X,q_0)} \circ \omega_{N_1}^{N_2} = \omega_{N_1}^{\pi_1(X,q_0)}
\end{equation}

We will also use the notation  $\omega^N \stackrel{def}= \omega^N _{\rm Id}$ and $\omega_N \stackrel{def}= \omega_N ^{\pi_1(X,q_0)}$ for a subgroup $N$ of $\pi_1(X,q_0)$.

Let again $N_1 \leq N_2$ be subgroups of $\pi_1(X,q_0)$.
Consider the covering $\omega_{N_1}^{N_2} :\tilde X \diagup ({\rm Is}^{{\tilde q}_0})^{-1}(N_1) \to \tilde X \diagup ({\rm Is}^{{\tilde q}_0})^{-1}(N_2)$. Let $\beta$ be a simple relatively closed curve in  $\tilde X \diagup ({\rm Is}^{\tilde q_0})^{-1}(N_2)$.
Then $(\omega_{N_1}^{N_2})^{-1}(\beta)$ is the union of simple relatively closed curves in $\tilde X \diagup ({\rm Is}^{{\tilde q}_0})^{-1}( N_1)$ and $\omega_{N_1}^{N_2}: (\omega_{N_1}^{N_2})^{-1}(\beta) \to \beta$ is a covering.
Indeed, we cover $\beta$ by small discs $U_k$ in $\tilde X \diagup ({\rm Is}^{{\tilde q}_0})^{-1}( N_2)$ such that for each $k$ the restriction of $\omega_{N_1}^{N_2}$
to each connected component of $(\omega_{N_1}^{N_2})^{-1}(U_k)$ is a homeomorphism onto $U_k$, and $U_k$ intersects $\beta$
along a connected set. Take any $k$ with $U_k \cap \beta \neq \emptyset$. Consider the preimages $(\omega_{N_1}^{N_2})^{-1}(U_k)$.
Restrict  $\omega_{N_1}^{N_2}$  to the intersection of any preimage $(\omega_{N_1}^{N_2})^{-1}(U_k)$ with $(\omega_{N_1}^{N_2})^{-1}(\beta)$. We obtain a homeomorphism onto $U_k \cap \beta$.
It follows that the map $(\omega_{N_1}^{N_2})$ is a covering from each connected component of $(\omega_{N_1}^{N_2})^{-1}(\beta)$ onto $\beta$.

\noindent{\bf The extremal length of monodromies.}
Let as before $X$ be a connected finite open Riemann surface with base point $q_0$, and $\tilde{q}_0$ a point in the universal covering $\tilde X$ for which ${\sf P}(\tilde{q}_0)=q_0$ for the covering map ${\sf P}:\tilde{X}\to X$.

Recall that for an arbitrary point $q\in X$ the free homotopy class of an element $e$ of the fundamental group $\pi_1(X,q)$ can be identified with the conjugacy class of elements of $\pi_1(X,q)$ containing $e$ and is denoted by $\widehat{e}$. Notice that for $e_0\in \pi_1(X,q_0)$ and a curve $\alpha$ in $X$ with initial point $q_0$ and terminal point $q$ the free homotopy classes of $e_0$ and of $e={\rm Is}_{\alpha}(e_0)$ coincide, i.e. $\widehat{e}=\widehat{e}_0$.
Consider a simple smooth relatively
closed curve $L$ in $X$.  We will say that a free homotopy class of curves $\widehat {e_0}$ intersects $L$ if each representative of $\widehat{e_0} $ intersects $L$. Choose an orientation of $L$. The intersection number of $\widehat {e_0}$ with the oriented curve $L$ is the intersection number with $L$ of some (and, hence, of each) smooth loop representing $\widehat {e_0}$ that intersects $L$ transversally.
The intersection number of a loop with the relatively closed curve $L$ is the sum of the intersection numbers over all intersection points. The intersection number at an intersection point equals  $+ 1$ if the orientation determined by the tangent vector to the loop
followed by the tangent vector to $L$ is the orientation of $X$, and equals $-1$ otherwise.

Let $A$ be an annulus equipped with an orientation (called positive orientation) of simple closed dividing curves in $A$.
Recall that a relatively closed curve in a surface $X$ is called dividing, if $X\setminus \gamma$ consists of two connected components.
A continuous mapping $\omega:A\to X$ is said to represent a conjugacy class $\widehat{ e}$ of elements of the fundamental group $\pi_1(X,q)$ for a point $q\in X$,
if the composition
$\omega\circ\gamma$ represents $\widehat{ e}$ for each positively oriented dividing curve $\gamma$ in $A$ .

Let $A$ be an annulus with base point $p$ with a chosen positive orientation of simple closed dividing curves in $A$.
Let $\omega$ be a continuous mapping from $A$ to a finite Riemann surface ${X}$ with base point $q$ such that $\omega(p)=q$. We write $\omega:(A,p) \to ({X},q)$. The mapping is said to represent the element $e$ of the fundamental group $ \pi_1({X},q)$
if $\omega\circ \gamma$ represents $e$ for some (and hence for each) positively oriented simple closed dividing curve $\gamma$ in $A$ with base point $q$.

Let $q_0$ be the base point of $X$ chosen above and $\tilde{q}_0$ a point in the  universal covering $\tilde X$ with ${\sf P}(\tilde{q}_0)=q_0$.
We associate to each element $e_0 \in  \pi_1(X,q_0)$ of the free group $\pi_1(X,q_0)$ the annulus  $X(\langle e_0 \rangle)=\tilde{X}\diagup ({\rm Is}^{\tilde{q}_0})^{-1}(\langle e_0 \rangle)$ with base point $q_{\langle e_0\rangle}= \omega_{\rm Id}^{\langle e_0 \rangle}(\tilde{q}_0)$ and the covering map $\omega_{\langle e_0 \rangle} \stackrel{def}=\omega^{\pi_1(X,q_0)}_{\langle e_0 \rangle}:X(\langle e_0 \rangle) \to X$.
By the commutative diagram Figure \ref{fig3} the equality  $\omega_{\langle e_0 \rangle }(q_{\langle e_0 \rangle})= q_0$ holds.
We choose the orientation of simple closed dividing curves in $X(\langle e_0 \rangle)=\tilde{X}\diagup ({\rm Is}^{\tilde{q}_0})^{-1}(\langle e_0 \rangle)$
so that for a curve $\tilde \gamma$ in $\tilde X$ with initial point $\tilde{q}_0$ and terminal point ${\rm Is}^{\tilde{q}_0}(e_0)$ the curve
$\gamma_{\langle e_0 \rangle}\stackrel{def}=\omega^{\langle e_0 \rangle}(\tilde{ \gamma})$ is positively oriented.
The locally conformal mapping
$\omega^{\pi_1(X,q_0)}_{\langle e_0 \rangle}:(X(\langle e_0 \rangle), q_{\langle e_0\rangle})\to (X,q_0)$ represents $e_0$.
This follows from the equality $\omega_{\langle e_0 \rangle}(\gamma_{\langle e_0 \rangle })=\omega_{\langle e_0 \rangle}(\omega^{\langle e_0 \rangle}(\tilde{\gamma}))={\sf P}(\tilde{\gamma})=\gamma$, since ${\sf P}(\tilde{\gamma})$ represents $e_0$.

Take a curve $\alpha$ in $X$ that joins $q_0$ and $q$, and the point $\tilde{q}=\tilde{q}({\alpha})\in \tilde X$ such that $\alpha$ and $\tilde{q}$ are compatible, i.e. ${\rm Is}^{\tilde{q}}={\rm Is}_{\alpha}\circ {\rm Is}^{\tilde{q}_0}$ (see equation \eqref{eq1''}).
Put $e={\rm Is}_{\alpha}(e_0)$.
By equation \eqref{eq1''} $({\rm Is}^{\tilde{q}})^{-1}(e)=({\rm Is}^{\tilde{q}_0})^{-1}(e_0)$, hence,
$\tilde{X}\diagup ({\rm Is}^{\tilde{q}})^{-1}(\langle e\rangle)=\tilde{X}\diagup ({\rm Is}^{\tilde{q}_0})^{-1}(\langle e_0\rangle)=X(\langle e_0\rangle)$.
The locally conformal mapping $\omega_{\langle e_0\rangle}: X(\langle e_0\rangle)\to X$ takes the point $q_{\langle e\rangle}\stackrel{def}=\omega^{\langle e_0 \rangle}(\tilde{q})$ to $q\in X$. Moreover,
$\omega_{\langle e_0\rangle}: (X(\langle e_0\rangle),q_{\langle e\rangle})  \to (X,q)$ represents $e\in\pi_1(X,q)$. This can be seen by repeating the previous arguments.

Let $\alpha$ be an arbitrary curve in $X$ joining $q_0$ with $q$, and $\tilde{q}\in {\sf P}^{-1}(q)$ be arbitrary (i.e. $\alpha$ and $\tilde{q}$ are not required to be compatible).
Let $e\in \pi_1(X,q)$.
Denote the projection $\tilde{X}\to\tilde{X}\diagup ({\rm Is}^{\tilde{q}})^{-1}(\langle e\rangle)$ by $\omega^{\langle e\rangle,\tilde{q}}$, and the projection $\tilde{X}\diagup ({\rm Is}^{\tilde{q}})^{-1}(\langle e\rangle)\to X$ by $\omega_{\langle e\rangle,\tilde{q}}$. Put $q_{\langle e\rangle,\tilde{q}}=\omega^{\langle e\rangle,\tilde{q}}(\tilde{q})$.
For any such choice we choose the orientation of simple closed dividing curves on $\tilde{X}\diagup ({\rm Is}^{\tilde{q}})^{-1}(\langle e\rangle)$ so that $\omega^{\langle e\rangle,\tilde{q}}$ maps
any curve $\tilde{\gamma}$ in $\tilde X$ with initial point $\tilde{q}$  and terminal point $({\rm Is}^{\tilde{q}})^{-1}(\langle e\rangle)(\tilde{q})$  to a positively oriented dividing curve. We will call it the standard orientation of dividing curves in $\tilde{X}\diagup ({\rm Is}^{\tilde{q}})^{-1}(\langle e\rangle)$.
The mapping $\omega_{\langle e\rangle,\tilde{q}}: \big(\tilde{X}\diagup ({\rm Is}^{\tilde{q}})^{-1}(\langle e\rangle),q_{\langle e\rangle,\tilde{q}} \big)\to (X,q)$
represents $e$.

Since the mapping $\;\;\omega_{\langle e_0 \rangle }:(X(\langle e_0 \rangle),(q_0)_{\langle e_0\rangle}) \to (X,q_0)\;\;$ represents $\;e_0\;$,
the mapping $\omega_{\langle e_0 \rangle }:X(\langle e_0 \rangle) \to X$ represents the free homotopy class $\widehat{e_0}$. The following simple lemma will be useful.
\begin{lemm}\label{lemfin0'}
The annulus $X(\langle e_0 \rangle)$ has
smallest extremal length among annuli which admit a holomorphic mapping to $X$, that represents the conjugacy class $\widehat{e_0}$.
\end{lemm}
\noindent In other words, $X(\langle e_0 \rangle)$ is the "thickest" annulus with the property stated in Lemma \ref{lemfin0'}.

\medskip

\noindent {\bf Proof}. Take an annulus $A$ with a choice of positive orientation of simple closed dividing curves. Suppose $A \overset{\omega}{-\!\!\!\longrightarrow}X$
is a holomorphic mapping that represents $\widehat{e_0}$. The annulus
$A$ is conformally equivalent to a round annulus in the plane, hence, we may assume that $A$ has the form $A=\{z\in \mathbb{C}: \, r<|z|<R\}$ for  $0\leq r<R\leq \infty$ and the positive orientation of dividing curves is the counterclockwise one.

Take a positively oriented simple closed dividing curve $\gamma^{A}$ in $A$.
Its image $\omega\circ \gamma^{A}$ under $\omega$ represents the class $\widehat{e_0}$. Choose a point $q^{A}$ in $\gamma^{A}$, and put $q=\omega(q^{A})$.
Then $\gamma^{A}$ represents a generator of $\pi_1(A,q^{A})$ and $\gamma=\omega\circ\gamma^{A} $ represents an element $e$ of $\pi_1(X,q)$ in the conjugacy class $\widehat{e_0}$.
Choose a curve $\alpha$ in $X$ with initial point $q_0$ and terminal point $q$,
and a point $\tilde q$ in $\tilde X$ so that  $\alpha$ and $\tilde q$ are compatible,
and, hence, for $e={\rm Is}_{\alpha}(e_0)$ the equality $({\rm Is}^{\tilde{q}_0})^{-1}(e_0)=({\rm Is}^{\tilde{q}})^{-1}(e)$ holds.
Let $L$ be the relatively closed arc $\{q^A\cdot r: r\in \mathbb{R}\} \cap A$ in $A$ that contains $q^A$.
After a homotopy of $\gamma^{A}$ with fixed base point, we may assume that its base point $q^{A}$ is the only point of $\gamma^{A}$ that is contained in $L$.
The restriction $\omega|(A\setminus L)$ lifts to a mapping $\tilde{\omega}:(A\setminus L)\to \tilde X$, that extends continuously to the two strands $L_{\pm}$ of $L$. (We assume that $L_-$ is reached by moving clockwise from $A\setminus L$ towards $L$.)
Let $q^A_{\pm}$ be the copies of $q^A$ on the two strands  $L_{\pm}$. We choose the lift $\tilde{\omega}$ so that $\tilde{\omega}(q^A_-)=\tilde{ q}$.
Since the mapping $(A,q^{A})\to (X,q)$ represents $e$, we obtain $\tilde{\omega}(q^A_+)=\sigma(\tilde{ q})$ for $\sigma= ({\rm Is}^{\tilde{q}})^{-1}(e)$.
Then for each $z\in L$ the covering transformation $\sigma$ maps the point $\tilde{z}_-\in\tilde{\omega}(L_-)$ for which ${\sf P}(\tilde{z}_-)=z$ to the point $\tilde{z}_+\in\tilde{\omega}(L_+)$ for which ${\sf P}(\tilde{z}_+)=z$. Hence $\omega$ lifts to
a holomorphic mapping $\iota: A\to X(\langle e_0 \rangle)$.
By Lemma \ref{lemm6}  (see also Lemma  7 of \cite{Jo3}) $\lambda(A) \geq \lambda( X(\langle e_0 \rangle))$. \hfill $\Box$

\medskip

For each point $q\in X$ and each element $e\in \pi_1(X,q)$ we denote by $A(\widehat {e})$ the conformal class of the ''thickest'' annulus that admits a holomorphic mapping into $X$ that represents $\widehat{e}$. We saw that $\lambda(A(\widehat{e_0}))=\lambda(\tilde{X}\diagup ({\rm Is}^{\tilde{q}_0})^{-1}(\langle e_0 \rangle))$ for $e_0\in \pi_1(X,q_0)$.
By the same reasoning as before $\lambda(A(\widehat{e}))=\lambda(\tilde{X}\diagup ({\rm Is}^{\tilde{q}'})^{-1}(\langle e \rangle))$ for each
$\tilde{q}' \in \tilde X$ and each element $e \in \pi_1(X,{\sf P}(\tilde{q}'))$.
Hence, if $e_0$ and $e$ are free homotopic (equivalently, if they represent to the same conjugacy class,) then $\lambda(\tilde{X}\diagup ({\rm Is}^{\tilde{q}_0})^{-1}(\langle e_0 \rangle))=\lambda(\tilde{X}\diagup ({\rm Is}^{\tilde{q}})^{-1}(\langle e \rangle))$ for any $\tilde{q}_0\in {\sf P}^{-1}(q_0)\subset \tilde X$ and any $\tilde{q}\in {\sf P}^{-1}(q)$.
Notice that $A(\reallywidehat{e^{-1}})=A(\widehat e)$ for each $e\in \pi_1(X,q),\, q\in X$.

\section{Holomorphic mappings into the twice punctured plane}
\label{sec:fin2}

The following lemma will be crucial for the estimate of the $\mathcal{L}_-$-invariant of the monodromies of holomorphic mappings from a finite open Riemann surface to $\mathbb{C}\setminus \{-1,1\}$. For a point $q'\in (-1,1)$ we will identify $\pi_1( \mathbb{C}\setminus \{-1,1\},q')$ with  $\pi_1( \mathbb{C}\setminus \{-1,1\},0)$ by the canonical isomorphism ${\rm Is}_{\alpha_{q'}}$ for the curve $\alpha_{q'}$ that runs along the line segment in $(-1,1)$ joining $0$ with $q'$.
\begin{lemm}\label{lemfin2}
Let $f:X \to \mathbb{C}\setminus \{-1,1\}$ be a non-contractible holomorphic mapping
on a connected finite open Riemann surface $X$, such that $0$ is a regular value of ${\rm Im}f$. Assume that $L_0$ is a simple relatively closed curve in $X$ such that $f(L_0)\subset (-1,1)$. Let $q \in L_0$ and
$q'=f(q)$.

If for an element $e\in \pi_1(X,q)$
the free homotopy class $\widehat e$ intersects $L_0$,
then either the reduced word $f_*(e) \in \pi_1(\mathbb{C}\setminus \{-1,1\},q')$ is a  non-zero power of a standard generator of $\pi_1(\mathbb{C}\setminus \{-1,1\}, q')$ or the inequality
\begin{equation}\label{eqfin1a}
\mathcal{L}_-(f_*(e)) \leq 2\pi \lambda(A(\widehat{e}))
\end{equation}
holds.
\end{lemm}
Notice that we make a normalization in the statement of the Lemma by  requiring that $f$ maps $L_0$ into the interval $(-1,1)$, not merely into $\mathbb{R}\setminus \{-1,1\}$.

Lemma \ref{lemfin2}
will be a consequence of the following lemma.

\begin{lemm}\label{lemfin1}
Let $X$, $f$, $L_0$, $q\in L_0$ be as in Lemma $\ref{lemfin2}$, and $e\in \pi_1(X,q)$.
Let $\tilde q$ be an arbitrary point in ${\sf P}^{-1}(q)$.
Consider the annulus $A\stackrel{def}=\tilde{X}\diagup ({\rm Is}^{\tilde{q}})^{-1}(\langle e \rangle)$ and the holomorphic projection $\omega_{A}\stackrel{def}=\omega_{\langle e\rangle,\tilde{q}}$.
Put $q_{A}\stackrel{def}=\omega^{\langle e\rangle,\tilde{q}}(\tilde{q})$. The mapping $\omega_{A}:(A,q_{A})\to (X,q)$ represents $e$.

Let $L_{A}\subset A$ be the connected component of $(\omega_{A})^{-1}(L_0)$ that contains $q_{A}$.
If $\widehat e$ intersects $L_0$, then $L_{A}$ is a relatively closed curve in $A$ that has limit points on both boundary components of $A$, and the lift $f\circ \omega_{A}$ is a holomorphic mapping from $A$ to $\mathbb{C}\setminus\{-1,1\}$ that maps $L_{A}$ into $(-1,1)$.
\end{lemm}

\noindent{\bf Proof of Lemma \ref{lemfin1}.}
Let $\gamma:[0,1]\to X$ be a curve with base point $q$ in $X$ that represents $e$, and let $\tilde{\gamma}$ be the lift of $\gamma$ to $\tilde X$ with initial point $\tilde{\gamma}(0)$ equal to $\tilde q$.
Put $\sigma\stackrel{def}= ({\rm Is}^{\tilde{q}})^{-1}(e)$.
Then the terminal point
$\tilde{\gamma}(1)$ equals $\sigma(\tilde{q})$.

All connected components of ${\sf P}^{-1}(L_0)$ are relatively closed curves in $\tilde{X}\cong \mathbb{C}_+$ (where $\mathbb{C}_+$ denotes the upper half-plane) with limit points on the boundary of $\tilde X$. Indeed, the lift $f\circ {\sf P}$ of $f$ to $\tilde X$ takes values in $(-1,1)$ on ${\sf P}^{-1}(L_0)$. Hence, $|\exp(\pm \,i\, f\circ {\sf P})|=1$ on ${\sf P}^{-1}(L_0)$. A compact connected component of ${\sf P}^{-1}(L_0)$ would bound a relatively compact topological disc in $\tilde{X}=\mathbb{C}_+$, and by the maximum principle $|\exp(\pm \,i\, f\circ {\sf P})|=1$ on the disc. This would imply that $ f\circ {\sf P}$ is constant on $\tilde X$ in contrary to the assumptions.

Let $\tilde{L}_{\tilde{q}}$ be the connected component of ${\sf P}^{-1}(L_0)$ that contains $\tilde{q}$.  The point $\sigma(\tilde{q})$ cannot be contained in $\tilde{L}_{\tilde{q}}$. Indeed, assume the contrary. Then the arc  $\tilde{\alpha}$
on  $\tilde{L}_{\tilde{q}}$ that joins $\tilde{q}$ and $\sigma(\tilde{q})$  is homotopic in $\tilde X$ with fixed endpoints to $\tilde{\gamma}$.
The projection $\alpha={\sf P}(\tilde{\alpha})$
is contained in $L_0$ and is homotopic in $X$ with fixed endpoints to $\gamma$. Since $\gamma$ represents $e$ and $e$ is a primitive element of the fundamental group $\pi_1(X,q)$, this is possible only if $L_0$ is compact (and after orienting it) $L_0$ represents $e$.
A small translation of $\alpha$ to a side of $L_0$ gives a curve in $X$ that does not intersect $L_0$ and represents the free homotopy class $\widehat e$ of $e$.
This contradicts the fact that $\widehat{e}$ intersects $L_0$.

Take any other point $\tilde{q}'\in \tilde{L}_{\tilde{q}}$. If $\sigma(\tilde{q}')$ was in $ \tilde{L}_{\tilde{q}}$, then the arc on  $\tilde{L}_{\tilde{q}}$ that joins $\tilde q$ with  $\tilde{q}'$ would project to a loop that represents $\hat e$ and is contained in $L_0$. By the arguments above this is not possible.
We proved that the curves $\tilde{L}_{\tilde{q}}$ and $\sigma(\tilde{L}_{\tilde{q}})$ are disjoint.

Each of the two connected components $\tilde{L}_{\tilde{q}}$ and $\sigma(\tilde{L}_{\tilde{q}})$ divides $\tilde X$. Let $\Omega$ be the domain on $\tilde X$ that is bounded by $\tilde{L}_{\tilde{q}}$ and $\sigma(\tilde{L}_{\tilde{q}})$ and parts of the boundary of $\tilde X$. After a homotopy of $\tilde{\gamma}$  that fixes the endpoints we may assume that $\tilde{\gamma}((0,1))$ is contained in $\Omega$. Indeed, for each connected component of  $\tilde{\gamma}((0,1))\setminus \Omega$ there is a homotopy with fixed endpoints that moves the connected component to an arc on $\tilde{L}_{\tilde{q}}$ or  $\sigma(\tilde{L}_{\tilde{q}})$. A small perturbation yields a curve $\tilde{\gamma}'$ which is homotopic with fixed endpoints to $\tilde{\gamma}$ and has interior contained in $\Omega$. Notice that by the same reasoning as above, $\tilde{\gamma}'((0,1))$ does not meet any $\sigma^k(\tilde{L}_{\tilde{q}})$.

The curve $\omega^{\langle e\rangle,\tilde{q}}(\tilde{\gamma}')$ is a closed curve on $A$ that represents a generator of the fundamental group of $A$ with base point $q_A$. Moreover, $\omega_A\circ \omega^{\langle e\rangle,\tilde{q}}(\tilde{\gamma}')=  \omega_{\langle e\rangle,\tilde{q}}\circ \omega^{\langle e\rangle,\tilde{q}}(\tilde{\gamma}')={\sf P}(\tilde{\gamma}')$ represents $e$. Hence, the mapping $\omega_A:(A,q_A)\to (X,q)$ represents $e$.

The curve $\omega^{\langle e\rangle,\tilde{q}}(\tilde{\gamma}')$ intersects $L_{\langle e\rangle}=\omega^{\langle e\rangle,\tilde{q}}(\tilde{L}_{\tilde{q}})$ exactly once.
Hence, $L_{\langle e\rangle}$ has limit points on both boundary circles of $A$ for otherwise $L_{\langle e\rangle}$ would intersect one of the components of $A\setminus \omega^{\langle e\rangle,\tilde{q}}(\tilde{\gamma}')$ along a set which is relatively compact in $A$, and $\tilde{\gamma}'$ would have intersection number zero with         $L_{\langle e\rangle}$. It is clear that $f\circ \omega_A(L_A)=f(L_0)\subset (-1,1)$.
The lemma is proved. \hfill $\Box$

\medskip

\noindent {\bf Proof of Lemma \ref{lemfin2}.}
Let $\omega_A:(A,q_A)\to (X,q)$ be the holomorphic mapping from Lemma \ref{lemfin1} that represents $e$, and let $L_A\ni q_A$ be the relatively closed curve in $A$ with limit set on both boundary components of $A$. Consider a positively oriented dividing curve  ${\gamma}_A:[0,1]\to A$ with base point $\gamma(0)=\gamma(1)=q_A$ such that $\gamma_A((0,1))\subset A\setminus L_A$. The curve $\gamma=\omega_A(\gamma_A)$ represents $e$. The mapping $f\circ \omega_A$ is holomorphic on $A$ and $f\circ \omega_A(\gamma_A)=f(\gamma)$ represents $f_*(e)\in \pi_1(\mathbb{C}\setminus\{-1,1\}, q')$ with $q'=f\circ \omega_A(q_A)=f(q)\in (-1,1)$. Hence,
$f\circ \omega_A(\gamma_A)$ also represents the element $(f_*(e))_{tr}\in \pi_1^{tr}(\mathbb{C}\setminus\{-1,1\})$ in the relative fundamental group  $\pi_1(\mathbb{C}\setminus \{-1,1\},(-1,1))=\pi_1^{tr}(\mathbb{C}\setminus\{-1,1\})$ corresponding to $f_*(e)$.

We prove now that $\Lambda_{tr}(f_*(e))\leq \lambda(A)$.
Let $A_0\Subset A$ be any relatively compact annulus in $A$ with smooth boundary such that $q_A\in A_0$. If $A_0$ is sufficiently large, then the connected component $L_{A_0}$ of $L_A\cap A_0$ that contains
$q_A$ has endpoints on different boundary components of $A_0$. The set $A_0 \setminus L_{A_0}$ is a curvilinear rectangle.
The open horizontal curvilinear sides are the strands of the cut that are reachable from the curvilinear rectangle moving counterclockwise, or clockwise, respectively. The open vertical curvilinear sides are obtained from the boundary circles of $A_0$ by removing an endpoint of the arc $L_{A_0}$. Since $f\circ\omega_A$ maps $L_{A}$ to $(-1,1)$, the restriction of  $f\circ\omega_A$ to $A_0 \setminus L_{A_0}$ represents $(f_*(e))_{tr}$. Hence,
\begin{equation}\label{eqfin2a}
\Lambda_{tr}(f_*(e))\leq \lambda(A_0 \setminus L_{A_0})\,.
\end{equation}
By Corollary \ref{cor4.0}
\begin{equation}\label{eqfin2b}
\lambda(A_0 \setminus L_{A_0})\leq \lambda(A_0)\,.
\end{equation}
We obtain the inequality
$\Lambda_{tr}(f_*(e))   \leq \lambda(A_0)\,$
for each annulus $A_0\Subset A$,
hence, since $A$ belongs to the class $A(\widehat{e})$ of conformally equivalent annuli,
\begin{align}\label{eqfin2c}
\Lambda_{tr}(f_*(e))   \leq \lambda(A(\widehat{e}))\,,
\end{align}
and the Lemma follows from Theorem \ref{thm1}.
\hfill $\Box$

\medskip

\noindent {\bf The monodromies along two generators.}
In the following Lemma we combine the information on the monodromies along two generators of the fundamental group $\pi_1(X,q)$. We allow the situation when the monodromy along one generator or along each of the two generators of the fundamental group of $X$ is a power of a standard generator of $\pi_1(\mathbb{C}\setminus \{-1,1\},f(q))$.

\begin{lemm}\label{lemfin3}
Let $f:X \to \mathbb{C} \setminus \{-1,1\}$ be a non-contractible holomorphic function on a connected open Riemann surface $X$ such that $0$ is a regular value of the imaginary part of $f$.
Suppose $f$ maps a simple relatively closed curve $L_0$ in $X$ to $(-1,1)$, and $q$ is
a point in $L_0$. Let $e^{(1)}$ and $e^{(2)}$ be primitive elements of $\pi_1(X,q)$. Suppose that for each $e= e^{(1)},\;e= e^{(2)}$, and $e=e^{(1)}\,e^{(2)}$,
the free homotopy class $\widehat e$ intersects $L_0$.
Then either $f_*(e^{(j)}),\, j=1,2,\,$ are (trivial or non-trivial) powers of the same standard generator of $\pi_1(\mathbb{C} \setminus \{-1,1\},q')$ with $q'=f(q) \in (-1,1)$, or each of them is the product of at most two elements $w_1$ and $w_2$ of $\pi_1(\mathbb{C} \setminus \{-1,1\},q')$ with
\begin{equation}\nonumber
\mathcal{L}_-(w_j) \leq  2\pi \lambda_{e^{(1)},e^{(2)}},\, j=1,2,
\end{equation}
where
\begin{equation}\nonumber
\lambda_{e^{(1)},e^{(2)}} \stackrel{def}=\max\{\lambda(A(\reallywidehat{e^{(1)}})),\, \lambda(A(\reallywidehat{e^{(2)}})),\, \lambda(A(\reallywidehat{e^{(1)}\,e^{(2)}}))\}.
\end{equation}
Hence,
\begin{equation}\label{eqfin3e'}
\mathcal{L}_-(f_*(e^{(j)})) \leq 4 \pi \lambda_{e^{(1)},e^{(2)}}, \, j=1,2.
\end{equation}
\end{lemm}
\index{$\lambda_{e^{(1)},e^{(2)}}$}

\noindent {\bf Proof.}
If the monodromies $f_*(e^{(1)})$ and $f_*(e^{(2)})$ are not powers of a single standard generator (the identity is considered as zeroth power of a standard generator) we obtain the following.
At most two of the elements, $f_*(e^{(1)})$,  $f_*(e^{(2)})$, and $f_*(e^{(1)}\, e^{(2)})= f_*(e^{(1)})\, f_*(e^{(2)})$, are powers of a standard generator, and if two of them are powers of a standard generator, then they are non-zero powers of different standard generators.

If two of them are non-zero powers of different standard generators, then the third has the form
$a_{\ell }^{k} a_{\ell '}^{k'}$ with $a_{\ell }$ and $a_{\ell ' }$
being different generators and $k$ and $k'$ being non-zero integers.
By Lemma \ref{lemfin2} the $\mathcal{L}_-$ of the third element
does not exceed $2\pi \lambda_{e^{(1)},e^{(2)}}$. On the other hand it equals   $\log(3|k'|)+ \log(3|k|)$. Hence, $\mathcal{L}_-(a_{\ell }^{k}) = \log(3|k|)\leq 2\pi \lambda_{e^{(1)},e^{(2)}}$
and  $\mathcal{L}_-(a_{\ell' }^{k'}) = \log(3|k'|)\leq 2\pi \lambda_{e^{(1)},e^{(2)}}$.

If two of the elements $f_*(e^{(1)})$,  $f_*(e^{(2)})$, and $f_*(e^{(1)}\, e^{(2)})= f_*(e^{(1)})\, f_*(e^{(2)})$, are not powers of a standard generator, then the $\mathcal{L}_-$ of each of the two elements does not exceed $2\pi \lambda_{e^{(1)},e^{(2)}}$. Since the $\mathcal{L}_-$ of an element coincides with the $\mathcal{L}_-$ of its inverse, the third element is the product of two elements with $\mathcal{L}_-$ not exceeding $2\pi \lambda_{e^{(1)},e^{(2)}}$.
Since for $x,x'\geq 2$ the inequality $\log(x+x')\leq \log x +\log x'$ holds,
the $\mathcal{L}_-$ of the product does not exceed the sum of the $\mathcal{L}_-$ of the factors. Hence the $\mathcal{L}_-$ of the third element does not exceed $4\pi \lambda_{e^{(1)},e^{(2)}}$. Hence, inequality \eqref{eqfin3e'} holds.
\hfill $\Box$

\medskip

The following proposition states
the existence of suitable connected components of the zero set of the imaginary part of certain analytic functions on tori with a hole and on planar domains.
As in Section  \ref{sec:fin.1} for a Riemann surface $X$ we let $\mathcal{E}$ be a standard system of generators of $ \pi_1(X,q_0)$
that is associated to a standard bouquet of circles for $X$.
Recall that $\mathcal{E}_j$ is the set of primitive elements of $ \pi_1(X,q_0)$ which can be written as product of at most $j$ elements of $\mathcal{E}\cup (\mathcal{E})^{-1} $.
Here as before for any subset $\mathcal{E}'$ of $\pi_1(X;q_0)$ we denote by $(\mathcal{E}')^{-1}$ the set of all elements that are inverse to elements in  $\mathcal{E}'$.
\index{$\mathcal{E}^{-1}$}

\begin{prop}\label{propfin2a} Let $X$ be a torus with a hole or a planar domain with base point $q_0$ and fundamental group $\pi_1(X,q_0)$, and let $\mathcal{E}$  be a standard system of generators  of $\pi_1(X,q_0)$ that is associated to a standard bouquet of circles for $X$.
Let $f: X \to \mathbb{C}\setminus \{-1,1\}$ be a non-contractible holomorphic mapping
such that $0$ is a regular value of ${\rm{Im}} f$. Then there exist a simple relatively closed curve $L_0\subset X$ such that
$f(L_0) \subset \mathbb{R}\setminus \{-1,1\},$ and a set $\mathcal{E}_2'  \subset \mathcal{E}_2 \subset\pi_1(X,q_0)$ of primitive elements of $\pi_1(X,q_0)$,
such that the following holds.
Each element $e_{j,0} \in \mathcal{E} \subset \pi_1(X,q_0)$ is the product of at most two elements of $\mathcal{E}_2'\cup (\mathcal{E}_2')^{-1} $.  Moreover, for each $e_0 \in \pi_1(X,q_0)$ which is the product of one or two elements from $\mathcal{E}_2'$ the free homotopy class $\widehat{ e_0}$ has positive intersection number with $L_0$ (after suitable orientation of $L_0$).

If $X$ is a torus with a hole or $X$ equals $\mathbb{P}^1$ with three holes, we may chose $\mathcal{E}_2'$ consisting of two elements, one of them contained in $\mathcal{E}$, the other is either contained in $\mathcal{E}\cup \mathcal{E}^{-1}$ or is a product of two elements of $\mathcal{E}$.
\end{prop}
Notice the following facts. By Theorem \ref{thmEl-1} a mapping $f:X\to \mathbb{C}\setminus \{-1,1\}$ is contractible if and only if  for each $e_0\in \pi_1(X,q_0)$ the monodromy $f_*(e_0)$ is equal to the identity. The mapping $f$ is reducible if and only if the monodromy mapping $f_*:\pi_1(X,q_0)\to \pi_1(\mathbb{C}\setminus\{-1,1\},f(q_0))$ is conjugate to a mapping into a subgroup $\Gamma$ of $\pi_1(\mathbb{C}\setminus\{-1,1\},f(q_0))$ that is generated by a single element that is represented by a curve which separates one of the points $1,-1$ or $\infty$ from the other points. In other words, $\Gamma$ is (after identifying fundamental groups with different base point up to conjugacy) generated by a conjugate of one of the elements $a_1$, $a_2$ or $a_1a_2$ of $\pi_1(\mathbb{C}\setminus\{-1,1\},0)$. Recall that for $q'\in (-1,1)$ the fundamental group $\pi_1(\mathbb{C}\setminus\{-1,1\},q')$ is canonical isomorphic to $\pi_1(\mathbb{C}\setminus\{-1,1\},0)$ (by changing the base point along the segment in $(-1,1)$ that joins $q'$ and $0$), and we will identify
$\pi_1(\mathbb{C}\setminus\{-1,1\},q')$ with  $\pi_1(\mathbb{C}\setminus\{-1,1\},0)$.

If $f$ is irreducible, then it is not contractible, and, hence, the preimage $f^{-1}(\mathbb{R})$ is not empty.

Denote by $M_1$ a M\"obius transformation which permutes the points  $-1,\,1,\, \infty$ and maps the interval $(-\infty,-1)$ onto $(-1,1)$, and let $M_2$ be a M\"obius transformation which permutes the points $-1,\,1,\, \infty$ and maps the interval $(1,\infty)$ onto $(-1,1)$. Let $M_0\stackrel{def}= \mbox {Id}$.

The main step for the proof of Theorem \ref{thmfin1} is the following Proposition \ref{propfin2}.

Recall that the invariant $\lambda_j(X)$ was defined in Definition \ref{defnfin0}, Section \ref{sec:fin.1}. Since for $e_0\in \pi_1(X,q_0)$
the equality $\lambda(\tilde{X}\diagup ({\rm Is}^{\tilde{q}_0})^{-1}(\langle e_0\rangle))=\lambda(A(\widehat{e_0}))$ holds, $\lambda_j(X)$ is the maximum of $\lambda(A(\widehat{e_0}))$ over $e_0\in \mathcal{E}_j$.

\begin{prop}\label{propfin2} Let $X$ be a connected finite open Riemann surface with base point $q_0$, and let $\mathcal{E}$ be the same system of generators of $\pi_1(X,q_0)$ as in Proposition $\ref{propfin2a}$.
Suppose
$f:X \to \mathbb{C}\setminus \{-1,1\}$ is an irreducible holomorphic mapping, such that
$0$ is a regular value of $\,{\rm{Im}}f$. Then for one of the functions $M_l \circ f,\, l=0,1,2,\,$
which we denote by $F$, there exists a point $q \in X$ (depending on $f$), such that
the point
$q' \stackrel{def}=F(q)$ is contained in  $(-1,1)$, and there exists a smooth curve $\alpha$ in $X$ joining $q_0$ with $q$, such that the following holds.
For each element $e_j\in\mbox{Is}_{\alpha}(\mathcal{E})$
the monodromy $F_*(e_j)$ is the product of at most six elements of $\pi_1(\mathbb{C}\setminus\{-1,1\},q')$ (canonically identified with   $\pi_1(\mathbb{C}\setminus\{-1,1\},0)$)
of $\mathcal{L}_-$ not exceeding $2 \pi \lambda_7(X)$ and, hence,
\begin{equation}\label{eqfin3b}
\mathcal{L}_-(F_*(e_j)) \leq 12\pi \lambda_7(X) \;\; \mbox{for each}\;\, j.
\end{equation}
If $X$ is a torus with a hole the proposition holds with $\lambda_7(X)$ replaced by $\lambda_3(X)$. If $X$ is a planar domain the proposition holds with $\lambda_4(X)$ instead of $\lambda_7(X)$.
\end{prop}
Notice, that
all monodromies of contractible mappings are equal to the identity, hence the inequality \eqref{eqfin3b} holds automatically for contractible mappings.

\noindent We postpone the proof of the two propositions and prove first the Theorem \ref{thmfin1}.

\smallskip

\noindent {\bf Proof of Theorem \ref{thmfin1}.}
Let $X$ be a connected finite open Riemann surface (possibly of second kind)
with base point $q_0$. We need to estimate the number $N_X$ of homotopy classes of irreducible mappings $X\to \mathbb{C}\setminus \{-1,1\}$ that contain a holomorphic mapping.
Take  any relatively compact open subset $X^0\Subset X$ that is a deformation retract of $X$ and contains $q_0$. We claim that the number $N_X$ does not exceed the number  $N_{X^0}^*$  of homotopy classes of irreducible mappings $X^0\to \mathbb{C}\setminus \{-1,1\}$ that contain a
holomorphic mapping for which $0$ is a regular value of the imaginary part.

Notice first that homotopic mappings $X\to \mathbb{C}\setminus \{-1,1\}$ restrict to  homotopic mappings $X^0\to \mathbb{C}\setminus \{-1,1\}$. Vice versa, if the restrictions $f_j\mid X^0$ of two mappings $f_j: X\to \mathbb{C}\setminus \{-1,1\}$, $j=1,2,$ are homotopic then
the $f_j$ are homotopic. Indeed, let $\varphi_t:X\to X$ be a continuous family of mappings $\varphi_t:X\to X,\, t\in [0,1],$ that map $X$ homeomorphically onto a domain $X_t$ in $X$ such that $\varphi_0$ is the identity and $X_1=X^0$ (see also Lemma \ref{lemEl2b}). Then $f_j$ is homotopic to $f_j\circ \varphi_1$, $j=1,2,$ and $f_1\circ  \varphi_1$ is homotopic to
 $f_2\circ  \varphi_1$.

For each irreducible holomorphic mapping $f:X\to \mathbb{C}\setminus \{-1,1\}$
and each small enough complex number $\varepsilon$ the holomorphic mapping $(f - i \varepsilon)\mid X^0$ is homotopic to $f\mid X^0$, is irreducible and takes values in $\mathbb{C}\setminus \{-1,1\}$. Hence, the homotopy class of the restriction to $X^0$ of any irreducible holomorphic mapping $X\to \mathbb{C}\setminus \{-1,1\}$ contains an irreducible holomorphic mapping for which zero is a regular value of the imaginary part. This means that the number $N_X$ does not exceed the number $N_{X_0}^*$ of irreducible homotopy classes $X^0\to \mathbb{C}\setminus \{-1,1\}$ that contain a
holomorphic mapping for which $0$ is a regular value of the imaginary part.

We estimate now the number $N_{X_0}^*$. Take any irreducible holomorphic
mapping ${\sf f}:X^0\to \mathbb{C}\setminus\{-1,1\}$ such that  $0$ is a regular value of the imaginary part ${\rm Im}\,{\sf f}$ and apply
Proposition \ref{propfin2}.
We obtain the following objects: a M\"obius transformation $M_l$, that maps one of the components of $\mathbb{R}\setminus \{-1,1\}$ onto $(-1,1)$; further, a point $q\in X^0$ and a smooth curve $\alpha$ in $X^0$ with initial point $q_0$ and terminal point $q$, such that for the mapping
$F=M_l \circ {\sf f}$ the inclusion $q'\stackrel{def}=F(q)\in (-1,1)$ holds, and
for the generators $e_j\stackrel{def}={\rm Is}_{\alpha}(e_{j,0}),\;  e_{j,0}\in \mathcal{E},$ of $\pi_1(X^0,q)$ the inequalities \eqref{eqfin3b} hold with $X$ replaced by $X^0$.
Our goal is to find
a smooth mapping $\hat{F}:X^0\to \mathbb{C}\setminus\{-1,1\}$, that is free homotopic to $F$, maps $q_0$ to $q'$,
and satisfies $(\hat{F})_*(e_0)=F_*(e)$.
Then the inequalities \eqref{eqfin3b}) will imply the inequality
\begin{equation}\label{eqfin3b+}
\mathcal{L}_-((\hat{F})_*(e_0)  ) \leq 12\pi \lambda_7(X^0)
\end{equation}
for each $e_{j,0}\in \mathcal{E}$.

Write $e=\mbox{Is}_{\alpha}(e_{0}) \in \pi_1(X,q)$  for each $e_{0}\in \pi_1(X,q_0)$.
Parameterise $\alpha$ by the interval $[0,1]$.
The  image of $\alpha$
under the mapping $F$ is the curve $\beta= F \circ \alpha$ in $\mathbb{C} \setminus \{-1,1\}$  with initial point $F(q_0)$ and terminal point $F(q)=q'$. Then $F_*(e_{0})=(\mbox{Is}_{\beta})^{-1}(F_*(e))$.
The curve $\beta$ is parameterized by the interval $[0,1]$.
Choose a free homotopy $F_t,\, t \in [0,1],$ of mappings from $X^0$ to $\mathbb{C} \setminus \{-1,1\}$, that changes the mappings only in a small neighbourhood of $\beta$ and joins the mapping $F_0 \stackrel{def}=F$ with a (smooth) mapping $F_1$ denoted by $\hat F$, so that $F_t(q_0)=\beta(t),\, t\in [0,1]$.
The value $\beta(t)$
moves from the point $\beta(0)=F(q_0)$ to $\beta(1)=q'$ along the curve $\beta$.
Denote by $\beta_t$ the curve that runs from $\beta(t)$ to $\beta(1)$ along $\beta$. Then $\beta_0=\beta$ and $\beta_1$ is a constant curve.
Let $\gamma_0$ be a curve that represents $e_0$. Then $\gamma\stackrel{def}={\rm Is}_{\alpha}(\gamma_0)$ represents $e$.
The base point of the curve $F_t(\gamma_0)$ equals $F_t(q_0)=\beta(t)$. Hence, we obtain a continuous family of curves $\beta_t^{-1} F_t(\gamma_0) \beta_t$ with base point $\beta(1)=F(q)$. For $t=1$ the curve is equal to $F_1(\gamma_0)=\hat{F}(\gamma_0)$, for $t=0$ the curve is equal to $\beta^{-1} F_0(\gamma_0) \beta=F_0(\alpha^{-1} \gamma_0\alpha)=F_0( {\rm Is}_{\alpha}(\gamma_0))=           F_0(\gamma)$. Since the two curves
$F_1(\gamma_0)$ and $F_0(\gamma)$
are homotopic and $F_1=\hat F$, $F_0=F$, we obtain $\hat{F}_*(e_0)=F_*(e)$.

We estimate now the number of free homotopy classes of mappings $X^0\to \mathbb{C}\setminus \{-1,1\}$ whose associated conjugacy class of homomorphisms
$\pi_1(X^0,q_0)\to \pi_1( \mathbb{C}\setminus\{-1,1\},q')$
contains a homomorphism
that satisfies the inequalities \eqref{eqfin3b+}. Recall that $\pi_1( \mathbb{C}\setminus\{-1,1\},q')$ is canonically isomorphic to $\pi_1( \mathbb{C}\setminus\{-1,1\},0)$ by the isomorphism $ {\rm Is}_{\beta'}$ where the curve $\beta'$ runs along the segment that joins $q'$ and $0$ and is contained in $(-1,1)$.
By Lemma \ref{lem10.1a}
there are at most
$\frac{1}{2}e^{36\pi \lambda_7(X^0)}+1\leq \frac{3}{2}e^{36\pi \lambda_7(X^0)}$
different reduced words $w \in \pi_1(\mathbb{C}\setminus\{-1,1\}),0)$ (including the identity) with $\mathcal{L}_-(w)\leq 12\pi \lambda_7(X^0)$.
Hence, there are at most $(\frac{3}{2}e^{36\pi  \lambda_7(X^0)})^{2g+m}$ different homomorphisms
$h: \pi_1(X,q_0) \to \pi_1(\mathbb{C}\setminus \{-1,1\},0)$
with $\mathcal{L}_-(h(e_0))\leq 12\pi \lambda_7(X^0)$ for each element $e_0$ of the set of generators $\mathcal{E}$ of $\pi_1(X^0,q_0)$.
By Theorem \ref{thmEl-1} there are at most $(\frac{3}{2}e^{36\pi \lambda_7(X^0)})^{2g+m}$ different
free homotopy classes of mappings $X^0\to \mathbb{C}\setminus \{-1,1\}$,
 whose associated conjugacy class of homomorphisms
$\pi_1(X^0,q_0)\to \pi_1( \mathbb{C}\setminus\{-1,1\},0)$
contains a homomorphism
that satisfies the inequalities \eqref{eqfin3b+}. It follows that $N_{X_0}^*\leq(\frac{3}{2}e^{36\pi \lambda_7(X^0)})^{2g+m}$.

Our arguments show that for each irreducible or contractible holomorphic mapping $f$
on $X$ there are arbitrarily small numbers $\varepsilon$, such that one of the three mappings $M_l(f - i \varepsilon)\mid X^0$,
$l=0,1,2,$ belongs to one of at most $(\frac{3}{2}e^{36\pi \lambda_7(X^0)})^{2g+m}$ free homotopy classes of mappings $ X^0\to \mathbb{C}\setminus \{-1,1\}$. Since for small emough numbers $\varepsilon$ the mappings $M_l(f )\mid X^0$ and  $M_l(f - i \varepsilon)\mid X^0$ are homotopic, the mapping $M_l(f)\mid X^0$ belongs to one of at most $(\frac{3}{2}e^{36\pi \lambda_7(X^0)})^{2g+m}$ free homotopy classes of mappings $ X^0\to \mathbb{C}\setminus \{-1,1\}$.
Compose the mappings of each class with each of the three M\"obius transformations $M_{l'}^{-1}$. We obtain no more than  $3(\frac{3}{2}e^{36\pi \lambda_7(X^0)})^{2g+m}$ free homotopy classes of mappings $ X^0\to \mathbb{C}\setminus \{-1,1\}$, and $f\mid X^0=M_l^{-1}(M_l(f))\mid X^0$ belongs to one of them. Identifying $\pi_1(X^0,q_0)$ with $\pi_1(X,q_0)$, the homotopy classes of mappings $ X\to \mathbb{C}\setminus \{-1,1\}$ have the same associated conjugacy classes of homomorphism $\pi_1(X,q_0) \to \pi_1(\mathbb{C}\setminus \{-1,1\},0)$ as the homotopy classes of their restrictions. We see that there are no more than $3(\frac{3}{2}e^{36\pi \lambda_7(X^0)})^{2g+m}$ holomorphic mappings  $ X\to \mathbb{C}\setminus \{-1,1\}$ that are irreducible or contractible.
Theorem \ref{thmfin1} is proved with the upper bound $3(\frac{3}{2}e^{24 \pi \lambda_7(X^0)})^{2g+m}$ for an arbitrary relatively compact domain $X^0\subset X$ that is a deformation retract of $X$.

It remains to prove that
$$
\lambda_7(X)= \inf\{\lambda_7(X^0): X^0 \Subset X \; \mbox{is\, a\, deformation\, retract\, of\,} X\,\}\,.
$$
We have to prove that for each $e_0 \in \pi_1(X,q_0)$ the quantity
$\lambda(\widetilde{X} \diagup ({\rm Is}^{\tilde{q}_0})^{-1}(\langle e_0 \rangle))$ is
equal to the infimum of $\lambda( \widetilde{X^0}\diagup ({\rm Is}^{\tilde{q}_0})^{-1}(\langle e_0 \rangle))$ over all $X^0$ being open relatively compact subsets of $X$ which are deformation retracts of $X$. Here $\widetilde{X^0}$ is the universal covering of $X^0$, and the fundamental groups of $X$ and $X^0$ are identified. $\widetilde{X^0}$ ($\widetilde{X}$, respectively) can be defined as set of homotopy classes of arcs in $X^0$ (in $X$, respectively) joining $q_0$ with a point $q\in X^0$ (in $X$ respectively) equipped with the complex structure induced by the projection to the endpoint of the arcs, and the point $\tilde{q}_0$ corresponds to the class of the constant curve.
The isomorphism $({\rm Is}^{\tilde{q}_0})^{-1}$ from $\pi_1(X^0,q_0)$ to the group of covering transformations on $\widetilde{X^0}$ is defined in the same way as it was done for $X$ instead of $X^0$.
These considerations imply that there is a holomorphic mapping from $\widetilde{X}^0 \diagup ({\rm Is}^{\tilde{q}_0})^{-1}(\langle e_0 \rangle)$ into $\widetilde{X} \diagup ({\rm Is}^{\tilde{q}_0})^{-1}(\langle e_0 \rangle)$. Hence, the extremal length of the first set is not smaller than the extremal length of the second set,
$\lambda(\widetilde{X}^0 \diagup {\rm Is}^{\tilde{q}_0})^{-1}(\langle e_0 \rangle)\geq \lambda( \widetilde{X} \diagup ({\rm Is}^{\tilde{q}_0})^{-1}(\langle e_0 \rangle)$.

Vice versa, take any annulus $A^0$ that is a relatively compact subset of
$\widetilde{X} \diagup ({\rm Is}^{\tilde{q}_0})^{-1}(\langle e_0 \rangle)$
and is a deformation retract of $\widetilde{X} \diagup ({\rm Is}^{\tilde{q}_0})^{-1}(\langle e_0 \rangle)$.
Its projection to $X$ is relatively compact in $X$, hence, it is contained in a relatively compact open subset  $X^0$ of $X$ that is a deformation retract of $X$. Hence, $A^0$ can be considered as subset of
$\widetilde{X^0} \diagup ({\rm Is}^{\tilde{q}_0})^{-1}(\langle e_0 \rangle)$, and, hence, $\lambda(\widetilde{X^0} \diagup ({\rm Is}^{\tilde{q}_0})^{-1}(\langle e_0 \rangle)) \leq \lambda(A^0)$. Since
\begin{align*}
\lambda (\widetilde{X} \diagup ({\rm Is}^{\tilde{q}_0})^{-1}(\langle e_0 \rangle)) = \inf \{\lambda (A^0): A^0 \Subset \widetilde{X} \diagup ({\rm Is}^{\tilde{q}_0})^{-1}(\langle e_0 \rangle)\nonumber\\
 \mbox{\,is\, a\, deformation\, retract\, of\,}\widetilde{X} \diagup ({\rm Is}^{\tilde{q}_0})^{-1}(\langle e_0 \rangle) \} \,,
\end{align*}
we obtain $\lambda(\widetilde{X}^0 \diagup {\rm Is}^{\tilde{q}_0})^{-1}(\langle e_0 \rangle)\leq \lambda( \widetilde{X} \diagup ({\rm Is}^{\tilde{q}_0})^{-1}(\langle e_0 \rangle)$.
We are done. \hfill $\Box$

\medskip

We proved a slightly stronger statement, namely, the number of homotopy classes of mappings $X\to \mathbb{C}\setminus\{-1,1\}$ that contain a contractible holomorphic mapping or an irreducible holomorphic mapping does not exceed $3(\frac{3}{2}e^{36\pi \lambda_7(X)})^{2g+m}$.

\medskip

\noindent {\bf Proof of Proposition \ref{propfin2a}.}
Denote the zero set $\{x\in X: \mbox{Im}f(x)=0\}$ by $L$. Since $f$ is not contractible,  $L\neq \emptyset$.

\noindent {\bf 1. A torus with a hole.} Assume first that $X$ is a torus with a hole with  base point $q_0$.
For notational convenience we denote by $e_0$ and ${e'_0}$ the two elements of the set of generators $\mathcal{E}$ of $\pi_1(X,q_0)$ that is associated to a standard bouquet of circles for $X$.
We claim that there is a connected component $L_0$ of $L$ such that (after suitable orientation) the intersection number of the free homotopy class  of one of the elements of $\mathcal{E}$,
say of $\widehat{e_0}$, with $L_0$ is positive, and
the intersection number with one of the classes $\reallywidehat{e'_0}$, or $\reallywidehat{(e'_0)^{-1}}$, or $\reallywidehat{e_0 \,e'_0}$ with $L_0$ is positive.

The claim is easy to prove in the case when there is a component $L_0$ of $L$ which is a simple closed curve that is not contractible and not contractible to the hole of $X$. Indeed, consider the inclusion of $X$ into a closed torus $X^c$ and the homomorphism on fundamental groups $\pi_1(X,q_0)\to \pi_1(X^c,q_0)$ induced by the inclusion. Denote by $e_0^c$ and ${e'_0}^c$ the images of $e_0$ and $e_0'$ under this homomorphism. Notice that $e_0^c$ and ${e'_0}^c$ commute. The (image under the inclusion of the) curve $L_0$ is a simple closed non-contractible curve in $X^c$. It represents the free homotopy class of an element $(e_0^c)^j({e'_0}^c)^k$ for some integers $j$ and $k$ which are not both equal to zero. Hence, $L_0$ is not null-homologous in $X^c$, and by the Poincar\'{e} Duality Theorem
for one of the generators, say for $e_0^c$, the representatives of the free homotopy class $\reallywidehat{e_0^c}$  have non-zero intersection number 
with $L_0$.
After possibly reorienting $L_0$, we may assume that this intersection number is positive. There is a representative of the class $\reallywidehat{e_0^c}$ which is contained in $X$, hence,
$\widehat{e_0}$ has positive intersection number with $L_0$.

Suppose all compact connected components of $L$ are contractible or
contractible to the hole of $X$.
Consider a relatively compact domain ${X}''\Subset {X}$ in ${X}$ with smooth boundary which is a deformation retract of ${X}$ such that for each connected component of $L$ at most one component of its intersection with ${X}''$ is not contractible to the hole of ${X}''$. (See the paragraph on ''Regular zero sets''.) There is at least one component of $L\cap {X}''$ that is not contractible to the hole of ${X}''$. Indeed, otherwise the free homotopy class of each element of $\mathcal{E}$ could be represented by a loop avoiding $L$, and, hence, the monodromy of $f$ along each
element of $\mathcal{E}$ would be conjugate to the identity, and, hence, equal to the identity, i.e. contrary to the assumption, $f:X\to \mathbb{C}\setminus \{-1,1\} $ would be free homotopic to a constant.

Take a component $L_0''$ of $L\cap {X}''$ that is not contractible to the hole of ${X}''$.
There is an arc of $\partial {X}''$ between the endpoints of $L_0''$ such that
the union $\tilde{L}_0$ of the component $L_0''$ with this arc is a closed curve in ${X}$ that is
not contractible and not contractible to the hole. Hence, for one of the elements of $\mathcal{E}$, say for $e_0$, the intersection number of the free homotopy class $\widehat{e_0}$ with the closed curve $\tilde{L}_0$ is positive after orienting the curve $\tilde{L}_0$ suitably. We may take a representative $\gamma$ of $\widehat{e_0}$ that is
contained in ${X}''$. Then $\gamma$ has positive intersection number with $L_0''$.
Denote the connected component of $L$ that contains $L_0''$ by $L_0$. All components of $L_0\cap {X}''$ different from $L_0''$ are contractible to the hole of ${X}''$. Hence, $\gamma$ has intersection number zero with each of these components.
Hence, $\gamma$ has positive intersection number with $L_0$ since ${\gamma}\subset {X}''$.
We proved that the class  $\widehat{e_0}$
has positive intersection number with $L_0$.

If $\reallywidehat{e'_0}$ also has non-zero intersection number with $L_0$  we define $e''_0=(e'_0)^{\pm 1}$ so that the intersection number of  $\reallywidehat{e''_0}$ with $L_0$ is positive. If $\reallywidehat{e'_0}$  has zero intersection number with $L_0$ we put $e''_0=e_0 \,e'_0$. Then again the intersection number of $\reallywidehat{e''_0}$ with $L_0$ is positive.  Also, the intersection number of  $\reallywidehat{e_0 \, e''_0}$ with $L_0$ is positive. The set $\mathcal{E}_2'\stackrel{def}=\{e_0,e_0''\}$ satisfies the condition required in the proposition.
We obtained Proposition \ref{propfin2a} for a torus with a hole.

\noindent {\bf 2. A planar domain.} Let $X$ be a planar domain.
The domain $X$ is conformally equivalent to  a disc with $m$ smoothly bounded holes,
equivalently, to the Riemann sphere with $m+1$ smoothly bounded holes, $\mathbb{P}^1 \setminus \bigcup_{j=1}^{m+1}\mathcal{C}_j$, where $\mathcal{C}_{m+1}$ contains the point $\infty$. As before the base point of $X$ is denoted by $q_0$, and for each $j=1,\ldots,m,$ the generator $e_{j,0}\in \mathcal{E}\subset \pi_1(X,q_0)$
is represented by a curve that surrounds $\mathcal{C}_j$ once counterclockwise.
Since $f$ is not contractible, there must be a connected component of $L$ that has limit points on some $\mathcal{C}_j$ with $j\leq m$. Indeed, otherwise
the free homotopy class of each  generator could be represented by a curve that avoids $L$.
This would imply that all monodromies are equal to the identity.
We claim that
there exists a component $L_0$ of $L$
with limit points on the boundary of two components $\partial {\mathcal{C}}_{j'}$ and
$\partial {\mathcal{C}}_{j''}$ for some $j', j'' \in \{1,\ldots,m+1\}$ with $j''\neq j'$.

Indeed, assume the contrary.
Then, if a component of $L$ has limit points on a component $\partial {\mathcal{C}}_{j},\, j\leq m,$ then all its limit points are on $\partial {\mathcal{C}}_{j}$.
Take a smoothly bounded simply connected domain ${\mathcal{C}}'_{j}\Subset X\cup {{\mathcal{C}}_{j}}$
that contains the closed set ${{\mathcal{C}}_{j}} $ , so that its boundary $\partial {\mathcal{C}}'_{j}$ represents $\reallywidehat{e_{j,0}}$.
Then all components $L'_k$ of $L\setminus {\mathcal{C}}'_{j} $ with an endpoint on $\partial \mathcal{C}'_{j}$ have another endpoint on this circle. The two endpoints of $L_k'$ on $\partial {\mathcal{C}}'_{j}$ divide $\partial {\mathcal{C}}'_{j}$ into two connected components.
The union of $\overline{L_k'}$ with each of the two components of $\partial {\mathcal{C}}'_{j}\setminus \overline{L_k'}$ is a simple closed curve in $\mathbb{C}$, and, hence, by the Jordan Curve Theorem it bounds a relatively compact topological disc in $\mathbb{C}$. One of these discs contains $ {\mathcal{C}}'_{j}$, the other does not.
Assign to each component $L_k'$ of $L \setminus {\mathcal{C}}'_{j}   $ with both endpoints on $\partial \mathcal{C}'_{j} $ the closed arc $\alpha_k$  in $\partial \mathcal{C}'_{j}$ with the same endpoints as $L_k'$, whose union with $L_k'$ bounds a relatively compact topological disc in $\mathbb{C}$ that does not contain $\mathcal{C}'_{j}$. These discs are partially ordered by inclusion,
since the $L_k'$ are pairwise disjoint. Hence, the arcs $\alpha_k$ are partially ordered by inclusion. For an arc $\alpha_k$  which contains no other of the arcs (a minimal arc) the curve $f \circ \alpha_k$ except its endpoints is contained in  $\mathbb{C}\setminus \mathbb{R}$. Moreover,
the endpoints of  $f \circ \alpha_k$ lie on ${f(\overline{L_k'})}$, which is contained in one connected components of $\mathbb{R}\setminus \{-1,1\}$, since $\overline{L_k'}$ is connected. Hence, the curve $f \circ \alpha_k$ is homotopic  in $\mathbb{C}\setminus \{-1,1\}$ (with fixed endpoints) to a curve in $\mathbb{R} \setminus \{-1,1\}$. The function $f$ either maps all points on $\partial \mathcal{C}'_{j}\setminus \alpha_k$ that are close to $\alpha_k$ to the open upper half-plane or maps them all to the open lower half-plane. (Recall, that zero is a regular value of $\mbox{Im}f$.) Hence, for an open arc $\alpha'_k \subset \partial \mathcal{C}'_{j} $ that contains $\alpha_k$ the curve $f \circ \alpha'_k $ is homotopic  in  $\mathbb{C}\setminus \{-1,1\}$ (with fixed endpoints) to a curve in $\mathbb{C}\setminus \mathbb{R}$.

Consider the arcs $\alpha_k$ with the following property. For an open arc $\alpha'_k$  in $\partial \mathcal{C}'_{j} $ which contains the closed arc $\alpha_k$ the mapping $f \circ \alpha'_k$ is homotopic in $\mathbb{C}\setminus \{-1,1\}$ (with fixed endpoints) to a curve contained in  $\mathbb{C}\setminus \mathbb{R}$. Induction on the arcs by inclusion shows that this property is satisfied for all maximal arcs among the $\alpha_k$ and, hence, $f \mid \partial \mathcal{C}'_{j} $ is contractible in  $\mathbb{C}\setminus \{-1,1\}$.
Hence, if the claim was not true, then for each hole $\mathcal{C}_j,\, j\leq m$, whose boundary contains limit points of a connected component of $L$, the monodromy along the curve $\mathcal{C}'_j$ (with any base point contained in $\mathcal{C}'_j$) that represents $\reallywidehat{e_{j,0}}$
would be trivial. Then all monodromies would be trivial, which contradicts the fact that the mapping is not contractible. The contradiction proves the claim.

With $j'$ and $j''$ being the numbers of the claim and $j'\leq m$ we consider the set $\mathcal{E}_2' \subset \mathcal{E}_2$  which consists of the following primitive elements:
$e_{j',0}$, the element $(e_{j'',0})^{-1}$ provided $j'' \neq m+1$, and  $e_{j',0}\, e_{j,0}$ for all $j=1,\ldots,m,\, j\neq j', j \neq j''$. The free homotopy class of each element of $\mathcal{E}_2'$  has intersection number $1$ with $L_0$ after suitable orientation of the curve $L_0$.
Each product of at most two different elements of $\mathcal{E}_2'$ is a primitive element of $\pi_1(X,q)$ and is contained in $\mathcal{E}_4$. Moreover, the intersection number with $L_0$ of the free homotopy class of each product of two different elements of $\mathcal{E}_2'$
equals $2$.
Each element of $\mathcal{E}$ is the product of at most two elements of $\mathcal{E}_2'\cup (\mathcal{E}_2')^{-1}$.

The proposition is proved for the case of planar domains $X$.
\hfill $\Box$

\medskip

\noindent {\bf Proof of Proposition \ref{propfin2}.}

\noindent {\bf 1.  A torus with a hole.} Suppose $X$ is a torus with a hole.
Consider the curve $L_0$ and the set $\mathcal{E}_2' \subset \pi_1(X,q_0)$ obtained in Proposition \ref{propfin2a}. For one of the functions $M_l \circ f$, denoted by $F$, the image $F (L_0)$ is contained in $(-1,1)$.
Let $e_0$, $e_0'$ be the two elements of $\mathcal{E}$.
Move the base point $q_0$ to a point $q\in L_0$ along a curve $\alpha$ in $X$, and consider the generators $e=\mbox{Is}_{\alpha} (e_0)$ and
$e'=\mbox{Is}_{\alpha} ({e'_0})$ of $ \pi_1(X,q)$, and the set $\mbox{Is}_{\alpha}(\mathcal{E}_2')\subset \pi_1(X,q)$. Then $e$ and $e'$ are products of at most two elements of $\mbox{Is}_{\alpha}(\mathcal{E}_2')$.
Since the free homotopy class of an element of $\pi_1(X,q_0)$ coincides with the free homotopy class of the element of $\pi_1(X,q)$ obtained by applying $\mbox{Is}_{\alpha}$,
the free homotopy class of each product of one or two elements of $\mbox{Is}_{\alpha}(\mathcal{E}_2')$ intersects $L_0$.
We may assume as in the proof of Proposition \ref{propfin2a} that  $\mbox{Is}_{\alpha}(\mathcal{E}_2')$ consists of the elements $e$ and $e''$, where $e''$ is either equal to ${e'}^{\pm 1}$, or equals the product of $e$ and $e'$.
Lemma \ref{lemfin3} applies to the pair $e$, $e''$,
the function $F$,
and the curve $L_0$.
Since $F$ is irreducible, the monodromies of $F$ along
$e$ and $e''$ are not powers of a single standard generator of the fundamental group of $\pi_1(\mathbb{C}\setminus \{-1,1\},q')$.
Hence, the monodromy along each of the $e$ and $e''$ is the product of at most two elements of $\mathcal{L}_-$ not exceeding $2 \pi\lambda_{e,e''}$. Therefore,
the monodromy of $F$ along each of the $e$ and $e''$ has $\mathcal{L}_-$ not exceeding $4 \pi\lambda_{e,e''}$. Notice that $\lambda_{e,e''}=\lambda_{e_0 e_0''}\leq \lambda_3(X)$, since $e_0''$ is the product of at most two factors, each an element of $\mathcal{E}\cup \mathcal{E}^{-1}$. Since $e'$ is the product of at most two different elements among the $e$ and $e''$ and their inverses, we obtain Proposition \ref{propfin2} for $e$ and $e'$, in particular  $\mathcal{L}_-(F_*(e))$ and $\mathcal{L}_-(F_*( e'))$ do not exceed $8\pi \lambda_3(X)$.
Proposition \ref{propfin2} is proved for tori with a hole.

\noindent {\bf 2. A planar domain.} Consider the curve $L_0$ and the set $\mathcal{E}_2'$ of Proposition \ref{propfin2a}.
Move the base point $q_0$ along an arc $\alpha$ to a point $q \in L_0$. Then $f(q) \in \mathbb{R}\setminus \{-1,1\}$ and for one of the mappings $f$, $M_1 \circ f$, or $M_2 \circ f$, denoted by $F$, the inclusion $F(L_0)\subset (-1,1))$ holds, hence, $q'\stackrel{def}=F(q)$ is contained in $(-1,1)$. Put  $e_j =  \mbox{Is}_{\alpha}(e_{j,0})$ for each element $e_{j,0}\in \mathcal{E}$. The $e_j$ form the basis $\mbox{Is}_{\alpha}(\mathcal{E})$ of $\pi_1(X,q)$. The set $\mbox{Is}_{\alpha}(\mathcal{E}_2')$ consists of primitive elements of $\pi_1(X,q)$ such that the free homotopy class of each product of one or two elements of $\mbox{Is}_{\alpha}(\mathcal{E}_2')$ intersects $L_0$. Moreover, each element of $\mbox{Is}_{\alpha}(\mathcal{E})$ is the product of one or two elements of  $\mbox{Is}_{\alpha}(\mathcal{E}_2') \cup (\mbox{Is}_{\alpha}(\mathcal{E}_2'))^{-1}$.

By the condition of the proposition not all monodromies $F_*(e),\, e \in \mbox{Is}_{\alpha}(\mathcal{E}_2'),$ are (trivial or non-trivial) powers of the same standard generator of $\pi_1(\mathbb{C}\setminus \{-1,1\},q')$.
Apply Lemma \ref{lemfin3} to all pairs of elements of $\mbox{Is}_{\alpha}(\mathcal{E}_2')$  whose monodromies are not (trivial or non-trivial) powers of the same standard generator of $\pi_1(\mathbb{C} \setminus \{-1,1\},q')$.
Since the product of at most two different elements of $\mbox{Is}_{\alpha}(\mathcal{E}_2')$ is contained in  $\mbox{Is}_{\alpha}(\mathcal{E}_4)$, Lemma \ref{lemfin3} shows that the monodromy $F_*(e)$ along each element  $e \in \mbox{Is}_{\alpha}(\mathcal{E}_2')$ is the product of at most two factors, each with $\mathcal{L}_-$ not exceeding $2 \pi \lambda_4(X)$.
Since each element of $\mbox{Is}_{\alpha}(\mathcal{E}) $ is a product of at most two factors in $\mathcal{E}_2' \cup (\mathcal{E}_2')^{-1}$,
the  monodromy $F_*(e_j)$ along each generator $e_j$ of $\pi_1(X,q)$ is the product of at most $4$ factors of $\mathcal{L}_-$ not exceeding  $2 \pi \lambda_4(X)$, and, hence, each monodromy $F_*(e_j)$ has $\mathcal{L}_-$ not exceeding $8 \pi \lambda_4(X)$.
Proposition \ref{propfin2} is proved for planar domains.

\noindent {\bf 3.1. The general case. Diagrams of coverings.}
We will use diagrams of coverings to reduce this case to the case of a torus with a hole or to the case of the Riemann sphere with three holes.

Let as before $\tilde{q}_0$ be the point in $\tilde X$ with ${\sf P}(\tilde{q}_0)=q_0$
chosen in Section \ref{sec:2.0}. Let $N$ be a subgroup of the fundamental group $\pi_1(X,q_0)$ and let $\omega^N:\tilde{X}\to \tilde{X}\diagup({\rm Is}^{\tilde{q}_0})^{-1}(N)=X(N)$ be the projection defined in Section \ref{sec:2.0}. Put $(q_0)_N \stackrel{def}= \omega^N(\tilde{q}_0)$. For an element $e_0\in N\subset \pi_1(X,q_0)$ we denote by $(e_0)_N$ the element of $\pi_1(X(N),(q_0)_N)$ that is obtained as follows. Take a curve $\gamma$ in $X$ with base point $q_0$ that represents $e_0\in N$. Let
$\tilde \gamma$ be its lift to $\tilde X$ with initial point $\tilde{q}_0$.
Then $\gamma_N\stackrel{def}=\omega^N(\tilde{\gamma})$ is a closed curve in $X(N)=\tilde{X}\diagup ({\rm Is}^{\tilde{q}_0})^{-1}(N) $ with base point $(q_0)_N$. The element of $\pi_1(X(N), (q_0)_N)$ represented by $\gamma_N$ is the required element $(e_0)_N$. All curves $\gamma'_N$  representing $(e_0)_N$ have the form $\omega^N(\tilde{\gamma}')$ for a curve $\tilde{\gamma}'$ in $\tilde X$ with  initial point $\tilde{q}_0$ and terminal point $({\rm Is}^{\tilde{q}_0})^{-1}(e_0)(\tilde{q}_0)$.
Since $\omega_N\circ\omega^N={\sf P}$, the curve $\omega_N(\gamma'_N)={\sf P}(\tilde{\gamma}')={\gamma}'$
represents $e_0$ for each curve $\gamma'_N$ in $X(N)$ that represents $(e_0)_N$. We obtain $(\omega_N)_*((e_0)_N)=e_0$. For two subgroups $N_1\leq N_2$ of $\pi_1(X,q_0)$ we obtain $(\omega^{N_2}_{N_1})_*((e_0)_{N_1})=(e_0)_{N_2}$, $e_0\in N_1$ (see the commutative diagram Figure \ref{fig3}). 

Let $\tilde q$ be another base point of $\tilde X$ and let $\tilde \alpha$
be a curve in $\tilde X$ with initial point $\tilde{q}_0$ and terminal point $\tilde q$. Let again $N$ be a subgroup of $\pi_1(X,q_0)$.  Put $q_N\stackrel{def}=\omega^N(\tilde{q})$.
The curve $\alpha_N=\omega^N(\tilde{\alpha})$ in $X(N)$, and the base point $\tilde q$ of $\tilde X$ are compatible, hence,
$X({\rm Is}_{\alpha_N}(N))\stackrel{def}=\tilde{X}\diagup ({\rm Is}^{\tilde{q}})^{-1}({\rm Is}_{\alpha_N}(N))=\tilde{X}\diagup ({\rm Is}^{\tilde{q}_0})^{-1}( N  )=X(N)$.

We will use the previous notation $\omega^{N_2}_{N_1}$ also for the projection
$$
\tilde{X}\diagup ({\rm Is}^{\tilde{q}})^{-1}({\rm Is}_{\alpha_{N_1}}(N_1))\to \tilde{X}\diagup ({\rm Is}^{\tilde{q}})^{-1}({\rm Is}_{\alpha_{N_2}}(N_2))\,,
$$
with $N_1\leq N_2$ being subgroups of $\pi_1(X,q_0)$ ($N_1$ may be the identity and $N_2$ may be $\pi_1(X,q_0)$.)

Put $\alpha\stackrel{def}={\sf P}(\tilde{\alpha})$. For an element $e_0\in \pi_1(X,q_0)$ we put $e\stackrel{def}={\rm Is}_{\alpha}(e_0)\in \pi_1(X,q)$ and denote by $e_N$ the element of $\pi_1(X(N),q_N)$, that is represented by $\omega^N(\tilde{\gamma})$ for a curve $\tilde \gamma$ in $\tilde X$ with initial point $\tilde q$ and projection ${\sf P}(\tilde{\gamma})$ representing $e$. Again $(\omega^{N_2}_{N_1})_*(e_{N_1})=e_{N_2}$ for subgroups $N_1\leq N_2$ of $\pi_1(X,q)$ and $e\in N_1$, in particular $( \omega_{N})_*(e_{N})=e$ for a subgroup $N$ of $\pi_1(X,q)$ and $e\in N$.

\noindent {\bf 3.2. The estimate for a chosen pair of monodromies.}
Since the mapping $f:X\to \mathbb{C}\setminus\{-1,1\}$ is irreducible, there exist two elements $e_0',\, e_0''\in \mathcal{E}\subset\pi_1(X,q_0)$ such that the monodromies $f_*(e_0')$ and $f_*(e_0'')$ are not powers of a single conjugate of a power of one of the elements $a_1$, $a_2$ or $a_1a_2$. The fundamental group of the Riemann surface $X(\langle e_0',e_0''\rangle)$ is a free group in the two generators $(e'_0)_{\langle e_0',e_0''\rangle}$ and $(e''_0)_{\langle e_0',e_0''\rangle}$, hence, $X(\langle e_0',e_0''\rangle)$ is either a torus with a hole or is equal to $\mathbb{P}^1$ with three holes. Moreover, the system $\mathcal{E}_{\langle e_0',e_0''\rangle}=\{(e'_0)_{\langle e_0',e_0''\rangle}, \,(e''_0)_{\langle e_0',e_0''\rangle}\}$  of generators of the fundamental group $\pi_1(X(\langle e_0',e_0''\rangle),(q_0)_{\langle e_0',e_0''\rangle})$ is associated to a standard bouquet of circles for $X(\langle e_0',e_0''\rangle)$.
This can be seen as follows. The set of generators $\mathcal{E}$ of $\pi_1(X,q_0)$ is associated to a standard bouquet of circles for $X$. For each $e_0\in \mathcal{E}$ we denote the circle of the bouquet that represents $e_0$ by $\gamma_{e_0}$.
For each $e_0\in\mathcal{E}$ we lift the circle $\gamma_{e_0}$ with base point $q_0$ to an arc $\widetilde{\gamma_{e_0}}$ in
$\tilde X$ with initial point
$\tilde{q_0}$. Let $D$ be a small disc in $X$ around $q_0$, and $\widetilde{D_0}$, $\widetilde{D_{e_0}}, e_0\in \mathcal{E},$ be the preimages of $D$ under the projection
${\sf P}:\tilde{X}\to X$,
that contain $\tilde{q_0}$, or the terminal point of $\widetilde{\gamma_{e_0}}$, respectively.
We assume that $D$ is small enough so that the mentioned preimages of $D$
are pairwise disjoint. Put $D_{\langle e'_0,e''_0\rangle}=\omega^{\langle e'_0,e''_0\rangle}(\widetilde{D_0})$.
For $e_0\neq e'_0,e''_0$
the image
$\omega^{\langle e'_0,e''_0\rangle}(\widetilde{D_0}\cup \widetilde{\gamma_{e_0}}\cup \widetilde{D_{e_0}} )$
is the union of an arc $\omega^{\langle e'_0,e''_0\rangle} (\widetilde{\gamma_{e_0}})$ in $X(\langle e'_0,e''_0\rangle)$ with two disjoint discs, each containing an endpoint of the arc and one of them equal to $D_{\langle e'_0,e''_0\rangle}$.
For $e_0=e'_0,e''_0$ the image $\omega^{\langle e',e''\rangle}(\widetilde{D_0}\cup \widetilde{\gamma_{e_0}}\cup \widetilde{D_{e_0}} )$ is the union of $D_{\langle e'_0,e''_0\rangle}$ with the loop $(\gamma_{e_0})_{\langle e'_0,e''_0\rangle}\stackrel{def}= \omega^{\langle e'_0,e''_0\rangle}( \widetilde{\gamma_{e_0}})$. For $e_0=e'_0,e''_0$ the loop $(\gamma_{e_0})_{\langle e'_0,e''_0\rangle}$ in $X(\langle e'_0,e''_0\rangle)$
has base point $(q_0)_{\langle e'_0,e''_0\rangle}=\omega^{\langle e'_0,e''_0\rangle}(\tilde{q_0})$
and represents the generator $(e_0)_{\langle e'_0,e''_0\rangle}$ of the fundamental group of $\pi_1(X(\langle e'_0,e''_0\rangle),(q_0)_{\langle e'_0,e''_0\rangle})$.

Since
the bouquet of circles $\cup_{e_0\in \mathcal{E}}\, \gamma_{e_0}$ is a standard bouquet of circles for $X$, the union $(\gamma_{e'_0})_{\langle e'_0,e''_0\rangle} \cup (\gamma_{e''_0})_{\langle e'_0,e''_0\rangle}$ is a standard bouquet of circles in $X(\langle e'_0,e''_0\rangle)$.  This can be seen by looking at the intersections of the loops with a circle that is contained in $D_{\langle e'_0,e''_0\rangle}$ and surrounds $(q_0)_{\langle e'_0,e''_0\rangle}$. By the commutative diagram of coverings the intersection behaviour is the same as for the images of these objects under $\omega_{\langle e'_0,e''_0\rangle}$.
Hence, since $(\gamma_{e'_0})_{\langle e'_0,e''_0\rangle}$ and $(\gamma_{e''_0})_{\langle e'_0,e''_0\rangle}$ represent the generators $({e'_0})_{\langle e'_0,e''_0\rangle}$ and  $({e''_0})_{\langle e'_0,e''_0\rangle}$ of $\mathcal{E}_{\langle e_0',e_0''\rangle}$, the
union $(\gamma_{e'_0})_{\langle e'_0,e''_0\rangle} \cup (\gamma_{e''_0})_{\langle e'_0,e''_0\rangle}$ is a standard bouquet of circles for $X(\langle e'_0,e''_0\rangle)$.

The set $X(\langle e_0',e_0''\rangle)$ is either a torus with a hole or is equal to $\mathbb{P}^1$ with three holes.
Apply Proposition \ref{propfin2a} to the Riemann surface $X(\langle e_0',e_0''\rangle)$ with base point $(q_0)_{\langle e'_0,e''_0\rangle}$, the holomorphic mapping $f_{\langle e_0',e_0''\rangle}=f\circ\omega_{\langle e_0',e_0''\rangle}$ into $\mathbb{C}\setminus \{-1,1\}$, and the set of generators $\mathcal{E}_{\langle e_0',e_0''\rangle}$ of the fundamental group $\pi_1(X(\langle e'_0,e''_0\rangle),(q_0)_{\langle e'_0,e''_0\rangle})$. We obtain a relatively closed curve $L_{\langle e_0',e_0''\rangle}$ on which the function $f_{\langle e_0',e_0''\rangle}$ is real,
and a set $(\mathcal{E}_{\langle e_0',e_0''\rangle})_2'=\{({\sf e}_0')_{\langle e_0',e_0''\rangle},({\sf e}_0'')_{\langle e_0',e_0''\rangle}\}$ which contains one of the elements of $\mathcal{E}_{\langle e_0',e_0''\rangle}$. The second element of  $(\mathcal{E}_{\langle e_0',e_0''\rangle})_2'$ is either equal to the second element of $\mathcal{E}_{\langle e_0',e_0''\rangle}$ or to its inverse, or to the product of the two elements (in any order) of $\mathcal{E}_{\langle e_0',e_0''\rangle}$.  (We will usually refer to the product $ ({ e}_0')_{\langle e_0',e_0''\rangle}\,({e}_0'') _{\langle e_0',e_0''\rangle}=({ e}_0'\,{ e}_0'')_{\langle e_0',e_0''\rangle} $,
but we may change the product $({ e}_0'\,{ e}_0'')_{\langle e_0',e_0''\rangle} $ to the product $({ e}_0''\,{ e}_0')_{\langle e_0',e_0''\rangle} $, without changing
the estimate of the $\mathcal{L}_-$ of the monodromies of the elements of $\mathcal{E}_2'$.)
The free homotopy classes $\reallywidehat{ ({\sf e}_0')_{\langle e_0',e_0''\rangle}}$, $\reallywidehat{ ({\sf e}_0'')_{\langle e_0',e_0''\rangle}}$, and $\reallywidehat{ ({\sf e}_0')_{\langle e_0',e_0''\rangle}\,({\sf e}_0'') _{\langle e_0',e_0''\rangle}}=\reallywidehat{({\sf e}_0'\,{\sf e}_0'')_{\langle e_0',e_0''\rangle}} $ intersect $L_{\langle e_0',e_0''\rangle}$.

Choose a point $q_{\langle e_0',e_0''\rangle}\in L_{\langle e_0',e_0''\rangle}$ and a point $\tilde{q}\in\tilde{X}$ with $\omega^{\langle e_0',e_0''\rangle}(\tilde{q})= q_{\langle e_0',e_0''\rangle}$. Let $\tilde{\alpha}$ be a curve in $\tilde X$ with initial point $\tilde{q}_0$ and terminal point $\tilde q$, and $\alpha_{\langle e_0',e_0''\rangle}= \omega^{\langle e_0',e_0''\rangle}(\tilde{\alpha})$. Put ${\sf e}'_{\langle e_0',e_0''\rangle}={\rm Is}_{\alpha_{\langle e_0',e_0''\rangle}}(({\sf e}_0')_{\langle e_0',e_0''\rangle})$ and ${\sf e}''_{\langle e_0',e_0''\rangle}={\rm Is}_{\alpha_{\langle e_0',e_0''\rangle}}(({\sf e}_0'')_{\langle e_0',e_0''\rangle})$.
For one out of three M\"obius transformations $M_l$ the mapping $F_{\langle e_0',e_0''\rangle}=M_l\circ f_{\langle e_0',e_0''\rangle}=M_l\circ f\circ\omega_{\langle e_0',e_0''\rangle}$ takes $L_{\langle e_0',e_0''\rangle}$ to $(-1,1)$, and hence $F_{\langle e_0',e_0''\rangle}$
takes a value $q'=F_{\langle e_0',e_0''\rangle}(q_{\langle e_0',e_0''\rangle})\in (-1,1)$ at $q_{\langle e_0',e_0''\rangle}$. By Lemma \ref{lemfin3} each of the $(F_{\langle e_0',e_0''\rangle})_*({\sf e}'_{\langle e_0',e_0''\rangle})$ and $(F_{\langle e_0',e_0''\rangle})_*({\sf e}''_{\langle e_0',e_0''\rangle})$ is the product of at most two elements of  $\pi_1(\mathbb{C}\setminus\{-1,1\},q')$     of $\mathcal{L}_-$ not exceeding $2\pi \lambda_3(X(\langle e_0',e_0''\rangle))$, hence,
\begin{align*}
\mathcal{L}_-&((F_{\langle e_0',e_0''\rangle})_*({\sf e}'_{\langle e_0',e_0''\rangle}))\leq 4\pi\lambda_3(X(\langle e_0',e_0''\rangle)),\\
\mathcal{L}_-&((F_{\langle e_0',e_0''\rangle})_*({\sf e}''_{\langle e_0',e_0''\rangle}))\leq 4\pi\lambda_3(X(\langle e_0',e_0''\rangle))\,.
\end{align*}
It follows that each of the $(F_{\langle e_0',e_0''\rangle})_*({ e}'_{\langle e_0',e_0''\rangle})$ and $(F_{\langle e_0',e_0''\rangle})_*({ e}''_{\langle e_0',e_0''\rangle})$ is the product of at most four elements of  $\pi_1(\mathbb{C}\setminus\{-1,1\},q')$     of $\mathcal{L}_-$ not exceeding $2\pi \lambda_3(X(\langle e_0',e_0''\rangle) )$, hence,
\begin{align*}
\mathcal{L}_-&((F_{\langle e_0',e_0''\rangle})_*({ e}'_{\langle e_0',e_0''\rangle}))\leq 8\pi\lambda_3(X(\langle e_0',e_0''\rangle)),\\
\mathcal{L}_-&((F_{\langle e_0',e_0''\rangle})_*({ e}''_{\langle e_0',e_0''\rangle}))\leq 8\pi \lambda_3(X(\langle e_0',e_0''\rangle))\,.
\end{align*}
It remains to take into account that
for a subgroup $N$ of $\pi_1(X,q_0)$
the equation $(F_N)_*({{ e}}_N)=F_*({e})$ holds for each ${e}\in {\rm Is}_{\alpha}(N)$, and
$\lambda_j(X(N))\leq \lambda_j(X)$ for each natural number $j$.

\noindent {\bf 3.3. Other generators. Intersection of free homotopy classes with a component of the zero set.}
Take any element $e\in {\rm Is}_{\alpha}(\mathcal{E})$  that is not in $\langle e',e''\rangle$.
Then the monodromy $F_*(e)$ is either equal to the identity, or
one of the pairs  $(F_*(e),F_*({\sf e}'))  $ or  $(F_*(e),F_*({\sf e}''))  $ consists of two elements of $\pi_1(\mathbb{C}\setminus\{-1,1\},q')$ that are not powers of the same standard generator $a_j$, $j=1$ or $2$. We assume that the second option holds.
Interchanging if necessary ${\sf e}'$ and ${\sf e}''$, we may suppose this option holds for the pair
$(F_*(e),F_*({\sf e}'))  $. Moreover, changing if necessary ${\sf e}'$ to its inverse $({\sf e}')^{-1}$, we may assume that ${\sf e}'$ is either an element of ${\rm Is}_{\alpha}(\mathcal{E})$ or it is a product of two elements of ${\rm Is}_{\alpha}(\mathcal{E})$. The quotient $X(\langle e,{\sf e}'\rangle)=\tilde{X}\diagup ({\rm Is}^{\tilde q})^{-1}(\langle e, {\sf e}'\rangle)$ is a Riemann surface whose fundamental group is a free group in two generators. Hence  $X(\langle e,{\sf  e}'\rangle)$ is either a torus with a hole or is equal to $\mathbb{P}^1$ with three holes.

We consider a diagram of coverings as follows. Let first $X(\langle {\sf  e}'\rangle)= \tilde{X}\diagup ({\rm Is}^{\tilde q})^{-1}(\langle {\sf e}'\rangle)$ be the annulus
with base point $q_{\langle {\sf e}'\rangle}=\omega^{\langle {\sf e}'\rangle}(\tilde{q})$,    that admits a mapping $\omega_{\langle {\sf e}'\rangle}: X(\langle {\sf  e}'\rangle)\to X$ that represents ${\sf  e}'$. By Lemma \ref{lemfin1} with $X$ replaced by $X(\langle {\sf e}', {\sf e}''\rangle)$
the connected component $L_ {\langle {\sf e}'\rangle}$ of $(\omega_{\langle {\sf e}'\rangle}^{\langle {\sf e}', {\sf e}''\rangle}    )^{-1}(L_{\langle {\sf e}', {\sf e}''\rangle})$ that contains $q_{\langle {\sf e}'\rangle}=\omega^{\langle {\sf e}'\rangle}(\tilde{q})$ is a relatively closed curve in $X(\langle {\sf  e}'\rangle)$ with limit points on both boundary components. The free homotopy class of the generator ${\sf  e}'_{\langle {\sf e}'\rangle}$ of $\pi_1(X(\langle {\sf e}'\rangle),q_{\langle {\sf e}'\rangle})$   intersects $L_ {\langle {\sf e}'\rangle}$. The mapping $F_{\langle {\sf e}'\rangle}=M_l \circ f\circ \omega_{\langle {\sf e}'\rangle} $ maps  $L_ {\langle {\sf e}'\rangle}$ into $(-1,1)$, and $F_{\langle {\sf e}'\rangle}(q_{\langle {\sf e}'\rangle})= F\circ{\sf P}(\tilde{q})=q'$.

Next we consider the quotient $X(\langle {\sf  e}', e \rangle)= \tilde{X}\diagup ({\rm Is}^{\tilde q})^{-1}(\langle {\sf e}',e\rangle)$ whose fundamental group is again a free group in two generators. The image $L_{\langle {\sf e}', e\rangle}\stackrel{def}= \omega_{\langle {\sf e}'\rangle}^{\langle {\sf e}', e\rangle}(L_{\langle {\sf e}'\rangle})$ is a connected component of the preimage of $(-1,1)$ under $ F_{\langle {\sf e}', e\rangle}=F\circ \omega_{\langle {\sf e}', e\rangle}$. Indeed, $L_{\langle {\sf e}', e\rangle}$ is connected as image of a connected set under a continuous mapping, and $F_{\langle {\sf e}', e\rangle}(L_{\langle {\sf e}', e\rangle})=
F_{\langle {\sf e}', e\rangle}(\omega_{\langle {\sf e}'\rangle}^{\langle {\sf e}', e\rangle}(L_{\langle {\sf e}'\rangle}))= F\circ \omega_{\langle {\sf e}',e\rangle}\circ
\omega_{\langle {\sf e}'\rangle}^{\langle {\sf e}', e\rangle}(L_{\langle {\sf e}'\rangle})= F_{\langle {\sf e}'\rangle}(L_{\langle {\sf e}'\rangle})\subset (-1,1)$. Moreover, since the mapping $\omega_{\langle {\sf e}'\rangle}^{\langle {\sf e}', e\rangle}:X({\langle {\sf e}'\rangle})\to X({\langle {\sf e}',e\rangle})$ is a covering, its restriction $({\rm Im}F\circ \omega_{\langle {\sf e}'\rangle})^{-1}(0)\to ({\rm Im}F\circ \omega_{\langle {\sf e}', e\rangle})^{-1}(0)$  is a covering. Hence the image under $\omega_{\langle {\sf e}'\rangle}^{\langle {\sf e}', e\rangle}$ of a connected component of $({\rm Im}F\circ \omega_{\langle {\sf e}'\rangle})^{-1}(0)$
is open and closed in $({\rm Im}F\circ \omega_{\langle {\sf e}', e\rangle})^{-1}(0)$. Hence, $L_{\langle {\sf e}', e\rangle}= \omega_{\langle {\sf e}'\rangle}^{\langle {\sf e}', e\rangle}(L_{\langle {\sf e}'\rangle})$  is a connected component of the preimage of $(-1,1)$ under $ F_{\langle {\sf e}', e\rangle}$.
Put $q_{\langle {\sf e}', e\rangle}=\omega_{\langle {\sf e}'\rangle}^{\langle {\sf e}', e\rangle}(q_{\langle {\sf e}'\rangle})=\omega_{\langle {\sf e}'\rangle}^{\langle {\sf e}', e\rangle}\circ \omega^{\langle {\sf e}'\rangle}(\tilde{q})=\omega^{\langle {\sf e}', e\rangle}(\tilde{q})$. Note that
$F_{\langle {\sf e}', e\rangle}(q_{\langle {\sf e}', e\rangle})=F\circ\omega_{\langle {\sf e}', e\rangle}(q_{\langle {\sf e}', e\rangle})=
F(q)=q'$.

We prove now that
the free homotopy class $\reallywidehat{{\sf e}'_{\langle {\sf e}',e \rangle}}$
in $X(\langle {\sf e}',e \rangle)$
that is related to ${\sf e}'$  intersects $L_{\langle {\sf e}',e \rangle} $. Consider any loop $\gamma'_{\langle{\sf e}',e  \rangle}$ in $X(\langle {\sf e}',e \rangle)$ with some base point $q'_{\langle {\sf e}',e \rangle}$, that represents $\reallywidehat{{\sf e}'_{\langle {\sf e}',e \rangle  }}$.
There exists a loop ${\gamma}'_{\langle {\sf e}' \rangle}$ in $ X(\langle {\sf e}' \rangle)$ which represents $\reallywidehat{{\sf e}'_{\langle{\sf e}' \rangle}}$ such that $\omega_{\langle {\sf e}'\rangle}^{\langle {\sf e}',e\rangle  }({\gamma}'_{\langle {\sf e}'\rangle})=\gamma'_{\langle {\sf e}',e\rangle}$.
Such a curve ${\gamma}'_{\langle {\sf e}' \rangle}$ can be obtained as follows. There is a loop $\gamma''_{\langle {\sf e}',e\rangle}$ in $X(\langle {\sf e}',e\rangle  )$    with base point ${q}_{\langle {\sf e}',e\rangle}$ that represents $({\sf e}'  )_{\langle {\sf e}',e\rangle}$,  and a curve $\alpha'_{\langle {\sf e}',e\rangle}$ in $X(\langle{\sf e}',e\rangle)$
with initial point ${q}_{\langle {\sf e}',e\rangle}$ and terminal point  ${q}'_{\langle {\sf e}',e\rangle}$, such that $\gamma'_{\langle {\sf e}',e\rangle}$ is homotopic with fixed endpoints to $(\alpha'_{\langle {\sf e}',e\rangle})^{-1}\, \gamma''_{\langle{\sf e}',e\rangle}\,    \alpha'_{\langle {\sf e}',e\rangle}$. Consider the lift $\tilde{\gamma}''$ of $\gamma''_{\langle {\sf e}',e\rangle}$ to $\tilde X$ with initial point $\tilde{q}$, and the lift $\tilde{\alpha}'$ of $\alpha'_{\langle {\sf e}',e\rangle}$ with initial point $\tilde{q}$. The terminal point of $\tilde{\gamma}''_{\langle {\sf e}',e\rangle}$ equals $\sigma(\tilde{q})$ for the covering transformation $\sigma= ({\rm Is}^{\tilde{q}})^{-1}({\sf e}' )=
({\rm Is}^{\tilde{q}_0})^{-1}({\sf e}' _0)$. (See equation \eqref{eq1''}.)
The terminal point of the curve $(\tilde{\alpha}')^{-1} \tilde{\gamma}''_{\langle {\sf e}',e\rangle}\sigma(\tilde{\alpha}')$ is obtained from its initial point by applying the covering transformation $\sigma$. Hence,
$\omega^{\langle {\sf e}'\rangle}((\tilde{\alpha}')^{-1} \tilde{\gamma}''_{\langle {\sf e}',e\rangle}\sigma(\tilde{\alpha}'))$
is a closed curve in $X(\langle{\sf e}'  \rangle )$ that represents $\reallywidehat{{\sf e}' _{\langle {\sf e}'\rangle}}$ and projects to $(\alpha'_{\langle {\sf e}',e\rangle})^{-1}\, \gamma''_{\langle {\sf e}',e\rangle}\,    \alpha'_{\langle {\sf e}',e\rangle}$ under $\omega_{\langle {\sf e}' \rangle}^{\langle {\sf e}',e\rangle  }$.
Since $\gamma'_{\langle {\sf e}',e  \rangle}$ is homotopic to $(\alpha'_{\langle {\sf e}',e \rangle})^{-1}\, \gamma''_{\langle {\sf e}',e\rangle}\,    \alpha'_{\langle {\sf e}',e\rangle}$ with fixed base point, it also has a lift to $X(\langle {\sf e}'\rangle)$ which represents $\reallywidehat{{\sf e}' _{\langle {\sf e}'\rangle}}$.

Since $\reallywidehat{{\sf e}'_{\langle  {\sf e}'  \rangle}}$ intersects $L_{\langle {\sf e}' \rangle}$,
the loop
${\gamma}'_{\langle {\sf e}' \rangle}$ has an intersection point $p'_{\langle {\sf e}' \rangle}$ with $L_{\langle {\sf e}'\rangle}$. The point $p'_{\langle {\sf e}',e \rangle  }=  \omega_{\langle {\sf e}'\rangle}^{\langle {\sf e}',e\rangle  }(p'_{\langle {\sf e}'\rangle}    )$ is contained in $\gamma'_{\langle {\sf e}',e \rangle  }$ and in $L_{\langle {\sf e}',e\rangle}$.
We proved that the free homotopy class $\reallywidehat{{\sf e}'_{\langle {\sf e}',e\rangle } }$     in $X(\langle {\sf e}',e \rangle  )$ intersects $L_{\langle {\sf e}',e\rangle  }$.

\noindent {\bf 3.4. A system of generators associated to a standard bouquet of circles.}
We claim that
the system of generators  $\;{\sf e}'_{\langle {\sf e}',e\rangle }, \;\; e_{\langle {\sf e}',e\rangle }\;$ of $\;\pi_1(X(\langle {\sf e}',e\rangle),q_{\langle {\sf e}',e\rangle })$ is associated to a standard bouquet of circles for $X(\langle {\sf e}',e\rangle)$. If ${\sf e}'\in \mathcal{E}$ the claim can be obtained as in paragraph 3.2.

Suppose ${\sf e}'=e'e''$ for $e', e''\in \mathcal{E}$. Consider the system $\mathcal{E}'$ of generators of $\pi_1(X,q)$ that is obtained from $\mathcal{E}$ by replacing $e'$ by $e'e''$.
If $e'$ and $e''$ correspond to a handle of $X$, then $\mathcal{E}'$
is associated to a part of a standard bouquet of circles for $X$, see
Figure 4a for the case when $e'$ is represented by an $\alpha$-curve and $e''$ is represented by a $\beta$-curve. The situation when $e'$ is represented by a $\beta$-curve and $e''$ is represented by an $\alpha$-curve is similar. The claim is obtained as in paragraph 3.2. In this case $X(\langle {\sf e}',e\rangle)$ is a torus with a hole.

\begin{figure}[h]
\begin{center}
\includegraphics[width=10cm]{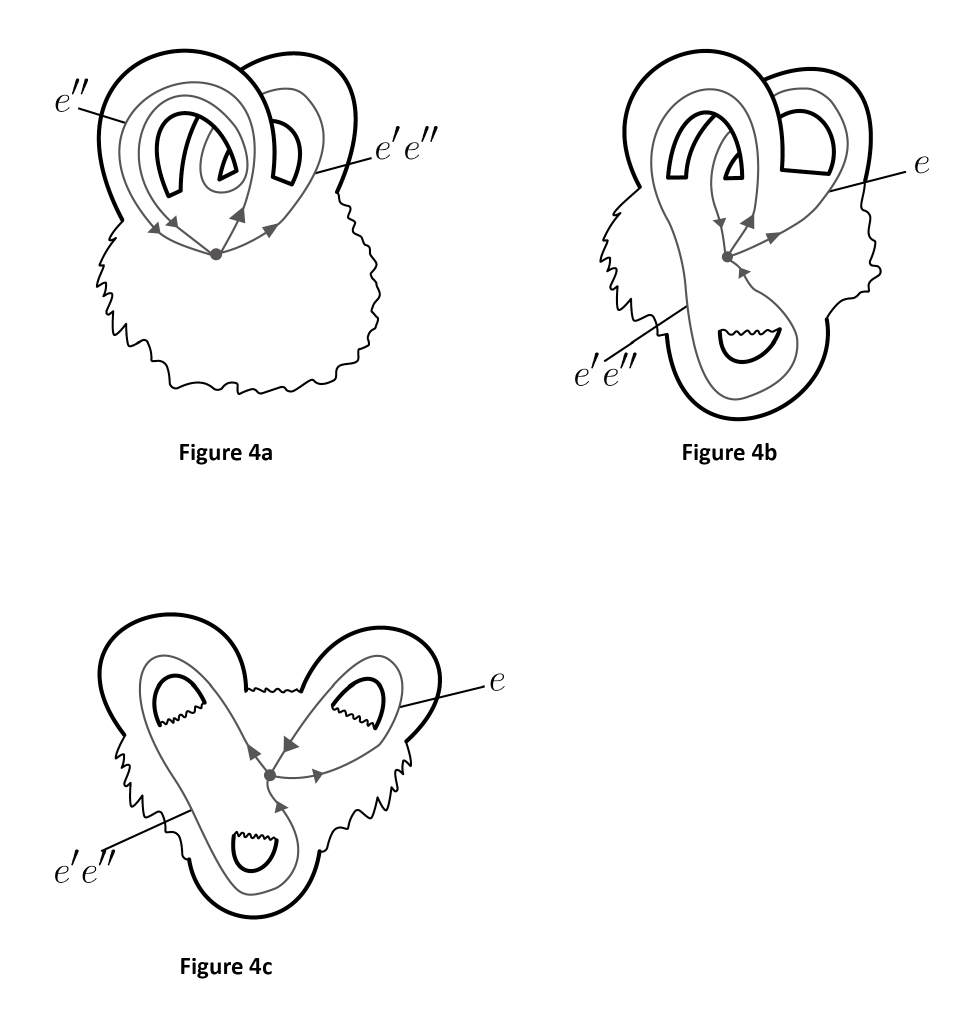}
\end{center}
\caption{Standard bouquets of circles}\label{fig4}
\end{figure}

Suppose one of the pairs $(e,e')$ or $(e,e'')$ corresponds to a handle of $X$.
We assume that $e$ corresponds to an $\alpha$-curve and $e'$ corresponds to a $\beta$-curve
of a handle of $X$
(see Figure 4b). The remaining cases are treated similarly, maybe, after replacing $e'e''$ by $e''e'$ (see paragraph 3.2). With our assumption $\mathcal{E}'$ is
not associated to a standard bouquet of circles for $X$. Nevertheless, the pair $(e_{\langle {\sf e}',e\rangle }, {\sf e}'_{\langle {\sf e}',e\rangle })$ with ${\sf e}'=e'e''$ is associated to a standard bouquet of circles for $X(\langle {\sf e}',e\rangle)$. This can be seen as before. Consider a (non-standard) bouquet of circles in $X$ corresponding to $\mathcal{E}'$ and take its union with a disc $D$ around $q$. Lift this set to $\tilde X$. We obtain
the union of a collection of arcs in $\tilde X$ with initial point $\tilde q$, with a collection of discs in $\tilde X$ around $\tilde q$ and around the terminal points of the arcs. Take the union of the arcs and the discs.  The image in $X(\langle {\sf e}',e\rangle)$ of this union under the projection $\omega^{\langle {\sf e}',e\rangle }$ is the union of the two loops $(\gamma_{e})_{\langle {\sf e}',e\rangle}\cup (\gamma_{{\sf e}'})_{\langle {\sf e}',e\rangle}$, the disc $D_{\langle {\sf e}',e\rangle }$
and a set, that is contractible to $D_{\langle {\sf e}',e\rangle }$. Looking at the intersection of the two loops with a small circle contained in  $D_{\langle {\sf e}',e\rangle }$ and surrounding $q_{\langle {\sf e}',e\rangle }$, we see as before that
$(\gamma_{e})_{\langle {\sf e}',e\rangle}\cup (\gamma_{{\sf e'}})_{\langle {\sf e}',e\rangle}$ is a standard bouquet of circles for $X(\langle {\sf e}',e\rangle)$. In this case $X(\langle {\sf e}',e\rangle)$ is a torus with a hole.

In the remaining case no pair of generators among $e$, $e'$, and $e''$ corresponds to a handle.
In this case again $\mathcal{E}'$ does not correspond to a standard bouquet of circles for $X$. But $\{e_{\langle {\sf e}',e\rangle } , (e'e'')_{\langle {\sf e}',e\rangle }\}$ (maybe, after changing $e'e''$ to $e''e'$)
corresponds to a standard bouquet of circles for $X(\langle {\sf e}',e\rangle )$. (See Figure 4c for the case when walking along a small circle around $q$ counterclockwise, we meet the incoming and outgoing rays of representatives of the three elements of $\mathcal{E}$ in the order $e,e',e''$. If the order is different the situation is similar, maybe, after replacing $e'e''$ by $e''e'$.) In this case $X(\langle {\sf e}',e\rangle )$ is a planar domain.

\noindent{\bf 3.5. End of the proof.}
Consider first the case when $X(\langle {\sf e}',e \rangle  )$ is a torus with a hole.
Since $\reallywidehat{{\sf e}'_{\langle {\sf e}',e \rangle }}$ intersects $L_{\langle {\sf e}',e \rangle }$, we see as in the proof when $X$ itself is a torus with a hole, that the curve $L_{\langle {\sf e}',e \rangle }$ cannot be contractible or contractible to the hole, and
the intersection number must be different from zero. Then the intersection number with $L_{\langle {\sf e}',e \rangle }$ of the free homotopy class of one of the choices $e_{\langle {\sf e}',e \rangle }^{\pm 1}$ or $({\sf e}'e)_{\langle {\sf e}',e \rangle }$, denoted by  ${\sf e}'''_{\langle {\sf e}',e \rangle }$, is not zero and has the same sign. By Lemma \ref{lemfin3} each of the
$(F_{\langle {\sf e}',e \rangle })_*({\sf e}'_{\langle {\sf e}',e \rangle })$ and $(F_{\langle {\sf e}',e \rangle })_*({\sf e}'''_{\langle {\sf e}',e \rangle })$ is the product of at most two elements of $\pi_1(\mathbb{C}\setminus\{-1,1\},q')$
with $\mathcal{L}_-$ not exceeding
\begin{equation}\label{eqfin11}
2\pi\lambda_{{\sf e}'_{\langle {\sf e}',e \rangle },{\sf e}'''_{\langle {\sf e}',e \rangle }}\leq 2\pi \lambda_5(X),
\end{equation}
since ${\sf e}'$ is the product of at most two elements of $\mathcal{E}\cup \mathcal{E}^{-1}$ and ${\sf e}'''$ is the product of at most three elements of $\mathcal{E}\cup \mathcal{E}^{-1}$.
The element $e$ is the product of at most two different elements among the ${\sf e}'$ and ${\sf e}'''$ or their inverses. Hence, the monodromy $F_*(e)=(F _{\langle {\sf e}',e \rangle })_*(e_{\langle {\sf e}',e \rangle })$ is the product of at most four elements with $\mathcal{L}_-$ not exceeding \eqref{eqfin11}.
Hence,
\begin{equation}\label{eqfin12}
F_*(e)\leq 8\pi \lambda_5(X)\,.
\end{equation}

Consider now the case when $X(\langle {\sf e}',e \rangle)$ equals $\mathbb{P}^1$ with three holes. Since ${\sf e}'_{\langle {\sf e}',e \rangle}$ and $e_{\langle {\sf e}',e \rangle}$ correspond to a standard bouquet of circles for $X(\langle {\sf e}',e \rangle)$, the curves representing ${\sf e}'_{\langle {\sf e}',e \rangle}$ surround counterclockwise one of the holes, denoted by  $\mathcal{C}'$ , and the curves representing $e_{\langle {\sf e}',e \rangle}$ surround counterclockwise another hole, denoted by  $\mathcal{C}''$.
After applying a M\"obius transformation we may assume that the remaining hole, denoted by $\mathcal{C}_{\infty}$, contains the point $\infty$.
There are several possibilities for the behaviour of the curve $L_{\langle {\sf e}',e \rangle}$.
Since $\reallywidehat{{\sf e}'_{\langle {\sf e}',e \rangle}}$ intersects $L_{\langle {\sf e}',e \rangle}$, the curve $L_{\langle {\sf e}',e \rangle}$ must have limit points on $\mathcal{C}'$. The first possibility is that $L_{\langle {\sf e}',e \rangle}$ has limit points on $\partial\mathcal{C}'$ and $\partial\mathcal{C}''$, the second possibility is, $L_{\langle {\sf e}',e \rangle}$
has limit points on $\mathcal{C}'$ and $\mathcal{C}_{\infty}$, the third possibility is, $L_{\langle {\sf e}',e \rangle}$ has all limit points on $\mathcal{C}'$, and $\mathcal{C}''$ is contained in the bounded connected component of $\mathbb{C}\setminus ( L_{\langle {\sf e}',e \rangle}\cup \mathcal{C}')$.

In the first case the free homotopy classes $\reallywidehat{{\sf e}'_{\langle {\sf e}',e \rangle}}$ and $\reallywidehat{e_{\langle {\sf e}',e \rangle}^{-1}}$ have positive intersection number with the suitably oriented curve $L_{\langle {\sf e}',e \rangle}$. In the second case the free homotopy classes $\reallywidehat{{\sf e}'_{\langle {\sf e}',e \rangle}}$ and $\reallywidehat{ ({{\sf e}'e})_{\langle {\sf e}',e \rangle} }$ have positive intersection number with the suitably oriented curve $L_{\langle {\sf e}',e \rangle}$. In the third case the free homotopy classes of ${\sf e}'_{\langle {\sf e}',e \rangle}$,  $ ({{\sf e}'}^2 e)_{\langle {\sf e}',e \rangle} $ and of their product intersect $L_{\langle {\sf e}',e \rangle}$. The first two cases were treated in paragraph 2 of this section.
The statement concerning the third case is proved as follows.

Any curve that is contained in the complement of $\mathcal{C}'\cup L_{\langle {\sf e}',e \rangle}    $ has either winding number zero around $\mathcal{C}'$ (as a curve in the complex plane $\mathbb{C}$), or its winding number around $\mathcal{C}'$ coincides with the winding number around $\mathcal{C}''$.
On the other hand the representatives of the free homotopy class of ${\sf e}'_{\langle {\sf e}',e \rangle} $
have winding number $1$ around $\mathcal{C}'$ and winding number $0$ around $\mathcal{C}''$.
The representatives of the free homotopy class of $({{\sf e}'}^2 e)_{\langle {\sf e}',e \rangle}$ have winding number $2$ around $\mathcal{C}'$, and  winding number $1$ around $\mathcal{C}''$. Hence,
${\sf e}'_{\langle {\sf e}',e \rangle} $ and
 $({{\sf e}'}^2 e)_{\langle {\sf e}',e \rangle}$ intersect $L_{\langle {\sf e}',e \rangle}$.
By the same argument the free homotopy class of the product of ${\sf e}'_{\langle {\sf e}',e \rangle}$ and $({{\sf e}'}^2 e)_{\langle {\sf e}',e \rangle}$ intersects $L_{\langle {\sf e}',e \rangle}$.

We let ${\sf e}'''_{\langle {\sf e}',e \rangle}$ be equal to $e_{\langle {\sf e}',e \rangle}^{-1}$ in the first case, equal to $ ({{\sf e}'e})_{\langle {\sf e}',e \rangle} $ in the second case, and equal to $({{\sf e}'}^2 e)_{\langle {\sf e}',e \rangle}$ in the third case. Then $e$ is the product of at most three elements among the $({\sf e}')^{\pm 1}$ and $({\sf e}''')^{\pm 1}$.

By Lemma \ref{lemfin3} each of the
$(F_{\langle {\sf e}',e \rangle })_*({\sf e}'_{\langle {\sf e}',e \rangle })$ and $(F_{\langle {\sf e}',e \rangle })_*({\sf e}'''_{\langle {\sf e}',e \rangle })$ is the product of at most two elements of $\pi_1(\mathbb{C}\setminus\{-1,1\},q')$
with $\mathcal{L}_-$ not exceeding
\begin{equation}\label{eqfin13}
2\pi\lambda_{{\sf e}'_{\langle {\sf e}',e \rangle },{\sf e}'''_{\langle {\sf e}',e \rangle }}\leq 2\pi \lambda_7(X),
\end{equation}
We used that ${\sf e}'$ is the product of at most two elements of $\mathcal{E}\cup \mathcal{E}^{-1}$, $e\in \mathcal{E}\cup \mathcal{E}^{-1}$ and ${\sf e}'''$ is the product of at most five elements of $\mathcal{E}\cup \mathcal{E}^{-1}$.
Since $e$ is the product of at most three elements among the $({\sf e}')^{\pm 1}$ and $({\sf e}''')^{\pm 1}$, the monodromy $F_*(e)=(F _{\langle {\sf e}',e \rangle })_*(e_{\langle {\sf e}',e \rangle })$ is the product of at most six elements with $\mathcal{L}_-$ not exceeding \eqref{eqfin13}.
Hence,
\begin{equation}\label{eqfin12'}
F_*(e)\leq 12\pi \lambda_7(X)\,.
\end{equation}
The proposition is proved.
\hfill $\Box$

\section{($\sf{g},\sf{m}$)-bundles over Riemann surfaces}\label{sec:fin3}

Theorem \ref{thmfin2} is a consequence of the following theorem on $(0,3)$-bundles with a section.

\begin{thm}\label{thmfin3}
Over a connected Riemann surface of genus $g$ with $m+1$ holes there are up to isotopy no more than $(15 \exp( 6 \pi \lambda_{10}(X)))^{6(2g+m)}$
irreducible holomorphic $(0,3)$-bundles
with a holomorphic section.
\end{thm}

Theorem \ref{thmfin1} (with a weaker estimate) is a consequence of Theorem \ref{thmfin3}. Indeed, consider holomorphic (smooth, respectively) bundles whose fiber over each point $x\in X$ equals $\mathbb{P}^1$ with set of distinguished points $\{-1,1,f(x),\infty\}$ for a function $f$ which depends holomorphically (smoothly, respectively) on the points $x\in X$ and does not take the values $-1$ and $1$. Then we are in the situation of Theorem \ref{thmfin3}. The mapping $f$ is reducible, iff the bundle is reducible (see also Lemma \ref{lemGrom1'}).

The relation between Theorems \ref{thmfin2} and  \ref{thmfin3} will be obtained by representing families of $(1,1)$-bundles as double branched coverings of special $(0,4)$-bundles (see Definition \ref{defnEl5}) and using
Proposition \ref{propEl.2}.

\smallskip

\noindent {\bf Preparation of the proof of Theorem \ref{thmfin3}.}
The proof of Theorem \ref{thmfin3} will go now along the same lines as the proof of Theorem \ref{thmfin1} with some modifications. Recall that the set
\begin{align*}
\mathcal{H}= & \{ \{z_1,z_2,z_3\} \in C_3(\mathbb{C})\diagup \mathcal{S}_3: \mbox{the three points}\; z_1,z_2, z_3  \nonumber\\
& \mbox{are contained in a real line in the complex plane}\}
\end{align*}
is a smooth real hypersurface of $C_3(\mathbb{C})\diagup \mathcal{S}_3$.

As before, for each complex affine self-mapping $M$ of the complex plane we consider the diagonal action $\;M\big((z_1,z_2,z_3)\big)=\big(M(z_1),M(z_2),M(z_3)\big)\;$ on points $\,(z_1,z_2,z_3)\in C_3(\mathbb{C})\,$, and the diagonal action $\;M\big(\{z_1,z_2,z_3\}\big)=\{M(z_1),M(z_2),M(z_3)\}\;$ on points $\{z_1,z_2,z_3\}\in C_3(\mathbb{C})\diagup \mathcal{S}_3$.

The following two lemmas replace Lemma \ref{lemfin2} in the case of
$(0,3)$-bundles with a section.

\begin{lemm}\label{lemfin4}
Let $A$ be an annulus with an orientation of simple closed dividing curves and $F:A \to C_3(\mathbb{C})\diagup \mathcal{S}_3$ a holomorphic mapping.
Suppose $L_A$ is a simple relatively closed curve in $A$ with limit points on both boundary circles of $A$, and $F(L_A) \subset \mathcal{H}$. Moreover, we assume that for a point $q_A\in L_A$ the value $F(q_A)$ is in the totally real subspace $ C_3(\mathbb{R})\diagup \mathcal{S}_3$.
Let $e_A\in \pi_1(A,q_A)$ be the positively oriented generator of the fundamental group of $A$ with base point $q_A$.
If the braid $b\stackrel{def}=F_*(e_A) \in \mathcal{B}_3 $ is different from $\sigma_j^k \, \Delta_3^{2 \ell'} $ with
$j$ equal to $1$ or $2$, and $k\neq 0$ and $\ell'$ being integers, then
\begin{equation}\label{eqfin14}
\mathcal{L}_-(\vartheta(b)) \leq 2\pi \lambda(A).
\end{equation}
\end{lemm}
Notice that the braids  $\sigma_j^k \, \Delta_3^{\ell} $ for odd $\ell$ are exceptional for Theorem \ref{thmbr.3}, but not exceptional for Lemma \ref{lemfin4}. The reason is that the braid in Lemma \ref{lemfin4} is related to a mapping of an annulus, not merely to a mapping of a rectangle.
For $t\in [0,\infty)$ we put
$\log_+ t\stackrel{def}=
\begin{cases} \log t& \; t \in [1,\infty) \\
0 & \; t \in [0,1)\;\;\;\; .\\
\end{cases}$

\begin{lemm}\label{lemfin3a}
If the braid in Lemma {\rm \ref{lemfin4}} equals $b = \sigma_j^k \, \sigma_{j'}^{k'} \,\Delta_3^{\ell}$ with $j$ and $j'$ equal to $1$ or to $2$,  $j'\neq j$, and $k$ and $k'$ being non-zero integers,
and $\ell$ an even integer, then
\begin{equation}\label{eqfin15}
\log_+(3[\frac{|k|}{2}]) + \log_+(3[\frac{|k'|}{2}])\leq {\pi} \lambda(A).
\end{equation}
\end{lemm}
\noindent As before, for a non-negative number $x$ we denote by $[x]$ the smallest integer not exceeding $x$.

\medskip

\noindent {\bf Proof of Lemma \ref{lemfin4}.}
By the same argument as in the proof of Lemma  \ref{lemfin2} we may assume that
the annulus $A$ has smooth boundary, the mapping
$F$ extends continuously to the closure $\overline{A}$, and the curve $L_A$ is a smooth (connected) curve in $\overline{A}$ whose endpoints are on different boundary components of $A$.
By Lemma \ref{lem10.30} the inequality
\begin{equation}\label{eqfin16}
\Lambda(b_{tr})\leq \lambda(A)\,
\end{equation}
holds.

For $b\neq \sigma_j^k \, \Delta_3^{\ell}$ with $j$ equal to $1$ or $2$, and $k\neq 0$ and $\ell$ being integers, the statement of Lemma \ref{lemfin4} follows from Theorem \ref{thmbr.3}  in the same way as Lemma \ref{lemfin2} follows from Theorem \ref{thm1}. For $b=\sigma_j^k \, \Delta_3^{\ell}$ with $k=0$ the statement is trivial since then $ \vartheta({\rm Id})={\rm Id}$ and $\mathcal{L}_-({\rm Id})=0$.

To obtain the statement in the remaining case $b = \sigma_j^k \, \Delta_3^{2 \ell' +1} $ with
$j$ equal to $1$ or $2$, and $k$ and $\ell'$ being integers, we use Lemma \ref{lemfin3a}.
Notice that $\sigma_1\, \Delta_3 = \Delta_3 \, \sigma_2$ and $\sigma_2\, \Delta_3 = \Delta_3 \, \sigma_1$. Hence, $b^2 = \sigma_j^k\, \sigma_{j'}^k \,\Delta_3^{4 \ell'+2}$ with $\sigma_j\,\neq  \sigma_{j'}$. Let $\omega_2:A^2\to A$ be the two-fold unbranched covering of $A$ by an annulus $A^2$. The equality $\lambda(A^2)=2 \lambda(A)$ holds. Let ${q}_{A^2}$ be a point in $\omega_2^{-1}(q_A)$, and let ${L}_{q_{A^2}}$ be the lift of $L_A$ to $A^2$ that contains ${q}_{A^2}$.
Denote by ${\gamma}_{A^2}$ the loop $\omega_2^{-1}(\gamma_A)$ with base point $q_{A^2}$.
Then $ {F}\circ \omega_2 \mid {\gamma}_{A^2}$ represents $b^2$ and $(b^2)_{tr}$.
Lemma \ref{lemfin3a} applied to $ \sigma_j^k\, \sigma_{j'}^k \,\Delta_3^{4 \ell'+2}$ gives the estimate $2\log_+(3[\frac{|k|}{2}]) \leq \pi \lambda(A^2)=2\pi\lambda(A)$.
Since  $\vartheta(b)=\sigma_j^{2[\frac{|k|}{2}]\mbox{sgn}(k)}$,
the inequality \eqref{eqfin14} follows.
The lemma is proved. \hfill $\Box$

\medskip

\noindent {\bf Proof of Lemma \ref{lemfin3a}.} Lemma \ref{lemm1} gives $\Lambda_{tr}( \sigma_j^k \, \sigma_{j'}^{k'} \,\Delta_3^{\ell})=\Lambda_{tr}( \sigma_j^k \, \sigma_{j'}^{k'} )$.
Along the lines of proof of Statement 1 of Lemma \ref{lemm15}
we see that for $k \cdot k'\neq 0$

\begin{align*}
\Lambda_{tr}( \sigma_j^k \, \sigma_{j'}^{k'} )\geq \Lambda \big( _{tr}(\sigma_j^k)_{pb}\big) \,+\, \Lambda \big( _{pb}(\sigma_{j'}^{k'})_{tr}\big)\,.
\end{align*}
A similar argument as used in the proof of Theorem  \ref{thmbr.3}
shows that for any positive integer number $\ell$ and each of the $\sigma_j$ the inequality
$\Lambda \big( _{tr}(\sigma_j^{\pm\ell})_{pb}\big)\geq \Lambda \big( _{tr}(\sigma_j^{\pm 2[{\frac{\ell}{2}}]})_{pb}\big)$ holds. Theorem
\ref{thm1} implies
$
\Lambda \big( _{tr}(\sigma_j^{\pm 2[{\frac{\ell}{2}}]})_{pb}\big)
\geq \mathcal{L}_-(\sigma_j^{\pm 2[{\frac{\ell}{2}}]})= \log_+(3[\frac{\ell}{2}])\,.$
Since by \eqref{eqfin16} the inequality $\Lambda_{tr}(\sigma_j^k \, \sigma_{j'}^{k'} \,\Delta_3^{\ell}) \leq \lambda(A)$ holds,
the lemma is proved. \hfill $\Box$
\medskip

We want to emphasize that periodic $3$-braids are not of the form
$\sigma_j^k \,\Delta_3^{2\ell}$ for  non-zero powers $\sigma_j^k$ of a $\sigma_j$ and an integer number $\ell$, hence, the lemma is true also for periodic braids. For each periodic braid $b$ of the form $\sigma_1 \sigma_2= \sigma_1 ^{-1} \, \Delta_3$, $(\sigma_1 \sigma_2)^2=\sigma_1 \, \Delta_3$, $\sigma_2 \sigma_1= \sigma_2 ^{-1} \, \Delta_3$, $(\sigma_2 \sigma_1)^2=\sigma_2 \, \Delta_3$,
and $\Delta_3$ the $\mathcal{L}_-(\vartheta(b))$ vanishes. However, for instance for the conjugate $\sigma_1^{-2k} \Delta_3 \sigma_1^{2k}= \sigma_1^{-2k} \sigma_2^{2k} \Delta_3$ of $\Delta_3$ we have $\mathcal{L}_-(\vartheta(\sigma_1^{-2k} \Delta_3 \sigma_1^{2k}))= 2 \log(3|k|)$. Another example, for the conjugate
$\sigma_2^{-2k}\, \sigma_1 \sigma_2\,  \sigma_2^{2k}$ of $\sigma_1 \sigma_2$ we have
\begin{align}\nonumber
\sigma_2^{-2k}\, \sigma_1 \sigma_2\,  \sigma_2^{2k}
=  \sigma_2^{-2k-1}\, \Delta_3\,  \sigma_2^{2k} = \sigma_2^{-2k-1}\, \sigma_1^{2k} \,\Delta_3\,.
\end{align}
and  $\mathcal{L}_-(\vartheta(\sigma_2^{-2k}\, \sigma_1 \sigma_2\,  \sigma_2^{2k})) $ equals  $2 \log(3|k|)$.

Notice that the lemmas and Theorem \ref{thmbr.3} descend to statements on elements of $\mathcal{B}_3 \diagup \mathcal{Z}_3$ rather than on braids.
For an element $\textsf{b}$ of the quotient $\mathcal{B}_3 \diagup \mathcal{Z}_3$ we put $\vartheta(\textsf{b})= \vartheta(b)$ for any representative $b \in \mathcal{B}_3$ of $\textsf{b}$.

\smallskip

Lemma \ref{lemfin8a} below is an analog of Lemma \ref{lemfin3}. It will follow from Lemma \ref{lemfin4} in the same way as Lemma \ref{lemfin3} follows from Lemma \ref{lemfin2}.

\begin{lemm}\label{lemfin8a}
Let $X$ be a connected finite open Riemann surface, and let $F:X \to C_3(\mathbb{C}) \diagup \mathcal{S}_3$ be a non-contractible holomorphic map that is transverse to the hypersurface $\mathcal{H}$ in $ C_3(\mathbb{C}) \diagup \mathcal{S}_3$.
Suppose $L_0$ is a simple relatively closed curve in $X$
such that $F(L_0)$ is contained in $\mathcal{H}$, and for a point $q \in L_0$ the point $F(q)$ is contained in the totally real space $C_3(\mathbb{R}) \diagup \mathcal{S}_3$.
Let $e^{(1)}$ and $e^{(2)}$ be primitive elements of $\pi_1(X,q)$. Suppose that for $e=e^{(1)}$, $e=e^{(2)}$, and $e=e^{(1)}e^{(2)}$ the free homotopy class $\widehat e$ intersects $L_0$.
Then either the two monodromies of $F$ modulo the center  $F_*(e^{(j)})\diagup \mathcal{Z}_3,\, j=1,2,\,$ are powers
of the same element $\sigma_j\diagup \mathcal{Z}_3$
of $\mathcal{B}_3\diagup \mathcal{Z}_3$, or each of them is the product of at most two elements $\textsf{b}_1$ and $\textsf{b}_2$ of $\mathcal{B}_3\diagup \mathcal{Z}_3$ with
\begin{equation}\label{eqfin18}
\mathcal{L}_-(\vartheta(\textsf{b}_j)) \leq  2\pi \lambda_{e^{(1)},e^{(2)}},\, j=1,2,
\end{equation}
where
\begin{equation}\nonumber
\lambda_{e^{(1)},e^{(2)}} \stackrel{def}=\max\{\lambda(A(\reallywidehat{e^{(1)}})),\, \lambda(A(\reallywidehat{e^{(2)}})),\, \lambda(A(\reallywidehat{e^{(1)}\,e^{(2)}}))\}.
\end{equation}
\end{lemm}

\noindent{\bf Proof.}
Suppose for an element $e\in \pi_1(X,q)$ the free homotopy class $\widehat e$ intersects $L_0$.
We claim that all components of ${\sf P}^{-1}(L_0)$ have limit points on the boundary of ${\tilde X}\cong \mathbb{C}_+$.

Suppose the claim is not true. Then some connected component $\tilde{L}_0$ of  ${\sf P}^{-1}(L_0)$ is compact. Lift the holomorphic mapping $F\circ{\sf P}:\tilde{X}\cong \mathbb{C}_+\to C_3(\mathbb{C})\diagup\mathcal{S}_3$ to a holomorphic mapping $\widetilde{F\circ{\sf P}}:{\tilde X}\cong \mathbb{C}_+ \to C_3(\mathbb{C})$. For each $z\in
\mathbb{C}_+$ we let $\mathfrak{A}_z$ be the complex affine mapping that takes the first coordinate of  $\widetilde{F\circ{\sf P}}(z)$
to $-1$, the third to $1$, and the second to a point $g(z)\in \mathbb{C}\setminus\{-1,1\}$. Then $\mathfrak{A}_z(\widetilde{F\circ{\sf P}}(z))=\big(-1,g(z),1\big), z\in \mathbb{C}_+$. By the conditions of the lemma $g(z)$ is real on $\tilde{L}_0$. Then, since $g$ is holomorphic on $\mathbb{C}_+$, $g\equiv c$ on
$\mathbb{C}_+$ for a constant $c$ (see the proof of Lemma  \ref{lemfin1}). It follows that
$\widetilde{F\circ{\sf P}}(z)=\mathfrak{A}_z^{-1}(c), \,z\in  \mathbb{C}_+$, which means that
the image of $F\circ{\sf P}$ is contained in $\mathcal{H}$. This contradicts the fact that $F$ is not constant and is transverse to $\mathcal{H}$. The claim is proved.

It follows along the lines of the proof of Lemma \ref{lemfin1} that  an analog of Lemma \ref{lemfin1} holds. More precisely,  for any point
$\tilde q$ in ${\sf P}^{-1}(q)$, with
$A\stackrel{def}=\tilde{X}\diagup ({\rm Is}^{\tilde{q}})^{-1}(\langle e \rangle)$,  $\omega_{A}\stackrel{def}=\omega_{\langle e\rangle,\tilde{q}}$, and
$q_{A}\stackrel{def}=\omega^{\langle e\rangle,\tilde{q}}(\tilde{q})$
the connected component $L_{A}\subset A$ of $(\omega_{A})^{-1}(L_0)$ that contains $q_{A}$ has limit points on both boundary circles
of $A$.

Put $F_A=F\circ\omega_A$. By the conditions of Lemma \ref{lemfin8a} $F_A(L_A)=F(L_0)\subset \mathcal{H}$ and $F_A(q_A)\in C_3(\mathbb{R})\diagup\mathcal{S}_3$.
Let $e_A$ be the generator of $\pi_1(A,q_A)$ for which $\omega_A(e_A)=e$.
The mapping $F_A:A\to C_3(\mathbb{C})\diagup \mathcal{S}_3$, the point $q_A$ and the curve $L_A$  satisfy the conditions of Lemma \ref{lemfin4}.
Notice that the equality $(F_A)_*(e_A)= F_*(e)$ holds.
Hence, if $F_*(e)$ is not equal to $\sigma^k_j \Delta^{2\ell},$ $j=1$ or $2$, $k\neq 0, \ell\in \mathbb{Z}$, then inequality \eqref{eqfin14} holds for $F_*(e)$.

Suppose the two monodromies modulo center $F_*(e^{(j)})\diagup \mathcal{Z}_3,\, j=1,2,\,$ of Lemma \ref{lemfin8a}
are not (trivial or non-trivial)
powers of the same element $\sigma_j \diagup \mathcal{Z}_3$ of $\mathcal{B}_3\diagup \mathcal{Z}_3$.
Then at most two of the elements, $\;\,F_*(e^{(1)})\diagup \mathcal{Z}_3$,  $F_*(e^{(2)})\diagup \mathcal{Z}_3\;\,$, and $\;\;F_*(e^{(1)}e^{(2)})\diagup \mathcal{Z}_3=F_*(e^{(1)})\diagup \mathcal{Z}_3\cdot F_*(e^{(2)})\diagup \mathcal{Z}_3\; $, are powers of an element of the form $\sigma_j\diagup \mathcal{Z}_3$.

If the monodromies modulo center along two elements among $e^{(1)}$,  $e^{(2)}$, and $e^{(1)}e^{(2)}$ are
not (zero or non-zero) powers of a $\sigma_j\diagup \mathcal{Z}_3$ then by Lemma \ref{lemfin4} for each of these two monodromies modulo center inequality \eqref{eqfin18} holds, and the third monodromy modulo center is the product of two elements of $\mathcal{B}_3\diagup \mathcal{Z}_3$ for which  inequality \eqref{eqfin18} holds.
If the monodromies modulo center along two elements among $e^{(1)}$, $e^{(2)}$, and $e^{(1)}e^{(2)}$ have the form $\sigma_{j}^{k} \diagup \mathcal{Z}_3$ and  $\sigma_{j'}^{k'}\diagup\mathcal{Z}_3$,
then the $\sigma_j$ and the $\sigma_{j'}$ are different and $k$ and $k'$ are non-zero.
The third monodromy modulo center  has the form $\sigma_{j }^{\pm k} \sigma_{j'}^{\pm k'} \diagup \mathcal{Z}_3$ (or the order of the two factors interchanged).
Lemma \ref{lemfin3a} gives the inequality $\log_+(3[\frac{|k|}{2}]) + \log_+(3[\frac{|k'|}{2}])\leq \pi \lambda_{e^{(1)},e^{(2)}}$.
Since $\mathcal{L}_-(\vartheta(\sigma_{j}^{\pm k}))= \log_+(3[\frac{k}{2}])$ and  $\mathcal{L}_-(\vartheta(\sigma_{j'}^{\pm k'})) =\log_+(3[\frac{k'}{2}])$,
inequality \eqref{eqfin18} follows for the other two monodromies.
The lemma is proved.
\hfill $\Box$

\smallskip
The following lemma is more comprehensive than Lemma \ref{lemm3-braids2}.
\begin{lemm}\label{lemfin7}
Let $X$ be a connected finite open Riemann surface, and $F:X\to C_3(\mathbb{C})\diagup \mathcal{S}_3$ a smooth mapping. Suppose for a base point $q_1$ of $X$ each element of $\pi_1(X,q_1)$ can be represented by a curve with base point $q_1$ whose image under $F$ avoids $\mathcal{H}$. Then all monodromies of $F$ are powers of the same periodic braid of period $3$.
\end{lemm}

\noindent {\bf Proof.} By Lemma \ref{lemm3-braids2}
the monodromy of $F$ along each element of $\pi_1(X,q_1)$  is the power of a periodic braid with period $3$.
There is a smooth homotopy $F_s,\, s\in[0,1],$ of $F$, such that $F_0=F$, each $F_s$ is different from $F$ only on a small neighbourhood of $q_1$, each $F_t$ avoids $\mathcal{H}$ on this neighbourhood of $q_1$,
and
$F_1(q_1)$ is the set of vertices of an equilateral triangle with barycenter $0$.
Since $F$ and $F_1$ are free homotopic, their monodromy homomorphisms are conjugate, and it is enough to prove the statement of the lemma for $F_1$.

For notational convenience we will keep the notation $F$ for the new mapping and assume that $F(q_1)$ is the set of vertices of an equilateral triangle with barycenter $0$.
The monodromy $F_*(e)$ along each element $e\in \pi_1(X,q_1)$ is a power of a periodic braid of period $3$. Hence, $\tau_3(F_*(e))$ is a cyclic permutation. Consider the braid $b$ with base point $F(0)$ that corresponds to rotation by the angle $\frac{2\pi}{3}$, i.e. it is represented by the geometric braid $t\to e^{\frac{i2\pi t}{3}}F(0),\, t\in [0,1],$ that avoids $\mathcal{H}$.
There exists an integer $k$ such that $F_*(e)\, b^k$ is a pure braid that is represented
by a mapping that avoids $\mathcal{H}$.
Hence, $F_*(e)\, b^k$ represents $\Delta_3^{2l}$ for some integer $l$. We proved that for each $e\in \pi_1(X,q_1)$ the monodromy  $F_*(e)$ is represented by rotation of $F(0)$ around the origin by the angle $\frac{2\pi j}{3}$ for some integer $j$.
The Lemma is proved. \hfill $\Box$

\medskip

Let as before $X$ be a finite open connected Riemann surface. The following proposition is the main ingredient of the proof of Theorem \ref{thmfin3}. As before $\mathcal{E}\subset \pi_1(X,q_0)$ denotes a standard system of generators of the fundamental group with base point $q_0\in X$ that is associated to a standard bouquet of circles for $X$.

\begin{prop}\label{propfin4}
Let $(X\times \mathbb{P}^1,{\rm pr}_1,\mathbold{E},X)$
be an
irreducible holomorphic special $(0,4)$-bundle
over a finite open Riemann surface $X$, that is not isotopic to a locally holomorphically trivial bundle.  Let $F(x),\, x \in X,$ be the set of finite distinguished points in the fiber over $x$. Assume that $F$ is transverse to $\mathcal{H}$.
Then there exists a complex affine mapping $M$ and a point $q\in X$ such that $M\circ
F(q)$ is contained in $C_3(\mathbb{R})\diagup \mathcal{S}_3$, and for an arc $\alpha$ in $X$ with initial point $q_0$ and terminal point $q$ and each element $e_j\in \mbox{Is}_{\alpha}(\mathcal{E})$ the monodromy modulo center $(M\circ F)_*(e_j)\diagup \mathcal{Z}_3$ can be written as product of at most $6$ elements  $\textsf{b}_{j,k},\, k=1,2,3,4,5,6,$ of $\mathcal{B}_3 \diagup \mathcal{Z}_3$ with
\begin{equation}\label{eqfin7}
\mathcal{L}_-(\vartheta(\textsf{b}_{j,k})) \leq 2\pi \lambda_{10}(X).
\end{equation}
If $X$ is a torus with a hole the monodromy along each $e_j$ is the product of at most $4$ elements with
$\mathcal{L}_-(\vartheta(\textsf{b}_{j,k})) \leq 2\pi \lambda_3(X)$, and in case of a planar domain the monodromy along each $e_j$ is the product of at most $6$ elements with
$\mathcal{L}_-(\vartheta(\textsf{b}_{j,k})) \leq 2\pi \lambda_8(X)$.\\
If $X$ is the sphere with $m=3$ holes,  the monodromy along each $e_j$ is the product of at most $6$ elements with
$\mathcal{L}_-(\vartheta(\textsf{b}_{j,k})) \leq 2\pi \lambda_5(X)$.

\end{prop}

\medskip

\noindent {\bf Proof of Proposition \ref{propfin4}.} Since the bundle is not isotopic to a locally holomorphically trivial bundle, it is not possible that
all monodromies are powers of the same periodic braid, and by Lemma \ref{lemfin7} the set
\begin{align}\label{eqfin7a}
L\stackrel{def}= & \{ z \in X: F(z) \in\mathcal{H }\}
\end{align}
is not empty.

\noindent {\bf 1. A torus with a hole.} Let $X$ be a torus with a hole
and let  $\mathcal{E}=\{e'_0 ,\,e''_0\}$ be a set of  generators of $\pi_1(X,q_0)$ that is associated to a standard bouquet of circles for $X$.
There exists a connected component $L_0$ of $L$ which is not contractible and not contractible to the hole. Indeed, otherwise there would be a base point $q_1$ and a curve $\alpha_{q_1}$ that joins $q_0$ with $q_1$, such that for both elements of
$\mbox{Is}_{\alpha_{q_1}}(\mathcal{E})$
there would be representing loops with base point $q_1$ which do not meet $L$, and hence, by Lemma \ref{lemfin7} the monodromies along both elements would be powers of a single periodic braid of period $3$.
Hence, as in the proof of Proposition \ref{propfin2a} there exists a component $L_0$ of $L$, which is a simple smooth relatively closed curve in $X$, such that
the free homotopy class of one of the elements of $\mathcal{E}$, say of $e'_0$,
has positive intersection number with $L_0$ after orienting $L_0$ suitably.
Put ${\sf e}'_0 =e'_0$.
Moreover, the intersection number with $L_0$ is positive for the free homotopy class of one of the elements
${e_0''}^{\pm 1}$ or $e'_0 e''_0$.
Denote this element by ${\sf e}_0''$. (Since $\reallywidehat{e'_0 e''_0}=\reallywidehat{e''_0 e'_0}$ we may also put ${\sf e}_0''=e''_0 e'_0$ if the free homotopy class of $e'_0 e''_0$ intersects $L_0$.)
Put $\mathcal{E}_2'= \{{\sf e}_0', {\sf e}_0''\}$. The free homotopy class of each element of  $\mathcal{E}_2'$ and of the product of its two elements
intersects $L_0$.
We proved the following claim which we formulate
for later use.
\smallskip

\noindent {\bf Claim 12.1} {\it Let $X$ be a torus with a hole and $\mathcal{E}=\{e'_0 ,\,e''_0\}$ a set of standard generators of $\pi_1(X,q_0)$. There exists a component $L_0$ of $L$, which is a simple smooth relatively closed curve in $X$, and two primitive elements ${\sf e}_0'$ and
${\sf e}_0''$ in $\langle \mathcal{E} \rangle$ such that the free homotopy class of each element of  $\mathcal{E}_2'' \stackrel{def}= \{{\sf e}_0', {\sf e}_0''\} $ and of the product of its two elements intersects $L_0$. Moreover, one of the ${\sf e}_0'$ and ${\sf e}_0''$ is an element of $\mathcal{E}$, the other is in $\mathcal{E}\cup \mathcal{E}^{-1}$ or is the product of two
elements of $\mathcal{E}$. Each element of  $\mathcal{E}$ is the product of at most two elements of $\mathcal{E}'_2\cup {\mathcal{E}'_2}^{-1}$.}

\noindent The proof of the proposition in the case of a torus with a hole is finished as follows.
Move the base point $q_0$ to a point $q \in L_0$ along a curve $\alpha$, and consider the respective generators ${\sf e}'=\mbox{Is}_{\alpha}({\sf e}'_0)$ and ${\sf e}''=\mbox{Is}_{\alpha}({\sf e}''_0)$ of the fundamental group $\pi_1(X,q)$ with base point $q$. Since $F(L_0)\subset \mathcal{H}$ there is a complex affine mapping $M$ such that $M\circ F(q)\in C_3(\mathbb{R})\diagup \mathcal{S}_3$.
Since $F$ is irreducible, the monodromy maps modulo center $(M\circ F)_*({\sf e}')\diagup \mathcal{Z}_3$ and $(M\circ F)_*({\sf e}'')\diagup \mathcal{Z}_3$ are not powers of a single standard generator $\sigma_j\diagup \mathcal{Z}_3$  of $\mathcal{B}_3 \diagup \mathcal{Z}_3$ (see Lemma \ref{lemEl.0}, or
Lemma 7 of \cite{Jo5}).
Hence, the second option of Lemma \ref{lemfin8a} occurs. We obtain that each of the $(M\circ F)_*({\sf e}')\diagup \mathcal{Z}_3$ and $(M\circ F)_*({\sf e}'')\diagup \mathcal{Z}_3$
is a product of at most two elements ${\sf{b}}_j$ of $\mathcal{B}_3\diagup \mathcal{S}_3$ with $\mathcal{L}_-(\vartheta({\sf{b}}_j))\leq 2\pi\lambda_3(X)$. Hence, $(M\circ F)_*(e')\diagup \mathcal{Z}_3$ and $(M\circ F)_*(e'')\diagup \mathcal{Z}_3$ are products of at most $4$
elements of $\mathcal{B}_3\diagup \mathcal{Z}_3$ with this property.
The proposition is proved for tori with a hole.

\noindent {\bf  2. A planar domain.} Let $X$ be a planar domain. Maybe, after applying a M\"obius transformation, we represent $X$ as the Riemann sphere with holes $\mathcal{C}_j$, $j=1,\ldots,m+1,$ such that $\mathcal{C}_{m+1}$ contains $\infty$. Recall, that we have chosen a standard system $\mathcal{E}$ of generators $e_{j,0}, \, j=1,\ldots,m,$ of the fundamental group $\pi_1(X,q_0)$
with base point $q_0$ that is associated to a standard bouquet of circle for $X$. Hence, $e_{j,0}$ is represented by a loop with base point $q_0$ that surrounds $\mathcal{C}_j$ counterclockwise and does not surround any other hole.

We claim that there is a connected component $L_0$ of $L$
of one of the following kinds.
Either $L_0$ has limit points on the boundary of two different holes (one of them may contain $\infty$) (first kind),
or a component $L_0$ has limit points on a single hole $\mathcal{C}_j,\, j\leq m+1,$
and $\mathcal{C}_j\cup L_0$ divides the plane $\mathbb{C}$ into two connected components each of which contains a hole (maybe, only the hole containing $\infty$) (second kind),
or there is a compact component $L_0$ that divides $\mathbb{C}$ into two connected components each of which contains at least two holes (one of them may contain $\infty$).
Indeed, suppose each non-compact component of $L$ has boundary points on the boundary of a single hole and the union of the component with the hole does not separate the remaining holes of $X$, and for each compact component of $L$ one of the connected components of its complement in $X$ contains at most one hole.
Then there exists a base point $q_1$, a curve $\alpha_{q_1}$ in $X$ with initial point $q_0$ and terminal point $q_1$, and a representative of each element of  $\mbox{Is}_{\alpha_{q_1}}(\mathcal{E})\subset\pi_1(X,q_1)$ that avoids $L$.
Lemma \ref{lemfin7} implies that all monodromies modulo center are powers of a single periodic element of $\mathcal{B}_3\diagup \mathcal{Z}_3$ which is a contradiction.

If there is a component $L_0$ of the first kind we choose the same set of primitive elements $\mathcal{E}_2'\subset \mathcal{E}_2 \subset \pi_1(X,q_0)$ as in the proof of
Proposition \ref{propfin2a}
in the planar case.
The free homotopy class of each element of $\mathcal{E}_2'$ and of the product of two different elements of $\mathcal{E}_2'$ intersects $L_0$.
Moreover, each element of $\mathcal{E}$ is the product of at most two elements of $\mathcal{E}_2'$.

Suppose there is no component of the first kind but a component $L_0$ of the second kind. Assume first that all limit points of $L_0$ are on the boundary of a hole $\mathcal{C}_j$ that does not contain $\infty$. Put $\mathcal{E}_3'=\{e_{j,0}\} \cup_{1\leq k\leq m,\, k\neq j}\{ e_{j,0}^2 e_{k,0}\}$. Each element of $\mathcal{E}_3'$ is a primitive element and is the product of at most three generators contained in the set $\mathcal{E}$.
Further, each element of $\mathcal{E}$ is the product of at most three elements of $\mathcal{E}_3'\cup {\mathcal{E}_3'}^{-1}$.

The free homotopy class of each element of $\mathcal{E}_3'$ and of each product of two different elements of $\mathcal{E}_3'$
intersects $L_0$. Indeed,
any curve that is contained in
$\mathbb{C}\setminus \big(\mathcal{C}_j\cup L_0\big)$ has either winding number zero around $\mathcal{C}_j$ (as a curve in the complex plane $\mathbb{C}$), or its winding number around $\mathcal{C}_j$ coincides with the winding number around each of the holes in the bounded connected component of
$\mathbb{C}\setminus \big(\mathcal{C}_j\cup L_0\big)$.
On the other hand the representatives of the free homotopy class of $e_{j,0}$
have winding number $1$ around $\mathcal{C}_j$ and winding number $0$ around each other hole that does not contain $\infty$. The representatives of the free homotopy class of $e_{j,0}^2 e_{k,0}$, $k\leq m,\, k\neq j$,
have winding number $2$ around $\mathcal{C}_j$,  winding number $1$ around $\mathcal{C}_k$, and winding number zero around each other hole $\mathcal{C}_l,\, l\leq m$.
The argument for products of two elements of $\mathcal{E}_3'$ is the same.

Assume that the limit points of $L_0$ are on the boundary of the hole $\mathcal{C}_{\infty}$ that contains $\infty$.
Let $\mathcal{C}_{j_0}$ and $\mathcal{C}_{k_0}$ be holes that are contained in different components of $\mathbb{C} \setminus (L_0 \cup \mathcal{C}_{\infty})$, and let $e _{j_0,0}$ and $e _{k_0,0}$ be the elements of $\mathcal{E}$ whose representatives surround $\mathcal{C}_{j_0}$, and $\mathcal{C}_{k_0}$ respectively.
Denote by  $\mathcal{E}'_3$ the set that consists of the elements  $e _{j_0,0}e _{k_0,0}\,$,  $\;e _{j_0,0}^2e _{k_0,0}\,$,
and all elements $ e _{j_0,0}e _{k_0,0} \tilde {e}_0$ with $\tilde{e}_0$ running over $ \mathcal{E}\setminus\{ e _{j_0,0},e _{k_0,0}\}$.
Each element of $\mathcal{E}_3'$ is the product of at most $3$ elements of $\mathcal{E}$, and each element of $\mathcal{E}$ is the product of at most $3$ elements of $\mathcal{E}'_3\cup (\mathcal{E}'_3)^{-1}$.

Each element of $\mathcal{E}'_3$ and each product of at most two  different elements of $\mathcal{E}'_3$ intersects $L_0$. Indeed, if a closed curve is contained in one of the components of $\mathbb{C}\setminus (L_0\cup \mathcal{C}_{\infty})$ then its winding number around each hole contained in the other component is zero. But for all mentioned elements there is a hole in each component of  $\mathbb{C}\setminus (L_0 \cup \mathcal{C}_{\infty})$ such that the winding number of the free homotopy class  of the element around the hole does not vanish.

Notice that in case of $m+1=3$ holes only these two possibilities for the curve $L_0$ may occur. We proved the following claim which we formulate for later use.
\smallskip

\noindent {\bf Claim 12.2} {\it If $X$ equals $\mathbb{P}^1$ with  $m+1=3$ hole and $\mathcal{E}$ is a standard system of generators of  $\pi_1(X,q_0)$, then there is a set $\mathcal{E}_3'=\{{\sf e}'_0,{\sf e}''_0\} \subset \pi_1(X,q_0)$, such that one of the elements of $\mathcal{E}_3'$ is the product of at most two elements of $\mathcal{E}\cup \mathcal{E}^{-1}$, and the free homotopy classes of both elements and of their product intersect $L_0$.
Moreover, $e$ and $e'$ are products of at most three factors, each an element of  $\mathcal{E}_3'\cup \mathcal{E}_3'^{-1}$. }

\smallskip

\noindent For the general case of planar domains we also need the following considerations.
Suppose there are no components of $L$ of the first or the second kind, but there is a connected component $L_0$ of $L$ of the third kind.
Let $\mathcal{C}_{j_0}$ be a hole contained in the bounded component of the complement of $L_0$ in $\mathbb{C}$, and let $\mathcal{C}_{k_0},\, k_0\leq m,$ be a hole that is contained in the unbounded component of $\mathbb{C} \setminus L_0$. Let $e _{j_0,0}$ and $e _{k_0,0}$ be the elements of $\mathcal{E}$ whose representatives surround $\mathcal{C}_{j_0}$, and $\mathcal{C}_{k_0}$ respectively.
Consider the set $\mathcal{E}'_4$ consisting of the following elements: $e_{j_0,0} e_{k_0,0}$,
$e_{j_0,0}^2 e_{k_0,0}$, and $e_{j_0,0}^2 e_{k_0,0} \tilde{ e}_0$ for each $\tilde{e}_0\in \mathcal{E}$ different from $e_{j_0,0}$ and $e_{k_0,0}$. Each element of $\mathcal{E}'_4$ is the product of at most $4$ elements of $\mathcal{E}$ and each element of $\mathcal{E}$ is the product of at most $3$ elements of $\mathcal{E}_4'\cup (\mathcal{E}_4')^{-1}$. The product of two different elements of $\mathcal{E}'_4$ is contained in $\mathcal{E}'_8$.

The free homotopy classes of each element of $\mathcal{E}'_4$ and of each product of two different elements of $\mathcal{E}'_4$ intersect $L_0$.
Indeed, if a loop is contained in the bounded connected component of $\mathbb{C} \setminus L_0$, its winding number around the holes $\mathcal{C}_j\,, j\leq m,$ contained in the unbounded component is zero. If a loop is contained in the unbounded connected
component of $\mathbb{C} \setminus L_0$, its winding numbers around all holes contained in the bounded connected component are equal. But the winding numbers of $e_{j_0,0} e_{k_0,0}$ and $e_{j_0,0}^2 e_{k_0,0}$ around the hole $\mathcal{C}_{j_0}$ are positive and the winding numbers around the other holes that are contained in the bounded connected component of $\mathbb{C} \setminus L_0$ vanish, hence the representatives of these two elements cannot be contained in the unbounded  component of $\mathbb{C} \setminus L_0$. Since the winding numbers of representatives of these elements around $\mathcal{C}_ {k_0}$ are positive, the representatives cannot be contained in the bounded component of $\mathbb{C} \setminus L_0$.
For representatives of  the elements $e_{j_0,0}^2 e_{k_0,0} \tilde{e}_0$
the winding numbers around $\mathcal{C}_{j_0}$ equal $2$, the winding numbers around any other hole in the bounded component of $\mathbb{C} \setminus L_0$ are at most $1$, and the winding numbers around
$\mathcal{C}_{k_0}$ equal $1$. Hence, the free homotopy classes of the mentioned elements must intersect both components of $\mathbb{C} \setminus L_0$, hence they intersect $L_0$.

Representatives of any product of two different elements of  $\mathcal{E}_4'$  have winding numbers around $\mathcal{C}_{j_0}$ at least $3$, the winding numbers around any other hole in the bounded component of $\mathbb{C} \setminus L_0$ are at most $1$, and the winding numbers around
$\mathcal{C}_{k_0}$ equal $2$.  Hence, the free homotopy classes of these elements intersect $L_0$.
We obtained the following
\smallskip

\noindent {\bf Claim 12.3} {\it If $X$ is a planar domain and $\mathcal{E}$ is a standard system of generators of  $\pi_1(X,q_0)$, then there is a set $\mathcal{E}_4'\subset \pi_1(X,q_0)$, such that each of the elements of $\mathcal{E}_4'$ is the product of at most $4$ elements of $\mathcal{E}\cup \mathcal{E}^{-1}$, and the free homotopy classes of the product of at most two elements of $\mathcal{E}_4'$ intersect $L_0$.
Moreover, $e$ and $e'$ are products of at most three factors, each an element of  $\mathcal{E}_4'\cup \mathcal{E}_4'^{-1}$. }
\smallskip

\noindent The proof of the proposition in the planar case is finished as follows.
Let $\alpha_q$ be a curve in $X$ with initial point $q_0$ and terminal point $q$,
and $M$ a complex affine mapping, such that $(M\circ F)(q)\in C_3(\mathbb{R})\diagup \mathcal{S}_3$. Since $M\circ F$ is irreducible, the monodromies modulo center of $M\circ F$ along the elements of $\mbox{Is}_{\alpha}(\mathcal{E}_4')$ are not (trivial or non-trivial) powers of a single element $\sigma_j\diagup \mathcal{Z}_3$. Hence,
for each element of $\mbox{Is}_{\alpha}(\mathcal{E}_4')$ there
exists another element of $\mbox{Is}_{\alpha}(\mathcal{E}_4')$ so that
the second option of
Lemma \ref{lemfin8a} holds for this pair of elements of $\mbox{Is}_{\alpha}(\mathcal{E}_4')$. Therefore, the monodromy modulo center of $M\circ F$ along each element of $\mbox{Is}_{\alpha}(\mathcal{E}_4')$ is the product of at most two elements ${\sf b}_j\in\mathcal{B}_3\diagup \mathcal{Z}_3$ of $\mathcal{L}_-$ not exceeding $2\pi \lambda_8(X)$, and the monodromy modulo center of $M\circ F$ along each element
$\mbox{Is}_{\alpha}(\mathcal{E})$ is the product of at most $6$ elements ${\sf b}_j\in \mathcal{B}_3\diagup \mathcal{Z}_3$ with $\mathcal{L}_-(\vartheta({\sf b}_j))$ not exceeding $2\pi \lambda_8(X)$.
Proposition \ref{propfin4} is proved in the planar case.

\noindent {\bf 3. The general case.}
Since not all monodromies are powers of a single element of $\mathcal{B}_3\diagup \mathcal{Z}_3$ that is either periodic or reducible, there exists a pair of generators $e_0'$, $e_0''$ in $\mathcal{E}$, such that the monodromies along them are not powers of a single periodic or reducible element. Consider the projection $\omega^{\langle  e_0', e_0''\rangle}: \tilde{X}\to X(\langle  e_0', e_0''\rangle)$.
By the proof for tori with a hole or for $\mathbb{P}^1$ with three holes there exist a relatively closed curve $L_{\langle  e_0', e_0''\rangle}$ in $X(\langle  e_0', e_0''\rangle)$ and a M\"obius transformation $M$,
such that for $F=M\circ f$ the mapping $F_{\langle  e_0', e_0''\rangle}  =F \circ \omega_{\langle  e_0', e_0''\rangle}$ takes $L_{\langle  e_0', e_0''\rangle}$ into $\mathcal{H}$, and takes a chosen point $q_{\langle  e_0', e_0''\rangle}\in L_{\langle  e_0', e_0''\rangle}$ to a point in $C_3(\mathbb{R})\diagup \mathcal{S}_3$.

Choose a point $\tilde{q}\in \tilde X$, for which $\omega^{\langle e'_0, e''_0\rangle}(\tilde{q})=q_{\langle e'_0, e''_0\rangle}$. Let $\tilde{\alpha}$ be a curve in $\tilde X$ with initial point $\tilde{q}_0$ and terminal point $\tilde{q}$.
Then $\alpha_{\langle e_0', e''_0\rangle}\stackrel{def}=\omega^{\langle e'_0, e''_0\rangle}(\tilde{\alpha})$ is a curve in $X(\langle e'_0, e''_0\rangle)$ with initial point $(q_0)_{\langle e'_0, e''_0\rangle}$ and terminal point $ q_{\langle e'_0, e''_0\rangle}$, and the curve $\alpha_{\langle e'_0, e''_0\rangle}$ in $X(\langle e'_0, e''_0\rangle)$ and the point $\tilde{q}$ in the universal covering $\tilde X$ of $X(\langle e'_0, e''_0\rangle)$ are compatible. Put $\alpha=\omega_{\langle e'_0, e''_0\rangle}(\alpha_{\langle e'_0, e''_0\rangle})$, and for each $e_0\in \pi_1(X,q_0)$ we denote as before the element ${\rm Is}_{\alpha}({ e}_0)$ by $e$.

Put $\mathcal{E}^* \stackrel{def}= \{e_0',e_0''\}$. For an element $e_0 \in \mathcal{E}^*$
we define  $({\sf e}_0')_{\langle e'_0, e''_0\rangle}$ and $({\sf e}''_0)_{\langle e'_0, e''_0\rangle}$ as in the end of paragraph 3.1 of the proof of Proposition \ref{propfin2}.
The Riemann surface $X(\langle e'_0, e''_0\rangle)$ is a torus with a hole or $\mathbb{P}^1$ with three holes. This implies the following fact.\\
{\it There are elements ${\sf e}_0'$ and ${\sf e}_0''$, one of them contained in $\mathcal{E}^*$
or equal to the product of at most two factors among the $e'_0$ and $e''_0$, the second either contained in $\mathcal{E}^*\cup (\mathcal{E}^*)^{-1}$, or equal to the product of at most three factors among the $e'_0$ and $e''_0$,
such that the free homotopy classes of  $({\sf e}_0')_{\langle e'_0, e''_0\rangle}$, of $({\sf e}''_0)_{\langle e'_0, e''_0\rangle}$, and of their product intersect  $ L_{\langle e'_0, e''_0\rangle}$.
Moreover, $e'_0$ and $e''_0$ are products of at most three factors, each being either $({\sf e}'_0)^{\pm 1}$ or $({\sf e}''_0)^{\pm 1}$.}

\noindent Put ${\sf e}'_{\langle e'_0, e''_0\rangle}={\rm Is}_{\alpha_{\langle e'_0, e''_0\rangle} }   (({\sf e}_0')_{\langle e'_0, e''_0\rangle})$, ${\sf e}''_{\langle e'_0, e''_0\rangle}={\rm Is}_{\alpha_{\langle e'_0, e''_0\rangle} }  ( ({\sf e}_0'')_{\langle e'_0, e''_0\rangle})$.

\noindent Since the monodromies along ${\sf e}'$ and ${\sf e}''$ are not powers of a single periodic or reducible element, by Lemma \ref{lemfin8a}  each monodromy $(F_{\langle e'_0, e''_0\rangle})_*({\sf e}'_{\langle e'_0, e''_0\rangle})= F_*({\sf e}')$ and $(F_{\langle e'_0, e''_0\rangle})_*({\sf e}''_{\langle e'_0, e''_0\rangle})= F_*({\sf e}'')$ is the product of at most two elements ${\sf b}_j\in\mathcal{B}_3\diagup \mathcal{Z}_3$
with $\mathcal{L}_-(\vartheta({\sf b}_j))\leq 2\pi \lambda_5(X)$. Since $e'$ and $e''$ are products of at most three elements among $({\sf e}')^{\pm 1}$ and  $({\sf e}'')^{\pm 1}$,
each of the monodromies $F_*(e')$ and $F_*(e'')$ is the product of at most $6$  elements ${\sf b}_j\in\mathcal{B}_3\diagup \mathcal{Z}_3$
with $\mathcal{L}_-(\vartheta({\sf b}_j))\leq 2\pi \lambda_5(X)$.

Take any element $e_0\in \mathcal{E}\setminus\{e'_0,e''_0\}$. Let $e=\mbox{Is}_{\alpha}(e_0)$. Either the pair of monodromies ($F_*({\sf e}')$, $F_*(e)$) or  the pair of monodromies ($F_*({\sf e}'')$, $F_*(e)$) does not consist of two powers of the same element of $\mathcal{B}_3\diagup \mathcal{Z}_3$ that is either periodic or reducible. Suppose this is so for  the pair ($F_*({\sf e}')$, $F_*(e)$).

Let $L_{\langle {\sf e}_0'\rangle}$ be the connected component of $(\omega^{\langle {\sf e}_0', {\sf e}''_0\rangle}_{\langle {\sf e}'_0\rangle})^{-1}(L_{\langle {\sf e}'_0,{\sf e}''_0\rangle})\,$ that contains $\;\omega^{\langle {\sf e}'_0\rangle}(\tilde{q})\,$. By the analog of Lemma \ref{lemfin1}, applied to the holomorphic projection $\;\;\tilde{X}\diagup ({\rm Is}^{\tilde{q}_0})^{-1}(\langle {\sf e}'_0\rangle) \to X(\langle {\sf e}'_0, {\sf e}''_0    \rangle)\,$, the free homotopy class $\reallywidehat{({\sf e}_0')_{\langle {\sf e}_0'\rangle}}$ intersects $L_{\langle {\sf e}_0'\rangle}$. (For the definition of $({\sf e}'_0)_{\langle {\sf e}_0'\rangle}$ see the end of the paragraph 3.1 of the proof of Proposition \ref{propfin2}.)
As in the proof of Proposition \ref{propfin2} we consider the Riemann surface $X(\langle e_0, {\sf e}_0'\rangle)$ and the curve $L_{\langle e_0, {\sf e}_0'\rangle}= \omega^{\langle e_0, {\sf e}'_0\rangle}_ {\langle  {\sf e}'_0\rangle}(L_{\langle {\sf e}_0'\rangle}) $ (see paragraph 3.3. of the proof of Proposition \ref{propfin2}). As there we see that
the free homotopy class
$\reallywidehat{( {\sf e}_0')_{\langle e_0, {\sf e}'_0\rangle}}$ intersects $L_{\langle e_0, {\sf e}'_0\rangle}$.
The system $\big((e_0)_{\langle e_0, {\sf e}'_0\rangle}, ({\sf e}'_0)_{\langle e_0, {\sf e}'_0\rangle}\big)$ is associated to a standard bouquet of circles for $X(\langle e_0, {\sf e}'_0\rangle)$ (though the system $(e_0, {\sf  e}'_0)$ may not be part of a system of generators that is associated to a standard bouquet of circles  for $X$).
This can be seen in the same way
as in the proof of Proposition \ref{propfin2}.
We will apply now the arguments, used for $X(\langle e_0',e''_0\rangle )$ and the generators $(e'_0)_{\langle e'_0, e''_0\rangle},(e''_0)_{\langle e'_0, e''_0\rangle}$ of the fundamental group $\pi_1(X(\langle e'_0, e''_0\rangle), q_{\langle e'_0, e''_0\rangle})$,  to
$X(\langle e_0, {\sf e}'_0\rangle)$ and the generators  $(e_0)_{\langle e_0, {\sf e}_0'\rangle}, ({\sf e}'_0)_{\langle e_0, {\sf e}'_0\rangle}$ of the fundamental group $\pi_1(X(\langle e_0, {\sf e}'_0\rangle),   q_{\langle e_0, {\sf e}'_0\rangle})$.

In the case when $X(\langle e_0, {\sf e}'_0\rangle)$ is a torus with a hole, the intersection number of $\reallywidehat{({\sf e}'_0)_{\langle e_0, {\sf e}'_0\rangle} }$ with $L_{\langle e_0, {\sf e}'_0\rangle}$ is non-zero. Put $\mathfrak{e}'_0={\sf e}'_0$. For one of the choices $e_0^{\pm 1}$, or ${\sf e}_0'\, e_0$, denoted by $\mathfrak{e}''_0$, the free homotopy classes of $(\mathfrak{e}'_0)_{\langle e_0, {\sf e}'_0\rangle}$, $(\mathfrak{e}''_0)_{\langle e_0, {\sf e}'_0\rangle}$, and of their product intersect $L_{\langle e_0, {\sf e}'_0\rangle}$.  Since by Claim 12.1 and Claim 12.2 the element ${\sf e}'_0$ is the product of at most three elements of $\mathcal{E}$, the element  ${\sf e}''_0$ is the product of at most $1+ 3=4$ elements  of $\mathcal{E}$.
Moreover, $e_0$ is the product of at most two factors, each being $(\mathfrak{e}'_0)^{\pm 1}$, or $(\mathfrak{e}''_0)^{\pm 1}$. Hence, $e_0$ is the product of at most $7$ elements of $\mathcal{E}$.

In case $X(\langle e_0, {\sf e}_0'\rangle)$ is planar, the curve $L_{\langle e_0, {\sf e}'_0\rangle}$ must have limit points on the hole that corresponds to the generator $({\sf e}'_0)_{\langle e_0, {\sf e}'_0\rangle}$ of the fundamental group $\pi_1(X(\langle e_0, {\sf e}'_0\rangle), q_{\langle e_0, {\sf e}'_0\rangle})$. We find elements $\mathfrak{e}'_0$ and $\mathfrak{e}''_0$ such that $\mathfrak{e}'_0= {\sf e}'_0 $ and $\mathfrak{e}''_0$ is either equal to $e_0^{-1}$,
or to the product of at most three factors, one being equal to $e_0$ and the others equal to ${\sf e}' _0$, and the free homotopy classes of $(\mathfrak{e}'_0)_{\langle e_0, {\sf e}'_0\rangle}$, $(\mathfrak{e}''_0)_{\langle e_0, {\sf e}'_0\rangle}$, and their product intersect  $L_{\langle e_0, {\sf e}'_0\rangle}$. Moreover,
$e_0$ is the product of at most $3$ factors, each being equal to $(\mathfrak{e}''_0)^{\pm 1}$ or
$(\mathfrak{e}'_0)^{\pm 1}$.
Since ${\sf e}_0'$ is the product of at most three elements of $\mathcal{E}$, ${\mathfrak e}''_0$ is the product of at most $1+2\cdot 3=7$  elements of $\mathcal{E}$.
In both cases for $X(\langle e_0, {\sf e}'_0\rangle)$ the elements $\mathfrak{e}'_0$, $\mathfrak{e}''_0$, and $\mathfrak{e}' _0\mathfrak{e}''_0$ are products of at most $10$ elements of $\mathcal{E} \cup \mathcal{E}^{-1}$.

Since for $e={\rm Is}_{\alpha}(e_0)$, and ${\sf e}'={\rm Is}_{\alpha}({\sf e}'_0)$
 the monodromies $F_*(e)$ and $F_*({\sf e}'_0)$ are not powers of a single periodic or reducible element,  for $\mathfrak{e}'={\rm Is}_{\alpha}(\mathfrak{e}' _0)$ and  $\mathfrak{e}''={\rm Is}_{\alpha}(\mathfrak{e}''_0)$ the monodromies
$F_*(\mathfrak{e}')$ and $F_*(\mathfrak{e}'')$  cannot be powers of a single element  that is either periodic or reducible.
Lemma \ref{lemfin8a} implies, that $F_*(\mathfrak{e}')$ and $F_*(\mathfrak{e}'')$ are products of at most two factors $\sf b$ with $\mathcal{L}_-(\vartheta({\sf b}))$ not exceeding $2\pi \lambda_{10}(X)$. Hence, $F_*(e)$ is the product of at most $6$ factors $\sf b$ with $\mathcal{L}_-(\vartheta({\sf b}))$ not exceeding $2\pi \lambda_{10}(X)$.
We obtain the statement of Proposition \ref{propfin4} in the general case.
Proposition \ref{propfin4} is proved. \hfill $\Box$

\medskip

\noindent {\bf Proof of Theorem \ref{thmfin3}.}
Let $X$ be a connected Riemann surface of genus $\sf g$ with ${\sf m}+1\geq 1$ holes. Since each holomorphic $(0,3)$-bundle with a holomorphic section on $X$ is isotopic to a holomorphic special $(0,4)$-bundle, we need to estimate the number of isotopy classes of irreducible smooth special $(0,4)$-bundles on $X$, that contain a holomorphic bundle.
By Lemma \ref{lemEl.0}
the monodromies of an irreducible bundle are not powers of a single element of $\mathcal{B}_3 \diagup \mathcal{Z}_3$ which is conjugate to a $\sigma_j\diagup \mathcal{Z}_3$, but they may be powers of a single periodic element of $\mathcal{B}_3 \diagup \mathcal{Z}_3$ (equivalently, the isotopy class may contain a locally holomorphically trivial holomorphic bundle).

Consider an irreducible special holomorphic $(0,4)$-bundle on $X$
which is not isotopic
to a locally holomorphically trivial bundle.
Let $F(x),\, x \in X,$ be the set of finite distinguished points in the fiber over $x$.
By the Holomorphic Transversality Theorem \cite{KZ}
the mapping $F:X\to C_3(\mathbb{C})\diagup \mathcal{S}_3$ can be approximated on relatively compact subsets of $X$ by holomorphic mappings
that are transverse to $\mathcal{H}$.  Similarly as in the proof of Theorem \ref{thmfin1}
we will therefore assume in the following (after slightly shrinking $X$ to a deformation retract of $X$ and approximating $F$) that $F$ is transverse to $\mathcal{H}$.

By Proposition \ref{propfin4}
there exists a complex affine mapping $M$ and a point $q\in X$ such that $M\circ
F(q)$ is contained in $C_3(\mathbb{R})\diagup \mathcal{S}_3$, and for an arc $\alpha$ in $X$ with initial point $q_0$ and terminal point $q$ and each element $e_j\in {\rm Is}_{\alpha}(\mathcal{E})$ the monodromy $(M\circ F)_*(e_j)\diagup \mathcal{Z}_3$ of the bundle can be written as product of at most $6$ elements  $\textsf{b}_{j,k},\, k=1,2,3,4,5,6,$ of $\mathcal{B}_3 \diagup \mathcal{Z}_3$ with
\begin{equation}\label{eqfin7b}
\mathcal{L}_-(\vartheta(\textsf{b}_{j,k})) \leq 2\pi \lambda_{10}(X).
\end{equation}
\noindent The mappings $F$ and $M\circ F$ from $X$ into the symmetrized configuration space are free homotopic.

Consider an isotopy class of special $(0,4)$-bundles that corresponds to a conjugacy class of homomorphisms $\pi_1(X,q_0)\to \mathcal{B}_3\diagup \mathcal{Z}_3$ whose image is generated by a single periodic element of $\mathcal{B}_3\diagup \mathcal{Z}_3$.
Up to conjugacy we may assume that this element is one of the following:
$\Delta_3\diagup \mathcal{Z}_3,\, (\sigma_1 \sigma_2 )\diagup \mathcal{Z}_3 , \,(\sigma_1 \sigma_2)^{-1} \diagup \mathcal{Z}_3$, or ${\rm Id}$ . For each of these elements $\textsf{b}$ the equality $\mathcal{L}_-(\vartheta(\textsf{b}))=0$ holds. Hence, in this case the isotopy class contains a smooth mapping $\tilde F$ such that for each $e_{j,0}\in \mathcal{E}$ the monodromy $(M\circ F)_*(e_{j,0})\diagup \mathcal{Z}_3$ of the bundle can be written as product of at most $6$ elements  $\textsf{b}_{j,k},\, k=1,2,3,4,5,6,$ of $\mathcal{B}_3 \diagup \mathcal{Z}_3$ satisfying inequality \eqref{eqfin7b}.

The same argument as in the proof of Theorem \ref{thmfin1} shows the following fact. Each irreducible free homotopy class of mappings $X\to C_3(\mathbb{C})\diagup \mathcal{S}_3$ that contains a holomorphic mapping contains a smooth mapping $\tilde F$ such that for each $e_{j,0}\in \mathcal{E}$ the monodromy $\tilde{F}_*(e_{j,0})\diagup \mathcal{Z}_3$ of the bundle can be written as product of at most $6$ elements  $\textsf{b}_{j,k},\, k=1,2,3,4,5,6,$ of $\mathcal{B}_3 \diagup \mathcal{Z}_3$ satisfying inequality \eqref{eqfin7b}.

Using Lemma  \ref{lem10.1a}  (see also Lemma 1 of \cite{Jo3}) the number of elements $  \textsf{b}\in \mathcal{B}_3 \diagup \mathcal{Z}_3$ (including the identity),
for which $\mathcal{L}_-(\vartheta(\textsf{b}))\leq 2\pi \lambda_{10}(X)$,
is estimated as follows.
The element ${\sf w}\stackrel{def}=\vartheta({\sf b}) \in \mathcal{PB}_3 \diagup \mathcal{Z}_3$ can be considered as a reduced word in the free group generated by $a_1=\sigma_1^2\diagup \mathcal{Z}_3$ and $a_2=\sigma_2^2\diagup \mathcal{Z}_3$.
By Lemma \ref{lem10.1a}
there are no more than $ \frac{1}{2}\exp(6 \pi \lambda_{10}(X))+1\leq \frac{3}{2}\exp(6 \pi \lambda_{10}(X))$ reduced words $\sf w$ in $a_1$ and $a_2$
(including the identity) satisfying the inequality $\mathcal{L}_-(\textsf{w})\leq 2\pi \lambda_{10}(X)$.

For a given element ${\sf w}\in \mathcal{PB}_3 \diagup \mathcal{Z}_3$ (including the identity) we describe now all elements ${\textsf{b}}$
of $\mathcal{B}_3 \diagup \mathcal{Z}_3$ with
$\vartheta(\textsf{b})=\textsf{w}$.
If ${\textsf{w}}\neq \mbox{Id}$ these are
the following elements.
If the first term of ${\sf w}$ equals
$a_j^k$ with $k\neq 0$, then the possibilities are $\textsf{b}={\sf w} \cdot (\Delta_3^{\ell}\diagup \mathcal{Z}_3)$ with $\ell=0$ or $1$,  $\textsf{b}=(\sigma_j^{{\rm{sgn}} k}\diagup\mathcal{Z}_3)\cdot  {\sf w}  \cdot (\Delta_3^{\ell}\diagup \mathcal{Z}_3)$ with $\ell=0$ or $1$, or  $\textsf{b}=(\sigma_{j'}^{\pm 1}\diagup \mathcal{Z}_3) \cdot {\sf w} \cdot (\Delta_3^{\ell}\diagup \mathcal{Z}_3)$ with $\ell=0$ or $1$ and $\sigma_{j'}\neq
\sigma_{j}$. Hence, for $\textsf{w}\neq \mbox{Id}$ there are $8$ possible choices of elements ${\textsf{b}}\in \mathcal{B}_3 \diagup \mathcal{Z}_3$ with $\vartheta({\textsf{b}})= {\sf w} $.

If $\textsf{b}= \mbox{Id}$ then the choices are $\Delta^{\ell}\diagup \mathcal{Z}_3$ and $(\sigma_j^{\pm 1}\Delta^{\ell})\diagup \mathcal{Z}_3$ for $j=1,2,$ and $\ell=0$ or $\ell=1$. These are $10$ choices.  Hence, there are no more than  $15 \exp( 6 \pi \lambda_{10}(X))$ different
elements $\textsf{b}\in \mathcal{B}_3 \diagup \mathcal{Z}_3$  with $\mathcal{L}_-(\vartheta(\textsf{b}))\leq 2 \pi \lambda_{10}(X)$.

Each monodromy is the product of at most six elements ${\sf{b}}_j$ of
$\mathcal{B}_3 \diagup \mathcal{Z}_3$ with $\mathcal{L}_-(\vartheta(\textsf{b}_j))\leq 2 \pi \lambda_{10}(X)$.
Hence, for each monodromy there are no more than $(15 \exp( 6 \pi \lambda_{10}(X)))^{6}$
possible choices. We proved that there are up to isotopy no more than $(15 \exp( 6 \pi \lambda_{10}(X)))^{6(2g+m)}$
irreducible holomorphic $(0,3)$-bundles with a holomorphic section over $X$.
Theorem \ref{thmfin3} is proved. \hfill $\Box$
\smallskip

Notice that we proved a slightly stronger statement, namely, over a Riemann surface of genus $g$ with $m+1\geq 1$ holes there are no more than
$(15 \exp( 6 \pi \lambda_{10}(X)))^{6(2g+m)}$ isotopy classes of smooth $(0,3)$-bundles with a
smooth section that contain a holomorphic bundle with a holomorphic section that is either irreducible or isotopic to the trivial bundle.

\medskip

\noindent {\bf Proof of Theorem \ref{thmfin2}.} Proposition \ref{propEl.2} and Theorem \ref{thmfin3} imply Theorem \ref{thmfin2} as follows. Suppose an isotopy class of smooth $(1,1)$-bundles over a finite open Riemann surface $X$ contains a holomorphic bundle. By Proposition \ref{propEl.2} the class contains a holomorphic bundle which is the double branched covering of a holomorphic special $(0,4)$-bundle.
If the $(1,1)$-bundle is irreducible then also the $(0,4)$-bundle is irreducible.
There are up to isotopy no more than $\big(15(\exp(6 \pi \lambda_{10}(X)))\big)^{6(2g+m)}$
holomorphic special $(0,4)$-bundles over $X$ that are either irreducible or isotopic to the trivial bundle.

By Theorems \ref{thmEl1} and \ref{thmfin3} there are no more than $\big(15\big(\exp(6 \pi \lambda_{10}(X)))\big)^{6(2g+m)}$
conjugacy classes of monodromy homomorphisms that correspond to
a special holomorphic $(0,4)$-bundle over $X$
that is either irreducible or isotopic to the trivial bundle.
Each monodromy homomorphism of the holomorphic double branched covering is a lift of the respective monodromy homomorphism of the holomorphic special $(0,4)$-bundle.
Different lifts of a monodromy mapping class of a special $(0,4)$-bundle differ by involution, and the fundamental group of $X$ has $2g+m$ generators.
Using Theorem \ref{thmEl1} for $(1,1)$-bundles, we see that
there are no more than  $2^{2g+m}\big(15(\exp(6 \pi \lambda_{10}(X)))\big)^{6(2g+m)}=\big(2 \cdot 15^6\cdot\exp(36 \pi \lambda_{10}(X))\big)^{2g+m}$
isotopy classes of $(1,1)$-bundles that contain a holomorphic bundle that is either irreducible or isotopic to the trivial bundle.
Theorem \ref{thmfin2} is proved.
\hfill $\Box$

\medskip

\noindent For convenience of the reader we give the short reduction of the Corollaries \ref{corfin1a} and \ref{corfin1b} to Theorems \ref{thmfin1} and \ref{thmfin2}, respectively.
The arguments of the reduction
are known in principle, but the case  considered here is especially simple.

\medskip

\noindent {\bf Proof of Corollary \ref{corfin1a}.}
We will prove that on a punctured Riemann surface there are no non-constant reducible holomorphic mappings to the twice punctured complex plane and that any homotopy class of mappings from a punctured Riemann surface to the twice  punctured complex plane contains at most one holomorphic mapping. This implies the corollary.

Recall that a holomorphic mapping $f$ from any punctured Riemann surface $X$ to the twice punctured complex plane extends by Picard's Theorem to a meromorphic function $f^c$ on the closed  Riemann surface $X^c$. \index{Picard ! Theorem of}
Suppose now that $X$ is a punctured Riemann surface and that the mapping $f:X \to \mathbb{C}\setminus \{-1,1\}$ is reducible, i.e. it is homotopic to a mapping into a punctured disc contained in $\mathbb{C}\setminus \{-1,1\}$. Perhaps after composing $f$ with a M\"obius transformation we may suppose that this puncture equals $-1$.  Then the meromorphic extension $f^c$ omits the value $1$. Indeed, if $f^c$ was equal to $1$ at some puncture of $X$, then $f$ would map
the boundary of a sufficiently small disc on $X^c$
that contains the puncture to a loop in $\mathbb{C}\setminus \{-1,1\}$ with non-zero winding number around $1$ ,
which contradicts the fact that $f$ is homotopic to a mapping into a disc punctured at $-1$ and contained in $\mathbb{C}\setminus \{-1,1\}$. Hence, $f^c$ is a meromorphic function on a compact Riemann surface that omits a value, and, hence $f$ is constant. Hence, on a punctured Riemann surface there are no non-constant reducible holomorphic mappings to $\mathbb{C}\setminus \{-1,1\}$.

Suppose $f_1$ and $f_2$ are non-constant homotopic holomorphic mappings from the punctured Riemann surface $X$ to the twice punctured complex plane. Then for their meromorphic extensions $f^c_1$ and $f^c_2$ the functions  $f^c_1-1$ and $f^c_2-1$ have the same divisor on the closed Riemann surface $X^c$. Indeed, suppose, for instance, that $f^c_1-1$ has a zero of order $k >0$
at a puncture $p$. Then for the boundary $\gamma$ of a small disc in $X^c$
around $p$
the curve $(f_1-1) \circ \gamma$ in $\mathbb{C}\setminus \{-2,0\}$ has index $k$
with respect to the origin.
Since $f_2-1$ is homotopic to $f_1-1$ as mapping to $\mathbb{C}\setminus \{-2,0\}$, the curve  $(f_2-1) \circ \gamma$ is free homotopic to $(f_1-1) \circ \gamma$ . Hence, $f_2-1$ has a zero of order $k$
at $p$. Applying the same arguments with $0$ replaced by $\infty$, we obtain that $f^c_1-1$ and $f^c_2-1$ have the same divisor. Hence, $f^c_1-1$ and $f^c_2-1$ differ by a non-zero multiplicative constant. 
Since the functions are non-constant they must take the value $-2$. By the same reasoning as above the functions are equal to $-2$ simultaneously. Hence, the multiplicative constant is equal to $1$.
We proved that non-constant homotopic holomorphic maps from punctured Riemann surfaces
to $\mathbb{C}\setminus \{-1,1\}$ are equal. \hfill $\Box$

\medskip
\noindent {\bf Proof of Corollary \ref{corfin1b}.}
We need the following fact. For each special $(0,4)$-bundle $\mathfrak{F}=(X\times \mathbb{P}^1, {\rm pr}_1, {\mathbold E}, X)$ there is a finite unramified covering $\hat{{\sf P}}:\hat X\to X$ of $X$, such that $\mathfrak{F}$ lifts to a  special $(0,4)$-bundle
$(\hat{X}\times\mathbb{P}^1,{\rm pr}_1,\hat{\mathbold{E}},X)$,
for which the complex curve  $\hat{\mathbold{E}}$ is the union of four disjoint complex curves $\hat{\mathbold{E}}^k,\, k=1,2,3,4,$ each intersecting each fiber $\{\hat{x}\}\times\mathbb{P}^1$ along a single point $(\hat{x},\,\hat{g}^k(\hat{x}))$. 
This can be seen as follows. Let $q_0$ be the base point of $X$.
The monodromy mapping class along each element $e$ of $\pi_1(X,q_0)$ takes the set of distinguished points $\hat{\mathbold{E}}\cup (\{q_0\}\times\mathbb{P}^1)$
onto itself, permuting them by a permutation $\sigma(e)$. Consider the set $ N$ of
elements $e\in \pi_1(X,q_0)$ for which $\sigma(e)$ is the identity. The set $N$ is a normal subgroup of $\pi_1(X,q_0)$. Its index is finite, since two left cosets $e_1\,N$
and $e_2 \, N$ are equal if $ \sigma(e_2\,e_1^{-1}) =    \sigma(e_2)\sigma(e_1)^{-1}={\rm Id}$, and there are only finitely many distinct permutations of points of $\hat{\mathbold{E}}\cup (\{q_0\}\times\mathbb{P}^1)$. The quotient $\hat{X}\stackrel{def}=\tilde {X}\diagup {\rm Is}^{\tilde{q}_0}(N)$ of the universal covering of $X$ by the group of covering transformations corresponding to $N$ and the canonical projection $\hat{X}\to X$ define the required covering.

To prove the corollary, we have to show first,  that any reducible holomorphic $(1,1)$-bundle over a punctured Riemann surface $X$ is locally holomorphically trivial and secondly, that two isotopic (equivalently, smoothly isomorphic) holomorphically non-trivial holomorphic $(1,1)$-bundles over $X$ are holomorphically isomorphic.

The second fact is obtained as follows.
Suppose the holomorphically non-trivial holomorphic $(1,1)$-bundles $\mathfrak{F}_j,\,j=1,2,$ have conjugate monodromy homomorphisms. By Proposition \ref{propEl.2}
each bundle $\mathfrak{F}_j,\, j=1,2,$ is holomorphically isomorphic to a double branched covering of a special holomorphic $(0,4)$-bundle
$(X\times\mathbb{P}^1,{\rm pr}_1,\mathbold{E}_j,X)\stackrel{def}= {\sf Pr}(\mathfrak{F}_j)$. The bundles ${\sf Pr}(\mathfrak{F}_j)$ are isotopic, since they have conjugate monodromy homomorphisms. There is a finite unramified covering $\hat{{\sf P}}:\hat X\to X$ of $X$, such that the bundles  ${\sf Pr}(\mathfrak{F}_j)$ have isotopic lifts
$(\hat{X}\times\mathbb{P}^1,{\rm pr}_1,\hat{\mathbold{E}}_j,X)$ to $\hat X$, and for each $j$ the complex curve  $\hat{\mathbold{E}}_j$ is the union of four disjoint complex curves $\hat{\mathbold{E}}_l^k,\, k=1,2,3,4,$ each intersecting each fiber $\{\hat{x}\}\times\mathbb{P}^1$ along a single point $(\hat{x},\,\hat{g}_j^k(\hat{x}))$. We may choose the label of the points so that the isotopy moves  the complex curve $\hat{\mathbold{E}}_1^k$ to $\hat{\mathbold{E}}_2^k$, $k=1,2,3,4$.
The lifted bundles are not isotopic to the trivial bundle. The mappings $\hat{X}\ni \hat{x}\to \hat{g}_j^k(\hat{x})\in \mathbb{P}^1$ are holomorphic.
We may assume that $\hat{g}_j^4(\hat{x})=\infty$ for each $\hat{x}$.
Define for $j=1,2,$ a holomorphic isomorphism of the bundle $(\hat{X}\times\mathbb{P}^1,{\rm pr}_1,\hat{\mathbold{E}}_j,X)$ by
$$
\{\hat{x}\}\times \mathbb{P}^1\ni (\hat{x},\,\zeta)\to \Big(\hat{x},\, \frac{\hat{g}_j^1(\hat{x})-\zeta}{\hat{g}_j^1(\hat{x})-\hat{g}_j^2(\hat{x}) }\Big)\,.
$$

The image $\hat{\mathbold{E}}'_j$ of $\hat{\mathbold{E}}_j$ under the $j$-th isomorphism
intersects the fiber over each $\hat{x}\in \hat X$ along the four points $(\hat{x},0),\,(\hat{x},1),\,(\hat{x},\infty),$ and $(\hat{x},\mathring{g}_j(\hat{x}))$ for a holomorphic mapping $\mathring{g}_j:\hat X \to \mathbb{P}^1$
that avoids $0,\,1$ and $\infty$. The mappings $\;\mathring{g}_j\;$, $j=1,2,\;$ are homotopic, since the bundles
are isotopic. They are not homotopic to a constant mapping since the bundles are not isotopic to the trivial bundle.
By Corollary \ref{corfin1a} the mappings $\;\mathring{g}_1\;$ and \;$\mathring{g}_2\;$ coincide.
Hence, the bundles $\;\;(\hat{X}\times\mathbb{P}^1,{\rm pr}_1,\hat{\mathbold{E}}_j,X)\;\;$
are holomorphically isomorphic to the bundle $\mathfrak{F}'= (\hat{X}\times\mathbb{P}^1,{\rm pr}_1,\hat{\mathbold{E}},X)\;\;$ with
$\;\hat{\mathbold{E}} \cap \big(\{\hat{x}\}\times\mathbb{P}^1\big)=\{\hat{x}\}\times
(0,1,\mathring{g}(\hat{x}),\infty)\;$ for a holomorphic mapping $\mathring{g}(\hat{x}):\hat{X}\to \mathbb{C}\setminus\{0,1\}$.

For each bundle ${\sf Pr}(\mathfrak{F}_j), \, j=1,2,$ a holomorphically isomorphic bundle is obtained from $\mathfrak{F}'$ by gluing for each $x\in X$ the fibers over $\hat{x}\in (\hat{\sf P})^{-1}(x)$
together using a complex affine mapping determined by ${\sf Pr}(\mathfrak{F}_j)$. The complex affine mappings depend holomorphically on $\hat x$.  Each of them takes the unordered triple of finite distinguished points in one fiber to the unordered triple of finite distinguished points in the other fiber over $\hat x$. Assign to each finite distinguished point
the number $k$ of the curve $\hat{\mathbold{E}}^k$ in $\hat{\mathbold{E}}$ to which the point belongs.
Each of the complex affine mappings
is uniquely determined by the permutation of these numbers $k$, induced by it. Since the bundles  ${\sf Pr}(\mathfrak{F}_j)$ are smoothly isomorphic, the respective permutations for the two bundles coincide. Hence, the bundles
${\sf Pr}(\mathfrak{F}_j)$ are holomorphically isomorphic to a single bundle that is obtained from  $\mathfrak{F}'$ by a holomorphic gluing procedure of the fibers over points $\hat{x}\in (\hat{\sf P})^{-1}(x)$
 for $x\in X$.

For each bundle ${\sf Pr}(\mathfrak{F}_j)$  there exist finitely many double branched coverings, and if the given double branched coverings of  ${\sf Pr}(\mathfrak{F}_1)$ and ${\sf Pr}(\mathfrak{F}_2)$, respecticvely, have conjugate monodromy homomorphisms, they are holomorphically isomorphic.
Since the bundles $\mathfrak{F}_j$, $j=1,2,$ are double branched coverings of the ${\sf Pr}(\mathfrak{F}_j)$ and have conjugate monodromy homomorphism, they are holomorphically isomorphic. The second fact is proved.

The first fact is obtained as follows. After a holomorphic isomorphism we may assume
that the reducible holomorphic $(1,1)$-bundle is a double branched covering of a reducible special $(0,4)$-bundle ${\sf Pr}(\mathfrak{F})=(X\times \mathbb{P}^1, {\rm pr}_1, \mathring{\mathbold{E}}\cup \mathbold{s}^{\infty}, X)$. After a further isomorphism the bundle ${\sf Pr}(\mathfrak{F})$ lifts to a holomorphic bundle $\reallywidehat{{\sf Pr}(\mathfrak{F})}= (\hat{X} \times \mathbb{P}^1, {\rm pr}_1, \hat{\mathring{\mathbold{E}}}\cup \widehat{\mathbold{s}^{\infty}},\hat{ X}) $, such that
$\hat{\mathring{\mathbold{E}}}$ intersects each fiber $\{x\}\times\mathbb{P}^1$ along a set of the form $\{\hat{x}\}\times \{0,1,\mathring{g}(\hat{x})\}$. Since the bundle $\mathfrak{F}$ is reducible, the bundle ${\sf Pr}(\mathfrak{F})$ and also $\reallywidehat{{\sf Pr}(\mathfrak{F})}$ are reducible. Hence,  the mapping $\mathring{g}$ is constant by Corollary \ref{corfin1a}. Hence, all fibers of $\reallywidehat{{\sf Pr}(\mathfrak{F})}$ are conformally equivalent, and, hence, all fibers of ${\sf Pr}(\mathfrak{F})$ are conformally equivalent. Since $\mathfrak{F}$ is the double branched covering of ${\sf Pr}(\mathfrak{F})$, all fibers of $\mathfrak{F}$ are conformally equivalent.
The first fact is proved.
\hfill $\Box$

\medskip

\noindent {\bf Proof of Proposition \ref{propfin1a}.}
Denote by $S^{\alpha}$ a skeleton of $T^{\alpha,\sigma} \subset T^{\alpha}$ which is the union of two circles each of which lifts
under the covering ${\sf P}:\mathbb{C}\to T^{\alpha}$ to
a straight line segment which is parallel to an axis in the complex plane. Denote the intersection point of the two circles by $q_0$.
Note that $S^{\alpha}$ is a standard bouquet of circles for $T^{\alpha,\sigma}$ with base point $q_0$, and
${\sf P}^{-1}(T^{\alpha,\sigma})$ is the $\frac{\sigma}{2}$-neighbourhood of
${\sf P}^{-1}(S^{\alpha})$. We may assume that ${\sf P}^{-1}(q_0)$ is the lattice $\mathbb{Z}+i\alpha \mathbb{Z}$.

Denote by $e$ the generator of $\pi_1(T^{\alpha,\sigma},q_0)$, that lifts to a vertical line segment and $e'$  the generator of $\pi_1(T^{\alpha,\sigma},q_0)$, that lifts to a horizontal line segment. Put $\mathcal{E}=\{e,e'\}$. We show first the inequality
\begin{equation}\label{eqfin20}
\lambda_3(T^{\alpha,\sigma}) \leq \frac{4(2\alpha+1)}{\sigma}\;.
\end{equation}
For this purpose we take any primitive element $e''$ of the fundamental group $\pi_1(T^{\alpha,\sigma},q_0)$ which is the product of at most three factors, each of the factors being an element of $\mathcal{E}$ or the inverse of an element of $\mathcal{E}$.
We represent
the element $e''$ by a piecewise $C^1$ mapping $f_1$ from an interval $[0,l_1]$
to the skeleton $S^{\alpha}$.
We may consider $f_1$ as a piecewise $C^1$ mapping from the circle
$\mathbb{R}\diagup (x \sim x+l_1)$ to the skeleton, and assume that for all points $t'$ of the circle where $f_1$ is not smooth, $f_1(t')=q_0$.
Let $t_0\in [0,l_1]$ be a point for which $f_1(t_0)\neq q_0$.
Let $\tilde{f}_1$ be a piecewise smooth mapping from $[t_0,t_0+l_1]$ to the universal covering $\mathbb{C}$ of $T^{\alpha}\subset T^{\alpha,\sigma}$
which projects to $f_1$.
We may take $f_1$ so that the equality $|\tilde{f}_1'|=1$ holds. The mapping may be chosen so that $l_1\leq 2 \alpha  + 1$.
(Recall that $\alpha\geq 1$ and the element $e$ is primitive.)

Take any $t'$ for which $f_1$ is not smooth. We may assume that $f_1$ is chosen so that
the direction of $\tilde{f}_1'$ changes by the angle $\pm \frac{\pi}{2}$ at each such point. Hence,
there exists a neighbourhood $I(t')$ of $t'$ on $(t_0,t_0+\ell_1)$, such that the restriction $\tilde{f}_1'| I(t')$
covers two sides of a square of side length $\frac{\sigma}{2}$. Denote $\tilde{q}'_0$  the common vertex  $\tilde{f}_1'(t')$ of these sides, and
by  $\tilde{q}''_0$ the vertex of the square that is not a vertex of one of the two sides.
Replace the union of the two sides of the square that contain $\tilde{q}'_0$ by a quarter-circle of radius $\frac{\sigma}{2}$ with center at the vertex $\tilde{q}''_0$, and
parameterize the latter by $t \to \frac{\sigma}{2} e^{\pm i\frac{2}{\sigma}t}$ so that the absolute value of the derivative equals $1$.
Notice that the quarter-circle is shorter than the union of the two sides.

Proceed in this way with all such points $t'$. After a reparameterization
we obtain a $C^1$ mapping $\tilde f$ of the interval $[0,l]$ of length $l$ not exceeding $2 \alpha +1$ whose image is contained in the union of
${\sf P}^{-1}(S^{\alpha})$
with some quarter-circles, such that $|\tilde{f}'|=1$.
The distance of each point of the image of $\tilde f$ to the boundary of ${\sf P}^{-1}(T^{\alpha,\sigma})$ 
is not smaller than $\frac{\sigma}{2}$. The mapping $\tilde f$ is piecewise of class $C^2$.
The normalization condition  $|\tilde{ f}'|=1$ implies $|\tilde{f}''|\leq \frac{2}{\sigma}$.

The projection $f={\sf P}\circ\tilde{f} $ can be considered as a mapping from the circle $\mathbb{R}\diagup (x\sim x +l)$ of length $l$ not exceeding $2 \alpha + 1$  to $T^{\alpha,\sigma}$,  that represents
the free homotopy class $\reallywidehat{e''}$ of the chosen element of the fundamental group.

Consider the mapping $ x+iy \to \tilde{F}(x+iy) \stackrel{def}=\tilde{f}(x) + i \tilde{f}'(x) y \in \mathbb{C}$, where
$x+iy$ runs along the rectangle $R_l= \{x+iy  \in \mathbb{C}:
x\in [0,l], |y| \leq \frac{\sigma}{4}\}$.
The image of this mapping is contained in the closure of ${\sf P}^{-1}(T^{\alpha,\sigma})$.
Since $2\frac{\partial}{\partial z} \tilde{F} (x+iy)= 2 \tilde{f}'(x) + i \tilde{f}''(x) y $ and $2\frac{\partial}{ \partial \bar z} \tilde{F }(x+iy)=  i \tilde{f}''(x) y $,
the Beltrami coefficient $\mu
_{\tilde{F}}(x+iy)= \frac{\frac{\partial}{\bar z} \tilde{F} (x+iy)}{\frac{\partial}{z} \tilde{F} (x+iy)}$ of $\tilde{F}$  satisfies the inequality $|\mu
_{\tilde{F}}(x+iy) | \leq \frac{1}{3}$.  Hence, for $K=\frac{1+\frac{1}{3}}{1-\frac{1}{3}}=2$ the mapping $\tilde{F}$ descends to a $K$-quasiconformal mapping $F$ from the annulus $A_l$ to $T^{\alpha,\sigma}$ of extremal length $\lambda(A_l)=\frac{l}{\frac{\sigma}{2}}\leq 2 \frac{(2 \alpha +1)}{\sigma}$
that represents the free homotopy class of the element $e''$ of the fundamental group $\pi_1(T^{\alpha,\sigma} ,q_0)$. Realize $A_l$ as an annulus in the complex plane. Let $\varphi$ be the solution of the Beltrami equation on $\mathbb{C}$ with Beltrami coefficient $\mu_{\tilde{F}}$ on $A_l$ and zero else.  Then the mapping $g=F \circ \varphi^{-1}$ is a holomorphic mapping of the annulus $\varphi(A_l)$ of extremal length not exceeding $K \lambda(A_l) \leq  \frac{4(2\alpha +1 )}{\sigma} $ into $T^{\alpha,\sigma}$ that represents the chosen element of the fundamental group $\pi_1(T^{\alpha,\sigma},q_0)$. Inequality \eqref{eqfin20} is proved.

By Theorem \ref{thmfin1}
for tori with a hole there are up to homotopy no more than $3(\frac{3}{2}e^{36 \pi \lambda_3(T^{\alpha,\sigma})})^2\leq \frac{27}{4}e^{2^5\cdot 3^2 \pi \frac{2\alpha +1}{\sigma}}< 7e^{2^5 \cdot 3^2 \pi \frac{2\alpha +1}{\sigma}} $
non-constant irreducible holomorphic mappings from $T^{\alpha,\sigma}$ to the twice punctured complex plane.

\smallskip

We give now the proof of the lower bound. Fix $\alpha\geq 1$ and let $\delta\leq\frac{1}{10}$.
We consider the annulus $A^{\alpha,5\delta}=\{z\in \mathbb{C}:|\mbox{Re}z|< \frac{5\delta}{2}\}\diagup (z\sim z+\alpha i)$ of extremal length equal to $\frac{\alpha}{5\delta}\geq 2\alpha$.

For any natural number $j$ we consider all elements of $\pi_1(\mathbb{C}\setminus \{-1,1\},0)$ of the form
\begin{equation}\label{eqfin21}
a_1^{\pm 2}a_2^{\pm 2} \ldots a_1^{\pm 2}a_2^{\pm 2}
\end{equation}
containing $2j$ terms, each of the form $a_j^{\pm 2}$. The choice of the sign in the exponent of each term is arbitrary. There are $2^{2j}$ elements of this kind.

Put $j=[\frac{\alpha}{60\pi \delta}]$, where $[x]$ is the largest integer not exceeding a positive number $x$. Then $\frac{\alpha}{5\delta}\geq 12 j \pi$ and
Remark \ref{rem3-braids.1} implies that
for each word $w$ of the form $a_1^{\pm 2} a_2^{\pm 2} \ldots a_1^{\pm 2} a_2^{\pm 2}$ with $N=2j$ terms there exists
a holomorphic mapping $\mathfrak{g}_w: \big(A^{\alpha,5\delta},0\diagup (z\sim z+i\alpha)\big) \to \big(\mathbb{C}\setminus \{-1,1\},0\big)$ from $A^{\alpha,5\delta}$ with base point $0\diagup (z\sim z+i\alpha)$ to  $\mathbb{C}\setminus \{-1,1\}$ with base point $0$, that represents $w$. Moreover, the image of $\mathfrak{g}_w$ is contained in the domain  $\{z\in\mathbb{C}: |z|<C, |z\pm 1|>\frac{1}{C}\}$ for a constant $C>1$.

The plan is the following. We will embed $A^{\alpha,\delta}$ into $T^{\alpha,\delta}$, restrict  $\mathfrak{g}_w$ to a smaller annulus, and
extend the pushforward of the restriction
to a smooth mapping $\mathring{\mathfrak{g}}_w$ from
$T^{\alpha,\delta}$ into $\{z\in\mathbb{C}: |z|<C, |z\pm 1|>\frac{1}{C}\}$ such that (with $\sf P$ being the projection ${\sf P}:\mathbb{C}\to T^{\alpha}$) the monodromy
along the circle ${\sf P}(\{\mbox{Re}z=0\})$
with base point ${\sf P}(0)$
is equal to \eqref{eqfin21}, and the monodromy along  ${\sf P}(\{\mbox{Im}z=0\}$
with the same base point equals the identity.
We will show the existence of a positive number $\varepsilon$ depending only on $C$, such that correcting the smooth mapping  $\mathring{\mathfrak{g}}_w$ on $T^{\alpha,\sigma}\stackrel{def}=T^{\alpha,\varepsilon\delta}$ by a solution of a $\bar{\partial}$-problem on $T^{\alpha,\sigma}$   with $L^{\infty}$-norm smaller than  $C$, we obtain a holomorphic mapping $T^{\alpha,\sigma} \to \mathbb{C}\setminus\{-1,1\}$ with
monodromies $w$ and $\rm Id$. In this way  for  $\sigma=\varepsilon \delta$ we will obtain $2^{2j}\geq \frac{1}{4}
e^{\frac{2\varepsilon\log2}{60\pi}       \frac{\alpha}{\sigma}}$ non-homotopic holomorphic mappings from $T^{\alpha,\sigma}$ to $\mathbb{C}\setminus\{-1,1\}$.
We will now give the detailed proof following the plan.

Let $\tilde{\mathfrak{g}}_w$ be the lift of $\mathfrak{g}_w$ to a mapping from the strip $\{z\in \mathbb{C}:|\mbox{Re}z|< \frac{5\delta}{2}\}$ to $\{z\in\mathbb{C}: |z|<C, |z\pm 1|>\frac{1}{C}\}$.
On the thinner strip
$\{|\mbox{Re}z|< \frac{3\delta}{2}\}$ the derivative of $\tilde{\mathfrak{g}}_w$
satisfies the inequality $|\tilde{\mathfrak{g}}'_w|\leq \frac{C}{\delta}$, since  $|\tilde{\mathfrak{g}}_w|\leq C$.

Let $F_{\alpha}=[-\frac{1}{2},\frac{1}{2})\times [-\frac{\alpha}{2},\frac{\alpha}{2})\subset \mathbb{C}$ be a fundamental domain for the projection ${\sf P}:\mathbb{C}\to T^{\alpha}$. Put $\Delta^{\alpha,\delta}=F_{\alpha}\cap {\sf P}^{-1}(T^{\alpha,\delta})$.
Let $\chi_0:[0,1]\to \mathbb{R}$ be a non-decreasing function of class $C^2$ with $\chi_0(0)=0,\, \chi_0(1)=1$, $\chi'_0(0)=\chi'_0(1)=0$ and $|\chi_0'(t)|\leq \frac{3}{2}$ (see also Section \ref{sec:3-braids3}). Define $\chi_{\delta}:[\frac{-3\delta}{2},\frac{+3\delta}{2}]\to [0,1]$ by
\begin{equation}\label{eqfin22}
\chi_{\delta}(t) =
\begin{cases} \chi_0(\frac{1}{\delta}t+ \frac{3}{2})& \; t \in [\frac{-3\delta}{2},\frac{-\delta}{2}] \\
1 & \; t \in [\frac{-\delta}{2},\frac{+\delta}{2}]\\
\chi_0(-\frac{1}{\delta}t+ \frac{3}{2})& \; t \in [\frac{\delta}{2},\frac{3\delta}{2}]\,.
\end{cases}
\end{equation}
Notice that $\chi_{\delta}$ is a $C^2$-function that vanishes at the endpoints of the interval  $[\frac{-3\delta}{2},\frac{+3\delta}{2}]$ together with its first derivative, is non-decreasing on $[\frac{-3\delta}{2},\frac{-\delta}{2}]$, and non-increasing on $[\frac{\delta}{2},\frac{3\delta}{2}]$.
Put
$\mathring{\mathfrak{g}}_w(z)= \chi_{\delta} (\mbox{Re}z)\; \tilde{\mathfrak{g}}_w(z) + (1-\chi_{\delta} (\mbox{Re}z))\; \tilde{\mathfrak{g}}_w(0)$ for $z$ in the intersection of
$\Delta^{\alpha,\delta}$ with $\{|\mbox{Re}z|<\frac{3\delta}{2}\}$,
and $\mathring{\mathfrak{g}}_w(z)=\tilde{\mathfrak{g}}_w(0)$ for $z$ in the rest of $\Delta^{\alpha,\delta}$.

Put $\varphi_w(z)= \frac{\partial}{\partial \bar z} \mathring{\mathfrak{g}}_w(z)$ on $\Delta^{\alpha,\delta}$.
Since $\frac{\partial}{\partial \bar z} \chi_{\delta} (\mbox{Re} z)=0$ for $|\mbox{Re} z|<\frac{\delta}{2}$ and for $|\mbox{Re} z|>\frac{3\delta}{2}$,
the function  $\varphi_w(z)$ vanishes on $\Delta^{\alpha,\delta}\setminus Q_{\delta}$ with $Q_{\delta}\stackrel{def}=([ -\frac{3\delta}{2},  +\frac{3\delta}{2}] \times [-\frac{\delta}{2},\frac{\delta}{2}])$.
On $Q_{\delta} \cap \Delta^{\alpha,\delta}$ the inequality
\begin{equation}\label{eqfin23}
|\varphi_w(z)|\leq\frac{1}{2} |\chi'_{\delta}(\mbox{Re} z)|\, | \tilde{\mathfrak{g}}_w(z)- \tilde{\mathfrak{g}}_w(0)|\leq \frac{3}{4\delta} \cdot \frac{C}{\delta}|z|< \frac{3}{4\delta^2} \cdot  C \cdot 2\delta= \frac{3}{2} \frac{C}{\delta}\,
\end{equation}
holds. Notice that the functions $\mathring{\mathfrak{g}}_w$ and $\varphi_w$ extend to ${\sf P}^{-1}( T^{\alpha,\delta})$ as continuous doubly periodic functions. Hence, we may consider them as functions on $T^{\alpha,\delta}$.

\begin{figure}[h]
\begin{center}
\includegraphics[width=60mm]{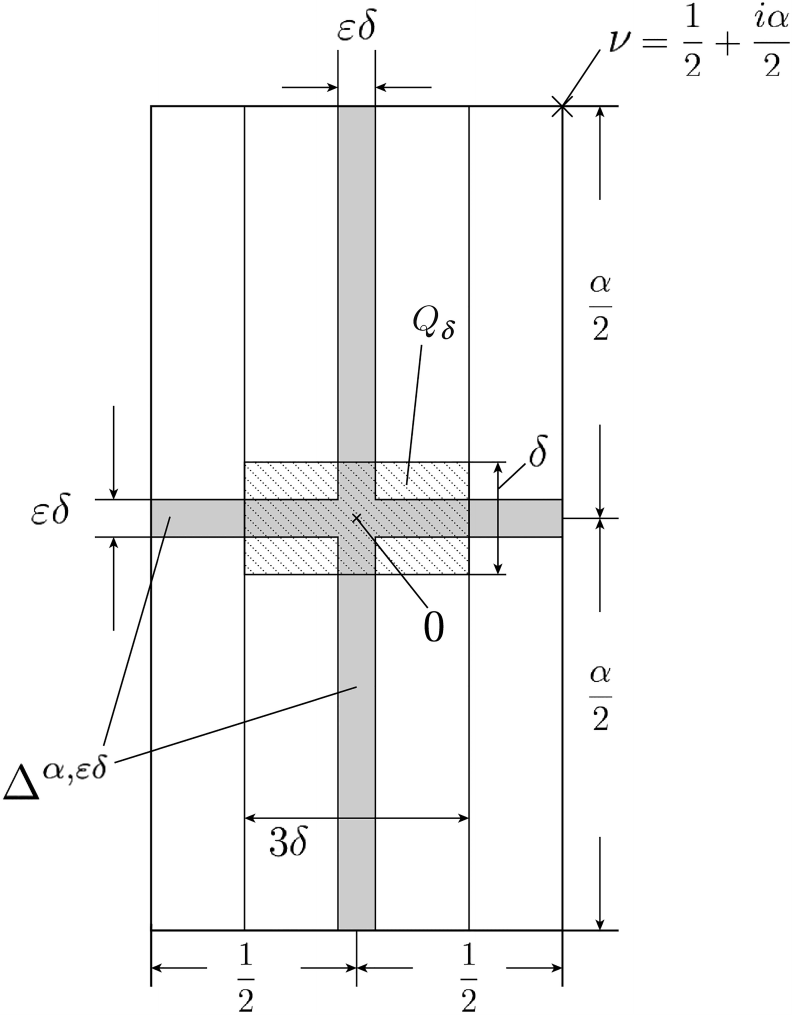}
\end{center}
\caption{A fundamental domain for a torus with a hole and the poles of the kernel for the $\overline{\partial}$-equation}\label{fig8.4}
\end{figure}

We want to find now a small positive number $\varepsilon$ that depends only on $C$,
such that for $\sigma\stackrel{def }= \varepsilon \delta\leq \frac{\varepsilon}{10}$ there exists a solution $f_w$ of the equation $\frac{\partial}{\partial \bar z}f_w(z)=\varphi_w(z)$ on $T^{\alpha,\sigma}$ such that for each $z$ the inequality $|f_w(z)|\leq \frac{1}{C}$ holds.
Then ${\mathfrak{g}}_w-f_w$ is a holomorphic mapping from $T^{\alpha,\sigma}$ to
$\mathbb{C}\setminus \{-1,1\}$ whose class has monodromies equal to \eqref{eqfin21}, and to the identity, respectively.
This provides the $2^{2j}\geq \frac{1}{4}
e^{\frac{\varepsilon\log2}{30\pi}       \frac{\alpha}{\sigma}}$
promised different homotopy classes of mappings from $T^{\alpha,\sigma}$ to $\mathbb{C}\setminus \{-1,1\}$, and, hence proves the lower bound.

To solve the $\bar{\partial}$-problem on $T^{\alpha,\varepsilon\delta}=  T^{\alpha,\sigma}$, we consider an explicit kernel function which mimics the Weierstraß $\wp$-function. The author is grateful to Bo Berndtsson who suggested to use this kernel function.

Recall that the Weierstraß $\wp$-function related to the torus $T^{\alpha}$ is the doubly periodic meromorphic function
$$
\wp_{\alpha}(\zeta)=\frac{1}{\zeta^2}+ \underset{(n,m) \in \mathbb{Z}^2\setminus (0,0)}{\sum} \Big(\frac{1}{(\zeta-n- i m \alpha)^2 }- \frac{1}{(n+ i m \alpha)^2}\Big)\,
$$
on $\mathbb{C}$. It defines a meromorphic function on $T^{\alpha}$ with a double pole at the projection of the origin and no other pole.

Put $\nu=\frac{1}{2} + \frac{\alpha i}{2}$.
Since for $\zeta\not\in (\mathbb{Z}+i\alpha \mathbb{Z})\cup  (\nu+\mathbb{Z}+i\alpha \mathbb{Z})$
the equality
\begin{align*}
\frac{1}{(\zeta-n- i m \alpha) } - \frac{1}{(\zeta-n- i m \alpha-\nu) }+ \frac{\nu}{(\zeta-n- i m \alpha)^2 }= \\
\frac{-\nu^2}{(\zeta -n- i m \alpha)^2 (\zeta - n- i m \alpha-\nu) }\,
\end{align*}
holds, and the series with these terms converges uniformly on compact sets not containing poles, the expression
\begin{align*}
\wp_{\alpha}^{\nu}(\zeta) &\stackrel{def}=
\frac{1}{\zeta}-\frac{1}{\zeta-\nu} \\
& + \,\underset{(n,m) \in \mathbb{Z}^2\setminus (0,0)}{\sum} \Big(\frac{1}{(\zeta-n- i m \alpha) } - \frac{1}{(\zeta-n- i m \alpha-\nu) }     + \frac{\nu}{(n+ i m \alpha)^2}\Big)\,
\end{align*}
defines a doubly periodic meromorphic function on $\mathbb{C}$ with only simple poles. The function descends to a meromorphic function on $T^{\alpha}$ with two simple poles and no other pole.

The support of $\varphi_w$ is contained in $Q_{\delta}$. The set $Q_{\delta}$
is contained in the $2\delta$-disc in $\mathbb{C}$ (in the Euclidean metric) around the origin.
If $\zeta$ is contained in the $2\delta$-disc around the origin and $z \in \Delta^{\alpha,\delta}$, then the point $\zeta-z$ is contained in the ${2\delta}$-neighbourhood (in $\mathbb{C}$) of $\Delta^{\alpha,\delta}$.
By the choice of $\delta$ the distance of any such point  $\zeta-z$
to any lattice point $n+i \alpha m$ except $0$ is larger than  $\frac{1}{2}-2\delta>  \frac{1}{4}$.
Further, for $z\in \Delta^{\alpha,\delta}$ and $\zeta$ in the $2\delta$-disc around
the origin the distance of the point  $\zeta-z$
to any point $n+i \alpha m +\nu$ (including the point $\nu$) is not smaller than $ \frac{1}{2}-\frac{5\delta}{2}\geq  \frac{1}{4}$.
Put $Q'_{\varepsilon,\delta}\stackrel{def}= Q_{\delta}\cap  \Delta^{\alpha,\varepsilon\delta}=      ([  -\frac{3\delta}{2},  +\frac{3\delta}{2}] \times [-\frac{\varepsilon\delta}{2},+\frac{\varepsilon\delta}{2}]) \bigcup ([  -\frac{\varepsilon\delta}{2},  +\frac{\varepsilon\delta}{2}] \times [-\frac{\delta}{2},+\frac{\delta}{2}])$.
Then the function
\begin{align}\label{eqfin24}
f_w(z)= - \frac{1}{\pi}\iint_{Q'_{\varepsilon,\delta}} \varphi_w(\zeta) \wp_{\alpha}^{\nu} (\zeta-z) dm_2(\zeta)\;,
\end{align}
for $z$ in $\Delta^{\alpha,\varepsilon\delta}$ is holomorphic outside  $Q'_{\varepsilon,\delta}$
and satisfies the equation $\frac{\partial}{\partial \bar z}f_w=\varphi_w$ on $Q'_{\varepsilon,\delta}$.
It extends continuously to
a doubly periodic function on ${\sf P}^{-1}(T^{\alpha,\varepsilon\delta})$ and hence descends to a continuous function on $T^{\alpha,\varepsilon\delta}$.
It remains to estimate the supremum norm of the function $f_w$ on  $\Delta^{\alpha,\sigma}=\Delta^{\alpha,\varepsilon\delta}$.
The following inequality holds for  $z \in \Delta^{\alpha,\sigma}$
\begin{align}\label{eqfin26}
|&\iint_{Q'_{\varepsilon,\delta}} \varphi_w(\zeta) \wp_{\alpha}^{\nu} (\zeta-z) dm_2(\zeta)|=\nonumber\\
|& \frac{1}{\pi}\iint_{Q'_{\varepsilon,\delta}} \varphi_w(\zeta)\Big(\frac{1}{\zeta-z} + (\wp_{\alpha}^{\nu}(\zeta-z) -\frac{1}{\zeta-z})\Big)  dm_2(\zeta)|\leq  \nonumber\\
&\frac{1}{\pi}\iint_{Q'_{\varepsilon,\delta}} \frac{3C}{2\delta}\Big(|\frac{1}{\zeta-z}|+ C'\Big)   dm_2(\zeta)  \,.
\end{align}
We used the upper bound \eqref{eqfin23} for $\varphi_w$ and the fact that for $z \in \Delta^{\alpha,\sigma}$ and $\zeta$ in $Q'_{\varepsilon,\delta}$ the expression $|\wp_{\alpha}^{\nu}(\zeta-z)  -\frac{1}{\zeta-z}|$ is bounded by a universal constant $C'$.
The integral  of the second term on the right hand side does not exceed $\frac{3 C}{2 \delta}\cdot C' \cdot 4\varepsilon \delta^2=6 C C' \varepsilon \delta$.
The integral $\quad \iint_{Q'_{\varepsilon,\delta}}|\frac{1}{\zeta-z}|dm_2(\zeta)\; $ does not exceed the sum of the two integrals
$\quad I_1=\iint_{(- \frac{3}{2}  \delta, \frac{3}{2} \delta)\times (- \frac{1}{2}\varepsilon\delta, \frac{1}{2}\varepsilon\delta)}\; \mid \frac{1}{\zeta-z}\mid dm_2(\zeta)\;, \;\; $
and
$\;\; I_2=\iint_{(-\frac{1}{2}\varepsilon\delta, \frac{1}{2}\varepsilon\delta)\times (-\frac{1}{2}\delta,\frac{1}{2}\delta)}\; \mid \frac{1}{\zeta-z}\mid  dm_2(\zeta)\;$.
The first integral $I_1$ is largest when $z= 0 $. Hence, it does not exceed
\begin{align}\label{eqfin27}
\iint_{|\zeta|< (\sqrt{2})^{-1} \varepsilon\delta} \; \mid \frac{1}{\zeta} \mid dm_2(\zeta) + 2\varepsilon \delta \int_{\frac{1}{2}\varepsilon\delta}^{\frac{3}{2}\delta}\, \frac{1}{\eta}\, d\eta
\leq  \sqrt{2}\pi\varepsilon\delta + 2\varepsilon\delta \log{\frac{3}{\varepsilon}}\,.
\end{align}
The second integral $I_2$ is smaller.
We obtain the estimate
\begin{align}\label{eqfin28}
|f_w(z)|\leq & \frac{6 C C' \varepsilon \delta}{\pi} + \frac{3C}{2\pi\delta}(\sqrt{2}\pi\varepsilon\delta + 2\varepsilon\delta \log{\frac{3}{\varepsilon}})\nonumber\\
= & \frac{6 C C'  \delta}{\pi}      \varepsilon + \frac{3C}{2\pi}(\sqrt{2}\pi\varepsilon + 2\varepsilon \log{\frac{3}{\varepsilon}})     \,.
\end{align}
Recall that we have chosen $\delta\leq\frac{1}{10}$.
We may choose $\varepsilon_0>0$ depending only on $C$
so that if $\varepsilon< \varepsilon_0$
the supremum norm of $f_w$ is less than $\frac{1}{C}$.
The proposition is proved.
\hfill  $\Box$

\medskip

\noindent {\bf Proof of Proposition \ref{propfin1b}.} Let $\ell_0$ be the length in the K\"ahler metric of the longest circle in the bouquet. For each natural number $k$ and each positive $\sigma<\sigma_0$ the value $\lambda_k(S_{\sigma})$ satisfies the inequalities
\begin{align}
C_1' \frac{\ell_0}{\sigma}\leq \lambda_k(S_{\sigma})\leq C_1'' \frac{\ell_0}{\sigma}
\end{align}
for constants $C_1'$ and $C_1''$ depending on $k$, $X$, $S$ and the K\"ahler metric. This can be seen by the argument used in the proof of Proposition \ref{propfin1a}.

The upper bound in inequalities \eqref{eqfin4} follows from Theorem \ref{thmfin1}.

The proof of the lower bound in \eqref{eqfin4} will follow along the same lines as the proof of Proposition \ref{propfin1a}. It will lead to a $\overline{\partial}$-problem on an open Riemann surface, for which H\"ormander's $L^2$-method can be used. The case of open Riemann surfaces is easier to treat than the general case of pseudo-convex domains. The needed results for Riemann surfaces are explicitly formulated in \cite{Nap}.

To obtain the lower bound we
consider the circle of the bouquet $S$ with largest length (in the Kähler metric). After a small deformation of the circle we may assume that it is real analytic outside a small neighbourhood of $q$. Take a slightly larger neighbourhood of $q$, so that the part $\gamma_0$ of this circle outside this neighbourhood is connected.
Consider a conformal mapping $\omega$ of a neighbourhood of $\gamma_0$ onto a relatively compact domain $V$ in $\mathbb{C}$, such that $\omega$ takes  $\gamma_0$ to an interval on the imaginary axis.  We may assume that its length is bigger than $3c \ell_0$ for a positive constant $c$, and the mapping is normalized so that $\omega(\gamma_0)$ contains the interval $(-\frac{3c\ell_0}{2},\frac{3c\ell_0}{2})$. Moreover,  we may suppose that the orientation of $\omega(\gamma_0)$
coincides with the orientation of the imaginary axis.

For any positive $\delta$ we denote by $R_{\delta}$ the rectangle
$R_{\delta}\stackrel{def}=\{x+iy: x\in (-\delta,\delta), y \in (-c\ell_0-2\delta,c\ell_0+2\delta)\} $ of extremal length $\lambda(R_{\delta})=c \frac{\ell_0}{\delta} +2$
in the complex plane. Let $\delta_0$ be a positive number such that $S_{2\delta_0}$ is relatively compact in $X$ and is a deformation retract of $X$, and
for all positive $\delta\leq 2\delta_0$ the rectangle  $R_{\delta}$ is contained in $V$.
We denote by $R^X_{\delta}$ the curvilinear rectangle $\omega^{-1}(R_{\delta})$ in $X$.

Let $\delta\leq\delta_0$. Put
$\mathring{R}_{\delta}\stackrel{def}= \{x+iy: x\in (-\delta,\delta), y \in (-c\ell_0,c\ell_0)\}
\subset R_{\delta} $ be the rectangle in the complex plane with the same center and horizontal side length as $R_{\delta} $, with
vertical side length $2c\ell_0$ and extremal length $\lambda(\mathring{R}_{\delta})= \frac{c\ell_0}{\delta}$.
Put $j\stackrel{def}= [\frac{c\ell_0}{12\delta \pi}]$.
By
Remark \ref{rem3-braids.1}
we may represent
any word $w$ of the form $a_1^{\pm 2}\, a_2^{\pm 2}\ldots a_2^{\pm 2}$ in the relative fundamental group  $\pi_1(\mathbb{C}\setminus\{-1,1\}, (-1,1))$ with $2j$  terms
by a holomorphic mapping $g_w:\mathring{R}_{\delta}\to  \{z\in\mathbb{C}: |z|<C, |z\pm 1|>\frac{1}{C}\}         \subset \mathbb{C}\setminus \{-1,1\}$
that vanishes at $ \pm i c\ell_0$.
The mapping $g_w$ extends by reflection through the horizontal sides of $\mathring{R}_{\delta}$ to a holomorphic function on $R_{\delta}$, that we also denote by $g_w$. Since $|g_w|\leq C$ on $\mathring{R}_{\delta}$ for the constant $C$ from
Remark \ref{rem3-braids.1},
hence, $|g_w|\leq C$ also on $R_{\delta}$, for any positive $\alpha<1$ the inequality $|g_w'|\leq  \frac{C}{\delta(1-\alpha)}$ holds for the derivative of the mapping $g_w$ on the smaller rectangle $R_{\alpha\delta}$ (defined as $R_\delta$ with $\delta$ replaced by $\alpha\delta$). This fact implies that $|g_w|\leq \frac{\sqrt{2} C\alpha}{1-\alpha}$ on $Q_{\alpha\delta}^{\pm}\stackrel{def}=
\{x+iy: x\in (-{\alpha\delta},{\alpha\delta}), \pm y\in
(c\ell_0, c\ell_0 + \alpha \delta)\}$. We took into account that
$g_w(\pm i c \ell_0)=0$.
We take $\alpha$ so that  $\frac{\sqrt{2} C\alpha}{1-\alpha} <1-\frac{1}{C}$.

With the same function $\chi_0$ as in the proof of Proposition \ref{propfin1a} we define
\begin{equation}\label{eqfin22+}
\mathring{\chi}_{\delta}(t) =
\begin{cases}
1 & \; \;t \;\in [-c\ell_0,c\ell_0]\\
\chi_0( \frac{c\ell_0+ \alpha\delta -|t|}{\alpha\delta}) &\; |t| \in (c\ell_0, c\ell_0+ \alpha \delta)\,,\\
0&\; \;t\; \in \mathbb{R}\setminus[-c\ell_0-\alpha\delta,c\ell_0+ \alpha\delta]\, .
\end{cases}
\end{equation}

Consider the function $\tilde{g}_w(z)\stackrel{def}=g_w(z)\cdot\mathring{\chi}_{\delta}({\rm Im}(z))$ and the
continuous $(0,1)$-form $\varphi_w\stackrel{def}= \bar{\partial}{\tilde{g}_w}$ on  $R_{\alpha\delta}$. The form $\varphi_w$ vanishes outside $Q_{\alpha\delta}^{\pm}$.
Let $\varepsilon$
be a small positive number that does  not depend on $\delta$ and will be chosen later.
Consider the measurable $(0,1)$-form $\varphi_{w,\varepsilon}$ on $R_{\delta}$ that equals $\varphi_w$ on $Q_{\alpha\delta,\varepsilon}^{\pm}\stackrel{def}=
\{x+iy: x\in (-{\alpha\delta\varepsilon},{\alpha\delta\varepsilon}), \pm y\in
(c\ell_0, c\ell_0 + \alpha \delta)\}$
and vanishes outside this set.
Extend its pullback under the conformal mapping $\omega: R^X_{\delta}\to R_{\delta}$
to a measurable $(0,1)$-form on $X$ by putting it equal to zero outside $R^X_{\delta}$. Denote the obtained form by $\varphi_{w,\varepsilon}^X$.

By Corollary 2.14.2
of \cite{Nap} there exists a strictly subharmonic exhaustion function $\psi$ on $X$.
The $L^2$-norm of $\varphi_{w,\varepsilon}$ with respect to the Euclidean metric on the complex plane does not exceed $C_2 \sqrt{\varepsilon}$ for an absolute constant $C_2$.
Hence, the weighted $L^2$-norm on $X$ of $\varphi^X_{w,\varepsilon}$ with respect to the K\"ahler metric and the weight $e^{-\psi}$ (see Definition 2.6.1
of \cite{Nap}) does not exceed $C_3 C_2 \sqrt{\varepsilon}$ for a constant $C_3$ that depends on $\psi$ and
on the K\"ahler metric on the relatively compact subset
$V  \supset R^X_{\delta}  $ of $X$. By Corollary 2.12.6 of \cite{Nap}
there exists a function $f_{w,\varepsilon}^X$ with $\bar{\partial} f_{w,\varepsilon}^X=\varphi^X_{w,\varepsilon}$ in the weighted $L^2$-space on $X$ with respect to the K\"ahler metric and the weight $e^{-\psi}$ (see Definition 2.6.1
of \cite{Nap}), whose norm in this space does not exceed $C_4 C_3 C_2 \sqrt{\varepsilon}$ for a constant $C_4$ depending only on $X$, $\psi$, and the K\"ahler metric.

Let $(Q_{\alpha\delta,\varepsilon}^{\pm})^X$ be the preimages of $Q_{\alpha\delta,\varepsilon}^{\pm}$ under $\omega$.
The function $f_{w,\varepsilon}^X$ is holomorphic on
$X\setminus \big(\overline{(Q_{\alpha\delta,\varepsilon}^{+})^X}\cup \overline{(Q_{\alpha\delta,\varepsilon}^{-})^X}\big)$.
For the above chosen constant $\delta_0$ we put $\tilde{Q}_{\delta_0}^{\pm}\stackrel{def}=\{x+iy\in R_{\delta_0}: \pm y\in (c\ell_0 -\delta_0, c\ell_0 + 2\delta_0)\}$, and $(\tilde{Q}_{\delta_0}^{\pm})^X=\omega^{-1}(\tilde{Q}_{\delta_0}^{\pm})$. Then $(Q_{\alpha\delta,\varepsilon}^{\pm})^X$ is relatively compact in $(\tilde{Q}_{\delta_0}^{\pm})^X$.
On a relatively compact open subset $X_0$ of $X$, that
contains the closed subset $\overline{S_{2\delta_0}} \setminus  ((\tilde{Q}_{\delta_0}^+)^X \cup  (\tilde{Q}_{\delta_0}^-)^X)$ of $X$, the supremum norm of $|f_{w,\varepsilon}^X|$ is estimated by its weighted $L^2$-norm: $|f_{w,\varepsilon}^X|< C_5 \sqrt{\varepsilon}$ in a neighbourhood of  $\overline{S_{2\delta_0}} \setminus  ((\tilde{Q}_{\delta_0}^+)^X \cup  (\tilde{Q}_{\delta_0}^-)^X)$ for a constant $C_5$ that depends on the relatively compact open subset $V$ of $X$, on the K\"ahler metric, on $\psi$ and on the constants chosen before (see Theorem 2.6.4
of \cite{Nap}).

On the other hand the classical Cauchy-Green formula on the complex plane provides a solution $\tilde{f}_{w,\varepsilon}$ of the equation $\overline{\partial}\tilde{f}_{w,\varepsilon}=\varphi_{w,\varepsilon}$ on the set $\tilde{Q}_{\delta_0}^+ \cup  \tilde{Q}_{\delta_0}^-$. The supremum norm of the function $\tilde{f}_{w,\varepsilon}$ is estimated by $C_6\sqrt{\varepsilon}$ for an absolute  constant $C_6$.
Let $\tilde{f}_{w,\varepsilon}^X$ be the pullback of $\tilde{ f}_{w,\varepsilon}$ to  $(\tilde{Q}_{\delta_0}^+)^X \cup  (\tilde{Q}_{\delta_0}^-)^X$. The function $f_{w,\varepsilon}^X - \tilde{f}_w^X$ is holomorphic on $(\tilde{Q}_{\delta_0}^+)^X \cup  (\tilde{Q}_{\delta_0}^-)^X$ and satisfies the inequality $|f_{w,\varepsilon}^X - \tilde{f}_{w,\varepsilon}^X|<(C_5+C_6)\sqrt{\varepsilon}$ at all points of the set $(\tilde{Q}_{\delta_0}^+)^X \cup  (\tilde{Q}_{\delta_0}^-)^X$, that are close to its boundary. Hence, the inequality is satisfied on $(\tilde{Q}_{\delta_0}^+)^X \cup  (\tilde{Q}_{\delta_0}^-)^X$. As a consequence,
\begin{align}\label{eqfin19'}
|f_{w,\varepsilon}^X|<(C_5+2C_6)\sqrt{\varepsilon}\,\; \mbox{on}\,\; (\tilde{Q}_{\delta_0}^+)^X \cup  (\tilde{Q}_{\delta_0}^-)^X\,.
\end{align}
Choose now $\varepsilon$ depending on $C_5$ and $C_6$, so that
\begin{align}\label{eqfin19}
(C_5+2C_6)\sqrt{\varepsilon}<\frac{1}{C}\,.
\end{align}
Then
\begin{align}\label{eqfin19''}
|f_{w,\varepsilon}^X|< \frac{1}{C} \; \mbox{on}\;  \overline{S_{2\delta_0}}\,.
\end{align}
Put $\sigma_0=\varepsilon\alpha \delta_0$, and take
$\sigma=\varepsilon\alpha \delta\leq\sigma_0$. Consider the smooth function $g_{w,\sigma}^X$ on  $S_{\alpha\delta\varepsilon }=S_{\sigma}$ which equals
the pullback $\tilde{g}_w^X= \tilde{g}_w\circ\omega$ on $ \omega^{-1} (R_{\alpha \delta})\cap S_{\sigma}$,
and vanishes on the rest of $S_{\sigma}$. The function $g_{w,\sigma}^X$  vanishes on all circles of the bouquet except $\gamma_0$, and therefore, the monodromy of its homotopy class along each circle of the bouquet except $\gamma_0$ is the identity. The restriction of $g_{w,\sigma}^X$ to
$\mathring{R}_{\delta}^X\cap S_{\sigma}=\omega^{-1}(\mathring{R}_{\delta})\cap S_{\sigma}
$ represents the element $a_1^{\pm 2} a_2^{\pm 2}\ldots a_2^{\pm 2}\in\pi_1(\mathbb{C}\setminus\{-1,1\}, (-1,1))$. Moreover, on $\big({R}_{\delta}^X\setminus \mathring{R}_{\delta}^X\big)\cap S_{\sigma} $  the inequality $|g_{w,\sigma}^X|<1-\frac{1}{C} $ holds. Hence, the monodromy of the homotopy class of $g_{w,\sigma}^X$ along $\gamma_0$ equals $a_1^{\pm} a_2^{\pm}\ldots a_2^{\pm}$.
By inequality
\eqref{eqfin19''} the  restriction $(g_{w,\sigma}^X   -f_{w,\varepsilon}^X)\mid S_{\sigma}$ is a holomorphic mapping into $\mathbb{C}\setminus \{-1,1\}$ and
its monodromies
along all circles of the bouquet coincide with those of the homotopy class of $g_{w,\sigma}^X$.
For each positive $\sigma=\epsilon\alpha\delta <\epsilon\alpha\delta_0=\sigma_0$ we found no less than
$2^{2j}\geq\frac{1}{4}e^{\frac{    c\alpha\varepsilon \log 2}{ 6 \pi} \frac{\ell_0}{\sigma}}$
irreducible non-homotopic holomorphic mappings from $S_{\sigma}$ to $\mathbb{C}\setminus\{-1,1\}$.
The proposition is proved. \hfill $\Box$

\medskip

\noindent {\bf Proof of Proposition  \ref{propfin5}.} The first part of the proposition  follows along the lines of the proof of the lower bound in Proposition  \ref{propfin1b}.

Here is a sketch of the proof of the second part of Proposition  \ref{propfin5}. Let $f:S_{\sigma}\to \mathbb{C}\setminus \{-1,1\}$ be a holomorphic mapping with $f(q_0)=0$. There exist positive constants $c$ and $c'<\frac{1}{2}$ depending only on the Kähler metric,
such that $f$ maps the $c\sigma$-neighbourhood $D_{c\sigma}$ of $q_0$ in $S_{\sigma}$ (in the Kähler metric) into the disc of radius $c'$ with center $0$
in $ \mathbb{C}\setminus \{-1,1\}$ (in the Euclidean metric). Indeed, For $c=1$ we take a conformal mapping $\omega$ from the unit disc $\mathbb{D}$ in the complex plane onto the $\sigma$-neighbourhood  $D_{\sigma}$ of $q_0$ with $\omega(0)=q_0$. The holomorphic mapping $f\circ\omega$ takes $\mathbb{D}$ into $ \mathbb{C}\setminus \{-1,1\}$ and $0$ to $0$. It lifts to a
holomorphic mapping $\widetilde{f\circ{\omega}}$ from $\mathbb{D}$ to the right half-plane $\mathbb{C}_r$ that takes $0$ to $\frac{1+i}{2}$. Let $\mathfrak{c}$ be a  conformal mapping from the right half-plane $\mathbb{C}_r$ onto the unit disc $\mathbb{D}$ that takes  $\frac{1+i}{2}$ to $0$. The composition $\mathfrak{c}\circ \widetilde{f\circ{\omega}}$ maps the unit disc to the unit disc and takes $0$ to $0$. Hence,
for the derivative of this mapping the inequality $|(\mathfrak{c}\circ \widetilde{f\circ{\omega}})'|\leq 2$ holds on the disc  $\frac{1}{2}\mathbb{D}$  of radius $\frac{1}{2}$ around $0$. Using the properties of $\mathfrak{c}$ and of the projection $\mathbb{C}_r\to  \mathbb{C}\setminus \{-1,1\}$ we find the constant
$c'$.

We have the isomorphisms of fundamental groups
$\pi_1(X,q_0)\cong \pi_1(X,D_{c\sigma})$ and $\pi_1(\mathbb{C}\setminus \{-1,1\},0) \cong \pi_1(\mathbb{C}\setminus \{-1,1\},\mathbb{D}_{c'})$.
For each $j$ we take a curvilinear rectangle $R_j$ contained in $S_{c\sigma}$ that has horizontal sides in $D_{c\sigma}$ and extremal length not exceeding $C_3\frac{\ell_j}{\sigma}$ for a constant $C_3$ depending on the Kähler metric,
and represents $e_j$ with horizontal boundary values in $D_{c\sigma}$. The restriction $f\mid R_j$ represents $f_*(e_j)$ with horizontal boundary values in $\mathbb{D}_{c'}$.

If $c'$ is small then for the mappings $f_1$ and $f_2$ of Proposition \ref{prop2} the preimage $(f_1\circ f_2)^{-1}(\mathbb{D}_{c'})$ is equal to the union $\bigcup_{k\in\mathbb{Z}}\big(\mathbb{D}_{c''}+ik\big)$ for a positive constant $c''<\frac{1}{2}$. As a consequence,
for each elementary word $w$ (including the exceptional cases)  the extremal length $\Lambda(_{\mathbb{D}_{c'}}w_{\;\mathbb{D}_{c'}})$
with boundary values in $\mathbb{D}_{c'}$ is not smaller than ${\rm const}\, \mathcal{L}(w)$ for an absolute constant ${\rm const}$. Moreover, the extremal lengths with mixed horizontal boundary values $\Lambda(_{\mathbb{D}_{c'}}w_{\,tr})$, $\Lambda(_{\mathbb{D}_{c'}}w_{\,pb})$, $\Lambda(_{tr}w_{\;\mathbb{D}_{c'}})$, $\Lambda(_{pb}w_{\;\mathbb{D}_{c'}})$ are not smaller than ${\rm const}\, \mathcal{L}(w)$. These facts can be proved similarly as Proposition \ref{prop4b}.

The estimates $\Lambda(_{\mathbb{D}_{c'}}(f_*(e_j)_{\,\mathbb{D}_{c'}})\geq C_4 \mathcal{L}(f_*(e_j)),\; j=1,\ldots,2g+m,$ for a constant $C_4$ depending on $c''$ can now be proved following along the lines of the proof of Theorems \ref{thm1} and \ref{thm10.1'}. By the choice of the curvilinear rectangles $R_j$   inequalities \eqref{eqfin31} hold. \hfill $\Box$

\appendix

\chapter{Several complex variables}
\label{ChapterA}

\medskip

\begin{thm}\label{thmA1}({\bf \cite{H1}, Theorem 5.5.1})
Let $\Omega$ be a Stein manifold and $\Omega_j$ open subsets of $\Omega$ such that
$\Omega=\bigcup_{j=1}^{\infty}\Omega_j$. If $g_{j,k}, j,k=1,2,\ldots,$ are holomorphic functions on $ \Omega_j \cap \Omega_k$ such that
\begin{align}\label{eqA1}
g_{j,k}= &-g_{k,j} \;\mbox{for all}\; j,k, \; \nonumber\\
g_{j,k}+g_{k,i}+g_{i,j}=&0 \, \mbox{on} \;  \Omega_j \cap \Omega_k \cap \Omega_i\; \mbox{for all}\; j,k,i,
\end{align}
then there are holomorphic functions $g_j$ on $\Omega_j$ such that
\begin{equation}\label{eqA2}
g_{j,k}=g_j-g_k  \;\mbox{for all}\; j,k.
\end{equation}
\end{thm}
A family of holomorphic functions $g_{j,k}$ on $ \Omega_j \cap \Omega_k$ that satisfies \eqref{eqA1} is called a Cousin I distribution. If
there are holomorphic functions $g_j$ on $\Omega_j$ satisfying \eqref{eqA2} we say that the first Cousin Problem for the Cousin distribution has a solution.

A family of nowhere vanishing holomorphic functions $g_{j,k}$ on $ \Omega_j \cap \Omega_k$ is called a Cousin II distribution if
\begin{align}\label{eqA3}
g_{j,k}g_{k,j} = &1 \;\mbox{for all}\; j,k, \; \nonumber\\
g_{j,k}g_{k,i}g_{i,j} = &1 \, \mbox{on} \;  \Omega_j \cap \Omega_k \cap \Omega_i\; \mbox{for all}\; j,k,i.
\end{align}
If there are nowhere vanishing holomorphic functions $g_j$ on $\Omega_j$ satisfying
the equation
\begin{equation}\label{eqA4}
g_{j,k}=g_jg_k^{-1}  \;\mbox{for all}\; j,k,
\end{equation}
we say that the second Cousin Problem for the Cousin II problem has a solution.

If the $\Omega_j$ cover a Stein manifold and all sets $\Omega_j \cap \Omega_k$ are simply connected, then the respective second Cousin Problem has a solution. Indeed, for each $j,k$
we may consider a branch of the logarithm of $g_{j,k}$ on $\Omega_j \cap \Omega_k$ and solve a Cousin I Problem for these functions.

\bigskip

\chapter{A Lemma on Conjugation}
\label{ChapterB}

\medskip

\begin{lemm}\label{lemm6.4}{\bf{(Lemma on conjugation)}}

\noindent $1$. Let $Q_1 , \ldots , Q_k$ be topological spaces, and let $\varphi$ be a
self-homeomorphism of (the formal disjoint union)
$Q\stackrel{def}=\bigcup_{1\leq j\leq k} Q_j$.  Suppose $\varphi$  permutes the $Q_j$
cyclically, i.e. $\varphi (Q_j) = Q_{j+1}$, $j = 1,\ldots , k-1$,
$\varphi (Q_k) = Q_1$.

Let $\mathring{Q}_1 , \ldots ,\mathring{ Q}_k$ be topological spaces, and
let $\mathring{\varphi}$ be a
self-homeomorphism of (the formal disjoint union)
$\mathring{Q}\stackrel{def}=\bigcup_{1\leq j\leq k} \mathring{Q}_j$, that permutes the $\mathring{Q}_j$ along the respective cycle,
i.e. $\mathring{\varphi} (\mathring{Q}_j) = \mathring{Q}_{j+1}$, $j = 1,\ldots , k-1$,
$\mathring{\varphi} (\mathring{Q}_k) = \mathring{Q}_1$.
Assume that $\varphi^k \mid Q_1$
and $\mathring{\varphi}^k \mid \mathring{Q}_1$ are related by isotopy and conjugation.

Then $\varphi$ and $\mathring{\varphi}   $ are related by isotopy followed by
conjugation, i.e. there exists a homeomorphism
$\sigma$ from $\mathring{Q}$ onto $Q$,
such that $\sigma_j\stackrel{def}=\sigma|\mathring{Q}_j$ maps each $\mathring{Q}_j$ homeomorphically onto ${Q}_j$, $j=0,\ldots,k$, and a self-homeomorphism $\tilde{ \varphi}$ of $Q$,
which is isotopic to $\varphi$ and such that $ \mathring{\varphi}= \sigma^{-1} \circ \tilde{\varphi} \circ \sigma$.

\smallskip
\noindent $2$. Suppose in addition, that $\mathring{Q}_j=Q_j$ for each $j=1,\ldots,k$, and there are
 subsets $Q'_j$ of $Q_j$, $j=1,\ldots,k,$ such that $\varphi= \mathring{\varphi}$ on
$Q'\stackrel{def}=\bigcup_{1\leq j\leq k}Q'_j$ and $\varphi\mid Q'$ is a self-homeomorphism of $Q'$.
Moreover, assume that $\varphi^k \mid Q_1$ is isotopic to $\mathring{\varphi}^k \mid \mathring{Q}_1$ by an isotopy of the form $\varphi^k\circ \tilde{\chi}_t,\,t\in[0,1],$ for a ${\rm Hom}^+(Q_1;Q_1')$-isotopy   $\tilde{\chi}_t$.

Then the mapping $\sigma$ of Statement 1 may
be chosen to fix the union  $Q'=\bigcup_{1\leq j\leq k}Q'_j$ pointwise and the self-homeomorphism $\tilde{\varphi}$ of Statement 1 may be chosen to be isotopic to $\varphi$ by an isotopy of the form $\varphi \circ \chi_t,\, t \in[0,1],$ for a ${\rm Hom}^+(Q;Q')$-isotopy $\chi_t$
with $\chi_0=\rm{ Id}$.
\end{lemm}

\bigskip

\noindent {\bf Proof of the Lemma on conjugation.}
Assume first that in Statement 1
\begin{equation}\label{eq6.66b}
\mathring{\varphi}^k\mid \mathring{Q}_1=\sigma_1^{-1}\circ \varphi^k\circ \sigma_1\mid \mathring{Q}_1
\end{equation}
for a homeomorphism $\sigma_1$ from $\mathring{Q}_1$ onto $Q_1$.
Put $\varphi_j =
\varphi \mid Q_j$ and $\mathring{\varphi}_j=\mathring{\varphi}\mid \mathring{Q}_j$ for $j=1,\ldots,k$.
The conditions of the lemma give the following commutative diagram:
$$
\xymatrix{
\mathring{Q}_1 \ar[rr]^{\mathring{\varphi}_1} \ar[d]^{\sigma_1} &&\mathring{Q}_2
\ar[rr]^{\mathring{\varphi}_2} &&\ldots \ar[rr]^{\mathring{\varphi}_{k-1}} &&\mathring{Q}_k
\ar[rr]^{\mathring{\varphi}_k} &&\mathring{Q}_1 \ar[d]^{\sigma_1} \\
Q_1 \ar[rr]^{\varphi_1} &&Q_2 \ar[rr]^{\varphi_2} &&\ldots
\ar[rr]^{\varphi_{k-1}} &&Q_k \ar[rr]^{\varphi_k} &&Q_1 }
$$

Let $\sigma_2:\mathring{Q}_2\to Q_2$ be the homeomorphism for which the following diagram commutes:
$$
\xymatrix{
\mathring{Q}_1 \ar[rr]^{\mathring{\varphi}_1} \ar[d]^{\sigma_1} &&\mathring{Q}_2\ar[d]^{\sigma_2} \\
Q_1 \ar[rr]^{\varphi_1} &&Q_2}
$$

In other words, we put $\sigma_2= \varphi_1\circ \sigma_1 \circ\mathring{\varphi}_1^{-1}$.
If $\sigma_j$ is defined for all $j\leq j_0\leq k-1$, we define $\sigma_{ j_0+1}= \varphi_{j_0}\circ \sigma_{j_0}\circ \mathring{\varphi}_{j_0}^{-1}$. Then for all $j\leq k$ the equality
\begin{align}\label{eqB.2}
\sigma_j= (\prod_{j'=1}^{j-1}  \varphi_{j'}) \circ  \sigma_1 \circ (\prod_{j'=1}^{j-1} \mathring{\varphi}_{j'})^{-1}
\end{align}
holds. Since $\prod_{j'=1}^k \varphi_{j'}= \varphi^k| Q_1$ and $\prod_{j'=1}^k \mathring{\varphi}_{j'}=\mathring{\varphi}^k|\mathring{Q}_1$
equation \eqref{eq6.66b} implies that the following diagram commutes:

$$
\xymatrix{
\mathring{Q}_1 \ar[rr]^{\mathring{\varphi}_1} \ar[d]^{\sigma_1} &&\mathring{Q}_2 \ar[d]^{\sigma_2} \ar[rr]^{\mathring{\varphi}_2} &&\ldots \ar[rr]^{\mathring{\varphi}_{k-1}} &&\mathring{Q}_k \ar[d]^{\sigma_k} \ar[rr]^{\mathring{\varphi}_k} &&Q_1 \ar[d]^{\sigma_1} \\
Q_1 \ar[rr]^{\varphi_1} &&Q_2 \ar[rr]^{\varphi_2} &&\ldots
\ar[rr]^{\varphi_{k-1}} &&Q_k \ar[rr]^{\varphi_k} &&Q_1 }
$$

Let $\sigma:\mathring{Q}\to Q$ be the mapping that equals $\sigma_j$ on $\mathring{Q}_j$.
The commutativity of the diagram means that $\mathring{\varphi}=\sigma^{-1}\circ \varphi \circ \sigma$. Indeed, the equality
\begin{equation}
\label{eq6.68bis} \mathring{\varphi} = \sigma^{-1} \circ \varphi \circ \sigma
\quad {\rm on} \quad \bigcup_{j=1}^k \ Q_j
\end{equation}
is equivalent to the system of equations
\begin{eqnarray}
\label{eq6.69}
\mathring{\varphi}_1 &= &\sigma_2^{-1} \circ \varphi_1 \circ \sigma_1 \nonumber \\
\vdots && \nonumber \\
\mathring{\varphi}_{k-1} &= &\sigma_k^{-1} \circ \varphi_{k-1} \circ \sigma_{k-1}  \\
\mathring{\varphi}_k &= &\sigma_1^{-1} \circ \varphi_k \circ \sigma_k \, .
\nonumber
\end{eqnarray}

The Statement 1 of the Lemma 6.1 is clear for the case when equality \eqref{eq6.66b} holds.

Consider now the general case of Statement 1, i.e. we assume that
$$
\mathring{\varphi}^k\mid \mathring{Q}_1 = \sigma_1^{-1}\circ \widetilde{\varphi^k}\circ \sigma_1
$$
on $\mathring{Q}_1$ for a homeomorphism $\sigma_1$ from $\mathring{Q}_1$ onto $Q_1$, and
a self-homeomorphism $\widetilde{\varphi^k}$ of $Q_1$ that is
isotopic to the self-homeomorphism  $\varphi^k\mid Q_1$   of $Q_1$.
To prove the first statement of the lemma, it is enough to
find a self-homeomorphism $\tilde\varphi$ of $Q$ that is obtained from $\varphi$ by isotopy and conjugation, for which $\tilde\varphi^k= \widetilde{\varphi^k}$ on $Q_1$. Then by the arguments for the case when equality \eqref{eq6.66b} holds $\mathring{\varphi}$ is conjugate to $\tilde\varphi$ and the first statement is proved in the general situation.

We put $\tilde\varphi=\varphi$ on $\bigcup_{1<j\leq k}Q_j$ and $\tilde\varphi= (\tilde\varphi)^{-k+1}\circ \widetilde{\varphi^k}=(\varphi)^{-k+1}\circ \widetilde{\varphi^k}$ on $Q_1$.
Then $(\tilde\varphi)^k=\widetilde{\varphi^k}$ on $Q_1$.
The isotopy joining $\varphi$ and $\tilde\varphi$ is described as follows.
It is equal to $ \varphi\circ\chi_t$ on $\bigcup_{1\leq j\leq k}Q_j$ with $\chi_t$ being the identity mapping on $\bigcup_{1< j \leq k}Q_j$ for $t\in[0,1]$, and on $Q_1$ the
$\chi_t$ form an isotopy of self-homeomorphisms of $Q_1$ for which $\chi_0$ is the identity and $\chi_1$ is equal to the mapping $\chi_1= (\varphi)^{-k}\circ (\widetilde{\varphi^k})$. Then $\varphi\circ\chi_t\mid Q_k$ is equal to $\varphi$ for $t=0$ and to $(\varphi)^{-k+1}\circ\widetilde{\varphi^k} = \tilde \varphi$ for $t=1$.

Statement 2 is obtained as follows. We look again for a suitable self-homeomorphism $\tilde {\varphi}$ of $Q$ that is isotopic to $\varphi$ and satisfies the equality $\tilde {\varphi}^k\mid Q_1=\mathring{\varphi}^k\mid Q_1 $.
As in the proof of Statement 1 we put  $\tilde {\varphi}=\varphi$ on $\bigcup_{1< j\leq k} Q_j$ and $\tilde {\varphi}= (\varphi^{-k+1})\circ  \mathring{\varphi} ^k $ on $Q_1$. Then $\mathring{\varphi}^k\mid Q_1 = \tilde {\varphi}^k \mid Q_1$ and $\mathring{\varphi}=\tilde{\varphi}$ on $\bigcup_{1\leq j<k} Q'_j$.

We prove now that $\varphi$ and  $\tilde {\varphi}$ are related by the required isotopy.
Put $\chi_t={\rm Id}$ on $\bigcup_{1< j\leq k}Q_j$ for all $t\in [0,1]$, and $\chi_t=\tilde{\chi}_t$ on $Q_1$. Here $\tilde{\chi}_t$ is the ${\rm Hom}^+(Q_1,Q_1')$-isotopy in the assumption of Statement 2 of the lemma, $\tilde{\chi}_0=\rm{Id}$ and $\tilde{\chi}_1= \varphi^{-k} \circ\mathring{\varphi}^k$. Then $\chi_t$ is a ${\rm Hom}^+(Q,Q')$-isotopy.
The family $ \varphi \circ \tilde{\chi}_t $
joins $\varphi=\varphi \circ \chi_0$ with  $\tilde{\varphi}=\varphi \circ \chi_1$. Indeed, this is true on  $\bigcup_{1< j\leq k} Q_j$, since there $\tilde{\varphi}=\varphi$ and $\chi_t={\rm id}$. On $Q_1$ $\varphi \circ \chi_0=\varphi$ and $\varphi \circ \chi_1=\varphi\circ\varphi^{-k} \circ\mathring{\varphi}^k=\tilde {\varphi}$. Finally $\tilde{\varphi}^k\mid Q_1= \mathring{\varphi}^k\mid Q_1$.

The homeomorphism $\sigma$ that conjugates $\tilde{\varphi}$ to $\mathring{\varphi}$ is given by equality \eqref{eqB.2} (see the proof of Statement 1). Since under the present conditions $\sigma_1$ is the identity and the $\tilde{\varphi}_j$ and $\mathring{\varphi}_j$ coincide on $Q_j'$, $\sigma$ equals the identy on $Q'$.
The lemma is proved. \hfill $\Box$

\chapter{Koebe's Theorem}
\label{ChapterC}
\begin{thm}\label{thmKoebe}{\bf (Koebe's $\frac{1}{4}$-Covering Theorem.)} {\rm (\cite{Go}, Theorem 2, II,§4)}
Suppose a holomorphic function $f$ on the unit disc $\mathbb{D}$ in the complex plane with $f(0)=0$ and $f'(0)=1$ is univalent (in other words, it maps the unit disc conformally onto its image). Then the image $f(\mathbb{D})$ contains the disc of radius $\frac{1}{4}$ with center $0$.
\end{thm} 
\printindex

\backmatter
\bibliographystyle{amsalpha}



\begin{thebibliography}{99}

\bibitem{AKM} R.L. Adler,  A.G. Konheim, and M.H. McAndrew,\textit{ Topological entropy.}
Trans. Amer Math. Soc. {\bf 114} (1965), 309--319.

\bibitem{A1} L. Ahlfors, {\it Lectures on Quasiconformal Mappings.} Van Nostrand, Princeton (1966).

\bibitem{A2} L. Ahlfors, \textit{On Quasiconformal Mappings.} J. Analyse Math.
{\bf 3} (1953-1954), 1--58.

\bibitem{A3} L. Ahlfors, \textit{Finitely generated Kleinian groups.} Amer. J. Math. {\bf 86} (1964), 413–-429.

\bibitem{Ap} T. Apostol, \textit{Modular functions and Dirichlet series in number theory.} Second edition. Graduate Texts in Mathematics, 41. Springer-Verlag, New York, 1990.

\bibitem{Ara} S. Ju. {Arakelov},  \textit{ Families of algebraic curves with fixed degeneracies.} (Russian) Izv. Akad. Nauk SSSR Ser. Mat. {\bf 35}  (1971), 1269–1293.

\bibitem {Ar} V. Arnold, \textit{On some Topological Invariants of Algebraic Functions.}
Trudy Moskov. Mat. Obsc. {\bf 21} (1970), 27--46. Engl. Transl.:
Trans. Moscow Math. Soc. {\bf 21} (1970), 30--52.


\bibitem {Bel} S. Bell, and S. Krantz, 
\textit{Smoothness to the boundary of conformal maps.}
Rocky Mountain J. Math. {\bf 17}, no. 1 (1987), 23–40.




\bibitem {Be1} L. Bers, \textit{An Extremal Problem for Quasiconformal Mappings and a Theorem by
Thurston.} Acta Math. {\bf 141} (1978), 73--98.

\bibitem {B2} L. {Bers}, \textit{Quasiconformal mappings and Teichm\"uller's theorem.} Analytic Functions,
Princeton Univ. Press, Princeton (1960), 89--119.

\bibitem {Be3} L. {Bers},  \textit{Automorphic forms and general Teichm\"uller
spaces.}  Proc. Conf. Complex Analysis, Minneapolis (1964), Springer
Verlag (1965), 109--113.

\bibitem {BeRo} L. {Bers}, and H.L. {Royden}, 
\textit{Holomorphic families of injections.} Acta Math. {\bf 157}, 3-4, (1986), 259–286.

\bibitem {BH} M. {Bestvina}, and M. {Handel}, \textit{Train Tracks for Surface
Homeomorphisms.}  Topology {\bf 34}, 1 (1995), 109--140.

\bibitem {Bi} J. {Birman},  \textit{Braids, Links and Mapping Class Groups.}  Ann. Math. Studies
{\bf 82}, Princeton Univ. Press (1975).

\bibitem{Bo} D. {Borthwick}, \textit{ Spectral theory of infinite-area hyperbolic surfaces.} Progress in Mathematics {\bf 256}, Birkh\"auser, Boston,Basel, Berlin.



\bibitem {Br} G. {Bredon}, {\it Topology and geometry}, Graduate Texts in Mathematics, {\bf 139}, Springer-Verlag, New York, (1993).
\bibitem{Ca} L. {Caporaso}, {\it On certain uniformity properties of curves over function fields.} Compositio Math. {\bf 130}, 1 (2002),   1--19.

\bibitem{Fr} M. {de Franchis}, \textit{Un teorema sulle involuzioni irrationali.} {Rend. Circ. Mat. Palermo}, {\bf 36}, 368 (1913).

\bibitem {EsMi} A. {Eskin}, M. {Mirzakhani}, \textit{Counting closed geodesics in moduli space}. J. Mod. Dyn. {\bf 5},1 (2011),  71–-105.


\bibitem{Fa} G. {Faltings}, \textit{Endlichkeitss\"atze f\"ur abelsche Variet\"aten \"uber Zahlk\"orpern.} (German) [Finiteness theorems for abelian varieties over number fields.], Invent. Math. {\bf 73}, 3 (1983), 349–366.
\bibitem{Fa2} Faltings, G.: {\it Arakelov's theorem for abelian varieties.} Invent. Math. {\bf 73}, 3 (1983), 337–347. 


\bibitem {FaKr} H. {Farkas},  and I. {Kra},  {\it Riemann surfaces}, Graduate Texts in
Mathematics, {\bf 71}, Springer-Verlag, New York-Berlin, (1980).


\bibitem {FaMa} B. {Farb}, and D. {Margalit}, {\it A primer on mapping class groups}.
Princeton Mathematical Series, {\bf 49}. Princeton University Press,
Princeton, NJ, (2012).

\bibitem{FLP} A. {Fathi}, F. {Laudenbach}, and V. {Po\'{e}naru}, \textit{Travaux de Thurston sur les surfaces.}  Soc.
Math. France, Paris, 1979, {\it Ast\'{e}risque} {\bf 66--67} (1979).



\bibitem{Fo} O. {Forster},  {\it Lectures on Riemann surfaces.} Translated from the 1977
German original by Bruce Gilligan. Graduate Texts in Mathematics,
{\bf 81}. Springer-Verlag, New York, (1991).

\bibitem{FiGr} W. {Fischer} Wolfgang, and H. {Grauert},
{\it Lokal-triviale Familien kompakter komplexer Mannigfaltigkeiten} (German)
Nachr. Akad. Wiss. Göttingen Math.-Phys. Kl. II  (1965), 89–94.


\bibitem{Forst} F. {Forstneric},  {\it Stein manifolds and holomorphic mappings.} The homotopy principle in complex analysis.
    A Series of Modern Surveys in Mathematics {\bf 56}.
    Springer, Heidelberg, (2011).

\bibitem{Forst1} F. {Forstneric}, M. Slapar, \textit{Stein structures and holomorphic mappings.} Math. Z. {\bf 256} ,  3 (2007), 615–646 .



\bibitem{Forst2} F.{Forstneric}, {\it  Runge Approximation on Convex Sets Implies the Oka Property.} Ann. of Math. {\bf (2) 163}, no. 2 (2006), 689–707 .




\bibitem{FrGr} K. {Fritsche}, and H. {Grauert}, {\it From
Holomorphic Functions to Complex Manifolds.} Springer, New York,
Berlin, Heidelberg, Inc., Graduate Texts in Mathematics {\bf 213},
2002.

\bibitem{Go} G.M. {Goluzin}, {\it Geometric
    theory of
    functions of a complex variable.}, Translations of
    Mathematical
    Monographs, Vol. 26, American Mathematical Society,
    Providence,
    R.I. 1969.



\bibitem{GL} E. {Gorin}, and V. {Lin},  \textit{On Separable Polynomials
over Commutative Banach Algebras.}  Dokl. Akad. Nauk SSSR {\bf 218}
(1974), n$^{\rm o}$~3, 505-508. Engl. Transl.:  Soviet Math. Dokl.
{\bf 15} (1974), n$^{\rm o}$~5, 1357--1361.

\bibitem {Gr} H. {Grauert}, {\it Mordells Vermutung \"uber rationale Punkte auf algebraischen Kurven und Funktionenk\"orper}. (German),
{Inst. Hautes Études Sci. Publ. Math.} {\bf 25}  (1965), 131–149.


\bibitem {G} {Gromov}, Mikhael, {\it Oka's principle for holomorphic sections of elliptic bundles.} {J. Amer. Math. Soc.} {\bf 2}, 4 (1989),  851–897.



\bibitem{Ha} V.L. {Hansen}, \textit{Polynomial Covering Spaces and
Homomorphisms into the Braid Groups.}  Pacif. J. Math. {\bf 81}, 2
(1979), 399-410.

\bibitem{He} G. {Heier}, {\it Uniformly effective Shafarevich conjecture on families of hyperbolic curves over a curve with prescribed degeneracy locus.}
{J. Math. Pures Appl.} {\bf 83}, 7  (2004), 845–867.


\bibitem{HS} Z. {He}, and O. {Schramm}, \textit{Fixed points, Koebe
uniformization and circle packings.} {\ bf Ann. Math.}, {\bf 137}, 2
(1993), 396-406.

\bibitem{H1} L. {H\"ormander} \textit{An introduction to complex analysis in several variables}. Third edition. North-Holland Mathematical Library, 7. North-Holland Publishing Co., Amsterdam, (1990).

\bibitem {IS} Y. {Imayoshi},  and H. {Shiga}, 
{\it A finiteness theorem for holomorphic families of Riemann surfaces.}
Holomorphic functions and moduli. Vol. II (Berkeley, CA, 1986), 207–219, Math. Sci.
Res. Inst. Publ., 11, Springer, New York, 1988.

\bibitem{Joh} D. {Johnson}, \textit{The structure of the Torelli group I: A finite
set of generators of ${\mathcal T}$.} Ann. Math. {\bf 118} (1983),
423--442.

\bibitem{Jo} B. {J\"oricke},  \textit{Braids, Conformal Module and
Entropy,.}  C.R. Acad. Sci. Paris, Ser. I, {\bf 351}, (2013),
289-293.

\bibitem{Jo2} B. {J\"oricke},  \textit{Fundamental groups, slalom curves and extremal length}. in Operator Theory: Advances and Applications, Vol. 261, 307-315, Springer 2018.


\bibitem{Jo3} B. {J\"oricke},  \textit{Fundamental Groups, 3-Braids, and Effective Estimates of Invariants.} Math. Z. {\bf 294} (2020), 1553-1609. 

\bibitem{Jo4} B. {J\"oricke},  \textit{Riemann surfaces of second kind and finiteness theorems.} Math. Z. {\bf 302}  (2022), 73–127.

\bibitem{Jo5}   B. {J\"oricke},  \textit{   Gromov’s Oka Principle, Fiber Bundles and the Conformal Module},  Acta Math. Vietn. {\bf 47}  (2022), 375–440. 

\bibitem{Jo6}   B. {J\"oricke},  \textit{ Conformal Invariants of $3$-Braids and Counting Functions}, Ann. Fac. Sci. Toulouse 
{\bf  31}, 5 (2022), 1323-1341.



\bibitem{Ka} S. {Kaliman}, \textit{A holomorphic universal covering of the space of polynomials without multiple roots.} Theory and Functions,
Functional Analysis and Applications {\bf 25} (1977), 25--35
(Russian).

\bibitem{Ka1} S. {Kaliman},  \textit{The universal holomorphic covers of polynomials
without multiple roots.}  Selecta Math. Soviet {\bf 12:4} (1993),
395--405.




\bibitem {KZ} S. {Kaliman},  and M. {Zaidenberg}, {\it A Transversality Theorem for Holomorphic Maps and Stability of Eisenman-Kobayashi Measures}. {Trans. Amer. Mat. Soc.}  {\bf 348}, 2 (1996), 661-672.

\bibitem{KaTu} Ch. {Kassel}, and V.{Turaev},  {\it Braid Groups},
{Graduate Texts in Mathematics} {\bf 247}, Spinger (2008).

\bibitem {Ke} B. {von Ker\'{e}rekj\'{a}rto}, {\it Über die Periodischen Transformationen der Kreisscheibe und der Kugelfläche.} {Math. Ann.} {\bf 80} (1919), 3-7.

\bibitem{Kodai} K. {Kodaira}, {\it Complex manifolds and deformation of complex structures.}
{Grundlehren der Mathematischen Wissenschaften}, {\bf 283},
Springer-Verlag, New York, (1986).

\bibitem{Ko} L. {Kodama},  \textit{Boundary measures of analytic differentials and uniform approximation on a Riemann surface.} Pacific J. Math.
{\bf 15} (1965), 1261--1277.


\bibitem{Kra} I. {Kra}, \textit{Automorphic forms and Kleinian groups.} Mathematics Lecture Note Series. W. A. Benjamin, Inc., Reading, Mass., 1972.

\bibitem{Krav} S. {Kravetz}, \textit{On the geometry of Teichm\"uller spaces and the structures of their modular groups}. Ann. Acad. Sci. Fenn. {\bf 278}, (1959), 1-35.

\bibitem{Lehn} J. {\sc Lehner}, {\it Discontinuous Groups and Automorpic Functions.} Mathematical Surveys VIII, Am. Math. Soc., Providence, Rhode Island, 1964.

\bibitem{Let} O. {Lehto}, {\it Univalent functions and Teichm\"uller spaces}, Springer (1986).

\bibitem{Li1} V. {Lin},  \textit{Algebraic functions,
configuration spaces, Teichm\"uller spaces, and new holomorphic
combinatorial invariants.} Funct.Anal.Appl. {\bf 45}, 3 (2011),
55--78.

\bibitem{Li2} V. {Lin},  \textit{Around the 13th Hilbert Problem
for algebraic functions.} Proc. of the Hirzebruch 65 Conf. in
Algebraic Geometry (Ramat Gan, 1993), Israel Math Conference Proc.
{\bf 9}, Bar-Ilan Univ., Ramat Gan (1996), 307--327.

\bibitem {Ma} A,I. {Markushevich}, 
   {\it Theory of functions of a complex variable.} Vol. III,
    Chelsea
    Publishing Co., 1977.




\bibitem{Maz} B. {Mazur}, {\it Arithmetic on curves.}
{ Bull. Amer. Math. Soc.} (N.S.) {\bf 14}, 2 (1986) 207–259.


\bibitem {Mi2} J.{Milnor}, {\it Topology from the differentiable viewpoint.} Based on notes by David W. Weaver. Revised reprint of the 1965 original. Princeton Landmarks in Mathematics. Princeton University Press, Princeton, NJ, (1997).


\bibitem {Mi} J. {Milnor}, {\it A note on curvature and fundamental group.} J. Differential Geometry {\bf 2} (1968), 1--7.









\bibitem{Mc} C. {McMullen},
{\it From dynamics on surfaces to rational points on curves.}
{ Bull. Amer. Math. Soc.} (N.S.) {\bf 37}, 2  (2000), 119–140.


\bibitem{Na} S. {Nag},  {\it The complex analytic theory of Teichm\"uller spaces.}
{Canad. Math. Soc. Ser.}, J.~Wiley \& Sons (1988).

\bibitem{Nap} T. Napier,  and M. Ramachandran,  {\it An  introduction to Riemann surfaces.} Cornerstones. Birkhäuser/Springer, New York, 2011.



\bibitem{Pa} A.N. {Parshin}, {\it Algebraic curves over function fields. }I. (Russian) {Izv. Akad. Nauk SSSR Ser. Mat.} {\bf 32} (1968), 1191–1219 .





\bibitem{P} R. {Penner},  \textit{Bounds on least dilatations.}  Proc. Amer. Math. Soc.
{\bf 113}, 2 (1991), 443--450.

\bibitem{PPM} Proceedings of Symposia in Pure Mathematics, vol XXVIII,
Mathematical Developments arising from Hilbert Problems, Am. Math.
Soc., Providence, Rhode Island, 1976, Edited by Felix.E Browder.


\bibitem{Roy} H. {Royden},  \textit{Automorphisms and Isometries of Teichm\"uller
space.} Advances in the Theory of Riemann Surfaces, {\it Ann. of
Math. Studies} {\bf 66}, Princeton Univ. Press, Princeton (1971),
369--383.


\bibitem{Sch} S. {Scheinberg},  \textit{Uniform approximation by meromorphic functions
having prescribed poles.}  Math. Ann. {\bf 243}, 1 (1979), 83--93.

\bibitem{Sh}  I. R. {Shafarevich}, {\it Algebraic number fields.} (Russian) (1963) Proc. Internat. Congr. Mathematicians (Stockholm, 1962) pp. 163–176 Inst. Mittag-Leffler, Djursholm.



\bibitem{Son} W. {Song}, K. {Ko}, and J. {Los}, \textit{Entropy of Braids.}  J. Knot Theory
Ramifications, {\bf 11}, 4 (2002), 647--666.


\bibitem{Sp} E.H.{Spanier},  \textit{Algebraic Topology.} McGraw-Hill Book Company, 1966.



\bibitem{St} N. {Steenrod},  \textit{The Topology of Fibre Bundles.} Princeton
Mathematical Series, vol. 14., Princeton University Press, Princeton,
N. J., 1951.

\bibitem{Sto}  E.L. {Stout}, {\it Bounded holomorphic funtions on finite Riemann surfaces.} {Trans. Amer. Math. Soc.} {\bf 120}, 2 (1965), 255-285 .



\bibitem{Str} K. {Strebel},  \textit{Quadratic Differentials,.} Springer Verlag, Berlin Heidelberg
New York Tokyo (1984).

\bibitem{Th} W. {Thurston},  \textit{On the geometry and dynamics of diffeomorphisms of
surfaces.} Bull. Amer. Math. Soc. (N.S.) {\bf 19}, 2 (1988),
417--431.


\bibitem{Ve} W.{Veech}, 
\textit{The Teichmüller geodesic flow.}
Ann. of Math. (2) {\bf 124}, 3 (1986),  441–530.




\bibitem{W} H. {Wielandt},  {\it Finite permutation groups.} Translated from the
German by R. Bercov, Academic Press, New York-London (1964).

\bibitem{Z1} O. {Zariski},  \textit{On the Problem of Existence of
Algebraic Functions of Two Variables Possessing a Given Branch
Curve.}  Amer. J. Math. {\bf 52}, 2 (1929), 305--328.

\bibitem{Z2} O. {Zariski},  \textit{On the Poincar\'{e} Group of Rational
Plane Curves.}  Amer. J. Math. {\bf 58}, 3 (1936), 607--619.


\bibitem{Z} Ju. {Zjuzin},  \textit{Irreducible separable polynomials with holomorphic coefficients
on a certain class of complex spaces.} (Russian),  Mat. Sb. (N.S.)
{\bf 102 (144)}, 4 (1977), 569--591, 632.


\end{thebibliography}

\end{document}